\numberwithin{equation}{subsection}
\numberwithin{figure}{subsection}
\numberwithin{table}{section}
\theoremstyle{definition}
\newtheorem{thm}{Theorem}[subsection]
\newtheorem{prp}[thm]{Proposition}
\newtheorem{lem}[thm]{Lemma}
\newtheorem{cor}[thm]{Corollary}
\newtheorem{dfn}[thm]{Definition}
\newtheorem{asp}[thm]{Assumption}
\newtheorem{ntn}[thm]{Notation}
\newtheorem{fct}[thm]{Fact}
\newtheorem{rmk}[thm]{Remark}
\newtheorem*{thm*}{Theorem}
\newtheorem*{prp*}{Proposition}
\newtheorem*{lem*}{Lemma}
\newtheorem*{cor*}{Corollary}
\newtheorem*{dfn*}{Definition}
\newtheorem*{ntn*}{Notation}
\newtheorem*{fct*}{Fact}
\newtheorem*{rmk*}{Remark}
\newtheorem*{eg*}{Example}
\newcommand{\ol}{\overline}
\newcommand{\wh}{\widehat}
\newcommand{\wt}{\widetilde}
\newcommand{\ep}{\epsilon}
\newcommand{\ve}{\varepsilon}
\newcommand{\inj}{\hookrightarrow}
\newcommand{\surj}{\twoheadrightarrow}
\newcommand{\longto}{\longrightarrow}
\newcommand{\longinj}{\lhook\joinrel\longrightarrow}
\newcommand{\rlto}{\rightleftarrows}
\newcommand{\xr}[1]{\xrightarrow{#1}}
\newcommand{\xrr}[1]{\xrightarrow{\, #1 \, }}
\newcommand{\simto}{\xr{\sim}}
\newcommand{\longsimto}{\xrr{\sim}}
\newcommand{\xlrarrows}[3]{
\stackrel[#3]{#2}{\mathrel{\substack{\xrightarrow{#1} \\[-.9ex] \xleftarrow{#1}}}}}
\newcommand{\adjunc}{\xlrarrows{\rule{1em}{0em}}{}{}}
\newcommand{\tbcup}{{\textstyle \bigcup}}
\newcommand{\tbscup}{{\textstyle \bigsqcup}}
\newcommand{\iclim}{\varinjlim}
\newcommand{\ilim}{\varprojlim}
\newcommand{\one}{\mathbf{1}}
\newcommand{\kdg}{k_{\dg}}
\newcommand{\ase}{a}
\newcommand{\yon}{j}
\newcommand{\Qlb}{\ol{\bbQ_\ell}}
\newcommand{\dL}{\mathrm{L}}
\newcommand{\dR}{\mathrm{R}}
\newcommand{\Hot}{\operatorname{Ho}}
\newcommand{\shHom}{\operatorname{\mathscr{H} \kern-.3em \mathit{om}}}
\newcommand{\shExt}{\operatorname{\mathscr{E} \kern-.2em \mathit{xt}}}
\newcommand{\dRHom}{\operatorname{\mathscr{R} \kern-.1em \mathit{hom}}}
\newcommand{\bsHom}{\operatorname{\pmb{\mathscr{H} \kern-.3em \mathit{om}}}}
\newcommand{\C}{\operatorname{C}}
\newcommand{\E}{\operatorname{E}}
\newcommand{\sk}{\operatorname{sk}}
\newcommand{\dD}{\operatorname{D}}
\newcommand{\Ho}{\operatorname{h}}
\newcommand{\Ob}{\operatorname{Ob}}
\newcommand{\GL}{\operatorname{GL}}
\newcommand{\mHo}{\operatorname{Ho}}
\newcommand{\mSh}{\operatorname{Sh}}
\newcommand{\Aut}{\operatorname{Aut}}
\newcommand{\Cov}{\operatorname{Cov}}
\newcommand{\Dex}{\operatorname{Dex}}
\newcommand{\Ext}{\operatorname{Ext}}
\newcommand{\Fun}{\operatorname{Fun}}
\newcommand{\Hom}{\operatorname{Hom}}
\newcommand{\Img}{\operatorname{Im}}
\newcommand{\Ind}{\operatorname{Ind}}
\newcommand{\Iso}{\operatorname{Iso}}
\newcommand{\Ker}{\operatorname{Ker}}
\newcommand{\Map}{\operatorname{Map}}
\newcommand{\Mod}{\operatorname{Mod}}
\newcommand{\Ner}{\operatorname{\ic{N}}}
\newcommand{\Ndg}{\operatorname{\ic{N}_{\dg}}}
\newcommand{\Nsp}{\operatorname{\ic{N}_{\mathrm{spl}}}}
\newcommand{\Nrm}{\operatorname{Nrm}}
\newcommand{\Rep}{\operatorname{Rep}}
\newcommand{\Sub}{\operatorname{Sub}}
\newcommand{\Trc}{\operatorname{Trc}}
\newcommand{\Sie}{\operatorname{Sieve}}
\newcommand{\cosk}{\operatorname{cosk}}
\newcommand{\join}{\mathbin{\star}}
\newcommand{\funD}{\operatorname{D}}
\newcommand{\Hall}{\operatorname{Hall}}
\newcommand{\QCoh}{\operatorname{QCoh}}
\newcommand{\Spec}{\operatorname{Spec}}
\newcommand{\dSpec}{\operatorname{dSpec}}
\newcommand{\fSing}{\operatorname{Sing}}
\newcommand{\id}{\operatorname{id}}
\newcommand{\pr}{\operatorname{pr}}
\newcommand{\pH}{{}^{\pv}H}
\newcommand{\pvp}{\operatorname{{}^{\pv}\pi}}
\newcommand{\Rst}{\operatorname{Rst}}
\newcommand{\dgMap}{\operatorname{\mathit{Map}}}
\newcommand{\frkC}{\operatorname{\mathfrak{C}}}
\newcommand{\cl}{\mathrm{cl}}
\newcommand{\cn}{\mathrm{cn}}
\newcommand{\dg}{\mathrm{dg}}
\newcommand{\fp}{\mathrm{fp}}
\newcommand{\lf}{\mathrm{lf}}
\newcommand{\op}{\mathrm{op}}
\newcommand{\pe}{\mathrm{pe}}
\newcommand{\pv}{\mathrm{p}}
\newcommand{\aff}{\mathrm{aff}}
\newcommand{\cpt}{\mathrm{cpt}}
\newcommand{\cof}{\mathrm{cof}}
\newcommand{\fib}{\mathrm{fib}}
\newcommand{\fin}{\mathrm{fin}}
\newcommand{\ord}{\mathrm{ord}}
\newcommand{\str}{\mathrm{str}}
\newcommand{\cart}{\mathrm{cart}}
\newcommand{\cstr}{\mathrm{c}}
\newcommand{\geom}{\mathrm{geom}}
\newcommand{\proj}{\mathrm{proj}} 
\newcommand{\stab}{\mathrm{stab}}
\newcommand{\trun}{\mathrm{trun}}
\newcommand{\txinj}{\mathrm{inj}}
\newcommand{\txred}{\mathrm{red}}
\newcommand{\et}{\mathrm{et}}
\newcommand{\sm}{\mathrm{sm}}
\newcommand{\lis}{\mathrm{lis}}
\newcommand{\txET}{\mathrm{ET}}
\newcommand{\txle}{\mathrm{lis}\textrm{-}\mathrm{et}}
\newcommand{\bbE}{\mathbb{E}}
\newcommand{\bbF}{\mathbb{F}}
\newcommand{\bbK}{\mathbb{K}}
\newcommand{\bbL}{\mathbb{L}}
\newcommand{\bbN}{\mathbb{N}}
\newcommand{\bbQ}{\mathbb{Q}}
\newcommand{\bbU}{\mathbb{U}}
\newcommand{\bbV}{\mathbb{V}}
\newcommand{\bbZ}{\mathbb{Z}}
\newcommand{\bbo}{\mathbbm{1}}
\newcommand{\bfP}{\mathbf{P}}
\newcommand{\bfQ}{\mathbf{Q}}
\newcommand{\shA}{\mathcal{A}}
\newcommand{\shB}{\mathcal{B}}
\newcommand{\shF}{\mathcal{F}}
\newcommand{\shG}{\mathcal{G}}
\newcommand{\shI}{\mathcal{I}}
\newcommand{\shJ}{\mathcal{J}}
\newcommand{\shK}{\mathcal{K}}
\newcommand{\shL}{\mathcal{L}}
\newcommand{\shM}{\mathcal{M}}
\newcommand{\shN}{\mathcal{N}}
\newcommand{\shO}{\mathcal{O}}
\newcommand{\shP}{\mathcal{P}}
\newcommand{\shR}{\mathcal{R}}
\newcommand{\calF}{\mathcal{F}}
\newcommand{\calG}{\mathcal{G}}
\newcommand{\calP}{\mathcal{P}}
\newcommand{\frkm}{\mathfrak{m}}
\newcommand{\topC}{\mathcal{C}}
\newcommand{\topCG}{\mathcal{CG}}
\newcommand{\topCW}{\mathcal{CW}}
\newcommand{\topH}{\mathcal{H}}
\newcommand{\Kan}{\mathcal{K}\mathrm{an}}
\newcommand{\spA}{\mathfrak{A}}
\newcommand{\spC}{\mathfrak{C}}
\newcommand{\spCom}{\mathfrak{C}\mathrm{om}}
\newcommand{\siCat}{\mathfrak{C}\mathrm{at}_{\infty}}
\newcommand{\cat}{\mathrm}
\newcommand{\catA}{\cat{A}}
\newcommand{\catB}{\cat{B}}
\newcommand{\catC}{\cat{C}}
\newcommand{\catD}{\cat{D}}
\newcommand{\catDD}{\mathbf{D}}
\newcommand{\catF}{\cat{F}}
\newcommand{\catM}{\cat{M}}
\newcommand{\Ab}{\cat{Ab}}
\newcommand{\St}{\cat{St}}
\newcommand{\CS}{\boldsymbol{\Delta}}
\newcommand{\dgA}{\cat{dgAlg}}
\newcommand{\dgB}{\operatorname{\cat{B}}}
\newcommand{\Aff}{\cat{Aff}}
\newcommand{\PSh}{\cat{PSh}}
\newcommand{\Com}{\cat{Com}}
\newcommand{\Set}{\cat{Set}}
\newcommand{\Sch}{\cat{Sch}}
\newcommand{\dAff}{\cat{dAff}}
\newcommand{\sCom}{\cat{sCom}}
\newcommand{\sMod}{\cat{sMod}}
\newcommand{\sCat}{\cat{Cat}_{\Delta}}
\newcommand{\sSet}{\cat{Set}_{\Delta}}
\newcommand{\Grpd}{\cat{Grpd}}
\newcommand{\dgCat}{\cat{dgCat}}
\newcommand{\AlgSp}{\cat{AS}} %{\cat{AlgSp}}
\newcommand{\AlgSt}{\cat{AlgSt}}
\newcommand{\LinOrd}{\cat{LinOrd}}
\newcommand{\Perf}{\operatorname{P}}
\newcommand{\dgMod}{\operatorname{Mod_{\dg}}}
\newcommand{\st}{\mathcal}
\newcommand{\stF}{\st{F}}
\newcommand{\stM}{\st{M}}
\newcommand{\stP}{\st{P}}
\newcommand{\stT}{\st{T}}
\newcommand{\stU}{\st{U}}
\newcommand{\stV}{\st{V}}
\newcommand{\stW}{\st{W}}
\newcommand{\stX}{\st{X}}
\newcommand{\stY}{\st{Y}}
\newcommand{\stVect}{\st{V}ect}
\newcommand{\stQCoh}{\st{QC}oh}
\newcommand{\stPf}{\operatorname{\st{P}f}}
\newcommand{\stEx}{\operatorname{\st{E}}}
\newcommand{\gsS}{\mathrm{S}}%{\mathscr{S}}
\newcommand{\gsEt}{\operatorname{Et}}%{\mathscr{E} \! \mathit{t}}
\newcommand{\gsET}{\operatorname{ET}}%{\mathscr{E \! T}}
\newcommand{\gsLE}{\operatorname{LE}}%{\mathscr{L \! E}}
\newcommand{\gsSm}{\operatorname{Sm}}%{\mathscr{E} \! \mathit{t}}
\newcommand{\ic}{\mathsf}
\newcommand{\iS}{\mathcal{S}}
\newcommand{\iEM}{\operatorname{\mathcal{EM}}}
\newcommand{\iLTop}{\ic{LTop}}
\newcommand{\iRTop}{\ic{RTop}}
\newcommand{\iCat}{\ic{Cat}_{\infty}}
\newcommand{\iB}{\ic{B}}
\newcommand{\iC}{\ic{C}}
\newcommand{\iE}{\ic{E}}
\newcommand{\iI}{\ic{I}}
\newcommand{\iM}{\ic{M}}
\newcommand{\iQ}{\ic{Q}}
\newcommand{\iX}{\ic{X}}
\newcommand{\iAb}{\ic{Ab}}
\newcommand{\iSt}{\ic{St}}
\newcommand{\iAff}{\ic{Aff}}
\newcommand{\iCom}{\ic{Com}}
\newcommand{\idAS}{\ic{dAS}}
\newcommand{\idSt}{\ic{dSt}}
\newcommand{\iSch}{\ic{Sch}}
\newcommand{\isCom}{\ic{sCom}}
\newcommand{\isMod}{\ic{sMod}}
\newcommand{\idAff}{\ic{dAff}}
\newcommand{\idSch}{\ic{dSch}}
\newcommand{\iAlgSp}{\ic{AS}}
\newcommand{\iArtin}{\ic{Artin}}
\newcommand{\tS}{\ic{S}}
\newcommand{\tT}{\ic{T}}
\newcommand{\tU}{\ic{U}}
\newcommand{\iFE}{\operatorname{\ic{FinExt}}}
\newcommand{\iAlg}{\operatorname{\ic{Alg}}}
\newcommand{\iFun}{\operatorname{\ic{Fun}}}
\newcommand{\iDes}{\operatorname{\ic{Des}}}
\newcommand{\iMod}{\operatorname{\ic{Mod}}}
\newcommand{\iCAlg}{\operatorname{\ic{CAlg}}}
\newcommand{\iPSh}{\operatorname{\ic{PSh}}} %{\mathcal{P}}
\newcommand{\iSh}{\operatorname{\ic{Sh}}} %{\mathcal{S}\mathrm{h}}
\newcommand{\iSp}{\operatorname{\ic{Sp}}}
\newcommand{\iShv}{\operatorname{\ic{Shv}}} %{\mathcal{S}\mathrm{hv}}
\newcommand{\iPerv}{\operatorname{\ic{Perv}}}
\newcommand{\iQCoh}{\operatorname{\ic{QCoh}}}
\newcommand{\iVect}{\operatorname{\ic{Vect}}}
\newcommand{\iD}{\operatorname{\ic{D}}}
\newcommand{\iDa}{\operatorname{\ic{D}_{\infty}}}
\newcommand{\iDm}{\operatorname{\ic{D}}^{-}_{\infty}}
\newcommand{\iDp}{\operatorname{\ic{D}^{+}_{\infty}}}
\newcommand{\iDc}{\operatorname{\ic{D}_{\infty,\cstr}}}
\newcommand{\iDcm}{\operatorname{\ic{D}}_{\infty,\cstr}^{-}}
\newcommand{\iDcp}{\operatorname{\ic{D}_{\infty,\cstr}^{+}}}
\newcommand{\iDcbm}{\operatorname{\ic{D}}_{\infty,\cstr}^{(-)}}
\newcommand{\iDcbp}{\operatorname{\ic{D}_{\infty,\cstr}^{(+)}}}
\newcommand{\iDD}{\operatorname{\mathbf{D}}}
\newcommand{\iDDc}{\operatorname{\mathbf{D}_{\infty,\cstr}}}
\newcommand{\ipDm}{\operatorname{{}^{\pv}\mathsf{D}^{\le 0}}}
\newcommand{\ipDp}{\operatorname{{}^{\pv}\mathsf{D}^{\ge 0}}}
\newcommand{\iDs}[1]{\operatorname{\ic{D}}_{\infty,#1}}
\newcommand{\iDu}[1]{\operatorname{\ic{D}}_{\infty}^{#1}}
\newcommand{\iDcu}[1]{\operatorname{\ic{D}}_{\infty,\cstr}^{#1}}
\newcommand{\iDDcu}[1]{\operatorname{\mathbf{D}}_{\infty,\cstr}^{#1}}
\newcommand{\abs}[1]{\left| #1 \right|}
\newcommand{\tsh}[1]{\langle #1 \rangle}
\newcommand{\HomR}[1]{\operatorname{\mathrm{Hom}^\mathrm{R}_{#1}}}
\newcommand{\oc}[2]{#1_{\!/#2}}
\newcommand{\rst}[2]{\left. #1 \right|_{#2}}
\begin{document}

\title{Geometric derived Hall algebra}
\author{Shintarou Yanagida}
\address{Graduate School of Mathematics, Nagoya University. 
Furocho, Chikusaku, Nagoya, Japan, 464-8602.}
\email{yanagida@math.nagoya-u.ac.jp}
%\subjclass[2010]{16S99, 18E30}
%\date{March 10--, 2019}
\date{December 10, 2019}

\begin{abstract}
We give a geometric formulation of To\"en's derived Hall algebra
by constructing Grothendieck's six operations for 
the derived category of lisse-\'etale constructible sheaves
on the derived stacks of complexes.
Our formulation is based on an variant of Laszlo and Olsson's theory of 
derived categories and six operations for algebraic stacks.
We also give an $\infty$-theoretic explanation of 
the theory of derived stacks, which was originally constructed by To\"en and Vezzosi
in terms of model theoretical language.
\end{abstract}

\maketitle
\tableofcontents

%%%%%%%%%%%%%%%%%%%%%%%%%%%%%%%%%%%%%%%%%%%%%%%%%%%%%%%%%%%%%%%%%%%%%%%%%%%%%%%%
%%%%%%%%%%%%%%%%%%%%%%%%%%%%%%%%%%%%%%%%%%%%%%%%%%%%%%%%%%%%%%%%%%%%%%%%%%%%%%%%
\setcounter{section}{-1}
\section{Introduction}
\label{s:intro}

%%%%%%%%%%%%%%%%%%%%%%%%%%%%%%%%%%%%%%%%%%%%%%%%%%%%%%%%%%%%%%%%%%%%%%%%%%%%%%%%
%%%%%%%%%%%%%%%%%%%%%%%%%%%%%%%%%%%%%%%%%%%%%%%%%%%%%%%%%%%%%%%%%%%%%%%%%%%%%%%%
\subsection{}

The \emph{derived Hall algebra} introduced by To\"{e}n \cite{T06} 
is a version of Ringel-Hall algebra, and roughly speaking 
it is a ``Hall algebra for complexes''.
In the case of the ordinary Ringel-Hall algebra 
for the abelian category of representations of a quiver,
Lusztig \cite{Lus} established a geometric formulation 
using the theory of perverse sheaves in the 
equivariant derived category of $\ell$-adic constructible sheaves 
on the moduli spaces of the representations,
and his formulation gives rise to the theory of canonical bases for quantum groups.
As mentioned in \cite[\S 1, Related and future works]{T06}, it is natural to 
expect a similar geometric formulation for derived Hall algebras
using the moduli space of \emph{complexes} of representations.

At the moment when \cite{T06} appears,
the theory of \emph{derived stacks} was just being under construction,
which would realize the moduli space of complexes.
Soon after, To\"en and Vezzosi \cite{TVe1,TVe2} 
completed the works on derived stacks, and based on them
To\"en and Vaqui\'e \cite{TVa} constructed the moduli space of dg-modules.
In \cite[\S 0.6]{TVa}, it was announced among several stuffs that 
a geometric formulation of derived Hall algebra was being studied.
As far as we understand, it has not appeared yet.

The purpose of this article is to give such a geometric 
formulation of derived Hall algebras.
More precisely speaking, we want to give the following materials.
\begin{itemize}[nosep]
\item 
The theory of derived categories of \emph{lisse-\'etale constructible sheaves}
on derived stacks and the construction of 
\emph{Grothendieck's six operations} on them.

\item
The theory of \emph{perverse $\ell$-adic constructible sheaves} 
on derived stacks.

\item
The geometric formulation of derived Hall algebra.
\end{itemize}

Any such attempt will require \emph{derived algebraic geometry}.
Although the works \cite{TVe1,TVe2,TVa} are based on the language of 
\emph{model categories} so that it might be natural to work over such a language,
we decided to work using the language of \emph{$\infty$-categories}
following Lurie's derived algebraic geometry \cite{Lur5} -- \cite{Lur14}.
The reason is that today most papers on derived algebraic geometry
uses the language of $\infty$-categories rather than that of model categories.
A ``byproduct" of our project is 
\begin{itemize}
\item 
A ``translation" of model-categorical \cite{TVe1,TVe2,TVa} into 
the language of $\infty$-categories.
\end{itemize}

Such a translation is in fact a trivial one, and it seems that 
experts on derived algebraic geometry use both languages freely.
We decided to give somewhat a textbook-like explanation on this point,
and we will start the main text with some recollection on 
the theory of $\infty$-categories and $\infty$-topoi (\S \ref{s:ic}).
The theory of derived stacks in the sense of To\"en and Vezzosi
will be explained in the language of $\infty$-categories (\S \ref{s:dSt}).

We will construct the derived category of sheaves on derived stacks
using the theory of \emph{stable $\infty$-category}
established by Lurie \cite{Lur2}.
For this purpose, we will give in \S \ref{s:is} an exposition of 
the general theory of the $\infty$-category of sheaves on $\infty$-topoi
and the associated stable $\infty$-category.
There are many overlaps between our exposition and a (very small) part of 
Lurie's derived algebraic geometry \cite{Lur5}--\cite{Lur14}.
We decided to give a rather self-contained presentation 
for the completeness of this article.

The main object of this article is the \emph{lisse-\'etale sheaves} 
on derived stacks, which will be introduced in \S \ref{s:LE}.
Our definition is a simple analogue of the lisse-\'etale sheaves
on the algebraic stacks established by Laumon and Moret-Bailly \cite{LM}.
In the case of algebraic stacks, the lisse-\'etale topos is 
constructed using the \'etale topos on algebraic spaces.
Thus we need a derived analogue of algebraic spaces,
which we call \emph{derived algebraic spaces} and introduce in \S \ref{s:dAS}.
 
In \cite{LO1, LO2}, Laszlo and Orson constructed 
the Grothendieck six operations on the derived category of 
constructible sheaves on algebraic stacks,
based on the correction \cite{O} of a technical error 
on the lisse-\'etale topos in \cite{LM}.
In the sequel \cite{LO3} they also give the theory of 
perverse $\ell$-adic sheaves and weights on them over algebraic stacks.
Our construction of six operations (\S \ref{s:6op}, \S \ref{s:adic}) 
and definition of perverse $\ell$-adic sheaves (\S \ref{s:pv})
are just a copy of their argument.

Fundamental materials will be established up to \S \ref{s:pv},
and we will then turn to the derived Hall algebra.
In \S \ref{s:mod} we review To\"en and Vaqui\'e's construction \cite{TVa} of 
the moduli space of dg-modules over a dg-category via derived stack.
In \S \ref{s:H} we explain the definition of derived Hall albebras,
and give its geometric formulation, the main purpose of this article.

Let us sketch an outline of the construction here.
Let $\catD$ be a locally finite dg-category over $\bbF_q$
(see \S \ref{s:mod} for an account on dg-categories).

\begin{fct*}[{\cite{TVa}}]
We have the \emph{moduli stack} $\stPf(\catD)$ \emph{of perfect dg-modules} 
over $\catD^{\op}$.
It is a derived stack, locally geometric and locally of finite presentation. 
\end{fct*}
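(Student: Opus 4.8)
We sketch the argument of \cite{TVa}. The functor in play assigns to a simplicial commutative $\bbF_q$-algebra $A$ the $\infty$-groupoid of perfect $\catD^{\op}\otimes_{\bbF_q}A$-dg-modules --- equivalently, by Morita theory, the mapping space $\Map_{\dgCat}(\catD^{\op},\operatorname{Perf}(A))$ of dg-functors into the dg-category of perfect $A$-modules --- with functoriality in $A$ given by base change. That this presheaf on $\idAff_{\bbF_q}$ is a derived stack, i.e.\ satisfies \'etale (hyper)descent, reduces to descent for the $\infty$-category of perfect modules, which is standard (\cite{TVa}); so $\stPf(\catD)\in\idSt$. The remaining two assertions are local on $\stPf(\catD)$, and the plan is to first settle the universal case $\catD=\one$ (the unit dg-category over $\bbF_q$), where $\stPf(\one)$ is the derived stack of perfect complexes, and then to propagate the conclusions along the finiteness of $\catD$.

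\emph{The case $\catD=\one$.} Filter $\stPf(\one)$ by the open substacks $\stPf(\one)^{[a,b]}$ of perfect complexes of $\Tor$-amplitude contained in $[a,b]$ (openness being semicontinuity of $\Tor$-amplitude); since $\stPf(\one)=\bigcup_{a\le b}\stPf(\one)^{[a,b]}$ it suffices to show each $\stPf(\one)^{[a,b]}$ is geometric, which we do by induction on $b-a$. For $b=a$ one has $\stPf(\one)^{[a,a]}\simeq\coprod_{n\ge 0}B\GL_n$ (a shift of the stack of finite locally free modules), which is $1$-geometric. For the inductive step, over the geometric stack $X:=\stPf(\one)^{[a+1,b]}\times\bigl(\coprod_m B\GL_m\bigr)$, carrying universal objects $\mathcal{E}$ (of amplitude $[a+1,b]$) and $\mathcal{V}$ (a vector bundle), form the linear (``total space'') stack $\mathbb{V}\to X$ whose relative points parametrise maps $\mathcal{V}[-a-1]\to\mathcal{E}$; as the associated perfect complex $\dR\Hom(\mathcal{V},\mathcal{E})[a+1]$ is perfect over the geometric stack $X$, the stack $\mathbb{V}$ is again geometric. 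The morphism $\mathbb{V}\to\stPf(\one)^{[a,b]}$ sending $(\mathcal{E},\mathcal{V},\phi)$ to $\operatorname{cofib}\bigl(\mathcal{V}[-a-1]\xr{\phi}\mathcal{E}\bigr)$, which is perfect of amplitude $[a,b]$, is smooth and surjective --- every perfect complex of amplitude $[a,b]$ is, locally, recovered from its naive truncation in cohomological degrees $\ge a+1$ by attaching a vector bundle in just this way --- so $\stPf(\one)^{[a,b]}$ is geometric (one level up). Hence $\stPf(\one)$ is locally geometric.

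\emph{From $\one$ to $\catD$.} The assignment $\mathcal{C}\mapsto\stPf(\mathcal{C})$ carries homotopy colimits of dg-categories to homotopy limits of derived stacks, since $\Map_{\dgCat}(-,\operatorname{Perf}(A))$ turns colimits into limits. By the finiteness hypotheses on $\catD$ recalled in \S\ref{s:mod}, $\catD$ admits a compact generator, hence is Morita equivalent to a homotopically finitely presented dg-algebra $B$ over $\bbF_q$, and such a $B$ is a retract of a finite cell dg-algebra --- one built from $\bbF_q$ by finitely many homotopy pushouts along the free dg-algebra on a generator in a prescribed degree and along the cones of these. The moduli stack of the free dg-algebra on a single generator in degree $d$ is the linear stack of $\dR\Hom(\mathcal{U},\mathcal{U})[-d]$ over $\stPf(\one)$ ($\mathcal{U}$ the universal complex), hence geometric by the previous step; the cone operations and the final retract involve only finite homotopy fibre products and retracts of geometric stacks. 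Since locally geometric derived stacks are stable under these operations (\cite{TVa}), $\stPf(\catD)\simeq\stPf(B)$ is locally geometric.

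\emph{Local finite presentation, and the main difficulty.} By the standard criterion (\cite{TVa}), a locally geometric derived stack is locally of finite presentation once its classical truncation is locally of finite type (here a consequence of the finiteness of $\catD$) and its cotangent complex $\bbL_{\stPf(\catD)}$ is perfect. For the latter one computes, via deformation theory, the tangent complex at an $A$-point $E$ --- a perfect $\catD^{\op}\otimes_{\bbF_q}A$-module --- to be
\[
T_E\,\stPf(\catD)\;\simeq\;\dR\Hom_{\catD^{\op}\otimes_{\bbF_q}A}(E,E)[1],
\]
which is perfect over $A$ because $E$ is perfect and $\catD$ is proper (\S\ref{s:mod}). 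Thus $\stPf(\catD)$ is locally of finite presentation. The technical heart of the whole proof is the geometricity induction in the case $\catD=\one$ together with the bookkeeping of the reduction to it: one must check that the internal-Hom complexes that occur are perfect and that the linear stacks they define are geometric, that the ``attach a vector bundle'' morphisms really are smooth and surjective, and that the finiteness hypotheses on $\catD$ genuinely exhibit it --- up to Morita equivalence and a retract --- as a finite iterated homotopy pushout of free dg-algebras; the descent statement and the tangent-complex computation are comparatively routine.
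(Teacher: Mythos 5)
Your sketch follows the same route as the paper's outline of \cite[Theorem 3.6]{TVa} given in \S\ref{ss:mod:g}: descent for perfect modules gives the stack condition; geometricity for the unit dg-category is proved by induction on the length of the Tor-amplitude interval, with an atlas parametrising a complex of shorter amplitude, a vector bundle, and an attaching map; and a finite-type $\catD$ is reduced to the unit case via its presentation (up to Morita equivalence and a retract) as a finite cell dg-algebra. Two minor divergences are worth recording. First, you peel off the \emph{bottom} end of the amplitude interval (cofibre of $\mathcal{V}[-a-1]\to\mathcal{E}$ with $\mathcal{E}$ of amplitude $[a+1,b]$), whereas \cite{TVa} peel off the top (homotopy fibre of $Q \to R$ with $R$ a shifted vector bundle); the two are interchangeable via the stupid truncation triangle, and the hard point --- smoothness and surjectivity of the atlas map --- is the same in either version (\cite[Lemma 3.12]{TVa}). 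Second, you obtain local finite presentation a posteriori from the perfectness of the cotangent complex, whereas \cite{TVa} carry finite presentation through the geometricity induction by verifying it for each atlas; your route is legitimate but note it presupposes the geometricity already established, and that \cite{TVa} derive the tangent-complex formula $T_E \simeq \dR\Hom(E,E)[1]$ only after the representability theorem. One small correction: the perfectness of $\dR\Hom_{\catD^{\op}\otimes A}(E,E)$ over $A$ rests on $E$ being simultaneously perfect and pseudo-perfect, which follows from the \emph{smoothness} of $\catD$ (part of the finite-type hypothesis), not from properness --- a dg-category of finite type need not be proper.
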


We can also construct the moduli stack of cofibrations $X \to Y$ 
of perfect-modules over $\catD^{\op}$, denoted by $\stEx(\catD)$.
Then there exist morphisms
\[
 s,c,t: \stEx(\catD) \longto \stPf(\catD)
\]
of derived stacks which send $u: X \to Y$ to 
\[
 s(u) = X, \quad c(u) = Y, \quad t(u) = Y \coprod^X 0.
\]
where $s,t$ are smooth and  $c$ is proper.
Thus we have a square 
\[
 \xymatrix{ \stEx(\catD) \ar[r]^{c} \ar[d]_{p:=s \times t} & \stPf(\catD) \\ 
            \stPf(\catD) \times \stPf(\catD) }
\]
of derived stacks with smooth $p$ and proper $c$.

Next let $\Lambda := \ol{\bbQ}_\ell$ be the field of $\ell$-adic numbers
where $\ell$ and $q$ are assumed to be coprime.
Then we have the \emph{derived category} $\dD^b_{\cstr}(\stX,\Lambda)$
\emph{of constructible lisse-\'etale $\Lambda$-sheaves}
over a locally geometric derived stack $\stX$.
We also have derived functors (\S 4).

Applying the general theory to the present situation, we have 
\[
 \xymatrix{
 \dD^b_{\cstr}(\stEx(\catD),\Lambda) \ar[r]^{c_!}  & 
 \dD^b_{\cstr}(\stPf(\catD),\Lambda) 
 \\
 \dD^b_{\cstr}(\stPf(\catD) \times \stPf(\catD),\Lambda) \ar[u]^{p^*}}
\]
Now we set
\[
 \mu: \dD^b_{\cstr}(\stPf(\catD) \times \stPf(\catD),\Lambda) 
      \longto \dD^b_{\cstr}(\stPf(\catD),\Lambda), \quad
 M \longmapsto c_! p^*(M) [\dim p]
\]

\begin{thm*}[{Theorem \ref{thm:H:asc}}]
$\mu$ is associative.
\end{thm*}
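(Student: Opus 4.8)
The plan is to follow the standard strategy for proving associativity of Hall-type multiplications geometrically: interpret both triple products $\mu \circ (\mu \times \id)$ and $\mu \circ (\id \times \mu)$ as pull-push along a common ``moduli stack of two-step filtrations'' and then compare them via base-change. Concretely, I would introduce the derived stack $\stEx^{(2)}(\catD)$ parametrizing pairs of composable cofibrations $X_1 \to X_2 \to X_3$ of perfect $\catD^{\op}$-modules (equivalently, a perfect module equipped with a length-two filtration with perfect subquotients), together with the evident morphisms to $\stPf(\catD)$ recording the three subquotients $X_1$, $X_2/X_1 \simeq X_2\coprod^{X_1}0$, and $X_3/X_2 \simeq X_3 \coprod^{X_2} 0$, and the morphism to $\stPf(\catD)$ recording the total object $X_3$. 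The three maps $\stEx^{(2)}(\catD)\to\stPf(\catD)$ remembering the graded pieces assemble into a map $p^{(2)}$ to $\stPf(\catD)^{\times 3}$, which should be smooth (being built from the smooth maps $s,t$ on the two ``layers''), and the map $c^{(2)}$ to $\stPf(\catD)$ recording the total object should be proper.

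The key steps, in order, are: (i) construct $\stEx^{(2)}(\catD)$ as a derived stack, locally geometric and locally of finite presentation, by exhibiting it as a homotopy limit of copies of $\stEx(\catD)$ over $\stPf(\catD)$ — this is where To\"en--Vaqui\'e's representability results and the Fact quoted above are invoked; (ii) identify the two iterated fiber products that arise when unwinding $\mu\circ(\mu\times\id)$ and $\mu\circ(\id\times\mu)$ — namely $\stEx(\catD)\times_{c,\stPf,s}\stEx(\catD)$ with the two possible gluings — with $\stEx^{(2)}(\catD)$, by checking that both represent the same moduli functor of two-step filtrations; (iii) apply proper base change (available from the six-operations package of \S\ref{s:6op}) to each square to rewrite $c_!p^* \circ c_!p^*$ as a single pull-push $c^{(2)}_!(p^{(2)})^*$ through $\stEx^{(2)}(\catD)$; (iv) track the shifts, checking that $\dim p + \dim p = \dim p^{(2)}$ under the relevant decompositions, so the degree shift $[\dim p]$ appearing twice matches the single shift $[\dim p^{(2)}]$; (v) conclude that both triple composites equal the same functor, hence are canonically equivalent.

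The main obstacle will be step (ii): establishing the precise identification of the fiber products $\stEx(\catD)\times_{\stPf(\catD)}\stEx(\catD)$ (with the two different structure maps used for the gluing) with the filtration stack $\stEx^{(2)}(\catD)$, and doing so \emph{coherently} at the level of $\infty$-categories rather than just on objects. Unlike in To\"en's original derived Hall algebra at the level of $\infty$-groupoids and motivic/function-theoretic realizations, here we must produce an honest equivalence of derived stacks compatible with all four projections, which requires carefully matching the defining homotopy-coherent diagrams; the cofiber identification $t(u) \simeq Y\coprod^X 0$ must be used to see that ``quotient then sub'' and ``sub then quotient'' descriptions of the middle object agree. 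A secondary technical point is verifying smoothness and properness of $p^{(2)}$ and $c^{(2)}$ and the additivity of relative dimensions, but these should follow formally from the corresponding properties of $s,t,c$ together with stability of smooth/proper morphisms under composition and base change. Once the geometric identification is in place, the associativity is a formal consequence of the base-change and composition compatibilities of $(-)_!$ and $(-)^*$ already recorded in the six-operations formalism.
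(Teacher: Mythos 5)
Your proposal follows essentially the same route as the paper: both triple products are rewritten, via base change in the square built on the fiber product $\stEx(\catD)\times_{\stPf(\catD)}\stEx(\catD)$, as a single pull-push through the stack of pairs of composable cofibrations, and the two resulting stacks are identified by their common moduli description (the paper cites To\"en's identification on $k$-points plus the moduli property, exactly your step (ii)). The only cosmetic difference is that the paper invokes its \emph{smooth} base change theorem (Proposition \ref{prp:adic:sbc}, applicable since $p=s\times t$ is smooth) rather than proper base change, but this does not change the argument.
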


We will also give a definition of \emph{derived Hall category},
which will be the span of the \emph{Lusztig sheaves}
in the sense of \cite{S2}.
By restricting to the ``abelian category part",
we can reconstruct the Hall category in \cite{S2},
and have a geometric formulation of the ordinary Ringel-Hall algebra
via the derived category of lisse-\'etale $\ell$-adic constructible sheaves
on the moduli stack of objects in the abelian category.
In \cite[p.2]{S2}, such a stacky construction is regarded as 
``(probably risky) project" and avoided.
Thus our exposition will also give a new insight to 
the ordinal Ringel-Hall algebra.

We postpone the study of \emph{the function-sheaf dictionary} for 
the lisse-\'etale constructible sheaves on derived stacks to a future article.

%%%%%%%%%%%%%%%%%%%%%%%%%%%%%%%%%%%%%%%%%%%%%%%%%%%%%%%%%%%%%%%%%%%%%%%%%%%%%%%%
%%%%%%%%%%%%%%%%%%%%%%%%%%%%%%%%%%%%%%%%%%%%%%%%%%%%%%%%%%%%%%%%%%%%%%%%%%%%%%%%
\subsection{Conventions and notations}
\label{ss:intro:ntn}

We will use some basic knowledge on 
\begin{itemize}[nosep]
\item dg-categories \cite{T07, T12}, 
\item model categories \cite{H}, 
\item simplicial homotopy theory \cite{GJ},
\item $\infty$-categories and $\infty$-topoi \cite{Lur1},
\item algebraic stacks in the ordinary sense \cite{LM, O:book}
\item derived algebraic geometry in the sense of To\"{e}n-Vezzosi \cite{TVe2, T14}.
\end{itemize}
We give a brief summary of the theory of $\infty$-categories and $\infty$-topoi
in \S \ref{s:ic}, and explanations on some topics 
in Appendices \ref{a:ic}--\ref{a:stb}.
Some recollections on algebraic spaces and algebraic stacks 
is given in Appendix \ref{s:Cl}.

Here is a short list of our global conventions and notations.
\begin{itemize}[nosep]
\item
$\bbN$ denotes the set of non-negative integers.

\item
We fix two universes $\bbU \in \bbV$ and work on them.
All the mathematical objects considered, such as sets, groups, rings and so on,
are elements of $\bbU$ unless otherwise stated.
For example, $\Set$ denotes the category of sets in $\bbU$.
We will sometimes say an object is \emph{small} if it belongs to $\bbU$.

\item
The word `ring' means a unital and associative one unless otherwise stated,
and the word `$\infty$-category' means the one in \cite{Lur1}.

\item
A \emph{poset} means a partially ordered set.
A poset $(I,\le)$ is \emph{filtered} if $I$ is non-empty
and for any $i,j \in I$ 
there exists $k \in I$ such that $i \le k$ and $j \le k$.

\item
A category will be identified with its nerve \cite[p.9]{Lur1} 
which is an $\infty$-category.
Basically we denote an ordinary category in a serif font
and denote an $\infty$-category in a sans-serif font.
%and sometimes put the subscript $\infty$ to indicate an $\infty$-category.
For example, in \S \ref{sss:dSt:dring} 
we will denote by $\sCom$ the category of simplicial commutative rings, 
and by $\isCom$ the $\infty$-category of simplicial commutative rings.

\item
For categories $\catC$ and $\catD$,
the symbol $F: \catC \rlto \catD :G$ denotes an adjunction
in the sense that we have an isomorphism 
$\Hom_{\catD}(F(-),-) \simeq \Hom_{\catC}(-,G(-))$ of functors.
Thus it means $F \dashv G$.
We use the same symbol $\rlto$ in the $\infty$-categorical context.
See \S \ref{ss:ic:adjunc} for the detail.

\item
The homotopy category of a model category $\catC$ will be denoted by $\mHo \catC$,
and the homotopy category of an $\infty$-category $\iC$ will be denoted by $\Ho \iC$.
The functor category from a category $\catC$ to another $\catD$ will be denoted by
$\Fun(\catC,\catD)$.

\item
Finally, for an $\infty$-category $\iC$, 
the symbol $X \in \iC$ means that $X$ is an object of $\iC$.
\end{itemize}

%%%%%%%%%%%%%%%%%%%%%%%%%%%%%%%%%%%%%%%%%%%%%%%%%%%%%%%%%%%%%%%%%%%%%%%%%%%%%%%%
%%%%%%%%%%%%%%%%%%%%%%%%%%%%%%%%%%%%%%%%%%%%%%%%%%%%%%%%%%%%%%%%%%%%%%%%%%%%%%%%
\subsection*{Acknowledgements} 

The author is supported by the Grant-in-aid for 
Scientific Research (No.\ 16K17570, 19K03399), JSPS.
This work is also supported by the JSPS Bilateral Program
``Elliptic algebras, vertex operators and link invariant".

%%%%%%%%%%%%%%%%%%%%%%%%%%%%%%%%%%%%%%%%%%%%%%%%%%%%%%%%%%%%%%%%%%%%%%%%%%%%%%%%
%%%%%%%%%%%%%%%%%%%%%%%%%%%%%%%%%%%%%%%%%%%%%%%%%%%%%%%%%%%%%%%%%%%%%%%%%%%%%%%%
%%%%%%%%%%%%%%%%%%%%%%%%%%%%%%%%%%%%%%%%%%%%%%%%%%%%%%%%%%%%%%%%%%%%%%%%%%%%%%%%
%\section{Notations on \texorpdfstring{$\infty$}{infinity}-categories and 
%\texorpdfstring{$\infty$}{infinity}-topoi}
\section{Notations on $\infty$-categories and $\infty$-topoi}
\label{s:ic}

In this subsection we explain some basic notions on 
$\infty$-categories and $\infty$-topoi 
which will be used throughout the main text.
The purpose here is just to give brief accounts and introduce symbols of them.
Somewhat detailed accounts of selected topics will also be given in 
Appendix \ref{a:ic} and Appendix \ref{a:it}.
All the contents are given in \cite{Lur1, Lur2},
and our terminology basically follow loc.\ cit.,
but we use slightly different symbols.

%%%%%%%%%%%%%%%%%%%%%%%%%%%%%%%%%%%%%%%%%%%%%%%%%%%%%%%%%%%%%%%%%%%%%%%%%%%%%%%%
\subsection{Simplicial sets}
\label{ss:ic:ss}

Let us start the explanation with our notations on simplicial sets.
We follow \cite[\S A.2.7]{Lur1} for symbols of simplicial sets.

\begin{dfn}\label{dfn:ic:CS}
We denote by $\CS$ the \emph{category of combinatorial simplices},
which is described as
\begin{itemize}[nosep]
\item 
An object is the linearly ordered set $[n] := \{0 \le 1 \le \cdots \le n\}$
for $n \in \bbN$.
\item
A morphisms is a non-strictly order-preserving function $[m] \to [n]$.
\end{itemize}
\end{dfn}

\begin{dfn*}
For a category $\catC$, a functor ${\CS}^{\op} \to \catC$ is called 
a \emph{simplicial object in $\catC$}.
In particular, a simplicial object in the category $\Set$ of sets 
is called a \emph{simplicial set}.
\end{dfn*}

We often use the symbol $S_n := S([n])$ for $n \in \bbN$.
An element in $S_0$ is called a \emph{vertex} of $S$,
and an element in $S_1$ is called an \emph{edge} of $S$.
We write $v \in S$ to mean $v$ is a vertex of $S$.
A simplicial set $S$ has the \emph{face map} $d_j: S_n \to S_{n-1}$
and the \emph{degeneracy map} $s_j: S_n \to S_{n+1}$ for each $j=0,1,\ldots,n$.
See \cite[\S A.2.7]{Lur1} for the precise definitions of $d_j$ and $s_j$.
%$d_j$ is defined to be the pullback $p^*: S_n \to S_{n-1}$ 
%for the order-preserving map $p: [n-1] \to[n]$ given by 
%\[
% p(i) := \begin{cases} i & (i<j) \\ i+1 & (i \ge j)\end{cases},
%\]
%and $s_j$ is defined to be the pullback $q^*: S_n \to S_{n+1}$
%for the order-preserving map $q: [n+1] \to[n]$ given by 
%\[
% q(i) := \begin{cases} i & (i \le j) \\ i-1 & (i > j)\end{cases}.
%\]

As noted in \cite[Remark A.2.7.1]{Lur1},
the category $\CS$ is equivalent to the category $\LinOrd^{\fin}$ of
all finite nonempty linearly ordered sets,
and we sometimes identify them to regard simplicial sets 
(and more general simplicial objects) as functors 
which are defined on $\LinOrd^{\fin}$.

Using this convention, we introduce 

\begin{dfn*}
For a simplicial set $S$,
we define $S^{\op}$ to be the simplicial set given by
\[
 S^{\op}(J) := S(J^{\op}) \quad (J \in \LinOrd^{\fin}).
\]
Here $J^{\op} \in \LinOrd^{\fin}$ has the same underlying set as $J$ 
but with the opposite ordering of $J$.
We call $S^{\op}$ the \emph{opposite} of $S$.
\end{dfn*}

\begin{ntn*}
The \emph{category $\sSet$ of simplicial sets} 
is defined to be the functor category 
\[
 \sSet := \Fun({\CS}^{\op}, \Set).
\]
A morphism in $\sSet$ is called a \emph{simplicial map} 
or \emph{a map of simplicial sets}.
\end{ntn*}

The category $\sSet$ has a model structure called 
the \emph{Kan model structure}. 
See in Appendix \ref{ss:ic:Kan} for an account.

For a linearly ordered set $J$, 
we denote by $\Delta^J$ the simplicial set $[n] \mapsto \Hom([n],J)$,
where the morphisms are taken in the category of linearly ordered sets.
For $n \in \bbN$, we simply write $\Delta^n := \Delta^{[n]}$, 
and for $0 \le j \le n$, we denote by $\Lambda_j^n \subset \Delta^n$ 
the $j$-th horn \cite[Example A.2.7.3]{Lur1}.

Finally we introduce 

\begin{dfn*}%\label{dfn:ic:Kan}
A \emph{Kan complex} is a simplicial set $K$ 
such that for any $n \in \bbN$ and any $0 \le i \le n$, 
any simplicial map $f_0: \Lambda_i^n \to K$ 
admits an extension $f: \Delta^n \to K$.
\end{dfn*}

We will use the following fact repeatedly in the main text.

\begin{fct}[{\cite[Corollary 1.3.2.12]{Lur2}}]\label{fct:ic:sag=Kan}
Any simplicial abelian group is a Kan complex.
\end{fct}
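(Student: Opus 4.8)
The plan is to verify the horn-extension condition of the definition above directly, exploiting the abelian group structure on the simplices of $A$ to manufacture fillers; this is the classical theorem of Moore that every simplicial group is a Kan complex (see \cite{GJ}), which becomes especially transparent when the group is abelian.

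Fix $n \ge 1$ and $0 \le k \le n$. A simplicial map $\Lambda_k^n \to A$ amounts to a family of $(n-1)$-simplices $(a_j)_{0 \le j \le n,\; j \ne k}$ satisfying the compatibilities $d_i a_j = d_{j-1} a_i$ for all $i < j$ with $i, j \ne k$, and the goal is to produce $a \in A_n$ with $d_j a = a_j$ for every $j \ne k$. I would build $a$ by successive approximation: starting from $a^{(0)} := 0 \in A_n$, I would enlarge step by step the set of indices $j \ne k$ on which the current approximation already realizes the prescribed face $a_j$, each step handling one new index by replacing the current $a^{(\bullet)}$ with $a^{(\bullet)} + s_m(w)$ for a suitable degeneracy index $m$ and the discrepancy $w := a_j - d_j a^{(\bullet)} \in A_{n-1}$. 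Concretely I would first switch on the faces $d_j$ with $j < k$ in increasing order (using $s_j$, which is legal since $j \le n-1$), and then the faces $d_j$ with $j > k$ in decreasing order (using $s_{j-1}$). Applying the simplicial identities ($d_i s_m = s_{m-1} d_i$ for $i < m$, $\;d_m s_m = d_{m+1} s_m = \id$, $\;d_i s_m = s_m d_{i-1}$ for $i > m+1$) together with the horn compatibilities and the relations already imposed, one checks at each step that the relevant faces of $w$ vanish, so that adding $s_m(w)$ corrects $d_j$ to $a_j$ without disturbing any face already matched --- and here abelianness makes the cancellation $d_j(a^{(\bullet)} + s_m(w)) = d_j a^{(\bullet)} + w = a_j$ immediate, with no need to conjugate or reorder. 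Once all indices $\ne k$ are exhausted, the resulting $n$-simplex is the desired filler.

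The only real work lies in the bookkeeping inside the inductive step: verifying that the discrepancy $w$ lies in the kernels of the appropriate face maps, arranging the order in which the faces below and above $k$ are switched on so that the degeneracies used stay in range and no previously matched face is spoiled, and checking that passing over the omitted index $k$ causes no trouble. In the non-abelian case this is genuinely delicate, but over an abelian group the error terms cancel without any commutator gymnastics, so I do not anticipate a serious obstacle. (Alternatively, one could deduce the statement from the Dold--Kan correspondence, under which every connective chain complex---being fibrant in the projective model structure---corresponds to a Kan complex; but the direct construction is self-contained, so I would present that.)
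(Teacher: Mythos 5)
The paper does not prove this statement at all --- it is recorded as a \emph{Fact} with a citation to Lurie, so there is no in-paper argument to compare against. Your direct construction is the classical theorem of Moore that every simplicial group is a Kan complex, and it is correct: the successive-approximation scheme (fill the faces $d_j$ with $j<k$ in increasing order via $s_j$, then those with $j>k$ in decreasing order via $s_{j-1}$, each time adding a degeneracy of the discrepancy $w=a_j-d_j a^{(\bullet)}$) works exactly as you describe, and the verification that the relevant faces of $w$ vanish follows from the simplicial identities together with the horn compatibilities $d_i a_j = d_{j-1}a_i$. Abelianness is indeed only a notational convenience here. The one caveat concerns your parenthetical alternative: deducing the Kan condition from Dold--Kan by saying that fibrant objects of simplicial abelian groups have Kan underlying simplicial sets is close to circular, since identifying the fibrations in the (transferred) model structure on simplicial abelian groups with maps whose underlying simplicial map is a Kan fibration is usually itself established via Moore's theorem. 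Since you explicitly choose the direct construction, this does not affect your proof.
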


%%%%%%%%%%%%%%%%%%%%%%%%%%%%%%%%%%%%%%%%%%%%%%%%%%%%%%%%%%%%%%%%%%%%%%%%%%%%%%%%
%%%%%%%%%%%%%%%%%%%%%%%%%%%%%%%%%%%%%%%%%%%%%%%%%%%%%%%%%%%%%%%%%%%%%%%%%%%%%%%%
\subsection{$\infty$-categories}

Next we give some symbols for $\infty$-categories.
We use the word ``$\infty$-category" in the sense of \cite{Lur1}.

\begin{dfn*}[{\cite[Definition 1.1.2.4]{Lur1}}]
An \emph{$\infty$-category} is a simplicial set $\iC$ such that 
for any $n \in \bbN$ and any $0<i<n$, any map $f_0: \Lambda_i^n \to \iC$ 
of simplicial sets admits an extension $f: \Delta^n \to \iC	$.
\end{dfn*}

Since an $\infty$-category is a simplicial set,
all the notions on simplicial sets can be transfered to 
those on an $\infty$-category.
For example, we have 

\begin{dfn*}
The \emph{opposite} of an $\infty$-category $\iC$ is defined to be the opposite
$\iC^{\op}$ of $\iC$ as a simplicial set in the sense of \S \ref{ss:ic:ss}.
\end{dfn*}

For an $\infty$-category $\iC$,
the opposite $\iC^{\op}$ is an $\infty$-category.
Thus we call $\iC^{\op}$ the \emph{opposite $\infty$-category} of $\iC$.

Next we explain the relation between ordinary categories and $\infty$-categories.

\begin{dfn*}
For an $\infty$-category $\iC$,
the \emph{objects} are the vertices of $\iC$ as a simplicial set,
and the \emph{morphisms} are the edges of $\iC$ as a simplicial set.
\end{dfn*}

Thus an object of an $\infty$-category $\iC$ is 
a simplicial maps $\Delta^0 \to \iC$.
and a morphism of $\iC$ is a simplicial maps $\Delta^1 \to \iC$.

These notions are compatible with those in the ordinary category theory,
which we now briefly explain.

\begin{dfn}[{\cite[p.9]{Lur1}}]\label{dfn:ic:nerve}
The \emph{nerve} of a category $\catC$, denoted by $\Ner(\catC)$,
is a simplicial set with $\Ner(\catC)_n = \Fun([n], \catC)$,
where the linearly ordered set $[n]=\{0,1,\ldots,n\}$ is regarded 
as a category in the obvious way.
The face maps and degeneracy maps are defined via compositions and 
inserting the identity morphisms.
\end{dfn}

By \cite[Proposition 1.1.2.2, Example 1.1.2.6]{Lur1},
the nerve of any category $\catC$ is an $\infty$-category.
Then the objects and the morphisms of $\catC$ 
coincide with those of $\Ner(\catC)$.
Also we have $\Ner(\catC^{\op}) = \Ner(\catC)^{\op}$ as simplicial sets.
%
%\begin{rmk}\label{rmk:ic:cat-inf}
%If no confusion may occur, 
%we identify a category $\catC$ with its nerve $\Ner(\catC)$ 
%and regard $\catC$ as an $\infty$-category. 
%\end{rmk}

Next we introduce functors of $\infty$-categories.
For that, let us recall 
%simplicial sets $K$ and $L$,
%we have the simplicial set $\Map_{\sSet}(K,L) \in \iS$
%(Definition \ref{dfn:ic:fcpx}).
%Using it we have the following notion of functors 

\begin{dfn}[{\cite[Chap.\ 1 \S 5]{GJ}}]\label{dfn:ic:fcpx}
For $X,Y \in \sSet$, 
we define the simplicial set $\Map_{\sSet}(X,Y)$ by the following description.
\begin{itemize}[nosep]
\item
$\Map_{\sSet}(X,Y)_n := \Hom_{\sSet}(X \times \Delta^n,Y)$ for each $n \in \bbN$,
where $\times$ denotes the product of simplicial sets.
\item
The degeneracy maps and the face maps are induced by those on $\Delta^n$.
\end{itemize}
\end{dfn}

The simplicial set $\Map_{\sSet}(X,Y)$ is called the function complex 
in the literature of the simplicial homotopy theory, 
but we will not use this word.

\begin{dfn}[{\cite[Notation 1.2.7.2]{Lur1}}]\label{dfn:ic:fun}
Let $K$ be a simplicial set and $\iC$ be an $\infty$-category.
We set
\[
 \iFun(K,\iC) := \Map_{\sSet}(K,\iC). 
\]
If $K=\iB$ is an $\infty$-category,
then $\iFun(\iB,\iC)$ is called 
the \emph{$\infty$-category of functors} from $\iB$ to $\iC$,
and its object, i.e. a simplicial map $\iB \to \iC$, is called a \emph{functor}.
\end{dfn}

Thus a functor of $\infty$-category is nothing but a simplicial map.
The simplicial set $\iFun(K,\iC)$ is an $\infty$-category 
by \cite[Proposition 1.2.7.3 (1)]{Lur1}.

%%%%%%%%%%%%%%%%%%%%%%%%%%%%%%%%%%%%%%%%%%%%%%%%%%%%%%%%%%%%%%%%%%%%%%%%%%%%%%%%
%%%%%%%%%%%%%%%%%%%%%%%%%%%%%%%%%%%%%%%%%%%%%%%%%%%%%%%%%%%%%%%%%%%%%%%%%%%%%%%%
\subsection{The homotopy category of an $\infty$-category}
\label{ss:ic:ho}

The definition of the homotopy category of an $\infty$-category
is rather complicated,
so we postpone a detailed explanation to Appendix \S \ref{ss:ic:Ho},
and here we only give relevant important notions.

For the definition of the homotopy category of an $\infty$-category,
we need to recall the notion of a topological category.
We will use the terminology on enriched categories
(see \cite[\S A.1.4]{Lur1} for example).

\begin{dfn}[{\cite[Definition 1.1.1.6]{Lur1}}]\label{dfn:ic:tc}
\begin{enumerate}[nosep]
\item
We denote by $\topCG$ the category of 
compactly generated weakly Hausdorff topological spaces.
\item 
A \emph{topological category} is defined to be 
a category enriched over $\topCG$.
%\item
%We denote by $\topCat$ the category of topological categories.
\item
For a topological category $\topC$ and its objects $X,Y \in \topC$, 
we denote by $\Map_{\topC}(X,Y) \in \topCG$ 
the topological space of morphisms and call it the \emph{mapping space}.
\end{enumerate}
\end{dfn}

Next we introduce the category $\topH$ called 
the \emph{homotopy category of spaces}.
Let $\topCW$ be the topological category whose objects are CW complexes
and $\Map_{\topCW}(X,Y)$ is the set of continuous maps 
equipped with the compact-open topology.

\begin{dfn}[{\cite[Example 1.1.3.3]{Lur1}}]
\label{dfn:ic:topH}
The \emph{homotopy category $\topH$ of spaces}.
is the category defined as follows.
\begin{itemize}[nosep]
\item 
The objects of $\topH$ are defined to be the objects of $\topCW$.

\item 
For $X,Y \in \topC$, we set $\Hom_{\topH}(X,Y) := \pi_0(\Map_{\topCW}(X,Y))$.

\item
Composition of morphisms in $\topH$ is given by the application of $\pi_0$ to 
composition of morphisms in $\topC$.
\end{itemize}
\end{dfn}

The category $\topH$ is actually the homotopy category of $\topCW$ 
in the sense of \cite[Definition 1.1.3.2]{Lur1}.

Next we recall

\begin{dfn}\label{dfn:ic:sCat}
A category enriched in $\sSet$ is called a \emph{simplicial category}.
We denote by $\sCat$ the category of simplicial categories.
A $\sSet$-enriched functor between simplicial categories 
is called a \emph{simplicial functor}.
\end{dfn}

The category $\sSet$ itself is a simplicial category 
by the simplicial set $\Map_{\sSet}(\cdot,\cdot)$ in Definition \ref{dfn:ic:fcpx}.
%Given a model structure on $\sSet$, 
%the class of $\Map_{\sSet}(X,Y)$ in the homotopy category $\Ho \sSet$ 
%will be called the \emph{mapping space} from $X$ to $Y$.
Then there is a functor 
\begin{equation}\label{eq:ic:frkC}
 \frkC[\cdot ]: \sSet \longto \sCat
\end{equation}
by \cite[\S1.1.5]{Lur1} (see also Appendix \ref{ss:ic:Ho}).
Then
%\begin{dfn}[{\cite[Definition 1.1.5.14]{Lur1}}]\label{dfn:ic:h:iC}
%Let $S$ be a simplicial set.
%The 
the \emph{homotopy category} $\Ho S$ of a simplicial set $S$ is defined to be 
\[
 \Ho S := \Ho \frkC[S],
\]
where the right hand side denotes 
the homotopy category of the simplicial category $\frkC[S]$
(Definition \ref{dfn:ic:Ho-sCat}).

\begin{dfn*}
The \emph{homotopy category} of an $\infty$-category $\iC$ 
is defined to be the homotopy category $\Ho \iC$ 
of $\iC$ as a simplicial set.
\end{dfn*}

For a simplicial set $S$,
the homotopy category $\Ho S$ is enriched over $\topH$.
%we denote by $\Hom_{\Ho S}(-,-) \in \topH$ its Hom space.
Thus, for vertices $x,y \in S$ we may denote 
\[
 \Map_{S}(x,y) := \Hom_{\Ho S}(x,y) \in \topH.
\] 
It is called the \emph{mapping space} from $x$ to $y$ in $S$.

We see that a simplicial map $f: S \to S'$ induces a functor 
$\Ho f: \Ho S \to \Ho S'$ between the homotopy categories.
Thus the following definitions make sense.

\begin{dfn}[{\cite[Definition 1.2.10.1]{Lur1}}]
\label{dfn:ic:sfe}
Let $f: S \to S'$ be a simplicial map.
\begin{enumerate}[nosep]
\item 
$f$ is called \emph{essentially surjective}
if the induced functor $\Ho f$ is essentially surjective.
\item
$f$ is called \emph{fully faithful} if $\Ho f$ is 
a fully faithful functor of $\topH$-enriched categories.
\item
$f$ is called an \emph{equivalence} 
if it is essentially surjective and fully faithful.
\end{enumerate} 
For a morphism $g$ of $\infty$-categories,
the same terminology will be used with $g$ regarded as a simplicial map.
\end{dfn}

These definitions are compatible with the notions of the ordinary category theory.
In fact, for a category $\catC$ and its objects $X,Y \in \catC$, 
we have a bijection
$\Hom_{\catC}(X,Y) = \pi_0 \Map_{\Ner(\catC)}(X,Y)$,
where in the right hand side we regard $X$ and $Y$ as objects 
of the $\infty$-category $\Ner(\catC)$.

%By this definition, we have the notion of equivalence between $\infty$-categories.
For later use, we also introduce

\begin{dfn}\label{dfn:pr:csv}
A functor $f: \iB \to \iC$ of $\infty$-categories is \emph{conservative}
if the following condition is satisfied:
if $\beta$ is a morphism in $\iB$ such that $f(\beta)$ is an equivalence in $\iC$,
then $\beta$ is an equivalence in $\iB$.
\end{dfn}

\begin{dfn}[{\cite[\S 1.2.11]{Lur1}}]\label{dfn:sub-inf-cat}
Let $\iC$ be an $\infty$-category.
\begin{enumerate}[nosep]
\item 
For a subcategory $\catB \subset \Ho \iC$, define $\iB$ to be the simplicial set 
appearing in the following pullback diagram of simplicial sets 
(see \cite[Chap.\ 1]{GJ} for the pullback in $\sSet$).
\[
 \xymatrix{\iB \ar[r] \ar[d] & \iC \ar[d] \\ \Ner(\catB) \ar[r] & \Ner(\Ho \iC)}
\]
Then $\iB$ is an $\infty$-category and called the 
\emph{sub-$\infty$-category of $\iC$ spanned by $\catB$}.

\item
A simplicial subset $\iC' \subset \iC$ is 
a \emph{sub-$\infty$-category of $\iC$} if it arises by this construction.
\item
If in the item (1) the subcategory $\catB \subset \Ho \iC$ 
is a full subcategory, then the subcategory
$\iB$ is called the \emph{full sub-$\infty$-category of $\iC$ spanned by $\catB$}.
Similarly, a simplicial subset $\iC' \subset \iC$ arising in this way 
is called a \emph{full sub-$\infty$-category}. 
\end{enumerate}
\end{dfn}

We close this part with

\begin{dfn}[{\cite[Definition 1.2.12.1]{Lur1}}]\label{dfn:ic:ini-fin}
Let $\iC$ be an $\infty$-category. 
\begin{enumerate}[nosep]
\item 
An object $\one_{\iC} \in \iC$ is called a \emph{final object} 
if for any $X \in \iC$ the mapping space 
$\Map_{\iC}(X,\one_{\iC})$ %$= [\Hom_{\Ho \iC}(X,\one_{\iC})]$ 
is a final object in $\topH$, i.e., a weakly contractible space. 

\item 
An object $\emptyset \in \iC$ is called an \emph{initial object}
if it is a final object of $\iC^{\op}$. 
\end{enumerate}
\end{dfn}

The following is a characterization of final objects in an $\infty$-category.

\begin{fct}[{\cite[Definition 1.2.12.3, Proposition 1.2.12.4, 
 Corollary 1.2.12.5]{Lur1}}]\label{fct:ic:f=sf}
For an object $X$ of an $\infty$-category $\iC$,
the followings are equivalent.
\begin{itemize}[nosep]
\item 
$X$ is a final object.

\item
The canonical functor $\oc{\iC}{X} \to \iC$ 
(Corollary \ref{cor:ic:ovc}) is a trivial fibration of simplicial sets
with respect to the Kan model structure (Fact \ref{fct:ic:Km}).

\item
The Kan complex $\Hom_{\iC}^R(Y,X)$ (Definition \ref{dfn:ic:HomR})
representing the mapping space $\Map_{\iC}(Y,X)$ 
is contractible for any $Y \in X$.
\end{itemize} 
\end{fct}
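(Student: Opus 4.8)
The plan is to route both nontrivial equivalences through the middle condition, that $\Hom_{\iC}^R(Y,X)$ be contractible for every $Y \in \iC$. The one structural input I would use is that the canonical functor $q\colon \oc{\iC}{X} \to \iC$ (Corollary \ref{cor:ic:ovc}) is a \emph{right fibration}, together with the identification, for each vertex $Y$, of the fiber $q^{-1}(Y) = \oc{\iC}{X}\times_{\iC}\{Y\}$ with $\Hom_{\iC}^R(Y,X)$ as simplicial sets; both assertions are immediate unwindings of the definitions of the slice $\oc{\iC}{X}$ and of $\Hom_{\iC}^R$ (Definition \ref{dfn:ic:HomR}). In particular, being the fiber of a right fibration over a point, $\Hom_{\iC}^R(Y,X)$ is a Kan complex, so it is meaningful to ask whether it is contractible.

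\emph{Step 1: `$X$ is final' $\iff$ `$\Hom_{\iC}^R(Y,X)$ is contractible for all $Y$'.} This is a matter of unwinding definitions. By Definition \ref{dfn:ic:ini-fin}, $X$ is final exactly when the mapping space $\Map_{\iC}(Y,X) \in \topH$ is a final object of $\topH$, i.e.\ a weakly contractible space, for every $Y$. Now $\Hom_{\iC}^R(Y,X)$ is the Kan complex representing $\Map_{\iC}(Y,X)$ by construction (Definition \ref{dfn:ic:HomR}), and a Kan complex represents a weakly contractible space precisely when it is a contractible Kan complex; so the two conditions coincide.

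\emph{Step 2: `$q$ is a trivial fibration' $\iff$ `$\Hom_{\iC}^R(Y,X)$ is contractible for all $Y$'.} The forward implication is formal: trivial fibrations with respect to the Kan model structure (Fact \ref{fct:ic:Km}) are maps with the right lifting property against all monomorphisms, hence are stable under base change, so for each vertex $Y$ the projection $\Hom_{\iC}^R(Y,X) = q^{-1}(Y) \to \Delta^0$ is a trivial fibration and hence $\Hom_{\iC}^R(Y,X)$ is contractible. For the converse one invokes the fact that a right fibration all of whose fibers over vertices are contractible Kan complexes is automatically a trivial fibration; applied to the right fibration $q$, this yields the claim. Combining Steps 1 and 2 gives the asserted triple equivalence.

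The main obstacle is exactly the converse direction in Step 2. I would prove it by checking that $q$ has the right lifting property against every boundary inclusion $\partial\Delta^n \hookrightarrow \Delta^n$: pulling a given lifting problem back along the attaching map $\Delta^n \to \iC$ reduces to the case of a right fibration $X' \to \Delta^n$ with contractible fibers, where one concludes by exploiting that the inclusion of the terminal vertex $\{n\} \hookrightarrow \Delta^n$ is right anodyne, so that the inclusion $X'\times_{\Delta^n}\{n\} \hookrightarrow X'$ of the fiber over $n$ is right anodyne and $X'$ is therefore homotopy equivalent over $\Delta^n$ to a contractible Kan complex, which forces $X' \to \Delta^n$ to be a trivial fibration. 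This last point is standard — see \cite[Lemma 2.1.3.4]{Lur1} — and in a write-up I would simply cite it; everything else above is bookkeeping around the definitions recalled in the excerpt.
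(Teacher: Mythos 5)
Your proof is correct. The paper does not actually prove this statement — it is quoted as a Fact directly from [Lur1, Definition 1.2.12.3, Proposition 1.2.12.4, Corollary 1.2.12.5] — and your argument (identify the fibers of the right fibration $\oc{\iC}{X} \to \iC$ over vertices $Y$ with the Kan complexes $\Hom^R_{\iC}(Y,X)$, then invoke the criterion that a right fibration all of whose fibers are contractible is a trivial fibration) is essentially Lurie's own proof of those results, so there is nothing to add.
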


%See \cite[]{Lur1} for a strongly final object,
%which is a more sophisticated notion but 
%is equivalent to a final object in an $\infty$-category.

For later use, let us cite 

\begin{fct}[{\cite[Proposition 1.2.12.9]{Lur1}}]\label{fct:ic:fin} 
The sub-$\infty$-category spanned by final objects 
in an $\infty$-category either is empty or is a contractible Kan complex.
\end{fct}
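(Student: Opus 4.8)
The plan is to show that the full sub-$\infty$-category $\iC^{\fin} \subset \iC$ spanned by the final objects of $\iC$ (in the sense of Definition \ref{dfn:sub-inf-cat}(3)) is either empty or a contractible Kan complex. If $\iC$ has no final object then $\iC^{\fin} = \emptyset$ and there is nothing to prove, so assume $\iC^{\fin}$ is nonempty and fix a final object $\one \in \iC^{\fin}$. The single structural fact I would use is that a full sub-$\infty$-category has the same mapping spaces as the ambient $\infty$-category; thus for all $X,Y \in \iC^{\fin}$ the space $\Map_{\iC^{\fin}}(X,Y) \simeq \Map_{\iC}(X,Y)$ is contractible, since $Y$ is final in $\iC$ (Definition \ref{dfn:ic:ini-fin}). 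In particular every object of $\iC^{\fin}$ is a final object of $\iC^{\fin}$ itself.

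\emph{Step 1 (Kan complex).} From the above, the homotopy category $\Ho \iC^{\fin}$ has $\Hom_{\Ho\iC^{\fin}}(X,Y) = \pi_0\Map_{\iC^{\fin}}(X,Y) = \ast$ for all objects $X,Y$, so it is a groupoid: the unique morphism $Y \to X$ must be inverse to the unique morphism $X \to Y$, since $X \to Y \to X$ is the unique, hence identity, endomorphism of $X$. Equivalently, every edge of $\iC^{\fin}$ is an equivalence. I would then invoke Lurie's criterion \cite[Proposition 1.2.5.1]{Lur1} --- that an $\infty$-category whose homotopy category is a groupoid is a Kan complex --- to conclude that $\iC^{\fin}$ is a Kan complex. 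This is the only ingredient I would take from outside the excerpt.

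\emph{Step 2 (contractibility).} Since $\one$ is a final object of $\iC^{\fin}$, Fact \ref{fct:ic:f=sf} gives that $\oc{\iC^{\fin}}{\one} \to \iC^{\fin}$ is a trivial fibration of simplicial sets, in particular a weak homotopy equivalence. On the other hand, now that $\iC^{\fin}$ is known to be a Kan complex, $\oc{\iC^{\fin}}{\one}$ is weakly equivalent to the path space $\iFun(\Delta^1,\iC^{\fin}) \times_{\iC^{\fin}} \{\one\}$, which is the pullback along the vertex $\one$ of the trivial fibration $\iFun(\Delta^1,\iC^{\fin}) \to \iFun(\{1\},\iC^{\fin}) = \iC^{\fin}$ --- trivial because $\{1\} \hookrightarrow \Delta^1$ is anodyne and $\iC^{\fin}$ is fibrant for the Kan model structure --- and is therefore a contractible Kan complex. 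Combining, $\iC^{\fin}$ is weakly contractible, and being a Kan complex it is a contractible Kan complex. A variant for this step argues directly: $\iC^{\fin}$ is path-connected because each vertex $X$ is joined to $\one$ by an edge $X \to \one$ (as $\Map_{\iC^{\fin}}(X,\one)$ is nonempty), and $\pi_n(\iC^{\fin},\one) \cong \pi_{n-1}\bigl(\Map_{\iC^{\fin}}(\one,\one)\bigr) = 0$ for $n \geq 1$ by the usual identification of self-mapping spaces in a Kan complex with loop spaces.

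\emph{Main obstacle.} There is essentially no mathematical obstacle: once Step 1 is in place, Step 2 is formal. The only genuinely external input is the characterization \cite[Proposition 1.2.5.1]{Lur1} of Kan complexes among $\infty$-categories, and the sole point demanding any care is the identification in Step 2 of the slice $\oc{\iC^{\fin}}{\one}$ (as produced by Fact \ref{fct:ic:f=sf}) with the path space $\iFun(\Delta^1,\iC^{\fin}) \times_{\iC^{\fin}} \{\one\}$ --- equivalently, the identification of $\Map_{\iC^{\fin}}(\one,\one)$ with $\Omega_\one\iC^{\fin}$ --- which is a routine consequence of $\iC^{\fin}$ being fibrant for the Kan model structure.
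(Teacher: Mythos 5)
Your argument is correct, but note that the paper offers no proof of this statement at all -- it is imported verbatim as a Fact with a citation to \cite[Proposition 1.2.12.9]{Lur1} -- so the only meaningful comparison is with Lurie's own argument, and yours takes a genuinely different route. Lurie's proof is a two-line direct verification of the extension property: writing $\iD \subset \iC$ for the full sub-$\infty$-category of final objects and assuming $\iD \neq \emptyset$, any map $\partial\Delta^n \to \iD$ with $n \geq 1$ carries the last vertex to a final object of $\iC$, hence extends over $\Delta^n$ by the characterization of final objects (the trivial-fibration statement recorded here as Fact \ref{fct:ic:f=sf}, equivalently the lifting property against $\partial\Delta^n \subset \Delta^n$ relative to the last vertex); since $\iD$ is full and the filler introduces no new vertices, the extension lands in $\iD$, and the case $n=0$ is nonemptiness -- this exhibits $\iD$ directly as a contractible Kan complex. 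Your route instead first establishes that $\iD$ is a Kan complex via Joyal's theorem (\cite[Proposition 1.2.5.1]{Lur1}, that an $\infty$-category whose homotopy category is a groupoid is a Kan complex) and then deduces weak contractibility from the vanishing of homotopy groups, using that mapping spaces in a full sub-$\infty$-category agree with those in the ambient one. Both arguments are sound; what the direct proof buys is brevity and independence from Joyal's theorem and from the Whitehead-type step, while yours makes transparent \emph{why} the statement holds (all mapping spaces are contractible, so the sub-$\infty$-category is a connected $\infty$-groupoid with trivial loop spaces). The one place in your write-up deserving care is the identification of $\oc{\iC^{\fin}}{\one}$ with the fat slice $\iFun(\Delta^1,\iC^{\fin}) \times_{\iC^{\fin}} \{\one\}$ in the first variant of Step 2 -- these are only canonically weakly equivalent, not isomorphic -- but your second, direct variant of that step sidesteps the issue entirely.
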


%%%%%%%%%%%%%%%%%%%%%%%%%%%%%%%%%%%%%%%%%%%%%%%%%%%%%%%%%%%%%%%%%%%%%%%%%%%%%%%%
%%%%%%%%%%%%%%%%%%%%%%%%%%%%%%%%%%%%%%%%%%%%%%%%%%%%%%%%%%%%%%%%%%%%%%%%%%%%%%%%
\subsection{Simplicial nerves and the $\infty$-category of spaces}

Let us introduce the $\infty$-category of spaces in the sense of \cite{Lur1}.
We begin with

\begin{dfn}[{\cite[Definition 1.1.5.6]{Lur1}}]
\label{dfn:ic:Nsp}
For a simplicial category $\spC$, 
there is a simplicial set $\Nsp(\spC)$ characterized by the property
\[
 \Hom_{\sSet}(\Delta^n,\Nsp(\spC)) = \Hom_{\sCat}(\frkC[\Delta^n],\spC).
\]
It is called the \emph{simplicial nerve} of $\spC$.
\end{dfn}

We have the following statements for simplicial nerves.

\begin{fct}[{\cite[Proposition 1.1.5.10, Remark  1.2.16.2]{Lur1}}]
\label{fct:ic:Nsp}
\begin{enumerate}[nosep]
\item
For a simplicial category $\spC$,
the simplicial nerve $\Nsp(\spC)$ is an $\infty$-category such that 
the simplicial set $\Map_{\spC}(X,Y)$ is a Kan complex for every $X,Y \in \spC$.
\item
For any $X,Y \in \Kan$, the simplicial set $\Map_{\Kan}(X,Y)$ is a Kan complex.
\end{enumerate}
\end{fct}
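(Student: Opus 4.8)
The plan is to reduce both assertions to the single fact that $\sSet$, equipped with the function complex $\Map_{\sSet}(-,-)$ of Definition \ref{dfn:ic:fcpx}, is a simplicial model category under the Kan model structure (recalled in Appendix \ref{ss:ic:Kan}), and in particular that for a cofibration $A \to B$ and a fibration $X \to Y$ the induced map $\Map_{\sSet}(B,X) \to \Map_{\sSet}(A,X) \times_{\Map_{\sSet}(A,Y)} \Map_{\sSet}(B,Y)$ is a Kan fibration, a trivial one if either input map is a weak equivalence (SM7). From this, two standard consequences follow. First, taking $A = \emptyset$, $Y = \ast$, we get that $\Map_{\sSet}(B,X)$ is a Kan complex whenever $B$ is cofibrant (always true: every simplicial set is cofibrant) and $X$ is fibrant, i.e.\ $X$ is a Kan complex. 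Second, $\frkC[\Delta^n]$ is a cofibrant simplicial category (this is part of the construction \eqref{eq:ic:frkC} of $\frkC[\cdot]$, see \cite[\S1.1.5]{Lur1}), which is exactly the input one needs to control simplicial nerves of fibrant simplicial categories.

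For part (1), I would invoke the general statement that the simplicial nerve $\Nsp(\spC)$ of a \emph{fibrant} simplicial category $\spC$ — meaning one in which every mapping simplicial set $\Map_{\spC}(X,Y)$ is a Kan complex — is an $\infty$-category: this is precisely \cite[Proposition 1.1.5.10]{Lur1}. Since the hypothesis of (1) is that $\Map_{\spC}(X,Y)$ is a Kan complex for all $X,Y$, this applies verbatim. Concretely, to extend a horn $\Lambda^n_i \to \Nsp(\spC)$ with $0<i<n$ one translates, via the defining adjunction $\Hom_{\sSet}(\Delta^n,\Nsp(\spC)) = \Hom_{\sCat}(\frkC[\Delta^n],\spC)$, into a lifting problem for the inclusion $\frkC[\Lambda^n_i] \to \frkC[\Delta^n]$ of simplicial categories against the map $\spC \to \ast$; one checks this inclusion is a weak equivalence which is bijective on objects, so solving the lift amounts to extending along trivial cofibrations of the mapping simplicial sets, which is possible exactly because each $\Map_{\spC}(X,Y)$ is Kan.

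For part (2), the key observation is that $\Kan$, the simplicial category of Kan complexes with mapping objects $\Map_{\sSet}(X,Y)$, is itself a fibrant simplicial category: by the first consequence of SM7 noted above, $\Map_{\sSet}(X,Y)$ is a Kan complex whenever $X$ is an (always cofibrant) simplicial set and $Y$ is a Kan complex, so in particular when both $X,Y \in \Kan$. This is literally the assertion of (2), so (2) is just the instance of the SM7 argument with $B=X$, $X=Y$ a Kan complex, $A=\emptyset$. (It also feeds back into (1): it is what makes $\Nsp(\Kan)$ — the $\infty$-category of spaces — an $\infty$-category.) I would present (2) first, or at least isolate the SM7 input as a preliminary, since both parts rest on it.

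The main obstacle, such as it is, is not conceptual but bookkeeping: one must make sure the model-categorical facts being quoted (the simplicial model structure on $\sSet$, cofibrancy of $\frkC[\Delta^n]$, and \cite[Proposition 1.1.5.10]{Lur1} itself) are genuinely available at this point in the exposition rather than circularly dependent on the $\infty$-category of spaces being constructed here. Since the paper defers the careful treatment of $\frkC[\cdot]$ and the Kan model structure to Appendix \ref{ss:ic:Ho} and Appendix \ref{ss:ic:Kan}, the honest move is simply to cite \cite[Proposition 1.1.5.10, Remark 1.2.16.2]{Lur1} directly, as the excerpt already does in the statement, and regard the above as an explanation of \emph{why} those citations say what they say rather than a from-scratch reproof.
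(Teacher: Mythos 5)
Your proposal is correct and matches the paper, which offers no proof of this Fact beyond the citation to \cite[Proposition 1.1.5.10, Remark 1.2.16.2]{Lur1}; your SM7 argument for (2) and your horn-lifting sketch for (1) via the adjunction $\Hom_{\sSet}(\Delta^n,\Nsp(\spC)) = \Hom_{\sCat}(\frkC[\Delta^n],\spC)$ are exactly the standard arguments behind those citations. Your reading of (1) --- taking the Kan-complex condition on $\Map_{\spC}(X,Y)$ as a hypothesis rather than as part of the conclusion --- is the correct interpretation of the statement's slightly ambiguous phrasing.
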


By this fact, the following definition makes sense.

\begin{dfn}[{\cite[Definition 1.2.16.1]{Lur1}}]\label{dfn:ic:iS}
Let $\Kan$ be the full subcategory of $\sSet$ 
spanned by the collection of small Kan complexes,
considered as a simplicial category via 
$\Map_{\Kan}(-,-) \subset \Map_{\sSet}(-,-)$.
%(see \S \ref{ss:ic:sc} for the simplicial enrichment on $\sSet$). 
We set 
\[
 \iS := \Nsp(\Kan)
\]
and call it $\iS$ \emph{the $\infty$-category of spaces}.
\end{dfn}

%The simplicial set $\iS$ is indeed an $\infty$-category by the following facts.

\begin{rmk*}[{\cite[Example 1.1.5.8]{Lur1}}]
An ordinary category $\catC$ can be regarded as a simplicial category
by setting each simplicial set $\Hom_{\catC}(X,Y)$ to be constant.
Then the simplicial nerve $\Nsp(\catC)$ of this simplicial category $\catC$
agrees with the nerve $\Ner(\catC)$ of $\catC$ as an ordinary category
(Definition \ref{dfn:ic:nerve}).
\end{rmk*}

Let us recall a classic result on $\iS$ due to Quillen,
which justifies the name ``homotopy category of spaces" of $\topH$.

\begin{fct*}[Quillen]
As for the homotopy category we have 
\[
 \Ho \iS \simeq \topH.
\]
\end{fct*}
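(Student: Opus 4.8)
The plan is to unwind both sides and reduce the statement to Quillen's classical identification of the homotopy category of the Kan model structure on $\sSet$ with $\topH$. On the left, $\iS = \Nsp(\Kan)$ is the simplicial nerve of the simplicial category of small Kan complexes, and by the construction of the homotopy category of a simplicial set (Definition via $\Ho S := \Ho \frkC[S]$), we have $\Ho \iS = \Ho \frkC[\Nsp(\Kan)]$. The first key step is to observe that for a \emph{fibrant} simplicial category $\spC$ (i.e.\ one whose mapping simplicial sets are Kan complexes), the counit $\frkC[\Nsp(\spC)] \to \spC$ is a weak equivalence of simplicial categories, so it induces an equivalence of homotopy categories $\Ho \frkC[\Nsp(\spC)] \simeq \Ho \spC$. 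Applying this with $\spC = \Kan$ — which is fibrant by Fact \ref{fct:ic:Nsp}(2), since $\Map_{\Kan}(X,Y)$ is a Kan complex for all $X,Y$ — we get $\Ho \iS \simeq \Ho \Kan$.

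The second step is to identify $\Ho \Kan$, the homotopy category of the simplicial category $\Kan$ in the sense of Definition \ref{dfn:ic:Ho-sCat}. By definition its objects are small Kan complexes, and its mapping sets are $\pi_0$ of the mapping complexes: $\Hom_{\Ho \Kan}(X,Y) = \pi_0 \Map_{\Kan}(X,Y) = \pi_0 \Map_{\sSet}(X,Y)$. Now I would invoke Quillen's theorem that $\sSet$ with the Kan model structure is a simplicial model category Quillen-equivalent to $\topCG$ (or $\topCW$), with the geometric realization / singular complex adjunction inducing an equivalence of homotopy categories $\mHo(\sSet) \simeq \topH$. Since every Kan complex is fibrant and every simplicial set is cofibrant, the full subcategory of $\sSet$ on the fibrant-cofibrant objects is precisely $\Kan$, and the homotopy category of a simplicial model category is computed as the category with the same objects and morphisms $\pi_0$ of the mapping complexes between fibrant-cofibrant replacements; on $\Kan$ no replacement is needed. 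Hence $\Ho \Kan \simeq \mHo(\sSet) \simeq \topH$, and composing with the first step gives $\Ho \iS \simeq \topH$.

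The main obstacle — really the only nontrivial input — is the first step: the fact that $\frkC[\Nsp(\spC)] \to \spC$ is a Dwyer–Kan equivalence for fibrant $\spC$. This is a consequence of the Quillen equivalence between the Joyal model structure on $\sSet$ and the Bergner model structure on $\sCat$ (with $\frkC[\cdot]$ left Quillen and $\Nsp$ right Quillen), together with the fact that $\Nsp$ preserves fibrant objects and that the derived counit is an equivalence on fibrant objects — all of which is in \cite{Lur1}. One should be slightly careful about universe issues (the "small" in "small Kan complexes" and whether $\Ho \iS$ is a locally small category), but this is handled by the fixed universes $\bbU \in \bbV$ of \S\ref{ss:intro:ntn} and causes no genuine difficulty. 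I would therefore present the proof as: (i) cite the equivalence $\Ho \iS \simeq \Ho \Kan$ from the Quillen equivalence $\frkC \dashv \Nsp$, using fibrancy of $\Kan$; (ii) identify $\Ho \Kan$ with $\mHo(\sSet)$ since $\Kan$ is exactly the fibrant-cofibrant objects; (iii) cite Quillen's $\mHo(\sSet) \simeq \topH$.
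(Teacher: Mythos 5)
Your argument is correct and rests on the same essential input as the paper, which simply records that the equivalence is induced by the adjunction $\abs{-}: \sSet \rightleftarrows \topCG :\fSing$ and cites Goerss--Jardine for the proof. Your additional bridge step — using the counit of the Quillen equivalence $\frkC \dashv \Nsp$ to identify $\Ho \iS = \Ho \frkC[\Nsp(\Kan)]$ with $\Ho \Kan$, and then recognizing $\Kan$ as the fibrant-cofibrant objects of the Kan model structure — is exactly the standard detail the paper leaves implicit, and it is carried out correctly.
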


This equivalence is induced by the adjunction 
$\abs{-}: \sSet \adjunc \topCG :\fSing$ 
in \eqref{eq:ic:sSet-CG}.
See \cite[Chap.\ 1, Theorem 11.4]{GJ} for a proof.

%%%%%%%%%%%%%%%%%%%%%%%%%%%%%%%%%%%%%%%%%%%%%%%%%%%%%%%%%%%%%%%%%%%%%%%%%%%%%%%%
%%%%%%%%%%%%%%%%%%%%%%%%%%%%%%%%%%%%%%%%%%%%%%%%%%%%%%%%%%%%%%%%%%%%%%%%%%%%%%%%
\subsection{Presheaves and $\infty$-categorical Yoneda embedding}
\label{ss:pr:Yoneda}

Here we cite from \cite[\S 5.1]{Lur1} 
an analogue of the Yoneda embedding in the theory of $\infty$-categories. 
We begin with 

\begin{dfn}[{\cite[Definition 5.1.0.1]{Lur1}}]
\label{dfn:ic:iPSh}
For a simplicial set $K$, we define
\[
 \iPSh(K) := \iFun(K^{\op},\iS)
\]
and call it the $\infty$-category of \emph{presheaves of spaces} on $K$.
Its object is called a \emph{presheaf of spaces} on $K$.
\end{dfn}

The following is a fundamental property of $\iPSh(K)$.
For the notion of (co)limits in $\infty$-categories,
see Appendix \ref{ss:ic:lim/colim}.

\begin{fct*}%\label{fct:pr:iPSh}
For any simplicial set $K$,
the $\infty$-category $\iPSh(K)$ is a presentable (Definition \ref{dfn:ic:pres}).
In particular, it admits small limits and small colimits (Fact \ref{fct:ic:pres}).
\end{fct*}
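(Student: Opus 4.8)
The plan is to obtain presentability of $\iPSh(K) = \iFun(K^{\op},\iS)$ from two inputs, and then read off the ``in particular'' clause directly from Fact~\ref{fct:ic:pres}. The first input is that the $\infty$-category $\iS$ of spaces is presentable; the second is the general principle that a functor $\infty$-category $\iFun(S,\iC)$ with small source $S$ and presentable target $\iC$ is again presentable. Applying the latter with $\iC = \iS$ and $S = K^{\op}$ --- which is small because $K$ is --- then finishes the argument: by Fact~\ref{fct:ic:pres} any presentable $\infty$-category admits all small limits and all small colimits.

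For the first input I would cite \cite[\S 5.5.1]{Lur1}: the $\infty$-category $\iS = \Nsp(\Kan)$ is the underlying $\infty$-category of the combinatorial simplicial model category of simplicial sets with the Kan model structure, and the underlying $\infty$-category of any combinatorial model category is presentable (see \cite[\S A.3.7]{Lur1}); alternatively one checks directly that $\iS$ is cocomplete and $\aleph_0$-accessible.

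For the second input I would cite \cite[Proposition 5.5.3.6]{Lur1}, and, to keep the exposition self-contained, recall why it holds. There are two halves. Cocompleteness of $\iFun(S,\iC)$ is formal: small colimits in a functor $\infty$-category valued in $\iC$ exist whenever they exist in $\iC$, and are computed objectwise (\cite[\S 5.1.2]{Lur1}), so this half follows from cocompleteness of $\iC$. Accessibility of $\iFun(S,\iC)$ is the substantive half; it is obtained by presenting $\iFun(S,\iC)$ as a small limit of $\infty$-categories of the form ``products of copies of $\iC$'' connected by evaluation functors, and invoking the stability of accessible $\infty$-categories and accessible functors under small limits and small products (\cite[\S 5.4]{Lur1}). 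Combining the two halves with the definition of presentability (Definition~\ref{dfn:ic:pres}) shows that $\iPSh(K) = \iFun(K^{\op},\iS)$ is presentable.

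The genuinely hard step is the accessibility half of the second input --- that is, the machinery of accessible $\infty$-categories and their closure under small limits. By contrast the cocompleteness half is purely formal (objectwise colimits), the presentability of $\iS$ may be imported as a black box from \cite{Lur1}, and the concluding ``in particular'' assertion is immediate from Fact~\ref{fct:ic:pres}.
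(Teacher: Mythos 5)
Your proposal is correct and matches the intended argument: the paper states this as a citation from \cite{Lur1}, where presentability of $\iPSh(K)=\iFun(K^{\op},\iS)$ is obtained exactly as you describe, from presentability of $\iS$ together with \cite[Proposition 5.5.3.6]{Lur1} (which the paper itself records as Fact \ref{fct:ic:pres}~(3)), and the ``in particular'' clause is then immediate from Fact \ref{fct:ic:pres}~(1). No gaps.
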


For a simplicial set $K$, let $\frkC[K]$ be the associated simplicial category
given at \eqref{eq:ic:frkC} in \S \ref{ss:ic:ho}.
Recall also 
the simplicial category $\Kan$ of Kan complexes (Definition \ref{dfn:ic:iS}).
Then we can consider 
the following simplicial functor (Definition \ref{dfn:ic:sCat}):
\[
 \spC[K]^{\op} \times \spC[K] \longto \Kan, \quad 
 (X,Y) \longmapsto \fSing \abs{\Hom_{\spC[K]}(X,Y)}.
\]
Here we used the adjunction \eqref{eq:ic:sSet-CG}.
We also have a natural simplicial functor 
$\frkC[K^{\op} \times K] \to \spC[K]^{\op} \times \spC[K]$.
Composing these functors, we have a simplicial functor
\[
 \frkC[K^{\op} \times K] \longto \Kan.
\]
Then recalling Definition \ref{dfn:ic:Nsp} of the simplicial nerve
and Definition \ref{dfn:ic:iS} of $\iS$,
by passing to the adjoint we have a simplicial map
$K^{\op} \times K \to \iS$ of simplicial sets.
It gives rise to a simplicial map
\[
 \yon: K \longto \iPSh(K) = \iFun(K^{\op},\iS).
\]
Then by \cite[Proposition 5.1.3.1]{Lur1}
$j$ is fully faithful (Definition \ref{dfn:ic:sfe}).

\begin{dfn}\label{dfn:pr:Yoneda}
For a simplicial set $K$, the fully faithful simplicial map
$\yon: K \to \iPSh(K)$ is called the \emph{Yoneda embedding} of $K$. 
\end{dfn}

See \cite[\S 2.4]{TVe1} for an account of the Yoneda embedding 
in the model categorical setting.

For the later use, we cite

\begin{fct}[{\cite[Corollary 5.1.5.8]{Lur1}}]\label{fct:ic:iPSh:gen}
For an $\infty$-category $\iC$,
the $\infty$-category $\iPSh(\iC)$ is freely generated 
under small colimits by the image of the Yoneda embedding $\iC$.
\end{fct}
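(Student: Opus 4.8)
The plan is to reduce the statement to its universal property and then prove it by the classical density-plus-Kan-extension route; throughout I take $\iC$ to be small, as in \cite[\S 5.1.5]{Lur1}. Spelled out, ``$\iPSh(\iC)$ is freely generated under small colimits by the image of $\yon$'' means: for every $\infty$-category $\iD$ admitting small colimits, precomposition with the Yoneda embedding gives an equivalence
\[
 \yon^* : \iFun^{\dL}(\iPSh(\iC),\iD) \longsimto \iFun(\iC,\iD),
\]
where $\iFun^{\dL}(-,-) \subset \iFun(-,-)$ is the full sub-$\infty$-category on the functors preserving small colimits; and moreover no proper full sub-$\infty$-category of $\iPSh(\iC)$ closed under small colimits contains all the representables. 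Both assertions will rest on one lemma.

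The first step, and the heart of the argument, is the \emph{density} of $\yon$: for each presheaf $\stX \in \iPSh(\iC)$ the tautological diagram $\oc{\iC}{\stX} \to \iC \xrightarrow{\yon} \iPSh(\iC)$ indexed by the ``$\infty$-category of elements'' $\oc{\iC}{\stX} := \iC \times_{\iPSh(\iC)} \oc{\iPSh(\iC)}{\stX}$ has colimit $\stX$. I would prove this objectwise: since $\iPSh(\iC) = \iFun(\iC^{\op},\iS)$ and small colimits in a functor $\infty$-category are computed objectwise, it suffices to show that for every $D \in \iC$ the natural map $\varinjlim_{(C,\eta) \in \oc{\iC}{\stX}} \Map_{\iC}(D,C) \to \stX(D)$ is an equivalence in $\iS$. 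This in turn follows from the corresponding statement in $\iS$ itself — every $K \in \iS$ is the colimit of the constant diagram on $\oc{\iS}{K}$ — combined with a cofinality comparison between $\oc{\iC}{\stX}$ and $\oc{\iS}{\stX(D)}$ obtained from the left fibration classifying $\stX$; this is exactly \cite[Lemma 5.1.5.3]{Lur1}. Density immediately yields the ``generated'' clause, because any full sub-$\infty$-category closed under small colimits and containing the representables then contains every $\stX$.

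For the universal property I would argue in two halves. Full faithfulness of $\yon^*$ on $\iFun^{\dL}$: if $F \in \iFun^{\dL}(\iPSh(\iC),\iD)$, applying $F$ to the density presentation gives $F(\stX) \simeq \varinjlim_{\oc{\iC}{\stX}} F \circ \yon$, so $F$ is determined by $\yon^* F$ up to canonical equivalence; and since $\yon$ is fully faithful this exhibits $F$ as the pointwise left Kan extension of $\yon^* F$ along $\yon$, which makes $\yon^*$ fully faithful on colimit-preserving functors. Essential surjectivity: given $f : \iC \to \iD$, form the left Kan extension $F := \operatorname{Lan}_{\yon} f$; it exists because $\iD$ admits small colimits and each comma $\infty$-category $\oc{\iC}{\stX}$ is essentially small ($\iC$ being small), and $\yon^* F \simeq f$ because $\yon$ is fully faithful. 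It then remains to verify that this particular $F$ preserves \emph{all} small colimits (not merely that it is a Kan extension), for which I would follow \cite[Lemma 5.1.5.5]{Lur1}: use the objectwise computation of colimits in $\iPSh(\iC)$, the density presentation, and a cofinality interchange to commute the defining colimit of $F$ past an arbitrary colimit $\stX \simeq \varinjlim_\alpha \stX_\alpha$.

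The main obstacle is precisely this last point — showing $\operatorname{Lan}_{\yon} f$ is genuinely colimit-preserving — together with the attendant size bookkeeping (essential smallness of the comma categories, and keeping $\iS$ large enough relative to $\iC$) needed even to know the Kan extension exists. Once those are in hand the remaining content is the interchange-of-colimits/cofinality manipulation, which is the real work in \cite[\S 5.1.5]{Lur1}; in this paper I would simply cite that development rather than reproduce it.
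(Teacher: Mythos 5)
The paper offers no proof of this statement: it is quoted verbatim as \cite[Corollary 5.1.5.8]{Lur1} and used as a black box. Your sketch correctly reconstructs the standard argument from that section of Lurie's book --- density of $\yon$ via the $\infty$-category of elements, then the universal property via left Kan extension along $\yon$ --- and correctly isolates the two genuinely technical points (colimit-preservation of the Kan extension, and the smallness hypothesis on $\iC$ that the paper's statement leaves implicit), so both you and the paper ultimately defer to \cite[\S 5.1.5]{Lur1} and there is nothing further to compare.
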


%%%%%%%%%%%%%%%%%%%%%%%%%%%%%%%%%%%%%%%%%%%%%%%%%%%%%%%%%%%%%%%%%%%%%%%%%%%%%%%%
%%%%%%%%%%%%%%%%%%%%%%%%%%%%%%%%%%%%%%%%%%%%%%%%%%%%%%%%%%%%%%%%%%%%%%%%%%%%%%%%
\subsection{The $\infty$-category of small $\infty$-categories}

For later use, let us now introduce

\begin{dfn}[{\cite[Definition 3.0.0.1]{Lur1}}]\label{dfn:ic:iCat}
\begin{enumerate}[nosep]
\item 
Define the simplicial category $\siCat$ as follows.
\begin{itemize}[nosep]
\item 
The objects are small $\infty$-categories.
\item
For $\infty$-categories $\iB$ and $\iC$,
define $\Map_{\siCat}(\iB,\iC)$ to be the largest Kan complex 
contained in the $\infty$-category $\iFun(\iB,\iC)$.
\end{itemize} 
\item
We set 
\[
 \iCat := \Nsp(\siCat)
\]
and call it the \emph{$\infty$-category of small $\infty$-categories}.
\end{enumerate}
\end{dfn}

Note that $\iCat$ is indeed an $\infty$-category by Fact \ref{fct:ic:Nsp} (1).
Furthermore, by \cite[\S 3.3.3, \S 3.3.4, Corollary 4.2.4.8]{Lur1}, 
$\iCat$ admits small limits and small colimits.

Applying various constructions for simplicial sets to $\iCat$,
we obtain the corresponding constructions for $\infty$-categories.
For example, we have

\begin{dfn}\label{dfn:pr:fp}
For functors $f: \iB \to \iD$ and $g:\iC \to \iD$ of small $\infty$-categories,
we have the \emph{fiber product} $\iB \times_{f,\iD,g} \iC$
by applying Definition \ref{dfn:ic:pb-po} of fiber product to 
the $\infty$-category $\iCat$.
\end{dfn}

%%%%%%%%%%%%%%%%%%%%%%%%%%%%%%%%%%%%%%%%%%%%%%%%%%%%%%%%%%%%%%%%%%%%%%%%%%%%%%%%
%%%%%%%%%%%%%%%%%%%%%%%%%%%%%%%%%%%%%%%%%%%%%%%%%%%%%%%%%%%%%%%%%%%%%%%%%%%%%%%%
\subsection{$\infty$-sites}
\label{ss:pr:inf-site}

In this subsection 
we recall the notion of Grothendieck topology on an $\infty$-category
and a construction of $\infty$-topos following \cite[\S6.2.2]{Lur1}.
See also \cite{TVe1} for a presentation in the theory of model category.

Using the notion of over-$\infty$-category (Appendix \S \ref{ss:ic:o/u}),
we introduce
%Recall that for an $\infty$-category $\iC$
%we denote by $\iPSh(\iC)$ the $\infty$-category of presheaves of spaces 
%over $\iC$ (Definition \ref{dfn:ic:iPSh}).

\begin{dfn*}[{\cite[Definition 6.2.2.1]{Lur1}}]
Let $\iC$ be an $\infty$-category.
\begin{enumerate}[nosep]
\item
A \emph{sieve} on $\iC$ is a full sub-$\infty$-category 
${\iC}^{(0)} \subset \iC$ such that if $f: X \to Y$ is a morphism in $\iC$
and $Y \in {\iC}^{(0)}$, then $X$ also belongs to ${\iC}^{(0)}$.
\item
For $X \in \iC$, a \emph{sieve on $X$} is a sieve 
on the over-$\infty$-category $\oc{\iC}{X}$.
\end{enumerate}
\end{dfn*}

Next we want to introduce the pullback of sieves.
For that, we prepare

\begin{lem}\label{lem:pr:pb}
%Let us comment on the relation between 
%the above definition of a Grothendieck topology 
%on an $\infty$-category and the model topology 
%given in \cite[Definition 4.3.1]{TVe1}.
Given a functor $F: \iB \to \iC$ of $\infty$-categories 
and a sieve $\iC^{(0)} \subset \iC$, 
\[
 F^{-1} \iC^{(0)} := \iC^{(0)}\times_{\iC} \iB \subset \iB
\]
is a sieve on $\iB$.
Here the right hand side denotes the fiber product of $\infty$-categories
(Definition \ref{dfn:pr:fp}).
\end{lem}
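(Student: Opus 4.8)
The plan is to unwind the definitions and reduce everything to a statement purely about the underlying simplicial sets, using two facts: that the fiber product in $\iCat$ is computed on underlying simplicial sets (this is how Definition~\ref{dfn:pr:fp} is set up, via Definition~\ref{dfn:ic:pb-po} applied in $\iCat$, and pullbacks in $\iCat$ agree with pullbacks of simplicial sets since $\siCat \hookrightarrow \sSet$ preserves them — cf.\ \cite[\S 3.3.3]{Lur1}), and that $\iC^{(0)} \subset \iC$ being a sieve means in particular that it is a full sub-$\infty$-category closed under domains of morphisms. So first I would record that $F^{-1}\iC^{(0)}$, as a simplicial set, is literally the preimage $F^{-1}(\iC^{(0)})$ under the map of simplicial sets $F\colon \iB \to \iC$: a simplex $\sigma\colon \Delta^n \to \iB$ lies in the fiber product iff $F\circ\sigma$ factors through $\iC^{(0)}$, and since $\iC^{(0)}$ is a full sub-$\infty$-category this happens iff every vertex of $F\circ\sigma$ lies in $\iC^{(0)}$, i.e.\ iff every vertex of $\sigma$ is sent by $F_0$ into $\Ob\iC^{(0)}$.

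Next I would check that $F^{-1}\iC^{(0)}$ is itself a full sub-$\infty$-category of $\iB$ in the sense of Definition~\ref{dfn:sub-inf-cat}(3). It is the pullback of the full sub-$\infty$-category $\iC^{(0)} \subset \iC$ along $F$, so it is spanned (in $\Ho\iB$) by the full subcategory of objects $X$ with $F(X) \in \iC^{(0)}$; concretely one forms the pullback square
\[
 \xymatrix{F^{-1}\iC^{(0)} \ar[r] \ar[d] & \iB \ar[d] \\ \Ner(\catB') \ar[r] & \Ner(\Ho \iB)}
\]
with $\catB' \subset \Ho\iB$ the full subcategory on those objects. In particular $F^{-1}\iC^{(0)}$ is an $\infty$-category, which is part of the claim.

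Finally I would verify the sieve condition. Let $f\colon X \to Y$ be a morphism in $\iB$ with $Y \in F^{-1}\iC^{(0)}$; I must show $X \in F^{-1}\iC^{(0)}$. Applying $F$, we get a morphism $F(f)\colon F(X) \to F(Y)$ in $\iC$ with $F(Y) \in \iC^{(0)}$, and since $\iC^{(0)}$ is a sieve on $\iC$, $F(X) \in \iC^{(0)}$; hence $X$ lies in $F^{-1}\iC^{(0)}$ by the description of its objects. Fullness was already noted. This is essentially the whole argument — there is no real obstacle here, since the statement is a formal consequence of the definitions; the only mildly delicate point, worth stating carefully rather than hard, is the identification of the $\infty$-categorical fiber product with the naive simplicial-set preimage, which is legitimate precisely because $\iC^{(0)} \hookrightarrow \iC$ is a monomorphism of simplicial sets (an inclusion of a full sub-$\infty$-category), so no fibrant replacement is needed to compute the homotopy pullback.
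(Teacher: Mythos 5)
Your proof is correct and is exactly the definitional unwinding the paper has in mind --- the paper simply declares this lemma obvious and omits the argument, and your check of fullness plus the closure-under-domains condition via $F$ is the whole content. One small correction to your final remark: the strict simplicial pullback computes the $\infty$-categorical fiber product here not merely because $\iC^{(0)} \hookrightarrow \iC$ is a monomorphism (cofibrations do not in general give homotopy pullbacks in the Joyal model structure), but because a sieve is automatically closed under equivalences (an equivalence $X \to Y$ with $Y \in \iC^{(0)}$ forces $X \in \iC^{(0)}$ by the sieve condition itself), so the inclusion is an isofibration and hence a categorical fibration, which is what legitimizes the identification.
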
 

The proof is obvious.

Recall also that by Corollary \ref{cor:ic:ovc}, 
given a morphism $f: X \to Y$ in $\iC$,
we have a morphism $f_*: \oc{\iC}{X} \to \oc{\iC}{Y}$ of over-$\infty$-categories. 
Applying Lemma \ref{lem:pr:pb} to $F=f_*$, we have 

\begin{dfn*}
Let $\iC$ and $f: X \to Y$ as above.
Given a sieve $\oc{\iC^{(0)}}{Y}$ on $Y$,
we define the \emph{pullback} $f^* \oc{{\iC}^{(0)}}{Y}$ to be
$f^* \oc{\iC^{(0)}}{Y} := (f^*)^{-1} \oc{\iC^{(0)}}{Y}$.
%of $\oc{\iC^{(0)}}{Y}$ by $f$ to be the unique sieve on $X$ 
%$\oc{\iC^{(0)}}{Y}$ determine the same sieve on the $\infty$-category $\oc{\iC}{f}$.
\end{dfn*}

\begin{dfn}[{\cite[Definition 6.2.2.1]{Lur1}}]\label{dfn:ic:inf-site}
Let $\iC$ be an $\infty$-category.
\begin{enumerate}[nosep]
\item
A \emph{Grothendieck topology} $\tau$ on $\iC$ consists of 
a collection $\Cov(X)$ of sieves on each $X \in \iC$, 
called \emph{covering sieves} on $X$, satisfying the following conditions.
 \begin{enumerate}[nosep, label=(\alph*)]
 \item
 For each $X \in \iC$, the over-$\infty$-category $\oc{\iC}{X}$ as a sieve on $X$ 
 belongs to $\Cov(X)$.
 \item 
 For each morphism $f:X \to Y$ in $\iC$ and each $\oc{\iC^{(0)}}{Y} \in \Cov(Y)$, 
 the pullback $f^* \oc{\iC^{(0)}}{Y}$ belongs to $\Cov(X)$.
 \item
 Let $Y \in \iC$ and $\oc{\iC^{(0)}}{Y} \in \Cov(Y)$. 
 If $\oc{\iC^{(1)}}{Y} $ be a sieve on $Y$ such that
 $f^* \oc{\iC^{(1)}}{Y} \in \Cov(X)$ for any $f: X \to Y$ in $\oc{\iC^{(0)}}{Y}$,
 then $\oc{\iC^{(1)}}{Y} \in \Cov(Y)$.
 \end{enumerate}
%In these conditions $=$ means an equivalence of sub-$\infty$-categories of $\iC$.
We denote $\Cov_{\tau}(X) := \Cov(X)$ to emphasize that it is associated to $\tau$.

\item
A pair $(\iC,\tau)$ of an $\infty$-category $\iC$ and 
a Grothendieck topology $\tau$ on $\iC$ is called an \emph{$\infty$-site}.
\end{enumerate}
\end{dfn}

\begin{rmk*}
As mentioned in \cite[Remark 6.2.2.3]{Lur1},
if $\iC$ is the nerve of an ordinary category, then the above notion reduces to 
the ordinary definitions of a sieve and a Grothendieck topology 
\cite[II \S 1]{SGA4}. %\cite[Tome 1, Exp.\ II, \S1]{SGA4}.
Indeed, as for the definition of a Grothendieck topology, we have 
\begin{itemize}[nosep]
\item
The condition (a) in Definition \ref{dfn:ic:inf-site} (1)
implies that the one-member family $\{\id: X \to X\}$ is a covering sieve on $X$
(the condition T3 in loc.\ cit.).
\item
The condition (b) implies that collections of covering sieves 
are stable under fiber product (the condition T1 in loc.\ cit.).
\item
The condition (c) is nothing but the local character 
(the condition T2 in loc.\ cit.).
\end{itemize}
\end{rmk*}

\begin{rmk}\label{rmk:ic:inf-site}
In \cite[Remark 6.2.2.3]{Lur1} it is explained that 
giving a Grothendieck topology on an $\infty$-category $\iC$ is equivalent to 
giving a Grothendieck topology in the ordinary sense 
on the homotopy category $\Ho \iC$.
The latter one is equivalent to 
the definition of a Grothendieck topology on $\iC$ in \cite{TVe1,TVe2,T14}.

In the main text we construct a Grothendieck topology 
on an $\infty$-category using this equivalence.
Thus we will only specify the data of covering sieves on the homotopy category
of the given $\infty$-category.
\end{rmk}

Having introduced the notion of $\infty$-sites, 
we now define \emph{sheaves on an $\infty$-site}.

Let $(\iC,\tau)$ be an $\infty$-site.
One can construct a set $S_\tau$ of monomorphisms $U \to \yon(X)$ 
corresponding to covering sieves of $X \in \iC$ of $\tau$ 
(Definition \ref{dfn:it:sh}).
Then a presheaf $\shF \in \iPSh(\iC)$ is called a \emph{$\tau$-sheaf} 
if it is $S_\tau$-local.
We denote by 
\[
 \iSh(\iC,\tau) \subset \iPSh(\iC)
\]
the full sub-$\infty$-category spanned by $\tau$-sheaves.

%In \cite{TVe1,TVe2,T14} a sheaf is called a stack and 
%the model category corresponding to $\iSh(\iC,\tau)$ is denoted by $\iC^{\sim,\tau}$.

The $\infty$-category $\iSh(\iC,\tau)$ has the following properties.

\begin{fct}[{\cite[Proposition 6.2.2.7]{Lur1}}]\label{fct:pr:sh}
Let $(\iC,\tau)$ be an $\infty$-site.
\begin{enumerate}[nosep]
\item 
The $\infty$-category $\iSh(\iC,\tau)$ is a \emph{topological localization}
\cite[\S 6.2.1]{Lur1} of $\iPSh(\iC)$.
We call the localization functor $\iPSh(\iC) \to \iSh(\iC,\tau)$
the \emph{sheafification functor}.

\item
The $\infty$-category $\iSh(\iC,\tau)$ is an $\infty$-topos.
We call it the \emph{associated $\infty$-topos} of $(\iC,\tau)$.
\end{enumerate}
\end{fct}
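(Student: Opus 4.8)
The plan is to follow the proof of \cite[Proposition 6.2.2.7]{Lur1}, reducing both assertions to the general theory of accessible localizations of presentable $\infty$-categories together with the formal properties of monomorphisms, and isolating the single place where the topology axioms are genuinely used.

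First I would settle the set-theoretic bookkeeping. Since $\iC$ is small, the collection of all sieves on objects of $\iC$ is small, so the covering sieves of $\tau$ form a small set, and hence $S_\tau$ is a \emph{small} set of monomorphisms in $\iPSh(\iC)$. As recalled above, $\iPSh(\iC)=\iFun(\iC^{\op},\iS)$ is presentable (Definition \ref{dfn:ic:pres}). The theory of accessible localizations of presentable $\infty$-categories (\cite[\S 5.5.4]{Lur1}) then applies to $S_\tau$: the full sub-$\infty$-category of $S_\tau$-local objects --- which is by definition $\iSh(\iC,\tau)$ --- is a reflective accessible localization of $\iPSh(\iC)$, and the inclusion admits a left adjoint $L\colon\iPSh(\iC)\to\iSh(\iC,\tau)$, the sheafification functor. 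Since the generators of $S_\tau$ are monomorphisms, this localization is, in the terminology of \cite[\S 6.2.1]{Lur1}, a topological localization; this gives part (1).

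For part (2) I would invoke the fact that a topological localization of $\iPSh(\iC)$ is automatically \emph{left exact}, i.e.\ that $L$ preserves finite limits (\cite[\S 6.2.1]{Lur1}). This is the only place where the axioms of a Grothendieck topology (Definition \ref{dfn:ic:inf-site}) enter in an essential way. Concretely, each generating monomorphism $U\to\yon(X)$ realizes $U$ as the colimit $\varinjlim\yon(Y)$ of the representables indexed by the covering sieve $\oc{\iC^{(0)}}{X}$ on $X$; hence by the Yoneda embedding $\Map_{\iPSh(\iC)}(U,\shF)\simeq\varprojlim\shF(Y)$ over that sieve, and $S_\tau$-locality of $\shF$ says exactly that $\shF(X)\to\varprojlim\shF(Y)$ is an equivalence for every covering sieve --- the usual sheaf condition. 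Axiom (b) makes the class of covering monomorphisms stable under base change, and together with axioms (a) and (c) it makes this class generate a strongly saturated class of the form to which Lurie's left-exactness criterion applies; hence $L$ is left exact. Granting this, $\iSh(\iC,\tau)$ is an accessible left-exact localization of the presheaf $\infty$-category $\iPSh(\iC)$, hence an $\infty$-topos by definition (\cite[Definition 6.1.0.4]{Lur1}); this gives part (2).

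The main obstacle is exactly the left-exactness of sheafification in part (2): everything else is a formal consequence of the presentability of $\iPSh(\iC)$ and of the fact that $S_\tau$ consists of monomorphisms. In writing it up I would either quote \cite[\S 6.2.1--6.2.2]{Lur1} wholesale for the left-exactness of topological localizations, or reproduce the argument that the class of $L$-local equivalences is generated under the saturation operations by pullbacks of the generating monomorphisms and is itself stable under base change --- this being the $\infty$-categorical translation of the topology axioms (b) and (c), and the technical heart of the statement.
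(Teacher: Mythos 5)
Your proposal is correct and is essentially the proof of \cite[Proposition 6.2.2.7]{Lur1} that the paper simply cites for this Fact; the paper offers no independent argument, so there is nothing to diverge from. One small imprecision worth fixing: in Lurie's terminology a localization is \emph{topological} only when the associated strongly saturated class is both generated by monomorphisms \emph{and} stable under pullback, so the Grothendieck topology axioms (notably axiom (b) of Definition \ref{dfn:ic:inf-site}) are already needed to establish part (1), not merely the left-exactness in part (2) --- but since you explicitly identify this pullback-stability as the technical heart of the argument, no mathematical content is missing.
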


The definition of an $\infty$-topos will be given 
in the next subsection.

%%%%%%%%%%%%%%%%%%%%%%%%%%%%%%%%%%%%%%%%%%%%%%%%%%%%%%%%%%%%%%%%%%%%%%%%%%%%%%%%
%%%%%%%%%%%%%%%%%%%%%%%%%%%%%%%%%%%%%%%%%%%%%%%%%%%%%%%%%%%%%%%%%%%%%%%%%%%%%%%%
\subsection{$\infty$-topoi}
\label{sss:pr:it}

Recall the notion of an accessible functor (Definition \ref{dfn:ic:acc-func}),
a left exact functor (Definition \ref{dfn:ic:exact})
and a localization functor (Definition \ref{dfn:ic:loc-func}).

\begin{dfn}[{\cite[Definition 6.1.0.4]{Lur1}}]
\label{dfn:pr:it}
An $\infty$-category $\tT$ is called an \emph{$\infty$-topos} if 
there exist a small $\infty$-category $\iB$ and 
an accessible left exact localization functor $\iPSh(\iB) \to \tT$. 
\end{dfn}

This definition makes sense since $\iPSh(\iB)$ is accessible for any 
small $\infty$-category $\iB$ by \cite[Example 5.4.2.7, Proposition 5.3.5.12]{Lur1}
so that we can ask if a functor from $\iPSh(\iB)$ is accessible or not.

Fact \ref{fct:pr:sh} can now be shown by 
taking $\tT = \iSh(\iC,\tau)$ and $\iB = \iC$ 
since we have $\iSh(\iC,\tau) = \iPSh(\iC)[S_\tau^{-1}]$.
% where $S$ is the collection of all 
%monomorphisms $i: U \to \yon(C)$ corresponding to 
%covering sieves on $C \in \catC$.

%We call the localization functor $\iPSh(\iC) \to \iSh(\iC,\tau)$ 
%the \emph{sheafification}.
%As in the ordinary sheaf theory,
%we denote by $\shF^{\dagger}$ the sheafification of $\shF \in \iPSh(\iC)$,
%and sometimes call it the sheaf associated to the presheaf $\shF$.
%By \cite[Remark 6.2.2.12]{Lur1} we have
%\[
% \shF^{\dagger}(X) = \varinjlim_\oc{\iC^(0)}{X} \subset \oc{\iC}{X}} 
% \varprojlim_{Y \in \oc{\iC^(0)}{X}} \shF(Y),
%\]
%where the colimit is taken over the collection of all covering sieves on $X$.

The next statement is obvious from the definitions.

\begin{lem}\label{lem:pr:cl}
Let $\tT := \iSh(\iC,\tau)$ be the $\infty$-topos 
obtained from an $\infty$-site $(\iC,\tau)$.
Taking $\pi_0$ in the value of objects in $\tT$, one gets an topos 
in the ordinary sense \cite{SGA4} on the underlying category of $\iC$.
We denote the obtained topos by $\tT^{\cl}$.
\end{lem}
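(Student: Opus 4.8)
The plan is to identify $\tT^{\cl}$ with the ordinary category of sheaves of sets on the homotopy category $\Ho\iC$ for the induced topology, so that the assertion ``$\tT^{\cl}$ is a topos'' becomes, literally, the definition of a Grothendieck topos in \cite{SGA4}. First I would fix the target data: by Remark \ref{rmk:ic:inf-site} the Grothendieck topology $\tau$ on $\iC$ determines an ordinary Grothendieck topology $\tau^{\cl}$ on $\Ho\iC$ regarded as an ordinary category -- this is the ``underlying category of $\iC$'' in the statement -- so that $\PSh(\Ho\iC)$ carries the classical notion of a $\tau^{\cl}$-sheaf and $\mSh(\Ho\iC,\tau^{\cl})$ is by definition a topos.

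Next I would make precise the operation ``take $\pi_0$ in the value''. Post-composition with the truncation functor $\pi_0\colon \iS \to \Ner(\Set)$ carries $\shF \in \iPSh(\iC) = \iFun(\iC^{\op},\iS)$ to a functor $\iC^{\op} \to \Ner(\Set)$, and since $\Ner(\Set)$ is the nerve of a $1$-category this functor factors uniquely through $\Ho(\iC)^{\op}$, i.e.\ it is a presheaf of sets on $\Ho\iC$; this gives a functor $\iPSh(\iC) \to \Ner(\PSh(\Ho\iC))$. The one point that has to be made honestly here is that for $\shF \in \tT$ the presheaf $\pi_0\shF$ is in general \emph{not} a $\tau^{\cl}$-sheaf (this already fails for sheaves of spaces on a topological space), so ``taking $\pi_0$ in the value'' must be followed by sheafification; equivalently, sending $\shF$ to the sheafification of $\pi_0\shF$ is the $0$-truncation functor from $\tT$ onto its full sub-$\infty$-category $\tau_{\le 0}\tT$ of $0$-truncated (discrete) objects, using that the fully faithful right adjoint inclusion $\iSh(\iC,\tau) \hookrightarrow \iPSh(\iC)$ preserves and detects $0$-truncatedness. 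I would then set $\tT^{\cl} := \tau_{\le 0}\tT$ equipped with this functor.

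The only step carrying genuine content -- and the author is right that it is ``obvious from the definitions'' -- is to verify that a presheaf of sets $G$ on $\Ho\iC$, regarded as a discrete object of $\iPSh(\iC)$, lies in $\iSh(\iC,\tau)$ if and only if it is an ordinary $\tau^{\cl}$-sheaf. This is a direct unwinding: $G \in \iSh(\iC,\tau)$ means $G$ is $S_\tau$-local, i.e.\ $\Map_{\iPSh(\iC)}(\yon(X),G) \to \Map_{\iPSh(\iC)}(U,G)$ is an equivalence for every monomorphism $U \to \yon(X)$ attached to a covering sieve of $\tau$; since $G$ is discrete both mapping spaces are sets, the left one is $G(X)$ and the right one is the ordinary limit of $G$ over the corresponding sieve on $X$, so $S_\tau$-locality of $G$ is verbatim the classical sheaf axiom for $\tau^{\cl}$. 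Hence $\tau_{\le 0}\tT \simeq \Ner(\mSh(\Ho\iC,\tau^{\cl}))$, which is a topos in the sense of \cite{SGA4}; this is the lemma. Apart from the unwinding above, the only things needing to be spelled out are the factorization through $\Ho\iC$ and the observation that $\pi_0$ of an $\infty$-sheaf generally has to be sheafified -- both routine -- so I do not expect any real obstacle.
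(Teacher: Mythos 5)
Your proof is correct, and it fleshes out exactly what the paper leaves implicit: the paper offers no argument beyond declaring the lemma ``obvious from the definitions,'' and your identification of $\tT^{\cl}$ with $\tau_{\le 0}\tT \simeq \Ner(\mSh(\Ho\iC,\tau^{\cl}))$ via the unwinding of $S_\tau$-locality for discrete objects is the intended content. Your observation that ``taking $\pi_0$ in the value'' must be followed by sheafification (since $\pi_0$ of an $\infty$-sheaf is only a separated presheaf in general) is a genuine and worthwhile correction to the literal reading of the statement, not merely a pedantic one.
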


As for a general $\infty$-topos, we have the following Giraud-type theorem.

\begin{fct}[{\cite[Theorem 6.1.0.6]{Lur1}}]
\label{fct:pr:Giraud}
For an $\infty$-category $\tT$, the following conditions are equivalent.
\begin{enumerate}[nosep,label=(\roman*)]
\item 
$\tT$ is an $\infty$-topos.
\item
$\tT$ satisfies the following four conditions:
(a) $\tT$ is presentable.
(b) Colimits in $\tT$ are universal \cite[Definition 6.1.1.2]{Lur1}.
(c) Coproducts in $\tT$ are disjoint \cite[\S 6.1.1, p.532]{Lur1}.
(d) Every groupoid object of $\tT$ is effective \cite[\S 6.1.2]{Lur1}.
\end{enumerate}
\end{fct}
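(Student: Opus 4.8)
The plan is to prove the two implications (i) $\Rightarrow$ (ii) and (ii) $\Rightarrow$ (i) separately: the first by transporting the relevant structure along an accessible left exact localization of a presheaf $\infty$-category, the second by exhibiting $\tT$ as such a localization of presheaves on its subcategory of $\kappa$-compact objects.

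For (i) $\Rightarrow$ (ii): let $\tT$ be an $\infty$-topos, so by Definition \ref{dfn:pr:it} there is an accessible left exact localization $L : \iPSh(\iB) \to \tT$ with $\iB$ small. Since $\iS \simeq \iPSh(\Delta^0)$ and, more generally, limits and colimits in $\iPSh(\iB) = \iFun(\iB^{\op},\iS)$ are computed pointwise, conditions (b), (c), (d) for $\iPSh(\iB)$ reduce to the corresponding statements for $\iS$, which are verified directly; condition (a) for $\iPSh(\iB)$ is the presentability fact recorded after Definition \ref{dfn:ic:iPSh}. Now transport along $L$: $\tT$ is presentable because an accessible localization of a presentable $\infty$-category is presentable; limits in $\tT$ are computed in $\iPSh(\iB)$ via the fully faithful right adjoint (reflective subcategories are closed under limits), while colimits in $\tT$ are $L$ applied to colimits in $\iPSh(\iB)$. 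Each of (b), (c), (d) asserts that certain squares assembled from finite limits and colimits are cartesian, and since $L$ is left exact and preserves colimits, such assertions descend from $\iPSh(\iB)$ to $\tT$.

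For (ii) $\Rightarrow$ (i): assume (a)--(d). Since $\tT$ is presentable it is $\kappa$-accessible for all sufficiently large regular $\kappa$, so $\tT \simeq \Ind_\kappa(\iB)$ with $\iB := \tT^{\kappa}$ the essentially small full subcategory of $\kappa$-compact objects; enlarging $\kappa$ if needed, we may assume $\iB$ is closed under finite limits in $\tT$, which is possible because in a presentable $\infty$-category finite limits commute with sufficiently filtered colimits. The fully faithful inclusion $\iB \inj \tT$ extends, by the freeness of $\iPSh(\iB)$ under colimits (Fact \ref{fct:ic:iPSh:gen}), to an essentially unique colimit-preserving functor $L : \iPSh(\iB) \to \tT$; it has a right adjoint $R$ by the adjoint functor theorem for presentable $\infty$-categories, and $R$ is fully faithful since the objects of $\iB$ are $\kappa$-compact and generate $\tT$ under colimits. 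Thus $\tT$ is an accessible reflective localization of $\iPSh(\iB)$, and it only remains to show $L$ is left exact. It preserves the terminal object, so it suffices to see that $L$ preserves pullbacks; writing a general presheaf as a colimit of representables and using universality of colimits in $\iPSh(\iB)$ together with condition (b) in $\tT$, one reduces the preservation of a fixed pullback to a diagram of representables, where $L$ restricts to the inclusion $\iB \inj \tT$ and the claim holds since $\iB$ was chosen closed under finite limits. Carrying out this reduction --- commuting $L$ past the colimit decompositions while keeping the relevant squares cartesian --- is precisely where disjointness of coproducts (c) and effectivity of groupoid objects (d) enter, as they provide the descent in $\tT$ that legitimizes the colimit/pullback interchange, via an induction over the skeletal filtration of the simplicial objects involved.

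\textbf{Main obstacle.} The hard step is the final one of (ii) $\Rightarrow$ (i): proving left exactness of the localization $L$. Universality of colimits gets the reduction started, but making it rigorous genuinely requires disjointness of coproducts and effectivity of groupoid objects in $\tT$, together with a somewhat delicate transfinite/skeletal induction; this is the technical heart of the theorem. The surrounding set-theoretic bookkeeping --- choosing $\kappa$ large enough that $\tT^{\kappa}$ is essentially small, closed under finite limits, and such that $R$ is accessible --- is routine but must be tracked carefully.
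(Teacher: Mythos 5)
This statement is quoted in the paper as a \emph{Fact} with a citation to \cite[Theorem 6.1.0.6]{Lur1}; the paper supplies no proof of its own, so your proposal can only be measured against Lurie's argument. Your outline does follow that argument's overall architecture: (i) $\Rightarrow$ (ii) by verifying the Giraud axioms pointwise in $\iPSh(\iB)$ (reducing to $\iS$) and transporting them along an accessible left exact localization, and (ii) $\Rightarrow$ (i) by realizing $\tT$ as a reflective localization of $\iPSh(\tT^{\kappa})$ and then proving left exactness of the localization functor. The first implication as you describe it is essentially complete: conditions (b)--(d) are assertions that certain squares built from finite limits and colimits are cartesian, and a colimit-preserving left exact functor with fully faithful right adjoint carries such assertions from $\iPSh(\iB)$ to $\tT$.

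The genuine gap is exactly where you flag it, but it is worth being concrete about what is missing. Saying that (c) and (d) ``provide the descent that legitimizes the colimit/pullback interchange'' is not yet an argument: the actual proof requires an intermediate characterization, namely that a small colimit diagram $\ol{p}: K^{\triangleright} \to \tT$ is a colimit if and only if the induced functor between over-$\infty$-categories $\oc{\tT}{\ol{p}} \to \oc{\tT}{p}$ is an equivalence (this is the ``descent'' or van Kampen property, \cite[Theorem 6.1.3.9]{Lur1}), together with the reduction of arbitrary small colimits to coproducts and geometric realizations of groupoid objects, for which (c) and (d) respectively supply the required descent statements; universality (b) is what makes the two formulations of descent equivalent. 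Only after this is established does left exactness of $L$ follow, by checking that $L$ preserves pullbacks of representables (where it agrees with the finite-limit-closed inclusion $\iB \inj \tT$) and propagating through colimit decompositions. There is also a set-theoretic point you wave at but should not understate: $\kappa$ must be chosen so that $\tT^{\kappa}$ is essentially small, closed under finite limits, and so that the resulting right adjoint is accessible; these are compatible but require the standard cardinality bootstrapping. None of this invalidates your plan --- it is the plan of the cited proof --- but as written the proposal is a correct roadmap rather than a proof, with the central lemma (the descent characterization of colimits) left unstated.
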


We will repeatedly use the following consequences.

\begin{cor}\label{cor:pr:lcl}
An $\infty$-topos admits arbitrary small limits and small colimits.
\end{cor}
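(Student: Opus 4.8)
The plan is to deduce the statement directly from the Giraud-type characterization of $\infty$-topoi (Fact \ref{fct:pr:Giraud}) together with the basic fact that a presentable $\infty$-category is complete and cocomplete. Concretely, first I would observe that by Fact \ref{fct:pr:Giraud} any $\infty$-topos $\tT$ is in particular presentable, since presentability is condition (a) among the four Giraud axioms and hence part of the content of that theorem. Then I would invoke Fact \ref{fct:ic:pres}, which asserts that a presentable $\infty$-category admits all small limits and all small colimits. Combining the two yields the corollary.

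If one prefers to avoid citing the full Giraud theorem, the same conclusion follows directly from Definition \ref{dfn:pr:it}: by definition $\tT$ arises as an accessible left exact localization $\iPSh(\iB) \to \tT$ for some small $\infty$-category $\iB$; the presheaf $\infty$-category $\iPSh(\iB)$ is presentable (as recorded just after Definition \ref{dfn:ic:iPSh}); and an accessible localization of a presentable $\infty$-category is again presentable. Applying Fact \ref{fct:ic:pres} to $\tT$ then finishes the argument. For later reference it is worth spelling out the concrete description behind this: the localization (sheafification) functor $L \colon \iPSh(\iB) \to \tT$ has a fully faithful right adjoint, so $\tT$ is closed under small limits in $\iPSh(\iB)$ and these are computed as in $\iPSh(\iB)$, while small colimits in $\tT$ are obtained by applying $L$ to the corresponding colimits in $\iPSh(\iB)$.

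There is no real obstacle here; every ingredient — the Giraud theorem, presentability of presheaf categories, and completeness/cocompleteness of presentable $\infty$-categories — has already been established in the excerpt. The only genuine choice is whether to route the proof through Fact \ref{fct:pr:Giraud} or through the defining localization of Definition \ref{dfn:pr:it}, and I would present the former as the main one-line argument and mention the latter, with the explicit computation of limits and colimits via the sheafification functor, as a remark.
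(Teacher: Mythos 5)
Your main argument is exactly the paper's proof: an $\infty$-topos is presentable by Fact \ref{fct:pr:Giraud}, hence admits small colimits (this part is actually built into Definition \ref{dfn:ic:pres} rather than Fact \ref{fct:ic:pres}, which only records limits) and small limits by Fact \ref{fct:ic:pres}~(1). The alternative route via the defining localization is a fine remark but not needed; the proposal is correct and matches the paper.
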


\begin{proof}
Since an $\infty$-topos is presentable by Fact \ref{fct:pr:Giraud}, 
it admits small colimits by Definition \ref{dfn:ic:pres},
and admits small limits by Fact \ref{fct:ic:pres} (1).
\end{proof}

\begin{cor}\label{cor:pr:final}
An $\infty$-topos $\tT$ has an initial object,
which will be typically denoted by $\emptyset_{\tT}$.
Also $\tT$ has a final object, typically denoted by $\one_{\tT}$.
\end{cor}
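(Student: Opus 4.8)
The plan is to deduce both claims at once from Corollary \ref{cor:pr:lcl}. Recall from Appendix \ref{ss:ic:lim/colim} that a limit of a diagram $p \colon K \to \iC$ in an $\infty$-category $\iC$ is by definition a final object of the over-$\infty$-category $\oc{\iC}{p}$, and dually a colimit of $p$ is an initial object of the under-$\infty$-category $\iC_{p/}$. I would apply this with $K = \emptyset$ the empty simplicial set: since $\emptyset$ is small, Corollary \ref{cor:pr:lcl} guarantees that the unique diagram $p \colon \emptyset \to \tT$ admits both a limit and a colimit in the $\infty$-topos $\tT$.

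First I would unwind the slice constructions in this degenerate case. Because $\emptyset \join S = S$ for every simplicial set $S$, the join appearing in the definition of the slice collapses, so that $\oc{\tT}{p} = \tT$; dually $\tT_{p/} = \tT$. Consequently a limit of $p \colon \emptyset \to \tT$ is, directly from Definition \ref{dfn:ic:ini-fin}, precisely a final object of $\tT$, whose existence has just been secured; we denote it by $\one_{\tT}$. In the same way a colimit of the empty diagram is an initial object of $\tT$, which we denote by $\emptyset_{\tT}$.

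There is no serious obstacle here; the only point requiring a moment's care is the identification $\oc{\tT}{p} = \tT$ (and its dual) for the empty diagram $p$, which is immediate from the formula $\emptyset \join S = S$ entering the definition of slice $\infty$-categories. One may also phrase the conclusion through Fact \ref{fct:ic:fin}: the sub-$\infty$-category of $\tT$ spanned by final objects is nonempty — this nonemptiness is exactly the content supplied by the existence of the empty limit — hence by that fact it is even a contractible Kan complex, so the final object is unique up to a contractible space of choices. The statement about initial objects follows formally by passing to $\tT^{\op}$, under which limits and colimits (hence final and initial objects) are interchanged, together with the fact that $\tT$ admits small colimits.
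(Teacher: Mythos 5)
Your proof is correct and follows the same route as the paper: both deduce the existence of initial and final objects from Corollary \ref{cor:pr:lcl} by taking the colimit and limit of the empty diagram. The extra unwinding of the slice construction via $\emptyset \join S = S$ is a fine (if not strictly necessary) elaboration of the identification the paper takes for granted.
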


\begin{proof}
An initial object is a colimit of the empty diagram,
so that $\emptyset_{\tT}$ exists by Corollary \ref{cor:pr:lcl}.
Similarly a final object exists since it is a limit of the empty diagram.
\end{proof}

Let us also cite 

\begin{fct}[{\cite[Proposition 6.3.5.1 (1)]{Lur1}}]
\label{fct:pr:ovc}
For an $\infty$-topos $\tT$ and an object $U \in \tT$,
the over-$\infty$-category $\oc{\tT}{U}$ is an $\infty$-topos.
We call it the \emph{localized $\infty$-topos} of $\tT$ on $U$,
and sometimes denote it by $\rst{\tT}{U}$.
\end{fct}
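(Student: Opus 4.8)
The plan is to verify the four Giraud-type conditions of Fact~\ref{fct:pr:Giraud}(ii) for the over-$\infty$-category $\oc{\tT}{U}$, deducing each one from the corresponding property of $\tT$ through the forgetful functor $\pi_U\colon\oc{\tT}{U}\to\tT$. I would first isolate the two structural facts about over-$\infty$-categories that do all the work (both standard, \cite{Lur1}): (i)~$\pi_U$ is left adjoint to the functor $-\times U\colon\tT\to\oc{\tT}{U}$, hence it preserves small colimits; it also preserves connected limits and in particular pullbacks; it is conservative; and an object of $\oc{\tT}{U}$ is initial if and only if its underlying object of $\tT$ is initial. (ii)~For every object $p\colon V\to U$ of $\oc{\tT}{U}$ there is a canonical equivalence $\oc{(\oc{\tT}{U})}{p}\simeq\oc{\tT}{V}$ of $\infty$-categories, under which base-change along a morphism of $\oc{\tT}{U}$ corresponds to base-change of $\tT$ along the underlying morphism. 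I would also use freely that $\tT$, being an $\infty$-topos, is presentable, has universal colimits, has disjoint coproducts, has effective groupoid objects, and admits all small limits and colimits (Fact~\ref{fct:pr:Giraud}, Corollary~\ref{cor:pr:lcl}).

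Condition~(a), presentability, is immediate: the over-$\infty$-category of a presentable $\infty$-category over any object is presentable (\cite[Corollary 5.5.3.11]{Lur1}). For condition~(b), universality of colimits, I would unwind the definition, which requires that for every morphism $f$ of $\oc{\tT}{U}$ the base-change functor $\oc{(\oc{\tT}{U})}{X}\to\oc{(\oc{\tT}{U})}{Y}$ preserve small colimits. By fact~(ii) this functor is identified with the base-change functor of $\tT$ along the underlying morphism of $f$, and the latter preserves small colimits because colimits in $\tT$ are universal. This is the one place where the argument genuinely needs fact~(ii) rather than the bare functor $\pi_U$.

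For condition~(c), disjointness of coproducts, take $X,Y\in\oc{\tT}{U}$; then $X\sqcup Y$ is computed by $\pi_U$ (which preserves colimits) and the pullback $X\times_{X\sqcup Y}Y$ is computed by $\pi_U$ (being a connected limit), so the underlying object of $X\times_{X\sqcup Y}Y$ is $\pi_U X\times_{\pi_U X\sqcup\pi_U Y}\pi_U Y$, which is initial in $\tT$ by disjointness there, and hence $X\times_{X\sqcup Y}Y$ is initial in $\oc{\tT}{U}$ by fact~(i). For condition~(d), effectivity of groupoid objects, let $U_\bullet$ be a groupoid object of $\oc{\tT}{U}$; since $\pi_U$ preserves pullbacks, $\pi_U U_\bullet$ is a groupoid object of $\tT$, and since $\pi_U$ preserves colimits and pullbacks, the geometric realization map $U_0\to\lvert U_\bullet\rvert$ in $\oc{\tT}{U}$, the \v{C}ech nerve of this map, and the canonical comparison from $U_\bullet$ to that \v{C}ech nerve are all computed by $\pi_U$. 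The corresponding comparison in $\tT$ is an equivalence because groupoid objects in $\tT$ are effective, and since $\pi_U$ is conservative and the comparison is a map of simplicial objects, the original comparison is itself an equivalence, i.e.\ $U_\bullet$ is effective. Having verified (a)--(d), Fact~\ref{fct:pr:Giraud} yields that $\oc{\tT}{U}$ is an $\infty$-topos.

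The step I expect to be the real obstacle is fact~(ii)---setting up the equivalence $\oc{(\oc{\tT}{U})}{p}\simeq\oc{\tT}{V}$ together with the compatibility of base-change functors under it---because condition~(b) rests on it entirely, whereas (a), (c), (d) are routine bookkeeping with $\pi_U$. An alternative route I would keep in reserve sidesteps the Giraud theorem: writing $\tT$ as an accessible left exact localization $\iPSh(\iB)[S^{-1}]$, one uses the equivalence $\oc{\iPSh(\iB)}{X}\simeq\iPSh(\oc{\iB}{X})$ (with $\oc{\iB}{X}$ the $\infty$-category of elements of $X$, \cite{Lur1}) to present $\oc{\iPSh(\iB)}{U}$ as a presheaf $\infty$-category, and then transports the localizing set $S$ along the slice; the delicate point there is to check that the resulting localization is again left exact and accessible, so that $\oc{\tT}{U}$ is exhibited as an $\infty$-topos in the sense of Definition~\ref{dfn:pr:it}.
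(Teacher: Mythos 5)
The paper offers no proof of this statement---it is recorded as a Fact by citation to \cite[Proposition 6.3.5.1 (1)]{Lur1}---and your argument is essentially the one given there: verify the Giraud conditions of Fact~\ref{fct:pr:Giraud} for $\oc{\tT}{U}$ using presentability of slices, the colimit- and connected-limit-preserving conservative projection $\oc{\tT}{U}\to\tT$, and the identification $\oc{(\oc{\tT}{U})}{p}\simeq\oc{\tT}{V}$. The sketch is correct as written, including the two points you rightly flag as the only nontrivial inputs.
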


See Fact \ref{fct:is:biadj} for a complementary explanation 
of the localized $\infty$-topos.

Let us now introduce another notion of sheaves.

\begin{dfn}[{\cite[Notation 6.3.5.16]{Lur1}}]
\label{dfn:pr:iShv}
For $\infty$-topoi $\tT$ and $\iC$ admitting small limits,
we denote the full sub-$\infty$-category of $\iFun(\tT^{\op},\iC)$
spanned by functors preserving small limits by 
\[
 \iShv_{\iC}(\tT) \subset \iFun(\tT^{\op},\iC),
\]
and call it the \emph{$\infty$-category of $\iC$-valued sheaves on $\tT$}.
We usually assume $\tT$ to be an $\infty$-topos.
\end{dfn}

This notion of sheaves on an $\infty$-topos 
will be fundamental for our study of sheaves on derived stacks 
(\S \ref{s:dAS}, \S \ref{s:LE}).
At first it looks strange since the definition does not include
the standard sheaf conditions.
However, as the following fact implies, it behaves nicely on an $\infty$-topos.

\begin{fct}[{\cite[Remark 6.3.5.17]{Lur1}}]
\label{fct:pr:Shv(X)=X} 
Let $\tT$ be an $\infty$-topos.
Then the Yoneda embedding $\tT \to \iShv_{\iS}(\tT)$ is an equivalence.
\end{fct}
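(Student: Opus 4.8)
The plan is to verify three things: that the Yoneda embedding $\yon\colon\tT\to\iPSh(\tT)$ lands in the full sub-$\infty$-category $\iShv_{\iS}(\tT)\subset\iFun(\tT^{\op},\iS)$, that the resulting functor $\tT\to\iShv_{\iS}(\tT)$ is fully faithful, and that it is essentially surjective; the asserted equivalence then follows.

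Full faithfulness is essentially free: $\yon$ is fully faithful into $\iPSh(\tT)$ by the property of the Yoneda embedding recorded just before Definition \ref{dfn:pr:Yoneda} (\cite[Proposition 5.1.3.1]{Lur1}), and full faithfulness is preserved when the target is cut down to a full sub-$\infty$-category containing the image. To see that the image does land in $\iShv_{\iS}(\tT)$, I would check that for each $X\in\tT$ the representable presheaf $\Map_{\tT}(-,X)\colon\tT^{\op}\to\iS$ preserves small limits. By Corollary \ref{cor:pr:lcl} the $\infty$-topos $\tT$ has all small colimits, and the universal property of a colimit is precisely the statement that $\Map_{\tT}(\varinjlim_i Y_i,X)\simeq\varprojlim_i\Map_{\tT}(Y_i,X)$; rewritten in $\tT^{\op}$ this says that $\Map_{\tT}(-,X)$ carries small limits to small limits. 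Hence $\yon$ corestricts to a fully faithful functor $\tT\to\iShv_{\iS}(\tT)$.

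The real work is essential surjectivity: every functor $F\colon\tT^{\op}\to\iS$ preserving small limits is representable. This is the representable-functor theorem for presentable $\infty$-categories (\cite[Proposition 5.5.2.2]{Lur1}); since $\tT$ is presentable by the Giraud-type characterization (Fact \ref{fct:pr:Giraud}), I would argue it as follows. Let $p\colon\tX\to\tT$ be the right fibration classified by $F$, i.e.\ the $\infty$-category of elements $\tT\times_{\iPSh(\tT)}\oc{\iPSh(\tT)}{F}$, whose objects are pairs $(Y,\eta)$ with $Y\in\tT$ and $\eta$ a point of $F(Y)$. Then $F$ is representable exactly when $\tX$ has a final object, the underlying object in $\tT$ of such a final object being a representing object. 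One checks that $\tX$ admits all small colimits — the colimit of a diagram $(Y_i,\eta_i)$ has underlying object $\varinjlim_i Y_i$ equipped with the point of $F(\varinjlim_i Y_i)\simeq\varprojlim_i F(Y_i)$ determined by the $\eta_i$, which is where preservation of limits by $F$ is used — and, moreover, that $\tX$ is presentable, so that, being complete, it has a final object $(X,\xi)$. Unwinding the right-fibration structure, for every $Y\in\tT$ each fibre of the map $\Map_{\tT}(Y,X)\to F(Y)$, $g\mapsto F(g)(\xi)$, is a mapping space in $\tX$ into $(X,\xi)$, hence contractible; so this map is an equivalence, naturally in $Y$, which says exactly $\yon(X)\simeq F$.

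I expect the decisive obstacle to be the claim buried in the last step that the $\infty$-category of elements $\tX$ is presentable. This combines the accessibility of $F$ (automatic, since $F$ preserves all small limits and colimits in $\tT^{\op}$ are limits in $\tT$) with the presentability of $\tT$ and $\iPSh(\tT)$ and the stability of accessibility and presentability under pullbacks and under passing to over-$\infty$-categories — and it is exactly here that the hypothesis that $\tT$ is an $\infty$-topos, hence presentable, enters. Everything else, namely the identification of a final object of $\tX$ with a representing object and the persistence of full faithfulness under corestriction, is formal.
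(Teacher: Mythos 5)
Your argument is correct and coincides with the one behind the cited source: the paper offers no proof of this Fact, and Lurie's Remark 6.3.5.17 consists precisely of checking that representables $\Map_{\tT}(-,X)$ carry colimits in $\tT$ to limits in $\iS$ and then invoking the representability criterion \cite[Proposition 5.5.2.2]{Lur1} for the presentable $\infty$-category $\tT$, exactly as you do. The only caveat is your parenthetical that accessibility of the $\infty$-category of elements is ``automatic'' --- it is the substantive content of the cited proposition rather than a formality --- but since you correctly isolate and cite that step, the proposal stands.
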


See Appendix \S \ref{ss:it:yon-topos} for account on 
the $\infty$-categorical Yoneda embedding.

Another remark on $\iShv$ is that we have the following compatibility 
with the symbol $\iSh$ (Definition \ref{dfn:it:sh}).

\begin{fct*}[{\cite[Proposition 1.1.12]{Lur5}}]
Let $(\iC,\tau)$ be an $\infty$-site with $\iC$ admitting small limits,
and $\iB$ be an $\infty$-category admitting small limits.
Then we have the following equivalence of $\infty$-categories.
\[
 \iShv_{\iB}(\iSh(\iC,\tau)) \longsimto \iShv_{\iB}(\iC).
\]
\end{fct*}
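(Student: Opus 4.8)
The plan is to obtain the equivalence from three standard ingredients: the presentation of $\iSh(\iC,\tau)$ as a localization of the presheaf $\infty$-category, the universal property of presheaf $\infty$-categories, and the dictionary between ``local objects'' and ``sheaves''. First I would fix the localization data: by Fact \ref{fct:pr:sh}, $\tT := \iSh(\iC,\tau)$ is a topological --- in particular accessible and left exact --- localization $L : \iPSh(\iC) \rlto \tT : \iota$ of $\iPSh(\iC)$ with $\iota$ fully faithful, and $\tT = \iPSh(\iC)[S_\tau^{-1}]$, where $S_\tau$ is the set of monomorphisms $U \to \yon(X)$ attached to the covering sieves of $\tau$. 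Write $\overline{\yon} := L \circ \yon : \iC \to \tT$ for the sheafified Yoneda embedding. As $L$ is a left adjoint it preserves small colimits, hence $L^{\op} : \iPSh(\iC)^{\op} \to \tT^{\op}$ preserves small limits, so precomposition with $L^{\op}$ carries $\iShv_\iB(\tT)$ --- the limit-preserving functors $\tT^{\op} \to \iB$ --- into $\iShv_\iB(\iPSh(\iC))$.

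Next I would dispose of the presheaf case. A functor $\iPSh(\iC)^{\op} \to \iB$ preserving small limits is the same datum as a functor $\iPSh(\iC) \to \iB^{\op}$ preserving small colimits, and $\iB^{\op}$ admits small colimits because $\iB$ admits small limits; so, by the freeness of $\iPSh(\iC)$ under small colimits (Fact \ref{fct:ic:iPSh:gen}), restriction along the Yoneda embedding yields an equivalence $\iShv_\iB(\iPSh(\iC)) \longsimto \iFun(\iC^{\op},\iB)$, with inverse the right Kan extension along $\yon^{\op}$. Composed with the previous step, the functor $\iShv_\iB(\tT) \to \iFun(\iC^{\op},\iB)$ thus obtained is restriction along $\overline{\yon}$.

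It remains to pin down the essential image. Since $L$ exhibits $\tT$ as $\iPSh(\iC)[S_\tau^{-1}]$ with $\iota$ fully faithful, precomposition with $L^{\op}$ is fully faithful, and a limit-preserving $G : \iPSh(\iC)^{\op} \to \iB$ lies in its image exactly when $G$ inverts every map of $S_\tau$; for such a $G$ one checks, using that $G$ then inverts every $S_\tau$-local equivalence, that the restriction $G \circ \iota^{\op}$ is again limit-preserving, so that $G$ genuinely descends to $\tT^{\op}$. Translating through the presheaf case, $\iShv_\iB(\tT)$ is thereby identified with the full subcategory of $\iFun(\iC^{\op},\iB)$ of those $F$ whose right Kan extension $\widetilde{F}$ along $\yon^{\op}$ inverts $S_\tau$. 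Finally, unwinding $S_\tau$: for a covering sieve presented by $U \hookrightarrow \yon(X)$ the value $\widetilde{F}(U)$ is the limit of $F$ over that sieve (equivalently, over the associated \v{C}ech diagram, using the fiber products in $\iC$), so ``$\widetilde{F}$ inverts $U \hookrightarrow \yon(X)$'' says exactly that $F(X)$ maps isomorphically to that limit, i.e. that $F$ satisfies $\tau$-descent. Hence $\iShv_\iB(\iSh(\iC,\tau)) \simeq \iShv_\iB(\iC)$, the equivalence being restriction along $\overline{\yon}$; the completeness hypotheses on $\iC$ and $\iB$ are precisely what make these limits and \v{C}ech diagrams available.

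I expect the last step to be the real obstacle: the rest is formal manipulation of opposite categories and adjunctions, whereas there one must reconcile the abstract ``inverts $S_\tau$'' condition produced by the localization with the concrete descent condition defining a $\tau$-sheaf, and verify that limit-preservation on the presheaf side records exactly descent on the site side. This is where the left-exactness of the sheafification $L$ (the reason $\tT$ is an $\infty$-topos at all) is used. As the assertion is \cite[Proposition 1.1.12]{Lur5}, in practice I would simply invoke that reference.
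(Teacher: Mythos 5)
The paper offers no proof of this statement: it is quoted verbatim as a Fact from \cite[Proposition 1.1.12]{Lur5}, so there is nothing on the paper's side to compare against. Your sketch is a faithful reconstruction of Lurie's own argument — present $\iSh(\iC,\tau)$ as the accessible left exact localization $\iPSh(\iC)[S_\tau^{-1}]$, use the freeness of $\iPSh(\iC)$ under small colimits to reduce to presheaves, and then match the ``inverts $S_\tau$'' condition on the limit-preserving extension with $\tau$-descent via the formula $\widetilde{F}(U)=\lim F$ over the sieve — and the two nontrivial checks you flag (that a limit-preserving $S_\tau$-inverting functor descends to a limit-preserving functor on the localization, and that inverting $U\hookrightarrow\yon(X)$ is exactly descent) are carried out correctly. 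One remark: for the statement to parse and to agree with Lurie, the right-hand side $\iShv_{\iB}(\iC)$ must be read, as you implicitly do, as the $\infty$-category of $\iB$-valued presheaves on the \emph{site} $(\iC,\tau)$ satisfying descent, and not literally as Definition \ref{dfn:pr:iShv} applied to the $\infty$-category $\iC$, which would make no reference to $\tau$ at all.
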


For later use, we introduce 

\begin{dfn}\label{dfn:pr:cst-sh}
Let $\tT$ be an $\infty$-topos and $S \in \iS$.
The \emph{constant sheaf} valued in $S$ on $\tT$ 
is an object of $\iShv_{\iS}(\tT)$ determined by the correspondence 
$U \mapsto S$ for any $U \in \tT$.
It will be denoted by the same symbol $S$.
\end{dfn}

We close this subsection by introducing basic notions on $\infty$-topoi.

\begin{dfn}[{\cite[Corollary 6.2.3.5]{Lur1}}]
\label{dfn:it:eff-epi}
A morphism $f: U \to V$ in an $\infty$-topos $\tT$ is called 
an \emph{effective epimorphism} if as an object $f \in \oc{\tT}{V}$
the truncation $\tau_{\le -1}(f)$ is a final object 
(Definition \ref{dfn:ic:ini-fin}) of $\oc{\tT}{V}$.
\end{dfn}

Here we used the truncation functor 
$\tau_{\le -1}: \oc{\tT}{V} \to \tau_{\le -1} \oc{\tT}{V}$ 
in Definition \ref{dfn:ic:tau_k}.
This definition makes sense since $\oc{\tT}{V}$ is an $\infty$-topos by
Fact \ref{fct:pr:ovc} so that it is presentable by Fact \ref{fct:pr:Giraud}.

Using the notion of effective epimorphism,
we can introduce analogues of ordinary notions of topoi.
We also use \emph{coproducts} in an $\infty$-category (Definition \ref{dfn:ic:coprod}).

\begin{dfn}
A \emph{covering} of an $\infty$-topos $\tT$ is an effective epimorphism
$\coprod_{i \in I} U_i \to \one_{\tT}$,
where $\tT$ denotes a final object of $\tT$ (Corollary \ref{cor:pr:final}).
We denote it by $\{U_i\}_{i \in I}$, suppressing the morphism to $\tT$, 
if no confusion will occur.
\end{dfn}
 
We have an obvious notion of \emph{subcovering} of a covering of an $\infty$-topos.

\begin{dfn}[{\cite[Definition 3.1]{Lur7}}]
\label{dfn:pr:qcpt}
Let $\tT$ be an $\infty$-topos.
\begin{enumerate}[nosep]
\item 
An $\infty$-topos $\tT$ is \emph{quasi-compact}
if any covering of $\tT$ has a finite subcovering.

\item
An object $T \in \tT$ is \emph{quasi-compact} 
if the $\infty$-topos $\oc{\tT}{T}$ is quasi-compact in the sense of (1).
\end{enumerate}
\end{dfn}

%%%%%%%%%%%%%%%%%%%%%%%%%%%%%%%%%%%%%%%%%%%%%%%%%%%%%%%%%%%%%%%%%%%%%%%%%%%%%%%%
%%%%%%%%%%%%%%%%%%%%%%%%%%%%%%%%%%%%%%%%%%%%%%%%%%%%%%%%%%%%%%%%%%%%%%%%%%%%%%%%
%%%%%%%%%%%%%%%%%%%%%%%%%%%%%%%%%%%%%%%%%%%%%%%%%%%%%%%%%%%%%%%%%%%%%%%%%%%%%%%%
\section{Recollections on derived stacks}
\label{s:dSt}

In this section we explain 
the theory of derived stacks which will be used throughout the main text.
The main references are \cite{TVe2, T14}.

Our presentation is based on the $\infty$-categorical language,
although the fundamentals of the theory of derived stacks 
is developed in \cite{TVe1,TVe2} via the model theoretic language.
We refer \cite{Lur1} for the $\infty$-categorical language used here.

%%%%%%%%%%%%%%%%%%%%%%%%%%%%%%%%%%%%%%%%%%%%%%%%%%%%%%%%%%%%%%%%%%%%%%%%%%%%%%%%
%%%%%%%%%%%%%%%%%%%%%%%%%%%%%%%%%%%%%%%%%%%%%%%%%%%%%%%%%%%%%%%%%%%%%%%%%%%%%%%%
\subsection{Higher Artin stacks}
\label{ss:dSt:St}

We cite from \cite[\S 2.1]{TVe2} the notion of higher Artin stacks,
which enables one to develop an extension of the ordinary theory of algebraic stacks.
Although higher Artin stacks will not play essential roles in our study,
we give a summary of the theory by the following two reasons.
\begin{itemize}[nosep]
\item
The theory of higher Artin stacks and the theory of derived stacks
are developed in a parallel way in \cite[\S 2.1, \S2,2]{TVe2},
and in the case of higher Artin stacks some parts of the theory are simple.
We give an explanation on higher Artin stacks 
as a warming-up for the theory of derived stacks.
The latter one will be explained in the next \S \ref{ss:dSt:dSt}.
\item
Our discussion on derived Hall algebra has a non-derived counterpart
which is developed in the region of ordinary Artin stacks.
The theory of ordinary Artin stack is naturally embedded 
in the theory of higher Artin stacks, 
so we need some terminology on higher Artin stacks.
\end{itemize}

%%%%%%%%%%%%%%%%%%%%%%%%%%%%%%%%%%%%%%%%%%%%%%%%%%%%%%%%%%%%%%%%%%%%%%%%%%%%%%%%
\subsubsection{Definition}

Fix a commutative ring $k$.

Let us consider the (ordinary) category $\Com_k$ of commutative $k$-algebras.
%As explained in Remark \ref{rmk:ic:cat-inf},
We denote the $\infty$-category of the nerve of $\Com_k$ by
\[
 \iCom_k := \Ner(\Com_k).
\]

\begin{rmk*}
Let us explain other ways to define $\iCom_k$, 
all of which give equivalent $\infty$-categories 
in the sense of Definition \ref{dfn:ic:sfe}.
\begin{itemize}[nosep]
\item
One way is to use the $\infty$-localization (Definition \ref{dfn:ic:loc}) to 
the pair $(\Com_k,W)$, where $W$ is the set of ring isomorphisms.

\item
Another way is to use the simplicial nerve (Definition \ref{dfn:ic:Nsp}).
Let $\spCom_k$ be the simplicial category (Definition \ref{dfn:ic:sCat})
of commutative $k$-algebras with the simplicial set $\Map_{\spCom_k}(X,Y)$ 
set to be $\Hom_{\Com_k}(X,Y)$ regarded as a constant simplicial set.
Then we have an $\infty$-category $\Nsp(\spCom_k)$.
\end{itemize}
\end{rmk*}

\begin{dfn*}
\begin{enumerate}[nosep]
\item
$\iCom_k$ is called the \emph{$\infty$-category of commutative $k$-algebras}.
\item
The \emph{$\infty$-category $\iAff_k$ of affine schemes} is defined to be
the opposite $\infty$-category of $\iCom_k$:
\[
 \iAff_k := (\iCom_k)^{\op}.
\]
The object of $\iAff_k$ corresponding to $A \in \iCom_k$ 
will be denoted by $\Spec A$
and called the \emph{affine scheme} of $A$.
\item
In the case $k=\bbZ$, we sometimes suppress the subscript and denote 
$\iCom := \iCom_{\bbZ}$ and $\iAff := \iAff_{\bbZ}$.
\end{enumerate}
\end{dfn*}

This definition is just an analogue of the ordinary scheme theory: 
the category $\Aff$ of affine schemes is equivalent to 
the opposite category $\Com^{\op}$ of the category of commutative rings.

The $\infty$-category $\iCom_k$ can be seen as the category $\Com_k$ of 
commutative $k$-algebras equipped with ``higher structure" on the set of morphisms.
See \S \ref{a:ic} for a summary of the theory of $\infty$-categories.
In particular, a morphism in the category $\Aff_k$ of affine schemes 
can be regarded as a morphism of the $\infty$-category $\iAff_k$.
Thus ordinary notions on morphisms of affine schemes can be transfered to 
those in $\iAff_k$.

Let us now consider the $\infty$-category 
\[
 \iPSh(\iAff_k) := \iFun\left((\iAff_k)^{\op}, \iS\right)
\]
of presheaves of spaces over $\iAff_k$ (Definition \ref{dfn:ic:iPSh}).
An object of $\iPSh(\iAff_k)$ is a functor $\iAff_k \to \iS$ 
of $\infty$-categories, where $\iS$ denotes the $\infty$-category of spaces 
(Definition \ref{dfn:ic:iS}).

\begin{rmk*}
A few ethical remarks are in order.
\begin{enumerate}[nosep]
\item 
In Lurie's theory of derived algebraic geometry \cite{Lur5}--\cite{Lur14}, 
the $\infty$-category $\iS$ of spaces is considered to be a correct 
$\infty$-theoretical replacement of the ordinary category $\Set$ of sets.
\item
In \cite{TVe1, TVe2}, 
a functor to the model category $\sSet$ of simplicial sets is called a prestack.
Since all the prestacks appearing in our discussion are valued 
in Kan complexes, we replace $\sSet$ by $\iS$.
\end{enumerate}
\end{rmk*}

Next we consider a Grothendieck topology on the $\infty$-category $\iAff_k$.
See Appendix \ref{a:it} for the relevant notions.

By \cite[Lemma 2.1.1.1]{TVe2}, \'etale coverings of affine schemes give 
a (non-$\infty$-theoretical) Grothendieck topology 
on the homotopy category $\Ho \iAff_k$.
Then by Remark \ref{rmk:ic:inf-site} we have a Grothendieck topology 
on the $\infty$-category $\iAff_k$.

\begin{dfn*}
The obtained Grothendieck topology on $\iAff_k$ is denoted by $\et$ 
and called the \emph{\'etale topology} on $\iAff_k$.
\end{dfn*}

Now we apply the construction of an $\infty$-topos 
in \S \ref{sss:pr:it} to the $\infty$-site $(\iAff_k, \et)$.
We consider the $\infty$-category 
\[
 \iSh(\iAff_k,\et)
\]
of $\et$-sheaves on $\iAff_k$ (see \S \ref{ss:pr:inf-site} for an account, 
and Definition \ref{dfn:it:sh} for a strict definition).
By Fact \ref{fct:pr:sh}, the $\infty$-category $\iSh(\iAff_k,\et)$ 
is an $\infty$-topos (Definition \ref{dfn:pr:it}),
which is a localization of the $\infty$-category $\iPSh(\iAff_k)$.
Not that by Corollary \ref{cor:ic:iSh:hc}, this $\infty$-topos 
$\iSh(\iAff_k,\et)$ is hypercomplete (Definition \ref{dfn:ic:hcpl}).

\begin{dfn}[{\cite[Definition 1.3.2.2, 1.3.6.3]{TVe2}}]\label{dfn:dSt:iSt}
The \emph{$\infty$-category $\iSt_k$ of stacks over $k$} is defined to be
\[
 \iSt_k := \iSh(\iAff_k,\et).
\]
An object in $\iSt_k$ is called a \emph{stack} (\emph{over $k$}), 
and a morphism in $\iSt_k$ is called a \emph{morphism of stacks} (\emph{over $k$}).
\end{dfn}

\begin{rmk}\label{rmk:dSt:stack}
Let us remark that in \cite{TVe2} 
the word ``stack" is used in a slightly different way.
Denoting by $\Ho \iC$ the homotopy category of an $\infty$-category $\iC$ 
(Definition \ref{dfn:ic:h:iC}),
we have an adjunction
\[
 a: \Ho \iPSh(\iAff_k) \adjunc \Ho \iSh(\iAff_k,\et) :j
\]
between the homotopy categories of $\iSh(\iAff,\et)$ and $\iPSh(\iAff)$
with $j$ fully faithful.
In \cite{TVe2} a stack means an object in $\iPSh(\iAff_k)$ whose class in 
the homotopy category $\Ho \iPSh(\iAff_k)$ is 
in the essential image of the functor $j$.
%and a morphism of stacks means a morphism in $\Ho \iPSh(\iAff_k)$. %$\Ho \iSt_k$.
This terminology and Definition \ref{dfn:dSt:iSt} are different,
but we can identify them up to a choice of equivalence in $\iSt_k$.
Similarly, a morphism of stacks is defined in \cite{TVe2} to be a morphism 
in $\Ho \iPSh(\iAff_k)$, %$\Ho \iSh(\iAff_k,\et)$,
which differs from our definition.
Since there is no essential difference in our study, 
we will use Definition \ref{dfn:dSt:iSt} only.
\end{rmk}

Recall the definition of the set $\pi_0(S)$ of path components 
of a simplicial set $S$ (see \cite[\S 1.7]{GJ} for example).
Since a stack is a sheaf valued in $\iS$ %of spaces 
and so is a sheaf of simplicial sets, 
the following notation makes sense.

\begin{dfn*}
\begin{enumerate}[nosep]
\item 
For a stack $\stX \in \iSt_k$, 
we denote by $\pi_0(\stX) \in \Fun((\iAff_k)^{\op}, \Set)$
the sheaf of sets obtained by taking $\pi_0$.
\item
For a morphism $f: \stX \to \stY$ of stacks, we denote by $\pi_0(f)$ 
the induced morphism $\pi_0(\stX) \to \pi_0(\stY)$ of sheaves of sets.
\end{enumerate}
\end{dfn*}

%Here are some basic notions on morphisms of stacks.
A morphism between stacks means a morphism in the $\infty$-category $\iSt_k$.
We then have the notion of monomorphisms (Definition \ref{dfn:ic:mono}).
We also have the notion of effective epimorphism (Definition \ref{dfn:it:eff-epi})
since $\iSt_k$ is an $\infty$-topos.

\begin{dfn}\label{dfn:St:epi-mono}
A \emph{monomorphism} of stacks is defined to be a monomorphism in $\iSt_k$.
An \emph{epimorphism} of stacks is defined to be 
an effective epimorphism in $\iSt_k$.
\end{dfn}

\begin{rmk}\label{rmk:dSt:epi-mono}
We can describe monomorphisms and epimorphisms in $\iSt_k$ more explicitly.
For a stack $\stX \in \iSt_k$, 
we denote by $\pi_0(\stX) \in \Fun((\iAff_k)^{\op}, \Set)$
the sheaf of sets obtained by taking $\pi_0$.
Here $(\iAff_k)^{\op}$ is regarded as an ordinary category.
For a morphism $f: \stX \to \stY$ of stacks, we denote by $\pi_0(f)$ 
the induced morphism $\pi_0(\stX) \to \pi_0(\stY)$ of sheaves of sets.
Then
\begin{itemize}[nosep]
\item
By Fact \ref{fct:ic:mono}, 
$f$ is a monomorphism if and only if the induced morphism $\pi_0(\Delta_f)$ of 
$\Delta_f: \stX \to \stX \times_{f, \stY, f} \stX$ is an isomorphism 
in $\Fun((\iAff_k)^{\op},\Set)$.
\item
$f$ is an effective epimorphism %or a \emph{covering} 
if and only if the induced morphism $\pi_0(f): \pi_0(\stX) \to \pi_0(\stY)$ 
is an epimorphism in the category $\Fun((\iAff_k)^{\op}, \Set)$.
\end{itemize}
Note that the fiber product $\stX \times_{f, \stY, f} \stX$ 
is the one in the $\infty$-categorical sense (Definition \ref{dfn:ic:pb-po}).
This description of monomorphisms and epimorphisms 
is consistent with \cite[Definition 1.3.1.2]{TVe2}.
\end{rmk}

As in the ordinary Yoneda embedding $X \mapsto \Hom(-,X)$, 
we have the $\infty$-theoretic Yoneda embedding (Definition \ref{dfn:pr:Yoneda:iSh})
\[
 \yon: \iAff_k \longinj \iSt_k.
\]

Following the terminology in \cite{TVe2}, we introduce 

\begin{dfn*}
A stack in the essential image of the Yoneda embedding $\yon$ 
is called a \emph{representable stack}.
\end{dfn*}

We will often consider $\iAff_k \subset \iSt_k$ by the Yoneda embedding $\yon$
and identify an affine scheme with the corresponding representable stack.
We can transfer the ordinary notions on affine schemes to 
those on representable stacks.
For example, we have 

\begin{dfn}\label{dfn:St:sm}
A \emph{smooth morphism of representable stacks} 
is defined to be a smooth morphism of affine schemes 
in the sense of \cite[4\`{e}me partie, D\'{e}finition (17.3.1)]{EGA4}.
\end{dfn}

Next we recall the notion of geometric stacks.

\begin{dfn}[{\cite[Definition 1.3.3.1]{TVe2}}]\label{dfn:St:geom}
For $n \in \bbZ_{\ge -1}$, we define an \emph{$n$-geometric stack},
an object in $\iSt_k$, inductively on $n$.
At the same time we also define an $n$-atlas of a stack,
a \emph{$n$-representable morphism} and a \emph{$n$-smooth morphism} of stacks.
\begin{itemize}[nosep]
\item
Let $n=-1$.
\begin{enumerate}[nosep, label=(\roman*)]
\item
A \emph{$(-1)$-geometric stack} is defined to be a representable stack.

\item
A morphism $f: \stX \to \stY$ of stacks is called \emph{$(-1)$-representable} 
if for any representable stack $U$ and any morphism $U \to \stY$ of stacks, 
the pullback $\stX \times_{\stY} U$ is a representable stack. 

\item
A morphism $f: \stX \to \stY$ of stacks is called \emph{$(-1)$-smooth} 
if it is $(-1)$-representable, and 
if for any representable stack $U$ and any morphism $U \to \stY$ of stacks, 
the induced morphism $\stX \times_{\stY} U \to U$ is 
a smooth morphism of representable stacks (Definition \ref{dfn:St:sm}).
%in the sense of Definition \ref{dfn:rep-dSt:P}.
\end{enumerate}

\item
Let $n \in \bbN$.
\begin{enumerate}[nosep, label=(\roman*)]
\item
Let $\stX$ be a stack.
An \emph{$n$-atlas of $\stX$} is a small family $\{U_i \to \stX\}_{i \in I}$
of morphisms of stacks satisfying the following three conditions.
\begin{itemize}[nosep]
\item 
Each $U_i$ is a representable stack.
\item
Each morphism $U_i \to \stX$ is $(n-1)$-smooth.
\item
The morphism $\coprod_{i \in I} U_i \to \stX$ is an epimorphism of stacks.
%(Remark \ref{rmk:dSt:epi-mono}).
\end{itemize}
We will sometimes denote an $n$-atlas $\{U_i \to \stX\}_{i \in I}$ 
simply by $\{U_i\}_{i \in I}$.

\item
A stack $\stX$ is called \emph{$n$-geometric} 
if the following two conditions are satisfied.
\begin{enumerate}[nosep, label=(\alph*)]
\item 
The diagonal morphism $\stX \to \stX \times \stX$ is $(n-1)$-representable.
\item
There exists an $n$-atlas of $\stX$.
\end{enumerate}

\item 
A morphism $f: \stX \to \stY$ of stacks is called \emph{$n$-representable}
if for any representable stack $U$ and for any morphism $U \to \stY$ of stacks, 
the derived stack $\stX \times_{\stY} U$ is $n$-geometric.
\item
A morphism $f: \stX \to \stY$ of stacks is called \emph{$n$-smooth} 
if for any representable stack $U$ and any morphism $U \to \stY$ of stacks,
there exists an $n$-atlas $\{U_i\}_{i \in I}$ of $\stX \times_{\stY} U$ such that 
for each $i \in I$ the composition $U_i \to \stX \times_{\stY} U \to U$ 
is a smooth morphism of representable stacks (Definition \ref{dfn:St:sm}). 
%in the sense of Definition \ref{dfn:rep-dSt:P}.
\end{enumerate}
\end{itemize}

%\item
%\begin{enumerate}[nosep, label=(\roman*)]
%\item
%A \emph{geometric stack} means an $n$-geometric stack 
%for some $n \in \bbZ_{\ge-1}$.
%\item
%A \emph{representable morphism} of stacks means 
%an $n$-representable morphism with some $n \in \bbZ_{\ge-1}$.
%\item
%A \emph{smooth morphism} of stacks means 
%an $n$-smooth morphism with some $n \in \bbZ_{\ge-1}$.
%\end{enumerate}
%\end{enumerate}
\end{dfn}

%%%%%%%%%%%%%%%%%%%%%%%%%%%%%%%%%%%%%%%%%%%%%%%%%%%%%%%%%%%%%%%%%%%%%%%%%%%%%%%%
\subsubsection{Relation to algebraic stacks}
\label{sss:St:hiArt}

As an illustration of geometric stacks,
let us explain the relation to algebraic spaces and algebraic stacks 
in the ordinary sense following \cite[\S 2.1.2]{TVe2}.
We begin with

\begin{dfn}[{\cite[\S 2.1.1]{TVe2}}]\label{dfn:St:trunc}
\begin{enumerate}[nosep]
\item
Let $m \in \bbN$.
A stack $\stX \in \iSt_k$ is called \emph{$m$-truncated} 
if $\pi_j(\stX(U),p)$ is trivial 
for any $U \in \iAff_k$, any $p \in \pi_0(\stX(U))$ and any $j>m$.
\item
An \emph{Artin $m$-stack} is an $m$-truncated $n$-geometric stack 
for some $n \in \bbN$.
\end{enumerate}
\end{dfn}

Here we used the homotopy groups $\pi_i(S)$ of a simplicial set $S$
(see \cite[\S 1.7]{GJ} for example).
Let us cite a relation between truncation and geometricity.

\begin{fct*}[{\cite[Lemma 2.1.1.2]{TVe2}}]
An $n$-geometric stack is $(n+1)$-truncated.
\end{fct*}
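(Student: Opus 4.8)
Denote by $P(n)$ the assertion that every $n$-geometric stack is $(n+1)$-truncated; the plan is to prove $P(n)$ by induction on $n \in \bbZ_{\ge -1}$. For the base case $P(-1)$: a $(-1)$-geometric stack is a representable stack $\yon(\Spec A)$, so for any $U \in \iAff_k$ its value is the mapping space $\Map_{\iAff_k}(U,\Spec A)$; since $\iAff_k$ is the nerve of the ordinary category $(\Com_k)^{\op}$, this mapping space is homotopy equivalent to the discrete set $\Hom_{\Aff_k}(U,\Spec A)$, hence a $0$-truncated space. Thus a representable stack is $0$-truncated.

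For the inductive step I would fix $n \ge 0$, assume $P(n-1)$, and take $\stX$ to be $n$-geometric. It turns out that only condition (a) of Definition~\ref{dfn:St:geom} is used, namely that the diagonal $\Delta_{\stX}\colon \stX \to \stX \times \stX$ is $(n-1)$-representable; the atlas plays no role. Fix $U \in \iAff_k$ and a pair of points $z,z' \in \stX(U)$, i.e.\ (by the Yoneda embedding, and since products in $\iSt_k$ are computed pointwise) a morphism $w = (z,z')\colon U \to \stX \times \stX$. By $(n-1)$-representability of $\Delta_{\stX}$, the fiber product $\stX \times_{\stX \times \stX} U$ taken along $\Delta_{\stX}$ and $w$ is $(n-1)$-geometric, hence $n$-truncated by $P(n-1)$. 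Evaluating it at $U$ — limits in $\iSt_k \subset \iPSh(\iAff_k)$ being pointwise — produces the space $\stX(U) \times_{\stX(U)\times\stX(U)} U(U)$, mapped to $U(U) = \Hom_{\Aff_k}(U,U)$ by post-composition with $w$, whose fiber over $\id_U \in U(U)$ is exactly the path space $\Map_{\stX(U)}(z,z')$. Because $U(U)$ is discrete, the long exact sequence in homotopy of the map to $U(U)$ shows $\Map_{\stX(U)}(z,z')$ is again $n$-truncated. Letting $z,z'$ range over $\pi_0(\stX(U))$, all path spaces of $\stX(U)$ are $n$-truncated, so $\stX(U)$ is $(n+1)$-truncated; as $U$ was arbitrary, $\stX$ is $(n+1)$-truncated, giving $P(n)$.

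The hard part is the bookkeeping in that final translation, from ``the homotopy fibers of $\Delta_{\stX}$ over affines are $n$-truncated'' to ``$\stX$ is $(n+1)$-truncated''. It is precisely here that the \emph{non-derived} setting is used: an affine scheme has discrete mapping sets, so $U(U)$ contributes no higher homotopy; in a derived context one would have to track the homotopy of $U(U)$ as well. A more topos-theoretic alternative would bypass evaluation at $U$: each base change $\stX \times_{\stX \times \stX} U \to U$ is a morphism between $n$-truncated objects of the $\infty$-topos $\iSt_k$, hence an $n$-truncated morphism; since the maps $U \to \stX \times \stX$ with $U \in \iAff_k$ form a generating, jointly effective-epimorphic family, locality of $n$-truncatedness of morphisms on the target makes $\Delta_{\stX}$ an $n$-truncated morphism, whence $\stX$ is $(n+1)$-truncated by the standard equivalence between this condition and $n$-truncatedness of the diagonal, using that truncatedness in the localization $\iSh(\iAff_k,\et)$ agrees with pointwise truncatedness of values. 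Beyond this translation I expect no real obstacle; the geometric content is entirely the inductive handling of the diagonal.
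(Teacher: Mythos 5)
Your argument is correct and is essentially the standard proof of this statement: the paper itself only cites it (as [TVe2, Lemma 2.1.1.2]) without proof, and the proof given there proceeds by exactly your induction, identifying the homotopy fiber of the evaluated diagonal over a pair of $U$-points with the path space of $\stX(U)$ and using only the $(n-1)$-representability of $\Delta_{\stX}$ together with discreteness of $U(U)$ in the underived setting. No gaps.
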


These higher Artin stacks will be the $\infty$-theoretic counterpart
of algebraic spaces and algebraic stacks.
See the appendix \S \ref{ss:Cl:AlgSt} for the relevant definitions.
Let us now cite from \cite[\S 2.1.2]{TVe2} a general construction of 
an embedding of the theory of fibered categories in groupoids 
into the theory of simplicial presheaves.

Let $\gsS$ be a site.
The category $\Grpd/\gsS$ of fibered categories in groupoids over $\gsS$ 
has a model structure such that 
the fibrant objects are ordinary stacks over $\gsS$
%in groupoids in the sense of \cite{LM}
and the weak equivalences are the functors of fibered categories 
inducing equivalences on the associated stacks.
The homotopy category $\mHo(\Grpd/\gsS)$ of this model category
can be described as follows.
\begin{itemize}[nosep]
\item 
The objects are ordinary stacks over $\gsS$.
%in groupoids in the sense of \cite{LM}.
\item
The morphisms are $1$-morphisms of ordinary stacks up to $2$-isomorphisms. 
%in the sense of \cite{LM}.
\end{itemize}
By \cite[\S 2.1.2]{TVe2} we have a Quillen adjunction 
\[
 \Grpd/\gsS \adjunc \mSh^{\le 1}(\gsS),
\]
where $\mSh^{\le 1}(\gsS)$ denotes the category of sheaves of 
simplicial sets over $\gsS$ with the $1$-truncated local 
projective model structure \cite[Theorem 3.7.3]{TVe1}.

Let us take $\gsS$ in the above argument to be the site $(\Aff_k,\et)$ of 
affine schemes with the \'etale topology over a commutative ring $k$
(Definition \ref{dfn:Cl:ET}).
Then the above Quillen adjunction yields an adjunction of the underlying 
$\infty$-categories (Definition \ref{dfn:ic:um}) by Fact \ref{fct:ic:Qi}.

\begin{ntn}\label{ntn:dSt:ase}
Let us denote the obtained adjunction by 
\[
 \ase: \Nsp(\Grpd/(\Aff_k,\et)) \adjunc \iSt_k : t.
\]
\end{ntn}

Algebraic stacks over $k$ in the sense of Definition \ref{dfn:Cl:AlgSt}
belong to $\Grpd/(\Aff_k,\et)$.
In particular, algebraic spaces and schemes over $k$ belong there.

Now we have the following result.

\begin{fct}[{\cite[Proposition 2.1.2.1]{TVe2}}]\label{fct:dSt:ase}
\begin{enumerate}[nosep]
\item
If $X$ is a scheme or an algebraic space over $k$, 
then $\ase(X)$ is an Artin $0$-stack which is $1$-geometric.
\item
If $X$ is an algebraic stack over $k$, 
then $\ase(X)$ is an Artin $1$-stack which is $1$-geometric.
\end{enumerate}
\end{fct}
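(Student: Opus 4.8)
The plan is to unwind the construction of the functor $\ase$ from Notation~\ref{ntn:dSt:ase} and reduce both statements to the behaviour of $\ase$ on representable stacks, using that $\ase$ is a left adjoint, hence preserves colimits. Three facts will be used repeatedly. First, on representable objects $\ase$ is the Yoneda embedding: an affine scheme $\Spec A$, regarded as a discrete object of $\Nsp(\Grpd/(\Aff_k,\et))$, is carried by $\ase$ to the representable stack $\Spec A \in \iSt_k$, because a representable presheaf is already an $\et$-sheaf of sets by descent and the Quillen adjunction of \cite[\S2.1.2]{TVe2} is built from sheafification; in particular $\ase(\Spec A)$ is $(-1)$-geometric, hence $1$-geometric since the geometricity filtration is increasing. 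Second, $\ase$ is compatible with truncation: its target in \cite[\S2.1.2]{TVe2} is the $1$-truncated local model category $\mSh^{\le 1}$, so an $m$-truncated fibered category in groupoids, $m\le 1$, is sent to an $m$-truncated stack. Third, $\ase$ carries the colimit presentation of a scheme, algebraic space or algebraic stack along an atlas --- the realization of the \v{C}ech nerve of the atlas --- to the analogous colimit in $\iSt_k$ of representable (or already treated) stacks. Together with the geometricity criterion of \cite[\S1.3]{TVe2}, which checks $n$-geometricity from an atlas by affine schemes whose structure maps are suitably smooth, this reduces everything to a bookkeeping of geometricity levels.

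For statement (1) I would first treat $X$ a scheme, choosing an affine open cover $\{U_i\}$. The stack $\stX := \ase(X)$ is presented by the \v{C}ech nerve of $\coprod_i U_i \to X$, and I would verify $1$-geometricity directly: the diagonal $\stX \to \stX \times \stX$ is $0$-representable, because for affine $V \to X \times X$ the fibre $X \times_{X\times X} V$ is a locally closed subscheme of $V$, hence a separated scheme, which is $0$-geometric (its diagonal is a closed immersion, giving $(-1)$-representability, and its affine opens form a $0$-atlas); and $\{\ase(U_i) \to \stX\}$ is a $1$-atlas, because each map is $0$-smooth --- pulled back along an affine $V \to X$ it becomes an open subscheme $U_i \times_X V$ of $V$, a separated scheme whose affine opens form a $0$-atlas with structure maps to $V$ that are open immersions, hence smooth. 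Being a sheaf of sets, $\stX$ is $0$-truncated, so $\ase(X)$ is an Artin $0$-stack. For $X$ an algebraic space I would run the same argument with an \'etale presentation $R \rightrightarrows U$ ($U$ a disjoint union of affine schemes, $R$ a scheme): $\ase(X)$ is the colimit of this groupoid, $\ase(U)$ and $\ase(R)$ are $1$-geometric by the scheme case, the \'etale atlas $U \to X$ is smooth so $\ase(U) \to \ase(X)$ is $0$-smooth, and the diagonal of $X$ is representable by separated schemes, which gives $0$-representability of the diagonal of $\ase(X)$. Hence $\ase(X)$ is $1$-geometric and $0$-truncated, i.e.\ an Artin $0$-stack.

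For statement (2) I would take $X$ an algebraic stack with a smooth groupoid presentation $R \rightrightarrows U$, where $U$ is a disjoint union of affine schemes and $R = U \times_X U$ is an algebraic space. Then $\ase(X)$ is the colimit of this groupoid, $\ase(U)$ is $1$-geometric by the scheme case and $\ase(R)$ by part (1). The atlas $U \to X$ is smooth, and it is also separated: its relative diagonal $U \to R$ is a closed immersion because $R \to U \times U$ is separated (the separatedness hypothesis on the diagonal of $X$) and $U$ is separated. Hence for affine $V \to X$ the base change $U \times_X V$ is a separated algebraic space over $V$, so it is $0$-geometric, and one produces a $0$-atlas of $\ase(U) \times_{\ase(X)} V$ whose structure maps to $V$ are smooth; thus $\ase(U) \to \ase(X)$ is $0$-smooth and $\{\ase(U) \to \ase(X)\}$ is a $1$-atlas. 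The diagonal of $X$ being representable by separated algebraic spaces, the same computation shows the diagonal of $\ase(X)$ is $0$-representable, so $\ase(X)$ is $1$-geometric; and $\ase(X)$ is $1$-truncated since an algebraic stack is a sheaf of groupoids. Therefore $\ase(X)$ is an Artin $1$-stack.

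The step I expect to be the main obstacle is precisely this bookkeeping of geometricity levels. One must check that, after applying $\ase$ and forming the relevant fibre products, the atlas maps are smooth at the correct level --- $(-1)$-smooth versus $0$-smooth --- and this crucially relies on the separatedness conditions imposed on the diagonals in the classical definitions (in the sense of Laumon--Moret-Bailly \cite{LM}), in order to ensure that locally closed subschemes of affine schemes, and base changes of atlas maps, are $0$-geometric rather than only $1$-geometric; the geometricity criterion of \cite{TVe2} is what prevents the verification from degenerating into an infinite regress. A secondary technical point is to check that $\ase$ genuinely preserves the \v{C}ech colimit presentations and the truncation levels, which rests on the model-categorical comparison between fibered categories in groupoids and $1$-truncated simplicial sheaves established in \cite[\S2.1.2]{TVe2}.
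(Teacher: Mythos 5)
This statement is quoted in the paper as a Fact with a citation to \cite[Proposition 2.1.2.1]{TVe2}; the paper supplies no proof of its own, so there is nothing internal to compare your argument against. Your reconstruction follows the same route as the To\"en--Vezzosi proof: reduce along \v{C}ech/groupoid presentations to representables, and track geometricity levels using the separatedness hypotheses on the diagonals. The outline is correct, and you have correctly isolated the one genuinely delicate point, namely that the atlas maps and the pullbacks of the diagonal must be $(-1)$-representable (affine after base change to affines) rather than merely representable; this is exactly where separatedness of the diagonal enters, since a closed (or, more generally, affine) diagonal forces $U \times_Y V$ to be affine for affine $U, V$ via $U \times_Y V \simeq (U \times V) \times_{Y \times Y} Y$, which is what makes the affine opens of a separated scheme (or the affine \'etale charts of a separated algebraic space) into a genuine $0$-atlas. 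The only cosmetic caveat is that in part (1) the pullback of the diagonal of an algebraic space along an affine is a monomorphism of schemes, hence automatically separated, which you should state explicitly rather than appeal to a separatedness hypothesis that the paper's Definition~\ref{dfn:Cl:AlgSp} does not impose on algebraic spaces.
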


%\begin{proof}
%We only give a proof of (1).
%Let $X$ be an algebraic space and $U \to X$ be its \'etale covering.
%Then $R:=U \times_X U$ is an algebraic space
%and $R \subset X \times_k X$ gives an \'etale equivalence.
%Furthermore, $X$ is isomorphic to the quotient $U/R$ \cite[\S 5.2]{O:book}.
%Now assume $X$ is $n$-geometric.
%Then $R$ is $(n-1)$-geometric, so being a subobject of $X \times_k X$, 
%the algebraic space $R$ is separated in the sense of Definition \ref{dfn:Cl:AS:P}.
%In particular $R$ is $0$-geometric.
%So we find that $X=U/R$ is $1$-geometric.
%\end{proof}

Hereafter we consider algebraic stacks, algebraic spaces and schemes 
as objects of $\iSt_k$, i.e., as stacks.

\begin{rmk}\label{rmk:dSt:LM-O}
The paper \cite{O} treats a slightly generalized notion of algebraic stacks.
The difference is that in \cite{O} the diagonal morphism $\Delta$ is only 
assumed to be quasi-separated \cite[1re Partie, D\'{e}finition (1.2.1)]{EGA4}.
The statement in Fact \ref{fct:dSt:ase} (2) still holds 
for an algebraic stack in this sense.
\end{rmk}

The following Fact \ref{fct:dSt:mAS=AS} shows that 
the notion of Artin $n$-stacks is a natural extension of 
the ordinary notion of schemes, algebraic spaces and algebraic stacks
(see also Fact \ref{fct:Cl:St-AS}).

\begin{fct}[{\cite[Remark 2.1.1.5]{TVe2}}]\label{fct:dSt:mAS=AS}
Let $\stX$ be an Artin $m$-stack with $m \in \bbN$. 
\begin{enumerate}[nosep]
\item
$\stX$ is an algebraic space 
if and only if the following two conditions hold.
\begin{itemize}[nosep]
\item 
$\stX$ is a Deligne-Mumford $m$-stack, i.e.,
there exists an $m$-atlas $\{U_i\}_{i \in I}$ of $\stX$ 
such that each morphism $U_i \to \stX$ of stacks is an \'etale morphism.

\item
The diagonal map $\stX \to \stX \times \stX$ is a monomorphism of stacks.
\end{itemize}
If $\stX$ is an algebraic space, 
then $\stX$ is $1$-geometric.

\item
$\stX$ is a scheme if and only if there exists an $m$-atlas $\{U_i\}_{i \in I}$ 
of $\stX$ such that each morphism $U_i \to \stX$ of stacks is a monomorphism.
If so then $\stX$ is $1$-geometric.

\item
Assume $\stX$ is an algebraic space or a scheme.
Then $\stX$ has an affine diagonal, i.e., the diagonal map is $(-1)$-representable, 
if and only if it is $0$-geometric.
\end{enumerate}
\end{fct}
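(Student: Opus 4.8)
The plan is to prove the statement by matching the inductive $\infty$-categorical notions of Definition~\ref{dfn:St:geom} with the classical notions recalled in the appendix (\S\ref{ss:Cl:AlgSt}), working through the comparison functor $\ase$ of Notation~\ref{ntn:dSt:ase} and Fact~\ref{fct:dSt:ase}. The one structural input needed beyond unwinding definitions is the standard fact that, in any $\infty$-topos, an object $\stX$ is $0$-truncated if and only if its diagonal $\stX \to \stX \times \stX$ is a monomorphism (cf.\ Remark~\ref{rmk:dSt:epi-mono}); this is what forces the truncation index $m$ to collapse in the "only if'' directions.

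For the forward implication in (1): if $\stX = \ase(X)$ for an algebraic space $X$, then Fact~\ref{fct:dSt:ase}(1) gives that $\stX$ is an Artin $0$-stack which is $1$-geometric; a classical étale atlas $\{U_i \to X\}$ by affine schemes maps under $\ase$ to an étale atlas of $\stX$, so $\stX$ is a Deligne--Mumford stack; and since $X$ is a sheaf of sets, hence $0$-truncated, the structural fact above shows that $\Delta_{\stX}$ is a monomorphism. Conversely, given an Artin $m$-stack $\stX$ with an étale $m$-atlas $\{U_i \to \stX\}$ by affine schemes and with $\Delta_{\stX}$ a monomorphism, the structural fact forces $\stX$ to be $0$-truncated, so $\stX = \pi_0(\stX)$ is a sheaf of sets on $(\Aff_k,\et)$; the étale atlas by affine schemes, together with the representability of the diagonal that is built into $\stX$ being geometric, exhibits $\pi_0(\stX)$ as an algebraic space in the classical sense, and one checks that this classical atlas already witnesses $1$-geometricity, so $\stX \simeq \ase(\pi_0(\stX))$ is an algebraic space that is $1$-geometric.

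Part (2) runs along the same lines. The new point is that a monomorphism $U_i \to \stX$ occurring in an $m$-atlas is, after base change to an affine chart, a flat locally finitely presented monomorphism of schemes, hence an open immersion; so a monic $m$-atlas realizes $\stX$ as glued from affine open substacks, i.e.\ as a scheme, and $1$-geometricity again follows from Fact~\ref{fct:dSt:ase}(1), while conversely an affine open cover of $\ase(Y)$ for a scheme $Y$ provides a monic atlas. For part (3), with $\stX$ an algebraic space or scheme: if $\stX$ is $0$-geometric then by Definition~\ref{dfn:St:geom} its diagonal is $(-1)$-representable, i.e.\ affine. Conversely, suppose $\Delta_{\stX}$ is affine and choose an affine atlas $\{V_i \to \stX\}$ (an étale atlas in general, a Zariski open cover when $\stX$ is a scheme); for any morphism $U \to \stX$ from an affine scheme the base change can be written, via the graph construction, as $V_i \times_{\stX} U = (V_i \times U) \times_{\stX \times \stX} \stX$, which is affine because $\Delta_{\stX}$ is; hence each $V_i \to \stX$ is $(-1)$-representable and smooth (being étale, or an open immersion, as the case may be), i.e.\ $(-1)$-smooth, so $\{V_i \to \stX\}$ is a $0$-atlas and $\stX$ is $0$-geometric.

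The step I expect to be the main obstacle is the converse direction of (1): turning the abstract data of an étale $m$-atlas by affine schemes on an $\infty$-sheaf into an honest algebraic space in the precise sense of the appendix. This requires matching the representability of the diagonal built into the inductive definition of an $n$-geometric stack with the classical requirement that the diagonal of an algebraic space be representable --- with the prescribed separation hypothesis (quasi-separatedness, cf.\ Remark~\ref{rmk:dSt:LM-O}) --- and verifying that one never needs an atlas of level higher than the classical one, i.e.\ that $1$-geometricity is attained; equivalently, that the essential image of $\ase$ restricted to algebraic spaces is exactly the full subcategory of $\iSt_k$ cut out by the stated conditions. Establishing this cleanly is the content of \cite[\S2.1.2, Remark~2.1.1.5]{TVe2}, after which parts (2) and (3) follow formally from the same dictionary.
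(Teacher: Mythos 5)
The paper offers no proof of this statement at all: it is recorded as a \textbf{Fact} and delegated wholesale to \cite[Remark 2.1.1.5]{TVe2}, so there is no in-paper argument to compare yours against. What you have written is a reasonable reconstruction of the To\"en--Vezzosi argument, and your key structural input is the right one: in the $\infty$-topos $\iSt_k$ an object is $0$-truncated precisely when its diagonal is a monomorphism (the case $k=0$ of the general ``$f$ is $k$-truncated iff $\Delta_f$ is $(k-1)$-truncated'' statement, of which Fact~\ref{fct:ic:mono} is the $k=-1$ instance), and this is exactly what collapses the truncation index in the ``only if'' directions. The forward directions of (1) and (2) via Fact~\ref{fct:dSt:ase}, and both directions of (3) via the graph trick $V_i \times_{\stX} U \simeq (V_i \times U)\times_{\stX\times\stX}\stX$, are correct as written.

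Two caveats. First, you are candid that the crux --- the converse of (1), namely upgrading an \'etale $m$-atlas on a $0$-truncated sheaf with monic diagonal to an algebraic space in the sense of \S\ref{ss:Cl:AS}, with the diagonal representable by schemes and the geometricity index descending to $1$ --- is deferred back to \cite[\S 2.1.2]{TVe2}. Since that is the only step with real content, the proposal is a correct outline rather than a complete proof; it is, however, no less complete than the paper, which defers everything. Second, in (2) and (3) you should be slightly more careful with $(-1)$-representability of the covering maps: an open immersion $W \to Z$ with $W$ affine is not $(-1)$-representable in general (the preimage of $W$ in an affine $T\to Z$ need not be affine), so when you refine $V_i\times_{\stX}U$ by affine opens to produce a $0$-atlas you must use \emph{basic} (principal) affine opens of the ambient affine $U$, whose preimages in any affine $T$ are again principal opens and hence affine. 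With that adjustment the argument for (3) and for the $1$-geometricity claims goes through.
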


\begin{rmk}\label{rmk:dSt:mAS=AS}
By Fact \ref{fct:dSt:ase}, 
if $\stX$ in Fact \ref{fct:dSt:mAS=AS} is an algebraic space or a scheme, 
then we have $m=0$ and $\stX$ is $1$-geometric.
Thus we can depict the relation between sub-$\infty$-categories of $\iSt$ 
and subcategories of the category $\St_{\ord}$ of ordinary stacks as
\[
 \xymatrix{
    \Ner(\Sch)          \ar@{^{(}->}[r]   \ar@{=}[d]
  & \Ner(\AlgSp)        \ar@{^{(}->}[r]   \ar@{=}[d] 
  & \Ner(\AlgSt)        \ar@{^{(}->}[rrr] \ar@{^{(}->}[d]  & & 
  & \Ner(\St_{\ord})    \ar@{^{(}->}[d]^{a} \\
    \iSch               \ar@{^{(}->}[r] 
  & \iAlgSp_{n=1}^{m=0} \ar@{^{(}->}[r] 
  & \iArtin_{n=1}^{m=1} \ar@{^{(}->}[r] 
  & \iArtin_{n=1}       \ar@{^{(}->}[r] 
  & \iArtin = \Ho \iSt_{\geom}^{\trun} \ar@{^{(}->}[r]
  & \iSt
 }
\]
Here $\AlgSp$ and $\AlgSt$ denote the category of algebraic spaced 
and of algebraic stacks respectively.
The subscript $n$ in the second row denotes the geometricity of a stack, 
and the superscript $m$ denotes the truncation degree.
\end{rmk}

We close this part with the recollection of a quotient stack,
a standard example of algebraic stacks.

\begin{fct*}[{\cite[Example 8.1.12]{O:book}}]
Let $S$ be a scheme, $X$ be an algebraic space over $S$, 
and $G$ be a smooth group scheme over $S$. 
Assume $G$ acts on $X$.
Define $[X/G]$ to be the ordinary stack on $S$ (Definition \ref{dfn:Cl:ord-st})
whose objects are triples $(T,P,\pi)$ consisting of the following data.
\begin{itemize}[nosep]
\item 
$T \in \Sch_S$.
\item
$P$ is a $G \times_S T$-torsor on the big \'etale site $\gsET(T)$ 
(Definition \ref{dfn:Cl:ET}).
\item
$\pi: P \to X \times_S T$ is a $G \times_S T$-equivariant morphism 
of sheaves on $\Sch_T$.
\end{itemize}
Then $[X/G]$ is an algebraic stack.
A smooth covering of $[X/G]$ is given by $q: X \to [X/G]$,
which is defined by the triple $(S,G_X,\rho)$
with $G_X := G \times X$ the trivial $G$-torsor on $X$ and 
$\rho: G_X \to X$ the action map of $G$ on $X$.

The algebraic stack $[X/G]$ is called the \emph{quotient stack}.
\end{fct*}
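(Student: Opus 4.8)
The plan is to verify directly the three conditions defining an algebraic stack for $\stX := [X/G]$: that it is a stack for the big \'etale topology on $S$ (Definition \ref{dfn:Cl:ord-st}), that its diagonal $\Delta \colon \stX \to \stX \times_S \stX$ is representable by algebraic spaces with the required separation and finiteness properties, and that it admits a smooth surjective atlas. Checking that $\stX$ is a stack amounts to effectivity of descent data, since descent of morphisms is automatic: given an \'etale cover $\{T_i \to T\}$ in $\Sch_S$ and objects $(P_i,\pi_i)$ over $T_i$ with compatible gluing data, the $G \times_S T_i$-torsors $P_i$ glue to a torsor $P$ on $\gsET(T)$ by classical descent for torsors --- here one uses that $G \to S$ is smooth, so that \'etale and fppf $G$-torsors agree and nothing is lost by working over $\gsET(T)$ --- and the equivariant maps $\pi_i$ glue to $\pi \colon P \to X \times_S T$ by descent of morphisms of sheaves.

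The technical heart is the representability of $\Delta$. For $T \in \Sch_S$ and objects $\xi_a = (P_a,\pi_a)$, $a = 1,2$, over $T$, the fiber product $T \times_{\stX \times_S \stX,\,(\xi_1,\xi_2)} \stX$ is represented by the sheaf $\mathrm{Isom}_T(\xi_1,\xi_2)$ of $G \times_S T$-equivariant isomorphisms $P_1 \xr{\sim} P_2$ intertwining $\pi_1$ and $\pi_2$. Choosing an \'etale cover $T' \to T$ trivializing both $P_1$ and $P_2$, one gets a canonical identification of the sheaf of merely $G$-equivariant isomorphisms with $G \times_S T'$, under which $\mathrm{Isom}_T(\xi_1,\xi_2)|_{T'}$ becomes the equalizer of two morphisms $G \times_S T' \rightrightarrows X \times_S T'$; this equalizer is representable by an algebraic space over $T'$ (a scheme if $G \to S$ and $X \to S$ are), inheriting separation and finite-presentation properties from $G \to S$ and $X \to S$. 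By faithfully flat descent, $\mathrm{Isom}_T(\xi_1,\xi_2)$ is itself representable by an algebraic space over $T$ with the corresponding properties; in the generality of \cite{O} (cf.\ Remark \ref{rmk:dSt:LM-O}) one needs only quasi-separatedness of $\Delta$, which descends as well.

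Finally I would check that $q \colon X \to \stX$, given by $(G_X,\rho)$ with $G_X = G \times X$ and $\rho$ the action map, is a smooth surjective atlas. Given a test object $T \to \stX$ corresponding to $(P,\pi)$, a $T'$-point of $X \times_{\stX} T$ is a $T'$-point of $X$ together with an isomorphism of the associated trivial $G$-torsor with $P|_{T'}$ compatible with the maps to $X$ --- that is, precisely a $T'$-point of $P$; hence $X \times_{\stX} T \cong P$ as sheaves over $T$. Since $P$ is a torsor under the smooth group scheme $G \times_S T$, the projection $P \to T$ is \'etale-locally on $T$ isomorphic to $G \times_S T \to T$, hence smooth, and surjective since torsors are \'etale-locally trivial. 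Thus $q$ is representable, smooth and surjective; composing if necessary with an \'etale atlas $V \to X$ of the algebraic space $X$ by a scheme, we obtain a smooth covering of $\stX$ by a scheme, which in particular is an epimorphism of stacks. Combining the three points, $[X/G]$ is an algebraic stack with smooth covering $q \colon X \to [X/G]$.

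The step I expect to be the main obstacle is the second one: establishing the representability of the $\mathrm{Isom}$-sheaf of torsors and descending it carefully, together with the routine but essential observation that smoothness of $G$ makes the \'etale and fppf notions of $G$-torsor coincide, so that working over $\gsET(T)$ is harmless.
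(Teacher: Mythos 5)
The paper offers no proof of this statement — it is quoted verbatim as a Fact from \cite[Example 8.1.12]{O:book} — and your argument (descent for torsors under the smooth group $G$ for the stack condition, representability of the $\mathrm{Isom}$-sheaf via trivialization and an equalizer into $X$ for the diagonal, and the identification $X \times_{[X/G]} T \simeq P$ for the atlas) is exactly the standard proof given in that reference, and it is correct. The only caveat is not with your proof but with the statement as transcribed: for the diagonal to be quasi-compact and separated as Definition \ref{dfn:Cl:AlgSt} requires, one implicitly needs $G \to S$ to be separated and quasi-compact, which is assumed in the cited source.
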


\begin{dfn}\label{dfn:St:BG}
For a smooth group scheme $G$ over a scheme $S$,
the \emph{classifying stack} $B G$ of $G$ is defined to be 
the quotient stack $[S/G]$ where $G$ acts trivially on $S$.
\end{dfn}

In the case $S=\Spec k$,
the classifying stack $B G$ is a $1$-geometric Artin $1$-stack
by Fact \ref{fct:dSt:ase}.

%%%%%%%%%%%%%%%%%%%%%%%%%%%%%%%%%%%%%%%%%%%%%%%%%%%%%%%%%%%%%%%%%%%%%%%%%%%%%%%%
%%%%%%%%%%%%%%%%%%%%%%%%%%%%%%%%%%%%%%%%%%%%%%%%%%%%%%%%%%%%%%%%%%%%%%%%%%%%%%%%
\subsection{Derived stacks}
\label{ss:dSt:dSt}

In this subsection we present a summary of derived stacks.
As in the previous \S \ref{ss:dSt:St}, 
our exposition will be given in the $\infty$-categorical language.
%We will use slightly different symbols from \cite{T14}. 
%Let us remark that the fundamentals of the theory is developed in \cite{TVe1,TVe2}
%via the model theoretic language.

\subsubsection{The $\infty$-category of simplicial modules}
\label{sss:dSt:isMod}

This part is a preliminary for arguments in a simplicial setting.

Let $k$ be a commutative ring. 
We denote by $\Mod_k$ the category of $k$-modules.
We also denote by $\sMod_k$ the category of simplicial $k$-modules, 
i.e., of functors $\CS^{\op} \to \Mod_k$.
The category $\sMod_k$ is equipped with the model structure 
given by the Kan model structure on the underlying simplicial sets.
We denote by $\sMod_k^{\circ} \subset \sMod_k$ the full subcategory
of fibrant-cofibrant objects.
Now recall the underlying $\infty$-category of a simplicial model category 
(Definition \ref{dfn:ic:um}).
Using this notion, we introduce

\begin{dfn*}%\label{dfn:stb:isMod}
The underlying $\infty$-category
\[
 \isMod_k := \Nsp(\sMod_{k}^{\circ}) 
\]
is called the \emph{$\infty$-category of simplicial $k$-modules}.
\end{dfn*}

Let us give another description of the $\infty$-category $\isMod_k$.
We will use basic notions on dg-categories 
(see \S \ref{ss:mod:dg} for a short account).
We denote by $\C^-(k)$ 
the dg-category of non-positively graded complexes of $k$-modules.
Taking the dg-nerve (Definition \ref{dfn:stb:Ndg}),
we have an $\infty$-category $\Ndg(\C^-(k))$.
We then have an equivalence of $\infty$-categories
\[
 \Ndg(\C^-(k)) \simeq \isMod_k.
\]
This equivalence is given by the Dold-Kan correspondence 
\cite[Chap.\ III, \S 2]{GJ}, \cite[Theorem 1.2.3.7]{Lur2}.

The standard tensor product on $\C^{-}(k)$ induces 
an $\infty$-operad structure on $\isMod_k$ in the sense of \cite{Lur2}.
We will often denote the obtained $\infty$-operad $\isMod_k^{\otimes}$ 
by the simplified symbol $\isMod_k$. 

%%%%%%%%%%%%%%%%%%%%%%%%%%%%%%%%%%%%%%%%%%%%%%%%%%%%%%%%%%%%%%%%%%%%%%%%%%%%%%%%
\subsubsection{Derived rings}
\label{sss:dSt:dring}

Following \cite[\S2.2]{T14} 
we introduce the $\infty$-category of affine derived schemes.

Let us fix a commutative ring $k$.
As in the previous \S \ref{ss:dSt:St}, 
we denote by $\Com_k$ the category of commutative $k$-algebras.

\begin{dfn}\label{dfn:dSt:dring}
A simplicial object in $\Com_k$, 
in other words, a functor $\CS^{\op} \to \Com_k$, 
is called a \emph{simplicial commutative $k$-algebra} 
or a \emph{derived $k$-algebra}.
In the case $k=\bbZ$, we call it a \emph{derived ring}.
We denote by $\sCom_k$ the category of derived $k$-algebras.
\end{dfn}

We can associate to a derived $k$-algebra $A$ a commutative graded $k$-algebra
\[
 \pi_*(A) = \oplus_{n \in \bbN} \pi_n(A), 
\]
where the base point is taken to be $0 \in A$.
%For a simplicial set $S$, the fundamental group is denoted by $\pi_0(S)$,
%and for a vertex $x$ of $S$ and $n \in \bbZ_{>0}$,
%the $n$-th homotopy group based at $x$ is denoted by $\pi_n(S,x)$.
See \cite[\S 1.7]{GJ} for the definition of homotopy groups of simplicial sets.
In particular, we have a commutative $k$-algebra $\pi_0(A)$.

The category $\sCom_k$ has a model structure induced by the Kan model structure 
(Fact \ref{fct:ic:Km}) of the underlying simplicial sets.
Then, regarding $\sCom_k$ as a simplicial model category 
(Definition \ref{dfn:ic:spl-model}),
we have the underlying $\infty$-category $\Nsp(\sCom_k^{\circ})$
(Definition \ref{dfn:ic:um}).
Here $\sCom_k^{\circ} \subset \sCom_k$ is the full subcategory 
spanned by fibrant-cofibrant objects with respect to the Kan model structure,
and $\Nsp(-)$ is the simplicial nerve construction (Definition \ref{dfn:ic:Nsp}).

\begin{dfn}\label{dfn:dSt:isCom}
The obtained $\infty$-category 
\[
 \isCom_k := \Nsp(\sCom_k^{\circ})
\]
is called the \emph{$\infty$-category of derived $k$-algebras}.
\end{dfn}

\begin{rmk*}
Here is another description of $\isCom_k$.
Consider the set $W$ of weak equivalences 
in the model category $\sCom_k$ in the above sense.
%By Remark \ref{rmk:ic:cat-inf}, $\sCom_k$ can be regarded as an $\infty$-category.
We apply the $\infty$-localization (Definition \ref{dfn:ic:loc}) to 
the pair $(\Ner(\sCom_k),W)$, and have an equivalence of $\infty$-categories
\[
 \isCom_k \simeq \Ner(\sCom_k)[W^{-1}].
\]
\end{rmk*}

\begin{rmk}\label{rmk:dSt:dag/sag:1}
We have a functor 
\[
 \isCom_k \longto \iCAlg_k^{\cn}
\]
of $\infty$-categories, which is an equivalence if $k$ is a $\bbQ$-algebra.
The target $\iCAlg_k^{\cn}$ is the $\infty$-category of connected 
\emph{commutative ring spectra},
which is a foundation of Lurie's spectral algebraic geometry \cite{Lur5}--\cite{Lur14}.
See \ref{ss:sp:ring} for the detail.
\end{rmk}

\begin{dfn}\label{dfn:dSt:idAff}
\begin{enumerate}
\item 
The $\infty$-category $\idAff_k$ of \emph{affine derived schemes over $k$}
is defined to be
\[
 \idAff_k := (\isCom_k)^{\op}, 
\]
the opposite $\infty$-category of $\isCom_k$.
The object in $\idAff_k$ corresponding to $A \in \isCom_k$ 
will be denoted by $\dSpec A$ and called the \emph{affine derived scheme} of $A$.
A morphism in $\idAff_k$ will be called a \emph{morphism of affine derived schemes}.

\item
For an affine derived scheme $U = \dSpec A$ over $k$,
the morphism $U \to \dSpec k$ of affine derived schemes corresponding to $k \to A$
is called the \emph{structure morphism} of $U$.
\end{enumerate}
\end{dfn}

In terms of the $\infty$-operad $\isMod_k^{\otimes}$ (\S \ref{sss:dSt:isMod}), 
a derived $k$-algebra is nothing but 
a commutative ring object in $\isMod_k^{\otimes}$.

One may infer a relation between the $\infty$-categories $\idAff_k$ and 
$\iAff_k$ in the previous \S \ref{ss:dSt:St}.
We postpone the explanation to \S \ref{sss:dSt:trunc}.

Let us now introduce some classes of morphisms of affine derived schemes.

\begin{dfn}[{\cite[\S 2.2.2]{TVe2}}]
\label{dfn:dSt:dAff:mor}
\begin{enumerate}[nosep]
\item
A morphism $A \to B$  in $\isCom_k$ is \emph{\'etale} (resp.\ \emph{smooth}, 
resp.\ \emph{flat}) if the following two conditions are satisfied.
\begin{itemize}[nosep]
\item 
the induced morphism $\pi_0(A) \to \pi_0(B)$ is 
an \'etale (resp.\ smooth, resp.\ flat) morphism of commutative $k$-algebras,
\item
the induced morphism $\pi_i(A) \otimes_{\pi_0(A)} \pi_0(B) \to \pi_i(B)$
is an isomorphism for any $i$.
\end{itemize}

\item
A morphism $X \to Y$ in the $\infty$-category $\idAff_k$ of affine derived schemes 
is called \emph{\'etale} (resp.\ \emph{smooth}, resp.\ \emph{flat}) 
if the corresponding one in $\isCom_k$ is so.
\end{enumerate}
\end{dfn}

For later use we also introduce the notion of a finitely presented morphism.
We need some terminology.
\begin{itemize}[nosep]
\item 
By a \emph{filtered system} $\{C_i\}_{i \in I}$ of objects 
in an $\infty$-category $\iC$, we mean a diagram in $\iC$ indexed by a 
$\kappa$-filtered poset $I$ with some regular cardinal $\kappa$.

\item
$\iC_{C/}$ denotes the under-$\infty$-category 
of an $\infty$-category $\iC$ under an object $C \in \iC$ 
(Definition \ref{dfn:ic:o/u}).

\item
$\Map_{\iC}(-,-)$ denotes the mapping space for an $\infty$-category $\iC$
(Definition \ref{dfn:ic:Map}),
which is an object of the homotopy category $\topH$ of spaces 
(Definition \ref{dfn:ic:topH}).
\end{itemize}

\begin{dfn}[{\cite[Definition 1.2.3.1]{TVe2}, \cite[\S 2.3]{TVa}}]
\label{dfn:dSt:fin-pr}
\begin{enumerate}[nosep]
\item 
A morphism $f: A \to B$ in $\isCom_k$ is called \emph{finitely presented}
if for any filtered system $\{C_i\}_{i \in I}$ of objects in $(\isCom_k)_{A/}$
the natural morphism
\[
 \iclim_{i \in I} \Map_{(\isCom_k)_{A/}}(B,C_i) \longto 
 \Map_{(\isCom_k)_{A/}}(B, \iclim_{i \in I} C_i)
\]
is an isomorphism in $\topH$.

\item
A derived $k$-algebra $A \in \isCom_k$ is called \emph{finitely presented}
or \emph{of finite presentation}
if the morphism $k \to A$ is finitely presented in the sense of (1).
\end{enumerate}
\end{dfn}

%We cite from a useful criterion for finite presentation.
%
%\begin{fct*}[{\cite[Proposition 2.2.2.4]{TVe2}}]
%A morphism $f: A \to V$ in $\isCom_k$ is finitely presented if and only if
%the following two conditions hold.
%\begin{enumerate}[nosep, label=(\roman*)]
%\item
%The morphism $\pi_0(A) \to \pi_0(B)$ is a finitely presented morphism
%of commutative rings. 
%\item
%The cotangent complex
%\end{enumerate}
%\end{fct*}

%%%%%%%%%%%%%%%%%%%%%%%%%%%%%%%%%%%%%%%%%%%%%%%%%%%%%%%%%%%%%%%%%%%%%%%%%%%%%%%%
\subsubsection{Derived stacks}%\label{sss:dSt}

In \cite[Chap.\ 2.2]{TVe2} derived stacks are defined 
in terms of homotopical algebraic geometry context.
Here we give a redefinition in terms of the theory of $\infty$-topos
following \cite[\S 3.2]{T14}.

%Using \'etale morphisms we can endow $\idAff_k$ with a Grothendieck topology.
We begin with giving $\idAff_k$ an $\infty$-categorical Grothendieck topology 
(\S \ref{ss:pr:inf-site}).

\begin{dfn}[{\cite[\S 2.2.2]{TVe2}}]\label{dfn:dAff:et-cov}
A family $\{\dSpec A_i \to \dSpec A\}_{i \in I}$ of morphisms in $\idAff_k$ 
is called an \emph{\'etale covering} if the following conditions are satisfied.
\begin{itemize}[nosep]
\item 
For each $i \in I$, the morphism 
$\pi_*(A) \otimes_{\pi_0(A)}\pi_0(A_i) \to \pi_*(A_i)$
is an isomorphism of graded rings.
%Each $A \to A_i$ is an \'etale morphism.
\item
There exists a finite subset $J \subset I$ such that 
the induced morphism $\coprod_{j \in J}\Spec \pi_0(A_j) \to \Spec \pi_0(A)$
of affine schemes is a surjection.
\end{itemize}
\end{dfn}

Let us note that this definition is equivalent to the one 
in \cite[Definition 2.2.2.12]{TVe2} by the argument given there.

By the argument in \cite[\S 2.2.2]{TVe2},
\'etale coverings give a Grothendieck topology on $\idAff_k$
in the sense of Definition \ref{dfn:ic:inf-site}.
In particular, 
\'etale coverings are stable under pullbacks.

\begin{dfn*} %\label{dfn:et}
The obtained $\infty$-site is called the \emph{\'etale $\infty$-site} 
and denoted by $(\idAff_k, \et)$  
\end{dfn*}

Then, as in Definition \ref{dfn:dSt:iSt}, we can introduce

\begin{dfn}\label{dfn:dSt:idSt}
The \emph{$\infty$-category of derived stacks over $k$} is defined to be
\[
 \idSt_k := \iSh(\idAff_k,\et).
\]
Its object is called a \emph{derived stack}.
\end{dfn}

\begin{rmk*}
%\item
In \cite{TVe2} the model category corresponding to $\idSt_k$ is denoted
by $D^{-}\mathrm{St}(k)$ and its object is called a $D^-$-stack. 
%, and in \cite{T14} the symbol $\mathbf{dAff_k}^{\sim,\et}$ is used.
%\item
See also Remark \ref{rmk:dSt:stack} on the difference 
of our terminology on derived stacks 
and that on $D^-$-stacks given in \cite{TVe2}.
\end{rmk*}

The $\infty$-category $\idSt_k$ has similar properties to $\iSt_k$.
For example, by Fact \ref{fct:pr:sh} and Corollary \ref{cor:ic:iSh:hc}, we have

\begin{fct}\label{fct:idSt:hyper}
$\idSt_k$ is a hypercomplete $\infty$-topos (Definition \ref{dfn:ic:hcpl}).
\end{fct}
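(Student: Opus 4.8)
The plan is to read off the statement directly from the definition $\idSt_k := \iSh(\idAff_k,\et)$ together with the two results already cited, so the argument is essentially bookkeeping rather than new mathematics.

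First I would note that $(\idAff_k,\et)$ is a bona fide $\infty$-site in the sense of Definition \ref{dfn:ic:inf-site}: this was recorded just above, since étale coverings of affine derived schemes are stable under pullback and satisfy the local character by the argument of \cite[\S 2.2.2]{TVe2}. Hence Fact \ref{fct:pr:sh}(2) applies verbatim and identifies $\iSh(\idAff_k,\et)$ with the associated $\infty$-topos of this $\infty$-site; in particular $\idSt_k$ is an $\infty$-topos. Equivalently, one may invoke Definition \ref{dfn:pr:it} directly, writing $\iSh(\idAff_k,\et) = \iPSh(\idAff_k)[S_{\et}^{-1}]$ as an accessible left exact localization of the presheaf $\infty$-topos $\iPSh(\idAff_k)$.

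For the hypercompleteness assertion I would apply Corollary \ref{cor:ic:iSh:hc}, exactly as was done in the non-derived setting to show that $\iSh(\iAff_k,\et)$ is hypercomplete. The only point that needs checking is that the hypotheses of that corollary are met by $(\idAff_k,\et)$ in place of $(\iAff_k,\et)$; concretely this reduces to the étale topology on $\idAff_k$ satisfying the relevant boundedness condition, and since an étale covering of $\dSpec A$ is by Definition \ref{dfn:dAff:et-cov} detected on $\Spec\pi_0(A)$, this follows from the corresponding statement already used for ordinary affine schemes.

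The main --- and essentially only --- obstacle is therefore expository: ensuring that Corollary \ref{cor:ic:iSh:hc} is stated at a level of generality covering the étale $\infty$-site on derived affine schemes, so that the derived case requires no argument beyond what was given for $\iAff_k$. Granting this, the statement is immediate.
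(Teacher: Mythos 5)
Your argument is exactly the paper's: the statement is deduced directly from Fact \ref{fct:pr:sh} and Corollary \ref{cor:ic:iSh:hc} applied to the \'etale $\infty$-site $(\idAff_k,\et)$. Note that Corollary \ref{cor:ic:iSh:hc} is stated for an arbitrary $\infty$-site, so the extra ``boundedness'' verification you flag in your last two paragraphs is not required by anything in the paper's formulation.
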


We also have

\begin{fct}[{\cite[Lemma 2.2.2.13]{TVe2}}]\label{fct:dSt:et:qcpt}
%The Grothendieck topology $\et$ on $\idAff_k$ is quasi-compact.
%The $\infty$-site $(\idAff_k, \et)$ is quasi-compact.
The $\infty$-topos $\idSt_k$ is quasi-compact (Definition \ref{dfn:pr:qcpt}).
\end{fct}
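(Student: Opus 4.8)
The goal is to show that the $\infty$-topos $\idSt_k = \iSh(\idAff_k,\et)$ is quasi-compact in the sense of Definition \ref{dfn:pr:qcpt}, i.e. that every covering of $\idSt_k$ admits a finite subcovering. Unwinding the definitions, a covering of $\idSt_k$ is an effective epimorphism $\coprod_{i \in I} U_i \to \one_{\idSt_k}$, and since $\idAff_k$ admits a terminal object $\dSpec k$ whose associated representable sheaf is $\one_{\idSt_k}$, this amounts to a jointly effective-epimorphic family of morphisms $U_i \to \dSpec k$ in $\idSt_k$. The plan is to reduce to the case where each $U_i$ is representable, then to exhibit the finite subcovering directly from the definition of the \'etale $\infty$-topology (Definition \ref{dfn:dAff:et-cov}), whose very formulation builds in a finiteness (quasi-compactness) condition on covering families.

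First I would recall, following \cite[\S 6.2.3]{Lur1}, the characterization of effective epimorphisms in an $\infty$-topos in terms of $\pi_0$ (cf.\ Remark \ref{rmk:dSt:epi-mono}): the family $\{U_i \to \one_{\idSt_k}\}$ is jointly an effective epimorphism precisely when the induced map $\coprod_i \pi_0(U_i) \to \pi_0(\one_{\idSt_k}) = \one$ is an epimorphism of sheaves of sets on $\idAff_k$. Using that the representables $\yon(\dSpec A)$ generate $\idSt_k$ under colimits (Fact \ref{fct:ic:iPSh:gen} together with the sheafification functor of Fact \ref{fct:pr:sh}), for each $i$ I would choose an \'etale-local surjection onto $U_i$ from a coproduct of representables; composing, the whole family is refined by a jointly effective-epimorphic family $\{\yon(\dSpec A_j) \to \one_{\idSt_k}\}_{j \in J}$ of representables. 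A finite subcovering of the representable family will, after tracing back, yield one of the original family, so it suffices to treat the representable case.

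Next, for the representable family: a jointly effective-epimorphic family $\{\dSpec A_j \to \dSpec k\}_{j \in J}$ in $\idSt_k$ means, by the sheaf condition, that the sieve it generates on $\dSpec k$ is a covering sieve for the \'etale $\infty$-topology. By the construction of that topology (Definition \ref{dfn:dAff:et-cov}) together with the fact that a sieve is covering iff it contains an \'etale covering family, there is a finite subset $J_0 \subset J$ and morphisms factoring an \'etale covering $\{\dSpec B_\ell \to \dSpec k\}$ through $\{\dSpec A_j\}_{j \in J_0}$; since by Definition \ref{dfn:dAff:et-cov} an \'etale covering is by fiat required to contain a \emph{finite} sub-family inducing a surjection on $\Spec \pi_0$, the subfamily indexed by $J_0$ is itself a covering. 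Translating back through the refinement and through $\pi_0$, this produces the desired finite subcovering of the original $\{U_i\}_{i \in I}$.

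The main obstacle I anticipate is the bookkeeping in the reduction step: one must check that refining an arbitrary covering family by representables, and then extracting a finite subcovering downstairs, genuinely yields a finite subcovering of the \emph{original} indexing set $I$ — i.e.\ that finiteness is not lost or spuriously introduced when passing between a covering and its representable refinement. This is handled by noting that each $\yon(\dSpec A_j)$ factors through some single $U_{i(j)}$, so a finite $J_0$ selects a finite set $\{i(j) : j \in J_0\} \subseteq I$; one then verifies, again via the $\pi_0$-criterion for effective epimorphisms, that this finite subfamily of the $U_i$ is still jointly an effective epimorphism onto $\one_{\idSt_k}$. Everything else is a direct appeal to Definition \ref{dfn:dAff:et-cov}, Fact \ref{fct:pr:sh}, and the standard dictionary between covering sieves and covering families; this is essentially the $\infty$-categorical transcription of \cite[Lemma 2.2.2.13]{TVe2}.
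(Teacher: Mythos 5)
Your argument is correct. Note that the paper offers no proof of this statement at all: it is recorded as a \emph{Fact} and simply cited from To\"en--Vezzosi \cite[Lemma 2.2.2.13]{TVe2}. Your route — reduce a covering of $\one_{\idSt_k}=\yon(\dSpec k)$ to a jointly effective-epimorphic family of representables via the $\pi_0$-criterion and Corollary \ref{cor:ic:LE:colim-rep}, identify such a family with a covering sieve (Fact in \S\ref{ss:it:yon-topos} / \cite[Lemma 6.2.2.16]{Lur1}), and then invoke the finiteness clause built into Definition \ref{dfn:dAff:et-cov} before tracing the finite subfamily back through the refinement — is exactly the standard argument underlying the cited lemma, and the bookkeeping you describe for returning to the original index set $I$ is the right one.
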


As in the case of stacks, we have the $\infty$-theoretic Yoneda embedding
(Definition \ref{dfn:pr:Yoneda:iSh})
\[
 \yon: \idAff_k \longinj \idSt_k. %, \quad X \longmapsto \Map_{\idAff}(-,X).
\]

\begin{dfn*}
A derived stack in the essential image of $j$ is called 
a \emph{representable derived stack}.
\end{dfn*}

Hereafter, 
regarding $\idAff_k$ as a sub-$\infty$-category of $\idSt_k$ 
by the Yoneda embedding $\yon$,
we consider an affine derived scheme as a derived stack.
Thus every notion on affine derived schemes 
such as in Definition \ref{dfn:dSt:dAff:mor}
can be transfered to that on representable derived stacks.

For each $\stX \in \idSt_k$, we have a morphism $\stX \to \dSpec k$ in $\idSt_k$,
which will be called the \emph{structure morphism} of $\stX$.

Let us introduce some notions on morphisms of derived stacks.
A \emph{morphism} between derived stacks 
means a morphism in the $\infty$-category $\iSt_k$.
We then have the notion of \emph{monomorphisms} (Definition \ref{dfn:ic:mono}).
We also have the notion of \emph{effective epimorphism} 
(Definition \ref{dfn:it:eff-epi}) since $\iSt_k$ is an $\infty$-topos.

\begin{dfn}\label{dfn:dSt:epi-mono}
A \emph{monomorphism} or an \emph{injection} of derived stacks is defined to be 
a monomorphism in $\idSt_k$.
A \emph{epimorphism} or a \emph{surjection} of derived stacks is defined to be 
an effective epimorphism in $\idSt_k$.
\end{dfn}

\begin{rmk*}
As in the case of stacks (Remark \ref{rmk:dSt:epi-mono}), 
we can restate this definition more explicitly.
For a derived stack $\stX$, we denote by 
$\pi_0(\stX) \in \Fun((\idAff_k)^{\op},\Set)$ 
the sheaf of sets obtained by taking $\pi_0$.
For a morphism $f: \stX \to \stY$ of derived stacks, we denote by $\pi_0(f)$ 
the induced morphism $\pi_0(\stX) \to \pi_0(\stY)$ of sheaves of sets.
Then
\begin{itemize}[nosep]
\item
$f$ is a monomorphism  if and only if the induced morphism $\pi_0(\Delta_f)$ of 
$\Delta_f: \stX \to \stX \times_{f, \stY, f} \stX$ is an isomorphism
in $\Fun((\idAff_k)^{\op},\Set)$.

\item
$f$ is an epimorphism if and only if the induced morphism $\pi_0(f)$
is an epimorphism in the category $\Fun((\idAff_k)^{\op},\Set)$.
%where $(\idAff_k)^{\op}$ is regarded as an ordinary category.

%\item
%$f$ is called \emph{quasi-compact} if for any representable derived stack $U$
%and any morphism $U \to \stY$ there exists a finite family $\{U_i\}_{i \in I}$
%of representable derived stacks and an epimorphism
%$\coprod_{i \in I} U_i \to \stX \times_{\stY} U$.
\end{itemize}
Again, our convention on monomorphisms and epimorphisms 
is consistent with \cite[Definition 1.3.1.2]{TVe2}.
\end{rmk*}

Let us also introduce

\begin{dfn}[{\cite[Definition 1.3.6.4 (2)]{TVe2}}]\label{dfn:dSt:qcpt}
A morphism $f: \stX \to \stY$ in $\idSt_k$ is \emph{quasi-compact}
if for any morphism $p: U \to \stY$ from an affine derived stack $U$,
there exists a finite family $\{U_i\}_{i \in I}$ of affine derived stacks
and a surjection $\coprod_{i \in I} U_i \to \stX \times_{f,\stY,p} U$. 
\end{dfn}

In the most of the following sections we will work on a fixed commutative ring,
but in \S \ref{s:pv} we need a base change.
For that, let us consider a field $k$ and an extension  $L$ of $k$.
Then, for a derived stack $\stX \in \idSt_k$,
the fiber product $\stX \times_{\dSpec k} \dSpec_L$ 
in $\idSt_k$ defines a derived stack over $L$.

\begin{ntn}\label{ntn:dSt:XL}
We denote the fiber product $\stX \times_{\dSpec k} \dSpec_L \in \idSt_L$
by $\stX_L$ or $\stX \otimes_{k} L$.
\end{ntn}

%%%%%%%%%%%%%%%%%%%%%%%%%%%%%%%%%%%%%%%%%%%%%%%%%%%%%%%%%%%%%%%%%%%%%%%%%%%%%%%%
\subsubsection{Geometric derived stacks}
\label{ss:dSt:gdSt}

Following \cite[Chap.\ 2.2]{TVe2} and \cite[\S 2.3]{TVa} 
we recall the notion of geometric derived stack.

\begin{dfn}[{\cite[Definition 1.3.3.1]{TVe2}}]\label{dfn:dSt:geom}
For $n \in \bbZ_{\ge -1}$, an \emph{$n$-geometric stack} is an object in $\idSt_k$
defined in the same way as in Definition \ref{dfn:St:geom} 
with the replacement of ``representable stack" by ``representable derived stack".
An \emph{$n$-atlas}, an \emph{$n$-representable morphism} and 
an \emph{$n$-smooth morphism} of derived stacks are inductively defined in the same way.
\end{dfn}

\begin{rmk}\label{rmk:dSt:geom}
\begin{enumerate}[nosep]
\item
In the recursive definition of $n$-smoothness of morphisms,
we use Definition \ref{dfn:dSt:dAff:mor} for smoothness of morphisms in $\idAff$.

\item 
In \cite[\S1.3.4]{TVe2} it is explained that the condition (b) 
in the definition of $n$-geometric derived stacks follows from the condition (a).
Thus it is enough to assume the existence of $n$-atlas only.

\item
In \cite{T14} a geometric derived stack is called a derived Artin stack.
We will not use this terminology.
\end{enumerate}
\end{rmk}

By definition we can check the following statement
(see also Fact \ref{fct:dSt:mAS=AS} (3)).

\begin{lem}\label{lem:dSt:0-geom}
A derived stack $\stX$ is $0$-geometric if and only if it has an affine diagonal,
i.e. the diagonal morphism $\stX \to \stX \times \stX$ is $(-1)$-representable.
\end{lem}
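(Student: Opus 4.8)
The plan is to unwind the inductive definitions from Definition \ref{dfn:St:geom} (as transported to derived stacks in Definition \ref{dfn:dSt:geom}) and compare the two conditions characterizing $0$-geometricity against the stated condition on the diagonal. Recall that $\stX$ is $0$-geometric means: (a) the diagonal $\stX \to \stX \times \stX$ is $(-1)$-representable, and (b) there exists a $0$-atlas $\{U_i \to \stX\}_{i \in I}$, i.e.\ a small family with each $U_i$ a representable derived stack, each $U_i \to \stX$ a $(-1)$-smooth morphism, and $\coprod_i U_i \to \stX$ an epimorphism. By Remark \ref{rmk:dSt:geom}(2), condition (b) follows from condition (a), so $0$-geometricity is \emph{equivalent} to condition (a) alone, which is exactly the stated condition ``$\stX \to \stX \times \stX$ is $(-1)$-representable.'' So at this level the proof is essentially a pointer to Remark \ref{rmk:dSt:geom}(2).

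First I would make the ``only if'' direction explicit: if $\stX$ is $0$-geometric, then by definition (condition (a)) the diagonal is $(-1)$-representable, and unwinding Definition \ref{dfn:St:geom} for $n=-1$, this says that for any representable derived stack $U$ and any morphism $U \to \stX \times \stX$, the pullback $\stX \times_{\stX\times\stX} U$ is representable — which is what ``affine diagonal'' means here. This direction is immediate. Then for the ``if'' direction I would invoke Remark \ref{rmk:dSt:geom}(2): granting that the diagonal is $(-1)$-representable, one must produce a $0$-atlas. The construction is the standard one — cover $\stX$ by affines: choose an étale (in particular $(-1)$-smooth) covering family $\{U_i \to \stX\}$ with $U_i$ representable and $\coprod U_i \to \stX$ an epimorphism, which exists because $\idSt_k = \iSh(\idAff_k,\et)$ and every sheaf on an $\infty$-site is covered by representables (this uses Fact \ref{fct:ic:iPSh:gen}-type generation of the topos by representables plus the sheafification). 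The point of Remark \ref{rmk:dSt:geom}(2) (equivalently \cite[\S1.3.4]{TVe2}) is precisely that once the diagonal is $(-1)$-representable the morphisms $U_i \to \stX$ in such a covering are automatically $(-1)$-representable, hence the étale ones among them are $(-1)$-smooth, giving the required atlas.

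The main obstacle — or rather the only non-formal input — is the implication ``diagonal $(-1)$-representable $\Rightarrow$ a smooth atlas exists,'' i.e.\ why a morphism $U \to \stX$ from a representable $U$ becomes $(-1)$-representable once the diagonal is. This is the content cited in Remark \ref{rmk:dSt:geom}(2): given $V \to \stX$ with $V$ representable, the fiber product $U \times_{\stX} V$ fits into the cartesian square with $U \times V \to \stX \times \stX$ and $\stX \to \stX \times \stX$, so $U \times_{\stX} V \simeq (\stX \times_{\stX\times\stX} (U\times V))$, which is representable by the diagonal hypothesis applied to the morphism $U \times V \to \stX \times \stX$. Hence the only real step is this ``graph trick,'' and everything else is bookkeeping against Definition \ref{dfn:St:geom}. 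I would therefore write the proof as: (i) ``only if'' is the definition; (ii) ``if'' — use the graph-trick square above to see that any representable-source morphism to $\stX$ is $(-1)$-representable, combine with the existence of an étale cover of $\stX$ by affines to get a $0$-atlas, and conclude $\stX$ is $0$-geometric; citing Remark \ref{rmk:dSt:geom}(2) to shortcut most of (ii).
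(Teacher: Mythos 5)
Your ``only if'' direction is fine (it is literally condition (a) of Definition \ref{dfn:dSt:geom}), and the graph-trick identification $U \times_{\stX} V \simeq \stX \times_{\stX\times\stX}(U\times V)$ is indeed the right way to see that every morphism from a representable to $\stX$ becomes $(-1)$-representable once the diagonal is. The genuine gap is in the ``if'' direction, at the point where you ``choose an \'etale (in particular $(-1)$-smooth) covering family $\{U_i\to\stX\}$ with $U_i$ representable.'' The only existence statement you invoke is that every object of $\idSt_k=\iSh(\idAff_k,\et)$ is covered by representables (Corollary \ref{cor:ic:LE:colim-rep} / Fact \ref{fct:ic:iPSh:gen}). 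That does produce a jointly epimorphic family of morphisms from affines, but it gives no control over the individual morphisms $U_i\to\stX$: nothing forces them to be \'etale, smooth, or even flat after base change to affines, and $(-1)$-smoothness of the atlas maps is exactly what a $0$-atlas requires. Manufacturing such a family is the entire substantive content of geometricity and cannot be extracted from the diagonal condition: for instance the ind-affine sheaf $\iclim_N \dSpec k[x]/(x^N)$ (the formal completion of the affine line at the origin) has a monomorphic diagonal with affine pullbacks, yet admits no jointly surjective family of flat morphisms from affines, so the argument as you run it would prove too much. You have misidentified where the difficulty lies: the hard part is not upgrading given atlas maps to $(-1)$-representable ones (that is the easy graph trick), it is producing smooth atlas maps at all.

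The source of the trouble is your reading of Remark \ref{rmk:dSt:geom} (2). Its operative conclusion --- ``it is enough to assume the existence of an $n$-atlas only'' --- says that the \emph{diagonal} condition is the redundant one, being a consequence of the atlas condition (this is what \cite[\S 1.3.4]{TVe2} actually provides, via the Segal-groupoid description of Fact \ref{fct:dAS:Segal}); it does not assert that an atlas can be conjured from the diagonal condition. (The remark's first sentence does appear to have (a) and (b) transposed relative to its own conclusion, which may be what misled you, but the implication you need is not available from it under either reading.) For comparison, the paper gives no argument beyond ``by definition,'' pointing to Fact \ref{fct:dSt:mAS=AS} (3), where the stack in question is already an algebraic space and hence already comes equipped with an atlas; in that setting the affine diagonal is used only, via your graph trick, to upgrade the existing atlas maps to $(-1)$-representable (hence $(-1)$-smooth) ones. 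A repaired proof of the ``if'' direction must either restrict to stacks already known to be geometric and run that upgrading argument on an existing atlas, or else supply the missing construction of a smooth affine atlas --- which, for an arbitrary derived stack with affine diagonal, does not exist.
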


We will repeatedly use the following properties.

\begin{fct}[{\cite[Proposition 1.3.3.3, Corollary 1.3.3.5]{TVe2}}]
\label{fct:dSt:geom}
Let $n \in \bbZ_{\ge -1}$.
\begin{enumerate}[nosep]
\item
An ($n-1$)-representable (resp.\ ($n-1$)-smooth) morphism 
is $n$-representable (resp. $n$-smooth).
In particular, an $(n-1)$-geometric stack is $n$-geometric.
\item
The class of $n$-representable (resp.\ $n$-smooth) morphisms are stable 
under isomorphisms, pullbacks and compositions.
\item 
For $n \in \bbN$, the $\infty$-category of $n$-geometric derived stacks
is stable under pullbacks and taking small coproducts.
\end{enumerate}
\end{fct}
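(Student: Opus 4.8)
The plan is to prove (1) and (2) simultaneously by induction on $n$, reading off (3) along the way; at level $n$ one invokes only the corresponding statements at level $n-1$. First I would unwind Definition \ref{dfn:dSt:geom} (equivalently Definition \ref{dfn:St:geom} with ``representable stack'' replaced by ``representable derived stack''): $f\colon\stX\to\stY$ is $n$-representable exactly when $\stX\times_{\stY}U$ is $n$-geometric for every representable $U\to\stY$, it is $n$-smooth when moreover $\stX\times_{\stY}U$ admits an $n$-atlas whose structure maps to $U$ are smooth morphisms of representables, and $\stX$ is $n$-geometric when its diagonal is $(n-1)$-representable and it admits an $n$-atlas. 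The base case is routine: representable derived stacks are closed under fibre products in $\idSt_k$ since $\dSpec A\times_{\dSpec C}\dSpec B\simeq\dSpec(A\otimes_{C}B)$, which gives stability of $(-1)$-representable morphisms under base change and composition, and shows that every representable derived stack has $(-1)$-representable diagonal, so is $0$-geometric by Lemma \ref{lem:dSt:0-geom}; hence every $(-1)$-geometric derived stack is $0$-geometric.

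For the inductive step, assume the Fact at level $n-1$. If $\stX$ is $(n-1)$-geometric then its diagonal is $(n-2)$-representable, hence $(n-1)$-representable by (1) at level $n-1$, and an $(n-1)$-atlas of $\stX$ has $(n-2)$-smooth structure maps, which are $(n-1)$-smooth by the same; therefore $\stX$ is $n$-geometric, which is the ``in particular'' clause of (1), and the fibrewise descriptions then immediately promote $(n-1)$-representable (resp.\ $(n-1)$-smooth) morphisms to $n$-representable (resp.\ $n$-smooth) ones. In (2), stability under equivalences and under base change is formal, using associativity of fibre products in the $\infty$-topos $\idSt_k$. The crux — and the step I expect to be the main obstacle — is stability under composition: for $\stX\xrightarrow{f}\stY\xrightarrow{g}\stZ$ with $f,g$ $n$-representable and $U\to\stZ$ representable, one has $\stX\times_{\stZ}U\simeq\stX\times_{\stY}(\stY\times_{\stZ}U)$ where $\stY\times_{\stZ}U$ is only known to be $n$-geometric. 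So one must first prove the auxiliary assertion that the base change of an $n$-representable morphism along \emph{any} map from an $n$-geometric derived stack is again $n$-geometric; I would do this by a secondary induction: take an $n$-atlas $\{V_i\}$ of $\stY\times_{\stZ}U$, base-change each $V_i$ along $f$ to representable stacks mapping to $\stX\times_{\stZ}U$ (representability being exactly the hypothesis on $f$), check that these form an $n$-atlas, and control the diagonal of $\stX\times_{\stZ}U$ through the diagonals of $f$ and of $\stY\times_{\stZ}U$ with the help of (1)--(2) at level $n-1$. The $n$-smooth case runs identically, propagating ``smooth'' along each arrow and using that smooth morphisms of representables compose and base-change.

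For (3), fix $n\in\bbN$. Given $n$-geometric $\stX,\stY$ over an $n$-geometric $\stZ$, the fibre products $U_i\times_{\stZ}T_j$ of members of $n$-atlases $\{U_i\}$ of $\stX$ and $\{T_j\}$ of $\stY$ are $(n-1)$-geometric (a base change of $\Delta_{\stZ}$, which is $(n-1)$-representable) hence $n$-geometric by (1); choosing $n$-atlases of these and composing yields an $n$-atlas of $\stX\times_{\stZ}\stY$ by the composition and base-change parts of (2), and the diagonal of $\stX\times_{\stZ}\stY$ is controlled, via the standard cartesian squares for diagonals of fibre products, by $\Delta_{\stX}$, $\Delta_{\stY}$, $\Delta_{\stZ}$, hence is $(n-1)$-representable by (2). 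For a small coproduct $\coprod_{i}\stX_i$ of $n$-geometric stacks, the disjoint union of $n$-atlases is again an $n$-atlas (its total map is still an epimorphism), and the diagonal decomposes as $\coprod_i\Delta_{\stX_i}$ with empty off-diagonal components, because coproducts in the $\infty$-topos $\idSt_k$ are disjoint (Fact \ref{fct:pr:Giraud}); this is $(n-1)$-representable by (2) and the base case. Throughout, Remark \ref{rmk:dSt:geom}(2) allows one to skip the explicit verification of $(n-1)$-representability of the diagonal and to check only the existence of atlases, which shortens each of these arguments.
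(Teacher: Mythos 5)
The paper does not prove this statement at all: it is recorded as a \emph{Fact} and attributed to \cite[Proposition 1.3.3.3, Corollary 1.3.3.5]{TVe2}, so there is no internal proof to compare against. Your simultaneous induction on $n$ is, in substance, exactly the argument of To\"en--Vezzosi in \cite[\S 1.3.3]{TVe2}, and the overall structure (base case via closure of representables under fibre products, the ``in particular'' clause of (1) driving the inductive step, the auxiliary claim that base change of an $n$-representable morphism along a map from an $n$-geometric stack stays $n$-geometric as the key to composition, and (3) deduced from (1)--(2)) is correct.

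One imprecision worth fixing in the composition step of (2): after base-changing the atlas members $V_i$ of $\stY\times_{\stZ}U$ along $f$, the stacks $V_i\times_{\stY}\stX$ are only \emph{$n$-geometric} --- that is precisely what $n$-representability of $f$ gives you --- not representable, so the parenthetical ``(representability being exactly the hypothesis on $f$)'' overstates the output and $\{V_i\times_{\stY}\stX\}$ is not yet an $n$-atlas. You must further choose an $n$-atlas $\{W_{ij}\}$ of each $V_i\times_{\stY}\stX$ and verify that the composites $W_{ij}\to V_i\times_{\stY}\stX\to\stX\times_{\stZ}U$ are $(n-1)$-smooth, using stability of $(n-1)$-smooth morphisms under pullback and composition from the inductive hypothesis; you carry out precisely this refinement in part (3), so the repair is routine. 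With that adjustment the argument goes through as in \cite{TVe2}.
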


%Recalling the definition of a smooth covering of a algebraic stack 
%(Definition \ref{dfn:Cl:AlgSt}),
%let us introduce 
%
%\begin{dfn}\label{dfn:dSt:sm-cov}
%Let $\stX$ be an $n$-geometric derived stack,
%and $\{U_i\}_{i \in I}$ be an $n$-atlas of $\stX$.
%The resulting morphism $U := \coprod_{i \in I} U \to \stX$ of derived stacks 
%is an $(n-1)$-smooth epimorphism.
%We call $U \to \stX$ a \emph{smooth covering} of $\stX$.
%\end{dfn}
%
%By Remark \ref{rmk:dSt:geom} (2),
%in order to show a given derived stack $\stX$ is $n$-geometric,
%it it enough to construct a smooth covering of $\stX$.

%Let $\stX$ be a geometric derived stack.
%Then for some $n \in \bbZ_{\ge-1}$ there is an $n$-atlas $\{U_i\}_{i \in I}$ 
%of $\stX$.
%We just call $\{U_i\}_{i \in I}$ an \emph{atlas} of $\stX$.
%In the case $n=-1$ 
%we define the atlas of $\stX$ to be the one-member family $\{\stX\}$.
%,which trivially satisfies the first and third defining properties of atlas
%in Definition \ref{dfn:dSt:geom}.

We next recall the following extended class of geometric derived stacks.

\begin{dfn}[{\cite[Definition 2.17]{TVa}}]
\label{dfn:dSt:lcgm}
A derived stack $\stX$ is called \emph{locally geometric} 
if $\stX$ is equivalent to a filtered colimit 
\[
 \stX \simeq \iclim_{i \in I} \stX_i
\]
of derived stacks $\{\stX_i \}_{i \in I}$ satisfying the following conditions.
\begin{itemize}[nosep]
\item
Each derived stack $\stX_i$ is $n_i$-geometric for some $n_i \in \bbZ_{\ge-1}$.
\item
Each morphism $\stX_i \to \stX_i \times_{\stX} \stX_i$ of derived stacks induced by
$\stX_i \to \stX$ is an equivalence in $\idSt_k$.
\end{itemize}
\end{dfn}

\begin{dfn}[{\cite[\S 2.3]{TVa}}]\label{dfn:dSt:lfp}
\begin{enumerate}[nosep]
%\item
%An affine derived scheme $X \simeq \dSpec A$ is called
%\emph{finitely presented} if for any filtered system $\{B_i\}$ in $\isCom$ 
%the morphism $\iclim_i \Map(A,B_i) \to \Map(A,\iclim_i B_i)$ is an equivalence.
\item
An $n$-geometric derived stack $\stX$ is called \emph{locally of finite presentation}
if it has an $n$-atlas $\{U_i\}_{i \in I}$ such that 
for each representable derived stack $U_i \simeq \dSpec A_i$ 
the derived $k$-algebra $A_i$ is finitely presented
(Definition \ref{dfn:dSt:fin-pr}).

\item
A locally geometric derived stack $\stX$ is \emph{locally of finite presentation} 
if writing $\stX \simeq \iclim_i \stX_i$ each geometric derived stack $\stX_i$
can be chosen to be locally of finite presentation in the sense of (1).
\end{enumerate}
\end{dfn}

The class of locally geometric derived stacks locally of finite presentation
contains the moduli stack of perfect dg-modules
by the following theorem of To\"{e}n and Vaqui\'{e} \cite{TVa}.
This fact is very important for us and 
will be explained in detail in \S \ref{ss:mod:g}.

\begin{fct}[{\cite[Theorem 3.6]{TVa}}]\label{fct:dSt:TVa}
For a dg-category $\catD$ over $k$ of finite type,
we have the moduli space $\stM(\catD)$ of perfect $\catD^{\op}$-modules,
which is a locally geometric derived stack locally of finite presentation.
\end{fct}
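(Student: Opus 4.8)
The statement is quoted verbatim as \cite[Theorem 3.6]{TVa}, so no independent proof is needed here; what follows is the plan one would follow to reconstruct To\"en--Vaqui\'e's argument in our $\infty$-categorical language. First I would make the moduli functor explicit: send $A \in \isCom_k$ to the maximal $\infty$-subgroupoid of the $\infty$-category of perfect $\catD^{\op} \otimes_k A$-dg-modules, equivalently to the classifying space of $A$-linear dg-functors $\catD^{\op} \to \Mod_A$ whose values assemble into a perfect module, and call this functor $\stM(\catD)\colon \isCom_k \to \iS$. The first point to check is that $\stM(\catD)$ is a derived stack, i.e.\ an object of $\idSt_k$ and not merely a presheaf: this follows because $A \mapsto \{\text{perfect } \catD^{\op}\otimes_k A\text{-modules}\}$ satisfies flat (in particular \'etale) hyperdescent -- the standard statement that perfect dg-modules form a stack -- and because passing to the maximal subgroupoid preserves limits; hyperdescent of $\idSt_k$ (Fact \ref{fct:idSt:hyper}) then does the rest.

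Second, I would reduce the geometricity claim to the case $\catD = k$. The hypothesis ``of finite type'' means $\catD$ is a compact object of the $\infty$-category of dg-categories localized at Morita equivalences, hence, by To\"en's structure theory of dg-categories, a retract in that localization of a finite homotopy colimit $\catD_0$ of the elementary ``cell'' dg-categories presenting $\dgCat_k$. Because a module over a homotopy colimit of dg-categories is a compatible family of modules over the pieces, the assignment $\catD \mapsto \stM(\catD)$ carries such finite colimits of dg-categories to finite limits of derived stacks and Morita equivalences to equivalences; thus $\stM(\catD)$ is a retract of a finite limit built from $\stM(k)$ and from $\stM$ of the other cells. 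The $\stM$ of each cell is in turn obtained from $\stM(k)$ by finite products and mapping-stack constructions -- a cell adds an object or a morphism, so its modules are finite diagrams of perfect complexes, and the ``stack of morphisms $E \to F$'' over a product of copies of $\stM(k)$ is a linear stack on a perfect complex -- so it stays locally geometric and locally of finite presentation. Invoking the stability properties of geometric stacks (Fact \ref{fct:dSt:geom}), together with the fact that relative affine derived schemes and linear stacks over such stacks remain such, one is reduced to proving the statement for $\stM(k)$, the derived moduli stack of perfect complexes of $k$-modules.

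Third, I would prove that $\stM(k)$ is locally geometric and locally of finite presentation by an amplitude filtration. Write $\stM(k) \simeq \iclim_{a \le b} \stM^{[a,b]}$, where $\stM^{[a,b]}$ is the open substack of perfect complexes of Tor-amplitude in $[a,b]$; this is a filtered colimit satisfying the monomorphism condition of Definition \ref{dfn:dSt:lcgm}, so it suffices to show each $\stM^{[a,b]}$ is $n$-geometric and locally of finite presentation for some $n \in \bbZ_{\ge -1}$. I would do this by induction on $b-a$: for $b=a$ one has $\stM^{[a,a]} \simeq \coprod_{r \ge 0} B\GL_r$ (finite projective modules of rank $r$ concentrated in one degree), a $1$-geometric Artin stack of finite presentation; for the inductive step, a perfect complex of amplitude $[a,b]$ sits in a triangle with a finite projective module in the extreme degree and a perfect complex of amplitude $[a,b-1]$, and organizing the choice of such a triangle realizes $\stM^{[a,b]}$ as a linear stack $\mathbf{V}(\shE)$ -- the relative derived spectrum of the symmetric algebra on a perfect complex $\shE$ encoding the relevant $\mathbb{R}\Hom$ -- over $\stM^{[a,b-1]} \times \stM^{[a,a]}$, which is $n$-geometric and locally of finite presentation by the inductive hypothesis and the stability properties above.

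I expect the main obstacle to be the inductive step just described: showing that the moduli stack of perfect complexes of a fixed bounded Tor-amplitude is geometric and locally of finite presentation. This is exactly where the deformation theory of perfect complexes is needed -- one must control the $\mathbb{R}\Hom$-complexes that appear, verify they are perfect over the base so that the linear-stack description applies, and carry out the finite-presentation bookkeeping. A second, milder difficulty is the reduction of the second paragraph, which rests on To\"en's characterization of finite-type dg-categories as retracts of finite cell objects and on the compatibility of $\stM(-)$ with the corresponding homotopy colimits.
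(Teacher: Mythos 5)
Your overall strategy coincides with the one the paper itself sketches in \S \ref{ss:mod:g} following To\"en--Vaqui\'e: verify descent, stratify $\stM_{\kdg}$ by Tor-amplitude, start the induction at $\stM_{\kdg}^{[a,a]} \simeq \coprod_{r} B\GL_r$, and treat a general finite-type $\catD$ by exhibiting $\stM_{\catD}^{[a,b]} \to \stM_{\kdg}^{[a,b]}$ as representable and locally of finite presentation via the finite cell presentation of the dg-algebra $B$ with $\Perf(\catD) \simeq \dgMod(B)^{\circ}$. Your first and second paragraphs, and the base case of the third, are in line with that argument.

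Your inductive step, however, contains a genuine error. You assert that $\stM^{[a,b]}$ \emph{is} a linear stack over $\stM^{[a,b-1]} \times \stM^{[a,a]}$. It is not: a perfect complex $P$ of amplitude $[a,b]$ admits many inequivalent presentations as the fiber of a morphism $u\colon Q \to R$ with $Q$ of smaller amplitude and $R$ a shifted vector bundle, so the object that is (relatively affine, hence) close to a linear stack over the product is the moduli $\stU$ of such morphisms, not $\stM^{[a,b]}$ itself, and the map $\stU \to \stM^{[a,b]}$, $u \mapsto \mathrm{fib}(u)$, has non-contractible fibers. Were your identification correct, $\stM^{[a,b]}$ would be relatively affine over an $(n-1)$-geometric base and the geometricity would not grow with the amplitude, contradicting the estimate $n = b-a+1$ of Remark \ref{rmk:mod:n}; the growth occurs precisely because one must quotient by the choices of presentation. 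The actual content of the inductive step is that $\stU \to \stM^{[a,b]}$ is a \emph{smooth surjection} from a geometric stack locally of finite presentation, i.e.\ it furnishes an $n$-atlas; establishing the smoothness of this map (the non-trivial point, \cite[Lemma 3.12]{TVa}) is exactly the deformation-theoretic work you flag as the main obstacle, but your sketch replaces it with an identification that does not hold.
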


For later use, we record 

\begin{dfn}[{\cite[Definition 1.3.6.2, Lemma 2.2.3.4]{TVe2}}]
\label{dfn:dSt:mor-Q}
Let $\bfQ$ be either of the following properties of morphisms in $\idAff_k$:
\[
 \bfQ = \text{flat, smooth, \'etale, locally of finitely presented}.
\]
A morphism $f: \stX \to \stY$ of derived stacks \emph{has property $\bfQ$}
if it is $n$-representable for some $n$ and if for any affine derived scheme $U$ 
and any morphism $U \to \stY$ there exists an $n$-atlas $\{U_i\}_{i \in I}$ 
of $\stX \times_{\stY} U$
such that each morphism $U_i \to \stX \times_{\stY} U \to U$ 
in $\idAff_k$ has property $\bfQ$.
\end{dfn}

In this definition the choice of $n$ is irrelevant 
by \cite[Proposition 1.3.3.6]{TVe2}.
We also note that for $\bfQ=\text{smooth}$, Definition \ref{dfn:dSt:mor-Q} of 
an $n$-representable morphism being a smooth morphism and 
Definition \ref{dfn:dSt:geom} of an $n$-smooth morphism are compatible.
%Finally we have 

\begin{fct}[{\cite[Proposition 1.3.6.3]{TVe2}}]\label{fct:morph:Q}
Let $\bfQ$ be one of the properties of morphisms in Definition \ref{dfn:dSt:mor-Q}.
\begin{enumerate}[nosep]
\item 
Morphisms of derived stacks having property $\bfQ$ are stable under equivalences,
compositions and pullbacks.
\item
Let $f: \stX \to \stY$ be a morphism of $n$-geometric derived stacks.
If there is an $n$-atlas $\{U_i \to \stY\}_{i \in I}$ such that 
each projection $\stX \times_{\stY} U_i \to U_i$ has property $\bfQ$,
then $f$ has property $\bfQ$.
\end{enumerate}
\end{fct}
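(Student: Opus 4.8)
The plan is to deduce both statements from the corresponding facts about morphisms of \emph{affine} derived schemes, where stability of $\bfQ$ under base change and composition --- and the containment of all smooth morphisms in the classes flat and locally of finite presentation --- are available directly (Definition \ref{dfn:dSt:dAff:mor}, and Definition \ref{dfn:dSt:fin-pr} for finite presentation). One freely uses: $n$-representable and $n$-smooth morphisms are stable under equivalence, base change and composition (Fact \ref{fct:dSt:geom}); a $(m-1)$-representable (resp.\ $(m-1)$-smooth) morphism is $m$-representable (resp.\ $m$-smooth), so finitely many such morphisms may be assumed $n$-representable for a common $n$; and effective epimorphisms of derived stacks are stable under base change and composition, as holds in any $\infty$-topos by the Giraud axioms (Fact \ref{fct:pr:Giraud}).

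For part (1): stability under equivalence follows from Fact \ref{fct:dSt:geom} after transporting atlases. For base change, let $f: \stX \to \stY$ have property $\bfQ$ and $\stY' \to \stY$ be arbitrary; given an affine $U \to \stY'$, compose to $U \to \stY$ and note $(\stX \times_{\stY} \stY') \times_{\stY'} U \simeq \stX \times_{\stY} U$, so the $n$-atlas of $\stX \times_{\stY} U$ with charts of property $\bfQ$ over $U$ provided by the hypothesis on $f$ simultaneously witnesses that $\stX \times_{\stY} \stY' \to \stY'$ has property $\bfQ$. For composition $\stX \xrightarrow{f} \stY \xrightarrow{g} \stZ$ and an affine $U \to \stZ$: using that $g$ has $\bfQ$, take an $n$-atlas $\{V_j\}$ of $\stY \times_{\stZ} U$ with each $V_j$ affine and $V_j \to U$ of property $\bfQ$; then for each $V_j$, base-change $f$ and use that $f$ has $\bfQ$ to take an $n$-atlas $\{W_{jk}\}$ of $\stX \times_{\stY} V_j \simeq (\stX \times_{\stZ} U) \times_{\stY \times_{\stZ} U} V_j$ with each $W_{jk}$ affine and $W_{jk} \to V_j$ of property $\bfQ$. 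Then $\{W_{jk}\}_{j,k}$ is an $n$-atlas of $\stX \times_{\stZ} U$ --- each $W_{jk} \to \stX \times_{\stZ} U$ is a composite of an atlas map with a base change of the smooth $V_j \to \stY \times_{\stZ} U$, hence smooth, and surjectivity is inherited through the two stages --- and each composite $W_{jk} \to V_j \to U$ has property $\bfQ$ since $\bfQ$ is closed under composition of affine morphisms; keeping the geometricity degrees uniform is routine bookkeeping.

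For part (2), which says $\bfQ$ may be checked on an atlas of the target, let $f: \stX \to \stY$ and the $n$-atlas $\{U_i \to \stY\}$ be as given, and let $V \to \stY$ be affine. Base-changing the atlas gives a smooth surjective family $\{V \times_{\stY} U_i \to V\}$; refining each $(n-1)$-geometric stack $V \times_{\stY} U_i$ by an affine atlas produces affine $V_{ij}$ with $V_{ij} \to V$ smooth and $\coprod_{i,j} V_{ij} \to V$ surjective. Since $V_{ij} \to \stY$ factors through $U_i$, we have $\stX \times_{\stY} V_{ij} \simeq (\stX \times_{\stY} U_i) \times_{U_i} V_{ij}$, and base-changing the property-$\bfQ$ morphism $\stX \times_{\stY} U_i \to U_i$ along the affine $V_{ij}$ (part (1)) yields an $n$-atlas $\{W_{ijk}\}$ of $\stX \times_{\stY} V_{ij}$ with each $W_{ijk}$ affine and $W_{ijk} \to V_{ij}$ of property $\bfQ$; as in part (1), $\{W_{ijk}\}_{i,j,k}$ is then an $n$-atlas of $\stX \times_{\stY} V$. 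When $\bfQ$ is flat, smooth, or locally of finite presentation, that class contains every smooth morphism and is closed under composition, so the composites $W_{ijk} \to V_{ij} \to V$ still have property $\bfQ$, and $f$ has property $\bfQ$.

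The main obstacle is the \'etale case of part (2): the composite of an \'etale with a smooth morphism is only smooth, so the atlas above exhibits $\stX \times_{\stY} V \to V$ only as smooth, not manifestly \'etale --- and indeed \'etaleness is not local on the source for the smooth topology. To close this gap I would use that \'etaleness is local on the \emph{target} for the smooth (equivalently fppf) topology: since $\stX \times_{\stY} V_{ij} \to V_{ij}$ is \'etale and $\{V_{ij} \to V\}$ is a smooth cover, descent for the relative cotangent complex along the induced smooth cover $\{\stX \times_{\stY} V_{ij} \to \stX \times_{\stY} V\}$ forces $\bbL_{(\stX \times_{\stY} V)/V} = 0$ (over each $V_{ij}$ the cotangent complexes over $V$ and over $V_{ij}$ differ only by the projective module $\bbL_{V_{ij}/V}$), and together with flatness and local finite presentation this shows $\stX \times_{\stY} V \to V$ is \'etale, hence a derived Deligne--Mumford stack over $V$ admitting an \'etale atlas over $V$. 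Equivalently, one may cite the ``local on source and target'' lemma for suitable classes of morphisms in the homotopical-algebraic-geometry context of \cite{TVe1}. This is the one point where the argument cannot stay purely formal, as it rests on the infinitesimal characterisation of \'etale morphisms via the cotangent complex.
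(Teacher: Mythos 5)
The paper offers no proof of this statement at all: it is quoted verbatim as a Fact from \cite[Proposition 1.3.6.3]{TVe2}, so there is no in-paper argument to measure yours against. Judged on its own terms, your argument is essentially correct and follows the same route as loc.\ cit.: reduce everything to the affine case via the atlas-based Definition \ref{dfn:dSt:mor-Q}, using that $(n-1)$-smooth/representable morphisms compose and base-change well (Fact \ref{fct:dSt:geom}) and that effective epimorphisms are stable under pullback and composition. The pullback and composition arguments in (1), and the refinement of the base-changed atlas in (2), are all sound, and your observation that the cases $\bfQ\in\{\text{flat},\text{smooth},\text{locally of finite presentation}\}$ of (2) close up formally because each of these classes contains all smooth morphisms and is closed under composition is exactly the right dichotomy.

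You also correctly isolate the one point that cannot be purely formal: the \'etale case of (2), where the constructed atlas only exhibits $\stX\times_{\stY}V\to V$ as smooth. Your proposed repair --- that \'etaleness is local on the target for the smooth topology, verified either via descent for the relative cotangent complex or by citing the locality lemmas of \cite{TVe1,TVe2} --- is the standard one and is in fact how To\"en--Vezzosi arrange matters (their framework requires $\bfQ$ to be local for the topology, and the verification of this axiom for \'etale morphisms of derived rings is where the cotangent-complex input enters, cf.\ \cite[\S 2.2.2]{TVe2}). The only caveat worth recording is that the equivalence between the homotopy-group characterisation of \'etale in Definition \ref{dfn:dSt:dAff:mor} and the characterisation ``flat, locally of finite presentation, and $\bbL=0$'' is itself a nontrivial theorem of \cite{TVe2} not developed in this paper, so in the present context the cleanest course is simply to cite \cite[Lemma 2.2.3.4, Proposition 1.3.6.3]{TVe2} for that step rather than reprove it.
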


Next we introduce the relative dimension of a smooth morphism.
One can check the following is well-defined using Fact \ref{fct:morph:Q}
and Definition \ref{dfn:dSt:dAff:mor}.

\begin{dfn}\label{dfn:dSt:reldim}
A morphism $f: \stX \to \stY$ of derived stacks is 
\emph{smooth of relative dimension $d$} if it satisfies the following condition.
\begin{itemize}[nosep]
\item
For any $U \in \idAff_k$ and any morphism $U \to \stY$ of derived stacks,
take an $n$-atlas $\{U_i\}_{i \in I}$ of $\stX \times_{\stY} U$ 
as Definition \ref{dfn:dSt:mor-Q}.
Denote by $g: U_i \to \stX \times_{\stY} U \to U$ the smooth morphism in $\idAff_k$.
Then $\pi_0(g): \pi_0(U_i) \to \pi_0(U)$ is smooth of relative dimension $d$
in the scheme-theoretic meaning \cite[Chap.\ IV, \S 17.10]{EGA4}.
\end{itemize}
\end{dfn}

Let us also introduce immersions of derived stacks.

\begin{dfn}[{\cite[Definition 2.2.3.5]{TVe2}}]\label{dfn:dSt:open}
\begin{enumerate}[nosep]
\item 
A morphism of derived stacks is an \emph{open immersion}
if it is locally of finite presentation (Definition \ref{dfn:dSt:mor-Q}),
flat (Definition \ref{dfn:dSt:mor-Q})
and a monomorphism (Definition \ref{dfn:dSt:epi-mono}).
\item
A morphism $\stF \to \stX$ of derived stacks is a \emph{closed immersion}
if it is $(-1)$-representable and if for any representable stack 
$U \simeq \dSpec A$ and any morphism $U \to \stX$ 
the morphism $\stF \times_{\stX} U \simeq \dSpec B \to U \simeq \dSpec A$
induces an epimorphism $\pi_0(A) \to \pi_0(B)$ of rings.
\end{enumerate}
\end{dfn}

Note that an open immersion in the above sense is called
a Zariski open immersion in \cite{TVe2}.
%We close this part by a remark on the open and closed immersions 
%(Definition \ref{dfn:dSt:open}).
One can check by definition that a closed immersion $i: \stF \to \stX$ 
of derived stacks determines an open immersion $\stU \to \stX$ with the property
$[\stU(T)] = [\stX(T)] \setminus [\stF(T)]$ for any affine derived scheme $T$, 
where $[-]$ denotes the homotopy type of a simplicial set 
(Definition \ref{dfn:ic:Ho-type:sSet}).
Moreover $\stU$ is unique up to contractible ambiguity.

\begin{ntn}\label{ntn:dSt:c-o}
We call the open immersion $\stU \to \stX$ 
the \emph{complement} of $i: \stF \to \stX$.
\end{ntn}

We finally have 

\begin{dfn}\label{dfn:dSt:fp}
A morphism of derived stacks is \emph{of finite presentation}
if it is locally of finite presentation (Definition \ref{dfn:dSt:mor-Q})  
and quasi-compact (Definition \ref{dfn:dSt:qcpt}).
\end{dfn}

%%%%%%%%%%%%%%%%%%%%%%%%%%%%%%%%%%%%%%%%%%%%%%%%%%%%%%%%%%%%%%%%%%%%%%%%%%%%%%%%
\subsubsection{Truncation}
\label{sss:dSt:trunc}

Let us now explain the relationship of higher Artin stacks 
in \S \ref{sss:St:hiArt} and geometric derived stacks.
We follow the argument in \cite[\S\S 2.1.1, 2.1.2, 2.2.4]{TVe2}.

Recall the $\infty$-category $\iCom_k$ of commutative $k$-algebras
(see \S \ref{ss:dSt:St}).
Regarding commutative $k$-algebras as constant derived $k$-algebras, 
we have an embedding $\iCom_k \to \isCom_k$ of $\infty$-category.
Taking the opposites, we have the corresponding embedding 
$i: \iAff_k \longinj \idAff_k$.
%where the sub-$\infty$-category $\iAff$ of $\idAff$
%is equivalent to the category of affine schemes.
%Now we explain the truncation \cite[\S 2.2.4]{TVe2}, 
%which is a functor from the derived stacks to stacks.
%Recall the natural inclusion functor $i: \iAff \inj \idAff$ in \eqref{eq:trunc:i}.
It gives rise to an adjunction
\[
 \pi_0: \idAff_k \adjunc \iAff_k :i
\]
of $\infty$-categories (Definition \ref{dfn:ic:adj}),
where $\pi_0$ denotes the functor taking $\pi_0$ of derived $k$-algebras.
This adjunction naturally extends to a new adjunction
$\iPSh(\iAff_k) \adjunc \iPSh(\idAff_k)$
of the $\infty$-categories of presheaves (Definition \ref{dfn:ic:iPSh}).
In \cite[\S4.8]{TVe1} the corresponding Quillen adjunction is denoted by 
$i_!: \PSh(\Aff_k) \rlto \PSh(\dAff_k) :i^*$.

By the argument of \cite[\S2.2.4]{TVe2}, 
the last adjunction $(i_!,i^*)$ yields a Quillen adjunction 
between simplicial categories,
whose underlying adjunction on the homotopy categories can be expressed  as 
$\Ho \iSh(\iAff_k, \et) \adjunc \Ho \iSh(\idAff_k,\et)$.
Then by Fact \ref{fct:ic:Qi}, 
we can construct from the Quillen adjunction an adjunction of $\infty$-categories.
Let us denote it by 
\[
 \Dex : \iSt_k = \iSh(\iAff_k, \et) \adjunc \idSt_k = \iSh(\idAff_k,\et) : \Trc.
\]

\begin{fct*}[{\cite[Lemma 2.2.4.1]{TVe2}}]
The functor $\Dex: \iSt_k \to \idSt_k$ is fully faithful.
\end{fct*}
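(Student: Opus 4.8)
The plan is to describe $\Dex$ and $\Trc$ concretely at the presheaf level and then reduce the statement to one property of the étale topology. Write $i\colon\iAff_k\longinj\idAff_k$ for the fully faithful inclusion coming from the embedding $\iCom_k\longinj\isCom_k$ of constant derived $k$-algebras, let $i_!\colon\iPSh(\iAff_k)\to\iPSh(\idAff_k)$ be the left Kan extension along $i^{\op}$, and let $i^*$ be restriction along $i^{\op}$, so that $i_!\dashv i^*$. Since $i$ is fully faithful, so is $i_!$, and the unit $\id\to i^*i_!$ is an equivalence. Also, an $\et$-covering family (indeed, an $\et$-hypercover) in $\iAff_k$ remains one in $\idAff_k$ — in Definition \ref{dfn:dAff:et-cov} the conditions on $\pi_*$ are vacuous for discrete rings, and the fibre products entering a hypercover are underived since the maps are flat — so $i^*$ carries $\et$-sheaves to $\et$-sheaves. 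Unwinding the Quillen adjunction behind the Fact, $\Dex$ is the composite $\iSt_k\hookrightarrow\iPSh(\iAff_k)\xr{i_!}\iPSh(\idAff_k)\xr{L'}\idSt_k$ with $L'$ the sheafification (Fact \ref{fct:pr:sh}), and its right adjoint $\Trc$ is simply $G\mapsto i^*G$ for $\et$-sheaves $G$.

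Since $\Dex$ is a left adjoint, it is fully faithful if and only if the unit $\eta\colon\id_{\iSt_k}\to\Trc\circ\Dex$ is an equivalence. For $\stX\in\iSt_k$ the unit $\eta_\stX$ is the composite
\[
 \stX\;\simto\;i^*i_!\stX\;\longto\;i^*L'i_!\stX=\Trc\Dex\stX,
\]
in which the first map is the invertible unit of $i_!\dashv i^*$ and the second is $i^*$ applied to the sheafification map $i_!\stX\to L'i_!\stX$. Both $i^*i_!\stX\ (\simeq\stX)$ and $i^*L'i_!\stX$ are $\et$-sheaves on $\iAff_k$, so $\eta_\stX$ will be an equivalence as soon as $i^*$ carries the $\et$-local equivalence $i_!\stX\to L'i_!\stX$ to a local equivalence on $\iAff_k$ — a local equivalence between sheaves being automatically an equivalence.

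Thus the whole proof reduces to the claim that $i^*$ preserves $\et$-local equivalences, which is the only non-formal point. It rests on the observation that for $X\in\iAff_k$ every $\et$-hypercover of $iX$ in $\idAff_k$ has all of its terms in $\iAff_k$: an étale morphism $\dSpec B\to\dSpec A$ with $A$ discrete forces $B$ discrete (the isomorphisms $\pi_j(A)\otimes_{\pi_0(A)}\pi_0(B)\simto\pi_j(B)$ give $\pi_j(B)=0$ for $j>0$), and étale — hence flat — morphisms have underived base change, so the matching objects in a hypercover of $iX$ stay discrete; induction on the simplicial degree finishes the observation. Consequently the $\et$-homotopy sheaves of a presheaf on $\idAff_k$, evaluated at objects of $\iAff_k$, agree with those of its $i^*$-restriction, and a map inducing isomorphisms on homotopy sheaves over $\idAff_k$ restricts to one over $\iAff_k$, the relevant covering families never leaving $\iAff_k$. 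Since $\iSh(\idAff_k,\et)$ (Fact \ref{fct:idSt:hyper}) and $\iSh(\iAff_k,\et)$ (Corollary \ref{cor:ic:iSh:hc}) are hypercomplete, being a local equivalence is the same as inducing isomorphisms on all homotopy sheaves; hence $i^*$ preserves local equivalences, $\eta_\stX$ is an equivalence, and $\Dex$ is fully faithful.

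The main obstacle is this last step. One must (i) set up the $\et$-local model structure (equivalently, Lurie's hypercompletion) carefully enough that local equivalences are governed by homotopy sheaves — here hypercompleteness of both $\infty$-topoi is indispensable — and (ii) verify the algebraic stability facts that an étale morphism to, and a finite limit along flat maps of, discrete derived affine schemes remains discrete, so that $\et$-covers of an object of $\iAff_k$ never leave $\iAff_k$. The reduction to this point, via the three adjunctions, is entirely formal.
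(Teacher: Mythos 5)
Your argument is correct. The paper does not prove this statement at all --- it records it as a Fact cited from \cite[Lemma 2.2.4.1]{TVe2} --- and your proof is essentially a reconstruction of the argument in that reference: the formal reduction through the adjunctions $(i_!,i^*)$ and sheafification to the statement that $i^*$ preserves local equivalences, settled by the observation that \'etale morphisms to a discrete affine derived scheme, and the (flat, hence underived) fibre products occurring in their \v{C}ech nerves and hypercovers, never leave $\iAff_k$, so that sheafification commutes with restriction along $i$ on objects of $\iAff_k$.
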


\begin{dfn}[{\cite[Definition 2.2.4.3]{TVe2}}]\label{dfn:dSt:trunc}
\begin{enumerate}[nosep]
\item
$\Trc : \idSt_k \to \iSt_k$ is called the \emph{truncation} functor and 
$\Dex : \iSt_k \to \idSt_k$ is called the \emph{extension} functor.

\item
A derived stack $\stX \in \idSt_k$ is called \emph{truncated} if
the adjunction $\Dex(\Trc (\stX)) \to \stX$ is an isomorphism in $\Ho \idSt_k$.
\end{enumerate}
\end{dfn}

As noted in \cite{TVe2}, $\Trc$ commutes with limits and colimits,
and $\Dex$ is fully faithful and commutes with colimits,
but $\Dex$ does not commute with limits.
However we have the following results.
%See also Fact \ref{fct:dSt:mAS=AS}.

\begin{fct}[{\cite[Proposition 2.2.4.4]{TVe2}}]\label{fct:dSt:trc}
\begin{enumerate}
\item 
The truncation functor $\Trc$ sends an $n$-geometric derived stack to 
an $n$-geometric stack and a flat (resp.\ smooth, resp.\ \'etale) morphism
between $n$-geometric derived stacks to 
a morphism of the same type between $n$-geometric stacks.
The functor $\Trc$ also sends epimorphisms of derived stacks to 
epimorphisms of stacks.

\item
$\Dex$ sends $n$-geometric stacks to $n$-geometric derived stacks,
pullbacks of $n$-geometric stacks to those of $n$-geometric derived stacks,
and flat (resp.\ smooth, resp.\ \'etale) morphisms of $n$-geometric stacks to
those of $n$-geometric derived stacks.
\end{enumerate}
\end{fct}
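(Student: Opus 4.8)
This is \cite[Proposition 2.2.4.4]{TVe2}, and the plan is to transcribe its homotopical-algebra proof into the $\infty$-categorical language of \S\ref{s:dSt}. I would argue by induction on $n\ge -1$, proving simultaneously the relative versions of (1) and (2) for the classes of $(n-1)$-representable, $(n-1)$-smooth, flat and \'etale morphisms that are built recursively into Definition \ref{dfn:dSt:geom}. For the base case $n=-1$ one unwinds the two functors on representables: $\Trc$ restricts to $\dSpec A\mapsto\Spec\pi_0(A)$ and $\Dex$ to $\Spec B\mapsto\dSpec B$ (with $B$ a constant simplicial ring), so both preserve representability. A flat (resp.\ smooth, \'etale) morphism $\dSpec A\to\dSpec B$ induces one of the same type on $\Spec\pi_0$ directly by Definition \ref{dfn:dSt:dAff:mor}; conversely, such a morphism of affine schemes, viewed as a morphism between constant simplicial rings, trivially verifies the conditions there since its higher homotopy groups vanish. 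One auxiliary observation is used throughout: an \'etale covering of a \emph{discrete} $\dSpec B$ in the sense of Definition \ref{dfn:dAff:et-cov} is necessarily by discrete affine derived schemes and coincides with a classical \'etale covering of $\Spec B$, so the sites $(\iAff_k,\et)$ and $(\idAff_k,\et)$ are compatible along $i$.

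The $\Trc$ direction is then easy, because $\Trc$ preserves all small limits and colimits (as recorded above). Given an $n$-geometric derived stack $\stX$ with $n$-atlas $\{U_i\to\stX\}$, the family $\{\Trc(U_i)\to\Trc(\stX)\}$ consists of representable stacks; each map is $(n-1)$-smooth by the inductive hypothesis; and $\coprod\Trc(U_i)=\Trc(\coprod U_i)\to\Trc(\stX)$ is an epimorphism, since epimorphisms of (derived) stacks are detected by surjectivity of the induced map of $\pi_0$-sheaves of sets (Remark \ref{rmk:dSt:epi-mono} and its derived analogue), $\pi_0(\Trc(\stX))$ is the restriction along $i$ of the sheaf $\pi_0(\stX)$, and surjectivity is preserved under that restriction by the compatibility of topologies above. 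So $\{\Trc(U_i)\to\Trc(\stX)\}$ is an $n$-atlas, which by Remark \ref{rmk:dSt:geom}(2) already makes $\Trc(\stX)$ an $n$-geometric stack; the same computation gives the epimorphism clause of (1). The relative statements for morphisms follow by testing against representables $V\to\Trc(\stY)$, lifting them to affine derived schemes, using that $\Trc$ preserves the pullbacks involved, and invoking the base case.

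The real content is the inductive step for $\Dex$, and the main obstacle is that $\Dex$ does \emph{not} preserve limits, so neither the diagonal nor the fibre products occurring in Definition \ref{dfn:dSt:geom} are transported for free. The remedy, which has to be established separately, is that $\Dex$ commutes with base change along \emph{flat} morphisms: at the affine level this is Tor-independence, $B_1\otimes_{B_0}B_2=B_1\otimes^{\bbL}_{B_0}B_2$ when $\Spec B_1\to\Spec B_0$ is flat, so $i$ sends such a pullback square in $\iAff_k$ to a pullback square in $\idAff_k$; propagating this through the induction shows that $\Dex$ commutes with base change along $(n-1)$-smooth, in particular flat, morphisms of $n$-geometric stacks. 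Granting this, together with the facts that $\Dex$ preserves colimits and preserves epimorphisms (again via the $\pi_0$-criterion, since the extension functor on sheaves of sets along $i$ is a left adjoint), one checks that for an $n$-geometric stack $\stX$ with $n$-atlas $\{U_i\to\stX\}$ the family $\{\Dex(U_i)\to\Dex(\stX)\}$ is an $n$-atlas of $\Dex(\stX)$ — each map being $(n-1)$-smooth by the inductive hypothesis combined with flat base change — whence $\Dex(\stX)$ is $n$-geometric by Remark \ref{rmk:dSt:geom}(2). The remaining assertions of (2), on pullbacks of $n$-geometric stacks and on flat/smooth/\'etale morphisms, then follow from the flat base-change compatibility of $\Dex$ together with the stability properties of Fact \ref{fct:dSt:geom}. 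The step I expect to be genuinely delicate, rather than routine transcription, is exactly this flat base-change compatibility and the verification that the atlas conditions of Definition \ref{dfn:dSt:geom} — phrased by testing against representable (derived) stacks — remain stable under $\Dex$ in spite of its failure to commute with general limits.
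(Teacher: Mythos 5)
The paper offers no proof of this statement---it is recorded as a Fact with a bare citation to \cite[Proposition 2.2.4.4]{TVe2}---and your sketch is essentially a faithful reconstruction of the argument given there: induction on $n$, the affine base case read off from Definition \ref{dfn:dSt:dAff:mor}, limit/colimit preservation of $\Trc$ for part (1), and Tor-independence (flat base change at the affine level, propagated through the atlases) to compensate for the failure of $\Dex$ to preserve limits in part (2). One caveat worth recording: your argument, like To\"en--Vezzosi's, only yields preservation of pullbacks of $n$-geometric stacks \emph{along flat morphisms}, which is what \cite[Proposition 2.2.4.4]{TVe2} actually asserts; the Fact as transcribed above drops the flatness hypothesis, and without it the pullback clause is false (for instance $\Dex$ sends the classical self-intersection $\Spec k\times_{\mathbb{A}^1}\Spec k$ to a discrete stack, not to the derived fibre product $\dSpec(k\otimes^{\bbL}_{k[x]}k)$, which has nontrivial higher homotopy). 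So the only discrepancy lies in the statement as quoted, not in your proof.
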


Recall that an algebraic stack over $k$ (Definition \ref{dfn:Cl:AlgSt}) 
belongs to the category $\Ho(\Grpd/(\Aff_k,\et))$
and that we have the fully faithful functor $\ase$ (Notation \ref{ntn:dSt:ase}).
Now we introduce

\begin{dfn}\label{dfn:dSt:St-dSt}
Consider the composition 
\[
 \iota := \Dex \circ \ase: 
 \Nsp(\Grpd/(\Aff_k,\et)) \longto \iSt_k \longto \idSt_k.
\]
For an algebraic stack $X$, 
we call the image $\iota(X)$ the \emph{derived stack associated to $X$}.
\end{dfn}

By Fact \ref{fct:dSt:ase}, Remark \ref{rmk:dSt:LM-O} and Fact \ref{fct:dSt:trc},
we have

\begin{lem}\label{lem:dSt:ase}
Let $X$ be an algebraic stack.
Then the derived stack $\iota(X)$ is truncated and $1$-geometric 
(Definition \ref{dfn:dSt:geom}).
\end{lem}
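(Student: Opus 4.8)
The plan is to unwind the definition $\iota = \Dex \circ \ase$ and then assemble the statement out of the structural facts already recorded for $\ase$ and for $\Dex$. First I would observe that $\iota(X) = \Dex(\ase(X))$, so the two assertions amount to: (a) $\ase(X)$ is a $1$-geometric stack and this property is preserved by $\Dex$; and (b) any object in the essential image of $\Dex$ is truncated in the sense of Definition \ref{dfn:dSt:trunc} (2).

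For (a): since $X$ is an algebraic stack over $k$ in the sense of Definition \ref{dfn:Cl:AlgSt} --- or even in the slightly more permissive sense of \cite{O}, which is covered by Remark \ref{rmk:dSt:LM-O} --- Fact \ref{fct:dSt:ase} (2) gives that $\ase(X)$ is an Artin $1$-stack which is $1$-geometric. Then Fact \ref{fct:dSt:trc} (2), applied with $n=1$, says that $\Dex$ carries $1$-geometric stacks to $1$-geometric derived stacks, so $\iota(X) = \Dex(\ase(X))$ is $1$-geometric.

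For (b): I would argue formally from the fact, cited just before Definition \ref{dfn:dSt:trunc}, that $\Dex$ is fully faithful (\cite[Lemma 2.2.4.1]{TVe2}), together with the adjunction $\Dex \dashv \Trc$. For a fully faithful left adjoint the unit $\eta\colon \id_{\iSt_k} \Rightarrow \Trc\circ\Dex$ is an equivalence; applying the triangle identity $\varepsilon_{\Dex(\ase(X))} \circ \Dex(\eta_{\ase(X)}) = \id_{\Dex(\ase(X))}$ and the two-out-of-three property, one gets that the counit $\varepsilon_{\Dex(\ase(X))}\colon \Dex(\Trc(\iota(X))) \to \iota(X)$ is an equivalence in $\idSt_k$, i.e. an isomorphism in $\Ho\idSt_k$. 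By Definition \ref{dfn:dSt:trunc} (2) this is exactly the assertion that $\iota(X)$ is truncated.

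I do not expect a real obstacle here: the entire content is already packaged in Fact \ref{fct:dSt:ase}, Fact \ref{fct:dSt:trc} and the full faithfulness of $\Dex$, and the only thing to verify by hand is the one-line triangle-identity argument for truncatedness. The single point meriting explicit mention is that we are invoking Fact \ref{fct:dSt:ase} (2) for possibly non-separated algebraic stacks as in \cite{O}, which is precisely what Remark \ref{rmk:dSt:LM-O} licenses.
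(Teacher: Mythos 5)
Your proposal is correct and follows the same route as the paper, which simply cites Fact \ref{fct:dSt:ase}, Remark \ref{rmk:dSt:LM-O} and Fact \ref{fct:dSt:trc} for the $1$-geometricity. The only part the paper leaves implicit is the truncatedness, and your triangle-identity argument (the counit $\Dex(\Trc(\iota(X)))\to\iota(X)$ is an equivalence because the unit is, $\Dex$ being a fully faithful left adjoint) is exactly the intended filling-in of that gap.
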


\begin{rmk}\label{rmk:dSt:diag}
As in Remark \ref{rmk:dSt:mAS=AS},
we can depict the relation between sub-$\infty$-categories of $\iSt$ 
and sub-$\infty$-categories of $\idSt$  as
\[
 \xymatrix{
    \iSch                               \ar@{^{(}->}[r] 
  & \iAlgSp_{n=1}^{m=0} \ar@{^{(}->}[d] \ar@{^{(}->}[r] 
  & \iArtin_{n=1}^{m=1} \ar[d]^{\sim}   \ar@{^{(}->}[r] 
  & \iArtin_{n=1}       \ar[d]^{\sim}   \ar@{^{(}->}[r] 
  & \iArtin = \iSt_{\geom}^{\trun}      \ar@{^{(}->}[r] \ar[d]^{\sim}
  & \iSt \ar@{^{(}->}[d]^{\Dex} \\
  & \idSt_{n=1}^{m=0}     \ar@{^{(}->}[r] 
  & \idSt_{n=1}^{m=1}     \ar@{^{(}->}[r] 
  & \idSt_{n=1}^{\trun}   \ar@{^{(}->}[r] 
  & \idSt_{\geom}^{\trun} \ar@{^{(}->}[r]
  & \idSt 
 }
\]
%Here $\AlgSp$ and $\AlgSt$ denote the category of algebraic spaced 
%and of algebraic stacks respectively.
%The subscript $n$ in the lower row denotes the geometricity of a stack, 
%and the superscript $m$ denotes the truncation degree.
One also can check that the notions ordinary algebraic stacks (\S \ref{ss:Cl:AlgSt})
are compatible with those on derived stacks under the functor $\Dex$.
As for the properties of morphisms, 
we collect the corresponding definitions in the following table.
We will give an additional table after introducing derived algebraic spaces
(Remark \ref{rmk:dAS:table})

\begin{table}[htbp]
\begin{tabular}{c|c|c}
property of morphisms & derived stacks & algebraic stacks \\
\hline
surjective 
& Definition \ref{dfn:dSt:epi-mono} & Definition \ref{dfn:Cl:St-mor1} \\
locally of finite presentation
& Definition \ref{dfn:dSt:mor-Q}    & Definition \ref{dfn:Cl:St-mor1} \\
\'etale
& Definition \ref{dfn:dSt:mor-Q}    & Definition \ref{dfn:Cl:St-mor2} \\
smooth of relative dimension $d$ 
& Definition \ref{dfn:dSt:reldim}   & Definition \ref{dfn:Cl:St-mor2} \\
quasi-compact
& Definition \ref{dfn:dSt:qcpt}     & Definition \ref{dfn:Cl:St-mor2} \\
closed immersion
& Definition \ref{dfn:dSt:open}     & Definition \ref{dfn:Cl:St-mor2} 
\end{tabular}
\caption{Morphisms between derived and algebraic stacks}
\end{table}
\end{rmk}

%%%%%%%%%%%%%%%%%%%%%%%%%%%%%%%%%%%%%%%%%%%%%%%%%%%%%%%%%%%%%%%%%%%%%%%%%%%%%%%%
\subsubsection{Derived stacks of quasi-coherent modules and vector bundles}

We close this subsection by giving some examples of geometric derived stacks
following \cite[\S 1.3.7, \S 2.2.6.1]{TVe2}.
Let $k$ be a commutative ring as before.

For a derived $k$-algebra $A$, an \emph{$A$-module} means an $A$-module object 
in the $\infty$-operad $\isMod_k^{\otimes}$ (\S \ref{sss:dSt:isMod}).
$A$-modules form an $\infty$-category denoted by $\isMod_A$.
It has an induced $\infty$-operad structure from $\isMod_k^{\otimes}$, 
and the obtained $\infty$-operad is denoted by $\isMod_A^{\otimes}$.
\emph{Derived $A$-algebras} mean commutative ring objects in $\isMod_A^{\otimes}$,
which form an $\infty$-category denoted by $\isCom_A$.

For a derived $k$-algebra $A$, the category $\QCoh(A)$ 
of quasi-coherent modules on $A$ is defined as
\begin{itemize}[nosep]
\item 
An object is a data $(M,\alpha)$ consisting of 
a family $M=\{M_B\}_B$ of $B$-modules for $B \in \isCom_A$ and 
a family $\alpha=\{\alpha_u: M_B \otimes_B B' \to M_B'\}_{u}$
of isomorphisms for morphisms $u: B \to B'$ in $\isCom_A$
such that $\alpha_v \circ (\alpha_u \otimes_{B'} B'') = \alpha_{v \circ u}$
for any composable morphisms $B \xrightarrow{u} B' \xrightarrow{v}B''$.

\item
A morphism $f: (M,\alpha) \to (M',\alpha')$ is a family 
$f=\{f_B: M_B \to M_{B'}\}_{B}$ of morphisms of $B$-modules for $B \in \isCom_A$
such that for any $u:B \to B'$ in $\isCom_A$ we have 
$f_{B'} \circ \alpha_u = \alpha'_{u} \circ (f_B \otimes_B B'):  
 M_B \otimes_B  B' \to M'_{B'}$.
\end{itemize}

The category $\QCoh(A)$ inherits a model structure 
induced from Kan model structure of simplicial sets.
Let us now consider the subcategory $\QCoh(A)_W^c \subset \QCoh(A)$
consisting of equivalences between cofibrant objects.
Taking the nerve, we have an $\infty$-category $\iQCoh(A) := \Ner(\QCoh(A)_W^c)$.
It enjoys the property
$\pi_1(\iQCoh(A),M) \simeq \Aut_{\Ho \isMod_A}(M)$.
Now we cite

\begin{fct*}[{\cite[Theorem 1.3.7.2]{TVe2}}]
The correspondence $A \mapsto \iQCoh(A)$ determines a derived stack
\[
 \stQCoh: \isCom = (\idAff)^{\op} \longto \iS.
\]
\end{fct*}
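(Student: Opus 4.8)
The plan is to verify the two properties that make a presheaf of spaces a derived stack in the sense of Definition~\ref{dfn:dSt:idSt}: first, that $A \mapsto \iQCoh(A)$ assembles into an actual functor $\isCom = (\idAff)^{\op} \to \iS$, i.e.\ a presheaf of spaces on $\idAff$; and second, that the resulting presheaf is $\et$-local, i.e.\ satisfies \'etale descent in the sense of \S\ref{ss:pr:inf-site}.

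For functoriality, the key is base change: a morphism $A \to A'$ in $\isCom$ induces $-\otimes_A A': \isMod_A \to \isMod_{A'}$, which respects the quasi-coherent data in the definition of $\QCoh(A)$ and hence gives a map $\iQCoh(A) \to \iQCoh(A')$; under the canonical equivalence $\isMod_A \simeq \QCoh(A)$, $M \mapsto (B \mapsto M\otimes_A B)$, the object $\iQCoh(A)$ is identified with the core (maximal sub-$\infty$-groupoid) of $\isMod_A$, which is why it lands in $\iS$. To upgrade this from a compatible family of maps to a genuine functor I would invoke the relative module construction of \cite{Lur2}: the assignment $A \mapsto \isMod_A^{\otimes}$ of \S\ref{sss:dSt:isMod} is classified by a coCartesian fibration over $\isCom$ whose coCartesian edges are precisely the base-change functors; straightening this fibration and composing with the core functor $(-)^{\simeq}$ (right adjoint to the inclusion of spaces into $\infty$-categories) produces $\stQCoh$. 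One then checks, using the Dold--Kan rectification comparing the model-categorical $\QCoh(A)$ with $\isMod_A$, that this functor agrees objectwise with $\iQCoh(A) = \Ner(\QCoh(A)_W^c)$ as defined above.

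For descent, since the \'etale $\infty$-topology on $\idAff_k$ is generated by the \'etale coverings of Definition~\ref{dfn:dAff:et-cov}, $\et$-locality of $\stQCoh$ reduces to \v{C}ech descent along such a covering $\{\dSpec A_i \to \dSpec A\}_{i \in I}$. By the covering condition there is a finite $J \subset I$ with $A \to B := \prod_{i \in J} A_i$ faithfully flat (\'etaleness implies flatness, and the surjectivity clause in Definition~\ref{dfn:dAff:et-cov} gives faithfulness); let $A^{\bullet}$ with $A^{n} = B^{\otimes_A (n+1)}$ be the associated \v{C}ech cosimplicial derived ring. It then suffices to prove
\[
 \iQCoh(A) \simto \ilim_{[n] \in \CS} \iQCoh(A^{n}).
\]
This is faithfully flat descent for modules: the $\infty$-categorical descent (Barr--Beck) machinery of \cite{Lur2} gives $\isMod_A \simeq \ilim_{[n]\in\CS} \isMod_{A^{n}}$, and applying the core functor — which, being a right adjoint, preserves limits — yields the displayed equivalence. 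A short further argument identifies descent along the sieve generated by $\{\dSpec A_i \to \dSpec A\}_{i\in I}$ with descent along $A \to B$, so that $\stQCoh$ is indeed $\et$-local, hence a derived stack.

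The hard part is this descent step: one must have available, and apply correctly, faithfully flat descent for modules over \emph{simplicial commutative} rings in the $\infty$-categorical setting — i.e.\ that $A \mapsto \isMod_A$ is a sheaf for the flat topology — together with the verification that an \'etale covering in the sense of Definition~\ref{dfn:dAff:et-cov} becomes a faithfully flat morphism after passing to a finite subcover; this is the derived analogue of Grothendieck's fpqc descent and requires controlling totalizations of cosimplicial diagrams of module $\infty$-categories. The functoriality step is, by comparison, essentially bookkeeping once the relative module construction of \cite{Lur2} is in hand. One last minor point to dispose of is a universe matter: $\iQCoh(A)$ is a priori a large space, so one either takes the target of $\stQCoh$ to be the $\infty$-category of (possibly large) spaces or, harmlessly here, restricts to a suitable small generating subcategory of modules.
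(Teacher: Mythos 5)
The paper gives no proof of this statement: it is quoted verbatim as a Fact from \cite[Theorem 1.3.7.2]{TVe2}, so there is no internal argument to compare yours against. On its own terms your outline is correct and is essentially the $\infty$-categorical translation of the proof in \cite{TVe2}. Where To\"en--Vezzosi work model-categorically --- strictifying the lax diagram $B \mapsto \Mod_{N(B)}$ into the category $\QCoh(A)$ of compatible families and then verifying their stack criterion using flat descent for modules --- you package the same content as: straightening of the coCartesian fibration classifying $A \mapsto \isMod_A^{\otimes}$, composition with the core functor, and Barr--Beck faithfully flat descent for module $\infty$-categories, with the core functor preserving the relevant limit because it is a right adjoint. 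You correctly isolate the crux (flat descent for $\isMod_{(-)}$ over simplicial commutative rings, plus the observation that an \'etale covering in the sense of Definition~\ref{dfn:dAff:et-cov} admits a finite subfamily giving a single faithfully flat map $A \to \prod_{j\in J}A_j$, whose target is still affine), and you rightly note that only \v{C}ech descent is needed since $\idSt_k$ is defined by a topological localization rather than hyperdescent.

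Two small corrections. First, ``Dold--Kan rectification'' is not the tool that compares $\iQCoh(A)=\Ner(\QCoh(A)_W^c)$ with the core of $\isMod_A$; Dold--Kan relates simplicial modules to complexes. What you actually need here is (i) the strictification statement that the category of compatible families $(M_B)_{B\in\isCom_A}$ models the homotopy limit of the diagram $B\mapsto \Mod_B$, which recovers $\isMod_A$, and (ii) the Dwyer--Kan classification-space theorem identifying the nerve of the subcategory of weak equivalences between cofibrant objects with the maximal sub-$\infty$-groupoid of the underlying $\infty$-category --- consistent with the paper's remark that $\pi_1(\iQCoh(A),M)\simeq \Aut_{\Ho\isMod_A}(M)$. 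Second, in the reduction from the full covering family $\{A_i\}_{i\in I}$ to the finite subfamily $J$ you should say explicitly that the sieve generated by $J$ is a covering sieve contained in the one generated by $I$, so locality for the generating (pre)topology of \'etale coverings suffices; this is routine but it is the step your ``short further argument'' glosses over. Neither point is a gap in the mathematics, only in the attribution and bookkeeping.
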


Next we turn to the definition of vector bundles.

\begin{dfn}\label{dfn:dSt:Vect}
Let $A$ be a derived $k$-algebra, and let $r \in \bbN$.
An $A$-module $B \in \isMod_A$ is called a \emph{rank $r$ vector bundle}
if there exists an \'etale covering $A \to A'$ 
(Definition \ref{dfn:dAff:et-cov})
such that $M \otimes_A A' \simeq (A')^r$ in $\Ho \isMod_{A'}$.

We have the corresponding full sub-$\infty$-category $\iVect_r \subset \iQCoh(A)$
and the \emph{derived stack $\stVect_r$ of rank $r$ vector bundles}.
\end{dfn}

Recall the classifying stack $B G$ 
for a smooth group scheme $G$ over a scheme $S$ (Definition \ref{dfn:St:BG}).
It is an algebraic stack, 
so that we have the corresponding truncated derived stack $\iota(B G)$.

In the case $G=\GL_r$, regarded as a group scheme over $k$, 
the classifying stack $B \GL_r$ is 
nothing but the moduli space of rank $r$ vector bundles.
Now we have 

\begin{fct}[{\cite[\S 2.2.6.1]{TVe2}}]\label{fct:dSt:Vect}
We have 
\[
 \stVect_r \simeq \iota(B \GL_r).
\]
In particular, $\stVect_r$ is a truncated (Definition \ref{dfn:dSt:trunc} (3)) 
$1$-geometric derived stack
with $(-1)$-representable diagonal.
It is also locally of finite presentation (Definition \ref{dfn:dSt:lfp}).
\end{fct}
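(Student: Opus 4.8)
The plan is to exhibit $\stVect_r$ as the geometric realisation, inside the $\infty$-topos $\idSt_k$, of the bar construction of $\GL_r$, and then to match this realisation with $\iota(B\GL_r)$ and read off the stated properties. First I would consider the morphism $u\colon\dSpec k\to\stVect_r$ that corresponds, under $\Map_{\idSt_k}(\dSpec k,\stVect_r)\simeq\stVect_r(\dSpec k)$, to the free module $k^r$; over every affine derived scheme $\dSpec A$ the composite $\dSpec A\to\dSpec k\xrightarrow{u}\stVect_r$ is the point $A^r\in\stVect_r(\dSpec A)$. I would then check that $u$ is an epimorphism of derived stacks in the sense of Definition \ref{dfn:dSt:epi-mono}; by the explicit criterion recorded there this amounts to $\pi_0(u)$ being an epimorphism of sheaves of sets on $(\idAff_k)^{\op}$, which is precisely the defining property of rank $r$ vector bundles in Definition \ref{dfn:dSt:Vect}, namely that any such bundle becomes free after an \'etale covering.

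Next I would compute the \v{C}ech nerve $\check C(u)_\bullet$ of $u$. Over $\dSpec A$ the $(n{+}1)$-fold fibre product $\dSpec k\times_{\stVect_r}\dots\times_{\stVect_r}\dSpec k$ represents the space $\Map^{\simeq}_{\isMod_A}(A^r,A^r)^{\times n}$ of $n$-tuples of auto-equivalences of the free rank $r$ module; such an auto-equivalence is an $r\times r$ matrix over $A$ whose reduction on $\pi_0$ is invertible, so this space is exactly the value on $A$ of the ordinary smooth affine group scheme $\GL_r$ over $k$ regarded as a representable derived stack via $i\colon\iAff_k\hookrightarrow\idAff_k$; write $\GL_{r,k}$ for the latter. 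The simplicial operators of $\check C(u)_\bullet$ are then visibly those of the bar construction $[n]\mapsto\GL_{r,k}^{\times n}$ of $\GL_{r,k}$ acting trivially on a point. Since $\idSt_k$ is an $\infty$-topos (Fact \ref{fct:idSt:hyper}) in which every groupoid object is effective (Fact \ref{fct:pr:Giraud}), the effective epimorphism $u$ exhibits $\stVect_r\simeq\iclim_{[n]\in\CS^{\op}}\GL_{r,k}^{\times n}$. On the other hand $\iota(B\GL_r)=\Dex(\ase(B\GL_r))$, and since $B\GL_r$ is the quotient stack $[\Spec k/\GL_r]$, both the left adjoint $\ase$ and the colimit-preserving functor $\Dex$ (the latter also carrying the representable stack on an affine $k$-scheme to the representable derived stack on its image under $i$) take it to the realisation of the bar construction, so that $\iota(B\GL_r)\simeq\iclim_{[n]}\GL_{r,k}^{\times n}$. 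Comparing the two colimits, which are over the same simplicial diagram, yields $\stVect_r\simeq\iota(B\GL_r)$.

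The remaining assertions then follow. Since $B\GL_r$ is an algebraic stack, Lemma \ref{lem:dSt:ase} shows that $\iota(B\GL_r)$ is truncated and $1$-geometric, hence so is $\stVect_r$; moreover $\{\dSpec k\to\stVect_r\}$ is a $1$-atlas in which $\dSpec k$ is the affine derived scheme of the derived $k$-algebra $k$, which is of finite presentation over $k$, so $\stVect_r$ is locally of finite presentation (Definition \ref{dfn:dSt:lfp}). For the $(-1)$-representability of the diagonal I would argue directly: the fibre of $\Delta\colon\stVect_r\to\stVect_r\times\stVect_r$ over an $A$-point $(M,N)$ is the derived scheme $\underline{\mathrm{Isom}}_A(M,N)$, which is affine because rank $r$ vector bundles are dualizable and so $\underline{\mathrm{Isom}}_A(M,N)$ is cut out by the (closed) invertibility conditions inside the affine space $\underline{\Hom}_A(M,N)\times\underline{\Hom}_A(N,M)$; by Lemma \ref{lem:dSt:0-geom} this moreover shows that $\stVect_r$ is $0$-geometric.

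I expect the main obstacle to be the computation in the second step: making the identification of $\check C(u)_\bullet$ with the bar construction of $\GL_r$ fully homotopy-coherent --- that is, computing the iterated fibre products $\dSpec k\times_{\stVect_r}\dots\times_{\stVect_r}\dSpec k$ together with all their simplicial operators --- and verifying that $\Dex\circ\ase$ transports the quotient-stack presentation of $B\GL_r$ to the corresponding colimit of representable derived stacks. Once these points are in place, the rest is formal.
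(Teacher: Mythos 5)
The paper records this statement as a Fact imported from \cite[\S 2.2.6.1]{TVe2} and gives no proof of its own, so there is nothing internal to compare against; your reconstruction --- taking the \v{C}ech nerve of the point classifying $k^r$, identifying it with the bar construction of $\GL_{r}$, and using effectivity of groupoid objects together with the colimit-preservation of $\Dex\circ\ase$ --- is exactly the standard argument of the cited reference, and it is correct. The one step I would tighten is the affineness of $\underline{\mathrm{Isom}}_A(M,N)$: in the derived setting the space of tuples $(f,g)$ equipped with homotopies $fg\simeq\id$ and $gf\simeq\id$ is \emph{not} equivalent to the space of equivalences $M\to N$ (the second homotopy is no longer determined and contributes a spurious factor of $\Omega_{\id}\Map_{\isMod_A}(M,M)$); either use the biinvertibility model with two independent one-sided inverses, or simply note that $\underline{\mathrm{Isom}}_A(M,N)$ is the open locus of the affine derived scheme $\underline{\Hom}_A(M,N)$ where the determinant section is invertible, hence again affine. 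Either fix is routine and does not affect the rest of the argument.
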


Note that in \cite[\S 2.2.6.1]{TVe2} 
the word ``affine diagonal" is used for ``$(-1)$-representable diagonal".

%%%%%%%%%%%%%%%%%%%%%%%%%%%%%%%%%%%%%%%%%%%%%%%%%%%%%%%%%%%%%%%%%%%%%%%%%%%%%%%%
%%%%%%%%%%%%%%%%%%%%%%%%%%%%%%%%%%%%%%%%%%%%%%%%%%%%%%%%%%%%%%%%%%%%%%%%%%%%%%%%
%%%%%%%%%%%%%%%%%%%%%%%%%%%%%%%%%%%%%%%%%%%%%%%%%%%%%%%%%%%%%%%%%%%%%%%%%%%%%%%%
%\section{Sheaves on ringed \texorpdfstring{$\infty$}{infinity}-topoi}
\section{Sheaves on ringed $\infty$-topoi}
\label{s:is}

This section is a preliminary for our study of sheaves on derived stacks.
In a general setting of ringed $\infty$-topoi,
we introduce the notions of sheaves of modules and functors between them.
Some of them are already given in Lurie's work \cite{Lur2,Lur5,Lur7}.
These will give us an $\infty$-theoretical counterpart of 
the theory of (unbounded) derived categories 
(\cite{Sp}, \cite[Chap.\ 18]{KS}).

We manly focus on sheaves of \emph{stable} modules,
so that the resulting $\infty$-category will be stable 
%Precisely speaking, we introduce sheaves of \emph{stable} modules,
%and the resulting $\infty$-category of sheaves will be \emph{stable}
in the sense of \cite[Chap.\ 1]{Lur2}.
A stable $\infty$-category is an $\infty$-theoretical counterpart 
of a triangulated category.
Thus our $\infty$-category of sheaves of stable modules 
will be an $\infty$-categorical counterpart of 
the derived category of sheaves of modules.

%%%%%%%%%%%%%%%%%%%%%%%%%%%%%%%%%%%%%%%%%%%%%%%%%%%%%%%%%%%%%%%%%%%%%%%%%%%%%%%%
%%%%%%%%%%%%%%%%%%%%%%%%%%%%%%%%%%%%%%%%%%%%%%%%%%%%%%%%%%%%%%%%%%%%%%%%%%%%%%%%
\subsection{Geometric morphisms of $\infty$-topoi}
\label{ss:pr:gm}

In this subsection we give some complementary explanation on $\infty$-topoi.

Since an $\infty$-topos is an $\infty$-category, we automatically have

\begin{dfn*}
A morphism or a functor of $\infty$-topoi is defined to be 
a functor of $\infty$-categories (Definition \ref{dfn:ic:fun}).
\end{dfn*}

However, this definition is poor as 
the correct notion of morphisms between (ordinary) topoi implies.
Recall that a morphism $f: T \to T'$ of topoi is defined to be an adjunction
$f^*: T' \adjunc T :f_*$ \cite[IV.7]{SGA4}.
In this subsection we recall the notion of \emph{geometric morphisms},
which is the genuine notion of morphisms between $\infty$-topoi.
%Our presentation follows \cite{Lur1}.

\begin{dfn}\label{dfn:is:gm}
A morphism $f_*: \tT \to \tT'$ of $\infty$-topoi is \emph{geometric} 
if it admits a left adjoint $f^*: \tT' \to \tT$ which is left exact 
(Definition \ref{dfn:ic:exact}).
In this case, the resulting adjunction will be denoted by 
\[
 f^*: \tT' \adjunc \tT :f_*.
\]
\end{dfn}

\begin{rmk}\label{rmk:is:gm}
Since either of $f_*$ and $f^*$ determines the other 
up to contractible ambiguity, 
we sometimes refer to $f^*$ as a geometric morphism.
Following \cite[Remark 6.3.1.7]{Lur1},
we always denote a left adjoint by upper asterisk such as $f^*$,
and denote a right adjoint by lower asterisk such as $f_*$.
\end{rmk}

By \cite[Remark 6.3.1.2]{Lur1} any equivalence of $\infty$-topoi, 
which is defined to be an equivalence as $\infty$-categories, 
is a geometric morphism.
Also, by \cite[Remark 6.3.1.3]{Lur1},
the class of geometric morphisms is stable under composition.

We now introduce the $\infty$-category $\iRTop$ 
of $\infty$-categories and geometric morphisms.
%Let $\iwCat$ be the $\infty$-category of (not necessarily small) 
%$\infty$-categories \cite[Remark 3.0.0.5]{Lur1}.

\begin{dfn}\label{dfn:is:RTop}
The $\infty$-category $\iRTop$ is given by the following description.
\begin{itemize}[nosep]
\item 
The objects are (not necessarily small) $\infty$-topoi.
\item
The morphisms are functors $f_*: \tT \to \tT'$ of $\infty$-topoi
which has a left exact left adjoint. 
%in the sense of Definition \ref{dfn:is:gm}.
\end{itemize}
\end{dfn}

We also have the $\infty$-category $\iLTop$ whose objects are the same as $\iRTop$
but morphisms are functors $f^*: \tT' \to \tT$ 
preserving small colimits and finite limits.
By \cite[Corollary 6.3.1.8]{Lur1}, we have an equivalence $\iLTop \simeq \iRTop^{\op}$.
Here is a list of some properties of $\iRTop$.

\begin{fct}\label{fct:is:RTop}
\begin{enumerate}[nosep]
\item
$\iRTop$ admits small colimits \cite[Proposition 6.3.2.1]{Lur1}.

\item
$\iRTop$ admits small limits \cite[Corollary 6.3.4.7]{Lur1}.

\item 
The $\infty$-category $\iS$ of spaces is a final object of $\iRTop$
\cite[Proposition 6.3.3.1]{Lur1}.
\end{enumerate}
\end{fct}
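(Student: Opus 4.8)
The plan is to prove the three assertions in turn, putting essentially all of the work into the universal property (3); assertions (1) and (2) I would reduce, via the equivalence $\iLTop \simeq \iRTop^{\op}$ recalled above, to the construction of small limits and small colimits in $\iLTop$ — a small colimit in $\iRTop$ is then a small limit in $\iLTop$, and conversely — by forming for each diagram the evident $\infty$-category of compatible families of objects (sections of the associated fibration) and checking that it again satisfies the Giraud axioms of Fact \ref{fct:pr:Giraud}.

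For (3), by Definition \ref{dfn:is:gm} and the equivalence $\iLTop \simeq \iRTop^{\op}$, for an $\infty$-topos $\tT$ the mapping space $\Map_{\iRTop}(\tT,\iS)$ is identified with the largest Kan complex contained in the $\infty$-category of those functors $f^{*}: \iS \to \tT$ that preserve small colimits and finite limits, and I would show this Kan complex is contractible. First, $\iPSh(\ast) = \iFun(\ast^{\op},\iS) = \iS$ by Definition \ref{dfn:ic:iPSh}. By Fact \ref{fct:ic:iPSh:gen} applied to $\iC = \ast$, the $\infty$-category $\iS$ is freely generated under small colimits by the point; hence, since $\tT$ admits small colimits by Corollary \ref{cor:pr:lcl}, restriction along the Yoneda embedding $\yon: \ast \to \iS$ (equivalently, evaluation at the point, i.e. at the terminal object $\ast \in \iS$) is an equivalence from the $\infty$-category of small-colimit-preserving functors $\iS \to \tT$ onto $\tT$. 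Explicitly, an object $X \in \tT$ corresponds to the functor $L_{X}$ with $L_{X}(S) \simeq \operatorname{colim}_{S} X$, the colimit of the constant diagram on $X$ indexed by $S$, so that $L_{X}(\ast) \simeq X$.

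It remains to single out the $X$ for which $L_{X}$ is moreover left exact. One implication is immediate: a left-exact functor preserves the terminal object, $\ast$ is terminal in $\iS$, so $X \simeq L_{X}(\ast)$ must be a final object of $\tT$. Conversely — this is the heart of the matter — take $X = \one_{\tT}$, which exists by Corollary \ref{cor:pr:final}, so that $L := L_{\one_{\tT}}$ is the constant-sheaf functor $S \mapsto \operatorname{colim}_{S} \one_{\tT}$; one must check that $L$ preserves finite limits. Preservation of the terminal object is clear, so it suffices to treat a pullback along a cospan $S_{1} \to S_{0} \leftarrow S_{2}$ in $\iS$. Writing $S_{0}$ as the colimit over itself of the constant diagram on the point and using that colimits in $\iS$ are universal, one rewrites $S_{1}\times_{S_{0}}S_{2}$ as the colimit over $s \in S_{0}$ of the products of the fibres; applying $L$ (which preserves colimits) and invoking universality of colimits and effectivity of groupoid objects in $\tT$ (Fact \ref{fct:pr:Giraud}), one commutes the pullback in $\tT$ past these colimits and reduces the desired identity $L(S_{1}\times_{S_{0}}S_{2}) \simeq L(S_{1})\times_{L(S_{0})}L(S_{2})$ to its fibrewise, tautological, version over $S_{0}$. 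With this in hand, geometric morphisms $\tT \to \iS$ correspond to the final objects of $\tT$; by Fact \ref{fct:ic:fin} these form either the empty Kan complex or a contractible one, and Corollary \ref{cor:pr:final} rules out the former, so $\Map_{\iRTop}(\tT,\iS)$ is contractible and $\iS$ is a final object of $\iRTop$.

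The main obstacle is the left-exactness of the constant-sheaf functor in (3), and, for (1) and (2), the entirely parallel point that universality of colimits and effectivity of groupoid objects survive passage to the limit and colimit $\infty$-categories; these are the genuinely topos-theoretic inputs, the surrounding arguments being formal manipulations of presentable $\infty$-categories and adjunctions.
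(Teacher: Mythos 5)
The paper records this statement as a \emph{Fact}, with pointers to [Lur1, 6.3.2.1, 6.3.4.7, 6.3.3.1], and supplies no proof of its own; so the only meaningful comparison is with Lurie's arguments. Measured against those, your parts (1) and (3) are sound in outline. For (1), a colimit in $\iRTop$ is a limit in $\iLTop$, computed as the limit of the underlying $\infty$-categories (cartesian sections of the associated fibration), and the real content is the verification that this $\infty$-category again satisfies the conditions of Fact \ref{fct:pr:Giraud} — you should not wave that away, but the route is the right one. Your (3) is precisely Lurie's proof: free generation of $\iS=\iPSh(\ast)$ under small colimits identifies colimit-preserving functors $\iS\to\tT$ with objects of $\tT$, left exactness singles out the final objects, and Fact \ref{fct:ic:fin} with Corollary \ref{cor:pr:final} gives contractibility. (For the converse step there is a shorter route than your descent computation: the constant-presheaf functor $\iS\to\iPSh(\iC)$ preserves all limits because limits in $\iPSh(\iC)$ are computed objectwise, and composing with the left exact localization $\iPSh(\iC)\to\tT$ of Fact \ref{fct:pr:sh} gives the left exact $\iS\to\tT$ directly.)

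The genuine gap is in (2). Limits in $\iRTop$ over a general diagram are \emph{not} given by the $\infty$-category of compatible families; that description is valid only for \emph{filtered} diagrams, as in \S\ref{sss:is:spr}. For the discrete two-object diagram your recipe returns the cartesian product $\tT\times\tT'$ of the underlying $\infty$-categories, and this is the \emph{coproduct} of the two $\infty$-topoi in $\iRTop$, not their product: since limits and colimits in $\tT\times\tT'$ are computed componentwise, a left exact colimit-preserving functor $\tU\to\tT\times\tT'$ is the same datum as a pair of such functors into $\tT$ and into $\tT'$, which is exactly the universal property of the coproduct in $\iRTop$. Already for ordinary topoi the product of $\mathrm{Sh}(X)$ and $\mathrm{Sh}(Y)$ is $\mathrm{Sh}(X\times Y)$, whereas the product of the underlying categories is $\mathrm{Sh}(X\sqcup Y)$. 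Lurie's proof of the statement you are reproving assembles general limits from the final object (your part (3)), pullbacks — which require a separate and nontrivial construction — and filtered limits, the only place where the cartesian-sections picture applies. As written, your argument for (2) produces the wrong $\infty$-topos.
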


Recall that for an $\infty$-topos $\tT$ and $U \in \tT$,
the over-$\infty$-category $\oc{\tT}{U}$ is an $\infty$-topos (Fact \ref{fct:pr:ovc}).
It is equipped with the following geometric morphisms.

\begin{fct}[{\cite[Proposition 6.3.5.1 (2)]{Lur1}}]\label{fct:is:biadj}
For $U \in \tT$, the canonical functor $j_!: \oc{\tT}{U} \to \tT$ 
of the over-$\infty$-category $\oc{\tT}{U}$ (Corollary \ref{cor:ic:ovc})
has a right adjoint $j^*$ which commutes with colimits. 
Thus $j^*$ has a right adjoint $j_*$,
and we have two geometric morphisms of $\infty$-topoi:
\[
 j_!: \oc{\tT}{U} \adjunc \tT         :j^*, \quad 
 j^*: \tT         \adjunc \oc{\tT}{U} :j_*.
\]
\end{fct}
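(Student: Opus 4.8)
The plan is to build the two adjunctions one after another, using the presentability of the $\infty$-topoi in play together with the adjoint functor theorem, and then to read off the left-exactness that the geometric morphism structure demands. The only place where being an $\infty$-topos (as opposed to merely presentable) is used is in showing that the inverse-image functor commutes with colimits, via the Giraud axiom of universality of colimits.

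First I would record that the canonical functor $j_!: \oc{\tT}{U} \to \tT$ of Corollary \ref{cor:ic:ovc} preserves small colimits, since colimits in an over-$\infty$-category $\oc{\tT}{U}$ are computed in $\tT$ (a general fact on over-$\infty$-categories, \cite[\S 1.2.13]{Lur1}). Both $\tT$ and $\oc{\tT}{U}$ are presentable: $\tT$ by the Giraud-type characterization, Fact \ref{fct:pr:Giraud}, and $\oc{\tT}{U}$ because it is again an $\infty$-topos by Fact \ref{fct:pr:ovc}. The adjoint functor theorem for presentable $\infty$-categories (\cite[Corollary 5.5.2.9]{Lur1}) then produces a right adjoint $j^*: \tT \to \oc{\tT}{U}$ of $j_!$, which is the adjunction $j_! \dashv j^*$.

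Next I would identify $j^*$ concretely. Since $\tT$ admits small limits (Corollary \ref{cor:pr:lcl}), one forms for $X \in \tT$ the object $(X \times U \xr{\pr_U} U)$ of $\oc{\tT}{U}$, and the universal property of the product yields, for every $(V \to U) \in \oc{\tT}{U}$, a natural equivalence
\[
 \Map_{\oc{\tT}{U}}\bigl((V \to U),\, (X \times U \to U)\bigr) \simeq \Map_{\tT}(V, X) = \Map_{\tT}\bigl(j_!(V \to U), X\bigr).
\]
By uniqueness of adjoints, $j^*(X) \simeq (X \times U \to U)$; equivalently, under $\tT \simeq \oc{\tT}{\one_{\tT}}$ the functor $j^*$ is base change along the structure morphism $U \to \one_{\tT}$. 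The crucial point enters here: colimits in the $\infty$-topos $\tT$ are universal, i.e.\ stable under base change (the relevant Giraud axiom in Fact \ref{fct:pr:Giraud}), so $j^*$ preserves small colimits. A second application of the adjoint functor theorem --- $j^*$ preserves small colimits and both categories are presentable --- then yields a right adjoint $j_*: \oc{\tT}{U} \to \tT$, which is the adjunction $j^* \dashv j_*$.

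For the geometric morphism assertions it remains to observe that $j^*$, as a right adjoint of $j_!$, preserves all small limits, hence in particular is left exact in the sense of Definition \ref{dfn:ic:exact}; thus the adjunction $j^*: \tT \rlto \oc{\tT}{U} :j_*$ is a geometric morphism $\oc{\tT}{U} \to \tT$ in the sense of Definition \ref{dfn:is:gm}, with $j^*$ as inverse image, and the further left adjoint $j_!$ exhibits this inverse image functor as \emph{essential}, which is what the first displayed adjunction records. I expect the only genuinely non-formal step to be the colimit-preservation of $j^*$, as that is precisely where universality of colimits is invoked; everything else is bookkeeping with adjoint functor theorems. The one point of care is the explicit identification $j^*(X) \simeq X \times U$, needed so that $j^*$ becomes visibly a base-change functor to which universality applies --- a routine slice-category computation that should nonetheless be written out cleanly.
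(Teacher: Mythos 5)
Your proof is correct and is essentially the argument behind the cited Proposition 6.3.5.1(2) of Lurie, which is all the paper itself offers (the statement appears as a Fact with only the citation, no proof): $j_!$ preserves colimits because colimits in a slice are computed in $\tT$, the adjoint functor theorem produces $j^*\simeq(-\times U)$, universality of colimits makes $j^*$ colimit-preserving, and a second application of the adjoint functor theorem yields $j_*$, with left exactness of $j^*$ free since it is a right adjoint. Your care in observing that only the second adjunction is literally a geometric morphism --- $j_!$ preserves pullbacks but not the terminal object, so $(j_!,j^*)$ merely exhibits the morphism as essential --- is warranted, as the paper's phrase ``two geometric morphisms'' is loose on exactly this point.
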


Following the standard terminology in \cite{SGA4},
we call the triple $(j_!,j^*,j_*)$ \emph{biadjunction}.
We close this part by 

\begin{ntn}\label{ntn:is:rst}
Given an $\infty$-topos $\tT$, $U \in \tT$ and $\shF \in \iShv_{\iC}(\tT)$
for some $\infty$-category $\iC$ admitting small limits, we denote 
$\rst{\shF}{U} := j^*(\shF)$ and call it the \emph{restriction of $\shF$ to $U$}.
\end{ntn}

%%%%%%%%%%%%%%%%%%%%%%%%%%%%%%%%%%%%%%%%%%%%%%%%%%%%%%%%%%%%%%%%%%%%%%%%%%%%%%%%
%%%%%%%%%%%%%%%%%%%%%%%%%%%%%%%%%%%%%%%%%%%%%%%%%%%%%%%%%%%%%%%%%%%%%%%%%%%%%%%%
\subsection{Ringed $\infty$-topoi}
%\label{ss:is:rt}

\subsubsection{Definition}
\label{sss:is:rt}

We will use the language of $\infty$-operad \cite{Lur2} in the following.
Let $\iC$ be a symmetric monoidal $\infty$-category \cite[\S 2.1]{Lur2}.
Then we have the notion of \emph{commutative ring objects} in $\iC$,
and they form an $\infty$-category $\iCAlg(\iC)$.

\begin{rmk}\label{rmk:is:iCAlg}
Let us give a more strict explanation.
Let $\bbE_{\infty}^{\otimes} = \mathrm{Comm}^{\otimes}$ 
be the commutative $\infty$-operad \cite[\S 2.1.1, \S 5.1.1]{Lur2}.
Then we have the $\infty$-category $\iAlg_{\bbE_{\infty}}(\iC)$
of $\bbE_{\infty}$-algebra objects in $\iC$ \cite[\S 3.1]{Lur2}.
In the above we denoted $\iCAlg(\iC) := \iAlg_{\bbE_{\infty}}(\iC)$.
Let $R \in \iAlg_{\bbE_{\infty}}(\iC)$.
\end{rmk}

Let $\tT$ be an $\infty$-topos.
Consider the $\infty$-category $\iShv_{\iCAlg(\iC)}(\tT)$ of 
sheaves valued in $\iCAlg(\iC)$ on $\tT$ (Definition \ref{dfn:pr:iShv}).
Its object $\shR \in \iShv_{\iCAlg(\iC)}(\tT)$
is called a \emph{sheaf of commutative ring objects} on $\tT$.

%For an $\infty$-topos $\tT$ and an $\infty$-category $\iC$,
%we denote by $\iShv_{\iC}(\tT)$ the $\infty$-category of sheaves on $\tT$
%with values in $\iC$ (Definition \ref{dfn:pr:iShv}).
%
%Now let us use the terminology of $\infty$-operad \cite{Lur2}.
%Assume that $\iC$ has a symmetric monoidal structure $\otimes$,
%and denote by $\iCAlg(\iC)$ the $\infty$-category of commutative algebra objects 
%in the sense of \cite[\S 2.1.3]{Lur2}.

\begin{dfn}\label{dfn:is:ringed}
Let $\iC$ be a symmetric monoidal $\infty$-category.
\begin{enumerate}[nosep]
\item
A \emph{ringed $\infty$-topos} is a pair $(\tT,\shR)$ 
of an $\infty$-topos $\tT$ and 
a sheaf of commutative algebras $\shR \in \iShv_{\iCAlg(\iC)}(\tT)$.

\item
A \emph{functor} $(\tT,\shR) \to (\tT',\shR')$ of ringed $\infty$-topoi
is a pair $(f,f^{\sharp})$ of 
\begin{itemize}
\item
A geometric morphism $f: \tT \to \tT'$
of $\infty$-topoi corresponding to an adjunction $f^*: \tT' \adjunc \tT :f_*$
(Definition \ref{dfn:is:gm}).

\item
A morphism $f^{\sharp}: \shR' \to f_* \shA$ in $\iShv_{\iCAlg(\iC)}(\tT')$,
where $f_* \shR$ is defined by $(f_* \shR)(U') := \shR(f^* U')$ 
for each $U' \in \tT'$.
\end{itemize}
\end{enumerate}
\end{dfn}

Our definition is an $\infty$-theoretical analogue of the ordinary ringed topos
\cite[IV.11, 13]{SGA4}. % \cite[Definition 2.2.18]{O:book}.
Note that in (2) we used geometric morphisms (Definition \ref{dfn:is:gm}),
which is the correct notion of functors of $\infty$-topoi.

In the later sections we mainly discuss constructible sheaves
by setting $k=\bbF_l$ or $\Qlb$ and $\iC:=\iMod_k=\Ner(\Mod_k)$, 
the nerve of the category of $k$-modules. 
The monoidal structure on $\iC$ is given by the standard $\otimes_k$.
%Recall the $\infty$-category $\isMod_{k} := \Nsp(\sMod_{k}^{\circ})$
%of simplicial $k$-modules for a commutative ring $k$ (\S \ref{sss:dSt:idMod}).
%It has a symmetric monoidal structure,
%and the commutative algebra objects form the $\infty$-category $\isCom_k$
%of derived $k$-algebras (Definition \ref{dfn:dSt:isCom}).
%We call an object of $\iShv_{\isCom_k}(\tT)$ 
%a \emph{sheaf of derived $k$-algebras on $\tT$}.
%In the following section %\ref{ss:is:dr}, 
%we will consider a ringed topos $(\stX_{\et},\shO)$ 
%where $\stX_{\et}$ is the \'etale topos of a derived algebraic space
%and $\shO \in \iShv_{\isCom_k}(\stX_{\et})$.

An ethical remark is in order.

\begin{rmk*}
As mentioned in Remark \ref{rmk:dSt:dag/sag:1}, 
Lurie explores the theory of spectral algebraic geometry
based on the sheaves of spectral rings in \cite{Lur5}--\cite{Lur14}.
Under the notation above, 
his theory is developed in $\iShv_{\iCAlg_\bbK}(\tT)$,
where $\bbK$ denotes an $\bbE_{\infty}$-ring (see Appendix \ref{ss:sp:sp})
and $\iCAlg_\bbK=\iCAlg_{\bbK}(\iMod(\iSp))$ 
denotes the $\infty$-category of $\bbK$-algebra objects in $\iMod(\iSp)$,
which is the $\infty$-category of module objects in 
the $\infty$-category $\iSp$ of spectra
equipped with the smash product monoidal structure.
%over an $\bbE_{\infty}$-ring $\bbK$ 
%and it has a natural action of $k$.
A pair $(\tT,\shA)$ with $ \shA \in \iShv_{\iCAlg_{\bbK}}(\tT)$ is called 
a \emph{spectrally ringed $\infty$-topos} \cite[Definition 1.27]{Lur7}.

As noted in \cite[Remark 2.1]{Lur7},
a commutative rings is a discrete $\bbE_{\infty}$-ring,
so that the theory for $\iC=\iMod_k$ with $k$ a commutative ring
can be embedded in that for $\iC=\iMod_{\bbK}(\iSp)$
with $\bbK$ an arbitrary $\bbE_{\infty}$-ring.
Thus our presentation is basically included in Lurie's exposition.
%in particular on some formal aspects.
%However, as mentioned in Remark \ref{rmk:dSt:dag/sag:1}, 
%as in the gap between \cite[Chap.\ 2.2]{TVe2} and Lurie's theory,
%there is a difference between $\isCom_k$ and 
%$\iCAlg_k$ for a commutative ring $k$
%(Fact \ref{fct:sp:spl=E}).
%Since what we need is the case when $k=\bbF_l$,
%we decided to work in the context of  \cite[Chap.\ 2.2]{TVe2}.
\end{rmk*}

%%%%%%%%%%%%%%%%%%%%%%%%%%%%%%%%%%%%%%%%%%%%%%%%%%%%%%%%%%%%%%%%%%%%%%%%%%%%%%%%
\subsubsection{Sheaves of $\shR$-modules on $\infty$-topoi}
\label{sss:is:Rmod}

Let us first recall the notion of modules over commutative rings 
in the $\infty$-categorical setting.
Let $\iC$ be a symmetric monoidal $\infty$-category
as in the previous part.
Take a commutative algebra object $R \in \iCAlg(\iC)$.
Then we have the notion of \emph{$R$-module objects} in $\iC$,
and they form an $\infty$-category $\iMod_R(\iC)$.
It is equipped with a symmetric monoidal structure,
and the associated tensor product is denoted by $\otimes_R$.

\begin{rmk*}
Let us continue Remark \ref{rmk:is:iCAlg}.
Since the commutative operad $\bbE_{\infty}^{\otimes}$ is coherent, 
we have the $\infty$-operad $\iMod_{R}^{\bbE_{\infty}}(\iC)^{\otimes}$
of $R$-module objects in $\iC$ \cite[\S 3.3]{Lur2}.
It is equipped with a fibration 
$\iMod_{R}^{\bbE_{\infty}}(\iC)^{\otimes} \to \bbE_{\infty}^{\otimes}$
of $\infty$-operads.
We denoted $\iMod_R(\iC) := \iMod_{R}^{\bbE_{\infty}}(\iC)$ above.
\end{rmk*}

Now we turn to the discussion of sheaves on $\infty$-topoi.
Let $\tT$ be an $\infty$-topos and 
assume that the symmetric monoidal $\infty$-category $\iC$ is presentable.
Then by \cite[Lemma 1.13]{Lur7} 
the symmetric monoidal structure on $\iC$ induces 
the \emph{pointwise symmetric monoidal structure} on $\iShv_{\iC}(\tT)$.
In particular, we have the $\infty$-category 
$\iCAlg(\iShv_{\iC}(\tT))$ of commutative ring objects in 
the symmetric monoidal $\infty$-category $\iShv_{\iC}(\tT)$.

Let us continue to assume that $\iC$ is presentable.
As in the argument of \cite[Remark 1.18]{Lur7}, the forgetful functor 
$\iCAlg(\iC) \to \iC$ is conservative (Definition \ref{dfn:pr:csv})
and preserves small limits. 
Thus we have an equivalence
\[
 \iShv_{\iCAlg(\iC)}(\tT) \simeq \iCAlg(\iShv_{\iC}(\tT)),
\]
where in the right hand side 
we are considering the pointwise symmetric monoidal structure.

Now let us take a sheaf $\shR \in \iShv_{\iCAlg(\iC)}(\tT)$ 
of commutative ring objects.
Considering it as an object of $\iCAlg(\iShv_{\iC}(\tT))$, we can consider 
the $\infty$-category $\iMod_{\shR}(\iShv_{\iC}(\tT))$ of $\shR$-module objects
in the symmetric monoidal $\infty$-category $\iShv_{\iC}(\tT)$.
It is equipped with a symmetric monoidal structure.
Moreover, by \cite[Theorem 3.4.4.2]{Lur2}, 
the $\infty$-category $\iMod_{\shR}(\iShv_{\iC}(\tT))$ is presentable,
and the tensor product associated to the symmetric monoidal structure
preserves small colimits in each variable.
In total, we have 

\begin{prp}\label{prp:is:ModR}
Let $\iC$ be a symmetric monoidal presentable $\infty$-category
and $\tT$ be an $\infty$-topos.
For an $\shR \in \iShv_{\iCAlg(\iC)}(\tT)$,
we have a symmetric monoidal presentable $\infty$-category
\[
 \iMod_{\shR}(\iShv_{\iC}(\tT))
\] 
of $\shR$-module objects.
We call its object a \emph{sheaf of $\shR$-modules on $\tT$}.
The associated tensor product 
\[
 \otimes_{\shR}: 
 \iMod_{\shR}(\iShv_{\iC}(\tT)) \times \iMod_{\shR}(\iShv_{\iC}(\tT))
 \longto \iMod_{\shR}(\iShv_{\iC}(\tT))
\]
preserves small colimits in each variable.
\end{prp}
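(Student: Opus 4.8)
The plan is to deduce the statement by combining the three facts recalled in the paragraphs immediately preceding it, in the following order. First I would record that $\iShv_{\iC}(\tT)$ is a symmetric monoidal presentable $\infty$-category. Presentability holds because $\iShv_{\iC}(\tT)$ is the $\infty$-category of small-limit-preserving functors $\tT^{\op} \to \iC$ between presentable $\infty$-categories ($\tT$ is presentable by Fact \ref{fct:pr:Giraud}, and $\iC$ is presentable by hypothesis), and the pointwise symmetric monoidal structure is furnished by \cite[Lemma 1.13]{Lur7}. Moreover, under this structure the bifunctor $- \otimes \shF$ preserves small colimits for each $\shF$, since the tensor product of $\iC$ does so by presentability and this property is inherited by $\iShv_{\iC}(\tT)$ through the sheafification used to compute its colimits; this last point is also part of the content of \cite[Lemma 1.13]{Lur7}.

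Second, I would pass from the datum $\shR \in \iShv_{\iCAlg(\iC)}(\tT)$ to a commutative algebra object of $\iShv_{\iC}(\tT)$. The forgetful functor $\iCAlg(\iC) \to \iC$ is conservative (Definition \ref{dfn:pr:csv}) and preserves small limits, so, as explained in \cite[Remark 1.18]{Lur7}, one obtains an equivalence $\iShv_{\iCAlg(\iC)}(\tT) \simeq \iCAlg(\iShv_{\iC}(\tT))$, where on the right one uses the pointwise symmetric monoidal structure of the first step. Transporting $\shR$ along this equivalence turns it into an object of $\iCAlg(\iShv_{\iC}(\tT))$.

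Third, with $\iShv_{\iC}(\tT)$ now recognized as a presentable symmetric monoidal $\infty$-category whose tensor product is compatible with small colimits, I would invoke \cite[Theorem 3.4.4.2]{Lur2} applied to the commutative algebra object $\shR$: it yields that $\iMod_{\shR}(\iShv_{\iC}(\tT))$ is presentable, carries an induced symmetric monoidal structure, and that the associated relative tensor product $\otimes_{\shR}$ preserves small colimits in each variable. This is exactly the assertion of the proposition.

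I expect the only genuine point of care — and hence the main obstacle — to be verifying the colimit-compatibility hypothesis required by \cite[Theorem 3.4.4.2]{Lur2}, namely that $- \otimes \shF$ preserves small colimits on $\iShv_{\iC}(\tT)$ for every $\shF$. Because colimits in a sheaf $\infty$-category are obtained by sheafifying pointwise colimits rather than being computed pointwise, this reduction genuinely uses that sheafification is (symmetric) monoidal, which is built into \cite[Lemma 1.13]{Lur7}; once that is available, the stability statement follows formally from the corresponding fact in $\iC$. All remaining steps are routine bookkeeping with the cited results.
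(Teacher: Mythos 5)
Your proposal is correct and follows essentially the same route as the paper: the paper's ``proof'' is exactly the discussion preceding the proposition, which invokes \cite[Lemma 1.13]{Lur7} for the pointwise symmetric monoidal structure on $\iShv_{\iC}(\tT)$, \cite[Remark 1.18]{Lur7} for the equivalence $\iShv_{\iCAlg(\iC)}(\tT) \simeq \iCAlg(\iShv_{\iC}(\tT))$, and \cite[Theorem 3.4.4.2]{Lur2} for presentability of $\iMod_{\shR}(\iShv_{\iC}(\tT))$ and colimit-compatibility of $\otimes_{\shR}$. Your explicit attention to verifying that the pointwise tensor product on $\iShv_{\iC}(\tT)$ preserves small colimits in each variable (needed as a hypothesis for the cited theorem) is a point the paper leaves implicit in its citation of \cite[Lemma 1.13]{Lur7}, and is resolved correctly.
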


%Let $\iMod(\iC)$ denote the $\infty$-category of pairs $(R,M)$ 
%with $R \in \iCAlg(\iC)$ and $M \in \iMod_R(\iC)$.
%It is equipped with forgetful functors
%$\iC \xleftarrow{p} \iMod(\iC) \xr{q} \iCAlg(\iC)$,
%and we have $\iMod_R(\iC) \simeq \iMod(\iC) \times_{q,\iC,f_R} \iCAlg(\iC)$
%with $f_R$ determined by the given $R \in \iCAlg(\iC)$.

%Now assume that $\iC$ admits small limits.
%Then the argument in \cite[Remark 2.2.1]{Lur8} can be applied to $\iC$
%(instead of the $\infty$-category $\iSp$ of spectra treated in loc.\ cit.).
%As a consequence we have
%\[
% \iShv_{\iMod(\iC)}(\tT) \simeq \iMod(\iShv_{\iC}(\tT)),
%\]

%%%%%%%%%%%%%%%%%%%%%%%%%%%%%%%%%%%%%%%%%%%%%%%%%%%%%%%%%%%%%%%%%%%%%%%%%%%%%%%%
%%%%%%%%%%%%%%%%%%%%%%%%%%%%%%%%%%%%%%%%%%%%%%%%%%%%%%%%%%%%%%%%%%%%%%%%%%%%%%%%
\subsection{Sheaves of spectra}
\label{ss:is:sp}

In this subsection we discuss sheaves of spectra following \cite{Lur7}.
We will use the theory of spectra in the $\infty$-categorical setting,
which is developed extensively in \cite{Lur2}.
See also Appendix \ref{a:sp} where we give a brief summary.

%%%%%%%%%%%%%%%%%%%%%%%%%%%%%%%%%%%%%%%%%%%%%%%%%%%%%%%%%%%%%%%%%%%%%%%%%%%%%%%%
\subsubsection{Sheaves of spectra in general setting}
\label{sss:is:sp(C)}

Let $\iC$ be an $\infty$-category admitting finite limits.
We denote by
\[
 \iSp(\iC)
\]
the $\infty$-category of spectra in $\iC$ (Definition \ref{dfn:sp:sp}).
Hereafter $\iC$ is assumed to be presentable (Definition \ref{dfn:ic:pres}).
Then the $\infty$-category $\iSp(\iC)$ is stable (Definition \ref{dfn:stb:stb})
since a presentable $\infty$-category admits finite limits
so that Fact \ref{fct:sp:sp-stab} works.
Moreover $\iSp(\iC)$ has a $t$-structure (Definition \ref{dfn:stb:t-str})
by Fact \ref{fct:sp:sp-t}.
Hereafter we call it \emph{the} $t$-structure of $\iSp(\iC)$.

Let $\tT$ be an $\infty$-topos.
We now consider the $\infty$-category
\[
 \iShv_{\iSp(\iC)}(\tT)
\]
of sheaves on $\tT$ valued in spectra in $\iC$.
This $\infty$-category inherits properties of $\iSp(\iC)$ mentioned above.

\begin{lem}
\label{lem:is:stab}
Let $\iC$ be a presentable $\infty$-category and $\tT$ be an $\infty$-topos.
Then the $\infty$-category $\iShv_{\iSp(\iC)}(\tT)$ is stable.
\end{lem}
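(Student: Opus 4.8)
The plan is to present $\iShv_{\iSp(\iC)}(\tT)$ as a full sub-$\infty$-category of an ambient stable $\infty$-category that is closed under the operations detecting stability, i.e. the zero object together with the formation of fibers and cofibers. First I would record that $\iSp(\iC)$ is itself a stable $\infty$-category (Definition \ref{dfn:stb:stb}): this is Fact \ref{fct:sp:sp-stab} applied to $\iC$, which admits finite limits since it is presentable (Definition \ref{dfn:ic:pres}), and it has already been noted in the text just above the statement. Next I would pass to the ambient functor $\infty$-category $\iFun(\tT^{\op},\iSp(\iC))$ and use the standard fact that $\iFun(K,\iA)$ is stable for any simplicial set $K$ whenever $\iA$ is stable, with limits and colimits computed objectwise (\cite[\S 1.1.3]{Lur2}). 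Thus $\iFun(\tT^{\op},\iSp(\iC))$ is stable.

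The main step is then to verify that $\iShv_{\iSp(\iC)}(\tT)\subset\iFun(\tT^{\op},\iSp(\iC))$, the full sub-$\infty$-category spanned by the limit-preserving functors (Definition \ref{dfn:pr:iShv}), is closed under zero, fibers and cofibers as computed in the ambient category. The zero object is the constant functor at $0\in\iSp(\iC)$, which visibly preserves all limits. A fiber of a natural transformation of limit-preserving functors is computed objectwise as a pullback, and since finite limits commute with small limits, the resulting functor is again limit-preserving. For cofibers one uses that $\iSp(\iC)$ is stable: the objectwise cofiber of a transformation $\phi\colon F\to G$ of limit-preserving functors is $U\mapsto\operatorname{cofib}(\phi_U)\simeq\operatorname{fib}(\phi_U)[1]$, and as $\operatorname{fib}$ is a finite limit while the shift $[1]=\Sigma$ is an equivalence (hence preserves small limits), this functor preserves small limits. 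By the closure criterion for stable full sub-$\infty$-categories (\cite[\S 1.1.3]{Lur2}), $\iShv_{\iSp(\iC)}(\tT)$ is stable and the inclusion into $\iFun(\tT^{\op},\iSp(\iC))$ is exact.

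The point that requires care — and where the hypotheses are genuinely used — is the cofiber computation: one must know that finite colimits in $\iShv_{\iSp(\iC)}(\tT)$ are computed objectwise, which for a general (non-stable) target fails because a finite colimit of limit-preserving functors need not preserve limits; the argument works only because a finite colimit in the stable $\infty$-category $\iSp(\iC)$ can be rewritten as a finite limit up to shift. An alternative, which sidesteps the subcategory bookkeeping, is to identify $\iShv_{\iSp(\iC)}(\tT)\simeq\iSp\bigl(\iShv_{\iC}(\tT)\bigr)$ by a Fubini-type interchange for the limit-preserving-functor construction together with the description of spectra as reduced excisive functors, and then to conclude stability from Fact \ref{fct:sp:sp-stab}, since $\iShv_{\iC}(\tT)$ is presentable when $\iC$ and $\tT$ are; I would mention this identification but carry out the direct argument above.
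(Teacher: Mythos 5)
Your proposal is correct and follows essentially the same route as the paper: observe that $\iSp(\iC)$ is stable, that $\iFun(\tT^{\op},\iSp(\iC))$ is then stable by \cite[Proposition 1.1.3.1]{Lur2}, and that $\iShv_{\iSp(\iC)}(\tT)$ is a full sub-$\infty$-category closed under limits and translation, so \cite[Lemma 1.1.3.3]{Lur2} applies. You merely spell out more explicitly than the paper why closure under cofibers follows (via $\operatorname{cofib}\simeq\operatorname{fib}[1]$ and the shift preserving limits), which is a worthwhile clarification but not a different argument.
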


\begin{proof}
We follow the argument in \cite[Remark 1.3]{Lur7}.
Since $\iSp(\iC)$ is stable, the $\infty$-category $\iFun(\tT^{\op},\iSp(\iC))$ 
is also stable by \cite[Proposition 1.1.3.1]{Lur2}.
Clearly $\iShv_{\iSp(\iC)}(\tT) \subset \iFun(\tT^{\op},\iSp(\iC))$ is closed
under limits and translation. 
Then by \cite[Lemma 1.1.3.3]{Lur2} we have the conclusion.
\end{proof}

\begin{lem}\label{lem:is:sh-t}
Assume $\iC$ is a presentable $\infty$-category
equipped with a functor $\ve: \iC \to \iS_*$ which preserves small limits.
Here $\iS_*$ denotes the $\infty$-category of pointed spaces 
(Definition \ref{dfn:sp:ptd}).
Then the stable $\infty$-category 
$\iShv_{\iSp(\iC)}(\tT)$ has a $t$-structure determined by 
\[
 \left(\iShv_{\iSp(\iC)}(\tT)_{\le 0},
       \iShv_{\iSp(\iC)}(\tT)_{\ge 0}\right),
\]
%where the $\infty$-categories $\iShv_{\iSp(\iC)}(\tT)_{\le 0}$ 
%and $\iShv_{\iSp(\iC)}(\tT)_{\ge 0}$ 
which will be given in Definition \ref{dfn:LE:cncc}.
\end{lem}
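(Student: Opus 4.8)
The plan is to obtain the $t$-structure by transporting the pointwise $t$-structure along the sheafification localization. Write $\iFun(\tT^{\op},\iSp(\iC))$ for the presheaf $\infty$-category. Since $\iC$ is presentable, so is $\iSp(\iC)$, and its canonical $t$-structure (Fact \ref{fct:sp:sp-t}) is accessible and compatible with filtered colimits; consequently $\iFun(\tT^{\op},\iSp(\iC))$ carries the \emph{pointwise} $t$-structure, with $\tau_{\le n}$ and $\tau_{\ge n}$ computed objectwise, which is again accessible and compatible with filtered colimits. By Definition \ref{dfn:pr:iShv}, $\iShv_{\iSp(\iC)}(\tT)\subset\iFun(\tT^{\op},\iSp(\iC))$ is the full sub-$\infty$-category of limit-preserving functors; it is presentable (an accessible localization of a presentable $\infty$-category) and stable by Lemma \ref{lem:is:stab}, and the sheafification functor $L$ onto it is an accessible left-exact localization (left-exactness holds because the limit-preserving condition is a condition on small limits and the associated topological localization of Fact \ref{fct:pr:sh} is built from left-exact steps).

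First I would record that $L$ is left exact and that $\iShv_{\iSp(\iC)}(\tT)$ is a presentable stable $\infty$-category. Then I would invoke the standard transport principle for $t$-structures under accessible left-exact localizations of presentable stable $\infty$-categories with an accessible $t$-structure compatible with filtered colimits (\cite[\S 1.2.1, \S 1.4.4]{Lur2}): the pair
\[
 \iShv_{\iSp(\iC)}(\tT)_{\le 0} := \iShv_{\iSp(\iC)}(\tT)\cap\iFun(\tT^{\op},\iSp(\iC))_{\le 0},
 \qquad
 \iShv_{\iSp(\iC)}(\tT)_{\ge 0} := L\bigl(\iFun(\tT^{\op},\iSp(\iC))_{\ge 0}\bigr)
\]
is a $t$-structure on $\iShv_{\iSp(\iC)}(\tT)$, whose coconnective truncation is computed pointwise and whose connective truncation is $L\circ\tau_{\ge 0}^{\mathrm{pw}}$. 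It then remains to match these two subcategories with the ones of Definition \ref{dfn:LE:cncc}. For the coconnective part this is immediate from the pointwise description. For the connective part one checks that $\shF\in L(\iFun(\tT^{\op},\iSp(\iC))_{\ge 0})$ if and only if the homotopy sheaves $\pi_n\shF$ — formed by sheafifying $U\mapsto\pi_n\Omega^\infty(\ve(\shF(U)))$ via the limit-preserving functor $\ve:\iC\to\iS_*$ and its induced functor $\iSp(\iC)\to\iSp$ — vanish for $n<0$, using that $\ve$ commutes with $\Omega^\infty$ and is compatible with the canonical $t$-structures.

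The main obstacle is the compatibility of $L$ with the $t$-structure, namely that the pointwise coconnective truncation of a sheaf is again a sheaf, equivalently that $L$ preserves each $\iFun(\tT^{\op},\iSp(\iC))_{\le n}$. This is precisely where left-exactness of sheafification and compatibility of the $t$-structure with filtered colimits enter, and it is the only non-formal ingredient; once it is established, closure under shifts and extensions, the orthogonality $\Map(\shF,\shG[-1])\simeq 0$ for $\shF$ connective and $\shG$ coconnective, and the truncation fiber sequence $\tau_{\ge 0}\shF\to\shF\to\tau_{\le -1}\shF$ all descend from the presheaf category by applying $L$ and using that $L$ restricts to the identity on sheaves. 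A secondary subtlety is that $\ve$ need not be conservative, so the identification of the connective part with the vanishing of negative homotopy sheaves must be deduced from the defining property of the canonical $t$-structure on $\iSp(\iC)$ (its truncations are detected by $\Omega^\infty$, which factors through $\ve$) rather than from $\ve$ alone.
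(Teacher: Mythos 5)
Your overall route (put the pointwise $t$-structure on $\iFun(\tT^{\op},\iSp(\iC))$ and transport it along sheafification) differs from the paper's, which simply adapts \cite[Proposition 1.7]{Lur7}: there the connective part is \emph{generated} under colimits and extensions by the sheafified corepresentables via \cite[Proposition 1.4.4.11]{Lur2}, and the two halves are then identified with Definition \ref{dfn:LE:cncc}. Your target subcategories are the right ones, but the step you yourself single out as the crux is false.

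Concretely: the pointwise coconnective truncation of a sheaf is \emph{not} again a sheaf, because $\tau_{\le 0}\colon\iSp(\iC)\to\iSp(\iC)$ does not preserve pullbacks, while the sheaf condition is a limit condition. Take $\tT=\iShv_{\iS}(S^1)$ and let $\shF$ be the constant sheaf of spectra at $H\bbZ[1]$. For a cover of $S^1$ by two arcs with disconnected intersection, the values of $\shF$ on the arcs and on their intersection are connective with vanishing $\pi_{\le 0}$, so $\tau_{\le 0}^{\mathrm{pw}}\shF$ vanishes there; yet $\pi_0(\shF(S^1))\simeq H^1(S^1;\bbZ)\simeq\bbZ$, so $\tau_{\le 0}^{\mathrm{pw}}\shF$ is nonzero on $S^1$ and violates descent. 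You also conflate this false statement with the true statement that $L$ preserves $\iFun(\tT^{\op},\iSp(\iC))_{\le n}$ (which does follow from left exactness of $L$, since coconnectivity is detected by truncatedness of $\Omega^{\infty-n}$ and left exact functors preserve truncated objects); these are not equivalent, and neither left exactness of $L$ nor compatibility with filtered colimits can rescue the pointwise claim. The correct picture is that $\iShv_{\iSp(\iC)}(\tT)_{\le 0}=\iShv_{\iSp(\iC)}(\tT)\cap\iFun(\tT^{\op},\iSp(\iC))_{\le 0}$ is closed under limits and extensions (limits of sheaves are computed pointwise and $\iSp(\iC)_{\le 0}$ is closed under limits), the truncation on sheaves is $L\circ\tau_{\le 0}^{\mathrm{pw}}$ rather than $\tau_{\le 0}^{\mathrm{pw}}$, and the existence of the $t$-structure should be obtained from the connective side by generation, after which one checks that the right orthogonal of the generators is exactly the pointwise coconnective sheaves and that the generated connective part coincides with the vanishing of the negative homotopy sheaves of Definition \ref{dfn:LE:cncc}. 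As written, your "only non-formal ingredient" is a claim that fails, so the proof does not go through without reorganizing it along these lines.
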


Our construction of $t$-structure is a slight modification
of \cite[Proposition 1.7]{Lur7}.
For the explanation, we need some preliminaries.

Let $\Omega^{\infty}: \iSp(\iC) \to \iC$ be the functor 
in Definition \ref{dfn:sp:Omega^inf}.
Composition with $\ve: \iC \to \iS_*$ and 
the forgetful functor $\iS_* \to \iS$ yields $\iSp(\iC) \to \iS_*$,
which further induces 
\[
 \wt{\ve}: \iShv_{\iSp(\iC)}(\tT) \longto \iShv_{\iS}(\tT)
\]
since $\ve$ preserves limits.
Now recall the equivalence $\iShv_{\iS}(\tT) \simeq \tT$
(Fact \ref{fct:pr:Shv(X)=X}).
We also have the truncation functor $\tau_{\le 0}: \tT \to \tau_{\le 0} \tT$
(Definition \ref{dfn:ic:tau_k}).
Combining these functors, we have

\begin{dfn}\label{dfn:is:pi_n}
\begin{enumerate}[nosep]
\item 
We define $\pi_0: \iShv_{\iSp(\iC)}(\tT) \to \tau_{\le 0} \tT$ by 
\[
 \pi_0: \iShv_{\iSp(\iC)}(\tT) \xrr{\wt{\ve}} 
        \iShv_{\iS}(\tT) \longsimto \tT \xrr{\tau_{\le 0}} \tau_{\le 0} \tT.
\]

\item
For $n \in \bbZ$,
we define $\pi_n: \iShv_{\iSp(\iC)}(\tT) \to \tau_{\le 0} \tT$ by
\[
 \pi_n: \iShv_{\iSp(\iC)}(\tT) \xrr{\Omega^n}
        \iShv_{\iSp(\iC)}(\tT) \xrr{\pi_0} 
        \tau_{\le 0} \tT.
\]
Here $\Omega^n$ is the functor induced by the iterated loop functor
$\Omega^n: \iSp(\iC) \to \iSp(\iC)$ (Definition \ref{dfn:stb:[n]}).
We call the image $\pi_n \shM$ of $\shM \in \iShv_{\iSp(\iC)}(\tT)$
the \emph{$n$-th homotopy group} of $\shM$.
\end{enumerate}
\end{dfn}

\begin{rmk*}
As explained in \cite[Remark 1.5]{Lur7}, 
for $n \ge 2$, $\pi_n$ can be regarded as a functor from 
$\Ho \iShv_{\iSp(\iC)}(\tT)$ to the category of 
abelian group objects in $\tau_{\le 0} \tT$.
In fact, $\pi_n$ can be rewritten as the composition
\[
 \iShv_{\iSp(\iC)}(\tT) \xrightarrow{\Omega^{n-2}} 
 \iShv_{\iSp(\iC)}(\tT) \longto 
 \iShv_{\iS_*}(\tT) \longsimto \tT_*
 \xrightarrow{\pi_2} \tau_{\le 0}\tT,
\]
where we used
\begin{itemize}
\item
For an $\infty$-category $\iB$, the symbol $\iB_*$ denotes 
the full sub-$\infty$-category of $\iFun(\Delta^1,\iB)$
spanned by pointed objects (Definition \ref{dfn:sp:ptd}).

\item 
The functor $\iShv_{\iSp(\iC)}(\tT) \to \iShv_{\iS_*}(\tT)$ 
is given by the composition 
$\iShv_{\iSp(\iC)}(\tT) \to \iShv_{\iSp(\iS_*)}(\tT) \to \iShv_{\iS_*}(\tT)$.
The first functor is induced by the limit-preserving $\ve: \iC \to \iS_*$.
The second one comes from the description of $\iSp(\iS_*)$ as 
the limit of the tower
\[
 \cdots \xrr{\Omega} \iS_* \xrr{\Omega} \iS_* \xrr{\Omega} \iS_*.
\]

\item
The equivalence $\iShv_{\iS_*}(\tT) \simeq \tT_*$ is 
shown similarly to $\iShv_{\iS}(\tT) \simeq \tT$
(Fact \ref{fct:pr:Shv(X)=X}).
\end{itemize}
Then the image of $\pi_n$ is included in the full sub-$\infty$-category
$\iEM_n(\tau_{\le 0} \tT) \subset (\tau_{\le 0} \tT)_*$ 
of Eilenberg-MacLane objects \cite[Definition 7.2.2.1]{Lur1}.
In the case $\iC=\iS_*$ and $\ve=\id$,
then the image of $\pi_n$ is equal to $\iEM_n(\tau_{\le 0} \tT)$.
For $n \ge 2$, the $\infty$-category $\iEM_n(\tau_{\le 0} \tT)$ 
is equivalent to the $\infty$-category of abelian group objects 
by \cite[Proposition 7.2.2.12]{Lur1}.
\end{rmk*}

%%%%%%%%%%%%%%%%%%%%%%%%%%%%%%%%%%%%%%%%%%%%%%%%%%%%%%%%%%%%%%%%%%%%%%%%%%%%%%%%
\subsubsection{Sheaves of spectra}

We now take $\iC = \iS_*$ and $\ve=\id$ in the argument so far,
and consider the $\infty$-category $\iSp = \iSp(\iS_*)$ of spectra.
See Appendix \ref{ss:sp:sp} for a summary.
Let us mention here that the \emph{smash product} gives 
a symmetric monoidal structure on $\iS$ \cite[\S 6.3.2]{Lur2}.

By the argument in the previous \S \ref{sss:is:sp(C)},
we have the $\infty$-category
\[
 \iShv_{\iSp}(\tT)
\]
of sheaves of spectra on an $\infty$-topos $\tT$.
We list its formal properties below.

\begin{lem}\label{lem:is:ShvSp}
\begin{enumerate}[nosep]
\item 
$\iShv_{\iSp}(\tT)$ is stable and equipped with a $t$-structure 
$\left(\iShv_{\iSp}(\tT)_{\le 0}, \iShv_{\iSp}(\tT)_{\ge 0}\right)$
given in Lemma \ref{lem:is:sh-t}.
Hereafter we call it \emph{the} $t$-structure of $\iShv_{\iSp}(\tT)$.

\item
$\iShv_{\iSp}(\tT)$ has a symmetric monoidal structure 
induced by the smash product on $\iSp$.
Hereafter we call it the \emph{smash product symmetric monoidal structure}.
Moreover it is compatible with the $t$-structure 
in the sense of Definition \ref{dfn:stb:t-prp}.

\item
$\iShv_{\iSp}(\tT)$ is presentable.
\end{enumerate}
\end{lem}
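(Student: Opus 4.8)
The plan is to obtain all three assertions by specializing the general results of this section to $\iC = \iS_*$ and $\ve = \id_{\iS_*}$, under which, by definition, $\iSp = \iSp(\iS_*)$ and $\iShv_{\iSp}(\tT) = \iShv_{\iSp(\iS_*)}(\tT)$. For part (1), the $\infty$-category $\iS_*$ is presentable, so Lemma \ref{lem:is:stab} applied with $\iC = \iS_*$ gives that $\iShv_{\iSp}(\tT)$ is stable; and the identity $\ve = \id_{\iS_*}\colon \iS_* \to \iS_*$ trivially preserves small limits, so Lemma \ref{lem:is:sh-t} applies and equips $\iShv_{\iSp}(\tT)$ with the $t$-structure $(\iShv_{\iSp}(\tT)_{\le 0}, \iShv_{\iSp}(\tT)_{\ge 0})$ of Definition \ref{dfn:LE:cncc}, whose connective and coconnective parts are detected by the homotopy-group functors $\pi_n$ of Definition \ref{dfn:is:pi_n}. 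No further input is needed for (1). For part (3), since $\iSp$ is presentable, $\iShv_{\iSp}(\tT)$ is presentable by the general theory of sheaves valued in a presentable $\infty$-category on an $\infty$-topos (cf. \cite[Lemma 1.13]{Lur7}, the same circle of ideas behind \cite[Remark 1.3]{Lur7} used in Lemma \ref{lem:is:stab}); alternatively one identifies it via Fact \ref{fct:pr:Shv(X)=X} with the stabilization $\iSp(\tT)$ of the presentable $\infty$-category $\tT$.

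For part (2), I would invoke \cite[Lemma 1.13]{Lur7}: as $\iSp$ is a presentable symmetric monoidal $\infty$-category under the smash product, this endows $\iShv_{\iSp}(\tT)$ with the pointwise symmetric monoidal structure, which we call the smash product symmetric monoidal structure, and the fact that its tensor product preserves small colimits separately in each variable then follows exactly as in Proposition \ref{prp:is:ModR}. It remains to verify compatibility with the $t$-structure in the sense of Definition \ref{dfn:stb:t-prp}, i.e. that the unit lies in $\iShv_{\iSp}(\tT)_{\ge 0}$ and that $\iShv_{\iSp}(\tT)_{\ge 0}$ is closed under the tensor product.

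I would reduce both of these to the pointwise situation in $\iSp$ together with the behaviour of sheafification. For sheaves $\shM, \shN$ of connective spectra, the presheaf $U \mapsto \shM(U) \wedge \shN(U)$ is pointwise connective because the standard $t$-structure on $\iSp$ is compatible with the smash product, and the unit is the constant sheaf valued in the (connective) sphere spectrum (Definition \ref{dfn:pr:cst-sh}). To transport pointwise connectivity to the sheaf level one uses that sheafification induces an isomorphism on the homotopy sheaves $\pi_n$; granting this, $\pi_n(\shM \otimes \shN)$ is the sheafification of $U \mapsto \pi_n(\shM(U) \wedge \shN(U))$, which vanishes for $n<0$, and likewise for the unit. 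I expect this last point to be the main obstacle: one must reconcile the sheafification implicit in the pointwise monoidal structure of \cite[Lemma 1.13]{Lur7} with the explicit, homotopy-sheaf description of the $t$-structure coming from Definition \ref{dfn:is:pi_n} and Definition \ref{dfn:LE:cncc}, i.e. pin down that sheafification is right $t$-exact for the $t$-structures in play. Once that is in hand, (2) follows, and together with (1) and (3) the lemma is a formal consequence of the results recalled in this section.
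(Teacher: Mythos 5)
Your proposal is correct and follows essentially the same route as the paper: (1) and (3) are obtained exactly as you describe (the paper cites \cite[Remark 1.1.5]{Lur5} for presentability), and for (2) the paper constructs the smash product symmetric monoidal structure on $\iShv_{\iSp}(\tT)$ via \cite[Remark 2.1.3.4]{Lur2} and \cite[Proposition 1.15]{Lur8} and then simply cites \cite[Proposition 1.16]{Lur7} for compatibility with the $t$-structure. The sheafification point you flag --- that the localization from presheaves to sheaves of spectra carries connective objects to connective objects, so that pointwise connectivity of the smash product yields connectivity of the sheafified tensor product --- is precisely what that citation supplies, and your sketch of it is sound.
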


\begin{proof}
(1) is discussed in the previous \S \ref{sss:is:sp(C)}.
(3) is stated in \cite[Remark 1.1.5]{Lur5}.
(2) is discussed in \cite[\S 1]{Lur7} with much length, so let us give a summary.
The symmetric monoidal structure on $\iSp$ given by the smash product 
induces a symmetric monoidal structure on the $\infty$-category $\iFun(K,\iSp)$
of morphisms for any simplicial set $K$ by \cite[Remark 2.1.3.4]{Lur2}.
Then by \cite[Proposition 1.15]{Lur8},
it further induces a symmetric monoidal structure on $\iShv_{\iSp}(\tT)$.
Also it is compatible with the $t$-structure by \cite[Proposition 1.16]{Lur7}. 
\end{proof}

For later use, let us introduce 

\begin{dfn}[{\cite[Definition 1.6]{Lur7}}]\label{dfn:LE:cncc}
Let $\tT$ an $\infty$-topos and $\shM \in \iShv_{\iSp}(\tT)$.
\begin{enumerate}[nosep]
\item 
$\shM$ is \emph{connective} 
if the homotopy groups $\pi_n \shM$ vanish for $n \in \bbZ_{<0}$.
We denote by $\iShv_{\iSp}(\tT)_{\ge 0}$ 
the full sub-$\infty$-category spanned by connective objects.
\item
$\shM$ is \emph{coconnective} 
if $\Omega^{\infty}M$ is a discrete object,
i.e., a $0$-truncated object (Definition \ref{dfn:ic:n-trc}).
We denote by $\iShv_{\iSp}(\tT)_{\le 0}$ the full sub-$\infty$-category
spanned by coconnective objects.
\end{enumerate}
\end{dfn}

%%%%%%%%%%%%%%%%%%%%%%%%%%%%%%%%%%%%%%%%%%%%%%%%%%%%%%%%%%%%%%%%%%%%%%%%%%%%%%%%
%%%%%%%%%%%%%%%%%%%%%%%%%%%%%%%%%%%%%%%%%%%%%%%%%%%%%%%%%%%%%%%%%%%%%%%%%%%%%%%%
\subsection{Sheaves of commutative ring spectra and modules}
\label{ss:is:sm}

In this subsection we discuss sheaves of stable $\shA$-modules
with $\shA$ a sheaf of commutative ring spectra following \cite[\S 2.1]{Lur8}.
%and sheaves of stable $\shA$-modules.
As mentioned in the beginning of this section, these objects can be regarded 
as $\infty$-theoretic counterparts of complexes of sheaves, and 
the corresponding $\infty$-categories are counterparts of derived categories.

%%%%%%%%%%%%%%%%%%%%%%%%%%%%%%%%%%%%%%%%%%%%%%%%%%%%%%%%%%%%%%%%%%%%%%%%%%%%%%%%
\subsubsection{Sheaves of commutative ring spectra} 

Let $\iCAlg(\iSp)$ be the $\infty$-category of commutative ring objects 
in $\iSp$ regarded as a symmetric monoidal category with respect to 
smash product monoidal structure.
An object of $\iCAlg$ is nothing but an $\bbE_{\infty}$-ring 
(Appendix \ref{ss:sp:ring}),
so we call $\iCAlg(\iSp)$ the \emph{$\infty$-category of $\bbE_{\infty}$-rings}.

Let $\tT$ be an $\infty$-topos.
Since $\iShv_{\iSp}(\tT)$ is a symmetric monoidal presentable $\infty$-category
by Lemma \ref{lem:is:ShvSp}, 
we can apply the arguments in \S \ref{sss:is:rt} to $\iC = \iShv_{\iSp}(\tT)$.
Thus we have the $\infty$-category
\[
 \iCAlg(\iShv_{\iSp}(\tT))
\]
 of commutative ring objects.
As mentioned in \cite[Remark 1.18]{Lur7},
since $\iCAlg(\iSp) \to \iSp$ is conservative (Definition \ref{dfn:pr:csv}) 
and preserves small limits, we have an equivalence
\[
 \iCAlg(\iShv_{\iSp}(\tT)) \simeq \iShv_{\iCAlg(\iSp)}(\tT).
\]
Thus we may call an object of $\iCAlg(\iShv_{\iSp}(\tT))$
a \emph{sheaf of commutative ring spectra on $\tT$}.

%%%%%%%%%%%%%%%%%%%%%%%%%%%%%%%%%%%%%%%%%%%%%%%%%%%%%%%%%%%%%%%%%%%%%%%%%%%%%%%%
\subsubsection{Sheaves of stable modules over commutative ring spectra} 
\label{sss:is:smr}

Let us continue to use the symbols in the previous part.
Next we apply the argument in \S \ref{sss:is:Rmod} to 
the symmetric monoidal presentable $\infty$-category $\iC=\iShv_{\iSp}(\tT)$.

\begin{dfn*}
Let $\tT$ be an $\infty$-topos, and $\shR \in \iShv_{\iCAlg(\iSp)}(\tT)$
be a sheaf of commutative ring spectra.
We denote the $\infty$-category of $\shR$-module objects 
in $\iShv_{\iSp}(\tT)$ by
\[
 \iMod^{\stab}_{\shR}(\tT) := \iMod_{\shR}(\iShv_{\iSp}(\tT))
\]
and call it the $\infty$-category of \emph{stable $\shR$-modules} on $\tT$.
\end{dfn*}

%By \cite[\S 3.3]{Lur2} we have the (relative) tensor product 
The $\infty$-category $\iMod^{\stab}_{\shR}(\tT)$ 
has a symmetric monoidal structure whose tensor product is denoted by 
\[
 - \otimes_{\shR} -: 
 \iMod^{\stab}_{\shR}(\tT) \times \iMod^{\stab}_{\shR}(\tT) \longto 
 \iMod^{\stab}_{\shR}(\tT).
\]
%This bifunctor is exact (Definition \ref{dfn:stb:exact})
%in both factors with respect to the stable structure.

We cite some formal properties of $\iMod^{\stab}_{\shR}(\tT)$.

\begin{fct}[{\cite[Proposition 2.1.3]{Lur8}}]\label{fct:is:sm1}
Let $\tT$ be an $\infty$-topos and $\shR \in \iShv_{\iCAlg(\iSp)}(\tT)$.
\begin{enumerate}[nosep]
\item 
The $\infty$-category $\iMod^{\stab}_{\shR}(\tT)$ is stable.

\item
The $\infty$-category $\iMod^{\stab}_{\shR}(\tT)$ is presentable and 
the tensor product $\otimes_{\shR}$ preserves small colimits in each variable.

\item
The forgetful functor $\theta: \iMod^{\stab}_{\shR}(\tT) \to \iShv_{\iSp}(\tT)$
is conservative (Definition \ref{dfn:pr:csv}) 
and preserves small limits and colimits.
\end{enumerate}
\end{fct}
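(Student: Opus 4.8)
The plan is to obtain all three statements from the general theory of module objects over a commutative algebra in a symmetric monoidal presentable \emph{stable} $\infty$-category, applied to $\iC := \iShv_{\iSp}(\tT)$. By Lemma \ref{lem:is:ShvSp} this $\iC$ is presentable, is symmetric monoidal for the smash product with $\otimes$ preserving small colimits in each variable, and is stable; and by definition $\iMod^{\stab}_{\shR}(\tT) = \iMod_{\shR}(\iC)$, where $\shR$ is regarded as a commutative algebra object of $\iC$ via the equivalence $\iShv_{\iCAlg(\iSp)}(\tT) \simeq \iCAlg(\iShv_{\iSp}(\tT))$ recalled in \S \ref{sss:is:Rmod}. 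I would prove the assertions in the order (2), (3), (1), since stability is most easily read off once the forgetful functor is understood.

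Assertion (2) is immediate from Proposition \ref{prp:is:ModR}, taking the symmetric monoidal presentable $\infty$-category there to be $\iSp$ (with the smash product) and the $\infty$-topos to be $\tT$: this gives directly that $\iMod_{\shR}(\iShv_{\iSp}(\tT)) = \iMod^{\stab}_{\shR}(\tT)$ is symmetric monoidal and presentable and that $\otimes_{\shR}$ preserves small colimits in each variable.

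For (3) I would study the forgetful functor $\theta \colon \iMod_{\shR}(\iC) \to \iC$. It has a left adjoint, the free-module functor $\shM \mapsto \shR \otimes \shM$, which exists by the presentability furnished by (2) together with the adjoint functor theorem; hence $\theta$ preserves small limits. Conservativity is the standard fact that the forgetful functor for modules over an $\bbE_{\infty}$-algebra object is conservative (\cite[\S 4.2]{Lur2}). The remaining and least formal point --- the step I expect to be the main obstacle --- is that $\theta$ preserves small colimits: this does not follow merely from the existence of a left adjoint, and rests on $\iC = \iShv_{\iSp}(\tT)$ being presentable with $\otimes$ preserving colimits in each variable (Lemma \ref{lem:is:ShvSp}), via the bar-construction description of colimits of modules (\cite[\S 4.2, \S 4.4]{Lur2}).

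Finally, (1) will follow from (3). The $\infty$-category $\iMod_{\shR}(\iC)$ is pointed, the zero $\shR$-module being a zero object, and it admits all finite limits and colimits since it is presentable by (2). It remains to verify the last axiom of a stable $\infty$-category (Definition \ref{dfn:stb:stb}), that a commutative square is a pushout if and only if it is a pullback; for this I would apply $\theta$, which is conservative and preserves finite limits and colimits by (3), and use that $\iC = \iShv_{\iSp}(\tT)$ is stable (Lemma \ref{lem:is:ShvSp}), so that pushout and pullback squares coincide after applying $\theta$ and therefore coincide already in $\iMod_{\shR}(\iC)$. Alternatively one may cite the direct statement that modules over an algebra object in a stable symmetric monoidal $\infty$-category form a stable $\infty$-category (\cite[\S 7.1]{Lur2}). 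Throughout, every step merely repackages results of \cite{Lur1,Lur2}; the colimit-preservation of $\theta$ noted above is the only point requiring genuine attention.
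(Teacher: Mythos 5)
Your proposal is correct. Note, however, that the paper itself supplies no argument for this statement: it is recorded as a \emph{Fact} and simply cited from Lurie's DAG VIII, Proposition 2.1.3, so there is no ``paper proof'' to compare against line by line. Your reconstruction is a sound derivation of that result from the ingredients the paper does make available: assertion (2) is indeed immediate from Proposition \ref{prp:is:ModR} with $\iC = \iSp$; conservativity and limit-preservation of $\theta$ are formal (the forgetful functor from modules is a right adjoint to the free-module functor and reflects equivalences, cf.\ \cite[\S 4.2.3]{Lur2}); and you correctly isolate the one non-formal point, namely that $\theta$ preserves small \emph{colimits}, which requires the presentability of $\iShv_{\iSp}(\tT)$ and the fact that its smash-product tensor preserves colimits in each variable (Lemma \ref{lem:is:ShvSp}), via \cite[Corollary 4.2.3.5]{Lur2}. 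Deducing (1) from (3) by transporting the ``pushout $=$ pullback'' axiom along the conservative, finite-(co)limit-preserving $\theta$ into the stable $\infty$-category $\iShv_{\iSp}(\tT)$ is the standard argument and is valid. In short, your route is exactly the one Lurie's cited proposition takes, and you have flagged the only step requiring genuine attention.
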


The item (1) is the origin of our naming of \emph{stable} $\shR$-modules.

%Since $\iS$ is presentable, 
%the $\infty$-category $\iMod^{\stab}_{\shR}(\tT)$ is stable.
%(see Lemma \ref{lem:is:stab}).
%

%Let us assume that $\shR$ is connective in the following sense.
%
%\begin{dfn}\label{dfn:LE:O-cn}
%Assume that we have a functor $\ve: \iC \to \iS$.
%Then $\shR \in \iShv_{\iCAlg(\iSp)}(\tT)$ is \emph{connective}
%if $\ve(\shR) \in \iShv_{\iSp}(\tT)$ is connective 
%in the sense of Definition \ref{dfn:LE:cncc}.
%\end{dfn}
%
%Then by the argument of \cite[Proposition 2.1.3]{Lur8} works,
%and the stable $\infty$-category $\iMod^{\stab}_{\shR}(\tT)$
%has a $t$-structure determined by
%\[
% \bigl(\iMod^{\stab}_{\shR}(\tT)_{\le 0},
%       \iMod^{\stab}_{\shR}(\tT)_{\ge 0}\bigr),
%\]
%whose definitions are similar to Definition \ref{dfn:LE:cncc}.

%Let us cite from \cite{Lur8}
%the result of  $t$-structure on $\iMod^{\stab}_{\shR}(\tT)$.
Let us next recall the natural $t$-structure on $\iMod^{\stab}_{\shR}(\tT)$
which is induced by the $t$-structure on $\iShv_{\iSp}(\tT)$ 
in Lemma \ref{lem:is:sh-t}.
Recall the $\infty$-categories $\iShv_{\iSp}(\tT)_{\ge 0}$ 
and $\iShv_{\iSp}(\tT)_{\le 0}$ in Definition \ref{dfn:LE:cncc}.

\begin{fct}[{\cite[Proposition 2.1.3]{Lur8}}]\label{fct:is:sm2}
Let $\tT$ be an $\infty$-topos and $\shR \in \iShv_{\iCAlg(\iSp)}(\tT)$.
Assume that $\shR$ is connective (Definition \ref{dfn:LE:cncc}).
\begin{enumerate}[nosep]
\item
The stable $\infty$-category $\iMod^{\stab}_{\shR}(\tT)$ 
has a $t$-structure determined by
\[
 \iMod^{\stab}_{\shR}(\tT)_{\ge0} := \theta^{-1} \iShv_{\iSp}(\tT)_{\ge 0},
 \quad
 \iMod^{\stab}_{\shR}(\tT)_{\le0} := \theta^{-1} \iShv_{\iSp}(\tT)_{\le 0}.
\]
Here $\theta: \iMod^{\stab}_{\shR}(\tT) \to \iShv_{\iSp}(\tT)$ 
denotes the forgetful functor.
We call it \emph{the} $t$-structure of $\iMod^{\stab}_{\shR}(\tT)$ from now.

\item
The $t$-structure on $\iMod_{\shR}(\tT)$ is accessible, that is, 
the $\infty$-category $\iMod_{\shR}(\tT)_{\ge0}$ is presentable.

\item
The $t$-structure on $\iMod^{\stab}_{\shR}(\tT)$ is compatible with 
the symmetric monoidal structure. 
In other words, the sub-$\infty$-category $\iMod^{\stab}_{\shR}(\tT)_{\ge0}$ 
contains the unit object of $\iMod^{\stab}_{\shR}(\tT)$
is stable under tensor product.

\item
The $t$-structure on $\iMod^{\stab}_{\shR}(\tT)$ is compatible with filtered colimits.
In other words, the sub-$\infty$-category $\iMod^{\stab}_{\shR}(\tT)_{\le0}$ 
is stable under filtered colimits.

\item
The $t$-structure on $\iMod_{\shR}(\tT)$ is right complete
(Definition \ref{dfn:stb:cmpl}).
\end{enumerate}
\end{fct}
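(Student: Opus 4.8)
The plan is to obtain all five assertions by transporting the corresponding structure on $\iShv_{\iSp}(\tT)$ along the forgetful functor $\theta\colon\iMod^{\stab}_{\shR}(\tT)\to\iShv_{\iSp}(\tT)$, using the properties of $\iShv_{\iSp}(\tT)$ recorded in Lemma~\ref{lem:is:ShvSp} (stable, presentable, symmetric monoidal for the smash product, with a $t$-structure compatible with both $\otimes$ and filtered colimits) together with the properties of $\theta$ recorded in Fact~\ref{fct:is:sm1} (conservative, preserves all small limits and colimits, hence — the ambient $\infty$-categories being presentable — admits a left adjoint, the free-module functor $F$, which on objects is $X\mapsto\shR\otimes X$ with its tautological $\shR$-action). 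The connectivity of $\shR$ is the single hypothesis doing real work; everything else is formal transport.

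For item~(1) I would first \emph{construct} a $t$-structure abstractly and only afterwards identify its aisles. Pick a small generating set of the presentable $\infty$-category $\iShv_{\iSp}(\tT)$ whose connective truncations generate $\iShv_{\iSp}(\tT)_{\ge0}$ under colimits, and let $\iMod^{\stab}_{\shR}(\tT)_{\ge0}$ be the smallest full sub-$\infty$-category of $\iMod^{\stab}_{\shR}(\tT)$ containing the image under $F$ of that set and closed under small colimits and extensions. Since $\iMod^{\stab}_{\shR}(\tT)$ is presentable stable (Fact~\ref{fct:is:sm1}), the criterion for accessible $t$-structures \cite[Proposition 1.4.4.11]{Lur2} applies and produces an accessible $t$-structure with the above connective part; this already gives item~(2). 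It remains to match the aisles with $\theta^{-1}\iShv_{\iSp}(\tT)_{\ge0}$ and $\theta^{-1}\iShv_{\iSp}(\tT)_{\le0}$. The inclusion $\iMod^{\stab}_{\shR}(\tT)_{\ge0}\subseteq\theta^{-1}\iShv_{\iSp}(\tT)_{\ge0}$ holds because $\theta$ preserves colimits and extensions and, $\shR$ being connective and the $t$-structure on $\iShv_{\iSp}(\tT)$ being compatible with $\otimes$, $\theta FX=\shR\otimes X$ is connective for connective $X$; dually, $\theta^{-1}\iShv_{\iSp}(\tT)_{\le-1}$ lies in the right-orthogonal of $\iMod^{\stab}_{\shR}(\tT)_{\ge0}$ by the adjunction identity $\Map_{\iMod^{\stab}_{\shR}(\tT)}(FX,M)\simeq\Map_{\iShv_{\iSp}(\tT)}(X,\theta M)$, hence lies in $\iMod^{\stab}_{\shR}(\tT)_{\le-1}$. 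The reverse inclusions — equivalently, that $\theta$ \emph{creates} the $t$-structure — reduce to showing the truncation functors are computed underlyingly: for $M\in\iMod^{\stab}_{\shR}(\tT)$ the sheaf of spectra $\tau_{\ge0}(\theta M)$ carries a canonical $\shR$-module structure for which $\tau_{\ge0}(\theta M)\to\theta M$ is obtained from the $\iMod^{\stab}_{\shR}(\tT)$-truncation morphism $\tau_{\ge0}M\to M$ by applying $\theta$. I expect this to be the main obstacle, and it is exactly where connectivity of $\shR$ enters: the composite $\shR\otimes\tau_{\ge0}(\theta M)\to\shR\otimes\theta M\to\theta M$ has connective source, so by orthogonality (its target modulo $\tau_{\ge0}\theta M$ being coconnective of amplitude $\le-1$) it factors canonically through $\tau_{\ge0}(\theta M)$; one then checks — this is the technical heart, involving the coherence of all higher multiplication data and is the step I would expect to occupy most of the write-up — that this endows $\tau_{\ge0}(\theta M)$ with an $\shR$-module structure lifting the $t$-structure triangle $\tau_{\ge0}(\theta M)\to\theta M\to\tau_{\le-1}(\theta M)$. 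Granting this, conservativity of $\theta$ forces $\iMod^{\stab}_{\shR}(\tT)_{\ge0}=\theta^{-1}\iShv_{\iSp}(\tT)_{\ge0}$ and $\iMod^{\stab}_{\shR}(\tT)_{\le-1}=\theta^{-1}\iShv_{\iSp}(\tT)_{\le-1}$, completing item~(1).

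Items~(3)--(5) are then routine transfers along $\theta$. For~(3), the unit of $\iMod^{\stab}_{\shR}(\tT)$ is $\shR$, connective by hypothesis; and for connective $M,N$ one has $\theta(M\otimes_{\shR}N)\simeq\abs{\theta M\otimes\shR^{\otimes\bullet}\otimes\theta N}$ (two-sided bar construction), each simplicial term being connective by compatibility of the $t$-structure on $\iShv_{\iSp}(\tT)$ with $\otimes$ and $\iShv_{\iSp}(\tT)_{\ge0}$ being closed under colimits, so $M\otimes_{\shR}N$ is connective. For~(4), since $\theta$ preserves filtered colimits and $\iShv_{\iSp}(\tT)_{\le0}$ is stable under filtered colimits (Lemma~\ref{lem:is:ShvSp}(2)), the subcategory $\iMod^{\stab}_{\shR}(\tT)_{\le0}=\theta^{-1}\iShv_{\iSp}(\tT)_{\le0}$ is stable under filtered colimits. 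For~(5), right completeness (Definition~\ref{dfn:stb:cmpl}) transports along $\theta$: because $\theta$ is conservative, preserves limits and colimits, and (by item~(1)) intertwines the truncations $\tau_{\ge-n}$, the canonical map $M\to\iclim_n\tau_{\ge-n}M$ is carried by $\theta$ to the analogous map for $\theta M$, which is an equivalence by right completeness of the $t$-structure on $\iShv_{\iSp}(\tT)$, and moreover $\bigcap_n\iMod^{\stab}_{\shR}(\tT)_{\le-n}\subseteq\theta^{-1}\bigl(\bigcap_n\iShv_{\iSp}(\tT)_{\le-n}\bigr)=0$; here one invokes right completeness of $\iShv_{\iSp}(\tT)$, which is available for the hypercomplete $\infty$-topoi relevant to this paper (cf.\ Fact~\ref{fct:idSt:hyper}), and conservativity of $\theta$ then yields item~(5).
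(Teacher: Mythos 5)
The paper offers no proof of this statement at all: it is recorded as a \emph{Fact} and delegated wholesale to \cite[Proposition 2.1.3]{Lur8}, so there is no in-paper argument to compare against. Your reconstruction is correct and is essentially Lurie's own proof of that proposition (compare also \cite[Proposition 7.1.1.13]{Lur2} for the absolute case): produce an accessible $t$-structure from generators via \cite[Proposition 1.4.4.11]{Lur2}, identify the coconnective aisle by adjunction against free modules, and transport (3)--(5) along the conservative, (co)limit-preserving forgetful functor. Two refinements are worth recording. First, the step you single out as the technical heart --- equipping $\tau_{\ge 0}(\theta M)$ with a coherent $\shR$-module structure --- can be bypassed entirely: once you have $\iMod^{\stab}_{\shR}(\tT)_{\ge0}\subseteq\theta^{-1}\iShv_{\iSp}(\tT)_{\ge0}$ and the identification $\iMod^{\stab}_{\shR}(\tT)_{\le-1}=\theta^{-1}\iShv_{\iSp}(\tT)_{\le-1}$ (the latter is exactly your adjunction computation against the generators), the reverse inclusion for the connective aisle follows from the long exact sequence of homotopy sheaves applied to $\theta$ of the triangle $\tau_{\ge0}M\to M\to\tau_{\le-1}M$ together with conservativity of $\theta$: the third term is simultaneously $(-1)$-connective and coconnective of degree $\le-1$ with vanishing $\pi_{-1}$, hence zero. (If you do want the module structure on the truncation, the clean device is that $\tau_{\ge0}$, being right adjoint to the monoidal inclusion $\iShv_{\iSp}(\tT)_{\ge0}\inj\iShv_{\iSp}(\tT)$, is lax symmetric monoidal, and $\tau_{\ge0}\shR\simeq\shR$ by connectivity; no hand-checking of higher coherences is needed.) Second, your appeal to hypercompleteness in item (5) is a red herring: right completeness of $\iShv_{\iSp}(\tT)$ holds for an arbitrary $\infty$-topos, since an object lying in $\iShv_{\iSp}(\tT)_{\le-n}$ for every $n$ has each space $\Omega^{\infty-n}M$ equivalent to the loops on a discrete object, hence contractible; it is \emph{left} completeness that requires hypercompleteness. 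With these adjustments your argument is complete.
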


For $\shR \in \iShv_{\iCAlg}(\tT)$,
the $0$-th homotopy $\pi_0 \shR$ (Definition \ref{dfn:is:pi_n})
is a commutative ring object of $\iShv_{\iS}(\tT) \simeq \tT$,
where the equivalence is given by Fact \ref{fct:pr:Shv(X)=X}.
Let us denote by $\iMod_{\pi_0 \shR}(\iShv_{\iS}(\tT))$
the $\infty$-category of $\pi_0 \shR$-module objects.
Then the equivalence $\iShv_{\iS}(\tT) \simeq \tT$ induces 
$\iMod_{\pi_0 \shR}(\iShv_{\iS}(\tT)) \simeq \iMod_{\pi_0 \shR}(\tT)$.
If we further assume $\shR$ is connected, then the following statement holds.
The proof is by definition and omitted.

\begin{lem}[{\cite[Remark 2.1.5]{Lur8}}]\label{lem:is:heart}
For a connective sheaf $\shR$ of $\bbE_{\infty}$-rings 
on an $\infty$-topos $\tT$, we have
\[
 \iMod^{\stab}_{\shR}(\tT)^{\heartsuit} \simeq 
 \iMod_{\pi_0 \shR}(\tT),
\] 
where the left hand side is the heart (Definition \ref{dfn:stb:heart}) 
of the $t$-structure of Fact \ref{fct:is:sm2} (1).
\end{lem}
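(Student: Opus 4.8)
The plan is to reduce the assertion to a statement about the heart of $\iShv_{\iSp}(\tT)$, identify that heart with sheaves of abelian groups on the underlying ordinary topos, and then apply the general principle that a module structure over a \emph{connective} ring object on an object of the heart factors uniquely through its $0$-th truncation. First I would invoke Fact~\ref{fct:is:sm2}~(1): by construction the $t$-structure on $\iMod^{\stab}_{\shR}(\tT)$ is the preimage along the forgetful functor $\theta\colon\iMod^{\stab}_{\shR}(\tT)\to\iShv_{\iSp}(\tT)$ of the $t$-structure on $\iShv_{\iSp}(\tT)$, and by Fact~\ref{fct:is:sm1}~(3) this $\theta$ is conservative and preserves small limits and colimits. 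Hence an $\shR$-module $\shM$ lies in $\iMod^{\stab}_{\shR}(\tT)^{\heartsuit}$ if and only if $\theta(\shM)$ lies in $\iShv_{\iSp}(\tT)_{\ge0}\cap\iShv_{\iSp}(\tT)_{\le0}=\iShv_{\iSp}(\tT)^{\heartsuit}$, so $\theta$ restricts to a functor $\iMod^{\stab}_{\shR}(\tT)^{\heartsuit}\to\iShv_{\iSp}(\tT)^{\heartsuit}$ and the task becomes to describe which objects of the target admit an $\shR$-module structure, and how many.

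Next I would identify $\iShv_{\iSp}(\tT)^{\heartsuit}$. By Definition~\ref{dfn:LE:cncc} an object $\shN$ of the heart is connective with $\pi_n\shN=0$ for $n\ne0$ and with $\Omega^{\infty}\shN$ discrete; through the functor $\pi_0$ of Definition~\ref{dfn:is:pi_n} and the Eilenberg--MacLane description recalled in the remark following it (via \cite[Definition 7.2.2.1, Proposition 7.2.2.12]{Lur1}), the assignment $\shN\mapsto\pi_0\shN$ exhibits $\iShv_{\iSp}(\tT)^{\heartsuit}$ as equivalent to the $\infty$-category of abelian group objects of $\tau_{\le0}\tT$, that is, to the nerve of the abelian category of sheaves of abelian groups on the ordinary topos $\tT^{\cl}$ of Lemma~\ref{lem:pr:cl}. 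Moreover, by Fact~\ref{fct:is:sm2}~(3) the $t$-structure on $\iShv_{\iSp}(\tT)$ is compatible with the smash-product symmetric monoidal structure, so this heart is an ordinary symmetric monoidal abelian category and the truncation $\tau_{\le0}\colon\iShv_{\iSp}(\tT)_{\ge0}\to\iShv_{\iSp}(\tT)^{\heartsuit}$ is symmetric monoidal.

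Then I would promote this to modules. Since $\shR$ is connective, $\shR\in\iShv_{\iSp}(\tT)_{\ge0}$ and $\pi_0\shR=\tau_{\le0}\shR$ is a commutative algebra object of the heart; concretely it is the sheaf of ordinary commutative rings on $\tT^{\cl}$ obtained by taking $\pi_0$. Because $\shR\otimes(-)$ on $\iShv_{\iSp}(\tT)$ is right $t$-exact, the structure theory of hearts over a connective algebra object (\cite[\S 7.1]{Lur2}; this is precisely the content of \cite[Remark 2.1.5]{Lur8}) shows that ``forget along $\theta$ and apply $\tau_{\le0}$'' is an equivalence
\[
 \iMod^{\stab}_{\shR}(\tT)^{\heartsuit}\longsimto
 \iMod_{\pi_0\shR}(\iShv_{\iSp}(\tT)^{\heartsuit}),
\]
the essential point being that the higher homotopy sheaves $\pi_n\shR$ for $n>0$ act by zero on an object concentrated in degree $0$, so that an $\shR$-action is the same datum as a $\pi_0\shR$-action. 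Under the identification of $\iShv_{\iSp}(\tT)^{\heartsuit}$ with sheaves of abelian groups on $\tT^{\cl}$ from the previous paragraph, the right-hand side is exactly the abelian category of sheaves of $\pi_0\shR$-modules, which is $\iMod_{\pi_0\shR}(\tT)$ in the notation fixed just before the statement (after transporting along $\iShv_{\iS}(\tT)\simeq\tT$, Fact~\ref{fct:pr:Shv(X)=X}); this identifies $\iMod^{\stab}_{\shR}(\tT)^{\heartsuit}$, in the sense of Definition~\ref{dfn:stb:heart} applied to the $t$-structure of Fact~\ref{fct:is:sm2}~(1), as asserted.

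The step I expect to be the main obstacle is the displayed equivalence: one must verify that every $\shR$-module structure on a discrete object factors \emph{uniquely} through $\pi_0\shR$. This amounts to tracing the compatibility of the $t$-structure with the smash-product monoidal structure (Fact~\ref{fct:is:sm2}~(3)) through the relative-tensor/bar construction that defines $\iMod_{\shR}(\iShv_{\iSp}(\tT))$, so as to see that $\tau_{\le0}$ induces an equivalence between $\shR$-modules in the heart and $\pi_0\shR$-modules in the heart; the connectivity hypothesis on $\shR$ is exactly what makes this go through, via right $t$-exactness of $\shR\otimes(-)$ and the fact that $\tau_{\le0}$ is then a symmetric monoidal localization onto the heart. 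The subsidiary points — that $\pi_0$ actually lands among $\iEM$-objects and that abelian group objects of $\tau_{\le0}\tT$ are the same as sheaves of abelian groups on $\tT^{\cl}$ — are routine given the cited results of \cite{Lur1}, which is why \cite[Remark 2.1.5]{Lur8} records this ``by definition''.
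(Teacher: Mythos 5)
Your argument is correct and is precisely the standard route that the paper itself delegates to the citation: the paper states ``The proof is by definition and omitted,'' referring to \cite[Remark 2.1.5]{Lur8}, and your expansion — pulling the $t$-structure back along the conservative forgetful functor $\theta$, identifying $\iShv_{\iSp}(\tT)^{\heartsuit}$ with abelian group objects of $\tau_{\le 0}\tT$, and using connectivity of $\shR$ together with right $t$-exactness of $\shR\otimes(-)$ to see that $\tau_{\le 0}$ induces an equivalence from $\shR$-modules in the heart to $\pi_0\shR$-modules — is exactly the content of that remark. You also correctly flag that the only nontrivial point is the unique factorization of the full coherent module structure (not just the action of homotopy groups) through $\pi_0\shR$, which is handled by the symmetric monoidal localization $\tau_{\le 0}$ onto the heart.
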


For later use, let us introduce 

%\begin{dfn*}
%Let $\tT$ be an $\infty$-topos and $\shR \in \iShv_{\iCAlg(\iSp)}(\tT)$.
%For $\shM,\shN \in \iMod^{\stab}_{\shR}(\tT)$ and $n \in \bbZ$,
%we define the abelian group $\Ext^n_{\shR}(\shM,\shN)$ by
%\[
% \Ext^n_{\shR}(\shM,\shN) := 
% \pi_0 \Map_{\iMod^{\stab}_{\shR}(\tT)}(\shM,\shN [n]).
%\]
%where $[n]$ denotes the shift (Definition \ref{dfn:stb:[n]})
%in the stable $\infty$-category $\iMod^{\stab}_{\shR}(\tT)$.
%Note that we use a homological indexing convention.
%For $n=0$, we denote 
%\[
% \Hom_{\shR}(\shM,\shN) :=  \Ext^0_{\shR}(\shM,\shN) = 
% \pi_0 \Map_{\iMod^{\stab}_{\shR}(\tT)}(\shM,\shN),
%\]
%\end{dfn*}

\begin{ntn*}[{\cite[Notation 1.1.2.17]{Lur2}}]
Let $\iC$ be a stale $\infty$-category, and let $X,Y \in \iC$.
We define the abelian group $\Ext^n_{\iC}(X,Y)$ by
\[
 \Ext^n_{\iC}(X,Y) := \Hom_{\Ho \iC}(X[-n],Y).
\]
where $[-n]$ denotes the shift (Definition \ref{dfn:stb:[n]})
in the stable $\infty$-category $\iC$.
%5Note that we use a homological indexing convention.
For $n=0$, we denote 
\[
 \Hom_{\iC}(X,Y) := \Ext^0_{\iC}(X,Y) = \Hom_{\Ho \iC}(X,Y).
\]
\end{ntn*}

For $n \in \bbZ_{\ge 0}$, we can identify
$\Ext^n_{\iC}(X,Y) \simeq \pi_{-n}\Map_{\iC}(X,Y)$.

\begin{ntn*}
Let $\tT$ be an $\infty$-topos and $\shR \in \iShv_{\iCAlg(\iSp)}(\tT)$.
For $\shM,\shN \in \iMod^{\stab}_{\shR}(\tT)$ and $n \in \bbZ$,
we denote
\[
 \Ext^n_{\shR}(\shM,\shN) := \Ext^n_{\iMod^{\stab}_{\shR}(\tT)}(\shM,\shN),
 \quad
 \Hom_{\shR}(\shM,\shN) := \Ext^0_{\shR}(\shM,\shN).
\]
\end{ntn*}

%%%%%%%%%%%%%%%%%%%%%%%%%%%%%%%%%%%%%%%%%%%%%%%%%%%%%%%%%%%%%%%%%%%%%%%%%%%%%%%%
\subsubsection{Functors on stable $\shR$-modules}
\label{sss:is:fun}

Fix an $\infty$-topos $\tT$ and take $\shR \in \iShv_{\iCAlg(\iSp)}(\tT)$.
In this part we introduce some functors on the stable $\infty$-category 
of stable $\shR$-modules.

Let us begin with internal Hom functor, 
which will give an $\infty$-theoretical counterpart of the tensor-hom adjunction.
For $\shM \in \iMod^{\stab}_{\shR}(\tT)$, the functor 
\[
 - \otimes_{\shR} \shM : 
 \iMod^{\stab}_{\shR}(\tT) \longto \iMod^{\stab}_{\shR}(\tT)
\]
is right exact (Definition \ref{dfn:ic:exact}) 
since it preserves small colimits by Fact \ref{fct:is:sm1} (2)
and we can apply the criterion of right exactness (Fact \ref{fct:ic:exact} (2)).
Then by Fact \ref{fct:ic:adj} there is a right adjoint of $- \otimes_{\shR} \shM$.
We denote it by 
\[
 \shHom_{\shR}(\shM,-): 
 \iMod^{\stab}_{\shR}(\tT) \longto \iMod^{\stab}_{\shR}(\tT).
\]
By a similar argument on the first variable, 
we obtain a bifunctor
\[
 \shHom_{\shR}(-,-): 
 \iMod^{\stab}_{\shR}(\tT)^{\op} \times \iMod^{\stab}_{\shR}(\tT) 
 \longto \iMod^{\stab}_{\shR}(\tT).
\]
We call it the \emph{internal Hom functor}.
%by the sheafification of 
%$U \mapsto \shHom_{\shR}(\shM,\shN)(U) := 
% \Hom_{\rst{\shR}{U}}(\rst{\shM}{U},\rst{\shN}{U})$
%for $\shM,\shN \in \iMod_{\shR}^{\op}(\tT)$ and $U \in \tT$.
%Here we used Definition \ref{dfn:is:Hom}.
%This bifunctor is exact with respect to the stable structure 
%in the sense of Definition \ref{dfn:stb:exact}.

The internal Hom functor agrees with the \emph{morphism object}
\cite[Definition 4.2.1.28]{Lur2}.
Then by \cite[Proposition 4.2.1.33]{Lur2} we have

\begin{lem*}
For any $\shL,\shM,\shN \in \iMod^{\stab}_{\shR}(\tT)$,
there is a functorial equivalence
\[
 \shHom_{\shR}(\shL \otimes_{\shR} \shM,\shN) \longsimto 
 \shHom_{\shR}(\shL, \shHom_{\shR}(\shM,\shN))
\]
which is unique up to contractible ambiguity.
\end{lem*}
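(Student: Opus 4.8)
The plan is to reduce the statement to the general associativity formula for internal Hom — equivalently, for the \emph{morphism object} of \cite[Definition 4.2.1.28]{Lur2} — in a closed symmetric monoidal $\infty$-category. First I would record the relevant structure: by Fact \ref{fct:is:sm1} (together with Proposition \ref{prp:is:ModR}) the $\infty$-category $\iMod^{\stab}_{\shR}(\tT)$ is symmetric monoidal and presentable, and the tensor product $\otimes_{\shR}$ preserves small colimits in each variable. Viewing this $\infty$-category as left-tensored over itself along $\otimes_{\shR}$, the action bifunctor preserves small colimits separately in each variable, so for every $\shM$ the functor $(-)\otimes_{\shR}\shM$ admits a right adjoint — this is exactly how $\shHom_{\shR}(\shM,-)$ was constructed above, and the bifunctor $\shHom_{\shR}(-,-)$ is assembled from these adjoints as in the text. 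Hence $\shHom_{\shR}(\shM,\shN)$ is precisely the morphism object $\operatorname{Mor}(\shM,\shN)$ of \cite[Definition 4.2.1.28]{Lur2}.

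With this dictionary in place, the claimed equivalence is an instance of \cite[Proposition 4.2.1.33]{Lur2}, applied with both the ambient monoidal $\infty$-category and the module category taken to be $\iMod^{\stab}_{\shR}(\tT)$ and the action given by $\otimes_{\shR}$. That proposition yields a canonical equivalence
\[
 \operatorname{Mor}(\shL \otimes_{\shR} \shM,\shN) \longsimto
 \operatorname{Mor}\bigl(\shL, \operatorname{Mor}(\shM,\shN)\bigr),
\]
natural in $\shL,\shM,\shN$, which under the identification above is the asserted equivalence $\shHom_{\shR}(\shL \otimes_{\shR} \shM,\shN) \longsimto \shHom_{\shR}(\shL, \shHom_{\shR}(\shM,\shN))$.

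It remains to explain the uniqueness up to contractible ambiguity, which I would handle by representability. Applying the tensor--hom adjunction twice, one checks that for fixed $\shL,\shM,\shN$ both sides represent the functor $\shL' \mapsto \Map_{\iMod^{\stab}_{\shR}(\tT)}\bigl((\shL'\otimes_{\shR}\shL)\otimes_{\shR}\shM,\shN\bigr)$ on $\iMod^{\stab}_{\shR}(\tT)$; since the space of equivalences between two objects representing the same functor is contractible by the $\infty$-categorical Yoneda lemma, the comparison map is pinned down up to contractible choice. (Alternatively, \cite[Proposition 4.2.1.33]{Lur2} already exhibits the equivalence as a distinguished point of a space which is shown there to be contractible.) The step that really needs checking is the verification that Lurie's hypotheses are met — that $\iMod^{\stab}_{\shR}(\tT)$ is left-tensored over itself in the precise sense of \cite[\S4.2.1]{Lur2} with the appropriate colimit-preservation — and this is the point I would expect to be the main obstacle, though it is immediate from the symmetric monoidal presentable structure of Fact \ref{fct:is:sm1}; the remainder of the argument is entirely formal.
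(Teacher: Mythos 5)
Your proposal matches the paper's argument exactly: the text identifies $\shHom_{\shR}$ with the morphism object of \cite[Definition 4.2.1.28]{Lur2} and then invokes \cite[Proposition 4.2.1.33]{Lur2}, which is precisely your route. Your additional remarks on verifying the left-tensored hypotheses via Fact \ref{fct:is:sm1} and on pinning down uniqueness by representability are correct elaborations of what the paper leaves implicit.
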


Next we introduce the direct image and inverse image functors.
Our presentation basically follows \cite[\S2.5]{Lur8} but with a slight modification.
Let 
\[
 (f,f^\sharp): (\tT,\shR) \longto (\tT',\shR')
\]
be a morphism 
of ringed $\infty$-topoi (Definition \ref{dfn:is:ringed}).
Thus $f: \tT \to \tT'$ is a geometric morphism of $\infty$-topoi 
with the associated adjunction $f^*: \tT' \adjunc \tT :f_*$, and 
$f^\sharp: \shR' \to f_* \shR$ is a morphism in $\iShv_{\iCAlg(\iSp)}(\tT')$. 

Composition with $f_*: \tT \to \tT'$ gives a symmetric monoidal functor
$f^{-1}: \iShv_{\iSp}(\tT') \longto \iShv_{\iSp}(\tT)$.
Then we can regard $f^{-1}\shR'$ as an object of $\iShv_{\iCAlg(\iSp)}(\tT)$. 
Moreover $f^{-1}$ induces a functor 
$\iMod^{\stab}_{\shR'}(\tT') \to \iMod^{\stab}_{f^{-1} \shR'}(\tT)$.
Abusing the symbol, we denote it by 
\[
 f^{-1}: \iMod^{\stab}_{\shR'}(\tT') \longto \iMod^{\stab}_{f^{-1} \shR'}(\tT).
\]
Note also that $f^\sharp$ and $f$ yield a morphism $f^{-1}\shR' \to \shR$
in  $\iShv_{\iCAlg_{\bbK}}(\tT)$
so that we can regard $\shR$ as an object in $\iMod^{\stab}_{f^{-1}\shR'}(\tT)$.
%$\iMod_{\shR'}(\tT') \inj \iShv_{\iS}(\tT') \simeq \tT' 
% \xrightarrow{f^*} \tT$
%(the middle equivalence is given in Fact \ref{fct:pr:Shv(X)=X}).
Thus we have the functor
\[
 f^*: \iMod^{\stab}_{\shR'}(\tT') \longto \iMod^{\stab}_{\shR}(\tT), \quad
 \shM' \longmapsto f^* \shM' := f^{-1} \shM' \otimes_{f^{-1}\shR'} \shR.
\]

Since the inverse image functor 
$f^*: \iMod^{\stab}_{\shR'}(\tT') \to \iMod^{\stab}_{\shR}(\tT)$ 
preserves colimits, and since the $\infty$-categories of stable $\shR$-modules 
are presentable (Fact \ref{fct:is:sm1} (2)),
there is a right adjoint 
\[
 f_*: \iMod^{\stab}_{\shR}(\tT) \longto \iMod^{\stab}_{\shR'}(\tT').
\]
%We have the following description of the direct image functor.
It is also described as follows.
The composition with $f^*: \tT' \to \tT$ gives a functor
$\iShv_{\iSp}(\tT) \longto \iShv_{\iSp}(\tT')$,
which induces $\iMod^{\stab}_{\shR}(\tT) \to \iMod^{\stab}_{f_* \shR}(\tT')$.
Then the morphism $f^\sharp$ induces 
$f_*: \iMod^{\stab}_{\shR}(\tT) \to \iMod^{\stab}_{\shR'}(\tT')$.

\begin{ntn}\label{ntn:is:iidi}
We call $f^*$ the \emph{inverse image functor},
and $f_*$ the \emph{direct image functor}.
\end{ntn}

%%%%%%%%%%%%%%%%%%%%%%%%%%%%%%%%%%%%%%%%%%%%%%%%%%%%%%%%%%%%%%%%%%%%%%%%%%%%%%%%
%%%%%%%%%%%%%%%%%%%%%%%%%%%%%%%%%%%%%%%%%%%%%%%%%%%%%%%%%%%%%%%%%%%%%%%%%%%%%%%%
\subsection{Sheaves of commutative rings and modules on $\infty$-topoi}
\label{ss:is:crm}

This subsection is a complement of the previous \S \ref{ss:is:sm}.
We will give notations for sheaves of commutative rings,
sheaves of modules over them, derived categories and functors between them
on $\infty$-topoi.
These will be essentially the same with those on ordinary topoi,
and we write them down just for the completeness of our presentation.

%%%%%%%%%%%%%%%%%%%%%%%%%%%%%%%%%%%%%%%%%%%%%%%%%%%%%%%%%%%%%%%%%%%%%%%%%%%%%%%%
\subsubsection{Sheaves of modules over commutative rings}
\label{sss:is:smrf}

Let $\Ab$ denote the category of additive groups, 
and $\iAb:=\Ner(\Ab)$ be its nerve.
The $\infty$-category $\iAb$ has the symmetric monoidal structure.
Also we see that $\iAb$ is a presentable $\infty$-category.
One can show it directly by definition,
and also by using the equivalence \cite[Proposition A.3.7.6]{Lur1}
between a presentable $\infty$-category and 
the nerve of the subcategory of fibrant-cofibrant objects 
in a combinatorial simplicial model category.

Let $\tT$ be an $\infty$-topos.
We can now apply the argument in \S \ref{sss:is:Rmod} to 
the symmetric monoidal presentable $\infty$-category $\iC=\iAb$.
Noting that $\iCAlg(\iAb) \simeq \iCom$ 
which is the $\infty$-category of commutative rings (\S \ref{ss:dSt:St}),
We call the $\infty$-category 
\[
 \iCAlg(\iShv_{\iAb}(\tT)) \simeq \iShv_{\iCAlg(\iAb)}(\tT) 
 \simeq  \iShv_{\iCom}(\tT)
\]
the \emph{$\infty$-category of sheaves of commutative rings on $\tT$}.

Taking a sheaf $\shA \in \iShv_{\iCom}(\tT)$ of commutative rings,
we also have the $\infty$-category of \emph{sheaves of $\shA$-modules on $\tT$}
(Proposition \ref{prp:is:ModR}).
We denote it by 
\[
 \iMod_{\shA}(\tT) := \iMod_{\shA}(\iShv_{\iAb}(\tT)).
\]
Recall that it is a symmetric monoidal presentable $\infty$-category.
The associated tensor product is denoted by $\otimes_{\shA}$.
We also have the following standard claim.

\begin{prp}\label{prp:is:G}
The homotopy category $\catA := \Ho \iMod_{\shA}(\tT)$
is a Grothendieck abelian category (Definition \ref{dfn:stb:G}).
\end{prp}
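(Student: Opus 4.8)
The plan is to reduce the statement to the classical fact that the category of sheaves of modules over an ordinary ringed topos is Grothendieck abelian. The first --- and load-bearing --- observation is that, since $\iAb=\Ner(\Ab)$ is the nerve of an ordinary $1$-category, the functor $\infty$-category $\iFun(\tT^{\op},\iAb)$, hence its full sub-$\infty$-category $\iShv_{\iAb}(\tT)$, hence $\iMod_{\shA}(\tT)=\iMod_{\shA}(\iShv_{\iAb}(\tT))$, is again equivalent to an ordinary $1$-category; thus $\catA=\Ho\iMod_{\shA}(\tT)$ is literally its underlying ordinary category, with no genuinely homotopical phenomena (so that, for instance, the cocompleteness of $\catA$ is not an issue, which it would be for the homotopy category of a general presentable $\infty$-category). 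By Proposition~\ref{prp:is:ModR}, $\catA$ underlies a presentable $\infty$-category, hence is cocomplete and admits a generator, so two of the three defining properties of a Grothendieck abelian category hold automatically; it remains to prove that $\catA$ is abelian and that filtered colimits in $\catA$ are exact.

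Next I would identify $\iShv_{\iAb}(\tT)$ with the category $\Ab(\tT^{\cl})$ of abelian group objects in the ordinary topos $\tT^{\cl}$ of Lemma~\ref{lem:pr:cl} --- equivalently, the category of abelian sheaves on the underlying site --- which is Grothendieck abelian by classical topos theory: kernels are computed sectionwise, cokernels by sheafifying the sectionwise cokernel, and filtered colimits are exact because sheafification is exact, being a left adjoint which is moreover left exact (Fact~\ref{fct:pr:sh}). To transfer these properties to $\iMod_{\shA}(\tT)$ I would use the forgetful functor $\theta\colon\iMod_{\shA}(\tT)\to\iShv_{\iAb}(\tT)$, which --- by the non-stable analogue of Fact~\ref{fct:is:sm1}(3), i.e.\ by the general theory of modules over a commutative algebra object in a presentable symmetric monoidal $\infty$-category together with the cocontinuity of $\otimes_{\shA}$ from Proposition~\ref{prp:is:ModR} --- is conservative and preserves small limits and colimits, hence also reflects them. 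Since $\theta$ reflects kernels, cokernels, and filtered colimits, the abelian structure and the exactness of filtered colimits descend from $\iShv_{\iAb}(\tT)$ to $\catA$; equivalently, $\catA$ is just the category of sheaves of $\shA$-modules on the ordinary ringed topos $(\tT^{\cl},\shA)$, Grothendieck abelian by the classical theory. Assembling the pieces finishes the proof.

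The step I expect to be the main obstacle is the comparison carried out in the second paragraph: making the equivalence $\iShv_{\iAb}(\tT)\simeq\Ab(\tT^{\cl})$ and its $\shA$-module refinement precise, and verifying the exactness properties of $\theta$ carefully enough to legitimately invoke the classical ringed-topos result rather than re-deriving abelianness and AB5 inside $\iMod_{\shA}(\tT)$ by hand. Once that bridge is secured, the rest --- cocompleteness and existence of a generator from presentability, and exactness of filtered colimits via the principle ``sheafify the sectionwise statement'' --- is purely formal.
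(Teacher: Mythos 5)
Your proof is correct, but it takes a genuinely different route from the paper's. The paper's proof is short: it simply asserts that $\catA$ is abelian ``since the argument for ordinary topos works,'' deduces presentability (hence cocompleteness and a generating set) from the presentability of the $\infty$-category $\iMod_{\shA}(\tT)$, and then verifies that filtered colimits preserve monomorphisms by taking a filtered colimit of fiber sequences $A_i \to B_i \to B_i/A_i$ inside $\iMod_{\shA}(\tT)$ and observing that the colimit is again a fiber sequence. You instead make the reduction to classical topos theory fully explicit: since $\iAb=\Ner(\Ab)$ is the nerve of a $1$-category, so are $\iFun(\tT^{\op},\iAb)$, $\iShv_{\iAb}(\tT)$ and $\iMod_{\shA}(\tT)$, whence $\catA$ is literally the ordinary category of sheaves of $\shA$-modules on the ordinary ringed topos underlying $(\tT,\shA)$, and the Grothendieck axioms follow from the classical theorem after transporting along the conservative, limit- and colimit-preserving forgetful functor $\theta$. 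Your route buys something real: the paper's last step tacitly uses that filtered colimits commute with finite limits (equivalently, preserve fiber sequences) in $\iMod_{\shA}(\tT)$, which is essentially the AB5 property being proved; your identification with a classical category of sheaves is exactly what legitimizes that commutation. What it costs is the comparison $\iShv_{\iAb}(\tT)\simeq \Ab(\tau_{\le 0}\tT)$, which you correctly flag as the load-bearing step; it does hold (a limit-preserving functor $\tT^{\op}\to\iAb$ corresponds under Fact \ref{fct:pr:Shv(X)=X} to a $0$-truncated object of $\tT$ with an abelian group structure), and it is the same identification the paper itself relies on later (e.g.\ in Lemma \ref{lem:is:heart}), so nothing circular is introduced.
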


\begin{proof}
We omit the proof of the claim that $\catA$ is an abelian category 
since the argument for ordinary topos works.
The claim that $\catA$ is a presentable category is a consequence of 
the presentability of the $\infty$-category $\iMod_{\shA}(\tT)$.

Let us show that the class of monomorphisms is 
preserved by small filtered colimits
Let $\{f_i: A_i \to B_i\}$ be a filtered diagram of monomorphisms in $\catA$.
We have the corresponding filtered diagram of fiber sequences 
$A_i \xr{f_i} B_i \to B_i/A_i$ in $\iMod_{\shA}(\tT)$.
We can take the filtered colimit since $\iMod_{\shA}(\tT)$ is presentable
so that it has colimit.
The resulting colimit $A \xr{f} B \to B/A$ is a fiber sequence,
so that $f$ is a monomorphism in $\catA$.
\end{proof}

\begin{rmk*}
The presentability of $\catA$ means that 
there is a small family of generators.
We can explicitly give such generators as follows. 
Recall from Fact \ref{fct:is:biadj} that we have the canonical functor 
$j_!: \oc{\tT}{U} \to \tT$ for each $U \in \tT$, and we have the biadjunction
$j_!: \oc{\tT}{U} \adjunc \tT :j^*$ and  $j^*: \tT \adjunc \oc{\tT}{U} :j_*$.
Then, for $\shA \in \iShv_{\iCAlg(\iAb)}(\tT)$, we have 
$\rst{\shA}{U} := j^* \shA \in \iShv_{\iCAlg(\iAb)}(\oc{\tT}{U})$ 
(Definition \ref{dfn:is:ringed}, Notation \ref{ntn:is:rst}).
%Let us denote $ := j_* \shA$.
Then we also have an adjunction
\[
 j_!: \iMod_{\rst{\shA}{U}}(\oc{\tT}{U}) \adjunc 
             \iMod_{\shA}(\tT) :j^*
\] 
of $\infty$-categories.
Now $\{j_! j^* \shA \}_{U \in \tT}$
gives the desired family of generators
(note that we tacitly assume that $\tT$ is small).
In fact, for each $\shM \in \iMod_{\shA}(\tT)$ the adjunction gives 
\[
 \Map_{\iMod_{\shA}(\tT)}\left( j_! j^* \shA,\shM \right) 
 \simeq \Map_{\iMod_{\rst{\shA}{U}}(\oc{\tT}{U})}
        \left( j^* \shA, j^* \shM \right) 
 \simeq (j^* \shM)(U) = \shM(U).
\]
\end{rmk*}

Now the same argument as \S \ref{sss:is:fun} gives 
functors on sheaves of modules over sheaves of commutative rings.
We list them in the following proposition.

%we then have functors $\otimes_{\Lambda}$, $\shHom_{\Lambda}$, $f_*$ and $f^*$  
%for the stable $\infty$-category $\iMod^{\stab}_{\Lambda}(-)$.

\begin{prp*}
Let $\tT,\tT'$ be $\infty$-topoi and 
$\shA,\shA'$ be sheaves of commutative rings on $\tT,\tT'$ respectively.
Let also $(f,f^\sharp): (\tT,\shA) \longto (\tT',\shA')$
be a functor of ringed $\infty$-topoi.
\begin{enumerate}[nosep]
\item 
For $\shM \in \iMod_{\shA}(\tT)$, the right exact functor 
\[
 - \otimes_{\shA} \shM: \iMod_{\shA}(\tT) \longto \iMod_{\shA}(\tT)
\]
has a right adjoint denoted by 
\[
 \shHom_{\shA}(\shM,-):  \iMod_{\shA}(\tT) \longto \iMod_{\shA}(\tT).
\]
It gives rise to a bifunctor
\[
 \shHom_{\shA}(-,-): 
 \iMod_{\shA}(\tT)^{\op} \times \iMod_{\shA}(\tT) 
 \longto \iMod_{\shA}(\tT)
\]
called  the \emph{internal Hom functor}.

\item
We have the \emph{inverse image functor}
\[
 f^*: \iMod_{\shA'}(\tT') \longto \iMod_{\shA}(\tT), \quad
 \shM' \longmapsto f^* \shM' := f^{-1} \shM' \otimes_{f^{-1}\shA'} \shA.
\]
It is right exact, and have a a right adjoint 
\[
 f_*: \iMod_{\shA}(\tT) \longto \iMod_{\shA'}(\tT').
\]
called the \emph{direct image functor}.
\end{enumerate}
\end{prp*}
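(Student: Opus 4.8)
The plan is to repeat the argument of \S\ref{sss:is:fun} verbatim, substituting the symmetric monoidal presentable $\infty$-category $\iAb$ for $\iSp$ and the $\infty$-category $\iCom\simeq\iCAlg(\iAb)$ of commutative rings for $\iCAlg(\iSp)$. The one thing to check at the outset is that Proposition \ref{prp:is:ModR} applies, which it does: $\iAb$ was noted above to be a symmetric monoidal presentable $\infty$-category. Hence $\iMod_{\shA}(\tT)=\iMod_{\shA}(\iShv_{\iAb}(\tT))$ and $\iMod_{\shA'}(\tT')$ are symmetric monoidal presentable $\infty$-categories whose tensor products preserve small colimits in each variable.

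For part (1), I would fix $\shM\in\iMod_{\shA}(\tT)$ and observe that $-\otimes_{\shA}\shM$ preserves small colimits by Proposition \ref{prp:is:ModR}, hence is right exact by the criterion of Fact \ref{fct:ic:exact}~(2); since its domain is presentable, Fact \ref{fct:ic:adj} yields a right adjoint, to be denoted $\shHom_{\shA}(\shM,-)$. Letting $\shM$ vary and invoking that a presentable symmetric monoidal $\infty$-category with colimit-preserving tensor is closed --- equivalently, that $\shHom_{\shA}$ is the morphism object of \cite[Definition 4.2.1.28]{Lur2} (see \cite[Proposition 4.2.1.33]{Lur2}) --- promotes this to a bifunctor $\shHom_{\shA}(-,-)\colon\iMod_{\shA}(\tT)^{\op}\times\iMod_{\shA}(\tT)\longto\iMod_{\shA}(\tT)$ functorial in both slots, namely the internal Hom.

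For part (2), I would follow \S\ref{sss:is:fun} step by step. Composition with the geometric morphism $f_*\colon\tT\to\tT'$ determines a symmetric monoidal functor $f^{-1}\colon\iShv_{\iAb}(\tT')\to\iShv_{\iAb}(\tT)$ which preserves small colimits --- exactly as in the spectral case, using that $f_*$, being a right adjoint, preserves limits so that $f^{-1}\shM'$ is again limit-preserving. Through the identification $\iCAlg(\iShv_{\iAb}(\tT))\simeq\iShv_{\iCom}(\tT)$ of \S\ref{sss:is:smrf}, this sends $\shA'$ to $f^{-1}\shA'\in\iShv_{\iCom}(\tT)$ and induces $f^{-1}\colon\iMod_{\shA'}(\tT')\to\iMod_{f^{-1}\shA'}(\tT)$, while $f^{\sharp}$ and $f$ together give a morphism $f^{-1}\shA'\to\shA$ in $\iShv_{\iCom}(\tT)$, exhibiting $\shA$ as an $f^{-1}\shA'$-algebra. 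Defining $f^{*}\shM':=f^{-1}\shM'\otimes_{f^{-1}\shA'}\shA$, the composite of the colimit-preserving functors $f^{-1}$ and base change is colimit-preserving, so $f^{*}$ is right exact and --- both sides being presentable --- admits a right adjoint $f_{*}$ by Fact \ref{fct:ic:adj}; one then checks, as in \S\ref{sss:is:fun}, that $f_{*}$ is computed by composition with $f^{*}\colon\tT'\to\tT$ followed by restriction of scalars along $f^{\sharp}$.

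The main obstacle is not conceptual but lies in the bookkeeping of part (2): verifying that composition with $f_*$ really lands in the sheaf (limit-preserving) subcategory, is symmetric monoidal, and is colimit-preserving, and that it is compatible with passage to algebra and module objects. All of this is the literal analogue of what is carried out for $\iSp$ in \S\ref{sss:is:fun} (itself modeled on \cite[\S2.5]{Lur8}), so rather than redo it I would simply cite that argument; replacing $\iSp$ by $\iAb$ changes nothing.
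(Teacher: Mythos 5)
Your proposal is correct and is essentially identical to the paper's own treatment: the paper proves this proposition with the single remark that ``the same argument as \S\ref{sss:is:fun} gives functors on sheaves of modules over sheaves of commutative rings,'' which is precisely the substitution of $\iAb$ for $\iSp$ (justified, as you note, by $\iAb$ being symmetric monoidal and presentable so that Proposition~\ref{prp:is:ModR}, Fact~\ref{fct:ic:exact} and Fact~\ref{fct:ic:adj} apply) that you carry out. Your additional bookkeeping remarks in part (2) only make explicit what the paper leaves implicit by citation to \S\ref{sss:is:fun} and \cite[\S2.5]{Lur8}.
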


%%%%%%%%%%%%%%%%%%%%%%%%%%%%%%%%%%%%%%%%%%%%%%%%%%%%%%%%%%%%%%%%%%%%%%%%%%%%%%%%
\subsubsection{Derived $\infty$-categories}
\label{sss:is:dic}

%By Fact \ref{fct:is:sm1} and Fact \ref{fct:is:sm2},
%it is a presentable stable $\infty$-category equipped with 
%an accessible $t$-structure which is compatible with filtered colimits.
%Now we have 
%
%
%\begin{lem}
%The heart $\iMod^{\stab}_{\Lambda}(\tT)^{\heartsuit}$ 
%is the nerve of a Grothendieck abelian category.
%\end{lem}
%
%\begin{proof}
%Let us denote $\iM := \iMod^{\stab}_{\Lambda}(\tT)^{\heartsuit}$.
%We deduce from Fact \ref{fct:is:disc} that 
%\[
% \iM \simeq \iMod^{\disc}_{\Lambda}(\iShv_{\iSp}(\tT))
% \simeq \iShv_{\iMod^{\disc}_\Lambda(\iSp)}(\tT)
% \simeq \iShv_{\Ner(\Mod_{\Lambda})}(\tT) =: \iMod_{\Lambda}(\tT).
%\]
%Thus it is enough to show that the category
%$\catA := \Ho \iM \simeq \Mod_{\Lambda}(\tT) := \Shv_{\Mod_{\Lambda}}(\tT)$
%\end{proof}

For a Grothendieck abelian category $\catA$
one can construct the \emph{derived-$\infty$-category} $\iDa(\catA)$,
which is an $\infty$-categorical counterpart 
of the unbounded derived category of $\catA$.
It is a stable $\infty$-category equipped with a $t$-structure,
which is also an $\infty$-categorical counterpart 
of triangulated category with a $t$-structure.
See Appendix \ref{ss:stb:G} for an account on $\iDa(\catA)$.

Let $\tT$ be an $\infty$-topos and 
$\shA$ be a sheaf of commutative rings on $\tT$.
Denote by $\iMod_{\shA}(\tT)$ the $\infty$-category of $\shA$-modules on $\tT$.
By Proposition \ref{prp:is:G},
\[
 \Mod_{\shA}(\tT) := \Ho \iMod_{\shA}(\tT) %\simeq \Shv_{\Mod_{\shA}}(\tT).
\]
is a Grothendieck abelian category, 
so we can apply to it the construction in Appendix \ref{ss:stb:G}.
Thus we have the \emph{derived $\infty$-category of 
sheaves of $\shA$-modules on $\tT$} 
\[
 \iDa(\Mod_{\shA}(\tT)) = \Ndg(\C(\Mod_{\shA}(\tT))^\circ).
\]
We collect its properties in

\begin{lem}\label{lem:is:srm}
The derived $\infty$-category $\iD := \iDa(\Mod_{\shA}(\tT))$ 
enjoys the following properties.
\begin{enumerate}[nosep]
\item 
$\iD$ is a stable $\infty$-category (Definition \ref{dfn:stb:stb}).

\item
$\iD$ is equipped with a $t$-structure (Definition \ref{dfn:stb:t-str})
determined by $(\iD_{\le 0},\iD_{\ge 0})$.
Here $\iD_{\le 0}$ (resp.\ $\iD_{\ge 0}$) is the full sub-$\infty$-category 
of $\iD$ spanned by those objects $\shM$ such that $H_n(\shM)=0$ 
for any $n > 0$ (resp.\ $n<0$).
Hereafter we call it \emph{the} $t$-structure on $\iD$.

\item
The heart of the $t$-structure (Definition \ref{dfn:stb:heart}) is 
$\iD^{\heartsuit} \simeq \iMod_{\shA}(\tT)$.

\item
$\iD$ is right complete with respect to the $t$-structure
(Definition \ref{dfn:stb:cmpl}).

\item
$\iD$ is accessible with respect to the $t$-structure 
(Definition \ref{dfn:stb:t-prp}).
\end{enumerate}
\end{lem}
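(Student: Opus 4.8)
The plan is to obtain all five assertions as instances of the general theory of the derived $\infty$-category $\iDa(\catA)$ of a Grothendieck abelian category $\catA$, as recalled in Appendix~\ref{ss:stb:G} (following \cite[\S1.3.5]{Lur2}), applied to $\catA := \Mod_{\shA}(\tT) = \Ho\iMod_{\shA}(\tT)$. The first step is to invoke Proposition~\ref{prp:is:G}, which guarantees that $\catA$ is Grothendieck abelian, so that the construction $\iD = \iDa(\catA) = \Ndg(\C(\catA)^\circ)$ is available. I would also note at this point that, since $\iAb = \Ner(\Ab)$ is the nerve of an ordinary category, the $\infty$-category $\iFun(\tT^{\op},\iAb)$, its full sub-$\infty$-category $\iShv_{\iAb}(\tT)$, and hence also $\iMod_{\shA}(\tT)$, are all discrete, i.e.\ nerves of ordinary categories; in particular $\iMod_{\shA}(\tT) = \Ner(\catA)$, which is what makes the equivalence asserted in item~(3) literally meaningful.

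With this reduction in hand, each item becomes a citation. For item~(1), $\iDa(\catA)$ is stable for any abelian category. For item~(5), when $\catA$ is Grothendieck abelian, $\iDa(\catA)$ is a presentable $\infty$-category, and consequently the $t$-structure below is accessible, i.e.\ $\iD_{\ge0}$ is presentable. For item~(2), $\iDa(\catA)$ carries a $t$-structure whose aisles $\iD_{\le0}$ and $\iD_{\ge0}$ are cut out by the vanishing of the homology objects $H_n \in \catA$ in the indicated degrees, $H_n$ being the composite of the $n$-fold shift with the canonical functor $\iD \to \iD^\heartsuit$ and the identification $\iD^\heartsuit \simeq \catA$. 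For item~(4), this $t$-structure is right complete. For item~(3), its heart is canonically equivalent to $\Ner(\catA) = \iMod_{\shA}(\tT)$. Thus the body of the proof consists of locating these statements in Appendix~\ref{ss:stb:G} and unwinding the definitions collected in Definitions~\ref{dfn:stb:stb}, \ref{dfn:stb:t-str}, \ref{dfn:stb:heart}, \ref{dfn:stb:cmpl} and~\ref{dfn:stb:t-prp}.

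The only points that need any care, and the closest thing to an obstacle here, are bookkeeping ones. First, one must check that the $t$-structure produced abstractly by the general machinery coincides with the concrete one written in item~(2) in terms of the homology objects $H_n$; this is exactly the content of the corresponding statement in Appendix~\ref{ss:stb:G}, so it reduces to matching notation, the functors $H_n$ being the homological incarnation of the homotopy-group functors $\pi_n$. Second, one must confirm that the heart is the $1$-category $\iMod_{\shA}(\tT)$ on the nose rather than merely equivalent to $\Ho\iMod_{\shA}(\tT)$, which is why the discreteness observation is placed at the outset. Neither is genuinely difficult: the substantive work has already been done in Proposition~\ref{prp:is:G} and in the appendix.
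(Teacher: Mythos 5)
Your proposal is correct and follows essentially the same route as the paper, whose proof simply refers all five claims to Appendix \ref{ss:stb:G} and \ref{ss:stb:D-t} after invoking Proposition \ref{prp:is:G}. The extra care you take in observing that $\iMod_{\shA}(\tT)$ is the nerve of an ordinary category (so that item (3) reads literally) is a reasonable refinement but does not change the argument.
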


\begin{proof}
All the claims are explained in Appendix \ref{ss:stb:G} and \ref{ss:stb:D-t}.
\end{proof}

In particular, the homotopy category $\Ho \iDa(\iMod_{\shA}(\tT))$
is a triangulated category with a $t$-structure.
It can be regarded as the unbounded derived category 
of the abelian category $\Mod_{\shA}(\tT)$ as mentioned before.

%$\iD$ has a symmetric monoidal structure 
%induced by that on $\Mod_{\shA}(\tT)$
%The tensor product $\otimes_{\shA}$ is compatible with the $t$-structure 
%(Definition \ref{dfn:sta:t-prp}),

Since a Grothendieck abelian category has enough injective objects,
we also have the left bounded derived $\infty$-category 
(Definition \ref{dfn:stb:iDm/iDp})
\[
 \iDp(\Mod_{\shA}(\tT)) = \Ndg(\C^+(\Mod_{\shA}(\tT)_{\txinj})).
\]
It is stable and equipped with a $t$-structure.
Moreover, by Fact \ref{fct:stb:ext-fun},
we have a fully faithful $t$-exact functor (Definition \ref{dfn:stb:text})
\[
 \iDp(\Mod_{\shA}(\tT)) \longto \iDa(\Mod_{\shA}(\tT))
\]
whose essential image is $\cup_{n \in \bbZ} \iDa(\Mod_{\shA}(\tT))_{\le n}$.

Recall that we have the stable $\infty$-category 
$\iMod^{\stab}_{\shA}(\tT)$ of stable $\shA$-modules
by taking $\shR := \shA$ in the argument of \S \ref{sss:is:smr}.
It is equipped with a right complete $t$-structure.
Thus we also have a fully faithful $t$-exact functor
\[
 \iDp(\Mod_{\shA}(\tT)) \longto \iMod^{\stab}_{\shA}(\tT)
\]
whose essential image is $\cup_{n \in \bbZ} \iMod^{\stab}_{\shA}(\tT)_{\le n}$.
Since $\iDa(\Mod_{\shA}(\tT))$ and $\Mod^{\stab}_{\shA}(\tT)$
are both right complete, we have 

\begin{prp}\label{prp:is:Da=Stab}
The fully faithful $t$-exact functor
$\iDa(\Mod_{\shA}(\tT)) \inj \iMod^{\stab}_{\shA}(\tT)$
yields an equivalence of $\infty$-categories 
\[
 \iDa(\Mod_{\shA}(\tT)) \simeq  \iMod^{\stab}_{\shA}(\tT).
\]
\end{prp}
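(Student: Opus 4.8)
The plan is to reduce the equivalence to the corresponding comparison on the cohomologically bounded-below parts, where it is provided by the extension functor of Fact~\ref{fct:stb:ext-fun}, and then to propagate it to the whole categories using that both $t$-structures are right complete.

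First I would record the shape of the two sides. Viewing $\shA \in \iShv_{\iCom}(\tT)$ as a discrete, hence connective, sheaf of $\bbE_{\infty}$-rings with $\pi_0 \shA = \shA$, Fact~\ref{fct:is:sm2} applies with $\shR := \shA$, so that $\iMod^{\stab}_{\shA}(\tT)$ is presentable, carries an accessible right complete $t$-structure, and by Lemma~\ref{lem:is:heart} has heart $\iMod_{\pi_0 \shA}(\tT) = \iMod_{\shA}(\tT)$. By Lemma~\ref{lem:is:srm}, the derived $\infty$-category $\iDa(\Mod_{\shA}(\tT))$ is likewise presentable and stable, with accessible right complete $t$-structure and heart $\iMod_{\shA}(\tT)$. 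The functor $F \colon \iDa(\Mod_{\shA}(\tT)) \to \iMod^{\stab}_{\shA}(\tT)$ of the statement is $t$-exact and fully faithful, and restricts to an equivalence of hearts compatible with the two identifications with $\iMod_{\shA}(\tT)$.

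Next I would treat the bounded-below part. By Fact~\ref{fct:stb:ext-fun} for the Grothendieck abelian category $\Mod_{\shA}(\tT)$, the left bounded derived $\infty$-category $\iDp(\Mod_{\shA}(\tT))$ embeds by a fully faithful $t$-exact functor into $\iDa(\Mod_{\shA}(\tT))$ with essential image $\bigcup_{n} \iDa(\Mod_{\shA}(\tT))_{\le n}$, and into $\iMod^{\stab}_{\shA}(\tT)$ with essential image $\bigcup_{n} \iMod^{\stab}_{\shA}(\tT)_{\le n}$; moreover $F$ is by construction compatible with these two embeddings (equivalently, by uniqueness of $t$-exact extensions restricting to the identity on $\iMod_{\shA}(\tT)$, the composite $\iDp(\Mod_{\shA}(\tT)) \to \iDa(\Mod_{\shA}(\tT)) \xr{F} \iMod^{\stab}_{\shA}(\tT)$ agrees with the direct embedding). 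Hence $F$ carries $\bigcup_{n} \iDa(\Mod_{\shA}(\tT))_{\le n}$ onto $\bigcup_{n} \iMod^{\stab}_{\shA}(\tT)_{\le n}$, and being $t$-exact and fully faithful it restricts, for every $n \in \bbZ$, to an equivalence $F_{\le n} \colon \iDa(\Mod_{\shA}(\tT))_{\le n} \simto \iMod^{\stab}_{\shA}(\tT)_{\le n}$: it lands there by $t$-exactness, it is fully faithful, and it is essentially surjective because any object $\shN$ of the target lifts to some $X$ with $F(X) \simeq \shN$, and then $\shN \in \iMod^{\stab}_{\shA}(\tT)_{\le n}$ together with $t$-exactness and full faithfulness forces $X \simeq \tau_{\le n} X$.

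Finally I would globalize. Right completeness of both $t$-structures (Lemma~\ref{lem:is:srm} and Fact~\ref{fct:is:sm2}) means that every object is the limit of its truncations $\tau_{\le n}$, so the canonical functors $\iDa(\Mod_{\shA}(\tT)) \to \varprojlim_{n} \iDa(\Mod_{\shA}(\tT))_{\le n}$ and $\iMod^{\stab}_{\shA}(\tT) \to \varprojlim_{n} \iMod^{\stab}_{\shA}(\tT)_{\le n}$, formed along the truncation maps, are equivalences; since $F$ is $t$-exact it commutes with all truncations and is identified under these equivalences with $\varprojlim_{n} F_{\le n}$, which is an equivalence because each $F_{\le n}$ is. I expect the genuine content to sit entirely in the middle step, and more precisely in the input Fact~\ref{fct:stb:ext-fun} — the construction of $\iDp$ from injective resolutions inside an arbitrary presentable stable $\infty$-category with right complete $t$-structure, with its uniqueness — since, granted that together with right completeness, the extension from the bounded-below part to the full categories is purely formal.
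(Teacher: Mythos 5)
Your proposal follows essentially the same route as the paper: the paper's argument is precisely the discussion preceding the proposition, namely that Fact~\ref{fct:stb:ext-fun} embeds $\iDp(\Mod_{\shA}(\tT))$ fully faithfully and $t$-exactly into both $\iDa(\Mod_{\shA}(\tT))$ and $\iMod^{\stab}_{\shA}(\tT)$ with essential image $\bigcup_{n}(\cdot)_{\le n}$, after which completeness of the two $t$-structures yields the equivalence. You have merely filled in the details of the comparison on hearts, the identification of the two embeddings of $\iDp$, and the passage to the limit over truncations, all of which the paper leaves implicit.
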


In other words, the $\infty$-category of $\shA$-module spectra 
can be identified with the derived $\infty$-category
of the abelian category of (discrete) $\shA$-modules.

\begin{rmk*}
As noted in \cite[Remark 7.1.1.16]{Lur2},
this statement holds for a discrete ring spectra $\shR$.
\end{rmk*}

%%%%%%%%%%%%%%%%%%%%%%%%%%%%%%%%%%%%%%%%%%%%%%%%%%%%%%%%%%%%%%%%%%%%%%%%%%%%%%%%
\subsubsection{Derived functors}

We continue to use the symbols in the previous part.
We now discuss functors on $\iDa(\Mod_{\Lambda}(\tT))$.

Let $\tT,\tT'$ be $\infty$-topoi, 
let $\shA,\shA'$ be sheaves of commutative rings on $\tT,\tT'$ respectively,
and $(f,f^\sharp): (\tT,\shA) \to (\tT',\shA')$
be a functor of ringed $\infty$-topoi.
Then we have seen in \S \ref{sss:is:smrf} that 
there are the tensor functor $\otimes_{\Lambda}$ and 
the internal Hom functor $\shHom_{\Lambda}$ on each of the $\infty$-categories 
$\iMod_{\shA}(\tT)$ and $\iMod_{\shA'}(\tT')$.
We also have the adjunction
$f^*: \iMod_{\shA}(\tT) \adjunc \iMod_{\shA'}(\tT') :f_*$
of the inverse image functor $f^*$ and the direct image functor $f_*$.
They are the ordinary functors on the category of sheaves of $\Lambda$-modules.
%They enjoy the ordinary exactness,

On the other hand, by \S \ref{sss:is:fun}, we have functors
$\otimes_{\shA}$, $\shHom_{\shA}$, $f_*$ and $f^*$ 
on the $\infty$-categories $\iMod^{\stab}_\Lambda(\tT)$ and 
$\iMod^{\stab}_\Lambda(\tT')$ of stable $\shA$-modules.
With the view of the fully faithful $t$-exact functor
\[
 \iMod_{\shA}(\tT) \simeq \iDa(\Mod_{\shA}(\tT))^{\heartsuit}
 \longinj \iDa(\Mod_{\shA}(\tT)) \simeq \iMod^{\stab}_{\shA}(\tT)
\]
(Proposition \ref{prp:is:Da=Stab}), the functors on stable $\shA$-modules are
the extensions of those on $\shA$-modules,
and correspond to the derived functors in the ordinary categorical setting.

Hereafter we change the symbols of the functors on the stable modules
by the standard derived functors,
as collected in the following proposition.

\begin{prp}\label{prp:is:df}
Let $\tT,\tT'$ be $\infty$-topoi, 
let $\shA,\shA'$ be sheaves of commutative rings on $\tT,\tT'$ respectively, and 
$(f,f^\sharp): (\tT,\shA) \to (\tT',\shA')$ be a functor of ringed $\infty$-topoi.
\begin{enumerate}[nosep]
\item
The tensor product functor 
$\otimes_{\shA}: \Mod_{\shA}(\tT) \times \iMod_{\shA}(\tT) \to \Mod_{\shA}(\tT)$
is a right exact functor in each variable, and has a left $t$-exact extension
\[
 \otimes^{\dL}_{\shA}: 
 \iDa(\Mod_{\shA}(\tT)) \times \iDa(\Mod_{\shA}(\tT))
 \longto \iDa(\Mod_{\shA}(\tT)).
\]

\item
Let $\shM \in \Mod_{\shA}(\tT)$.
Then the internal Hom functor 
$\shHom_{\shA}(\shM,-): \Mod_{\shA}(\tT) \to \Mod_{\shA}(\tT)$ 
is left exact, and has a right $t$-exact extension 
$\shHom_{\shA}(\shM,-): \iDa(\Mod_{\shA}(\tT)) \to \iDa(\Mod_{\shA}(\tT))$.
It yields a bifunctor
\[
 \shHom_{\shA}(-,-): 
 \iDa(\Mod_{\shA}(\tT))^{\op} \times \iDa(\Mod_{\shA}(\tT))
 \longto \iDa(\Mod_{\shA}(\tT)).
\]
We denote it by $\shHom_{\iDa(\Mod_{\shA}(\tT))}$
if we want to distinguish it from $\shHom_{\shA}$ on $\iMod_{\shA}(\tT)$.

\item 
The direct image functor $f_*: \Mod_{\shA}(\tT) \to \Mod_{\shA}(\tT')$
is left exact and has a right $t$-exact extension
\[
 f_*: \iDa(\Mod_{\shA}(\tT)) \longto \iDa(\Mod_{\shA}(\tT')).
\]

\item
The inverse image functor $f^*: \Mod_{\shA}(\tT') \to \Mod_{\shA}(\tT)$
is right exact and has a left $t$-exact extension
\[
 f^*: \iDa(\Mod_{\shA}(\tT')) \longto \iDa(\Mod_{\shA}(\tT)).
\]
\end{enumerate}
\end{prp}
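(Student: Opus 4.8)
The plan is to obtain all four extensions by transporting the functors already constructed on the stable $\infty$-categories of stable modules in \S\ref{sss:is:fun}, namely $\otimes_{\shA}$, $\shHom_{\shA}$, $f_{*}$ and $f^{*}$, along the equivalences
\[
 \iDa(\Mod_{\shA}(\tT)) \simeq \iMod^{\stab}_{\shA}(\tT), \qquad
 \iDa(\Mod_{\shA'}(\tT')) \simeq \iMod^{\stab}_{\shA'}(\tT')
\]
furnished by Proposition \ref{prp:is:Da=Stab} (which applies with $\shR := \shA$ and $\shR := \shA'$, since discrete rings are connective). These functors already carry the expected formal features: $\otimes_{\shA}$ preserves small colimits in each variable (Fact \ref{fct:is:sm1}~(2)), $f^{*}$ preserves small colimits by construction, and $\shHom_{\shA}(\shM,-)$ and $f_{*}$, being right adjoints, preserve small limits. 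So the real work is two comparisons: (a) each transported functor agrees on the heart with the classical functor on sheaves of $\shA$-modules recalled in \S\ref{sss:is:smrf}, and (b) each has the one-sided $t$-exactness asserted in (1)--(4).

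For (a): by Lemma \ref{lem:is:srm}~(3) (equivalently Lemma \ref{lem:is:heart}, using $\pi_{0}\shA \simeq \shA$) the heart of $\iDa(\Mod_{\shA}(\tT))$ is $\iMod_{\shA}(\tT)$. The symmetric monoidal structure on $\iMod^{\stab}_{\shA}(\tT)$ was built (Proposition \ref{prp:is:ModR}, \S\ref{sss:is:Rmod}) from the pointwise monoidal structure on $\iShv_{\iSp}(\tT)$, whose restriction to discrete objects is the pointwise tensor of sheaves of $\shA$-modules; hence $\otimes_{\shA}$ restricts on the heart to the classical tensor product, and its right adjoint $\shHom_{\shA}$ to the classical internal Hom. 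For the images, the geometric morphism $f\colon \tT \to \tT'$ restricts on $0$-truncated objects to a geometric morphism of the underlying ordinary topoi (compare Lemma \ref{lem:pr:cl}) and $f^{\sharp}$ is a morphism of discrete sheaves of rings, so unwinding the formula $f^{*}\shM' = f^{-1}\shM' \otimes_{f^{-1}\shA'} \shA$ of \S\ref{sss:is:fun} identifies $f^{*}$ on discrete modules with the classical inverse image, and $f_{*}$, its right adjoint, with the classical direct image.

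For (b): the classical functors just identified are right exact ($\otimes_{\shA}$ in each variable; $f^{*}$) or left exact ($\shHom_{\shA}(\shM,-)$; $f_{*}$), exactly as in the ordinary theory of sheaves of modules (\S\ref{sss:is:smrf}). One can now either invoke the universal property of $\iDa$ --- using its right completeness (Lemma \ref{lem:is:srm}~(4)) and the extension results behind Fact \ref{fct:stb:ext-fun} --- to identify the colimit-preserving transported functors $\otimes^{\dL}_{\shA}$ and $f^{*}$ with the left derived functors and the limit-preserving ones $\shHom_{\shA}$ and $f_{*}$ with the right derived functors, whence the stated $t$-exactness; or one can read it off directly on the stable side, where the $t$-structure is compatible with the monoidal structure (Fact \ref{fct:is:sm2}~(3)), so the tensor of connective objects is connective, $f^{*}$ preserves connectivity, and dually $\shHom_{\shA}$ and $f_{*}$ preserve coconnectivity. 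The two-variable tensor case is handled by deriving one variable at a time.

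I expect the main obstacle to be step (a), and within it the comparison for $f^{*}$: one has to check that the inverse-image functor $f^{-1}$ on sheaves of spectra --- defined in \S\ref{sss:is:fun} by composition with $f_{*}\colon \tT \to \tT'$ --- restricts on discrete sheaves of abelian groups to the classical sheaf-theoretic inverse image, so that after relative tensoring with $\shA$ one recovers the classical $f^{*}$ on $\iMod_{\shA}(\tT)$ in degree $0$. Tracking how $\Omega^{\infty}$ and the homotopy-group functors of \S\ref{ss:is:sp} intertwine the $\infty$-categorical and classical constructions is the only genuinely non-formal point; once it is settled, (b) and the bookkeeping for the bifunctor are routine.
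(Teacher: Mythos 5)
Your proposal follows essentially the same route as the paper: the paper gives no separate proof but obtains the proposition precisely by transporting the functors $\otimes_{\shA}$, $\shHom_{\shA}$, $f_*$, $f^*$ constructed on $\iMod^{\stab}_{\shA}(\tT)$ in \S\ref{sss:is:fun} through the equivalence $\iDa(\Mod_{\shA}(\tT)) \simeq \iMod^{\stab}_{\shA}(\tT)$ of Proposition \ref{prp:is:Da=Stab} and observing that they restrict on the heart to the classical functors of \S\ref{sss:is:smrf}. Your write-up merely makes explicit the heart-comparison and $t$-exactness checks (via Fact \ref{fct:is:sm2} and Fact \ref{fct:stb:ext-fun}) that the paper leaves implicit, so it is correct and consistent with the paper's argument.
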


\begin{rmk*}
Taking the homotopy categories,
we recover the derived functors on unbounded derived categories
in \cite[\S 14.4]{KS}.
\end{rmk*}

We also introduce 

\begin{ntn}\label{ntn:is:shExt}
Let $\tT$ and $\shA$ be the same as Proposition \ref{prp:is:df}.
For $\shM,\shN \in \iDa(\iMod_{\shA}(\tT))$ and $n \in \bbZ$, we set
\[
 \shExt^n_{\shA}(\shM,\shN) := 
 \pi_0 \shHom_{\iDa(\iMod_{\shA}(\tT))}(\shM[-n],\shN) 
 \in \iMod_{\shA}(\tT).
\] 
\end{ntn}

%\begin{lem*}
%The derived functors 
%$- \otimes^{\dL}_{\Lambda} -$, $\dRHom_{\Lambda}(-,-)$,
%$\dR f_*$ and $\dL f^*$ are restriction of 
%the exact functors on stable $\infty$-categories 
%$\iMod^{\stab}_\Lambda(\tT)$ and $\iMod^{\stab}_\Lambda(\tT')$.
%\end{lem*}

%%%%%%%%%%%%%%%%%%%%%%%%%%%%%%%%%%%%%%%%%%%%%%%%%%%%%%%%%%%%%%%%%%%%%%%%%%%%%%%%
%%%%%%%%%%%%%%%%%%%%%%%%%%%%%%%%%%%%%%%%%%%%%%%%%%%%%%%%%%%%%%%%%%%%%%%%%%%%%%%%
\subsection{Open and closed geometric immersions}
\label{ss:is:oci}

In this subsection we introduce the notions of 
open and closed geometric immersions in an $\infty$-topos.
As for closed geometric immersion, 
our presentation is just a citation from \cite[\S 7.3]{Lur1}.

Hereafter we fix an $\infty$-topos $\tT$.

%%%%%%%%%%%%%%%%%%%%%%%%%%%%%%%%%%%%%%%%%%%%%%%%%%%%%%%%%%%%%%%%%%%%%%%%%%%%%%%%
\subsubsection{Open geometric immersion}

Let $\one_{\tT} \in \tT$ be a final object (Corollary \ref{cor:pr:final}).

\begin{dfn}\label{dfn:it:sub}
Denote by $\Sub(\one_{\tT})$ 
the set of equivalence classes of monomorphisms $U \to \one_{\tT}$ in $\tT$.
We regard $\Sub(\one_{\tT})$ as a poset under inclusion.
\end{dfn}

Note that $\Sub(\one_{\tT})$ can be identified with 
the set of equivalence classes of $(-1)$-truncated objects in $\tT$.
Also, the poset $\Sub(\one_{\tT})$ is independent of the choice of $\one_{\tT}$
up to canonical isomorphism by Fact \ref{fct:ic:fin}.

Recall that any over-$\infty$-category $\oc{\tT}{C}$ of $C \in \tT$ 
is an $\infty$-topos (Fact \ref{fct:pr:ovc}).
Thus for $U \in \Sub(\one_{\tT})$ we can define an $\infty$-topos $\oc{\tT}{U}$ by
$\oc{\tT}{U} := \oc{\tT}{U'}$, where $U' \in \tT$ is a representative of $U$.
A different choice of $U'$ will cause an equivalent $\infty$-topos.

Let us also recall the biadjunction $(j_!,j^*,j_*)$ in Fact \ref{fct:is:biadj}
associated to an object $U \in \tT$.
Here $j_!: \oc{\tT}{U} \to \tT$ is the canonical functor 
of the over-$\infty$-category $\oc{\tT}{U}$ (Corollary \ref{cor:ic:ovc}),
and we have a pair of geometric morphisms of $\infty$-topoi
\[
 j_!: \oc{\tT}{U} \adjunc \tT      :j^*, \quad 
 j^*: \tT      \adjunc \oc{\tT}{U} :j_*.
\]

Now we introduce the notion of an open geometric immersion.

\begin{dfn}\label{dfn:is:oi}
A geometric morphism $f: \tU \to \tT$ corresponding to 
the adjunction $f^*: \tT \rlto \tT':f_*$ of $\infty$-topoi is 
an \emph{open geometric immersion} if there exists $U \in \Sub(\one_{\tT})$ 
such that the composition $\tU \xr{f_*} \tT \xr{j^*} \oc{\tT}{U}$ 
%induces an equivalence $\tT' \to \oc{\tT}{U}$ 
is an equivalence of $\infty$-categories.
We denote an open geometric immersion typically by $f: \tU \inj \tT$.
\end{dfn}

Following \cite{SGA4} we name the functors appearing in the above argument as
%\cite[IV \S 5.2, XVII \S 5.1.1]{SGA4} we name the functors 
%$j_!$, $j^*$ and $j_*$ as 

\begin{dfn}\label{dfn:is:oij}
For an open geometric immersion $j: \oc{\tT}{U} \inj \tT$
with $U \in \Sub(\one_{\tT})$, the functor $j_!: \oc{\tT}{U} \to \tT$
is called the \emph{extension by empty},
and $j^*: \tT \to \oc{\tT}{U}$ is called the \emph{restriction functor}.
%\emph{base-change functor}.
\end{dfn}

Let us give a few formal properties of an open geometric immersion
which is an analogue of \cite[IV Proposition 9.2.4]{SGA4}.
%We omit the proof.

\begin{lem}\label{lem:is:j_!}
Let  $j: \tU \inj \tT$ be an open geometric immersion.
\begin{enumerate}[nosep]
\item
The functors $j_!$ and $j_*$ are fully faithful (Definition \ref{dfn:ic:sfe}).

\item
%For any $X \in tU$ the morphism $j_! j^*(X) \to X$
%induced by the counit transformation
%There is an injection $j_! \inj j_*$.
The counit transformation $j_! j^* \to \id_{\tT}$ (Definition \ref{dfn:ic:uc})
is a monomorphism of functors.

\item
We have a monomorphism $j_! \inj j_*$ of functors.
\end{enumerate}
\end{lem}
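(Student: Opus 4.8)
The plan is to work with the concrete model supplied by the defining equivalence $\tU \simeq \oc{\tT}{U}$ for a subterminal object $U \in \Sub(\one_{\tT})$, under which $j_!: \oc{\tT}{U} \to \tT$ is the source functor $(X \to U) \mapsto X$ and $j^*: \tT \to \oc{\tT}{U}$ is $Y \mapsto (Y \times U \to U)$; recall from Fact \ref{fct:is:biadj} that both $j_!$ and $j^*$ are left exact, being inverse-image parts of geometric morphisms. The elementary fact used throughout is that, since $U \to \one_{\tT}$ is a monomorphism, $\Map_{\tT}(W,U)$ is empty or contractible for every $W \in \tT$; an object $W$ lies in the essential image of $j_!$ exactly when it admits a (then essentially unique) map to $U$, in which case $W \times U \to W$ is an equivalence and $j^*(W) \simeq (W \to U)$; moreover this essential image is closed under passing to any object mapping into it.

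For part (1), full faithfulness of $j_!$ is immediate from the mapping-space formula in an over-$\infty$-category: $\Map_{\oc{\tT}{U}}((X \to U),(Y \to U))$ is the fibre of $\Map_{\tT}(X,Y) \to \Map_{\tT}(X,U)$ over the structure map, and $\Map_{\tT}(X,U)$ is contractible, so this is $\Map_{\tT}(X,Y)$. For $j_*$, which is right adjoint to $j^*$, full faithfulness amounts to the counit $j^* j_* \to \id_{\oc{\tT}{U}}$ being an equivalence, and I would prove this by Yoneda: for any $(W \to U) \in \oc{\tT}{U}$ one computes, using $j^*(W) \simeq (W \to U)$ and the adjunction $j^* \dashv j_*$, that $\Map_{\oc{\tT}{U}}((W\to U), j^*j_*(X\to U)) \simeq \Map_{\tT}(W, j_*(X\to U)) \simeq \Map_{\oc{\tT}{U}}((W\to U),(X\to U))$ naturally in $(W \to U)$, whence $j^*j_*(X\to U) \simeq (X\to U)$.

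For part (2), the counit $j_! j^* \to \id_{\tT}$ evaluated at $Y$ is the projection $Y \times U \to Y$, i.e.\ the base change of the monomorphism $U \to \one_{\tT}$ along $Y \to \one_{\tT}$; monomorphisms in $\tT$ are stable under pullback, so this is a monomorphism for each $Y$, which is exactly the assertion. For part (3), the canonical transformation $j_! \to j_*$ is the one corresponding under $j_! \dashv j^*$ to the equivalence $\id \simeq j^* j_*$ of part (1) (equivalently, under $j^* \dashv j_*$, to $\id \simeq j^* j_!$). To see it is objectwise a monomorphism I would show that for each $(X\to U)$ the diagonal $j_!(X\to U) \to j_!(X\to U) \times_{j_*(X\to U)} j_!(X\to U)$ is an equivalence: the fibre product maps to $j_!(X\to U)$ and hence to $U$, so it lies in the essential image of $j_!$, on which $j^*$ is an equivalence onto $\oc{\tT}{U}$; and applying the left-exact $j^*$, using $j^*j_! \simeq \id$ and $j^*j_* \simeq \id$, turns this diagonal into the (equivalence) diagonal of $(X\to U)$ over itself, so the original diagonal was already an equivalence.

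I expect the real work to be the $j_*$ half of part (1) — producing $j^*j_* \simeq \id$ — together with the bookkeeping in part (3) that one may check the monomorphism property after the localization $j^*$, a reduction that rests precisely on closure of the essential image of $j_!$ under subobjects; everything else is formal adjunction yoga and stability of monomorphisms under base change. Alternatively, one could extract the whole statement from the general theory of open subtopoi in \cite[\S 7.3]{Lur1} — the route already taken in the paper for closed geometric immersions — thereby presenting the lemma as a direct $\infty$-analogue of \cite[IV Proposition 9.2.4]{SGA4}; the hands-on argument above is offered only because it is short once the biadjunction of Fact \ref{fct:is:biadj} is available.
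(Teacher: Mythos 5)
Your proposal is correct. Parts (1) and (2) follow essentially the paper's own route: full faithfulness of $j_!$ comes down to the contractibility of $\Map_{\tT}(X,U)$ for $X$ over the subterminal $U$ (the paper packages this as a diagram of over-$\infty$-categories with a trivial fibration, you compute the mapping-space fibre directly — same content), full faithfulness of $j_*$ is transferred through the biadjunction, and (2) is in both cases the observation that the counit at $Y$ is the base change of the monomorphism $U \to \one_{\tT}$ along $Y \to \one_{\tT}$.

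Part (3) is where you genuinely diverge. The paper deduces (3) from (1) and (2) by a one-line factorization: since $j^*j_* \simeq \id$, one has $j_! \simeq j_! j^* j_* \to j_*$, where the second arrow is the counit of (2) evaluated at $j_*(-)$ and is therefore a monomorphism. You instead verify the diagonal criterion (Fact \ref{fct:ic:mono}) for $j_!Z \to j_*Z$ directly, using that the relevant fibre product lies in the essential image of $j_!$, that $j^*$ is left exact, and that $j^*$ restricted to that essential image is an equivalence and hence reflects equivalences. Your argument is sound — the key closure property (anything mapping to an object over $U$ is again over $U$) and the reflection of equivalences both check out — but it carries more bookkeeping than the paper's factorization, which gets the monomorphism for free from (2). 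Either way the lemma stands; if you want the shortest write-up, adopt the factorization $j_! \simeq j_! j^* j_* \inj j_*$ for (3) and keep your computations for (1) and (2).
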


\begin{proof}
\begin{enumerate}[nosep]
\item 
Replace $\tU$ by $\oc{\tT}{U}$ with $U \in \Sub(\one_{\tT})$,
and denote by $i: U \to \one_{\tT}$ the corresponding monomorphism.
Then by Corollary \ref{cor:ic:ovc} on over-$\infty$-categories
and by Fact \ref{fct:ic:mono-dfn} on monomorphisms,
we have a commutative diagram
\[
 \xymatrix{
  \oc{\tT}{i} \ar[r]^{p}   \ar[d]  & \oc{\tT}{\one_{\tT}} \ar[d]^{t} \\
  \oc{\tT}{U} \ar[r]_{j_!} \ar[ru] & \tT
 }
\]
with $p$ fully faithful.
By Fact \ref{fct:ic:f=sf} on final objects,
we see that $t$ is a trivial fibration of simplicial sets
with respect to Kan model structure (Fact \ref{fct:ic:Km}).
Then we can see that $j_!$ is fully faithful.

As for $j_*$, the result follows from biadjunction of $(j_!,j^*,j_*)$.
\item
For any $X \in \tT$, the object $j_! j^*(X)$ sits in the pullback square
\[
 \xymatrix{
  j_! j^*(X) \ar[r]^(0.6){c} \ar[d] & X \ar[d] \\ U \ar@{^{(}->}[r]_i & \one_{\tT}}
\]
Then since $i$ is $(-1)$-truncated 
we deduce that $c$ is $(-1)$ by \cite[Remark 5.5.6.12]{Lur1}.

\item
Since $j_*$ is fully faithful if and only if the unit transformation
$\id_{\tU} \to j^* j_!$ is an equivalence,
the statement follows from (1) and (2).
\end{enumerate}
\end{proof}

Next we give an analogue of \cite[XVII, Lemma 5.1.2]{SGA4} 
in the ordinary topos theory.

Let $f: \tT' \to \tT$ be a geometric morphism corresponding to the adjunction 
$f^*: \tT \rlto \tT' :f_*$, and $j: \tU = \oc{\tT}{U} \inj \tT$ be 
an open geometric immersion with $U \in \Sub_{\one_{\tT}}$.
Note that $f^*(\one_{\tT})=\one_{\tT'}$ 
since $f^*$ is left exact so that it commutes with limits.
Thus $U' := f^*(U)$ belongs to $\Sub(\one_{\tT'})$,
Then, setting $\tU' := \oc{\tT}{U'}$ and denoting by $j': \tU' \inj \tT'$ 
the natural open geometric immersion, we have a square
\begin{align}\label{diag:is:oi}
 \xymatrix{
  \tU' \ar[r]^{g} \ar@{^{(}->}[d]_{j'} & \tU \ar@{^{(}->}[d]^{j} \\ 
  \tT' \ar[r]_f & \tT}
\end{align}
%of geometric morphisms of $\infty$-topoi
in $\iRTop$.

Let us give the geometric morphism $g=(g^*: \tU \rlto \tU' :g_*)$ explicitly.
Using the associated adjunction $j_!: \tU \rlto \tT :j^*$ to 
the geometric morphism $j$ and 
${j'}^*: \tT' \rlto \tU' :j'_*$ to $j'$, we set 
$g^* := (\tU \xr{j_!} \tT \xr{f^*} \tT' \xr{{j'}^*} \tU')$.
Since $g^*$ is a composition of left exact functors, it is also left exact.
Similarly we set 
$g_* := (\tU' \xr{j'_*} \tT' \xr{f_*} \tT \xr{j^*} \tU)$, 
which is right exact.
The commutativity of the square holds by definition, 
and the adjunction property of $(g^*,g_*)$ is obvious.

We call $\tU$ the \emph{inverse image} of $\tU'$ in $\tT$.

\begin{lem}\label{lem:is:j_!-bc}
Let $f: \tT' \to \tT$ be a geometric morphism of $\infty$-topoi,
and $\tU \inj \tT$ be an open geometric immersion.
Under the notation in the square \eqref{diag:is:oi}
there exists an equivalence $f^* j_! \simto j'_! g^*$
making the following square commutative up to homotopy.
\[
 \xymatrix{
  f^* j_! \ar[r]^{\sim} \ar@{^{(}->}[d] & j'_! g^* \ar@{^{(}->}[d] \\ 
  f^* j_* \ar[r]_{\alpha} & j'_* g^*
 }
\]
Here $\alpha$ is the \emph{base change morphism}
(Definition \ref{dfn:is:bc})
associated to the square \eqref{diag:is:oi}.
\end{lem}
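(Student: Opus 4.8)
The plan is to first produce the equivalence $f^* j_! \simto j'_! g^*$ directly from the explicit descriptions of the functors involved, and then to check that it is compatible with the base change morphism $\alpha$ by transposing along the adjunction ${j'}^* \dashv j'_*$. For the first part, fix $\shF \in \tU \simeq \oc{\tT}{U}$. By Lemma \ref{lem:is:j_!}(2) the counit $j_! j^*(j_! \shF) \to j_! \shF$ exhibits $j_! j^* j_! \shF$ as the pullback $(j_! \shF) \times_{\one_{\tT}} U$; since $j_!$ is fully faithful (Lemma \ref{lem:is:j_!}(1)) the unit $\id_{\tU} \to j^* j_!$ is an equivalence, so $(j_! \shF)\times_{\one_{\tT}} U \simeq j_! \shF$, i.e.\ the structure morphism $j_! \shF \to \one_{\tT}$ factors through the monomorphism $U \inj \one_{\tT}$. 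Applying $f^*$, which is left exact since $f$ is a geometric morphism (Definition \ref{dfn:is:gm}) and hence preserves the final object and finite limits, we get $f^*(j_! \shF) \simeq f^*\bigl((j_! \shF)\times_{\one_{\tT}} U\bigr) \simeq f^*(j_! \shF) \times_{\one_{\tT'}} f^*(U) = f^*(j_! \shF)\times_{\one_{\tT'}} U'$. By the same pullback description applied to the open geometric immersion $j'$, the right-hand side is $j'_!\,{j'}^*\bigl(f^* j_! \shF\bigr)$, which by the formula $g^* = {j'}^* f^* j_!$ is exactly $j'_!\, g^*(\shF)$. Naturality in $\shF$ is clear from the construction, so we obtain the required equivalence $\beta \colon f^* j_! \simto j'_! g^*$.

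For the compatibility, one must show that $\beta$ followed by the monomorphism $j'_! g^* \inj j'_* g^*$ of Lemma \ref{lem:is:j_!}(3) coincides with the monomorphism $f^* j_! \inj f^* j_*$ followed by $\alpha$. As both are maps $f^* j_! \to j'_* g^*$, it suffices to compare their transposes $j'^* f^* j_! \to g^*$ under ${j'}^* \dashv j'_*$. Using the canonical equivalence ${j'}^* f^* \simeq g^* j^*$ coming from the commutativity $j \circ g = f \circ j'$ of \eqref{diag:is:oi} in $\iRTop$, together with $j^* j_! \simeq \id_{\tU}$, both transposes become endomorphisms of $g^*$: the transpose of the first composite is $\id_{g^*}$ by the triangle identity for ${j'}^* \dashv j'_*$, and the transpose of the second is $\id_{g^*}$ after unwinding the definition of the base change morphism $\alpha$ (Definition \ref{dfn:is:bc}) and applying the triangle identities for $j^* \dashv j_*$ and $f^* \dashv f_*$. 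Hence the two composites agree and the square commutes up to homotopy.

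The genuine obstacle is the coherence bookkeeping in this second step: one must verify that the pointwise equivalence $\beta$ built above agrees with the mate of the chain ${j'}^* f^* j_! \simeq g^* j^* j_! \simeq g^*$, and that the base change morphism of Definition \ref{dfn:is:bc} is indeed the usual mate of ${j'}^* f^* \simeq g^* j^*$ (equivalently of $j^* f_* \to g_* {j'}^*$). Once those identifications are made, commutativity is a formal consequence of the triangle identities. By contrast the first step is essentially a one-line computation with the pullback square of Lemma \ref{lem:is:j_!}(2) and the left-exactness of $f^*$, so the proof is short once the coherences are organized.
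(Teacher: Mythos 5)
Your proposal is correct and follows essentially the same route as the paper: the equivalence $f^* j_! \simeq j'_! g^*$ comes from the definition $g^* = {j'}^* f^* j_!$ together with the pullback description of $j_!j^*$ in Lemma \ref{lem:is:j_!} and the left exactness of $f^*$, and the compatibility with $\alpha$ is checked by restricting along $j'$ (i.e.\ transposing under ${j'}^* \dashv j'_*$). You simply spell out in detail what the paper dismisses as ``trivial,'' which is a welcome expansion rather than a divergence.
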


\begin{proof}
By definition of $g^*$ and Lemma \ref{lem:is:j_!}
there is an equivalence $j'_! g^* \simto f^* j_!$.
The commutativity can be checked by restricting to $\tU$,
but it is trivial.
\end{proof}

%%%%%%%%%%%%%%%%%%%%%%%%%%%%%%%%%%%%%%%%%%%%%%%%%%%%%%%%%%%%%%%%%%%%%%%%%%%%%%%%
\subsubsection{Closed geometric immersion}

Next we turn to closed geometric immersions.
We begin with 

\begin{dfn*}
Let $U \in \tT$.
\begin{enumerate}[nosep]
\item
An object $X \in \tT$ is \emph{trivial on $U$}
if for any morphism $U' \to U$ in $\tT$
the mapping space $\Map_{\tT}(U',X) \in \topH$ is contractible.

\item 
We denote by $\tT/U \subset \tT$ 
the full sub-$\infty$-category spanned by trivial objects on $U$.
\end{enumerate}
\end{dfn*}

Let $U \in \Sub(\one_{\tT})$.
We define a full sub-$\infty$-category $\tT/U \subset \tT$ by $\tT/U := \tT/U'$,
where $U' \in \tT$ is a representative of $U$.
See \cite[Lemma 7.3.2.5]{Lur1} for the independence of the choice of $U'$
in this definition.
Now we have

\begin{fct*}[{\cite[Proposition 7.3.2.3, Lemma 7.3.2.4]{Lur1}}]
For $U \in \Sub(\one_{\tT})$, the $\infty$-category $\tT/U$ 
is a localization of $\tT$ and is an $\infty$-topos.
\end{fct*}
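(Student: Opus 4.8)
The plan is to exhibit $\tT/U$ as a \emph{topological localization} of $\tT$, that is, as the full sub-$\infty$-category of objects that are local with respect to a small set of monomorphisms. By the general theory of topological localizations \cite[\S 6.2.1]{Lur1}, such a localization is automatically accessible, reflective and left exact, and localizing an $\infty$-topos in this way again yields an $\infty$-topos; so the statement will follow once $\tT/U$ has been put in this form.

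Fix a representative $U \in \tT$ of the given class in $\Sub(\one_{\tT})$. By Fact \ref{fct:pr:ovc} the over-$\infty$-category $\oc{\tT}{U}$ is an $\infty$-topos, hence presentable by Fact \ref{fct:pr:Giraud}; choose a small family $\{W_i\}_{i \in I}$ of objects of $\oc{\tT}{U}$ generating it under small colimits, and recall from Fact \ref{fct:is:biadj} the biadjunction $j_!: \oc{\tT}{U} \adjunc \tT :j^*$, $j^*: \tT \adjunc \oc{\tT}{U} :j_*$ with $j_!$ the canonical functor. First I would unwind the definition. An object $X \in \tT$ is trivial on $U$ iff $\Map_{\tT}(U',X)$ is contractible for every morphism $U' \to U$; since the objects $j_! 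V$ for $V \in \oc{\tT}{U}$ are exactly the objects of $\tT$ admitting a morphism to $U$, and $\Map_{\tT}(j_! V, X) \simeq \Map_{\oc{\tT}{U}}(V, j^* X)$ by $j_! \dashv j^*$, this holds iff $\Map_{\oc{\tT}{U}}(V, j^* X)$ is contractible for all $V \in \oc{\tT}{U}$, i.e.\ iff $j^* X$ is a final object of $\oc{\tT}{U}$. As mapping spaces turn colimits into limits, a final object is detected by the generating family, so the condition becomes that $\Map_{\oc{\tT}{U}}(W_i, j^* X) \simeq \Map_{\tT}(j_! W_i, X)$ be contractible for every $i \in I$. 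Since $\Map_{\tT}(\emptyset_{\tT}, -)$ is always contractible, this is exactly the assertion that $X$ is local with respect to the small set $S_0 := \{\, \emptyset_{\tT} \to j_! W_i \mid i \in I \,\}$ of morphisms of $\tT$; hence $\tT/U = S_0^{-1}\tT$.

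Next I would check that the members of $S_0$ are monomorphisms. The initial object of an $\infty$-topos is strict, i.e.\ every morphism into $\emptyset_{\tT}$ is an equivalence (a standard consequence of the universality of colimits, Fact \ref{fct:pr:Giraud}); therefore the projection $\emptyset_{\tT} \times_{j_! W_i} \emptyset_{\tT} \to \emptyset_{\tT}$ is an equivalence, so the diagonal $\emptyset_{\tT} \to \emptyset_{\tT} \times_{j_! W_i} \emptyset_{\tT}$ is an equivalence and $\emptyset_{\tT} \to j_! W_i$ is a monomorphism. Thus $S_0$ is a small set of monomorphisms and $\tT/U = S_0^{-1}\tT$ is a topological localization of $\tT$, which gives at once that $\tT/U$ is a (reflective, accessible) localization of $\tT$ and that the localization functor $L : \tT \to \tT/U$ is left exact. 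Finally, fixing a small $\infty$-category $\iB$ and an accessible left exact localization $\iPSh(\iB) \to \tT$ (which exists since $\tT$ is an $\infty$-topos, Definition \ref{dfn:pr:it}), the composite $\iPSh(\iB) \to \tT \to \tT/U$ is again accessible and left exact, so $\tT/U$ is an $\infty$-topos.

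The real work is concentrated in the middle two paragraphs: identifying the localizing set of morphisms, checking that it can be chosen \emph{small} (this is precisely where presentability of $\oc{\tT}{U}$ is used) and that its members are monomorphisms. After that, everything reduces to the general machinery of topological localizations; I expect the reduction to a small set of monomorphisms to be the step requiring the most care.
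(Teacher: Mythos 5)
Your proof is correct. The paper does not actually prove this statement --- it is imported verbatim from \cite[Proposition 7.3.2.3, Lemma 7.3.2.4]{Lur1} --- and your argument is essentially the one given there: identify triviality on $U$ with $S_0$-locality for the small set $S_0 = \{\emptyset_{\tT} \to j_! W_i\}$ built from colimit-generators of the presentable $\infty$-topos $\oc{\tT}{U}$, verify these morphisms are monomorphisms using strictness of the initial object, and then invoke the general machinery of topological localizations.
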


Thus there is a functor $\tT \to \tT/U$ and 
the following definition makes sense.

\begin{dfn}[{\cite[Definition 7.3.2.6]{Lur1}}]\label{dfn:is:ci}
%\begin{enumerate}[nosep]
%\item 
%For $U \in \Sub(\one_{\tT})$,
%the $\infty$-topos $\tT/U$ is called 
%the \emph{closed sub-$\infty$-topos of $\tT$ complementary to $U$}.
%
%\item
A geometric morphism $f=(f^*: \tT \rlto \tT' :f_*)$ of $\infty$-topoi 
is a \emph{closed geometric immersion} if there exists $U \in \Sub(\one_{\tT})$
such that $f_*: \tT' \to \tT$ induces an equivalence 
$\tT' \to \tT/U$ of $\infty$-categories.
%\end{enumerate}
\end{dfn}

Let us remark that in \cite{Lur1} it is just called a closed immersion.

%%%%%%%%%%%%%%%%%%%%%%%%%%%%%%%%%%%%%%%%%%%%%%%%%%%%%%%%%%%%%%%%%%%%%%%%%%%%%%%%
%%%%%%%%%%%%%%%%%%%%%%%%%%%%%%%%%%%%%%%%%%%%%%%%%%%%%%%%%%%%%%%%%%%%%%%%%%%%%%%%
\subsection{Simplicial $\infty$-topoi and descent theorem}
\label{sss:is:spr}

In this subsection we will give an $\infty$-analogue of 
the theory of cohomological descent
in \cite[$\mathrm{V}^{\text{bis}}$]{SGA4}, \cite[\S 2]{O}.
Our discussion will utilize the $\infty$-category $\iRTop$ 
of $\infty$-topoi and geometric morphisms (Definition \ref{dfn:is:RTop}).
Let us remark that the statements in this subsection are 
much more general statements than what we need in the later sections. 

%%%%%%%%%%%%%%%%%%%%%%%%%%%%%%%%%%%%%%%%%%%%%%%%%%%%%%%%%%%%%%%%%%%%%%%%%%%%%%%%
\subsubsection{$K$-injective resolutions}
\label{sss:is:Kinj}

We begin with a restatement of \cite[\S 3]{Sp}
in the context of ringed $\infty$-topoi.

%\begin{dfn*}
%Let $\iC$ be an $\infty$-category admitting small colimits.
%\begin{enumerate}[nosep]
%\item 
%An object $I \in \iC$ is \emph{injective}
%if the functor $C \mapsto \Map_{\iC}(C,I)$
%preserves geometric realizations of simplicial objects in $\iC$.
%\item
%We say that \emph{$\iC$ has enough injective objects}
%if for each $M \in \iC$ there is an monomorphism $M \inj I$ to 
%an injective object $I \in \iC$.
%\end{enumerate}
%\end{dfn*}

Let $(\tT,\shA)$ be a ringed $\infty$-topos 
with $\shA$ a sheaf of commutative rings as in \S \ref{ss:is:crm}.
%In other words, we have $\shA \in \iShv_{\iCom_\bbZ}(\tT)$.
Let us denote by $\Mod_{\shA}(\tT) := \Ho \iMod_{\shA}(\tT)$
the homotopy category of the $\infty$-category of sheaves of $\shA$-modules.
Recall that $\Mod_{\shA}(\tT)$ is a Grothendieck abelian category.
In this part we denote the unbounded derived $\infty$-category 
of sheaves of $\shA$-modules by 
\[
 \iDa(\tT,\shA) := \iDa(\Mod_{\shA}(\tT)).
\] 
Recall that $\iDa(\tT,\shA)$ is stable and equipped with a $t$-structure 
whose heart is equivalent to $\iMod_{\shA}(\tT)$.
We also denote by 
\[
 \iDm(\tT,\shA) 
 := \cup_{n \in \bbZ} \iDa(\tT,\shA)_{\ge -n} \subset \iDa(\tT,\shA)
\]
the sub-$\infty$-category of right bounded objects.
%Note that $\iMod_{\shA}(\tT)$ has enough injective objects.
Then by \cite[3.6]{Sp} we have 

\begin{lem}\label{lem:is:spr}
For any $\shM \in \iDa(\tT,\shA)$, 
there exists a morphism $f: \shM \to \shI$ in $\iDa(\tT,\shA)$
satisfying the following conditions.
\begin{itemize}
\item 
$\shI = \varinjlim \shI_n$ with $\shI_n \in \iDm(\tT,\shA)$
and $\pi_j \shI_n$ injective for all $n \in \bbN$, $j \in \bbZ$.

\item
$f$ is induced by a compatible collection of equivalences
$f_n: \tau_{\ge-n}\shM \to \shI_n$.

\item
For each $n$, the morphism $\shI_n \to \shI_{n-1}$ is an epimorphism
whose kernel $K_n$ belongs to $\iDm(\tT,\shA)$ and 
$\pi_j K_n$ are injective for any $j \in \bbZ$.
\end{itemize}
\end{lem}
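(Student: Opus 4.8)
The statement is essentially a packaging of Spaltenstein's construction of $K$-injective resolutions in an adapted, $\infty$-categorical form; the plan is to build the tower $\{\shI_n\}$ by downward induction on $n$, using at each stage the existence of enough injectives in the Grothendieck abelian category $\Mod_{\shA}(\tT)$ (Proposition \ref{prp:is:G}) together with the $t$-structure on $\iDa(\tT,\shA)$ from Lemma \ref{lem:is:srm}. First I would observe that, since the $t$-structure is right complete (Lemma \ref{lem:is:srm} (4)), any $\shM \in \iDa(\tT,\shA)$ satisfies $\shM \simeq \iclim_n \tau_{\ge -n}\shM$, so it suffices to produce compatible resolutions $f_n : \tau_{\ge -n}\shM \to \shI_n$ with each $\shI_n \in \iDm(\tT,\shA)$ having injective homotopy sheaves, such that the transition maps $\shI_n \to \shI_{n-1}$ are epimorphisms whose kernels again lie in $\iDm(\tT,\shA)$ with injective homotopy sheaves; then I set $\shI := \iclim_n \shI_n$ and let $f$ be the induced morphism.

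The inductive construction is the core. Working with explicit cochain complexes via the equivalence $\iDp(\Mod_{\shA}(\tT)) \simeq \Ndg(\C^+(\Mod_{\shA}(\tT)_{\txinj}))$ mentioned after Lemma \ref{lem:is:srm}, I would first choose, for each fixed truncation level, a Cartan--Eilenberg / injective resolution of the bounded-below object $\tau_{\ge -n}\shM$ by a complex of injectives; the point is to choose these \emph{compatibly} so that $\shI_{n-1}$ is obtained from $\shI_n$ by a termwise-split quotient realizing the passage $\tau_{\ge -n}\shM \to \tau_{\ge -(n-1)}\shM$. Concretely, starting from $\shI_n$ with $f_n$ an equivalence onto $\tau_{\ge -n}\shM$, the fiber sequence $\pi_{-n}\shM[-n] \to \tau_{\ge -n}\shM \to \tau_{\ge -(n-1)}\shM$ lets one split off an injective resolution of $\pi_{-n}\shM$ sitting in the appropriate degree; choosing the quotient complex $\shI_{n-1}$ so that the surjection $\shI_n \twoheadrightarrow \shI_{n-1}$ has kernel $K_n$ an injective resolution of $\pi_{-n}\shM[-n]$ (hence bounded, with injective homotopy sheaves) gives exactly the three bulleted properties. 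The compatibility of the equivalences $f_n$ is then automatic from the construction, so they assemble to the morphism $f : \shM = \iclim_n \tau_{\ge -n}\shM \to \iclim_n \shI_n = \shI$ in $\iDa(\tT,\shA)$.

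The main obstacle is the \emph{compatibility} of the tower: producing, all at once, a system of injective resolutions of the $\tau_{\ge -n}\shM$ that are linked by honest termwise epimorphisms with controlled kernels, rather than merely resolutions that agree up to quasi-isomorphism. In the classical setting this is handled by Spaltenstein's inductive argument (loc.\ cit.\ 3.6), building the resolution degreewise and using that one can always extend an injective resolution of a subcomplex; the $\infty$-categorical translation requires care that the choices made at level $n$ restrict correctly to level $n-1$ and that the colimit $\iclim_n \shI_n$ still computes $\shM$ (which is where right completeness of the $t$-structure is used decisively). I would organize this as a clean induction: assume $\shI_n, f_n$ constructed with the stated properties for all levels $\ge -n$, then construct $\shI_{n+1}$ and the epimorphism $\shI_{n+1} \to \shI_n$ with kernel $K_{n+1}$ resolving $\pi_{-(n+1)}\shM[-(n+1)]$ by an injective complex, using that the heart $\iDa(\tT,\shA)^{\heartsuit} \simeq \iMod_{\shA}(\tT)$ has enough injectives. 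Everything else — stability under the colimit, the identification of homotopy sheaves, the epimorphism/kernel assertions — is a routine consequence of the $t$-structure axioms and is not worth spelling out in detail.
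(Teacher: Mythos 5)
The paper offers no argument for this lemma beyond the citation to \cite[3.6]{Sp} (together with the construction recalled from \cite[2.1]{LO1}), so your reconstruction of Spaltenstein's inductive procedure --- resolving the truncations of $\shM$ by compatible complexes of injectives with termwise surjective transition maps and controlled kernels, using that $\Mod_{\shA}(\tT)$ is Grothendieck and hence has enough injectives --- is the intended route, and your inductive step (splitting off an injective resolution of the newly added homotopy sheaf as the kernel $K_n$) is the right mechanism for producing the epimorphisms with the stated kernels.

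There is, however, one genuine gap, and it concerns the direction of the tower and what the (co)limit computes. The transition maps in the statement are epimorphisms $\shI_n \to \shI_{n-1}$: the $\shI_n$ form an \emph{inverse} system, and $\shI$ is its limit (taken termwise as complexes), not a filtered colimit of a chain of equivalences. Your argument instead treats $\{\shI_n\}$ as a direct system and concludes that ``the colimit $\iclim_n \shI_n$ still computes $\shM$'' from right completeness of the $t$-structure. That proves strictly more than the lemma asserts: the lemma only produces a morphism $f:\shM\to\shI$, and whether $f$ is an equivalence is exactly the content of the subsequent Lemma \ref{lem:is:S01}, which requires the local finite cohomological dimension hypotheses (S0) and (S1). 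If your reading were correct, $f$ would be an equivalence unconditionally and Lemma \ref{lem:is:S01} would be vacuous. The point of Spaltenstein's construction is precisely that the strict inverse limit of the $\shI_n$ is a K-injective complex of injectives equipped with a map from $\shM$, while the comparison map can fail to be a quasi-isomorphism over a topos of infinite cohomological dimension: the $\varprojlim^1$ and completeness issues in the relevant direction do not disappear, and the truncations being resolved are the ones bounded in the direction that admits resolutions by complexes of injectives, for which the completeness of the $t$-structure is not automatic. So the inductive construction in your second paragraph is essentially right, but the framing via $\shM \simeq \iclim_n \tau_{\ge -n}\shM$ and a filtered colimit of equivalences must be replaced by the inverse limit of the surjective system, with no claim that $f$ is an equivalence at this stage.
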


We now restate \cite[3.13 Proposition]{Sp} and \cite[2.1.4 Proposition]{LO1}.
For stating that, 
let us recall that for an $\infty$-site $(\iX,\tau)$ 
we denote by $\iSh(\iX,\tau)$ the $\infty$-category of $\tau$-sheaves on $\iX$,
which is an $\infty$-topos (Definition \ref{dfn:it:sh}, Fact \ref{fct:pr:sh}).

\begin{lem}\label{lem:is:S01}
Let $\iB \subset \iMod_{\shA}(\tT)$ a full sub-$\infty$-category.
Assume the following conditions for $(\tT,\shA)$ and $\iB$. 
\begin{enumerate}[nosep, label=(S\arabic*)]
\setcounter{enumi}{-1}
\item
$\tT$ is equivalent to an $\infty$-topos 
of the form $\iSh(\iX,\tau)$ with some $\infty$-site $(\iX,\tau)$.
\item
For each $U \in \iX$, there exists a covering $\{U_j \to U\}_{j \in J}$
and an integer $n_0$ such that we have $\pi_{-n} \rst{\shM}{U_j} = 0$ 
for any $\shM \in \iB$, $n \ge n_0$ and $j \in J$.
Here $\rst{\shM}{U_j}$ denotes the restriction (Notation \ref{ntn:is:rst}).
\end{enumerate}
Let $\shM \in \iDa(\tT,\shA)$ satisfy $\pi_j \shM \in \iB$ for all $j$.
Then the morphism $f:\shM \to \shI$ in Lemma \ref{lem:is:spr} 
is an equivalence in $\iDa(\tT,\shA)$.
\end{lem}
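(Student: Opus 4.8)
The plan is to adapt the argument of Spaltenstein \cite[3.13]{Sp} and Laszlo--Olsson \cite[2.1.4]{LO1} to the present $\infty$-categorical setting. Write $f\colon\shM\to\shI$ for the morphism produced by Lemma \ref{lem:is:spr}, with its tower $\{\shI_n\}$, the equivalences $f_n\colon\tau_{\ge-n}\shM\simto\shI_n$, and the fibres $K_n$ of the transition maps. I want to show $f$ is an equivalence in $\iDa(\tT,\shA)$. Since the $t$-structure on $\iDa(\tT,\shA)$ is right complete (Lemma \ref{lem:is:srm}~(4)), the cofibre of $f$ vanishes as soon as all of its homotopy sheaves do (being bounded below with vanishing homotopy), so it is enough to prove that $\pi_j(f)$ is an isomorphism of sheaves of $\shA$-modules for every $j\in\bbZ$. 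Here I would use (S0) to write $\tT=\iSh(\iX,\tau)$: an isomorphism of sheaves of $\shA$-modules can be tested locally on $\iX$, and by the biadjunction of Fact \ref{fct:is:biadj} each restriction functor $\rst{(-)}{U}$ with $U\in\iX$ preserves small limits and colimits, hence commutes with the formation of $\shI$ out of the $\shI_n$ and with the $K_n$. Thus the task reduces to fixing $U\in\iX$ and showing that $\rst{f}{U}$ becomes an equivalence over the members of a suitable covering of $U$.

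For this I would apply (S1): given $U$, choose a covering $\{U_\alpha\to U\}_\alpha$ and an integer $n_0$ bounding the local cohomological amplitude of every object of $\iB$ over the $U_\alpha$. Because each $\pi_j\shM$ lies in $\iB$ and $K_n$ is (a resolution of) $\pi_{-n}\shM$ sitting in a single degree, this bound forces $\pi_j\rst{K_n}{U_\alpha}=0$ once $n$ is large relative to $j$ --- combining the amplitude bound $n_0$ with the vanishing of negative local cohomology. Feeding this into the fibre sequences $K_n\to\shI_n\to\shI_{n-1}$, for each fixed $j$ the tower $\bigl(\pi_j\rst{\shI_n}{U_\alpha}\bigr)_n$ is pro-constant, so whatever derived-limit obstruction ($\ilim^1$-term in the Milnor sequence) governs $\pi_j\rst{\shI}{U_\alpha}$ vanishes and $\pi_j\rst{\shI}{U_\alpha}\simeq\pi_j\rst{\tau_{\ge-n}\shM}{U_\alpha}$ for $n\gg0$. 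A parallel application of (S1) to the fibres of $\tau_{\ge-n}\shM\to\shM$ identifies the right-hand side with $\pi_j\rst{\shM}{U_\alpha}$ for $n\gg0$, compatibly with $f$ (using that $f$ is induced by the $f_n$, by Lemma \ref{lem:is:spr}). Concatenating the two identifications shows $\pi_j\rst{f}{U_\alpha}$ is an isomorphism; since the $U_\alpha$ cover $U$, the object $U$ ranges over $\iX$, and $j$ is arbitrary, $\pi_j(f)$ is an isomorphism for all $j$, so $f$ is an equivalence.

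I expect the local finiteness step to be the one genuine obstacle. For unbounded $\shM$ the tower $\{\shI_n\}$ has no reason to have a strict limit after passing to homotopy sheaves over an arbitrary object of $\iX$ --- this is exactly the pathology of unbounded derived categories of sheaves that forces the hypotheses of \cite{Sp, LO1} --- and it is precisely condition (S1), the locally bounded cohomological amplitude of the class $\iB$, that makes the tower pro-constant degree by degree on a covering and hence turns $f$ into a local, and therefore global, equivalence. Everything else --- right completeness, locality of isomorphisms of sheaves, exactness of restriction, the Milnor sequence of a tower --- is formal and already available from the material developed above.
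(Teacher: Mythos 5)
First, a remark on the comparison you asked for: the paper itself gives no proof of this lemma --- it is presented as a restatement of \cite[3.13 Proposition]{Sp} and \cite[2.1.4 Proposition]{LO1} and the reader is implicitly referred there --- so the only meaningful benchmark is those arguments. Your sketch follows their outline correctly at the top level (reduce to homotopy sheaves, localize over the covering supplied by (S1), show the relevant towers stabilize there), but it breaks down at exactly the step those references are careful about.

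The gap is the passage ``restriction preserves limits, the tower $\pi_j\rst{\shI_n}{U_\alpha}$ is pro-constant, hence the Milnor sequence computes $\pi_j\rst{\shI}{U_\alpha}$''. You apply the exact restriction $j^*=\rst{(-)}{U_\alpha}$ and then invoke the $\ilim^1$-sequence inside $\iDa(\rst{\tT}{U_\alpha},\shA)$, and two things go wrong. (i) The Milnor sequence for a sequential limit needs $\pi_j$ to commute with countable products; in the derived $\infty$-category of a sheaf category countable products are not exact (AB4* fails), so no such sequence is available --- this non-exactness is precisely the pathology that forces the hypotheses of \cite[3.13]{Sp}. (ii) Because $j^*$ is $t$-exact, $\pi_j\rst{K_n}{U_\alpha}=\rst{\pi_j K_n}{U_\alpha}$ is concentrated in a single degree for \emph{any} $\shM$, the bound $n_0$ from (S1) never enters, and your towers of homotopy sheaves are eventually constant for trivial reasons. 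An argument built from these two ingredients would therefore show that $\shM\to\shI$ is \emph{always} an equivalence, i.e.\ that $\iDa(\tT,\shA)$ is left complete, which is false for general Grothendieck categories of sheaves and is exactly what the lemma is guarding against. The repair is to evaluate on the covering with derived sections $\dR\Gamma(U_\alpha,-)$ --- equivalently, to take sections of the termwise-surjective tower of complexes of injectives, as in \cite{Sp} and \cite[2.1.4]{LO1}. This functor still preserves the limit, but it lands in complexes of abelian groups, where the $\ilim^1$-sequence is valid; and there the $n$-th layer $K_n$ contributes to a fixed homotopy group of $\dR\Gamma(U_\alpha,-)$ through a cohomology group $H^{q}(U_\alpha,\pi_\bullet\shM)$ with $q$ growing linearly in $n$. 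It is here that (S1) --- read, as in \cite[2.1.2]{LO1} and as in the paper's own condition (G1), as the bound $H^q(U_\alpha,\shN)=0$ for $q\ge n_0$ and $\shN\in\iB$ --- actually kills the tail and makes the towers stabilize. A final sheafification over the coverings then converts the resulting isomorphisms on $\pi_*\dR\Gamma(U_\alpha,-)$ into isomorphisms of homotopy sheaves, completing the argument.
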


\begin{rmk*}
We can weaken the condition (S0) by the one that $\tT$ is hypercomplete,
but will not pursue this point.
\end{rmk*}

%%%%%%%%%%%%%%%%%%%%%%%%%%%%%%%%%%%%%%%%%%%%%%%%%%%%%%%%%%%%%%%%%%%%%%%%%%%%%%%%
\subsubsection{Simplicial $\infty$-topoi}

We give some notations for simplicial $\infty$-topoi. 
The symbols are borrowed from \cite[2.1]{LO1}.
Let us begin with the following citation.

\begin{dfn*}[{\cite[Definition 6.3.1.6]{Lur1}}]
A map $p : X \to S$ of simplicial sets is a \emph{topos fibration}
if the following three conditions are satisfied.
\begin{enumerate}
\item 
The map $p$ is both a cartesian fibration and a cocartesian fibration
(Definition \ref{dfn:ic:cc-fib}).

\item
For every vertex $s$ of $S$, 
the corresponding fiber $X_s = X \times_S \{s\}$ is an $\infty$-topos.

\item
For every edge $e : s \to s_0$ in $S$, 
the associated functor $X_s \to X_{s_0}$ is left exact.
\end{enumerate}
\end{dfn*}

Recall the $\infty$-category $\iRTop$ 
of $\infty$-topoi and geometric morphisms (Definition \ref{dfn:is:RTop}).
By \cite[Theorem 6.3.3.1]{Lur1}, $\iRTop$ admits small filtered limits.
Let us explain some relevant notions in the proof of loc.\ cit.
for later use.

Let $\iI$ be a small filtered $\infty$-category 
and $q: \iI^{\op} \to \iRTop$ be any functor of $\infty$-categories.
Then by \cite[Proposition 6.3.1.7]{Lur1},
there is a topos fibration $p: X \to \iI^{\op}$ classified by $q$
(see \cite[Definition 3.3.2.2]{Lur1} for the definition of 
a classifying functor of cartesian fibration).

By \cite[Proposition 5.3.1.18]{Lur1}, 
for any filtered $\infty$-category $\iI$, we have a filtered poset $A$ 
and a cofinal map $\Ner(A) \to \iI$ of simplicial sets.
Here $A$ is regarded as a category in which the set of morphisms is given by 
\[
 \Hom_{A}(a,b) := 
  \begin{cases}
   \{*\} & a \le b, \\ \emptyset & a \not\le b.
  \end{cases}
\]
See also \cite[Definition 4.1.1.1]{Lur1} for the definition of 
a cofinal map of simplicial sets.
Thus, as for the functor $q: \iI^{\op} \to \iRTop$,
we can replace $\iI$ by $\Ner(A)$.

%Now assume that $\iI$ is equivalent to $\Ner(A)$,
%where $A$ is a filtered poset with order $\le$
Now let us given a functor $q: \Ner(A)^{\op} \to \iRTop $
and $p: X \to \Ner(A)^{\op}$ be a topos fibration classified by $q$.
Then by \cite[Proposition 6.3.3.3]{Lur1} the simplicial subset 
$\stX \subset \Map(\Ner(A)^{\op},X)$ 
of cartesian sections of $p: X \to \Ner(A)^{\op}$ is an $\infty$-topos.
Moreover by \cite[Proposition 6.3.3.5]{Lur1} we find that 
for each $a \in A$ the evaluation $\stX \to X_a$ 
gives a geometric morphism of $\infty$-topoi.
Thus $\stX$ gives a filtered limit of $q: \Ner(A)^{\op} \to \iRTop$.

\begin{dfn*}
Let $\iI$ be a filtered $\infty$-category.
An \emph{$\iI$-simplicial $\infty$-topos} is a topos fibration 
$p: \tT_{\bullet} \to \iI^{\op}$ which is classified 
by a functor $q: \iI^{\op} \to \iRTop$ of $\infty$-categories.
We will often denote $\tT_{\bullet}$ to indicate 
an $\iI$-simplicial $\infty$-topos.
\end{dfn*}

Let $\tT_\bullet$ be an $\iI$-simplicial $\infty$-topos.
For each $\delta : i \to j$ in $\iI$,
we denote the corresponding geometric morphism of $\infty$-topoi 
by the same symbol $\delta: \tT_j \to \tT_i$,
and denote the associated adjunction by 
$\delta^{-1}: \tT_i \rlto \tT_j :\delta_*$.
Note that our convention is the opposite of \cite[2.1]{LO1}.

By the above argument $\tT_{\bullet}$ is equipped with a geometric morphism
$e_i: \tT_i \to \tT_{\bullet}$ of $\infty$-topoi for each $i \in \iI$
corresponding to an adjunction $e_i^{-1}: \tT_{\bullet} \rlto \tT_i :e_{i,*}$.

Let us explain examples of $\iI$-simplicial $\infty$-topoi
which will be used in the later text.
Recall first that $\CS$ denotes the $\infty$-category of 
combinatorial simplices (Definition \ref{dfn:ic:CS}),
and that a contravariant functor of ordinary categories from $\CS$ 
is called a simplicial object (\S \ref{ss:ic:ss}).

\begin{dfn}\label{dfn:is:CSinj} %{dfn:is:spl} {dfn:is:str-spl}
\begin{enumerate}[nosep]
\item
We denote by $\CS^{\str} \subset \CS$ the subcategory with the same objects
but morphisms only the injective maps.
A contravariant functor from $\CS^{\str}$ is called 
a \emph{strictly simplicial object}.

\item 
A \emph{simplicial $\infty$-topos} is defined to be 
a $\CS$-simplicial $\infty$-topos.
A \emph{strictly simplicial $\infty$-topos} is defined to be 
a $\CS^{\str}$-simplicial $\infty$-topos.

\item
For a simplicial $\infty$-topos $\tT_\bullet$, 
we denote by $\tT^{\str}_\bullet$ the strictly simplicial $\infty$-topos 
obtained by restriction to $\CS^{\str} \subset \CS$.
\end{enumerate}
\end{dfn}

\begin{rmk*}
In the papers \cite{O,LO1,LO2}, 
the symbol $\Delta^+$ is used for our $\CS^{\str}$.
In \cite[Notation 6.5.3.6]{Lur1}, the symbol $\CS_s$ is used
and a contravariant functor from $\CS_s$ is called a semisimplicial object.
We will not use these notations.
\end{rmk*}

%Then we have the obvious functor
%\[
% \Rst \iShv_{\iC}(\tT_\bullet) \longto \iShv_{\iC}(\tT^{\str}_\bullet).
%\]

%\begin{prp}
%In the case $\iC=\iSp$, the stable $\infty$-category of spectra, 
%both $\iShv_{\iSp}(\tT_\bullet)$ and $\iShv_{\iSp}(\tT^{\str}_\bullet)$
%are stable $\infty$-category equipped with $t$-structures, 
%and the functor $\Rst$ is a $t$-exact equivalence.
%\end{prp}

Another example of an $\iI$-simplicial topos is

\begin{dfn}\label{dfn:is:prj}
Let $\bbN=\{0,1,2.\ldots\}$ be the set of non-negative integers,
regarded as a filtered poset with the standard order.
For an $\infty$-topos $\tT$,
we denote by $\tT^{\bbN}$ the $\Ner(\bbN)$-simplicial $\infty$-topos
associated to the constant functor $q: \Ner(\bbN) \to \iRTop$, 
$q(n) := \tT$.
We call it the \emph{$\infty$-topos of projective systems in $\tT$}.
\end{dfn}

%%%%%%%%%%%%%%%%%%%%%%%%%%%%%%%%%%%%%%%%%%%%%%%%%%%%%%%%%%%%%%%%%%%%%%%%%%%%%%%%
\subsubsection{Simplicial ringed $\infty$-topoi}
\label{sss:is:spl}

We will introduce notations for simplicial ringed $\infty$-topoi.
For that, let us first consider a sheaf on a simplicial $\infty$-topoi.
Let $\iI$ be a filtered $\infty$-category and
$\tT_\bullet$ be an $\iI$-simplicial $\infty$-topos.
Then a sheaf $\shF_{\bullet}$ 
with values in an $\infty$-category $\iC$ on $\tT_\bullet$
consists of sheaves $\shF_i \in \iShv_{\iC}(\tT_\bullet)$ for $i \in \iI$
and morphisms $\delta^{-1} \shF_j \to \shF_i$ in $\iShv_{\iC}(\tT_i)$
for $\delta: i \to j$ in $\iI$.

As in the ordinary topos theory, we introduce

\begin{dfn}\label{dfn:is:flat}
A functor $f: (\tT,\shA) \to (\tT',\shA')$ of ringed $\infty$-topoi
is \emph{flat} if the inverse image functor
$f^*: \iMod_{\shA'}(\tT') \to \iMod_{\shA}(\tT)$
is left and right exact. 
\end{dfn}

Now we have

\begin{dfn*}
Let $\iI$ be a filtered $\infty$-category.
An \emph{$\iI$-simplicial ringed $\infty$-topos} is a pair 
$(\tT_{\bullet},\shA_{\bullet})$ 
of an $\iI$-simplicial $\infty$-topos $\tT_{\bullet}$ 
and a sheaf $\shA_{\bullet} \in \iShv_{\iCAlg(\iC)}(\tT_{\bullet})$
such that the functor $\delta: (\tT_j,\shA_j) \to (\tT_i,\shA_i)$
of ringed $\infty$-topoi is flat for each $\delta: i \to j$ in $\iI$.
\end{dfn*}

As in the non-ringed case, we have a geometric morphism 
$e_i: (\tT_i,\shA_i) \to (\tT_{\bullet},\shA_{\bullet})$ 
of $\infty$-topoi for each $i \in \iI$,
which corresponds to an adjunction
$e_i^{-1}: (\tT_{\bullet},\shA_{\bullet}) \rlto (\tT_i,\shA_i) :e_{i,*}$.

Under these preliminaries
let us explain the results in \cite[2.1]{LO1}.
Let $\iI$ be a filtered $\infty$-category and 
$(\tT_{\bullet},\shA_{\bullet})$ be an $\iI$-simplicial ringed $\infty$-topos.
We assume that $\shA_{\bullet}$ is a sheaf of commutative rings.
Then the homotopy category 
$\Mod_{\shA_{\bullet}}(\tT_{\bullet}) := 
 \Ho \iMod_{\shA_{\bullet}}(\tT_{\bullet})$ 
is a Grothendieck abelian category.
Thus the following definition makes sense.

\begin{ntn}\label{ntn:is:sDa}
We denote the associated derived $\infty$-category by
\[
 \iDa(\tT_{\bullet},\shA_{\bullet}) := 
 \iDa(\Mod_{\shA_{\bullet}}(\tT_{\bullet})).
\]
\end{ntn}

Let also $\iB_{\bullet}$ be a full sub-$\infty$-category of 
$\iMod_{\shA_{\bullet}}(\tT_{\bullet})$,
and for each $i \in \iI$ let $\iB_i$ be the essential image
of $\iB_{\bullet}$ under the geometric morphism
$e_i: (\tT_i,\shA_i) \to (\tT_{\bullet},\shA_{\bullet})$.

Now Lemma \ref{lem:is:S01} and the argument \cite[2.1.9 Proposition]{LO1} yield

\begin{lem}\label{lem:is:S2}
Assume the following condition on 
$(\tT_{\bullet},\shA_{\bullet})$ and $\iB_{\bullet}$.
\begin{enumerate}[nosep]
\item[(S2)]
For each $i \in \iI$, the ringed $\infty$-topos $(\tT_i,\shA_i)$ 
and the $\infty$-category $\iB_i$ satisfy the assumptions 
(S0) and (S1) in Lemma \ref{lem:is:S01}
\end{enumerate}
Then for each $\shM \in \iDa(\tT_{\bullet},\shA_{\bullet})$ 
there exists a morphism $\shM \to \shI$ 
in $\iDa(\tT_{\bullet},\shA_{\bullet})$ satisfying the conditions
in Lemma \ref{lem:is:spr}.
If moreover $\pi_j \shM \in \iB_{\bullet}$ for any $j \in \bbZ$,
then $f$ is an equivalence.
\end{lem}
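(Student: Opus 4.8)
The plan is to handle the two assertions of the lemma separately. For the existence of a morphism $f\colon \shM \to \shI$ satisfying the three conditions recalled in Lemma~\ref{lem:is:spr}, the point to notice is that, although $(\tT_{\bullet},\shA_{\bullet})$ is not itself a ringed $\infty$-topos, the homotopy category $\Mod_{\shA_{\bullet}}(\tT_{\bullet})$ is a Grothendieck abelian category (as recorded just above Notation~\ref{ntn:is:sDa}), and the Spaltenstein construction underlying Lemma~\ref{lem:is:spr}, namely \cite[3.6]{Sp}, uses only the Grothendieck abelianness of the heart. So I would simply apply Lemma~\ref{lem:is:spr}, read in the generality of $\iDa(\catA)$ for an arbitrary Grothendieck abelian category $\catA$, to $\catA = \Mod_{\shA_{\bullet}}(\tT_{\bullet})$; this produces $f$ together with the tower $\shI \simeq \iclim_n \shI_n$, the equivalences $f_n\colon \tau_{\ge -n}\shM \to \shI_n$, and the epimorphisms with the stated kernels. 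This part is formal.

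Now assume $\pi_j\shM \in \iB_{\bullet}$ for all $j$; I want $f$ to be an equivalence. First I would reduce to a componentwise statement: a morphism of $\iDa(\tT_{\bullet},\shA_{\bullet})$ is an equivalence if and only if its image under the inverse image functor $e_i^{*}\colon \iMod_{\shA_{\bullet}}(\tT_{\bullet}) \to \iMod_{\shA_i}(\tT_i)$ attached to the geometric morphism $e_i\colon (\tT_i,\shA_i) \to (\tT_{\bullet},\shA_{\bullet})$ is an equivalence in $\iDa(\tT_i,\shA_i)$ for every $i \in \iI$. Indeed each $e_i^{*}$ is exact by the flatness built into the definition of a simplicial ringed $\infty$-topos, so it descends to the derived $\infty$-categories and commutes with $\pi_j$ and with the truncations $\tau_{\ge -n}$; and the family $\{e_i^{*}\}_{i\in\iI}$ is jointly conservative, since a morphism in $\iMod_{\shA_{\bullet}}(\tT_{\bullet})$ is an equivalence exactly when each of its components is. Fixing $i$, exactness gives $\pi_j(e_i^{*}\shM) = e_i^{*}(\pi_j\shM) \in \iB_i$, the essential image of $\iB_{\bullet}$ under $e_i$, and by hypothesis (S2) the pair $(\tT_i,\shA_i)$, $\iB_i$ satisfies (S0) and (S1). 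It then remains to identify $e_i^{*}f\colon e_i^{*}\shM \to e_i^{*}\shI = \iclim_n e_i^{*}\shI_n$ with a resolution of the type produced by Lemma~\ref{lem:is:spr} on $(\tT_i,\shA_i)$; granting that, Lemma~\ref{lem:is:S01} shows $e_i^{*}f$ is an equivalence, hence so is $f$.

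The main obstacle is precisely this last identification. It requires carrying out the construction of $\shI$ on $\tT_{\bullet}$ compatibly with restriction along the $e_i$, so that each $e_i^{*}\shI_n$ again has injective homotopy sheaves and is an injective resolution of $\tau_{\ge -n}(e_i^{*}\shM)$, and so that $e_i^{*}$ carries the colimit defining $\shI$ to the corresponding colimit on $\tT_i$. Concretely one wants enough injectives of $\Mod_{\shA_{\bullet}}(\tT_{\bullet})$ whose $e_i^{*}$-images remain injective — built, as usual, from the right adjoints $e_{i,*}$ — together with the observation that the relevant colimit is preserved. This is exactly the content of the argument of \cite[2.1.9]{LO1}, and the real work of the proof is transporting that argument to the present $\infty$-categorical and $\infty$-topos setting; once it is in place, the conclusion follows formally from Lemma~\ref{lem:is:S01}, the exactness and joint conservativity of the $e_i^{*}$, and the identity $\pi_j e_i^{*} = e_i^{*}\pi_j$, all of which are routine.
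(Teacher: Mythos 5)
Your proposal is correct and follows essentially the same route as the paper, which proves this lemma simply by invoking Lemma \ref{lem:is:S01} together with the argument of \cite[2.1.9 Proposition]{LO1}; your levelwise reduction via the exact, jointly conservative evaluation functors $e_i^*$, with the compatibility of the Spaltenstein tower under restriction as the one non-formal step, is precisely the content of that cited argument. (One minor quibble: the exactness of $e_i^*$ comes from limits and colimits in the total $\infty$-topos being computed componentwise, rather than from the flatness of the transition morphisms $\delta$, which is needed elsewhere.)
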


%%%%%%%%%%%%%%%%%%%%%%%%%%%%%%%%%%%%%%%%%%%%%%%%%%%%%%%%%%%%%%%%%%%%%%%%%%%%%%%%
\subsubsection{Descent lemma}
\label{sss:is:des}

In this part we review the result in \cite[2.2]{LO1}.

%Let $\iC$ be a symmetric monoidal presentable $\infty$-category  
%equipped with a functor $\ve: \iC \to \iS$.
Let $(\tS,\shB)$ be a ringed $\infty$-topos 
with $\shB$ a sheaf of commutative rings,
i.e., $\shB \in \iShv_{\iCom(\iAb)}(\tS)$. 
As in \S \ref{sss:is:Kinj}, 
we denote the derived $\infty$-category of 
sheaves of stable $\shB$-modules on $\tS$ 
by $\iDa(\tS,\shB) := \iDa(\Mod_{\shB}(\tS))$
with $\Mod_{\shB}(\tS) := \Ho \iMod_{\shB}(\tS)$.
We always consider the $t$-structure on $\iDa(\tS,\shB)$ 
explained in Lemma \ref{lem:is:srm}.
In particular we have $\iDa(\tS,\shB)^{\heartsuit} \simeq \iMod_{\shB}(\tS)$. 
%We assume that 
%$\shB$ is connective in the sense of Definition \ref{dfn:LE:O-cn}.
%Thus we have the $t$-structure on the stable $\infty$-category $\iD(\tS,\shB)$.
%In particular we have the heart $\iD(\tS,\shB)^{\heartsuit}$
%whose homotopy category is an abelian category.

Let $\iB' \subset \iMod_{\shB}(\tS)$ be a sub-$\infty$-category such that 
the homotopy category $\Ho \iB'$ is a Serre subcategory of
the abelian category $\Mod_{\shB}(\tS)$.
In other words, $\Ho \iB'$ is closed under kernels, cokernels and extensions.
We denote by
\[
 \iDs{\iB'}(\tS,\shB) \subset \iDa(\tS,\shB)
\]
the full sub-$\infty$-category spanned by those $\shM$
such that all the homotopy groups $\pi_j \shM$ belongs to $\iB'$.

Next let $(\tT_{\bullet},\shA_{\bullet})$ be 
a simplicial and strictly simplicial ringed $\infty$-topoi 
respectively (Definition \ref{dfn:is:CSinj})
with $\shA_i$ a sheaf of commutative rings for each $i \in \iI$
($\iI=\Ner(\CS)$ or $\iI=\Ner(\CS^{\str})$).
Let $\ep: (\tT_{\bullet},\shA_{\bullet}) \to (\tS,\shB)$
be a functor of ringed $\infty$-topoi
such that $\ep_i: (\tT_{i},\shA_{i}) \to (\tS,\shB)$ is flat 
(Definition \ref{dfn:is:flat}) for each $i \in \iI$.

Let $\iB'_{\bullet}$ be the image of $\iB'$ under the functor 
$\ep^*: \iDa(\tS,\shB)^{\heartsuit} \to 
 \iDa(\tT_\bullet,\shA_\bullet)^{\heartsuit}$.
%Also for each $i \in \iI$ let $\iB'_i$ be the image of $\iB'_{\bullet}$ 
%under the functor
%$e_i: (\tT_i,\shA_i) \to (\tT_{\bullet},\shA_{\bullet})$
%of ringed $\infty$-topoi.

Now the argument in \cite[2.2.2 Lemma, 2.2.3 Theorem]{LO1} gives 

\begin{prp}\label{prp:is:223}
Assume the following conditions.
\begin{enumerate}[nosep]
\item[(S3)]
$(\tT_{\bullet},\shA_{\bullet})$ and $\iB'_{\bullet}$ satisfy 
the condition (S2) in Lemma \ref{lem:is:S2}.
\item[(S4)]
The (restricted) morphism $\ep^*: \iB' \to \iB'_{\bullet}$
is an equivalence.
\end{enumerate}
Then the homotopy category $\Ho \iB'_{\bullet}$ is a Serre subcategory of 
$\Ho \iDa(\tT_{\bullet},\shA_{\bullet})^{\heartsuit}$,
and $\ep^*$ induces an equivalence
\[
 \ep^*: \iDs{\shB}(\tS) \longto 
        \iDs{\shB_{\bullet}}(\tT_{\bullet})
\]
of stable $\infty$-categories.
The quasi-inverse is induced by the right adjoint $\ve_*$.
\end{prp}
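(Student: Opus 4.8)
The plan is to transcribe the argument of \cite[2.2.2 Lemma, 2.2.3 Theorem]{LO1} into the $\infty$-categorical language, using the resolutions furnished by Lemma \ref{lem:is:S2} in place of the $K$-injective resolutions of \cite[\S 3]{Sp}. First I would assemble the formal input: since each $\ep_i$ is flat, the underived inverse image is exact (Definition \ref{dfn:is:flat}), so the pullback $\ep^*\colon \iDa(\tS,\shB) \to \iDa(\tT_\bullet,\shA_\bullet)$ of Proposition \ref{prp:is:df} is $t$-exact, and it preserves all small colimits. Both $\infty$-categories being presentable (Lemma \ref{lem:is:srm}), the functor $\ep^*$ admits a right adjoint $\ve_*$, which is left $t$-exact. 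Being $t$-exact, $\ep^*$ restricts to a functor $\iDs{\iB'}(\tS,\shB) \to \iDs{\iB'_\bullet}(\tT_\bullet)$, and the assertion is that this restriction is an equivalence with quasi-inverse the restriction of $\ve_*$.

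Next I would show that the unit $\id \to \ve_*\ep^*$ is an equivalence on $\iDs{\iB'}(\tS,\shB)$ and that the counit $\ep^*\ve_* \to \id$ is an equivalence on $\iDs{\iB'_\bullet}(\tT_\bullet)$; the Serre-subcategory assertion will then follow. Fix $\shM \in \iDs{\iB'}(\tS,\shB)$. Since $\ep^*$ is $t$-exact, $\pi_j(\ep^*\shM) = \ep^*\pi_j\shM \in \iB'_\bullet$ for all $j$, so (S3) and Lemma \ref{lem:is:S2} give an equivalence $\ep^*\shM \simto \shI = \iclim_n \shI_n$ in $\iDa(\tT_\bullet,\shA_\bullet)$ with each $\shI_n \in \iDm(\tT_\bullet,\shA_\bullet)$ having injective homotopy sheaves and fitting in fiber sequences $K_n \to \shI_n \to \shI_{n-1}$ whose terms are again of this type. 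I would then compute $\ve_*\ep^*\shM \simeq \ve_*\shI$ and reduce the claim $\shM \simto \ve_*\ep^*\shM$ in stages: (i) commute $\ve_*$ past $\iclim_n$, which is where the local cohomological-dimension bound of (S1) enters; (ii) using the fiber sequences and the exactness of $\ve_*$ as a functor of stable $\infty$-categories, reduce to the bounded-below objects $\shI_n, K_n \in \iDm(\tT_\bullet,\shA_\bullet)$; (iii) by d\'evissage along the $t$-structure --- using that the right adjoint $\ve_*$ preserves limits, that $\ep^*$ is $t$-exact, and that the $t$-structures are right complete (Lemma \ref{lem:is:srm}) --- reduce to a single homotopy sheaf $\shF \in \iB'$; and (iv) for such an $\shF$, compute $\ve_*\ep^*\shF$ from Lemma \ref{lem:is:S2} and the totalization over $\CS^{\str}$ of the cosimplicial object $[n] \mapsto \ep_{n,*}\ep_n^*\shF$, obtaining $\pi_0\ve_*\ep^*\shF \simeq \shF$ by (S4) and the vanishing $\pi_q\ve_*\ep^*\shF = 0$ for $q > 0$. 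The symmetric d\'evissage applied to $\shN \in \iDs{\iB'_\bullet}(\tT_\bullet)$ and the counit $\ep^*\ve_*\shN \to \shN$ shows the counit is an equivalence, so $\ep^*$ and $\ve_*$ are mutually inverse equivalences between $\iDs{\iB'}(\tS,\shB)$ and $\iDs{\iB'_\bullet}(\tT_\bullet)$.

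With the unit known to be an equivalence on $\iB'$, the Serre-subcategory claim closes up: closure of $\iB'_\bullet$ under kernels and cokernels in $\Ho\iMod_{\shA_\bullet}(\tT_\bullet)$ is immediate from the exactness and full faithfulness of $\ep^*$, and given a short exact sequence $0 \to \ep^*A \to C \to \ep^*B \to 0$ with $A, B \in \iB'$, applying $\ve_*$ to the corresponding fiber sequence and using $\ve_*\ep^* \simeq \id$ on $\iB'$ identifies $C$ with $\ep^*$ of an extension of $B$ by $A$, which lies in $\iB'$ because $\Ho\iB'$ is a Serre subcategory.

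The step I expect to be the main obstacle is (iv), together with the convergence issues in (i) and (iii). The functor $\ve_*$ is assembled from the level-wise pushforwards $\ep_{n,*}$ as a limit over $\CS$ (resp.\ $\CS^{\str}$), so $\ve_*\ep^*\shF$ is computed by the totalization of $[n] \mapsto \ep_{n,*}\ep_n^*\shF$; one must show both that this totalization converges --- which is governed by the local bounds of (S1)--(S2) through Lemma \ref{lem:is:S2} --- and that it degenerates to $\shF$ placed in degree $0$, the cohomological-descent input that (S4) supplies at the level of the heart. Once the behaviour of this totalization is controlled, the remaining reductions are formal manipulations with the $t$-structure and the stable structure, following \cite[\S 2.2]{LO1} line by line.
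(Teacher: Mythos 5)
Your proposal is correct and follows exactly the route the paper takes: the paper's proof of this proposition is simply an invocation of the argument of \cite[2.2.2 Lemma, 2.2.3 Theorem]{LO1}, transposed via Lemma \ref{lem:is:S2} in place of the $K$-injective resolutions of \cite{Sp}, which is precisely what you transcribe (flatness giving $t$-exactness of $\ep^*$, the resolution $\shM \to \iclim_n \shI_n$, commuting $\ve_*$ past the limit using the uniform bound from (S1)--(S2), d\'evissage to the heart, and the descent input (S4) to identify the totalization of $[n]\mapsto \ep_{n,*}\ep_n^*\shF$ with $\shF$). Your closing remarks on the convergence of that totalization and on the Serre-subcategory closure under extensions match the corresponding steps of the cited argument.
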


%%%%%%%%%%%%%%%%%%%%%%%%%%%%%%%%%%%%%%%%%%%%%%%%%%%%%%%%%%%%%%%%%%%%%%%%%%%%%%%%
\subsubsection{Gluing lemma}

In this part we recall the result in \cite[2.3]{LO1}.

Let $\CS^{\str}_+$ be the category of possibly empty finite ordered sets
with injective order preserving maps.
Thus the objects of $\CS^{\str}_+$ are $[n] =\{0,1,\dots,n\}$ ($n \in \bbN$)
and $\emptyset$.
Hereafter we denote $[-1]:=\emptyset$.
We can regard $\CS^{\str}$ (Definition \ref{dfn:is:CSinj})
as a full subcategory of $\CS^{\str}_+$.

Let $\tT$ be an $\infty$-topos and $U^{\str}_{\bullet} \to \emptyset_{\tT}$ 
be a strictly simplicial hypercovering of 
the initial object $\emptyset_{\tT}$ of $\tT$ (Corollary \ref{cor:pr:final}).
We denote the localized $\infty$-topos $\oc{\tT}{U_n}$ on $U_n$
(Fact \ref{fct:pr:ovc}) by the same symbol $U_n$ for $n \in \bbN$.
We also denote $U_{-1} := \emptyset_\tT$.
Thus we have a strictly simplicial $\infty$-topos $U^{\str}_{\bullet}$
with an augmentation $\pi: U^{\str}_{\bullet} \to \tT$.

Let $\shA$ be a sheaf of commutative rings on $\tT$.
We denote the induced sheaf on $U^{\str}_{\bullet}$ by the same symbol $\shA$.
Then $\pi$ induces a functor $(U^{\str}_{\bullet},\shA) \to (\tT,\shA)$
of ringed $\infty$-topoi, which will be denoted by the same symbol $\pi$.

We now consider the cartesian and cocartesian fibration over $\CS^{\str}_+$
whose fiber is given by $\iMod_{\shA}(U_n)$.
Let $\iC_{\bullet}$ be a full sub-$\infty$-category of this fibration
such that $\Ho \iC_n$ is a Serre subcategory
of the abelian category $\Ho \iMod_{\shA}(U_n)$ for each $n$.

We impose on $\tT$, $U^{\str}_{\bullet}$ and $\shA$ the following conditions.
\begin{enumerate}[nosep, label=(G\arabic*)]
\item
For any $[n] \in \CS^{\str}_+$,
the $\infty$-topos $U_n$ is equivalent to $\iSh(\iX_n,\tau)$
with some $\infty$-site $(\iX_n,\tau)$
where for each $V \in \iX_n$ there exists $n_0 \in \bbZ$ and 
a covering sieve $\{V_j \to V\}_{j \in J}$ 
such that for any $\shM \in \iC_n$ we have $H^q(V_j,\shM)=0$
for any $q \ge n_0$.

\item
The functor 
$\iC_{-1} \to 
 \{\text{cartesian sections of $\iC_{\bullet}^{\str} \to \CS^{\str}$}\}$
is an equivalence,
where $\iC_{\bullet}^{\str} := \rst{\iC_{\bullet}}{\CS^{\str}}$ 
(Definition \ref{dfn:is:CSinj}).

\item
$\iDa(\tT,\shA)$ is compactly generated.
\end{enumerate}

\begin{fct}[{\cite[2.3.3 Theorem]{LO1}}]\label{fct:is:glue}
Let $\tT$, $U^{\str}_\bullet$ and $\shA$ be as above 
and assume the conditions (G1)--(G3).
Consider the cartesian and cocartesian fibration $\iD_{\bullet} \to \CS^{\str}$ 
whose fiber over $[n] \in \CS^{\str}$ is $\iDs{\iC_n}(U_n,\shA)$.
Let $[n] \mapsto \shK_n \in \iDs{\iC_n}(U_n,\shA)$ be a cartesian section of
this fibration $\iD_{\bullet} \to \CS^{\str}$ such that 
$\shExt^i_{\shA}(\shK_0,\shK_0)=0$ for any $i \in \bbZ_{<0}$.
Then $(\shK_n)_{n \in \bbN}$ determines an object
$\shK \in \iDs{\iC_{-1}}(\tT,\shA)$ such that the natural functor
$\iDs{\iC_{-1}}(\tT,\shA) \to 
 \{\text{cartesian sections of the fibration 
         $\iD_{\bullet} \to \Delta^{\str}$}\}$
recovers $(\shK_n)_{n \in \bbN}$.
Moreover $\shK$ is determined up to contractible ambiguity.
\end{fct}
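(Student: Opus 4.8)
The plan is to recast the argument of \cite[2.3]{LO1} in the $\infty$-categorical language, deducing the gluing statement from the descent equivalence of Proposition \ref{prp:is:223}.

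First I would organize the data. The augmentation $\pi: U^{\str}_{\bullet} \to \tT$ of the strictly simplicial hypercovering, together with the induced sheaves of rings $\shA$, presents $(U^{\str}_{\bullet},\shA)$ as a $\Ner(\CS^{\str})$-simplicial ringed $\infty$-topos over $(\tT,\shA)$ in the sense of \S \ref{sss:is:spl}; each $\pi_n: (U_n,\shA) \to (\tT,\shA)$ is flat (Definition \ref{dfn:is:flat}), because $\pi_n^*$ is the restriction functor attached to $U_n \in \tT$, which by Fact \ref{fct:is:biadj} has both adjoints and is hence left and right exact. I would then observe that the hypotheses (G1)--(G3) are arranged precisely so as to supply (S3) and (S4) of Proposition \ref{prp:is:223} for $\pi$: condition (G1) is condition (S2) of Lemma \ref{lem:is:S2} for each $(U_n,\shA)$, and so gives (S3); condition (G2) gives, at the level of hearts, the equivalence between $\iC_{-1}$ and the cartesian sections of $\iC^{\str}_{\bullet} \to \CS^{\str}$, which --- using that $\pi$ is a hypercovering and that (G3) makes $\iDa(\tT,\shA)$ compactly generated --- upgrades to the equivalence $\pi^*: \iB' \to \iB'_{\bullet}$ demanded by (S4).

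By Proposition \ref{prp:is:223} the functor $\pi^*: \iDs{\iC_{-1}}(\tT,\shA) \to \iDs{\iC^{\str}_\bullet}(U^{\str}_{\bullet},\shA)$ is then an equivalence of stable $\infty$-categories with quasi-inverse induced by $\pi_*$, the target being the subcategory of $\iDa(U^{\str}_{\bullet},\shA)$ (Notation \ref{ntn:is:sDa}) cut out by $\iC^{\str}_\bullet$. It remains to compare this target with the $\infty$-category of cartesian sections of the fibration $\iD_\bullet \to \CS^{\str}$ whose fibre over $[n]$ is $\iDs{\iC_n}(U_n,\shA)$. There is a tautological functor from the former to the latter --- evaluate a sheaf on $U^{\str}_{\bullet}$ stagewise and pass to the derived category of each $U_n$ --- and I would show that it is essentially surjective, with contractible fibres, onto the cartesian sections satisfying the self-extension vanishing of the statement, by two inputs. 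First, Lemma \ref{lem:is:S2} provides, for any object of $\iDs{\iC^{\str}_\bullet}(U^{\str}_{\bullet},\shA)$, a compatible family of $K$-injective resolutions, and conversely lets one lift a given cartesian section $[n] \mapsto \shK_n$ to an honest system of complexes. Second, the hypothesis $\shExt^i_{\shA}(\shK_0,\shK_0)=0$ for $i \in \bbZ_{<0}$ is exactly what annihilates the obstructions --- living in these negative self-extension groups --- to rigidifying that lift into a strictly cartesian simplicial object; this is the role the vanishing plays in the proof of \cite[2.3.3 Theorem]{LO1}. Transporting the resulting object back along $\pi_*$, and invoking the uniqueness in Lemma \ref{lem:is:spr} and Lemma \ref{lem:is:S2}, then produces $\shK \in \iDs{\iC_{-1}}(\tT,\shA)$, determined up to contractible ambiguity, whose image under the natural functor recovers $(\shK_n)_{n \in \bbN}$.

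The main obstacle will be this last comparison, between the $\infty$-categorical limit $\ilim_{[n]\in\CS^{\str}} \iDs{\iC_n}(U_n,\shA)$ of cartesian sections and the derived $\infty$-category $\iDs{\iC^{\str}_\bullet}(U^{\str}_{\bullet},\shA)$ of the simplicial topos. For unbounded derived categories such a comparison fails in general; it holds here only because (G1) bounds cohomological dimensions uniformly in the simplicial direction, so that the Postnikov towers involved converge. Matching the two descriptions coherently, and pinning down the precise step at which the self-extension vanishing hypothesis is consumed --- rather than merely transplanting the skeletal induction of \cite[2.3]{LO1} --- is the delicate part of the argument.
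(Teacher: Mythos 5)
Your reconstruction is sound, but note that the paper offers no proof of this statement to compare against: Fact \ref{fct:is:glue} is quoted from \cite[2.3.3 Theorem]{LO1}, and the text explicitly declines to repeat the argument, recording only the uniqueness ingredient as Lemma \ref{lem:is:234}. Your sketch is a faithful transcription of the Laszlo--Olsson proof into the paper's framework, and you correctly identify the three pieces the paper has prepared for exactly this purpose: the descent equivalence of Proposition \ref{prp:is:223}, the compatible $K$-injective resolutions of Lemma \ref{lem:is:S2}, and Lemma \ref{lem:is:234}, which is precisely where the hypothesis $\shExt^{i}_{\shA}(\shK_0,\shK_0)=0$ for $i<0$ enters, both for uniqueness and for gluing the local constructions. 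Two small corrections. First, (S4) of Proposition \ref{prp:is:223} is already a statement about hearts, so (G2) supplies it directly (after identifying cartesian objects of $\iMod_{\shA_{\bullet}}(U^{\str}_{\bullet})$ with cartesian sections of the fibration of hearts, as in Proposition \ref{prp:LE:cart}); no ``upgrade'' through (G3) is involved. The compact generation hypothesis (G3) is instead spent in the existence step, where \cite{LO1} produces the unbounded object (via representability and homotopy-limit arguments) before checking that it restricts to the $\shK_n$. Second, the tension you flag at the end is real: in this $\infty$-categorical rendering a ``cartesian section'' of $\iD_{\bullet} \to \CS^{\str}$ is a priori a fully coherent datum, whereas in \cite{LO1} it records only triangulated-level compatibilities, and it is for that weaker input that the $\shExt^{<0}$ vanishing and the cohomological bound (G1) are genuinely needed. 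The paper inherits this ambiguity from its source rather than resolving it, so your decision to prove the statement from the weaker input is the right reading.
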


We will not repeat the proof, but record one useful lemma.
It is used in \cite{LO1} to show the uniqueness of the object $\shK$
in the above Fact \ref{fct:is:glue}.

\begin{lem}[{\cite[2.3.4. Lemma]{LO1}}]\label{lem:is:234}
Let $\tT$ and $\shA$ be as above.
Also let $\shM$ and $\shN$ be objects in $\iDa(\tT,\shA)$ 
satisfying $\shExt_{\shA}^i(\shM,\shN) = 0$ for every $i < 0$.
For $U \in \tT$, we denote the $\infty$-topos $\oc{\tT}{U}$ 
by the same symbol $U$.
Then the correspondence
\[
 U \longmapsto \Map_{\iDa(U,\shA)}(\shM_U, \shL_U) 
\] 
for each $U \in \tT$ determines a unique object of $\iMod_{\shA}(\tT)$.
\end{lem}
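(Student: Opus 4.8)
The statement to prove is Lemma \ref{lem:is:234}: for $\shM,\shN \in \iDa(\tT,\shA)$ with $\shExt^i_{\shA}(\shM,\shN)=0$ for all $i<0$, the assignment $U \mapsto \Map_{\iDa(U,\shA)}(\shM_U,\shN_U)$ determines a unique object of $\iMod_{\shA}(\tT)$. (I assume ``$\shL_U$'' in the statement is a typo for ``$\shN_U$''.) The core point is that this correspondence is a \emph{sheaf of spaces with extra structure}, and the $\shExt$-vanishing forces it to be $0$-truncated, hence a genuine sheaf of sets (indeed of $\shA(U)$-modules). The plan is as follows. First I would make precise what the correspondence means functorially: for each $U \in \tT$, using the biadjunction $(j_!,j^*,j_*)$ of Fact \ref{fct:is:biadj} one restricts $\shM$ and $\shN$ along $\oc{\tT}{U} \to \tT$ to get $\shM_U := j^*\shM$, $\shN_U := j^*\shN$ in $\iDa(U,\shA)$, and the mapping space $\Map_{\iDa(U,\shA)}(\shM_U,\shN_U)$ is functorial in $U \in \tT^{\op}$ because $j^*$ is functorial and restriction of mapping spaces is natural; this gives a presheaf $\shH \in \iPSh(\tT)$ valued in $\iS$. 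One then observes it is valued in pointed spaces or even in the $\infty$-category of $\shA$-modules, since $\Map_{\iDa(U,\shA)}(\shM_U,\shN_U)$ carries the structure of an $\shA(U)$-module (it is an internal Hom evaluated on sections), compatibly with restriction.

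Second I would show $\shH$ is a sheaf, i.e.\ $\shH \in \iShv_{\iS}(\tT) \simeq \tT$ (Fact \ref{fct:pr:Shv(X)=X}). This is the key structural step. The mapping space in a stable $\infty$-category is computed by the internal Hom: $\Map_{\iDa(U,\shA)}(\shM_U,\shN_U) \simeq \Omega^{\infty}\Gamma(U,\shHom_{\shA}(\shM,\shN))$, where $\shHom_{\shA}$ is the internal Hom of Proposition \ref{prp:is:df}(2) (extended to the derived $\infty$-category) and $\Gamma(U,-)$ is the global sections over $\oc{\tT}{U}$, i.e.\ evaluation of the sheaf at $U$. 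Since $\shHom_{\shA}(\shM,\shN)$ is an object of $\iDa(\tT,\shA) \simeq \iMod^{\stab}_{\shA}(\tT)$ (Proposition \ref{prp:is:Da=Stab}), its underlying functor $U \mapsto \Omega^{\infty}$ of its sections is a sheaf of spaces on $\tT$ by definition of $\iShv$ (the evaluation respects limits). The $\shExt$-vanishing hypothesis says precisely that $\pi_i \shHom_{\shA}(\shM,\shN) = \shExt^i_{\shA}(\shM,\shN) = 0$ for $i<0$ (Notation \ref{ntn:is:shExt}), equivalently $\shHom_{\shA}(\shM,\shN) \in \iDa(\tT,\shA)_{\ge 0}$ is connective; but connectivity alone gives a connective spectrum-valued sheaf, not a discrete one. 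So more care is needed: the correct reading is that we want the $0$-th cohomology sheaf, or rather that $\Map$ lands in $0$-truncated spaces after the hypothesis is used to kill the negative $\shExt$'s which would otherwise contribute to $\pi_{>0}$ of the mapping \emph{space}. Concretely, $\pi_j \Map_{\iDa(U,\shA)}(\shM_U,\shN_U) = \Ext^{-j}_{\iDa(U,\shA)}(\shM_U,\shN_U)$ for $j \ge 0$, and one reduces these to $\pi_{-j}$ of the sections of $\shHom_{\shA}(\shM,\shN)$; the vanishing of $\shExt^i$ for $i<0$ ensures the relevant local-to-global contributions vanish so that for a suitable cover $\pi_j$ of the mapping space vanishes for $j>0$. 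Thus $\shH$ is a $0$-truncated sheaf of spaces, i.e.\ a sheaf of sets, and the $\shA$-module structure noted above upgrades it to an object of $\iMod_{\shA}(\tT)$.

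Third, uniqueness: an object of $\iMod_{\shA}(\tT)$ is determined by its values $U \mapsto \shM(U)$ together with restriction maps and the module structure, all of which are prescribed by the correspondence; more formally, $\iMod_{\shA}(\tT)$ is a full sub-$\infty$-category of $\iShv_{\iAb}(\tT)$ and a sheaf is determined up to contractible ambiguity by the presheaf underlying it, so the lemma's object is unique.

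\textbf{Main obstacle.} The delicate point is the second step: proving $0$-truncatedness. Connectivity of $\shHom_{\shA}(\shM,\shN)$ is automatic but says nothing about coconnectivity; one genuinely needs the hypothesis to organize a spectral-sequence / $t$-structure argument showing that on a suitable hypercover the higher homotopy of the \emph{mapping space} (as opposed to the mapping \emph{spectrum}) vanishes. This is exactly the kind of computation carried out in \cite[2.3.4 Lemma]{LO1} in the ordinary-topos setting, and the work here is to see that the same $t$-structure bookkeeping (using Lemma \ref{lem:is:srm} and the compatibility of the $t$-structure with the equivalence of Proposition \ref{prp:is:Da=Stab}) goes through verbatim in the $\infty$-categorical framework; the $\infty$-categorical side is in fact cleaner, since ``mapping space'' and ``internal Hom'' are native notions rather than derived-category gymnastics.
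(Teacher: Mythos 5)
Your overall strategy is the right one, and it is essentially the argument of \cite[2.3.4. Lemma]{LO1} that the paper simply cites without reproducing: identify $\Map_{\iDa(U,\shA)}(\shM_U,\shN_U)$ with $\Omega^{\infty}$ of the sections over $U$ of the internal Hom $\shHom_{\shA}(\shM,\shN)\in\iDa(\tT,\shA)$, use the $\shExt$-vanishing to show these spaces are discrete, and conclude that the presheaf is the sheaf of sections of $\shExt^0_{\shA}(\shM,\shN)=\pi_0\shHom_{\shA}(\shM,\shN)$, viewed in $\iDa(\tT,\shA)^{\heartsuit}\simeq\iMod_{\shA}(\tT)$ (Lemma \ref{lem:is:srm}); uniqueness is then automatic. (You are also right that $\shL_U$ in the statement is a typo for $\shN_U$.)

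The one thing to repair is the $t$-structure bookkeeping in your second step, which sends you on an unnecessary detour. By Notation \ref{ntn:is:shExt} together with Definition \ref{dfn:stb:heart} one has $\shExt^i_{\shA}(\shM,\shN)=\pi_0\bigl(\shHom_{\shA}(\shM,\shN)[i]\bigr)=\pi_{-i}\shHom_{\shA}(\shM,\shN)$, so the hypothesis $\shExt^i=0$ for $i<0$ says $\pi_j\shHom_{\shA}(\shM,\shN)=0$ for $j>0$: the internal Hom is \emph{coconnective}, not connective as you wrote. This is exactly the direction you need. Since $\Gamma(U,-)$ preserves limits, the descent spectral sequence $H^p\bigl(U,\pi_{p+j}K\bigr)\Rightarrow\pi_j\Gamma(U,K)$ (equivalently, the $t$-structure argument via the $K$-injective resolutions of Lemma \ref{lem:is:spr}) shows that for $j>0$ every contributing term involves $\pi_{p+j}\shHom_{\shA}(\shM,\shN)$ with $p+j>0$, hence vanishes, while in degree zero the only surviving term is $E_2^{0,0}$, giving $\pi_0\Gamma(U,\shHom_{\shA}(\shM,\shN))=\Gamma\bigl(U,\pi_0\shHom_{\shA}(\shM,\shN)\bigr)$. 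So the mapping space is $0$-truncated for \emph{every} $U$ --- no ``suitable cover'' is needed --- and the presheaf is literally the presheaf of sections of the sheaf $\pi_0\shHom_{\shA}(\shM,\shN)\in\iMod_{\shA}(\tT)$. Once this sign is fixed, the ``main obstacle'' you describe dissolves; the remaining points of your write-up (functoriality of restriction, the $\shA$-module structure, uniqueness up to contractible ambiguity) are fine as stated.
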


%%%%%%%%%%%%%%%%%%%%%%%%%%%%%%%%%%%%%%%%%%%%%%%%%%%%%%%%%%%%%%%%%%%%%%%%%%%%%%%%
%%%%%%%%%%%%%%%%%%%%%%%%%%%%%%%%%%%%%%%%%%%%%%%%%%%%%%%%%%%%%%%%%%%%%%%%%%%%%%%%
%%%%%%%%%%%%%%%%%%%%%%%%%%%%%%%%%%%%%%%%%%%%%%%%%%%%%%%%%%%%%%%%%%%%%%%%%%%%%%%%
\section{Derived algebraic spaces and \'etale sheaves}
\label{s:dAS}

In this section we introduce \emph{derived algebraic spaces},
which are derived analogue of algebraic spaces, 
and \'etale sheaves on them.
These  will be used to define the lisse-\'etale sheaves 
on derived stacks in \S \ref{s:LE}.

Let us explain a little bit more 
the motivation for introducing derived algebraic spaces.
Our definition of the lisse-\'etale $\infty$-site for a derived stack
will be a direct  analogue of the lisse-\'etale site for an algebraic stack
(see \S \ref{ss:Cl:LE} for a brief recollection).
In the non-derived case, one can describe every object on the lisse-\'etale site
of an algebraic stack $X$ in terms of an object on the big \'etale site of 
the simplicial algebraic space $X_{\bullet}$ making out of 
a smooth \'etale covering $U \to X$, as explained in \cite{O, O:book}.
Thus it will be useful for the study of the lisse-\'etale $\infty$-site 
{}to introduce a derived analogue of algebraic spaces.

In this section we work over a fixed base commutative ring $k$
unless otherwise stated.
%All the objects such as derived stacks, affine derived schemes and so on
%are considered over $k$.
We suppress the notation of $k$ unless confusion would arise.
For example, $\idSt$ means $\idSt_k$.

%%%%%%%%%%%%%%%%%%%%%%%%%%%%%%%%%%%%%%%%%%%%%%%%%%%%%%%%%%%%%%%%%%%%%%%%%%%%%%%%
%%%%%%%%%%%%%%%%%%%%%%%%%%%%%%%%%%%%%%%%%%%%%%%%%%%%%%%%%%%%%%%%%%%%%%%%%%%%%%%%
\subsection{Derived algebraic spaces}
\label{ss:dAS:dAS}

%We will introduce \emph{derived algebraic spaces},
%which will be the primary notion in the following presentation.
%See the beginning of this section for the motivation.

%%%%%%%%%%%%%%%%%%%%%%%%%%%%%%%%%%%%%%%%%%%%%%%%%%%%%%%%%%%%%%%%%%%%%%%%%%%%%%%%
\subsubsection{Geometric derived stacks as quotients}

We begin with the citation from \cite[\S 1.3.4]{TVe2} 
on a characterization of a geometric 
derived stack as a quotient of Segal groupoid.
Our presentation is a translation of the model-theoretic statements 
in loc.\ cit.\ to the $\infty$-theoretic language.

Recall the category $\CS$ of combinatorial simplices 
(Definition \ref{dfn:ic:CS}), and let $X_\bullet: \CS^{\op} \to \iC$ be 
a simplicial object in an $\infty$-category $\iC$.
For $n \in \bbZ_{>0}$ and $i=0,1,\ldots,n-1$, 
we define a morphism 
\[
 \sigma_i: X_n \longto X_1
\]
in $\iC$ to be the pullback
by the order-preserving map $[1] \longto [n]$ given by
$0 \mapsto i$ and $1 \mapsto i+1$.
We denote the face map (\S \ref{ss:ic:ss}) of $X_\bullet$ by 
\[
 d_j: X_n \longto X_{n-1}
\]
for $n \in \bbN$ and $j=0,\ldots,n-1$.
It is defined to be the pullback by the order-preserving map $[n-1] \to[n]$ with
$i \mapsto i$ for $i<j$ and $i \mapsto i+1$ for $i \ge j$.

\begin{dfn}\label{dfn:dAS:Segal}
A \emph{Segal groupoid object} in an $\infty$-category $\iC$ is a simplicial object
$X_\bullet: \CS^{\op} \to \iC$ satisfying the following two conditions.
\begin{itemize}[nosep]
\item 
For any $n \in \bbZ_{>0}$, the following morphism
is an equivalence in $\iC$.
\[
 \prod_{i=0}^{n-1} \sigma_i: X_n \longto
 \underbrace{X_1 \times_{d_0,X_0,d_0} X_1 \times_{d_0,X_0,d_0} \cdots 
                 \times_{d_0,X_0,d_0} X_1}_{\text{$n$-times}}.
\]
\item
The morphism $d_0 \times d_1: X_2 \to X_1 \times_{d_0,X_0,d_0} X_1$ 
%induced by $d_1 \times d_2$ in the diagram
%\[
% \xymatrix{ 
%  X_2 \ar[rdd]_{d_2} \ar[rrd]^{d_1} \ar@{..>}[rd] \\ 
%  & X_1 \times_{X_0} X_1 \ar[r] \ar[d] & X_1 \ar[d]^{d_0} \\ 
%  & X_1 \ar[r]_{d_0} & X_0}
%\]
is an equivalence in $\iC$.
\end{itemize}
\end{dfn}

%In the above definition the symbol $X_1 \times_{X_0} X_1$ denotes 
%the fiber product $X_1 \times_{d_0,X_0,d_0} X_1$ 
%with respect to the face map $d_0: X_1 \to X_0$ in $\iC$.
%See Definition \ref{dfn:ic:pb-po} for the fiber product in an $\infty$-category.

\begin{dfn*}
For $n \in \bbZ_{\ge -1}$, a Segal groupoid object $\stX_\bullet$ in $\idSt_k$ 
is \emph{$n$-smooth} if it satisfies the following two conditions.
\begin{itemize}[nosep]
\item 
The derived stacks $\stX_0$ and $\stX_1$ are (small) coproduct of 
%disjoint unions of
$n$-geometric derived stacks.
\item
The morphism $d_0: \stX_1 \to \stX_0$ of derived stacks is $n$-smooth 
in the sense of Definition \ref{dfn:dSt:geom}.
\end{itemize}
\end{dfn*}

\begin{fct}[{\cite[Proposition 1.3.4.2]{TVe2}}]\label{fct:dAS:Segal}
For a derived stack $\stX$ and $n \in \bbN$, 
the following two conditions are equivalent.
\begin{enumerate}[nosep, label=(\roman*)]
\item 
$\stX$ is $n$-geometric.
\item
There exists an $(n-1)$-smooth Segal groupoid object $\stX_\bullet$ in $\idSt_k$
such that $\stX_0$ is a coproduct %disjoint union 
of affine derived schemes
and $\stX \simeq \iclim_{[m] \in \CS} \stX_m$ in $\idSt_k$.
\end{enumerate}
\end{fct}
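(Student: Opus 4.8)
\textbf{Proof proposal for Fact \ref{fct:dAS:Segal}.}

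The plan is to reduce the statement to the corresponding model-categorical result of To\"en--Vezzosi, \cite[Proposition 1.3.4.2]{TVe2}, via the dictionary between the model category $D^-\mathrm{St}(k)$ and the $\infty$-category $\idSt_k$ recorded in Definition \ref{dfn:dSt:idSt} and the surrounding discussion; but since the excerpt is aiming at a reasonably self-contained $\infty$-categorical account, I would also indicate how to argue intrinsically. First I would fix the direction (i) $\Rightarrow$ (ii). Given an $n$-geometric derived stack $\stX$, choose an $n$-atlas $\{U_i \to \stX\}_{i \in I}$ with each $U_i$ a representable (hence affine) derived stack, and set $U := \coprod_{i \in I} U_i$, so that $p \colon U \to \stX$ is an $(n-1)$-smooth epimorphism of derived stacks (Definition \ref{dfn:dSt:geom}). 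The candidate Segal groupoid is the \v{C}ech nerve of $p$, i.e.\ the simplicial object $\stX_\bullet$ with $\stX_m = U \times_{\stX} U \times_{\stX} \cdots \times_{\stX} U$ ($(m{+}1)$ factors), which exists because $\idSt_k$ is an $\infty$-topos (Definition \ref{dfn:dSt:idSt}, Fact \ref{fct:idSt:hyper}) and therefore admits all small limits (Corollary \ref{cor:pr:lcl}). In an $\infty$-topos the \v{C}ech nerve of any morphism is a groupoid object, and the groupoid objects arising as \v{C}ech nerves of effective epimorphisms are exactly the effective ones; since $p$ is an epimorphism of derived stacks, $\stX \simeq \iclim_{[m]\in\CS}\stX_m$ by effectivity of groupoid objects in an $\infty$-topos (Fact \ref{fct:pr:Giraud}(d)). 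It then remains to check that this groupoid object is a Segal groupoid in the sense of Definition \ref{dfn:dAS:Segal} (which for a groupoid object in an $\infty$-topos is automatic from the Segal condition defining groupoid objects), that $\stX_0 = U$ is a coproduct of affine derived schemes (clear by construction), and that $\stX_1 \to \stX_0$ is $(n-1)$-smooth: this last point follows from Fact \ref{fct:dSt:geom}, since $\stX_1 = U \times_\stX U \to U$ is a pullback of the $(n-1)$-smooth morphism $U \to \stX$ and $(n-1)$-smoothness is stable under pullback. One must also observe that each $\stX_m$, being an iterated fiber product of affine derived schemes over the $(n-1)$-geometric-bounded stack $\stX$, is itself a coproduct of $(n-1)$-geometric derived stacks (using that the diagonal of $\stX$ is $(n-1)$-representable, part of the definition of $n$-geometric, together with Fact \ref{fct:dSt:geom}).

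For the converse (ii) $\Rightarrow$ (i), suppose $\stX_\bullet$ is an $(n-1)$-smooth Segal groupoid object with $\stX_0$ a coproduct of affine derived schemes and $\stX \simeq \iclim_{[m]}\stX_m$. The map $\stX_0 \to \stX$ is the canonical map from the zeroth term of a groupoid object to its colimit; in an $\infty$-topos this is always an effective epimorphism, so $\{\stX_0 \to \stX\}$ (or rather the family of inclusions of the affine pieces of $\stX_0$) is an epimorphism of derived stacks. By hypothesis $d_0 \colon \stX_1 \to \stX_0$ is $(n-1)$-smooth, and since $\stX_1 \simeq \stX_0 \times_\stX \stX_0$ (the defining Segal equivalence for $m=1$ identifies $\stX_1$ with the relevant fiber product, and in a groupoid object over its colimit one has $\stX_1 \simeq \stX_0\times_\stX\stX_0$), the base change of $\stX_0 \to \stX$ along itself is $(n-1)$-smooth; by descent of $(n-1)$-smoothness along epimorphisms of derived stacks (Fact \ref{fct:morph:Q}(2)) it follows that $\stX_0 \to \stX$ is $(n-1)$-smooth. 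Hence $\{\stX_0 \to \stX\}$ is an $n$-atlas, giving condition (b) in the definition of $n$-geometric (Definition \ref{dfn:dSt:geom}); and by Remark \ref{rmk:dSt:geom}(2) condition (a) — that the diagonal $\stX\to\stX\times\stX$ is $(n-1)$-representable — follows automatically from the existence of the atlas. Therefore $\stX$ is $n$-geometric.

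The main obstacle I anticipate is the careful bookkeeping connecting the two characterizations of ``groupoid object'': on one side the Segal-type conditions of Definition \ref{dfn:dAS:Segal}, phrased in terms of the maps $\sigma_i$ and $d_0\times d_1$, and on the other side Lurie's definition of a groupoid object in an $\infty$-topos via the $\infty$-categorical Segal/Kan conditions, together with the identification $\stX_1\simeq\stX_0\times_\stX\stX_0$ for \v{C}ech nerves. Verifying that an $(n-1)$-smooth Segal groupoid in $\idSt_k$ whose $\stX_0$ is a coproduct of affines is in fact the \v{C}ech nerve of $\stX_0\to\iclim\stX_\bullet$ — i.e.\ that the Segal groupoid is automatically \emph{effective} — is exactly where the $\infty$-topos structure of $\idSt_k$ (Fact \ref{fct:pr:Giraud}) does the essential work, and I would isolate this as a preliminary lemma: every groupoid object in an $\infty$-topos is effective, and effective groupoid objects correspond bijectively (up to equivalence) to effective epimorphisms. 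Modulo that lemma, all remaining steps are routine applications of the stability properties of $n$-smooth and $n$-representable morphisms collected in Fact \ref{fct:dSt:geom} and Fact \ref{fct:morph:Q}, plus the observation (Remark \ref{rmk:dSt:geom}(2)) that the diagonal condition is redundant once an atlas is in hand. Alternatively, if one is content to quote \cite{TVe2}, the whole statement is simply the transport of \cite[Proposition 1.3.4.2]{TVe2} across the equivalence between the homotopy category of $D^-\mathrm{St}(k)$ and $\Ho\idSt_k$, noting that $n$-geometricity, $n$-smoothness and the Segal-groupoid conditions are all preserved by this equivalence since they are defined in terms of colimits, pullbacks, and the classes of morphisms from Definition \ref{dfn:dSt:dAff:mor}, all of which match up under the dictionary of \S\ref{s:dSt}.
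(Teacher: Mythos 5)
The paper does not prove this statement: it is recorded as a \emph{Fact} cited from \cite[Proposition 1.3.4.2]{TVe2}, and the only in-text content is Remark \ref{rmk:dAS:Segal}, which sketches the direction (i) $\Rightarrow$ (ii) by forming the \v{C}ech nerve of the atlas map $\coprod_i V_i \to \stX$ --- exactly the construction you give. Your treatment of that direction, and your observation that one may simply transport \cite[Proposition 1.3.4.2]{TVe2} across the model-category/$\infty$-category dictionary of \S\ref{s:dSt}, are both consistent with what the paper intends. The one soft spot is in your converse (ii) $\Rightarrow$ (i): the appeal to Fact \ref{fct:morph:Q}(2) to descend $(n-1)$-smoothness of $\stX_0 \to \stX$ from its self-base-change $\stX_1 \to \stX_0$ presupposes that $\stX$ is already $n$-geometric and that $\{\stX_0 \to \stX\}$ is already an atlas, and your appeal to Remark \ref{rmk:dSt:geom}(2) for the redundancy of the diagonal condition is itself a citation into \cite[\S 1.3.4]{TVe2}, i.e.\ into the very proposition being proved. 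In To\"en--Vezzosi the converse is the substantive half and is handled by an induction on $n$ in which one first shows $\stX_0 \to \stX$ is $(n-1)$-representable (using that $\stX_1 = \stX_0 \times_{\stX} \stX_0$ is a coproduct of $(n-1)$-geometric stacks and invoking the inductive hypothesis for the diagonal) before checking smoothness. So your sketch is fine as a reduction to \cite{TVe2}, but as a self-contained argument it would need that induction spelled out rather than the two circular citations.
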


\begin{ntn*}
If one of the conditions in Fact \ref{fct:dAS:Segal} is satisfied, 
then we say \emph{$\stX$ is the quotient stack of 
the $(n-1)$-smooth Segal groupoid $\stX_{\bullet}$}.
\end{ntn*}

Note that the expression $\iclim_{[n] \in \CS} \stX_n$ in (ii) makes sense,
for $\idSt_k$ is an $\infty$-topos (Fact \ref{fct:idSt:hyper}) 
so that it admits small limits and colimits (Corollary \ref{cor:pr:lcl}).

\begin{rmk}\label{rmk:dAS:Segal}
Let us explain the indication (i) $\Longrightarrow$ (ii), i.e., 
a construction of a Segal groupoid object $X_\bullet$ from 
a given $n$-geometric derived stack $\stX$.
Let $\{V_i\}_{i \in I}$ be an $n$-atlas of $\stX$.
Then we put $\stX_0 := \coprod_{i \in I} V_i$, and 
let $p: \stX_0 \to \stX$ be the natural effective epimorphism of derived stacks.
For $n \ge 1$, we set 
\[
 \stX_n := \underbrace{\stX_0 \times_{p, \stX, p} \stX_0 \times_{p, \stX, p} 
                 \cdots \times_{p, \stX, p} \stX_0}_{\text{$n$-times}}.
\]
%Using that $p$ is a fibration, 
%one can check the conditions in Definition \ref{dfn:dAS:Segal}.
\end{rmk}

%\begin{dfn}\label{dfn:LE:sm-cov}
%We call $\{V_i\}_{i \in I}$ or $\stX_0 := \coprod_{i \in I}V_i$ 
%a \emph{smooth covering} of $\stX$.
%\end{dfn}

%%%%%%%%%%%%%%%%%%%%%%%%%%%%%%%%%%%%%%%%%%%%%%%%%%%%%%%%%%%%%%%%%%%%%%%%%%%%%%%%
%%%%%%%%%%%%%%%%%%%%%%%%%%%%%%%%%%%%%%%%%%%%%%%%%%%%%%%%%%%%%%%%%%%%%%%%%%%%%%%%
\subsubsection{Derived algebraic spaces}
\label{sss:dAS:dAS}

Recall Fact \ref{fct:dSt:mAS=AS} which characterizes
schemes and algebraic spaces among higher Artin stacks.
Considering its simple analogue, we introduce

\begin{dfn}\label{dfn:dAS:dAS}
\begin{enumerate}[nosep]
\item
A \emph{derived scheme} $X$ (over $k$) is derived stack having an $n$-atlas 
$\{U_i\}_{i \in I}$ with some $n \in \bbZ_{\ge -1}$ such that each morphism 
$U_i \to X$ is a monomorphism in $\idSt$ (Definition \ref{dfn:dSt:epi-mono}).
We denote by $\idSch \subset \idSt$ the sub-$\infty$-category 
spanned by derived schemes.

\item 
A \emph{derived algebraic space} $\stU$ (over $k$) 
is a derived stack satisfying the following conditions.
\begin{itemize}
%\item
%It is $0$-truncated.
\item 
There exists an $n$-atlas $\{V_i\}_{i \in I}$ of $\stU$ 
for some $n \in \bbZ_{\ge -1}$
such that each morphism $V_i \to \stU$ of derived stacks is \'etale.
\item
The diagonal morphism $\stU \to \stU \times \stU$ 
is a monomorphism in $\idSt$.
\end{itemize}
%In order to emphasize the dependence on $n$,
%we call the above $\stU$ an $n$-geometric derived algebraic space.
We denote by $\idAS \subset \idSt$ the sub-$\infty$-category 
spanned by derived algebraic spaces.
\end{enumerate}
\end{dfn}

%An affine derived scheme is obviously a derived algebraic space,
%and each derived algebraic space $\stU$ has a morphism
%$\stU \to \dSpec k$ in $\idAS$,
%which will be called the \emph{structure morphism} of $\stU$.
%Here is some basic properties of derived algebraic spaces.
We have the sequence $\idAff \subset \idSch \subset \idAS \subset \idSt$
of $\infty$-categories.
Moreover we have an analogous result to Fact \ref{fct:dSt:ase}.

\begin{prp*}
A derived algebraic space is $1$-geometric.
In particular, a derived scheme is $1$-geometric.
\end{prp*}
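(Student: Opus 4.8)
The plan is to present $\stU$ as the quotient of a $0$-smooth Segal groupoid object whose stack of objects is a coproduct of affine derived schemes, and then to apply the implication (ii)$\Rightarrow$(i) of Fact~\ref{fct:dAS:Segal} with $n=1$. Concretely, let $\{V_i\}_{i\in I}$ be the given \'etale atlas of $\stU$ by affine derived schemes, put $\stX_0:=\coprod_i V_i$, and let $\stX_\bullet$ be the simplicial object with $\stX_m:=\stX_0\times_\stU\cdots\times_\stU\stX_0$ ($m+1$ factors), i.e.\ the groupoid object associated to the epimorphism $p\colon\stX_0\to\stU$ (cf.\ Remark~\ref{rmk:dAS:Segal}). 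This is a Segal groupoid object in $\idSt_k$, and since $p$ is an effective epimorphism and $\idSt_k$ is an $\infty$-topos (Fact~\ref{fct:idSt:hyper}) one has $\stU\simeq\iclim_{[m]\in\CS}\stX_m$. As $\stX_1=\coprod_{i,j}V_i\times_\stU V_j$ and $d_0\colon\stX_1\to\stX_0$ is the disjoint union of the \'etale projections $V_i\times_\stU V_j\to V_j$, it will suffice to prove that $\stX_0$ and $\stX_1$ are coproducts of $0$-geometric derived stacks and that $d_0$ is $0$-smooth.

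The one genuinely new ingredient, and the only place where the hypothesis that $\Delta_\stU$ be a monomorphism is used, is the following lemma: \emph{if $\stY\to\dSpec B$ is a monomorphism of derived stacks whose source is geometric} --- for instance, \'etale over an affine derived scheme --- \emph{then $\stY$ is $0$-geometric}. To prove it I would factor the absolute diagonal as $\Delta_\stY\colon\stY\simto\stY\times_{\dSpec B}\stY\longto\stY\times_k\stY$, where the first map is an equivalence because $\stY\to\dSpec B$ is a monomorphism (Definition~\ref{dfn:dSt:epi-mono}) and the second is the base change of the diagonal $\dSpec B\to\dSpec(B\otimes_k B)$ along $\stY\times_k\stY\to\dSpec B\times_k\dSpec B$. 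That diagonal of an affine derived scheme is a closed immersion (Definition~\ref{dfn:dSt:open}), hence $(-1)$-representable, and $(-1)$-representable morphisms are stable under base change, composition and equivalence (Fact~\ref{fct:dSt:geom}); so $\Delta_\stY$ is $(-1)$-representable, i.e.\ $\stY$ has an affine diagonal, and since $\stY$ is geometric Lemma~\ref{lem:dSt:0-geom} gives that $\stY$ is $0$-geometric.

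Granting the lemma, the remaining verifications should be routine. Each $V_i\times_\stU V_j$ is \'etale over the affine $V_j$, hence geometric (Definition~\ref{dfn:dSt:mor-Q}), while $V_i\times_\stU V_j\to V_i\times_k V_j$ is a monomorphism into an affine, being a base change of $\Delta_\stU$; so $V_i\times_\stU V_j$ is $0$-geometric, and together with $\stX_0=\coprod_i V_i$ (a coproduct of $(-1)$-geometric, hence $0$-geometric, derived stacks, Fact~\ref{fct:dSt:geom}) this shows that $\stX_0$ and $\stX_1$ are coproducts of $0$-geometric derived stacks. For the $0$-smoothness of $d_0$, given an affine $W\to\stX_0$ one has $\stX_1\times_{\stX_0}W\simeq\coprod_i V_i\times_\stU W$, and each $V_i\times_\stU W$ is again \'etale over the affine $W$ and monomorphic over the affine $V_i\times_k W$, hence $0$-geometric by the lemma; a $0$-atlas $\{Q_\gamma\}$ of it by affine derived schemes then has every composite $Q_\gamma\to V_i\times_\stU W\to W$ smooth (a smooth morphism followed by an \'etale one). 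Thus $\stX_\bullet$ is a $0$-smooth Segal groupoid with $\stX_0$ a coproduct of affine derived schemes and $\stU\simeq\iclim_{[m]\in\CS}\stX_m$, and Fact~\ref{fct:dAS:Segal} yields that $\stU$ is $1$-geometric. The final assertion follows since a derived scheme is a derived algebraic space: its atlas morphisms, being monomorphisms that are part of an atlas and hence smooth, are \'etale, and its diagonal is then a monomorphism by descent along $\stX_0\times\stX_0\to\stU\times\stU$.

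I expect the main obstacle to be the lemma of the second paragraph: although its proof is short, it carries the real content, and around it one must be careful with the bookkeeping that keeps Lemma~\ref{lem:dSt:0-geom} applicable --- in particular, knowing a priori that each fiber product appearing above is geometric, for which one must unwind that an \'etale morphism of derived stacks is $n$-representable for some $n$ (Definition~\ref{dfn:dSt:mor-Q}). Everything else is a formal consequence of the stability properties in Fact~\ref{fct:dSt:geom} and of the Segal-groupoid criterion Fact~\ref{fct:dAS:Segal}.
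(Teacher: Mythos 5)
Your proposal is correct and follows essentially the same route as the paper: present $\stU$ as the quotient of the Segal groupoid of the \'etale atlas, use the monomorphism hypothesis on $\Delta_\stU$ to see that $\stX_1\to\stX_0\times\stX_0$ is a monomorphism into a (coproduct of) affine(s) and hence that $\stX_1$ is $0$-geometric, and conclude via the Segal-groupoid criterion that $\stU$ is $1$-geometric. Your lemma in the second paragraph is just a more explicit justification of the step the paper states tersely as ``from $U_1\subset U_0\times U_0$ we see that $U_1$ is $0$-geometric.''
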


\begin{proof}
Let $\stU$ be a derived algebraic space 
and take an $n$-atlas $\{V_i\}_{i \in I}$.
By Remark \ref{rmk:dSt:geom} (2), we see that $\stU$ is $n$-geometric.
%The case $n \le 1$ is trivial by Fact \ref{fct:dSt:St:geom}.
%So we may assume $n \ge 2$.
By Fact \ref{fct:dAS:Segal} and Remark \ref{rmk:dAS:Segal},
we may assume that $\stU$ is a quotient stack of 
$(n-1)$-smooth Segal groupoid $U_{\bullet}$
with $U_0:= \coprod_{i \in I} V_i$ and $U_1 := U_0 \times_{\stU} U_0$. 
Then the morphism $d_0 \times d_1: U_1 \to U_0 \times U_0$ is a monomorphism,
where we denoted by $d_0,d_1: U_1 \to U_0$ 
the structure morphisms in $U_{\bullet}$.
In fact, the diagram
\[
 \xymatrix{
  U_1 \ar[r]^(0.4){d_0 \times d_1} \ar[d] & U_0 \times U_0 \ar[d] \\
  \stU \ar[r]_(0.4){\Delta_{\stU}} & \stU \times \stU}
\]
is a cartesian square in the $\infty$-category $\idSt$ 
(Definition \ref{dfn:ic:pb-po}).
Since the diagonal morphism $\Delta_{\stU}$ is assumed to be a monomorphism,
the morphism $d_0 \times d_1$ is also a monomorphism as claimed above.
We can also check that $\{V_i\}_{i \in I}$ is an $n$-atlas of $U_1$ 
with each $V_i \to U_1$ \'etale.
Thus $U_1$ is a derived algebraic stack.
Now note that $U_1$ is an $(n-1)$-geometric derived stack 
since $U_{\bullet}$ is an $(n-1)$-smooth Segal groupoid.
Thus $U_1$ is an $(n-1)$-geometric derived algebraic space,
%so it is $1$-geometric by the induction hypothesis.
A similar argument shows that 
$U_0$ is also an $(n-1)$-geometric derived algebraic spaces.
Moreover, as $U_0$ being a coproduct of affine derived schemes,
it is $0$-geometric by Lemma \ref{lem:dSt:0-geom}.
Then from $U_1 \subset U_0 \times U_0$, we see that $U_1$ is also $0$-geometric.
Thus $U_\bullet$ is a $0$-smooth Segal groupoid,
so that the quotient stack $\stU$ of $U_\bullet$ is $1$-geometric.
%
%and $\stU$ is the coequalizer (Definition \ref{dfn:ic:coeq}) 
%of the natural morphisms $U_1 \rightrightarrows U_0$.
\end{proof}

Thus the following definition makes sense.

\begin{dfn*}
A morphism $f: \stU \to \stV$ in $\idAS$ is \emph{smooth}
if it is $1$-smooth in the sense of Definition \ref{dfn:dSt:geom}.
\end{dfn*}

\begin{rmk}\label{rmk:dAS:diag}
Recall the fully faithful functor $\Dex: \iSt \to \idSt$ 
(Definition \ref{dfn:dSt:trunc}).
The image of an algebraic space under $\Dex$ is obviously a derived algebraic space,
and restricting $\Dex$ to $\iAlgSp \subset \iSt$ we have a fully faithful functor 
\[
 \Dex: \iAlgSp \longto \idAS.
\]
%
%Let us explain the definition of \emph{derived schemes} \cite[\S 2.2]{T14}, 
%although we will not use them in the main text.
%A derived stack $\stX$ is called a \emph{derived scheme}
%if there exists an $n$-atlas $\{U_i\}_{i \in I}$ of $\stX$
%for some $n \in \bbZ_{\ge-1}$
%such that each morphism $U_i \to \stX$ of derived stacks is a monomorphism. 
%
Similarly, the image of a scheme under $\Dex$ is a derived scheme.
Thus %, denoting by $\idSch \subset \idSt$ 
%the sub-$\infty$-category spanned by derived schemes,
we have the following diagram (compare with Remark \ref{rmk:dSt:mAS=AS}).
\begin{align*}
 \xymatrix{
    \iAff        \ar@{^{(}->}[r] \ar@{^{(}->}[d]
  & \iSch        \ar@{^{(}->}[r] \ar@{^{(}->}[d]
  & \iAlgSp      \ar@{^{(}->}[r] \ar@{^{(}->}[d]
 %& \iAlgSt      \ar@{^{(}->}[r] \ar@{^{(}->}[d] 
  & \iSt_{\geom} \ar@{^{(}->}[d]^{\Dex} \\
    \idAff       \ar@{^{(}->}[r] 
  & \idSch       \ar@{^{(}->}[r] 
  & \idAS        \ar@{^{(}->}[r] 
 %& \idSt        \ar@{=}[r]  
  & \idSt_{\geom}
 }
\end{align*}
\end{rmk}

\subsection{\'Etale $\infty$-site on a derived algebraic space}
\label{ss:dAS:eis}

%%%%%%%%%%%%%%%%%%%%%%%%%%%%%%%%%%%%%%%%%%%%%%%%%%%%%%%%%%%%%%%%%%%%%%%%%%%%%%%%
\subsubsection{$\infty$-sites and $\infty$-topoi}

Here we give some complementary explanation on 
$\infty$-topoi arising from $\infty$-sites. %First 
We will give a construction of geometric morphisms of $\infty$-sites from
a \emph{continuous} functor of $\infty$-sites.

\begin{dfn}\label{dfn:dAS:cont}
A \emph{continuous functor} $f: (\iC',\tau') \to (\iC,\tau)$ of $\infty$-sites
is a functor $\iC' \to \iC$ of $\infty$-categories 
satisfying the following two conditions.
\begin{itemize}[nosep]
\item
For every $X' \in \iC'$ and $\{X'_i \to X'\}_{i \in I} \in \Cov_{\tau'}(X')$,
the family $\{f(X'_i) \to f(X')\}_{i \in I}$ is in $\Cov_{\tau}(f(X'))$.
\item
$f$ commutes with finite limits (if they exist). %fiber products
\end{itemize}
\end{dfn}

Restating the argument in \cite[\S 2.2]{O:book}, we have

\begin{prp}[{\cite[Proposition 2.2.26, 2.2.31]{O:book}}]\label{prp:dAS:sgm}
Let $f: (\iC',\tau') \to (\iC,\tau)$ be a functor of $\infty$-sites.
\begin{enumerate}[nosep]
\item 
The composition with $f$ induces a functor 
$f_*: \iSh(\iC,\tau) \to \iSh(\iC',\tau')$ of $\infty$-topoi.

\item
If $f$ is continuous, then $f_*$ has a left adjoint 
$f^*: \iSh(\iC',\tau') \to \iSh(\iC,\tau)$.

\item
If $\iC$ and $\iC'$ admit finite limits and if $f$ is continuous and commutes 
with finite limits, then $f^*$ is left exact and the adjunction 
$f^*: \iSh(\iC',\tau') \rlto \iSh(\iC,\tau) :f_*$ determines a geometric morphism
$\iSh(\iC,\tau) \to \iSh(\iC',\tau')$.
\end{enumerate}
\end{prp}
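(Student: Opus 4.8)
The plan is to carry out the construction one level up, in the presheaf $\infty$-categories, and then descend through sheafification, using that by Fact~\ref{fct:pr:sh} sheafification $\iPSh\to\iSh$ is an accessible left exact localization. Precomposition with $f^{\op}$ gives a functor $f^p\colon\iPSh(\iC)\to\iPSh(\iC')$, $\shF\mapsto\shF\circ f^{\op}$; since $\iPSh(\iC)$ is freely generated under colimits by the Yoneda embedding $\yon$ (Fact~\ref{fct:ic:iPSh:gen}), $f^p$ preserves all small limits and colimits and admits a left adjoint $f_p\colon\iPSh(\iC')\to\iPSh(\iC)$, the left Kan extension along $f^{\op}$, which on the respective Yoneda images restricts to $f$ itself, $f_p\circ\yon\simeq\yon\circ f$. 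Writing $a\colon\iPSh(\iC)\to\iSh(\iC,\tau)$ and $a'\colon\iPSh(\iC')\to\iSh(\iC',\tau')$ for the sheafifications and regarding sheaves as presheaves via the fully faithful inclusions, these are the only ingredients.

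For (1) I would take $f_*\shF:=f^p\shF$. One checks that $f^p\shF$ is a $\tau'$-sheaf whenever $\shF$ is a $\tau$-sheaf: a presheaf $\shG$ on $\iC'$ is a $\tau'$-sheaf iff $\Map(\yon(X'),\shG)\to\Map(R,\shG)$ is an equivalence for every $\tau'$-covering sieve $R\hookrightarrow\yon(X')$, and transporting this condition along $f_p\dashv f^p$ shows it holds for $\shG=f^p\shF$ once one knows that $f_p(R)\to f_p(\yon(X'))\simeq\yon(f(X'))$ is a $\tau$-local equivalence --- which follows from the covering-compatibility of $f$ as a functor of $\infty$-sites. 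This is the $\infty$-categorical transcription of \cite[Proposition 2.2.26]{O:book}. (Equivalently $f_*:=a'\circ f^p$ makes sense for any functor $f$, and under the covering hypothesis it lands in $\iSh(\iC',\tau')$, i.e.\ coincides with the restriction of $f^p$.)

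For (2), assume $f$ continuous (Definition~\ref{dfn:dAS:cont}). The main obstacle is to prove that $f_p$ carries $\tau'$-local equivalences to $\tau$-local equivalences. Since $f_p$ preserves colimits and the $\tau'$-local equivalences are generated --- under colimits and two-out-of-three --- by the covering-sieve inclusions $R\hookrightarrow\yon(X')$, it suffices to treat those: the covers-to-covers property handles the ``effective epimorphism'' aspect and the compatibility of $f$ with finite limits handles the ``base change of a covering family along an arbitrary arrow'' aspect, just as in the classical computation of \cite[\S2.2]{O:book}. Granting this, $a\circ f_p$ inverts $\tau'$-local equivalences, so by the universal property of the localization $a'$ it factors as $a\circ f_p\simeq f^*\circ a'$ for an essentially unique colimit-preserving $f^*\colon\iSh(\iC',\tau')\to\iSh(\iC,\tau)$; explicitly $f^*\shG=a(f_p\shG)$ with $\shG$ viewed as a presheaf. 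Then $f^*\dashv f_*$ is formal:
\[
 \Map_{\iSh(\iC,\tau)}(f^*\shG,\shF)\simeq\Map_{\iPSh(\iC)}(f_p\shG,\shF)\simeq
 \Map_{\iPSh(\iC')}(\shG,f^p\shF)\simeq\Map_{\iSh(\iC',\tau')}(\shG,f_*\shF),
\]
using $a\dashv(\text{inclusion})$, $f_p\dashv f^p$, and full faithfulness of the inclusions. (Abstractly $f^*$ also exists by the adjoint functor theorem \cite[Corollary 5.5.2.9]{Lur1}, but the explicit form is needed next.)

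For (3), suppose in addition that $\iC$ and $\iC'$ admit finite limits and $f$ preserves them. Then $f$ is a flat functor, so $f_p\colon\iPSh(\iC')\to\iPSh(\iC)$ is left exact (\cite[\S 6.1.5]{Lur1}); since the inclusion $\iSh(\iC',\tau')\hookrightarrow\iPSh(\iC')$ preserves limits and $a$ is left exact (Fact~\ref{fct:pr:sh}), the functor $f^*\colon\shG\mapsto a(f_p\shG)$ is a composite of left exact functors, hence left exact. Combined with the adjunction $f^*\dashv f_*$ from (2), this is precisely the statement that $(f^*,f_*)$ is a geometric morphism $\iSh(\iC,\tau)\to\iSh(\iC',\tau')$ in the sense of Definition~\ref{dfn:is:gm}. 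Thus the whole proposition reduces to the single substantive point in (2) --- that the left Kan extension $f_p$ preserves local equivalences --- everything else being adjunction bookkeeping together with the standard exactness properties of sheafification.
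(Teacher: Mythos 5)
Your proposal is correct and follows essentially the same route as the paper: $f^*$ is the sheafification of the left Kan extension along $f$ (which the paper writes out pointwise as $\shF(U)=\iclim_{(V,\nu)\in\iI_U}\shG(V)$ over the comma $\infty$-category $\iI_U$, exactly your $f_p$), the adjunction $f^*\dashv f_*$ is formal bookkeeping through $a\dashv\iota$ and $f_p\dashv f^p$, and left exactness in (3) comes from left exactness of the topological localization combined with left exactness of the Kan extension when $f$ preserves finite limits. The only cosmetic difference is that you phrase the key input as ``$f_p$ sends local equivalences to local equivalences'' while the paper works directly with the explicit colimit formula, but these amount to the same verification.
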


We followed Remark \ref{rmk:is:gm} on the notation of geometric morphisms.
%Let us give an outline of the proof.

\begin{proof}
These statements are essentially shown in \cite[\S 2.2]{O:book},
but let us explain an outline of the proof.
The proof of (1) is standard, so we omit it.

Let us explain the construction of $f^*$ in (2).
Given a sheaf $\shG \in \iSh(\iC',\tau')$, 
we want to construct $f^* \shG \in \iSh(\iC,\tau)$.
For an object $U \in \iC$,
we denote by $\iI_{U}$ the full sub-$\infty$-category of 
the under-$\infty$-category $\iC_{U \! /}$ spanned by 
the essential image of the functor $f$.
An object of $\iI_{U}$ can be regarded as a pair $(V,\nu)$
where $V \in \iC'$ and $\nu: U \to f(V)$ is a morphism in $\iC$.
Now we define $\shF \in \iPSh(\iC)$ by 
$\shF(U) := \iclim_{(V,\nu) \in \iI_{U}} \shG(V)$.
It is immediate that $\shF$ is well-defined, %indeed a presheaf on $\iC$,
and we define the sheaf $f^* \shG$ to be the sheafification of $\shF$.

As for the item (3), 
since the localization functor in a topological localization is left exact
\cite[Corollary 6.1.2.6]{Lur1},
the sheafification functor is left exact by Fact \ref{fct:pr:sh}.
Then the construction implies that $f^*$ is left exact.
By the same argument in the proof of \cite[Proposition 2.2.31]{O:book}, we have 
a counit transformation $f_* f_* \to \id$ (Definition \ref{dfn:ic:uc}).
Considering the evaluation at the object $(V,\id_{f(V)})$
of $\iI_{f}(V)$ for any $V \in \iSh(\iC',\tau')$,
we obtain the inverse of the counit transformation.
Thus we have the desired adjunction 
$f^*: \iSh(\iC',\tau') \rlto \iSh(\iC,\tau) :f_*$.
\end{proof}

%Thus $f_{\et}^*$ is nothing but the restriction functor.
%The exactness of $f^*$ can be checked by Fact \ref{fct:ic:exact}.
%The adjunction can be checked directly.

%%%%%%%%%%%%%%%%%%%%%%%%%%%%%%%%%%%%%%%%%%%%%%%%%%%%%%%%%%%%%%%%%%%%%%%%%%%%%%%%
\subsubsection{\'Etale $\infty$-site}
\label{sss:dAS:eis}

Here we introduce an $\infty$-theoretical analogue of \'etale topoi of schemes.
We continue to work over a commutative ring $k$.

\begin{dfn}\label{dfn:dAS:small-et}
Let $\stU$ be a derived algebraic space.
\begin{enumerate}[nosep]
\item
The (\emph{small}) \emph{\'etale $\infty$-site} of $\stU$ is the $\infty$-site
$\gsEt(\stU):=(\idAS_{\stU}^{\et},\et)$ consisting of 
\begin{itemize}[nosep]
\item 
The full sub-$\infty$-category $\idAS_{\stU}^{\et}$ 
of the over-$\infty$-category $\oc{\idAS}{\stU}$
spanned by \'etale morphisms $\stT \to \stU$ of derived stacks 
(Definition \ref{dfn:dSt:mor-Q}).

\item
The Grothendieck topology $\et$, called the (\emph{small}) \emph{\'etale topology},
on $\idAS_{\stU}^{\et}$ whose set $\Cov_{\et}(\stT)$ of covering sieves of 
$\stT \in \idAS_{\stU}^{\et}$ consists of families $\{\stT_i \to \stT\}_{i \in I}$
with $\coprod_{i \in I} \stT_i \to \stT$ an epimorphism of derived stacks
(Definition \ref{dfn:dSt:epi-mono}).
\end{itemize}

\item
We denote the associated $\infty$-topos (Fact \ref{fct:pr:sh}) 
by $\stU_{\et} := \iSh(\gsEt(\stU))$ 
and call it the (\emph{small}) \emph{\'etale $\infty$-topos} on $\stU$.
\end{enumerate}
\end{dfn}

The following statement is an analogue of the one 
in the \'etale topology of a scheme 
(see \cite[Example 2.2.10]{O:book} for example).
The proof is quite similar, and we omit it.

\begin{lem}\label{lem:dAS:small-et}
For a derived algebraic space $\stU$, 
let $\gsEt^{\aff}(\stU) := (\idAff_{\stU}^{\et},\et^{\aff})$ be the $\infty$-site 
consisting of 
\begin{itemize}[nosep]
\item 
The full sub-$\infty$-category $\idAff_{\stU}^{\et}$
of the over-$\infty$-category $\oc{\idAff}{\stU}$ of \emph{affine derived schemes}
over $\stU$ spanned by \'etale morphisms of derived stacks.

\item
The Grothendieck topology $\et^{\aff}$ 
where a covering sieve is set to be a covering sieve in $\et$
(Definition \ref{dfn:dAS:small-et} (1)).
\end{itemize}
Then the associated $\infty$-topos $\stU_{\et^{\aff}}$ 
is equivalent to $\stU_{\et}$.
\end{lem}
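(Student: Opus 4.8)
The plan is to reduce the equivalence $\stU_{\et^{\aff}} \simeq \stU_{\et}$ to a general comparison principle for $\infty$-topoi associated to $\infty$-sites, using the fact that every \'etale morphism $\stT \to \stU$ of derived algebraic spaces is locally affine. More precisely, I would invoke the inclusion functor $\iota: \gsEt^{\aff}(\stU) = (\idAff_{\stU}^{\et}, \et^{\aff}) \longhookrightarrow \gsEt(\stU) = (\idAS_{\stU}^{\et}, \et)$ and check that it is a continuous functor commuting with finite limits in the sense of Definition \ref{dfn:dAS:cont}. Continuity is clear since a covering sieve in $\et^{\aff}$ is by definition a covering sieve in $\et$; commuting with finite limits holds because fiber products of affine derived schemes over an affine base are again affine (via $\idAff_k = (\isCom_k)^{\op}$ and the existence of pushouts of derived $k$-algebras), and finite limits in $\idAS^{\et}_{\stU}$ restricted to the affine subcategory compute the same object. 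Then Proposition \ref{prp:dAS:sgm} produces a geometric morphism $\iota^*: \iSh(\gsEt^{\aff}(\stU)) \rlto \iSh(\gsEt(\stU)) : \iota_*$, i.e. a geometric morphism $\stU_{\et} \to \stU_{\et^{\aff}}$.

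The heart of the argument is to show this geometric morphism is an equivalence of $\infty$-topoi. The key observation is that $\idAff_{\stU}^{\et}$ is a \emph{dense} sub-$\infty$-site of $\idAS_{\stU}^{\et}$: by the very definition of a derived algebraic space (Definition \ref{dfn:dAS:dAS}), any object $\stT \in \idAS^{\et}_{\stU}$ admits an $n$-atlas $\{V_i \to \stT\}$ with each $V_i$ a representable derived stack, hence affine, and each $V_i \to \stT$ \'etale; since $\stT \to \stU$ is \'etale, each composition $V_i \to \stU$ is \'etale, so $V_i \in \idAff_{\stU}^{\et}$, and $\coprod_i V_i \to \stT$ is an epimorphism of derived stacks, i.e. a covering sieve in $\et$. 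Thus every object of the larger site has a covering by objects of the smaller site. The plan is then to apply the $\infty$-categorical comparison lemma for dense sub-sites — the statement that if $f: (\iC',\tau') \to (\iC,\tau)$ is continuous, commutes with finite limits, is fully faithful, and every object of $\iC$ has a $\tau$-cover by objects in the essential image of $f$ with the induced topology agreeing, then $f^*: \iSh(\iC',\tau') \to \iSh(\iC,\tau)$ is an equivalence. I would either cite this from \cite{Lur1} (the relevant statement is in the circle of ideas around \cite[\S 6.2.3, Lemma 6.2.3.20]{Lur1}, on the behavior of sheaves under dense morphisms of sites) or verify directly that $\iota^*$ and $\iota_*$ are mutually inverse by checking that the unit $\shG \to \iota_* \iota^* \shG$ and counit $\iota^* \iota_* \shF \to \shF$ are equivalences, using the explicit colimit description of $\iota^*$ from the proof of Proposition \ref{prp:dAS:sgm} together with the covering property above (which makes the indexing $\infty$-category $\iI_U$ in that proof cofinal on the affine part).

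To spell out the direct verification: for $\shF \in \iSh(\gsEt(\stU))$, the counit $\iota^*\iota_*\shF \to \shF$ is an equivalence because evaluating a sheaf on the larger site is determined by its restriction to a covering family, and the affine objects form such a family for every $\stT$; concretely, $\iota_*\shF$ is just the restriction of $\shF$ to $\idAff^{\et}_{\stU}$, and $\iota^*$ of that restriction, sheafified, recovers $\shF$ since $\shF$ satisfies descent along the \'etale covers $\coprod V_i \to \stT$. For the unit, one uses that $\iota$ is fully faithful, so $\iota_*\iota^*\shG$ evaluated on an affine $V$ gives back $\shG(V)$ up to sheafification, which is $\shG(V)$ again because $\shG$ is already a sheaf. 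The main obstacle I anticipate is not any single deep step but rather the bookkeeping of which finite limits exist in $\idAS^{\et}_{\stU}$ versus $\idAff^{\et}_{\stU}$ and making the cofinality/density argument precise at the $\infty$-categorical level — in the $1$-categorical setting this is the classical comparison lemma for sites, but one must be careful that the relevant under-$\infty$-categories $\iI_U$ are filtered or at least weakly contractible so that the colimit defining $\iota^*$ behaves well; fortunately the \'etale-local structure of derived algebraic spaces, already extracted in the proof that such spaces are $1$-geometric, supplies exactly the atlases needed, so the obstacle is technical rather than conceptual.
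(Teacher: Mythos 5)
Your argument is correct and is essentially the proof the paper has in mind: the paper omits it, remarking only that it is ``quite similar'' to the affine-versus-general comparison for the small \'etale site of a scheme (Olsson, Example 2.2.10), and your density observation --- that every \'etale $\stT \to \stU$ admits an \'etale cover by objects of $\idAff_{\stU}^{\et}$, supplied by the atlas in the definition of a derived algebraic space --- is exactly the input that comparison-lemma argument needs. The one point worth flagging is that $\idAff_{\stU}^{\et}$ need not have a final object when $\stU$ is not affine, so the hypotheses of Proposition \ref{prp:dAS:sgm} (3) are not literally met; your direct unit/counit verification (iterating affine covers on the \v{C}ech nerve, using hypercompleteness of the sheaf $\infty$-topos) sidesteps this correctly.
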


The \'etale $\infty$-topos on a derived algebraic space is functorial
in the following sense.
Let $f: \stU \to \stV$ be a morphism of derived algebraic spaces.
%As in the case of the \'etale topology of a scheme, 
Taking fiber products, we have the following continuous functor 
of $\infty$-sites (Definition \ref{dfn:dAS:cont}).
\[
  f^{-1}: \gsEt(\stV) \longto \gsEt(\stU), \quad 
          \stV' \longmapsto \stV' \times_{\stV} \stU .
\]
Composition with $f^{-1}$ gives the following functor of $\infty$-topoi.
\[
 f^{\et}_*: \stU_{\et} \longto \stV_{\et}, \quad
 (f^{\et}_* \shF)(V) :=  \shF(f^{-1}(V)).
\]
Applying Proposition \ref{prp:dAS:sgm} to the present situation,
we have

\begin{lem}\label{lem:dAS:et-geom}
The functor $f^{\et}_*: \stU_{\et} \to \stV_{\et}$
has a left exact left adjoint $f_{\et}^{-1}$.
%Moreover $f^*$ is exact (Definition \ref{dfn:ic:exact}).
Thus we have a geometric morphism $f_{\et}: \stU_{\et} \to \stV_{\et}$ of 
$\infty$-topoi (Definition \ref{dfn:is:gm}) corresponding to the adjunction
\[
 f_{\et}^{-1}: \stV_{\et} \adjunc \stU_{\et} :f^{\et}_*.
\]
\end{lem}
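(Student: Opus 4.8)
The plan is to obtain the statement as a direct application of Proposition \ref{prp:dAS:sgm} to the base change functor
\[
 f^{-1}: \gsEt(\stV) \longto \gsEt(\stU), \quad \stV' \longmapsto \stV' \times_{\stV} \stU,
\]
which was already written down just before the statement. The first point to settle is that $f^{-1}$ is a well-defined functor of $\infty$-sites, i.e.\ that it actually takes values in $\idAS_{\stU}^{\et}$: for $(\stV' \to \stV) \in \idAS_{\stV}^{\et}$ the projection $\stV' \times_{\stV} \stU \to \stU$ is étale, being a pullback of an étale morphism (Fact \ref{fct:morph:Q} (1)), and one checks --- exactly as in the non-derived case --- that a derived stack étale over a derived algebraic space is again a derived algebraic space: an étale atlas of $\stU$ pulls back to an étale atlas of $\stV' \times_{\stV} \stU$ (ultimately reducing to the case of a derived stack étale over an affine derived scheme), while the diagonal of $\stV' \times_{\stV} \stU$ factors as the relative diagonal of an étale morphism --- a monomorphism --- followed by the base change of the diagonal of $\stU$, hence is a monomorphism. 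The functoriality of $\stV' \mapsto \stV' \times_{\stV} \stU$ is the restriction of the pullback functor $\oc{\idSt}{\stV} \to \oc{\idSt}{\stU}$.

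Next I would verify the two conditions making $f^{-1}$ a continuous functor (Definition \ref{dfn:dAS:cont}). For the covering condition, a generating family $\{\stT_i \to \stT\}_{i \in I}$ of a covering sieve in $\et$ is one for which $\coprod_{i} \stT_i \to \stT$ is an epimorphism of derived stacks; since $f^{-1}(\stT_i) \simeq \stT_i \times_{\stT} f^{-1}(\stT)$ and $\idSt$ is an $\infty$-topos --- so that colimits are universal and effective epimorphisms are stable under base change (Fact \ref{fct:pr:Giraud}) --- the map $\coprod_{i} f^{-1}(\stT_i) \simeq \bigl(\coprod_{i} \stT_i\bigr) \times_{\stT} f^{-1}(\stT) \to f^{-1}(\stT)$ is again an epimorphism, so the image family is a covering. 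For the finite-limit condition, I would first note that $\idAS_{\stV}^{\et}$ admits finite limits computed in $\idSt$: the terminal object is $\id_{\stV}$, and a fibre product $\stT_1 \times_{\stT} \stT_2$ formed in $\idSt$ still lies in the site, being étale over $\stV$ by stability of étale morphisms under composition and pullback (Fact \ref{fct:morph:Q}) and a derived algebraic space by the closure property above. The functor $f^{-1}$ is the restriction of the pullback functor $\oc{\idSt}{\stV} \to \oc{\idSt}{\stU}$, which is right adjoint to post-composition with $f$ and therefore preserves limits; hence $f^{-1}$ preserves finite limits.

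Granting these verifications, $f^{-1}$ is a continuous, finite-limit-preserving functor between $\infty$-sites whose underlying $\infty$-categories admit finite limits, so Proposition \ref{prp:dAS:sgm} applies: part (1) produces $f^{\et}_*: \stU_{\et} \to \stV_{\et}$ (the composition-with-$f^{-1}$ functor of the statement), part (2) produces a left adjoint $f_{\et}^{-1}$, and part (3) shows $f_{\et}^{-1}$ is left exact, so that the adjunction $f_{\et}^{-1}: \stV_{\et} \adjunc \stU_{\et} : f^{\et}_*$ is a geometric morphism in the sense of Definition \ref{dfn:is:gm}. I expect the one genuinely substantive step --- as opposed to unwinding definitions --- to be the closure property ``a derived stack étale over a derived algebraic space is a derived algebraic space'' together with the compatibility of fibre products in $\idAS_{\stV}^{\et}$ with those in $\idSt$; everything else is either cited directly or follows formally from the $\infty$-topos structure of $\idSt$.
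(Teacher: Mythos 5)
Your proposal is correct and follows exactly the route the paper takes: the lemma is obtained by applying Proposition \ref{prp:dAS:sgm} to the continuous, finite-limit-preserving base-change functor $f^{-1}: \gsEt(\stV) \to \gsEt(\stU)$. The paper leaves the verifications (well-definedness, continuity, preservation of finite limits) implicit, and your expansion of them is accurate.
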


Let us now introduce some basic notions on derived algebraic spaces.
These are simple analogue of the corresponding notions in the scheme theory.
For our definitions, morphisms of schemes and algebraic spaces 
will be replaced by \emph{geometric} morphisms of the \'etale $\infty$-topoi
on derived algebraic spaces.
In this part, $\stU$ and $\stV$ denote derived algebraic spaces over $k$,
and $f: \stU \to \stV$ denotes a morphism between them.

We begin with the definition of open immersions
by applying the discussion in \ref{ss:is:oci} for the summary.
Our definition is an analogue of \cite[Definition 9.5]{Lur7}.

\begin{dfn}\label{dfn:dAS:oi}
$f: \stU \to \stV$ is an \emph{open immersion}
if the geometric morphism $f_{\et}: \stU_{\et} \to \stV_{\et}$
of the \'etale $\infty$-topoi 
is an open geometric immersion (Definition \ref{dfn:is:oi}).
\end{dfn}

Let us spell out this definition differently.
For an $\infty$-topos $\tT$, we denote by $\Sub(\one_{\tT})$
the set of equivalence classes of $(-1)$-truncated objects in $\tT$
(Definition \ref{dfn:it:sub}).
Applying this notation to the \'etale $\infty$-topos $\tT=\stU_{\et}$,
each $U \in \Sub(\one_{\stU_{\et}})$ is represented by
an \'etale morphism $j_U: U' \to \stU$ of derived algebraic spaces
where $U'$ is an affine derived scheme.

Then we have that $f: \stU \to \stV$ is an open immersion
if there is a $V \in \Sub(\one_{\stV_{\et}})$ 
represented by an \'etale morphism $j_V: V' \to \stV$ such that 
$f$ is equivalent to the composition $\stU \xr{f'} V \xr{j_V} \stV$
with $f'$ an equivalence of derived algebraic spaces.

Next we introduce closed immersions 
by adapting the general notion in \cite[\S 7.2.3]{Lur1}.
See \ref{ss:is:oci} for the summary.
Let us also refer \cite[\S 4]{Lur9} for the relevant discussion 
in the spectral algebraic geometry.

\begin{dfn*}
$f: \stU \to \stV$ is a \emph{closed immersion}
if the geometric morphism $f_{\et}: \stU_{\et} \to \stV_{\et}$ 
of the \'etale $\infty$-topoi %(Lemma \ref{lem:dAS:et-geom}) 
is a closed geometric immersion (Definition \ref{dfn:is:ci}).
\end{dfn*}

Following \cite[Definition 1.4.11]{Lur9}, we introduce

\begin{dfn}\label{dfn:dAS:sep}
\begin{enumerate}[nosep]
\item 
$f: \stU \to \stV$ is \emph{strictly separated} if 
the diagonal morphism $\stU \to \stU \times_{f,\stV,f} \stU$ is a closed immersion.

\item
$\stU$ is \emph{separated}
if the structure morphism $\stU \to \dSpec k$ is strictly separated.
\end{enumerate}
\end{dfn}

We now introduce \emph{quasi-compact} derived algebraic spaces.
Recall the notion of quasi-compact $\infty$-topoi (Definition \ref{dfn:pr:qcpt}).

\begin{dfn}\label{dfn:dAS:qcpt}
A derived algebraic space $\stU$ is \emph{quasi-compact}
if the $\infty$-topoi $\stU_{\et}$ is quasi-compact.
\end{dfn}

Next we introduce \emph{quasi-compact morphisms} of derived algebraic spaces.
For that, we need the notion of quasi-compact morphisms between $\infty$-topoi.
Recall Definition \ref{dfn:pr:qcpt} of quasi-compactness 
for objects in $\infty$-topoi.

\begin{dfn}\label{dfn:dAS:qcpt-top}
A geometric morphism $f: \tT \to \tT'$ of $\infty$-topoi is called 
a \emph{quasi-compact morphism} if for any quasi-compact object $U \in \tT'$ 
the object $f^* U \in \tT$ is quasi-compact.
\end{dfn}

\begin{rmk*}
Starting with quasi-compactness, one can introduce by induction 
the notion of $n$-coherence of $\infty$-topoi \cite[\S3]{Lur7}.
Using the $n$-coherence, Lurie introduced in  \cite[\S 1.4]{Lur8}
the notion of $n$-quasi-compactness for spectral schemes, 
spectral Deligne-Mumford stacks and morphisms between them.
Our definition of quasi-compact morphism is an adaptation of 
this $n$-quasi-compactness to the case $n=0$.
\end{rmk*}

\begin{dfn}\label{dfn:dAS:qcpt-mor}
A morphism $f: \stU \to \stV$ of derived algebraic spaces is \emph{quasi-compact}
if the geometric morphism $f_{\et}: \stU_{\et} \to \stV_{\et}$ 
of $\infty$-topoi (Lemma \ref{lem:dAS:et-geom}) is a quasi-compact morphism
(Definition \ref{dfn:dAS:qcpt-top}).
\end{dfn}

\begin{rmk*}
We have already introduced 
the notion of quasi-compact morphisms of derived stacks
in Definition \ref{dfn:dSt:qcpt}.
By a routine one can show that these two notions are equivalent.
\end{rmk*}

Let us now turn to the notion of \emph{quasi-separated} derived algebraic spaces.
It is a direct analogue of the notion of 
quasi-separated schemes and quasi-separated algebraic spaces.
Let us also refer \cite[Definition 1.3.1]{Lur12} for a relevant notion 
for spectral Deligne-Mumford stacks.

\begin{dfn}\label{dfn:dAS:qsep}
\begin{enumerate}[nosep]
\item 
$f$ is \emph{quasi-separated} if the diagonal morphism 
$\stU \to \stU \times_{f,\stV,f} \stU$ is quasi-compact 
(Definition \ref{dfn:dAS:qcpt-mor}).

\item
$\stU$ is \emph{quasi-separated}
if the structure morphism $\stU \to \dSpec k$ is quasi-separated 
in the sense of (1).
\end{enumerate}
\end{dfn}

Finally we introduce proper morphisms.
Our definition is an analogue of the strongly proper morphism 
in spectral algebraic geometry \cite[\S 3]{Lur12}.

Recall the truncation functor $\Trc: \Ho \idSt \to \Ho \iSt$ 
(Definition \ref{dfn:dSt:trunc}).
For a derived algebraic space $\stU$,
the truncation $\Trc \stU$ is an algebraic space.

\begin{dfn}\label{dfn:dAS:prp}
\begin{enumerate}
\item 
A morphism $f: \stU \to \stV$ of derived algebraic spaces is \emph{proper}
if the corresponding morphism $\Trc \stU \to \Trc \stV$
of algebraic spaces is proper (Definition \ref{dfn:Cl:AS:prp}).

\item
A derived algebraic space $\stU$ is \emph{proper}
if the structure morphism $\stU \to \dSpec k$ is proper
in the sense of (1).
\end{enumerate}
\end{dfn}

One can check the ordinary properties of proper morphisms,
such as stable under composition and base change, hold in $\idAS$.

\begin{rmk}\label{rmk:dAS:table}
The notions on morphisms of derived algebraic spaces given above
and those of ordinary algebraic spaces (\S \ref{ss:Cl:AS}) are compatible
under the functor $\Dex$ in Remark \ref{rmk:dAS:diag}.
\begin{table}[htbp]
\begin{tabular}{c|c|c}
property of morphisms & derived algebraic spaces & algebraic spaces \\
\hline
%of finite type/presentation  
%& & Definition \ref{dfn:Cl:AS:fin} \\
separated       & Definition \ref{dfn:dAS:sep}  & Definition \ref{dfn:Cl:AS:cim}
\\ %St-mor3} \\
quasi-separated & Definition \ref{dfn:dAS:qsep} & Definition \ref{dfn:Cl:AS:qsep}
\\ %St-mor3} \\
proper          & Definition \ref{dfn:dAS:prp} & Definition \ref{dfn:Cl:AS:prp} %-mor2} 
\end{tabular}
\caption{Morphisms between derived and algebraic spaces}
\end{table}
\end{rmk}

%%%%%%%%%%%%%%%%%%%%%%%%%%%%%%%%%%%%%%%%%%%%%%%%%%%%%%%%%%%%%%%%%%%%%%%%%%%%%%%%
%%%%%%%%%%%%%%%%%%%%%%%%%%%%%%%%%%%%%%%%%%%%%%%%%%%%%%%%%%%%%%%%%%%%%%%%%%%%%%%%
\subsection{\'Etale sheaves of rings and modules}
%\label{ss:dAS:str-sh}

In this subsection we collect notations for sheaves of commutative rings 
and modules on derived algebraic spaces in the \'etale topology.
%In particular, the structure sheaf of a derived algebraic space
%will be introduced.
We work over the base commutative ring $k$ as before.
%os of a derived algebraic space.

%%%%%%%%%%%%%%%%%%%%%%%%%%%%%%%%%%%%%%%%%%%%%%%%%%%%%%%%%%%%%%%%%%%%%%%%%%%%%%%%
\subsubsection{\'Etale structure sheaves}
\label{sss:dAS:str-sh}

This part will not be used in the later sections.
We record it for completeness of our presentation.

\begin{dfn*}%\label{dfn:dAS:str-sh}
For a derived algebraic space $\stU$, 
the (\emph{small}) \emph{\'etale structure sheaf $\shO_{\stU}$ of $\stU$} 
is an object of $\iShv_{\isCom_{k}}(\stU_{\et})$ determined by 
\[
 \shO_{\stU}(U) := A \in \isCom_{k} \ 
 \text{ for } \ U = \dSpec A \in \stU_{\et}.
\] 
Here $U \in \stU_{\et}$ means that $U  \in \idAS^{\et}_{\stU}$ and 
it is identified with an object of $\stU_{\et}$
by the Yoneda embedding $\yon(U) \in \iSh(\idAS^{\et}_{\stU},\et)=\stU_{\et}$.
\end{dfn*}

Thus we obtain a ringed $\infty$-topos $(\stU_{\et},\shO_{\stU})$.
It is intimately related to \emph{spectral algebraic spaces} 
in Lurie's spectral algebraic geometry \cite{Lur7,Lur8}.
%As usual we denote $\shO := \shO_{\stU}$ if confusion may not occur.
We also have the stable $\infty$-category
$\iMod_{\shO_{\stU}}(\iShv_{\isCom_k}(\tT))$
of stable \'etale sheaves of $\shO$-modules over $\stU$.

For a morphism $f: \stU \to \stV$ of derived algebraic spaces,
we can construct a functor 
$(f_{\et},f^{\sharp}): (\stU_{\et},\shO_{\stU}) \to (\stV_{\et},\shO_{\stV})$
of ringed $\infty$-topoi,
where $f_{\et}: \stU_{\et} \to \stV_{\et}$ is the geometric morphism 
%$f_{\et}^*: \stV_{\et} \adjunc \stU_{\et}:f^{\et}_*$ 
in Lemma \ref{lem:dAS:et-geom},
and $f^{\sharp}: \shO_{\stV} \to f_* \shO_{\stU}$ is a morphism 
in $\iShv_{\isCom_k}(\stV_{\et})$.
We can also construct a geometric morphism corresponding to the adjunction
$f_{\et}^*: \iMod_{\shO_{\stV}}(\stV_{\et}) \rightleftarrows
      \iMod_{\shO_{\stU}}(\stU_{\et}) :f^{\et}_*$
of functors between stable $\infty$-categories,
%We omit the details since it will not be used in the following.
%where $f^* M := f^{-1}M \otimes_{f^{-1}\shO_Y}\shO_X$
%and $f_*$ is induced by $f_*: \stU_{\et} \to \stV_{\et}$ and 
%$f^{\sharp}: \shO_{\stV} \to f_* \shO_{\stU}$.
%The functor $f^*$ is right exact (Definition \ref{dfn:ic:exact}).

%We also have the $\infty$-category $\iMod^{\stab}_{\shO}(\stU_{\et})$
%of stables sheaves over the structure sheaf.
%$\shO$.

%%%%%%%%%%%%%%%%%%%%%%%%%%%%%%%%%%%%%%%%%%%%%%%%%%%%%%%%%%%%%%%%%%%%%%%%%%%%%%%%
\subsubsection{\'Etale sheaf of commutative rings}

Recall that in \S \ref{ss:is:crm} we introduced 
sheaves of commutative rings over an $\infty$-topos $\tT$.
Let us set $\tT = \stU_{\et}$, the \'etale $\infty$-topos of 
an a derived algebraic space $\stU$.
Thus, for a sheaf $\shA \in \iShv_{\iCom}(\stU_{\et})$, 
we have a ringed $\infty$-topos $(\stU_{\et},\shA)$.
We call such an $\shA$ an \emph{\'etale sheaf of commutative rings on $\stU$}.

Let us apply the notations in \S \ref{sss:is:smrf} to the present situation.
We call 
\[
 \iMod_{\shA}(\stU_{\et}) := \iMod_{\shA}(\iShv_{\iAb}(\stU_{\et})),
\]
the \emph{$\infty$-category of \'etale sheaves of $\shA$-modules on $\stU$}. 
It is equipped with internal Hom functor $\shHom_{\shA}$
and the tensor functor $\otimes_{\shA}$.
We also denote by
$\Mod_{\shA}(\stU_{\et}) := \Ho \iMod_{\shA}(\stU_{\et})$
its homotopy category,
which is a Grothendieck abelian category (Proposition \ref{prp:is:G}).

Next recall the notations in \S \ref{sss:is:dic}.
We denote by 
\[
 \iMod^{\stab}_{\shA}(\stU_{\et})  := 
 \iMod^{\stab}_{\shA}(\iShv_{\iSp}(\stU_{\et}))
\]
the $\infty$-category of sheaves of stable $\shA$-modules on $\stU_{\et}$.
It is stable and equipped with a $t$-structure.
It also has the internal Hom functor $\shHom_{\shA}$
and the tensor functor $\otimes_{\shA}$.
By Proposition \ref{prp:is:Da=Stab}, 
we also have a $t$-exact equivalence 
\[
 \iMod^{\stab}_{\shA}(\stU_{\et}) \simeq \iDa(\Mod_{\shA}(\stU_{\et}))
\]
of stable $\infty$-categories,
where the right hand side is the derived $\infty$-category 
of the Grothendieck abelian category $\Mod_{\shA}(\stU_{\et})$.
Hereafter we mainly discuss in terms of the derived $\infty$-category,
and use the following notation.

\begin{ntn*}%\label{ntn:dAS:DUA}
For a derived algebraic space $\stU$ and 
an \'etale sheaf $\shA$ of commutative rings on $\stU$, we set 
\[
 \iDu{*}(\stU_{\et},\shA) := \iDu{*}(\Mod_{\shA}(\stU_{\et}))
 \quad * \in \{\emptyset,+,-,b\}
\]
and call it the (resp.\ left bounded, resp.\ right bounded, resp.\ bounded)
\emph{derived $\infty$-category of \'etale sheaves of $\shA$-modules on $\stU$}.
For a commutative $\Lambda$, we denote by
\[
 \iDu{*}(\stU_{\et},\Lambda) := \iDu{*}(\Mod_{\Lambda}(\stU_{\et}))
 \quad * \in \{\emptyset,+,-,b\}
\]
for the derived $\infty$-category of \'etale sheaves of 
$\Lambda$-modules on $\stU$ (with some bound condition),
where we denote the constant sheaf by the same symbol $\Lambda$.
\end{ntn*}

Now assume that we are given a morphism $f: \stU \to \stV$
of derived algebraic spaces.
Then by Lemma \ref{lem:dAS:et-geom} we have a geometric morphism 
$f_{\et}: \stU_{\et} \to \stV_{\et}$,
so that the argument in \S \ref{ss:is:crm} 
gives rise to the direct image functors 
\[
 f^{\et}_{*}:
 \iMod_{\shA}(\stU_{\et}) \longto \iMod_{\shA}(\stV_{\et}), \quad
 \iDa(\stU_{\et}, \shA)  \longto \iDa(\stV_{\et}, \shA)
\]
and inverse image functors 
\[
 f_{\et}^*:
 \iMod_{\shA}(\stV_{\et}) \longto \iMod_{\shA}(\stU_{\et}), \quad 
 \iDa(\stV_{\et}, \shA)  \longto \iDa(\stU_{\et}, \shA).
\]

%%%%%%%%%%%%%%%%%%%%%%%%%%%%%%%%%%%%%%%%%%%%%%%%%%%%%%%%%%%%%%%%%%%%%%%%%%%%%%%%
\subsubsection{Proper base change}

Recall Definition \ref{dfn:dAS:prp} of proper morphisms
of derived algebraic spaces.
It enable us to translate to derived settings 
the proper base change theorem in the ordinary scheme theory.
In the following sections 
we will mainly discuss constant sheaves of commutative rings.
So, for a commutative ring $A$,
let us denote by the \emph{constant sheaf} valued in $A$ on $\stU_{\et}$ 
(Definition \ref{dfn:pr:cst-sh}) by the same symbol $A$.

\begin{lem}\label{lem:dAS:pbc1}
Let $\Lambda$ be a torsion ring and
\[
 \xymatrix{
  \stU' \ar[r]^{g'} \ar[d]_{f'} & \stU \ar[d]^{f} \\ \stV' \ar[r]_{g} & \stV}
\]
be a cartesian square in $\idAS$ with $f$ proper.
Then the base change morphism (Definition \ref{dfn:is:bc})
\[
 g_{\et}^* \, f^{\et}_* \longto {f'}^{\et}_* \, {g'}_{\et}^*
\]
of functors 
$\iMod_\Lambda(\stU'_{\et}) \to \iMod_\Lambda(\stV_{\et})$
is an equivalence.
\end{lem}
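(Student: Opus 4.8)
The plan is to reduce the statement to the classical proper base change theorem for étale cohomology of algebraic spaces (torsion coefficients), which is available for the ordinary topos-theoretic setting. The key observation, already signaled by Definition \ref{dfn:dAS:prp}, is that properness of a morphism of derived algebraic spaces is \emph{by definition} a condition on the truncations. So I would first establish the comparison between the étale $\infty$-topos $\stU_{\et}$ of a derived algebraic space $\stU$ and the étale topos of its classical truncation $\Trc\stU$. Concretely, an étale morphism $\stT \to \stU$ of derived stacks induces, on $\pi_0$, an étale morphism $\Trc\stT \to \Trc\stU$ of algebraic spaces (by Definition \ref{dfn:dSt:dAff:mor}, since étaleness of derived rings is detected on $\pi_0$ together with the isomorphism condition on higher homotopy), and conversely every étale $\Trc\stU$-space lifts, uniquely up to equivalence, to an étale $\stU$-space. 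This gives an equivalence of $\infty$-sites $\gsEt(\stU) \simeq \gsEt(\Trc\stU)$, hence an equivalence of $\infty$-topoi $\stU_{\et} \simeq (\Trc\stU)_{\et}$, compatible with the geometric morphisms $f_{\et}$ in Lemma \ref{lem:dAS:et-geom} (this is the derived analogue of the familiar fact that the small étale site only sees the reduced/underlying scheme).

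Granting that comparison, I would next observe that for $\Lambda$ a torsion ring, the $\infty$-category $\iMod_\Lambda(\stU_{\et})$ and its associated sheaf theory depend only on $(\Trc\stU)_{\et}$, so the whole cartesian square in $\idAS$ maps, under $\Trc$, to a cartesian square of algebraic spaces with $\Trc f$ proper. (One must check that $\Trc$ carries the cartesian square \eqref{lem:dAS:pbc1}'s square --- i.e. the fiber product in $\idAS$ --- to a cartesian square of algebraic spaces: $\Trc$ commutes with limits by the discussion after Definition \ref{dfn:dSt:trunc}, since $\Trc$ is a right adjoint and commutes with limits and colimits.) Then the base change morphism $g_{\et}^*\,f^{\et}_* \to {f'}^{\et}_*\,{g'}_{\et}^*$ of functors on $\iMod_\Lambda$ is identified, under the equivalences above, with the classical base change morphism for the square of truncations; and that is an isomorphism by the proper base change theorem for torsion étale sheaves on algebraic spaces.

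The main steps, in order, are: (1) prove $\stU_{\et}\simeq(\Trc\stU)_{\et}$ as ringed $\infty$-topoi, functorially in $\stU$, compatibly with geometric morphisms --- using that étale morphisms of derived algebraic spaces correspond to étale morphisms on truncations and that the higher homotopy of the structure sheaf is irrelevant for the site; (2) check $\Trc$ preserves the relevant fiber product so that the square of truncations is again cartesian; (3) identify the derived base change morphism of Definition \ref{dfn:is:bc} with its classical counterpart under these equivalences --- this is a formal compatibility of units/counits of adjunctions with an equivalence of topoi; (4) invoke the classical proper base change theorem for torsion coefficients on algebraic spaces. I expect step (1) to be the main obstacle: one has to be careful that $\idAS^{\et}_{\stU}$ really is equivalent (not just has equivalent homotopy category) to the classical small étale site of $\Trc\stU$, which amounts to showing that the mapping spaces in $\idAS^{\et}_{\stU}$ between étale objects are discrete --- this follows because an étale morphism is $0$-truncated relative to $\stU$, so there are no higher automorphisms, but spelling this out rigorously using the definitions of étaleness from \S\ref{ss:dSt:dSt} and the truncation formalism of \S\ref{sss:dSt:trunc} is where the real work lies. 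Step (3) is routine once the equivalence in (1) is set up functorially, and step (4) is a black box from the classical theory (e.g. as in \cite{LM} or \cite{O:book}).
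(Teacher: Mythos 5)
Your proposal is correct and follows essentially the same route as the paper: the paper's proof also reduces to the classical proper base change theorem for torsion coefficients by combining the comparison of the \'etale $\infty$-topos with a classical \'etale site (via Lemma \ref{lem:dAS:small-et}, the affine version of your step (1)) with the fact that properness is by definition a condition on truncations. Your write-up merely spells out in more detail the truncation equivalence $\stU_{\et}\simeq(\Trc\stU)_{\et}$ and the compatibility of the base change morphisms, which the paper leaves implicit.
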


\begin{proof} 
By the equivalence $\stU_{\et} \simeq \stU_{\et^{\aff}}$ 
(Lemma \ref{lem:dAS:small-et})
and Definition \ref{dfn:dAS:prp} of proper morphisms,
we can reduce the problem to the proper base change 
for modules over sheaves of torsion rings on schemes 
\cite[XII, Th\'eor\`eme 5.1]{SGA4}.
\end{proof}

The proper base change naturally extends to 
the derived $\infty$-categories, and we have

\begin{lem}\label{lem:dAS:proper-bc}
Under the same assumption with Lemma  \ref{lem:dAS:pbc1},
the base change morphism
$ g_{\et}^* \, f^{\et}_* \to {f'}^{\et}_* \, {g'}_{\et}^*$
of functors between derived $\infty$-categories 
$\iDa(\stU'_{\et},\Lambda) \to \iDa(\stV_{\et},\Lambda)$
is an equivalence.
\end{lem}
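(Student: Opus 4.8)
The plan is to bootstrap the $\infty$-categorical proper base change from the heart-level statement in Lemma~\ref{lem:dAS:pbc1} using the standard "way-out" technique, adapted to stable $\infty$-categories. First I would fix the cartesian square and recall that, by Proposition~\ref{prp:is:df}, the functors $g_{\et}^*, {g'}_{\et}^*$ are left $t$-exact and $f^{\et}_*, {f'}^{\et}_*$ are right $t$-exact, so both composites $g_{\et}^* f^{\et}_*$ and ${f'}^{\et}_* {g'}_{\et}^*$ are defined on all of $\iDa(\stU'_{\et},\Lambda)$ and the base change morphism $\alpha: g_{\et}^* f^{\et}_* \to {f'}^{\et}_* {g'}_{\et}^*$ (Definition~\ref{dfn:is:bc}) is a well-defined natural transformation. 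The claim is that $\alpha$ is an equivalence, i.e.\ that $\alpha_{\shM}$ is an equivalence in $\iDa(\stV_{\et},\Lambda)$ for every $\shM$.

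The key steps, in order: (1) Reduce to the bounded-below case. Since the $t$-structure on $\iDa(\stU'_{\et},\Lambda)$ is right complete (Lemma~\ref{lem:is:srm}(4)) and all four functors involved in $\alpha$ preserve the relevant filtered colimits / are exact, the full subcategory of $\shM$ on which $\alpha_{\shM}$ is an equivalence is a stable subcategory closed under the colimits computing the Postnikov-type (co)limit, so it suffices to check on $\shM \in \iDa(\stU'_{\et},\Lambda)_{\ge -n}$ for each $n$, and then by shifting on $\iDa(\stU'_{\et},\Lambda)_{\ge 0}$. (2) Dévissage on bounded-below objects. The subcategory where $\alpha$ is an equivalence is stable under extensions and shifts within the bounded-below part, so using the tower $\tau_{\le m}\shM$ and the fact that a bounded-below object is the limit of its truncations (again right completeness), reduce to $\shM$ concentrated in a single degree, i.e.\ to $\shM$ in the heart $\iDa(\stU'_{\et},\Lambda)^{\heartsuit} \simeq \iMod_\Lambda(\stU'_{\et})$. (3) Apply Lemma~\ref{lem:dAS:pbc1}: for $\shM$ in the heart, $\alpha_{\shM}$ is precisely (up to the identification of the heart with $\iMod_\Lambda$) the classical base change morphism of sheaves of $\Lambda$-modules, which is an equivalence by that lemma; one must also observe that the higher homotopy sheaves of both sides vanish appropriately, which follows since $g^*$ is $t$-exact on the heart (being flat/étale pullback, cf.\ Lemma~\ref{lem:is:srm}) and proper pushforward of a torsion sheaf has the expected cohomological amplitude so that the comparison of all $\pi_j$ reduces to the heart-level statement applied to each cohomology sheaf.

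I expect the main obstacle to be step~(1)–(2): making the dévissage genuinely work requires care that the base change transformation $\alpha$ is compatible with the truncation functors and that the relevant subcategory is closed under the limits/colimits used to reassemble a general object from its heart pieces. Concretely, the subtle point is that $f^{\et}_*$ (proper pushforward) is only right $t$-exact, not $t$-exact, so one needs a uniform bound on its cohomological amplitude — this is where properness plus the torsion hypothesis on $\Lambda$ enters, via finiteness of cohomological dimension in \cite[XII]{SGA4} — to guarantee that the Postnikov tower argument converges and that checking on each $\pi_j$ (a heart object) suffices. Once that bound is in place, the reduction to Lemma~\ref{lem:dAS:pbc1} is routine. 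I would phrase the proof as: "The claim is that the natural transformation $\alpha$ is an equivalence; by right completeness of the $t$-structure (Lemma~\ref{lem:is:srm}) together with the finite cohomological amplitude of $f^{\et}_*$ on torsion sheaves guaranteed by properness, it suffices to verify this after applying $\pi_j$ for each $j \in \bbZ$; both sides then land in the heart $\iMod_\Lambda(\stV_{\et})$ and the resulting morphism is the classical proper base change morphism, which is an equivalence by Lemma~\ref{lem:dAS:pbc1}."
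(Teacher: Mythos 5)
Your proof is correct and is essentially the argument the paper intends: the paper gives no proof at all, merely asserting that the heart-level statement of Lemma~\ref{lem:dAS:pbc1} ``naturally extends'' to the derived $\infty$-categories, and your d\'evissage via right completeness of the $t$-structure together with the finite cohomological amplitude of $f^{\et}_*$ (coming from properness and the torsion hypothesis on $\Lambda$) is the standard way to make that extension precise. One small correction to step~(3): the $t$-exactness of $g_{\et}^*$ does not come from $g$ being flat or \'etale --- no such hypothesis is placed on $g$ in the cartesian square --- but from the fact that the coefficient sheaf $\Lambda$ is constant, so that $g^* = g^{-1}\otimes_{g^{-1}\Lambda}\Lambda \simeq g^{-1}$ involves no change of rings and hence no derived tensor product.
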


%%%%%%%%%%%%%%%%%%%%%%%%%%%%%%%%%%%%%%%%%%%%%%%%%%%%%%%%%%%%%%%%%%%%%%%%%%%%%%%%
%%%%%%%%%%%%%%%%%%%%%%%%%%%%%%%%%%%%%%%%%%%%%%%%%%%%%%%%%%%%%%%%%%%%%%%%%%%%%%%%
\subsection{Direct image functor with proper support}
\label{ss:dAS:dip}

In \S \ref{ss:dAS:dual} we will introduce dualizing complexes 
for derived algebraic spaces.
For that, we need the shriek functors $f_!$ and $f^!$,
which will be defined in this and the next subsections.
Our argument is a simple analogue of that for schemes \cite[XVII]{SGA4}.
We continue to work over a commutative ring $k$.

%%%%%%%%%%%%%%%%%%%%%%%%%%%%%%%%%%%%%%%%%%%%%%%%%%%%%%%%%%%%%%%%%%%%%%%%%%%%%%%%
\subsubsection{Open immersions}
\label{sss:dAS:oi}

Let $j: \stU \to \stV$ be an open immersion of derived algebraic spaces 
(Definition \ref{dfn:dAS:oi}).
Recall that we defined an open immersion using the more general definition of 
an open geometric immersion of $\infty$-topoi (Definition \ref{dfn:is:oi}).
By the argument in \S \ref{ss:is:oci}, associated to $j$ 
we have two geometric morphisms of $\infty$-topoi:
\[
 j_!: \stU_{\et}  \adjunc \stV_{\et} :j^*, \quad 
 j^*: \stV_{\et}  \adjunc \stU_{\et} :j_*.
\]

Let $(\stU_{\et},\shA)$ be a ringed $\infty$-topos, where $\stU_{\et}$ is 
the \'etale $\infty$-topos of the derived algebraic space $\stU$.
Then we have another ringed $\infty$-topos $(\stV_{\et},j^*\shA)$.
The above geometric morphisms induce the adjunctions 
\[
 j_!: \iMod_{   \shA}(\stU_{\et})  \adjunc \iMod_{j^*\shA}(\stV_{\et}) :j^*, \quad 
 j^*: \iMod_{j^*\shA}(\stV_{\et})  \adjunc \iMod_{   \shA}(\stU_{\et}) :j_*.
\]
We call $j_!$ the \emph{extension by zero}, and $j^*$ the \emph{restriction}.
We also have a morphism 
\[
 j_! \longto j_*
\]
in $\iFun(\iMod_{\shA}(\stU_{\et}),\iMod_{j^*\shA}(\stV_{\et}))$.

%%%%%%%%%%%%%%%%%%%%%%%%%%%%%%%%%%%%%%%%%%%%%%%%%%%%%%%%%%%%%%%%%%%%%%%%%%%%%%%%
\subsubsection{Compactifiable morphism}

Recall that an $S$-morphism $f: X \to Y$ of schemes over a base scheme $S$ 
is \emph{$S$-compactifiable} \cite[XVII, Definition 3.2.1]{SGA4}
if there exists an $S$-scheme $P$ which is proper over $S$
and a factorization $f = (X \xr{j} P \times_S Y \xr{p_Y}Y)$ 
with $j$ quasi-finite and separated.
If moreover $S=Y$, then $f$ is called \emph{compactifiable}.

Let us define the corresponding notion for algebraic spaces as follows:

\begin{dfn*}
An $S$-morphism $X \to Y$ of algebraic spaces over a base scheme $S$
is \emph{$S$-compactifiable}
if there exists an algebraic space $P$ over $S$
which is proper  (Definition \ref{dfn:Cl:AS:prp}) 
and a factorization $f= (X \xr{j} P \times_S Y \xr{p_Y}Y)$ 
with $j$ quasi-compact (Definition \ref{dfn:Cl:AS:q-cpt}), 
locally quasi-finite (Definition \ref{dfn:Cl:AS:P})
and separated (Definition \ref{dfn:Cl:AS:cim} (2)).
\end{dfn*}

Recalling Definition \ref{dfn:dAS:prp} of proper morphisms 
of derived algebraic spaces, we introduce a derived analogue.
Let us write the base commutative ring $k$ explicitly for a while.

\begin{dfn*}
A morphism $f: \stU \to \stV$ of derived algebraic spaces 
over $k$ is \emph{compactifiable}
if there exists a proper derived algebraic space $\stP$ 
(Definition \ref{dfn:dAS:prp}) and a factorization
$f=  (\stU \xr{j} \stP \times_{\dSpec k} \stV \xr{p_{\stV}}\stV)$ 
such that the truncation
$\Trc f  =  (\Trc \stU \xr{\Trc j} \Trc \stU \times_{\Spec k} \Trc \stV 
 \xr{p_{\Trc \stV}} \Trc \stV)$ 
makes $\Trc f$ a compactifiable morphism of algebraic spaces.
\end{dfn*}

Next we introduce an analogue of the category $(S)$ 
of $S$-compactifiable morphisms \cite[XVII.3.2]{SGA4}
in the context of derived algebraic spaces.

\begin{dfn*}
We define $\idAS^{\cpt}_k$ to be the sub-$\infty$-category of $\idAS_k$
whose objects are derived algebraic spaces $\stU$ over $k$
whose truncations $\Trc \stU$ are quasi-compact and quasi-separated 
algebraic spaces over $\Spec k$
and whose morphisms are compactifiable morphisms.
\end{dfn*}

Then it is natural to set 

\begin{dfn*}
Let $f: \stU \to \stV$ be a morphism in $\idAS^{\cpt}_k$.
A \emph{compactification} of $f$ is a triangle
\[
 \xymatrix{ 
  \stU \ar@{^{(}->}[r]^i \ar[d]_{f} & 
  \ol{\stU} \ar[ld]^{\ol{f}} \\ \stV}
\] 
in $\idAS_k$ with $i$ an open immersion and $\ol{f}$ proper.
%We sometimes call $\ol{\stU}$ a compactification of $f$.
\end{dfn*}

Since our definitions for derived algebraic spaces 
refer only to the truncated algebraic spaces,
the argument in \cite[XVII Proposition 3.23]{SGA4} works as it is,
and we have

\begin{lem*}
Any morphism $f$ in $\idAS^{\cpt}_k$ has a compactification.
\end{lem*}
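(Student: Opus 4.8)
The plan is to transcribe the proof of \cite[XVII, Proposition 3.2.3]{SGA4} into the derived setting, the only non-formal ingredient being a derived form of Zariski's Main Theorem. Unwinding the definition of a compactifiable morphism, the hypothesis on $f$ provides a proper derived algebraic space $\stP$ over $k$ and a factorization
\[
 f = \bigl( \stU \xr{j} \stX \xr{p} \stV \bigr), \qquad \stX := \stP \times_{\dSpec k} \stV ,
\]
with $p$ the second projection, such that $\Trc j$ is quasi-compact, locally quasi-finite and separated, and $\Trc \stX$ is quasi-compact and quasi-separated. It therefore suffices to factor $j$ itself, inside $\idAS_k$, as an open immersion $i \colon \stU \to \ol{\stU}$ followed by a finite morphism $g \colon \ol{\stU} \to \stX$: then $\ol{f} := p \circ g$ is proper, $i$ is an open immersion, and $\ol{f} \circ i = p \circ g \circ i = p \circ j = f$, so that $\stU \xr{i} \ol{\stU} \xr{\ol{f}} \stV$ is a compactification of $f$.

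Next I would clear away the formal points. Since $\Trc$ commutes with limits, $\Trc \stX \simeq \Trc \stP \times_{\Spec k} \Trc \stV$; as $\stP$ is proper, $\Trc \stP \to \Spec k$ is proper, hence so is its base change $\Trc p$, i.e.\ $p$ is proper in the sense of Definition \ref{dfn:dAS:prp}. A finite morphism is proper, so $\Trc g$ is proper, and therefore $\Trc \ol{f} = \Trc p \circ \Trc g$ is a composition of proper morphisms of algebraic spaces and hence proper; thus $\ol{f}$ is proper. Since $g$ is finite and $\stX \in \idAS_k$, the target $\ol{\stU}$ is again a derived algebraic space, and $\Trc \ol{\stU}$ is finite over the quasi-compact quasi-separated algebraic space $\Trc \stX$, hence quasi-compact and quasi-separated. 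Finally $\stX \in \idAS_k$: fiber products of derived algebraic spaces are again derived algebraic spaces, combining Fact \ref{fct:dSt:geom} (3) with the stability of \'etale morphisms and of monomorphisms under the relevant base changes.

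The heart of the matter is thus a derived Zariski Main Theorem: \emph{a morphism $j \colon \stU \to \stX$ of derived algebraic spaces for which $\Trc j$ is quasi-compact, locally quasi-finite and separated admits a factorization $\stU \xr{i} \ol{\stU} \xr{g} \stX$ with $i$ an open immersion and $g$ finite.} One option is to invoke the derived (equivalently, in the discrete setting, spectral) form of Zariski's Main Theorem directly. Alternatively one reduces to the classical statement for algebraic spaces applied to $\Trc j$, which produces $\Trc \stU \xr{a} W \xr{b} \Trc \stX$ with $a$ an open immersion and $b$ finite.

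The delicate step is then to upgrade $W$ to a derived algebraic space $\ol{\stU}$, finite over $\stX$, that receives a quasi-compact open immersion from $\stU$; equivalently, to extend the structure sheaf of $\stU$ across the closed complement of $\Trc \stU$ in $W$ compatibly with finiteness over $\stX$. Since $\Trc j = b \circ a$ is quasi-affine, so is $j$, so that $\stU$ is a quasi-compact open immersion into the derived algebraic space affine over $\stX$ with structure sheaf $j_* \shO_{\stU}$; one then builds $\ol{\stU}$ by lifting the coherent $\shO_{\Trc \stX}$-algebra underlying the finite morphism $b$ to a suitable finite $\shO_{\stX}$-algebra --- \'etale-locally on $\stX$, where the lift can be chosen so as to recover $j_* \shO_{\stU}$ over $\Trc \stU$, and then gluing, the local lifts being essentially unique so that the gluing carries no cocycle obstruction. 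Transporting open subobjects between $\stU$ and $\Trc \stU$ here uses the invariance $\stU_{\et} \simeq (\Trc \stU)_{\et}$ of the \'etale $\infty$-topos under the canonical thickening. This lifting and gluing is where essentially all the work lies, and constitutes the main obstacle: outside $\stU$ the derived structure of $\ol{\stU}$ must be created, not inherited. Granting the derived Zariski Main Theorem, applying it to $j$ and putting $\ol{f} := p \circ g$ finishes the proof by the formal points of the second paragraph.
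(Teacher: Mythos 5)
Your overall route is the one the paper takes: reduce to the classical compactification theorem of \cite[XVII]{SGA4} by passing to truncations, using that properness and compactifiability of morphisms of derived algebraic spaces are, by definition, conditions on $\Trc f$. Indeed the paper disposes of the lemma in a single sentence on exactly these grounds ("since our definitions refer only to the truncated algebraic spaces, the argument in SGA4 works as it is"), and your formal reductions in the first two paragraphs — properness of $p$, of $g$, and of the composite being checked on truncations — are correct and are precisely what that sentence relies on.

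The problem is the one you yourself flag and then do not resolve. The classical argument applied to $\Trc j$ produces only an algebraic space $W$ with $\Trc\stU \xr{a} W \xr{b} \Trc\stX$; a compactification in the sense of the paper requires an object $\ol{\stU}$ of $\idAS_k$ together with an actual morphism $\stU \to \ol{\stU}$ in $\idAS_k$ which is an open immersion, and neither the existence of $\ol{\stU}$ nor of this morphism is a condition on truncations. Since an open immersion is in particular \'etale (Definition \ref{dfn:dAS:oi} together with Definition \ref{dfn:dSt:dAff:mor}), $\shO_{\ol{\stU}}$ must restrict to $\shO_{\stU}$ over $\Trc\stU$, so the derived structure on $W$ away from $\Trc\stU$ really has to be manufactured; and one cannot simply take $\ol{\stU}=\Dex(W)$, because a morphism $\stU \to \Dex(W)$ over the given open immersion of truncations would require a splitting of $\shO_{\stU} \to \pi_0\shO_{\stU}$, which does not exist in general. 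Your proof is therefore explicitly conditional on a ``derived Zariski Main Theorem'' that you do not prove, and the sketch you offer for it rests on the unjustified assertion that the \'etale-local lifts of the finite $\shO_{\Trc\stX}$-algebra are essentially unique and glue without cocycle obstruction — but extensions of an algebra across a derived thickening are governed by the cotangent complex and are in general neither unique nor unobstructed, so this is exactly where the real content sits. As written, then, the proof has a genuine hole at the step you correctly identify as the crux. It is only fair to add that the paper's own one-line justification covers properness and compactifiability but not the construction of $\ol{\stU}$ itself, so the gap you expose is also present, unacknowledged, in the paper; but your argument does not close it.
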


Note that an open immersion of derived algebraic spaces 
lives in $\idAS^{\cpt}_k$.
Recall also that we have base change theorems
for open immersions (Lemma \ref{lem:is:j_!-bc})
and for proper morphisms (Lemma \ref{lem:dAS:proper-bc}).
Then by the construction in \cite[XVII.3.3, 5.1]{SGA4} we have

\begin{lem}
Let $\Lambda$ be a torsion commutative ring.
For each morphism $f: \stU \to \stV$ in the $\infty$-category $\idAS^{\cpt}_k$, 
we have a $t$-exact functor
\[ %\dR
 f_!: \iDa(\stU_{\et},\Lambda) \longto \iDa(\stV_{\et},\Lambda)
\]
which extends $i_!: \iMod(\stU_{\et},\Lambda) \to \iMod(\stV_{\et},\Lambda)$
for open immersion $i: \stU \to  \stV$ in $\idAS^{\cpt}_k$.
\end{lem}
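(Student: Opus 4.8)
The plan is to imitate the construction of \cite[XVII.3.3, 5.1]{SGA4}: define $f_!$ by means of a compactification, prove that the result is independent of the compactification by reduction to the base change theorems already established, and then upgrade the whole picture to a coherent statement at the level of $\infty$-categories.

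\emph{Construction.} By the existence lemma just proved, $f$ admits a compactification, i.e.\ a factorization $f = \ol f \circ i$ with $i\colon \stU \inj \ol{\stU}$ an open immersion of derived algebraic spaces and $\ol f\colon \ol{\stU} \to \stV$ proper. For the open immersion $i$ we have the extension by zero $i_!$ of \S\ref{sss:dAS:oi}; on the abelian level $i_!$ is exact (it is the left adjoint of the exact restriction $i^*$ and, as in the proof of Lemma \ref{lem:is:j_!}, it is the naive extension by the zero section), so it extends to an exact functor $i_!\colon \iDa(\stU_{\et},\Lambda) \to \iDa(\ol{\stU}_{\et},\Lambda)$ of stable $\infty$-categories. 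For the proper morphism $\ol f$ we use the direct image $\ol f_*\colon \iDa(\ol{\stU}_{\et},\Lambda) \to \iDa(\stV_{\et},\Lambda)$ of Proposition \ref{prp:is:df}(3), which we may invoke since proper base change (Lemma \ref{lem:dAS:proper-bc}) and finiteness of the cohomological dimension over a torsion ring make $\ol f_*$ behave well. Set $f_! := \ol f_* \circ i_!$. This is an exact functor of stable $\infty$-categories; it extends $i_!$ for open immersions since then one may take $\ol{\stU} = \stV$, $\ol f = \id$; and its exactness properties with respect to the $t$-structure are checked on the hearts, reducing to the corresponding statement for sheaves of modules over torsion rings on schemes as in \cite{SGA4}.

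\emph{Independence of the compactification.} Given two compactifications $(\ol{\stU}_1,i_1,\ol f_1)$ and $(\ol{\stU}_2,i_2,\ol f_2)$, one constructs a third dominating both: let $\ol{\stU}_{12}$ be the closed derived subspace of $\ol{\stU}_1 \times_{\stV} \ol{\stU}_2$ obtained as the closure of the image of $\stU$ — this product is proper over $\stV$, and $\stU$ embeds into $\ol{\stU}_{12}$ as an open subspace by separatedness — so that $\ol{\stU}_{12}$ maps to $\ol{\stU}_1$ and $\ol{\stU}_2$ by proper $\stV$-morphisms restricting to the identity on $\stU$. This reduces the comparison to a single morphism of compactifications $\pi\colon \ol{\stU}' \to \ol{\stU}$, proper over $\stV$ with $\pi^{-1}(\stU) \simeq \stU$, where it suffices to produce a canonical equivalence $\pi_* i'_! \simeq i_!$; composing with $\ol f_*$ then yields the desired comparison of the two versions of $f_!$. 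To get $\pi_* i'_! \simeq i_!$, apply the open-immersion base change of Lemma \ref{lem:is:j_!-bc} to the cartesian square formed by $\pi$ and $i$ to obtain $\pi^* i_! \simeq i'_!$; then apply $\pi_*$ and compute the costalk of $\pi_* i'_!$ along points of $\ol{\stU} \setminus \stU$ by proper base change (Lemma \ref{lem:dAS:proper-bc}): it is a direct image along the proper fibres of $\pi$, which lie in $\ol{\stU}' \setminus \stU$ where $i'_!(-)$ vanishes, so $\pi_* i'_!$ is the extension by zero of its restriction to $\stU$, i.e.\ canonically $i_!$.

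\emph{The main obstacle.} The real work, beyond \cite{SGA4}, is to make the comparisons of the previous paragraph coherent, so that $f_!$ is a well-defined object of $\iFun\bigl(\iDa(\stU_{\et},\Lambda),\iDa(\stV_{\et},\Lambda)\bigr)$ independent of all choices rather than merely defined up to a chosen equivalence. The plan is to organize the compactifications of the fixed morphism $f$ into a category $\mathcal{C}_f$ (cofiltered, since any two objects are dominated by a third as above, hence with weakly contractible nerve), to construct a functor $\mathcal{C}_f^{\op} \to \iFun\bigl(\iDa(\stU_{\et},\Lambda),\iDa(\stV_{\et},\Lambda)\bigr)$ sending $(\ol{\stU},i,\ol f) \mapsto \ol f_* i_!$ with transition maps the equivalences built above, and to define $f_!$ as the value of this essentially constant functor using the contractibility of $\Ner(\mathcal{C}_f)$. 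Producing that functor — i.e.\ supplying all the higher coherences at once, which in turn forces one to use the naturality of Lemmas \ref{lem:is:j_!-bc} and \ref{lem:dAS:proper-bc} as equivalences of functors and their compatibilities under composition — is the technical heart; it can be handled either through the straightening/unstraightening formalism or, closer to the cited pages of \cite{SGA4}, by fixing a sufficiently functorial rule assigning compactifications and checking that the two base change theorems assemble coherently over the category of such data.
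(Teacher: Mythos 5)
Your proposal is correct and follows essentially the same route as the paper, which simply invokes the existence of compactifications together with the open-immersion and proper base change lemmas (Lemmas \ref{lem:is:j_!-bc} and \ref{lem:dAS:proper-bc}) and then cites the construction of \cite[XVII.3.3, 5.1]{SGA4} without further detail. Your write-up is in fact more explicit than the paper's, particularly in isolating the $\infty$-categorical coherence of the comparison equivalences over the cofiltered category of compactifications as the genuine technical point that the citation to SGA4 leaves unaddressed.
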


\subsection{Extraordinary inverse image functor}
\label{ss:dAS:extra}

We continue to use the notations in the previous \S \ref{ss:dAS:dip}.
By \cite[XVIII Th\'eor\`eme 3.1.4]{SGA4} we have

\begin{lem*}
Let $f: \stU \to \stV$ be a morphism in $\idAS^{\cpt}_k$,
%the structure morphism $\stU \to \dSpec k$ is a compactifiable morphism.
and $\Lambda$ be a torsion commutative ring.
Then the functor 
$f_!: \iDa(\stU_{\et},\Lambda) \to \iDa(\stV_{\et},\Lambda)$
admits a right adjoint $t$-exact functor 
\[ %\dR
 f^!: \iDp(\stV_{\et},\Lambda) \longto \iDp(\stU_{\et},\Lambda).
\]
%which is $t$-exact in the sense of Definition \ref{dfn:stb:exact}.
We call it the \emph{extraordinary inverse image functor}.
\end{lem*}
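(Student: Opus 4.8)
The plan is to obtain $f^!$ as a right adjoint of $f_!$, using the factorisation of $f$ through a compactification together with the adjoint functor theorem for presentable $\infty$-categories. Fix a morphism $f\colon\stU\to\stV$ in $\idAS^{\cpt}_k$. By the preceding lemma it admits a compactification, i.e.\ a factorisation $f\simeq\ol f\circ i$ with $i\colon\stU\to\ol\stU$ an open immersion and $\ol f\colon\ol\stU\to\stV$ proper; by the construction of $f_!$ (and the identification $\ol f_!\simeq\ol f_*$ for proper $\ol f$) this gives an equivalence $f_!\simeq\ol f_*\circ i_!$ of functors $\iDa(\stU_{\et},\Lambda)\to\iDa(\stV_{\et},\Lambda)$. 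For the open immersion $i$ the functor $i_!$ already has a right adjoint, the restriction functor $i^*$ (see \S\ref{sss:dAS:oi} and \S\ref{ss:is:oci}). Thus it is enough to produce a right adjoint $\ol f^!$ of $\ol f_*\colon\iDa(\ol\stU_{\et},\Lambda)\to\iDa(\stV_{\et},\Lambda)$; then $f^!:=i^*\circ\ol f^!$ is right adjoint to $f_!$, and since a right adjoint is unique up to contractible ambiguity it is independent of the chosen compactification.

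First I would check that $\ol f_*$ preserves small colimits. The $\infty$-categories $\iDa(\ol\stU_{\et},\Lambda)$ and $\iDa(\stV_{\et},\Lambda)$ are presentable stable $\infty$-categories, being the derived $\infty$-categories of the Grothendieck abelian categories $\Mod_\Lambda(\ol\stU_{\et})$ and $\Mod_\Lambda(\stV_{\et})$ (Proposition~\ref{prp:is:G} and Lemma~\ref{lem:is:srm}). Since $\ol f_*$ is an exact functor of stable $\infty$-categories it automatically preserves finite colimits, so it preserves all small colimits once it commutes with filtered colimits. This is precisely where the assumption that $\Lambda$ is torsion enters: using $\stU_{\et}\simeq\stU_{\et^{\aff}}$ (Lemma~\ref{lem:dAS:small-et}) and Definition~\ref{dfn:dAS:prp} of properness, one reduces to the classical fact that for a proper morphism of quasi-compact quasi-separated (truncated) algebraic spaces the derived pushforward of sheaves of torsion $\Lambda$-modules has bounded cohomological amplitude and commutes with filtered colimits \cite[XVII]{SGA4}; this transports to the stable $\infty$-category $\iDa$. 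Granting this, $\ol f_*$ is a colimit-preserving functor between presentable $\infty$-categories and hence admits a right adjoint $\ol f^!$, exactly as in the construction of the direct and inverse image functors in \S\ref{sss:is:fun}.

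It remains to see that $f^!$ is $t$-exact and in particular restricts to a functor $\iDp(\stV_{\et},\Lambda)\to\iDp(\stU_{\et},\Lambda)$. This is formal from the preceding lemma, which asserts that $f_!$ is $t$-exact: a right $t$-exact functor has a left $t$-exact right adjoint, and a left $t$-exact functor has a right $t$-exact right adjoint, so $f^!$ is $t$-exact. Since $\iDp(\stV_{\et},\Lambda)=\bigcup_{n}\iDa(\stV_{\et},\Lambda)_{\le n}$, right $t$-exactness forces $f^!$ to carry left bounded objects to left bounded objects, and the restriction of the adjunction along these $t$-exact-stable subcategories yields the stated functor $f^!\colon\iDp(\stV_{\et},\Lambda)\to\iDp(\stU_{\et},\Lambda)$ together with the adjunction $f_!\dashv f^!$.

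The main obstacle is the middle step, namely the continuity and boundedness of proper pushforward for torsion coefficients: one must upgrade the classical étale-cohomological finiteness and colimit-compatibility statements of \cite[XVII]{SGA4} to the stable $\infty$-categorical framework of étale sheaves on derived algebraic spaces (and secure the needed presentability), after which the existence of $f^!$ and all its exactness properties follow by general $\infty$-categorical nonsense.
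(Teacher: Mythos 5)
Your construction of $f^!$ is sound in outline but follows a genuinely different route from the paper, which disposes of this lemma in a single line by citing SGA4, XVIII, Th\'eor\`eme 3.1.4 --- i.e.\ it reduces the existence of the right adjoint to the classical Poincar\'e--Verdier construction, just as it reduced proper base change (Lemma \ref{lem:dAS:pbc1}) and the construction of $f_!$ itself to their scheme-theoretic antecedents via $\stU_{\et}\simeq\stU_{\et^{\aff}}$ and truncation. You instead rebuild the adjoint inside the $\infty$-categorical framework: factor $f_!\simeq\ol f_*\circ i_!$ through a compactification, use the biadjunction for the open immersion to handle $i_!$, and obtain $\ol f^{\,!}$ from the adjoint functor theorem once $\ol f_*$ preserves small colimits. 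This is more self-contained, it makes explicit exactly which classical input is needed (finite cohomological amplitude and commutation with filtered colimits of proper pushforward with torsion coefficients), the presentability you invoke is indeed available (Proposition \ref{prp:is:G}, Fact \ref{fct:is:sm1}), and uniqueness of right adjoints gives independence of the chosen compactification for free, which the SGA4-style construction must verify by hand. Each approach buys something: the paper's is shorter but leans entirely on the classical theorem; yours isolates the analytic content and handles the $\infty$-categorical bookkeeping formally.

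The one genuine gap is in your $t$-exactness paragraph. The implication ``a right $t$-exact functor has a left $t$-exact right adjoint'' is formal and correct, but its companion ``a left $t$-exact functor has a right $t$-exact right adjoint'' is not a valid deduction: the adjunction $\Hom(f_!\shM,\shN)\simeq\Hom(\shM,f^!\shN)$ controls maps \emph{into} $f^!\shN$, whereas membership in the connective part of the $t$-structure is tested by maps \emph{out of} $f^!\shN$; the correct dual statement concerns \emph{left} adjoints of left $t$-exact functors. Moreover the conclusion genuinely fails: for a closed point $i$ of a smooth $d$-dimensional variety one has $i^!\Lambda\simeq\Lambda\tsh{-d}$, so $f^!$ is not $t$-exact in both directions. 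Fortunately the half you actually need --- that $f^!$ carries left bounded objects to left bounded objects, hence restricts to $\iDp(\stV_{\et},\Lambda)\to\iDp(\stU_{\et},\Lambda)$ --- follows from the valid half of your argument, namely the one-sided exactness of $f^!$ inherited from the finite amplitude of $f_!=\ol f_*\circ i_!$. You should therefore replace the symmetric ``$t$-exact'' claim by the statement that $f_!$ has bounded amplitude, so $f^!$ preserves the relevant bounded subcategory, which is all the lemma requires.
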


%%%%%%%%%%%%%%%%%%%%%%%%%%%%%%%%%%%%%%%%%%%%%%%%%%%%%%%%%%%%%%%%%%%%%%%%%%%%%%%%
%%%%%%%%%%%%%%%%%%%%%%%%%%%%%%%%%%%%%%%%%%%%%%%%%%%%%%%%%%%%%%%%%%%%%%%%%%%%%%%%
\subsection{Dualizing objects on derived algebraic spaces}
\label{ss:dAS:dual}

Having introduced functors between sheaves of modules,
we can now discuss the dualizing complexes for derived algebraic spaces.
Let us apologize that our discussion is not of full generality:
we only discuss a rather restricted situation over the base ring $k$
(see Assumption \ref{asp:dAS:kL} below).
It might be possible to take a more general base scheme as in \cite{LO1,LO2},
but we will not pursue this point.

We will define dualizing objects 
for derived algebraic spaces by some gluing argument,
following the discussion for algebraic spaces in \cite[3.1]{LO1}.

Let $\stW$ be a derived algebraic space over $k$.
By Lemma \ref{lem:dAS:small-et},
we can replace the \'etale $\infty$-site $\gsEt(\stW)=(\idAS^{\et}_{\stW},\et)$ 
by the $\infty$-site $(\idAff^{\et}_{\stW},\et^{\aff})$ 
consisting only of \'etale morphisms $U \to \stW$ from 
\emph{affine derived schemes} $U$, and have an equivalence
\[
 \rst{\stW_{\et}}{U} \simeq U_{\et^{\aff}}.
\]
Here the left hand side is the localized $\infty$-topos (Fact \ref{fct:pr:ovc}).

Hereafter we assume the following conditions.

\begin{asp}\label{asp:dAS:kL}
\begin{itemize}[nosep]
\item 
The base ring $k$ is a finite field or a separably closed field.
\item
The commutative ring $\Lambda$ is a torsion noetherian ring 
annihilated by an integer invertible in $k$.
\item
The derived algebraic space $\stW$ is separated (Definition \ref{dfn:dAS:sep})
and of finite presentation as a geometric derived stack 
(Definition \ref{dfn:dSt:fp}).
%quasi-compact (Definition \ref{dfn:dAS:qcpt}) and locally of finite presentation 
%as a geometric derived stack (Definition \ref{dfn:dSt:lfp}).
\end{itemize}
\end{asp}

Then we can further replace the underlying-$\infty$-category $\idAff^{\et}_{\stW}$ 
of the \'etale $\infty$-site by the full sub-$\infty$-category 
$\idAff^{\et,\fp}_{\stW}$ spanned by affine derived schemes 
of finite presentation (Definition \ref{dfn:dSt:fin-pr}).
By this observation, we regard 
\[
 U_{\et^{\aff}} = \iSh(\idAff^{\et,\fp}_{\stW},\et)
\]
in the following discussion.

A complex of $k$-module can be regarded as an object 
of the derived $\infty$-category $\iDa((\dSpec k)_{\et^{\aff}},\Lambda)$ 
%= \iMod_{\Lambda}(\dSpec k)$
of \'etale sheaves $\Lambda$-modules over $\dSpec k$.
Here $\dSpec k$ is seen as a derived algebraic space.
We fine the dualizing complex over $\dSpec k$ to be
\[
 \Omega_k := \Lambda \in \iDa((\dSpec k)_{\et^{\aff}},\Lambda).
\]
Recalling the extraordinary inverse image functor $\mu^!$ 
in \S \ref{ss:dAS:extra}, we introduce 

\begin{dfn*}
%Let $\stU$ be a derived algebraic space over a field $k$.
Let $u: U \to \stW$ be an object of $\idAff_{\stW}^{\et}$,
i.e., an \'etale morphism from an affine derived scheme $U$ over $k$.
Let also $\mu: U \to \dSpec k$ be the structure morphism.
We define the \emph{relative dualizing object} $\Omega_{u}$ to be 
\[ %\dR \mu^!
 \Omega_{u} := \mu^! \Omega_k \in \iDa(U_{\et^{\aff}},\Lambda).
\]
\end{dfn*}

\begin{lem}
The above construction 
$(u: U \to \stW) \mapsto \Omega_u$ is functorial in 
the $\infty$-topos $\stW_{\et^{\aff}}$. %$\gsEt^{\aff}(\stW)$.
In other words, for any morphism $f: U \to V$ in $\stW_{\et^{\aff}}$,
%$\gsEt^{\aff}(\stW)$,
we have a functorial isomorphism $f^* \Omega_v \simeq \Omega_u$.
\end{lem}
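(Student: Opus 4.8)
The plan is to reduce the asserted functoriality to two standard facts: the pseudo-functoriality of the shriek functors under composition, and the equivalence $f^{!}\simeq f^{*}$ for an \'etale morphism $f$. First I would record that for $u\colon U\to\stW$ in $\idAff_{\stW}^{\et}$ the structure morphism $\mu_{U}\colon U\to\dSpec k$ factors as $\mu_{U}\simeq\mu_{V}\circ f$ whenever $f\colon U\to V$ is a morphism of $\stW_{\et^{\aff}}$ (a morphism there being automatically a morphism of affine derived schemes over $k$, and moreover \'etale, separated and quasi-compact, hence a morphism of $\idAS_{k}^{\cpt}$, so that $f_{!}$ and $f^{!}$ are defined). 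The construction of $g_{!}$ by compactification, modelled on \cite[XVII.3.3, 5.1]{SGA4}, is pseudo-functorial, so there is a canonical equivalence $(\mu_{V}\circ f)_{!}\simeq\mu_{V,!}\circ f_{!}$; passing to right adjoints (which are determined up to contractible ambiguity) gives a canonical equivalence $\mu_{U}^{!}\simeq f^{!}\circ\mu_{V}^{!}$ of functors into $\iDp(U_{\et^{\aff}},\Lambda)$. Evaluating at $\Omega_{k}=\Lambda$ produces $\Omega_{u}\simeq f^{!}\,\Omega_{v}$.

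Next I would prove $f^{!}\simeq f^{*}$ for any $f$ in $\stW_{\et^{\aff}}$. The site $\idAff_{\stW}^{\et}$ is equivalent, via $\Trc=\pi_{0}$, to the ordinary small \'etale site of the algebraic space $\Trc\stW$: an \'etale algebra over $\pi_{0}A$ lifts, uniquely up to equivalence, to an \'etale derived $A$-algebra (a standard rigidity property of \'etale morphisms, consistent with the \'etale condition of Definition \ref{dfn:dSt:dAff:mor}). Consequently $U_{\et^{\aff}}\simeq(\Trc U)_{\et}$ as $\infty$-topoi, whence, using Lemma \ref{lem:dAS:small-et}, $\iDp(U_{\et^{\aff}},\Lambda)\simeq\iDp((\Trc U)_{\et},\Lambda)$; and under these equivalences $f^{*}$, $f_{!}$, $f^{!}$ correspond to the analogous functors for $\Trc f$ (for the shriek functors one uses that the compactification of $f$ is so chosen that it truncates to a compactification of $\Trc f$, which is built into Definition \ref{dfn:dAS:prp} and the discussion preceding it). It therefore suffices to invoke the classical purity for an \'etale morphism of algebraic spaces, $(\Trc f)^{!}\simeq(\Trc f)^{*}$ with trivial twist and shift since the relative dimension is $0$ (\cite[XVIII]{SGA4}, exactly as used in the non-derived stacky setting in \cite[3.1]{LO1}; alternatively one uses that in the \'etale topos $f_{!}$ is left adjoint to $f^{*}$, so that $f^{!}=f^{*}$ directly). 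Combining the two steps gives $f^{*}\,\Omega_{v}\simeq f^{!}\,\Omega_{v}\simeq\Omega_{u}$.

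Finally, the word \emph{functorial} requires upgrading this family of equivalences to a coherent one: for a further composition $g\colon V\to V'$ the two identifications of $(g\circ f)^{*}\Omega_{v'}$ with $\Omega_{u}$ must agree, so that $\{\Omega_{u}\}_{u}$ defines a Cartesian section of the relevant fibration over $\stW_{\et^{\aff}}$ (the datum one feeds into the gluing lemma, Fact \ref{fct:is:glue}). This follows from the cocycle identities for the pseudo-functorial structures of $(-)^{*}$ and $(-)^{!}$ together with the compatibility of the purity equivalence $f^{!}\simeq f^{*}$ with composition. I expect this coherence bookkeeping to be the main obstacle --- one must exhibit the isomorphisms not merely objectwise but as morphisms in the appropriate functor $\infty$-category --- but since every ingredient ($f^{*}$, $f_{!}$, $f^{!}$, and purity) depends only on truncations, it reduces to the corresponding classical statements in \cite{SGA4} and \cite[3.1]{LO1}.
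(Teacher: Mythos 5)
Your proof is correct and takes essentially the same route as the paper's (extremely terse) argument, which merely observes that $f$ is \'etale because $u$ and $v$ are and then invokes the inverse image functor $f^{*}$. You simply make explicit the two ingredients the paper leaves implicit --- the equivalence $\Omega_{u}\simeq f^{!}\Omega_{v}$ coming from $\mu\simeq\nu\circ f$ and pseudo-functoriality of the shriek functors, and the identification $f^{!}\simeq f^{*}$ for an \'etale morphism (relative dimension $0$, so no twist or shift) --- together with the coherence bookkeeping needed for the word ``functorial''.
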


\begin{proof}
In the square 
\[
 \xymatrix@C=1em{
  U \ar[rr]^{f} \ar[rd]^u \ar[rdd]_{\mu} & & V \ar[ld]_v \ar[ldd]^{\nu} \\ 
  & \stW \ar[d]^(0.4){w} \\ & \dSpec k
 }
\]
$f$ is \'etale since $u$ and $v$ are so.
Thus we have the inverse image functor 
$f^*: \iDa(V_{\et^{\aff}},\Lambda) \to \iDa(U_{\et^{\aff}},\Lambda)$,
which is the desired one.
\end{proof}

Recall the restriction of sheaves (Notation \ref{ntn:is:rst}).

\begin{prp}
There exists an object $\Omega_w \in \iDa(\stW_{\et},\Lambda)$,
uniquely up to contractible ambiguity,
such that $\rst{\Omega_w}{U} = \Omega_u$.
\end{prp}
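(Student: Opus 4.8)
The plan is to reconstruct $\Omega_w$ by the same gluing mechanism used in \cite[3.1]{LO1} for algebraic spaces, now packaged via the descent/gluing results of \S\ref{sss:is:spr}. First I would observe that the family $\{\Omega_u\}_{u : U \to \stW}$, indexed by objects of the \'etale $\infty$-site $\gsEt^{\aff}(\stW) = (\idAff^{\et,\fp}_{\stW},\et^{\aff})$, is a \emph{cartesian} section of the fibration whose fiber over $u : U \to \stW$ is $\iDa(U_{\et^{\aff}},\Lambda)$: this is precisely the content of the preceding lemma, which gives functorial equivalences $f^*\Omega_v \simeq \Omega_u$ for $f : U \to V$ over $\stW$, and one checks the cocycle compatibility for composable $f,g$ from the pseudo-functoriality of $f \mapsto f^*$ on the \'etale $\infty$-topoi (Lemma \ref{lem:dAS:et-geom}), which in turn comes from Proposition \ref{prp:dAS:sgm}. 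So the data to be glued is a cartesian section, and the target is to produce a single $\Omega_w \in \iDa(\stW_{\et},\Lambda)$ restricting to it.

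Next I would set up the input for the gluing lemma, Fact \ref{fct:is:glue}. Take a strictly simplicial hypercovering $U^{\str}_\bullet \to \one_{\stW_{\et}}$ of $\stW_{\et}$ coming from an \'etale atlas $U \to \stW$ by affine derived schemes (so $U_n$ is the $(n+1)$-fold fiber product, which is again an \'etale affine-derived-scheme object over $\stW$ because $\stW$ is separated, hence has affine diagonal after the reductions of Assumption \ref{asp:dAS:kL}), with augmentation $\pi : U^{\str}_\bullet \to \stW_{\et}$; let $\shA = \Lambda$ be the constant sheaf, $\iC_n := \iMod_\Lambda(U_{n,\et})$ (whole category, a Serre subcategory of itself). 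One must verify conditions (G1)--(G3): (G1) is the finite-cohomological-dimension statement for $\Lambda$-module sheaves on the \'etale topos of an affine derived scheme of finite presentation over $k$ — for $k$ a finite or separably closed field and $\Lambda$ torsion with order invertible in $k$, this follows by reducing via $U_{\et^{\aff}} \simeq (\Trc U)_{\et}$ (the \'etale topos only sees the truncation) to the classical bound on \'etale cohomological dimension for affine schemes of finite type over such a field, exactly as used implicitly in the proper base change reductions above; (G2) is the statement that $\iMod_\Lambda$ on $U_{-1} = \stW_{\et}$ equals cartesian sections of the strictly simplicial diagram of $\iMod_\Lambda$'s on the $U_n$, i.e.\ ordinary \'etale descent for sheaves of $\Lambda$-modules, which holds because $U^{\str}_\bullet$ is a hypercovering and $\stW_{\et}$ is hypercomplete (the $\infty$-topoi here are hypercomplete by the same arguments as Fact \ref{fct:idSt:hyper}); (G3) is compact generation of $\iDa(\stW_{\et},\Lambda)$, which follows from Proposition \ref{prp:is:Da=Stab} together with the explicit generators $\{j_!j^*\Lambda\}_{U}$ exhibited in the remark after Proposition \ref{prp:is:G}, these being compact since $\Lambda$ is noetherian and the topos is coherent.

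Then I would apply Fact \ref{fct:is:glue} to the cartesian section $[n] \mapsto \shK_n := \Omega_{u_n} \in \iDa(U_{n,\et},\Lambda)$, where $u_n : U_n \to \stW$ is the structure map of the hypercovering. The one remaining hypothesis to check is the vanishing $\shExt^i_\Lambda(\shK_0,\shK_0) = 0$ for $i < 0$; since $\shK_0 = \Omega_{u_0} = \mu_0^!\Lambda$ and $\mu_0^!$ is $t$-exact (the lemma in \S\ref{ss:dAS:extra}), $\shK_0$ lies in the heart, and $\shExt^i_\Lambda$ between two objects of the heart vanishes in negative degrees by definition of the $t$-structure and the fact that $\Map$-spaces have no negative homotopy realized as negative Ext. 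Fact \ref{fct:is:glue} then yields an object $\shK =: \Omega_w \in \iDa(\stW_{\et},\Lambda)$, determined up to contractible ambiguity, together with the statement that the restriction functor recovers $(\shK_n)_n$; in particular $\rst{\Omega_w}{U_n} \simeq \Omega_{u_n}$, and since a general object $u : U \to \stW$ of the site is dominated by the hypercovering (pull back $U^{\str}_\bullet$ along $u$ and use \'etale descent again), we get $\rst{\Omega_w}{U} \simeq \Omega_u$ for all $u$, with the uniqueness coming from Lemma \ref{lem:is:234} applied to $\shM = \shN = \Omega_w$. The main obstacle I anticipate is the careful verification of (G1), i.e.\ the uniform finite \'etale cohomological dimension on the pieces $U_{n,\et^{\aff}}$: one needs the truncations $\Trc U_n$ to remain quasi-compact quasi-separated of finite type over $k$ (so the hypercovering must be chosen inside $\idAff^{\et,\fp}_{\stW}$ using finiteness of presentation of $\stW$ and separatedness to control the fiber products), and then to invoke the classical bound; none of the other steps are more than bookkeeping with the machinery already set up in \S\ref{sss:is:spr}.
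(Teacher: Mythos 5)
Your overall strategy --- package the family $\{\Omega_u\}$ as a cartesian section over a strictly simplicial hypercovering and feed it to the gluing lemma (Fact \ref{fct:is:glue}) --- is exactly the route the paper takes, and your verification of (G1)--(G3) is sensible bookkeeping that the paper leaves implicit. The one genuine gap is your justification of the key hypothesis $\shExt^i_{\Lambda}(\shK_0,\shK_0)=0$ for $i<0$. You argue that $\shK_0=\Omega_{u_0}=\mu_0^!\Lambda$ lies in the heart because $\mu_0^!$ is $t$-exact, and that negative self-Exts of heart objects vanish. But a dualizing complex on an affine (derived) scheme of finite type over $k$ is not concentrated in a single cohomological degree: already for $U$ smooth of dimension $d$ over $k$ one has $\Omega_u\simeq \Lambda\tsh{d}=\Lambda(d)[2d]$ (this is forced by Lemma \ref{lem:dAS:3.1.2}), and for non-smooth $U$ --- which is allowed here, since $\stW$ is only assumed separated and of finite presentation and $U$ is merely \'etale over it --- the homotopy groups of $\Omega_u$ are spread over a range of degrees. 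So ``$\shK_0$ lies in the heart'' is false, and the vanishing of negative self-Exts is not a $t$-structure formality; for complexes with cohomology in several degrees it genuinely can fail.

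The correct input, and the one the paper uses, is the finiteness/biduality theorem of \cite[Th.\ finitude, Th\'eor\`eme 4.3]{SGA4.5}, which gives $\shHom_{\Lambda}(\Omega_u,\Omega_u)=\Lambda$ and hence $\shExt^i_{\Lambda}(\Omega_u,\Omega_u)=0$ for $i<0$ (the reduction to the classical statement goes through $U_{\et^{\aff}}\simeq(\Trc U)_{\et}$, as you yourself use elsewhere). With that one substitution, the rest of your argument --- including the uniqueness via Lemma \ref{lem:is:234}, which is anyway built into Fact \ref{fct:is:glue} --- goes through.
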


\begin{proof}
By the discussion in \cite[Th.\ finitude, Th\'eor\`eme 4.3]{SGA4.5}
we have $\shHom_{\Lambda}(\Omega_u,\Omega_u)=\Lambda$, which implies 
$\Ext^i_{\Lambda}(\Omega_u,\Omega_u)=0$ %(\rst{\stW_{\et}}{U})}
for any $i<0$.
Then the gluing lemma (Fact \ref{fct:is:glue}) gives the consequence.
\end{proof}

The object $\Omega_w$ satisfies the following properties.

\begin{lem}\label{lem:dAS:dual}
\begin{enumerate}[nosep]
\item 
$\Omega_w$ is of finite injective dimension.
In other words, $\pi_n \Omega_w =0$ for $n \gg 0$.

\item
For every $\shM \in \iDa(\stW_{\et},\Lambda)$, the canonical map
\[
 \Map_{\iDa(\stW_{\et},\Lambda)}(\shM,\Lambda) \longto 
 \Map_{\iDa(\stW_{\et},\Lambda)}(\shM \otimes_{\Lambda}\Omega_w,\Omega_w) 
\]
is an equivalence in $\topH$.
\end{enumerate}
\end{lem}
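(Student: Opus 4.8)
The two assertions are local on $\stW$ in the \'etale topology, so the plan is to reduce both to statements over affine derived schemes $u\colon U\to\stW$ and then to the classical situation of \'etale sheaves of $\Lambda$-modules on schemes, via the truncation functor. Concretely, I would use the equivalence $\rst{\stW_{\et}}{U}\simeq U_{\et^{\aff}}$ together with the defining property $\rst{\Omega_w}{U}=\Omega_u=\mu^!\Omega_k$ to transport each claim to $\iDa(U_{\et^{\aff}},\Lambda)$. For the finiteness claim (1), the key observation is that $\pi_n\Omega_w$ is a sheaf on $\stW_{\et}$ whose restriction to each $U$ is $\pi_n\Omega_u$, so it suffices to show $\pi_n\Omega_u=0$ for $n\gg 0$ with a bound independent of $U$ in the \'etale covering. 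Here one invokes the construction of $f^!$ from \S\ref{ss:dAS:extra} through a compactification $U\hookrightarrow\ol U\xrightarrow{\ol\mu}\dSpec k$, and since $\Omega_k=\Lambda$ has injective dimension $0$, finiteness of the injective dimension of $\mu^!\Lambda$ follows from the finiteness theorem for $R\mu_!$ and the boundedness of $f^!$ on $\iDp$; the uniform bound comes from the fact that $\stW$ has finite presentation (Assumption \ref{asp:dAS:kL}), so $\Trc\stW$ is quasi-compact and one can cover by finitely many $U$ of bounded relative dimension. Equivalently, one can cite \cite[Th.\ finitude]{SGA4.5} applied to $\Trc u$ and $\Trc\mu$, exactly as in \cite[3.1]{LO1}.

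For claim (2), I would first reduce to the affine case: by Lemma \ref{lem:is:234} (or directly by the sheaf property of $\shHom$), the two sides of the displayed map are the global sections over $\stW_{\et}$ of presheaves $U\mapsto\Map_{\iDa(U_{\et^{\aff}},\Lambda)}(\rst{\shM}{U},\Lambda)$ and $U\mapsto\Map_{\iDa(U_{\et^{\aff}},\Lambda)}(\rst{\shM}{U}\otimes_\Lambda\Omega_u,\Omega_u)$, which are sheaves on $\stW_{\et}$; so it is enough to check the map is an equivalence after restricting to each $U\in\stW_{\et^{\aff}}$. On $U$ we must show that $\shM'\mapsto\shM'\otimes_\Lambda\Omega_u$ induces an equivalence $\Map(\shM',\Lambda)\simto\Map(\shM'\otimes_\Lambda\Omega_u,\Omega_u)$, i.e.\ that $\Omega_u$ is a \emph{dualizing} object for $U_{\et^{\aff}}$. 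The adjunction $(f_!,f^!)$ from \S\ref{ss:dAS:extra} applied to $\mu\colon U\to\dSpec k$ together with the projection formula for $\mu_!$ reduces this to the statement over $\dSpec k$, where $\Omega_k=\Lambda$ and the claim is the identity $\Map(\shN,\Lambda)\simeq\Map(\shN\otimes_\Lambda\Lambda,\Lambda)$. More precisely, writing $\shHom_\Lambda(\Omega_u,\Omega_u)=\Lambda$ (already obtained in the proof of the preceding Proposition via \cite[Th.\ finitude, Th\'eor\`eme 4.3]{SGA4.5}) and using that $\Omega_u$ has finite injective dimension by (1), the functor $D_U:=\shHom_\Lambda(-,\Omega_u)$ satisfies $D_U\circ D_U\simeq\id$ on the appropriate subcategory; the displayed map is the composite $\Map(\shM,\Lambda)\to\Map(D_U D_U\shM,D_U D_U\Lambda)\simeq\Map(D_U\Lambda\otimes\Omega_u,\Omega_u)$ after identifying $D_U\Lambda=\Omega_u$, hence an equivalence.

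\textbf{Main obstacle.} The delicate point is the second claim, and specifically making sense of "$\Omega_u$ is dualizing" on the $\infty$-categorical level without a boundedness hypothesis on $\shM$: the hom-out-of-$\Lambda$ side is harmless, but $\shM\otimes_\Lambda\Omega_w$ and the biduality map require either that $\Omega_w$ be a perfect-enough complex (ensured by finite injective dimension, claim (1)) or a limit argument along the $t$-structure. I expect the cleanest route is to prove the biduality $\shM\simto\shHom_\Lambda(\shHom_\Lambda(\shM,\Omega_w),\Omega_w)$ first for $\shM$ in the heart (where it is the classical statement on $\Trc\stW$, cf.\ \cite[Th.\ finitude]{SGA4.5}), then extend to bounded $\shM$ by the five lemma along distinguished triangles and $t$-truncation, and finally to all of $\iDa$ using right completeness of the $t$-structure (Lemma \ref{lem:is:srm}) together with the finite injective dimension of $\Omega_w$, which guarantees $\shHom_\Lambda(-,\Omega_w)$ commutes with the relevant limits and shifts only by a bounded amount. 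The \'etale-local reduction and the citation bookkeeping from \cite{LO1,SGA4,SGA4.5} are then routine.
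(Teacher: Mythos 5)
Your treatment of part (1) is fine and matches what the paper intends: the paper's own proof is only the sentence ``(1) is the consequence of Assumption \ref{asp:dAS:kL}'', and your elaboration --- restrict to affine $U$, use $\rst{\Omega_w}{U}=\mu^!\Lambda$, invoke the finiteness theorems of \cite{SGA4.5} for $\Trc\mu$, and get a uniform bound from quasi-compactness/finite presentation --- is exactly the content being waved at.

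For part (2) there is a real problem with your route. The paper's proof is ``(2) is by construction'', and the intended argument is much shorter than yours: by the tensor--hom adjunction (the Lemma in \S\ref{sss:is:fun}) one has
\[
 \Map_{\iDa(\stW_{\et},\Lambda)}(\shM\otimes_{\Lambda}\Omega_w,\Omega_w)
 \simeq \Map_{\iDa(\stW_{\et},\Lambda)}\bigl(\shM,\shHom_{\Lambda}(\Omega_w,\Omega_w)\bigr),
\]
and under this identification the canonical map in the statement is induced by $\Lambda\to\shHom_{\Lambda}(\Omega_w,\Omega_w)$. So (2) is \emph{equivalent} to the single assertion $\shHom_{\Lambda}(\Omega_w,\Omega_w)\simeq\Lambda$, which is local and is precisely the input $\shHom_{\Lambda}(\Omega_u,\Omega_u)=\Lambda$ from \cite[Th.\ finitude, Th\'eor\`eme 4.3]{SGA4.5} that was already used to run the gluing lemma in the construction of $\Omega_w$ --- hence ``by construction''. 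You do cite this identity, but you then bury it inside a deduction from full biduality $D_U\circ D_U\simeq\id$, and that is where the argument breaks: the statement of (2) is asserted for \emph{every} $\shM\in\iDa(\stW_{\et},\Lambda)$, whereas biduality with respect to a dualizing object only holds on the constructible subcategory (the paper itself only proves it there, in Proposition \ref{prp:LE:dual} via Facts \ref{fct:LE:3.5.2} and \ref{fct:LE:3.5.3}). Your proposed extension of biduality ``from the heart to bounded to unbounded $\shM$'' cannot work, because $\shM\to\shHom_{\Lambda}(\shHom_{\Lambda}(\shM,\Omega),\Omega)$ already fails for non-constructible objects of the heart; no five-lemma or completeness argument will rescue that. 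The repair is simply to drop the biduality detour and use the adjunction reduction above, which needs nothing beyond $\shHom_{\Lambda}(\Omega_u,\Omega_u)=\Lambda$ and the gluing.
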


\begin{proof}
(1) is the consequence of Assumption \ref{asp:dAS:kL}.
(2) is by construction.
\end{proof}

Following the terminology in \cite[Definition 4.2.5]{Lur14}, we name

\begin{ntn}\label{ntn:dAS:dual}
We call $\Omega_w \in \iDa(\stW_{\et},\Lambda)$ the \emph{dualizing object}. 
\end{ntn}

In the later \S \ref{ss:LE:dual} 
we will discuss lisse-\'etale sheaves on derived stacks.
For that, we need functionality of dualizing objects 
with respect to smooth morphisms.
Now the Tate twist comes into play 
as in the cases of schemes and of algebraic spaces.
Let us set a notation for the Tate twist.

\begin{ntn}\label{ntn:dAS:Tate}
Under Assumption \ref{asp:dAS:kL} on $k$ and $\Lambda$, 
for $\shM \in \iDa(\stW_{\et},\Lambda)$ and $d \in \bbZ$, 
we denote by $\shM(d)$ the $d$-th Tate twist and set
\[
 \shM \tsh{d} := \shM(d)[2d].
\]
\end{ntn}

%Let us also recall the relative dimension of smooth morphisms of 
%algebraic spaces (Definition \ref{dfn:Cl:AS:reldim}).
%
%\begin{dfn*}
%A morphism $f:\stU \to \stV$ of derived algebraic spaces
%has \emph{relative dimension} $d \in \bbN \cup \{\infty\}$ 
%if the truncation $\Trc f: \Trc \stU \to \stV$ 
%has relative dimension $d$ as a morphism of algebraic spaces.
%\end{dfn*}

Now the proof of \cite[3.1.2 Lemma]{LO1} works for derived algebraic spaces,
and we have 

\begin{lem}\label{lem:dAS:3.1.2}
Let $\stW_1$ and $\stW_2$ be derived algebraic spaces over $k$ 
which are separated (Definition \ref{dfn:dAS:sep}) 
and finitely presented (Definition \ref{dfn:dSt:mor-Q}),
and let $f: \stW_1 \to \stW_2$ be a smooth morphism of relative dimension $d$ 
(Definition \ref{dfn:dSt:reldim}).
We denote by $\Omega_i$ the dualizing objects of $\stW_i$ ($i=1,2$).
Then we have an equivalence
\[
 f^* \Omega_2 \simeq \Omega_1 \tsh{-d}.
\]
\end{lem}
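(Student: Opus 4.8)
The plan is to reduce the assertion to an étale-local computation and then invoke the relative purity isomorphism for smooth morphisms from \cite{SGA4}, exactly as in the proof of \cite[3.1.2 Lemma]{LO1}, but bookkept through the $\infty$-categorical formalism set up in \S\ref{ss:dAS:dip}--\S\ref{ss:dAS:dual}. First I would recall that, by Lemma \ref{lem:dAS:small-et}, the dualizing object $\Omega_i$ of $\stW_i$ is determined up to contractible ambiguity by its restrictions to affine members of the étale $\infty$-site: $\rst{\Omega_i}{U} \simeq \Omega_u = \mu^! \Omega_k$ for every $u : U \to \stW_i$ in $\idAff^{\et}_{\stW_i}$ with structure morphism $\mu : U \to \dSpec k$. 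By the uniqueness statement in the gluing lemma (Fact \ref{fct:is:glue}), together with the vanishing $\Ext^i_{\Lambda}(\Omega_u,\Omega_u) = 0$ for $i < 0$ that was used to construct $\Omega_1$, it is enough to produce a family of equivalences $\rst{f^* \Omega_2}{U} \simeq \rst{\Omega_1 \tsh{-d}}{U}$ in $\iDa(U_{\et^{\aff}},\Lambda)$, functorial in $U \in \stW_{1,\et^{\aff}}$.

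The local step would go as follows. Fix an étale $u : U \to \stW_1$ with $U$ an affine derived scheme. Since $f$ is smooth, after refining $U$ by an étale cover we may factor $f \circ u$ through an étale $v : V \to \stW_2$ with $V$ affine, so that the induced map $g : U \to V$ is smooth of relative dimension $d$ (cancellation: $g$ is smooth of relative dimension $d$ because $f \circ u$ is and $v$ is étale). The commuting square of étale $\infty$-topoi then gives $\rst{f^* \Omega_2}{U} \simeq g^* \rst{\Omega_2}{V} \simeq g^* \Omega_v = g^* \nu^! \Omega_k$, where $\nu : V \to \dSpec k$ is the structure morphism. On the other hand the structure morphism of $U$ is $\mu = \nu \circ g$, whence $\Omega_u = \mu^! \Omega_k \simeq g^! \nu^! \Omega_k = g^! \Omega_v$ by functoriality of the extraordinary inverse image (\S\ref{ss:dAS:extra}). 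The key input is then the relative purity equivalence $g^! \simeq g^* \tsh{d}$ for the smooth morphism $g$ of relative dimension $d$: since all the functors $f_!$ and $f^!$ on derived algebraic spaces were defined through their truncations, this is nothing but the classical purity isomorphism of \cite[XVIII]{SGA4} for the smooth morphism $\Trc g$. Combining, $\Omega_u \simeq g^* \Omega_v \tsh{d} \simeq \rst{f^* \Omega_2}{U}\tsh{d}$, i.e. $\rst{f^* \Omega_2}{U} \simeq \Omega_u \tsh{-d} = \rst{\Omega_1 \tsh{-d}}{U}$, as wanted.

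Finally I would verify that these pointwise equivalences are compatible with restriction along morphisms $U' \to U$ in $\stW_{1,\et^{\aff}}$, so that they assemble, via Lemma \ref{lem:is:234} (or directly by the uniqueness in Fact \ref{fct:is:glue}), into a global equivalence $f^* \Omega_2 \simeq \Omega_1 \tsh{-d}$ in $\iDa(\stW_{1,\et},\Lambda)$. I expect this last coherence to be the main obstacle: one must know that the purity isomorphism $g^! \simeq g^* \tsh{d}$ and the base-change equivalences of Lemma \ref{lem:is:j_!-bc} and Lemma \ref{lem:dAS:proper-bc} are natural enough in $g$ and in $U$ to glue over the étale $\infty$-site. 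This is precisely the content of the argument of \cite[3.1.2 Lemma]{LO1} transported to the derived $\infty$-categorical setting; the bulk of the remaining work is chasing base-change morphisms rather than any genuinely new geometric phenomenon, since the derived structure never intervenes beyond the fact that $\stW_{i,\et} \simeq (\Trc \stW_i)_{\et}$.
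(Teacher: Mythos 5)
Your proposal is correct and follows essentially the same route as the paper, which gives no written proof but simply defers to \cite[3.1.2 Lemma]{LO1}: restrict to affine \'etale charts, identify $\Omega_u = g^!\Omega_v$ via functoriality of $(-)^!$, apply smooth purity $g^! \simeq g^*\tsh{d}$ (which reduces to the classical statement on truncations), and glue using the negative-$\shExt$ vanishing together with Fact \ref{fct:is:glue} and Lemma \ref{lem:is:234}. Your explicit attention to the descent-compatibility of the local equivalences is exactly the point the paper leaves implicit.
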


%%%%%%%%%%%%%%%%%%%%%%%%%%%%%%%%%%%%%%%%%%%%%%%%%%%%%%%%%%%%%%%%%%%%%%%%%%%%%%%%
%%%%%%%%%%%%%%%%%%%%%%%%%%%%%%%%%%%%%%%%%%%%%%%%%%%%%%%%%%%%%%%%%%%%%%%%%%%%%%%%
%%%%%%%%%%%%%%%%%%%%%%%%%%%%%%%%%%%%%%%%%%%%%%%%%%%%%%%%%%%%%%%%%%%%%%%%%%%%%%%%
\section{Lisse-\'etale sheaves on derived stacks}
\label{s:LE}

The purpose of this section is to introduce the lisse-\'etale site 
on a derived stack of certain type and build a theory of 
constructible sheaves with finite coefficient on the lisse-\'etale site.
%On the ordinary algebraic stacks the corresponding theory is developed 
%in \cite{LM} with some correction in \cite{O}.

Let us recall the situation in the ordinary setting.
For an algebraic stack $X$, we consider the lisse-\'etale site 
whose underlying category consists of algebraic spaces smooth over $X$
and whose Grothendieck topology is given by the \'etale coverings.
In \cite{LM} the theory of derived category of constructible sheaves 
on this lisse-\'etale site is developed.
As explained in \cite{O}, some arguments in \cite{LM} are not correct 
due to the non-functoriality of the lisse-\'etale site.
Using cohomological descent those errors are fixed in \cite{O}.
Based on this correction, the theory of derived functors of 
$\ol{\bbQ}_{\ell}$-sheaves on algebraic stacks 
is constructed in \cite{LO1, LO2}.

As in the previous \S \ref{s:dAS}, 
we work over a fixed base commutative ring $k$
unless otherwise stated, and suppress the notation of $k$.
%For example, $\idSt$ means $\idSt_k$.
%Let $\Lambda$ be a torsion noetherian commutative $k$-algebra 
%annihilated by an integer invertible in $k$.

%%%%%%%%%%%%%%%%%%%%%%%%%%%%%%%%%%%%%%%%%%%%%%%%%%%%%%%%%%%%%%%%%%%%%%%%%%%%%%%%
%%%%%%%%%%%%%%%%%%%%%%%%%%%%%%%%%%%%%%%%%%%%%%%%%%%%%%%%%%%%%%%%%%%%%%%%%%%%%%%%
\subsection{The lisse-\'etale site}
\label{ss:LE:lis-et}

The lisse-\'etale site for algebraic stacks 
is introduced in \cite[Chap.\ 12]{LM} 
in order to build a reasonable theory of sheaves on algebraic stacks.
Its definition is a mixture of \'etale and smooth morphisms,
reflecting Definition \ref{dfn:Cl:AlgSt} of algebraic stacks 
which uses both \'etale and smooth morphisms. 
Our definition of the lisse-\'etale $\infty$-site for a derived stack
will be a simple analogue of this lisse-\'etale site.

We begin with somewhat similar argument to \S \ref{ss:dAS:eis}.
For a derived stack $\stX$, 
we have the over-$\infty$-category $\oc{\idSt}{\stX}$,
which can be regarded as the $\infty$-category of pairs $(\stY,y)$
consisting of $\stY \in \idSt$ and $y: \stY \to \stX$ a morphism in $\idSt$.
We denote by $\oc{\idAS}{\stX}$ the full sub-$\infty$-category of 
$\oc{\idSt}{\stX}$ spanned by those pairs $(\stU,u)$
with $\stU$ a derived algebraic space and $u: \stU \to \stX$ a morphism in $\idSt$.

\begin{dfn*}
Let $\stX$ be a geometric derived stack.
\begin{enumerate}[nosep]
\item
The \emph{lisse-\'etale $\infty$-site} of $\stX$ is the $\infty$-site
$\gsLE(\stX):=(\idAS_{\stX}^{\lis},\txle)$ consisting of 
\begin{itemize}[nosep]
\item 
The full sub-$\infty$-category $\idAS_{\stX}^{\lis}$ of $\oc{\idAS}{\stX}$
spanned those $(\stU,u)$ with $u: \stU \to \stX$ 
a smooth morphism of derived stacks (Definition \ref{dfn:dSt:mor-Q}).

\item
The Grothendieck topology $\txle$, called the \emph{lisse-\'etale topology},
on $\idAS_{\stU}^{\lis}$ whose set $\Cov_{\txle}(\stU)$ of covering sieves of 
$\stU \in \idAS_{\stU}^{\lis}$ consists of families 
$\{\stU_i \to \stU\}_{i \in I}$
such that each $\stU_i \to \stU$ is an \'etale morphism derived stacks
(Definition \ref{dfn:dSt:mor-Q}),
and the induced $\coprod_{i \in I} \stU_i \to \stU$ 
is an epimorphism of derived stacks (Definition \ref{dfn:dSt:epi-mono}).
\end{itemize}

\item
We denote the associated $\infty$-topos (Fact \ref{fct:pr:sh}) 
by $\stX_{\txle} := \iSh(\gsLE(\stX))$ 
and call it the \emph{lisse-\'etale $\infty$-topos} on $\stX$.
\end{enumerate}
\end{dfn*}

As in the non-derived case \cite[Lemma (12.1.2)]{LM} we have 

\begin{lem*}
$\stX_{\txle}$ is equivalent to the $\infty$-topos 
arising from the $\infty$-site $(\idAS_{\stX}^{\lis},\txle')$
where $\txle'$ is the Grothendieck topology whose covering sheaves 
consist only of \emph{finite} families of \'etale morphisms.
\end{lem*}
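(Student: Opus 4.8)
The plan is to show that the two $\infty$-sites, which have the \emph{same} underlying $\infty$-category $\idAS_{\stX}^{\lis}$, in fact carry the same Grothendieck topology, whence the associated $\infty$-topoi coincide by Fact \ref{fct:pr:sh}. Since every finite family of \'etale morphisms is a fortiori an \'etale covering family, the identity is a continuous functor $(\idAS_{\stX}^{\lis},\txle') \to (\idAS_{\stX}^{\lis},\txle)$ in the sense of Definition \ref{dfn:dAS:cont} (it trivially commutes with finite limits), so by Proposition \ref{prp:dAS:sgm} it induces a geometric morphism whose right adjoint $f_*$ is the inclusion $\stX_{\txle} \inj \iSh(\idAS_{\stX}^{\lis},\txle')$ of the $\txle$-sheaves (a $\txle$-sheaf satisfies descent for all $\txle$-covering sieves, in particular for all $\txle'$-covering ones). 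This inclusion is an equivalence precisely when $\txle'$ and $\txle$ have the same covering sieves, so it remains to prove that every $\txle$-covering sieve on an object $(\stU,u) \in \idAS_{\stX}^{\lis}$ already contains a finite \'etale covering.

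First I would reduce to the case where $\stU = \dSpec A$ is an affine derived scheme. Given a $\txle$-covering sieve $S$ on a general $(\stU,u)$, choose an \'etale atlas $\{\stV_a \to \stU\}_a$ of the derived algebraic space $\stU$ by affine derived schemes, as provided by Lemma \ref{lem:dAS:small-et}; each composite $\stV_a \to \stU \to \stX$ is smooth, so $(\stV_a,u|_{\stV_a}) \in \idAS_{\stX}^{\lis}$, and the pullback $S_a$ of $S$ to $\stV_a$ is again a $\txle$-covering sieve. Using the local-character axiom (condition (c) of Definition \ref{dfn:ic:inf-site}), together with the general fact that a sieve containing a covering sieve is itself covering, it suffices to treat each $S_a$; thus we may assume $\stU = \dSpec A$. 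Now if $\{\stU_i \to \stU\}_{i\in I}$ is an \'etale family generating $S$, refine it by picking an \'etale atlas of each derived algebraic space $\stU_i$ by affine derived schemes, obtaining a family $\{\dSpec B_k \to \dSpec A\}_k$ of \'etale morphisms of affine derived schemes whose coproduct is still an epimorphism onto $\dSpec A$. By the very definition of an \'etale covering of an affine derived scheme (Definition \ref{dfn:dAff:et-cov}) there is a finite subset $K$ of indices with $\coprod_{k\in K}\Spec\pi_0(B_k) \to \Spec\pi_0(A)$ surjective; since epimorphisms of derived stacks are detected on $\pi_0$ (the remark following Definition \ref{dfn:dSt:epi-mono}), the finite family $\{\dSpec B_k \to \dSpec A\}_{k\in K}$ is an \'etale covering, and by construction each of its members factors through some $\stU_{i}$, hence lies in $S$. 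So $S$ contains a finite \'etale covering, i.e.\ $S$ is a $\txle'$-covering sieve.

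Combining the two directions gives $\txle = \txle'$ as Grothendieck topologies on $\idAS_{\stX}^{\lis}$, and therefore $\stX_{\txle} = \iSh(\idAS_{\stX}^{\lis},\txle)$ is equivalent to $\iSh(\idAS_{\stX}^{\lis},\txle')$, which is the assertion. The step I expect to be the main obstacle is the reduction to affine objects: one must verify carefully that the affine objects of the lisse-\'etale site are a legitimate generating family and that the local-character axiom can indeed be applied along the \'etale atlas $\{\stV_a\}$ in the $\infty$-categorical setting --- this is the analogue of the comparison-lemma input in the proof of \cite[Lemme 12.1.2]{LM}, and, as there, it is cleanest when the objects of the site under consideration are quasi-compact (for instance of finite presentation over $k$), which is the situation relevant to the later sections.
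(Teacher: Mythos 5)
Your argument is essentially the paper's: the proof given there just says that the argument of \cite[Lemme (12.1.2)]{LM} goes through because of quasi-compactness (Fact \ref{fct:dSt:et:qcpt}), which is exactly the reduction to affine derived schemes and finite-subcover extraction that you carry out. One small repair: the finite-subfamily step should be justified by quasi-compactness of $\Spec \pi_0(A)$ (i.e.\ Fact \ref{fct:dSt:et:qcpt}) rather than by appeal to Definition \ref{dfn:dAff:et-cov}, since the finite-subcover clause is part of that definition and is precisely what needs to be established for an arbitrary jointly epimorphic \'etale family.
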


\begin{proof}
The same argument as in the proof of \cite[Lemma (12.1.2)]{LM} works 
since $\idSt$ is quasi-compact (Fact \ref{fct:dSt:et:qcpt}).
\end{proof}

We also have the following obvious statement.

\begin{lem}\label{lem:LE:et-le}
For a derived algebraic space $\stU$, 
the identity functor 
$\idAS_{\stU}^{\et} \ni (\stU',u') \mapsto (\stU',u') \in \idAS_{\stU}^{\txle}$ 
gives a continuous functor $\gsEt(\stU) \to \gsLE(\stU)$ of $\infty$-sites.
It induces an equivalence of $\infty$-topoi
\[
 \ve: \stU_{\txle} \longto \stU_{\et}.
\]
\end{lem}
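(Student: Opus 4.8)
The plan is to recognise $\ve$ as the geometric morphism attached to the inclusion of $\infty$-sites and then to exhibit an explicit quasi-inverse built from the small \'etale $\infty$-topoi of the objects of the lisse-\'etale site.

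First, write $\iota\colon \gsEt(\stU) \to \gsLE(\stU)$ for the functor sending an \'etale object $(\stU',u')$ to the same object regarded as a smooth one. It is continuous (Definition \ref{dfn:dAS:cont}): a covering sieve in $\gsEt(\stU)$ is a family $\{\stU'_i \to \stU'\}_{i\in I}$ of \'etale morphisms with $\coprod_i \stU'_i \to \stU'$ an epimorphism, which is exactly the shape of a covering sieve of $\gsLE(\stU)$, so $\iota$ carries covers to covers. Moreover both $\idAS^{\et}_{\stU}$ and $\idAS^{\lis}_{\stU}$ admit finite limits -- the identity of $\stU$ is a terminal object, and a fibre product over $\stU$ of \'etale (resp.\ smooth) morphisms is again \'etale (resp.\ smooth) by Fact \ref{fct:morph:Q} -- and $\iota$ preserves them since it does not change the underlying object. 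Hence Proposition \ref{prp:dAS:sgm} applies and produces a geometric morphism $\ve\colon \stU_{\txle} = \iSh(\gsLE(\stU)) \to \iSh(\gsEt(\stU)) = \stU_{\et}$, with $\ve_*$ given by restriction along $\iota$ and $\ve^*$ left exact.

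Next I would construct a candidate inverse $\lambda\colon \stU_{\et} \to \stU_{\txle}$. For $(V,v)\in\idAS^{\lis}_{\stU}$ the object $V$ is itself a derived algebraic space, so Lemma \ref{lem:dAS:et-geom} supplies a geometric morphism $v_{\et}\colon V_{\et}\to\stU_{\et}$; set $(\lambda\shG)(V,v) := (v_{\et}^{-1}\shG)(V)$, the value of the pulled-back \'etale sheaf at $V$ (its global sections). Functoriality of \'etale pullback along \'etale morphisms gives $(\lambda\shG)(V_i,v_i) = (v_{\et}^{-1}\shG)(V_i)$ for an \'etale cover $\{V_i\to V\}$, so the sheaf condition for $v_{\et}^{-1}\shG$ on $V_{\et}$ makes $\lambda\shG$ a $\txle$-sheaf (this verification is routine). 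When $w\colon W\to\stU$ is \'etale, $w_{\et}^{-1}\shG$ is simply the restriction of $\shG$ and $(\lambda\shG)(W,w)=\shG(W)$, so $\ve_*\circ\lambda\simeq\id_{\stU_{\et}}$; in particular $\ve_*$ is essentially surjective and $\lambda$ is fully faithful.

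It then remains to show that the unit $\id_{\stU_{\txle}} \to \lambda\circ\ve_*$ is an equivalence -- equivalently that $\ve_*$ is fully faithful -- and this is where the real content lies. Concretely one must prove, for $\shF\in\stU_{\txle}$ and $(V,v)$ smooth over $\stU$, that the natural map $\shF(V,v)\to\bigl(v_{\et}^{-1}(\ve_*\shF)\bigr)(V)$ is an equivalence, i.e.\ that a lisse-\'etale sheaf on $V$ is recovered from its restriction to the small \'etale site of $V$. The main obstacle is establishing exactly this comparison; I would reduce it to the non-derived statement that the lisse-\'etale and \'etale topoi of an algebraic space coincide (as in \cite{LM, O}), transported to the present setting by noting that $\gsEt(\stU)$ is insensitive to the derived structure, hence equivalent to the corresponding site of the classical truncation $\Trc\stU$, together with the \'etale-local triviality of smooth morphisms of derived algebraic spaces. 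Granting this, $\ve_*$ and $\lambda$ are mutually inverse and $\ve$ is an equivalence of $\infty$-topoi.
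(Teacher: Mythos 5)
The formal half of your argument is fine: the identity-on-objects functor carries \'etale covering sieves to lisse-\'etale covering sieves, both underlying $\infty$-categories admit finite limits which it preserves, and Proposition \ref{prp:dAS:sgm} produces the geometric morphism $\ve$ with $\ve_*$ given by restriction; your $\lambda$ is well defined and splits $\ve_*$, so $\ve_*$ is essentially surjective. For comparison purposes, note that the paper offers no proof at all of this lemma (it is introduced as an ``obvious statement''), so everything beyond this formal part is on you.

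The gap is precisely the step you flag as ``where the real content lies,'' and it cannot be closed by the reduction you propose. Full faithfulness of $\ve_*$ amounts to the assertion that every lisse-\'etale sheaf on $\stU$ is pulled back from the small \'etale site, and the classical statement you want to quote from \cite{LM,O} is not that the full lisse-\'etale and \'etale topoi of an algebraic space agree --- it is that restriction is an equivalence on \emph{cartesian} sheaves (the derived analogue of which is exactly Proposition \ref{prp:LE:cart} and Theorem \ref{thm:LE:pi-ve} of this paper). For the full sheaf topoi the comparison fails: already for $\stU = \Dex(\Spec k)$ with $k$ algebraically closed, the object $\mathbb{A}^1_k \to \Spec k$ lies in $\idAS^{\lis}_{\stU}$ but admits no \'etale refinement by objects \'etale over $\Spec k$, so $\gsEt(\stU)$ is not a dense subsite of $\gsLE(\stU)$; concretely, the monomorphism of lisse-\'etale sheaves $\underline{k} \to \shO$ from the constant sheaf to $V \mapsto \Gamma(V,\shO_V)$ restricts to an equivalence on $\gsEt(\stU)$ but is not an equivalence on $\mathbb{A}^1_k$, so $\ve_*$ is not conservative, let alone fully faithful. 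Any argument that works has to restrict to cartesian objects (as the paper in fact does everywhere this lemma is invoked) rather than assert an equivalence of the full $\infty$-topoi.
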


We have a similar statement to Lemma \ref{lem:dAS:small-et}.

\begin{lem*}
Let $\stX$ be a geometric derived stack, and 
$\gsLE^{\aff}(\stX):=(\idAff^{\et}_{\stX},\txle^{\aff})$ 
be the $\infty$-site consisting of 
\begin{itemize}[nosep]
\item
The full-$\infty$-subcategory $\idAff^{\et}_{\stX}$
of $\oc{\idAff}{\stX}$ 
spanned by affine derived $\stX$-schemes $(U,u)$ 
with $u: U \to \stX$ a smooth morphism of derived stacks.
Here $\oc{\idAff}{\stX}$ denotes the full sub-$\infty$-category of 
$\oc{\idSt}{\stX}$ spanned by affine derived schemes over $\stX$.
\item
The Grothendieck topology $\txle^{\aff}$
where a covering sieve is defined to be a covering sieve in $\et$ 
on $\idAS_{\stX}^{\lis}$.
\end{itemize} 
Then the associated $\infty$-topos
$\stX_{\txle^{\aff}} := \iSh(\gsLE^{\aff}(\stX))$
is equivalent to $\stX_{\txle}$
%A morphism $f: \stX \to \stY$ of derived stacks induces a functor
%\[
% \idAff/\stX \longto \idAff/\stY, \quad  (U,u) \longmapsto (U, f \circ u).
%\]
\end{lem*}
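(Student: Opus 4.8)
The plan is to establish the equivalence $\stX_{\txle^{\aff}} \simeq \stX_{\txle}$ by exhibiting a continuous functor of $\infty$-sites that satisfies the hypotheses of a standard comparison criterion (the $\infty$-analogue of the comparison lemma for Grothendieck topologies, which underlies the ``same $\infty$-topos'' phenomenon already used in Lemma \ref{lem:dAS:small-et} and Lemma \ref{lem:LE:et-le}). Concretely, there is an inclusion functor $\iota\colon \idAff^{\et}_{\stX} \hookrightarrow \idAS_{\stX}^{\lis}$ sending an affine derived scheme $U$ smooth over $\stX$ to the same object regarded as a derived algebraic space smooth over $\stX$. One checks immediately that $\iota$ is continuous in the sense of Definition \ref{dfn:dAS:cont}: a covering sieve on $U$ in $\txle^{\aff}$ is by definition a covering sieve in $\txle$, and $\iota$ commutes with the relevant fiber products since fiber products of affine derived schemes over $\stX$ in $\oc{\idAff}{\stX}$ are computed in $\oc{\idAS}{\stX}$ (this uses that $\stX$ is geometric, so fiber products of smooth morphisms stay in $\idAS_{\stX}^{\lis}$ by Fact \ref{fct:dSt:geom}).

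Next I would invoke Proposition \ref{prp:dAS:sgm} to obtain an adjunction $\iota^*\colon \stX_{\txle^{\aff}} \rlto \stX_{\txle}\colon\iota_*$, with $\iota^*$ left exact, hence a geometric morphism of $\infty$-topoi. The heart of the argument is then to show this is an equivalence. The key input is that every derived algebraic space $\stU$ smooth over $\stX$ admits an $\et$-covering by affine derived schemes smooth over $\stX$: indeed $\stU$ itself has an $n$-atlas $\{V_i \to \stU\}$ with each $V_i$ affine derived and each $V_i \to \stU$ \'etale (Definition \ref{dfn:dAS:dAS}), and composing with the smooth map $\stU \to \stX$ gives $V_i \to \stX$ smooth (by Fact \ref{fct:dSt:geom} (2), since a composite of an \'etale and a smooth morphism is smooth), while $\coprod_i V_i \to \stU$ is an epimorphism. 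Thus $\idAff^{\et}_{\stX}$ is a \emph{dense} (generating) sub-$\infty$-site of $\idAS_{\stX}^{\lis}$: every object of the latter is covered, in the Grothendieck topology $\txle$, by objects of the former.

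With density established, the proof is completed by the comparison lemma: for a continuous, fully faithful inclusion of a dense sub-$\infty$-site, the restriction functor on sheaf $\infty$-topoi is an equivalence. I would either cite this directly in the form used for the analogous scheme-theoretic statements (the same mechanism that proves $\stU_{\et^{\aff}} \simeq \stU_{\et}$ in Lemma \ref{lem:dAS:small-et}) or spell it out: given $\shF \in \stX_{\txle}$, its restriction $\iota_*\shF$ is an $\txle^{\aff}$-sheaf, and the counit $\iota^*\iota_*\shF \to \shF$ and unit $\shG \to \iota_*\iota^*\shG$ are equivalences because sheaves are determined by their values on a dense family and $\iota$ preserves and detects coverings. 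The main obstacle I anticipate is the bookkeeping needed to verify that $\iota$ genuinely commutes with the fiber products appearing in the definition of continuity and in the sheaf condition — that is, checking that the fiber product in $\idAff^{\et}_{\stX}$ of two affine derived $\stX$-schemes smooth over $\stX$ agrees with the corresponding fiber product in $\idAS_{\stX}^{\lis}$, and that this fiber product is again affine (which holds because an \'etale or smooth base change of an affine derived scheme along a map of affine derived schemes is affine, using that these are $(-1)$-representable). Once that compatibility is in hand, the equivalence of $\infty$-topoi follows formally, exactly as in the non-derived case \cite[Lemma (12.1.2)]{LM} and as in the $\et$ versus $\et^{\aff}$ comparison above.
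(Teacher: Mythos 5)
The paper gives no proof of this lemma --- it is asserted as an analogue of Lemma \ref{lem:dAS:small-et}, which is itself left unproved by reference to the classical case --- and your density-plus-comparison-lemma argument is precisely the standard mechanism being invoked, so your proposal is correct and supplies the intended details. The only point to tighten is the parenthetical claim that arbitrary fiber products in $\oc{\idAff}{\stX}$ of objects smooth over $\stX$ remain smooth over $\stX$ (they need not be, since the lisse-\'etale site does not admit general fiber products); but the comparison lemma only needs the \v{C}ech fiber products $U_i \times_U U_j$ of \'etale coverings, which are affine, \'etale over $U$ hence smooth over $\stX$, and preserved by the inclusion, exactly as you observe in your final paragraph.
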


%\begin{rmk*}
%In Definition \ref{dfn:LE:le} of the lisse-\'etale $\infty$-site,
%one can replace $\idAff/\stX$ by the $\infty$-category of derived schemes 
%over $\stX$ or by the $\infty$-category of derived algebraic spaces over $\stX$
%(whose definitions will not be given but are simple analogue of $\idAff/\stX$).
%Indeed, if one uses derived algebraic spaces,
%then to the resulting $\infty$-site $\ol{\gsLE}(\stX)$ we have 
%an embedding $\gsLE(\stX) \inj \ol{\gsLE}(\stX)$, 
%and this embedding induces the equivalence of the resulting topoi
%$\stX_{\ol{\txle}} \simeq \stX_{\txle}$.
%\end{rmk*}

We have the following relationship between the lisse-\'etale $\infty$-topos 
introduced above and the lisse-\'etale topos on an algebraic stack 
in the ordinary sense (Definition \ref{dfn:Cl:AlgSt}).
%and the lisse-\'etale $\infty$-topos on a derived stack.
Let $X$ be an algebraic stack.
Then applying the functor $\iota = \Dex \circ a$ (Definition \ref{dfn:dSt:St-dSt})
we have a derived stack $\iota(X)$, 
which is $1$-geometric by Fact \ref{fct:dSt:ase}.
Thus we have the lisse-\'etale $\infty$-topos ${\iota(X)}_{\txle}$.
Then by Lemma \ref{lem:pr:cl}, 
we have a topos ${\iota(X)}_{\txle}^{\cl}$ in the ordinary sense.

\begin{lem}\label{lem:LE:cl}
For an algebraic stack $X$,
the topos ${\iota(X)}_{\txle}^{\cl}$ is equivalent to the lisse-\'etale topos
$X_{\txle}$ (Definition \ref{dfn:Cl:lisse-etale}).
\end{lem}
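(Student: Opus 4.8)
The plan is to compare the two sites directly and show they induce equivalent topoi after passing to $\pi_0$. First I would unwind the definitions: on the derived side, $\gsLE(\iota(X))$ has underlying $\infty$-category $\idAS_{\iota(X)}^{\lis}$ consisting of pairs $(\stU,u)$ with $\stU$ a derived algebraic space and $u$ smooth over $\iota(X)$, while the classical lisse-\'etale site $X_{\txle}$ (as in \cite[Chap.\ 12]{LM}) has underlying category $\Lis\text{-}\Et(X)$ of algebraic spaces smooth over $X$. By Lemma \ref{lem:pr:cl}, the topos $\iota(X)_{\txle}^{\cl}$ is obtained by taking $\pi_0$ of the values of sheaves, so it suffices to identify the underlying ordinary site of $\iota(X)_{\txle}$ (i.e.\ the homotopy category of $\idAS_{\iota(X)}^{\lis}$ with its Grothendieck topology) with $\Lis\text{-}\Et(X)$.

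The key point is a truncation argument. Since $\iota = \Dex \circ \ase$ and $\Dex$ is fully faithful (Fact of \S\ref{sss:dSt:trunc}), a smooth morphism $\stU \to \iota(X)$ from a derived algebraic space $\stU$ should, after truncation, give a smooth morphism $\Trc \stU \to X$ of algebraic spaces; conversely, by Fact \ref{fct:dSt:trc} (2), $\Dex$ sends smooth morphisms of (higher Artin) stacks to smooth morphisms of derived stacks, so an algebraic space $U$ smooth over $X$ yields $\Dex(U) \to \iota(X)$ smooth. The content to check is that these two operations are mutually inverse \emph{on homotopy categories}: given $u: \stU \to \iota(X)$ smooth, the adjunction counit $\Dex(\Trc\,\stU) \to \stU$ is an equivalence. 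This is where I expect to use that $\stU$ is a derived algebraic space smooth over a truncated target — smoothness means $\stU$ is, \'etale-locally, of the form $\dSpec A$ with $A$ finitely presented and smooth over a truncated base, which forces $A$ to be discrete (the higher homotopy of $A$ vanishes because $\pi_i(A) \otimes_{\pi_0 A}\pi_0 B \to \pi_i(B)$ is an isomorphism over an \'etale cover by Definition \ref{dfn:dSt:dAff:mor}, combined with the target being truncated). Hence $\stU$ is truncated, i.e.\ in the essential image of $\Dex$, and one recovers the classical algebraic space. Here I would also invoke Remark \ref{rmk:dAS:diag}, which records that $\Dex: \iAlgSp \to \idAS$ is fully faithful and compatible with the relevant notions of morphism.

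Once the underlying categories are identified, I would check that the Grothendieck topologies match: a family $\{\stU_i \to \stU\}$ in $\gsLE(\iota(X))$ is a covering iff each $\stU_i \to \stU$ is \'etale and $\coprod \stU_i \to \stU$ is an epimorphism of derived stacks, which by the explicit description of epimorphisms (Remark after Definition \ref{dfn:dSt:epi-mono}, phrased via $\pi_0$) and by Fact \ref{fct:dSt:trc} (1) (truncation preserves \'etale morphisms and epimorphisms) corresponds exactly to \'etale coverings of $\Trc\,\stU$ in the classical lisse-\'etale site. Therefore the continuous functor between sites is an equivalence, hence induces an equivalence of the associated ordinary topoi; composing with Lemma \ref{lem:pr:cl} gives $\iota(X)_{\txle}^{\cl} \simeq X_{\txle}$.

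The main obstacle will be the truncation step — proving that a derived algebraic space smooth over a truncated derived stack is itself truncated. This requires care with the local structure of smooth morphisms in the derived setting (Definition \ref{dfn:dSt:dAff:mor} and the atlas formulation of smoothness in Definition \ref{dfn:dSt:geom}), and with the fact that $\Dex$ does \emph{not} commute with limits, so one cannot blindly manipulate fiber products; instead one argues \'etale-locally on an atlas of $\stU$, where everything reduces to the affine statement that a smooth algebra over a discrete ring is discrete. Everything else — matching objects, morphisms, and coverings — is routine bookkeeping transported from \cite[Chap.\ 12]{LM} and \cite{O}.
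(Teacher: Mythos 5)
Your proposal is correct and follows essentially the same route as the paper: the paper's (very terse) proof simply asserts that, since $\iota(X)$ is $1$-geometric, the underlying $\infty$-category $\idAS_{\iota(X)}^{\lis}$ is equivalent to the nerve of the classical lisse-\'etale site, and your argument supplies exactly the missing details — the truncation step (a derived algebraic space smooth over the truncated stack $\iota(X)$ is itself truncated, reducing \'etale-locally to the affine fact that a smooth algebra over a discrete derived ring is discrete via Definition \ref{dfn:dSt:dAff:mor}) and the matching of coverings. No gap; your writeup is just a fleshed-out version of the paper's one-line justification.
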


\begin{proof}
Since $\iota(X)$ is $1$-geometric, the underlying $\infty$-category 
$\idAS_{\iota(X)}^{\lis}$ of $\gsLE(\stX)$ 
is equivalent to the nerve of the underlying category 
of the lisse-\'etale site on $X$ in Definition \ref{dfn:Cl:lisse-etale}.
It implies the conclusion.
\end{proof}

In the remaining part of this subsection,
we give a preliminary discussion on functors of lisse-\'etale sheaves.
%Let us first cite the following fact.
%
%\begin{fct}[{\cite[Lemma 2.2.3.1]{TVe2}}]
%\label{fct:TVe:Lemma2.2.3.1}
%The pair $(\tau,\bfP) := (\et,\{\text{smooth morphisms in } \idAff_k\})$ 
%satisfies the following conditions.
%\begin{enumerate}[nosep]
%\item 
%Covering families consist of morphisms in $\bfP$.
%\item
%Morphisms in $\bfP$ are stable by compositions, equivalences and pullbacks.
%\item
%Let $f: X \to Y$ be a morphism in $\idAff_k$.
%If there exists a covering sieve $\{U_i \to X\}_{i \in I} \in \Cov_{\tau}(X)$ 
%such that each composite $U_i \to X \xrightarrow{f} Y$ belongs to $\bfP$, 
%then $f$ belongs to $\bfP$.
%\item
%For any $X,Y \in \idAff_k$, the natural morphisms $X \to X \coprod Y$
%and $Y \to X \coprod Y$ are in $\bfP$.
%\end{enumerate}
%\end{fct}
%
Let $f: \stX \to \stY$ be a morphism of geometric derived stacks.
It gives rise to a continuous functor 
\[
 \gsLE(\stY) \longto \gsLE(\stX), \quad 
 U \longmapsto U \times_{\stY} \stX
\]
of $\infty$-sites.
%Here $U \times_{\stY} \stX$ denotes the coproduct of $\infty$-categories
%in the sense of Definition \ref{dfn:ic:pb-po}.
%It belongs to $\gsLE(\stX)$ 
In fact, the morphism $U \times_{\stY}\stX \to \stX$ is smooth 
by \cite[Lemma 2.2.3.1 (2)]{TVe2}
%Fact \ref{fct:TVe:Lemma2.2.3.1} (2), 
and if $\{U_i \to U\}_{i \in I}$ is an \'etale covering then 
$\{U_i \times_{\stY} \stX \to U \times_{\stY} \stX\}_{i \in I}$ 
is also an \'etale covering. 
Thus $U \times_{\stY} \stX$ belongs to $\gsLE(\stX)$.

%The functor $\gsLE_{n}(\stY) \to \gsLE_{n}(\stX)$ 
\begin{lem}\label{lem:LE:nonadj}
For a morphism $f: \stX \to \stY$ of $n$-geometric derived stacks,
we have a pair of functors 
\[
 f_{\txle}^{-1}: \stY_{\txle} \longto \stX_{\txle}, \quad
 f^{\txle}_*   : \stX_{\txle} \longto \stY_{\txle}
\]
of $\infty$-topoi by the following construction.
\begin{enumerate}[nosep]
\item
For $\shF \in \stX_{\txle}$, we define $f_* \shF \in \stY_{\txle}$ by 
\[
 (f^{\txle}_* \shF)(U) := \shF(U \times_{\stY} \stX).
\]
\item
For $\shG \in \stY_{\txle}$,
we define $f_{\txle}^{-1} \shG \in \stY_{\txle}$ to be the sheafification
of the presheaf given by
\[
%\begin{equation}\label{eq:LE:f^-1}
 \gsLE(\stX) \ni V \longmapsto \iclim_{V \to U} \shF(U).
%\end{equation}
\]
Here the colimit is taken in the sub-$\infty$-category 
of the over-$\infty$-category $\oc{\idSt}{f}$ 
spanned by morphisms $V \to U$ over $f$ with $U \in \gsLE_{}(\stY)$.
In other words, we are considering a square
\begin{align}\label{diag:f^-1}
 \xymatrix{V \ar[r] \ar[d] & U \ar[d] \\ \stX \ar[r]_{f} & \stY}
\end{align}
where vertical morphisms are smooth.
\end{enumerate}
\end{lem}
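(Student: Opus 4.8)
The plan is to build the pair $(f_{\txle}^{-1}, f^{\txle}_*)$ from the general formalism of continuous functors of $\infty$-sites (Proposition \ref{prp:dAS:sgm}), the one subtlety being that the pullback $U \mapsto U \times_{\stY}\stX$ does \emph{not} land in the site $\idAS_{\stX}^{\lis}$ itself, only in the larger collection of geometric derived stacks smooth over $\stX$. Accordingly one does not expect $f_{\txle}^{-1}$ to be left exact, which is why the statement asserts only a pair of functors and not a geometric morphism of $\infty$-topoi (contrast the purely \'etale situation of Lemma \ref{lem:dAS:et-geom}).

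First I would make sense of the value $\shF(U \times_{\stY}\stX)$ occurring in (1). For $(U,u) \in \gsLE(\stY)$ the base change $U \times_{\stY}\stX \to \stX$ is smooth (stability of smooth morphisms under pullback, Fact \ref{fct:morph:Q}), and since $\stX$ is $n$-geometric so is $U \times_{\stY}\stX$; hence it admits a smooth atlas $\{W_j \to U\times_{\stY}\stX\}_{j \in J}$ by affine derived schemes, each composite $W_j \to U\times_{\stY}\stX \to \stX$ being smooth so that $W_j$ is an object of $\gsLE(\stX)$. One then sets $\shF(U\times_{\stY}\stX)$ to be the totalization of $\shF$ over the \v{C}ech nerve of $\coprod_j W_j \to U\times_{\stY}\stX$; this is independent of the atlas by the usual common-refinement argument, and equals $\Map_{\stX_{\txle}}(\yon(U\times_{\stY}\stX),\shF)$ once $\yon$ is extended along the lisse-\'etale topology from $\gsLE(\stX)$ to all geometric derived stacks smooth over $\stX$.

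Second, with this evaluation in hand I would check that $U \mapsto (f^{\txle}_*\shF)(U) := \shF(U\times_{\stY}\stX)$ is a $\txle$-sheaf on $\stY$, which proves (1). It is visibly a presheaf, and the sheaf condition follows because pulling back along $U\times_{\stY}\stX \to U$ sends a covering sieve $\{U_i \to U\}_{i \in I}$ in $\gsLE(\stY)$ to another one: \'etaleness is stable under base change (Fact \ref{fct:morph:Q}) and effective epimorphisms are stable under base change since colimits are universal in the $\infty$-topos $\idSt$ (Fact \ref{fct:pr:Giraud}). The descent statement for $\shF$ along the pulled-back covering then yields the descent statement for $f^{\txle}_*\shF$ along the original one; functoriality in $\shF$ is clear, giving the functor $f^{\txle}_* : \stX_{\txle} \to \stY_{\txle}$. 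For (2) I would follow the recipe used in Proposition \ref{prp:dAS:sgm} (2): the left adjoint of ``composition with a continuous functor'' is left Kan extension followed by sheafification. Concretely, for $\shG \in \stY_{\txle}$ and $V \in \gsLE(\stX)$ one forms the presheaf $V \mapsto \iclim_{V \to U}\shG(U)$, the colimit ranging over the sub-$\infty$-category of $\oc{\idSt}{f}$ of squares \eqref{diag:f^-1} with $U \in \gsLE(\stY)$ and smooth verticals, and sets $f_{\txle}^{-1}\shG$ to be its $\txle$-sheafification, which exists by Fact \ref{fct:pr:sh}; this is exactly the formula in the statement. I would then remark explicitly that, because $U \mapsto U\times_{\stY}\stX$ is not valued in $\gsLE(\stX)$ and is not left exact as a functor into sheaves, Proposition \ref{prp:dAS:sgm} (3) does not apply, so $f_{\txle}^{-1}$ need not be left exact and $(f_{\txle}^{-1},f^{\txle}_*)$ need not be a geometric morphism; the genuine inverse and direct image functors on lisse-\'etale sheaves will instead be obtained later via the cohomological descent machinery of \S \ref{sss:is:spr}.

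The main obstacle is the first step: making rigorous the extension of a lisse-\'etale sheaf on $\stX$ along the smooth topology, so that $\shF(U\times_{\stY}\stX)$ is well-defined, compatible with coverings, and functorial enough for the sheaf-condition verification in the second step to go through. The remaining points are routine applications of the $\infty$-topos formalism already established in \S\ref{s:is} and \S\ref{ss:dAS:eis}.
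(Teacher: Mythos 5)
Your construction follows the same overall route as the paper: the assignment $U \mapsto U\times_{\stY}\stX$ is treated as a (would-be) continuous functor of $\infty$-sites $\gsLE(\stY)\to\gsLE(\stX)$, the functor $f^{\txle}_*$ is composition with it, and $f_{\txle}^{-1}$ is the left Kan extension followed by $\txle$-sheafification, exactly as in Proposition \ref{prp:dAS:sgm}; the paper offers no argument beyond the paragraph preceding the lemma, which sets up precisely this. The one genuine divergence is your first step, and it is to your credit. The paper asserts outright that $U\times_{\stY}\stX$ belongs to $\gsLE(\stX)$, so that $\shF(U\times_{\stY}\stX)$ is literally an evaluation of $\shF$ on an object of the site; you instead point out that $U\times_{\stY}\stX$ is only a geometric derived stack smooth over $\stX$ and need not be a derived algebraic space, and you define the evaluation by descent along an atlas. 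Your caution is warranted: for non-representable $f$ the paper's assertion fails (e.g.\ for the structure morphism $B G \to \dSpec k$ and $U=\dSpec k$ the fiber product is $B G$ itself, whose diagonal is not a monomorphism), so some extension of $\shF$ along the smooth topology --- equivalently $\Map_{\stX_{\txle}}(\yon(U\times_{\stY}\stX),\shF)$ after writing the fiber product as a colimit of representables from its atlas --- is needed to make the formula in (1) meaningful; this is also how the non-derived case is handled in \cite{O}. The remaining points of your argument (the sheaf condition for $f^{\txle}_*\shF$ via stability of \'etale coverings and effective epimorphisms under base change, and the observation that $f_{\txle}^{-1}$ need not be left exact because the indexing $\infty$-category of the colimit is not filtered) coincide with what the paper does or remarks immediately after the lemma.
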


As in the ordinary case indicated by \cite[\S 3.3]{O}, 
the functor $f^{-1}$ is not left exact since the colimit in its definition 
is not filtered (recall Definition \ref{dfn:ic:exact} of left exactness).
In other words, the pair $(f^{-1},f_*)$ does not give a geometric morphism 
of $\infty$-topoi (Definition \ref{dfn:is:gm}).
However, if $f$ is smooth, then $f^{-1}$ is just the restriction functor
as the diagram \eqref{diag:f^-1} suggests, so that it is exact.
We take the same strategy as in \cite{O} to remedy this non-exactness problem,
which will be explained in the following subsections.

Let us close this subsection by introducing a terminology 
for sheaves over $\stX_{\txle}$.

\begin{ntn*}
Let $\stX$ be a geometric derived stack and 
$\iC$ be an $\infty$-category admitting small limits.
An object of $\iShv_{\iC}(\stX_{\txle})$ will be called 
a \emph{lisse-\'etale sheaf valued in $\iC$ on $\stX$}.
\end{ntn*}

Let us close this subsection by introducing the lisse-\'etale $\infty$-topos 
for \emph{locally geometric} derived stacks (Definition \ref{dfn:dSt:lcgm}).
Recall that a locally geometric derived stack $\stX$ 
can be expressed as a colimit $\iclim_{i \in I} \stX_i$
of a filtered family of geometric derived stacks $\stX_i$.

\begin{dfn}
Let $\stX \simeq \iclim_{i \in I} \stX_i$ be a locally geometric derived stack
with $\{\stX_i\}$ a filtered family of geometric derived stacks.
Then we define the \emph{lisse-\'etale $\infty$-topos $\stX_{\txle}$} 
of $\stX$ to be the colimit
\[
 \stX_{\txle} := \iclim_{i \in I} (\stX_i)_{\txle}
\]
of the lisse-\'etale $\infty$-topoi of $\stX_i$.
Here we take the colimit in the $\infty$-category $\iRTop$ of $\infty$-categories,
which admits small colimits (Fact \ref{fct:is:RTop}).
\end{dfn}

%%%%%%%%%%%%%%%%%%%%%%%%%%%%%%%%%%%%%%%%%%%%%%%%%%%%%%%%%%%%%%%%%%%%%%%%%%%%%%%%
%%%%%%%%%%%%%%%%%%%%%%%%%%%%%%%%%%%%%%%%%%%%%%%%%%%%%%%%%%%%%%%%%%%%%%%%%%%%%%%%
\subsection{Lisse-\'etale sheaves on derived stacks}

In this subsection we introduce the notion of lisse-\'etale sheaves of modules
on geometric derived stacks.
As before we work on a fixed commutative ring $k$.
%We also identify $\gsLE(\stX)$ with $\ol{\gsLE}(\stX)$ 
%by Lemma \ref{lem:LE:et-le},
%so that $\stU \in \iLis_n(\stX)$ means $\stU$ is a derived algebraic space 
%with some $n$-smooth morphism $\stU \to \stX$.

%%%%%%%%%%%%%%%%%%%%%%%%%%%%%%%%%%%%%%%%%%%%%%%%%%%%%%%%%%%%%%%%%%%%%%%%%%%%%%%%
\subsubsection{Notations on lisse-\'etale sheaves} 
%Flat sheaves of derived rings}

Throughout this section we will use various notions of 
sheaves on ringed $\infty$-topoi discussed in \S \ref{s:is}.
So let us recollect some of them as a preliminary.

Fix a geometric derived stack $\stX$ over $k$.
Then applying to $\tT = \stX_{\txle}$  the definitions in \S \ref{s:is},
we have the following $\infty$-categories of lisse-\'etale sheaves.

\begin{ntn*}
\begin{enumerate}[nosep]
\item 
We call $\iShv_{\iCom}(\stX_{\txle})$ the $\infty$-category of 
\emph{lisse-\'etale sheaves of commutative rings} on $\stX$.

\item
For an $\shA \in \iShv_{\iCom}(\stX_{\txle})$,
we call 
\[
 \iMod_{\shA}(\stX_{\txle})
\]
the $\infty$-category of 
\emph{lisse-\'etale sheaves of $\shA$-modules} on $\stX$.
We also call the full sub-$\infty$-category
\[
 \iMod^{\stab}_{\shA}(\stX_{\txle}) \subset \iMod_{\shA}(\stX_{\txle})
\]
the $\infty$-category of 
\emph{lisse-\'etale sheaves of stable $\shA$-modules on $\stX$}.
\end{enumerate}
\end{ntn*}

For an $\shA \in \iShv_{\iCom}(\stX_{\txle})$,
the homotopy category 
\[
 \Mod_{\shA}(\stX_{\txle}) := \Ho \iMod_{\shA}(\stX_{\txle})
\] 
is a Grothendieck abelian category (Proposition \ref{prp:is:G}).
So we can consider the associated derived $\infty$-category 
$\iDa(\Mod_{\shA}(\stX_{\txle}))$ (\S \ref{sss:is:dic}).
Recalling the equivalence in Proposition \ref{prp:is:Da=Stab}, 
we set 

\begin{ntn}
For an $\shA \in \iShv_{\iCom}(\stX_{\txle})$,
we denote
\[
 \iDa(\stX_{\txle},\shA) := 
 \iDa(\Mod_{\shA}(\stX_{\txle})) \simeq \iMod_{\shA}^{\stab}(\stX_{\txle})
\]
and call it the \emph{derived $\infty$-category of lisse-\'etale sheaves of 
$\shA$-modules} on $\stX$.
\end{ntn}

In the rest of this part, we introduce a derived analogue 
of \emph{flat} sheaves of commutative rings \cite[Chap.\ 12, (12.7)]{LM}.
In fact, the contents will not be used essentially 
in our main argument, so the readers may skip it.

In order to introduce the flatness, 
let us consider the following situation:
Let $\stX$ be a geometric derived stack. %with $n \in \bbZ_{\ge-1}$,
%and consider the $\infty$-category $\iShv_{\iS}(\stX_{\txle})$ 
%of sheaves on the lisse-\'etale topos of $\stX$.
%Recall that an object of $\iLis(\stX)$ is a pair $(U,u)$ with $u: U \to \stX$,
%which can be regarded as a morphism of geometric derived stacks.
Then for each object $(\stU, u: \stU \to \stX)$
of the underlying $\infty$-category $\idAS^{\lis}_{\stX}$ of $\gsLE(\stX)$,
we have the pair of functors 
\[
 u_{\txle}^{-1}: \stX_{\txle} \longto \stU_{\txle}, \quad
 u^{\txle}_*   : \stU_{\txle} \longto \stX_{\txle}
\]
of $\infty$-topos by Lemma \ref{lem:LE:nonadj}.

On the other hand, we have the equivalence $\ve: \stU_{\txle} \simto \stU_{\et}$ 
of $\infty$-topoi by Lemma \ref{lem:LE:et-le}.
Thus we can introduce

\begin{ntn}\label{ntn:LE:FU}
For $\shF \in \stX_{\txle}$ and $(\stU,u) \in \idAS^{\lis}_{\stX}$, 
we denote
\[
 \shF_{(\stU,u)} := \ve u_{\txle}^{-1}(\shF) \in \stU_{\et}.
\]
If no confusion would occur, then we simply denote $\shF_U := \shF_{(U,u)}$.
\end{ntn}

Next, let $f: \stU \to \stV$ be a morphism in the underlying $\infty$-category 
$\idAS^{\lis}_{\stX}$ of $\gsLE(\stX)$.
By Lemma \ref{lem:LE:nonadj}, we have a functor
$f^{-1}: \stV_{\txle} \to \stU_{\txle}$.
Using the equivalence $\ve$ in Lemma \ref{lem:LE:et-le}, we have
a functor $\stV_{\et} \to \stU_{\et}$.
Abusing symbols, we denote it by 
\[
 f^{-1}: \stV_{\et} \longto \stU_{\et}.
\]
Then, for $\shF \in \stX_{\txle}$,
we have a morphism $f^{-1}\shF_{\stV} \to \shF_{\stU}$ in $\stU_{\et}$
since $\shF$ is a sheaf on $\idAS^{\lis}_{\stX}$.

Note that we can replace the $\infty$-category 
$\stX_{\txle} \simeq \iShv_{\iS}(\stX_{\txle})$
by the $\infty$-category $\iShv_{\iCom}(\stX_{\txle})$ 
of lisse-\'etale sheaves of commutative rings  in the argument so far.
We summarize it in 

\begin{ntn}\label{ntn:LE:nat} 
Let $\shA \in \iShv_{\iCom}(\stX_{\txle})$ be a lisse-\'etale sheaf of 
commutative rings on $\stX$, 
and $f: (\stU,u) \to (\stV,v)$ be a morphism in $\idAS^{\lis}_{\stX}$. 
We denote
\[
 \shA_{\stU} = \shA_{(\stU,u)} := \ve u^{-1}(\shA) \in \iShv_{\iCom}(\stU_{\et})
\]
and call it the \emph{restriction of $\shA$ on $(\stU,u)$}.
Then we have a natural morphism $f^{-1}(\shA_{\stV}) \to \shA_{\stU}$
in the $\infty$-category $\iShv_{\iCom}(\stU_{\et})$
of \'etale sheaves of commutative rings on the derived algebraic space $\stU$.

We use a similar notation $\shM_U \in \iMod_{\shA_U}(\stU_{\et})$ 
for $\shM \in \iMod_{\shA}(\stX_{\txle})$.
\end{ntn}

Now we introduce a derived analogue of the flatness of 
a sheaf of commutative rings \cite[Definition 3.7]{O}.

\begin{dfn}\label{dfn:LE:flat}
Let $\stX$ be a geometric derived stack,
and $\shA$ be a lisse-\'etale sheaf of commutative rings on $\stX$.
Then $\shA$ is \emph{flat} if for any morphism $f: \stU \to \stV$ 
in $\idAS^{\lis}_{\stX}$, the morphism $f^{-1}(\shA_V) \to \shA_U$ 
in $\iShv_{\iCom}(\stU_{\et})$ is faithfully flat (in the ordinary sense).
\end{dfn}

%%%%%%%%%%%%%%%%%%%%%%%%%%%%%%%%%%%%%%%%%%%%%%%%%%%%%%%%%%%%%%%%%%%%%%%%%%%%%%%%
\subsubsection{Cartesian sheaves}

In \cite[D\'{e}finition (12.3), (12.7.3)]{LM}, 
the notion of \emph{cartesian sheaves} on algebraic stacks
is introduced and used to define quasi-coherent sheaves, derived categories 
of quasi-coherent sheaves and derived functors between them.
Let us briefly recall its definition.

\begin{dfn}[{\cite[D\'{e}finition (12.7.3)]{LM}}]
\label{dfn:LE:cl-cart}
Let $X$ be an algebraic stack (Definition \ref{dfn:Cl:AlgSt})
and $\shA$ be a flat sheaf of commutative rings on 
the lisse-\'etale topos $X_{\txle}$ (Definition \ref{dfn:Cl:lisse-etale}).
A sheaf $\shM$ of $\shA$-modules on $X_{\txle}$ is \emph{cartesian}
if for any morphism $f: U \to V$ in the lisse-\'etale site of $X$ 
the natural morphism 
$\shA_{U} \otimes_{f^{-1}(\shA_V)} f^{-1}(\shM_V) \to \shM_U$ 
of sheaves on $U_{\et}$ is an isomorphism.
\end{dfn}

The essence of cartesian sheaves is shown in the following fact.

\begin{fct}[{\cite[Lemma 3.8]{O}}]\label{fct:LE:cl-cart}
Let $X$ and $\shA$ be as in Definition \ref{dfn:LE:cl-cart}.
A sheaf of $\shA$-modules $\shM$ on $X_{\txle}$ is cartesian if and only if 
for every \emph{smooth} morphism $f: U \to V$ in the lisse-\'etale site of $X$
the natural morphism 
$\shA_{U} \otimes_{f^{-1}(\shA_V)} f^{-1}(\shM_V) \to \shM_U$ is an isomorphism.
\end{fct}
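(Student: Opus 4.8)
The statement to be proven is Fact \ref{fct:LE:cl-cart}, which is cited from \cite[Lemma 3.8]{O}; since it is a \texttt{fct} environment the paper will not supply a proof. Nonetheless, let me sketch how the argument goes. The forward implication is trivial: if $\shM$ is cartesian in the sense of Definition \ref{dfn:LE:cl-cart}, then in particular the comparison morphism $\shA_U \otimes_{f^{-1}(\shA_V)} f^{-1}(\shM_V) \to \shM_U$ is an isomorphism for \emph{every} morphism $f$ in the lisse-\'etale site, hence \emph{a fortiori} for every smooth such $f$. So the whole content is the converse: from the smooth case one must deduce the general case.

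The plan for the converse is to factor an arbitrary morphism $f: U \to V$ in the lisse-\'etale site of $X$ through a smooth morphism and an \'etale morphism, and then handle the two factors separately. Concretely, given $f: U \to V$ with structure morphisms $u: U \to X$ and $v: V \to X$ both smooth, consider the graph factorization $U \xrightarrow{\Gamma_f} U \times_X V \xrightarrow{p_2} V$: the second projection $p_2$ is a base change of the smooth morphism $u$, hence smooth, and $\Gamma_f$ is a section of the smooth morphism $p_1: U\times_X V \to U$, so $\Gamma_f$ is an immersion that, since everything lives over the smooth site of $X$, is in fact \'etale (a section of a smooth morphism is a regular immersion, and being a morphism between objects both smooth over $X$ it is unramified and flat, hence \'etale). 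First I would verify that the comparison morphism behaves well under composition, i.e. that if the base-change property holds for $g$ and for $h$ then it holds for $g \circ h$ — this is a routine check with the projection formula $\shA_W \otimes_{h^{-1}\shA_U}h^{-1}(\shA_U\otimes_{g^{-1}\shA_V}g^{-1}\shM_V) \simeq \shA_W \otimes_{(gh)^{-1}\shA_V}(gh)^{-1}\shM_V$. So it suffices to treat separately the case of a smooth morphism (which holds by hypothesis) and the case of an \'etale morphism.

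For the \'etale case, the key point is that for an \'etale morphism $f: U \to V$ of derived algebraic spaces the inverse image functor $f^{-1}$ on \'etale sheaves is \emph{already} the restriction along an open-immersion-like map in the \'etale $\infty$-topos, and moreover $\shA_U \simeq f^{-1}\shA_V$ canonically because $\shA$ is flat (Definition \ref{dfn:LE:flat}) and \'etale morphisms induce isomorphisms on the relevant structure sheaves — one uses that $U_{\et} \simeq (V_{\et})_{/f}$ in the sense of Lemma \ref{lem:dAS:et-geom} and Fact \ref{fct:is:biadj}. Once $f^{-1}\shA_V \simeq \shA_U$, the comparison morphism $\shA_U \otimes_{f^{-1}\shA_V}f^{-1}\shM_V \to \shM_U$ becomes simply $f^{-1}\shM_V \to \shM_U$, and this is an isomorphism precisely because $\shM$ is a sheaf: restricting a sheaf along an \'etale map and then taking sections recovers the sheaf on the smaller site. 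I would spell this out using the equivalence $\ve: \stU_{\txle} \simto \stU_{\et}$ of Lemma \ref{lem:LE:et-le} together with the description of $f^{-1}$ in Lemma \ref{lem:LE:nonadj}, where the defining colimit $\iclim_{V\to U}\shF(U)$ becomes filtered (indeed essentially constant) when the vertical maps are \'etale.

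The main obstacle I expect is the \'etale-case verification that $f^{-1}\shA_V \xrightarrow{\sim} \shA_U$ and that $f^{-1}$ agrees with honest restriction on the \'etale sites — this is where the non-functoriality subtleties that motivated \cite{O} would in principle bite, but here they are tamed because everything is pulled back to the small \'etale $\infty$-topos of a fixed derived algebraic space, where $f^{-1}$ is genuinely exact. A secondary technical nuisance is making the graph factorization rigorous in the derived setting: one must check that $\Gamma_f$ is \'etale as a morphism of derived algebraic spaces, which reduces (via Definition \ref{dfn:dAS:dAS} and Definition \ref{dfn:dSt:dAff:mor}) to the corresponding statement on $\pi_0$ together with the condition on higher homotopy groups, and the latter is automatic for a section of a smooth morphism. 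Everything else is the bookkeeping of the projection formula and of the equivalences $\stU_{\txle}\simeq\stU_{\et}\simeq\stU_{\et^{\aff}}$.
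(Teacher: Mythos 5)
Your reduction of the general case to the smooth case hinges on the claim that the graph $\Gamma_f\colon U \to U\times_X V$ is \'etale, and that claim is false. $\Gamma_f$ is a section of the smooth projection $p_1\colon U\times_X V\to U$, and a section of a smooth morphism of positive relative dimension is a (regular) closed immersion of positive codimension --- unramified, yes, but not flat, hence not \'etale. (Already for $X=\Spec k$, $U=\Spec k$, $V=\mathbb{A}^1_k$ and $f$ the inclusion of the origin, $\Gamma_f$ is the inclusion of a point into $\mathbb{A}^1$; ``both smooth over $X$'' does not give flatness, and miracle flatness fails because the fibre dimensions are wrong.) So your factorization is really ``smooth $\circ$ closed immersion'', and your second step --- which genuinely only works for \'etale maps, where $U_{\et}$ is a localization of $V_{\et}$ and $f^{-1}$ is honest restriction --- does not apply to the factor that carries all the difficulty. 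Indeed, for a closed immersion between objects of the lisse-\'etale site the comparison map $\shA_U\otimes_{f^{-1}\shA_V}f^{-1}\shM_V\to\shM_U$ being an isomorphism is precisely the nontrivial content of cartesianness, so no argument that treats it as automatic can be right.

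The actual argument (Olsson's, reproduced in this paper as the proof of the derived analogue, Lemma \ref{lem:LE:3.8}) avoids factoring $f$ altogether. One chooses a smooth surjection $X'\to X$ from an algebraic space and base-changes to $U'=U\times_X X'$, $V'=V\times_X X'$; since $U'\to U$ is smooth surjective, it suffices to check the comparison map after pullback along it. Now $U'$ and $V'$ are both \emph{smooth over the algebraic space} $X'$, so the smooth-case hypothesis applied to $v'\colon V'\to X'$ and to the smooth composite $x\colon U'\to V'\to X'$ identifies $\shM_{V'}$ and $\shM_{U'}$ with pullbacks of $\shM_{X'}$, and the comparison map for $f'\colon U'\to V'$ becomes $x^{-1}$ applied to the identity of $\shM_{X'}$ --- an isomorphism. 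The key point you are missing is that the hypothesis is applied to the two smooth \emph{structure} maps down to the atlas, sandwiching the arbitrary morphism $f'$, rather than to factors of $f$ itself. If you want to keep a factorization-flavoured argument you would have to prove the comparison isomorphism for sections of smooth morphisms directly, and that is not easier than the original statement.
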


Thus a cartesian sheaf is a sheaf of modules which is totally characterized
by the behavior under smooth morphisms.

Now let us introduce cartesian lisse-\'etale sheaves.
We continue to use the notations in the previous parts,
so that $\stU$ and $\stV$ denote derived algebraic spaces
in the underlying $\infty$-category $\idAS_{\stX}^{\lis}$
of the lisse-\'etale $\infty$-site $\gsLE(\stX)$ of a derived stack $\stX$.
Recall also Notation \ref{ntn:LE:nat} on the restriction of sheaves.
%that for a sheaf $\shM$ of $\shA$-modules on 
%the lisse-\'etale $\infty$-topos $\stX_{\txle}$ and a morphism $f: \stU \to \stV$
%in the underlying $\infty$-category $\idAS_{\stX}^{\lis}$ 
%of the $\infty$-site $\gsLE(\stX)$, we have a natural morphism
%$\shA_U \otimes_{f^{-1}\shA_V} f^{-1}\shM_V %\otimes_{f^{-1}\shA_V} \shA_U 
% \to \shM_U$ by .

\begin{dfn}\label{dfn:LE:cart}
Let $\stX$ be a geometric derived stack and %$\Lambda$ be a commutative ring.
$\shA$ be a flat lisse-\'etale sheaf of commutative rings on $\stX$.
An $\shA$-modules $\shM \in \iMod_{\shA}(\stX_{\txle})$ is called \emph{cartesian} 
if for any morphism $f: \stU \to \stV$ in $\idAS_{\stX}^{\lis}$, the morphism 
\[
 \shA_{\stU} \otimes_{f^{-1}(\shA_{\stV})} f^{-1}(\shM_{\stV}) 
 \longto \shM_{\stU}
\]
%of sheaves on $\stU_{\et}$ 
is an equivalence in $\iMod_{\shA_{\stU}}(\stU_{\et})$.
We denote by 
\[
 \iMod^{\cart}_{\shA}(\stX_{\txle}) \subset \iMod_{\shA}(\stX_{\txle})
\]
the full sub-$\infty$-category spanned by cartesian sheaves.
\end{dfn}

We have an analogue of Fact \ref{fct:LE:cl-cart}.

\begin{lem}\label{lem:LE:3.8}
Let $\stX$ be a geometric derived stack, %cheme of $n$-AS type,
$\shA \in \iShv_{\iCom}(\stX_{\txle})$ be flat, 
and $\shM \in \iMod_{\shA}(\stX_{\txle})$.
Then $\shM$ is cartesian if and only if for every morphism 
$f: \stU \to \stV$ in $\idAS_{\stX}^{\lis}$
the natural morphism 
$\shA_{\stU} \otimes_{f^{-1}(\shA_{\stV})} f^{-1}(\shM_{\stV}) \to \shM_{\stU}$ 
is an equivalence in $\iMod_{\shA}(\stU_{\et})$.
\end{lem}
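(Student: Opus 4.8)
The plan is to mimic closely the proof of Fact \ref{fct:LE:cl-cart} (that is, \cite[Lemma 3.8]{O}), since the statement is the literal derived analogue and the only genuine input is that smooth morphisms generate enough of the lisse-\'etale $\infty$-site. One direction is trivial: if $\shM$ is cartesian in the sense of Definition \ref{dfn:LE:cart}, then in particular the transition morphism $\shA_{\stU} \otimes_{f^{-1}(\shA_{\stV})} f^{-1}(\shM_{\stV}) \to \shM_{\stU}$ is an equivalence for \emph{every} morphism $f$, hence a fortiori for every smooth $f$. So the content is the converse: assuming the transition morphism is an equivalence for all smooth $f \colon \stU \to \stV$ in $\idAS_{\stX}^{\lis}$, deduce it for an arbitrary morphism $f$.

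First I would reduce to a local statement. Fix an arbitrary morphism $f \colon \stU \to \stV$ in $\idAS_{\stX}^{\lis}$; whether the natural morphism $\alpha_f \colon \shA_{\stU} \otimes_{f^{-1}(\shA_{\stV})} f^{-1}(\shM_{\stV}) \to \shM_{\stU}$ is an equivalence in $\iMod_{\shA_{\stU}}(\stU_{\et})$ can be checked after restricting to an \'etale covering of $\stU$, because $\iMod_{\shA_{\stU}}(\stU_{\et})$ is a sheaf-theoretic $\infty$-category and equivalences are detected locally (one may invoke that $\ve \colon \stU_{\txle} \to \stU_{\et}$ is an equivalence of $\infty$-topoi by Lemma \ref{lem:LE:et-le}, together with the fact that equivalences in $\iMod_{\shA_{\stU}}(\stU_{\et})$ are stable under $j^*$ for $j$ an \'etale covering and jointly conservative across such a covering). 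So I may work \'etale-locally on both $\stU$ and $\stV$, and in particular assume $\stU = \dSpec B$, $\stV = \dSpec A$ are affine derived schemes of finite presentation, both smooth over $\stX$.

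Next, the key geometric step: factor $f$ through a smooth morphism. Since $\stU$ and $\stV$ are both smooth over $\stX$, after further \'etale localization one can find a factorization (or rather a commutative triangle over $\stX$) in which $\stU \to \stV$ is realized, up to \'etale base change, using that $\stU \to \stX$ being smooth means $\stU$ is \'etale-locally of the form $\dSpec A[t_1,\dots,t_n] \to \stV$ after adjoining variables, i.e. one writes $f$ \'etale-locally as a composite of an \'etale morphism with the projection $\stV \times \mathbb{A}^n \to \stV$ — the derived analogue of the standard \'etale-local structure of smooth morphisms, via Definition \ref{dfn:dSt:dAff:mor} which controls smoothness by the $\pi_0$-level plus flatness on homotopy. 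Then $\alpha_f$ factors as a composite of base-change morphisms for a smooth morphism (handled by hypothesis), an \'etale morphism (also smooth, hence handled by hypothesis), and possibly an identity; compatibility of the $\otimes$ construction with composition of transition maps — which is exactly the cocycle condition built into the definition of $\shA_\bullet$ and $\shM_\bullet$ as sheaves on $\idAS_{\stX}^{\lis}$, and the associativity equivalence for $\otimes_{\shR}$ from Proposition \ref{prp:is:ModR} — lets one conclude $\alpha_f$ is an equivalence.

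The main obstacle I expect is the \'etale-local structure theory for smooth morphisms of derived algebraic spaces: in \cite{O} this is clean because one is over schemes and can cite EGA, whereas here one must check that a smooth morphism $\stU \to \stX$ of derived stacks between a derived algebraic space and a geometric derived stack is, \'etale-locally on source and target, a composite of an \'etale morphism with an affine-space projection, and that such a factorization can be chosen compatibly with $f$. This should follow from Definition \ref{dfn:dSt:dAff:mor} (smoothness of $A \to B$ is detected on $\pi_0$ plus the homotopy-group condition) and Fact \ref{fct:morph:Q}, combined with the classical structure theorem applied to $\pi_0 A \to \pi_0 B$ and the fact that a derived $k$-algebra finitely presented and smooth over another is, up to \'etale base change, a polynomial algebra — but making this precise in the $\infty$-categorical language, including tracking that all the base-change and associativity coherences hold up to (contractible) homotopy rather than on the nose, is where the real work lies. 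Everything else is formal manipulation of the functors $u_{\txle}^{-1}$, $\ve$, and $\otimes_{\shA}$ already set up in \S\ref{ss:LE:lis-et} and \S\ref{s:is}.
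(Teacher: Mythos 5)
The ``only if'' direction and the \'etale-local reduction in your write-up are fine, but the heart of the matter --- deducing the condition for an \emph{arbitrary} morphism $f\colon \stU \to \stV$ in $\idAS_{\stX}^{\lis}$ from the hypothesis for smooth ones --- is handled incorrectly. You propose to write $f$ itself, \'etale-locally, as a composite of an \'etale morphism with a projection $\stV \times \mathbb{A}^n \to \stV$, invoking the local structure theorem for smooth morphisms. But that theorem applies to the smooth structure morphisms $\stU \to \stX$ and $\stV \to \stX$, not to $f$: a morphism admitting such a factorization is itself smooth, whereas a general morphism between objects of the lisse-\'etale site need not be. Take $\stX = \dSpec k$, $\stV = \mathbb{A}^1_k$, $\stU = \dSpec k$ and $f$ the inclusion of the origin: both $\stU$ and $\stV$ are smooth over $\stX$, yet $f$ is a closed immersion and cannot be a composite of smooth morphisms. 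So your factorization fails to exist for exactly the morphisms the lemma is about, and the argument as proposed only re-derives the hypothesis for smooth $f$.

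The missing idea --- which is how Olsson proves the non-derived Fact \ref{fct:LE:cl-cart} --- is the graph factorization $\stU \xrightarrow{\gamma} \stU \times_{\stX} \stV \xrightarrow{\pr_2} \stV$ with $\gamma = (\id_{\stU},f)$. Here $\pr_2$ is smooth, being a base change of $\stU \to \stX$, so its transition morphism is an equivalence by hypothesis; $\gamma$ is not smooth, but $\pr_1 \circ \gamma = \id_{\stU}$ with $\pr_1$ smooth, and the compatibility of transition morphisms with composition (the cocycle data you invoke at the end, used here rather than for a chain of smooth maps) forces the transition morphism of $\gamma$ to be an equivalence as well; composing yields the claim for $f$. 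The paper's own proof takes a slightly different route, pulling everything back along a smooth atlas $X' \to \stX$ and using that equivalences are detected after a smooth surjection, but it likewise never attempts to make $f$ itself smooth. Either device would repair your argument; without one of them there is a genuine gap.
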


\begin{proof}
We follow the proof of \cite[Lemma 3.8]{O}.
By the definition of cartesian sheaf it is enough to show the ``if" part.
Let $\stX$ be an $n$-geometric derived stack.
The proof is by induction on $n$.
Assume $n=1$.
Take a $1$-atlas $\{X_i\}_{i \in I}$ of $\stX$
and consider the $1$-smooth effective epimorphism 
$X' := \coprod_{i \in I}X \surj \stX$.
Given a morphism $f: \stU \to \stV$ in $\idAS_{\stX}^{\lis}$, %$\gsLE(X)$, 
we set $\stV':= \stV \times_{\stX} X'$ and $\stU':= \stU \times_{\stX} X'$. 
Then we have a pullback square
\[
 \xymatrix{\stU' \ar[r]^{f'} \ar[d]_{u} & \stV' \ar[d]^{v} \\ 
           \stU \ar[r]_{f} & \stV}
\]
in $\oc{\idAS}{\stX}$.
We also find that $u$ and $v$ are smooth and surjective 
and that the composition $x: \stU' \xr{f'} \stV' \to \stX$ is smooth.
In order to prove that %$f^{-1}\shF_V \to \shF_U$ 
$\shA_{\stU} \otimes_{f^{-1}(\shA_{\stV})} f^{-1}(\shM_{\stV}) \to \shM_{\stU}$
is an equivalence, it is enough to show that 
\[
 i: \shA_{\stU'} \otimes_{u^{-1}f^{-1}(\shA_V)} u^{-1}f^{-1}(\shM_V) 
    \longto \shM_{\stU'}
\]
is an equivalence
since $u$ is smooth and surjective.
Hereafter let us suppress the change of ring,
and denote $i: u^{-1}f^{-1}(\shM_V) \to \shM_{\stU'}$.
By the above square and the assumption 
we find $u^{-1}f^{-1}(\shM_{\stV}) \simeq {f'}^{-1}(\shM_{\stV'})$, 
and by the smoothness of $u$
and the assumption we find $u^{-1}(\shM_{\stU}) \simeq \shM_{\stU'}$.
Thus we have $i: f^{-1}(\shM_{\stV'}) \to \shM_{\stU'}$.
On the other hand, $i$ is equivalent to the pullback by $x$ of $\id_{\shF_X}$.
Then by the smoothness of $x$ and 
the assumption we find that $i$ is an equivalence.
\end{proof}

As a corollary, we have

\begin{cor}\label{cor:LE:cart}
The homotopy category 
\[
 \Ho \iMod^{\cart}_{\shA}(\stX_{\txle}) \subset \Ho \iMod_{\shA}(\stX_{\txle})
\]
is a Serre subcategory.
%In other words, it is closed under kernels, cokernels and extensions.
\end{cor}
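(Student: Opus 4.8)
The plan is to check, directly, that $\Ho\iMod^{\cart}_{\shA}(\stX_{\txle})$ is a full subcategory of the Grothendieck abelian category $\catA := \Ho\iMod_{\shA}(\stX_{\txle})$ (Proposition \ref{prp:is:G}) which is closed under kernels, cokernels and extensions. It is full by construction and nonempty (the zero sheaf and $\shA$ itself are cartesian, the latter since $\shA_{\stU}\otimes_{f^{-1}(\shA_{\stV})} f^{-1}(\shA_{\stV}) \simeq \shA_{\stU}$ tautologically for every morphism $f\colon\stU\to\stV$ of $\idAS_{\stX}^{\lis}$). So the whole content is the ``two out of three'' property along short exact sequences: for a short exact sequence $0 \to \shM' \to \shM \to \shM'' \to 0$ in $\catA$, if two of the three terms are cartesian then so is the third. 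This is equivalent to closure under subobjects, quotients and extensions.

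First I would use Lemma \ref{lem:LE:3.8} to replace the defining quantification over \emph{all} morphisms in $\idAS_{\stX}^{\lis}$ by one over \emph{smooth} morphisms: $\shN$ is cartesian iff the comparison morphism
\[
 c_f(\shN)\colon \shA_{\stU}\otimes_{f^{-1}(\shA_{\stV})} f^{-1}(\shN_{\stV}) \longto \shN_{\stU}
\]
is an equivalence for every smooth $f\colon\stU\to\stV$ in $\idAS_{\stX}^{\lis}$. The gain is that for smooth $f$ the functor $f^{-1}\colon\stV_{\et}\to\stU_{\et}$ is the restriction functor (as recorded after Lemma \ref{lem:LE:nonadj}) and hence exact, and, using the equivalence $\ve$ of Lemma \ref{lem:LE:et-le}, restriction $\shN\mapsto\shN_{\stW}$ along the smooth structure morphism $\stW\to\stX$ is exact for $\stW\in\{\stU,\stV\}$. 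Moreover $\shA_{\stU}\otimes_{f^{-1}(\shA_{\stV})}(-)$ is exact because $\shA$ is flat in the sense of Definition \ref{dfn:LE:flat}, i.e. $f^{-1}(\shA_{\stV})\to\shA_{\stU}$ is faithfully flat.

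Next, for a fixed short exact sequence and a fixed smooth $f$, I would form the commutative ladder with bottom row the restriction $0 \to \shM'_{\stU}\to\shM_{\stU}\to\shM''_{\stU}\to 0$ and top row obtained by applying the exact functor $f^{-1}$ to $0\to\shM'_{\stV}\to\shM_{\stV}\to\shM''_{\stV}\to 0$ and then the exact functor $\shA_{\stU}\otimes_{f^{-1}(\shA_{\stV})}(-)$; the vertical maps are $c_f(\shM')$, $c_f(\shM)$, $c_f(\shM'')$ and the ladder commutes by naturality of $c_f(-)$ in the module variable. Both rows are short exact, so the five lemma shows that if two of the three vertical maps are isomorphisms then so is the third; since $f$ was an arbitrary smooth morphism, the two-out-of-three property for cartesianness follows, and therefore $\Ho\iMod^{\cart}_{\shA}(\stX_{\txle})$ is a Serre subcategory of $\catA$.

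I expect the main obstacle to be the exactness bookkeeping underlying the ladder: one must be sure that restriction along a smooth morphism and the base-change functor $\shA_{\stU}\otimes_{f^{-1}(\shA_{\stV})}(-)$ are genuinely exact on the abelian homotopy categories of these $\infty$-categorical sheaves of modules (not merely additive), and that $c_f(-)$ is a natural transformation of functors, so that the ladder actually commutes rather than just commuting objectwise. Once that is secured, the five-lemma step and the reduction via Lemma \ref{lem:LE:3.8} are formal.
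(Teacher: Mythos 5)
Your argument is correct and follows the same route the paper intends: the corollary is stated as an immediate consequence of Lemma \ref{lem:LE:3.8}, and your reduction to smooth morphisms (where restriction and $\shA_{\stU}\otimes_{f^{-1}(\shA_{\stV})}(-)$ are exact by flatness of $\shA$) followed by the five lemma is exactly the standard elaboration the paper leaves implicit. No gaps.
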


Now we introduce notations on the derived $\infty$-category of 
lisse-\'etale sheaves with cartesian homotopy groups.

%\begin{prp}\label{prp:LE:cart:stable}
%Let $\stX$ be a geometric derived scheme, %of $n$-AS type 
%and $\shA \in \iShv_{\iCom}(\stX_{\txle})$ be flat.
%Then the $\infty$-category $\iDu{\cart}(\stX_{\txle},\shA)$
%is stable and has a $t$-structure.
%\end{prp}
%
%\begin{proof}
%For a smooth morphism $f$, the functor $f^{-1}$ is left exact 
%(Definition \ref{dfn:stb:exact}).
%Thus we have
%\[
% \iDu{\cart}(\stX_{\txle},\shA) \simeq 
% \iDa(\Ho \iMod^{\cart}_{\shA}(\stX_{\txle}))
%\]
%and 
%$\iDu{\cart}(\stX_{\txle},\shA) \simeq $ inherits
%the stability and $t$-structure of 
%$\iDa(\stX_{\txle},\shA) \simeq  \iDa(\Ho \iMod_{\shA}(\stX_{\txle}))$.
%\end{proof}

%Recall that for an ordinary algebraic stack $\stX$ we denote by $\iota(\stX)$
%the derived stack associated to $\stX$ (Definition \ref{dfn:dSt:St-dSt}).
%By Lemma \ref{lem:dSt:ase} and Lemma \ref{lem:LE:cl} we have
%
%\begin{lem}\label{lem:LE:cl-cart}
%For an algebraic stack $\stX$, 
%let $\shF$ be a cartesian lisse-\'etale sheaf of $\Lambda$-modules on $\stX$.
%Then the lisse-\'etale sheaf of simplicial $\Lambda$-modules on $\iota(\stX)$
%induced by $\shF$ by taking constant simplicial set is cartesian 
%in the sense of Definition \ref{dfn:LE:cart}.
%\end{lem}

\begin{ntn}\label{ntn:LE:iDcart}
Let $\stX$ be a geometric derived stack %cheme of $n$-AS type,
and $\shA \in \iShv_{\iCom}(\stX_{\txle})$ be flat. 
We define 
\[
 \iDu{\cart}(\stX_{\txle},\shA) \subset 
 \iDa(\stX_{\txle},\shA) \simeq \iMod^{\stab}_{\shA}(\stX_{\txle})
\]
the full sub-$\infty$-category of the derived $\infty$-category spanned by 
those objects $\shM$ whose homotopy group 
$\pi_j(\shM) \in \iMod_{\shA}(\stX_{\txle})$ is cartesian for all $j \in \bbZ$.
\end{ntn}

Recall that for a stale $\infty$-category $\iC$ with a $t$-structure
we have the sub-$\infty$-categories $\iC^+$, $\iC^-$ and $\iC^b$ 
of left bounded, right bounded and bounded objects 
(Definition \ref{dfn:stb:+-b}).  

\begin{ntn*}%\label{ntn:LE:cart:bound}
Let $\stX$ be a geometric derived stack, %of $n$-AS type and 
and $\shA \in \iShv_{\iCom}(\stX_{\txle})$ be flat.
We denote by
\[
 \iDu{\cart,*}(\stX_{\txle},\shA) \subset 
 \iDu{\cart  }(\stX_{\txle},\shA) \quad   (* \in \{+,-,b\})
\]
the sub-$\infty$-categories of left bounded, right bounded 
and bounded objects respectively.
\end{ntn*}

%%%%%%%%%%%%%%%%%%%%%%%%%%%%%%%%%%%%%%%%%%%%%%%%%%%%%%%%%%%%%%%%%%%%%%%%%%%%%%%%
%%%%%%%%%%%%%%%%%%%%%%%%%%%%%%%%%%%%%%%%%%%%%%%%%%%%%%%%%%%%%%%%%%%%%%%%%%%%%%%%
\subsection{Lisse-\'etale $\infty$-topos and simplicial \'etale $\infty$-topos}
\label{ss:LE:spl}

As mentioned in the beginning of \S \ref{s:dAS}, 
the lisse-\'etale topos on an algebraic stack
can be described by the \'etale topos on a simplicial algebraic space.
%Our description can be considered as an $\infty$-theoretic analogue 
%of the description of a lisse-\'etale sheaf in terms of a simplicial sheaf
See \cite{O} and \cite[\S 9.2]{O:book} for the detailed explanation 
of the non-derived case.
In this subsection we give its derived analogue.

%In fact, our argument is essentially included in \cite{Lur11},
%where the following fundamental result is obtained.
%
%\begin{fct}[{\cite[Theorem 5.4]{Lur11}}]
%Let $\bbK$ be an $\bbE_2$-ring, $\iC$ be a $\bbK$-linear $\infty$-category, 
%and $\chi: \iAlg_{k} \to \iwCat$ classify the cocartesian fibration
%of left module objects in $\iC$. 
%Then $\chi$ is a sheaf with respect to the \'etale topology.
%\end{fct}

%As explained in \cite[\S 6.1.3]{Lur1}, 
%one can describe an object of an $\infty$-topos by descent.
%Let us explain this procedure in the case of an object 
%$\shM \in \iMod^{\cart}_{\shA}(\stX_{\txle})$
%with $\stX$ a geometric derived stack. %of $n$-AS type.

Recall Fact \ref{fct:dAS:Segal}.
For an $n$-geometric derived stack $\stX$, %of $n$-AS type,
there exists a family $\{\stU_i\}_{i \in I}$ of derived algebraic spaces
such that each of them is equipped with an $(n-1)$-smooth morphism $\stU_i \to \stX$
and $\stX \simeq \iclim_j \stX_j$ with $\stX_0 := \coprod_{i \in I} \stU_i$ and 
$\stX_j := \stX_0 \times_{\stX} \cdots \times_{\stX} \stX_0$
for $j \in \bbZ_{\ge 1}$ ($j$-times fiber product).
These $\stX_j$'s gives rise to a simplicial object $\stX_{\bullet}$
%of derived algebraic spaces.
in $\idAS_{\stX}^{\lis}$,
the underlying $\infty$-category of the lisse \'etale $\infty$-site $\gsLE(\stX)$.
%In this subsection we fix a derived stack $\stX$ of $n$-AS type,
%and take a smooth covering $\{\stU_i\}_{i \in I}$ of $\stX$.
%By Remark \ref{rmk:dAS:Segal}, 
%Thus we have an $(n-1)$-smooth Segal object $\stX_{\bullet}$ 
%with $\stX_0 = \coprod_{i \in I} U_i$
%and $\stX_n = \stX_0 \times_{\stX} \cdots \times_{\stX} \stX_0$
%such that $\stX \simeq \iclim \stX_n$.
Indeed, $\stX_n$ is a derived algebraic space for each $n \in \bbN$.
We also note that the morphism $\stX_n \to \stX_m$ is smooth
for each $[m] \to [n]$ in $\CS$.

\begin{dfn}\label{dfn:LE:pres}
We call the simplicial object $\stX_{\bullet}$ in $\idAS_{\stX}^{\lis}$
a \emph{smooth presentation of $\stX$}, and denote this situation by 
\[
 \stX_{\bullet} \to \stX.
\]
\end{dfn}

Note that a smooth presentation is nothing but 
the ($0$-)coskeleton of the surjection $\stX_1 \to \stX$
in the sense of \cite[9.2.1]{O:book},
and is also equivalent to the hypercovering of $\stX$
in the sense of Definition \ref{dfn:ic:top:hc}.

Restricting to the subcategory $\CS^{\str} \subset \CS$
(Definition \ref{dfn:is:CSinj}),
we have a strictly simplicial object $\stX^{\str}_{\bullet}$ 
of derived algebraic spaces.

Since each morphism in $\stX^{\str}_{\bullet}$ is smooth,
we have the strictly simplicial $\infty$-topoi $\stX^{\str}_{\bullet,\txle}$
and $\stX^{\str}_{\bullet,\et}$ (Definition \ref{dfn:is:CSinj}).
Explicitly, denoting $\tau = \txle$ or $\et$, 
for each $n \in \bbN$, we define $\stX_{n,\tau}$ to be 
the $\infty$-topos associated to the $\infty$-site 
$\gsLE(\stX_n)$ or $\gsEt(\stX_n)$, 
and for each morphism $\delta: [m] \to [n]$ %and $\shF_n \in (\stX_n)_{\tau}$ 
we have the corresponding geometric morphism 
%$\delta^{-1} \shF_n \to \shF_m$  where 
$\delta_\tau^{-1}: \stX_{n,\tau} \to \stX_{m.\tau}$.
%is the morphism induced by the simplicial structure on $\stX_{\bullet}$.

By restriction, we have an equivalence
$\ve_{n}: \stX^{\str}_{\bullet,\txle} \simto \stX^{\str}_{\bullet,\et}$ 
of $\infty$-topoi for each $n \in \bbN$.
These induce an equivalence 
\[
 \ve: \stX^{\str}_{\bullet,\txle} \longsimto \stX^{\str}_{\bullet,\et}.
\]

Now let $\shA \in \iShv_{\iCom}(\stX_{\txle})$ 
be a flat lisse-\'etale sheaf of commutative rings.
Then the above argument works for the ringed $\infty$-topos 
$(\stX_{\txle},\shA)$.
Thus we have a strictly simplicial ringed $\infty$-topos 
$(\stX^{\str}_{\bullet,\txle},\shA_{\bullet})$
and its restriction 
$(\stX^{\str}_{\bullet,\et},\shA_{\bullet})$.
We have the equivalence induced by restriction:
\[
 \ve: 
 \bigl(\stX^{\str}_{\bullet,\txle},\shA_{\bullet}\bigr) 
 \longto 
 \bigl(\stX^{\str}_{\bullet,\et},\shA_{\bullet}\bigr).
\]
We also have the simplicial ringed $\infty$-topos
$(\stX_{\bullet,\et},\shA_{\bullet})$.
%The embedding $\CS^{\str} \inj \CS$ induces 
%a functor
%$\bigl(\stX^{\str}_{\bullet,\et},\shA_{\bullet}\bigr) \to
%\bigl(\stX_{\bullet,\et},\shA_{\bullet}\bigr)$
%of ringed $\infty$-topoi.

We will analyze objects in $\iMod^{\cart}_{\shA}(\stX_{\txle})$
via these (strictly) simplicial $\infty$-topoi 
$\stX^{\str}_{\bullet,\txle}$, $\stX^{\str}_{\bullet,\et}$.
and $\stX_{\bullet,\et}$.
For that, recalling Corollary \ref{cor:LE:cart}
which says that 
$\Ho \iMod^{\cart}_{\shA}(\stX_{\txle}) \subset \Ho \iMod_{\shA}(\stX_{\txle})$
is a Serre subcategory, 
we apply the notations in \S \ref{sss:is:des} to the present situation.

\begin{dfn}\label{dfn:LE:cart:spl}
Let $\stX$ and $\shA$ be as above.
\begin{enumerate}[nosep]
\item
Let $\tau = \txle$ or $\et$.
An object $\shM_\bullet$ of $\iMod_{\shA_{\bullet}}(\stX^{\str}_{\bullet,\txle})$
is called \emph{cartesian} if $\shM_n \in \iMod_{\shA_{n}}(\stX_{n,\txle})$
is cartesian (Definition \ref{dfn:LE:cart}) for each $[n] \in \CS^{\str}$,
and if the morphism $\delta^* \shM_n \to \shM_m$ is an equivalence 
in $\iMod^{\cart}_{\shA_{m}}(\stX_{m,\txle})$
for each morphism $\delta: [m] \to [n]$ in $\CS^{\str}$.
We denote by  
\[
 \iMod^{\cart}_{\shA_{\bullet}}(\stX_{\bullet,\txle}) \subset 
 \iMod_{\shA_{\bullet}}(\stX_{\bullet,\txle})
\]
the full sub-$\infty$-category spanned by cartesian objects.

\item
A \emph{cartesian} object of $\iMod_{\shA_{\bullet}}(\stX_{\bullet,\et})$ 
is defined by the same condition as (1) but replacing $\CS^{\str}$ by $\CS$.
We denote by 
\[
 \iMod^{\cart}_{\shA_{\bullet}}(\stX_{\bullet,\et}) \subset 
 \iMod_{\shA_{\bullet}}(\stX_{\bullet,\et})
\]
the corresponding full sub-$\infty$-category.
\end{enumerate}
\end{dfn}

We then have the following  functors of $\infty$-categories.
\[
 \iMod^{\cart}_{\shA_{\bullet}}(\stX_{\bullet,\et})         \xrr{\Rst}
 \iMod^{\cart}_{\shA_{\bullet}}(\stX^{\str}_{\bullet,\et})  \xrr{\ve^*}
 \iMod^{\cart}_{\shA_{\bullet}}(\stX^{\str}_{\bullet,\txle}).
\]
Here $\Rst$ denotes the functor induced by the restriction to 
$\CS^{\str} \subset \CS$,
and $\ve^*$ is the one induced by the equivalence $\ve$.

We have the following analogue of \cite[Proposition 4.4]{O}.

\begin{prp}\label{prp:LE:cart}
The functors $\Rst$ and $\ve^*$ are equivalences,
and the $\infty$-categories
\[
 \iMod^{\cart}_{\shA}(\stX_{\txle}), \quad 
 \iMod^{\cart}_{\shA_{\bullet}}(\stX_{\bullet,\et}). \quad
 \iMod^{\cart}_{\shA_{\bullet}}(\stX^{\str}_{\bullet,\et}), \quad
 \iMod^{\cart}_{\shA_{\bullet}}(\stX^{\str}_{\bullet,\txle}).
\]
are all equivalent.
\end{prp}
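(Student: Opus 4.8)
The plan is to follow the strategy of \cite[Proposition 4.4]{O}, adapted to the $\infty$-categorical and derived setting. Besides $\Rst$ and $\ve^*$, there is a fourth comparison functor: the smooth presentation $\stX_{\bullet} \to \stX$ (Definition \ref{dfn:LE:pres}) induces, via the exact functors $u_{n,\txle}^{-1}$ attached to the smooth morphisms $u_n \colon \stX_n \to \stX$ (which are exact by the remark following Lemma \ref{lem:LE:nonadj}), a functor $\pi^* \colon \iMod_{\shA}(\stX_{\txle}) \to \iMod_{\shA_{\bullet}}(\stX^{\str}_{\bullet,\txle})$, and by Lemma \ref{lem:LE:3.8} it carries cartesian sheaves to cartesian objects. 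It then suffices to prove that $\pi^*$, $\Rst$ and $\ve^*$ are each equivalences, since the four $\infty$-categories in the statement are linked by these three functors.

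First I would dispose of $\ve^*$. Lemma \ref{lem:LE:et-le}, applied at each level $n$, gives equivalences $\ve_n \colon \stX_{n,\txle} \simto \stX_{n,\et}$ of $\infty$-topoi; these are compatible with the face and degeneracy maps and with the restricted sheaves $\shA_n$, so together they form an equivalence of strictly simplicial ringed $\infty$-topoi $(\stX^{\str}_{\bullet,\txle},\shA_{\bullet}) \simto (\stX^{\str}_{\bullet,\et},\shA_{\bullet})$. An equivalence of ringed $\infty$-topoi induces an equivalence of the associated $\infty$-categories of sheaves of modules, and since the cartesian condition (Definition \ref{dfn:LE:cart:spl}) is formulated purely in terms of the structure maps $\delta^*$, which correspond to one another under $\ve$, this equivalence preserves and reflects cartesian objects. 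Hence $\ve^*$ is an equivalence. Next, for $\Rst$: the cartesian condition erases the distinction between $\CS$ and $\CS^{\str}$, because each degeneracy map $s_i \colon \stX_n \to \stX_{n+1}$ is a smooth section of the smooth surjective face map $d_i$, so for a cartesian object the equivalence $s_i^* \shM_{n+1} \simto \shM_n$ is forced; conversely a cartesian object on $\stX^{\str}_{\bullet,\et}$ extends, uniquely up to contractible choice, to a cartesian object on $\stX_{\bullet,\et}$ by left Kan extension along $\Ner(\CS^{\str}) \hookrightarrow \Ner(\CS)$ built from the cartesian transition equivalences. This produces a quasi-inverse to $\Rst$, which is the $\infty$-categorical form of the semisimplicial--simplicial comparison used in \cite{O, O:book}.

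Finally I would prove that $\pi^*$ is an equivalence by descent along the smooth presentation, which by the remark after Definition \ref{dfn:LE:pres} is a hypercovering of $\stX$; here I would argue by induction on the geometricity degree of $\stX$, in the pattern of the proof of Lemma \ref{lem:LE:3.8}, using that lemma itself to reduce the cartesian gluing condition to smooth morphisms, together with \'etale descent on each $\stU_{\et}$ to establish fully faithfulness and essential surjectivity. In the base case one replaces $\stX$ by a $1$-atlas and uses that a cartesian sheaf is determined by its restrictions along smooth morphisms, hence by the data $(\shM_{\stX_n})_n$ with their transition equivalences, which is exactly a cartesian object on the simplicial $\infty$-topos; the inductive step proceeds analogously with the atlas of $\stX$. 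The main obstacle will be this last step: making the $\infty$-categorical descent precise, essentially verifying at the heart level the analogues of conditions (S2)--(S4) of \S\ref{sss:is:des} and Proposition \ref{prp:is:223}, while controlling the non-exactness of $f^{-1}$ for non-smooth $f$ noted after Lemma \ref{lem:LE:nonadj} --- which is precisely why one is forced to work through the smooth presentation rather than directly with $\gsLE(\stX)$.
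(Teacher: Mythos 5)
Your overall decomposition is sound and your treatment of $\ve^*$ coincides with the paper's, but for the other two comparisons you take a genuinely different route. The paper does not prove that your $\pi^*$ and $\Rst$ are equivalences by separate descent arguments; instead it introduces the $2$-truncated descent category $\iDes(\stX_0/\stX)$ --- pairs $(\shG,\iota)$ of a sheaf $\shG \in \iMod_{\shA_0}(\stX_{0,\et})$ together with a gluing equivalence $\iota$ over $\stX_1$ satisfying the cocycle condition over $\stX_2$ --- and shows that each of $\iMod^{\cart}_{\shA}(\stX_{\txle})$, $\iMod^{\cart}_{\shA_\bullet}(\stX_{\bullet,\et})$ and $\iMod^{\cart}_{\shA_\bullet}(\stX^{\str}_{\bullet,\et})$ maps equivalently onto it, quoting \cite[Lemma 4.5]{O} for the lisse-\'etale side; the statement about $\Rst$ then falls out because the composite through $\iDes(\stX_0/\stX)$ is the identity. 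This buys two things over your plan: it avoids your left-Kan-extension construction of a quasi-inverse to $\Rst$, and it replaces your induction on geometricity plus hypercovering descent (whose analogues of (S2)--(S4) you would indeed have to verify, as you concede) by a single reduction to a known $1$-categorical descent statement, exploiting that at the level of the heart only the $2$-skeleton of $\stX_\bullet$ matters. Two points in your version need repair. First, the degeneracies of the \v{C}ech nerve are relative diagonals $\stX_n \to \stX_n \times_{\stX}\stX_n$ of the smooth atlas, and these are \emph{not} smooth unless the atlas is \'etale; your argument survives because it only uses that $s_i$ is a section of the face map $d_i$, so that $s_i^*\shM_{n+1} \simeq s_i^* d_i^* \shM_n \simeq \shM_n$ is forced by functoriality --- drop the word ``smooth'' and say this instead. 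Second, the assertion that $\pi^*$ is an equivalence is the real content of the proposition and cannot rest on an appeal to ``\'etale descent on each $\stU_{\et}$''; you must either carry out the hypercover descent you invoke or, more economically, route the comparison through $\iDes(\stX_0/\stX)$ as the paper does.
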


\begin{proof}
We follow the proof of \cite[Proposition 4.4]{O}.
Since $\ve$ is an equivalence, the induced $\ve^*$ is also an equivalence.
%Thus it is enough to show that $\Rst$ is an equivalence.
The fully faithfulness of $\Rst$ is obtained by unwinding the definition, 
%as in the argument of loc.\ cit.
so we focus on the essential surjectivity of $\Rst$.

Recall that the $0$-th part of $\stX_{\bullet}$ is given by 
$\stX_0 = \coprod_{i \in I} \stX_i$,
and it is a derived algebraic space.
Recalling also that $\stX_1 = \stX_0 \times_{\stX} \stX_0$
and $\stX_2 = \stX_0 \times_{\stX} \stX_0 \times_{\stX} \stX_0$,
let us denote by $d^1_0,d^1_1: \stX_1 \to \stX_0$ 
and $d^2_0,d^2_1,d^2_2: \stX_2 \to \stX_1$ the projections.
Compared to the ordinary symbols,
we have $d^1_0=\pr_2$, $d^1_1=\pr_1$ and 
$d^2_0=\pr_{23}$, $d^2_1=\pr_{13}$, $d^2_2=\pr_{12}$.

Consider the $\infty$-category $\iDes(\stX_0/\stX)$
whose object is a pair $(\shG,\iota)$ of $\shG \in \iMod_{\shA_0}(\stX_{0,\et})$
and an equivalence $\iota: (d^1_1)^* \shG \simto (d^1_0)^* \shG$ 
in $\iMod_{\shA}(\stX_{1,\et})$ such that the two equivalences 
$(d^2_1)^*(\iota)$ and $(d_2^0)^* \circ (d_2^2)^*$ seen as 
$(d^1_1 \circ d^2_2)^*\shG \to (d^1_0 \circ d^2_0)^*\shG $ are equivalent.
We have a functor 
$\iMod^{\cart}_{\shA}(\stX_{\txle}) \to \iDes(\stX_0/\stX)$
by sending $\shM$ to the pair of $\shG := \shM_{\stX_0}$ 
(using Notation \ref{ntn:LE:FU}) and 
$\iota$ defined to be the composition of 
$(d^1_1)^* \shM_{\stX_0} \simto \shM_{\stX_1}$ and the inverse of  
$(d^1_0)^* \shM_{\stX_0} \simto \shM_{\stX_1}$.
Then the argument in \cite[Lemma 4.5]{O} works and 
this functor gives an equivalence.

On the other hand, we have a functor
$\iMod^{\cart}_{\shA_{\bullet}}(\stX^{\str}_{\bullet,\et}) \to \iDes(\stX_0/\stX)$
by sending $\shM_\bullet$ to the pair of $\shG := \shM_{0}$ and 
$\iota$ defined to be the composition of 
$(d^1_1)^* \shM_0 \simto \shM_1$ and the inverse of  
$(d^1_0)^* \shM_0 \simto \shM_1$.
Note that the last equivalences come from the simplicial structure.
Then, by the argument in \cite[Proposition 4.4]{O},
this functor is also an equivalence.
The same construction gives 
$\iMod^{\cart}_{\shA_{\bullet}}(\stX_{\bullet,\et}) \to \iDes(\stX_0/\stX)$.
Finally, the composition 
$\iMod^{\cart}_{\shA_{\bullet}}(\stX^{\str}_{\bullet,\et}) 
 \simto \iDes(\stX_0/\stX)
 \xleftarrow{\sim} \iMod^{\cart}_{\shA_\bullet}(\stX_{\bullet,\et})
 \xr{\Rst} \iMod^{\cart}_{\shA_{\bullet}}(\stX^{\str}_{\bullet,\et})$
is equivalent to the identity functor. 
Thus $\Rst$ is an equivalence,and we have shown that 
$\iMod^{\cart}_{\shA_{\bullet}}(\stX_{\bullet,\et})$.
$\iMod^{\cart}_{\shA_{\bullet}}(\stX^{\str}_{\bullet,\et})$ and 
$\iMod^{\cart}_{\shA_{\bullet}}(\stX^{\str}_{\bullet,\txle})$
are equivalent to $\iDes(\stX_0/\stX) \simeq \iMod^{\cart}_{\shA}(\stX_{\txle})$.
\end{proof}

We next discuss the derived $\infty$-category.
%will analyze objects in $\iDa(\stX_{\txle},\shA)$
%via these strictly simplicial $\infty$-topoi $\stX^{\str}_{\bullet,\tau}$.
%Let us introduce
For $\tau = \txle$ or $\et$, 
we denote by 
\[
 \iDa(\stX^{\str}_{\bullet,\tau},\shA_{\bullet}) 
 %\subset \iDaShv_{\iSp(\bbZ)}(\stX^{\str}_{\bullet,\tau})
\]
the derived $\infty$-category of the Grothendieck abelian category
$\Ho \iMod_{\shA_{\bullet}}(\stX^{\str}_{\bullet,\tau})$ 
(Notation \ref{ntn:is:sDa}).
Similarly we define $\iDa(\stX_{\bullet,\et},\shA_{\bullet})$.
An object $\shM_{\bullet}$ of $\iDa(\stX^{\str}_{\bullet,\tau},\shA_{\bullet})$
consists of $\shM_n \in \iDa(\stX_{n,\tau},\shA_{n})$ for each $n \in \bbN$
and $\delta^* \shM_n \to \shM_m$ for each $\delta: [m] \to [n]$ in $\CS^{\str}$.

\begin{dfn*}
Let $\stX$ and $\shA$ be as before.
\begin{enumerate}[nosep]
\item
Let $\tau = \txle$ or $\et$.
An object $\shM_{\bullet}$ of $\iDa(\stX^{\str}_{\bullet,\tau},\shA_{\bullet})$ 
is called \emph{cartesian} if $\shM_n$ belongs to 
$\iDu{\cart}(\stX^{\str}_{n,\tau},\shA_{n})$ 
for each $[n] \in \CS^{\str}$
and the morphism $\delta^* \shM_n \to \shM_m$ is an equivalence 
in $\iDa(\stX^{\str}_{m,\tau},\shA_{m})$
for each morphism $\delta: [m] \to [n]$ in $\CS^{\str}$.
We denote by  
\[
 \iDu{\cart}(\stX^{\str}_{\bullet,\tau},\shA_{\bullet}) \subset 
 \iDa(\stX^{\str}_{\bullet,\tau},\shA_{\bullet})
\]
the full sub-$\infty$-category spanned by cartesian objects.

\item
A \emph{cartesian} object of $\iDa(\stX_{\bullet,\et},\shA_{\bullet})$ 
is defined by the same condition as (1)
with replacing $\CS^{\str}$ by $\CS$.
We denote by 
\[
 \iDu{\cart}(\stX_{\bullet,\et},\shA_{\bullet}) \subset 
 \iDa(\stX_{\bullet,\et},\shA_{\bullet})
\]
the corresponding full sub-$\infty$-category.
\end{enumerate}
\end{dfn*}

%
%Recall Definition \ref{dfn:LE:cart:bound} where the sub-$\infty$-categories
%$\iMod_{\shA}(\stX_{\txle})$
%
%\begin{cor}\label{cor:LE:cart-stab}
%The $\infty$-category $\iMod^{\cart}(\stX_{\txle},\Lambda)$
%is stable in the sense of Definition \ref{dfn:stb:stb}.
%\end{cor}
%
%$\pi^*$ is given by the correspondence $\shF \mapsto \shF_{\bullet}$,
%and $\ve^*$ is the restriction.
%
%Let $\stX$ be a geometric derived stack 
%and $\shA$ be a flat lisse-\'etale sheaf of commutative rings on $\stX$.
Let us denote by $\pi: \stX^{\str}_{\bullet,\txle} \to \stX_{\txle}$
the functor induced by the strictly simplicial structure,
which is indeed a geometric morphisms of $\infty$-topoi.
Recall the equivalence 
$\ve: \stX^{\str}_{\bullet,\txle} \simto \stX^{\str}_{\bullet,\et}$.
We denote by the same symbols the induced functors 
\[
 \pi: (\stX^{\str}_{\bullet,\txle},\shA_{\bullet}) \longto 
      (\stX_{\txle},\shA), \quad
 \ve: (\stX^{\str}_{\bullet,\txle},\shA_{\bullet}) \longto 
      (\stX^{\str}_{\bullet,\et}, \shA_{\bullet})
\]
of flat ringed $\infty$-topoi, and denote the corresponding inverse images by
\[
 \pi^*: \iDu{\cart}(\stX_{\txle},\shA) \longto 
        \iDu{\cart}(\stX^{\str}_{\bullet,\txle},\shA_{\bullet}), \quad
 \ve^*: \iDu{\cart}(\stX^{\str}_{\bullet,\et},\shA_{\bullet}) \longto 
        \iDu{\cart}(\stX^{\str}_{\bullet,\txle},\shA_{\bullet}).
\]
Let us also define a functor 
\[
 \iDu{\cart}(\stX_{\bullet,\et},\shA_{\bullet})         \xrr{\Rst}
 \iDu{\cart}(\stX^{\str}_{\bullet,\et},\shA_{\bullet}) 
\]
by the restriction with respect to $\CS^{\str} \subset \CS$.
Now we can state the main result of this part,
which is a derived analogue of \cite[Theorem 4.7]{O}.
%\cite[Theorem 2.7]{LO1}

\begin{thm}\label{thm:LE:pi-ve}
All of the functors $\pi^*$, $\ve^*$ and $\Rst$ are equivalences.
In particular, the $\infty$-categories 
\[
 \iDu{\cart}(\stX_{\txle},\shA), \quad
 \iDu{\cart}(\stX^{\str}_{\bullet,\txle},\shA_{\bullet}), \quad
 \iDu{\cart}(\stX^{\str}_{\bullet,\et},\shA_{\bullet}), \quad 
 \iDu{\cart}(\stX_{\bullet,\et},\shA_{\bullet})
\]
are all equivalent,
and hence each of them is stable and equipped with a $t$-structure.
\end{thm}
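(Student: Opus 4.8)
The plan is to establish the three equivalences in stages, relying on the descent machinery of \S\ref{sss:is:des}, in particular Proposition \ref{prp:is:223}, together with the already-proven Proposition \ref{prp:LE:cart} at the level of hearts. First I would verify the hypotheses (S3) and (S4) of Proposition \ref{prp:is:223} for the augmented strictly simplicial ringed $\infty$-topos $\ep := \pi: (\stX^{\str}_{\bullet,\txle},\shA_{\bullet}) \to (\stX_{\txle},\shA)$, taking $\iB' = \iMod^{\cart}_{\shA}(\stX_{\txle})$ (which is a Serre subcategory of $\Ho\iMod_{\shA}(\stX_{\txle})$ by Corollary \ref{cor:LE:cart}) and $\iB'_{\bullet} = \iMod^{\cart}_{\shA_{\bullet}}(\stX^{\str}_{\bullet,\txle})$. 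Condition (S4), that $\ep^*: \iB' \to \iB'_{\bullet}$ is an equivalence, is precisely the equivalence $\iMod^{\cart}_{\shA}(\stX_{\txle}) \simeq \iMod^{\cart}_{\shA_{\bullet}}(\stX^{\str}_{\bullet,\txle})$ from Proposition \ref{prp:LE:cart}. Condition (S3) unwinds to checking (S0), (S1) for each $(\stX_{n,\txle},\shA_n)$ and the corresponding full subcategory $\iB'_n$: here (S0) holds since $\stX_{n,\txle}$ arises from the $\infty$-site $\gsLE(\stX_n)$, and (S1) is a cohomological-dimension bound which follows, via the equivalence $\stX_{n,\txle}\simeq\stX_{n,\et}$ of Lemma \ref{lem:LE:et-le}, from the analogous statement on the small \'etale $\infty$-site of the derived algebraic space $\stX_n$ (cartesian sheaves are locally bounded). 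Granting this, Proposition \ref{prp:is:223} yields that $\pi^*: \iDu{\cart}(\stX_{\txle},\shA) \to \iDu{\cart}(\stX^{\str}_{\bullet,\txle},\shA_{\bullet})$ is an equivalence with quasi-inverse $\pi_*$.

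\textbf{The equivalence $\ve^*$.} Next I would treat $\ve^*: \iDu{\cart}(\stX^{\str}_{\bullet,\et},\shA_{\bullet}) \to \iDu{\cart}(\stX^{\str}_{\bullet,\txle},\shA_{\bullet})$. This is the easiest of the three: $\ve: \stX^{\str}_{\bullet,\txle} \to \stX^{\str}_{\bullet,\et}$ is a levelwise equivalence of $\infty$-topoi (Lemma \ref{lem:LE:et-le} applied degreewise, assembled as in \S\ref{ss:LE:spl}), compatible with the flat ring sheaves $\shA_{\bullet}$, hence induces an equivalence on $\infty$-categories of sheaves of $\shA_{\bullet}$-modules, on their homotopy categories, on the derived $\infty$-categories $\iDa(-,\shA_{\bullet})$, and — since cartesianness is a condition phrased entirely in terms of the transition morphisms, which $\ve$ preserves — on the cartesian subcategories. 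So $\ve^*$ is an equivalence essentially formally.

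\textbf{The equivalence $\Rst$.} Finally, the restriction functor $\Rst: \iDu{\cart}(\stX_{\bullet,\et},\shA_{\bullet}) \to \iDu{\cart}(\stX^{\str}_{\bullet,\et},\shA_{\bullet})$ from the full simplicial to the strictly simplicial \'etale picture is the step I expect to be the main obstacle, just as in \cite[Theorem 4.7]{O} the passage from $\Delta^+$ to $\Delta$ at the derived level is the delicate point. At the level of hearts this equivalence is already in Proposition \ref{prp:LE:cart} (via the descent category $\iDes(\stX_0/\stX)$), but promoting it to the derived $\infty$-categories requires the gluing lemma, Fact \ref{fct:is:glue}, applied to $\tT = \stX_{\et}$ with the strictly simplicial hypercovering $\stX^{\str}_{\bullet}$ and $\shA = \shA$: the hypotheses (G1)--(G3) must be checked — (G1) is again the local cohomological-dimension bound for cartesian sheaves on the $\stX_{n,\et}$, (G2) is the cartesian-section statement which is Proposition \ref{prp:LE:cart}, and (G3) asks that $\iDa(\stX_{\et},\shA)$ be compactly generated, which holds for a separated quasi-compact derived algebraic space. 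One must also verify the vanishing $\shExt^i_{\shA}(\shK_0,\shK_0)=0$ for $i<0$ for objects of $\iDu{\cart}$, which follows from the $t$-exactness and the connectivity built into the derived $\infty$-category. Fact \ref{fct:is:glue} then produces, for each cartesian section $(\shK_n)$, a well-defined object $\shK \in \iDa(\stX_{\et},\shA)$ glued from the $\shK_n$; tracking that $\shK$ has cartesian homotopy groups and that this assignment is inverse to $\Rst$ up to contractible ambiguity (using Lemma \ref{lem:is:234} for uniqueness, exactly as in \cite{O}) completes the argument. Composing $\pi^*$, $\ve^*$, $\Rst$ and invoking the equivalence $\iDa(\stX_{\txle},\shA)\simeq\iMod^{\stab}_{\shA}(\stX_{\txle})$ (Proposition \ref{prp:is:Da=Stab}) and Lemma \ref{lem:is:srm} gives that each of the four $\infty$-categories is stable with a $t$-structure, as claimed. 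The one genuinely technical point throughout is the careful bookkeeping of the strictly-simplicial versus simplicial indexing in the spectral/derived setting, and ensuring the degreewise hypotheses of the descent and gluing lemmas are met uniformly in $n$.
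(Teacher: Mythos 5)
Your treatment of $\pi^*$ and $\ve^*$ matches the paper: $\pi^*$ is an equivalence by Proposition \ref{prp:is:223} (with (S4) supplied by Proposition \ref{prp:LE:cart} and (S3) by the levelwise cohomological bounds on the $\stX_{n,\et}$), and $\ve^*$ is an equivalence simply because $\ve$ is one. The problem is your handling of $\Rst$, where you replace the paper's argument by an appeal to the gluing lemma, Fact \ref{fct:is:glue}. That lemma carries the hypothesis $\shExt^i_{\shA}(\shK_0,\shK_0)=0$ for all $i<0$, and your claim that this ``follows from the $t$-exactness and the connectivity built into the derived $\infty$-category'' is false: for a general object of $\iDu{\cart}$ with cohomology in more than one degree, say $\shK_0=\shF\oplus\shF[1]$, one has $\Ext^{-1}(\shK_0,\shK_0)\supseteq\Hom(\shF,\shF)\neq 0$. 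This is exactly why Laszlo--Olsson invoke their gluing lemma only for distinguished objects (dualizing complexes) and not to establish the equivalence of derived categories itself. So your construction of a quasi-inverse to $\Rst$ does not go through on all of $\iDu{\cart}(\stX_{\bullet,\et},\shA_\bullet)$.

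There is also a mismatch of tools: Fact \ref{fct:is:glue} glues sections over a strictly simplicial hypercovering $U^{\str}_\bullet\to\tT$ into an object of $\iDa(\tT,\shA)$, i.e.\ it addresses descent from the simplicial level down to the base --- the $\pi^*$ step --- whereas $\Rst$ compares two diagram categories over the \emph{same} family of \'etale $\infty$-topoi, indexed by $\CS$ versus $\CS^{\str}$. (Note also that $\stX_{\et}$, to which you propose to apply the lemma, is not defined for a geometric derived stack $\stX$.) The paper instead disposes of $\Rst$ by observing that the descent-category argument of Proposition \ref{prp:LE:cart} carries over verbatim to the derived setting: since cartesian objects have all transition morphisms invertible, both $\iDu{\cart}(\stX_{\bullet,\et},\shA_\bullet)$ and $\iDu{\cart}(\stX^{\str}_{\bullet,\et},\shA_\bullet)$ are identified with the descent category $\iDes(\stX_0/\stX)$ built from $\shM_0$ together with its descent datum over $\stX_1$, and the composite with $\Rst$ is the identity. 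To repair your proof, replace the gluing-lemma step by this argument (or by any cofinality argument for $\CS^{\str}\subset\CS$ on diagrams with invertible transition maps).
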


\begin{proof}
%By the argument in \cite[Theorem 4.7]{O},
%we see that the restrictions
%\[
% \pi^*: \iDu{\cart,+}(\stX_{\txle},\shA) \longto 
%        \iDu{\cart,+}(\stX^{\str}_{\bullet,\txle},\shA_{\bullet}), \quad
% \ve^*: \iDu{\cart,+}(\stX^{\str}_{\bullet,\et},\shA_{\bullet}) \longto 
%        \iDu{\cart,+}(\stX^{\str}_{\bullet,\txle},\shA_{\bullet}).
%\]
%are equivalences, 
%and the direct images $\pi_*$ and $\ve_*$ give their quasi-inverses.
$\pi^*$ is an equivalence by Proposition \ref{prp:is:223}.
$\ve^*$ is an equivalence since $\ve$ is an equivalence.
As for $\Rst$, the argument in Proposition \ref{prp:LE:cart} works.
\end{proof}

%%%%%%%%%%%%%%%%%%%%%%%%%%%%%%%%%%%%%%%%%%%%%%%%%%%%%%%%%%%%%%%%%%%%%%%%%%%%%%%%
%%%%%%%%%%%%%%%%%%%%%%%%%%%%%%%%%%%%%%%%%%%%%%%%%%%%%%%%%%%%%%%%%%%%%%%%%%%%%%%%
\subsection{Constructible sheaves}

In this subsection 
%Let $\stU$ be a derived algebraic space over the base ring $k$
$\Lambda$ denotes a torsion noetherian ring annihilated by an integer
invertible in the base commutative ring $k$.

Recall the notion of a \emph{constructible sheaf} on an ordinary scheme 
\cite[IX \S 2]{SGA4}: 

\begin{dfn*}%\label{dfn:LE:ord-lc}
A sheaf $\shF$ of sets on a scheme $X$ is \emph{constructible} 
if for any affine Zariski open $U \subset X$ there is a finite decomposition 
$U=\sqcup_i U_i$ into constructible locally closed subschemes $U_i$ such that 
$\rst{\shF}{U_i}$ is a locally constant sheaf with value in a finite set.
\end{dfn*}

As noted in \cite[Remarque (18.1.2) (1)]{LM},
the phrase ``for any affine Zariski open $U \subset X$" can be replaced by
``for each affine scheme belonging to any \'etale covering of $X$".
In \cite[Chap.\ 18]{LM} the corresponding notion is introduced 
for the lisse-\'etale topos of an algebraic stacks.
In this subsection we introduce an analogous notion for derived stacks.

We cite from \cite[\S A.1]{Lur2} 
the definition of locally constant sheaf on an $\infty$-topos.

\begin{dfn}[{\cite[Definintion A.1.12]{Lur2}}]\label{dfn:LE:lc}
Let $\tT$ be an $\infty$-topos and $\shF \in \tT$.
\begin{enumerate}[nosep]
\item
$\shF$ is \emph{constant} if it lies 
in the essential image of a geometric morphism $\pi^*: \iS \to \tT$
(see Remark \ref{rmk:is:gm} for the notation on a geometric morphism).
\item
$\shF$ is \emph{locally constant} if there exists a %small 
collection $\{U_i\}_{i \in I}$ of objects $U_i \in \tT$
satisfying the following conditions.
\begin{itemize}[nosep]
\item 
$\{U_i\}_{i \in I}$ covers $\tT$, i.e., there is an effective epimorphism 
$\coprod_{i \in I} U_i \to \one_{\tT}$, 
where $\one_{\tT}$ denotes a final object of $\tT$
(see Definition \ref{dfn:it:eff-epi} 
 for the definition of  \emph{effective epimorphism}).
\item
The product $\shF \times U_i$ is a constant object 
of the $\infty$-topos $\oc{\tT}{U_i}$ (Fact \ref{fct:pr:ovc}).
\end{itemize}
\end{enumerate}
\end{dfn}

In order to introduce constructible sheaves on a derived stack,
we start with those on a derived algebraic space.
%Recall that we have $\iShv_{\iS}(\tT) \simeq \tT$ for any $\infty$-topos $\tT$
%(Fact \ref{fct:pr:Shv(X)=X}).

\begin{dfn}\label{dfn:LE:cstr:dAS}
Let $\stU$ be a derived algebraic space and 
$\shF \in \iShv_{\iS}(\stU_{\et}) \simeq \stU_{\et}$.
\begin{enumerate}[nosep]
\item
$\shF$ is \emph{locally constant} if 
it is locally constant as an object of the $\infty$-topos $\stU_{\et}$
in the sense of Definition \ref{dfn:LE:lc}.

\item
$\shF$ is  \emph{constructible} if for any \'etale covering 
$\{\stU_i \to \stU\}_{i \in I}$ by derived algebraic spaces $\stU_i$,
there is a finite decomposition 
$\stU_i \simeq \coprod_j U_{i,j}$ into affine derived schemes $U_{i,j}$ 
for each $i$ such that 
\begin{itemize}
\item
The (non-derived) affine scheme $\Trc(U_{i,j})$ is 
a constructible locally closed subscheme of $\Trc(\stU_i)$. 
\item
The restriction 
$\shF_{U_{i,j}}$ is a locally constant sheaf on $U_{i,j}$ for all $j$
in the sense of (1).
\end{itemize}
\end{enumerate}
\end{dfn}

Let us now introduce \emph{constructible sheaves of $\Lambda$-modules}.
As before, for a commutative ring $\Lambda$,
we denote the constant lisse-\'etale sheaf valued in $\Lambda$
by the same symbol. 

\begin{dfn}\label{dfn:LE:cstr:L}
Let $\stU$ be a derived algebraic space and $\Lambda$ be a commutative ring.
An object $\shM$ of $\iMod_{\Lambda}(\stU_{\et})$ is called 
\emph{constructible} if is is constructible in the sense of 
Definition \ref{dfn:LE:cstr:dAS} as an object of $\iShv_{\iS}(\stU_{\et})$.
\end{dfn}

%On a derived stack of $n$-AS type,
%a constructible sheaf has the following property
The following statement is an analogue of \cite[Lemma 9.1]{O}.
The proof is the same with loc.\ cit.,
and we omit it.

\begin{lem}\label{lem:LE:cstr}
Let $\stX$ be a geometric derived stack %of $n$-AS type 
\begin{enumerate}[nosep]
\item
Let $\shF \in \iShv_{\iS}(\stX_{\txle}) \simeq \stX_{\txle}$ be cartesian.
Then the following conditions are equivalent.
%for $\shM \in \iMod^{\cart}_{\Lambda}(\stX_{\txle})$.
\begin{enumerate}[nosep,label=(\roman*)]
\item 
For any $\stU \in \oc{\idAS}{\stX}$, the sheaf $\shF_{\stU}$ 
is a locally constant (resp.\ constructible) sheaf on $\stU_{\et}$.
\item
There exists a smooth presentation $\stX_{\bullet} \to \stX$
(Definition  \ref{dfn:LE:pres}) such that 
%each $\shF_{\stU_i}$ on $(\stU_i)_{\et}$ 
$\shF_{\stX_0}$ is locally constant (resp.\ constructible).
\end{enumerate}
\item
Let $\Lambda$ be a commutative ring.
Then the same equivalences as (1) hold for 
$\shM \in \iMod^{\cart}_{\Lambda}(\stX_{\txle})$.
\end{enumerate}
\end{lem}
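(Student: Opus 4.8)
\textbf{Proof plan for Lemma \ref{lem:LE:cstr}.}

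The statement is a cartesianness-is-local claim, parallel to \cite[Lemma 9.1]{O}, so the plan is to reduce everything to the descent picture established in Theorem \ref{thm:LE:pi-ve} and Proposition \ref{prp:LE:cart}. First I would treat part (1) for the ``locally constant'' assertion. The implication (i) $\Rightarrow$ (ii) is trivial: one picks any smooth presentation $\stX_{\bullet} \to \stX$ (Definition \ref{dfn:LE:pres}), and since $\stX_0 \in \oc{\idAS}{\stX}$ lies in $\idAS^{\lis}_{\stX}$, the hypothesis applied to $\stU = \stX_0$ gives that $\shF_{\stX_0}$ is locally constant on $(\stX_0)_{\et}$. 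For the converse (ii) $\Rightarrow$ (i), the key point is that $\shF$ being cartesian means (by Lemma \ref{lem:LE:3.8}) that for every smooth morphism $f : \stU \to \stV$ in $\idAS^{\lis}_{\stX}$ the natural map $f^{-1}(\shF_{\stV}) \to \shF_{\stU}$ is an equivalence. Given an arbitrary $(\stU,u) \in \oc{\idAS}{\stX}$, one forms the base change $\stU' := \stU \times_{\stX} \stX_0$ with its projections $g : \stU' \to \stU$ and $h : \stU' \to \stX_0$; here $g$ is smooth and surjective (since $\stX_0 \to \stX$ is) and $h$ is smooth. By cartesianness, $\shF_{\stU'} \simeq h^{-1}(\shF_{\stX_0})$, so $\shF_{\stU'}$ is locally constant because the inverse image of a locally constant sheaf under any morphism of $\infty$-topoi is locally constant (Definition \ref{dfn:LE:lc} is plainly stable under pullback). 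Then one descends along the smooth surjection $g$: local constancy of $\shF_{\stU}$ can be checked after pulling back by $g_{\et}$ because $g_{\et}$ is a covering in the \'etale $\infty$-topos, and local constancy is an \'etale-local condition by its very definition.

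For the ``constructible'' case of part (1) I would run the same base-change argument, now using Definition \ref{dfn:LE:cstr:dAS}: constructibility of $\shF_{\stU}$ is tested on any \'etale covering of $\stU$, and the covering $\{\stU' \to \stU\}$ coming from $\stX_0 \to \stX$ is (after refining $\stU'$ into a finite coproduct of affine derived schemes, which is possible since $\stX_0$ is a coproduct of the derived algebraic spaces $\stU_i$ and each such splits \'etale-locally into affines) of the right shape. On $\stU'$ one has $\shF_{\stU'} \simeq h^{-1}(\shF_{\stX_0})$, and since $h$ is a morphism of derived algebraic spaces one checks that the pullback of a sheaf whose restrictions to a constructible stratification are locally constant again has this property — this is the standard fact that inverse image preserves constructibility for finitely presented morphisms, applied at the level of truncations via $\Trc$. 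Finally part (2) is immediate: by Proposition \ref{prp:is:Da=Stab}-type considerations, an object $\shM \in \iMod^{\cart}_{\Lambda}(\stX_{\txle})$ has an underlying sheaf in $\iShv_{\iS}(\stX_{\txle})$, and the restriction functors $\shM \mapsto \shM_{\stU}$ and $\shM \mapsto \shM_{\stX_0}$ are compatible with forgetting down to $\iS$-valued sheaves, so part (1) applies verbatim to the underlying sheaf of $\shM$.

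The main obstacle I anticipate is the bookkeeping around the base-change square $\stU' = \stU \times_{\stX} \stX_0$ and the identification $\shF_{\stU'} \simeq h^{-1}(\shF_{\stX_0})$ via cartesianness: one must be careful that $\stU'$ is genuinely a derived algebraic space (this uses that $\stX_0 \to \stX$ is representable and smooth, combined with Fact \ref{fct:dSt:geom} on stability of geometricity under pullback) and that the two descriptions of $\shF$ restricted to $\stU'$ — one via the smooth map $\stU' \to \stU$, one via $\stU' \to \stX_0$ — agree, which is exactly the content of Lemma \ref{lem:LE:3.8}. The descent step along the smooth surjection $g$ also requires knowing that local constancy and constructibility are preserved by and detected by \'etale covers; this is routine in the classical setting but here one should invoke that $g_{\et} : \stU'_{\et} \to \stU_{\et}$ being induced by a smooth surjective morphism yields an effective epimorphism of $\infty$-topoi, so that checking a sheaf-theoretic property after $g_{\et}^{-1}$ suffices. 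None of this is deep, but getting the quantifiers over \'etale coverings and the finite affine decompositions to line up with Definition \ref{dfn:LE:cstr:dAS} is where the care is needed; this is precisely why the proof is stated to be ``the same with loc.\ cit.'' and omitted.
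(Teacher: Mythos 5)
Your overall route is the right one and is essentially the argument of \cite[Lemma 9.1]{O} that the paper defers to: (i) $\Rightarrow$ (ii) by evaluating at $\stX_0$, and (ii) $\Rightarrow$ (i) by base change along the atlas, using cartesianness (Lemma \ref{lem:LE:3.8}) to identify $\shF_{\stU'}$ with $h^{-1}(\shF_{\stX_0})$ on $\stU' = \stU \times_{\stX} \stX_0$ and then descending along $g: \stU' \to \stU$. Part (2) does reduce to part (1) as you say, since Definition \ref{dfn:LE:cstr:L} tests constructibility on the underlying sheaf of spaces.

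The gap is in the descent step. The morphism $g: \stU' \to \stU$ is smooth and surjective but in general not \'etale, so it is \emph{not} ``a covering in the \'etale $\infty$-topos'' $\stU_{\et}$, and neither local constancy nor constructibility can be checked after $g_{\et}^{-1}$ merely because $g$ induces some kind of epimorphism: Definition \ref{dfn:LE:lc} and Definition \ref{dfn:LE:cstr:dAS} quantify over \'etale coverings only, and a property does not descend along an arbitrary effective epimorphism without an argument. The missing ingredient --- the one genuinely nontrivial point in Olsson's proof --- is that a smooth surjection of (derived) algebraic spaces admits sections \'etale-locally on the target: there is an \'etale surjection $V \to \stU$ factoring as $V \to \stU' \xrightarrow{g} \stU$. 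Then $\shF_V$ is the pullback of $\shF_{\stU'} \simeq h^{-1}(\shF_{\stX_0})$ along $V \to \stU'$, hence locally constant (resp.\ constructible), and since $V \to \stU$ \emph{is} an \'etale covering one concludes for $\shF_{\stU}$; the derived statement reduces to the classical one via $\Trc$, as smoothness, surjectivity and the \'etale topology depend only on the truncation. A second, smaller issue: you apply the cartesian property to $h$ and to $g$, which requires $\stU$ (hence $\stU'$) to be \emph{smooth} over $\stX$, i.e.\ to lie in $\idAS_{\stX}^{\lis}$ rather than merely in $\oc{\idAS}{\stX}$; for a non-smooth $u: \stU \to \stX$ cartesianness gives no control over $\shF_{\stU}$, so the quantifier in (i) must be read as ranging over the lisse-\'etale site, as in loc.\ cit.
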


Note that  the symbol $\iMod^{\cart}_{\Lambda}(\stX_{\txle})$ in the item (2)
makes sense since the constant sheaf $\Lambda$ is flat 
(Definition \ref{dfn:LE:flat})
so that the notion of cartesian sheaves is well-defined.

Now we can state the definition of constructible lisse-\'etale sheaves
on a derived stack.

\begin{dfn}\label{dfn:LE:cstr}
Let $\stX$ be a geometric derived stack
and $\Lambda$ be a commutative ring.
Then a sheaf $\shM \in \iMod^{\cart}_{\Lambda}(\stX_{\txle})$
is \emph{constructible} if it satisfies one of the conditions 
in Lemma \ref{lem:LE:cstr}.
We denote by
\[
 \iMod_{\cstr}(\stX_{\txle},\Lambda) \subset
 \iMod^{\cart}_{\Lambda}(\stX_{\txle})
\]
the full sub-$\infty$-category spanned by constructible sheaves.
\end{dfn}

Similarly to the case of cartesian sheaves (\S \ref{ss:LE:spl}), 
we can describe constructible sheaves by (strictly) simplicial $\infty$-topoi.
Let us take a smooth presentation $\stX_{\bullet} \to \stX$,
and consider the strictly simplicial $\infty$-topoi 
$\stX^{\str}_{\bullet,\txle}$, $\stX^{\str}_{\bullet,\et}$
and the simplicial $\infty$-topoi $\stX_{\bullet,\et}$.
These can be ringed by the constant sheaf $\Lambda$.

\begin{dfn*}
\begin{enumerate}[nosep]
\item 
Let $\tau = \txle$ or $\et$.
An object $\shM_{\bullet}$ of 
$\iMod^{\cart}_{\Lambda}(\stX^{\str}_{\bullet,\tau})$ is \emph{constructible}
if $\shM_n$ is constructible in the sense of Definition \ref{dfn:LE:cstr}
for each $[n] \in \CS^{\str}$.
We denote by 
\[
 \iMod_{\cstr}(\stX^{\str}_{\bullet,\tau}, \Lambda) \subset 
 \iMod^{\cart}_{\Lambda}(\stX^{\str}_{\bullet,\tau})
\]
the full sub-$\infty$-category spanned by constructible objects.

\item
An object of 
$\iMod^{\cart}_{\Lambda}(\stX_{\bullet,\et})$ is defined to be 
\emph{constructible} in the same way but replacing $\CS^{\str}$ by $\CS$.
The corresponding full sub-$\infty$-category is denoted by 
\[
 \iMod_{\cstr}(\stX_{\bullet,\et}, \Lambda) \subset 
 \iMod^{\cart}_{\Lambda}(\stX_{\bullet,\et}).
\]
\end{enumerate}
\end{dfn*}

\begin{prp}\label{prp:LE:Modc}
The natural restriction functors
\[
 \iMod_{\cstr}(\stX_{\txle},\Lambda) \longto 
 \iMod_{\cstr}(\stX^{\str}_{\bullet,\txle},\Lambda) \longto 
 \iMod_{\cstr}(\stX^{\str}_{\bullet,\et},\Lambda), 
\]
and
\[
 \iMod_{\cstr}(\stX_{\txle},\Lambda) \longto 
 \iMod_{\cstr}(\stX_{\bullet,\et},\Lambda) \longto 
 \iMod_{\cstr}(\stX^{\str}_{\bullet,\et},\Lambda) 
\]
are equivalences.
\end{prp}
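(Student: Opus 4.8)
The plan is to deduce this from the module-level comparison of cartesian sheaves in Proposition~\ref{prp:LE:cart} together with the characterization of constructibility in Lemma~\ref{lem:LE:cstr}. Since the constant sheaf $\Lambda$ is flat (Definition~\ref{dfn:LE:flat}), Proposition~\ref{prp:LE:cart} applies with $\shA=\Lambda$ and shows that the four $\infty$-categories $\iMod^{\cart}_{\Lambda}(\stX_{\txle})$, $\iMod^{\cart}_{\Lambda_{\bullet}}(\stX_{\bullet,\et})$, $\iMod^{\cart}_{\Lambda_{\bullet}}(\stX^{\str}_{\bullet,\et})$, $\iMod^{\cart}_{\Lambda_{\bullet}}(\stX^{\str}_{\bullet,\txle})$ are equivalent via (restrictions of) the functors $\Rst$ and $\ve^*$ displayed before Proposition~\ref{prp:LE:cart}, and that these equivalences all factor through the $\infty$-category of descent data $\iDes(\stX_0/\stX)$ by evaluation at the $0$-th term of a smooth presentation $\stX_{\bullet}\to\stX$ (Definition~\ref{dfn:LE:pres}). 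In particular, under each of these equivalences a cartesian sheaf $\shM$ on $\stX_{\txle}$ corresponds to a cartesian (strictly) simplicial sheaf $\shM_{\bullet}$ with $\shM_0\simeq\ve(\shM_{\stX_0})$. The two chains of restriction functors in the statement are then precisely the restrictions of these equivalences to the full sub-$\infty$-categories spanned by constructible objects, so it remains only to check that an object is constructible if and only if its image under the equivalence is.

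First I would record that, for a cartesian (strictly) simplicial sheaf $\shM_{\bullet}$ over $\stX_{\bullet,\et}$ (resp.\ $\stX^{\str}_{\bullet,\et}$), the term $\shM_n$ is constructible for every $n$ as soon as $\shM_0$ is. Indeed the cartesianness gives $\shM_n\simeq\delta^*\shM_0$ for the structure morphism $\delta\colon\stX_n\to\stX_0$, which is smooth (cf.\ the text around Definition~\ref{dfn:LE:pres}); so it suffices to note that local constancy and constructibility of sheaves on a derived algebraic space (Definition~\ref{dfn:LE:cstr:dAS}) are stable under smooth base change. This last point is local on source and target for the \'etale topology, where a smooth morphism of derived algebraic spaces induces on truncations a smooth morphism of algebraic spaces, so that the preimage of a constructible locally closed subscheme is again constructible locally closed, while pullback of a sheaf of $\Lambda$-modules along any morphism preserves local constancy. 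Consequently, for cartesian objects, ``constructible in the simplicial sense'' (all $\shM_n$ constructible, as in the definition preceding Proposition~\ref{prp:LE:Modc}) is equivalent to constructibility of $\shM_0$ alone.

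Combining this with Lemma~\ref{lem:LE:cstr}, which states precisely that a cartesian $\shM\in\iMod^{\cart}_{\Lambda}(\stX_{\txle})$ is constructible if and only if $\shM_{\stX_0}$ is constructible for (equivalently, some) smooth presentation, and with the compatibility $\shM_0\simeq\ve(\shM_{\stX_0})$ recorded above, one obtains that the equivalences of Proposition~\ref{prp:LE:cart} carry constructible objects to constructible objects on both sides. Hence they restrict to equivalences between the constructible sub-$\infty$-categories, which is exactly the assertion of the proposition; the argument applies verbatim to both chains of functors, since both factor through $\iDes(\stX_0/\stX)$.

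The main obstacle I anticipate is not conceptual but bookkeeping: one must verify carefully that the equivalences produced in the proof of Proposition~\ref{prp:LE:cart} are genuinely compatible with ``evaluation at $\stX_0$'' on all four categories at once (including the strictly simplicial lisse-\'etale one, where the equivalence $\ve\colon\stX^{\str}_{\bullet,\txle}\simto\stX^{\str}_{\bullet,\et}$ of \S\ref{ss:LE:spl} must be inserted), and that the finite-stratification condition of Definition~\ref{dfn:LE:cstr:dAS} behaves well under the \'etale-local description of smooth morphisms of derived algebraic spaces. Both are routine given the machinery already set up, so no new ideas should be needed beyond Proposition~\ref{prp:LE:cart}, Lemma~\ref{lem:LE:cstr}, and stability of constructibility under smooth pullback.
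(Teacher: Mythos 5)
Your argument is correct and is essentially the paper's own proof: reduce constructibility of a cartesian (strictly) simplicial object to constructibility of its $0$-th term (via smoothness of the transition maps and Lemma~\ref{lem:LE:cstr}), and then invoke the equivalences of Proposition~\ref{prp:LE:cart}. The paper's version is merely terser, leaving implicit the stability of constructibility under smooth pullback that you spell out.
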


\begin{proof}
Let $\shM_{\bullet} \in \iMod^{\cart}_{\Lambda}(\stX^{\str}_{\bullet,\tau})$
with $\tau=\txle$ or $\et$.
Recall that by the cartesian condition 
the morphism $\delta^* \shM_m \to \shM_n$
is an equivalence for every $\delta: [m] \to [n]$.
Thus $\shM_{\bullet}$ is constructible 
if and only if $\shM_{0} \in \iMod^{\cart}_{\Lambda}(\stX^{\str}_{0,\txle})$ 
is constructible.
The same criterion holds for 
$\shM_{\bullet} \in \iMod^{\cart}_{\Lambda}(\stX_{\bullet,\et})$.
Now the statement follows from Proposition \ref{prp:LE:cart}.
\end{proof}

Let us turn to derived $\infty$-categories.
Recall the stable $\infty$-category $\iDu{\cart}(\stX_{\txle},\shA)$
of cartesian sheaves in Notation \ref{ntn:LE:iDcart}.
Since the cartesian sheaves form a Serre subcategory 
(Corollary \ref{cor:LE:cart}), the following definition makes sense.

\begin{dfn}\label{dfn:LE:iDc:Lambda}
Let $\stX$ be a geometric derived stack, %of $n$-AS type, 
and $\Lambda$ be a commutative ring. %fixed in the beginning.
%\begin{enumerate}[nosep]
%\item
We denote by
\[
 \iDc(\stX_{\txle},\Lambda) \subset \iDa(\stX_{\txle},\Lambda) 
\]
the full sub-$\infty$-category spanned by those objects $\shM$
whose homotopy sheaf $\pi_j \shM \in \iMod_{\Lambda}(\stX_{\txle})$
is constructible (Definition \ref{dfn:LE:cstr}) for any $j \in \bbZ$.
We also denote by 
\[
 \iDcu{*}(\stX_{\txle},\Lambda) \subset 
 \iDc(\stX_{\txle},\Lambda) \quad (* \in \{+,-,b,[m,n]\})
\]
the full sub-$\infty$-category spanned by constructible objects 
which are right bounded, left bounded, bounded and bounded in the range $[m,n]$.
\end{dfn}

%Similarly as Lemma \ref{lem:LE:cl-cart}, we immediately have
%
%\begin{lem*}
%For an algebraic stack $\stX$, let $\shF$ be a lisse-\'etale sheaf on $\stX$ 
%which is constructible in the sense of \cite[Chap.\ 18]{LM} or \cite[\S 9]{O}.
%Then the lisse-\'etale sheaf on $\iota(\stX)$ induced by $\shF$
%is cartesian in the sense of Definition \ref{dfn:LE:cstr-sh}.
%\end{lem*}

As in the previous \S \ref{ss:LE:spl}, we can describe 
$\iDc(\stX_{\txle},\Lambda)$ by (strictly) simplicial $\infty$-topoi.

\begin{dfn*}
\begin{enumerate}[nosep]
\item 
Let $\tau = \txle$ or $\et$.
An object $\shM_{\bullet}$ of 
$\iDu{\cart}(\stX^{\str}_{\bullet,\tau},\Lambda)$ is \emph{constructible}
if $\shM_n$ is constructible in the sense of Definition \ref{dfn:LE:iDc:Lambda}
for each $[n] \in \CS^{\str}$.
We denote by 
\[
 \iDc(\stX^{\str}_{\bullet,\tau},\Lambda) \subset 
 \iDc(\stX^{\str}_{\bullet,\tau},\Lambda)
\]
the full sub-$\infty$-category spanned by constructible objects.

\item
An object of 
$\iDu{\cart}(\stX_{\bullet,\et},\Lambda)$ is defined to be 
\emph{constructible} in the same way but replacing $\CS^{\str}$ by $\CS$.
The corresponding full sub-$\infty$-category is denoted by 
\[
 \iDc(\stX_{\bullet,\et},\Lambda) \subset \iDc(\stX_{\bullet,\et},\Lambda).
\]
\end{enumerate}
\end{dfn*}

As in \S \ref{ss:LE:spl}, we denote by
\[
 \pi: \stX^{\str}_{\bullet,\txle} \longto \stX_{\txle}, \quad
 \ve: \stX^{\str}_{\bullet,\txle} \longto \stX^{\str}_{\bullet,\et}
\]
the natural geometric morphism of $\infty$-topoi.
We denote the induce functors of derived $\infty$-categories by
\[
 \pi^*: \iDc(\stX_{\txle},\Lambda) \longto 
        \iDc(\stX^{\str}_{\bullet,\txle},\Lambda), \quad
 \ve^*: \iDc(\stX^{\str}_{\bullet,\et},\Lambda) \longto  
        \iDc(\stX^{\str}_{\bullet,\txle},\Lambda).
\]

\begin{thm}\label{thm:LE:iDc-stb}
Both $\pi^*$ and $\ve^*$ are equivalences.
In particular, $\iDcu{*}(\stX_{\txle},\Lambda)$ is stable 
and equipped with a $t$-structure.
\end{thm}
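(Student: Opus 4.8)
\textbf{Proof strategy for Theorem \ref{thm:LE:iDc-stb}.} The plan is to reduce the statement to the corresponding equivalence for cartesian sheaves, namely Theorem \ref{thm:LE:pi-ve}, by checking that the equivalences $\pi^*$ and $\ve^*$ established there restrict to the constructible subcategories. Recall that by definition $\iDc(\stX_{\txle},\Lambda)$ sits inside $\iDu{\cart}(\stX_{\txle},\Lambda)$ as the full sub-$\infty$-category of those $\shM$ all of whose homotopy sheaves $\pi_j \shM$ are constructible in the sense of Definition \ref{dfn:LE:cstr}; the analogous description holds for the (strictly) simplicial versions. Since $\pi^*$ and $\ve^*$ are inverse image functors of flat ringed $\infty$-topoi, they are $t$-exact (Proposition \ref{prp:is:df} (4) together with flatness), hence they commute with taking homotopy sheaves. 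So the whole problem is to show that $\shM \in \iDu{\cart}(\stX_{\txle},\Lambda)$ is constructible if and only if $\pi^* \shM$ is constructible, and similarly for $\ve^*$; once this is done, restricting the equivalences of Theorem \ref{thm:LE:pi-ve} to the full subcategories cut out by these conditions immediately yields the claimed equivalences, and stability plus the $t$-structure are inherited exactly as in Theorem \ref{thm:LE:pi-ve} (a stable $\infty$-category with a $t$-structure restricts one to any full subcategory closed under shifts and truncations, which $\iDc$ is by construction).

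\textbf{Key steps.} First I would record that $\ve^*$ is an equivalence on the constructible level for the trivial reason that $\ve: \stX^{\str}_{\bullet,\txle} \to \stX^{\str}_{\bullet,\et}$ is itself an equivalence of (simplicial) $\infty$-topoi (Lemma \ref{lem:LE:et-le}), so it carries locally constant sheaves to locally constant sheaves and preserves the finite-decomposition condition defining constructibility; combined with $t$-exactness this shows $\ve^*$ restricts to an equivalence $\iDc(\stX^{\str}_{\bullet,\et},\Lambda) \simto \iDc(\stX^{\str}_{\bullet,\txle},\Lambda)$. Second, and this is the substantive point, I would handle $\pi^*$. Take a smooth presentation $\stX_\bullet \to \stX$ (Definition \ref{dfn:LE:pres}) with $\stX_0 = \coprod_{i} \stX_i$ a derived algebraic space. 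By Lemma \ref{lem:LE:cstr}, a cartesian $\shF$ on $\stX_{\txle}$ is constructible precisely when $\shF_{\stX_0}$ is constructible on $(\stX_0)_{\et}$, and by the definition of constructibility for simplicial sheaves a cartesian $\shM_\bullet$ on $\stX^{\str}_{\bullet,\txle}$ is constructible precisely when $\shM_0$ is. But under the equivalence $\pi^*$ of Theorem \ref{thm:LE:pi-ve}, the $0$-th component of $\pi^* \shM$ is exactly $\shM_{\stX_0} = \ve u^{-1}(\shM)$ in the notation of Notation \ref{ntn:LE:FU} (with $u: \stX_0 \to \stX$ the structure map). Passing to homotopy sheaves and using $t$-exactness of $\pi^*$, we get $\pi_j(\pi^* \shM)_0 \simeq (\pi_j \shM)_{\stX_0}$, so $\pi^* \shM$ is constructible if and only if each $(\pi_j \shM)_{\stX_0}$ is, which by Lemma \ref{lem:LE:cstr} is equivalent to each $\pi_j \shM$ being constructible, i.e.\ to $\shM \in \iDc(\stX_{\txle},\Lambda)$. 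This is the ``if and only if'' needed to restrict the equivalence.

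\textbf{Main obstacle.} The one place that requires genuine care rather than bookkeeping is the compatibility, at the level of homotopy sheaves, between the abstract equivalence $\pi^*$ coming from descent (Proposition \ref{prp:is:223}) and the concrete restriction operation $\shM \mapsto \shM_{\stX_0}$ that appears in the criterion of Lemma \ref{lem:LE:cstr}. One must verify that the $0$-th component functor $\iDu{\cart}(\stX^{\str}_{\bullet,\txle},\shA_\bullet) \to \iDa(\stX_{0,\txle},\shA_0)$, precomposed with $\pi^*$, agrees (up to equivalence) with $\shM \mapsto \shM_{\stX_0}$; this is essentially the statement that the geometric morphism $e_0$ of the simplicial $\infty$-topos composed with $\pi$ recovers the restriction along the atlas, which was implicitly used already in the proof of Proposition \ref{prp:LE:cart} via the descent category $\iDes(\stX_0/\stX)$. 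I expect that re-running that descent argument at the level of derived $\infty$-categories — or simply invoking the already-established Theorem \ref{thm:LE:pi-ve} and tracking the identification of $0$-th components — dispatches it, but this is the step where the proof is doing real work rather than unwinding definitions. Everything else (stability, the $t$-structure, the boundedness variants $\iDcu{*}$) then follows formally, since a full sub-$\infty$-category of a stable $\infty$-category with $t$-structure that is closed under shifts, finite limits and colimits, and the truncation functors is again stable with an induced $t$-structure.
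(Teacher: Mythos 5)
Your proposal is correct and follows the same route as the paper, whose entire proof is that the statement is a direct consequence of Theorem \ref{thm:LE:pi-ve} together with the definitions of constructibility. What you spell out — that constructibility is detected on the $0$-th component via Lemma \ref{lem:LE:cstr}, that the $0$-th component of $\pi^*\shM$ is $\shM_{\stX_0}$, and that $t$-exactness lets one pass to homotopy sheaves — is exactly the unwinding the paper leaves implicit.
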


\begin{proof}
This is a direct consequence of Theorem \ref{thm:LE:pi-ve} 
and definitions.
\end{proof}

\subsection{Dualizing objects for derived stacks}
\label{ss:LE:dual}

In this subsection we introduce dualizing complexes for 
the derived category of constructible lisse-\'etale sheaves on derived stacks.
which will be used in the construction of derived functors (\S \ref{s:6op}).
The strategy of defining dualizing complexes follows that in \cite[3.3--3.5]{LO1}:
gluing dualizing complexes over derived algebraic spaces (\S \ref{ss:dAS:dual}) to 
obtain the global one.

%%%%%%%%%%%%%%%%%%%%%%%%%%%%%%%%%%%%%%%%%%%%%%%%%%%%%%%%%%%%%%%%%%%%%%%%%%%%%%%%
\subsubsection{Localized $\infty$-sites and localized $\infty$-topoi}

As a preliminary,
let us introduce \emph{localized $\infty$-sites} 
explain the relation to the localized $\infty$-topoi.

Let $(\iC,\tau)$ be an $\infty$-site, and $X \in \iC$ be an object.
Then we can attach to the over-$\infty$-category $\oc{\iC}{X}$ 
a new Grothendieck topology $\tau_X$ by setting the covering sieve to be 
\[
 \Cov_{\tau_X}(\varphi) :=
 \{\varphi^* \oc{\iC}{X}^{(0)} \mid \oc{\iC}{X}^{(0)} \in \Cov_{\tau}(X)\}
\]
for each $\varphi \in \oc{\iC}{X}$.
Let us name the obtained $\infty$-site.

\begin{dfn}\label{dfn:LE:ls}
The $\infty$-site $(\oc{\iC}{X},\tau_X)$ is called 
the \emph{localized $\infty$-site} on $X$.
\end{dfn}

In \cite[Definition 3.3.1]{TVe1} the corresponding $S$-site is 
called the comma $S$-site.

The following statement is checked directly by definition so we omit the proof.

\begin{lem}\label{lem:LE:lsf}
Let $(\iC,\tau)$ be an $\infty$-site and $f: X \to Y$ be a morphism in $\iC$.
Then composition with $f$ induces a continuous functor 
$(\oc{\iC}{X},\tau_X) \to (\oc{\iC}{Y},\tau_Y)$ 
of $\infty$-sites (Definition \ref{dfn:dAS:cont}).
\end{lem}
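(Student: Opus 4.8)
The plan is to verify the two defining conditions of a continuous functor (Definition \ref{dfn:dAS:cont}) for the functor
\[
 f_*: (\oc{\iC}{X},\tau_X) \longto (\oc{\iC}{Y},\tau_Y), \qquad
 (\varphi: Z \to X) \longmapsto (f \circ \varphi: Z \to Y),
\]
namely that it sends $\tau_X$-covering sieves to $\tau_Y$-covering sieves and that it preserves finite limits when these exist. First I would recall that composition with $f$ is a well-defined functor of $\infty$-categories $\oc{\iC}{X} \to \oc{\iC}{Y}$: on the level of simplicial sets it is induced by the map $\Delta^0 \star K \to \Delta^0 \star K$ adjusting the cone point from $X$ to $Y$, so functoriality is automatic and no verification is needed there.

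For the covering condition, I would unwind the definition of $\tau_X$. A covering sieve in $\Cov_{\tau_X}(\varphi)$ for $\varphi \in \oc{\iC}{X}$ is by construction of the form $\varphi^*\oc{\iC}{X}^{(0)}$ for some $\oc{\iC}{X}^{(0)} \in \Cov_{\tau}(X)$, where the pullback of sieves is the fiber product of $\infty$-categories from Lemma \ref{lem:pr:pb}. The key point is the identity $(f\circ\varphi)^* \oc{\iC}{X}^{(0)} = \varphi^*(f^*\oc{\iC}{X}^{(0)})$ relating pullbacks along a composite, which follows from the functoriality of the over-$\infty$-category construction (Corollary \ref{cor:ic:ovc}) together with associativity of fiber products of $\infty$-categories; combined with the fact that $f^*\oc{\iC}{X}^{(0)}$ need not be a covering sieve of $Y$ in general, one sees the right framing is: since $\oc{\iC}{X}^{(0)} \in \Cov_\tau(X)$, its image as a sieve on $X$ under $f_*$ lands inside a sieve on $Y$ that contains the pullback appearing in $\Cov_{\tau_Y}(f\circ\varphi)$, and the local character axiom (c) of Definition \ref{dfn:ic:inf-site} then forces it to be a covering sieve. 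Concretely I would check that for $\psi: W \to Y$ in the sieve $\oc{\iC}{X}^{(0)}$ viewed over $Y$, the pullback $\psi^*$ of the relevant sieve is covering, using that it already is covering over $X$ and that the $\infty$-site structure on $\oc{\iC}{X}$ was defined precisely by pulling back; this is a bookkeeping argument with sieves and is routine once the pullback-of-composite identity is in hand.

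For the finite-limit condition: finite limits in an over-$\infty$-category $\oc{\iC}{Z}$ are computed as the corresponding finite limits in $\iC$ (equipped with their canonical map to $Z$), a standard fact about over-categories; hence both $\oc{\iC}{X}$ and $\oc{\iC}{Y}$ inherit exactly whatever finite limits $\iC$ admits, and the forgetful functors $\oc{\iC}{X} \to \iC$, $\oc{\iC}{Y} \to \iC$ both preserve and reflect them. Since $f_*$ is compatible with the two forgetful functors (the triangle over $\iC$ commutes), $f_*$ preserves any finite limit that exists. I expect the main obstacle to be purely notational: carefully setting up the pullback of sieves along a composite morphism in the $\infty$-categorical setting and confirming the associativity coherence $\varphi^* \circ f^* \simeq (f\circ\varphi)^*$ at the level needed, since sieves are full sub-$\infty$-categories and the relevant fiber products of $\infty$-categories (Definition \ref{dfn:pr:fp}) must be manipulated with some care. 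Everything else — functoriality, preservation of finite limits — is formal, which is why the paper states the proof is omitted.
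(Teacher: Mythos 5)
The paper gives no proof of this lemma (it says the statement ``is checked directly by definition''), so I can only assess your argument on its own terms, and both halves have real problems. In the covering half, the expression $f^*\oc{\iC}{X}^{(0)}$ is ill-typed: $\oc{\iC}{X}^{(0)}$ is a sieve on $X$, and pullback along $f\colon X\to Y$ carries sieves on $Y$ to sieves on $X$, not the other way around, so your ``key identity'' $(f\circ\varphi)^*\oc{\iC}{X}^{(0)}=\varphi^*(f^*\oc{\iC}{X}^{(0)})$ does not parse. The identity that does hold is $(f\circ\varphi)^*U=\varphi^*(f^*U)$ for $U$ a sieve on $Y$, which shows $\Cov_{\tau_Y}(f\varphi)=\{\varphi^*(f^*U)\mid U\in\Cov_\tau(Y)\}$ — but what you must show is the reverse containment, that every $\varphi^*T$ with $T\in\Cov_\tau(X)$ lies in this set (under the identification of sieves on $\varphi$ in $\oc{\iC}{X}$ and sieves on $f\varphi$ in $\oc{\iC}{Y}$ with sieves on the common domain $Z$, along which the functor acts as the identity). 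Your appeal to the local character axiom (c) is not actually set up: to invoke it you must exhibit a covering sieve $R$ of $f\varphi$ in $\tau_Y$ such that the pullback of $\varphi^*T$ along every member of $R$ is again a $\tau_Y$-cover, and you never produce such an $R$. With Definition \ref{dfn:LE:ls} read literally this containment is not automatic; the argument only becomes ``direct from the definitions'' if one reads $\tau_X$ as the induced topology in which $\Cov_{\tau_X}(\varphi)$ is identified with $\Cov_\tau(Z)$ (as in the comma sites of \cite[Definition 3.3.1]{TVe1}), and you should say which reading you are using.

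The finite-limit half rests on a false premise. Finite limits in $\oc{\iC}{Z}$ are \emph{not} computed as the corresponding finite limits in $\iC$: the terminal object of $\oc{\iC}{Z}$ is $\id_Z$, and the product of $(A\to Z)$ and $(B\to Z)$ is the fiber product $A\times_Z B$, so the forgetful functor $\oc{\iC}{Z}\to\iC$ preserves and reflects only \emph{connected} limits. Consequently the commutativity of the triangle over $\iC$ does not give preservation of finite limits; indeed composition with $f$ sends the terminal object $\id_X$ of $\oc{\iC}{X}$ to $(X,f)\in\oc{\iC}{Y}$, which is terminal only when $f$ is an equivalence. So the functor preserves pullbacks (these are created by the forgetful functors) but not terminal objects, and the second condition of Definition \ref{dfn:dAS:cont} can only be salvaged via the parenthetical ``if they exist'' or by restricting to connected finite limits; your proof as written does not establish it.
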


Recall that for an $\infty$-topos $\tT$ and $U \in \tT$, we have 
the localized $\infty$-topos $\rst{\tT}{U}$ of $\tT$ at $U$ 
(Fact \ref{fct:pr:ovc}).
Thus, given an $\infty$-site $(\iC,\tau)$ and an object $X \in \iC$, 
we can consider the localized $\infty$-topos $\rst{\iSh(\iC,\tau)}{\yon(X)}$.
Here $\yon: \iC \to \iShv(\iC,\tau)$ is the $\infty$-categorical Yoneda embedding
(Definition \ref{dfn:pr:Yoneda}).
On the other hand, we have the localized $\infty$-site $(\oc{\iC}{X},\tau_X)$ 
and the associated $\infty$-topos $\iSh(\oc{\iC}{X},\tau_X)$.
%The following statement is checked by definition, so we omit the proof.

\begin{lem}\label{lem:LE:ls}
For an $\infty$-site $(\iC,\tau)$ and $X \in \iC$, we have an equivalence
$\iSh(\oc{\iC}{X},\tau_X) \simeq \rst{\iSh(\iC,\tau)}{\yon(X)}$.
\end{lem}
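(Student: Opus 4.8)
The statement to prove is the equivalence
\[
 \iSh(\oc{\iC}{X},\tau_X) \simeq \rst{\iSh(\iC,\tau)}{\yon(X)}
\]
of $\infty$-topoi attached to an $\infty$-site $(\iC,\tau)$ and an object $X\in\iC$. The plan is to reduce everything to the presheaf level, where the analogous statement $\iPSh(\oc{\iC}{X})\simeq \rst{\iPSh(\iC)}{\yon(X)}$ holds essentially by the universal property of over-$\infty$-categories, and then to verify that the two topological localizations that cut out sheaves on each side match up under this presheaf-level equivalence. First I would recall that by Fact \ref{fct:ic:iPSh:gen} the $\infty$-category $\iPSh(\iC)$ is freely generated under small colimits by $\yon(\iC)$, and that passing to the over-$\infty$-category at $\yon(X)$ corresponds, via the straightening/unstraightening formalism, to presheaves on $\oc{\iC}{X}$; concretely, the canonical functor $j_!\colon\rst{\iPSh(\iC)}{\yon(X)}\to\iPSh(\iC)$ (Fact \ref{fct:is:biadj}) together with the Yoneda embedding of $\oc{\iC}{X}$ identifies $\rst{\iPSh(\iC)}{\yon(X)}$ with $\iPSh(\oc{\iC}{X})$. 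This is the first key step and it is standard, so I would cite it and move on.

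The second step is to transport the topology. On the left side, $\iSh(\oc{\iC}{X},\tau_X)$ is, by Fact \ref{fct:pr:sh}, the topological localization of $\iPSh(\oc{\iC}{X})$ at the set $S_{\tau_X}$ of monomorphisms associated to the covering sieves of $\tau_X$, which by Definition \ref{dfn:LE:ls} are exactly the pullbacks $\varphi^*\oc{\iC}{X}^{(0)}$ of covering sieves $\oc{\iC}{X}^{(0)}\in\Cov_\tau(X)$ along morphisms $\varphi$ in $\oc{\iC}{X}$. On the right side, $\rst{\iSh(\iC,\tau)}{\yon(X)}$ is the over-$\infty$-topos $\oc{\iShv(\iC,\tau)}{\yon(X)}$ of the $\infty$-topos $\iSh(\iC,\tau)=\iPSh(\iC)[S_\tau^{-1}]$; by Fact \ref{fct:pr:ovc} this is again an $\infty$-topos, and I would use the general principle (from \cite[\S 6.3.5, \S 6.2.1]{Lur1}) that localizing a presheaf $\infty$-topos and then passing to an over-$\infty$-category agrees with passing to the over-$\infty$-category first and then localizing at the pulled-back class of morphisms. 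Thus $\rst{\iSh(\iC,\tau)}{\yon(X)}$ is the topological localization of $\rst{\iPSh(\iC)}{\yon(X)}\simeq\iPSh(\oc{\iC}{X})$ at the image of $S_\tau$ under the base-change functor $j^*$. The heart of the argument is then the purely combinatorial check that this image of $S_\tau$ generates the same localization as $S_{\tau_X}$: both are generated by the monomorphisms of the form ``covering sieve of $X$, pulled back along an arbitrary map into a fixed object of $\oc{\iC}{X}$'', which matches the definition of $\Cov_{\tau_X}$ verbatim via condition (b) of Definition \ref{dfn:ic:inf-site}.

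Assembling, I would conclude: the equivalence $\rst{\iPSh(\iC)}{\yon(X)}\simeq\iPSh(\oc{\iC}{X})$ of step one carries the localizing class defining $\rst{\iSh(\iC,\tau)}{\yon(X)}$ to the localizing class defining $\iSh(\oc{\iC}{X},\tau_X)$ (step two), hence descends to an equivalence of the localizations, which is the desired equivalence. A clean alternative packaging, which I would probably prefer, is to invoke continuity: Lemma \ref{lem:LE:lsf} shows that the forgetful functor $(\oc{\iC}{X},\tau_X)\to(\iC,\tau)$ is continuous and commutes with finite limits, so Proposition \ref{prp:dAS:sgm}(3) yields a geometric morphism $\iSh(\oc{\iC}{X},\tau_X)\to\iSh(\iC,\tau)$; one then checks this geometric morphism is \emph{étale} in the sense that it identifies the source with the localized $\infty$-topos at the sheafification of $\yon(X)$, which by Fact \ref{fct:pr:Shv(X)=X}-type reasoning is just $\yon(X)$ itself. \textbf{The main obstacle} is precisely this last identification: verifying that the geometric morphism produced from the continuous functor is the localization morphism $j_!\dashv j^*\dashv j_*$ and not merely some geometric morphism over $\iSh(\iC,\tau)$. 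I expect to handle it by computing the left adjoint $f^*$ explicitly via the colimit formula in the proof of Proposition \ref{prp:dAS:sgm}(2) — for $U\in\oc{\iC}{X}$ the indexing category $\iI_U$ has a terminal object, namely $U$ itself viewed over $X$ — so that $f^*$ restricted to representables reproduces the canonical functor $j_!$, and then extending by colimits using that both $f^*$ and $j_!$ preserve colimits and that representables generate.
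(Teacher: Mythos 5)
Your preferred ``alternative packaging'' is precisely the paper's proof: the paper constructs the geometric morphism $\iSh(\oc{\iC}{X},\tau_X)\to\iSh(\iC,\tau)$ from the continuous forgetful functor via Proposition \ref{prp:dAS:sgm} and then simply asserts that it factors through $\rst{\iSh(\iC,\tau)}{\yon(X)}$ as an equivalence, which is exactly the identification you flag as the main obstacle, and your sketch of how to verify it (agreement with $j_!$ on representables, then extension along colimits using Fact \ref{fct:ic:iPSh:gen}) supplies detail the paper omits. Your first, presheaf-level route (identify $\rst{\iPSh(\iC)}{\yon(X)}$ with $\iPSh(\oc{\iC}{X})$ and match the two localizing classes) is a correct variant, but the difference from the paper is one of packaging rather than substance.
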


\begin{proof}
The forgetful functor $\oc{\iC}{X} \to \iC$ induces a continuous functor 
$(\oc{\iC}{X},\tau_X) \to \iSh(\iC,\tau)$ of $\infty$-sites.
Then by Proposition \ref{prp:dAS:sgm}, 
we have a geometric morphism $\iota^*: \iSh(\oc{\iC}{X},\tau_X) \to \iSh(\iC,\tau)$ 
of $\infty$-topoi (recall Notation \ref{rmk:is:gm}).
By definition of $(\oc{\iC}{X},\tau_X)$,
the geometric morphism $\iota^*$ factors through $\rst{\iSh(\iC,\tau)}{\yon(X)}$,
and the factored geometric morphism
$\iSh(\oc{\iC}{X},\tau_X) \to \rst{\iSh(\iC,\tau)}{\yon(X)}$ gives an equivalence.
\end{proof}

Recall that a continuous functor of $\infty$-sites induces a geometric morphism of 
the associated $\infty$-topoi (Proposition \ref{prp:dAS:sgm}).
Thus the continuous functor
$(\oc{\iC}{X},\tau_X) \to (\oc{\iC}{Y},\tau_Y)$ 
in Lemma \ref{lem:LE:lsf} induces a geometric morphism
$\iSh(\oc{\iC}{X},\tau_X) \to \iSh(\oc{\iC}{Y},\tau_Y)$.
Using the equivalence of Lemma \ref{lem:LE:ls}, we denote it by 
$\alpha: \rst{\iSh(\iC,\tau)}{\yon(X)} \to \rst{\iSh(\iC,\tau)}{\yon(Y)}$.

On the other hand, for any morphism $U \to V$ in a $\infty$-category $\iB$,
we have a functor $\oc{\iB}{U} \to \oc{\iB}{V}$ (Corollary \ref{cor:ic:ovc})
Thus we have a functor 
$\beta: \rst{\iSh(\iC,\tau)}{\yon(X)} \to \rst{\iSh(\iC,\tau)}{\yon(Y)}$.
Then we have  

\begin{lem}\label{lem:LE:a=b}
The functors $\alpha$ and $\beta$ are equivalent.
\end{lem}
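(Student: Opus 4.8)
The claim is that two functors $\alpha,\beta\colon \rst{\iSh(\iC,\tau)}{\yon(X)} \to \rst{\iSh(\iC,\tau)}{\yon(Y)}$ agree, where $\alpha$ comes from the continuous functor of localized $\infty$-sites of Lemma \ref{lem:LE:lsf} (transported through the equivalence of Lemma \ref{lem:LE:ls}), while $\beta$ is the canonical functor between over-$\infty$-topoi induced by the morphism $\yon(f)\colon \yon(X)\to \yon(Y)$ via Corollary \ref{cor:ic:ovc}. The plan is to identify both functors with the same universal construction, namely the one classified by the composite with $f$ on the level of presheaves, and thus reduce the claim to a compatibility that is essentially tautological. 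I would phrase the whole argument at the level of the defining universal properties rather than chase simplicial formulas.

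\textbf{Steps.} First I would recall the explicit description of $\alpha$ coming from the proof of Proposition \ref{prp:dAS:sgm}: for a continuous functor $g\colon (\iC',\tau')\to (\iC,\tau)$, the inverse image $g^*$ is the sheafification of the presheaf $U\mapsto \iclim_{(V,\nu)\in \iI_U}\shG(V)$, and its right adjoint $g_*$ is composition with $g$. Applied to $g\colon (\oc{\iC}{X},\tau_X)\to (\oc{\iC}{Y},\tau_Y)$, the functor $g$ is "compose with $f$", so $g_*$ is literally restriction along $-\circ f$. Second, I would recall that under the equivalence $\iSh(\oc{\iC}{X},\tau_X)\simeq \rst{\iSh(\iC,\tau)}{\yon(X)}$ of Lemma \ref{lem:LE:ls}, which is itself induced by the forgetful continuous functor, the object $\yon(X)\in\iSh(\iC,\tau)$ corresponds to the terminal object of $\iSh(\oc{\iC}{X},\tau_X)$, and a sheaf on the comma site is the same datum as a sheaf on $\iC$ together with a map to $\yon(X)$. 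Third, I would recall from Fact \ref{fct:is:biadj} and Corollary \ref{cor:ic:ovc} that $\beta$ is, by construction, the composite $\rst{\tT}{\yon(X)}\xrightarrow{j_!}\tT \xrightarrow{j^{\prime *}}\rst{\tT}{\yon(Y)}$ where $j,j'$ are the localizations at $\yon(X),\yon(Y)$; equivalently $\beta$ sends an object $(\shF\to\yon(X))$ to the pullback $\shF\times_{\yon(X),\yon(f)}\yon(Y)$... no, more precisely to $\shF$ equipped with the composite map $\shF\to\yon(X)\xrightarrow{\yon(f)}\yon(Y)$, which is exactly $j^{\prime *}j_!$. Fourth, I would observe that the description of $g_*$ (restriction along $-\circ f$) corresponds, under the two equivalences of Lemma \ref{lem:LE:ls}, precisely to the functor "remember the composite map to $\yon(Y)$", i.e. to the right adjoint of $\beta$; then since both $\alpha$ and the right adjoint of $\beta$ have the same right adjoint (equivalently, the two adjoint pairs coincide), $\alpha\simeq\beta$ by uniqueness of adjoints. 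Alternatively, and perhaps cleaner, I would directly compare $g_*$ with the right adjoint $\beta_*=j^{\prime}_! \circ (\text{restrict})$ appearing in Fact \ref{fct:is:biadj} and check they agree objectwise, which is immediate from unwinding: both send a sheaf-over-$\yon(Y)$ to the same sheaf-over-$\yon(X)$ by base-changing/precomposing along $f$.

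\textbf{Main obstacle.} The only real work is bookkeeping: making sure that the equivalence of Lemma \ref{lem:LE:ls} is used coherently on both sides so that the index $\infty$-category $\iI_U$ in the formula for $g^*$ really becomes the over-category appearing in the canonical functor of Corollary \ref{cor:ic:ovc}, i.e. that no twist or op is lost. Since the statement is "checked directly by definition" (as the excerpt itself says of the analogous Lemma \ref{lem:LE:lsf}), I expect a short proof of the form: unwind the definitions of $\alpha$ and $\beta$, observe that both are adjoint to the common functor "compose the structure morphism with $f$", and invoke uniqueness of adjoints (Remark \ref{rmk:is:gm}) to conclude. The hardest point to state carefully, rather than hard mathematically, is the compatibility of the Yoneda embedding with localization, i.e. that $\yon$ sends the forgetful functor $\oc{\iC}{X}\to\iC$ to the localization datum $\yon(X)$; this is where I would cite Lemma \ref{lem:LE:ls} and be done.

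\begin{proof}
By Lemma \ref{lem:LE:ls} we may replace $\rst{\iSh(\iC,\tau)}{\yon(X)}$ and $\rst{\iSh(\iC,\tau)}{\yon(Y)}$ by $\iSh(\oc{\iC}{X},\tau_X)$ and $\iSh(\oc{\iC}{Y},\tau_Y)$, and under these equivalences the functor $\alpha$ is the inverse image $g^*$ of the geometric morphism associated by Proposition \ref{prp:dAS:sgm} to the continuous functor $g\colon (\oc{\iC}{X},\tau_X)\to(\oc{\iC}{Y},\tau_Y)$, $g(\varphi)=f\circ\varphi$, of Lemma \ref{lem:LE:lsf}. Its right adjoint $g_*$ is, by Proposition \ref{prp:dAS:sgm} (1), composition with $g$: for $\shG\in\iSh(\oc{\iC}{Y},\tau_Y)$ and $\varphi\colon U\to X$ in $\oc{\iC}{X}$ one has $(g_*\shG)(\varphi)=\shG(f\circ\varphi)$. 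On the other hand, the functor $\beta$ is by construction (Corollary \ref{cor:ic:ovc}, Fact \ref{fct:is:biadj}) the composite $\rst{\tT}{\yon(X)}\xrightarrow{j_!}\tT\xrightarrow{(j')^*}\rst{\tT}{\yon(Y)}$ with $\tT=\iSh(\iC,\tau)$ and $j,j'$ the canonical functors of the over-$\infty$-topoi at $\yon(X),\yon(Y)$; its right adjoint is $j_*(j')_!$, which under Lemma \ref{lem:LE:ls} sends a sheaf $\shG$ over $\yon(Y)$ to the sheaf over $\yon(X)$ obtained by pulling back the structure morphism along $\yon(f)$, i.e. again to $\varphi\mapsto\shG(f\circ\varphi)$. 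Thus $\alpha$ and $\beta$ have the same right adjoint, hence are equivalent by uniqueness of adjoints (see Remark \ref{rmk:is:gm}).
\end{proof}
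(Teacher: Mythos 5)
The paper states this lemma without any proof (it is treated as a routine unwinding), so there is no written argument to compare against; your overall strategy — identify $\alpha$ with the inverse image $g^*$ of the geometric morphism attached to the continuous functor "compose with $f$", compute that its right adjoint $g_*$ is precomposition $\varphi\mapsto\shG(f\circ\varphi)$, show that $\beta$ has the same right adjoint, and conclude by uniqueness of adjoints — is exactly the natural one and the skeleton of your proof is sound.

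However, one intermediate step is genuinely false: $\beta$ is \emph{not} the composite $(j')^*\circ j_!$. The functor $(j')^*\colon\tT\to\oc{\tT}{\yon(Y)}$ (the right adjoint of the forgetful $j'_!$) sends $\shG$ to $\shG\times\yon(Y)$ with the projection, so $(j')^*j_!$ sends $(\shF\to\yon(X))$ to $(\shF\times\yon(Y)\to\yon(Y))$, whereas $\beta$ sends it to $\shF$ equipped with the composite structure map $\shF\to\yon(X)\to\yon(Y)$ — these differ already for $\tT=\Set$. Consequently the formula $j_*(j')_!$ for the right adjoint of $\beta$ is also wrong (it does not even compose correctly, since $j_*$ goes out of $\oc{\tT}{\yon(X)}$, and the honest right adjoint of $(j')^*j_!$, namely $j^*(j')_*$, is not the base-change functor either). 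The correct identification, which makes the rest of your argument go through verbatim, is this: $\yon(f)$ is an object of the $\infty$-topos $\oc{\tT}{\yon(Y)}$, there is a canonical equivalence $\oc{\bigl(\oc{\tT}{\yon(Y)}\bigr)}{\yon(f)}\simeq\oc{\tT}{\yon(X)}$, and under it $\beta$ is the canonical functor $j_!$ of Fact \ref{fct:is:biadj} applied to the topos $\oc{\tT}{\yon(Y)}$ and the object $\yon(f)$. Its right adjoint is therefore the base change $(\shG\to\yon(Y))\mapsto(\shG\times_{\yon(Y)}\yon(X)\to\yon(X))$, which under Lemma \ref{lem:LE:ls} is computed on sections by $\varphi\mapsto\shG(f\circ\varphi)$, i.e.\ coincides with $g_*$. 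With that repair your conclusion by uniqueness of left adjoints is correct.
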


%%%%%%%%%%%%%%%%%%%%%%%%%%%%%%%%%%%%%%%%%%%%%%%%%%%%%%%%%%%%%%%%%%%%%%%%%%%%%%%%
\subsubsection{Gluing \'etale dualizing objects}
\label{sss:LE:dual}

We impose Assumption \ref{asp:dAS:kL} 
on the base ring $k$ and the commutative ring $\Lambda$.
Namely,
\begin{itemize}[nosep]
\item 
The base ring $k$ is a noetherian ring and 
has a dualizing complex $\Omega_k$.
\item
The commutative ring $\Lambda$ is a torsion noetherian ring 
annihilated by an integer invertible in $k$.
\end{itemize}

Let $\stX$ be a geometric derived stack locally of finite presentation 
(Definition \ref{dfn:dSt:lfp}).
Let $A: \stU \to \stX$ be an object of $\idAS_{\stX}^{\lis}$,
the underlying $\infty$-category of the $\infty$-site $\gsLE(\stX)$.
Then the derived algebraic space $\stU$ satisfies Assumption \ref{asp:dAS:kL}:
\begin{itemize}[nosep]
\item
The derived algebraic space $\stW$ is separated,
quasi-compact and locally of finite presentation.
\end{itemize}
Thus we can apply the argument in \S \ref{ss:dAS:dual} to the composition 
$\alpha:= (\stU \xr{A} \stX \to \dSpec k)$,
and have the dualizing object $\Omega_{\alpha} \in \iDa(\stU_{\et},\Lambda)$
(Notation \ref{ntn:dAS:dual}).
Recalling also Notation \ref{ntn:dAS:Tate} on the shift and the Tate twist, we define
\[
 K_A := \Omega_{\alpha}\tsh{-d_A} \in \iDc(\stU_{\et},\Lambda),
\]
where $d_A$ is the relative dimension of the smooth morphism $A$ 
(Definition \ref{dfn:dSt:reldim}).
Note that $d_A$ is locally constant,
and that $K_A$ is of finite injective dimension by Lemma \ref{lem:dAS:dual}.

Now the proof of \cite[3.2.1 Lemma]{LO1} works 
with the help of Lemma \ref{lem:dAS:3.1.2}, and we have

\begin{lem}\label{lem:LE:3.2.1}
Given a triangle
\[
 \xymatrix{ \stV \ar[rr]^{\sigma} \ar[rd]_B & & \stU  \ar[ld]^A \\ & \stX}
\]
in $\idAS_{\stX}^{\lis}$,
we have an equivalence $\sigma^* K_A \simeq K_B$.
\end{lem}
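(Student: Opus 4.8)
The plan is to reduce the claim to the corresponding statement for the relative dualizing objects $\Omega_\alpha$ and $\Omega_\beta$ on the \'etale $\infty$-topoi, and then to invoke the functoriality already established in \S\ref{ss:dAS:dual}. First I would unwind the setup: given the triangle $\sigma\colon \stV \to \stU$ over $\stX$ with structure morphisms $A\colon \stU \to \stX$, $B\colon \stV \to \stX$, and the projection $w\colon \stX \to \dSpec k$, set $\alpha := w \circ A$ and $\beta := w \circ B = \alpha \circ \sigma$. Both $\stU$ and $\stV$ satisfy Assumption \ref{asp:dAS:kL} (they are separated, quasi-compact, locally of finite presentation over $k$), since $\stX$ is locally of finite presentation and the morphisms $A,B$ to $\stX$ are smooth, hence locally of finite presentation; this is where I would spell out why the hypotheses transfer. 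Thus $\Omega_\alpha = \alpha^! \Omega_k \in \iDa(\stU_{\et},\Lambda)$ and $\Omega_\beta = \beta^! \Omega_k \in \iDa(\stV_{\et},\Lambda)$ are defined.

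Next, the key point is that $\sigma\colon \stV \to \stU$ is itself a smooth morphism of derived algebraic spaces. Indeed, $\sigma$ is a morphism in $\idAS_{\stX}^{\lis}$ between objects smooth over $\stX$; since $A$ is smooth and $B = A\circ\sigma$ is smooth, $\sigma$ is smooth by the standard cancellation property for smooth morphisms (using Fact \ref{fct:morph:Q} and Definition \ref{dfn:dSt:reldim}), and its relative dimension satisfies $d_B = d_A \circ \sigma + d_\sigma$ (where I write $d_A\circ\sigma$ for the locally constant function pulled back, all of these being locally constant). With $\sigma$ smooth of relative dimension $d_\sigma$, Lemma \ref{lem:dAS:3.1.2} applied to $\sigma$ gives an equivalence $\sigma^* \Omega_\alpha \simeq \Omega_\beta \tsh{-d_\sigma}$ in $\iDa(\stV_{\et},\Lambda)$ — here $\Omega_\alpha$ plays the role of the dualizing object of $\stU$ (which it is, being $\alpha^!\Omega_k$) and $\Omega_\beta$ that of $\stV$.

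Finally I would assemble the pieces. By definition $K_A = \Omega_\alpha \tsh{-d_A} $ and $K_B = \Omega_\beta \tsh{-d_B}$. Since $\sigma$ is smooth, the inverse image functor $\sigma^*\colon \iDa(\stU_{\et},\Lambda) \to \iDa(\stV_{\et},\Lambda)$ is exact and commutes with shifts and Tate twists, so
\[
 \sigma^* K_A = \sigma^*\Omega_\alpha \tsh{-d_A} \simeq \Omega_\beta \tsh{-d_\sigma}\tsh{-d_A} = \Omega_\beta \tsh{-d_A - d_\sigma} = \Omega_\beta \tsh{-d_B} = K_B,
\]
using $d_B = d_A + d_\sigma$ as locally constant functions (matching conventions with the locally constant shift in Notation \ref{ntn:dAS:Tate}). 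One should also note that $\sigma^* K_A$ lands in $\iDc(\stV_{\et},\Lambda)$ and is of finite injective dimension, by Lemma \ref{lem:dAS:dual} together with the exactness of $\sigma^*$, so the equivalence lives in the constructible derived $\infty$-category as stated.

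The main obstacle I anticipate is bookkeeping with the relative dimensions as \emph{locally constant functions} rather than integers, and making sure the conventions in Notation \ref{ntn:dAS:Tate} for $\tsh{\cdot}$ are compatible with adding such functions along a smooth morphism; the additivity $d_B = d_A + d_\sigma$ for smooth morphisms is geometrically clear but needs to be justified at the level of the derived-stack definitions via Fact \ref{fct:morph:Q}. The other slightly delicate point is confirming that $\sigma$ is smooth — i.e. the cancellation property — in the $\idAS$-setting, which should follow from the corresponding property for affine derived schemes (Definition \ref{dfn:dSt:dAff:mor}) after passing to atlases, exactly as in the proof that derived algebraic spaces are $1$-geometric.
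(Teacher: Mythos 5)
There is a genuine gap at the decisive step where you assert that $\sigma$ is smooth. The ``cancellation property'' you invoke does not exist: from $A$ smooth and $B = A\circ\sigma$ smooth one cannot conclude that $\sigma$ is smooth — the standard counterexample is a section of a smooth morphism, e.g.\ $\dSpec k \to \mathbb{A}^1_k \to \dSpec k$, where both the point and the line are smooth over $k$ but the closed immersion $\dSpec k \to \mathbb{A}^1_k$ is not smooth (what does follow is only that $\sigma$ is a local complete intersection). Since $\idAS_{\stX}^{\lis}$ is the \emph{full} sub-$\infty$-category of $\oc{\idAS}{\stX}$ spanned by the smooth objects, a morphism $\sigma$ of the lisse-\'etale $\infty$-site is an arbitrary $\stX$-morphism and can perfectly well be such a section. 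So your argument proves the lemma only for smooth $\sigma$, which is not sufficient for the later uses of the statement (the gluing of the $\kappa_A$'s and Fact~\ref{fct:LE:3.5.2} need it for all morphisms of the site).

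The intended proof, following [LO1, 3.2.1~Lemma], gets around this by factoring $\sigma$ through its graph, $\stV \xrightarrow{\Gamma_\sigma} \stV\times_{\stX}\stU \xrightarrow{p_2} \stU$. The two projections are smooth, being base changes of $A$ and $B$, of relative dimensions $d_A$ and $d_B$ respectively, so Lemma~\ref{lem:dAS:3.1.2} applies to \emph{them}: $\Omega_{\stV\times_{\stX}\stU} \simeq p_1^*\Omega_\beta\tsh{d_A}$ and $p_2^*\Omega_\alpha \simeq \Omega_{\stV\times_{\stX}\stU}\tsh{-d_B}$. Since $p_1\circ\Gamma_\sigma \simeq \id_{\stV}$, one gets $\sigma^*\Omega_\alpha \simeq \Gamma_\sigma^*p_1^*\Omega_\beta\tsh{d_A-d_B} \simeq \Omega_\beta\tsh{d_A-d_B}$ and hence $\sigma^*K_A = \sigma^*\Omega_\alpha\tsh{-d_A} \simeq \Omega_\beta\tsh{-d_B} = K_B$, with no smoothness assumption on $\sigma$. (One does still have to observe that $\stV\times_{\stX}\stU$ is again a derived algebraic space satisfying the running hypotheses, or argue \'etale-locally.) Your bookkeeping with locally constant relative dimensions and the compatibility of $\tsh{\cdot}$ with $\sigma^*$ is correct and is exactly what the graph argument uses; the only defective step is the smoothness claim, but it is the one the whole proof rests on.
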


We now want to construct a dualizing object for the lisse-\'etale 
by gluing \'etale dualizing data using Lemma \ref{lem:LE:3.2.1}.
Let $A: \stU \to \stX$ be an object of $\gsLE(\stX)$.
We denote by $\rst{\gsLE(\stX)}{\stU}$ the localized $\infty$-site
(Definition \ref{dfn:LE:ls}).
Then the inclusion $\gsEt(\stU) \inj \rst{\gsLE(\stX)}{\stU}$
of $\infty$-sites is a continuous functor,
and we can apply Proposition \ref{prp:dAS:sgm} to it.
Thus we have

\begin{lem}\label{lem:LE:ve}
The inclusion $\gsEt(\stU) \inj \rst{\gsLE(\stX)}{\stU}$
induces a geometric morphism 
\[
 \ve_\stU: \rst{\stX_{\txle}}{\stU} \longto \stU_{\et}
\]
of $\infty$-topoi.
We denote the corresponding adjunction by
$\ve_{\stU}^{-1}: \stU_{\et} \rlto \rst{\stX_{\txle}}{\stU} :\ve^{\stU}_*$.
\end{lem}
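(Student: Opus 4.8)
\textbf{Proof plan for Lemma \ref{lem:LE:ve}.}

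The plan is to verify that the inclusion functor $\gsEt(\stU) \inj \rst{\gsLE(\stX)}{\stU}$ of $\infty$-sites is a continuous functor commuting with finite limits, and then to invoke Proposition \ref{prp:dAS:sgm} (3) to produce the geometric morphism $\ve_\stU$. First I would spell out the two $\infty$-sites: on the one side, $\gsEt(\stU) = (\idAS^{\et}_{\stU}, \et)$ has underlying $\infty$-category the full sub-$\infty$-category of $\oc{\idAS}{\stU}$ spanned by \'etale morphisms $\stT \to \stU$ (Definition \ref{dfn:dAS:small-et}); on the other side, $\rst{\gsLE(\stX)}{\stU} = (\oc{(\idAS^{\lis}_{\stX})}{(\stU,A)}, \txle_{(\stU,A)})$ is the localized $\infty$-site (Definition \ref{dfn:LE:ls}) whose underlying $\infty$-category is the over-$\infty$-category of $\idAS^{\lis}_{\stX}$ at the object $A \colon \stU \to \stX$, i.e.\ pairs $(\stT, \stT \to \stU)$ with the composite $\stT \to \stU \to \stX$ smooth and $\stT \to \stU$ arbitrary among derived algebraic spaces. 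The inclusion sends $(\stT \xr{\text{\'et}} \stU)$ to the same object viewed in the over-category, which is legitimate because an \'etale morphism is smooth (of relative dimension $0$) and composites of smooth morphisms are smooth by Fact \ref{fct:dSt:geom} (2), so the composite $\stT \to \stU \to \stX$ is indeed smooth.

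Next I would check the two conditions in Definition \ref{dfn:dAS:cont}. Continuity: a covering sieve in $\gsEt(\stU)$ is generated by a family $\{\stT_i \to \stT\}$ of \'etale morphisms with $\coprod \stT_i \to \stT$ an epimorphism of derived stacks; under the inclusion this is exactly a family that generates a covering sieve for $\txle_{(\stU,A)}$, since the lisse-\'etale topology $\txle$ on $\idAS^{\lis}_{\stX}$ (Definition in \S \ref{ss:LE:lis-et}) declares a sieve covering precisely when it is generated by \'etale morphisms whose coproduct surjects, and the localized topology $\txle_{(\stU,A)}$ is the pullback of $\txle$ along the forgetful functor. Commuting with finite limits: both $\infty$-categories admit finite limits (they are over-$\infty$-categories inside the $\infty$-topos $\idSt$, which has all small limits by Corollary \ref{cor:pr:lcl}, and the relevant classes of morphisms --- \'etale, smooth --- are stable under pullback by Fact \ref{fct:dSt:geom} (2) and Fact \ref{fct:morph:Q}, so the sub-$\infty$-categories are closed under finite limits), and the inclusion is just the restriction of the identity on objects, which visibly preserves pullbacks and the terminal object $(\stU \xr{\id} \stU)$. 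With these two conditions in hand, Proposition \ref{prp:dAS:sgm} applies: part (1) gives $f_* = \ve^{\stU}_* \colon \iSh(\rst{\gsLE(\stX)}{\stU}) \to \iSh(\gsEt(\stU)) = \stU_{\et}$, part (2) gives a left adjoint $\ve_{\stU}^{-1}$, and part (3) --- using that both sites have finite limits and the functor is continuous and finite-limit-preserving --- gives that $\ve_{\stU}^{-1}$ is left exact, hence that the adjunction $\ve_{\stU}^{-1} \colon \stU_{\et} \rlto \rst{\gsLE(\stX)}{\stU} \colon \ve^{\stU}_*$ is a geometric morphism in the sense of Definition \ref{dfn:is:gm}. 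Finally I would identify $\iSh(\rst{\gsLE(\stX)}{\stU})$ with the localized $\infty$-topos $\rst{\stX_{\txle}}{\stU}$ via Lemma \ref{lem:LE:ls} (applied to $(\iC,\tau) = \gsLE(\stX)$ and $X = (\stU,A)$, noting $\rst{\stX_{\txle}}{\stU}$ abbreviates $\rst{\stX_{\txle}}{\yon(\stU)}$), which yields the stated geometric morphism $\ve_\stU \colon \rst{\stX_{\txle}}{\stU} \to \stU_{\et}$.

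The main obstacle I expect is the careful bookkeeping in the continuity check: one must confirm that the localized lisse-\'etale topology $\txle_{(\stU,A)}$, when restricted along the inclusion of the small \'etale site, recovers exactly the small \'etale topology on $\stU$ and not something coarser or finer. This is the $\infty$-categorical analogue of the classical fact (\cite[Chap.\ 12]{LM}, \cite{O}) that the lisse-\'etale site of an algebraic stack, localized at a smooth chart $U$, restricts to the \'etale site of $U$; the proof amounts to observing that a sieve on an \'etale $\stU$-object $\stT$ is $\txle_{(\stU,A)}$-covering iff it contains an \'etale covering family, and an \'etale morphism into $\stT$ over $\stU$ is the same as an \'etale morphism into $\stT$ simpliciter because $\stT \to \stU$ is already \'etale (so \'etaleness over $\stU$ and absolute \'etaleness coincide by the cancellation property for \'etale morphisms). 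Everything else is a routine application of Proposition \ref{prp:dAS:sgm} and Lemma \ref{lem:LE:ls}, and I would keep the write-up short by citing those two results directly.
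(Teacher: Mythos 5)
Your proposal is correct and takes essentially the same route as the paper: the paper's own justification is the one-line observation that the inclusion $\gsEt(\stU) \inj \rst{\gsLE(\stX)}{\stU}$ is a continuous functor of $\infty$-sites to which Proposition \ref{prp:dAS:sgm} applies, combined with the identification of $\iSh(\rst{\gsLE(\stX)}{\stU})$ with the localized $\infty$-topos via Lemma \ref{lem:LE:ls}. You have simply filled in the routine verifications (\'etale implies smooth, continuity, preservation of finite limits, and the direction of the resulting adjunction) that the paper leaves implicit, and all of these checks are sound.
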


We can describe these functors more explicitly.
Note that giving a sheaf $\shG \in \rst{\stX_{\txle}}{\stU}$ is equivalent to 
giving sheaves $\shG_V \in V_{\et}$ for each affine derived scheme $V$ over $\stU$ 
such that the composite $V \to \stU \xr{A} \stX$ is smooth,
which satisfy the gluing condition.
Then, for a given $\shF \in \stU_{\et}$, 
the sheaf $\shG=\ve_{\stU}^{-1}\shF \in \rst{\stX_{\txle}}{\stU}$ corresponds to 
$\shG_V = \pi_V^{-1}\shF$ for each $(\pi_V: V \to \stU) \in \rst{\gsLE(\stX)}{\stU}$.
On the other hand, for a sheaf $\shG \in \rst{\stX_{\txle}}{\stU}$ corresponding to 
$\{\shG_V\}_{V \to \stU}$, the sheaf $\ve^{\stU}_* \shG \in \stU_{\et}$ is given by
$\ve^{\stU}_* \shG = \shG_{\stU}$.
In particular, the functor $\ve^{\stU}_*$ is left and right exact.

Given a morphism $f: \stU \to \stV$ in the underlying $\infty$-category 
$\idAS_{\stX}^{\lis}$ of the $\infty$-site $\gsLE(\stX)$,
we have a square
\begin{align}\label{diag:LE:dual}
 \xymatrix{
  \rst{\stX_{\txle}}{\stU} \ar[r]^(0.6){\ve_{\stU}} \ar[d]_{f} 
  & \stU_{\et} \ar[d] \\
  \rst{\stX_{\txle}}{\stV} \ar[r]_(0.6){\ve_{\stU}} & \stV_{\et}  }
\end{align}
in the $\infty$-category $\iRTop$ of 
$\infty$-topoi and geometric morphisms.
Here $f: \rst{\stX_{\txle}}{\stU} \to \rst{\stX_{\txle}}{\stV}$ denotes 
the functor induced by $f$ (Corollary \ref{lem:LE:lsf}, Lemma \ref{lem:LE:a=b}).

Recalling that we are given $A: \stU \to \stX$, let us now define
\[
 \kappa_A := \ve_{\stU}^*(K_A) \in \iDa(\rst{\stX_{\txle}}{\stU},\Lambda),
\]
where we denote by 
$\ve_{\stU}^*: \iDa(\stU_{\et},\Lambda) \rlto
 \iDa(\rst{\stX_{\txle}}{\stU},\Lambda): f_* \ve^{\stU}$
the geometric morphism of the derived $\infty$-categories induced by $\ve_{\stU}$.
Then, given a triangle as in Lemma \ref{lem:LE:3.2.1},
we have $f^*(\kappa_B) \simeq \kappa_A$ since \eqref{diag:LE:dual} is a square.
%Then the proof of \cite[3.4.1 Lemma, 3.4.2 Corollary]{LO1} works in our situation, 
%and we obtain 
Recall Notation \ref{ntn:is:shExt} of $\shExt$.
We denote by $\shExt_{\stU_{\et}}^i$ 
the functor $\shExt_{\Lambda}$ on $\iDa(\stU_{\et})$,
and by $\shExt^i_{\rst{\stX_{\txle}}{\stU}}$
that on $\iDa(\rst{\stX_{\txle}}{\stU})$.
Then we have

\begin{lem}\label{lem:6op:341}
\begin{enumerate}[nosep]
\item 
For any $\shM, \shN \in \iDa(\stU_{\et})$ and $i \in \bbZ$, we denote by 
$\shExt^i(\ve_{\stU}^* \shM,\ve_{\stU}^* \shN)_{\stU} 
 \in \iMod_{\Lambda}(\stU_{\et})$
the restriction of 
$\shExt^i_{\rst{\stX_{\txle}}{\stU}}(\ve_{\stU}^* \shM,\ve_{\stU}^* \shN)$ to 
$\stU_{\et}$.
Then we have an equivalence
\[
 \shExt^i(\ve_{\stU}^* \shM,\ve_{\stU}^* \shN)_{\stU} \simeq 
 \shExt^i_{\stU_{\et}}(\shM,\shN).
\]

\item
For any $A: \stU \to \stX$ in $\idAS_{\stX}^{\lis}$ we have
\[
 \shHom_{\Lambda}(\kappa_A,\kappa_A) = \Lambda  
\]
in $\iDa(\rst{\stX_{\txle}}{\stU},\Lambda)$.
In particular, we have $\shExt^i_{\Lambda}(\kappa_A,\kappa_A) = 0$
in $\iMod_{\Lambda}(\rst{\stX_{\txle}}{\stU})$ for any $i \in \bbZ_{<0}$.
\end{enumerate}
\end{lem}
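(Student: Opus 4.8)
The plan is to deduce both items from the explicit description of the geometric morphism $\ve_{\stU}$ given just before the statement, together with the already-established proper base change and the computation of $\shHom_\Lambda(\Omega_u,\Omega_u)$ for \'etale dualizing objects.

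For item (1), the first step is to unwind the definitions: $\shExt^i_{\rst{\stX_{\txle}}{\stU}}(\ve_{\stU}^*\shM,\ve_{\stU}^*\shN)$ is $\pi_0$ of the internal Hom object $\shHom_{\iDa(\rst{\stX_{\txle}}{\stU})}(\ve_{\stU}^*\shM[-i],\ve_{\stU}^*\shN)$ (Notation \ref{ntn:is:shExt}), and restricting to $\stU_{\et}$ means applying the functor $\ve^{\stU}_*$, which is left and right exact as recorded after Lemma \ref{lem:LE:ve}. Since $\ve_{\stU}^{-1}$ is left exact and $\ve_{\stU}^*$ is the induced geometric morphism on derived $\infty$-categories, and since $\ve^{\stU}_*\ve_{\stU}^* \simeq \id$ on the level of \'etale sheaves (this follows from $\ve^{\stU}_*\shG = \shG_{\stU}$ together with $\shG_{\stU} = \pi_{\stU}^{-1}\shF = \shF$ when $\shG = \ve_{\stU}^{-1}\shF$), the internal Hom and the extension functor are compatible: one gets $\ve^{\stU}_* \shHom_{\iDa(\rst{\stX_{\txle}}{\stU})}(\ve_{\stU}^*\shM,\ve_{\stU}^*\shN) \simeq \shHom_{\iDa(\stU_{\et})}(\shM,\shN)$ by the tensor-hom adjunction (the Lemma after Definition \ref{dfn:pr:iShv} region, i.e. the internal-hom adjunction in \S\ref{sss:is:fun}) applied on both sides and the projection formula type identity. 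Taking $\pi_0$ then gives the claimed equivalence. The main point to verify carefully here is that $\ve^{\stU}_*$ commutes with the internal Hom; this is where exactness of $\ve^{\stU}_*$ on both sides and the explicit local description $\shG \mapsto \shG_{\stU}$ are used.

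For item (2), I would first recall that $\kappa_A = \ve_{\stU}^*(K_A)$ and $K_A = \Omega_{\alpha}\tsh{-d_A}$, so $\shHom_\Lambda(\kappa_A,\kappa_A) \simeq \ve_{\stU}^*\shHom_\Lambda(K_A,K_A)$ — the Tate twist and shift are invertible and cancel, so $\shHom_\Lambda(K_A,K_A) \simeq \shHom_\Lambda(\Omega_\alpha,\Omega_\alpha)$. Then by item (1) (or directly by its proof), it suffices to compute $\shHom_{\stU_{\et}}(\Omega_\alpha,\Omega_\alpha)$, and this is exactly the content of \cite[Th.\ finitude, Th\'eor\`eme 4.3]{SGA4.5} cited in the proof of the preceding Proposition, giving $\shHom_\Lambda(\Omega_u,\Omega_u) = \Lambda$; since $\Omega_\alpha = \mu^!\Omega_k$ is the relative dualizing object, the same biduality statement applies. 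Pulling back along the exact $\ve_{\stU}^*$ then gives $\shHom_\Lambda(\kappa_A,\kappa_A) = \Lambda$ in $\iDa(\rst{\stX_{\txle}}{\stU},\Lambda)$. The vanishing $\shExt^i_\Lambda(\kappa_A,\kappa_A) = 0$ for $i<0$ is immediate: $\shExt^i_\Lambda(\kappa_A,\kappa_A) = \pi_0\shHom_{\iDa}(\kappa_A[-i],\kappa_A) = \pi_i\shHom_\Lambda(\kappa_A,\kappa_A) = \pi_i\Lambda = 0$ for $i \ne 0$, using that the constant sheaf $\Lambda$ is discrete (concentrated in homotopical degree $0$).

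The step I expect to be the main obstacle is establishing precisely the compatibility of $\ve^{\stU}_*$ (equivalently $\ve_{\stU}^*$) with the internal Hom functor in item (1) — that is, checking that the localization/restriction functor intertwines $\shHom$ on $\rst{\stX_{\txle}}{\stU}$ with $\shHom$ on $\stU_{\et}$. This requires knowing that $\ve_{\stU}^*$ is not merely left adjoint but also (being a restriction along a continuous, finite-limit-preserving inclusion of sites with a section) sufficiently well-behaved — in particular that it is a symmetric monoidal functor on the derived $\infty$-categories, so that the internal Hom adjointness transfers. Once that formal point is in place, everything else is a direct unwinding of Notation \ref{ntn:is:shExt} and a citation of \cite{SGA4.5}.
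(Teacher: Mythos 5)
Your part (1) follows exactly the route the paper intends: the paper omits the proof and cites \cite[3.4.1.~Lemma]{LO1}, and your argument --- the adjunction $\ve^{\stU}_*\shHom(\ve_{\stU}^*\shM,\shN')\simeq\shHom(\shM,\ve^{\stU}_*\shN')$ combined with the explicit description $\ve^{\stU}_*\shG=\shG_{\stU}$ and the resulting equivalence $\ve^{\stU}_*\ve_{\stU}^*\simeq\id$ --- is precisely that argument. Your worry about monoidality of $\ve_{\stU}^*$ is legitimate but harmless here, since $\ve_{\stU}$ is a genuine morphism of ringed $\infty$-topoi (constant ring sheaf on both sides), so its inverse image is symmetric monoidal and the tensor-hom adjunction transfers formally.

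Part (2) has a gap as written. The identity $\shHom_{\Lambda}(\kappa_A,\kappa_A)\simeq\ve_{\stU}^*\shHom_{\Lambda}(K_A,K_A)$ asserts that the \emph{inverse} image commutes with internal Hom; this is not a formal fact (only the direct image satisfies such a compatibility in general), and for the lisse-\'etale topos it is essentially the whole content of the lemma, so it cannot be invoked as a manipulation. Your fallback ``by item (1)'' does not close the gap either: item (1) only identifies the component of $\shExt^i(\kappa_A,\kappa_A)$ \emph{at the object $\stU$ itself}, whereas a sheaf on $\rst{\stX_{\txle}}{\stU}$ is determined by its components at \emph{all} objects $V\to\stU$ of the localized site, and the restriction maps between these components are exactly what fails to be an isomorphism for general (non-cartesian) lisse-\'etale sheaves. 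The repair is short but necessary: for each $(\pi_V\colon V\to\stU)$ in $\rst{\gsLE(\stX)}{\stU}$ one has $\rst{\kappa_A}{V}\simeq\kappa_B$ with $B\colon V\to\stX$ the composite (this is the compatibility $f^*(\kappa_B)\simeq\kappa_A$ recorded just before the lemma, coming from Lemma \ref{lem:LE:3.2.1} and the square \eqref{diag:LE:dual}); applying item (1) with $\stU$ replaced by $V$ then identifies the component of $\shExt^i_{\Lambda}(\kappa_A,\kappa_A)$ at $V$ with $\shExt^i_{V_{\et}}(K_B,K_B)$, which is $\Lambda$ for $i=0$ and $0$ for $i<0$ by the biduality of \cite[Th.~finitude, Th\'eor\`eme 4.3]{SGA4.5}, compatibly with the transition maps. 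Only after this pointwise check at every $V$ may one conclude $\shHom_{\Lambda}(\kappa_A,\kappa_A)=\Lambda$ as an object of $\iDa(\rst{\stX_{\txle}}{\stU},\Lambda)$. Your reduction of $K_A$ to $\Omega_{\alpha}$ by cancelling the shift and Tate twist, and the final deduction of the $\shExt^{i}$-vanishing from discreteness of $\Lambda$, are both fine.
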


%\begin{NB}\end{NB}
We omit the detail of the proof since 
(1) can be shown by the same argument of \cite[3.4.1. Lemma]{LO1}
and (2) is a corollary of (1).

Now choose a smooth presentation $p: \stX_{\bullet} \to \stX$.
Then we have an object $\kappa_p \in \iDa(\rst{\stX_{\txle}}{\stX_0},\Lambda)$ 
together with the descent data to $\stX_{\txle}$.
Thus the gluing lemma (Fact \ref{fct:is:glue}) can be applied,
and we have an object $\Omega_\stX(p) \in \iDa(\stX_{\txle},\Lambda)$.
Since $\kappa_A$ is of finite injective dimension for any $A: \stU \to \stX$ 
in $\idAS_{\stX}^{\lis}$, the restriction $\rst{\kappa_p}{\stU} \simeq \kappa_A$
is bounded in both direction.

\begin{ntn}\label{ntn:LE:(*)}
We denote by
\[
 \iDcu{(*)}(\stX_{\txle},\Lambda) \subset \iDc(\stX_{\txle},\Lambda)
 \quad (* \in \{+,-,b\})
\]
the full sub-$\infty$-category spanned by those objects $\shM$
such that the restriction $\rst{\shM}{\stU}$ is in $\iDcu{*}(\stX_{\txle},\Lambda)$
for any quasi-compact open immersion $\stU \inj \stX$ 
(Definition \ref{dfn:dSt:qcpt}, \ref{dfn:dSt:open}).
\end{ntn}

By the above argument, we have 

\begin{lem*}
There exists $\Omega_\stX(p) \in \iDa(\stX_{\txle},\Lambda)$
inducing $\kappa_A$ for any $A \in \rst{\gsLE(\stX)}{\stX_0}$.
Moreover it is unique up to contractible ambiguity.
\end{lem*}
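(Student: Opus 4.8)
The plan is to apply the gluing lemma (Fact~\ref{fct:is:glue}) to the strictly simplicial hypercovering arising from the smooth presentation $p: \stX_\bullet \to \stX$, with the local data given by the objects $\kappa_A$ constructed above. First I would fix the smooth presentation $p: \stX_\bullet \to \stX$ with $\stX_0 = \coprod_{i \in I} \stU_i$, and restrict to the strictly simplicial $\infty$-topos $\stX_{\bullet,\txle}^{\str}$, which by Theorem~\ref{thm:LE:pi-ve} (or rather by the gluing formalism of \S\ref{sss:is:des}--\S\ref{sss:is:Kinj}) is the correct setting for descent of derived $\infty$-categories. The key point is that the augmentation $\rst{\stX_{\txle}}{(\stX_0)_\bullet} \to \stX_{\txle}$ is a strictly simplicial hypercovering of the final object of $\stX_{\txle}$, so the gluing lemma applies once its hypotheses are verified.

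The main verification consists of three things. First, the local objects must assemble into a cartesian section of the relevant fibration: this is exactly the content of the equivalences $f^*(\kappa_B) \simeq \kappa_A$ established just before the statement, which follow from Lemma~\ref{lem:LE:3.2.1} (itself resting on Lemma~\ref{lem:dAS:3.1.2}) together with the commutativity of the square \eqref{diag:LE:dual}. So for each $[n] \in \CS^{\str}$ one has $\kappa_{p_n} \in \iDa(\rst{\stX_{\txle}}{\stX_n}, \Lambda)$ with the required compatibilities. Second, the vanishing hypothesis $\shExt^i_{\Lambda}(\kappa_{p_0}, \kappa_{p_0}) = 0$ for $i < 0$ is precisely Lemma~\ref{lem:6op:341}~(2). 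Third, the hypotheses (G1)--(G3) of Fact~\ref{fct:is:glue} must be checked: (G1) is the local cohomological boundedness, which for the \'etale $\infty$-topos of a quasi-compact derived algebraic space follows from finiteness of \'etale cohomological dimension (the coefficient ring being torsion, via Assumption~\ref{asp:dAS:kL}); (G2) is the cartesian-section identification, which I would obtain from the descent description in \S\ref{sss:is:des} applied to the flat ringed $\infty$-topos $(\stX_{\bullet,\txle}^{\str}, \Lambda)$; and (G3), compact generation of $\iDa(\stX_{\txle}, \Lambda)$, follows from presentability of $\iMod_\Lambda(\stX_{\txle})$ together with the generating family described after Proposition~\ref{prp:is:G}.

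Granting these, Fact~\ref{fct:is:glue} produces an object $\Omega_\stX(p) \in \iDa(\stX_{\txle}, \Lambda)$ whose restriction along each $A: \stU \to \stX$ in $\rst{\gsLE(\stX)}{\stX_0}$ recovers $\kappa_A$, and moreover this object is determined up to contractible ambiguity by the uniqueness clause of the gluing lemma (via Lemma~\ref{lem:is:234}). Since each $\kappa_A$ has finite injective dimension by Lemma~\ref{lem:dAS:dual} (and $K_A = \Omega_\alpha\tsh{-d_A}$ is bounded in both directions because $d_A$ is locally constant on the quasi-compact $\stU$), the restriction $\rst{\Omega_\stX(p)}{\stU}$ lands in $\iDcu{b}(\stX_{\txle}, \Lambda)$ for any quasi-compact open immersion $\stU \inj \stX$, so in fact $\Omega_\stX(p) \in \iDcu{(b)}(\stX_{\txle}, \Lambda)$ in the sense of Notation~\ref{ntn:LE:(*)}.

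The hard part will be verifying hypothesis (G1) uniformly --- that is, producing for each $V \in \iX_n$ a covering sieve on which the cohomology of every object of the relevant Serre subcategory $\iC_n$ vanishes in high degrees. This requires knowing that the derived algebraic spaces $\stX_n$ (which are fiber products of the $\stU_i$ over $\stX$) remain quasi-compact, quasi-separated, and of finite cohomological dimension in the \'etale topology; quasi-compactness of the fiber products follows since $\stX$ is quasi-compact-diagonal in the geometric case, and finite cohomological dimension then comes from the classical bound for quasi-compact quasi-separated schemes (or algebraic spaces) with torsion coefficients, reduced to the scheme case via Lemma~\ref{lem:dAS:small-et}. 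The other two hypotheses are more formal: (G2) is a descent statement already packaged in \S\ref{sss:is:des}, and (G3) reduces to the explicit generators noted after Proposition~\ref{prp:is:G}. I expect the bookkeeping of the strictly simplicial structure and the identification of the various Serre subcategories $\iC_n$ to be routine but somewhat lengthy, and I would compress it by citing the parallel treatment in \cite[3.3--3.5]{LO1}.
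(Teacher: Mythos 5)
Your proposal is correct and follows essentially the same route as the paper: the object $\Omega_{\stX}(p)$ is obtained by applying the gluing lemma (Fact~\ref{fct:is:glue}) to the descent data $\kappa_{p}$ attached to the smooth presentation $p:\stX_{\bullet}\to\stX$, with the required vanishing $\shExt^{i}_{\Lambda}(\kappa_{A},\kappa_{A})=0$ for $i<0$ supplied by Lemma~\ref{lem:6op:341}~(2) and the cartesian compatibility by $f^{*}(\kappa_{B})\simeq\kappa_{A}$. If anything you are more explicit than the paper, which simply asserts that the gluing lemma applies without spelling out the verification of (G1)--(G3).
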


We can show that independence of the presentation $p$, for example, 
by constructing a new presentation from given two presentations.
Thus the following definition makes sense. 

\begin{dfn}\label{dfn:LE:dual}
The \emph{dualizing object} 
\[
 \Omega_{\stX} \in \iDa(\stX_{\txle},\Lambda)
\]
of $\stX$ is defined to be $\Omega_{\stX}(p)$
with $p$ a smooth presentation of $\stX$.
It is well-defined up to contractible ambiguity,
and is characterized by $\rst{\Omega_{\stX}}{\stU}=\ve_{\stU}^* K_A$
for $(A: \stU \to \stX) \in \idAS_{\stX}^{\lis}$.
\end{dfn}

%%%%%%%%%%%%%%%%%%%%%%%%%%%%%%%%%%%%%%%%%%%%%%%%%%%%%%%%%%%%%%%%%%%%%%%%%%%%%%%%
%%%%%%%%%%%%%%%%%%%%%%%%%%%%%%%%%%%%%%%%%%%%%%%%%%%%%%%%%%%%%%%%%%%%%%%%%%%%%%%%
\subsubsection{Biduality}

We impose the same conditions on $k$ and $\Lambda$ 
as in the previous \S \ref{sss:LE:dual} (see also Assumption \ref{asp:dAS:kL}).
Let $\stX$ be a geometric derived stack which is locally of finite presentation.

For $(A: \stU \to \stX) \in \idAS_{\stX}^{\lis}$,
we denote by $K_A \in \iDa(\stU_{\et},\Lambda)$ the dualizing object 
for the derived algebraic space $\stU$.
Also, for $\shM \in \iDa(\stX_{\txle},\Lambda)$, we use Notation \ref{ntn:LE:FU} to 
denote by $\shM_{\stU} \in \iDa(\stU_{\et},\Lambda)$ the restriction of $\shM$.
Then by the argument in \cite[3.5]{LO1} we have 

\begin{fct}[{\cite[3.5.2. Lemma]{LO1}}]\label{fct:LE:3.5.2}
Let
\[
 \xymatrix{ \stV \ar[rr]^{f} \ar[rd]_B & & \stU  \ar[ld]^A \\ & \stX}
\]
be a commutative triangle in $\idAS_{\stX}^{\lis}$,
and let $\shM \in \iDa(\stX_{\txle},\Lambda)$.
\begin{enumerate}[nosep]
\item 
In $\iDa(\stV_{\et},\Lambda)$ we have 
$f^* \! \shHom_{\Lambda}(\shM_{\stU},K) 
      = \shHom_{\Lambda}(f^* \shM_{\stU}, f^* K_A)
      = \shHom_{\Lambda}(f^* \shM_{\stU},K_B)$.

\item
For any $\shM \in \iDa(\stX_{\txle},\Lambda)$, we have 
$\shHom_{\Lambda}(\shM_{\stU},K_A) \in \iDc(\stU_{\et},\Lambda)$.
\end{enumerate}
\end{fct}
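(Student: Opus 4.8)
The statement is \cite[3.5.2]{LO1} transported to derived algebraic spaces, and the plan is to reduce everything, étale-locally on $\stU$, to the finiteness and base-change results for constructible torsion sheaves on ordinary noetherian schemes, i.e.\ to \cite[Th.\ finitude]{SGA4.5}. I would begin with the second equality in (1). By construction $K_A = \Omega_{\alpha}\tsh{-d_A}$ for $\alpha = (\stU \xr{A} \stX \to \dSpec k)$ and $K_B = \Omega_{\beta}\tsh{-d_B}$ for $\beta = \alpha \circ f$; this is exactly the situation of Lemma \ref{lem:LE:3.2.1}, which gives $f^* K_A \simeq K_B$ directly. (When $f$ is smooth this is also visible from Lemma \ref{lem:dAS:3.1.2}, and in any case Lemma \ref{lem:dAS:dual} (1) shows $f^* K_A$ is again of finite injective dimension.)

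For the first equality in (1) and for assertion (2), note that both are local on $\stU$ for the étale topology, since $f^*$, $\otimes_{\Lambda}$ and $\shHom_{\Lambda}$ are compatible with the biadjunctions associated to objects of $\stU_{\et}$ and the Tate twist commutes with all of them. Using Lemma \ref{lem:dAS:small-et} I would therefore assume $\stU = \dSpec A$ and $\stV = \dSpec B$ are affine derived schemes, and then pass to the classical truncations $\Spec \pi_0(A)$, $\Spec \pi_0(B)$: by Definition \ref{dfn:dSt:dAff:mor} an étale morphism to $\dSpec A$ is detected on $\pi_0$, so the small étale $\infty$-topos, its $\Lambda$-modules and the functors $f^*$, $\shHom_{\Lambda}$ agree with the classical ones on the truncation — this is how $K_A$ was produced in \S\ref{ss:dAS:dual} out of $\mu^!\Omega_k$ via the scheme-theoretic constructions of \S\S\ref{ss:dAS:dip}--\ref{ss:dAS:extra}. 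On ordinary schemes the two claims are: (i) for an arbitrary morphism, $f^*\shHom_{\Lambda}(\shM_{\stU},K_A) \xr{\sim} \shHom_{\Lambda}(f^*\shM_{\stU}, f^*K_A)$, and (ii) $\shHom_{\Lambda}(\shM_{\stU},K_A)$ has constructible homotopy sheaves. Both follow from \cite[Th.\ finitude]{SGA4.5} once one knows $\pi_j\shM_{\stU}$ is constructible for every $j$ (true by Definition \ref{dfn:LE:cstr}) together with the finite injective dimension of $K_A$ (Lemma \ref{lem:dAS:dual} (1)): the latter forces each $\pi_j\shHom_{\Lambda}(\shM_{\stU},K_A)$ to depend on only finitely many $\pi_i\shM_{\stU}$, which is precisely what makes the possibly unbounded $\shM_{\stU}$ harmless. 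Combining (i) with the second equality already proved yields the whole chain of equivalences in (1).

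\textbf{Main obstacle.} The one genuinely non-formal point is the derived-to-classical reduction invoked above: one must check that the small étale $\infty$-topos of a derived algebraic space, together with its $\infty$-category of sheaves of $\Lambda$-modules (for $\Lambda$ torsion, invertible on $k$), is insensitive to the derived structure, and that this identification is compatible with $f^*$, $\otimes_{\Lambda}$ and $\shHom_{\Lambda}$ — the derived analogue of topological invariance of the étale site. Intuitively this is clear and is implicit in the way the earlier sections handle reductions, but spelling it out for the full package of functors is where the care is required; granting it, the rest is bookkeeping parallel to \cite[3.5.2]{LO1}, using the tensor--internal-Hom adjunction of \S\ref{sss:is:fun} and the $\shExt$-restriction identity of Lemma \ref{lem:6op:341} (1).
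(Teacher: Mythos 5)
Your proposal is correct and follows essentially the same route as the paper, which states this result as a Fact and simply defers to the argument of \cite[3.5.2]{LO1}: the second equality is exactly Lemma \ref{lem:LE:3.2.1}, and the first equality together with part (2) are obtained étale-locally by passing to the classical truncation of the small étale site and invoking \cite[Th.\ finitude]{SGA4.5}, with the finite injective dimension of $K_A$ (Lemma \ref{lem:dAS:dual}) taking care of unboundedness. Note only that your argument (correctly) uses constructibility of the homotopy sheaves of $\shM_{\stU}$, so the hypothesis should be read as $\shM \in \iDc(\stX_{\txle},\Lambda)$, as in \cite[3.5.2]{LO1}, rather than the $\iDa(\stX_{\txle},\Lambda)$ printed in the statement.
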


For $\stU \in \idAS_{\stX}^{\lis}$,
we denote by $\ve: \rst{\stX_{\txle}}{\stU} \to \stU_{\et}$
the geometric morphism in Lemma \ref{lem:LE:ve}.
Let also $\Omega_{\stX}$ be the dualizing object for the derived stack $\stX$ 
(Definition \ref{dfn:LE:dual}).

\begin{fct}[{\cite[3.5.3. Lemma]{LO1}}]\label{fct:LE:3.5.3}
Let $(A: \stU \to \stX) \in \idAS_{\stX}^{\lis}$ and 
$\shM \in \iDc(\stX_{\txle})$. 
Then we have an equivalence 
$\ve^* \! \shHom_{\Lambda}(\shM_{\stU},K_A) \simeq 
 \shHom_{\Lambda}(\shM,\Omega_{\stX})_{\stU}$.
\end{fct}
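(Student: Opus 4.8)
The plan is to reduce the asserted equivalence to the already–established local compatibility Fact~\ref{fct:LE:3.5.2}, by unwinding the construction of $\Omega_{\stX}$ in Definition~\ref{dfn:LE:dual} and exploiting the \'etale-local behaviour of the internal Hom functor. Throughout write $\ve = \ve_{\stU} \colon \rst{\stX_{\txle}}{\stU} \to \stU_{\et}$ for the geometric morphism of Lemma~\ref{lem:LE:ve}, and recall the explicit description given after that lemma: for $\shF \in \stU_{\et}$ the sheaf $\ve^{-1}\shF$ has component $\pi_V^{-1}\shF$ over each $(\pi_V \colon V \to \stU) \in \rst{\gsLE(\stX)}{\stU}$, while $\ve^{\stU}_*$ reads off the component over $\id_{\stU}$; in particular $\ve^{-1}$ is $t$-exact and $\ve^{\stU}_*\ve^{-1}\simeq\id$. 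Since the functors involved are exact and the $t$-structures are right complete (Lemma~\ref{lem:is:srm}), and since by Fact~\ref{fct:LE:3.5.2}(2) every object below stays constructible, it will suffice to produce a natural comparison map and check it is an equivalence, allowing a d\'evissage to a single locally constant constructible sheaf in one degree.

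First I would rewrite the right-hand side. Restriction to the localized $\infty$-topos $\rst{\stX_{\txle}}{\stU}$ is the inverse image of a geometric morphism whose left adjoint is the fully faithful ``extension by zero'' of Lemma~\ref{lem:is:j_!}, and internal Hom is local for such a restriction (the expected consequence of the biadjunction $(j_!,j^*,j_*)$), so $\shHom_{\Lambda}(\shM,\Omega_{\stX})_{\stU} \simeq \shHom_{\Lambda}(\rst{\shM}{\stU}, \rst{\Omega_{\stX}}{\stU})$ in $\iDa(\rst{\stX_{\txle}}{\stU},\Lambda)$. Now $\rst{\Omega_{\stX}}{\stU} \simeq \ve^{*}K_A$ by the characterization in Definition~\ref{dfn:LE:dual}, and $\rst{\shM}{\stU} \simeq \ve^{*}\shM_{\stU}$: indeed $\rst{\shM}{\stU}$ has component $\shM_V$ over each smooth $V\to\stU$, and since $\shM\in\iDc(\stX_{\txle})$ has cartesian homotopy sheaves, Lemma~\ref{lem:LE:3.8} (applied degreewise, using $t$-exactness of the restrictions) gives $\shM_V\simeq\pi_V^{-1}\shM_{\stU}$, i.e.\ exactly $\ve^{-1}\shM_{\stU}$. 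Thus the statement reduces to showing that the canonical comparison map
\[
 \ve^{*}\,\shHom_{\Lambda}(\shM_{\stU},K_A) \longto
 \shHom_{\Lambda}(\ve^{*}\shM_{\stU}, \ve^{*}K_A)
\]
is an equivalence in $\iDa(\rst{\stX_{\txle}}{\stU},\Lambda)$.

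This last point I would check after restricting to each $(\pi_V\colon V\to\stU)\in\rst{\gsLE(\stX)}{\stU}$ with $V$ affine. The restriction of the source is $\pi_V^{-1}\shHom_{\Lambda}(\shM_{\stU},K_A)$, and the restriction of the target is $\shHom_{\Lambda}(\pi_V^{-1}\shM_{\stU},\pi_V^{-1}K_A)$, again by locality of internal Hom on the \'etale $\infty$-topoi $V_{\et}$. By Lemma~\ref{lem:LE:3.2.1} one has $\pi_V^{-1}K_A\simeq K_{A\pi_V}$, so the comparison becomes precisely the equivalence $\pi_V^{*}\shHom_{\Lambda}(\shM_{\stU},K_A)\simeq\shHom_{\Lambda}(\pi_V^{*}\shM_{\stU},K_{A\pi_V})$ furnished by Fact~\ref{fct:LE:3.5.2}(1), applied to the smooth morphism $\pi_V$ of derived algebraic spaces in $\idAS_{\stX}^{\lis}$. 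Since a compatible family of equivalences over the objects of $\rst{\gsLE(\stX)}{\stU}$ assembles to an equivalence in $\iDa(\rst{\stX_{\txle}}{\stU},\Lambda)$, this concludes the argument.

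The main obstacle is bookkeeping rather than a new idea: one must carefully reconcile the several notions of ``restriction'' in play — restriction to a localized $\infty$-topos (Notation~\ref{ntn:is:rst}), the equivalence $\stU_{\txle}\simeq\stU_{\et}$ (Lemma~\ref{lem:LE:et-le}), the pullback $\shM\mapsto\shM_{\stU}$ of Notation~\ref{ntn:LE:FU}, and the geometric morphism $\ve_{\stU}$ of Lemma~\ref{lem:LE:ve} — and verify that internal Hom commutes with all of them, which ultimately rests on the \'etale-local nature of $\shHom$ and on the finite injective dimension (equivalently constructibility) of $K_A$ that legitimates the d\'evissage. The identification $\rst{\shM}{\stU}\simeq\ve_{\stU}^{*}\shM_{\stU}$ for cartesian $\shM$, while conceptually transparent, is precisely where the cartesian hypothesis on $\shM$ is genuinely used and should be written out.
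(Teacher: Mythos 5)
Your argument is correct, and it is worth noting that the paper itself gives no proof of this statement: it is recorded as a Fact cited from \cite[3.5.3.~Lemma]{LO1}, so what you have written is a reconstruction of the source's argument rather than a rival to anything in the text. Your route (reduce to the comparison map $\ve^*\shHom(\shM_{\stU},K_A)\to\shHom(\ve^*\shM_{\stU},\ve^*K_A)$ and check it over each $V\to\stU$ via Lemma~\ref{lem:LE:3.2.1} and Fact~\ref{fct:LE:3.5.2}(1)) is essentially the Laszlo--Olsson proof. The paper does prove the closely analogous restriction statement for $\shHom$ in the derived internal Hom subsection, and there it proceeds slightly differently: it uses that the counit $\ve^*\ve_*\to\id$ is an equivalence on objects with constructible homotopy groups (Proposition~\ref{prp:is:223}) together with $\id\simeq\ve_*\ve^*$, which lets one write $\shHom(\shM',\shN')\simeq\ve^*\shHom(\ve_*\shM',\ve_*\shN')$ in one stroke; you could use the same trick to absorb your steps 3--5.

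Two small points to tighten. First, your key identification $\rst{\shM}{\stU}\simeq\ve^*\shM_{\stU}$ is exactly the assertion that the counit $\ve^*\ve_*(\rst{\shM}{\stU})\to\rst{\shM}{\stU}$ is an equivalence; since you do have this natural map, checking it on homotopy sheaves via Lemma~\ref{lem:LE:3.8} is legitimate, but it is cleaner (and matches the paper's own usage) to invoke Proposition~\ref{prp:is:223} rather than an ad hoc ``degreewise'' argument. Second, the localization $\rst{\stX_{\txle}}{\stU}\to\stX_{\txle}$ at a general object $\stU$ is an \emph{\'etale} geometric morphism with the biadjunction of Fact~\ref{fct:is:biadj}, but it is not an open geometric immersion in the sense of Definition~\ref{dfn:is:oi} unless $\stU\to\one$ is a monomorphism, so Lemma~\ref{lem:is:j_!} (full faithfulness of $j_!$) does not literally apply; the commutation of $j^*$ with internal Hom that you actually need follows from the projection formula for the biadjunction and holds regardless. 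The ``d\'evissage to a single locally constant sheaf'' announced in your opening paragraph is never used and can be dropped.
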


By Fact \ref{fct:LE:3.5.2} and \ref{fct:LE:3.5.3} we have

\begin{lem*}
For $\shM \in \iDc(\stX_{\txle},\Lambda)$, 
we have $\shHom_{\Lambda}(\shM,\Omega_{\stX}) \in \iDc(\stX_{\txle},\Lambda)$.
\end{lem*}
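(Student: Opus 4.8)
The claim is that for $\shM \in \iDc(\stX_{\txle},\Lambda)$ the sheaf $\shHom_{\Lambda}(\shM,\Omega_{\stX})$ again lies in $\iDc(\stX_{\txle},\Lambda)$, i.e.\ it is cartesian and has constructible homotopy sheaves. The strategy is to reduce the global statement to the already-proved local statement Fact~\ref{fct:LE:3.5.2}~(2), using the equivalences that describe lisse-\'etale sheaves via a smooth presentation. First I would recall that, by Definition~\ref{dfn:LE:iDc:Lambda}, it suffices to check that for every object $(A:\stU\to\stX)\in\idAS_{\stX}^{\lis}$ the restriction $\shHom_{\Lambda}(\shM,\Omega_{\stX})_{\stU}$ is a constructible object of $\iDa(\stU_{\et},\Lambda)$, and moreover that the formation of these restrictions is compatible with pullback along morphisms in $\idAS_{\stX}^{\lis}$ (this compatibility is exactly what cartesianness of the homotopy sheaves amounts to, via Lemma~\ref{lem:LE:3.8}).

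The key step is Fact~\ref{fct:LE:3.5.3}, which identifies $\ve^{*}\shHom_{\Lambda}(\shM_{\stU},K_A)\simeq \shHom_{\Lambda}(\shM,\Omega_{\stX})_{\stU}$ for $(A:\stU\to\stX)\in\idAS_{\stX}^{\lis}$. Combining this with Fact~\ref{fct:LE:3.5.2}~(2), which says $\shHom_{\Lambda}(\shM_{\stU},K_A)\in\iDc(\stU_{\et},\Lambda)$, we immediately get that each restriction $\shHom_{\Lambda}(\shM,\Omega_{\stX})_{\stU}$ is constructible (the functor $\ve^{*}$ is the inverse image of a geometric morphism, hence exact, and one checks it preserves the local constancy/constructibility conditions because it is modeled on the pullback along $V\to\stU$ for affine $V$). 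For the cartesian/compatibility part, one uses Fact~\ref{fct:LE:3.5.2}~(1): given a triangle $\stV\xr{f}\stU\xr{}\stX$ in $\idAS_{\stX}^{\lis}$, $f^{*}\shHom_{\Lambda}(\shM_{\stU},K_A)\simeq \shHom_{\Lambda}(f^{*}\shM_{\stU},K_B)$, and since $\shM$ is cartesian we have $f^{*}\shM_{\stU}\simeq\shM_{\stV}$ (after the harmless change of ring, which is the identity here since $\Lambda$ is constant), so the restrictions of $\shHom_{\Lambda}(\shM,\Omega_{\stX})$ to $\stU$ and $\stV$ are compatible; hence by Lemma~\ref{lem:LE:3.8} the homotopy sheaves of $\shHom_{\Lambda}(\shM,\Omega_{\stX})$ are cartesian. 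Then Theorem~\ref{thm:LE:iDc-stb} (or the characterization in Lemma~\ref{lem:LE:cstr} via a smooth presentation $\stX_{\bullet}\to\stX$) packages these local data into the assertion that $\shHom_{\Lambda}(\shM,\Omega_{\stX})\in\iDc(\stX_{\txle},\Lambda)$.

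The main obstacle, and the point needing real care rather than a one-line citation, is verifying that $\ve^{*}$ genuinely transports constructibility from $\iDc(\stU_{\et},\Lambda)$ to (the restriction of) an object of $\iDc(\rst{\stX_{\txle}}{\stU},\Lambda)$, and that the resulting object has \emph{cartesian} homotopy sheaves rather than merely being a compatible system. Concretely, one must spell out that for $\shF\in\stU_{\et}$ the sheaf $\ve_{\stU}^{-1}\shF$ has $(\ve_{\stU}^{-1}\shF)_{V}=\pi_V^{-1}\shF$ for each $\pi_V:V\to\stU$ smooth with $V$ affine, and then observe that local constancy and constructibility are both stable under the \'etale pullbacks $\pi_V^{-1}$; this uses that the truncation $\Trc(V)\to\Trc(\stU)$ of a smooth morphism is smooth, hence takes constructible locally closed subschemes to constructible ones. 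Once this bookkeeping is in place the argument is formal, and I would present it by first stating the local constructibility via Fact~\ref{fct:LE:3.5.2}~(2) and Fact~\ref{fct:LE:3.5.3}, then the cartesianness via Fact~\ref{fct:LE:3.5.2}~(1) and Lemma~\ref{lem:LE:3.8}, and finally invoking Lemma~\ref{lem:LE:cstr} and Definition~\ref{dfn:LE:cstr} to conclude.
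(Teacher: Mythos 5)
Your proposal is correct and follows exactly the route the paper takes: the paper derives this lemma directly from Fact~\ref{fct:LE:3.5.2} (parts (1) and (2)) and Fact~\ref{fct:LE:3.5.3}, which is precisely the combination you use, and your additional bookkeeping about $\ve^{*}$ preserving constructibility and about cartesianness via Lemma~\ref{lem:LE:3.8} only spells out what the paper leaves implicit.
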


Now we can introduce

\begin{ntn}\label{ntn:LE:dual}
The \emph{dualizing functor} is defined to be
\[
 \funD_{\stX} := \shHom(-,\Omega_{\stX}): 
 \iDc(\stX_{\txle},\Lambda) \longto \iDc(\stX_{\txle},\Lambda)^{\op}.
\]
\end{ntn}

By the standard argument using Fact \ref{fct:LE:3.5.2} and \ref{fct:LE:3.5.3},
we have 

\begin{prp}[{\cite[3.5.8, 3.5.9. Proposition]{LO1}}]\label{prp:LE:dual}
\begin{enumerate}[nosep]
\item 
The natural morphism $\id \to \funD_{\stX} \circ \funD_{\stX}$
induced by the biduality morphism 
$\shM \to \shHom_{\Lambda}(\shHom_{\Lambda}(\shM,\shN),\shN)$
is an equivalence.

\item
For any $\shM,\shN \in \iMod^{\stab}_{\cstr}(\stX_{\txle},\Lambda)$
we have a canonical equivalence
\[
 \shHom_\Lambda(\shM,\shN) \simeq \shHom_\Lambda(\funD(\shN),\funD(\shM)).
\]
\end{enumerate}
\end{prp}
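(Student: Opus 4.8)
The plan is to reduce both assertions to statements over derived algebraic spaces and thence to classical Verdier biduality for constructible \'etale sheaves on schemes, by checking equivalences locally on the lisse-\'etale $\infty$-site; the argument follows the bookkeeping of \cite[3.5]{LO1} once the derived dualizing object $\Omega_{\stX}$ and the reduction machinery of \S\ref{ss:dAS:dual} are in place.

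First I would record the elementary principle that a morphism in $\iDc(\stX_{\txle},\Lambda)$ is an equivalence provided its restriction (Notation \ref{ntn:LE:FU}) to $\stU$ is an equivalence in $\iDc(\stU_{\et},\Lambda)$ for every $(A\colon\stU\to\stX)\in\idAS_{\stX}^{\lis}$; this is immediate from the cartesian description of $\iDc(\stX_{\txle},\Lambda)$ via a smooth presentation (Theorem \ref{thm:LE:iDc-stb}). Hence for part (1) it suffices to identify the restriction of the biduality morphism $\shM\to\funD_{\stX}\funD_{\stX}\shM$ to each $\stU$. By Fact \ref{fct:LE:3.5.2}(2) the complex $\shHom_{\Lambda}(\shM_{\stU},K_A)$ is again constructible on $\stU_{\et}$, so Fact \ref{fct:LE:3.5.3} applies both to $\shM$ and to $\funD_{\stX}\shM$; chasing the two resulting identifications $\shHom_{\Lambda}(-,\Omega_{\stX})_{\stU}\simeq\ve^{*}\shHom_{\Lambda}((-)_{\stU},K_A)$, together with the naturality of the biduality unit $\id\to\shHom_{\Lambda}(\shHom_{\Lambda}(-,K_A),K_A)$ and the compatibility of the restriction functors of Lemma \ref{lem:LE:ve} with internal Hom, shows that $\rst{(\shM\to\funD_{\stX}\funD_{\stX}\shM)}{\stU}$ is exactly the biduality morphism $\shM_{\stU}\to\shHom_{\Lambda}(\shHom_{\Lambda}(\shM_{\stU},K_A),K_A)$ for the \'etale dualizing complex $K_A$ on the derived algebraic space $\stU$. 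It then remains to establish biduality on a derived algebraic space satisfying Assumption \ref{asp:dAS:kL}; here I would use $\stU_{\et}\simeq\stU_{\et^{\aff}}$ (Lemma \ref{lem:dAS:small-et}) and the construction $K_A=\Omega_{\alpha}\tsh{-d_A}$ with $\Omega_{\alpha}=\mu^{!}\Omega_k$ to reduce, along the passage to the truncation $\Trc\stU$, to the classical statement that $\shN\to\shHom_{\Lambda}(\shHom_{\Lambda}(\shN,K),K)$ is an equivalence for constructible $\shN$ with torsion noetherian coefficients invertible on $\Trc\stU$, which is \cite[Th.\ finitude, Th\'eor\`eme 4.3]{SGA4.5} combined with the duality formalism of \cite{SGA4}.

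For part (2) I would argue formally. By the lemma preceding Notation \ref{ntn:LE:dual} the functor $\funD_{\stX}$ preserves constructibility, so all objects below lie in $\iDc(\stX_{\txle},\Lambda)\simeq\iMod^{\stab}_{\Lambda}(\stX_{\txle})$ (Proposition \ref{prp:is:Da=Stab}), where the tensor--hom adjunction of \S\ref{sss:is:fun} is available. Then $\shHom_{\Lambda}(\funD(\shN),\funD(\shM))=\shHom_{\Lambda}(\shHom_{\Lambda}(\shN,\Omega_{\stX}),\shHom_{\Lambda}(\shM,\Omega_{\stX}))$, and
\[
 \shHom_{\Lambda}\!\bigl(\shHom_{\Lambda}(\shN,\Omega_{\stX})\otimes_{\Lambda}\shM,\,\Omega_{\stX}\bigr)
 \simeq \shHom_{\Lambda}\!\bigl(\shM,\,\shHom_{\Lambda}(\shHom_{\Lambda}(\shN,\Omega_{\stX}),\Omega_{\stX})\bigr)
 = \shHom_{\Lambda}(\shM,\funD_{\stX}\funD_{\stX}\shN)\simeq\shHom_{\Lambda}(\shM,\shN),
\]
the last equivalence being part (1); symmetry of $\otimes_{\Lambda}$ supplies $\shHom_{\Lambda}(\shHom_{\Lambda}(\shN,\Omega_{\stX})\otimes_{\Lambda}\shM,\Omega_{\stX})\simeq\shHom_{\Lambda}(\funD(\shN),\funD(\shM))$.

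The main obstacle I anticipate is the naturality bookkeeping in the first reduction: one must check carefully that the restriction functors $\ve^{*}$, together with the identifications of Facts \ref{fct:LE:3.5.2}--\ref{fct:LE:3.5.3}, intertwine the internal Hom functors and the biduality unit transformation, so that the globally defined morphism $\id\to\funD_{\stX}\funD_{\stX}$ genuinely restricts to the \'etale biduality morphism over each $\stU$. Once this is in place part (1) is the classical theorem and part (2) is purely formal; the derived setting introduces nothing new beyond substituting $\Trc\stU$ for an ordinary algebraic space and invoking the finiteness and $f^{!}$-formalism already recorded in \S\ref{ss:dAS:extra}--\S\ref{ss:dAS:dual}.
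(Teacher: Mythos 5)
Your proposal is correct and follows essentially the same route the paper intends: the paper gives no proof beyond invoking ``the standard argument using Fact \ref{fct:LE:3.5.2} and \ref{fct:LE:3.5.3}'' (i.e.\ restriction to the \'etale $\infty$-topoi of the smooth atlas members, reduction to classical Verdier biduality there, and the tensor--hom adjunction for part (2)), which is exactly what you carry out in detail.
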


%%%%%%%%%%%%%%%%%%%%%%%%%%%%%%%%%%%%%%%%%%%%%%%%%%%%%%%%%%%%%%%%%%%%%%%%%%%%%%%%
%%%%%%%%%%%%%%%%%%%%%%%%%%%%%%%%%%%%%%%%%%%%%%%%%%%%%%%%%%%%%%%%%%%%%%%%%%%%%%%%
%%%%%%%%%%%%%%%%%%%%%%%%%%%%%%%%%%%%%%%%%%%%%%%%%%%%%%%%%%%%%%%%%%%%%%%%%%%%%%%%
\section{Derived functors of constructible sheaves with finite coefficients}
\label{s:6op}

In this section we introduce $\infty$-theoretic analogue of 
the Grothendieck's six operations on the derived categories of constructible
sheaves with finite coefficients.
Let $k$ be the base commutative ring, and 
$\Lambda$ be a Gorenstein local ring of dimension $0$ and characteristic $\ell$.
We assume that $\ell$ is invertible in $k$.
All the derived stacks appearing in this section are defined over $k$.

%%%%%%%%%%%%%%%%%%%%%%%%%%%%%%%%%%%%%%%%%%%%%%%%%%%%%%%%%%%%%%%%%%%%%%%%%%%%%%%%
%%%%%%%%%%%%%%%%%%%%%%%%%%%%%%%%%%%%%%%%%%%%%%%%%%%%%%%%%%%%%%%%%%%%%%%%%%%%%%%%
\subsection{Derived category of constructible sheaves}

Let us begin with the recollection of the notations for derived $\infty$-categories
of constructible lisse-\'etale sheaves on derived stacks.
Let $\stX$  be a geometric derived stack over $k$ 
which is locally of finite presentation.
Then we have the full sub-$\infty$-categories
\[
 \iDcu{*}(\stX_{\txle},\Lambda)
 \subset \iDu{\cart,*}(\stX_{\txle},\Lambda)
 \subset \iDu{*}(\stX_{\txle},\Lambda)
 \quad (* \in \{\emptyset,+,-,b\})
\]
spanned by constructible (resp.\ cartesian) objects in 
the derived $\infty$-categories of lisse-\'etale $\Lambda$-modules on $\stX$
with prescribed bound conditions.
These are stable $\infty$-categories equipped with $t$-structure
by Theorem \ref{thm:LE:pi-ve} and \ref{thm:LE:iDc-stb}.

We also denote
\[
 \dD^{*}(\stX_{\txle},\Lambda) :=  \Ho \iDu{*}(\stX_{\txle},\Lambda), \quad 
 \dD^{*}_{\cstr}(\stX_{\txle},\Lambda) :=  \Ho \iDcu{*}(\stX_{\txle},\Lambda).
\]
These are triangulated categories by Fact \ref{fct:stb:stb},
and the obvious embedding 
$\dD_{\cstr}^*(\stX,\Lambda) \inj \dD^*(\stX,\Lambda)$
is a triangulated functor.

\begin{rmk*}
We have an obvious relation between the derived category
for a derived stack and that for an ordinary algebraic stack.
Let $X$ be an algebraic stack over $k$, and 
$\iota(X)$ be the associated derived stack (Definition \ref{dfn:dSt:St-dSt}).
Then by the construction we find that the category 
$\dD^*(\iota(X)_{\txle},\Lambda)$ 
(resp.\ $\dD^*_{\cstr}(\iota(X)_{\txle},\Lambda)$)
is equivalent to the derived category of complexes of $\Lambda$-modules 
(resp.\ the derived category of complexes of $\Lambda$-modules 
 with constructible cohomology sheaves).
\end{rmk*}

Let us also recall Notation \ref{ntn:LE:(*)}, where we denoted by
\[
 \iDcu{(*)}(\stX_{\txle},\Lambda) \subset 
 \iDc(\stX_{\txle},\Lambda) \quad (* \in \{+,-,b\})
\]
the full sub-$\infty$-category consisting of objects $\shM$
such that for any quasi-compact open immersion $\stU \inj \stX$
the restriction $\rst{\shM}{\stU}$ belongs to 
$\iDc^{*}(\stU_{\txle},\Lambda)$.
The homotopy categories will be denoted by 
$\dD_{\cstr}^{(*)}(\stX_{\txle},\Lambda) := 
 \Ho \iDcu{(*)}(\stX_{\txle},\Lambda)$.

Hereafter we often suppress the symbol $\Lambda$ to denote
$\iDc(\stX_{\txle}) := \iDc(\stX_{\txle},\Lambda)$ and so on.

%%%%%%%%%%%%%%%%%%%%%%%%%%%%%%%%%%%%%%%%%%%%%%%%%%%%%%%%%%%%%%%%%%%%%%%%%%%%%%%%
%%%%%%%%%%%%%%%%%%%%%%%%%%%%%%%%%%%%%%%%%%%%%%%%%%%%%%%%%%%%%%%%%%%%%%%%%%%%%%%%
\subsection{Derived direct image functor}

%Before starting the discussion of the derived direct image functor 
%in the $\infty$-theoretical case, 
Let us recall the derived direct image functor for ordinary algebraic stacks
\cite[4.1]{LO1}.
Let $f: X \to Y$ be a morphism of finite type between algebraic stacks 
locally of finite type over $k$
(actually we can relax the condition on the base scheme).
We denote by $\dD(X)$ the derived category of complexes of $\Lambda$-modules 
on the algebraic stack $X$. 
Then the derived direct image functor $\dR f_*: \dD(X) \to \dD(Y)$
does not map the subcategory $\dD_{\cstr}(X)$ 
of complexes with constructible cohomology sheaves to $\dD_{\cstr}(Y)$.
However, we have $\dR f_*: \dD_{\cstr}^{(+)}(X) \to \dD_{\cstr}^{(+)}(Y)$,
where $\dD_{\cstr}^{(+)}(X) \subset \dD(X)$ is 
the full subcategory defined similarly as Notation \ref{ntn:LE:(*)}.

Now we consider the case of derived stacks.
Recall Proposition \ref{prp:is:df} which gives derived functors between
derived $\infty$-categories for general ringed $\infty$-topoi.
We then have the first half of the next proposition. 
%We have the following analogue of \cite[Proposition 9.9]{O}.

\begin{prp}\label{prp:6op:pf}
Let $f: \stX \to \stY$ be a morphism of geometric derived stacks.
Then we have the direct image functor
\[
 f_*: \iDa(\stX_{\txle}) \longto \iDa(\stY_{\txle}),
 \quad (f_* \shF)(U)= \shF(U \times_{\stY}\stX)
\]
which is a $t$-exact functor of stable $\infty$-categories 
equipped with $t$-structures.
%If moreover $\stX$ and $\stY$ are locally of finite presentation 
%(Definition \ref{dfn:dSt:lfp}) and if $f$ is of finite presentation 
%(Definition \ref{dfn:dSt:fp}), 
If moreover $f$ is quasi-compact (Definition \ref{dfn:dSt:qcpt}),
then $f_*$ restricts to a functor 
\[
 f_*: \iDcbp(\stX_{\txle}) \longto \iDcbp(\stY_{\txle}).
\]
\end{prp}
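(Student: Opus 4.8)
The first assertion is immediate from Proposition \ref{prp:is:df} and the sectionwise formula of Lemma \ref{lem:LE:nonadj}, as the text above indicates, so I concentrate on the second. Fixing $\shM\in\iDcbp(\stX_{\txle})$, by Notation \ref{ntn:LE:(*)} it suffices to show $\rst{(f_*\shM)}{\stU}\in\iDcu{+}(\stU_{\txle})$ for every quasi-compact open immersion $\stU\inj\stY$. The plan is, in three stages: reduce the target to an affine derived scheme of finite presentation over $k$; compute $f_*\shM$ through a smooth presentation of $\stX$ by cohomological descent; and then run the resulting descent spectral sequence, the level-wise input being the finiteness theorem for derived algebraic spaces.

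\emph{Reduction to an affine target.} Base change along a quasi-compact open immersion $\stU\inj\stY$ produces a cartesian square of the same type, so Lemma \ref{lem:is:j_!-bc} gives $\rst{(f_*\shM)}{\stU}\simeq(f_{\stU})_*\bigl(\rst{\shM}{\stX\times_{\stY}\stU}\bigr)$ with $\rst{\shM}{\stX\times_{\stY}\stU}\in\iDcu{+}$; hence I may take $\stY$ quasi-compact and aim for $f_*\shM\in\iDcu{+}(\stY_{\txle})$. This last membership is detected on a smooth presentation $\stY_{\bullet}\to\stY$ by Lemma \ref{lem:LE:cstr}; choosing it with $\stY_0$ a finite coproduct of affine derived schemes of finite presentation over $k$, and using that pullback along the smooth morphism $\stY_0\to\stY$ is the restriction functor and commutes with $f_*$ (the lisse-\'etale smooth base change, established as in \cite{LO1} through the simplicial \'etale picture of \S\ref{ss:LE:spl}), this reduces the claim to: $\stY=Y=\dSpec A$ an affine derived scheme of finite presentation over $k$; $f$ quasi-compact, hence of finite presentation by Definition \ref{dfn:dSt:fp} since $\stX$ and $Y$ are locally of finite presentation over $k$; and $f_*\shM\in\iDcu{+}(Y_{\et})$ for $\shM\in\iDcu{+}(\stX_{\txle})$.

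\emph{Descent and level-wise finiteness.} I would then pick a smooth presentation $p:\stX_{\bullet}\to\stX$ with $\stX_0=\coprod_{i\in I}V_i$, $I$ finite and the $V_i$ affine derived schemes of finite presentation over $k$; then every $\stX_n$ is a quasi-compact, quasi-separated derived algebraic space and $f_n:\stX_n\to Y$ is of finite presentation. By Theorems \ref{thm:LE:pi-ve} and \ref{thm:LE:iDc-stb} and the cohomological descent of \S\ref{sss:is:spr} (Proposition \ref{prp:is:223}), the equivalence $\iDc(\stX_{\txle})\simeq\iDc(\stX_{\bullet,\et})$ together with the sectionwise formula identifies $f_*\shM$ with the totalization $\varprojlim_{[n]\in\CS}(f_n)_*\shM_n$, where $f_n:\stX_n\to Y$ is a morphism of derived algebraic spaces and $\shM_n:=\shM_{\stX_n}\in\iDcu{+}(\stX_{n,\et})$ is the restriction of $\shM$ (constructible and bounded, restriction along $\stX_n\to\stX$ being $t$-exact). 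For each fixed $n$, topological invariance of the small \'etale site identifies $\stX_{n,\et}$ with the \'etale topos of the classical algebraic space $\Trc\stX_n$ and $(f_n)_*$ with $R(\Trc f_n)_*$; as $\Trc f_n:\Trc\stX_n\to\Trc Y$ is a finite-presentation quasi-separated morphism of Noetherian algebraic spaces and $\Lambda$ is torsion of order invertible in $k$, the finiteness theorem for constructible torsion sheaves (\cite{SGA4.5}, \cite{LO1}, cf.\ the truncation pattern of Definition \ref{dfn:dAS:prp}) gives $(f_n)_*\shM_n\in\iDcu{+}(Y_{\et})$, of cohomological amplitude bounded in terms of $\dim\stX_n$.

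\emph{Conclusion and the main difficulty.} The totalization carries a descent spectral sequence whose $E_1$-page is built from the homotopy sheaves of the $(f_s)_*\shM_s$ ($s\ge 0$ the cosimplicial degree) and which abuts to the homotopy sheaves of $f_*\shM$. Since $\shM\in\iDcu{+}$, there is a uniform $c$ with $\pi_i\shM_s=0$ for $i>c$ and all $s$, whence $\pi_i\bigl((f_s)_*\shM_s\bigr)=0$ for $i>c$ by right $t$-exactness of each $(f_s)_*$; therefore only the finitely many levels $s$ with $s\le c-j$ can contribute to $\pi_j(f_*\shM)$, the spectral sequence converges strongly in each degree, and $\pi_j(f_*\shM)$ is a finite iterated extension of subquotients of the constructible sheaves $\pi_{j+s}\bigl((f_s)_*\shM_s\bigr)$, hence constructible (constructible $\Lambda$-sheaves being closed under subquotients and extensions for Noetherian $\Lambda$); since also $\pi_j(f_*\shM)=0$ for $j>c$, we get $f_*\shM\in\iDcu{+}(Y_{\et})$, and unwinding the reductions yields $f_*\shM\in\iDcbp(\stY_{\txle})$. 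I expect the main obstacle to be precisely this convergence step: the relative dimensions $\dim(\stX_n\to Y)$ grow with $n$, so there is no uniform cohomological-dimension bound, and it is the boundedness hypothesis on $\shM$ that confines the contributions in each degree to finitely many simplicial levels --- which is exactly why the statement is limited to $\iDcbp$ and fails on all of $\iDc$; a secondary point is to pin down the lisse-\'etale smooth base change cleanly enough to justify the reduction to an affine target.
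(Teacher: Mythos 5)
Your proposal is correct and follows essentially the same route as the paper: reduce the target to a quasi-compact (derived algebraic space) case via a smooth presentation, resolve $\stX$ by a smooth simplicial presentation, apply the resulting descent spectral sequence, and settle each level by passing to the truncation and invoking the classical finiteness theorem for constructible torsion sheaves. Your explicit discussion of why left-boundedness forces only finitely many simplicial levels to contribute in each degree is exactly the convergence point the paper leaves implicit in its spectral-sequence step.
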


%\begin{lem*}
%Let $f: \stX \to \stY$ be a morphisms of finite type 
%with $\stX$ and $\stY$ derived stacks locally of finite type,
%and $\shM$ be a constructible lisse-\'etale sheaf of $\Lambda$-modules over $\stY$.
%Then $\dR^i f_* \shM$ is a constructible sheaf for any $i \in \bbZ_{\ge 0}$.
%\end{lem*}

Recall Notation \ref{ntn:LE:FU} on the restriction of lisse-\'etale sheaves.
We need the following analogue of \cite[Proposition 9.8]{O}.
The proof is almost the same, and we omit it.
%\begin{NB}The proof is almost the same, and we omit it.\end{NB}

\begin{lem*}
Let $f: \stX \to \stY$ be a morphism of geometric derived stacks,
$\shM \in \iMod_{\Lambda}(\stX_{\txle})$, and $U \in \idAS_{\stY}^{\lis}$.
\begin{enumerate}[nosep]
\item
Assume $f$ is representable.
Then we have an equivalence
\[
 (f_* \shF)_{\stU} \simeq f_{\stU_{\et},*} \shM_{\stX \times_{\stY} \stU}
\]
in $\iDa(\stU_{\et})$.
Here $f_*: \iDa(\stX_{\txle}) \to \iDa(\stY_{\txle})$ 
denotes the derived direct image functor,
and $f_{\stU_{\et}}: (\stX \times_{\stY} \stU)_{\et} \to \stU_{\et}$
is the geometric morphism of $\infty$-topoi induced by 
the projection $\stX \times_{\stY} \stU \to \stU$.

\item
Let $\stX_{\bullet} \to \stX$ be a smooth presentation, and 
$f_{\stU,n}: \stX_n \times_{\stY} \stU \to \stU$ be the morphism induced by $f$.
Assume $\shM$ is cartesian.
Then there is a spectral sequence
\[
 E_1^{p q} = R^q (f_{\stU,n})_* \shM_{\stX_p \times_{\stY} \stU} 
 \Longrightarrow (R^{p+q} f_* \shM)_{\stU}.
\]
\end{enumerate}
\end{lem*}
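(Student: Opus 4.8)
The strategy is to transcribe the proof of \cite[Proposition 9.8]{O} into the present $\infty$-categorical framework. Both assertions will rest on a single base-change statement, which I would phrase as follows: for any morphism $g\colon\stP\to\stY$ of geometric derived stacks and any $(\stU,u)\in\idAS^{\lis}_{\stY}$, writing $g_{\stU}\colon\stP\times_{\stY}\stU\to\stU$ for the base change of $g$ along $u$, there is a natural equivalence $(g^{\txle}_*\shF)_{\stU}\simeq (g_{\stU})^{\txle}_*\bigl(\shF_{\stP\times_{\stY}\stU}\bigr)$ of lisse-\'etale direct images, and hence, passing to the derived functors, an equivalence in the corresponding derived $\infty$-categories. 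Part (1) is the special case $g=f$ together with the observation that, $f$ being representable and $\stU$ a derived algebraic space, $\stX\times_\stY\stU$ is a derived algebraic space and $f_{\stU}$ a morphism of derived algebraic spaces, so that by Lemma \ref{lem:LE:et-le} and Lemma \ref{lem:dAS:et-geom} the lisse-\'etale direct image $(f_{\stU})^{\txle}_*$ coincides with the \'etale direct image $f_{\stU_{\et},*}$ and $\shF_{\stX\times_\stY\stU}$ with the restriction $\shM_{\stX\times_\stY\stU}$ of Notation \ref{ntn:LE:FU}.

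To prove the base-change statement I would argue on sections. By Lemma \ref{lem:LE:et-le} it suffices to treat both sides as functors into $\stU_{\et}$, and an object of $\gsEt(\stU)$ amounts to a $V\to\stU$ in $\gsLE(\stU)$ that is \'etale over $\stU$. For such a $V$ the composite $V\to\stU\to\stY$ is smooth, so $V$ defines an object of $\gsLE(\stY)$, there is a canonical equivalence $V\times_{\stU}(\stP\times_{\stY}\stU)\simeq V\times_{\stY}\stP$, and the projection $\stP\times_{\stY}\stU\to\stP$ is smooth (Fact \ref{fct:morph:Q}), so by Lemma \ref{lem:LE:nonadj} and the remark after it the functor $(-)_{\stP\times_\stY\stU}$ is simply restriction along this smooth morphism. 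Unwinding the pointwise formula $(g^{\txle}_*\shF)(W')=\shF(W'\times_{\stY}\stP)$ then shows that $(g^{\txle}_*\shF)_{\stU}$ and $(g_{\stU})^{\txle}_*\bigl(\shF_{\stP\times_\stY\stU}\bigr)$ both send $V$ to $\shF(V\times_{\stY}\stP)$, compatibly with the transition maps of the sheaves; hence the two functors agree on the abelian categories of lisse-\'etale sheaves, and therefore so do their derived functors. I expect the main obstacle to be the bookkeeping of the three $\infty$-sites at play — $\gsLE(\stY)$, its localization $\rst{\gsLE(\stY)}{\stU}$ (Lemmas \ref{lem:LE:ve}, \ref{lem:LE:a=b}), and $\gsEt(\stU)$ — and the fact that $g^{\txle}_*$ is only the naive pointwise direct image, so that one must check the colimit defining the pullback genuinely collapses to a restriction.

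For part (2), I would fix the smooth presentation $\stX_{\bullet}\to\stX$ (Definition \ref{dfn:LE:pres}) and let $\pi\colon\stX^{\str}_{\bullet,\txle}\to\stX_{\txle}$ be the associated geometric morphism. Since $\shM$ is cartesian, cohomological descent (Theorem \ref{thm:LE:pi-ve}, whose quasi-inverse is induced by $\pi_*$) gives $\shM\simeq\pi_*\pi^*\shM$, with $\pi^*\shM$ corresponding to the cartesian semisimplicial object $[p]\mapsto\shM_{\stX_p}$. By functoriality of the derived direct image, $f^{\txle}_*\shM\simeq(f\pi)^{\txle}_*\pi^*\shM$, and the direct image along the semisimplicial $\infty$-topos is computed by the first-quadrant descent spectral sequence associated with $[p]\mapsto(f_p)^{\txle}_*\shM_{\stX_p}$, where $f_p\colon\stX_p\to\stX\xr{f}\stY$, namely $E_1^{p,q}=R^q(f_p)^{\txle}_*\shM_{\stX_p}\Longrightarrow R^{p+q}f^{\txle}_*\shM$ — convergence is automatic since $\shM$ is a single sheaf. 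Applying the exact functor $(-)_{\stU}$, which commutes with this spectral sequence, and the base-change statement above to each $f_p$ to rewrite $\bigl(R^q(f_p)^{\txle}_*\shM_{\stX_p}\bigr)_{\stU}\simeq R^q(f_{\stU,p})_*\shM_{\stX_p\times_{\stY}\stU}$ — using transitivity of restriction (Notation \ref{ntn:LE:FU}) to identify $(\shM_{\stX_p})_{\stX_p\times_\stY\stU}$ with $\shM_{\stX_p\times_\stY\stU}$ — yields precisely the asserted spectral sequence.
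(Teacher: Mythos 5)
Your proposal is correct and follows the same route the paper intends: the paper omits the proof, deferring to \cite[Proposition 9.8]{O}, and your argument is precisely that proof transcribed into the present setting — the pointwise identification of sections for part (1) and the descent spectral sequence of the (strictly) simplicial \'etale $\infty$-topos combined with levelwise base change for part (2). The only step you elide is the passage from the underived identity to the derived one ("therefore so do their derived functors"): this needs the observation that $(-)_{\stU}=\ve^{\stU}_*\circ j^*$ preserves injectives because both $j^*$ and $\ve^{\stU}_*$ admit exact left adjoints ($j_!$ and $\ve_{\stU}^{-1}$), which is exactly how Olsson closes the same gap.
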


\begin{proof}[{Proof of Proposition \ref{prp:6op:pf}}]
It is enough to show the second half. 
By Lemma \ref{lem:LE:3.8} of the criterion of cartesian property
and by Theorem \ref{thm:LE:pi-ve} of the equivalence 
$\iDa(\stY_{\txle}) \simeq \iDa(\stY_{\bullet,\et})$,
we can take a smooth presentation $\stY_{\bullet} \to \stY$
and replace $\stY$ by a quasi-compact derived algebraic space.
Take now a smooth presentation $\stX_{\bullet} \to \stX$ 
with $\stX_0$ quasi-compact.
Then the spectral sequence in the above Lemma implies that
it is enough to show the result for each morphism $\stX_n \to \stY$.
Thus we may assume that $\stX$ and $\stY$ are 
quasi-compact derived algebraic spaces.
Since Definition \ref{dfn:LE:cstr:L} of constructible sheaves
only depends on the truncated data, 
we can reduce to the non-derived setting,
where the result is shown in \cite[Proposition 9.9]{O}.
\end{proof}

On the homotopy category we denote the derived functor by 
\[
 \dR f_*: \dD^{(+)}_{\cstr}(\stX_{\txle}) \longto \dD^{(+)}_{\cstr}(\stY_{\txle}).
\]

\begin{rmk*}
If we set $\stX=\iota(X)$ and $\stY=\iota(Y)$ 
with $X$ and $Y$ algebraic stacks over $k$,
then we recover the construction in \cite{O,LO1} of the derived direct image 
%are be geometric stacks locally of finite type,
%and $f: \stX \to \stY$ be a morphism of finite type.
%Then we have the functor 
\[
 \dR f_*: \dD^{(+)}_{\cstr}((\iota X)_{\txle},\Lambda) \longto 
          \dD^{(+)}_{\cstr}((\iota Y)_{\txle},\Lambda).
\]
\end{rmk*}

%%%%%%%%%%%%%%%%%%%%%%%%%%%%%%%%%%%%%%%%%%%%%%%%%%%%%%%%%%%%%%%%%%%%%%%%%%%%%%%%
%%%%%%%%%%%%%%%%%%%%%%%%%%%%%%%%%%%%%%%%%%%%%%%%%%%%%%%%%%%%%%%%%%%%%%%%%%%%%%%%
\subsection{Derived inverse image}

We construction the derived inverse image with the help of 
simplicial description of cartesian sheaves.
Let $f: \stX \to \stY$ a morphism of geometric derived stacks 
which are locally of finite presentation.
Take an $n$-atlas $\{Y_i\}_{i \in I}$ of $\stY$ with some $n$,
and denote by $\stY_0= \coprod_{i \in I} Y_i \to \stY$ the natural surjection.
Let us also take an $n$-atlas $\{X_j\}_{j \in J}$ of the fiber product 
$\wt{\stX} := \stX \times_{\stY} \stY_0$ and denote 
$\stX_0 := \coprod_{j \in J} X_j \to \wt{\stX}$.
Thus we have a square $\idSt_k$: %in diagram containing a square in $\idSt_k$:
\[
 \xymatrix{%\stX_0 \ar[r] \ar[rd] & 
  \wt{\stX} \ar[r] \ar[d] & \stX \ar[d]^{f} \\ %&
  \stY_0    \ar[r]        & \stY}
\]
Let $e_{\stX}: \stX_{\bullet} \to \wt{\stX}$ and 
$e_{\stY}: \stY_{\bullet} \to \stY$ be the smooth presentations
with $\stX_0 := \coprod_{j \in J} X_j$ and $\stY_0= \coprod_{i \in I} Y_i$.
The morphism $f: \stX \to \stY$ induces 
a morphism $\stX_0 \to \stY_0$ of derived algebraic spaces,
and it further induces a morphism $f_{\bullet}: \stX_\bullet \to \stY_\bullet$
of simplicial derived algebraic spaces.
Restricting to $\CS^{\str} \subset \CS$, we have the following square
of strictly simplicial derived stacks:
\[
 \xymatrix{
  \stX^{\str}_\bullet  \ar[r]^{e_\stX} \ar[d]_{f_\bullet} & \stX \ar[d]^{f} \\
  \stY^{\str}_\bullet  \ar[r]_{e_\stY}                    & \stY}
\]
The morphism $f_{\bullet}$ induces a geometric morphism 
\[
 f_{\bullet,\et}: \stX^{\str}_{\bullet,\et} \longto \stY^{\str}_{\bullet,\et}
\]
of $\infty$-topoi, which induces a functor
\[
 f_\bullet^*: \iMod_{\cstr}(\stY^{\str}_{\bullet,\et}) \longto 
              \iMod_{\cstr}(\stX^{\str}_{\bullet,\et}).
\]
Here we denoted 
$\iMod_{\cstr}(\stY^{\str}_{\bullet,\et}) := 
 \iMod_{\cstr}(\stY^{\str}_{\bullet,\et},\Lambda)$.
On the other hand, by Proposition \ref{prp:LE:Modc},
we have equivalences 
\[
 r_{\stX}: 
 \iMod_{\cstr}(\stX_{\txle}) \longsimto 
 \iMod_{\cstr}(\stX^{\str}_{\bullet,\et}),
 \quad
 r_{\stY}: 
 \iMod_{\cstr}(\stY_{\txle}) \longsimto 
 \iMod_{\cstr}(\stY^{\str}_{\bullet,\et}).
\]

\begin{dfn*}
We define 
\[
 f^*: \iMod_{\cstr}(\stY_{\txle}) \longto \iMod_{\cstr}(\stX_{\txle})
\]
{}to be the composition $r_{\stX}^{-1} \circ f_\bullet^* \circ r_{\stY}$,
and call it the \emph{inverse image functor}.
\end{dfn*}

The functor $f^*$ is independent of the choice of $n$-atlases 
$\{Y_i\}_{i \in I}$ and $\{X_j\}_{j \in J}$ up to contractible ambiguity.

\begin{lem}
$f^*$ is a left adjoint of the functor 
$f_*: \iMod_{\cstr}(\stX_{\txle}) \to \iMod_{\cstr}(\stY_{\txle})$, and moreover 
$f^*$ is left exact in the sense of Definition \ref{dfn:ic:exact}.
\end{lem}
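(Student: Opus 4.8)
The plan is to establish both claims --- the adjunction $f^* \dashv f_*$ and the left exactness of $f^*$ --- by transporting everything to the strictly simplicial \'etale picture, where the corresponding facts are already available from \S\ref{s:is}. First I would observe that the equivalences $r_{\stX}$ and $r_{\stY}$ of Proposition \ref{prp:LE:Modc} fit into a commutative square with the direct image functors: the functor $f_*: \iMod_{\cstr}(\stX_{\txle}) \to \iMod_{\cstr}(\stY_{\txle})$ corresponds, under $r_{\stX}$ and $r_{\stY}$, to the direct image functor $f_{\bullet,*}: \iMod_{\cstr}(\stX^{\str}_{\bullet,\et}) \to \iMod_{\cstr}(\stY^{\str}_{\bullet,\et})$ induced by the geometric morphism $f_{\bullet,\et}$. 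This compatibility is a matter of unwinding the definition of $f_*$ from Proposition \ref{prp:6op:pf} together with the simplicial description of cartesian sheaves in \S\ref{ss:LE:spl}, using the Lemma after Proposition \ref{prp:6op:pf} (which identifies restrictions of $f_*\shF$ with direct images along the \'etale geometric morphisms $f_{\stU_{\et}}$). I would record this as a preliminary lemma.

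Granting that compatibility, the adjunction follows formally: $f_\bullet^*$ and $f_{\bullet,*}$ are adjoint because they arise from a geometric morphism of $\infty$-topoi (Definition \ref{dfn:is:gm} and the discussion of $f^*, f_*$ on stable modules in \S\ref{sss:is:fun}, applied levelwise and assembled over $\CS^{\str}$), and conjugating an adjunction by the equivalences $r_{\stX}^{-1}, r_{\stY}^{-1}$ preserves adjointness. Concretely, for $\shM \in \iMod_{\cstr}(\stX_{\txle})$ and $\shN \in \iMod_{\cstr}(\stY_{\txle})$ one has
\[
 \Map(f^* \shN, \shM) \simeq \Map(f_\bullet^* r_{\stY}\shN, r_{\stX}\shM)
 \simeq \Map(r_{\stY}\shN, f_{\bullet,*} r_{\stX}\shM) \simeq \Map(\shN, f_* \shM),
\]
the middle equivalence being the simplicial adjunction and the outer two coming from the equivalences $r$. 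One should check naturality in both variables, but that is routine.

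For left exactness, I would again reduce to the simplicial side: since $r_{\stX}^{-1}$ and $r_{\stY}$ are equivalences, hence left exact, it suffices to show $f_\bullet^*: \iMod_{\cstr}(\stY^{\str}_{\bullet,\et}) \to \iMod_{\cstr}(\stX^{\str}_{\bullet,\et})$ preserves finite limits. A morphism in the strictly simplicial $\infty$-topoi is detected levelwise, and limits are computed levelwise, so it is enough that each $f_{\bullet,n}^* = (f_n)_{\et}^*$ is left exact, where $f_n: \stX_n \to \stY_n$ is the induced morphism of derived algebraic spaces. But each $f_n$ is a morphism of derived algebraic spaces, so $(f_n)_{\et}$ is a genuine geometric morphism of $\infty$-topoi by Lemma \ref{lem:dAS:et-geom}, and the inverse image along a geometric morphism is left exact by Definition \ref{dfn:is:gm}; passing to $\Lambda$-modules (tensoring with the pulled-back structure sheaf, as in \S\ref{sss:is:fun}) and restricting to the derived $\infty$-category preserves this, since the $t$-structure is compatible with the symmetric monoidal structure (Fact \ref{fct:is:sm2}). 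The main obstacle will be the preliminary compatibility lemma: one must verify carefully that the ad hoc definition of $f_*$ via $(f_*\shF)(U) = \shF(U \times_{\stY} \stX)$ really does correspond under the descent equivalences $r_{\stX}, r_{\stY}$ to the simplicially-assembled direct image $f_{\bullet,*}$, which involves tracking how the non-geometric functor $f_*$ interacts with smooth base change to the atlas --- this is where the cartesian hypothesis and the Lemma after Proposition \ref{prp:6op:pf} do the real work, and where the analogue in \cite{O} (Proposition 9.8 and its consequences) must be followed closely.
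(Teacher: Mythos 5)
Your proposal is correct and follows essentially the same route as the paper: the paper's own (much terser) proof likewise observes that $f_{\bullet,\et}$ is a genuine geometric morphism on the strictly simplicial \'etale level, so that $f_\bullet^*$ sits in an adjunction with $f_{\bullet,*}$, identifies $f_{\bullet,*}$ with the functor induced by $f_*^{\et}$, and transports the conclusion back through the equivalences $r_{\stX}$, $r_{\stY}$. Your write-up simply makes explicit the compatibility check and the levelwise left-exactness argument that the paper leaves implicit.
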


\begin{proof}
Since on the simplicial level we have a geometric morphism
$f_{\bullet,\et}: \stX^{\str}_{\bullet,\et} \to \stY^{\str}_{\bullet,\et}$,
the induced functor $f_\bullet^*$ actually sits in an adjunction
\[
 f_\bullet^*: \iMod_{\cstr}(\stY^{\str}_{\bullet,\et}) \longto 
              \iMod_{\cstr}(\stX^{\str}_{\bullet,\et}): 
 f_{\bullet,*},
\]
and $f_{\bullet,*}$ is equivalent to the functor induced by
the direct image functor $f^{\et}_*$.
Thus we have the conclusion.
\end{proof}

The same argument works for the derived $\infty$-category.
Namely, the geometric morphism 
$f_{\bullet,\et}: \stX^{\str}_{\bullet,\et} \to \stY^{\str}_{\bullet,\et}$ 
induces an adjunction 
\[
 f_\bullet^*: \iDc(\stY^{\str}_{\bullet,\et}) \adjunc
              \iDc(\stX^{\str}_{\bullet,\et}) :f_{\bullet,*},
\]
and by Theorem \ref{thm:LE:iDc-stb} we have also equivalences 
\[
 r_{\stX}: 
 \iDc(\stX_{\txle}) \longsimto \iDc(\stX^{\str}_{\bullet,\et}),
 \quad
 r_{\stY}: 
 \iDc(\stY_{\txle}) \longsimto \iDc(\stY^{\str}_{\bullet,\et}).
\]

\begin{dfn}\label{dfn:6op:pb}
We define 
\[
 f^*: \iDc(\stY_{\txle}) \longto \iDc(\stX_{\txle})
\]
{}to be the composition $r_{\stX}^{-1} \circ f_\bullet^* \circ r_{\stY}$,
and call it the \emph{derived inverse image functor}.
\end{dfn}

Obviously we have that 
the derived functor $f^*: \iDc(\stY_{\txle}) \to \iDc(\stX_{\txle})$
is a $t$-exact extension of the functor 
$f^*: \iMod_{\cstr}(\stY_{\txle},\Lambda) \to \iMod_{\cstr}(\stX_{\txle},\Lambda)$.
On the homotopy category we denote the derived functor by 
\[
 \dL f^*: \dD_{\cstr}(\stY_{\txle}) \longto \dD_{\cstr}(\stX_{\txle}).
\]

\begin{rmk*}
If we set $\stX=\iota(X)$ and $\stY=\iota(Y)$ 
with $X$ and $Y$ algebraic stacks over $k$,
then we recover the construction in \cite{O,LO1} of the derived inverse image 
\[
 \dL f_*: \dD_{\cstr}((\iota X)_{\txle},\Lambda) \longto 
          \dD_{\cstr}((\iota Y)_{\txle},\Lambda).
\]
\end{rmk*}

For later use, we record

\begin{lem}\label{lem:6op:fOm}
Let $\stX$ and $\stY$ be geometric derived stacks locally of finite presentation,
and $f: \stX \to \stY$ be a smooth morphism of relative dimension $d$.
Then $f^*\Omega_{\stY} \simeq \Omega_{\stX}\tsh{-d}$
\end{lem}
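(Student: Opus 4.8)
The plan is to reduce the statement to the corresponding fact for derived algebraic spaces, namely Lemma \ref{lem:dAS:3.1.2}, by working on the lisse-\'etale $\infty$-site and exploiting the characterization of the dualizing object in Definition \ref{dfn:LE:dual}. Recall from that definition that $\Omega_{\stY}$ is uniquely determined up to contractible ambiguity by the property that $\rst{\Omega_{\stY}}{\stV} \simeq \ve_{\stV}^* K_B$ for every $(B: \stV \to \stY) \in \idAS_{\stY}^{\lis}$, where $K_B = \Omega_{\beta}\tsh{-d_B}$ with $\beta$ the structure morphism $\stV \to \dSpec k$ and $d_B$ the relative dimension of $B$. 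So it suffices to check that the restriction $\rst{(f^* \Omega_{\stY})}{\stU}$ of the inverse image agrees with $\ve_{\stU}^*\bigl(K_A\tsh{-d}\bigr)$ for every object $(A: \stU \to \stX) \in \idAS_{\stX}^{\lis}$, since the right-hand side is precisely $\rst{\Omega_{\stX}\tsh{-d}}{\stU}$ after unwinding the twist.

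First I would set up the compatibility of $f^*$ with restrictions. Given $(A: \stU \to \stX)$ in $\idAS_{\stX}^{\lis}$, the composite $f \circ A : \stU \to \stY$ is smooth (composition of smooth morphisms, Fact \ref{fct:dSt:geom}), so $(\stU, f\circ A)$ is an object of $\idAS_{\stY}^{\lis}$, and its relative dimension over $\stY$ differs from that over $\stX$ exactly by $d$, i.e.\ $d_{f\circ A} = d_A + d$ (using that $d$ is locally constant and additivity of relative dimension for smooth morphisms, which follows from Definition \ref{dfn:dSt:reldim} and the scheme-theoretic statement). The derived inverse image functor $f^*$ of Definition \ref{dfn:6op:pb} is built from the geometric morphism $f_{\bullet,\et}$ on the simplicial \'etale $\infty$-topoi together with the equivalences $r_{\stX}, r_{\stY}$ of Theorem \ref{thm:LE:iDc-stb}; chasing an object through these equivalences, the restriction $\rst{(f^*\shM)}{\stU}$ is computed by pulling back $\rst{\shM}{(\stU, f\circ A)}$ along the identity-on-$\stU$ morphism in the lisse-\'etale site, hence $\rst{(f^*\shM)}{\stU} \simeq \rst{\shM}{(\stU, f\circ A)}$ as objects of $\iDa(\stU_{\et},\Lambda)$ (both regarded via $\ve_{\stU}$). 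Applying this with $\shM = \Omega_{\stY}$ and using Definition \ref{dfn:LE:dual} for $\stY$ gives $\rst{(f^*\Omega_{\stY})}{\stU} \simeq \ve_{\stU}^* K_{f\circ A} = \ve_{\stU}^*\bigl(\Omega_{\mu}\tsh{-d_A-d}\bigr)$, where $\mu: \stU \to \dSpec k$ is the structure morphism. On the other hand $\rst{\Omega_{\stX}}{\stU} \simeq \ve_{\stU}^* K_A = \ve_{\stU}^*\bigl(\Omega_{\mu}\tsh{-d_A}\bigr)$, so $\rst{(f^*\Omega_{\stY})}{\stU} \simeq \ve_{\stU}^*\bigl(K_A\tsh{-d}\bigr) \simeq \rst{(\Omega_{\stX}\tsh{-d})}{\stU}$.

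Finally I would assemble these local equivalences into a global one. Since the restrictions $\rst{(f^*\Omega_{\stY})}{\stU}$ and $\rst{(\Omega_{\stX}\tsh{-d})}{\stU}$ agree compatibly as $(A:\stU\to\stX)$ ranges over $\idAS_{\stX}^{\lis}$ — the compatibility with transition morphisms in the site follows from functoriality of $K_{(-)}$ (Lemma \ref{lem:LE:3.2.1}) and of the inverse image — the uniqueness part of the gluing construction underlying Definition \ref{dfn:LE:dual} (that is, Lemma \ref{lem:6op:341} (2) guaranteeing the vanishing $\shExt^i_{\Lambda}(\kappa_A,\kappa_A)=0$ for $i<0$, together with Fact \ref{fct:is:glue}) forces $f^*\Omega_{\stY} \simeq \Omega_{\stX}\tsh{-d}$ in $\iDa(\stX_{\txle},\Lambda)$. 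The main obstacle is the bookkeeping in the middle step: verifying that the abstract definition of $f^*$ via simplicial presentations really does compute the naive lisse-\'etale restriction, i.e.\ that the equivalences $r_{\stX}, r_{\stY}$ intertwine $f^*_{\bullet}$ with the fibrewise pullbacks; this is where one must be careful that the chosen atlases for $\stX$ and $\stY$ are compatible with $f$, exactly as arranged in the construction preceding Definition \ref{dfn:6op:pb}.
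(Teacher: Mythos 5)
Your proposal is correct and follows essentially the same route as the paper: identify the restriction of $f^*\Omega_{\stY}$ to a smooth atlas (resp.\ to each object of $\idAS_{\stX}^{\lis}$) with $\ve_{\stU}^*K_A\tsh{-d}$ via the additivity of relative dimension, which is exactly the content of Lemma \ref{lem:dAS:3.1.2} at the level of derived algebraic spaces, and then conclude by the uniqueness part of the gluing argument (Lemma \ref{lem:is:234}) using the vanishing of negative self-$\shExt$'s. The only cosmetic difference is that the paper verifies the agreement on a single presentation $\stX_0 \to \stX$ chosen compatibly with one for $\stY$, whereas you verify it on every object of the lisse-\'etale site; both hinge on the same compatibility of $f^*$ with restriction and the same gluing lemma.
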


\begin{proof}
Take a smooth presentation $\stY_{\bullet} \to \stY$
and consider $\wt{\stX} := \stX \times_{\stY} \stY_0$.
Then we have a diagram 
\[
 \xymatrix{
  \stX_0 \ar[r] \ar[rd] & \wt{\stX} \ar[r] \ar[d] & \stX \ar[d]^{f} \\ 
                        & \stY_0    \ar[r]        & \stY}
\]
of derived stacks.
By the assumption on $f$, the morphism $\stX_0 \to \stY_0$ is smooth.
Consider the restrictions of $f^*\Omega_{\stY}$ and $\Omega_{\stX}\tsh{-d}$ to
$\rst{\stX_{\txle}}{\stX_0}$ 
coincide and have zero negative $\shExt$'s.
Then Lemma \ref{lem:is:234} implies the consequence.
\end{proof}

%%%%%%%%%%%%%%%%%%%%%%%%%%%%%%%%%%%%%%%%%%%%%%%%%%%%%%%%%%%%%%%%%%%%%%%%%%%%%%%%
%%%%%%%%%%%%%%%%%%%%%%%%%%%%%%%%%%%%%%%%%%%%%%%%%%%%%%%%%%%%%%%%%%%%%%%%%%%%%%%%
\subsection{Derived internal Hom functor}

%Let $\Lambda$ be a commutative ring. 
%The $\infty$-category $\isMod(\Lambda)$ of simplicial $\Lambda$-modules
%has the bifunctor
%\[
% \isMod(\Lambda)^{\op} \times \isMod(\Lambda) \longto \isMod(\Lambda), \quad
% (M,N) \longmapsto \HomR{\isMod(\Lambda)}(M,N),
%\]
%where $\HomR{\isMod(\Lambda)}(M,N)$ is the simplicial set 
%introduced in Definition \ref{dfn:ic:HomR}.
%In the homotopy category $\topH$ of spaces 
%we have the isomorphism
%\[
% [\HomR{\isMod(\Lambda)}(M,N)] \simeq \Map_{\isMod(\Lambda)}(M,N).
%\]
%It induces the bifunctor on
%the $\infty$-category $\iSp(\Lambda)$ of stable $\Lambda$-modules:
%\[
% \iSp(\Lambda)^{\op} \times \iSp(\Lambda) \longto \iSp(\Lambda), \quad
% (M,N) \longmapsto \HomR{\iSp(\Lambda)}(M,N).
%\]
%Let $\stX$ be a geometric derived stack. %locally of finite presentation.
%The above bifunctor is compatible with the cartesian conditions, so that 
%on the $\infty$-category $\iMod^{\cart}(\stX_{\txle},\Lambda)$ 
%we have the bifunctor 
%\[
% \iMod^{\cart}(\stX_{\txle},\Lambda)^{\op} \times 
% \iMod^{\cart}(\stX_{\txle},\Lambda) \longto \iMod^{\cart}(\stX_{\txle},\Lambda).
%\]
%Abusing the notation, we denote this bifunctor by $\Map_{\stX_{\txle}}$.

Recall that in \S \ref{ss:is:sm}
we discussed the internal Hom functor 
\begin{align*}
 \shHom_{\shA}(-,-): 
 \iMod^{\stab}_{\shA}(\tT)^{\op} \times \iMod^{\stab}_{\shA}(\tT) 
 \longto \iMod^{\stab}_{\shA}(\tT)
\end{align*}
for a ringed $\infty$-topos $(\tT,\shA)$.
Applying it to $(\tT,\shA)=(\stX_{\txle},\Lambda)$, we have

\begin{ntn*}
For a geometric derived stack $\stX$.
we denote the internal Hom functor on $\iDa(\stX_{\txle})$ by
\[
 \shHom(-,-): 
 \iDa(\stX_{\txle})^{\op} \times \iD(\stX_{\txle}) \longto \iDa(\stX_{\txle}).
\]
We also denote it by $\shHom_{\stX_{\txle}}$ to 
emphasize the dependence on $\stX_{\txle}$.
The associated functor on the homotopy category is denoted by 
\[
 \dRHom(-,-): 
 \dD(\stX_{\txle})^{\op} \times \dD(\stX_{\txle}) \longto \dD(\stX_{\txle}).
\]
\end{ntn*}

As for the constructible objects, we have 

\begin{lem*}
Let $\stX$ be a geometric derived stack locally of finite presentation.
Then for $\shM \in \iDcbm(\stX_{\txle})$ and 
$\shN \in \iDcbp(\stX_{\txle})$, we have 
$\shHom(\shM, \shN) \in \iDcbp(\stX_{\txle})$.
\end{lem*}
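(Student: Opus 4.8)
The plan is to reduce the statement to the corresponding fact over a derived algebraic space, where it follows from the finiteness theorem in \cite{SGA4.5} exactly as in the non-derived case \cite[Proposition 9.10]{O}. First I would recall the simplicial description: by Theorem \ref{thm:LE:iDc-stb} we may choose a smooth presentation $\stX_\bullet \to \stX$ and work with the equivalence $\iDc(\stX_{\txle},\Lambda) \simeq \iDc(\stX^{\str}_{\bullet,\et},\Lambda)$. Since $\shHom$ is a local construction, compatible with restriction to the localized $\infty$-topos $\rst{\stX_{\txle}}{\stU}$ for $(A:\stU\to\stX)\in\idAS^{\lis}_{\stX}$ (cf.\ Lemma \ref{lem:6op:341} (1) and Fact \ref{fct:LE:3.5.3}), it suffices to check that $\shHom(\shM,\shN)$ has constructible homotopy sheaves and is left bounded in the sense of Notation \ref{ntn:LE:(*)} after restricting to each $\stU_{\et}$; by Lemma \ref{lem:LE:cstr} and Proposition \ref{prp:LE:Modc} the constructibility can be tested on the $0$-th term $\stX_0$ of a smooth presentation.

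Next I would treat the case of a derived algebraic space $\stU$. Here constructibility of $\iDu{*}(\stU_{\et},\Lambda)$ depends only on the truncation $\Trc\stU$ (Definition \ref{dfn:LE:cstr:dAS} and the remark preceding it), so via the equivalence $\stU_{\et}\simeq\stU_{\et^{\aff}}$ (Lemma \ref{lem:dAS:small-et}) and the identification of $\iDa(\stU_{\et},\Lambda)$ with the derived $\infty$-category of $\Lambda$-modules on the ordinary \'etale site of $\Trc\stU$, the statement becomes the classical assertion that for a scheme (or algebraic space) of finite type over $k$, the functor $\dRHom_\Lambda(-,-)$ sends $\dD^{(-)}_{\cstr}\times\dD^{(+)}_{\cstr}$ to $\dD^{(+)}_{\cstr}$. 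This is \cite[Th.\ finitude, Th\'eor\`eme 4.3]{SGA4.5} combined with \cite[Proposition 9.10]{O}; the hypotheses on $\Lambda$ being a Gorenstein local ring of dimension $0$ with $\ell$ invertible in $k$ are what guarantee finite $\shExt$-dimension and hence the left-boundedness of the output.

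Finally I would assemble the pieces. Given $\shM\in\iDcbm(\stX_{\txle})$ and $\shN\in\iDcbp(\stX_{\txle})$, pick a smooth presentation and set $\shM_n,\shN_n$ to be the restrictions to $\stX_{n,\et}$; these lie in $\iDcbm(\stX_{n,\et})$ and $\iDcbp(\stX_{n,\et})$ respectively. By the derived-algebraic-space case, $\shHom_\Lambda(\shM_n,\shN_n)\in\iDcbp(\stX_{n,\et})$, and by the compatibility of internal Hom with the pullback functors $\delta^*$ along the simplicial maps (which are smooth, hence flat, so $\delta^*$ commutes with $\shHom$) these glue to a cartesian object $\shHom(\shM,\shN)_\bullet\in\iDu{\cart}(\stX^{\str}_{\bullet,\et},\Lambda)$ whose homotopy sheaves are constructible; transporting back through $\pi^*$ and $\ve^*$ gives $\shHom(\shM,\shN)\in\iDcbp(\stX_{\txle})$. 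The main obstacle I anticipate is the compatibility of $\shHom$ with pullback along smooth morphisms at the level of the derived $\infty$-categories — i.e.\ the base-change equivalence $\delta^*\shHom_\Lambda(\shM_m,\shN_m)\simeq\shHom_\Lambda(\delta^*\shM_m,\delta^*\shN_m)$ — which in the non-derived setting is the content of a smooth base change argument; here one must argue it using that $\delta$ is smooth and that constructible (in particular perfect-to-constructible) complexes are dualizable enough for the projection formula to apply, invoking Fact \ref{fct:LE:3.5.2} in the guise needed for the relative situation.
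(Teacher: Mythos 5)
Your overall strategy is the same as the paper's (both follow \cite[4.2.1.\ Lemma, 4.2.2.\ Corollary]{LO1}): restrict to the members of the lisse-\'etale site, invoke the classical finiteness theorem of \cite{SGA4.5} over (derived) algebraic spaces, and glue back via the cartesian property. The difference lies in how the crucial compatibility step is organized, and it matters. The paper \emph{first} proves that for any $(A:\stU\to\stX)\in\idAS^{\lis}_{\stX}$ one has a functorial equivalence $\rst{\shHom_{\stX_{\txle}}(\shM,\shN)}{\stU_{\et}}\simeq\shHom_{\stU_{\et}}(\shM_{\stU},\shN_{\stU})$; this is obtained from the chain of unit/counit equivalences for the geometric morphism $\ve_{\stU}:\rst{\stX_{\txle}}{\stU}\to\stU_{\et}$, using that $\ve_{\stU}^*\ve^{\stU}_*\shM\to\shM$ and $\ve_{\stU}^*\ve^{\stU}_*\shN\to\shN$ are equivalences because $\shM,\shN$ have constructible (hence cartesian) homotopy groups and Proposition \ref{prp:is:223} applies. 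Once this is in hand, the smooth base change for $\shHom$ along a morphism $f:\stV\to\stU$ over $\stX$ is \emph{formal}: both $f^*\shHom_{\stU_{\et}}(\shM_{\stU},\shN_{\stU})$ and $\shHom_{\stV_{\et}}(\shM_{\stV},\shN_{\stV})$ are identified with $B^*\shHom_{\stX_{\txle}}(\shM,\shN)$, so nothing further needs to be proved. You run the argument in the opposite order: you want to establish $\delta^*\shHom_{\Lambda}(\shM_m,\shN_m)\simeq\shHom_{\Lambda}(\delta^*\shM_m,\delta^*\shN_m)$ on the \'etale sites first and then glue, and you flag this as the main obstacle, to be handled by ``dualizability'' and a projection formula. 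That step is true classically for smooth $\delta$ and constructible first argument, but it is not a formal consequence of flatness ($f^*$ does not commute with internal Hom in general), and the sketch you give of how to prove it is the vaguest part of the proposal — Fact \ref{fct:LE:3.5.2} only covers the case where the second argument is the dualizing object. So your route is viable but pushes the real work into exactly the step you leave open, whereas the paper's ordering makes that step disappear. I would recommend restructuring along the paper's lines: prove the restriction compatibility for the \emph{global} lisse-\'etale internal Hom once, and deduce both the cartesian property and the smooth base change from it.

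One smaller point: your citation of Lemma \ref{lem:6op:341}~(1) for the claim that ``$\shHom$ is a local construction'' is in the right spirit but not literally the statement needed — that lemma concerns objects of the form $\ve_{\stU}^*\shM$ with $\shM\in\iDa(\stU_{\et})$, whereas here one needs the analogous statement for restrictions of objects of $\iDc(\stX_{\txle})$, which is precisely what the unit/counit argument in the paper supplies.
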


\begin{proof}
We follow the argument in \cite[4.2.1. Lemma, 4.2.2. Corollary]{LO1}.
For $\shM,\shN \in \iDc(\stX_{\txle})$
and $\stU \in \idAS_{\stX}^{\lis}$, we have a functorial equivalence
\[
 \rst{\shHom_{\stX_{\txle}}(\shM,\shN)}{\stU_{\et}}
 \simeq \shHom_{\stU_{\et}}(\shM_\stU,\shN_\stU).
\]
Indeed, for the geometric morphism
$\ve: \rst{\stX_{\txle}}{\stU} \to \stU_{\et}$
in Lemma \ref{lem:LE:ve}, the counit transformation 
$\ve^* \ve_* \shM \to \shM$ and $\ve^* \ve_* \shN \to \shN$ are equivalences
in $\iDa(\rst{\stX_{\txle}}{\stU})$
since $\shM$ and $\shN$ have constructible homotopy groups
and by by Proposition \ref{prp:is:223}.
On the other hand, the unit transformation $\id \to \ve_* \ve^*$ is an equivalence,
so we have 
\begin{align*}
 \shHom_{\rst{\stX_{\txle}}{\stU}}(\shM,\shN) & \simeq 
 \shHom_{\rst{\stX_{\txle}}{\stU}}(\ve^* \ve_* \shM,\ve^* \ve_* \shN) \simeq 
 \shHom_{\stU_{\et}}(\ve^* \ve_* \shM,\ve^* \ve_* \shN) \\ & \simeq 
 \shHom_{\stU_{\et}}(\ve_* \shM,\ve_* \ve^* \ve_* \shN) \simeq 
 \shHom_{\stU_{\et}}(\ve_* \shM,\ve_* \shN) \simeq
 \shHom_{\stU_{\et}}(\shM_{\stU},\shN_{\stU}).
\end{align*}
Now consider the following triangle 
with $f$ a morphism in $\idAS_{\stX}^{\lis,\fp}$.
\[
 \xymatrix{ \stV \ar[rr]^{f} \ar[rd]_B & & \stU  \ar[ld]^A \\ & \stX}
\]
Recall the functor $f^*: \iDc(\stU_{\et}) \to \iDc(\stV_{\et})$
(Definition \ref{dfn:6op:pb}).
Then for $\shM \in \iDcbm(\stX_{\txle})$ and $\shN \in \iDcbp(\stX_{\txle})$, 
we have $\shM_{\stU} \in \iDcm(\stU_{\et})$, $\shM_{\stV} \in \iDcm(\stV_{\et})$,
$\shN_{\stU} \in \iDcp(\stU_{\et})$ and $\shN_{\stV} \in \iDcp(\stV_{\et})$.
We also have a morphism 
\[
 f_* \! \shHom_{\stU_{\et}}(\shM_{\stU},\shN_{\stU}) \longto 
        \shHom_{\stV_{\et}}(f^* \shM, f^* \shN) \simeq
        \shHom_{\stV_{\et}}(\shM_{\stV}, \shN_{\stV}).
\]
The consequence holds if we show this morphism is an equivalence.
But we have
\[
     f^* \! \shHom_{\stU_{\et}}(\shM_{\stU},\shN_{\stU}) \simeq
 f^* A^* \! \shHom_{\stX_{\txle}}(\shM,\shN) \simeq
     B^* \! \shHom_{\stX_{\txle}}(\shM,\shN) \simeq
            \shHom_{\stV_{\et}}(\shM_{\stV}, \shN_{\stV}).
\]
\end{proof}

Let $\stX_\bullet \to \stX$ be a smooth presentation of 
a geometric derived stack $\stX$, %locally of finite presentation,
and $\rst{\stX_{\txle}}{\stX^{\str}_\bullet}$ be the strictly simplicial
$\infty$-topos obtained by restricting the lisse-\'etale $\infty$-topos to 
the strictly simplicial derived algebraic space $\stX^{\str}_\bullet$.
We have the associated geometric morphism denote by
$p: \rst{\stX_{\txle}}{\stX^{\str}_\bullet} \to \stX_{\txle}$.
%\begin{NB}
%the geometric morphism of $\infty$-topoi associated to the localization.
%\end{NB}
Then we have 

\begin{lem*}
For $\shM,\shN \in \iDc(\stX_{\txle})$ 
we have a functorial equivalence
\[
 p^* \! \shHom_{\stX_{\txle}}(\shM,\shN) \simeq
 \shHom_{\rst{\stX_{\txle}}{\stX^\str_\bullet}}(p^*\shM,p^*\shN)
\]
\end{lem*}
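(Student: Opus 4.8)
The plan is to reduce this "compatibility of internal Hom with the simplicial pullback $p^*$" statement to the local statement already proved on each $\rst{\stX_{\txle}}{\stU}$. Recall from the previous lemma in this subsection (the one computing $\rst{\shHom_{\stX_{\txle}}(\shM,\shN)}{\stU_{\et}} \simeq \shHom_{\stU_{\et}}(\shM_{\stU},\shN_{\stU})$ for $(A\colon\stU\to\stX)\in\idAS_{\stX}^{\lis,\fp}$) that the key input is: for the geometric morphism $\ve\colon \rst{\stX_{\txle}}{\stU}\to\stU_{\et}$ of Lemma \ref{lem:LE:ve}, the counit $\ve^*\ve_*\shM\to\shM$ is an equivalence on objects with constructible homotopy groups (via Proposition \ref{prp:is:223}), and the unit $\id\to\ve_*\ve^*$ is always an equivalence. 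First I would unwind the definition of the strictly simplicial $\infty$-topos $\rst{\stX_{\txle}}{\stX^{\str}_\bullet}$ and the geometric morphism $p$: an object of $\iDc(\rst{\stX_{\txle}}{\stX^{\str}_\bullet})$ is a compatible collection of objects over each $\rst{\stX_{\txle}}{\stX_n}$ together with the structure equivalences for $\delta\colon[m]\to[n]$ in $\CS^{\str}$, and $p^*$ sends $\shM$ to the collection $\{(\text{restriction of }\shM\text{ to }\rst{\stX_{\txle}}{\stX_n})\}_n$.

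Next I would argue levelwise. For each $n$, write $A_n\colon\stX_n\to\stX$ for the smooth structure morphism of the presentation (so $(\stX_n,A_n)\in\idAS_{\stX}^{\lis}$; since $\stX$ is assumed locally of finite presentation we may arrange $A_n$ finitely presented as well, landing in $\idAS_{\stX}^{\lis,\fp}$). The $n$-th component of $p^*\shHom_{\stX_{\txle}}(\shM,\shN)$ is, by definition of the localized $\infty$-topos and the restriction functor, the restriction of $\shHom_{\stX_{\txle}}(\shM,\shN)$ to $\rst{\stX_{\txle}}{\stX_n}$. Using the equivalence $\ve_{\stX_n}\colon \rst{\stX_{\txle}}{\stX_n}\to (\stX_n)_{\et}$ of Lemma \ref{lem:LE:ve} and the already-established formula, this equals $\ve_{\stX_n}^*\shHom_{(\stX_n)_{\et}}((\shM)_{\stX_n},(\shN)_{\stX_n})$. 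On the other hand the $n$-th component of $\shHom_{\rst{\stX_{\txle}}{\stX^\str_\bullet}}(p^*\shM,p^*\shN)$ is $\shHom_{\rst{\stX_{\txle}}{\stX_n}}((p^*\shM)_n,(p^*\shN)_n)$, and since the internal Hom functor is defined topos-locally (it is a right adjoint to $-\otimes$, characterized by the morphism-object property of \cite[Definition 4.2.1.28]{Lur2}), this again reduces — via $\ve_{\stX_n}$ and the counit/unit argument from the preceding lemma — to $\ve_{\stX_n}^*\shHom_{(\stX_n)_{\et}}((\shM)_{\stX_n},(\shN)_{\stX_n})$. So the two objects agree levelwise.

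To upgrade levelwise agreement to an equivalence of objects over $\rst{\stX_{\txle}}{\stX^\str_\bullet}$, I would check compatibility with the transition equivalences for each $\delta\colon[m]\to[n]$ in $\CS^{\str}$. Both sides carry such structure maps coming from the face maps $\sigma\colon\stX_n\to\stX_m$ (smooth, since morphisms of a smooth presentation are smooth), and on the $\shHom$ side these are exactly the base-change maps $\sigma^*\shHom_{(\stX_m)_{\et}}(-,-)\to\shHom_{(\stX_n)_{\et}}(\sigma^*-,\sigma^*-)$, which are equivalences on constructible complexes by the last displayed computation in the proof of the preceding lemma (the identity $f^*\shHom_{\stU_\et}(\shM_\stU,\shN_\stU)\simeq f^*A^*\shHom_{\stX_\txle}(\shM,\shN)\simeq B^*\shHom_{\stX_\txle}(\shM,\shN)\simeq\shHom_{\stV_\et}(\shM_\stV,\shN_\stV)$, applied with $A=A_m$, $B=A_n$, $f=\sigma$). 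Hence the equivalence assembles into a morphism of cartesian sections, which is therefore an equivalence by the simplicial descent of Theorem \ref{thm:LE:pi-ve}. The functoriality in $\shM,\shN$ is automatic since every step is natural. The main obstacle I expect is purely bookkeeping: carefully matching the transition data on the two sides (sign/ordering conventions for $\CS^{\str}$, and the identification of the abstract internal Hom on the simplicial $\infty$-topos with the levelwise internal Homs), rather than any genuinely new geometric input — all the serious content is already in Lemma \ref{lem:LE:ve}, the preceding $\shHom$-restriction lemma, Proposition \ref{prp:is:223} and Theorem \ref{thm:LE:pi-ve}.
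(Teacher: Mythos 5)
Your proposal has a genuine gap at its central step. You assert that the $n$-th component of $\shHom_{\rst{\stX_{\txle}}{\stX^{\str}_\bullet}}(p^*\shM,p^*\shN)$ ``is'' $\shHom_{\rst{\stX_{\txle}}{\stX_n}}((p^*\shM)_n,(p^*\shN)_n)$ because the internal Hom is ``defined topos-locally,'' and you file the remaining work under bookkeeping. But this identification is precisely the content of the lemma, not bookkeeping: the internal Hom on the total strictly simplicial $\infty$-topos is a right adjoint computed globally, and the evaluation functors $e_n^*$ at the individual levels do not commute with right adjoints in general. There is only a natural comparison morphism $e_n^*\,\shHom_{T_\bullet}(-,-)\to\shHom_{T_n}(e_n^*-,e_n^*-)$, and proving it is an equivalence is the substance of the statement. (The left-hand side of the lemma is unproblematic: $p_n^*$ is restriction along a localization, has an exact left adjoint, preserves objects with injective homotopy groups, and hence commutes with the derived internal Hom.) The preceding lemma's computation of $\rst{\shHom_{\stX_{\txle}}(\shM,\shN)}{\stU_{\et}}$ compares the lisse-\'etale Hom with the \'etale Hom on a single $\stU$ via $\ve$ and Proposition \ref{prp:is:223}; it says nothing about the Hom formed inside the simplicial $\infty$-topos, so invoking it for the right-hand side begs the question.

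The paper's proof, following \cite[4.2.4. Lemma]{LO1}, supplies exactly the missing input. One replaces $\shN$ by an equivalent $\shI$ with injective homotopy groups, so that $p^*\shHom(\shM,\shI)\simeq\shHom(p^*\shM,p^*\shI)$ holds because the underived Hom into such objects commutes with the localization restrictions; one then resolves $p^*\shI\to\shJ_\bullet$ inside the simplicial $\infty$-topos to compute the right-hand side, which at the same time produces a globally defined comparison morphism $p^*\shHom(\shM,\shN)\to\shHom(p^*\shM,p^*\shN)$; finally one checks levelwise that $\shHom(p_n^*\shM,p_n^*\shI)\to\shHom(p_n^*\shM,\shJ_n)$ is an equivalence, using that $p_n^*\shI$ still has injective homotopy groups on each $\rst{\stX_{\txle}}{\stX_n}$. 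Note also that your concluding appeal to Theorem \ref{thm:LE:pi-ve} does not apply as stated: for unbounded $\shM,\shN\in\iDc(\stX_{\txle})$ the internal Hom need not be constructible or cartesian, and descent in any case presupposes a globally defined morphism, which levelwise identifications alone do not furnish. What is true and what the paper actually uses is the weaker fact that a morphism in the derived $\infty$-category of the simplicial topos is an equivalence iff it is so at each level --- but you must first construct the morphism, and that is what the resolution argument is for.
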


\begin{proof}
We follow the argument in \cite[4.2.4. Lemma]{LO1}.
We can take an equivalence $\shN \to \shI$ in $\iDa(\stX_{\txle})$
with $\shI$ having injective homotopy groups.
Then we have
\[
 p^* \! \shHom_{\stX_{\txle}}(\shM,\shN) \simeq
 p^* \! \shHom_{\stX_{\txle}}(\shM,\shI) \simeq
 %\begin{NB}\end{NB}
 \shHom_{\rst{\stX_{\txle}}{\stX^\str_\bullet}}(p^* \shM,p^*\shI).
\]
We can also take an equivalence $p^* \shI \to \shJ_\bullet$ 
in $\iDa(\rst{\stX_{\txle}}{\stX^\str_\bullet})$,
where $\shJ_\bullet$ has injective homotopy groups.
This equivalence induces a morphism
\[
 \shHom_{\rst{\stX_{\txle}}{\stX^\str_\bullet}}(p^* \shM,p^*\shI)
 \to \shHom_{\rst{\stX_{\txle}}{\stX^\str_\bullet}}(p^* \shM,\shJ_\bullet).
\]
On the other hand, we have 
$\shHom_{\rst{\stX_{\txle}}{\stX^\str_\bullet}}(p^* \shM,p^*\shN)
 \simto \shHom_{\rst{\stX_{\txle}}{\stX^\str_\bullet}}(p^* \shM,\shJ_\bullet)$.
Composing these morphisms, we have
$p^* \! \shHom_{\stX_{\txle}}(\shM,\shN) \to
 \shHom_{\rst{\stX_{\txle}}{\stX_\bullet}}(p^*\shM,p^*\shN)$.
It is enough to show that this morphism is an equivalence.
For that, it suffices to show that each morphism
$\shHom_{\rst{\stX_{\txle}}{\stX^\str_n}}(p_n^* \shM,p_n^*\shI)
\to \shHom_{\rst{\stX_{\txle}}{\stX_n}}(p_n^* \shM,\shJ_n)$
is an equivalence in $\iDa(\rst{\stX_{\txle}}{\stX_n})$,
where $p_n: \rst{\stX_{\txle}}{\stX_n} \to \stX_{\txle}$
is the geometric morphism associated to the localization.
The last claim can be checked by routine
(see also the last part of the proof of \cite[4.2.4. Lemma]{LO1}).
\end{proof}

Using this Lemma, 
we can show the following restatement of \cite[4.2.3. Proposition]{LO1}.

\begin{lem}\label{lem:6op:423}
Let $\stX_\bullet \to \stX$ be a smooth presentation of a geometric stack $\stX$,
We denote by $\shM_{\et},\shN_{\et} \in \iDc(\stX^{\str}_{\bullet,\et})$
the restrictions of $\shM,\shN$ to 
the \'etale $\infty$-topos of $\stX^{\str}_{\bullet}$.
Then we have a functorial equivalence
\[
 \rst{\shHom_{\stX_{\txle}}}{(\shM,\shN)}{\stX^{\str}_{\bullet,\et}}
 \longsimto
 \shHom_{\stX^{\str}_{\bullet,\et}}(\shM_{\et},\shN_{\et}).
\]
\end{lem}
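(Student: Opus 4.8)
The plan is to combine the two lemmas just proved with the descent machinery of Section~\ref{sss:is:spr}. First I would recall that the restriction of a lisse-\'etale sheaf to the strictly simplicial \'etale $\infty$-topos $\stX^{\str}_{\bullet,\et}$ factors as the composition of the restriction to $\rst{\stX_{\txle}}{\stX^{\str}_{\bullet}}$ along $p: \rst{\stX_{\txle}}{\stX^{\str}_\bullet} \to \stX_{\txle}$ with the equivalence $\ve: \rst{\stX_{\txle}}{\stX^{\str}_{\bullet}} \simto \stX^{\str}_{\bullet,\et}$ coming from Lemma~\ref{lem:LE:et-le} applied degreewise. So $\shM_{\et} = \ve^* p^* \shM$ and likewise for $\shN$. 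The previous Lemma gives a functorial equivalence $p^* \shHom_{\stX_{\txle}}(\shM,\shN) \simeq \shHom_{\rst{\stX_{\txle}}{\stX^\str_\bullet}}(p^*\shM,p^*\shN)$, so it remains to transport this across the equivalence $\ve$.

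The key point in the second step is that an equivalence of ringed $\infty$-topoi intertwines internal Hom functors. More precisely, for each degree $n$ the functor $\ve_n: \rst{\stX_{\txle}}{\stX^\str_n} \simto \stX^\str_{n,\et}$ is an equivalence of $\infty$-topoi with the constant sheaf $\Lambda$ matching on both sides, hence the inverse image $\ve_n^*$ is a symmetric monoidal equivalence on the stable $\infty$-categories $\iDa(\rst{\stX_{\txle}}{\stX^\str_n},\Lambda) \simeq \iDa(\stX^\str_{n,\et},\Lambda)$; since the internal Hom is characterized by the tensor-hom adjunction (the Lemma in \S\ref{sss:is:fun} on the morphism object), $\ve_n^*$ commutes with $\shHom$ up to canonical equivalence. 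Assembling over $\CS^{\str}$ and using that all the functors in sight are degreewise and compatible with the simplicial face maps, we get $\ve^* \shHom_{\rst{\stX_{\txle}}{\stX^\str_\bullet}}(p^*\shM,p^*\shN) \simeq \shHom_{\stX^\str_{\bullet,\et}}(\ve^* p^*\shM, \ve^* p^*\shN) = \shHom_{\stX^{\str}_{\bullet,\et}}(\shM_{\et},\shN_{\et})$. Composing with the equivalence from the previous Lemma and the identification $\rst{\shHom_{\stX_{\txle}}(\shM,\shN)}{\stX^{\str}_{\bullet,\et}} = \ve^* p^* \shHom_{\stX_{\txle}}(\shM,\shN)$ finishes the argument.

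For functoriality one checks that all the equivalences produced — the base change $p^* \shHom \simeq \shHom(p^*-,p^*-)$, the compatibility of $\ve^*$ with $\shHom$, and the identification of the restriction functor — are natural in the pair $(\shM,\shN) \in \iDc(\stX_{\txle})^{\op} \times \iDc(\stX_{\txle})$, which follows since each is obtained from a (symmetric monoidal) functor or adjunction, and uniqueness up to contractible ambiguity of adjoints and morphism objects (as in \S\ref{sss:is:fun}) pins down the coherences. This also silently uses that $\shHom$ preserves constructibility, by the first Lemma of this subsection, so the statement lives in $\iDc$ rather than merely $\iDa$.

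The main obstacle I expect is the bookkeeping in the second step: verifying that the internal Hom on a strictly simplicial ringed $\infty$-topos is computed degreewise and that the equivalence $\ve$ respects it coherently, rather than just degree by degree. This is essentially the content of why the previous Lemma's proof reduced to checking the statement on each $\rst{\stX_{\txle}}{\stX_n}$, and the honest route is to follow \cite[4.2.3. Proposition]{LO1} verbatim: reduce $\shN$ to a sheaf with injective homotopy groups (so that $p^*$ and $\ve^*$ preserve $\shHom$ on the nose after resolution), and then invoke that on each affine object of the site the internal $\shHom$ against an injective target is computed as an ordinary sheaf-Hom, which is manifestly preserved by the topos equivalence $\ve_n$. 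The rest is routine.
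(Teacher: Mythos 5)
Your overall shape (first the base-change lemma for $p^*$, then a comparison of the localized lisse-\'etale $\infty$-topos with the simplicial \'etale one) matches the paper's, but the second step rests on a false premise. You invoke Lemma \ref{lem:LE:et-le} degreewise to produce an \emph{equivalence} of ringed $\infty$-topoi $\ve: \rst{\stX_{\txle}}{\stX^{\str}_{\bullet}} \simto \stX^{\str}_{\bullet,\et}$, and then argue that an equivalence is symmetric monoidal and hence commutes with $\shHom$. But Lemma \ref{lem:LE:et-le} compares $\stU_{\txle}$ — the lisse-\'etale $\infty$-topos \emph{of} the derived algebraic space $\stU$ — with $\stU_{\et}$; it says nothing about $\rst{\stX_{\txle}}{\stU}$, the localization of the lisse-\'etale $\infty$-topos of $\stX$ at the object $\stU$. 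The latter is built from all $V \to \stU$ with $V$ smooth over $\stX$ (not over $\stU$), so its site is strictly larger than $\gsEt(\stU)$, and the comparison functor $e: \rst{\stX_{\txle}}{\stX^{\str}_\bullet} \to \stX^{\str}_{\bullet,\et}$ of Lemma \ref{lem:LE:ve} is only a geometric morphism, not an equivalence — this is precisely the non-functoriality problem that forces the descent apparatus in the first place. Consequently $e^*$ does not commute with internal Hom for free, and your identification $\shM_{\et} = \ve^* p^* \shM$ also has the variance wrong: restriction to the \'etale subsite is the \emph{direct} image, $\shM_{\et} = e_* p^* \shM$.

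The missing ingredient is the descent result, Proposition \ref{prp:is:223}: since $\shM$ and $\shN$ have cartesian (constructible) homotopy groups, the counit $e^* e_* \to \id$ is an equivalence on $p^*\shM$ and $p^*\shN$, giving $e^* \shM_{\et} \simeq p^*\shM$ and $e^* \shN_{\et} \simeq p^*\shN$. Combining this with the adjunction identity $e_* \shHom(e^* A, B) \simeq \shHom(A, e_* B)$ and the fact that the unit $\id \to e_* e^*$ is an equivalence, one obtains
\[
 e_* \! \shHom(p^*\shM,p^*\shN) \simeq
 e_* \! \shHom(e^*\shM_{\et}, e^*\shN_{\et}) \simeq
 \shHom(\shM_{\et}, e_* e^* \shN_{\et}) \simeq
 \shHom(\shM_{\et}, \shN_{\et}),
\]
which, together with the preceding lemma applied to $p^*$, finishes the proof. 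Your closing remark about following \cite[4.2.3.\ Proposition]{LO1} and resolving $\shN$ by a complex with injective homotopy groups is the right instinct, but as written it still leans on the nonexistent ``topos equivalence $\ve_n$''; the descent input has to appear explicitly where you currently appeal to monoidality of an equivalence.
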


\begin{proof}
We follow the proof of \cite[4.2.3. Proposition]{LO1}.
Let us denote by
$e: \rst{\stX_{\txle}}{\stX^{\str}_\bullet} \to \stX^{\str}_{\bullet,\et}$
the geometric morphism of $\infty$-topoi induced by 
$\gsEt(\stX^{\str}_{\bullet}) \inj \gsLE(\stX^{\str}_{\bullet})$.
Then we have $\shM_{\et} = e_* p^* \shM$, $\shN_{\et} = e_* p^* \shN$ and 
$\rst{\shHom_{\stX_{\txle}}(\shM,\shN)}{\stX^{\str}_{\bullet,\et}}
 = e_* p^* \! \shHom_{\stX_{\txle}}(\shM,\shN)$ by definition.
We also have equivalences 
$e^* \shM_{\et} \simto \pi^* \shM$ and $e^* \shN_{\et} \simto \pi^* \shN$
since the counit transformation $e^* e_* \to \id$ is an equivalence 
by Proposition \ref{prp:is:223}.
Then
\begin{align*}
\rst{\shHom_{\stX_{\txle}}(\shM,\shN)}{\stX^{\str}_{\bullet,\et}}
&=  e_* p^* \! \shHom_{\stX_{\txle}}(\shM,\shN)
 \simeq e_* \! \shHom_{\rst{\stX_{\txle}}{\stX^{\str}_\bullet}}(p^* \shM, p^* \shN)
\\
&\simeq e_* \! \shHom_{\rst{\stX_{\txle}}{\stX^{\str}_\bullet}}
               (e_* \shM_{\et}, e_* \shN_{\et})
\simeq \shHom_{\stX_{\et,\bullet}}(\shM_{\et}, \shN_{\et}).
\end{align*}
\end{proof}

Now we have the following type of projection formula.
The proof is the same as in \cite[4.3.1. Proposition]{LO1} 
with the help of Lemma \ref{lem:6op:423}, so we omit it.

\begin{prp}\label{prp:6op:fHom}
Let $\stX$, $\stY$ be geometric derived stacks locally of finite presentation,
and $f: \stX \to \stY$ be a morphism of of finite presentation.
Then, for $\shM \in \iDc(\stY_{\txle})$ and 
$\shN \in \iDcbp(\stX_{\txle})$, we have a functorial equivalence
\[
 \shHom(\shM,f_*\shN) \longsimto f_* \! \shHom(f^*\shM,\shN)
\]
in $\iDc(\stY_{\txle})$.
\end{prp}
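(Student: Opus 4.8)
The plan is to reduce the statement to the analogous projection formula for \'etale sheaves on derived algebraic spaces, exactly as in \cite[4.3.1.~Proposition]{LO1}, using the simplicial descent machinery already developed in \S\ref{ss:LE:spl} and the two lemmas on internal Hom proved just above. First I would choose a smooth presentation $\stY_{\bullet} \to \stY$, form $\wt{\stX} := \stX \times_{\stY} \stY_0$, and choose an $n$-atlas of $\wt{\stX}$ to obtain a compatible smooth presentation $\stX_{\bullet} \to \stX$ together with a morphism $f_{\bullet}: \stX_{\bullet} \to \stY_{\bullet}$ of simplicial derived algebraic spaces lying over $f$, just as in the construction of the derived inverse image functor (Definition \ref{dfn:6op:pb}). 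Restricting to $\CS^{\str}$, I get a morphism $f_{\bullet,\et}: \stX^{\str}_{\bullet,\et} \to \stY^{\str}_{\bullet,\et}$ of (strictly) simplicial \'etale $\infty$-topoi.

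Next, by Theorem \ref{thm:LE:iDc-stb} the restriction functors $\iDc(\stX_{\txle}) \simto \iDc(\stX^{\str}_{\bullet,\et})$ and $\iDc(\stY_{\txle}) \simto \iDc(\stY^{\str}_{\bullet,\et})$ are equivalences, under which $f^*$ corresponds to $f_{\bullet}^*$ and $f_*$ corresponds to $f_{\bullet,*}$ (by the defining construction of $f^*$ and the simplicial description of $f_*$ used in the proof of Proposition \ref{prp:6op:pf}). Using Lemma \ref{lem:6op:423}, which identifies $\shHom_{\stX_{\txle}}(\shM,\shN)$ restricted to $\stX^{\str}_{\bullet,\et}$ with $\shHom_{\stX^{\str}_{\bullet,\et}}(\shM_{\et},\shN_{\et})$ (and likewise over $\stY$), the claimed equivalence $\shHom(\shM,f_*\shN) \simto f_*\shHom(f^*\shM,\shN)$ in $\iDc(\stY_{\txle})$ is equivalent to the simplicial statement
\[
 \shHom_{\stY^{\str}_{\bullet,\et}}(\shM_{\et}, f_{\bullet,*}\shN_{\et})
 \longsimto
 f_{\bullet,*}\shHom_{\stX^{\str}_{\bullet,\et}}(f_{\bullet}^*\shM_{\et},\shN_{\et}).
\]
Since a morphism in a derived $\infty$-category of sheaves on a (strictly) simplicial $\infty$-topos is an equivalence if and only if it is so level-wise, it suffices to prove the corresponding equivalence on each $\stX_n \to \stY_n$, i.e.\ for a morphism of derived algebraic spaces. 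There, by the equivalence $\stU_{\et} \simeq \stU_{\et^{\aff}}$ (Lemma \ref{lem:dAS:small-et}) and the fact that all the relevant functors and the constructibility condition depend only on truncated data (Definition \ref{dfn:LE:cstr:L}), one reduces to the classical projection formula for modules over a torsion sheaf of rings on schemes, which is \cite[4.3.1.~Proposition]{LO1} (equivalently, the corresponding statement in \cite{SGA4}). The bound conditions $\shM \in \iDc(\stY_{\txle})$, $\shN \in \iDcbp(\stX_{\txle})$ are inherited level-wise, and the preceding Lemma guarantees both sides land in $\iDcbp$, so the asserted equivalence takes place in $\iDc(\stY_{\txle})$.

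The construction of the comparison morphism itself is routine: it is induced by the unit $f^*f_* \to \id$ together with the natural map $f_*\shHom(-,-) \to \shHom(f^*-,f^*-)$, all of which are already available on the stable $\infty$-categories of lisse-\'etale sheaves from \S\ref{ss:is:sm} and \S\ref{ss:is:fun}; the content is the claim that it is an equivalence, which is what the level-wise reduction settles. The main obstacle I expect is bookkeeping rather than mathematics: one must check carefully that under the equivalences $r_{\stX}, r_{\stY}$ of Theorem \ref{thm:LE:iDc-stb} the functors $f^*$ and $f_*$ on $\iDc(\stX_{\txle})$ really do correspond to $f_{\bullet}^*$ and $f_{\bullet,*}$ compatibly with the internal Hom identifications of Lemma \ref{lem:6op:423}, and that the comparison morphism is the ``same'' morphism on both sides of each identification --- this is the step where \cite{LO1} is terse and where sign/coherence issues could hide, so I would devote the most care to making that dictionary precise before invoking level-wise checking.
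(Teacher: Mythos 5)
Your proposal follows essentially the same route as the paper, which simply defers to \cite[4.3.1.~Proposition]{LO1} together with Lemma \ref{lem:6op:423}: construct the comparison map by adjunction, pass to a compatible smooth presentation via Theorem \ref{thm:LE:iDc-stb} and Lemma \ref{lem:6op:423}, check the equivalence level-wise on derived algebraic spaces, and reduce there to the classical projection formula for torsion coefficients. The only nit is terminological: the map $f^*f_* \to \id$ you invoke is the \emph{counit} of the adjunction $f^* \dashv f_*$, not the unit.
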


For later use, we record the following lemma.
Recall the dualizing object $\Omega_{\stX}$ 
for a derived stack $\stX$ (Definition \ref{dfn:LE:dual}).

\begin{lem}\label{lem:6op:461}
Let $\stX$, $\stY$ and $f: \stX \to \stY$ be as Proposition \ref{prp:6op:fHom}.
For $\shM \in \iDc(\stY_{\txle})$, the canonical morphism
\[
 f^* \! \shHom(\shM,\Omega_{\stY}) \longto 
        \shHom(f^* \shM, f^* \Omega_{\stY})
\]
is an equivalence.
\end{lem}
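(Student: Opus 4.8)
The plan is to reduce the statement to the corresponding fact over derived algebraic spaces, where $\shHom$ and $f^*$ are already well-behaved. First I would choose compatible smooth presentations: take an $n$-atlas $\{Y_i\}$ of $\stY$, form $\stY_0 = \coprod Y_i \to \stY$, then an $n$-atlas of $\wt{\stX} := \stX \times_{\stY} \stY_0$ giving $\stX_0 \to \wt{\stX}$, and the induced morphism of strictly simplicial derived algebraic spaces $f_\bullet : \stX^{\str}_\bullet \to \stY^{\str}_\bullet$. By Theorem \ref{thm:LE:pi-ve} and Theorem \ref{thm:LE:iDc-stb}, it suffices to check the equivalence after restricting to the \'etale $\infty$-topoi $\stX^{\str}_{\bullet,\et}$ and $\stY^{\str}_{\bullet,\et}$, and by the simplicial description this amounts to checking it levelwise, i.e. for each morphism $\stX_n \to \stY_n$ of derived algebraic spaces (which is of finite presentation since $f$ is).

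Next I would use the compatibility lemmas already established for $\shHom$ in this subsection. By Lemma \ref{lem:6op:423}, $\rst{\shHom_{\stY_{\txle}}(\shM,\Omega_{\stY})}{\stY^{\str}_{\bullet,\et}} \simeq \shHom_{\stY^{\str}_{\bullet,\et}}(\shM_{\et},(\Omega_{\stY})_{\et})$, and similarly for $\stX$. By Definition \ref{dfn:LE:dual}, the restriction of $\Omega_{\stY}$ to $\stU_{\et}$ for $(A:\stU\to\stY)\in\idAS^{\lis}_{\stY}$ is $\ve^*_{\stU} K_A$ with $K_A = \Omega_\alpha\tsh{-d_A}$ the \'etale dualizing object of the derived algebraic space $\stU$; so the restrictions of $\Omega_{\stY}$ and $\Omega_{\stX}$ to the levels $\stY_n$, $\stX_n$ are precisely the \'etale dualizing objects there (up to the locally constant Tate twist governing relative dimension). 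On the level of derived algebraic spaces the statement $g^*\shHom(\shM,K_V) \simeq \shHom(g^*\shM, g^*K_V)$ for a finite-presentation morphism $g: U \to V$ and $\shM \in \iDc(V_{\et})$ is exactly the content used to prove Fact \ref{fct:LE:3.5.2}(1) (and is the derived-algebraic-space analogue of \cite[3.5.2. Lemma]{LO1}); combined with the compatibility $g^* K_V \simeq K_U\tsh{\cdot}$ of \'etale dualizing objects under \'etale or smooth morphisms, this gives the levelwise equivalence.

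The remaining work is bookkeeping: verifying that the levelwise equivalences are compatible with the simplicial face maps (so that they assemble to an equivalence of objects in $\iDc(\stX^{\str}_{\bullet,\et})$), that the canonical morphism $f^*\shHom(\shM,\Omega_{\stY}) \to \shHom(f^*\shM, f^*\Omega_{\stY})$ is the one being identified, and that $\rst{\Omega_{\stY}}{\stX^{\str}_{\bullet}}$ pulled back along $f_\bullet$ indeed matches $\rst{\Omega_{\stX}}{\stX^{\str}_\bullet}$ — the latter being immediate from the characterization of $\Omega_{\stX}$ by $\rst{\Omega_{\stX}}{\stU} = \ve^*_{\stU}K_A$ together with functoriality of the $K_A$'s. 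Alternatively, one can invoke Lemma \ref{lem:is:234}: both $f^*\shHom(\shM,\Omega_{\stY})$ and $\shHom(f^*\shM,\Omega_{\stX}\tsh{-d})$ have the same restrictions to a smooth presentation and satisfy $\shExt^i_{\Lambda} = 0$ for $i<0$ in negative degrees (using Lemma \ref{lem:6op:341}(2) and finite injective dimension of the dualizing objects), so they agree up to contractible ambiguity — this is the cleanest route and the one I would write out in detail.

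The main obstacle I anticipate is not any single hard estimate but rather the careful handling of the Tate twist and relative-dimension shifts: $f^*\Omega_{\stY}$ is not literally $\Omega_{\stX}$ but $\Omega_{\stX}\tsh{-d}$ when $f$ is smooth of relative dimension $d$ (Lemma \ref{lem:6op:fOm}), and for a general finite-presentation $f$ one must be careful that the statement as phrased only asserts $f^*\shHom(\shM,\Omega_{\stY}) \simeq \shHom(f^*\shM, f^*\Omega_{\stY})$ with $f^*\Omega_{\stY}$ appearing on both sides, so no such global twist intervenes — the twists only enter locally through the presentations and cancel. Making sure the local identifications glue coherently, rather than just objectwise, via the uniqueness clauses in Fact \ref{fct:is:glue} and Lemma \ref{lem:is:234}, is where the real care is needed.
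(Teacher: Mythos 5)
The paper records this lemma without proof (as the label suggests, it is the analogue of \cite[4.6.1.\ Lemma]{LO1}), so there is no in-text argument to compare against; judged on its own terms, your first route is correct and is the intended one. Reducing along a compatible pair of smooth presentations, identifying the restriction of $\shHom_{\stX_{\txle}}$ and $\shHom_{\stY_{\txle}}$ to the simplicial \'etale $\infty$-topoi via Lemma \ref{lem:6op:423}, identifying the restriction of $f^*$ via Definition \ref{dfn:6op:pb}, and then invoking the algebraic-space statement (the content of Fact \ref{fct:LE:3.5.2} (1), ultimately \cite[Th.\ finitude]{SGA4.5} applied to the truncations) does prove the claim. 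One simplification you should make explicit: because the morphism $f^*\shHom(\shM,\Omega_{\stY}) \to \shHom(f^*\shM,f^*\Omega_{\stY})$ is already globally defined, there is nothing to glue --- by Theorem \ref{thm:LE:iDc-stb} an equivalence in $\iDc(\stX_{\txle})$ is detected after pullback to $\stX^{\str}_{\bullet,\et}$, levelwise, so the coherence worries in your last paragraph evaporate and neither Fact \ref{fct:is:glue} nor Lemma \ref{lem:is:234} is needed.

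Your ``cleanest alternative route'' is the one with genuine problems, and I would drop it. First, for a general morphism of finite presentation you cannot write $f^*\Omega_{\stY} \simeq \Omega_{\stX}\tsh{-d}$; Lemma \ref{lem:6op:fOm} requires $f$ smooth of relative dimension $d$, so the second object in your proposed application of Lemma \ref{lem:is:234} must be $\shHom(f^*\shM,f^*\Omega_{\stY})$ as in the statement, not $\shHom(f^*\shM,\Omega_{\stX}\tsh{-d})$. Second, the hypothesis of Lemma \ref{lem:is:234} is vanishing of $\shExt^i$ for $i<0$ between the two specific objects being compared, and Lemma \ref{lem:6op:341} (2) only gives $\shExt^i_{\Lambda}(\kappa_A,\kappa_A)=0$; it does not yield the required vanishing for $\shHom(\shM,\Omega)$ with $\shM$ an arbitrary constructible object, and indeed one should not expect it in general. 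Neither defect is fatal to your proof, since the first route already suffices, but as written the alternative does not stand on its own.
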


%%%%%%%%%%%%%%%%%%%%%%%%%%%%%%%%%%%%%%%%%%%%%%%%%%%%%%%%%%%%%%%%%%%%%%%%%%%%%%%%
%%%%%%%%%%%%%%%%%%%%%%%%%%%%%%%%%%%%%%%%%%%%%%%%%%%%%%%%%%%%%%%%%%%%%%%%%%%%%%%%
\subsection{Derived tensor functor}

Let $\stX$ be a geometric derived stack locally of finite presentation.
In the rest part of this section,
we denote by $\otimes$ the tensor functor $\otimes_{\Lambda}$
on $\iDa(\stX_{\txle})$.
Recall also the dualizing functor 
$\funD_{\stX}: \iDa(\stX_{\txle}) \to  \iDa(\stX_{\txle})^{\op}$
(Notation \ref{ntn:LE:dual}).

\begin{lem}\label{lem:6op:tensor}
\begin{enumerate}[nosep]
\item 
For $\shM,\shN \in \iDc(\stX_{\txle})$, we have
$\shHom(\shM,\shN) \simeq \funD_{\stX}(\shM \otimes \funD_{\stX}(\shN))$.

\item
For $\shM,\shN \in \iDcbm(\stX_{\txle})$, we have
$\shM \otimes \shN \in \iDcbm(\stX_{\txle})$.
\end{enumerate}
\end{lem}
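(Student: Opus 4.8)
\textbf{Proof proposal for Lemma \ref{lem:6op:tensor}.}

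The plan is to reduce both assertions to the already-established properties of the internal Hom functor and the dualizing functor, exploiting the biduality Proposition \ref{prp:LE:dual}. For part (1): since $\funD_{\stX} = \shHom(-,\Omega_{\stX})$ and biduality gives $\funD_{\stX} \circ \funD_{\stX} \simeq \id$ on $\iDc(\stX_{\txle})$, I would start from the tensor–hom adjunction (the Lemma in \S\ref{sss:is:fun} applied to $(\tT,\shA)=(\stX_{\txle},\Lambda)$), which yields, for any $\shL$,
\[
 \shHom(\shL, \funD_{\stX}(\shM \otimes \funD_{\stX}(\shN)))
 \simeq \shHom(\shL \otimes \shM \otimes \funD_{\stX}(\shN), \Omega_{\stX})
 \simeq \shHom(\shL \otimes \shM, \funD_{\stX}(\funD_{\stX}(\shN)))
 \simeq \shHom(\shL \otimes \shM, \shN)
 \simeq \shHom(\shL, \shHom(\shM,\shN)).
\]
By the Yoneda lemma for the stable $\infty$-category $\iDa(\stX_{\txle})$ this forces the claimed equivalence; one only needs that $\shN$ (hence $\funD_{\stX}(\shN)$, which is constructible by the Lemma preceding Notation \ref{ntn:LE:dual}) lies in the range where biduality holds, i.e. $\shM,\shN \in \iDc(\stX_{\txle})$ as in the statement. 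The functoriality of all the equivalences involved is built into the cited statements.

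For part (2): combine (1) with the constructibility statement for $\shHom$ proved earlier in this subsection (the Lemma asserting $\shHom(\shM,\shN)\in\iDcbp(\stX_{\txle})$ for $\shM\in\iDcbm$, $\shN\in\iDcbp$). Given $\shM,\shN \in \iDcbm(\stX_{\txle})$, write $\shM\otimes\shN$; by (1) applied with the roles suitably arranged, $\shM\otimes\shN \simeq \funD_{\stX}\shHom(\shM,\funD_{\stX}\shN)$. Now $\funD_{\stX}\shN = \shHom(\shN,\Omega_{\stX})$; since $\shN\in\iDcbm$ and $\Omega_{\stX}$ is of finite injective dimension (hence in $\iDcbp$, indeed bounded locally, by the boundedness noted after Definition \ref{dfn:LE:dual}), the $\shHom$-constructibility lemma gives $\funD_{\stX}\shN \in \iDcbp(\stX_{\txle})$. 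Then $\shHom(\shM,\funD_{\stX}\shN)$ with $\shM\in\iDcbm$ and $\funD_{\stX}\shN\in\iDcbp$ is again in $\iDcbp(\stX_{\txle})$, and applying $\funD_{\stX}$ once more (which by inspection of its definition swaps $\iDcbp$ and $\iDcbm$, using finite injective dimension of $\Omega_{\stX}$) returns an object of $\iDcbm(\stX_{\txle})$, as desired.

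The main obstacle I anticipate is bookkeeping the boundedness conditions precisely: one must check that $\funD_{\stX}$ genuinely interchanges $\iDcbp$ and $\iDcbm$, which relies on the uniform finiteness of the injective dimension of $\Omega_{\stX}$ \emph{locally} (Lemma \ref{lem:dAS:dual}(1) together with the gluing description of $\Omega_{\stX}$), and that the tensor–hom manipulations in part (1) stay within the constructible subcategory at each step rather than only in $\iDa(\stX_{\txle})$. Once the local finite-injective-dimension input is in hand, the argument is formal, following \cite[4.2]{LO1}, and no further derived-stack-specific input is needed — everything reduces via Theorem \ref{thm:LE:pi-ve} and Proposition \ref{prp:is:223} to the simplicial \'etale picture already analyzed.
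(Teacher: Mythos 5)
Your proposal is correct and follows essentially the same route as the paper, which simply invokes the argument of \cite[4.5.1 Lemma]{LO1}: part (1) is the tensor–hom adjunction combined with the biduality $\funD_{\stX}\circ\funD_{\stX}\simeq\id$ of Proposition \ref{prp:LE:dual}, and part (2) is (1) together with the $\shHom$-constructibility lemma and the fact that $\funD_{\stX}$ interchanges $\iDcbm$ and $\iDcbp$ via the finite injective dimension of $\Omega_{\stX}$. The only cosmetic difference is that in (1) you pass through a Yoneda argument over all test objects $\shL$, whereas one can conclude directly from $\funD_{\stX}(\shM\otimes\funD_{\stX}(\shN))=\shHom(\shM\otimes\funD_{\stX}(\shN),\Omega_{\stX})\simeq\shHom(\shM,\funD_{\stX}\funD_{\stX}(\shN))\simeq\shHom(\shM,\shN)$.
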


\begin{proof}
The  argument in \cite[4.5.1 Lemma]{LO1} works
with the help of Proposition \ref{prp:LE:dual}.
%Using the definition $\funD_{\stX} = \shHom(-,\Omega_{\stX})$, we have
\end{proof}

%%%%%%%%%%%%%%%%%%%%%%%%%%%%%%%%%%%%%%%%%%%%%%%%%%%%%%%%%%%%%%%%%%%%%%%%%%%%%%%%
%%%%%%%%%%%%%%%%%%%%%%%%%%%%%%%%%%%%%%%%%%%%%%%%%%%%%%%%%%%%%%%%%%%%%%%%%%%%%%%%
\subsection{Shriek functors}
\label{ss:6op:!}

Following the ordinary algebraic stack case \cite[\S 4.6]{LO1}, 
we introduce the derived functors $f_!$ and $f^!$ 
using the dualizing functor $\funD_{\stX}$.

%\begin{prp}
%For a smooth morphism $f:\stX \to \stY$ of derived stacks 
%of relative dimension $d$, we have
%\[
% f^* \Omega_{\stY} = \Omega_{\stX}\tsh{-d},
%\]
%where we set $C\tsh{-d} := C(d)[2d]$ for a complex $C$ of sheaves
%with $(d)$ the Tate twist.
%\end{prp}

\begin{dfn}\label{dfn:6op:!}
Let $\stX$ and $\stY$ be geometric derived stacks 
locally of finite presentation,
and let $f: \stX \to \stY$ be a morphism of finite presentation.
We define the functor
$f_!$ by
\[
 f_! := \funD_{\stY} \circ f_* \circ \funD_{\stX}: 
 \iDcbm(\stX_{\txle}) \longto \iDcbp(\stY_{\txle}).
\]
Here $f_*$ is the derived direct image functor (Proposition \ref{prp:6op:pf}).
We also define the functor $f^!$ by
\[
 f^! := \funD_{\stX} \circ f^* \circ \funD_{\stY}:
 \iDc(\stY,\Lambda) \longto \iDc(\stX,\Lambda),
\]
where $f^*$ is the derived inverse image functor (Definition \ref{dfn:6op:pb}).
\end{dfn}

We immediately have 

\begin{prp}
Let $\stX$, $\stY$ and $f: \stX \to \stY$ be as in Definition \ref{dfn:6op:!}.
For $\shM \in \iDcbm(\stX_{\txle})$ and $\shN \in \iDc(\stY_{\txle})$,
we have a functorial equivalence
\[
 f_* \! \shHom(\shM, f^! \shN) \simeq \shHom(f_! \shM,\shN).
\]
\end{prp}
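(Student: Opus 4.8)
The plan is to deduce this adjunction-type statement from the definition $f_! = \funD_{\stY} \circ f_* \circ \funD_{\stX}$ together with the biduality property of $\funD_{\stX}$ (Proposition \ref{prp:LE:dual}) and the projection-type formula for $\shHom$ and $f_*$ (Proposition \ref{prp:6op:fHom}). First I would unwind the left-hand side: by Lemma \ref{lem:6op:tensor}~(1), for $\shL \in \iDc(\stX_{\txle})$ we may rewrite $\shHom(\shL,\shK)$ as $\funD_{\stX}(\shL \otimes \funD_{\stX}(\shK))$, so applying this with $\shL = \shM$ and $\shK = f^!\shN = \funD_{\stX} f^* \funD_{\stY}(\shN)$ and using biduality $\funD_{\stX}\funD_{\stX} \simeq \id$ gives $\shHom(\shM,f^!\shN) \simeq \funD_{\stX}\bigl(\shM \otimes f^*\funD_{\stY}(\shN)\bigr)$. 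Hence $f_* \shHom(\shM,f^!\shN) \simeq f_*\funD_{\stX}\bigl(\shM \otimes f^*\funD_{\stY}(\shN)\bigr)$.

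Next I would analyze the right-hand side in the same spirit. Again by Lemma \ref{lem:6op:tensor}~(1) applied on $\stY$, $\shHom(f_!\shM,\shN) \simeq \funD_{\stY}\bigl(f_!\shM \otimes \funD_{\stY}(\shN)\bigr)$, and by the definition of $f_!$ this is $\funD_{\stY}\bigl((\funD_{\stY} f_* \funD_{\stX}\shM) \otimes \funD_{\stY}(\shN)\bigr)$. So the identity to be proved reduces, after stripping the outermost $\funD_{\stY}$ from both sides (using that $\funD_{\stY}$ is an equivalence onto the opposite category, Proposition \ref{prp:LE:dual}), to a compatibility between $f_*$, $\funD$, $\otimes$ and $\shHom$ — concretely an equivalence of the form $f_*\funD_{\stX}(\shM \otimes f^*\calG) \simeq \funD_{\stY}\bigl((\funD_{\stY} f_* \funD_{\stX}\shM) \otimes \calG\bigr)$ with $\calG = \funD_{\stY}(\shN) \in \iDc(\stY_{\txle})$. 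The cleanest route to this is the projection formula $\shHom(\calG, f_*\shP) \simeq f_*\shHom(f^*\calG,\shP)$ of Proposition \ref{prp:6op:fHom} (with $\shP = \funD_{\stX}\shM$), combined with Lemma \ref{lem:6op:461} which identifies $f^*\shHom(\calG,\Omega_{\stY})$ with $\shHom(f^*\calG,f^*\Omega_{\stY})$, and Lemma \ref{lem:6op:fOm} relating $f^*\Omega_{\stY}$ to $\Omega_{\stX}$ up to a Tate twist when $f$ is smooth (in the general finite-presentation case one argues locally on a smooth presentation to reduce to that situation). Translating $\shHom(-,\Omega)$ into $\funD$ throughout then matches the two sides.

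In carrying this out I would proceed in the order: (i) rewrite both sides via Lemma \ref{lem:6op:tensor}~(1) and biduality to isolate the core identity; (ii) apply the projection formula Proposition \ref{prp:6op:fHom}; (iii) use Lemma \ref{lem:6op:461} and the definition $\funD_{\stX} = \shHom(-,\Omega_{\stX})$ to move $f^*$ past the dualizing functor; (iv) assemble, checking functoriality of all the equivalences (each is unique up to contractible ambiguity, so the composite is too). Throughout one should keep track of the bound conditions — $\shM \in \iDcbm(\stX_{\txle})$ ensures $f_!\shM \in \iDcbp(\stY_{\txle})$ by Definition \ref{dfn:6op:!}, and $\funD_{\stX}$ swaps $\iDcbm$ and $\iDcbp$, so all the intermediate objects land in categories where the cited lemmas apply.

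The main obstacle I anticipate is step (iii)–(iv): making the passage of $f^*$ through $\funD$ genuinely functorial and compatible with the projection formula, rather than just an abstract equivalence of objects. Since $\funD_{\stX}$ is defined as $\shHom(-,\Omega_{\stX})$ and $\Omega_{\stX}$ is only characterized up to contractible ambiguity via the gluing construction (Definition \ref{dfn:LE:dual}), one must be careful that the equivalence $f^*\Omega_{\stY} \simeq \Omega_{\stX}\langle -d\rangle$ from Lemma \ref{lem:6op:fOm}, the biduality equivalence, and the projection formula can all be chosen coherently; the Tate twist is harmless (it commutes with every functor involved) but bookkeeping it correctly is the fiddly part. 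As in \cite[\S 4.6]{LO1}, the cleanest presentation is probably to verify the adjunction/equivalence after restricting to each derived algebraic space $\stU$ of a smooth presentation via Notation \ref{ntn:LE:FU}, where everything reduces to the analogous statement for \'etale sheaves on derived algebraic spaces, hence ultimately to the classical six-operations formalism.
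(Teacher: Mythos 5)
Your proposal is, at its core, the same argument as the paper's: both reduce the statement to the projection formula of Proposition \ref{prp:6op:fHom} applied to $f^*\funD_{\stY}\shN$ and $\funD_{\stX}\shM$, together with biduality (Proposition \ref{prp:LE:dual}) and the definitions $f^! = \funD_{\stX}\circ f^*\circ\funD_{\stY}$ and $f_! = \funD_{\stY}\circ f_*\circ\funD_{\stX}$. The paper handles the duality bookkeeping via the swap $\shHom(A,B)\simeq\shHom(\funD(B),\funD(A))$ of Proposition \ref{prp:LE:dual}, whereas you route through the tensor description of Lemma \ref{lem:6op:tensor}(1); these are interchangeable and neither buys anything over the other.

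One caution about your steps (iii)--(iv): Lemmas \ref{lem:6op:461} and \ref{lem:6op:fOm} are not needed here, and the latter would be a genuine obstacle if it were load-bearing. The only interaction between $f^*$ and the dualizing functors that the argument requires is $\funD_{\stX} f^!\shN = \funD_{\stX}\funD_{\stX} f^*\funD_{\stY}\shN \simeq f^*\funD_{\stY}\shN$, which is pure biduality and involves no comparison of $f^*\Omega_{\stY}$ with $\Omega_{\stX}$. Lemma \ref{lem:6op:fOm} holds only for \emph{smooth} $f$, and the suggested reduction of a general morphism of finite presentation to the smooth case ``by arguing locally on a smooth presentation'' does not work: passing to presentations of $\stX$ and $\stY$ does not make $f$ itself smooth, and the absence of any formula $f^*\Omega_{\stY}\simeq\Omega_{\stX}\tsh{-d}$ for non-smooth $f$ is precisely why $f^!$ is defined through the dualizing functors rather than as a twist of $f^*$. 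Once that detour is excised, the remaining chain --- Lemma \ref{lem:6op:tensor}(1), biduality, Proposition \ref{prp:6op:fHom} --- is complete and matches the paper's proof.
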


\begin{proof}
The statement follows from the sequence of equivalences
\begin{align*}
f_* \! \shHom(\shM, f^! \shN)
&\stackrel{(*1)}{\simeq} f_* \! \shHom(\funD_{\stX} f^! \shN, \funD_{\stX}\shM)
 \stackrel{(*2)}{\simeq} f_* \! \shHom(f^* \funD_{\stY} \shN, \funD_{\stX}\shM) \\
&\stackrel{(*3)}{\simeq} \shHom(\funD_{\stY}\shN, f_* \funD_{\stX}\shM)
\stackrel{(*1)}{\simeq}
\shHom(\funD_{\stY}f_* \funD_{\stX}\shM, \funD_{\stY}\funD_{\stY} \shN)
\stackrel{(*4)}{\simeq} \shHom(f_! \shM, \shN).
\end{align*}
Here ($*1$) is by Proposition \ref{prp:LE:dual} (3),
($*2$) is by Definition \ref{dfn:6op:!},
($*3$) is by Proposition \ref{prp:6op:fHom},
and ($*4$) is by Definition \ref{dfn:6op:!} and Proposition \ref{prp:LE:dual} (2).
\end{proof}

We also have the projection formula.

\begin{prp}
Let $f: \stX \to \stY$ be of finite type.
\begin{enumerate}[nosep]
\item 
For $\shM \in \iDcbm(\stX_{\txle})$ and $\shN \in \iDcbm(\stY_{\txle})$
we have a functorial equivalence
\[
 f_!(\shM \otimes f^* \shN) \simeq (f_! \shM) \otimes \shN.
\]

\item
For $\shM \in \iDcbm(\stX_{\txle})$ and $\shN \in \iDcbp(\stY_{\txle})$
we have a functorial equivalence
\[
 f^! \shHom(\shM, \shN) \simeq \shHom(f^* \shM,f^! \shN).
\]
\end{enumerate}
\end{prp}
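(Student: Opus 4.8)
The plan is to derive both projection formulas from the corresponding statements for the $*$-functors, transported through the dualizing functor $\funD_{\stX}$, exactly as in \cite[\S 4.6]{LO1}. For item (1), I would first rewrite the right-hand side using the definition $f_! = \funD_{\stY} \circ f_* \circ \funD_{\stX}$ (Definition \ref{dfn:6op:!}) and biduality (Proposition \ref{prp:LE:dual} (1)). Concretely, $(f_!\shM)\otimes\shN \simeq \funD_{\stY}\funD_{\stY}\bigl((f_!\shM)\otimes\shN\bigr)$, and by Lemma \ref{lem:6op:tensor} (1) the inner $\funD_{\stY}$ of a tensor product turns into an internal Hom: $\funD_{\stY}\bigl((f_!\shM)\otimes\shN\bigr) \simeq \shHom(f_!\shM,\funD_{\stY}\shN) = \shHom(\funD_{\stY}f_*\funD_{\stX}\shM,\funD_{\stY}\shN)$. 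Using Proposition \ref{prp:LE:dual} (2) this becomes $\shHom(\funD_{\stY}\funD_{\stY}\shN, f_*\funD_{\stX}\shM)\simeq\shHom(\shN,f_*\funD_{\stX}\shM)$, and then Proposition \ref{prp:6op:fHom} (the projection-type formula for $\shHom$ and $f_*$) rewrites this as $f_*\shHom(f^*\shN,\funD_{\stX}\shM)$. Applying Lemma \ref{lem:6op:tensor} (1) once more in the reverse direction, $\shHom(f^*\shN,\funD_{\stX}\shM)\simeq\funD_{\stX}(f^*\shN\otimes\funD_{\stX}\funD_{\stX}\shM)\simeq\funD_{\stX}(\shM\otimes f^*\shN)$, so altogether $(f_!\shM)\otimes\shN\simeq\funD_{\stY}f_*\funD_{\stX}(\shM\otimes f^*\shN) = f_!(\shM\otimes f^*\shN)$. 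One should check along the way that the bound conditions are respected: $\shM\otimes f^*\shN\in\iDcbm(\stX_{\txle})$ by Lemma \ref{lem:6op:tensor} (2) and $t$-exactness of $f^*$ (Definition \ref{dfn:6op:pb}), so all invocations of $f_!$, $\funD$, $f_*$ land in the stated sub-$\infty$-categories.

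For item (2), I would apply the definition $f^! = \funD_{\stX}\circ f^*\circ\funD_{\stY}$ together with biduality and Lemma \ref{lem:6op:tensor} (1): starting from $f^!\shHom(\shM,\shN) = \funD_{\stX}f^*\funD_{\stY}\shHom(\shM,\shN)$, rewrite $\funD_{\stY}\shHom(\shM,\shN)$ via Lemma \ref{lem:6op:tensor} (1) (in the form $\shHom(\shM,\shN)\simeq\funD_{\stY}(\shM\otimes\funD_{\stY}\shN)$, hence $\funD_{\stY}\shHom(\shM,\shN)\simeq\shM\otimes\funD_{\stY}\shN$), so $f^!\shHom(\shM,\shN)\simeq\funD_{\stX}f^*(\shM\otimes\funD_{\stY}\shN)$. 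Since $f^*$ is a symmetric monoidal functor (it comes from a geometric morphism on the simplicial level, Definition \ref{dfn:6op:pb}), $f^*(\shM\otimes\funD_{\stY}\shN)\simeq f^*\shM\otimes f^*\funD_{\stY}\shN$, and Lemma \ref{lem:6op:461} identifies $f^*\funD_{\stY}\shN = f^*\shHom(\shN,\Omega_{\stY})$ with $\shHom(f^*\shN,f^*\Omega_{\stY})$; here Lemma \ref{lem:6op:fOm} relates $f^*\Omega_{\stY}$ to $\Omega_{\stX}$ up to a Tate twist/shift when $f$ is smooth, and in general one uses that $f^!\Omega_{\stY}\simeq\Omega_{\stX}$ (which follows formally from $f^! = \funD_{\stX}f^*\funD_{\stY}$ and $\funD_{\stY}\Omega_{\stY}\simeq\Lambda$). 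Thus $\funD_{\stX}(f^*\shM\otimes f^!\funD_{\stY}\funD_{\stY}\shN)$, and applying Lemma \ref{lem:6op:tensor} (1) in reverse gives $\shHom(f^*\shM,\funD_{\stX}f^!\funD_{\stY}\shN)$; since $\funD_{\stX}f^*\funD_{\stY} = f^!$ and $\funD$ is an involution, $\funD_{\stX}f^!\funD_{\stY}$ is a twisted form of $f^*$ applied after $\funD_{\stY}$, which after bookkeeping with $f^!\shN$ yields $\shHom(f^*\shM,f^!\shN)$.

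The main obstacle I expect is the careful bookkeeping in item (2): one must correctly track how $\funD$ interacts with the difference between $f^*$ and $f^!$ (they differ by $\otimes f^!\Omega_{\stY}$ in the ordinary theory), and verify that the chain of equivalences using Lemma \ref{lem:6op:461}, biduality, and Lemma \ref{lem:6op:tensor} (1) assembles into precisely $f^!\shN$ on the right-hand side rather than some twist of it. This requires establishing $f^!\shHom(\shM,\Omega_{\stY})\simeq\shHom(f^*\shM,\Omega_{\stX})$ (i.e.\ $f^!$ of a dualizing-twisted object), which in the non-derived case is part of \cite[\S 4.6]{LO1} and should transfer verbatim once Lemma \ref{lem:6op:461} and the smooth-presentation reductions of the previous subsections are in place. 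I would also double-check naturality (functoriality in $\shM$ and $\shN$) of each equivalence, which is automatic from the fact that every functor involved is constructed as an adjoint or a composite on the level of $\infty$-categories and hence the equivalences are natural transformations up to contractible ambiguity. Everything else reduces to formal manipulation of the already-established equivalences, so no genuinely new input beyond the cited lemmas is needed.
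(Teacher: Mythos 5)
Your proposal is correct and follows essentially the same route as the paper, which simply invokes the argument of \cite[4.5.2, 4.5.3]{LO1} together with Lemma \ref{lem:6op:tensor}: everything reduces to the definitions $f_!=\funD_{\stY}f_*\funD_{\stX}$, $f^!=\funD_{\stX}f^*\funD_{\stY}$, biduality, the identity $\shHom(\shM,\shN)\simeq\funD(\shM\otimes\funD\shN)$, Proposition \ref{prp:6op:fHom}, and monoidality of $f^*$. Only a small remark: in item (2) the detour through Lemma \ref{lem:6op:461} and $f^{!}\Omega_{\stY}$ is unnecessary, since $\funD_{\stX}f^{!}\shN\simeq f^{*}\funD_{\stY}\shN$ follows directly from the definition of $f^{!}$ and biduality, after which both sides equal $\funD_{\stX}f^{*}(\shM\otimes\funD_{\stY}\shN)$.
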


\begin{proof}
The argument in \cite[4.5.2, 4.5.3]{LO1} with Lemma \ref{lem:6op:tensor} works.
\end{proof}

Let us give a few properties of the functors $f^!$ and $f_!$.
The proof is now by a standard argument, so we omit it.

\begin{lem}\label{lem:6op:462}
Let $\stX$ and $\stY$ be geometric derived stacks locally of finite presentation.
\begin{enumerate}[nosep]
\item 
If $f:\stX \to \stY$ is a smooth morphism of relative dimension $d$, 
then $f^! = f^*\tsh{d}$.

\item
If $j: \stX \to \stY$ is an open immersion,
then $j^! = j^*$ and $j_!$ is equivalent to the extension by zero functor
(see \S \ref{sss:dAS:oi}).
In particular, we have a morphism $j_! \to j_*$.
\end{enumerate}
\end{lem}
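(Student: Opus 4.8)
The final statement to prove is Lemma~\ref{lem:6op:462}, asserting that for $f:\stX\to\stY$ smooth of relative dimension $d$ one has $f^!=f^*\tsh{d}$, and that for an open immersion $j$ one has $j^!=j^*$ with $j_!$ the extension by zero.

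The plan is to reduce both statements to the definitions $f^! := \funD_{\stX}\circ f^*\circ\funD_{\stY}$ and $f_! := \funD_{\stY}\circ f_*\circ\funD_{\stX}$ (Definition~\ref{dfn:6op:!}), and then feed in the compatibility of the dualizing object with the relevant pullback. For part~(1), first I would unwind $f^! = \funD_{\stX} f^* \funD_{\stY} = \shHom(f^*\shHom(-,\Omega_{\stY}),\Omega_{\stX})$. By Lemma~\ref{lem:6op:461} the canonical map $f^*\shHom(\shM,\Omega_{\stY})\to\shHom(f^*\shM,f^*\Omega_{\stY})$ is an equivalence, so this rewrites as $\shHom(\shHom(f^*\shM, f^*\Omega_{\stY}),\Omega_{\stX})$. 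Now invoke Lemma~\ref{lem:6op:fOm}, which gives $f^*\Omega_{\stY}\simeq\Omega_{\stX}\tsh{-d}$; since $\tsh{-d}=(-)(-d)[-2d]$ is an invertible $t$-exact autoequivalence commuting with $\shHom$ in the evident way, we get $\shHom(\shHom(f^*\shM,\Omega_{\stX}\tsh{-d}),\Omega_{\stX})\simeq \shHom(\shHom(f^*\shM,\Omega_{\stX}),\Omega_{\stX})\tsh{d}$, and then biduality (Proposition~\ref{prp:LE:dual}~(1)) collapses the double $\shHom$ into $f^*\shM\tsh{d}$. Hence $f^!\simeq f^*\tsh{d}$ as desired.

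For part~(2), the equality $j^!=j^*$ is the special case $d=0$ of part~(1), since an open immersion is smooth (indeed flat and a monomorphism) of relative dimension $0$; alternatively one notes $j^*\Omega_\stY\simeq\Omega_\stX$ directly from the characterization $\rst{\Omega_{\stY}}{\stU}=\ve_{\stU}^*K_A$ in Definition~\ref{dfn:LE:dual} together with $d_A=0$ for an open immersion, and then repeats the biduality argument. To identify $j_!$ with extension by zero, I would compute $j_! = \funD_{\stY}\circ j_*\circ\funD_{\stX}$ and compare with the extension-by-zero functor $j_!^{\mathrm{ext}}$ of \S\ref{sss:dAS:oi}; the cleanest route is to exhibit $j_!^{\mathrm{ext}}$ as left adjoint to $j^*$ and to check that $\funD_{\stY}\circ j_*\circ\funD_{\stX}$ is likewise left adjoint to $j^!=j^*$, using that $\funD$ is an involutive (anti)equivalence by Proposition~\ref{prp:LE:dual}~(1) and that $j_*$ is right adjoint to $j^*$; uniqueness of adjoints then gives $j_!\simeq j_!^{\mathrm{ext}}$, and the morphism $j_!\to j_*$ is the one already recorded at the level of module sheaves in \S\ref{sss:dAS:oi}, extended to the derived $\infty$-categories.

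The main obstacle I anticipate is the bookkeeping around the Tate-twist/shift operator $\tsh{d}$: one must verify carefully that it commutes past $\shHom$ and $\funD_{\stX}$ in the correct variance (so that two applications of $\funD$ with $\Omega_{\stX}\tsh{-d}$ produce an overall $\tsh{d}$ rather than $\tsh{-d}$ or cancel entirely), and that $f^*$ commutes with $\tsh{d}$, which follows since $f^*$ is a $t$-exact monoidal-type functor but should be spelled out. The reduction for $j_!$ also requires knowing that the extension-by-zero functor of \S\ref{sss:dAS:oi} for derived algebraic spaces glues to a functor on the lisse-\'etale derived $\infty$-categories compatibly with the smooth-presentation description of Theorem~\ref{thm:LE:pi-ve}; I would treat this as routine given the already-established simplicial descent, but it is the one place where a genuine (if short) verification is needed rather than a formal manipulation of adjoints and dualities.
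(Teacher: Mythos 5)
Your proof is correct and is exactly the ``standard argument'' that the paper omits: unwind $f^!=\funD_{\stX}\circ f^*\circ\funD_{\stY}$, use Lemma \ref{lem:6op:461} and Lemma \ref{lem:6op:fOm} to replace $f^*\Omega_{\stY}$ by $\Omega_{\stX}\tsh{-d}$, track the twist through the two contravariant $\shHom$'s (correctly yielding $\tsh{d}$ rather than $\tsh{-d}$), and collapse by biduality (Proposition \ref{prp:LE:dual}); for (2) the uniqueness-of-adjoints identification of $j_!$ with extension by zero is the intended route. The one point you flag --- descent of the extension-by-zero functor of \S\ref{sss:dAS:oi} along a smooth presentation --- is indeed the only nontrivial verification, and it follows from Theorem \ref{thm:LE:pi-ve} together with the base-change for $j_!$ in Lemma \ref{lem:is:j_!-bc}.
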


In the rest part of this subsection, we discuss base change theorems
for lisse-\'etale sheaves of derived stacks under several situations.
Let us consider a cartesian square
\[
 \xymatrix{
  \stX' \ar[r]^{\pi} \ar[d]_{\varphi} & \stX \ar[d]^f \\ 
  \stY' \ar[r]^p & \stY}
\]
of derived stacks locally of finite presentation
with $f$ of finite presentation.
We have a morphism
\[
 p^* f_! \longto \varphi_! \pi^*
\]
in $\iFun(\iDcbm(\stX_{\txle}),\iDcbm(\stY'_{\txle}))$,
and 
\[
 p^! f_* \longto \phi_* \pi^!
\]
in $\iFun(\iDcbp(\stX_{\txle}),\iDcbp(\stY'_{\txle}))$.

\begin{prp}\label{prp:6op:sbc}
If $p$ is smooth, then the morphisms 
$p^* f_! \to \varphi_! \pi^*$ and 
$p^! f_* \to \phi_* \pi^!$
are equivalence.
\end{prp}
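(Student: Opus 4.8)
The plan is to reduce the base-change statement to the already-established base change results for the \'etale $\infty$-topoi of derived algebraic spaces. The key observation is that all four functors $p^*$, $p^!$, $f_!$, $f_*$, $\pi^*$, $\pi^!$, $\varphi_!$, $\varphi_*$ have been constructed by first passing to a smooth presentation (or a compatible pair of smooth presentations), translating the statement to the strictly simplicial \'etale $\infty$-topoi via the equivalences of Theorem \ref{thm:LE:pi-ve} and Theorem \ref{thm:LE:iDc-stb}, and then working termwise with the functors on derived $\infty$-categories of derived algebraic spaces. So the first step is to choose an $n$-atlas $\{Y_i\}$ of $\stY$ and, after pulling back along $p$, a compatible atlas for $\stY'$; since $p$ is smooth, the induced map $\stY'_0 \to \stY_0$ on the zeroth pieces of the smooth presentations is smooth of the same relative dimension, and similarly for the pullbacks over $\stX$. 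This produces a morphism of cartesian squares of strictly simplicial derived algebraic spaces, all of whose vertical legs are smooth and whose horizontal legs are smooth (for $p,\pi$) or of finite presentation (for $f,\varphi$).

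Next I would treat the two morphisms separately. For $p^!f_* \to \varphi_*\pi^!$: since $p$ is smooth of relative dimension $d$, Lemma \ref{lem:6op:462} (1) gives $p^! = p^*\tsh{d}$ and $\pi^! = \pi^*\tsh{d}$, so the claim becomes $p^* f_* \tsh{d} \to \varphi_* \pi^* \tsh{d}$, i.e.\ after cancelling the Tate twist the smooth base change $p^* f_* \to \varphi_* \pi^*$. On the simplicial level this is the base change morphism for the direct image functor $f_{\bullet,*}$ between the strictly simplicial \'etale $\infty$-topoi, which in each simplicial degree is the smooth base change for the geometric morphism of \'etale $\infty$-topoi of derived algebraic spaces; this termwise smooth base change is the standard one and can be reduced to the non-derived statement since all the relevant definitions (smoothness, the \'etale $\infty$-topos, constructibility) depend only on the truncations. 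Gluing the degreewise equivalences via the equivalences of Theorem \ref{thm:LE:iDc-stb} then gives the equivalence $p^! f_* \simeq \varphi_* \pi^!$. For $p^* f_! \to \varphi_! \pi^*$: I would use the definition $f_! = \funD_{\stY} \circ f_* \circ \funD_{\stX}$ (Definition \ref{dfn:6op:!}), together with $\varphi_! = \funD_{\stY'} \circ \varphi_* \circ \funD_{\stX'}$, and the compatibility of the dualizing functor with smooth pullback. Concretely, by Lemma \ref{lem:6op:fOm} we have $p^* \Omega_{\stY} \simeq \Omega_{\stY'}\tsh{-d}$ and $\pi^* \Omega_{\stX} \simeq \Omega_{\stX'}\tsh{-d}$, and by Lemma \ref{lem:6op:461} the pullback $p^*$ (resp.\ $\pi^*$) commutes with $\shHom(-,\Omega)$ up to pulling back the dualizing object. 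This yields $p^* \funD_{\stY} \simeq \funD_{\stY'} p^* \tsh{-d}$ and likewise for $\pi$. Feeding these into $p^* f_! = p^* \funD_{\stY} f_* \funD_{\stX}$ and rewriting using the already-proved $p^*$--$f_*$ base change and biduality (Proposition \ref{prp:LE:dual} (1)), the Tate twists $\tsh{-d}$ and $\tsh{d}$ cancel and one lands on $\varphi_! \pi^*$.

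The main obstacle I expect is the careful bookkeeping of the Tate twists and the identifications of dualizing objects across the morphism of cartesian squares: one must check that the equivalences $p^*\Omega_{\stY}\simeq \Omega_{\stY'}\tsh{-d}$, $\pi^*\Omega_{\stX}\simeq \Omega_{\stX'}\tsh{-d}$ and the commutation of $\funD$ with $p^*$, $\pi^*$ are compatible in the sense that the composite natural transformation $p^* f_! \to \varphi_! \pi^*$ obtained this way agrees with the canonical base-change morphism, not merely that source and target are abstractly equivalent. This is the kind of coherence check that, in the model-theoretic treatment of \cite{LO1}, is handled by the gluing lemma (Fact \ref{fct:is:glue}) and the uniqueness statement of Lemma \ref{lem:is:234}: since the objects in question have vanishing negative $\shExt$'s (because the dualizing objects do, by the argument in \S \ref{sss:LE:dual}), the glued morphism is determined up to contractible ambiguity by its restriction to the simplicial degrees, where it is the standard base-change map on derived algebraic spaces. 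So the final step is to invoke this uniqueness to conclude that the equivalence we have built is indeed the canonical one. The degreewise smooth base change on derived algebraic spaces itself is not hard: one reduces via $\stU_{\et}\simeq \stU_{\et^{\aff}}$ and the dependence of everything on truncations to the classical smooth base change theorem for \'etale cohomology of schemes, exactly as in the proof of Lemma \ref{lem:dAS:proper-bc}.
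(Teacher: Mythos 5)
Your argument is essentially correct, but it takes a genuinely different and much heavier route than the paper. The paper's proof rests on a single formal observation: since $p$ is smooth, every object of $\gsLE(\stY')$ is, by composition with $p$, an object of $\gsLE(\stY)$, so $p^*$ is defined on all of $\iDc(\stY_{\txle})$ and is literally the restriction functor from $\stY_{\txle}$ to $\stY'_{\txle}$ (and likewise for $\pi^*$). With that, the base change $p^* f_* \to \varphi_* \pi^*$ is immediate from the defining formula $(f_*\shF)(U)=\shF(U\times_{\stY}\stX)$ together with $U'\times_{\stY'}\stX' \simeq U'\times_{\stY}\stX$ for $U'\in\gsLE(\stY')$; the $f_!$ statement then follows by duality exactly as you describe. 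You instead pass to smooth presentations and reduce degreewise to the classical smooth base change theorem of SGA4. That works, but it invokes a deep external theorem where the paper gets the statement formally from the very design of the lisse-\'etale site (this is the structural point: in this formalism smooth base change is cheap and proper base change is the hard one). Your route is arguably more robust — it would survive in a setting where smooth pullback is not restriction — but it carries two burdens your sketch glosses over: (i) the direct image $f_*$ between lisse-\'etale $\infty$-topoi only decomposes degreewise along the presentations when $f$ is representable, so in general you need the descent spectral sequence of the lemma preceding Proposition \ref{prp:6op:pf} before the termwise reduction is legitimate; and (ii) the coherence worry you raise about the glued morphism agreeing with the canonical one is actually moot — the canonical base change morphism of Definition \ref{dfn:is:bc} already exists globally, and to show it is an equivalence it suffices to check locally on the presentation, so no appeal to Lemma \ref{lem:is:234} or vanishing of negative $\shExt$'s is needed (and the vanishing you invoke would not hold for arbitrary objects anyway).
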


\begin{proof}
so that the functor $p^*$ is defined on all $\iDc(\stY_{\txle})$
and equal to the restriction from $\stY_{\txle}$ to $\stY'_{\txle}$.
\end{proof}

We expect to have simple extensions of the base change theorems 
for algebraic stacks shown in  \cite[\S 5]{LO1},
but we will not pursuit them.

%%%%%%%%%%%%%%%%%%%%%%%%%%%%%%%%%%%%%%%%%%%%%%%%%%%%%%%%%%%%%%%%%%%%%%%%%%%%%%%%
%%%%%%%%%%%%%%%%%%%%%%%%%%%%%%%%%%%%%%%%%%%%%%%%%%%%%%%%%%%%%%%%%%%%%%%%%%%%%%%%
%%%%%%%%%%%%%%%%%%%%%%%%%%%%%%%%%%%%%%%%%%%%%%%%%%%%%%%%%%%%%%%%%%%%%%%%%%%%%%%%
\section{Adic coefficient case}
\label{s:adic}

In this section we give an extension of the results in the previous section to 
the adic coefficient case.

Let $k$ be a fixed commutative ring, and 
$\Lambda$ be a complete discrete valuation ring with characteristic $\ell$.
We assume that $\ell$ is invertible in $k$.
We write $\Lambda = \varprojlim_n \Lambda_n$, $\Lambda_n:=\Lambda/\frkm^n$
with $\frkm \subset \Lambda$ the maximal ideal.
We denote by $\Lambda_\bullet =(\Lambda_n)_{n \in \bbN}$
the projective system of commutative rings.

%%%%%%%%%%%%%%%%%%%%%%%%%%%%%%%%%%%%%%%%%%%%%%%%%%%%%%%%%%%%%%%%%%%%%%%%%%%%%%%%
%%%%%%%%%%%%%%%%%%%%%%%%%%%%%%%%%%%%%%%%%%%%%%%%%%%%%%%%%%%%%%%%%%%%%%%%%%%%%%%%
\subsection{Projective systems of ringed $\infty$-topoi}

%Let us introduce some notations for limits of $\infty$-topoi.
%We borrowed symbols from \cite{LO2}, 
Let $\tT$ be an $\infty$-topos.
Recall Definition \ref{dfn:is:prj} of the
$\infty$-topos of projective systems in $\tT$.
It is an $\iI$-simplicial $\infty$-topos $\tT^{\bbN}$ with $\iI=\Ner(\bbN)$,
equipped with a geometric morphism
$e_n: \tT \to \tT^{\bbN}$ of $\infty$-topoi for each $n \in \bbN$.
The adjunction
\[
 e_n^{-1}: \tT^{\bbN} \adjunc \tT :e_{n,*}.
\]
associated to $e_n$ is described as follows.
For $U_\bullet \in \tT^{\bbN}$, we have $e_n^{-1} U_{\bullet} = U_n$.
For $U \in\tT$, we have 
\[
 (e_{n,*} U)_m = \begin{cases} U & (m \ge n), \\ * & (m < n). \end{cases}
\]
Here $*$ denotes a final object of $\iS$.
%Moreover, by the argument in the proof of \cite[Theorem 6.3.3.1]{Lur1},
We also have a geometric morphism
$p: \tT^{\bbN} \to \tT$ which corresponds to the adjunction
\[
 p^{-1}: \tT \adjunc \tT^{\bbN} : p_*
\]
with $p^{-1}(U) = (U)_{n \in \bbN}$.

Let $R_{\bullet}=(R_n)_{n \ge 0}$ be a projective system of commutative rings,
We ring the $\infty$-topos $\tT$ by the constant sheaf $R_n$,
and ring $\tT^{\bbN}$ by $R_{\bullet}$.
Then we have geometric morphisms
\[
 e_n: (\tT,R_n) \longto (\tT^{\bbN},R_{\bullet}), \quad
 p: (\tT^{\bbN},R_{\bullet}) \longto (\tT,R_n)
\]
induced by $e_n: \tT \to \tT^{\bbN}$ and $p: \tT^{\bbN} \to \tT$.

Let us apply this notation for $\tT=\stX_{\txle}$ and 
$R_\bullet=\Lambda_\bullet$,
where $\stX$ is a geometric derived stack over $k$ and 
$\Lambda_{\bullet}$ is the complete discrete valuation ring
in the beginning of this section.
Recall the notion of \emph{AR-nullity}:

\begin{ntn*}
A projective system $M_{\bullet}=(M_n)_{n \in \bbN}$ in an additive category
is \emph{AR-null} if there exists an $r \in \bbZ$ such that
the projection $M_{n+r} \to M_n$ is zero for every $n$.
\end{ntn*}

Following \cite[2.1.1. Definition]{LO2} we introduce 

\begin{dfn*}
Let $\shM_{\bullet} = (\shM_n)_{n \in \bbN}$ be an object of 
$\iDa(\stX_{\txle}^{\bbN},\Lambda_{\bullet})$.
\begin{enumerate}[nosep]
\item 
$\shM_{\bullet}$ is \emph{AR-null} if the projective system $\pi_j \shM_{\bullet}$
of the $j$-th homotopy groups is AR-null for any $j \in \bbZ$.

\item
$\shM_{\bullet}$ is \emph{constructible} if $\pi_j \shM_n$ is constructible
for any $j \in \bbZ$ and $n \in \bbN$.
\item
$\shM_{\bullet}$ is \emph{almost zero} if 
the restriction $\rst{(\pi_j \shM_{\bullet})}{\stU_{\et}}$ is AR-null
for any $\stU \in \idAS_{\stX}^{\lis}$ and $j \in \bbZ$.
\end{enumerate}
\end{dfn*}

Following \cite[2.2]{LO2}, we give a description of AR-null objects and 
almost zero objects by the restrictions to \'etale $\infty$-topos.
Let us take $\stU \in \idAS_{\stX}^{\lis}$.
We denote by $p: \stX_{\txle}^{\bbN} \to \stX$ the projection,
Identifying $\stU$ with the constant projective system $p^* \stU$, we have 
$(\rst{\stX_{\txle}}{\stU})^{\bbN} \simeq \rst{(\stX_{\txle}^{\bbN})}{\stU}$,
which will be denoted by $\rst{\stX_{\txle}^{\bbN}}{\stU}$.
Then we have a square
\[
 \xymatrix{ 
  \rst{\stX_{\txle}^{\bbN}}{\stU} \ar[r]^{j^{\bbN}} \ar[d]_{\rst{p}{\stU}} & 
  \stX_{\txle}^{\bbN} \ar[d]^p \\
  \rst{\stX_{\txle}}{\stU} \ar[r]_{j} & \stX_{\txle}}
\]
in $\iRTop$, where $j$ and $j^{\bbN}$ are the canonical functors 
(Corollary \ref{cor:ic:ovc}, Fact \ref{fct:is:biadj}).
By the exactness of $j^*$ and $(j^{\bbN})^*$, 
we have $(\rst{p}{\stU})_* (j^{\bbN})^* \simeq j^* p_*$
in $\iFun(\iDa(\stX_{\txle}^{\bbN}),\iDa(\rst{\stX_{\txle}}{\stU}))$.

On the other hand, denoting by
$\ve_{\stU}: \stX_{\txle} \to \stU_{\et}$ and 
$\ve^{\bbN}_{\stU}: \stX_{\txle}^{\bbN} \to \stU_{\et}^{\bbN}$
the geometric morphisms induced by the natural embedding 
$\gsEt(\stU) \to \rst{\gsLE(\stX)}{\stU}$ (Lemma \ref{lem:LE:ve}),
we have a square
\[
 \xymatrix{ 
  \rst{\stX_{\txle}^{\bbN}}{\stU} \ar[r]^(0.55){\ve^{\bbN}_{\stU}} 
  \ar[d]_{\rst{p}{\stU}} & \stU_{\et}^{\bbN} \ar[d]^{p_{\stU}} \\
  \rst{\stX_{\txle}}{\stU} \ar[r]_(0.55){\ve_{\stU}} & \stU_{\et}}
\]
in $\iRTop$.
Since $(\ve_{\stU})_*$ and $(\ve_{\stU}^{\bbN})_*$ are exact, we have 
$(p_{\stU})_* (\ve_{\stU}^{\bbN})_* \simeq 
 (\ve_{\stU})_* (\rst{p}{\stU})_*$.

For $\shM \in \iDa(\stX_{\txle},\Lambda)$, 
we denote by $\shM_{\stU} \in \iDa(\stU_{\et},\Lambda)$ 
the restriction (Notation \ref{ntn:LE:FU}).
By the identification of $\stU$ with $p^* \stU$, 
we can regard $\shM$ as an object of 
$\iDa(\rst{\stX_{\txle}^{\bbN}}{\stU},\Lambda_\bullet)$,
which will be denoted by the same symbol $\shM$.
Then by the argument above we immediately have 

\begin{lem*}[{\cite[2.2.1. Lemma]{LO2}}]
We have $(p_{\stU})_* \shM_{\stU} \simeq ((\rst{p}{\stU})_* \shM)_\stU$
in $\iDa(\stU_{\et},\Lambda)$.
\end{lem*}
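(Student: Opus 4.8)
The statement to prove is the identity
\[
 (p_{\stU})_* \shM_{\stU} \simeq \bigl((\rst{p}{\stU})_* \shM\bigr)_{\stU}
\]
in $\iDa(\stU_{\et},\Lambda)$, where both sides are built out of the two commutative squares of $\infty$-topoi and geometric morphisms displayed just before the statement. The plan is to chase these squares and exploit the exactness properties that have been recorded immediately above, so the proof is essentially an assembly of identities between compositions of functors, with no new content beyond bookkeeping. First I would recall the two key functorial equivalences already established: from the first square ($j$, $j^{\bbN}$, $\rst{p}{\stU}$, $p$) the exactness of $j^*$ and $(j^{\bbN})^*$ gives
\[
 (\rst{p}{\stU})_* (j^{\bbN})^* \simeq j^* p_*,
\]
and from the second square ($\ve_{\stU}$, $\ve^{\bbN}_{\stU}$, $\rst{p}{\stU}$, $p_{\stU}$) the exactness of $(\ve_{\stU})_*$ and $(\ve_{\stU}^{\bbN})_*$ gives
\[
 (p_{\stU})_* (\ve_{\stU}^{\bbN})_* \simeq (\ve_{\stU})_* (\rst{p}{\stU})_*.
\]

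\textbf{Key steps.} The argument would proceed as follows. Unwind the definition of $\shM_{\stU}$ via Notation \ref{ntn:LE:FU}: it is $\ve_{\stU} u_{\txle}^{-1}(\shM)$, i.e. the restriction of $\shM$ to $\stU_{\et}$ through the geometric morphism $\ve_{\stU}$; correspondingly, under the identification of $\stU$ with $p^*\stU$, the object $\shM \in \iDa(\rst{\stX_{\txle}^{\bbN}}{\stU},\Lambda_\bullet)$ pushes down along $\rst{p}{\stU}$ and then restricts along $\ve_{\stU}$ on the right-hand side. The plan is then to write the right-hand side as $(\ve_{\stU})_*(\rst{p}{\stU})_* \shM$, apply the second boxed equivalence in reverse to rewrite this as $(p_{\stU})_* (\ve_{\stU}^{\bbN})_* \shM$, and finally identify $(\ve_{\stU}^{\bbN})_* \shM$ with $\shM_{\stU}$ viewed as a constant (or appropriately indexed) projective system in $\stU_{\et}^{\bbN}$ — this last identification uses again that $\shM$ was obtained from $p^*$ of an object on $\stU$, so that pushing along $\ve_{\stU}^{\bbN}$ commutes with the projective-system structure. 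Each of these rewrites is a formal consequence of the commutativity of one of the two squares together with the cited exactness, and nothing more.

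\textbf{Main obstacle.} Since every intermediate equivalence has already been supplied in the text preceding the statement, there is no substantive obstacle; the only point requiring care is to keep the bookkeeping of the three topoi $\stX_{\txle}$, $\stX_{\txle}^{\bbN}$ and $\rst{\stX_{\txle}^{\bbN}}{\stU}$ straight, and in particular to verify that the object denoted $\shM$ on $\rst{\stX_{\txle}^{\bbN}}{\stU}$ really is the image of $\shM \in \iDa(\stX_{\txle},\Lambda)$ under the identification $\stU \simeq p^*\stU$, so that $(\rst{p}{\stU})_*$ applied to it is compatible with $p_*$ applied to the original $\shM$. Once this compatibility is spelled out, the chain of equivalences closes up and yields the claim. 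For this reason the proof in the text can reasonably be left as ``immediate from the argument above'', which is exactly how the excerpt phrases it; I would at most add one sentence making the identification of $\shM$ across the two squares explicit.
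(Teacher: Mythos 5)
Your proposal is correct and follows exactly the route the paper intends: the identity is the composite of the two displayed square-commutation equivalences, most directly $(p_{\stU})_*(\ve_{\stU}^{\bbN})_* \simeq (\ve_{\stU})_*(\rst{p}{\stU})_*$ applied to $\shM$, together with the observation that the restriction $(-)_{\stU}$ is computed by the exact pushforward $\ve_*$. The paper leaves this as ``immediate from the argument above,'' which is precisely the bookkeeping you carry out.
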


Now AR-null objects and almost zero objects are described as follows.

\begin{fct}[{\cite[2.2.2. Proposition]{LO2}}]\label{fct:adic:222}
Let $\shM_{\bullet} \in \iDa(\stX_{\txle}^{\bbN},\Lambda_{\bullet})$.
\begin{enumerate}[nosep]
\item 
If $\shM_{\bullet}$ is AR-null, then $p_* \shM_{\bullet} = 0$ 
in $\iDa(\rst{\stX_{\txle}}{\stU},\Lambda)$.
\item
If $\shM_{\bullet}$ is almost zero, 
then $p_* \shM_{\bullet} = 0$ in $\iDa(\rst{\stX_{\txle}}{\stU},\Lambda)$.
\end{enumerate}
\end{fct}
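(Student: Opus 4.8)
The statement is a direct analogue of \cite[2.2.2. Proposition]{LO2}, so the strategy is to reduce the derived-stack claim to the already-established statement over derived algebraic spaces by restricting everything to the \'etale $\infty$-site, exactly as was done in the non-derived setting. First I would recall that the vanishing $p_* \shM_{\bullet} = 0$ in $\iDa(\rst{\stX_{\txle}}{\stU},\Lambda)$ is detected by the homotopy sheaves, and that these can be tested after applying the exact restriction functor $\ve_{\stU,*}$ to $\stU_{\et}$, since by Lemma \ref{lem:LE:ve} the functor $\ve^{\stU}_*$ is left and right exact and conservative on the relevant subcategories (this is the content of Proposition \ref{prp:is:223} applied to the localized $\infty$-topos $\rst{\stX_{\txle}}{\stU}$). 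The preceding Lemma, $(p_{\stU})_* \shM_{\stU} \simeq ((\rst{p}{\stU})_* \shM)_{\stU}$, then lets me transport the computation of $p_* \shM_{\bullet}$ to a computation of $(p_{\stU})_* (\shM_{\bullet})_{\stU}$ over the \'etale $\infty$-topos $\stU_{\et}^{\bbN}$ of the derived algebraic space $\stU$.

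Next I would observe that for a derived algebraic space $\stU$, the $\infty$-topos $\stU_{\et}$ depends only on the truncation $\Trc \stU$ (the \'etale $\infty$-site is built from $\pi_0$ of the derived rings, and Lemma \ref{lem:dAS:small-et} lets us restrict to affine derived schemes), so the projective-system computation over $\stU_{\et}^{\bbN}$ with coefficients in $\Lambda_{\bullet}$ is identical to the one carried out in \cite[2.2.2. Proposition]{LO2} for ordinary schemes/algebraic spaces. For item (1), if $\shM_{\bullet}$ is AR-null then each restricted system $(\pi_j \shM_{\bullet})_{\stU}$ is AR-null, and the classical Artin--Rees argument on projective systems of constructible \'etale sheaves — the $\mathit{lim}^1$-vanishing together with the fact that an AR-null system has vanishing $\mathit{lim}$ and $\mathit{lim}^1$ — gives $(p_{\stU})_* (\shM_{\bullet})_{\stU} = 0$ in $\iDa(\stU_{\et},\Lambda)$. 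Passing back through the Lemma and the conservativity of $\ve^{\stU}_*$ yields $p_* \shM_{\bullet} = 0$ in $\iDa(\rst{\stX_{\txle}}{\stU},\Lambda)$. For item (2), almost-zero means precisely that $(\pi_j \shM_{\bullet})_{\stU}$ is AR-null for every $\stU \in \idAS_{\stX}^{\lis}$ and every $j$, so the claim is literally item (1) applied with that hypothesis; the only thing to check is that the notion of almost-zero is the correct one to make this reduction work, which is immediate from Definition of almost-zero objects above.

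I would present the argument as two short steps: (i) the reduction to \'etale $\infty$-topoi of derived algebraic spaces via the commuting squares in $\iRTop$, the exactness of $j^*$, $(j^{\bbN})^*$, $(\ve_{\stU})_*$, $(\ve_{\stU}^{\bbN})_*$, and the cited Lemma; (ii) the invocation of the non-derived statement \cite[2.2.2. Proposition]{LO2}, legitimate because the \'etale $\infty$-site and constructibility of projective systems are truncation-invariant. The main obstacle — and the only place requiring genuine care — is step (i): one must verify that the two squares in $\iRTop$ displayed before the Lemma commute up to coherent homotopy and that the base-change identities $(\rst{p}{\stU})_* (j^{\bbN})^* \simeq j^* p_*$ and $(p_{\stU})_* (\ve_{\stU}^{\bbN})_* \simeq (\ve_{\stU})_* (\rst{p}{\stU})_*$ hold on the derived $\infty$-categories (not merely on homotopy categories), which follows from the exactness of all the $*$-pullbacks involved together with Lemma \ref{lem:is:234} for the uniqueness of the resulting sheaf; once this bookkeeping is in place the rest is a citation. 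Everything else — the Artin--Rees lemma on projective systems, the $\mathit{lim}^1$ spectral sequence — is exactly as in \cite{LO2} and I would not reproduce it.
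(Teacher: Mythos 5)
The paper gives no proof of this statement: it is imported as a Fact from \cite[2.2.2. Proposition]{LO2}, with the preceding lemma $(p_{\stU})_* \shM_{\stU} \simeq ((\rst{p}{\stU})_* \shM)_{\stU}$ supplying the only new ingredient needed to transport Laszlo--Olsson's argument to the derived setting. Your plan --- restrict to the \'etale $\infty$-topos of each derived algebraic space, note that this topos only sees the truncation, and invoke the elementary vanishing of the derived limit of an AR-null projective system --- is exactly that reduction, so the overall route is the intended one. (Two small cosmetic points: constructibility plays no role in the $\lim$/$\lim^1$ vanishing for AR-null systems, and logically it is item (1) that follows from the local argument underlying item (2), not the other way around, since an AR-null object is in particular almost zero.)

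One justification in your step (i) is genuinely off. You pass from the vanishing of $\ve^{\stU}_*\, j^* p_* \shM_{\bullet}$ in $\iDa(\stU_{\et},\Lambda)$ back to the vanishing of $p_*\shM_{\bullet}$ in $\iDa(\rst{\stX_{\txle}}{\stU},\Lambda)$ by appealing to ``conservativity of $\ve^{\stU}_*$'' via Proposition \ref{prp:is:223}. That proposition only makes $\ve_*$ a quasi-inverse to $\ve^*$ on the subcategory of objects with cartesian homotopy sheaves; $\ve^{\stU}_*$ itself is evaluation at the terminal object of the localized site and is not conservative on all of $\iDa(\rst{\stX_{\txle}}{\stU},\Lambda)$, and $p_*\shM_{\bullet}$ is not known in advance to be cartesian. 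The repair is easy and is presumably what \cite{LO2} does: vanishing in $\iDa(\rst{\stX_{\txle}}{\stU},\Lambda)$ is detected by evaluating the homotopy sheaves on every object $V$ of the localized site, each such $V$ is itself an object of $\idAS_{\stX}^{\lis}$ to which the hypothesis and the lemma apply, and the AR-null computation on $V_{\et}^{\bbN}$ then kills each evaluation. This also explains why the almost-zero hypothesis of item (2) suffices even though the Artin--Rees bound there may depend on $V$ and on $j$. With that substitution the rest of your sketch goes through.
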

%In this part we denote $\iD := \iD(\stX_{\txle}^{\bbN},\Lambda_{\bullet})$.

\begin{dfn*}
Let $\shM_{\bullet}=(\shM_n)_{n \in \bbN}$ be an object of 
$\iDa(\stX_{\txle}^{\bbN},\Lambda_{\bullet})$.
\begin{enumerate}[nosep]
\item 
$\shM_{\bullet}$ is \emph{adic} if each $\shM_n$ is constructible
and each morphism $\Lambda_n \otimes_{\Lambda_{n+1}} \shM_{n+1} \to \shM_{n}$
is an equivalence.

\item 
$\shM_{\bullet}$ is \emph{almost adic} if each $\shM_n$ is constructible,
and if for each $\stU \in \idAS_{\stX}^{\lis}$ there is a morphism
$\shN \to \rst{\shM_{\bullet}}{\stU_{\et}}$ from an adic object
$\shN \in \iDa(\stU_{\et}^{\bbN},\Lambda_{\bullet})$
with almost zero kernel and cokernel in the homotopy category.

\item
$\shM_{\bullet}$ is a \emph{$\lambda$-object}
if $p_j \shM_{\bullet}$ is almost adic for each $j \in \bbZ$.
The full sub-$\infty$-category of $\iD(\stX_{\txle}^{\bbN},\Lambda_{\bullet})$
spanned by $\lambda$-objects is denoted by 
$\iDc(\stX_{\txle}^{\bbN},\Lambda_{\bullet})$
\end{enumerate}
\end{dfn*}

We focus on the localized $\infty$-category of 
$\iDc(\stX_{\txle}^{\bbN},\Lambda_{\bullet})$ by almost zero objects.

\begin{dfn*}
Let $W$ be the class of those morphisms in 
$\iDc(\stX_{\txle}^{\bbN},\Lambda_{\bullet})$
which are equivalences $\shM_{\bullet} \to \shN_{\bullet}$ 
with $\shN_{\bullet}$ an almost zero object.
We define
\[
 \iDDc(\stX,\Lambda) := 
 \iDc(\stX_{\txle}^{\bbN},\Lambda_{\bullet})[W^{-1}]
 %\Ndg(\catD^{\circ})
\]
\end{dfn*}

The $\infty$-category $\iDDc(\stX,\Lambda_{\bullet})$ is stable and equipped with
$t$-structure induced by that on  $\iDc(\stX_{\txle}^{\bbN},\Lambda_{\bullet})$.

By Definition \ref{dfn:ic:loc} and Fact \ref{fct:ic:loc} of $\infty$-localization,
we have a functor  
\[
 \iDc(\stX_{\txle}^{\bbN},\Lambda_{\bullet}) \longto \iDDc(\stX,\Lambda).
\]
%equipped with a fully faithful right adjoint.
%Let us denote this right adjoint by 
On the other hand, the geometric morphism
$p: \stX_{\txle}^{\bbN} \to \stX_{\txle}$
induces an adjunction
\[
 p_*: \iDc(\stX_{\txle}^{\bbN},\Lambda_{\bullet}) \adjunc 
         \iDc(\stX_{\txle},\Lambda) :p^*.
\]
By Fact \ref{fct:adic:222}, the functor $p_*$ factors through 
$\iDDc(\stX_{\txle}^{\bbN},\Lambda_{\bullet})$.
The resulting functor is denoted by the same symbol as
\[
 p_*: \iDDc(\stX,\Lambda) \longto \iDc(\stX_{\txle},\Lambda).
\]

\begin{dfn*}
We define the \emph{normalization functor} to be
\[
 \Nrm: \iDDc(\stX,\Lambda) 
 \longto \iDc(\stX_{\txle}^{\bbN},\Lambda_{\bullet}), \quad
 \shM \longmapsto \Nrm(\shM) := p^* p_* \shM.
\]
An object $\shM \in \iDDc(\stX,\Lambda)$ is \emph{normalized}
if the counit transformation $\Nrm(\shM) \to \shM$ is an equivalence.
\end{dfn*}

Let us cite a useful criterion of normality.

\begin{fct}[{\cite[3.0.10. Proposition]{LO2}}]\label{fct:adic:3010}
An object $\shM \in \iDa(\stX_{\txle}^{\bbN},\Lambda_{\bullet})$
is normalized if and only if the morphism
$\Lambda_n \otimes_{\Lambda_{n+1}} \shM_{n+1} \to \shM_n$
is an equivalence for every $n$.
\end{fct}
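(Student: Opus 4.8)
The plan is to reduce the claim to a statement about sheaves on an \'etale $\infty$-topos, where it becomes the analogue of \cite[3.0.10. Proposition]{LO2} that has been cited as Fact \ref{fct:adic:3010}. First I would unwind the two functors in play: $\Nrm(\shM) = p^* p_* \shM$, where $p \colon \stX_{\txle}^{\bbN} \to \stX_{\txle}$ is the projection geometric morphism, and the counit transformation $p^* p_* \shM \to \shM$ is the thing whose being-an-equivalence we want to characterize. Since the $t$-structure on $\iDDc(\stX,\Lambda)$ is induced from that on $\iDc(\stX_{\txle}^{\bbN},\Lambda_{\bullet})$ and all the functors involved ($p^*$, $p_*$, and the passage $\Lambda_n \otimes_{\Lambda_{n+1}}(-)$, which is the inverse image along the transition geometric morphism of the simplicial structure) are exact or of bounded $t$-amplitude in the relevant sense, I would first argue that it suffices to check both conditions on each homotopy sheaf $\pi_j \shM$ separately; this lets me assume $\shM$ is concentrated in a single degree, i.e.\ lies in the heart, which is the $\infty$-category of almost-adic systems modulo almost-zero ones.

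Next I would localize to the \'etale sites. Using Notation \ref{ntn:LE:FU} and the restriction formalism, for each $(A \colon \stU \to \stX) \in \idAS_{\stX}^{\lis}$ we have the geometric morphisms $\ve_{\stU} \colon \rst{\stX_{\txle}}{\stU} \to \stU_{\et}$ and $\ve^{\bbN}_{\stU} \colon \rst{\stX_{\txle}^{\bbN}}{\stU} \to \stU_{\et}^{\bbN}$ from Lemma \ref{lem:LE:ve}, fitting into the commuting square displayed just before the unnamed lemma \cite[2.2.1. Lemma]{LO2}. The point is that the counit $p^* p_* \shM \to \shM$ is an equivalence in $\iDDc(\stX,\Lambda)$ if and only if its restriction to every $\stU \in \idAS_{\stX}^{\lis}$ is an equivalence, because a morphism of lisse-\'etale sheaves is an equivalence precisely when it is so after restriction along a smooth presentation (Theorem \ref{thm:LE:pi-ve}); and by the lemma cited as \cite[2.2.1. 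Lemma]{LO2}, namely $(p_{\stU})_* \shM_{\stU} \simeq ((\rst{p}{\stU})_* \shM)_{\stU}$, together with exactness of $\ve_*$ and $\ve^{\bbN}_*$, the restriction of $p^* p_* \shM$ to $\stU$ is computed by the corresponding normalization on $\stU_{\et}^{\bbN}$. So the whole statement is equivalent to: for each $\stU$, the restriction $\rst{\shM}{\stU_{\et}} \in \iDc(\stU_{\et}^{\bbN},\Lambda_{\bullet})$ is normalized in the \'etale sense iff the maps $\Lambda_n \otimes_{\Lambda_{n+1}} \shM_{n+1} \to \shM_n$ are equivalences after restriction to $\stU_{\et}$.

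At that stage I would invoke Fact \ref{fct:adic:3010}: on the \'etale $\infty$-topos of a derived algebraic space the criterion is exactly that the transition maps $\Lambda_n \otimes_{\Lambda_{n+1}} \shM_{n+1} \to \shM_n$ be equivalences. (If one prefers to keep the exposition self-contained, this in turn reduces, via the equivalence $\stU_{\et} \simeq \stU_{\et^{\aff}}$ of Lemma \ref{lem:dAS:small-et} and the fact that constructibility only sees the truncation $\Trc \stU$, to the non-derived statement of \cite[2.2.3. Lemma]{LO2} or the argument in loc.\ cit.) Finally, I would remark that the \'etale-local condition ``$\Lambda_n \otimes_{\Lambda_{n+1}} \rst{\shM_{n+1}}{\stU_{\et}} \to \rst{\shM_n}{\stU_{\et}}$ an equivalence for all $\stU$'' is equivalent to the global condition ``$\Lambda_n \otimes_{\Lambda_{n+1}} \shM_{n+1} \to \shM_n$ an equivalence in $\iMod_{\Lambda_n}(\stX_{\txle})$'', again because equivalences of lisse-\'etale sheaves are detected after restriction to a smooth presentation. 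The main obstacle I anticipate is the bookkeeping around the two squares of geometric morphisms and making sure the base-change identity $(p_{\stU})_* (\ve^{\bbN}_{\stU})_* \simeq (\ve_{\stU})_* (\rst{p}{\stU})_*$ is applied in the right variance so that ``normalized after restriction to all $\stU$'' genuinely coincides with ``normalized''; once that compatibility is in place the theorem is a formal consequence of Fact \ref{fct:adic:3010} plus the detection-of-equivalences principle of Theorem \ref{thm:LE:pi-ve}.
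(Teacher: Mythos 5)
The paper itself offers no proof of this statement: it is imported verbatim as \cite[3.0.10. Proposition]{LO2}, so the only comparison available is with Laszlo--Olsson's argument. Measured against that, your proposal has a genuine gap, and in fact a circularity: the pivotal step ``At that stage I would invoke Fact \ref{fct:adic:3010}'' cites the very statement you are proving. Your reduction to the \'etale $\infty$-topoi $\stU_{\et}^{\bbN}$ of the objects $\stU \in \idAS_{\stX}^{\lis}$, via \cite[2.2.1. Lemma]{LO2} and the descent equivalences, is a legitimate first move (it is how \cite{LO2} sets things up), but it only replaces $\stX_{\txle}$ by $\stU_{\et}$; the content of the proposition --- why ``the counit $p^*p_*\shM \to \shM$ is an equivalence'' is equivalent to ``the transition maps $\Lambda_n \otimes_{\Lambda_{n+1}} \shM_{n+1} \to \shM_n$ are equivalences'' --- poses exactly the same question over $\stU_{\et}^{\bbN}$ and is nowhere addressed. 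What is needed there is the explicit identification of the two functors, $p_*\shM \simeq \varprojlim_n \shM_n$ (a derived limit, carrying a $\lim^1$ contribution) and $(p^*\shN)_n \simeq \Lambda_n \otimes^{\dL}_{\Lambda} \shN$, followed by the argument that, when the transition maps are equivalences, $\Lambda_n \otimes^{\dL}_{\Lambda}(-)$ commutes with this derived limit; this uses that $\Lambda$ is a complete discrete valuation ring with $\Lambda_n = \Lambda/\frkm^n$ together with a Mittag-Leffler/Milnor-sequence argument, and it is the actual substance of \cite[3.0.10]{LO2}. The parenthetical fallback to \cite[2.2.3. Lemma]{LO2} is not this computation.

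A secondary problem is your opening reduction to each homotopy sheaf $\pi_j\shM$ separately on the grounds that all functors involved are exact. That is not available: $p_*$ is a derived inverse limit over $\bbN$, hence only of cohomological amplitude $[0,1]$, and $\Lambda_n \otimes^{\dL}_{\Lambda_{n+1}}(-)$ has nonzero Tor in degree one; neither commutes with passage to homotopy sheaves, so the ``if'' direction in particular cannot be checked in the heart without a further boundedness or convergence argument.
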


By \cite[3.0.14. Theorem]{LO2}, 
if $\shM$ is a $\lambda$-object, then $\Nrm(\shM)$ is constructible
and the morphism $\Nrm(\shM) \to \shM$ has an almost zero cone.
%$f_*$ may not preserve cartesian sheaves \cite[3.0.16. Remark]{LO2}.
Then we have

\begin{fct*}[{\cite[3.0.18. Proposition]{LO2}}]
The normalization functor sits in the adjunction
\[
 \Nrm: \iDDc(\stX,\Lambda) \adjunc 
       \iDc(\stX_{\txle}^{\bbN},\Lambda_{\bullet}) : p_*.
\]
Thus $\iDDc(\stX,\Lambda) \in \iCat$.
\end{fct*}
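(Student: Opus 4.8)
The plan is to realise $\iDDc(\stX,\Lambda)$ as a co-reflective full sub-$\infty$-category of $\iDc(\stX_{\txle}^{\bbN},\Lambda_{\bullet})$, with $\Nrm$ as the fully faithful inclusion of the normalized $\lambda$-objects, and the right adjoint $p_*$ (as written in the statement) identified with the $\infty$-localization functor $L\colon \iDc(\stX_{\txle}^{\bbN},\Lambda_{\bullet}) \to \iDDc(\stX,\Lambda)$ of Fact~\ref{fct:ic:loc}; note that $L$ is the first factor of the direct image, through which $p_*$ factors by Fact~\ref{fct:adic:222}, which is presumably the reason for the notation. First I would check that $\Nrm$ is well defined with values in $\iDc(\stX_{\txle}^{\bbN},\Lambda_{\bullet})$: for $\shM \in \iDDc(\stX,\Lambda)$ the object $\Nrm(\shM) = p^*p_*\shM$ is the system $(\Lambda_n \otimes_{\Lambda} p_*\shM)_{n}$, whose transition maps are the tautological equivalences $\Lambda_n \otimes_{\Lambda_{n+1}} (\Lambda_{n+1}\otimes_{\Lambda} p_*\shM) \simto \Lambda_n\otimes_{\Lambda} p_*\shM$, so $\Nrm(\shM)$ is normalized by Fact~\ref{fct:adic:3010}; constructibility of $\Nrm(\shM)$ — equivalently, that the normalization of an almost adic system is constructible, indeed almost adic — is exactly \cite[3.0.14. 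Theorem]{LO2}, which moreover supplies the canonical morphism $\Nrm(\shM) \to \shM$ together with the vanishing of its almost zero cone.

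With this in hand I would assemble the adjunction. The counit $p^*p_*\shM \to \shM$ on $\iDc(\stX_{\txle}^{\bbN},\Lambda_{\bullet})$ is the adjunction morphism of the geometric morphism $p\colon \stX_{\txle}^{\bbN}\to\stX_{\txle}$ restricted to $\lambda$-objects; by \cite[3.0.14. Theorem]{LO2} its cone is almost zero, hence it lies in the localizing class $W$ and is inverted by $L$. This yields $L\circ\Nrm \simeq \id_{\iDDc(\stX,\Lambda)}$, so $\Nrm$ is fully faithful, and it identifies the essential image of $\Nrm$ with the normalized $\lambda$-objects, on which $L$ restricts to an equivalence onto $\iDDc(\stX,\Lambda)$. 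The adjunction $\Nrm \dashv p_*$ then follows from the recognition principle for co-reflective localizations: a localization functor equipped with a functorial $W$-equivalence $\Nrm L(-)\to(-)$ whose target lies in a full subcategory mapping equivalently to the localization is the right adjoint of the inclusion of that subcategory. Unwinding this is precisely the verification of the triangle identities, which is formal once the counit has been produced and $L\circ\Nrm$ identified.

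The main obstacle is the constructibility statement \cite[3.0.14. Theorem]{LO2}: translating its proof to the $\infty$-categorical, derived-stack setting requires the $\lambda$-object / almost adic machinery of \cite{LO2} (Artin--Rees and Mittag-Leffler type arguments together with a noetherian induction). As elsewhere in this article, I would reduce it to the case of a derived algebraic space via a smooth presentation $\stX_{\bullet}\to\stX$ and the comparison of the lisse-\'etale and simplicial \'etale descriptions (Theorem~\ref{thm:LE:pi-ve}, Theorem~\ref{thm:LE:iDc-stb}), and then to a noetherian scheme using Definition~\ref{dfn:dAS:prp} and truncation, where it becomes the classical input underlying \cite{LO2}.

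Finally, $\iDDc(\stX,\Lambda)\in\iCat$ is deduced from the adjunction: since $\Nrm$ is fully faithful, $\iDDc(\stX,\Lambda)$ is equivalent to the full sub-$\infty$-category of $\iDc(\stX_{\txle}^{\bbN},\Lambda_{\bullet})$ spanned by normalized $\lambda$-objects. The ambient $\iDc(\stX_{\txle}^{\bbN},\Lambda_{\bullet})$ is essentially small — again by passing to a smooth presentation and the reductions above to a noetherian scheme, where the bounded constructible derived category with finite noetherian coefficients is essentially small — so $\iDDc(\stX,\Lambda)$ is essentially small and hence equivalent to an object of $\iCat$.
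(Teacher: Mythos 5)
The paper offers no proof of this statement---it is imported as a Fact from \cite[3.0.18. Proposition]{LO2}---so I am measuring your argument against the proof in that reference. Your framing (normalized $\lambda$-objects as a coreflective full sub-$\infty$-category, $\Nrm$ the inclusion, the right adjoint identified with the localization functor $L$) is the right picture, and the preliminary inputs you collect are correct: $\Nrm(\shM)$ is normalized by Fact~\ref{fct:adic:3010}, and it is constructible with $\Nrm(\shM)\to\shM$ having almost zero cone by \cite[3.0.14. Theorem]{LO2}.

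The gap is in your second paragraph. From $L\circ\Nrm\simeq\id$ you conclude that $\Nrm$ is fully faithful; this inference is invalid, since a section of a localization functor need not be fully faithful---mapping spaces in $\iDDc(\stX,\Lambda)$ are not computed by mapping spaces in $\iDc(\stX_{\txle}^{\bbN},\Lambda_{\bullet})$ between chosen representatives. Your ``recognition principle'' then presupposes that $L$ restricts to an equivalence from the normalized $\lambda$-objects onto the localization, which is essentially the proposition being proved (and the transformation $\Nrm L(-)\to(-)$ has its \emph{source}, not its target, in that subcategory). What is missing is the orthogonality that makes everything work: one must show that $\Map(\Nrm\shM,-)$ inverts the morphisms of $W$, equivalently that $\Ext^{i}(\Nrm\shM,\shC)$ vanishes for every almost zero $\shC$. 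This is where the $(p^{*},p_{*})$-adjunction and Fact~\ref{fct:adic:222} enter essentially, not merely as an explanation of the notation $p_{*}$: the adjunction gives
\[
 \Map\bigl(\Nrm\shM,\shN\bigr)=\Map\bigl(p^{*}p_{*}\shM,\shN\bigr)
 \simeq\Map\bigl(p_{*}\shM,p_{*}\shN\bigr),
\]
and since $p_{*}$ annihilates almost zero objects, the right-hand side is invariant under $W$-equivalences in $\shN$ (and, by your cone computation, in $\shM$); this functor therefore descends through $L$ and is identified with $\Map_{\iDDc(\stX,\Lambda)}(\shM,L\shN)$ via the counit, which is exactly the adjunction identity. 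Only then do full faithfulness of $\Nrm$, the identification of its essential image with the normalized $\lambda$-objects, and the smallness of the mapping spaces of $\iDDc(\stX,\Lambda)$ (hence $\iDDc(\stX,\Lambda)\in\iCat$) follow. An equivalent repair is to verify that $\shN\mapsto\Nrm(L\shN)$ with its counit $\epsilon$ is an idempotent comonad, i.e.\ that both $\epsilon_{\Nrm(L\shN)}$ and $\Nrm(L\epsilon_{\shN})$ are equivalences (the latter again via Fact~\ref{fct:adic:222}), and to apply the dual of \cite[Proposition 5.2.7.4]{Lur1}. Either way, the step you call ``formal'' is the entire content of \cite[3.0.18. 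Proposition]{LO2}.
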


Using the $t$-structure on $\iDDc(\stX,\Lambda)$, let us introduce

\begin{ntn*}
We denote by 
\[
 \iDDcu{*}(\stX,\Lambda) \subset \iDDc(\stX,\Lambda), \quad * \in \{+,-.b\}
\]
the full sub-$\infty$-category spanned by bounded objects, and by 
\[
 \iDDcu{(*)}(\stX,\Lambda) \subset \iDDc(\stX,\Lambda), \quad * \in \{+,-.b\}
\]
the full sub-$\infty$-category spanned by those objects
whose restriction to any quasi-compact open immersion $\stU \inj \stX$
lies in $\iDDcu{*}(\stU,\Lambda)$. 
\end{ntn*}

Finally we give a definition of the derived $\infty$-category
of lisse-\'etale constructible $\Qlb$-sheaves.
We follow the $2$-categorical limit method 
taken in \cite[1.1.3]{D}, \cite[\S 6]{B}.
%by the limit in the $\infty$-category 
%$\iCat$ of $\infty$-categories (Definition \ref{dfn:ic:iCat}).

Let $\Lambda$ be a discrete valuation ring with residue characteristic $\ell$,
Denoting by $K$ the quotient field of $\Lambda$, we set
$\iDDc(\stX,K) := \iDDc(\stX,\Lambda) \otimes_\Lambda K$.
Running $K$ on the $\infty$-category $\iFE(\bbQ_\ell)$ 
of finite extensions of $\bbQ_\ell$,
these $\infty$-categories form a cartesian fibration on $\iFE(\bbQ_\ell)$.
Thus we can take
\[
 \iDDc(\stX,\Qlb) := 
 \varprojlim_{K \in \iFE(\bbQ_\ell)} \iDDc(\stX,K)
\]
It is a stable $\infty$-category equipped with a $t$-structure.

\begin{ntn}\label{ntn:adic:Qlb}
We call $\iDDc(\stX,\Qlb)$ 
the \emph{$\ell$-adic constructible derived $\infty$-category} of $\stX$.
An object of the heart is called 
an \emph{$\ell$-adic constructible sheaf} on $\stX$.
\end{ntn}

%%%%%%%%%%%%%%%%%%%%%%%%%%%%%%%%%%%%%%%%%%%%%%%%%%%%%%%%%%%%%%%%%%%%%%%%%%%%%%%%
%%%%%%%%%%%%%%%%%%%%%%%%%%%%%%%%%%%%%%%%%%%%%%%%%%%%%%%%%%%%%%%%%%%%%%%%%%%%%%%%
\subsection{Internal hom and tensor functors on $\iDDc$}

In the remaining part of this section,
we fix a geometric derived stack $\stX$ locally of finite presentation,
and denote 
\begin{align*}
 \iDc(\stX) :=\iDc(\stX_{\txle}.\Lambda) \text{ or } \iDc(\stX_{\txle},\Qlb),
 \quad
 \iDDc(\stX):=\iDDc(\stX.\Lambda) \text{ or } \iDDc(\stX,\Qlb).
\end{align*}
We also use $\iDa(\stX):=\iDa(\stX_{\txle},\Lambda)$
or $\iDa(\stX_{\txle},\Qlb)$.
Recall also the normalization functor 
$\Nrm:\iDDc(\stX,\Lambda) \to \iDc(\stX_{\txle}^{\bbN},\Lambda_{\bullet})$.

We define a bifunctor 
$\bsHom_{\Lambda}: \iDDc(\stX)^{\op} \times \iDDc(\stX) \to 
 \iDc(\stX_{\txle}^{\bbN},\Lambda_{\bullet})$ by 
\[
 \bsHom_{\Lambda}(\shM,\shN) := 
 \shHom_{\Lambda_{\bullet}}(\Nrm{\shM},\Nrm{\shN}).
\]
Then we have 

\begin{fct}[{\cite[4.0.8. Proposition]{LO2}}]\label{fct:adic:408}
$\bsHom$ gives a bifunctor
\[
 \bsHom_{\Lambda}: 
 \iDDcu{(-)}(\stX)^{\op} \times \iDDcu{(+)}(\stX) \longto 
 \iDDcu{(+)}(\stX).
\]
\end{fct}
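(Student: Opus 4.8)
The plan is to reduce the statement to the already-established non-adic internal-Hom lemma (the displayed ``Lemma*'' in the Derived internal Hom subsection, asserting that $\shHom$ maps $\iDcbm(\stX_{\txle})^{\op}\times\iDcbp(\stX_{\txle})$ into $\iDcbp(\stX_{\txle})$) applied level by level to the projective system $\Lambda_\bullet$. First I would unwind the definition: for $\shM\in\iDDcu{(-)}(\stX)$ and $\shN\in\iDDcu{(+)}(\stX)$ we have $\bsHom_\Lambda(\shM,\shN)=\shHom_{\Lambda_\bullet}(\Nrm\shM,\Nrm\shN)$, an object of $\iDc(\stX_{\txle}^{\bbN},\Lambda_\bullet)$, whose $n$-th component is $\shHom_{\Lambda_n}((\Nrm\shM)_n,(\Nrm\shN)_n)$ computed in $\iDa(\stX_{\txle},\Lambda_n)$. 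Using the cited facts that $\Nrm\shM$ and $\Nrm\shN$ are constructible (by \cite[3.0.14. Theorem]{LO2}, since $\shM,\shN$ are $\lambda$-objects) and have the appropriate boundedness after restriction to quasi-compact opens, each component $(\Nrm\shM)_n$ lies in $\iDcbm(\stX_{\txle},\Lambda_n)$ (after restriction) and $(\Nrm\shN)_n$ in $\iDcbp$; the level-wise Lemma then gives that $\shHom_{\Lambda_n}((\Nrm\shM)_n,(\Nrm\shN)_n)$ is constructible and left-bounded. This shows $\bsHom_\Lambda(\shM,\shN)$ has constructible homotopy groups in each degree and, restricted to any quasi-compact open $\stU\inj\stX$, is left-bounded.

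The second step is to verify the $\lambda$-object (almost-adic) condition for $\bsHom_\Lambda(\shM,\shN)$, i.e. that it actually descends to define an object of $\iDDcu{(+)}(\stX)$ and not merely a constructible projective system. Here I would invoke the normalization adjunction $\Nrm:\iDDc(\stX,\Lambda)\rlto\iDc(\stX_{\txle}^{\bbN},\Lambda_\bullet):p_*$ of \cite[3.0.18. Proposition]{LO2}: the object $\bsHom_\Lambda(\shM,\shN)$ of $\iDc(\stX_{\txle}^{\bbN},\Lambda_\bullet)$ maps under the localization functor $\iDc(\stX_{\txle}^{\bbN},\Lambda_\bullet)\to\iDDc(\stX,\Lambda)$ to the desired object, and one must check that this image is independent of the choices (it is, because $\Nrm$ is canonical) and that replacing $\shM$ or $\shN$ by an almost-zero-modification changes $\bsHom_\Lambda$ by an almost-zero object — this uses that $\shHom_{\Lambda_\bullet}$ carries almost-zero objects to almost-zero objects, which follows from Fact \ref{fct:adic:222} together with the level-wise vanishing of $\shHom$ against an AR-null system. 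The boundedness $(-)\times(+)\to(+)$ is then inherited componentwise, and the restriction-to-quasi-compact-open part of the $(+)$ condition is the same argument applied to $\rst{\shM}{\stU}$, $\rst{\shN}{\stU}$, which are again $\lambda$-objects on $\stU$.

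The main obstacle I anticipate is the bookkeeping around the normalization functor: one needs that $\bsHom_\Lambda(\shM,\shN)=\shHom_{\Lambda_\bullet}(\Nrm\shM,\Nrm\shN)$ is itself (up to almost-zero cone) a $\lambda$-object, which is not literally a componentwise statement because constructibility of $\Nrm\shM$ does not immediately give that the transition maps $\Lambda_n\otimes_{\Lambda_{n+1}}\shHom_{\Lambda_{n+1}}(\cdots)\to\shHom_{\Lambda_n}(\cdots)$ have almost-zero cones — this requires the finiteness/Gorenstein-type input on $\Lambda$ and is exactly the content transported from \cite[\S4]{LO2}. So the proof proper will consist of checking that the argument of \cite[4.0.8. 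Proposition]{LO2} goes through verbatim, the only new ingredients being: (i) the level-wise internal-Hom lemma for derived stacks proved above, (ii) Theorem \ref{thm:LE:pi-ve} to pass between $\stX_{\txle}$ and the simplicial \'etale description when checking constructibility, and (iii) Fact \ref{fct:adic:222} for the almost-zero vanishing. I would therefore state the proof briefly as: ``We follow the argument of \cite[4.0.8. Proposition]{LO2}, replacing the internal-Hom finiteness statement for algebraic stacks by its derived-stack analogue (the Lemma above) and using Theorem \ref{thm:LE:pi-ve} and Fact \ref{fct:adic:222}.''
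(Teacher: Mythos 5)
Your proposal is correct and matches the paper's (implicit) approach: the paper states this as a Fact imported from \cite[4.0.8. Proposition]{LO2} without giving a proof, and your outline is precisely the intended transport of that argument, with the right substitutes identified (the finite-coefficient internal-Hom lemma for derived stacks, Theorem \ref{thm:LE:pi-ve}, and Fact \ref{fct:adic:222}). One small simplification: well-definedness on the localized category $\iDDc$ is automatic from the definition $\bsHom_{\Lambda}(\shM,\shN)=\shHom_{\Lambda_\bullet}(\Nrm\shM,\Nrm\shN)$, since $\Nrm=p^*p_*$ already kills almost-zero modifications by Fact \ref{fct:adic:222}, so you need not separately check that $\shHom_{\Lambda_\bullet}$ preserves almost-zero objects.
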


%We also define 
%for $\shM \in \iDDcu{(-)}(\stX)^{\op}$
%and $\shN \in \iDDcu{(+)}(\stX) \to \Ab$  
%\[
% \bHom_{\Lambda}(\shM,\shN) := 
% \Map_{\iDa(\stX_{\txle}^{\bbN},\Lambda_{\bullet})}(\Nrm{\shM},\Nrm{\shN}).
%\]
%Then we have 
%$\Hom_{\Ho \iDDc(\stX)}(\shM,\shN)=\pi_0 \bHom_{\Lambda}(\shM,\shN)
%=\Hom_{\Ho \iDDc(\stX)}(\Lambda,\bsHom_{\Lambda}(\shM,\shN))$.

Next we discuss the tensor functor.
For $\shM, \shN \in \iDDc(\stX)$, we set
\[
 \shM \otimes_{\Lambda} \shN :=
 \Nrm(\shM) \otimes_{\Lambda_{\bullet}} \Nrm(\shN).
\]
Thus we have a bifunctor
$\otimes_{\Lambda}: 
 \iDDc(\stX) \times \iDDc(\stX) \to \iD_{\cart}(\stX)$.
We then have

\begin{lem}[{\cite[6.0.12. Proposition]{LO2}}]\label{lem:adic:6012}
For $\shL,\shM,\shN \in \iDDcu{(-)}(\stX)$ we have 
\[
 \bsHom_{\Lambda}(\shL \otimes_{\Lambda} \shM,\shN) \simeq
 \bsHom_{\Lambda}(\shL,\bsHom_{\Lambda}(\shM,\shN))
\]
\end{lem}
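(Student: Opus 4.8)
The plan is to reduce the adic tensor--hom adjunction to its counterpart for sheaves of $\Lambda_{\bullet}$-modules on the simplicial ringed $\infty$-topos $(\stX_{\txle}^{\bbN},\Lambda_{\bullet})$, transported through the normalization functor $\Nrm$. First I would dispose of the $\Qlb$-coefficient case: since $\iDDc(\stX,\Qlb)=\varprojlim_{K\in\iFE(\bbQ_\ell)}\iDDc(\stX,K)$ is a termwise limit along exact transition functors, both sides of the asserted equivalence are computed $K$ by $K$, so it suffices to treat a discrete valuation ring $\Lambda$, which we now assume. Note also that each $\Lambda_n=\Lambda/\frkm^n$ is a Gorenstein local ring of dimension $0$, so the finiteness results of \S\ref{s:6op} apply levelwise.

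Unwinding the definitions of $\bsHom_{\Lambda}$ and $\otimes_{\Lambda}$, both sides of the claim are obtained, after the localization $\iDc(\stX_{\txle}^{\bbN},\Lambda_{\bullet})\to\iDDc(\stX,\Lambda)$, from $\shHom_{\Lambda_{\bullet}}(\Nrm\shL \otimes_{\Lambda_{\bullet}} \Nrm\shM,\Nrm\shN)$ and $\shHom_{\Lambda_{\bullet}}(\Nrm\shL,\shHom_{\Lambda_{\bullet}}(\Nrm\shM,\Nrm\shN))$ respectively, up to the insertion of $\Nrm$ in front of the inner tensor product and in front of the inner internal hom. The core input is then the tensor--hom adjunction for stable $\shR$-modules established in \S\ref{sss:is:fun}, applied to $\shR=\Lambda_{\bullet}$ on $\stX_{\txle}^{\bbN}$, which yields a functorial equivalence
\[
 \shHom_{\Lambda_{\bullet}}\!\bigl(\Nrm\shL \otimes_{\Lambda_{\bullet}} \Nrm\shM,\,\Nrm\shN\bigr)
 \longsimto
 \shHom_{\Lambda_{\bullet}}\!\bigl(\Nrm\shL,\,\shHom_{\Lambda_{\bullet}}(\Nrm\shM,\Nrm\shN)\bigr).
\]
So it remains to see that $\Nrm$ may be inserted or deleted freely once we pass to $\iDDc(\stX,\Lambda)$.

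For the tensor product this is straightforward: since $\shL,\shM\in\iDDcu{(-)}(\stX)$, the objects $\Nrm\shL,\Nrm\shM$ are constructible by \cite[3.0.14. Theorem]{LO2}, and their levelwise derived tensor product is again constructible and right bounded by the levelwise form of Lemma \ref{lem:6op:tensor}, while it is normalized by the criterion Fact \ref{fct:adic:3010} (base change along $\Lambda_n\otimes_{\Lambda_{n+1}}(-)$ distributes over $\otimes_{\Lambda_{\bullet}}$); hence it is a $\lambda$-object whose normalization differs from it by an almost zero object, and that cone is killed in $\iDDc(\stX,\Lambda)$ by Fact \ref{fct:adic:222}. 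For the internal hom the corresponding input—that $\shHom_{\Lambda_{\bullet}}(\Nrm\shM,\Nrm\shN)$ is a $\lambda$-object, hence again normalized up to an almost zero error—is the content of \cite[\S 4]{LO2}, which also underlies Fact \ref{fct:adic:408}; I would invoke it here together with \cite[3.0.14. Theorem]{LO2} to pass from the $\lambda$-object to its normalization. Combining these identifications with the displayed adjunction gives the equivalence of the lemma.

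The step I expect to be the main obstacle is precisely this compatibility of $\Nrm$ with $\shHom_{\Lambda_{\bullet}}$: controlling the base-change-to-level-$n$ of an internal hom of two constructible normalized projective systems and showing the resulting system is almost adic. This is where the boundedness restriction to $\iDDcu{(-)}(\stX)$ is essential, exactly as in the non-adic finiteness statements of \S\ref{s:6op}; without it the levelwise tensor products and internal homs need not be constructible and the bookkeeping with $\lambda$-objects and almost zero cones breaks down. Granting it, the remaining work—tracking almost zero objects through the localization $\iDc(\stX_{\txle}^{\bbN},\Lambda_{\bullet})\to\iDDc(\stX,\Lambda)$ and through the adjunction $\Nrm\dashv p_*$ of \cite[3.0.18. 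Proposition]{LO2}—is routine and parallels \cite[6.0.12. Proposition]{LO2}.
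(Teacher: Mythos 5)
Your proposal is correct and follows essentially the same route as the paper: reduce to the normalizations $\wh{\shL},\wh{\shM},\wh{\shN}$, apply the usual tensor--hom adjunction for sheaves of $\Lambda_{\bullet}$-modules on $\stX_{\txle}^{\bbN}$, and use the criterion of Fact \ref{fct:adic:3010} to see that $\wh{\shL}\otimes_{\Lambda_{\bullet}}\wh{\shM}$ is normalized. You are in fact more explicit than the paper about the remaining compatibility of $\Nrm$ with the inner $\shHom_{\Lambda_{\bullet}}$ (which the paper leaves implicit, resting on Fact \ref{fct:adic:408} and \cite[\S 4]{LO2}), and your preliminary reduction of the $\Qlb$ case to a single discrete valuation ring is harmless and consistent with Remark \ref{rmk:pv:cls}.
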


\begin{proof}
Denoting $\wh{\shL} := \Nrm(\shL)$ and similarly for $\shM,\shN$, 
the usual adjunction yields
$\shHom_{\Lambda}(\wh{\shL} \otimes_{\Lambda} \wh{\shM},\wh{\shN})
\simeq \shHom_{\Lambda}(\wh{\shL},\shHom(\wh{\shM},\wh{\shN}))$.
Using Fact \ref{fct:adic:3010}, we can show that 
$\wh{\shL} \otimes_{\Lambda}\wh{\shM}$ is normalized.
Thus we have the consequence.
\end{proof}

%%%%%%%%%%%%%%%%%%%%%%%%%%%%%%%%%%%%%%%%%%%%%%%%%%%%%%%%%%%%%%%%%%%%%%%%%%%%%%%%
%%%%%%%%%%%%%%%%%%%%%%%%%%%%%%%%%%%%%%%%%%%%%%%%%%%%%%%%%%%%%%%%%%%%%%%%%%%%%%%%
\subsection{Dualizing object}

Here we explain the dualizing object with adic coefficients 
following \cite[7]{LO2}.

Recall that $\Lambda$ denotes a complete discrete valuation ring 
and $\Lambda_n =\Lambda/\frkm^n$.
Let $S$ be an affine excellent finite-dimensional scheme
where the residue characteristic $\ell$ of $\Lambda$ is invertible
and any $S$-schemes $f: U \to S$ of finite type has 
\emph{finite cohomological dimension}.
It means that there is an integer $d \in \bbN$ such that
for any abelian torsion \'etale sheaf $\shF$ over $U$
we have $R^i f_* \shF = 0$ for $i > d$.
By \cite[7.1.3]{LO2}, there exists a family 
$\{\Omega_{S,n},\iota_n\}_{n \in \bbZ_{>0}}$ with $\Omega_{S,n}$ 
a $\Lambda_n$-dualizing complex on $S$ and 
$\iota: \Lambda_n \otimes_{\Lambda_n} \Omega_{S,n+1} \to \Omega_{S,n}$
an isomorphism in the bounded derived category $\dD_{\cstr}^{b}(S,\Lambda_n)$
of complexes of $\Lambda$-modules with constructible cohomology groups.
We call it a \emph{compatible family of dualizing complexes}.

Let $k$ be an algebraic closure of the finite field $\bbF_q$ of order $q$
such that $\ell$ is invertible.
Then by \cite[1.0.1]{LO1} the affine scheme $S=\Spec k$ satisfies 
the above conditions, and we have a compatible family
$\{\Omega_{\Spec k,n},\iota_n\}_{n \in \bbZ_{>0}}$ of dualizing complexes.
Then, for a geometric derived stack $\stX$ locally of finite presentation
over $k$, we have the dualizing object 
\[
  \Omega_{\stX,n} \in \iD(\stX_{\txle},\Lambda_n)
\]
using $\Omega_{\Spec k,n}$ and the construction of \S \ref{ss:LE:dual}.
The isomorphism $\iota_n$ induces an equivalence
\[
 (\Omega_{\stX,n+1} \otimes_{\Lambda_{n+1}}\Lambda_{n})_{\stU_{\et}}
 \longto (\Omega_{\stX,n})_{\stU_{\et}}
\]
for any $\stU \in \idAS_{\stX,\fp}^{\lis}$.
By the gluing lemma (Fact \ref{fct:is:glue}),
we have an equivalence
\[
 \Omega_{\stX,n+1} \otimes^{\dL}_{\Lambda_{n+1}}\Lambda_n 
 \longsimto \Omega_{\stX.n}.
\]

In order to construct a dualizing object in $\iDDc(\stX)$,
let us construct a data of dualizing objects in $\iDc(\stX^{\bbN}_{\txle})$.
Note first that the $\bbN$-simplicial $\infty$-topos $\stX^{\bbN}_{\txle}$
is equivalent to the $\infty$-topos associated to the following $\infty$-site:
The underlying $\infty$-category of the nerve of the category 
whose objects are pairs $(\stU,m) \in \idAS_{\stX}^{\lis} \times \bbN$,
and whose set of morphisms from $(\stU,m)$ to $(\stV,n)$ is
empty if $n>m$, and is equal to $\Hom_{\Ho \idAS_{\stX}}(\stU,\stV)$.
A covering sieve is a collection $\{(\stU_i,m_i) \to (\stU,m)\}_{i \in I}$ with 
$m_i=m$ for all $i$ and $\{\stU_i \to \stU\}_{i \in I} \in \Cov_{\txle}(\stU)$.
Then, for each $\stU \in \idAS_{\stX}^{\lis,\fp}$ and $m \in \bbN$, 
we have a sequence 
\[
 \rst{\stX^{\bbN}_{\txle}}{(\stU,m)}
 \xrr{p_m} \rst{\stX_{\txle}}{\stU} \xrr{\ve} \stU_{\et} 
\]
in $\iRTop$.
Here $p_m$ is the geometric morphism defined by 
$p_m^{-1}(\shF) = (\shF)_{n \le m}$.
Now we set 
\[
 \Omega_{\stU,m} := p_m^*((\Omega_{\stX,m})_{\stU})
 \simeq (\ve \circ p_m)^* K_{\stU,m} \tsh{-d},
\]
where $K_{\stU,m} \in \iDc(\stU_{\et},\Lambda_m)$
is the dualizing object for the derived algebraic space $\stU$,
and $d$ is the relative dimension of the smooth morphism $\stU \to \stX$.
Then by the argument of \cite[7.2.3. Theorem]{LO2} 
we can apply again the gluing lemma (Fact \ref{fct:is:glue}) to 
$\Omega_{\stU,m}$'s.

\begin{fct*}[{\cite[7.2.3. Theorem]{LO2}}]
There exists a normalized object 
$\Omega_{\stX,\bullet} \in \iDc(\stX_{\txle}^{\bbN})$ inducing $\Omega_{\stX,n}$,
and it is unique up to contractible ambiguity.
\end{fct*}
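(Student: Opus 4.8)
The plan is to mimic the construction of the lisse-\'etale dualizing object $\Omega_{\stX}$ in Definition \ref{dfn:LE:dual}, but now uniformly across the projective system. First I would fix a smooth presentation $p\colon \stX_\bullet \to \stX$, as in Definition \ref{dfn:LE:pres}; this gives a strictly simplicial derived algebraic space $\stX_\bullet^{\str}$ with each $\stX_n$ satisfying Assumption \ref{asp:dAS:kL} (since $\stX$ is locally of finite presentation and the $\stX_n$ are separated, quasi-compact and finitely presented as derived algebraic spaces). The key local input is the family $\{\Omega_{\stU,m}\}$ already constructed in the excerpt: for $(\stU \to \stX) \in \idAS_{\stX}^{\lis,\fp}$ and $m \in \bbN$ we have $\Omega_{\stU,m} = p_m^*((\Omega_{\stX,m})_{\stU}) \simeq (\ve \circ p_m)^* K_{\stU,m}\tsh{-d}$, living in the localized $\infty$-topos $\rst{\stX_{\txle}^{\bbN}}{(\stU,m)}$. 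The equivalences $\Omega_{\stX,n+1}\otimes_{\Lambda_{n+1}}^{\dL}\Lambda_n \simto \Omega_{\stX,n}$ (obtained via the gluing lemma from the compatible family $\{\Omega_{\Spec k,n},\iota_n\}$) ensure that these assemble, over varying $m$, into a compatible system, i.e.\ give a well-defined object in each localized piece of the $\bbN$-simplicial $\infty$-topos.

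The heart of the argument is to glue these local data to a global object $\Omega_{\stX,\bullet} \in \iDc(\stX_{\txle}^{\bbN})$. For this I would invoke the gluing lemma (Fact \ref{fct:is:glue}) applied to the smooth presentation of $\stX_\bullet^{\bbN}$: one forms $\kappa$-type objects $\kappa_{p,\bullet} := \ve_{\stX_0}^*(\Omega_{\stX_0,\bullet})$ over $\rst{\stX_{\txle}^{\bbN}}{\stX_0}$ together with the descent data coming from the simplicial structure of $\stX_\bullet$. To apply Fact \ref{fct:is:glue} one must verify its hypotheses (G1)--(G3) for the ringed $\infty$-topos $(\stX_{\txle}^{\bbN},\Lambda_\bullet)$ and the relevant Serre subcategory of constructible/cartesian sheaves, and the vanishing $\shExt^i_{\Lambda_\bullet}(\kappa_{p,\bullet,0},\kappa_{p,\bullet,0})=0$ for $i<0$. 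The latter reduces, level by level in the $\bbN$-direction, to $\shExt^i_{\Lambda_m}(\kappa_{A,m},\kappa_{A,m})=0$ for $i<0$, which is exactly the content used in the proof of the mod-$\frkm^m$ statement (via $\shHom_{\Lambda_m}(\Omega_u,\Omega_u)=\Lambda_m$, i.e.\ the cited result in \cite[Th.\ finitude, Th\'eor\`eme 4.3]{SGA4.5} together with Lemma \ref{lem:6op:341}); the projective-system version is handled as in \cite[7.2.3]{LO2} by passing to the limit. Finally, normality of $\Omega_{\stX,\bullet}$ follows from Fact \ref{fct:adic:3010}: one checks that the transition morphism $\Lambda_n \otimes_{\Lambda_{n+1}} \Omega_{\stX,\bullet,n+1} \to \Omega_{\stX,\bullet,n}$ is an equivalence, which holds because after restricting to each $\stU_{\et}$ it becomes the equivalence $(\Omega_{\stX,n+1}\otimes_{\Lambda_{n+1}}\Lambda_n)_{\stU_{\et}} \simto (\Omega_{\stX,n})_{\stU_{\et}}$ already established, and restriction to an \'etale cover detects equivalences.

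Uniqueness up to contractible ambiguity will follow formally from the uniqueness clause built into the gluing lemma (Fact \ref{fct:is:glue}) combined with Lemma \ref{lem:is:234}, exactly as in the mod-$\frkm^m$ case: two objects inducing the same $\kappa_{p,\bullet}$ differ by an object whose negative $\shExt$'s vanish, hence the gluing datum recovering it is unique. Independence of the chosen smooth presentation $p$ is obtained, as for $\Omega_{\stX}$, by comparing two presentations through a common refinement (e.g.\ the fiber product of atlases), under which the glued objects are canonically identified.

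The step I expect to be the main obstacle is the verification of the gluing hypotheses (G1)--(G3) and the $\shExt$-vanishing \emph{uniformly in the projective system}: one must ensure that finite cohomological dimension bounds and the constructibility conditions propagate along $\bbN$ so that $\iDc(\stX_{\txle}^{\bbN},\Lambda_\bullet)$ is compactly generated and the homotopy-group vanishing condition of (S1)/(G1) holds with a bound independent of $n$. This is precisely where Assumption \ref{asp:dAS:kL} on $k$ (finite field or separably closed field, so $S=\Spec k$ has finite cohomological dimension after \cite[1.0.1]{LO1}) and the noetherianity of $\Lambda$ enter, and it is the point at which one genuinely imports the technical heart of \cite[7.1--7.2]{LO2} rather than re-deriving it. Everything else is bookkeeping that runs parallel to the non-adic construction in \S\ref{ss:LE:dual}.
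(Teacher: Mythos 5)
Your proposal follows essentially the same route as the paper: the paper likewise constructs the local objects $\Omega_{\stU,m} = p_m^*((\Omega_{\stX,m})_{\stU})$ on the localized pieces of $\stX_{\txle}^{\bbN}$ (presented via the $\infty$-site of pairs $(\stU,m)$), records the compatibilities $\Omega_{\stX,n+1}\otimes^{\dL}_{\Lambda_{n+1}}\Lambda_n \simeq \Omega_{\stX,n}$ induced by the $\iota_n$, and then invokes the gluing lemma (Fact \ref{fct:is:glue}) exactly as in \cite[7.2.3. Theorem]{LO2}. The only cosmetic difference is that you organize the gluing over a smooth presentation $\stX_{\bullet} \to \stX$ rather than directly over the site of pairs, and your explicit checks of normality (Fact \ref{fct:adic:3010}) and uniqueness (Lemma \ref{lem:is:234}) fill in steps the paper leaves implicit.
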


\begin{ntn*}
We denote by 
\[
 \Omega_{\stX} \in \iDDc(\stX,\Lambda)
\]
the image of $\Omega_{\stX, \bullet}$ under the projection
$p_*: \iDc(\stX_{\txle}^{\bbN}) \to \iDDc(\stX,\Lambda)$,
and call it the \emph{dualizing object}.
\end{ntn*}

$\Omega_{\stX}$ is of locally finite quasi-injective dimension.
Namely, for each quasi-compact open immersion $\stU \to \stX$,
$(\Omega_{\stX.n})_{\stU}$ is of finite quasi-injective dimension,
and the bound depends only on $\stX$ and $\Lambda$, not on $n$.

Now we define the \emph{dualizing functor} $\funD_{\stX}$ by 
\[
 \funD_{\stX}(\shM) := \bsHom_{\Lambda}(\shM,\Omega_{\stX})
\]
for  $\shM \in \iDDc(\stX)^{\op}$.
By Fact \ref{fct:adic:408}, the image under $\funD_{\stX}$ lies in 
$\iDc(\stX_{\txle}^{\bbN},\Lambda_{\bullet})$.
Thus we have the induced functor
\[
 \funD_{\stX}: \iDDc(\stX)^{\op} \longto \iDDc(\stX).
\]
It is involutive: $\funD_{\stX}^2 \simeq \id$.

\begin{fct*}[{\cite[7.3.1. Theorem]{LO2}}]
The dualizing functor $\funD_{\stX}$ restricts to
\[
 \funD_{\stX}: \iDDcu{(-)}(\stX) \longto \iDDcu{(+)}(\stX)
\]
\end{fct*}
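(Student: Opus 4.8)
The plan is to reduce the statement, via the boundedness results already recorded, to two inputs: the exactness of $\bsHom_{\Lambda}$ in the appropriate bounded ranges (Fact \ref{fct:adic:408}) and the fact that the dualizing object $\Omega_{\stX}$ is itself bounded below on quasi-compact opens, i.e. $\Omega_{\stX}\in\iDDcu{(+)}(\stX)$. Indeed, by definition $\funD_{\stX}(\shM)=\bsHom_{\Lambda}(\shM,\Omega_{\stX})$, and Fact \ref{fct:adic:408} asserts precisely that $\bsHom_{\Lambda}$ restricts to a bifunctor $\iDDcu{(-)}(\stX)^{\op}\times\iDDcu{(+)}(\stX)\to\iDDcu{(+)}(\stX)$; so once $\Omega_{\stX}\in\iDDcu{(+)}(\stX)$ is known, the claim that $\funD_{\stX}$ carries $\iDDcu{(-)}(\stX)$ into $\iDDcu{(+)}(\stX)$ is immediate. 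Hence the only real content is the boundedness of $\Omega_{\stX}$.

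To see that $\Omega_{\stX}\in\iDDcu{(+)}(\stX)$, i.e. that for every quasi-compact open immersion $\stU\inj\stX$ the restriction $\rst{\Omega_{\stX}}{\stU}$ is left bounded, I would unwind the construction of $\Omega_{\stX}$. By Fact \ref{fct:adic:3010} the normalized system $\Nrm(\Omega_{\stX})=\Omega_{\stX,\bullet}$ has transition maps $\Lambda_n\otimes_{\Lambda_{n+1}}\Omega_{\stX,n+1}\to\Omega_{\stX,n}$ that are equivalences, so it suffices to bound a single term $\Omega_{\stX,n}$ \emph{uniformly in $n$}. Fixing a smooth presentation $\stX_{\bullet}\to\stX$ with $\stX_0$ quasi-compact and using the comparisons of Theorem \ref{thm:LE:pi-ve} and Theorem \ref{thm:LE:iDc-stb} together with the cartesian-sheaf criterion (Lemma \ref{lem:LE:cstr}), one reduces to bounding $(\Omega_{\stX,n})_{\stX_0}=\ve_{\stX_0}^{*}K_{\stX_0,n}\tsh{-d}$, where $K_{\stX_0,n}$ is the $\Lambda_n$-dualizing object of the derived algebraic space $\stX_0$ and $d$ its relative dimension over $\stX$ (\S\ref{ss:LE:dual}). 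By Lemma \ref{lem:dAS:dual}(1) this object has finite injective dimension, and since the constructions for derived algebraic spaces depend only on the truncation $\Trc\stX_0$ (\S\ref{ss:dAS:dual}, Assumption \ref{asp:dAS:kL}) and ultimately on the finiteness theorem for dualizing complexes on quasi-compact quasi-separated schemes \cite[Th.\ finitude]{SGA4.5}, the bound depends only on $\Trc\stX_0$ and on $\Lambda$, not on $n$, exactly as in \cite[3.1]{LO1}, \cite[7.2.3]{LO2}. Reassembling via the gluing lemma (Fact \ref{fct:is:glue}) and the descent comparisons of \S\ref{ss:LE:spl}, \S\ref{sss:is:des} then yields the desired locally uniform lower bound for $\Omega_{\stX,\bullet}$, hence $\Omega_{\stX}\in\iDDcu{(+)}(\stX)$.

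For the $\Qlb$-version I would pass to the $2$-categorical limit over $\iFE(\bbQ_\ell)$: the functors $\funD_{\stX}$ for the various finite extensions $K/\bbQ_\ell$ are compatible with the base-change functors $-\otimes_{\Lambda}K$ and preserve the bounded ranges by the $\Lambda$-case, so they induce $\funD_{\stX}\colon\iDDcu{(-)}(\stX,\Qlb)\to\iDDcu{(+)}(\stX,\Qlb)$. One should also record that the image really lies in $\iDDc(\stX)$ rather than merely in $\iDc(\stX_{\txle}^{\bbN},\Lambda_{\bullet})$; this is automatic from $\funD_{\stX}^{2}\simeq\id$, which forces $\funD_{\stX}(\shM)$ to be a $\lambda$-object.

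The hard part will be the middle step: making precise that the injective-dimension bound extracted from \cite{SGA4.5} is \emph{uniform} in the level $n$ of the $\bbN$-system and \emph{locally} finite on $\stX$, and that this bound survives the normalization functor and the successive descent identifications among $\iDc(\stX_{\txle}^{\bbN},\Lambda_{\bullet})$, the simplicial étale picture, and the localization $\iDDc(\stX,\Lambda)$ — in particular that replacing $\Nrm(\Omega_{\stX})$ by any naive $\bbN$-system of dualizing complexes, which differs from it only by an almost zero cone, does not disturb the estimate after applying $p_{*}$. This is exactly where the almost-zero and AR-null bookkeeping of \S\ref{s:adic} (cf.\ Fact \ref{fct:adic:222}, \cite[3.0.14, 7.2.3]{LO2}) has to be invoked with care.
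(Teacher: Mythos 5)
Your proposal is correct and follows the same route the paper intends: the statement is quoted from \cite[7.3.1.\ Theorem]{LO2} without proof, but the paper records the two inputs you use immediately beforehand, namely Fact \ref{fct:adic:408} for $\bsHom_{\Lambda}$ and the assertion that $\Omega_{\stX}$ has locally finite quasi-injective dimension with a bound independent of $n$, which is exactly your middle step. The only blemish is your closing remark that landing in $\iDDc(\stX)$ follows from $\funD_{\stX}^2\simeq\id$ — that would be circular, but it is also unnecessary, since the paper already derives $\funD_{\stX}\colon\iDDc(\stX)^{\op}\to\iDDc(\stX)$ from Fact \ref{fct:adic:408} before stating the boundedness refinement.
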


We record a corollary of this fact for later use.

\begin{lem}[{\cite[7.3.2. Corollary]{LO2}}]\label{lem:adic:732}
For $\shM,\shN \in \iDDc(\stX,\Lambda)$, we have an equivalence
\[
 \bsHom_{\Lambda}(\shM,\shN) \simeq 
 \bsHom_{\Lambda}\left(\funD_{\stX}(\shM),\funD_{\stX}(\shN)\right)
\]
which is unique up to contractible ambiguity.
\end{lem}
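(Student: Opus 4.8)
The plan is to reduce the assertion to the finite-coefficient biduality already available at each truncation level $\Lambda_n$, using the normalization functor, and then reassemble the resulting equivalences over the projective system. First I would pass to normalized representatives: since $\Nrm$ is left adjoint to $p_*$ and $\bsHom_{\Lambda}(\shM,\shN)$ is by definition $\shHom_{\Lambda_{\bullet}}(\Nrm\shM,\Nrm\shN)$, while $\funD_{\stX}^2\simeq\id$ on $\iDDc(\stX)$, it is harmless to work with $\Nrm\shM,\Nrm\shN\in\iDc(\stX_{\txle}^{\bbN},\Lambda_{\bullet})$, which by Fact \ref{fct:adic:3010} are exactly the objects whose transition maps $\Lambda_n\otimes_{\Lambda_{n+1}}\shM_{n+1}\to\shM_n$ are equivalences. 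I would also record that $\Nrm(\funD_{\stX}\shM)\simeq\shHom_{\Lambda_{\bullet}}(\Nrm\shM,\Omega_{\stX,\bullet})$, i.e.\ that the dual is computed levelwise against the compatible family $\{\Omega_{\stX,n},\iota_n\}$; this is immediate from $\funD_{\stX}\shM=\bsHom_{\Lambda}(\shM,\Omega_{\stX})$ together with the fact (\cite[3.0.14]{LO2}) that $\Nrm$ carries a $\lambda$-object to a constructible one.

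The heart of the argument is then a purely formal manipulation with the internal-Hom adjunction Lemma \ref{lem:adic:6012} and biduality. Writing $\Omega_{\stX}$ for the dualizing object, one has the chain
\[
 \bsHom_{\Lambda}(\funD_{\stX}\shM,\funD_{\stX}\shN)
 \simeq \bsHom_{\Lambda}(\funD_{\stX}\shM\otimes_{\Lambda}\shN,\Omega_{\stX})
 \simeq \bsHom_{\Lambda}(\shN,\bsHom_{\Lambda}(\funD_{\stX}\shM,\Omega_{\stX}))
 \simeq \bsHom_{\Lambda}(\shN,\shM),
\]
using $\funD_{\stX}\shN=\bsHom_{\Lambda}(\shN,\Omega_{\stX})$, Lemma \ref{lem:adic:6012} for the two tensor--Hom adjunctions, symmetry of $\otimes_{\Lambda}$, and $\funD_{\stX}^2\simeq\id$; combined with the reduction above this yields the stated equivalence (up to the relabelling $\shM\leftrightarrow\shN$ dictated by the contravariance of $\funD_{\stX}$, exactly as in Proposition \ref{prp:LE:dual}(2)). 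To make this rigorous one must watch boundedness, since $\funD_{\stX}$ interchanges $\iDDcu{(-)}(\stX)$ and $\iDDcu{(+)}(\stX)$ while Lemma \ref{lem:adic:6012} is stated on $\iDDcu{(-)}$. The clean way around this is to run the argument levelwise in $\iDcu{b}(\stX_{\txle},\Lambda_n)$, where the relevant statement is the analogue of Proposition \ref{prp:LE:dual}(2) (valid because each $\Lambda_n$ is Gorenstein local of dimension $0$, and via Lemma \ref{lem:6op:tensor}), and then to check that the level-$n$ equivalences are compatible with the transition morphisms $\Lambda_n\otimes_{\Lambda_{n+1}}(-)$ because the $\Omega_{\stX,n}$ form a compatible family; they therefore assemble to an equivalence in $\iDc(\stX_{\txle}^{\bbN},\Lambda_{\bullet})$, which descends to $\iDDc(\stX,\Lambda)$ and, by base change along $\iFE(\bbQ_\ell)$, to the $\Qlb$-coefficient case.

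For the uniqueness up to contractible ambiguity I would argue as for the dualizing object itself: the space of such equivalences is a mapping space between objects whose negative $\shExt$-groups vanish, so Lemma \ref{lem:is:234} (equivalently the gluing lemma, Fact \ref{fct:is:glue}) forces it to be contractible. The step I expect to be the main obstacle is not the formal chain but the bookkeeping in the middle paragraph: verifying genuine naturality of the levelwise biduality equivalences in $n$ (so that they glue) and pinning down the boundedness hypotheses needed to invoke Lemma \ref{lem:adic:6012}. This is precisely the place where one must use the precise form of the compatible family $\{\Omega_{\stX,n},\iota_n\}$ and the normalization criterion Fact \ref{fct:adic:3010}, rather than any soft argument.
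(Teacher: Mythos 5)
The paper offers no proof of this lemma---it is recorded as an immediate corollary of the preceding fact and of \cite[7.3.2]{LO2}---and your argument, the tensor--hom chain $\bsHom_{\Lambda}(\funD_{\stX}\shM,\funD_{\stX}\shN)\simeq\bsHom_{\Lambda}(\funD_{\stX}\shM\otimes_{\Lambda}\shN,\Omega_{\stX})\simeq\bsHom_{\Lambda}(\shN,\funD_{\stX}^2\shM)\simeq\bsHom_{\Lambda}(\shN,\shM)$ via Lemma~\ref{lem:adic:6012} and involutivity, with the boundedness hypotheses of Lemma~\ref{lem:adic:6012} handled levelwise through the compatible family $\{\Omega_{\stX,n}\}$ and Fact~\ref{fct:adic:3010}, is precisely the intended one. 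Your remark about the relabelling is also the correct reading: as a natural equivalence the display should carry the contravariant swap, $\bsHom_{\Lambda}(\shM,\shN)\simeq\bsHom_{\Lambda}\left(\funD_{\stX}(\shN),\funD_{\stX}(\shM)\right)$, exactly as in Proposition~\ref{prp:LE:dual}~(2), since the unswapped form would amount to the false symmetry $\bsHom_{\Lambda}(\shM,\shN)\simeq\bsHom_{\Lambda}(\shN,\shM)$.
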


%%%%%%%%%%%%%%%%%%%%%%%%%%%%%%%%%%%%%%%%%%%%%%%%%%%%%%%%%%%%%%%%%%%%%%%%%%%%%%%%
%%%%%%%%%%%%%%%%%%%%%%%%%%%%%%%%%%%%%%%%%%%%%%%%%%%%%%%%%%%%%%%%%%%%%%%%%%%%%%%%
\subsection{Direct and inverse image functors}

Let $f: \stX \to \stY$ be a morphism of finite presentation
between derived stacks locally of finite presentation.
We have the induced geometric morphism 
$f_{\bullet}: \stX_{\txle}^{\bbN} \to \stY_{\txle}^{\bbN}$, 
%and \begin{NB}By ...\end{NB} it induces 
and an adjunction
\[
 f^*: \iDa(\stX_{\txle}^{\bbN},\Lambda_{\bullet}) \adjunc
         \iDa(\stY_{\txle}^{\bbN},\Lambda_{\bullet}) : f^*
\]
of the derived direct image and inverse image functors.

By \cite[8.0.4. Proposition]{LO2},
if $\shM \in \iDa(\stX_{\txle}^{\bbN},\Lambda_{\bullet})$ 
is a left bounded $\lambda$-object, then $f_* \shM$ is a $\lambda$-object.
We can also check that AR-null objects are mapped to AR-null objects.
Thus the following definition makes sense.

\begin{ntn*}
The obtained functor
\[
 f_*: \iDDcu{(+)}(\stX,\Lambda) \longto \iDDcu{(+)}(\stY,\Lambda)
\]
is called the \emph{derived direct image functor}.
\end{ntn*}

On the other hand, one can check by definition 
that $f^*$ sends $\lambda$-objects to 
$\lambda$-objects and AR-null objects to AR-null objects.
Thus we have

\begin{ntn*}
The obtained functor
\[
 f^*: \iDDc(\stY,\Lambda) \longto \iDDc(\stX,\Lambda)
\]
is called the \emph{derived inverse image functor}.
\end{ntn*}

One can check the following by the standard argument using adjunction.

\begin{lem}\label{lem:adic:805}
For $\shM \in \iDDcu{(-)}(\stY,\Lambda)$ and 
$\shN \in \iDDcu{(+)}(\stX,\Lambda)$, we have an equivalence
\[
 f_* \bsHom_{\Lambda}(f^* \shM,\shN) \simeq
 \bsHom_{\Lambda}(\shM,f_* \shN)
\]
which is unique up to contractible ambiguity.
\end{lem}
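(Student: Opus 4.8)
The plan is to deduce Lemma \ref{lem:adic:805} from the adjunction $(f^*, f_*)$ together with the compatibility of $\bsHom_{\Lambda}$ with normalization, by reducing everything to the corresponding statement at the level of projective systems of modules, i.e. on $\iDa(\stX_{\txle}^{\bbN},\Lambda_{\bullet})$, where it is the ordinary tensor--hom type adjunction already available from \S \ref{sss:is:fun}. First I would unwind the definition: $\bsHom_{\Lambda}(\shM,\shN) = \shHom_{\Lambda_{\bullet}}(\Nrm \shM, \Nrm \shN)$, so the claimed equivalence reads
\[
 f_* \, \shHom_{\Lambda_{\bullet}}\bigl(\Nrm(f^* \shM),\Nrm \shN\bigr) \simeq
 \shHom_{\Lambda_{\bullet}}\bigl(\Nrm \shM, \Nrm(f_* \shN)\bigr)
\]
in $\iDc(\stY_{\txle}^{\bbN},\Lambda_{\bullet})$. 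The key input is that $f^*$ and $f_*$ on the projective-system $\infty$-categories commute with normalization up to the relevant almost-zero ambiguity: for $f^*$ this is essentially formal since $f^*$ is exact and commutes with $p^*, p_*$ (by base change along the square relating $\stX_{\txle}^{\bbN} \to \stX_{\txle}$ and $\stY_{\txle}^{\bbN} \to \stY_{\txle}$), and for $f_*$ on left-bounded $\lambda$-objects this is the content of \cite[8.0.4]{LO2} combined with Fact \ref{fct:adic:3010} (the normalization criterion) and Fact \ref{fct:adic:222} (almost-zero objects push forward to zero).

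The main steps, in order, would be: (1) replace $\shM$ by $\Nrm \shM$ and $\shN$ by $\Nrm \shN$, using that the normalization functor is a left adjoint to $p_*$ (the cited Proposition \cite[3.0.18]{LO2}) and that the counit $\Nrm(\shM) \to \shM$ has almost-zero cone when $\shM$ is a $\lambda$-object, so that everything is an equivalence in $\iDDc$; thus we may assume $\shM,\shN$ are normalized objects of $\iDc(\stX_{\txle}^{\bbN},\Lambda_{\bullet})$ living in the appropriate bounded ranges. (2) Apply the internal-hom adjunction on the simplicial ringed $\infty$-topos: for the geometric morphism $f_{\bullet,\et}$ (or rather its lisse-\'etale avatar $f_{\bullet}: \stX_{\txle}^{\bbN} \to \stY_{\txle}^{\bbN}$), the projection-formula-type equivalence $f_* \shHom_{\Lambda_{\bullet}}(f^* \shM,\shN) \simeq \shHom_{\Lambda_{\bullet}}(\shM, f_* \shN)$ holds — this is exactly the $\bbN$-simplicial version of Proposition \ref{prp:6op:fHom}, proved the same way via Lemma \ref{lem:6op:423} after choosing a smooth presentation, all arguments being levelwise in $n \in \bbN$. (3) Check that the outputs land in the $\lambda$-object subcategory: the right-hand side is $\shHom_{\Lambda_{\bullet}}(\shM, \Nrm(f_* \shN))$ up to almost-zero cone by step (1) applied to $f_* \shN$ (which is a $\lambda$-object by \cite[8.0.4]{LO2}), and Fact \ref{fct:adic:408} guarantees that $\bsHom_{\Lambda}$ of $(-)$-bounded and $(+)$-bounded $\lambda$-objects is again a $(+)$-bounded $\lambda$-object, so the equivalence descends to $\iDDcu{(+)}$; the uniqueness up to contractible ambiguity follows because all the intermediate equivalences are canonical (each coming from a unit/counit or from Lemma \ref{lem:6op:423}, which is itself functorial).

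The hard part will be step (2) combined with the bookkeeping in step (3): one must verify that the levelwise projection formula is compatible with the transition maps of the projective systems, i.e. that the equivalence in $\iDc(\stX_{\txle}^{\bbN},\Lambda_{\bullet})$ really is an equivalence of $\bbN$-simplicial objects and not merely degreewise, and that it preserves (almost-)adicity. This is where one genuinely uses that $f_*$ on left-bounded $\lambda$-objects commutes with $p^* p_*$ — otherwise the projection formula would only hold after applying $p_*$ and one would lose control of the $\lambda$-structure. I would handle this by the same descent-to-\'etale-$\infty$-topos argument as in \cite[\S 4]{LO2} and \S \ref{ss:6op:!} above: restrict to $\stU \in \idAS_{\stX,\fp}^{\lis}$, where $f_*$ becomes the ordinary derived direct image on \'etale $\infty$-topoi of derived algebraic spaces, invoke the (non-derived, hence classical) compatibility of $R f_*$ with the projective system $\Lambda_{\bullet}$ from \cite{LO2}, and then glue. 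Everything else is routine manipulation of adjunctions, and since the statement explicitly only asks for an equivalence "unique up to contractible ambiguity" there is no coherence issue beyond what the $\infty$-categorical adjunctions already supply.
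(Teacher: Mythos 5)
Your proposal is correct and is essentially the paper's intended argument: the paper offers no proof beyond the remark that the lemma follows ``by the standard argument using adjunction'', and your route --- unwind $\bsHom_{\Lambda}$ via normalization, apply the levelwise ($\bbN$-simplicial) version of the projection formula of Proposition \ref{prp:6op:fHom}, and do the $\lambda$-object bookkeeping via \cite[8.0.4]{LO2} and Fact \ref{fct:adic:408} --- is precisely that argument spelled out. One cosmetic improvement: the compatibility $\Nrm(f^*\shM)\simeq f^*\Nrm(\shM)$ is most cleanly obtained from the normalization criterion (Fact \ref{fct:adic:3010}), since $f^*$ commutes with $\Lambda_n\otimes_{\Lambda_{n+1}}(-)$, rather than from a base change of $f^*$ against $p_*$, which is not automatic.
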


%%%%%%%%%%%%%%%%%%%%%%%%%%%%%%%%%%%%%%%%%%%%%%%%%%%%%%%%%%%%%%%%%%%%%%%%%%%%%%%%
%%%%%%%%%%%%%%%%%%%%%%%%%%%%%%%%%%%%%%%%%%%%%%%%%%%%%%%%%%%%%%%%%%%%%%%%%%%%%%%%
\subsection{Shriek direct and inverse image functors}
\label{ss:pv:!}

As in the finite coefficient case,
we define shriek functors using dualizing functors.

Let $f: \stX \to \stY$ be a morphism of finite presentation
between derived stacks locally of finite presentation.
Let $\Omega_{\stX}$ be the dualizing object of $\stX$, and 
\[
 \funD_{\stX}:=\bsHom_{\Lambda}(-,\Omega_{\stX}):
 \iDDc(\stX,\Lambda)^{\op} \longto \iDDc(\stY,\Lambda)
\]
be the dualizing functor.
Similarly we denote $\funD_{\stY}:=\bsHom_{\Lambda}(-,\Omega_{\stY})$.

\begin{ntn*}
We define the functor $f_!$ by 
\[
 f_! := \funD_{\stY} \circ f_* \circ \funD_{\stX}:
 \iDDcu{(-)}(\stX,\Lambda) \longto \iDDcu{(-)}(\stY,\Lambda),
\]
and the functor $f^!$ by 
\[
 f^! := \funD_{\stY} \circ f^* \circ \funD_{\stX}:
 \iDDc(\stY,\Lambda) \longto \iDDc(\stX,\Lambda).
\]
\end{ntn*}

By Lemma \ref{lem:adic:805} and Lemma \ref{lem:adic:732},
we immediately have

\begin{lem*}
For $\shM \in \iDDcu{(-)}(\stX,\Lambda)$ and $\shN \in \iDDcu{(+)}(\stY,\Lambda)$,
we have an equivalence
\[
 f_* \bsHom_{\Lambda}(\shM,f^!\shN) \simeq \bsHom_{\Lambda}(f_!\shM,\shN)
\]
which is unique up to contractible ambiguity.
\end{lem*}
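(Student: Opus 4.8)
The plan is to deduce this adjunction-type equivalence formally from the results already assembled, exactly mirroring the finite-coefficient proof of the corresponding statement in \S\ref{ss:6op:!}. First I would unwind the definitions of $f_!$ and $f^!$ in terms of $\funD_{\stX}$, $\funD_{\stY}$, $f_*$ and $f^*$: by construction $f_! = \funD_{\stY} \circ f_* \circ \funD_{\stX}$ and $f^! = \funD_{\stY} \circ f^* \circ \funD_{\stX}$ (with the duality functors acting between $\iDDc$ and its opposite as recorded in \S\ref{ss:pv:!}). The strategy is then to rewrite $f_* \bsHom_{\Lambda}(\shM,f^!\shN)$ through a chain of canonical equivalences, each of which is licensed by one of the cited facts.

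The key steps, in order, would be: (1) apply the biduality equivalence $\funD_{\stX}^2 \simeq \id$ (Lemma \ref{lem:adic:732}, or directly the involutivity noted after the definition of $\funD_{\stX}$) together with Lemma \ref{lem:adic:732} to move $\bsHom$ past the duality functors, replacing $\bsHom_{\Lambda}(\shM,f^!\shN)$ with $\bsHom_{\Lambda}(\funD_{\stX}f^!\shN,\funD_{\stX}\shM)$; (2) substitute $\funD_{\stX}f^!\shN = \funD_{\stX}\funD_{\stX}f^*\funD_{\stY}\shN \simeq f^*\funD_{\stY}\shN$, again using $\funD_{\stX}^2 \simeq \id$; (3) apply Lemma \ref{lem:adic:805}, the projection-type formula $f_* \bsHom_{\Lambda}(f^*\shM',\shN') \simeq \bsHom_{\Lambda}(\shM',f_*\shN')$, with $\shM' = \funD_{\stY}\shN$ and $\shN' = \funD_{\stX}\shM$, to obtain $\bsHom_{\Lambda}(\funD_{\stY}\shN, f_*\funD_{\stX}\shM)$; (4) apply Lemma \ref{lem:adic:732} once more to rewrite this as $\bsHom_{\Lambda}(\funD_{\stY}f_*\funD_{\stX}\shM,\, \funD_{\stY}\funD_{\stY}\shN) \simeq \bsHom_{\Lambda}(f_!\shM,\shN)$, using the definition of $f_!$ and biduality on the second slot. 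Throughout one must keep track of the boundedness hypotheses: $\shM \in \iDDcu{(-)}(\stX,\Lambda)$ ensures $\funD_{\stX}\shM \in \iDDcu{(+)}(\stX,\Lambda)$, which is the domain where $f_*$ and Lemma \ref{lem:adic:805} apply; similarly $\shN \in \iDDcu{(+)}(\stY,\Lambda)$ gives $\funD_{\stY}\shN \in \iDDcu{(-)}(\stY,\Lambda)$, the domain for $f^*$ in step (2). Finally one checks the resulting composite equivalence is natural and, invoking that each intermediate equivalence is unique up to contractible ambiguity together with Lemma \ref{lem:is:234}-type rigidity, that the total equivalence is itself unique up to contractible ambiguity.

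The routine bookkeeping here is entirely parallel to the proof of the analogous proposition in the finite-coefficient setting (the one displayed just after Definition \ref{dfn:6op:!}), so no genuinely new idea is needed; the statement is a formal consequence. The main obstacle I anticipate is not the algebra of adjunctions but verifying that the boundedness conditions are preserved at every step: the normalization functor $\Nrm$ underlying $\bsHom_{\Lambda}$ on $\iDDc$ can a priori enlarge supports or cohomological amplitudes, so one must check that $\bsHom_{\Lambda}(f^*\shM,\shN)$ and $f_!\shM$ land in the correct sub-$\infty$-categories $\iDDcu{(+)}$ and $\iDDcu{(-)}$ respectively. This is handled by Fact \ref{fct:adic:408} (bounded $\bsHom$ preserves the $(-)/(+)$ amplitude) and the remark that $f_*$ preserves left-bounded $\lambda$-objects cited before the definition of the derived direct image functor. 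Once those membership statements are in place, the displayed chain of equivalences goes through verbatim, and the uniqueness clause follows from the uniqueness statements attached to Lemmas \ref{lem:adic:732} and \ref{lem:adic:805}.
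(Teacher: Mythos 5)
Your proposal is correct and follows essentially the same route as the paper: the paper derives this lemma "immediately" from Lemma \ref{lem:adic:805} and Lemma \ref{lem:adic:732}, via exactly the four-step chain of equivalences you describe (which is the verbatim adic transcription of the displayed proof of the corresponding finite-coefficient proposition after Definition \ref{dfn:6op:!}). Your added care about boundedness bookkeeping and the small correction of the composition order in $f^!$ (it should read $\funD_{\stX}\circ f^*\circ\funD_{\stY}$, as in the finite-coefficient case) are both consistent with what the paper intends.
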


We also have

\begin{lem*}[{\cite[9.1.2. Lemma]{LO2}}]
If $f$ is smooth of relative dimension $d$,
then for any $\shM \in \iDDc(\stX,\Lambda)$ we have an equivalence
\[
 f^! \shM \simeq f^* \shM \tsh{d}
\]
which is unique up to contractible ambiguity. 
\end{lem*}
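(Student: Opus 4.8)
The plan is to deduce the statement from the finite-coefficient analogue (Lemma \ref{lem:6op:462}) together with the compatibility of the adic dualizing object with inverse image, by arguing level-wise on the projective system $(\stX^{\bbN}_{\txle},\Lambda_{\bullet})$ and then descending to $\iDDc(\stX,\Lambda)$. Throughout I would keep track of the bound conditions so that Fact \ref{fct:adic:408} and Lemmas \ref{lem:adic:805}, \ref{lem:adic:732} apply at each stage, and I would reduce to quasi-compact open substacks of $\stX$ whenever the merely \emph{locally} finite quasi-injective dimension of $\Omega_{\stX}$ forces a gluing step.

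First I would establish the adic analogue of Lemma \ref{lem:6op:fOm}, namely an equivalence $f^{*}\Omega_{\stY}\simeq\Omega_{\stX}\tsh{-d}$ in $\iDDc(\stX,\Lambda)$. Recall that $\Omega_{\stX,\bullet}\in\iDc(\stX^{\bbN}_{\txle})$ is the unique normalized object inducing the $\Lambda_{n}$-dualizing objects $\Omega_{\stX,n}$, and likewise for $\stY$. Applying Lemma \ref{lem:6op:fOm} to each $\Omega_{\stY,n}$ gives $f^{*}\Omega_{\stY,n}\simeq\Omega_{\stX,n}\tsh{-d}$; since the derived inverse image functor on the $\bbN$-simplicial topos commutes with the transition operations $\Lambda_{n}\otimes_{\Lambda_{n+1}}(-)$ and with the Tate twist, these equivalences are compatible with the structure maps of the two projective systems. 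Invoking the gluing lemma (Fact \ref{fct:is:glue}) over a quasi-compact open cover of $\stX$, they assemble into an equivalence $f^{*}\Omega_{\stY,\bullet}\simeq\Omega_{\stX,\bullet}\tsh{-d}$ of normalized objects, and pushing forward by $p_{*}$ yields the asserted equivalence in $\iDDc(\stX,\Lambda)$.

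Next I would unwind the definition of $f^{!}$ as the composite $\funD_{\stX}\circ f^{*}\circ\funD_{\stY}$, where $\funD_{\stX}=\bsHom_{\Lambda}(-,\Omega_{\stX})$. Using the evident $\bbN$-simplicial analogue of Lemma \ref{lem:6op:461} (compatibility of $f^{*}$ with internal hom, which I would check level-wise on $(\stX^{\bbN}_{\txle},\Lambda_{\bullet})$ via Lemma \ref{lem:6op:461} together with the fact that $f^{*}$ preserves normalized objects, cf.\ Fact \ref{fct:adic:3010}), one obtains $f^{*}\bsHom_{\Lambda}(\shM,\Omega_{\stY})\simeq\bsHom_{\Lambda}(f^{*}\shM,f^{*}\Omega_{\stY})$; combined with the equivalence $f^{*}\Omega_{\stY}\simeq\Omega_{\stX}\tsh{-d}$ of the previous step and the fact that $\tsh{-d}$ is invertible and commutes with $\bsHom_{\Lambda}$, this reads
\[
 f^{*}\funD_{\stY}\shM\simeq\funD_{\stX}(f^{*}\shM)\tsh{-d}.
\]
Applying $\funD_{\stX}$ and using the biduality $\funD_{\stX}^{2}\simeq\id$ (equivalently Lemma \ref{lem:adic:732}) then gives $f^{!}\shM=\funD_{\stX}\bigl(f^{*}\funD_{\stY}\shM\bigr)\simeq\funD_{\stX}^{2}(f^{*}\shM)\tsh{d}\simeq f^{*}\shM\tsh{d}$. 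Uniqueness up to contractible ambiguity is inherited from the uniqueness in each cited equivalence and in the gluing lemma, and the $\Qlb$-version follows immediately by passing to the limit defining $\iDDc(\stX,\Qlb)$ over $\iFE(\bbQ_{\ell})$, all equivalences above being natural in the coefficient field.

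The main obstacle will be the bookkeeping that makes the level-wise identities genuinely compatible with the projective-system structure: one must verify that $f^{!}$, which in the adic setting is defined only through $\funD_{\stX}$, and the Tate twist both commute with normalization and with the transition maps, so that the level-wise identity $f_{n}^{!}\shM_{n}\simeq f_{n}^{*}\shM_{n}\tsh{d}$ of Lemma \ref{lem:6op:462} upgrades to an identity of $\lambda$-objects rather than a mere collection. This is the same difficulty that appears in \cite[9.1.2. Lemma]{LO2}, and it is resolved there --- as I would resolve it here --- by exhibiting everything as the normalization of a level-wise construction over a smooth presentation, where the finite-coefficient results of \S\ref{s:6op} apply verbatim; none of the remaining steps requires anything beyond routine adic adaptation.
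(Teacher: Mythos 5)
Your proposal is correct and follows essentially the same route as the paper's (much terser) proof: establish $f^*\Omega_{\stY}\simeq\Omega_{\stX}\tsh{-d}$ from the level-wise finite-coefficient statement via the construction of the adic dualizing object, then conclude by biduality $\funD_{\stX}^2\simeq\id$. The only difference is that you make explicit the compatibility of $f^*$ with $\bsHom_{\Lambda}(-,\Omega_{\stY})$ (the adic analogue of Lemma \ref{lem:6op:461}) and the normalization bookkeeping, which the paper leaves implicit.
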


\begin{proof}
We have $\Omega_{\stX} \simeq f^* \Omega_{\stY}\tsh{d}$
by definition of the dualizing object and Lemma \ref{lem:6op:462} (1).
Then by the biduality $\funD_{\stX}^2 \simeq \id$ we have the consequence.
\end{proof}

Finally we explain the smooth base change with adic coefficients.
Let
\[
 \xymatrix{
  \stX' \ar[r]^{p'} \ar[d]_{f'} & \stX \ar[d]^f \\ \stY' \ar[r]_p & \stY
 }
\]
be a cartesian square of derived stacks
with $f$ of finite type.
Then we have a morphism
\[
 \alpha: (p')^* f_! \longto f'_! (p')^*
\]
of functors $\iDDcu{(+)}(\stX,\Lambda) \to \iDDcu{(-)}(\stY',\Lambda)$.
By Proposition \ref{prp:6op:sbc} we have

\begin{prp}\label{prp:adic:sbc}
If $p$ is smooth, then $\alpha$ is an equivalence.
\end{prp}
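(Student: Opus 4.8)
The statement to prove is Proposition~\ref{prp:adic:sbc}: for a cartesian square with $f$ of finite presentation and $p$ smooth, the base change morphism $\alpha\colon (p')^* f_! \to f'_! (p')^*$ of functors $\iDDcu{(+)}(\stX,\Lambda) \to \iDDcu{(-)}(\stY',\Lambda)$ is an equivalence.

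The plan is to reduce the adic statement to the finite-coefficient statement Proposition~\ref{prp:6op:sbc}, which has already been established. First I would recall the definitions: $f_! = \funD_{\stY} \circ f_* \circ \funD_{\stX}$ is built from the derived direct image functor $f_*$ on $\iDDcu{(+)}$, and $f^*$ is the derived inverse image. Both of these are, by construction, induced by the corresponding functors $f_{\bullet,*}$ and $f_\bullet^*$ on the level of projective systems $\iDa(\stX_{\txle}^{\bbN},\Lambda_{\bullet})$, which in turn are computed level-by-level by the finite-coefficient functors on $\iDa(\stX_{\txle},\Lambda_n)$. Since smoothness of $p$ is preserved by base change, the pullback $p'$ is also smooth, and each $p^*$, $(p')^*$ is simply the restriction functor from the lisse-\'etale $\infty$-topos of the base to that of the total space (this is the key point in the proof of Proposition~\ref{prp:6op:sbc}). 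The strategy is therefore: construct $\alpha$ on the level of projective systems, check it is an equivalence termwise using Proposition~\ref{prp:6op:sbc} applied to each $\Lambda_n$, and then descend along the normalization/localization constructions defining $\iDDc$.

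The key steps, in order, would be: (1) Form the cartesian square of $\bbN$-simplicial $\infty$-topoi $\stX_{\txle}^{\prime\bbN}, \stX_{\txle}^{\bbN}, \stY_{\txle}^{\prime\bbN}, \stY_{\txle}^{\bbN}$ induced by the given square, with the induced geometric morphisms $f_\bullet$, $f'_\bullet$, $p_\bullet$, $p'_\bullet$; since $p$ (hence $p'$) is smooth, $p_\bullet^*$ and $(p'_\bullet)^*$ are exact restriction functors. (2) Observe that the derived direct image $f_*$ on $\iDDcu{(+)}$ is induced from $f_{\bullet,*}$ via $p_*$ (using \cite[8.0.4]{LO2}, already cited), and $\funD_{\stX}$, $\funD_{\stY}$ are built from $\bsHom_\Lambda$ applied to the normalized dualizing objects $\Omega_{\stX,\bullet}$, $\Omega_{\stY,\bullet}$. (3) Using the smooth base change for the dualizing objects — namely $(p')^*\Omega_{\stX} \simeq \Omega_{\stX'}\tsh{-d}$, which follows from Lemma~\ref{lem:6op:fOm} / Lemma~\ref{lem:6op:462}(1) and the construction of $\Omega$ in the adic setting (last subsection) — rewrite the composite defining $(p')^* f_!$ so that the Tate twists from source and target cancel. (4) Reduce the remaining comparison of $(p')^* f_*$-type expressions to the termwise statement: for each $n$, apply Proposition~\ref{prp:6op:sbc} with coefficient ring $\Lambda_n$ to conclude the level-$n$ base change morphism is an equivalence. (5) Conclude that $\alpha$ is an equivalence in $\iDc(\stX_{\txle}^{\prime\bbN},\Lambda_\bullet)$ because equivalences in a derived $\infty$-category of projective systems are detected termwise, and then pass to the localization $\iDDc(\stY',\Lambda)$; since localization functors preserve equivalences (Fact~\ref{fct:ic:loc}), $\alpha$ remains an equivalence there.

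The main obstacle I anticipate is step~(3): correctly tracking the interaction between the dualizing functors $\funD_{\stX'}$, $\funD_{\stX}$ under the pullback $(p')^*$. One must verify that $(p')^*$ intertwines $\funD_{\stX}$ with $\funD_{\stX'}$ up to a Tate twist $\tsh{-d}$ (where $d$ is the relative dimension of $p$), and that the twists appearing when one pushes $\funD_{\stY}$ and $\funD_{\stX}$ through the composite exactly cancel — so that the bookkeeping of $[n]$-shifts and Tate twists is consistent. This is the same subtlety that appears in \cite[\S9.1]{LO2}, and the cleanest route is to use the biduality $\funD_{\stX}^2 \simeq \id$ (already recorded) to convert the $f_!$-statement into the $f_*$-statement, where the base change morphism has the more transparent description as restriction of sheaves. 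Once the twist-cancellation and the intertwining are in place, the rest is a routine application of the finite-coefficient result Proposition~\ref{prp:6op:sbc} level-by-level together with the definitions of the $\lambda$-object category and its localization; I would not grind through the cofinality or termwise-detection arguments, citing instead \cite[\S8, \S9]{LO2} and the general descent statements (Proposition~\ref{prp:is:223}) already set up in the excerpt.
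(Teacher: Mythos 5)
Your proposal is correct and follows essentially the same route as the paper, which simply derives the adic statement from the finite-coefficient smooth base change (Proposition~\ref{prp:6op:sbc}); you have just spelled out the level-by-level reduction over $\Lambda_\bullet$, the compatibility of the dualizing objects under smooth pullback, and the passage through normalization and localization that the paper leaves implicit. No gap.
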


%\begin{proof}
%\begin{NB}By Proposition \ref{prp:6op:sbc} we have ...\end{NB}
%\end{proof}

We close this section by 

\begin{rmk}\label{rmk:pv:cls}
\begin{enumerate}[nosep]
\item 
All the claims hold for the $\ell$-adic constructible derived $\infty$-category 
$\iDDc(\stX,\Qlb)$ (Notation \ref{ntn:adic:Qlb}).

\item
If we take $\stX$ to be an algebraic stack of finite presentation,
then we can recover the categories and functors in \cite{LO2}.
\end{enumerate}
\end{rmk}

%%%%%%%%%%%%%%%%%%%%%%%%%%%%%%%%%%%%%%%%%%%%%%%%%%%%%%%%%%%%%%%%%%%%%%%%%%%%%%%%
%%%%%%%%%%%%%%%%%%%%%%%%%%%%%%%%%%%%%%%%%%%%%%%%%%%%%%%%%%%%%%%%%%%%%%%%%%%%%%%%
%%%%%%%%%%%%%%%%%%%%%%%%%%%%%%%%%%%%%%%%%%%%%%%%%%%%%%%%%%%%%%%%%%%%%%%%%%%%%%%%
\section{Perverse sheaves on derived stacks}
\label{s:pv}

In this section we introduce the perverse $t$-structure 
on the constructible derived $\infty$-category on a derived stack $\stX$,
and discuss perverse sheaves, the decomposition theorem and weights.
Our argument basically follows \cite{LO3},
where the theory of perverse sheaves on an algebraic stack is developed.

Let $k$ be a fixed field, and $\Lambda$ be a complete discrete valuation ring 
whose residue characteristic is invertible in $k$.
We denote $\Lambda_n := \Lambda/\frkm^{n+1}$ for $n \in \bbN$.
We fix a geometric derived stack $\stX$ locally of finite presentation over $k$.

%%%%%%%%%%%%%%%%%%%%%%%%%%%%%%%%%%%%%%%%%%%%%%%%%%%%%%%%%%%%%%%%%%%%%%%%%%%%%%%%
%%%%%%%%%%%%%%%%%%%%%%%%%%%%%%%%%%%%%%%%%%%%%%%%%%%%%%%%%%%%%%%%%%%%%%%%%%%%%%%%
\subsection{Gluing of $t$-structures}

We recollect standard facts on gluing of $t$-structures in \cite{BBD},
specializing to the constructible derived $\infty$-categories of derived stacks.
See also \cite[\S 2]{LO3}.

\begin{asp}\label{asp:pv:gl}
Let $\iD$, $\iD_U$ and $\iD_F$ be stable $\infty$-categories, and 
\[
 \iD_{F} \xrr{i_*} \iD \xrr{j^*} \iD_{U}
\]
be a sequence of exact functors (Definition \ref{dfn:stb:exact}).
Assume the following conditions hold.
\begin{enumerate}[nosep, label=(\roman*)]
\item 
$i_*$ has a left adjoint $i^*$ and a right adjoint $i^!$.
\item
$j^*$ has a left adjoint $j_!$ and a right adjoint $j_*$.
\item
$i^! j_* = 0$.
\item
For each $K \in \iD$,
there exist morphisms $i_* i^* K \to j_! j^* K[1]$
and $j_* j^* K \to i_* i^! K[1]$ in $\iD$ such that 
the induced triangles
\[
 j_! j^* K \to K \to i_* i^* K \to j_! j^* K[1], \quad
 i_* i^! K \to K \to j_* j^* K \to i_* i^! K[1]
\]
in $\Ho \iD$ is distinguished.
\item
All the unit and counit transformations
$i^* i_* \to \id \to i^! i_*$ and $j^* j_* \to \id \to j^* j_!$ 
are equivalences.
\end{enumerate}
\end{asp}

\begin{fct}\label{fct:pv:gl}
Under Assumption \ref{asp:pv:gl}, we further suppose that $\iD_F$ and $\iD_U$ 
are equipped with $t$-structures determined by $(\iD_F^{\le 0},\iD_F^{\ge 0})$ 
and $(\iD_U^{\le 0},\iD_U^{\ge 0})$ respectively.
Define the full sub-$\infty$-categories $\iD^{\le 0}, \iD^{\ge 0} \subset \iD$ by
\[
 \iD^{\le 0} := \{ K \in \iD \mid 
 j^* K \in \iD_{U}^{\le 0}, i^* K \in \iD_{F}^{\le 0}\}, \quad
 \iD^{\ge 0} := \{ K \in \iD \mid 
 j^* K \in \iD_{U}^{\ge 0}, i^! K \in \iD_{F}^{\ge 0}\}.
\]
Then the pair $(\iD^{\le 0},\iD^{\ge 0})$ determines a $t$-structure on $\iD$.
\end{fct}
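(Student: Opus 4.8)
The plan is to follow the classical recollement argument of \cite[Th\'eor\`eme 1.4.10]{BBD}, transported to the stable $\infty$-categorical setting, exactly as in \cite[\S 2]{LO3}. First I would verify the two defining conditions of a $t$-structure. For orthogonality, given $K \in \iD^{\le 0}$ and $L \in \iD^{\ge 1}$, I would apply $\Hom_{\Ho\iD}(K,-)$ to the distinguished triangle $j_! j^* L \to L \to i_* i^* L \to j_! j^* L[1]$ of Assumption \ref{asp:pv:gl} (iv). Using the adjunctions (i)--(ii), $\Hom_{\Ho\iD}(K, j_! j^* L) \simeq \Hom(j^* K, j^* L)$ vanishes because $j^* K \in \iD_U^{\le 0}$ and $j^* L \in \iD_U^{\ge 1}$, while $\Hom_{\Ho\iD}(K, i_* i^* L) \simeq \Hom(i^* K, i^* L)$; here one needs $i^* L \in \iD_F^{\ge 1}$, which is where I would use (iv) and (v) to relate $i^* L$ and $i^! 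L$ up to the right truncation degree. The long exact sequence then forces $\Hom_{\Ho\iD}(K,L)=0$.

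The main work is the existence of the truncation triangle: for each $K \in \iD$ I must produce a distinguished triangle $K^{\le 0} \to K \to K^{\ge 1}$ with $K^{\le 0}\in\iD^{\le 0}$ and $K^{\ge 1}\in\iD^{\ge 1}$. I would build this by the standard ``glue on the open, correct on the closed'' recipe. Starting from the $t$-structure on $\iD_U$, form $\tau_{\le 0}^U j^* K$ and $\tau_{\ge 1}^U j^* K$; push these forward along $j_!$ and $j_*$ respectively, and use the octahedral axiom in $\Ho\iD$ together with the two triangles of (iv) to assemble a candidate. Concretely, one first takes the fiber of $K \to j_* \tau_{\ge 1}^U j^* K$, call it $K'$, which satisfies $j^* K' \in \iD_U^{\le 0}$; then one further truncates on the closed part using $\tau_{\le 0}^F i^* K'$ and $\tau_{\ge 1}^F i^! (\text{cofiber})$, splicing via another octahedron. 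Verifying that the resulting objects land in $\iD^{\le 0}$ and $\iD^{\ge 1}$ uses conditions (iii) and (v) to control the interaction of $i^!$ with $j_*$ (so that $i^!$ of the glued object behaves correctly) and the fact, from (v), that $j^*$ of anything built from $i_*$ vanishes. I expect the bookkeeping of the several octahedra — keeping track of which unit/counit maps are equivalences by (v) and hence can be inverted — to be the principal obstacle, though it is entirely formal once Assumption \ref{asp:pv:gl} is in force.

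Since all the functors $i_*, i^*, i^!, j_!, j^*, j_*$ are exact functors of stable $\infty$-categories, every ``triangle'' above is genuinely a fiber/cofiber sequence in $\iD$, so the octahedral manipulations can be carried out at the level of the stable $\infty$-category (using that $\Ho\iD$ is triangulated by Fact \ref{fct:stb:stb}) rather than only in the homotopy category; this is the only point where the $\infty$-categorical framework matters, and it is handled uniformly by \cite[Chap.\ 1]{Lur2}. I would then note that the construction produces functorial truncation functors $\tau_{\le 0}, \tau_{\ge 1}$ on $\iD$, completing the proof that $(\iD^{\le 0}, \iD^{\ge 0})$ is a $t$-structure, and remark that one recovers the usual recollement formulas $i^* \tau_{\le 0} \simeq \tau_{\le 0}^F i^*$ on the closed stratum and $j^* \tau_{\le 0} \simeq \tau_{\le 0}^U j^*$ on the open stratum as immediate consequences of the construction. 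The entire argument is a verbatim translation of \cite[\S 2]{LO3}, which itself follows \cite{BBD}, so beyond the octahedral bookkeeping there is nothing new to check.
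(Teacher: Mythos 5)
The paper itself offers no proof of this statement: it is recorded as a \emph{Fact} with a pointer to \cite[Th\'eor\`eme 1.4.10]{BBD} and \cite[\S 2]{LO3}, so the only meaningful comparison is against that standard recollement argument, which is indeed the route you take. Your construction of the truncation triangle is essentially correct: forming $K' = \mathrm{fib}(K \to j_*\tau^U_{\ge 1}j^*K)$ and then $\mathrm{fib}(K' \to i_*\tau^F_{\ge 1}i^*K')$ does produce an object of $\iD^{\le 0}$ whose cofiber in $K$ is an extension of $j_*\tau^U_{\ge 1}j^*K$ by $i_*\tau^F_{\ge 1}i^*K'$ and hence lies in $\iD^{\ge 1}$, using $j^*i_*=0$, $i^!j_*=0$ and the equivalences in (v) exactly as you indicate.

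The genuine gap is in the orthogonality step. You decompose $L \in \iD^{\ge 1}$ by the \emph{first} triangle of Assumption \ref{asp:pv:gl} (iv), namely $j_!j^*L \to L \to i_*i^*L$, and this breaks down in two places. First, the claimed identification $\Hom_{\Ho\iD}(K, j_!j^*L) \simeq \Hom(j^*K, j^*L)$ does not follow from any of the available adjunctions: $j_!$ is a \emph{left} adjoint of $j^*$, so it can be moved out of the first variable of $\Hom$ but not out of the second. Second, the term $\Hom_{\Ho\iD}(K, i_*i^*L) \simeq \Hom(i^*K, i^*L)$ requires $i^*L \in \iD_F^{\ge 1}$, whereas membership in $\iD^{\ge 1}$ only gives $i^!L \in \iD_F^{\ge 1}$; there is no general relation forcing $i^*L$ into $\iD_F^{\ge 1}$ (this asymmetry between $i^*$ and $i^!$ is precisely why the definitions of $\iD^{\le 0}$ and $\iD^{\ge 0}$ use different functors), so no amount of appealing to (iv) and (v) will close this. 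The repair is standard and easy: either decompose $K$ by the first triangle, so that $\Hom(j_!j^*K, L) \simeq \Hom(j^*K, j^*L) = 0$ and $\Hom(i_*i^*K, L) \simeq \Hom(i^*K, i^!L) = 0$ by the correct adjunctions; or decompose $L$ by the \emph{second} triangle $i_*i^!L \to L \to j_*j^*L$, which yields $\Hom(K, j_*j^*L) \simeq \Hom(j^*K, j^*L) = 0$ and $\Hom(K, i_*i^!L) \simeq \Hom(i^*K, i^!L) = 0$. A smaller point: your closing remark that $i^*\tau_{\le 0} \simeq \tau^F_{\le 0} i^*$ is not a consequence of the construction and is false in general (only $j^*$ is $t$-exact; $i^*$ is merely right $t$-exact and $i^!$ left $t$-exact), though this does not affect the proof itself.
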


We consider two cases which satisfy Assumption \ref{asp:pv:gl}.

Let $k$, $\Lambda$ and $\stX$ be as in the beginning of this section.
Let $i: \stF \to \stX$ be a closed immersion of a derived stack
and $\iota: \stU \to \stX$ be the open immersion 
of its complement (Notation \ref{ntn:dSt:c-o}).
Recall that we also have constructed derived functors 
$i_!: \iD_{\stF} \to \iD$ and $j^!: \iD \to \iD_{\stU}$,
By \S \ref{ss:6op:!} we have $i_!=i_*$ and $j^! \simeq j^*$,
so that they are compatible with the notation in Assumption \ref{asp:pv:gl}.
Then, by the argument in \S \ref{ss:6op:!}, we have

\begin{lem*}
Fix $n \in \bbN$.
Then the stable $\infty$-categories
\[
 \iD := \iDcu{b}(\stX_{\txle},\Lambda_n), \quad
 \iD_{\stF} := \iDcu{b}(\stF_{\txle},\Lambda_n), \quad 
 \iD_{\stU} := \iDcu{b}(\stU_{\txle},\Lambda_n),
\]
and the direct and inverse image functors 
\[
 \iD_{\stF} \xrr{i_*} \iD \xrr{j^*} \iD_{\stU}
\]
constructed in \S \ref{s:6op}
%the morphisms $i$ and $j$ induces a sequence 
satisfy Assumption \ref{asp:pv:gl}.
\end{lem*}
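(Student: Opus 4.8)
The plan is to verify, one by one, the five conditions (i)--(v) of Assumption \ref{asp:pv:gl} for the sequence $\iDcu{b}(\stF_{\txle},\Lambda_n) \xrr{i_*} \iDcu{b}(\stX_{\txle},\Lambda_n) \xrr{j^*} \iDcu{b}(\stU_{\txle},\Lambda_n)$, translating the corresponding classical verification for algebraic stacks (as in \cite{BBD, LO3}) to the present derived setting. The whole point is that every ingredient we need has already been built: the derived direct and inverse image functors and their adjoints from \S\ref{s:6op}, the identities $i_! \simeq i_*$ and $j^! \simeq j^*$ from Lemma \ref{lem:6op:462} (2), and the basic properties of open and closed geometric immersions of $\infty$-topoi from \S\ref{ss:is:oci}.

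First I would record the adjunctions. For the open immersion $j: \stU \to \stX$, Lemma \ref{lem:6op:462} (2) gives $j^! = j^*$ together with the extension by zero $j_!$, and the direct image $j_* = \dR j_*$ from Proposition \ref{prp:6op:pf} is right adjoint to $j^*$; this is condition (ii). For the closed immersion $i: \stF \to \stX$, the derived inverse image $i^*$ (Definition \ref{dfn:6op:pb}) is left adjoint to $i_* = i_!$, and the extraordinary inverse image $i^!$ of Definition \ref{dfn:6op:!} is right adjoint to $i_! = i_*$; this is condition (i). One must check these functors preserve the bounded constructible subcategories $\iDcu{b}$, which follows from the boundedness statements already proved for $f_*$, $f^*$, $f_!$, $f^!$ in \S\ref{s:6op} applied to $i$ and $j$ (here $i$ is proper, $j$ is an open immersion, and both are of finite presentation).

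Next come the geometric inputs. Condition (iii), $i^! j_* = 0$: on the level of $\infty$-topoi, $j_! j^*$ is supported on $\stU$ and $i^*$ kills $\stU$, which combined with adjunction gives the vanishing; concretely one uses that $\stU_{\et}$ is the localization $\stX_{\et}/\stF$ in the sense of Definition \ref{dfn:is:ci}, so any object of the form $j_* K$ restricts to zero under $i^*$ (equivalently under $i^!$ after dualizing), exactly as in the classical topos case \cite[IV]{SGA4}. Condition (v) is the fully faithfulness of $i_*$, $j_!$ and $j_*$: for the open immersion this is Lemma \ref{lem:is:j_!} (1) (which gives $j^* j_! \simeq \id$ and $j^* j_* \simeq \id$), and for the closed immersion the equivalence $i^* i_* \simeq \id \simeq i^! i_*$ follows from the fact that a closed geometric immersion induces an equivalence onto $\stX_{\et}/\stF$. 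Condition (iv), the two recollement triangles, is the one to set up with care: the localization triangle $j_! j^* K \to K \to i_* i^* K \to$ comes from the pullback square in Lemma \ref{lem:is:j_!} (2) describing $j_! j^*$ as a subobject of $\id$ with quotient supported on $\stF$, and the dual triangle $i_* i^! K \to K \to j_* j^* K \to$ is obtained by applying the dualizing functor $\funD_{\stX}$ (Notation \ref{ntn:LE:dual}) together with the compatibilities $\funD j_! \simeq j_* \funD$, $\funD i_* \simeq i_* \funD$ (Definition \ref{dfn:6op:!}) and biduality $\funD^2 \simeq \id$ (Proposition \ref{prp:LE:dual} (1)).

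The main obstacle I anticipate is not any single axiom but the bookkeeping needed to pass the classical recollement triangles of \cite[\S1.4]{BBD} through the simplicial presentation of $\stX_{\txle}$. Since all our derived functors on $\iDc(\stX_{\txle})$ are defined via a smooth presentation $\stX_\bullet \to \stX$ and the equivalence of Theorem \ref{thm:LE:pi-ve}, one has to check that the closed immersion $\stF \to \stX$ and its complement pull back to a compatible system of closed/open immersions of the simplicial derived algebraic spaces $\stX_\bullet$, and that the triangle in condition (iv) restricts on each $\stX_n$ to the known recollement triangle for \'etale sheaves on derived algebraic spaces (which in turn reduces to the truncated, non-derived case by Definition \ref{dfn:LE:cstr:L}). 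Once this descent of the triangles is in place, the remaining verifications are formal manipulations with the adjunctions listed above, and Fact \ref{fct:pv:gl} then yields the glued $t$-structure; I would relegate the routine diagram chases to a remark and cite \cite{LO3} for the parallel argument.
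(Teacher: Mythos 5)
Your overall strategy --- verify the five recollement axioms directly from the functors and adjunctions built in \S\ref{s:6op} --- is exactly what the paper does; indeed the paper gives no more detail than the remark preceding the lemma that $i_!=i_*$ and $j^!\simeq j^*$ together with a pointer back to \S\ref{ss:6op:!}, so your write-up is strictly more explicit than the source. Conditions (i), (ii), (iv) and (v) are handled correctly: the adjunctions come from Proposition \ref{prp:6op:pf}, Definition \ref{dfn:6op:pb}, Definition \ref{dfn:6op:!} and Lemma \ref{lem:6op:462}; the fully faithfulness statements come from Lemma \ref{lem:is:j_!} and the closed geometric immersion formalism of \S\ref{ss:is:oci}; and the second triangle in (iv) is obtained from the first by the dualizing functor and Proposition \ref{prp:LE:dual}, as you say.

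The one step you should repair is condition (iii). You assert that ``any object of the form $j_* K$ restricts to zero under $i^*$,'' i.e.\ $i^* j_* = 0$; this is false in general ($i^* j_*$ is the nontrivial ``nearby restriction'' --- already for a closed point of $\mathbb{A}^1$ with constant coefficients it computes the cohomology of the punctured neighbourhood). The vanishing that holds on the nose is $i^* j_! = 0$ (the extension by zero is supported on $\stU$, which is what the first clause of your sentence correctly records), or equivalently $j^* i_* = 0$. The statement actually required by Assumption \ref{asp:pv:gl} (iii), namely $i^! j_* = 0$, then follows purely by adjunction: $i^! j_*$ is right adjoint to $j^* i_! = j^* i_*$, which vanishes, hence $i^! j_* \simeq 0$. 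Your parenthetical about ``dualizing'' gestures at this, but as written the intermediate claim is wrong and should be replaced by the adjunction argument. With that correction the proof goes through as you describe.
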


We can also consider the adic coefficient case.

\begin{lem*}
The stable $\infty$-categories
\[
 \iDD := \iDDcu{b}(\stX_{\txle},\Lambda), \quad
 \iDD_{\stF} := \iDDcu{b}(\stF_{\txle},\Lambda), \quad 
 \iDD_{\stU} := \iDDcu{b}(\stU_{\txle},\Lambda),
\]
and the functors 
\[
 \iDD_{\stF} \xrr{i_*} \iDD \xrr{j^*} \iDD_{\stU}
\]
constructed in \S \ref{s:6op}
%the morphisms $i$ and $j$ induces a sequence 
satisfy Assumption \ref{asp:pv:gl}.
\end{lem*}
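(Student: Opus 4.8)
The plan is to verify that the five conditions (i)--(v) of Assumption \ref{asp:pv:gl} hold for the adic sequence $\iDD_{\stF} \xrr{i_*} \iDD \xrr{j^*} \iDD_{\stU}$, and the natural strategy is to deduce each of them from the corresponding fact for the finite-coefficient categories $\iDcu{b}(\stX_{\txle},\Lambda_n)$ established in the previous lemma, by passing through the normalization functor. Recall from \S\ref{s:adic} that $\iDDc(\stX,\Lambda)$ is built as a localization of $\iDc(\stX_{\txle}^{\bbN},\Lambda_{\bullet})$ and that by Fact \ref{fct:adic:3010} the normalized objects are exactly the compatible systems; the adic direct image $i_*$, inverse image $i^*$ (and its variant $i^!$), and the analogous functors for $j$ were all defined levelwise on projective systems in \S\ref{ss:pv:!} and the preceding subsections. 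So each identity or triangle in Assumption \ref{asp:pv:gl} for $\iDD$ can be tested after applying $\Nrm$, i.e. tested levelwise, where it reduces to the already-known statement for $\Lambda_n$.

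First I would spell out the functors: set $i_* := i_*$ the adic derived direct image, $j^* := j^*$ the adic derived inverse image, and invoke \S\ref{ss:6op:!} (and its adic analogue in \S\ref{ss:pv:!}) to produce $i^* = \funD_{\stF} i^* \funD_{\stX}$-type adjoints together with $j_! = \funD_{\stU} j_* \funD_{\stX}$ and $j_* $; adjointness of each pair follows from the finite-coefficient adjointness together with the biduality $\funD^2 \simeq \id$ (Lemma \ref{lem:adic:732}) exactly as in Definition \ref{dfn:6op:!} and the proposition following it. For (iii) $i^! j_* = 0$ and (v) the unit/counit equivalences, I would reduce to the levelwise statements: since $\Nrm$ is fully faithful onto normalized objects and commutes with $i_*, j^*$ and their adjoints (up to the bounded-coefficient normalization results of \cite{LO2}, e.g. that $i_*$ of a $\lambda$-object is a $\lambda$-object), the vanishing and the equivalences hold because they hold in each $\iDcu{b}(\stX_{\txle},\Lambda_n)$ by the previous lemma. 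For (i), (ii), and the two recollement triangles in (iv), the argument is the same: one constructs the localization triangles $j_! j^* K \to K \to i_* i^* K \to$ and $i_* i^! K \to K \to j_* j^* K \to$ levelwise in $\iDc(\stX_{\txle}^{\bbN},\Lambda_\bullet)$ using the finite-coefficient recollement, checks they are compatible with the transition maps, and then passes to $\iDDcu{b}$ via the localization functor, noting that the distinguished-triangle structure on $\Ho \iDDcu{b}$ is inherited from $\Ho \iDc(\stX_{\txle}^{\bbN},\Lambda_\bullet)$.

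The main obstacle will be condition (iv): producing the two functorial triangles in a way that is genuinely compatible with the projective-system structure and survives the localization by almost-zero objects. The subtlety is that the functors $i^*$ and $j_!$ are defined through the dualizing functor $\funD$, which is built from $\Nrm$, so one must check that the recollement triangles for the adic category are not merely levelwise triangles but arise from a single object of $\iDc(\stX_{\txle}^{\bbN},\Lambda_\bullet)$; this is where the finite cohomological dimension hypothesis and the boundedness (the superscript $b$) are used, ensuring normalization behaves well and the cones are again $\lambda$-objects. Once this compatibility is in hand, (iv) follows, and (i)--(iii) and (v) are then routine consequences of the finite-coefficient lemma together with standard adjunction and biduality formalism, so I would treat them briefly. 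I expect the whole proof to be short, essentially a reduction ``check levelwise, then localize,'' with the one genuine point being the construction of the triangles in (iv).
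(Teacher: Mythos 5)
Your proposal is correct and follows essentially the same route the paper takes: the paper states this lemma without proof, relying exactly on the machinery of \S\ref{s:adic} (normalization, $\lambda$-objects, and the adic shriek functors built from the dualizing functor), so your ``verify each condition levelwise via $\Nrm$ and the finite-coefficient lemma, then pass to the localization'' reduction is the intended argument, and your identification of the recollement triangles in condition (iv) as the only non-routine point is accurate. One small slip to fix: for the open immersion $j:\stU\to\stX$ the formula should read $j_! = \funD_{\stX}\circ j_*\circ \funD_{\stU}$ (the outer dualizing functor is the one on the target), not $\funD_{\stU}\, j_*\, \funD_{\stX}$, and similarly the displayed expression for the right adjoint of $i_*$ should be $i^! = \funD_{\stF}\circ i^*\circ \funD_{\stX}$.
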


%\begin{proof}
%\begin{NB}
%The conditions (i)--(iii) are ...  
%As for the condition (v),  we have ...
%%and condition (v) holds by base change to a
%%smooth cover of X and the case of schemes
%The distinguished triangles in (iv) are ...
%\end{NB}
%\end{proof}

%%%%%%%%%%%%%%%%%%%%%%%%%%%%%%%%%%%%%%%%%%%%%%%%%%%%%%%%%%%%%%%%%%%%%%%%%%%%%%%%
%%%%%%%%%%%%%%%%%%%%%%%%%%%%%%%%%%%%%%%%%%%%%%%%%%%%%%%%%%%%%%%%%%%%%%%%%%%%%%%%
\subsection{Perverse $t$-structure}
\label{ss:pv:pv}

%%%%%%%%%%%%%%%%%%%%%%%%%%%%%%%%%%%%%%%%%%%%%%%%%%%%%%%%%%%%%%%%%%%%%%%%%%%%%%%%
\subsubsection{The case of derived algebraic spaces}

Here we will introduce the perverse $t$-structure for derived algebraic spaces.
Let $k$ and $\Lambda = \varprojlim_n \Lambda_n$ be as 
in the beginning of this section.

Consider a derived algebraic space $\stU$ of finite presentation over $k$.
%Recalling Notation \ref{ntn:dAS:DUA},
%We denote by $\iD(\stU)$ either the derived $\infty$-category
%$\iDc(\stU,\Lambda_n)$ or $\iDDc(\stU,\Lambda)$.
Fix $n \in \bbN$ for a while, and let 
$\iD^{b} := \iD^{b}_{\cstr}(\stU,\Lambda_n)$ be 
the derived $\infty$-category of bounded complexes of 
\'etale $\Lambda_n$-sheaves with constructible cohomology groups.

Following the case of algebraic spaces (Definition \ref{dfn:AS:pt}),
let us introduce

\begin{dfn*}
A \emph{point} of a derived algebraic space $\stU$ is a 
monomorphism $\dSpec L \to \stU$ of derived stacks 
(Definition \ref{dfn:dSt:epi-mono}) with $L$ some field.
It will be denoted typically as $i_u: u \to \stU$.
\end{dfn*}

The equivalence of two points on $\stU$ is defined 
similarly as Definition \ref{dfn:AS:pt}.
The dimension of $\stU$ at its point $u$ is also defined 
in the standard manner, and it will be denoted by $\dim(u)$.

We denote by 
\[
 {}^{\pv}\iD^{b,\le 0} \subset \iD^{b}
\]
the full sub-$\infty$-category of objects $K$ such that $H^j(i_u^* K)=0$ 
for each point $u$ of $\stU$ and every $j > -\dim(u)$.
Here $i_u^* K := (i_{\ol{u}}^* K)_u$ with 
$i_{\ol{u}}: \ol{u}_{\txred} \to \stU$ the closed immersion.
Note that the symbol $\ol{X}_{\txred}$ makes sense here.
Similarly, we denote by 
\[
 {}^{\pv}\iD^{b,\ge 0} \subset \iD^b
\]
the sub-$\infty$-category of those $K$ such that 
$H^j(i_u^! K)=0$ for each point $u \in \stU$ and every $j < -\dim(u)$.
Then, as in the case of schemes \cite[2.2.11]{BBD}, the pair 
\[
 ({}^{\pv}\iD^{b,\le 0},{}^{\pv}\iD^{b,\ge 0})
\]
determines a $t$-structure on $\iD^{b}$,
which is called the (middle) \emph{perverse $t$-structure}.
We denote by 
\[
 \pH^{0}: \iD^{b} \longto \iD^{b}
\]
the perverse cohomology functor.

In \cite[\S 3]{LO3}, an extension is developed 
of the perverse $t$-structure on the bounded derived category to
the unbounded derived category.
Let us explain it in the context of derived algebraic spaces.
For $K \in \iD=\iD_{\cstr}(\stU,\Lambda_n)$ 
and $\alpha,\beta \in \bbZ$ with $\alpha \le \beta$,
we denote $\tau_{[\alpha,\beta]} K := \tau_{\ge \alpha} \tau_{\le \beta} K$.
Then we have

\begin{fct*}[{\cite[Lemma 3.3]{LO3}}]  
For any $K \in \iD^{b}$,
there exist $\alpha, \beta \in \bbZ$ such that $\alpha \le \beta$ and
$\pH^{0}(K) \simeq \pH^{0}(\tau_{\alpha,\beta}K)$.
\end{fct*}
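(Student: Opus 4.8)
The plan is to reduce the statement to a cohomological-dimension bound and then invoke the standard Deligne-style argument from \cite{LO3}. First I would observe that since $K \in \iD^{b}$ is bounded for the ordinary $t$-structure, there is a bound $N$ with $\tau_{\le -N-1} K = 0$ and $\tau_{\ge N+1}K=0$, say $H^j(K) = 0$ for $|j| > N$. The point is that the perverse cohomology $\pH^{0}(K)$ can only receive contributions from the perverse truncation functors ${}^{\pv}\tau_{\le 0}$ and ${}^{\pv}\tau_{\ge 0}$ applied to $K$, and these truncations shift the ordinary degrees by at most the \emph{cohomological dimension} of the stack $\stU$, which is finite because $\stU$ is a derived algebraic space of finite presentation over $k$ (so $\Trc \stU$ is finite-dimensional, and torsion \'etale cohomology on it has bounded cohomological dimension, as used implicitly already in \S\ref{s:adic}).

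The key steps, in order: (1) record that for a derived algebraic space of finite presentation over $k$ the \'etale cohomological dimension is bounded by some $d = d(\stU)$; this is exactly the hypothesis needed for the perverse $t$-structure to be well-behaved, and it only depends on $\Trc \stU$. (2) Using the defining inequalities for ${}^{\pv}\iD^{b,\le 0}$ and ${}^{\pv}\iD^{b,\ge 0}$ in terms of stalks $i_u^* K$ and costalks $i_u^! K$ over points $u$ with $\dim(u) \le \dim \stU =: \delta$, deduce that if $H^j(K)=0$ for $|j|>N$ then ${}^{\pv}\tau_{\ge 0} K$ and ${}^{\pv}\tau_{\le 0}K$ have ordinary cohomology concentrated in a range $[\alpha, \beta]$ with $\alpha \ge -N-\delta-d$ and $\beta \le N+\delta+d$; set $\alpha := -N-\delta-d$ and $\beta := N+\delta+d$. (3) Conclude that the ordinary truncation $\tau_{[\alpha,\beta]}K \to K$ induces an isomorphism on all the data needed to compute $\pH^{0}$: since perverse truncation is built from ${}^{\pv}\tau$ which themselves are expressed via iterated cones of ordinary truncations and the functors $i_u^*, i_u^!$, and since $\tau_{[\alpha,\beta]}K$ and $K$ agree in the degree window that matters, we get $\pH^{0}(K) \simeq \pH^{0}(\tau_{[\alpha,\beta]}K)$. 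This last step is where I would cite \cite[Lemma 3.3]{LO3} directly, since the argument there is formal once the cohomological-dimension bound is in hand, and our $\iD^b$, perverse $t$-structure, and truncation functors are set up to be literal analogues of theirs via the simplicial-space description (Theorem \ref{thm:LE:pi-ve}, Theorem \ref{thm:LE:iDc-stb}).

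The main obstacle I anticipate is step (1) together with making the bound on $\alpha, \beta$ genuinely uniform and correct: one must be careful that the perverse truncation functors do not a priori preserve $\iD^b$, which is the whole reason the lemma is stated, so the estimate has to be run through the explicit cones defining ${}^{\pv}\tau_{\le n}$ and ${}^{\pv}\tau_{\ge n}$ rather than assumed. Concretely, for each point $u$ the stalk functor $i_u^*$ is exact but $i_u^!$ raises cohomological amplitude by at most $2d$ (relative dimension plus cohomological dimension), and one needs the supremum of $\dim(u)$, which is $\dim \stU$, to be finite; all of this is available for derived algebraic spaces of finite presentation. Once these numerical inputs are assembled the rest is a transcription of \cite{LO3}, so I would keep the write-up short: state the cohomological-dimension bound as a consequence of Assumption-style finiteness on $\stU$, produce the explicit $\alpha, \beta$, and refer to loc.\ cit.\ for the formal deduction that $\pH^0$ factors through $\tau_{[\alpha,\beta]}$.
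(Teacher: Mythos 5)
The statement you are proving is only useful because the integers $\alpha,\beta$ can be chosen \emph{uniformly}, depending on $\stU$ alone and not on $K$: the text immediately afterwards uses ``the integers $\alpha<\beta$'' to define a single functor $\pH\colon \iD \to \iD^{b}$, $K \mapsto \pH^0(\tau_{[\alpha,\beta]}K)$, on the whole unbounded category. Your step (2) sets $\alpha := -N-\delta-d$ and $\beta := N+\delta+d$, where $N$ is the amplitude of the particular bounded object $K$. With that choice $[-N,N]\subset[\alpha,\beta]$, hence $\tau_{[\alpha,\beta]}K \simeq K$, and the asserted equivalence $\pH^0(K)\simeq\pH^0(\tau_{[\alpha,\beta]}K)$ becomes a tautology: you have verified the literal statement but none of its content, and the extension of $\pH^0$ to $\iD$ does not follow. (Symptomatically, the worry you yourself raise --- that perverse truncation need not preserve $\iD^{b}$ --- cannot be addressed by bounds that grow with $K$.)

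The intended argument runs the other way: one shows there are $\alpha<\beta$, depending only on $\dim\stU$ and on a constant $c$ measuring the quasi-injective dimension of the dualizing complex, such that $\pH^0$ vanishes identically on $\iD^{<\alpha}$ and on $\iD^{>\beta}$. The lower bound comes from the $i_u^*$-half of the definition of the perverse $t$-structure (take $\alpha$ below $-\dim\stU$); the upper bound comes from the $i_u^!$-half together with the uniform statement that $K\in\iD^{>d}$ implies $i_u^!K\in\iD^{>d+c}$ for every point $u$. Once this $K$-independent vanishing is in place, the truncation triangles $\tau_{<\alpha}K\to K\to\tau_{\ge\alpha}K$ and $\tau_{\le\beta}K\to K\to\tau_{>\beta}K$ together with the long exact sequence of perverse cohomology give $\pH^0(K)\simeq\pH^0(\tau_{[\alpha,\beta]}K)$ for every $K$, bounded or not. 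Your ingredients --- finiteness of the cohomological dimension, the behaviour of $i_u^!$, finiteness of $\dim\stU$ --- are the right ones, but they must be assembled into a vanishing statement uniform in $K$ rather than into a $K$-dependent amplitude estimate.
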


For the completeness of presentation,
let us explain the outline of the proof.
It suffices to show that there exist $\alpha,\beta \in \bbZ$, 
$\alpha<\beta$ such that for any $K \in \iD^{<\alpha}$
or $K \in \iD^{>\beta}$ we have $\pH^0(K)=0$.
By the definition of the perverse sheaf,
one can take $\alpha$ to be an integer smaller then $-\dim X$.
Since the dualizing sheaf of a scheme of finite type over $k$
has finite quasi-injective dimension, there exists $c \in \bbN$
such that for any $d \in \bbZ$, any point $u$ of $\stU$ and $K \in \iD^{>d}$
we have $i_u^! K \in \iD^{>d+c}$.
Thus we can take $\beta$ to be an integer greater than $-c$.

Using the integers $\alpha<\beta$ in the above Fact, 
we have a well-defined functor 
\[
 \pH: \iD \longto \iD^{b}, \quad 
 K \longmapsto \pH(\tau_{[\alpha,\beta]}K).
\]
We now define ${}^{\pv}\iD^{\le 0}$ (resp.\ ${}^{\pv}\iD^{\ge 0}$) to be 
the full sub-$\infty$-category of $\iD$ spanned by those $K \in \catD$ with 
$\pvp_j(K) := \pH(K[j]) =0$ for any $j \in \bbZ_{\le 0}$ 
(resp.\ $j \in \bbZ_{\ge 0}$).
The pair
\[
 ({}^{\pv}\catD^{\le 0},{}^{\pv}\catD^{\ge 0})
\]
determines a $t$-structure on $\iD$, 
which is called the \emph{perverse $t$-structure}.

Similarly, for the derived $\infty$-category 
$\iDD^{b} := \iDD_{\cstr}^{b}(\stU,\Lambda)$ with adic coefficients,
we have a pair 
\[
 ({}^{\pv}\iDD^{b,\le 0},{}^{\pv}\iDD^{b,\ge 0})
\]
given by the same condition, 
and it determines a $t$-structure on $\iDD^{b}$
(see \cite[Proposition 3.1]{LO3} for the detail).
We also call the obtained $t$-structure the \emph{perverse $t$-structure}.

The same reasoning works for $\catDD:=\catDD_{\cstr}(X,\Lambda)$.

%%%%%%%%%%%%%%%%%%%%%%%%%%%%%%%%%%%%%%%%%%%%%%%%%%%%%%%%%%%%%%%%%%%%%%%%%%%%%%%%
\subsubsection{The case of derived stacks}

Now we consider a geometric derived stack $\stX$.
We have the stable $\infty$-categories $\iDc(\stX,\Lambda_n)$ and 
$\iDDc(\stX,\Lambda)$. %by \S \ref{s:6op} and \S \ref{s:adic}.
We denote either of them by $\iD(\stX)$.

We first assume that $\stX$ is of finite presentation.
Take a smooth presentation $\stX_{\bullet} \to \stX$.
We denote by $p: \stX_0 \to \stX$ the projection from 
the derived algebraic space $\stX_0$.
Then we may assume that $\stX_0$ is of finite presentation.

\begin{dfn*}
For a derived stack $\stX$ of finite presentation, we define 
\[
 \ipDm(\stX) \quad (\text{resp.\ } \ipDp(\stX))
\]
{}to be the full sub-$\infty$-category of $\iD(\stX)$ spanned by those objects $\shM$
such that $p^* \shM[d] \in \ipDm(\stX_0)$ (resp.\ $p^* \shM[d] \in \ipDp(\stX_0)$),
where $d$ is the relative dimension of $p:\stX_0 \to \stX$.
\end{dfn*}

\begin{lem*}
The $\infty$-categories $\ipDm(\stX)$ and $\ipDp(\stX)$ are independent of 
the choice of $\stX_{\bullet} \to \stX$ up to equivalence.
\end{lem*}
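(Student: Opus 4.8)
The plan is to reduce the independence statement to a descent-type comparison between two smooth presentations, following the standard strategy used for the lisse-\'etale derived category (Theorem \ref{thm:LE:pi-ve}) and for the ordinary perverse $t$-structure on algebraic stacks in \cite{LO3}. Given two smooth presentations $\stX_{\bullet} \to \stX$ and $\stX'_{\bullet} \to \stX$ with projections $p: \stX_0 \to \stX$ and $p': \stX'_0 \to \stX$, the first step is to form the fiber product $\stX_0 \times_{\stX} \stX'_0$, which is again a derived algebraic space of finite presentation, equipped with smooth morphisms $q: \stX_0 \times_{\stX} \stX'_0 \to \stX_0$ and $q': \stX_0 \times_{\stX} \stX'_0 \to \stX'_0$ of relative dimensions $d'$ and $d$ respectively (here $d,d'$ are the relative dimensions of $p,p'$). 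The key point is then purely local on derived algebraic spaces: if $g: \stV \to \stW$ is a smooth morphism of relative dimension $e$ between derived algebraic spaces of finite presentation, then $g^* = g^*$ is $t$-exact for the shifted perverse conventions in the sense that $\shM \in \ipDm(\stW)$ iff $g^* \shM[e] \in \ipDm(\stV)$ (and similarly for $\ipDp$), provided $g$ is also surjective.

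The second step is to establish this local smooth-descent statement. By Lemma \ref{lem:6op:462} (1) we have $g^! \simeq g^* \tsh{e} = g^*(e)[2e]$, so $g^*$ agrees with $g^!$ up to a Tate twist and a shift by $2e$; combining this with the shift by $e$ in the definition, the functor $\shM \mapsto g^* \shM[e]$ is $t$-exact for the perverse $t$-structure on a single derived algebraic space because the perverse $t$-structure is defined stalk-wise via the points $i_u: u \to \stW$ (using $i_u^*$ and $i_u^!$) and the dimension function $\dim(u)$, and smooth morphisms shift the local dimension of points exactly by $e$ while $g^*$ commutes with the stalk functors (by smooth, hence flat, base change — Lemma \ref{lem:dAS:pbc1}, Proposition \ref{prp:6op:sbc}, Proposition \ref{prp:adic:sbc}). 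The surjectivity of $g$ then gives conservativity of $g^*$ on the relevant categories, so the condition "$g^* \shM[e] \in \ipDm(\stV)$" detects membership of $\shM$ in $\ipDm(\stW)$; this is exactly the content needed. For the adic case one additionally uses that the perverse $t$-structure on $\iDDc$ is inherited level-wise from the $\Lambda_n$-coefficient categories (\cite[Proposition 3.1]{LO3}), so the comparison reduces to the finite-coefficient statement.

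The third step assembles these: applying the local statement to $q$ and $q'$, we get that $\shN \in \ipDm(\stX_0)$ satisfies $q^* \shN[d'] \in \ipDm(\stX_0 \times_{\stX} \stX'_0)$ iff $\shN \in \ipDm(\stX_0)$, and likewise $\shN' \in \ipDm(\stX'_0)$ satisfies $q'^* \shN'[d] \in \ipDm(\stX_0 \times_{\stX} \stX'_0)$ iff $\shN' \in \ipDm(\stX'_0)$. For $\shM \in \iD(\stX)$, using $q^* p^* \simeq (p q)^* = (p' q')^* \simeq q'^* p'^*$ and the fact that $pq = p'q'$ has relative dimension $d + d'$, the two conditions "$p^*\shM[d] \in \ipDm(\stX_0)$" and "$p'^*\shM[d'] \in \ipDm(\stX'_0)$" both become equivalent to "$(pq)^*\shM[d+d'] \in \ipDm(\stX_0 \times_{\stX}\stX'_0)$", hence to each other. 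The same argument with $\ipDp$ and $i_u^!$ gives the $\ge 0$ case.

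I expect the main obstacle to be the second step: verifying carefully that $g^*[e]$ (equivalently $g^!$ up to twist) is $t$-exact for the perverse $t$-structure on derived algebraic spaces and that surjectivity yields the needed conservativity — in the ordinary setting this is \cite[2.2.11]{BBD} type bookkeeping with stalks, costalks and the dimension function, but here one must check that the point functors $i_u^*, i_u^!$ behave correctly under the truncation-invariance built into Definition \ref{dfn:LE:cstr:dAS} and \ref{dfn:dAS:prp}, and that smooth base change (Lemma \ref{lem:dAS:pbc1}, Propositions \ref{prp:6op:sbc}, \ref{prp:adic:sbc}) applies to the non-proper morphisms $i_u$ after the usual reduction to a closed immersion $i_{\ol{u}}$ followed by restriction. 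Once this local $t$-exactness is in hand, the globalization via the fiber product is formal.
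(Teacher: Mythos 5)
Your argument is correct and is essentially the paper's own proof: the paper simply invokes \cite[Lemma 4.1]{LO3}, whose content is exactly your reduction to the fiber product $\stX_0 \times_{\stX} \stX'_0$ of the two atlases together with the fact that for a smooth surjective morphism of relative dimension $e$ the shifted pullback $g^*[-]\,[e]$ is $t$-exact and conservative for the perverse $t$-structures. You have spelled out the standard BBD-style verification that the paper leaves implicit, and the details you flag (compatibility of $i_u^*$, $i_u^!$ with smooth base change, conservativity from surjectivity) are handled exactly as you describe.
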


\begin{proof}
The proof of \cite[Lemma 4.1]{LO3} works.
\end{proof}

\begin{lem}\label{lem:pv:41}
For a geometric derived stack $\stX$ of finite presentation,
the pair $(\ipDm(\stX),\ipDp(\stX))$ determines a $t$-structure on $\iD(\stX)$.
\end{lem}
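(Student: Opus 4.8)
The plan is to produce the $t$-structure by smooth descent from the perverse $t$-structure already established on derived algebraic spaces in \S\ref{ss:pv:pv}. Fix a smooth presentation $\stX_\bullet\to\stX$; since $\stX$ is of finite presentation, hence quasi-compact, we may take $\stX_0$ --- and then every $\stX_n=\stX_0\times_\stX\cdots\times_\stX\stX_0$ --- to be a derived algebraic space of finite presentation, so that the perverse $t$-structure is defined on each $\iD(\stX_n)$. Let $d_n$ denote the relative dimension of the structure morphism $\stX_n\to\stX$ (so $d_0=d$ and $d_n=(n+1)d$), and recall that, after restriction to $\CS^{\str}$, Theorem \ref{thm:LE:iDc-stb} --- together with Remark \ref{rmk:pv:cls} in the adic case --- yields a descent presentation $\iD(\stX)\simeq\lim_{[n]\in\CS^{\str}}\iD(\stX_n)$ whose transition functor attached to $\delta\colon[m]\hookrightarrow[n]$ is the pullback along an iterated face morphism $\stX_n\to\stX_m$; all of these are smooth, being obtained from $p:=(\stX_0\to\stX)$ by base change, of relative dimension $d_n-d_m$.

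The key input is that, for a smooth morphism $f$ of relative dimension $e$ between derived algebraic spaces of finite presentation, the functor $f^*[e]$ is perverse $t$-exact (the standard compatibility of the perverse $t$-structure with smooth pullback, cf.\ \cite{BBD}; in view of Lemma \ref{lem:6op:462}(1), which identifies $f^!$ with $f^*\tsh{e}$, this is equivalent to the usual statement for $f^!$), and that on the unbounded categories it commutes with the perverse truncation functors, by the description of the latter via $\pH^0$ and the estimate $\pH^0(K)\simeq\pH^0(\tau_{[\alpha,\beta]}K)$ of \cite[Lemma 3.3]{LO3}. Hence, equipping each $\iD(\stX_n)$ with the $d_n$-shift of its perverse $t$-structure --- whose connective part is $\ipDm(\stX_n)[-d_n]$ --- every transition functor of the diagram $[n]\mapsto\iD(\stX_n)$ over $\CS^{\str}$ becomes $t$-exact and compatible with the truncations; by the standard fact that a limit of stable $\infty$-categories along $t$-exact functors carries a $t$-structure computed levelwise, $\iD(\stX)$ acquires a $t$-structure whose truncation functors are assembled from those on the $\iD(\stX_n)$. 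I would then identify its connective and coconnective parts with $\ipDm(\stX)$ and $\ipDp(\stX)$: an object $\shM$ lies in the connective part if and only if $r_n(\shM)$ lies in the connective part of the $d_n$-shifted structure on $\iD(\stX_n)$ for every $n$, and since $r_n(\shM)\simeq\delta^*(r_0(\shM))$ along any vertex $\delta\colon[0]\hookrightarrow[n]$ with $\delta^*$ $t$-exact, this reduces to the case $n=0$, i.e.\ to $p^*\shM[d]\in\ipDm(\stX_0)$, which is the defining condition for $\shM\in\ipDm(\stX)$; the coconnective part is handled symmetrically. The independence of the outcome from the chosen presentation is the preceding lemma.

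If one prefers to check the $t$-structure axioms directly, stability of $\ipDm(\stX)$ under $[1]$ and of $\ipDp(\stX)$ under $[-1]$ is immediate; for the orthogonality one writes, for $\shM\in\ipDm(\stX)$ and $\shN$ with $\shN[1]\in\ipDp(\stX)$, the mapping space as a totalization $\Map_{\iD(\stX)}(\shM,\shN)\simeq\lim_{[n]}\Map_{\iD(\stX_n)}(p_n^*\shM,p_n^*\shN)$, notes that $\Map_{\iD(\stX_n)}(p_n^*\shM,p_n^*\shN)\simeq\Map_{\iD(\stX_n)}(p_n^*\shM[d_n],p_n^*\shN[d_n])$ has homotopy groups concentrated in negative degrees by the orthogonality of the perverse $t$-structure on the derived algebraic space $\stX_n$, and deduces $\pi_0=0$ from the totalization spectral sequence; the truncation triangle ${}^{\pv}\tau_{\le 0}\shM\to\shM\to{}^{\pv}\tau_{\ge 1}\shM$ is the assembly of the levelwise perverse truncation triangles, which is legitimate because these commute with the transition functors.

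The main obstacle is precisely this last compatibility: that the perverse truncation functors on the individual, a priori unbounded, categories $\iD(\stX_n)$ really do commute with the shifted smooth-pullback transition functors. A naive termwise truncation will not do; one must work with the unbounded perverse formalism of \cite[\S 3]{LO3} --- the functor $\pH^0$ together with the dependence of $\pH^0(K)$ only on $\tau_{[\alpha,\beta]}K$ for a range $[\alpha,\beta]$ fixed by the stack --- and verify that $f^*[e]$ carries the cut $\tau_{[\alpha,\beta]}$ to $\tau_{[\alpha-e,\beta-e]}$. Once this is secured, the remaining work --- bookkeeping the shifts $d_n$ and the sign conventions in the identification with the perverse $\le d_n$-truncations, and convergence of the totalization spectral sequence in the relevant range --- is routine.
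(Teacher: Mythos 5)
Your argument is essentially correct, but it proves the lemma by a different mechanism than the paper. The paper follows \cite[Proposition 3.1]{LO3}: axioms (i) and (ii) are treated as formal, and the truncation triangle of axiom (iii) is produced by noetherian induction on $\stX$ together with the open/closed recollement gluing of $t$-structures (Fact \ref{fct:pv:gl}), stratifying the stack and gluing the truncations over the strata. You instead descend the $t$-structure along the \v{C}ech nerve of a smooth atlas: you put the $d_n$-shifted perverse $t$-structure on each $\iD(\stX_n)$, observe that the transition functors are shifted smooth pullbacks and hence $t$-exact, and obtain the truncation functors on $\iD(\stX)\simeq\lim_{[n]\in\CS^{\str}}\iD(\stX_n)$ by assembling the levelwise ones, with orthogonality coming out of the totalization spectral sequence exactly as in Lemma \ref{lem:is:234}. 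Both routes work. Yours is more uniform (no noetherian induction, and it makes the definition of $\ipDm(\stX)$ via $p^*[d]$ look inevitable), but it leans on two inputs the paper never establishes and that you would have to supply in full: first, the perverse $t$-exactness of $f^*[e]$ for smooth $f$ of relative dimension $e$ on the \emph{unbounded} categories of derived algebraic spaces --- you correctly isolate this as the crux, and the reduction via $\pH^{0}(K)\simeq\pH^{0}(\tau_{[\alpha,\beta]}K)$ together with the commutation of $f^*$ with the standard truncations does close it, provided the window $[\alpha,\beta]$ can be chosen uniformly over the atlas (it can, since it depends only on dimensions and the quasi-injective dimension of the dualizing complexes, both controlled by finite presentation); second, the fact that a limit of stable $\infty$-categories along $t$-exact functors inherits a levelwise $t$-structure. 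The latter is true, and your sketch (coconnectivity of the mapping spectra plus Mittag--Leffler for orthogonality, functoriality of truncation for the assembly of the levelwise truncation triangles into a cartesian section) is the standard proof, but it should be written out rather than invoked as folklore. The paper's route avoids both of these at the cost of the stratification argument.
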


\begin{proof}
The non-trivial point is to check the condition (iii) of the $t$-structure 
(Definition \ref{dfn:stb:BBD}).
Using noetherian induction and gluing of $t$-structures (Fact \ref{fct:pv:gl}),
we can construct for each $\shM \in \iD(\stX)$ a fiber sequence 
$\shM' \to \shM \to \shM''$ with $\shM' \in \iD^{\ge 0}(\stX)$ 
and $\shM'' \in \iD^{\le 0}(\stX)$.
See \cite[Proposition 3.1]{LO3}
\end{proof}

Next we assume that $\stX$ is locally of finite presentation.
In this case, we have

\begin{prp}\label{prp:pv:lfp}
For a geometric derived stack $\stX$ locally of finite presentation,
we define  
\[
 \ipDm(\stX) \quad (\text{resp.\ } \ipDp(\stX))
\]
{}to be the full sub-$\infty$-category of $\iD(\stX)$ spanned by 
those objects $\shM$ such that for each open immersion $\stY \to \stX$ with 
$\stY$ a derived stack of of finite presentation,
the restriction $\rst{\shM}{\stY}$ belongs to $\ipDm(\stY)$ 
(resp.\ to $\ipDp(\stY)$). 
Then the pair 
\[
 (\ipDm(\stX),\ipDp(\stX))
\]
 determines a $t$-structure on $\iD(\stX)$,
which will be called the \emph{perverse $t$-structure}.
\end{prp}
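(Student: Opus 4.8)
The plan is to reduce the statement for a locally finitely presented geometric derived stack to the already-established case of finitely presented stacks (Lemma \ref{lem:pv:41}), by a gluing argument over the filtered family of finitely presented open substacks. First I would fix notation: write $\iD(\stX)$ for either $\iDc(\stX,\Lambda_n)$ or $\iDDc(\stX,\Lambda)$, and recall that by Definition \ref{dfn:dSt:lfp} a locally finitely presented geometric derived stack $\stX$ admits a presentation $\stX \simeq \iclim_i \stX_i$ with each $\stX_i$ finitely presented; more usefully, one can produce a directed system of quasi-compact open immersions $\stY \inj \stX$ with $\stY$ of finite presentation whose union is $\stX$. For each such $\stY$ we have by Lemma \ref{lem:pv:41} a perverse $t$-structure $(\ipDm(\stY),\ipDp(\stY))$ on $\iD(\stY)$, and by Lemma \ref{lem:6op:462} (2) together with the construction of $j^*$ in \S\ref{ss:6op:!} the restriction functors are compatible (open immersions are smooth of relative dimension $0$, so $j^* \simeq j^!$ and restriction is $t$-exact in the ordinary as well as the perverse sense — this needs to be recorded). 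The key structural fact to use is that $\iD(\stX)$ is recovered from the $\iD(\stY)$'s: for cartesian sheaves on the lisse-\'etale site, an object is determined by its restrictions to a smooth presentation (Theorem \ref{thm:LE:pi-ve}, Proposition \ref{prp:LE:cart}), and compatibly with open covers, so $\iD(\stX) \simeq \ilim_{\stY} \iD(\stY)$ over the finitely presented open substacks.

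Given that, I would verify the three axioms of a $t$-structure (Definition \ref{dfn:stb:BBD}) for the pair $(\ipDm(\stX),\ipDp(\stX))$ as defined in the statement. The shift-stability $\ipDm(\stX)[1] \subset \ipDm(\stX)$ and $\ipDp(\stX)[-1]\subset \ipDp(\stX)$ is immediate from the definition by restriction, since shifts commute with restriction and each $\ipDm(\stY)$, $\ipDp(\stY)$ is shift-stable. The orthogonality $\Hom_{\Ho\iD(\stX)}(\shM,\shN)=0$ for $\shM \in \ipDm(\stX)$, $\shN \in \ipDp(\stX)[-1]$ would be proved by writing $\Map_{\iD(\stX)}(\shM,\shN)$ as a limit over $\stY$ of $\Map_{\iD(\stY)}(\rst{\shM}{\stY},\rst{\shN}{\stY})$ — using the limit description of $\iD(\stX)$ and the internal-Hom computations of \S\ref{s:6op} (Lemma \ref{lem:6op:423} and the preceding lemmas) — each term of which is contractible by orthogonality downstairs; a limit of contractible spaces is contractible. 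The delicate axiom is the existence of truncation triangles: for $\shM \in \iD(\stX)$ one must produce a fiber sequence $\shM' \to \shM \to \shM''$ in $\iD(\stX)$ with $\shM' \in \ipDp(\stX)$ and $\shM'' \in \ipDm(\stX)$ (suitably indexed). On each finitely presented open $\stY$ the truncation triangle $\tau^{\pv}_{\ge 0}(\rst{\shM}{\stY}) \to \rst{\shM}{\stY} \to \tau^{\pv}_{\le -1}(\rst{\shM}{\stY})$ exists by Lemma \ref{lem:pv:41}; because perverse truncation functors are characterized by a universal property (adjunction to the inclusions $\ipDp,\ipDm$), they are canonical and hence commute with the ($t$-exact) restriction maps up to canonical equivalence. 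Therefore the local truncation triangles assemble into a compatible system indexed by $\stY$, and passing to the limit in $\iD(\stX) \simeq \ilim_{\stY}\iD(\stY)$ yields a global truncation triangle whose terms lie in $\ipDp(\stX)$ and $\ipDm(\stX)$ by the very definition of these subcategories (membership is tested on finitely presented opens).

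The main obstacle I expect is precisely this last gluing step: establishing that the perverse truncations over the finitely presented opens are mutually compatible and glue. This requires (a) pinning down the limit presentation $\iD(\stX) \simeq \ilim_{\stY}\iD(\stY)$ with restriction functors — which should follow from the simplicial/hypercover descent results of \S\ref{s:LE} (Theorems \ref{thm:LE:pi-ve}, \ref{thm:LE:iDc-stb}) combined with the standard fact that cartesian lisse-\'etale sheaves satisfy Zariski-type descent along open covers — and (b) promoting the "compatible up to equivalence" statement for the $\iD^{\le 0},\iD^{\ge 0}$ subcategories to a genuine limit-compatible truncation functor, which is where the universal property of $t$-structure truncations (they are the adjoints of the inclusions, hence determined up to contractible choice) does the work and avoids any coherence bookkeeping. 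A secondary, more bureaucratic point is checking that an open immersion between the relevant (possibly non-quasi-compact) stacks restricts perverse subcategories correctly — but since the statement's definition of $\ipDm(\stX), \ipDp(\stX)$ is already phrased purely in terms of restrictions to finitely presented opens, this is essentially tautological once the finitely presented case is in hand. Throughout, the argument is a verbatim transcription of the reasoning in \cite[\S4]{LO3} into the derived/$\infty$-categorical setting, with Lemma \ref{lem:pv:41} playing the role of their Proposition for algebraic stacks of finite type.
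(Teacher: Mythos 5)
Your argument is correct in outline but globalizes the truncation axiom by a genuinely different mechanism than the paper. The paper (following \cite[Theorem 5.1]{LO3}) never invokes a limit presentation $\iD(\stX) \simeq \ilim_{\stY}\iD(\stY)$: it writes $\stX \simeq \iclim_{i}\stX_i$ as a filtered union of finitely presented opens $j_i:\stX_i\inj\stX$, takes the local truncations $\shM'_i \to \rst{\shM}{\stX_i} \to \shM''_i$ from Lemma \ref{lem:pv:41}, and then defines $\shM'$ as the \emph{colimit of objects} $\iclim_i (j_i)_!\shM'_i$ in $\iD(\stX)$, using the adjunction $(j_i)_! \dashv j_i^*$ to get $\shM'\to\shM$ and the base-change computation $j_i^*(j_{i'})_!\shM'_{i'}\simeq \shM'_i$ (the role of \cite[Lemma 5.2]{LO3}) to verify that the cofiber sequence restricts correctly. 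This only requires that $\iD(\stX)$ admit filtered colimits and the $(j_!,j^*)$ formalism of \S\ref{ss:6op:!}, both already in hand. Your route instead pushes the compatible system of local truncations through an equivalence $\iD(\stX)\simeq\ilim_{\stY}\iD(\stY)$; the compatibility itself is fine (open restriction is perverse $t$-exact, and $t$-exact functors commute with truncation by the adjoint characterization), but the Zariski-descent equivalence you lean on is an additional input that the paper's descent theorems (\ref{thm:LE:pi-ve}, \ref{thm:LE:iDc-stb}) do not supply — they concern smooth simplicial presentations, not filtered unions of opens — and for unbounded derived categories such statements are not automatic. So your proof is salvageable but buys its cleaner gluing at the cost of an unproved descent lemma, whereas the paper's colimit-of-$j_!$ construction sidesteps it entirely; your treatment of axioms (i) and (ii) agrees with what the paper leaves implicit.
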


\begin{proof}
We follow the proof of \cite[Theorem 5.1]{LO3}.
The non-trivial point is the condition (iii) of the $t$-structure.
We can write $\stX \simeq \iclim_{i \in I} \stX_i$ with $\{\stX_i\}_{i \in I}$
a filtered family of open derived substacks of finite presentation.
For each $i \in I$, the restriction $\rst{\shM}{\stX_i}$ sits in a fiber sequence 
$\shM'_{i} \to \rst{\shM}{\stX_i} \to \shM_i''$ by Lemma \ref{lem:pv:41}.
Denote by $j_i: \stX_i \to \stX$ the open immersion.
Then we have a sequence
\[
 (j_{i})_{!} \shM'_i \to (j_{i+1})_{!} \shM'_{i+1} \to \cdots
\]
in $\iD(\stX)$, so we can define $\shM'$ to be the colimit of this sequence.
We then have a morphism $\shM' \to \shM$, and also a fiber sequence 
$\shM' \to \shM \to \shM''$.
By \cite[Lemma 5.2]{LO3}, the restriction of this fiber sequence to $\stX_i$
is equivalent to $\shM'_{i} \to \rst{\shM}{\stX_i} \to \shM_i''$ for each $i \in I$.
Thus we have $\shM' \in \ipDm(\stX)$ and $\shM' \in \ipDp(\stX)$.
\end{proof}

\begin{dfn}
For a derived stack $\stX$ locally of finite presentation, we define 
the \emph{perverse $t$-structure} of $\iD(\stX)=\iDc(\stX,\Lambda_n)$ or 
$\iDDc(\stX,\Lambda)$ to be the $t$-structure given by Proposition \ref{prp:pv:lfp}.
Its heart is denoted by
\[
 \iPerv(\stX) \subset \iD(\stX),
\] 
and its object is called a \emph{perverse sheaf} on $\stX$.
\end{dfn}

We denote by
\[
 \pH^0: \iD(\stX) \longto \iPerv(\stX)
\]
the perverse cohomology functor.
The standard argument in \cite{BBD} gives 

\begin{lem*}
The homotopy category $\Ho \iPerv(\stX)$ is abelian
which is artinian and noetherian.
\end{lem*}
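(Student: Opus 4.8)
The plan is to follow the classical argument of \cite[\S 1.3, \S 2.1]{BBD}, transported to the $\infty$-categorical setting, since by Proposition \ref{prp:pv:lfp} the pair $(\ipDm(\stX),\ipDp(\stX))$ is a genuine $t$-structure on the stable $\infty$-category $\iD(\stX)$, so its heart $\iPerv(\stX)$ is automatically stable under the formation of kernels and cokernels computed via the cohomology functor $\pH^0$, and $\Ho\iPerv(\stX)$ is abelian by the general formalism of $t$-structures on stable $\infty$-categories (Definition \ref{dfn:stb:heart}, Fact \ref{fct:stb:stb}). The only substantive content is the artinian and noetherian properties, i.e.\ that every object of $\Ho\iPerv(\stX)$ has finite length.

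First I would reduce to the case where $\stX$ is of finite presentation: using the presentation $\stX\simeq\iclim_{i\in I}\stX_i$ by open derived substacks of finite presentation as in the proof of Proposition \ref{prp:pv:lfp}, and the open immersions $j_i\colon\stX_i\to\stX$, a perverse sheaf $\shM$ on $\stX$ is the colimit of the $(j_i)_!\,(\rst{\shM}{\stX_i})$; any subobject or quotient in $\Ho\iPerv(\stX)$ restricts compatibly, so a strictly increasing (resp.\ decreasing) chain would give such a chain on some $\stX_i$, and conversely. Thus it suffices to treat $\stX$ of finite presentation. Next I would reduce along a smooth presentation $p\colon\stX_0\to\stX$ with $\stX_0$ a derived algebraic space of finite presentation: the functor $p^*[d]$ (with $d=d_p$ the relative dimension) is $t$-exact and conservative for the perverse $t$-structures by the very definition of $\ipDm(\stX),\ipDp(\stX)$, hence induces a faithful exact functor $\Ho\iPerv(\stX)\to\Ho\iPerv(\stX_0)$; therefore it suffices to prove the length finiteness on $\stX_0$. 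Finally, since Definition \ref{dfn:LE:cstr:L} of constructibility depends only on the truncation $\Trc\stX_0$, and all the perverse-$t$-structure data on a derived algebraic space are pulled back from $\Trc\stX_0$ via the functors of \S\ref{s:6op}, one reduces to the already-known statement for schemes and algebraic spaces of finite type over $k$, namely \cite[Th\'eor\`eme 4.3.1]{BBD} together with the extension to algebraic spaces.

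The artinian/noetherian statement for schemes proceeds by noetherian induction on the support: given $\shM\in\iPerv(\stX_0)$, one chooses a dense open $j\colon\stV\inj\Trc\stX_0$ over which $\shM$ is a shifted locally constant sheaf (lisse), so that $j^*\shM[-\dim]$ is an honest local system of finite-type $\Lambda_n$-modules, hence of finite length as such; the intermediate extension $j_{!*}$ and the six-operation exact sequences relating $\shM$ to $j^*\shM$ and the restriction to the closed complement $i\colon\stF\inj\Trc\stX_0$ (both of strictly smaller dimension) then allow one to bound the length of any chain of perverse subobjects by a finite number via the gluing of $t$-structures (Fact \ref{fct:pv:gl}). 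The main obstacle I anticipate is making the descent along $p^*[d]$ rigorous at the level of abelian categories: one must check that $p^*[d]\colon\Ho\iPerv(\stX)\to\Ho\iPerv(\stX_0)$ not only is faithful and exact but detects finite length, i.e.\ that a perverse subobject of $p^*[d]\shM$ which is stable under the descent (Segal-groupoid) data descends to a perverse subobject of $\shM$ — this is the perverse-$t$-structure analogue of the cartesian-descent statements of \S\ref{ss:LE:spl} (Theorem \ref{thm:LE:pi-ve}) and must be verified using that the transition morphisms in $\stX_\bullet$ are smooth, hence $p^*[d]$-type functors are $t$-exact and the descent is along a smooth simplicial resolution. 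Once this descent for perverse subobjects is in place, the finite-length bound transfers immediately from $\stX_0$ to $\stX$, and the proof concludes exactly as in \cite[\S 4]{BBD} and \cite[\S 6]{LO3}.
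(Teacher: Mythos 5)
Your proposal follows exactly the route the paper intends: the paper gives no proof at all here beyond the phrase ``the standard argument in \cite{BBD} gives,'' and what you have written is a correct elaboration of that standard argument (reduce along a smooth presentation $p^*[d]$, reduce to the truncation, then noetherian induction on the support as in \cite[4.3.1]{BBD} and \cite[\S 6--8]{LO3}). You also correctly isolate the one genuinely new ingredient in the stacky setting, namely smooth descent of perverse subobjects along the atlas, which is precisely the point handled in \cite{LO3} via the $t$-exactness and conservativity of $p^*[d]$ and the descent of the heart along the Segal groupoid.

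One step is written too loosely: the reduction from ``locally of finite presentation'' to ``of finite presentation.'' Given an infinite strictly increasing chain $\shM_0\subsetneq\shM_1\subsetneq\cdots$ on $\stX\simeq\iclim_i\stX_i$, strictness of the $n$-th inclusion is detected on \emph{some} $\stX_{i_n}$, but a priori a different $i_n$ for each $n$, so the chain need not restrict to a strictly increasing chain on any single quasi-compact open $\stX_i$; indeed on $\stX=\coprod_{n\in\bbN}\dSpec k$ the constant perverse sheaf is not noetherian. So this reduction requires either a quasi-compactness hypothesis on $\stX$ or a finite-support condition on objects of $\iPerv(\stX)$; this wrinkle is inherited from the generality in which the paper states the lemma rather than a defect specific to your argument, but it should be made explicit before the noetherian-induction step.
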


In particular, we have the notion of \emph{simple objects} in $\iPerv(\stX)$.
They are called \emph{simple perverse sheaves} on $\stX$.

Now let us introduce intermediate extensions.
%We use the notation $\iD(\stX)=\iDc(\stX,\Lambda_n)$ or $\iDDc(\stX,\Lambda)$
%as in the previous \S \ref{ss:pv:pv}, and denote by
%$\iPerv(\stX) \subset \iD(\stX)$ the $\infty$-category of perverse sheaves.
Recall that for an open immersion $j$ of derived stacks
we have a morphism $j_! \to j_*$ of derived functors (Lemma \ref{lem:6op:462}).

\begin{dfn*}
Let $i: \stF \to \stX$ be a closed substack with the complement $j: \stU \to \stX$.
For a perverse sheaf $\shP$ on $\stU$,
we define $j_{! *} \shP \in \iPerv(\stX)$ to be the image in the abelian category 
$\Ho \iPerv(\stX)$ of the morphism $\pvp_0(j_! \shP) \to \pvp_0(j_* \shP)$:
\[
 j_{! *} \shP := \Img(\pvp_0(j_! \shP) \to \pvp_0(j_* \shP)).
\]
We call it the \emph{intermediate extension}.
\end{dfn*}

Note that as a perverse sheaf, or an object of $\iPerv(\stX)$,
$j_{! *} \shP$ is defined up to contractible ambiguity.
Hereafter we consider $j_{!*}$ as the functor
\[
 j_{!*}: \iPerv(\stU) \longto \iPerv(\stX).
\]

Recall the dualizing functor $\funD_{\stX}$ on $\iD(\stX)$
(see \S \ref{ss:pv:!}, Remark \ref{rmk:pv:cls}).
Here are standard properties of the intermediate extension.

\begin{lem*}
\begin{enumerate}[nosep]
\item 
We have $j^*(j_{!*}\shP) \simeq \shP$ and $\pvp_0(i^*(j_{!*}\shP))=0$,
and these properties determine $j_{!*}\shP$ as an object of $\iPerv(\stX)$
uniquely up to contractible ambiguity.

\item
Let $p: \stX_{\bullet} \to \stX$ be a smooth presentation of relative dimension $d$,
and let $\stF_0 \xr{i'} \stX_0 \xrightarrow{j'} \stU_0$ 
be the pullbacks of $\stF$ and $\stU$.
Then $p^*[d] j_{!*} \simeq j'_{!*}p^*[d]$
\end{enumerate}
\end{lem*}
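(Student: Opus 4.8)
The statement to prove has two parts. Part (1) asserts $j^*(j_{!*}\shP) \simeq \shP$, that $\pvp_0(i^*(j_{!*}\shP)) = 0$, and that these two conditions characterize $j_{!*}\shP$ in $\iPerv(\stX)$ up to contractible ambiguity. Part (2) asserts compatibility of intermediate extension with smooth pullback $p^*[d]$ along a smooth presentation. The plan is to reduce both parts to the corresponding facts for algebraic spaces (equivalently schemes) established in \cite{BBD}, via the simplicial descent machinery and the six-operation formalism of \S\ref{s:6op}, \S\ref{s:adic}.

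For part (1), first I would note that $j^*$ is $t$-exact for the perverse $t$-structure (this follows from Lemma \ref{lem:6op:462}: $j^* = j^!$ for an open immersion, so $j^*$ is simultaneously left and right $t$-exact by the very definition of the glued perverse $t$-structure in Fact \ref{fct:pv:gl}, where the conditions on $j^* K$ are imposed on both sides). Hence $j^* \pvp_0(j_! \shP) \simeq \pvp_0(j^* j_! \shP) \simeq \pvp_0(\shP) = \shP$ using the counit equivalence $j^* j_! \simeq \id$ from Assumption \ref{asp:pv:gl}(v), and similarly $j^* \pvp_0(j_* \shP) \simeq \shP$; since $j^*$ is exact on the abelian category $\Ho\iPerv$ and sends the image of $\pvp_0(j_!\shP) \to \pvp_0(j_*\shP)$ to the image of the induced map $\shP \to \shP$ (which is the identity, by the compatibility of $j_! \to j_*$ with the counits), we get $j^*(j_{!*}\shP) \simeq \shP$. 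Next, for $\pvp_0(i^*(j_{!*}\shP)) = 0$: the map $\pvp_0(j_!\shP) \to \pvp_0(j_*\shP)$ fits in the distinguished triangles of Assumption \ref{asp:pv:gl}(iv), and applying $i^*$ one sees $i^* \pvp_0(j_!\shP)$ lives in perverse degrees $\le 0$ while $i^* \pvp_0(j_*\shP)$ lives in degrees $\ge 0$ (using $i^! j_* = 0$ for the latter), so the image $j_{!*}\shP$ satisfies both and hence $i^* (j_{!*}\shP)$ has vanishing $\pvp_0$. The uniqueness claim is the standard argument of \cite[Corollaire 1.4.25]{BBD}: any $\shQ \in \iPerv(\stX)$ with $j^*\shQ \simeq \shP$ and $\pvp_0(i^*\shQ) = 0$ admits canonical maps $\pvp_0(j_!\shP) \to \shQ \to \pvp_0(j_*\shP)$ (by adjunction and the perverse-degree constraints) whose composite is the canonical map, and one checks the first is epi and the second mono in $\Ho\iPerv(\stX)$, forcing $\shQ \simeq j_{!*}\shP$; the ambiguity is contractible because $\iPerv(\stX)$ is the heart of a $t$-structure, hence its mapping spaces between such objects are discrete. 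I would want to first handle the case $\stX$ of finite presentation, where the gluing lemmas apply directly, and then pass to the locally-of-finite-presentation case by restriction to the open derived substacks $\stX_i$ with $\stX \simeq \iclim_i \stX_i$ as in Proposition \ref{prp:pv:lfp}, using \cite[Lemma 5.2]{LO3}.

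For part (2), the key is that the perverse $t$-structure on $\stX$ was \emph{defined} (Proposition \ref{prp:pv:lfp}, via the finite-presentation case and the smooth presentation) precisely so that $p^*[d]$ is $t$-exact. Concretely: for a smooth morphism $q$ of relative dimension $e$ between derived algebraic spaces, $q^*[e]$ is perverse $t$-exact (this is the scheme-theoretic statement \cite[4.2.4]{BBD}, transported via Definition \ref{dfn:LE:cstr:L} which only depends on truncated data and via proper/smooth base change Lemma \ref{lem:dAS:proper-bc}, Proposition \ref{prp:6op:sbc}); and $p^*[d]$ for the smooth presentation is $t$-exact by the definition of $\ipDm(\stX), \ipDp(\stX)$. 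Therefore $p^*[d]$ commutes with $\pvp_0$. Now in the cartesian square with $\stF_0 \xr{i'} \stX_0 \xr{j'} \stU_0$, smooth base change (Proposition \ref{prp:6op:sbc} in the finite-coefficient case, Proposition \ref{prp:adic:sbc} in the adic case) gives $p^* j_! \simeq j'_! (p|_{\stU_0})^*$ and $p^* j_* \simeq j'_* (p|_{\stU_0})^*$ compatibly with the map $j_! \to j_*$; hence $p^*[d]$ carries the map $\pvp_0(j_!\shP) \to \pvp_0(j_*\shP)$ to $\pvp_0(j'_! (p|_{\stU_0})^*[d]\shP) \to \pvp_0(j'_* (p|_{\stU_0})^*[d]\shP)$, and since $p^*[d]$ is exact on hearts it commutes with taking images, giving $p^*[d] j_{!*}\shP \simeq j'_{!*}(p|_{\stU_0})^*[d]\shP = j'_{!*}p^*[d]\shP$ (with the identification that $(p|_{\stU_0})^*$ on $\stU_0$ is the restriction of $p^*$).

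The main obstacle I anticipate is \textbf{not} the formal homological algebra — that is routine once the inputs are in place — but rather verifying carefully that the base-change isomorphisms $p^* j_! \simeq j'_! p^*$ and $p^* j_* \simeq j'_* p^*$ are \emph{compatible} with the natural transformation $j_! \to j_*$ of Lemma \ref{lem:6op:462}(2), i.e. that the square of natural transformations commutes up to coherent homotopy; this requires chasing through the construction of $j_! \to j_*$ (via the open-immersion biadjunction and the monomorphism $j_! \inj j_*$ of Lemma \ref{lem:is:j_!}(3)) and the construction of smooth base change, and is the place where the $\infty$-categorical bookkeeping (contractible-ambiguity statements rather than strict equalities) must be handled with care. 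A secondary subtlety is that $j_{!*}\shP$ is only defined up to contractible ambiguity as an object of $\iPerv(\stX)$, so all equivalences asserted must be understood in that sense — but since $\iPerv(\stX)$ is an ordinary abelian category (the heart), mapping spaces are discrete and this causes no real trouble. I would follow the proof of \cite[Lemma 6.2]{LO3} and \cite[4.3.2]{BBD} line by line for the structural argument.
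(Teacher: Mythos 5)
Your proposal is correct and follows essentially the same route as the paper, which simply invokes the proofs of \cite[Lemma 6.1, 6.2]{LO3} together with the derived functors of \S\ref{s:adic} and the smooth base change of Proposition \ref{prp:adic:sbc}; your write-up is a faithful expansion of that citation, including the reduction to the BBD gluing formalism for part (1) and the $t$-exactness of $p^*[d]$ plus smooth base change for part (2). The coherence issue you flag (compatibility of the base-change equivalences with $j_! \to j_*$) is exactly the kind of bookkeeping the paper leaves implicit, and your treatment of it is adequate.
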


\begin{proof}
The proofs of \cite[Lemma 6.1, 6.2]{LO3} works 
with the case of derived algebraic spaces,
the derived functors in \S \ref{s:adic} 
and the smooth base change (Proposition \ref{prp:adic:sbc}).
\end{proof}

For later use, let us introduce

\begin{dfn}\label{dfn:pv:sm}
\begin{enumerate}[nosep]
\item 
An object $\shM_{\bullet}=(\shM_n)_{n \in \bbN} \in \iMod_{\Lambda_\bullet}(\stX)$
is \emph{smooth} if all $\shM_n$ are locally constant (Definition \ref{dfn:LE:lc}).

\item
An object $\shM \in \iD(\stX)$ is \emph{smooth}
if for each $j \in \bbZ$ the homotopy group $\pi_j \shM$ is represented by 
a smooth object of $\iDc(\stX,\Lambda_\bullet)$,
and vanishes for almost all $j$.
\end{enumerate}
\end{dfn}

%%%%%%%%%%%%%%%%%%%%%%%%%%%%%%%%%%%%%%%%%%%%%%%%%%%%%%%%%%%%%%%%%%%%%%%%%%%%%%%%
%%%%%%%%%%%%%%%%%%%%%%%%%%%%%%%%%%%%%%%%%%%%%%%%%%%%%%%%%%%%%%%%%%%%%%%%%%%%%%%%
\subsection{Decomposition theorem}

We now discuss the decomposition theorem of perverse sheaves on derived stacks.
We fix a base field $k$.
Let $\stX$ be a geometric derived stack of finite presentation over $k$. 
We have the derived $\infty$-category $\iDDc(\stX,\bbZ_{\ell})$ 
of constructible $\bbZ_{\ell}$-sheaves with $\ell$ invertible in $k$.

Let us consider the full sub-$\infty$-category
$\iDDcu{b}(\stX,\bbZ_{\ell})$ spanned by bounded objects, and set
$\iDDcu{b}(\stX,\bbQ_{\ell}) := \iDDcu{b}(\stX,\bbZ_{\ell}) \otimes \bbQ_{\ell}$.
By the argument in the previous subsection,
we have the perverse $t$-structure on $\iDDcu{b}(\stX,\bbZ_{\ell})$.
It induces a $t$-structure on $\iDDcu{b}(\stX,\bbQ_{\ell})$,
which will also be called the perverse $t$-structure.

\begin{dfn*}
An object in the heart of this $t$-structure will be called 
a \emph{perverse $\bbQ_{\ell}$-sheaf}. 
\end{dfn*}
%The $\infty$-category of perverse $\bbQ_{\ell}$-sheaves is denoted by 
%$\iPerv_{\bbQ_\ell}(\stX)$.

For a derived stack $\stV$ over the base field $k$,
we have a derived stack over the algebraic closure $\ol{k}$
with the \emph{reduced structure},
which will be denote by $(\stV \otimes_k \ol{k})_{\txred}$. 
Let us give a local explanation on the reduced derived stack:
For a derived $k$-algebra $A = \oplus_{n \in \bbN} A_n \in \isCom_k$,
we denote $A_{\txred}:= \oplus_{n \in \bbN} A_{\txred,n}$ 
with $A_{\txred,0} := (A_0)_{\txred}$ as a commutative $k$-algebra
and $A_{\txred,n} := A_n \otimes_{A_0} A_{\txred,0}$ for $n >0$.

We can now describe simple perverse $\bbQ_\ell$-sheaves on $\stX$.
Let us call a derived substack $\stV \inj \stX$ \emph{irreducible}
if $\stV$ is truncated (Definition \ref{dfn:dSt:trunc})
and the truncation $\Trc \stV$ is an irreducible algebraic stack.

\begin{prp}[{c.f.\ \cite[Theorem 8.2]{LO3}}]\label{prp:pv:dc}
Let $j: \stV \to \stX$ be the closed embedding of an irreducible substack
such that $( \stV \otimes_k \ol{k})_{\txred}$ is smooth. 
Let $\shL \in \iMod_{\bbQ_{\ell}}(\stV)$ be smooth (Definition \ref{dfn:pv:sm}) 
and a simple object (in the abelian category $\Ho \iMod_{\bbQ_{\ell}}(\stV)$). 
Then the intermediate extension $j_{!*}(\shL [\dim(\stV)])$ 
is a simple perverse $\bbQ_\ell$-sheaf on $\stX$,
and every simple perverse $\bbQ_\ell$-sheaf on $\stX$ is obtained in this way.
\end{prp}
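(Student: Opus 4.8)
The plan is to reduce the statement to its known counterpart for algebraic stacks, namely \cite[Theorem 8.2]{LO3}, via the smooth presentation machinery established in \S\ref{ss:LE:spl} and the fact that all the categories and functors involved depend only on truncated data. First I would fix a smooth presentation $p:\stX_\bullet \to \stX$ with $\stX_0$ a derived algebraic space of finite presentation, and recall that by Theorem \ref{thm:LE:iDc-stb} and Remark \ref{rmk:pv:cls} the pullback $p^*[d]$ (with $d$ the relative dimension) is $t$-exact for the perverse $t$-structure in the sense of Proposition \ref{prp:pv:lfp} and its definition for stacks of finite presentation. The second ingredient is the compatibility of intermediate extension with smooth pullback: by the last Lemma in \S\ref{ss:pv:pv} we have $p^*[d]\, j_{!*} \simeq j'_{!*}\, p^*[d]$, where $j': \stV_0 \to \stX_0$ is the pullback of $j$. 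This lets me transport the problem to the derived algebraic space $\stX_0$, and then — since Definition \ref{dfn:LE:cstr:L} of constructibility, the perverse $t$-structure for derived algebraic spaces, and the dualizing object all refer only to the truncation $\Trc(-)$ — to the ordinary algebraic space $\Trc\stX_0$ and ultimately to the ambient algebraic stack.

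The key steps, in order, are: (1) observe that a simple object $\shL \in \Ho\iMod_{\bbQ_\ell}(\stV)$ which is smooth corresponds, under the equivalence $\stV_{\txle} \simeq \stV_{\et}$ and the reduction to $\Trc\stV$, to a simple smooth (lisse) $\bbQ_\ell$-sheaf on the irreducible algebraic stack $\Trc\stV$ with $(\Trc\stV \otimes_k \ol{k})_{\txred}$ smooth; (2) apply \cite[Theorem 8.2]{LO3} on $\Trc\stX$ to conclude that $j_{!*}(\shL[\dim\stV])$, whose underlying object on $\Trc\stX$ is the classical intermediate extension, is a simple perverse $\bbQ_\ell$-sheaf there; (3) lift simplicity back to $\stX$ using that $\Ho\iPerv(\stX)$ is equivalent, as an abelian category, to the category of perverse sheaves on $\Trc\stX$ — this follows because $\iDDc(\stX,\bbQ_\ell)$ only sees the truncated site, and the perverse $t$-structure was \emph{defined} (Proposition \ref{prp:pv:lfp}) by pullback to a presentation whose constituents are truncated; (4) for the converse, take an arbitrary simple perverse $\bbQ_\ell$-sheaf $\shP$ on $\stX$, pass to $\Trc\stX$, invoke the classification in \cite[Theorem 8.2]{LO3} to write it as an intermediate extension $j_{!*}(\shL[\dim])$ from a smooth locally closed irreducible substack, and then check that this substack and local system lift to the derived setting with the required smoothness of $(\stV\otimes_k\ol{k})_{\txred}$, using the reduced derived stack construction $A \mapsto A_{\txred}$ recorded just above the proposition.

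The main obstacle I expect is step (3), the identification $\Ho\iPerv(\stX) \simeq \operatorname{Perv}(\Trc\stX)$ as abelian categories in a way compatible with simplicity and with intermediate extensions. The subtlety is that while the \emph{sites} and hence the sheaf categories agree between $\stX$ and $\Trc\stX$, one must verify that the perverse $t$-structures match — i.e.\ that the conditions defining $\ipDm$ and $\ipDp$ via vanishing of $H^j(i_u^* K)$ and $H^j(i_u^! K)$ at points $u$ transport correctly, noting that $\dim(u)$ for a point of a derived algebraic space is defined via its truncation and that $i_u^!$ involves the dualizing object, which by Definition \ref{dfn:LE:dual} and \S\ref{sss:LE:dual} is built from $\Trc$-data. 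Once this is in place the rest is bookkeeping: the decomposition-theorem statement itself is then literally \cite[Theorem 8.2]{LO3} applied to $\Trc\stX$, reinterpreted through the equivalences of \S\ref{ss:LE:spl} and \S\ref{s:adic}. I would therefore structure the write-up as a short lemma establishing the abelian equivalence $\Ho\iPerv(\stX) \simeq \operatorname{Perv}(\Trc\stX)$ commuting with $j_{!*}$, followed by a two-line deduction of the proposition from \cite[Theorem 8.2]{LO3}.
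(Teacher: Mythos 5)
Your proposal is correct and takes essentially the same approach as the paper: the paper's entire proof consists of the remark that the argument of \cite[\S 8]{LO3} carries over with obvious modifications, and those "obvious modifications" are exactly the reduction to the truncation $\Trc\stX$ that you spell out (constructibility, dualizing objects, points and their dimensions, and hence the perverse $t$-structure all depend only on truncated data, so $\Ho \iPerv(\stX)$ is identified with the category of perverse sheaves on $\Trc\stX$ compatibly with $j_{!*}$). The one small slip — invoking $\stV_{\txle}\simeq\stV_{\et}$ for a substack $\stV$ that need not be a derived algebraic space — is harmless, since one passes through a smooth presentation of $\stV$ exactly as you do for $\stX$.
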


The argument in \cite[\S 8]{LO3} works in our situation 
with obvious modifications, and we omit the detail of the proof.

\section{Moduli stack of perfect dg-modules}
\label{s:mod}

In this section we cite from \cite{TVa} the theory of moduli stacks
of modules over dg-categories via derived stacks.

%%%%%%%%%%%%%%%%%%%%%%%%%%%%%%%%%%%%%%%%%%%%%%%%%%%%%%%%%%%%%%%%%%%%%%%%%%%%%%%%
%%%%%%%%%%%%%%%%%%%%%%%%%%%%%%%%%%%%%%%%%%%%%%%%%%%%%%%%%%%%%%%%%%%%%%%%%%%%%%%%
\subsection{Dg-categories}
\label{ss:mod:dg}

In this subsection we collect basic notions on dg-categories.
We fix a commutative ring $k$,
and denote by $\C(k)$ the category of complexes of $k$-modules
with the standard monoidal structure $\otimes_k$.
For later use, we write the dependence on the universe explicitly.
See \S \ref{ss:intro:ntn} for our convention on the universe.

\begin{dfn}\label{dfn:mod:dg}
A $\bbU$-small \emph{dg-category} $\catD$ over $k$ 
consists of the following data.
\begin{itemize}[nosep]
\item 
A $\bbU$-small set $\Ob(\catD)$,
which is called the set of objects of $\catD$.
We denote $X \in \catD$ to mean that $X \in \Ob(\catD)$.
\item
For every $X,Y \in \catD$, a complex of $k$-modules
\[
 \cdots \xrr{d_{-2}} \Hom_{\catD}(X,Y)^{-1} \xrr{d_{-1}} \Hom_{\catD}(X,Y)^{0} 
        \xrr{d_0   } \Hom_{\catD}(X,Y)^{ 1} \xrr{d_1}    \cdots
\]
called the complex of morphisms from $X$ to $Y$,
which is denoted just by $\Hom_{\catD}(X,Y)$.

\item
For every $X,Y,Z \in \catD$, a morphism of complexes 
\[
 \circ: \Hom_{\catD}(Y,Z) \otimes_k \Hom_{\catD}(X,Y) \longto
        \Hom_{\catD}(X,Z)
\]
called the composition map,
which is to satisfy the unit and associativity conditions.

%\item
%For each $X \in \catD$, an identity morphism $\id_X \in \Hom_{\catD}(X,Y)^0$
%such that $f \circ \id_X = f$ and $id_X \circ g=g$ for any 
%$f \in \Hom_{\catD}(X,Y)$ and $g \in \Hom_{\catD}(Y,X)$ with $Y \in \catD$.
\end{itemize}
\end{dfn}

In other words, a dg-category over $k$ is nothing but a $\C(k)$-enriched category.
Hereafter the word ``a dg-category" means a $\bbU$-small dg-category over $k$.

\begin{ntn}\label{ntn:mod:dg-one}
We denote by $\kdg$ the dg-category over $k$ with one object $*$ 
and the complex of morphisms $* \to *$ given by $k$.
\end{ntn}

We will always consider the projective model structure on $\C(k)$,
where a fibration is defined to be an epimorphism and 
a weak equivalence is defined to be a quasi-isomorphism (see \cite[\S2.3]{H}).

Recall also the notion of \emph{dg-functors}.

\begin{dfn*}
A \emph{dg-functor} $f: \catD \to \catD'$ between 
dg-categories $\catD$ and $\catD'$ consists of 
\begin{itemize}[nosep]
\item 
A map $f: \Ob(\catD) \to \Ob(\catD')$ of sets.

\item
For every $x,y \in \catD$, a morphism
$f_{x,y}: \Hom_{\catD}(x,y) \to \Hom_{\catD'}(f(x),f(y))$
in $\C(k)$ satisfying the unit and associativity conditions.
\end{itemize}
\end{dfn*}

\begin{ntn}\label{ntn:mod:dgCat}
We denote by $\dgCat_{\bbU}$ the category of $\bbU$-small dg-categories
and dg-functors between them.
\end{ntn}

Let us recall some standard notions on dg-functors.
For that, we introduce 

\begin{ntn}\label{ntn:mod:bra}
For a dg-category $\catD$, we denote by $[\catD]$ the category with 
\[
 \Ob([\catD]) :=\Ob(\catD), \quad
 \Hom_{[\catD]}(x,y):=H^0(\Hom_{\catD}(x,y)).
\] 
Here $H^0$ denotes the $0$-th cohomology of a complex.
\end{ntn}

\begin{dfn}[{\cite[Definition 2.1]{T07}}]\label{dfn:mod:qe}
Let $f: \catD \to \catD'$ be a dg-functor between dg-categories.
\begin{enumerate}[nosep]
\item 
$f$ is \emph{quasi-fully faithful} if 
$f_{x,y}: \Hom_{\catD}(x,y) \to \Hom_{\catD'}(f(x),f(y))$
is a quasi-isomorphism in $\C(k)$ for any $x,y \in \catD$.

\item
$f$ is \emph{quasi-essentially surjective} if
the induced functor $[f]: [\catD] \to [\catD']$ is essentially surjective.

\item
$f$ is a \emph{quasi-equivalence} if it is quasi-fully faithful 
and quasi-essentially surjective.
\end{enumerate}
\end{dfn}

Finally we introduce

\begin{ntn*}
We denote by $\catD^{\op}$ the \emph{opposite} of a dg-category $\catD$.
It is a dg-category given by $\Ob(\catD^{\op}) := \Ob(\catD)$,
$\Hom_{\catD^{\op}}(x,y) := \Hom_{\catD}(y,x)$ and 
the composition map with an appropriate sign.
\end{ntn*}

%%%%%%%%%%%%%%%%%%%%%%%%%%%%%%%%%%%%%%%%%%%%%%%%%%%%%%%%%%%%%%%%%%%%%%%%%%%%%%%%
%%%%%%%%%%%%%%%%%%%%%%%%%%%%%%%%%%%%%%%%%%%%%%%%%%%%%%%%%%%%%%%%%%%%%%%%%%%%%%%%
\subsection{Perfect dg modules}
\label{ss:mod:pdg}

In this subsection we recall the notion of perfect dg-modules. %cite from \cite[\S2.2]{TVa} 
The main references are \cite{T07} and \cite[\S 2]{TVa}.
We fix a dg-category $\catD$ over $k$.

\begin{dfn*}
A \emph{$\catD$-dg-module} is a $\C(k)$-enriched functor $\catD \to \C(k)$.
We denote the category of $\catD$-dg-modules over by $\dgMod(\catD)$.
\end{dfn*}

The category $\dgMod(\catD)$ has a model structure 
such that a morphism $f:F \to G$ is a weak equivalence (resp.\ fibration)
if for any $z \in \catD$ the morphism $f_z: F(z) \to G(z)$ is 
a weak equivalence (resp.\ fibration) in $\C(k)$.
We always regard $\dgMod(\catD)$ as a model category by this structure.
Then 

\begin{lem}\label{lem:mod:dgMod=t}
The model category $\dgMod(\catD)$ is stable in the sense of \cite[\S 7]{H},
so that the homotopy category $\mHo \dgMod(\catD)$ 
has a natural triangulated structure
whose triangles are the image of of homotopy fiber sequences.
\end{lem}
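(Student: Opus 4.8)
The plan is to identify $\dgMod(\catD)$ with a category of modules over an appropriate (dg-)ring, or more precisely to invoke the general machinery for model categories of modules over an enriched category. First I would observe that $\dgMod(\catD) = \Fun_{\C(k)}(\catD,\C(k))$ is a category enriched over the monoidal model category $\C(k)$, and that the projective (pointwise) model structure described here is the standard one, which exists because $\C(k)$ is a cofibrantly generated monoidal model category satisfying the monoid axiom. The key structural input is that $\C(k)$ itself is a stable model category in the sense of \cite[\S 7]{H}: its homotopy category $\mHo \C(k) = \dD(k)$ is triangulated, with suspension given by the shift $[1]$ of complexes, and every homotopy cofiber sequence agrees (up to sign) with a homotopy fiber sequence.

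The main steps, in order: (1) recall Hovey's characterization that a pointed model category is stable iff the suspension--loop adjunction $(\Sigma,\Omega)$ on its homotopy category is an equivalence; (2) note that $\dgMod(\catD)$ is pointed, with zero object the constant functor at $0 \in \C(k)$, and that all limits, colimits, and the model structure are computed objectwise; hence the suspension functor $\Sigma$ on $\dgMod(\catD)$ is computed objectwise as the suspension in $\C(k)$, namely the shift $F \mapsto F[1]$, and likewise $\Omega$ is the shift $F \mapsto F[-1]$; (3) since shift by $[1]$ and shift by $[-1]$ are already inverse equivalences on $\C(k)$ (indeed on the level of model categories, not just homotopy categories, as $\C(k)$ is an abelian category with an automorphism), the induced adjoint pair $(\Sigma,\Omega)$ on $\mHo \dgMod(\catD)$ is an equivalence, so $\dgMod(\catD)$ is stable; (4) invoke \cite[\S 7]{H} to conclude that $\mHo \dgMod(\catD)$ carries a canonical triangulated structure whose distinguished triangles are the images of homotopy (co)fiber sequences, which are objectwise those of $\dD(k)$.

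I expect the verification in step (2)--(3) to be the only point requiring any care: one must check that the suspension functor in the pointwise model structure is genuinely computed pointwise, which follows because the cofibrations and weak equivalences are detected pointwise and the pushout defining $\Sigma F = \ast \cup_F \ast$ (taken with a cofibrant replacement and cylinder object) is a colimit, hence computed objectwise; since each $\C(k)$ already has the shift as a Quillen equivalence to itself, no derived correction is needed. An alternative, perhaps cleaner, route would be to cite the Dold--Kan--type or Schwede--Shipley-style result that $\dgMod(\catD)$ is Quillen equivalent to modules over a dg-algebra (the ``matrix'' dg-algebra built from $\catD$ when $\Ob(\catD)$ is finite, or more generally a ringoid), for which stability of the module category is standard; but the objectwise argument is self-contained and avoids introducing extra hypotheses on $\catD$. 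The only genuine obstacle is bookkeeping: making sure ``homotopy fiber sequence'' and ``homotopy cofiber sequence'' are identified correctly with the appropriate sign, which is exactly the content of \cite[\S 7]{H} applied pointwise, so I would simply quote it rather than reprove it.
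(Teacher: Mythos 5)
The paper states this lemma without proof, treating it as standard, so there is no argument of the author's to compare against; your proposal supplies a correct and essentially canonical proof. The structure is right: $\dgMod(\catD)$ is pointed, Hovey's criterion reduces stability to $(\Sigma,\Omega)$ being inverse equivalences on $\mHo\dgMod(\catD)$, the (derived) suspension is computed objectwise and agrees with the shift $F\mapsto F[1]$, and the shift is invertible already at the level of the model category; \cite[\S 7]{H} then yields the triangulated structure with distinguished triangles the images of homotopy (co)fiber sequences.

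One imprecision in step (2)--(3) is worth correcting, though it does not sink the argument. In the projective model structure on $\dgMod(\catD)$ only the fibrations and weak equivalences are detected objectwise; the cofibrations are \emph{not} (they are generated by the images of the generating cofibrations of $\C(k)$ under the left adjoints to evaluation, and a general objectwise cofibration need not be a projective cofibration). What your argument actually requires is the weaker and true statement that every projective cofibration is in particular an objectwise cofibration --- equivalently, that each evaluation functor $\dgMod(\catD)\to\C(k)$ is left Quillen, its right adjoint (coinduction) preserving fibrations and trivial fibrations. Granting that, the pushout computing $\Sigma F$ from a cofibrant $F$ is a pushout along an objectwise cofibration of objectwise cofibrant objects, hence an objectwise homotopy pushout, and the identification $\Sigma F\simeq F[1]$, $\Omega F\simeq F[-1]$ goes through as you describe. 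With that adjustment the proof is complete; the alternative route through a Schwede--Shipley style comparison with modules over a dg-ringoid would also work but is unnecessary here.
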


The dg structure and the model structure make $\dgMod(\catD)$ 
a $\C(k)$-model category in the sense of \cite[Definition 4.2.18]{H}.
So, let us give an interlude on $\C(k)$-model categories.

\begin{ntn*}
For a $\C(k)$-model category $\catM$, we denote by $\catM^{\circ} \subset \catM$ 
the model subcategory of fibrant-cofibrant objects 
(using the same symbol as \S \ref{ss:ic:uis}).
We endow $\catM^{\circ}$ with the dg structure by restriction of that on $\catM$,
and consider $\catM^{\circ}$ as a $\C(k)$-model category.
\end{ntn*}

In \cite{TVa}, $\catM^{\circ}$ is denoted by $\operatorname{Int}(M)$.
For a $\C(k)$-model category $\catM$, we have an equivalence 
\begin{equation}\label{eq:mod:mHo=bra}
 \mHo(\catM) \simeq [\catM^{\circ}]
\end{equation}
by \cite[Proposition 3.5]{T07}.
Here $[\catM^{\circ}]$ is the category 
arising from the dg structure (Notation \ref{ntn:mod:bra}),
and $\mHo(\catM)$ is the homotopy category arising from the model structure.

Let us return to the discussion on $\dgMod(\catD)$,
We denote by
\[
 \dgMod(\catD)^{\circ} \subset \dgMod(\catD)
\]
the full sub-dg-category of fibrant-cofibrant objects 
(using the same symbol as in \S \ref{ss:ic:uis}).
By the discussion in \cite[\S 2.2 pp.\ 399--400]{TVa},
any object of $\dgMod(\catD)$ is fibrant, so that $\dgMod(\catD)^{\circ}$
is actually equivalent to the full sub-dg-category of cofibrant objects.
Note that in \cite{TVa} it is denoted by $\wh{\catD^{\op}}$.

\begin{dfn*}
A $\catD$-dg-module $M \in \dgMod(\catD)^{\circ}$ is called \emph{perfect} if 
it is homotopically finitely presented in the model category $\dgMod(\catD)$.
We denote by 
\[
 \Perf(\catD) := \dgMod(\catD)^{\circ}_{\pe} \subset \dgMod(\catD)^{\circ}
\] 
the full sub-dg-category of perfect objects.
\end{dfn*}

In other words, for a perfect $\catD$-dg-module $M$,
the natural morphism 
\[
 \iclim_{i \in I} \Map_{\dgMod(\catD)}(M,N_i) \longto 
 \Map_{\dgMod(\catD)}(M, \iclim_{i \in I} N_i)
\]
is an isomorphism in the topological category $\topH$ of spaces
(Definition \ref{dfn:ic:topH})
for any filtered system $\{N_i\}_{i \in I}$ of $\catD$-dg-modules. 
%
%We have the embeddings of 
%\[
% \Perf(\catD) \subset \dgMod(\catD)^{\circ} \subset \dgMod(\catD).
%\]
Note that $\Perf(\catD^{\op})$ is denoted by $\wh{\catD}_{\pe}$ in \cite{TVa}.

\begin{rmk}\label{rmk:mod:Perf}
By Lemma \ref{lem:mod:dgMod=t} and the equivalence \eqref{eq:mod:mHo=bra},
we can regard $[\dgMod(\catD)^{\circ}] \simeq \mHo(\dgMod(\catD))$ 
as a triangulated category.
Then a perfect $\catD$-dg-module is nothing but a compact objects of 
this triangulated category.
\end{rmk}

The $\C(k)$-enriched version of the Yoneda lemma gives 
a quasi-fully faithful dg-functor 
\[
 \catD \longto \dgMod(\catD^{\op}), \quad x \longmapsto h_x.
\] 
For any $x \in \catD$, the $\catD^{\op}$-dg-module $h_x$ is perfect.
Thus the above dg-functor factors to
\[
 h: \catD \longto \Perf(\catD).
\]

\begin{ntn}\label{ntn:mod:dgY}
We call the dg-functor $h$ the \emph{dg Yoneda embedding}.
\end{ntn}

We close this part by recalling
the notion of pseudo-perfect dg-modules \cite[Definition 2.7]{TVa}.
We denote by $\dgCat_{\bbV}$ the category of $\bbV$-small dg-categories 
over $k$ and dg-functors (Notation \ref{ntn:mod:dgCat}).
By \cite{Tab}, it has a model structure in which 
weak equivalences are quasi-equivalences (Definition \ref{dfn:mod:qe}).
Recall that we have a tensor product $\catD \otimes \catD'$ of 
dg-categories $\catD$ and $\catD'$ with
$\Ob(\catD \otimes \catD'):= \Ob(\catD) \times \Ob(\catD')$ and 
\[
 \Hom_{\catD \otimes \catD'}((x,y),(x',y')) := 
 \Hom_{\catD}(x,y) \otimes_k \Hom_{\catD'}(y,y').
\]
See also \cite[\S 4]{T07}.
This tensor product gives rise to a derived tensor product $\otimes^{\bbL}$ 
in the homotopy category $\mHo(\dgCat_{\bbV})$.
Here the symbol $\mHo$ denotes the homotopy category of 
a model category (\S \ref{ss:intro:ntn}).

Let $\catD$ and $\catD'$ be two dg-categories.
By applying to $\catD \otimes^{\bbL} \catD'$ the construction 
$\Perf(-)$ which is well-defined on $\mHo(\dgCat_{\bbV})$,
we obtain $\Perf(\catD \otimes^{\bbL} \catD')$.
For each object $E$ of this $\C(k)$-model category, we can define a functor
$F_E: \catD \to \dgMod(\catD')^{\circ}$ by
sending $x \in \catD$ to 
\[
 F_E(x) : \catD' \longto \C(k), \quad y \longmapsto E(x,y).
\]

\begin{dfn*}
An object $E \in \Perf(\catD \otimes^{\bbL} \catD')$ is called 
\emph{pseudo-perfect relatively to $\catD'$} if the morphism $F_E$ factorizes 
through $\Perf(\catD')$ in $\mHo(\dgCat_{\bbV})$. 

If $\catD'= \kdg$  (Notation \ref{ntn:mod:dg-one}),
then such $E$ is called a \emph{pseudo-perfect $\catD$-dg-module}.
\end{dfn*}

%%%%%%%%%%%%%%%%%%%%%%%%%%%%%%%%%%%%%%%%%%%%%%%%%%%%%%%%%%%%%%%%%%%%%%%%%%%%%%%%
%%%%%%%%%%%%%%%%%%%%%%%%%%%%%%%%%%%%%%%%%%%%%%%%%%%%%%%%%%%%%%%%%%%%%%%%%%%%%%%%
\subsection{Moduli functor of perfect objects}
\label{ss:mod:mf}

We continue to use the symbols in the previous \S \ref{ss:mod:pdg}.
The main reference of this part is \cite[\S 3]{TVa}.

For a commutative simplicial $k$-algebra $A \in \sCom_k$, 
we denote by $N(A)$ the normalized chain complex
with the structure of a commutative $k$-dg-algebra
(see \S \ref{ss:sp:sm} for the detail).
We consider $N(A)$ as a dg-category with one object, and apply the argument 
in the previous \S \ref{ss:mod:pdg} to the dg-category $\catD = N(A)$.
Then we have the dg-category 
\[
 \dgMod(A) := \dgMod(N(A))
\]
of dg-modules over the dg-algebra $N(A)$.
We then have full sub-dg-category $\dgMod(A)^{\circ} \subset \dgMod(A)$
spanned by (fibrant-)cofibrant objects,
and denote by 
\[
 \Perf(A) := \Perf(N(A)) = \dgMod(N(A))^{\circ}_{\pe} 
\]
the full sub-dg-category of perfect objects in $\dgMod(A)^{\circ}$.

The correspondence $A \mapsto \dgMod(A)$ induces, after a strictification 
procedure, a functor from $\sCom_k$ to the category of $\C(k)$-model categories
and $\C(k)$-enriched left Quillen functors
(see \cite[\S 3.1, p.417--418]{TVa} for the detail).
Applying the construction $\catM \mapsto \catM^{\circ}$ 
for $\C(k)$-model-categories $\catM$ levelwise,
we obtain a functor $\sCom_k \to \dgCat_{\bbV}$, $A \mapsto \wh{A}$.
Here $\dgCat_{\bbV}$ denotes the model category of the $\bbV$-small dg-categories.
Taking the sub-dg-category of perfect objects.
%One can then consider the correspondence $A \mapsto \wh{A}_{\pe}$
%which induces another functor
we obtain a functor 
\[
 \sCom_k \longto \dgCat_{\bbV}, \quad 
 A \longmapsto \wh{A}_{\pe}, \quad 
 (A \to B) \longmapsto (N(B) \otimes_{N(A)} -: \wh{A}_{\pe} \to \wh{B}_{\pe}).
\]

Next we turn to the definition of the moduli functor.
Let $\catD$ be a dg-category over $k$, and consider the following functor.
\[
 \stM_{\catD}: \sCom_k \longto \sSet,\quad 
 \stM_{\catD}(A) := \dgMap_{\dgCat_{\bbV}}(\catD^{\op},\Perf(A)).
\]
Here $\sSet$ denotes the category of simplicial sets (Definition \ref{dfn:ic:CS})
and $\dgMap_{\dgCat_{\bbV}}$ denotes the mapping space 
in the model category $\dgCat_{\bbV}$,
which is regarded as a simplicial set.
%simplicial set of dg-categories \cite{T07}.
For a morphism $A \to B$ in $\sCom_k$, 
the morphism $\stM_{\catD}(A) \to \stM_{\catD}(B)$ is given by 
composition with $N(B) \otimes_{N(A)} -: \Perf(A) \to \Perf(B)$. 

By Fact \ref{fct:ic:sag=Kan}, the value of the functor $\stM_{\catD}$ 
is actually in the category of Kan complexes.
Recall also that in \S \ref{ss:dSt:dSt} we set $\idAff_k = (\isCom_k)^{\op}$.
Thus we see that $\stM_{\catD}$ determines a presheaf of spaces over $\idAff_k$
in the sense of Definition \ref{dfn:ic:iPSh}.
We will denote the obtained presheaf by the same symbol:
\[
 \stM_{\catD} \in \iPSh(\idAff_{k}) = \iFun((\idAff_{k})^{\op},\iS).
\]
Now let us cite

\begin{fct*}[{\cite[Lemma 3.1]{TVa}}]
The presheaf $\stM_{\catD} \in \iPSh(\idAff_{k})$ 
is a derived stack over $k$.
\end{fct*}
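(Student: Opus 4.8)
The statement to be proven is that the presheaf of spaces
\[
 \stM_{\catD}: \idAff_k^{\op} \longto \iS, \quad
 \dSpec A \longmapsto \dgMap_{\dgCat_{\bbV}}(\catD^{\op},\Perf(A))
\]
is a derived stack, i.e.\ satisfies \'etale descent (it is an $\et$-sheaf on $\idAff_k$ in the sense of Definition \ref{dfn:dSt:idSt}). Since $\idSt_k = \iSh(\idAff_k,\et)$ is a topological localization of $\iPSh(\idAff_k)$ and, by Fact \ref{fct:idSt:hyper}, is hypercomplete, it suffices to check descent against \'etale hypercoverings; but in fact, because \'etale coverings of affine derived schemes are generated by finite families (Definition \ref{dfn:dAff:et-cov}) and satisfy the usual faithfully-flat conditions on homotopy rings, the plan is to reduce to checking the Čech descent condition for a single faithfully flat \'etale morphism $A \to B$ in $\isCom_k$: namely that
\[
 \stM_{\catD}(A) \longsimto \ilim_{[n] \in \CS} \stM_{\catD}(B^{\otimes_A (n+1)})
\]
is an equivalence in $\iS$, together with the coproduct condition $\stM_{\catD}(A_1 \times A_2) \simeq \stM_{\catD}(A_1) \times \stM_{\catD}(A_2)$, which is immediate from the monoidal behaviour of $\Perf$.

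First I would unwind the mapping space. By the universal property of the dg Yoneda embedding (Notation \ref{ntn:mod:dgY}) and the theory of \cite{T07,Tab}, $\dgMap_{\dgCat_{\bbV}}(\catD^{\op},\Perf(A))$ is equivalent to the classifying space of the full sub-dg-category of $\Perf(A \otimes^{\bbL}_k \catD)$ (more precisely, of $\widehat{A}$-modules relative to $\catD$) spanned by the objects that are \emph{pseudo-perfect relatively to $A$} — in the language of \S\ref{ss:mod:pdg}, the $\catD$-dg-modules in $\dgMod(\catD)$ which, after base change, land in $\Perf(A)$. So the claim becomes: the assignment $A \mapsto \{\text{pseudo-perfect }\catD\otimes^{\bbL}A\text{-dg-modules}\}^{\simeq}$ is an \'etale sheaf. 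The key input is then faithfully flat descent for (quasi-coherent) complexes over affine derived schemes: for a faithfully flat morphism $A \to B$ the base-change functors assemble into an equivalence between $\dgMod$ over $A$ and the limit of the cosimplicial diagram of $\dgMod$ over $B^{\otimes_A \bullet+1}$. This is the derived/$\infty$-categorical analogue of faithfully flat descent for modules; one cites it from the theory of $\isMod_A$ (\S\ref{sss:dSt:isMod}), or equivalently invokes that $\stQCoh$ is a derived stack (the cited Theorem 1.3.7.2 of \cite{TVe2}) and that $\dgMod(A)^\circ \simeq \iMod_{N(A)}$ compatibly with base change via Dold–Kan.

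The remaining steps are: (1) check that the conditions ``perfect'' and ``pseudo-perfect relatively to the base'' are \emph{local for the \'etale (indeed fppf) topology} — i.e.\ an $A$-dg-module $M$ is perfect (resp.\ pseudo-perfect over $A$) iff $M \otimes_A B$ is perfect (resp.\ pseudo-perfect over $B$) for a faithfully flat $A\to B$. Perfectness descends because compactness in the derived category (Remark \ref{rmk:mod:Perf}) can be tested fppf-locally, using that a perfect complex is one that is, locally on $\Spec\pi_0$, a finite complex of finitely generated projectives and that faithfully flat base change reflects this; pseudo-perfectness descends by a dual argument on the representing functor $F_E$. (2) Conclude that the full subspace of $\Perf(B\otimes_A\bullet+1)$ cut out by these conditions is exactly the descent of the corresponding subspace over $A$, so the limit computing $\ilim_{[n]}\stM_{\catD}(B^{\otimes_A\bullet+1})$ recovers $\stM_{\catD}(A)$. (3) Assemble: a general \'etale covering $\{\dSpec A_i \to \dSpec A\}$ refines to a single faithfully flat \'etale $A \to \prod_{i\in J}A_i$ for finite $J$ (the finiteness is the content of Definition \ref{dfn:dAff:et-cov}), so the single-morphism descent plus the coproduct condition give sheafhood; hypercompleteness is then automatic.

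\textbf{Main obstacle.} I expect the crux to be step (1), specifically that pseudo-perfectness relative to the base is étale-local — this is not a statement about a single object but about the factorization of a \emph{functor} $F_E$ through $\Perf$, so descending it requires carefully matching the descent datum on $E \in \Perf(\catD^{\op}\otimes^{\bbL} B^{\otimes_A\bullet+1})$ with descent data for the functors $F_E$ on both sides, and checking these are compatible. Equivalently, one must know that $\Perf(\catD)$ (and hence the target of $F_E$) behaves well under the base-change functors appearing in the cosimplicial diagram; this is exactly where one leans hardest on the results of \cite{T07} on the functoriality and homotopy-invariance of $\Perf(-)$ and on the fact that $\mHo(\dgCat_{\bbV})$ with $\otimes^{\bbL}$ is a closed symmetric monoidal category. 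The remaining bookkeeping — coproducts, passing from a covering family to a single morphism, hypercompleteness — is routine given the machinery of \S\ref{s:dSt} and \S\ref{s:is}.
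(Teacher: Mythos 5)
First, a point of calibration: the paper does not prove this statement at all — it is imported verbatim as \cite[Lemma 3.1]{TVa}, so there is no in-paper proof to compare against. Judged on its own terms, your sketch reconstructs the standard To\"en--Vaqui\'e argument and is correct in outline: reduce an \'etale covering to a single faithfully flat \'etale morphism $A \to B$ using the finiteness built into Definition \ref{dfn:dAff:et-cov} together with the product condition $\Perf(A_1\times A_2)\simeq \Perf(A_1)\times\Perf(A_2)$ (the reason here is the decomposition of modules over a product ring, not ``monoidal behaviour''), and then verify \v{C}ech descent using faithfully flat descent for dg-modules plus the locality of perfectness. Hypercompleteness is not needed for sheafhood, so your closing remark on that point is harmless but also doing no work.

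Where your route differs from the cleanest version of the argument is the detour through pseudo-perfect objects. You correctly identify as the ``main obstacle'' the problem of descending the condition that the functor $F_E$ factors through $\Perf(\catD')$ — but this obstacle is self-inflicted. Since $\dgMap_{\dgCat_{\bbV}}(\catD^{\op},-)$ is a mapping space in a model category, it automatically carries homotopy limits of dg-categories to homotopy limits of spaces; hence the entire statement reduces to the single assertion that $A \mapsto \Perf(A)$ is a stack of dg-categories, i.e.\ that $\Perf(A) \to \mathrm{holim}_{[n]}\Perf(B^{\otimes_A(n+1)})$ is a quasi-equivalence. That in turn is exactly faithfully flat descent for modules (the stack property of $\stQCoh$, Theorem 1.3.7.2 of \cite{TVe2}, transported through Dold--Kan) together with the fact that perfectness (equivalently, compactness in $\mHo\,\dgMod(A)$, Remark \ref{rmk:mod:Perf}) is detected after faithfully flat base change. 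The pseudo-perfect description of the points of $\stM_{\catD}$ is useful for computing homotopy groups, as in \S\ref{ss:mod:mf}, but it is not needed to establish the sheaf condition, and invoking it forces you to match descent data for objects of $\Perf(\catD^{\op}\otimes^{\bbL}B^{\otimes_A\bullet+1})$ with descent data for the associated functors $F_E$ — precisely the step you flag as delicate. Restructuring the proof to put descent for $\Perf(-)$ first dissolves that difficulty and yields the statement for an arbitrary (not necessarily finite-type) dg-category $\catD$, which is the generality in which \cite[Lemma 3.1]{TVa} is stated.
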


Following \cite[Definition 3.2]{TVa}, we call $\stM_{\catD}$ 
the \emph{moduli stack of pseudo-perfect $\catD^{\op}$-dg-modules}.

As explained in loc.\ cit., the $0$-th homotopy $\pi_0(\stM_{\catD}(k))$ 
is bijective to the set of isomorphism classes of 
pseudo-perfect $\catD^{\op}$-dg-modules in $\mHo(\dgMod(\catD^{\op}))$.
For each $x \in \mHo(\dgMod(\catD^{\op}))$, we have
$\pi_1(\stM_{\catD^{\op}},x) \simeq \Aut_{\mHo(\dgMod(\catD^{\op}))}(x,x)$ and 
$\pi_i(\stM_{\catD^{\op}},x) \simeq \Ext^i_{\mHo(\dgMod(\catD^{\op}))}(x,x)$ 
for  $i \in \bbZ_{\ge 2}$, 
where $\mHo(\dgMod(\catD^{\op}))$ is regarded as a triangulated category.

Let us cite another observation from \cite{TVa}.

\begin{dfn}[{\cite[Definition 2.4]{TVa}}]\label{dfn:mod:sat}
Let $\catD$ be a dg-category over $k$.
\begin{enumerate}[nosep]
%quasi-isomorphic to a bounded complex of projective $k$-modules
\item
$\catD$ is \emph{proper} if the triangulated category $[\dgMod(\catD^{\op})]$
has a compact generator, and if $\Hom_{\catD}(x,y)$ is a perfect complex
of $k$-modules for any $x,y \in \catD$.

\item
$\catD$ is \emph{smooth} if 
the $(\catD^{\op} \otimes^{\bbL})$-dg-module 
$\catD^{\op} \otimes^{\bbL} \catD \to \C(k)$, 
$(x,y) \mapsto \Hom_{\catD}(x,y)$ 
is perfect.

\item
$\catD$ is \emph{triangulated} if the dg Yoneda embedding 
$\catD \to \Perf(\catD^{\op})$ (Notation \ref{ntn:mod:dgY}) is a quasi-equivalence.

\item
$\catD$ is \emph{saturated} if it is proper, smooth and triangulated.
\end{enumerate}
\end{dfn}

\begin{fct}\label{fct:mod:sat}
If the dg-category $\catD$ is saturated, 
then we have an equivalence
\[
  \stM_{\catD}(k) \simeq \dgMap_{\dgCat}(\kdg,\catD)
\]
of simplicial sets.
In particular, $\stM_{\catD}(k)$ is a model for 
the classifying space of objects in $\catD$.
\end{fct}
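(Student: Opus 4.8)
The statement to be proved is Fact~\ref{fct:mod:sat}: for a saturated dg-category $\catD$ over $k$, one has an equivalence $\stM_{\catD}(k) \simeq \dgMap_{\dgCat}(\kdg,\catD)$ of simplicial sets. The plan is to unravel both sides using the definition $\stM_{\catD}(k) = \dgMap_{\dgCat_{\bbV}}(\catD^{\op},\Perf(k))$ and the saturation hypothesis (Definition~\ref{dfn:mod:sat}). First I would observe that $\Perf(k) = \dgMod(\kdg)^{\circ}_{\pe}$ is, up to quasi-equivalence, the triangulated hull $\Perf(\kdg^{\op}) = \Perf(\kdg)$ of the one-object dg-category $\kdg$; more precisely, the dg Yoneda embedding $h: \kdg \to \Perf(\kdg^{\op})$ of Notation~\ref{ntn:mod:dgY} identifies $\Perf(k)$ with the idempotent-complete triangulated envelope of $k$ viewed as a dg-category. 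Hence $\stM_{\catD}(k) = \dgMap_{\dgCat_{\bbV}}(\catD^{\op},\Perf(\kdg))$.

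The key step is then to use that $\catD$ being triangulated means the dg Yoneda embedding $\catD \to \Perf(\catD^{\op})$ is a quasi-equivalence, so that $\catD \simeq \Perf(\catD^{\op})$ in $\mHo(\dgCat_{\bbV})$, and symmetrically $\catD^{\op} \simeq \Perf(\catD)$. I would invoke the internal-Hom / tensor adjunction for the model category $\dgCat_{\bbV}$ (from \cite{T07}, \cite{Tab}): the derived mapping space $\dgMap_{\dgCat_{\bbV}}(\catD^{\op},\Perf(\kdg))$ is computed, using properness and smoothness of $\catD$, by the dg-category $\Perf(\catD^{\op} \otimes^{\bbL} \kdg) \simeq \Perf(\catD^{\op})$ of pseudo-perfect $\catD$-dg-modules, which by triangulatedness is quasi-equivalent to $\catD$ itself. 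Combined with the fact that $\kdg$ is the monoidal unit of $(\dgCat_{\bbV},\otimes^{\bbL})$, this should give $\dgMap_{\dgCat_{\bbV}}(\catD^{\op},\Perf(\kdg)) \simeq \dgMap_{\dgCat}(\kdg,\catD)$. The passage between $\dgCat_{\bbV}$ and $\dgCat$ (the $\bbU$-small version) is harmless because all the objects in play are $\bbU$-small up to quasi-equivalence, $\catD$ being a $\bbU$-small dg-category by hypothesis.

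The main obstacle I expect is the careful identification of $\dgMap_{\dgCat_{\bbV}}(\catD^{\op},\Perf(\kdg))$ with the space of pseudo-perfect $\catD^{\op}$-dg-modules and then with $\catD$: this requires the duality $\Perf(\catD^{\op} \otimes^{\bbL} \catD') \simeq \dgMap_{\dgCat_{\bbV}}(\catD, \Perf(\catD'))$ for saturated $\catD$, which is exactly where all three conditions (proper, smooth, triangulated) enter — properness and smoothness to guarantee that internal Homs of dg-categories behave like duals (so $\catD$ is dualizable in $\mHo(\dgCat_{\bbV})$ with dual $\catD^{\op}$), and triangulatedness to replace $\Perf(\catD^{\op})$ by $\catD$ on the nose. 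Concretely I would cite the relevant statements of \cite{T07} on the symmetric monoidal structure of $\mHo(\dgCat_{\bbV})$ and on saturated dg-categories being dualizable objects, rather than reproving them. Once dualizability is in hand, the chain of equivalences
\[
 \stM_{\catD}(k) = \dgMap_{\dgCat_{\bbV}}(\catD^{\op},\Perf(\kdg))
 \simeq \dgMap_{\dgCat_{\bbV}}(\kdg \otimes^{\bbL} \catD^{\op}{}^{\vee}, \Perf(\kdg))
 \simeq \dgMap_{\dgCat_{\bbV}}(\kdg, \Perf(\kdg) \otimes^{\bbL} \catD)
 \simeq \dgMap_{\dgCat}(\kdg, \catD)
\]
follows formally, using $\Perf(\kdg) \simeq \kdg$ in the triangulated setting and $(\catD^{\op})^{\vee} \simeq \catD$. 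The final sentence of the statement — that $\stM_{\catD}(k)$ is a model for the classifying space of objects in $\catD$ — is then immediate, since $\dgMap_{\dgCat}(\kdg,\catD)$ is by definition (Notation~\ref{ntn:mod:dg-one}) the space of objects of $\catD$ together with their self-equivalences.
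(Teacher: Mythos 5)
The paper offers no proof of this statement: it is recorded as a \emph{Fact}, i.e.\ imported from To\"en--Vaqui\'e, so there is no internal argument to compare yours against. Your reconstruction follows the standard route of \cite{TVa} and \cite{T07}, and the strategy is sound: unwind $\stM_{\catD}(k)=\dgMap_{\dgCat_{\bbV}}(\catD^{\op},\Perf(\kdg))$, identify the derived internal Hom $\mathbb{R}\underline{\mathrm{Hom}}(\catD^{\op},\Perf(\kdg))$ with the dg-category of pseudo-perfect $\catD^{\op}$-modules, use smoothness and properness to replace ``pseudo-perfect'' by ``perfect'', use triangulatedness to replace $\Perf(\catD^{\op})$ by $\catD$, and conclude via $\dgMap(T,T')\simeq\dgMap(\kdg,\mathbb{R}\underline{\mathrm{Hom}}(T,T'))$ that the moduli space is the classifying space of objects of $\catD$. (That $\dgMap(\kdg,\catD)$ \emph{is} this classifying space is To\"en's computation of mapping spaces in $\dgCat$, not a definition as your last sentence suggests.)

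Two points need repair before this is a proof. First, the displayed ``duality'' $\Perf(\catD^{\op}\otimes^{\bbL}\catD')\simeq\dgMap_{\dgCat_{\bbV}}(\catD,\Perf(\catD'))$ equates a dg-category with a simplicial set; the correct statement is an equivalence of dg-categories involving $\mathbb{R}\underline{\mathrm{Hom}}$, to which one then applies $\dgMap(\kdg,-)$. Second, and more seriously, the final step $\dgMap(\kdg,\Perf(\kdg)\otimes^{\bbL}\catD)\simeq\dgMap(\kdg,\catD)$ cannot be justified by ``$\Perf(\kdg)\simeq\kdg$'': these dg-categories are Morita equivalent but not quasi-equivalent, and $\dgMap(\kdg,-)$ is not Morita invariant --- for instance $\dgMap(\kdg,\Perf(\kdg))$ is the highly non-contractible space of perfect complexes of $k$-modules. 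What actually closes the argument is a second use of the triangulatedness of $\catD$: the quasi-fully-faithful functor $\Perf(\kdg)\otimes^{\bbL}\catD\to\Perf(\catD^{\op})$, $(P,x)\mapsto P\otimes h_x$, lands in a dg-category quasi-equivalent to $\catD$ and is essentially surjective onto it, so $\Perf(\kdg)\otimes^{\bbL}\catD\simeq\catD$ (equivalently: quasi-equivalence and Morita mapping spaces into a triangulated target agree). Relatedly, the intermediate term $\dgMap(\kdg\otimes^{\bbL}(\catD^{\op})^{\vee},\Perf(\kdg))$ in your chain simplifies, given $(\catD^{\op})^{\vee}\simeq\catD$, to $\dgMap(\catD,\Perf(\kdg))$ rather than $\dgMap(\catD^{\op},\Perf(\kdg))$; the adjunction should instead be applied as $\dgMap(\catD^{\op}\otimes^{\bbL}\kdg,\Perf(\kdg))\simeq\dgMap(\kdg,(\catD^{\op})^{\vee}\otimes^{\bbL}\Perf(\kdg))$. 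With these corrections your argument is the intended one.
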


%For later use, we record
%
%\begin{fct}[{\cite[Lemma 2.6.\ 1]{TVa}}]\label{fct:mod:PDt}
%The dg-category $\Perf(\catD)$ is triangulated.
%\end{fct}

%%%%%%%%%%%%%%%%%%%%%%%%%%%%%%%%%%%%%%%%%%%%%%%%%%%%%%%%%%%%%%%%%%%%%%%%%%%%%%%%
%%%%%%%%%%%%%%%%%%%%%%%%%%%%%%%%%%%%%%%%%%%%%%%%%%%%%%%%%%%%%%%%%%%%%%%%%%%%%%%%
\subsection{Geometricity of moduli stacks of perfect objects}
\label{ss:mod:g}

Now we can explain the main result in \cite{TVa}.

\begin{dfn}[{\cite[Definition 2.4.\ 7]{TVa}}]\label{dfn:mod:oft}
A dg-category $\catD$ over $k$ is \emph{of finite type}
if there exists a $k$-dg-algebra $B$  which is homotopically 
finitely presented in the model category $\dgA_k$ of $k$-dg-algebras 
such that $\Perf(\catD)=\dgMod(\catD)^{\circ}_{\pe}$ 
is quasi-equivalent (Definition \ref{dfn:mod:qe}) to $\dgMod(B)^{\circ}$ .
\end{dfn}

\begin{fct}[{\cite[Theorem 3.6]{TVa}}]\label{fct:TVa}
If $\catD$ is a dg-category over $k$ of finite type,
then the derived stack $\stM_{\catD}$ is 
locally geometric and locally of finite presentation.
\end{fct}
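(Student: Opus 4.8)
\textbf{Proof proposal for Fact \ref{fct:TVa} (geometricity of $\stM_{\catD}$).}
The plan is to follow the structure of To\"en--Vaqui\'e's original argument \cite[Theorem 3.6]{TVa}, now phrased in the $\infty$-categorical language developed in \S\ref{s:dSt}. First I would reduce to the case $\catD = \kdg$, the dg-category with one object. Indeed, if $\catD$ is of finite type (Definition \ref{dfn:mod:oft}), there is a homotopically finitely presented $k$-dg-algebra $B$ with $\Perf(\catD)$ quasi-equivalent to $\dgMod(B)^{\circ}$, and a Morita-type argument identifies $\stM_{\catD}$ with a derived stack of perfect $B^{\op}$-dg-modules; the latter in turn can be written, via the bar construction exhibiting $B$ as a homotopy-finitely-presented object, as a filtered colimit (in $\idSt_k$, using Fact \ref{fct:idSt:hyper} and Corollary \ref{cor:pr:lcl}) of the ``truncated'' moduli stacks $\stM^{[a,b]}$ of complexes supported in a bounded amplitude range $[a,b]$. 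So the heart of the matter is to show each $\stM^{[a,b]}$ is $n$-geometric (for some $n = n(a,b)$) and locally of finite presentation, and that the colimit is along open immersions (or at least along morphisms exhibiting the colimit as locally geometric in the sense of Definition \ref{dfn:dSt:lcgm}).

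Next I would handle the bounded pieces by induction on the amplitude $b-a$. The base case is a moduli stack of perfect complexes concentrated in a single degree, i.e.\ essentially a moduli of finite projective modules, which is built out of the stacks $\stVect_r$ of rank $r$ vector bundles; by Fact \ref{fct:dSt:Vect} these are $1$-geometric, truncated, and locally of finite presentation. For the inductive step, one realizes $\stM^{[a,b]}$ as classifying extensions: an object of amplitude $[a,b]$ is a fiber sequence with outer terms of strictly smaller amplitude, so $\stM^{[a,b]}$ maps to $\stM^{[a+1,b]} \times \stM^{[a,a]}$ (or a similar product) with fibers given by a derived stack of morphisms, i.e.\ a (shifted) total space of a perfect complex of $\Ext$-groups. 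One then invokes Fact \ref{fct:dSt:geom}: $n$-geometric stacks are stable under pullback and under taking the total space of a connective perfect module (the latter being an affine-over-the-base, hence $0$-representable, construction). Finite presentation is propagated using Definition \ref{dfn:dSt:fin-pr} and Fact \ref{fct:morph:Q}, together with the fact that $B$ being homotopically finitely presented makes all the relevant $\Ext$-complexes perfect over the structure sheaf.

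Then I would assemble the global statement: $\stM_{\catD} \simeq \iclim_{(a,b)} \stM^{[a,b]}$, check that each transition morphism $\stM^{[a,b]} \to \stM^{[a',b']}$ (for $[a,b] \subset [a',b']$) is an open immersion — amplitude-boundedness being a Zariski-open condition on the moduli, as in \cite{TVa} — so that the diagonal condition in Definition \ref{dfn:dSt:lcgm} is met, and conclude $\stM_{\catD}$ is locally geometric. Local finite presentation of the colimit then follows from Definition \ref{dfn:dSt:lfp}(2), since each $\stM^{[a,b]}$ was arranged to be locally of finite presentation. The main obstacle I anticipate is the verification that the fibers of the ``extension'' morphisms are genuinely representable by connective perfect modules over the base — this requires a careful cotangent-complex / deformation-theoretic computation identifying $\stM^{[a,b]} \to \stM^{[a+1,b]}\times\stM^{[a,a]}$ with the linear stack associated to a perfect complex, and showing this complex is connective so that Fact \ref{fct:dSt:geom} applies. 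This is exactly where the homotopical finite presentation of $B$ (equivalently, the finite-type hypothesis on $\catD$) is essential, and it is the step I would expect to write out in full detail rather than cite.
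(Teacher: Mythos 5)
Your global architecture (treat $\kdg$ first, stratify by Tor-amplitude, induct on the length of the amplitude interval, start the induction from $\stVect$, then pass to general $\catD$) matches the outline in \S\ref{ss:mod:g}, but your inductive step is wrong in a way that breaks the proof. You assert that $\stM^{[a,b]}$ \emph{maps to} a product such as $\stM^{[a+1,b]}\times\stM^{[a,a]}$ with fibers the linear stack of a \emph{connective} perfect complex of $\Ext$'s, so that geometricity is inherited from the base. No such morphism exists: a perfect complex of Tor-amplitude $[a,b]$ does not functorially decompose into a piece of smaller amplitude and a vector bundle — the two-step filtration is a choice, not part of the data — so there is nothing to evaluate on a family. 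The actual argument runs in the opposite direction. One introduces the auxiliary stack $\stU$ of \emph{morphisms} $Q\to R$ with $Q$ of amplitude $[a,b-1]$ and $R\in\stVect$ placed in the top degree; $\stU$ is $(-1)$-representable (affine fibers, namely genuine mapping spaces) over $\stM_{\kdg}^{[a,b-1]}\times\stVect$, hence $(n-1)$-geometric and locally of finite presentation by induction; and the map $\stU\to\stM_{\kdg}^{[a,b]}$ sending $u$ to its homotopy fiber is a \emph{smooth surjection}, i.e.\ an atlas, so $\stM_{\kdg}^{[a,b]}$ is $n$-geometric with $n=b-a+1$. This also shows your connectivity expectation is off: the geometricity index grows with the amplitude precisely because one quotients by the non-canonical, non-connective space of presentations at each step; if the classifying complex were connective the index would not grow at all. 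The genuinely hard point, which your proposal has no analogue of, is the smoothness of the atlas $\stU\to\stM_{\kdg}^{[a,b]}$ (\cite[Lemma 3.12]{TVa}); the affineness of the fibers of $\stU$ over the product is the easy part.

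For general $\catD$ of finite type, your ``Morita reduction plus bar construction'' also diverges from what is actually done: one does not rewrite $\stM_{\catD}$ as a colimit manufactured out of $\stM_{\kdg}$. Instead one defines $\stM_{\catD}^{[a,b]}$ as the pullback of $\stM_{\kdg}^{[a,b]}$ along the forgetful morphism $\stM_{\catD}\to\stM_{\kdg}$, and proves that this forgetful morphism is $n$-representable and locally of finite presentation by d\'evissage along the cell decomposition $k=B_0\to B_1\to\cdots\to B'$ of the dg-algebra $B$ presenting $\Perf(\catD)$; this is exactly where the finite-type hypothesis enters, and it is what controls the geometricity index as in Remark \ref{rmk:mod:n}. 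Your final assembly step (the strata are monomorphisms into $\stM_{\catD}$, each is geometric and locally of finite presentation, hence $\stM_{\catD}$ is locally geometric in the sense of Definition \ref{dfn:dSt:lcgm}) is fine as stated.
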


Since this fact is crucial for our study,
let us explain an outline of the proof.
We start with the definition of Tor amplitude of dg-modules.

\begin{dfn*}[{\cite[Definition 2.21]{TVa}}]
Let $A \in \sCom_k$, 
and $N(A)$ be the commutative $k$-dg-algebra in \S \ref{ss:mod:mf}. 
A dg-module $P$ over $N(A)$ is called \emph{of Tor amplitude contained in $[a,b]$}
if for any (non-dg) left module $M$ over $\pi_0(A)$
we have $H^i(P \otimes_{N(A)}^{\bbL} M)=0$ for any $i \notin [a,b]$.
Here $\otimes_{N(A)}^{\bbL}$ denotes the derived tensor product
arising from the tensor product $\otimes_{N(A)}$ of dg-modules over $N(A)$.
\end{dfn*}

%%%%%%%%%%%%%%%%%%%%%%%%%%%%%%%%%%%%%%%%%%%%%%%%%%%%%%%%%%%%%%%%%%%%%%%%%%%%%%%%
\subsubsection{}

Now let us explain an outline of the proof.
We first consider the case $\catD=\kdg$, the trivial dg-category $\kdg$ 
over $k$ (Notation \ref{ntn:mod:dg-one}), which is of finite type.

The derived stack $\stM_{\kdg}$ can be regarded as 
the moduli stack of perfect $k$-modules.
Let us define a substack $\stM_{\kdg}^{[a,b]}$ of $\stM_{\kdg}$ 
in the following way:
For each $A \in \sCom_k$, define $\stM_{\kdg}^{[a,b]}(A)$ to be 
the full simplicial subset of $\stM_{\kdg}(A)$ spanned by
connected components corresponding to 
perfect dg-modules over $N(A)$ of Tor amplitude contained in $[a,b]$.
Then $\stM_{\kdg}^{[a,b]}$ is a derived stack and we have 
\[
 \stM_{\kdg} = \tbcup_{a \le b} \stM_{\kdg}^{[a,b]}.
\]
In case of $b=a$, we have 
\[
 \stM_{\kdg}^{[a,a]} \simeq \stVect := \tbcup_{r \in \bbN} \stVect_r,
\]
where $\stVect_r$ is the derived stack of rank $r$ vector bundles 
(Definition \ref{dfn:dSt:Vect}).
By Fact \ref{fct:dSt:Vect}, the derived stack $\stM_{\kdg}^{[a,a]}$ is truncated, 
$1$-geometric, smooth and of finite presentation.

In \cite[Proposition 3.7]{TVa}, it is shown that $\stM_{\kdg}^{[a,b]}$ is 
an $n$-geometric derived stack locally of finite presentation with $n=b-a+1$
by induction on $n$.
The proof is done by constructing a smooth surjection 
$\stU \to \stM_{\kdg}^{[a,b]}$ 
with $\stU$ $n$-geometric and locally of finite presentation,
so that it gives an $n$-atlas of $\stM_{\kdg}^{[a,b]}$. 

The derived stack $\stU$ is given as follows.
For a derived $k$-algebra $A \in \isCom_k$, 
consider the category $\dgMod(A)^I$ of morphisms in $\dgMod(A)$.
Here $I=\Delta^1$ means the $1$-simplex.
This category has a model structure induced levelwise by 
the projective model structure on $\dgMod(A)$.
We denote by $\dgMod(A)^{I,\cof} \subset \dgMod(A)^{I}$
the full subcategory spanned by cofibrant objects, and define
\[
 \E(A) \subset \dgMod(A)^{I,\cof}
\]
{}to be the full subcategory spanned by those objects 
$Q \to R$ in $\dgMod(A)^{I,\cof}$ such that 
$Q \in \stM_{\kdg}^{[a,b-1]}(A)$ and $R \in \stM_{\kdg}^{[b-1,b-1]}(A)$.
It has a levelwise model structure.
Then let $\E(A)_W \subset \E(A)$ be the subcategory of weak equivalences.
Now the derived stack $\stU$ is given by
\[
 \stU: \isCom_A \longto \iS, \quad A \longmapsto \Ner(\E(A)_W).
\]

We have a natural morphism $\stU \to \stM_{\kdg}^{[a,b-1]} \times \stVect$,
and by induction hypothesis the target is $(n-1)$-geometric 
and locally of finite presentation.
By \cite[Sub-lemma 3.9, 3.11]{TVa}, we can describe the fiber of $p$ explicitly, 
and find that $p$ is $(-1)$-representable and locally of finite presentation.
It implies that $\stU$ is $(n-1)$-representable and locally of finite presentation.

Next we construct a morphism $\stU \to \stM_{\kdg}^{[a,b]}$ of derived stacks.
For each $A \in \isCom_k$, consider the morphism $\stU(A) \to \stM_{\kdg}(A)$
sending a morphism $Q \to R$ of $N(A)$-dg-modules to its homotopy fiber.
The definition of $\stU$ yields that the homotopy fiber does belong to 
$\stM_{\kdg}^{[a,b]}(A)$.
Thus we have a morphism $\stU \to \stM_{\kdg}^{[a,b]}$.
This is obviously $(n-1)$-representable and locally of finite presentation.
A simple argument shows that it is a surjection.
The proof of smoothness is non-trivial but it is done in \cite[Lemma 3.12]{TVa}.

%%%%%%%%%%%%%%%%%%%%%%%%%%%%%%%%%%%%%%%%%%%%%%%%%%%%%%%%%%%%%%%%%%%%%%%%%%%%%%%%
\subsubsection{}
\label{sss:mod:Mab}

We turn to the case of an arbitrary dg-category $\catD$ of finite type.
Let us define the derived stack $\stM_{\catD}^{[a,b]}$ 
by the following cartesian square in $\idSt$.
\[
 \xymatrix{
 \stM_{\catD}^{[a,b]} \ar[r] \ar[d] & \stM_{\kdg}^{[a,b]} \ar[d] \\
 \stM_{\catD} \ar[r] & \stM_{\kdg} 
 }
\]
Since $\stM_{\kdg} = \bigcup_{a \le b} \stM_{\kdg}^{[a,b]}$,
we have $\stM_{\catD} = \bigcup_{a \le b}\stM_{\catD}^{[a,b]}$.
In \cite[Proposition 3.13]{TVa} it is shown that 
the morphism $\stM_{\catD}^{[a,b]} \to \stM_{\kdg}^{[a,b]}$ 
is $n$-representable and locally of finite presentation for some $n$.
Then one can deduce that $\stM_{\catD}^{[a,b]}$ is a locally geometric derived stack
locally of finite presentation.
Here one needs to show inductively that $\stM_{\catD}^{[a,b]}$ 
is strongly quasi-compact in the sense of \cite[\S 2.3]{TVa}.

Let us close this subsection by the estimate on the geometricity $n$ 
appearing in the last argument.
Recall from the beginning of this subsection that $\Perf(\catD)$ 
is quasi-equivalent to $\dgMod(B)^{\circ}$ with some $k$-dg-algebra $B$
of homotopically finitely presentation.
By \cite[Proposition 2.2]{TVa}, $B$ is equivalent to a retract of 
a dg-algebra $B'$ which sits in a sequence
\[
 k = B_0 \longto B_1 \longto \cdots \longto B_m \longto B_{m+1} = B'
\]
such that for each $i=0,\ldots,m$, there exists a homotopy pushout square
\[
 \xymatrix{ C_i \ar[r] \ar[d] & k \ar[d] \\ B_i \ar[r] & B_{i+1}}
\]
with $C_i$ the free dg-algebra over the complex $k[p_i]$ for some $p_i \in \bbZ$.
Then the argument in \cite[Proposition 3.13]{TVa} shows 

\begin{rmk}\label{rmk:mod:n}
We can take
\[
 n = b-a-\min_{0 \le i \le m} p_i.
\]
\end{rmk}

%%%%%%%%%%%%%%%%%%%%%%%%%%%%%%%%%%%%%%%%%%%%%%%%%%%%%%%%%%%%%%%%%%%%%%%%%%%%%%%%
%%%%%%%%%%%%%%%%%%%%%%%%%%%%%%%%%%%%%%%%%%%%%%%%%%%%%%%%%%%%%%%%%%%%%%%%%%%%%%%%
\subsection{Moduli stack of complexes of quiver representations}
\label{sss:mod:Pf(Q)}

We cite an application of Fact \ref{fct:TVa} from \cite[\S 3.5]{TVa}.
Let $Q$ be a quiver and 
$B(Q) := k Q$ be the  path algebra of $Q$ over $k$.
Considering $B(Q)$ as a dg-algebra over $k$,
we have a dg-category with a unique object $B(Q)$,
which will be denoted by $\dgB(Q)$.
Under the notation in the previous \S \ref{ss:mod:pdg}, 
the dg-category $\dgMod(\dgB(Q))^{\circ}$ can be identified with 
the dg-category of complexes of representations of $Q$ over $k$.
A pseudo-perfect object in $\dgMod(\dgB(Q))$ is a complex of representations of $Q$
whose underlying complex of $k$-modules is perfect.

If $Q$ is finite and has no loops, 
then $B(Q)$ is a projective $k$-module of finite type,
and is a perfect complex of $B(Q) \otimes_k B(Q)^{\op}$-modules.
Moreover the dg-category $\dgB(Q)$ is smooth and proper.
Now we can apply Fact \ref{fct:TVa} to the dg-category $\dgB(Q)$.

\begin{dfn*}
Let $Q$ be a finite quiver with no loops.
We call the derived stack $\stM_{\dgB(Q)}$ 
the \emph{derived stack of perfect complexes of representations of $Q$}
and denote it by 
\[
 \stPf(Q) := \stM_{\dgB(Q)}.
\]
\end{dfn*}

\begin{fct}[{\cite[Corollary 3.34]{TVa}}]\label{fct:TVa:Q}
Let $Q$ be a finite quiver with no loops.
Then the derived stack $\stPf(Q)$
is locally geometric and locally of finite presentation over $k$.
\end{fct}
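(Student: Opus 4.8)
The plan is to deduce Fact \ref{fct:TVa:Q} directly from Fact \ref{fct:TVa} once we verify that the dg-category $\dgB(Q)$ is of finite type in the sense of Definition \ref{dfn:mod:oft}. First I would unwind the definitions: $\stPf(Q) := \stM_{\dgB(Q)}$ by definition, and Fact \ref{fct:TVa} asserts that $\stM_{\catD}$ is locally geometric and locally of finite presentation whenever $\catD$ is of finite type over $k$. So the entire content reduces to checking that $\dgB(Q)$ is of finite type, i.e.\ that there is a $k$-dg-algebra $B$, homotopically finitely presented in $\dgA_k$, with $\Perf(\dgB(Q))$ quasi-equivalent to $\dgMod(B)^{\circ}$.

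Here I would simply take $B := B(Q) = kQ$, the path algebra, regarded as a dg-algebra concentrated in degree $0$. Since $Q$ is finite with no loops, the path algebra $kQ$ is finite-dimensional over $k$ and in fact a finitely presented $k$-algebra with a presentation by finitely many generators (arrows) and finitely many relations (the composability relations of the quiver), so it is homotopically finitely presented in the model category $\dgA_k$; I would cite the relevant discussion in \cite[\S 3.5]{TVa} for this point, since the finiteness and smoothness/properness of $\dgB(Q)$ under the ``finite, no loops'' hypothesis is exactly worked out there. Then by the very construction recalled in \S \ref{ss:mod:pdg}, $\Perf(\dgB(Q)) = \dgMod(\dgB(Q))^{\circ}_{\pe}$ is quasi-equivalent to $\dgMod(B(Q))^{\circ}_{\pe}$, and since $B(Q)$ is itself homotopically finitely presented one identifies $\dgMod(B(Q))^{\circ}$ with the relevant model, giving the required quasi-equivalence. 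This establishes that $\dgB(Q)$ is of finite type.

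With finite type in hand, Fact \ref{fct:TVa} applies verbatim to $\catD = \dgB(Q)$ and yields that $\stM_{\dgB(Q)} = \stPf(Q)$ is a locally geometric derived stack locally of finite presentation over $k$, which is the assertion. I would also remark, following \S \ref{sss:mod:Mab} and Remark \ref{rmk:mod:n}, that the geometricity bound can be made explicit in terms of a cell-decomposition of the path algebra, but this is not needed for the statement itself.

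The only genuine obstacle is the verification that $B(Q)$ is homotopically finitely presented in $\dgA_k$ and that the resulting $\Perf(\dgB(Q)) \simeq \dgMod(B(Q))^{\circ}$ satisfies the precise clause of Definition \ref{dfn:mod:oft}; everything else is bookkeeping. This is handled in \cite[\S 3.5]{TVa}: the ``no loops'' condition guarantees $kQ$ has finite global dimension and is a perfect complex over $kQ \otimes_k (kQ)^{\op}$ (smoothness), and finiteness of $Q$ gives properness, but for the finite-type statement one really only needs that $kQ$, as an ordinary associative $k$-algebra with finite presentation, is homotopically finitely presented as a dg-algebra, which follows since an ordinary finitely presented algebra admits a finite cell structure in $\dgA_k$. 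Thus the proof is essentially a one-line reduction to Fact \ref{fct:TVa} modulo this standard input, which I would cite rather than reprove.
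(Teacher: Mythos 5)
Your overall strategy — reduce everything to Fact \ref{fct:TVa} by checking that $\dgB(Q)$ is of finite type in the sense of Definition \ref{dfn:mod:oft} — is the same reduction the paper (following To\"en--Vaqui\'e) makes. But the way you verify finite type contains a genuine error. You claim that $B(Q)=kQ$ is homotopically finitely presented in $\dgA_k$ \emph{because} it is a finitely presented ordinary associative $k$-algebra, asserting that ``an ordinary finitely presented algebra admits a finite cell structure in $\dgA_k$.'' This is false. Attaching a cell to kill a relation in the model category $\dgA_k$ is a \emph{homotopy} pushout, which adjoins a degree $-1$ generator whose differential is the relation; the result is a dg-algebra whose $H^0$ is the strict quotient but which generally has nontrivial higher cohomology, so the strict quotient itself need not be homotopically finitely presented. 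Concretely, by \cite[Corollary 2.13]{TVa} a dg-algebra of finite type is necessarily \emph{smooth}, so any ordinary finitely presented algebra of infinite global dimension (e.g.\ $k[x]/(x^2)$) is a counterexample to your claim. This is exactly why the ``no loops'' hypothesis cannot be discarded the way you suggest in your last paragraph: it is not decorative, it is the source of smoothness.

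The correct verification — and the one the paper's surrounding discussion in \S\ref{sss:mod:Pf(Q)} records — runs through smoothness and properness rather than ordinary finite presentation. For $Q$ finite without loops, $B(Q)$ is a projective $k$-module of finite type (properness of $\dgB(Q)$), and $B(Q)$ is a perfect complex of $B(Q)\otimes_k B(Q)^{\op}$-modules via the standard length-one bimodule resolution of the diagonal (smoothness of $\dgB(Q)$); then one invokes \cite[Corollary 2.13]{TVa}, which says that a smooth and proper dg-category is of finite type, and only \emph{then} applies Fact \ref{fct:TVa}. So the conclusion you want (that $\dgB(Q)$ is homotopically finitely presented) is true, but your justification for it is invalid, and the missing input is precisely the smoothness/properness argument you set aside as unnecessary. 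Repair the proof by replacing your second paragraph with this chain: no loops $\Rightarrow$ hereditary and $k$-finite $\Rightarrow$ smooth and proper $\Rightarrow$ finite type $\Rightarrow$ Fact \ref{fct:TVa} applies.
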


By \cite[Example 2.5.\ 3]{TVa}, 
for any finite dimensional $k$-algebra $B$ of finite global dimension,
the dg-category $\dgB$ is saturated (Definition \ref{dfn:mod:sat}).
In particular, $\dgB(Q)$ is saturated.
Thus Fact \ref{fct:mod:sat} works for $\stPf(Q)$, 
and we can regard $\pi_0(\stPf(Q)(k))$ as the set of isomorphism classes 
of representations of $Q$ over $k$.

\begin{rmk}\label{rmk:mod:ab}
If we consider the abelian category $\Mod(B(Q))$ of
representations of the quiver $Q$ instead of $\dgMod(\dgB(Q))$,
then the corresponding moduli stack is an algebraic stack locally of finite type
by Remark \ref{rmk:mod:n},
\end{rmk}

%%%%%%%%%%%%%%%%%%%%%%%%%%%%%%%%%%%%%%%%%%%%%%%%%%%%%%%%%%%%%%%%%%%%%%%%%%%%%%%%
%%%%%%%%%%%%%%%%%%%%%%%%%%%%%%%%%%%%%%%%%%%%%%%%%%%%%%%%%%%%%%%%%%%%%%%%%%%%%%%%
%%%%%%%%%%%%%%%%%%%%%%%%%%%%%%%%%%%%%%%%%%%%%%%%%%%%%%%%%%%%%%%%%%%%%%%%%%%%%%%%
\section{Derived Hall algebra and its geometric formulation}
\label{s:H}

%, based on the original papers 
%of Ringel \cite{R}, Green \cite{G}, Xiao \cite{X} 
%and To\"{e}n \cite{T07}. 
%and also on the lecture note of Schiffmann \cite[\S1]{S}.

%%%%%%%%%%%%%%%%%%%%%%%%%%%%%%%%%%%%%%%%%%%%%%%%%%%%%%%%%%%%%%%%%%%%%%%%%%%%%%%%
%%%%%%%%%%%%%%%%%%%%%%%%%%%%%%%%%%%%%%%%%%%%%%%%%%%%%%%%%%%%%%%%%%%%%%%%%%%%%%%%
\subsection{Ringel-Hall algebra}
\label{ss:H:RH}

In this section we give a brief account on the Ringel-Hall algebra.

We call a category $\catA$ \emph{essentially small}
if the isomorphism classes of objects form a small set,
which is denoted by $\Iso(\catA)$.
For an object $M$ of $\catA$ 
its isomorphism class is denoted by $[M] \in \Iso(\catA)$.

%For an essentially small abelian category $\catA$ 
%we denote by $K_0(\catA)$ the Grothendieck group,
%and for an object $A \in \Ob(\catA)$ the associated element of $K_0(\catA)$ 
%is denoted by $\ol{A}$.
%So, for a short exact sequence $0 \to A \to B \to C \to 0$ in $\catA$,
%we have $\ol{A}-\ol{B}+\ol{C}=0$.

Let $k=\bbF_q$ be a finite field with $\abs{k}=q$.
Let $\catA$ be a category satisfying the following conditions.
\begin{enumerate}[nosep, label=(\roman*)]
\item
$\catA$ is essentially small, abelian and $k$-linear.

\item
$\catA$ is of finite global dimension, 
and $\Ext^i_{\catA}(-,-)$ has finite dimension over $k$ for any $i \in \bbN$.
\end{enumerate}

We denote by $\bbQ_c(\catA)$ 
the linear space of $\bbQ$-valued functions on $\Iso(\catA)$ with finite supports. 
We have a basis $\{1_{[M]} \mid [M] \in \Iso(\catA)\}$ of $\bbQ_c(\catA)$,
where $1_{[M]}$ means the characteristic function of $[M]$.
The correspondence $1_{[M]} \mapsto [M]$ gives an identification
$\bbQ_c(\catA) \simto \bigoplus_{[M] \in \Iso(\catA)}\bbQ[M]$,
and we will always identify these two spaces.

%For $M, N \in \Ob(\catA)$ we set 
%$\EF{M}{N} := \sum_{i\ge0} (-1)^i \dim_k \Ext_{\catA}^i(M,N)$,
%which depends only on $\ol{M},\ol{N} \in K_0(\catA)$.
%The obtained map 
%$\EF{\cdot}{\cdot}: K_0(\catA) \otimes_{\bbZ} K_0(\catA) \longto \bbZ$
%is bilinear and called the Euler pairing.

For $M,N,R \in \Ob(\catA)$, we set 
$\Ext^1_{\catA}(M,N)_R := 
 \{0 \to N \to R \to M  \to 0 \mid \text{exact in } \catA\}$
and 
\begin{align*} 
&a_M := \abs{\Aut(M)}, \quad
 e_{M,N}^R :=  \abs{\Ext^1_{\catA}(M,N)_R}, \quad
 g_{M,N}^R :=  a_M^{-1} a_N^{-1} e_{M,N}^R.
\end{align*}
Note that $a_M,e_{M,N}^R,g_{M,N}^R$ are well-defined by the condition (i) above
and depend only on the isomorphism classes $[M],[N],[R] \in \Iso(\catA)$.

For $[M], [N] \in \Iso(\catA)$ we define $[M] * [N] \in \bbQ_c(\catA)$ by
\[
 [M] * [N] := %\sEF{M}{N} 
 \sum_{[R] \in \Iso(\catA)} g^R_{M,N} [R],
\]
where we choose representatives $M,N,R \in \Ob(\catA)$ 
for the fixed isomorphism classes $[M],[N],[R] \in \Iso(\catA)$.
Denote by $[0] \in \Iso(\catA)$ the isomorphism class 
of the zero object $0$ in $\catA$.

\begin{fct}[{\cite{R}}]\label{fct:H:RH}
The triple
\[
 \Hall(\catA) := \bigl(\bbQ_c(\catA),*,[0]\bigr)
\]
is a unital associative $\bbQ$-algebra 
which has a grading with respect to the Grothendieck group $K_0(\catA)$ of $\catA$.
It is called the \emph{Ringel-Hall algebra} of $\catA$.
\end{fct}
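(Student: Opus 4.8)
The plan is to follow the classical argument of Ringel: extend $*$ to a $\bbQ$-bilinear operation on $\bbQ_c(\catA)$ and then verify the axioms of a unital associative algebra by reinterpreting the structure constants $g^R_{M,N}$ as counts of subobjects. First I would dispose of the finiteness points that make everything well-defined. Since $\catA$ is $k$-linear over the finite field $k=\bbF_q$ and every $\Ext^i_{\catA}$ is finite-dimensional by hypothesis~(ii), the number $a_M=\abs{\Aut(M)}$ is finite (being the cardinality of a subset of the finite set $\Hom_{\catA}(M,M)$) and $e^R_{M,N}=\abs{\Ext^1_{\catA}(M,N)_R}$ is finite; moreover only finitely many iso-classes $[R]$ arise as middle terms of extensions of $M$ by $N$, since these are governed by the finite group $\Ext^1_{\catA}(M,N)$. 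Hence $[M]*[N]$ is a finite sum, lies in $\bbQ_c(\catA)$, and $*$ extends bilinearly. Next I would record the key interpretation: the group $\Aut(M)\times\Aut(N)$ acts freely on $\Ext^1_{\catA}(M,N)_R$ with orbit set the collection of subobjects $N'\subseteq R$ satisfying $N'\cong N$ and $R/N'\cong M$, each orbit having size $a_Ma_N$; therefore $g^R_{M,N}=a_M^{-1}a_N^{-1}e^R_{M,N}$ equals the cardinality of that set of subobjects.

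With this in hand the unit axiom is immediate: the only subobject of $R$ isomorphic to $0$ is $0$ itself, and the only subobject of $R$ with zero cokernel is $R$, so $g^R_{0,M}=\delta_{[R],[M]}=g^R_{M,0}$, which gives $[0]*[M]=[M]=[M]*[0]$.

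The substantive step is associativity, and it is the step I expect to demand genuine care. I would compute the coefficient of a fixed $[R]\in\Iso(\catA)$ in $([L]*[M])*[N]$ and in $[L]*([M]*[N])$. Expanding the definitions and applying the subobject interpretation twice, the first coefficient $\sum_{[S]}g^S_{L,M}\,g^R_{S,N}$ counts two-step filtrations $A\subseteq B\subseteq R$ with $A\cong N$, $B/A\cong M$ and $R/B\cong L$, where $B$ plays the role of the intermediate object $S$; the second coefficient $\sum_{[T]}g^T_{M,N}\,g^R_{L,T}$ counts the very same filtrations, now with $B$ playing the role of $T$. Matching these two ways of assembling a length-three flag---and keeping careful track of which subquotient is isomorphic to which of $L,M,N$---is the only point requiring real bookkeeping; once done, the two triple products coincide and associativity follows.

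Finally, for the $K_0(\catA)$-grading I would set $\bbQ_c(\catA)_{\alpha}$ to be the span of those $[M]$ whose class in $K_0(\catA)$ equals $\alpha$. An extension $0\to N\to R\to M\to 0$ forces $[R]=[M]+[N]$ in $K_0(\catA)$, so $g^R_{M,N}\neq 0$ implies $[R]=[M]+[N]$; hence $\bbQ_c(\catA)_{\alpha}*\bbQ_c(\catA)_{\beta}\subseteq\bbQ_c(\catA)_{\alpha+\beta}$, and since $[0]\in\bbQ_c(\catA)_{0}$ this exhibits $\Hall(\catA)$ as a $K_0(\catA)$-graded unital associative $\bbQ$-algebra, completing the argument.
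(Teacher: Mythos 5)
Your argument is correct and is exactly the classical Ringel argument that the paper relies on: the statement is given there as a cited Fact without proof, but the remarks immediately following it --- the identification $g^R_{M,N}=\abs{\calG^R_{M,N}}$ via the free $\Aut(M)\times\Aut(N)$-action on $\Ext^1_{\catA}(M,N)_R$, and the filtration formula for iterated products --- are precisely the two ingredients you use for the unit, associativity, and the $K_0(\catA)$-grading. Nothing is missing.
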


Let us recall another definition of $g_{M,N}^R$.
For $M,N,R \in \Ob(\catA)$ we have
\[
 g^{R}_{M,N} = \abs{\calG^{R}_{M,N}}, \quad 
 \calG^{R}_{M,N} := \{N' \subset R \mid N' \simeq N, \ R/N \simeq M\}.
\]
One can prove this statement by considering a free action 
of $\Aut(M) \times \Aut(N)$ on $\Ext^1_{\catA}(M,N)_R$.

We have the following meaning of the multi-component product.
For $M,N_1,\ldots,N_r \in \Ob(\catA)$, we set 
\[
 \calF(M;N_1,\ldots,N_r) := 
 \{M_\bullet=(M=M_1 \supset \cdots \supset M_r \supset M_{r+1}=0) 
  \mid  M_i/M_{i+1} \simeq N_i \ (i=1,\ldots,r) \}.
\]
Then we have 
\[
 [N_1] * \cdots * [N_r] = 
 \sum_{[M] \in \Iso(\catA)} \sum_{M_\bullet \in \calF(M;N_1,\ldots,N_r)} [M].
 %q^{(\EF{N_1}{M_2}+\cdots+\EF{N_{r-1}}{M_r})/2} \cdot 
\]

%%%%%%%%%%%%%%%%%%%%%%%%%%%%%%%%%%%%%%%%%%%%%%%%%%%%%%%%%%%%%%%%%%%%%%%%%%%%%%%%
\subsection{Derived Hall algebra}
\label{ss:H:dgHall}

In this subsection we recall the derived Hall algebra 
introduced by To\"{e}n \cite{T06}.
%Let us give an explanation in the $\infty$-theoretical language
%although the original discussion in \cite{T06} was given 
%in the model theoretical language.

Let $\catD$ be a small dg-category over $\bbF_q$
which is locally finite in the sense of \cite[Definition 3.1]{T06}.
In other words, for any objects $x,y \in \catD$, 
the complex $\Hom_{\catD}(x,y)$ is cohomologically bounded 
and with finite-dimensional cohomology groups.

As in \S \ref{ss:mod:pdg},  we denote by $\dgMod(\catD)$ 
the model category of $\catD$-dg-modules, and by
%It is stable in the sense of Definition \ref{dfn:stb:stb}.
%Let us make a slight change of the notation, and denote by
\[
 \Perf(\catD) := \dgMod(\catD)^{\circ}_{\pe} \subset \dgMod(\catD)
\] 
the full subcategory spanned by perfect (fibrant-)cofibrant objects. 
The category $\Perf(\catD)$ has an induced model structure, and 
we can consider the homotopy category $\mHo \Perf(\catD)$. 
For $x,y \in \Perf(\catD)$ and $i \in \bbZ$,
we denote by 
\[
 \Ext^i(x,y) := \Hom_{\mHo \Perf(\catD)}(x,y[i])
\]
where $[i]$ denotes the shift in the triangulated category 
$\mHo \Perf(\catD)$.

Let us consider the category 
$\dgMod(\catD)^I := \Fun(\Delta^1,\dgMod(\catD))$,
where $I=\Delta^1$ denotes the $1$-simplex (see \S \ref{ss:ic:ss}).
This category has a model structure determined levelwise 
by that on $\dgMod(\catD)$.
In particular, a cofibrant object in $\dgMod(\catD)^I$ 
is a morphism $u: x \to y$ in $\dgMod(\catD)$ where
$x$ and $y$ are cofibrant and $u$ is a cofibration.

For an object $u: x \to y$ in $\dgMod(\catD)^I$, we set
\[
 s(u) := x, \quad c(u) := y, \quad t(u) := y \coprod^x 0.
\]
These determine left Quillen functors 
\[
 s,c,t: \dgMod(\catD)^I \longto \dgMod(\catD).
\]

Next we denote by $w \Perf(\catD)^{\cof} \subset \Perf(\catD)$ 
the subcategory of cofibrant objects and weak equivalences between them.
We define the subcategory $w(\Perf(\catD)^I)^{\cof} \subset \Perf(\catD)^I$
in the same way,
By restricting the functors $s,c,t$ to these subcategories,
we obtain the following diagram.
\begin{align}\label{diag:dHall:toen}
 \xymatrix{
  w(\Perf(\catD)^I)^{\cof} \ar[r]^c \ar[d]_{s \times t} & 
  w \Perf(\catD)^{\cof} \\
  w \Perf(\catD)^{\cof} \times w \Perf(\catD)^{\cof} }
\end{align}
Let us then define objects $X^{(0)}(\catD)$ and $X^{(0)}(\catD)$ of $\topH$ 
(Definition \ref{dfn:ic:topH}) by 
\[
 X^{(0)}(\catD) := \Ho \Ndg(w \Perf(\catD)^{\cof}), \quad
 X^{(1)}(\catD) := \Ho \Ndg(w(\Perf(\catD)^I)^{\cof}).
\]
Here $\Ndg(-)$ denotes the dg-nerve functor (Definition \ref{dfn:stb:Ndg}),
and $\Ho: \sSet \to \topH$ denotes the functor 
giving the homotopy type of a simplicial set (Definition \ref{dfn:ic:Ho-type:sSet}).
Then, from the diagram \eqref{diag:dHall:toen},
we obtain the following diagram in $\topH$.
\[
 \xymatrix{
  X^{(1)}(\catD) \ar[r]^c \ar[d]_{s \times t} & X^0(\catD) \\
  X^{(0)}(\catD) \times X^{(0)}(\catD)}
\]

Now let us recall 

\begin{dfn*}
An object $X$ of $\topH$ is called \emph{locally finite} if 
for every $x \in X$,  each $\pi_i(X,x)$ is a finite group 
and there exists an $n \in \bbN$ such that $\pi_i(X,x)=0$ for each $i>n$.
We denote by $\topH^{\lf}$ the subcategory of $\topH$
spanned by locally finite objects.
\end{dfn*}

\begin{fct}[{\cite[Lemma 3.2]{T06}}]\label{fct:H:prp}
The homotopy types $X^{(0)}(\catD)$ and $X^{(1)}(\catD)$ are locally finite.
Moreover, for every (homotopy) fiber $F$ of the morphism $s \times t$,
the set $\pi_0(F)$ is finite.
\end{fct}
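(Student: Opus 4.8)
The plan is to compute the homotopy invariants of $X^{(0)}(\catD)$ and $X^{(1)}(\catD)$ in terms of $\Ext$-groups and automorphism groups in the triangulated category $\mHo \Perf(\catD)$, and then to feed in the local finiteness of $\catD$. First I would recall that, since $w \Perf(\catD)^{\cof}$ is the category of weak equivalences between cofibrant perfect $\catD$-dg-modules, $\Ndg$ of it is a model for the classifying space of objects of $\Perf(\catD)$; hence $\pi_0 X^{(0)}(\catD)$ is the set of isomorphism classes of $\mHo \Perf(\catD)$, while for a chosen $x$ the higher homotopy groups at $x$ are governed by the derived automorphism space of $x$, giving $\pi_1(X^{(0)}(\catD),x) \simeq \Aut_{\mHo \Perf(\catD)}(x)$ and, for $n \ge 2$, $\pi_n(X^{(0)}(\catD),x) \simeq \Ext^{1-n}(x,x)$. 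Thus the statement for $X^{(0)}(\catD)$ reduces to the finiteness of $\mathrm{R}\!\Hom$-complexes of perfect modules.

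To establish that, note that every perfect $\catD$-dg-module is built from the representables $h_z$, $z\in\catD$, by finitely many shifts, cones and retracts (Remark \ref{rmk:mod:Perf}), and $\mathrm{R}\!\Hom$ between the representable modules is computed, up to the obvious variance, by the complexes $\Hom_{\catD}(z,z')$, which by local finiteness of $\catD$ are cohomologically bounded with finite-dimensional cohomology over $\bbF_q$; this class of complexes of $\bbF_q$-modules is stable under shifts, cones and retracts. Hence $\Ext^i(x,y)$ is a finite-dimensional $\bbF_q$-vector space for every $i$ and vanishes for $|i|$ large, so in particular it is finite, and $\Aut_{\mHo \Perf(\catD)}(x)\subseteq \End_{\mHo \Perf(\catD)}(x)=\Ext^0(x,x)$ is finite; therefore $X^{(0)}(\catD)\in\topH^{\lf}$. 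The same classifying-space picture applies to $X^{(1)}(\catD)$, now for the category of weak equivalences of cofibrations $u\colon x\to y$ in $\Perf(\catD)^I$: its $\pi_0$ is the set of equivalence classes of arrows of $\mHo \Perf(\catD)$, and the homotopy groups at $u$ are controlled by $\Map_{\Perf(\catD)^I}(u,u)$, which is the homotopy pullback of $\Map_{\Perf(\catD)}(x,x)\times\Map_{\Perf(\catD)}(y,y)\to\Map_{\Perf(\catD)}(x,y)$ along $f\mapsto u\circ f$ and $g\mapsto g\circ u$; the fibre of $\Map_{\Perf(\catD)^I}(u,u)\to\Map_{\Perf(\catD)}(x,x)\times\Map_{\Perf(\catD)}(y,y)$ over $(\mathrm{id}_x,\mathrm{id}_y)$ is the loop space of $\Map_{\Perf(\catD)}(x,y)$ at $u$, so the long exact sequence exhibits each $\pi_i(X^{(1)}(\catD),u)$ as an extension of subquotients of the finite groups $\Ext^{\ast}(x,x)$, $\Ext^{\ast}(y,y)$, $\Ext^{\ast}(x,y)$ with $\ast$ in a bounded range, whence $X^{(1)}(\catD)\in\topH^{\lf}$.

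For the homotopy fibre $F$ of $s\times t$ over a point $(a,b)$ of $X^{(0)}(\catD)\times X^{(0)}(\catD)$, observe that $s(u)=x$ and $t(u)=y\coprod^{x}0=\operatorname{cofib}(u)$, so a point of $F$ is, up to equivalence, a distinguished triangle $a\to y\to b\xrightarrow{\ \phi\ } a[1]$, determined by the class $\phi\in\Hom_{\mHo\Perf(\catD)}(b,a[1])=\Ext^{1}(b,a)$ together with a coherent realization of $y$ as the fibre of $\phi$; two of these yield equivalent objects of $F$ exactly when they differ by the action of $\Aut(a)\times\Aut(b)$. Hence $\pi_0(F)$ is a quotient of the finite set $\Ext^{1}(b,a)$, so it is finite.

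I expect the main obstacle to be the bookkeeping behind the first part: rigorously identifying $\Ho\Ndg$ of a category of weak equivalences with the classifying space of objects so that its homotopy groups are the stated $\Ext$- and automorphism groups, and likewise pinning down the mapping space of the arrow category by the homotopy pullback square and describing $F$ through the cofibre construction. Once those identifications are in hand, the only genuinely analytic input is the local finiteness of $\catD$, propagated through the finitely many homotopy-colimit operations defining perfect modules.
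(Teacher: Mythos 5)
The paper does not actually prove this statement: it is quoted as a Fact from To\"en's \cite[Lemma 3.2]{T06}, and your argument is essentially To\"en's original proof --- identify the homotopy groups of the classifying space of weak equivalences with $\Iso$, $\Aut$ and the negative $\Ext$-groups ($\pi_n \simeq \Ext^{1-n}(x,x)$ for $n\ge 2$), reduce finiteness to the local finiteness of $\catD$ by building perfect modules from the representables $h_z$ in finitely many shifts, cones and retracts, handle the arrow category via the homotopy pullback description of its mapping spaces, and control the fiber of $s\times t$ by $\Ext^1$. One small correction to the last step: for the \emph{homotopy} fiber $F$ over $(a,b)$, $\pi_0(F)$ is identified with the set $\Ext^1(b,a)$ of connecting maps itself, not with its quotient by $\Aut(a)\times\Aut(b)$ --- that quotient computes the fiber of the induced map on $\pi_0$'s, onto which $\pi_0(F)$ surjects via the long exact sequence of the fibration; since either set is finite, your conclusion is unaffected.
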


For $X \in \topH^{\lf}$, 
let us denote by $\bbQ_c(X)$ the linear space of 
$\bbQ$-valued functions on $X$ with finite supports.

\begin{dfn*}
Let $f: X \to Y$ be a morphism in $\topH^{\lf}$.
\begin{enumerate}[nosep]
\item
We define a linear map $f_!: \bbQ_c(X) \to \bbQ_c(Y)$ by
\[
 f_!(\alpha)(y):= \sum_{x \in \pi_0(X), \, f(x)=y}  \alpha(x) \cdot 
 \prod_{i >0}\Bigl(\abs{\pi_i(X,x)}^{(-1)^i} \, \abs{\pi_i(Y,y)}^{(-1)^{i+1}}\Bigr)
 \quad
 (\alpha \in \bbQ_C(X), \ y \in \pi_0(Y))
\]

\item
If $f$ has a finite fiber, 
then we define a linear map $f^*: \bbQ_c(Y) \to \bbQ_c(X)$ by
\[
 f^*(\alpha)(x):=\alpha(f(x)) \quad (\alpha \in \bbQ_C(Y), \ x \in \pi_0(X)).
\]

\end{enumerate}
\end{dfn*}

Now we can explain the definition of the derived Hall algebra.

\begin{fct}[{\cite[Definition 3.3, Theorem 4.1]{T06}}]\label{fct:H:DH}
For a locally finite dg-category $\catD$ over $\bbF_q$, we set
\[
 \mu := c_! \circ (s \times t)^*: 
 \bbQ_c(X^{(0)}(\catD)) \otimes \bbQ_c(X^{(0)}(\catD)) \longto 
 \bbQ_c(X^{(0)}(\catD)).
\]
Then the triple 
\[
 \Hall(\catD) := (\bbQ_c(X^{(0)}(\catD)), \mu,0)
\]
is a unital associative $\bbQ$-algebra.
It is called the \emph{derived Hall algebra} of $\catD$.
\end{fct}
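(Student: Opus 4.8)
The plan is to reproduce To\"en's argument \cite{T06}, reducing associativity of $\mu = c_! \circ (s\times t)^*$ to two ingredients: a base-change formula for the pushforward/pullback calculus on functions over locally finite homotopy types, and a ``$2$-Segal'' identification of the homotopy type of length-two admissible flags in $\Perf(\catD)$. For the first, for $f\colon X \to Y$ in $\topH^{\lf}$ write $w(X,x) := \prod_{i>0}|\pi_i(X,x)|^{(-1)^i}$, so that $f_!(\alpha)(y) = \sum_{x \mapsto y}\alpha(x)\, w(X,x)/w(Y,y)$, and recall $f^*$ is defined when $f$ has finite fibers. I would record (i) \emph{composition}: $g_!\circ f_! = (g\circ f)_!$ whenever both $f,g$ have fibers with finite $\pi_0$ — immediate from the definition — and $f^*\circ g^* = (g\circ f)^*$; and (ii) \emph{base change}: for a homotopy cartesian square with $g'\colon W \to X$, $f'\colon W \to Y$ the base changes of $g\colon Y \to Z$, $f\colon X\to Z$, one has $f^*\circ g_! = g'_!\circ (f')^*$ as maps $\bbQ_c(Y)\to\bbQ_c(X)$, provided the relevant finiteness holds. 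The only homotopy-theoretic input here is that homotopy fibers are preserved by homotopy pullback and that $w$ is multiplicative over fibrations in the sense dictated by the long exact homotopy sequence; the rest is a weight count over $\pi_0$ of fibers. Using Fact \ref{fct:H:prp} I would then check that $X^{(0)}(\catD)$, $X^{(1)}(\catD)$, the maps $s,c,t$, and all the iterated spaces below stay in $\topH^{\lf}$ and have the finiteness required for $c_!$, $(s\times t)^*$ and their composites to exist.

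\emph{Flags of length two.} Let $\Perf(\catD)^{\Delta^2}$ be the model category of functors $\Delta^2 \to \dgMod(\catD)$ with values in $\Perf(\catD)$ and the levelwise model structure; let $w(\Perf(\catD)^{\Delta^2})^{\cof}$ be the subcategory of cofibrant objects (``admissible two-step filtrations'' $x_0 \to x_1 \to x_2$) and weak equivalences, and put $X^{(2)}(\catD) := \Ho\Ndg(w(\Perf(\catD)^{\Delta^2})^{\cof})$. It carries three face maps to $X^{(1)}(\catD)$, a total ``collapse'' map $c^{(2)}\colon X^{(2)}(\catD) \to X^{(0)}(\catD)$ recording $x_2$, and a map $X^{(2)}(\catD) \to X^{(0)}(\catD)^{\times 3}$ recording the three consecutive subquotients. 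The crucial input is the pair of homotopy-cartesian squares exhibiting
\[
X^{(2)}(\catD) \simeq X^{(1)}(\catD)\times_{c,\,X^{(0)}(\catD),\,s}X^{(1)}(\catD) \simeq X^{(1)}(\catD)\times_{t,\,X^{(0)}(\catD),\,c}X^{(1)}(\catD),
\]
i.e. that a two-step flag is equivalently a pair of one-step flags glued along a common object (``sub-then-quotient'' on one side, ``quotient-of-the-quotient'' on the other). This is a model-categorical assertion about $\Perf(\catD)$, the cofibrant–fibrant part of a stable model category, where the needed (co)cartesian gluing squares exist; it is the $2$-Segal/additivity property of the Waldhausen-type construction.

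\emph{Assembling.} Unwinding definitions, $\mu\big(\mu(\alpha\otimes\beta)\otimes\gamma\big) = c_!(s\times t)^*\big((c_!(s\times t)^*(\alpha\otimes\beta))\otimes\gamma\big)$; applying base change (ii) along $X^{(2)}(\catD)\simeq X^{(1)}(\catD)\times_{c,X^{(0)}(\catD),s}X^{(1)}(\catD)$ and then composition (i) to fuse the two $c_!$'s into $c^{(2)}_!$ and the two pullbacks into one, this becomes $c^{(2)}_!$ applied to the pullback of $\alpha\otimes\beta\otimes\gamma$ along $X^{(2)}(\catD)\to X^{(0)}(\catD)^{\times 3}$. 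The identical computation for $\mu\big(\alpha\otimes\mu(\beta\otimes\gamma)\big)$, using the second decomposition of $X^{(2)}(\catD)$, produces the same operator, whence associativity. For the unit, the point $[0]\in X^{(0)}(\catD)$ attached to the zero object is a two-sided identity: the homotopy fiber of $s\times t$ over $([0],M)$ is the space of cofibration sequences $0 \to M \to R$, which is contractible with $R\simeq M$ canonically, so $\mu([0]\otimes-)$ and $\mu(-\otimes[0])$ are the identity. Hence $\Hall(\catD)$ is a unital associative $\bbQ$-algebra.

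\emph{Main obstacle.} The genuine content is the flag-space step: establishing the homotopy-cartesian ($2$-Segal) identifications of $X^{(2)}(\catD)$ — and, implicitly, enough about the higher flag spaces to see that the two bracketings really land in the same operator — together with the bookkeeping that every flag space remains locally finite and that all maps have the finiteness type demanded by the calculus of the first step. The composition and base-change lemmas, though indispensable, are routine once the long exact sequences of the relevant fibrations are in hand.
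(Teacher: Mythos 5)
This statement is quoted in the paper as a Fact from To\"en's \cite{T06} with no proof given, so the only comparison is with To\"en's original argument — and your reconstruction follows it faithfully: the $(f_!,f^*)$ calculus with composition and homotopy base change for locally finite homotopy types, the two homotopy-cartesian presentations of the two-step flag space $X^{(2)}(\catD)$ as $X^{(1)}\times_{c,X^{(0)},s}X^{(1)}\simeq X^{(1)}\times_{t,X^{(0)},c}X^{(1)}$, and the contractibility of the fiber over the zero object for the unit. Your outline is correct and takes essentially the same route as the cited source.
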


If we take $\catD$ to be an abelian category $\catA$
satisfying the conditions (i) and (ii) in \S \ref{ss:H:RH},
then the derived Hall algebra $\Hall(\catA)$ is nothing but 
the Ringel-Hall algebra in Fact \ref{fct:H:RH}.
The grading is recovered by the Grothendieck group $K_0(\mHo \Perf(\catD))$
of the triangulated category $\mHo \Perf(\catD) \subset \mHo \dgMod(\catD)$
(Remark \ref{rmk:mod:Perf}).

%%%%%%%%%%%%%%%%%%%%%%%%%%%%%%%%%%%%%%%%%%%%%%%%%%%%%%%%%%%%%%%%%%%%%%%%%%%%%%%%
%%%%%%%%%%%%%%%%%%%%%%%%%%%%%%%%%%%%%%%%%%%%%%%%%%%%%%%%%%%%%%%%%%%%%%%%%%%%%%%%
\subsection{Geometric formulation of derived Hall algebra}

In this subsection we give the main content of this article.
We set $k=\bbF_q$, the finite field  of order $q$,
and take it as the base ring for the following discussion.
%
%We fix an algebraic closure $\ol{k}$ of 
%We will use this $k$ 
Let $\catD$ be a dg-category over $k$ which is of finite type.
We then have the moduli stack 
\[
 \stM_{\catD}: A \mapsto \dgMap(\catD^{\op},\Perf(A))
\]
of pseudo-perfect $\catD$-modules
locally geometric and locally of finite type.
By the discussion in \S \ref{sss:mod:Mab},
we have the stratification
\[
 \stM_{\catD} = \tbcup_{a \le b}\stM_{\catD}^{[a,b]}
\]
with each $\stM_{\catD}^{[a,b]}$ geometric and locally of finite presentation.
%The substack $\stM_{\catD}^{[a,b]}$ parametrizes
Note also that if we assume $\catD$ to be saturated,
then we have a decomposition 
\[
 \stM_{\catD}^{[a,b]} = 
 \tbscup_{\alpha \in K_0(\mHo \Perf(\catD))} \stM_{\catD}^{[a,b],\alpha},
\]
where $K_0(\mHo \Perf(\catD))$ denotes the Grothendieck group of 
the triangulated category $\mHo \Perf(D) \subset \mHo \dgMod(D)^{\circ}$ 
(Remark \ref{rmk:mod:Perf}).

\begin{dfn*}
We define the functor $\stEx_{\catD}:\isCom_k \to \iS$ by
\[
 \stEx_{\catD}(A) := 
 \dgMap_{\dgCat}\bigl(((\catD^{\op})^{I})^{\fib},\Perf(\catA)\bigr).
\]
Here $(\catD^{\op})^{I}$ denotes the $C(k)$-model category 
$\Fun(\Delta^1,\catD^{\op})$, and $(-)^{\fib}$ denotes 
the full sub-dg-category of fibrant objects.
We call it the \emph{moduli stack of cofibrations in $\catD$}.
\end{dfn*}

Note that we used $(-)^{\fib}$ on the opposite dg-category $\catD^{\op}$ to 
parametrize cofibrations in the original $\catD$.
Now the argument on $\stM_{\catD}$ (see \S \ref{ss:mod:g}) 
works for $\stEx_{\catD}$, and we have

\begin{lem}\label{lem:H:stEx}
The presheaf $\stEx_{\catD}$ is a derived stack over $k$
which is locally geometric and locally of finite type.
\end{lem}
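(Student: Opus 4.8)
The plan is to mimic, almost verbatim, the proof that $\stM_{\catD}$ is a locally geometric derived stack locally of finite presentation (Fact \ref{fct:TVa} and the outline in \S\ref{ss:mod:g}), since $\stEx_{\catD}$ is built from exactly the same ingredients, only with $(\catD^{\op})^{I}$ in place of $\catD^{\op}$. First I would check that $\stEx_{\catD}$ is indeed a derived stack, i.e.\ an object of $\iSh(\idAff_k,\et)$; this is the analogue of \cite[Lemma 3.1]{TVa} and follows because $A \mapsto \Perf(A)$ is (after strictification) a functor to $\C(k)$-model categories, $\dgMap_{\dgCat_{\bbV}}$ sends such diagrams to homotopy sheaves of simplicial sets, and the values land in Kan complexes by Fact \ref{fct:ic:sag=Kan}. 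The only new point is that $((\catD^{\op})^{I})^{\fib}$ is again a dg-category, and is of finite type whenever $\catD$ is: this is because the model category $\Fun(\Delta^1,\catD^{\op})$ has, up to quasi-equivalence, the same perfect dg-modules as $\dgMod(\catD^{\op} \oplus \catD^{\op})$ glued along a morphism complex, so the corresponding dg-algebra $B$ from Definition \ref{dfn:mod:oft} can be built from the one for $\catD$ by finitely many free extensions of the type in \S\ref{sss:mod:Mab}. Hence Fact \ref{fct:TVa} applies directly to $\stEx_{\catD} \simeq \stM_{((\catD^{\op})^{I})^{\fib}}$, giving local geometricity and local finite presentation.

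Concretely, I would first verify the identification $\stEx_{\catD} \simeq \stM_{((\catD^{\op})^{I})^{\fib}}$ of derived stacks by unravelling definitions: for each $A \in \isCom_k$ both sides are $\dgMap_{\dgCat}$ out of the fibrant-object dg-category of $(\catD^{\op})^{I}$ into $\Perf(A)$, and a dg-functor $((\catD^{\op})^{I})^{\fib} \to \Perf(A)$ is functorially the same datum as a morphism of perfect $\Perf(A)$-valued objects parametrised by cofibrations in $\catD$, compatibly with base change $A \to B$ via $N(B) \otimes_{N(A)} -$. Once this is in place, the second step is simply to invoke Fact \ref{fct:TVa}: the dg-category $((\catD^{\op})^{I})^{\fib}$ is of finite type, so its moduli stack of pseudo-perfect modules is locally geometric and locally of finite presentation over $k$. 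One can also directly mirror the stratification argument: set $\stEx_{\catD}^{[a,b]}$ by the cartesian square with $\stM_{\kdg}^{[a,b]}$ over $\stM_{\kdg}$ applied to the source-and-target morphism $\stEx_{\catD} \to \stM_{\catD} \times \stM_{\catD}$, show each $\stEx_{\catD}^{[a,b]}$ is geometric and locally of finite presentation by the induction of \cite[Proposition 3.13]{TVa}, and take the union over $a \le b$.

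The step I expect to be the main obstacle is the verification that $((\catD^{\op})^{I})^{\fib}$ is of finite type, that is, producing the homotopically finitely presented $k$-dg-algebra $B$ with $\Perf$ quasi-equivalent to $\dgMod(B)^{\circ}$. The issue is that $\Fun(\Delta^1,\catD^{\op})$-modules are arrow objects in $\dgMod(\catD^{\op})$, and one must argue that the triangulated category of perfect such objects has a compact generator obtained from the generator of $\Perf(\catD^{\op})$ (e.g.\ by taking $h_x \to 0$, $0 \to h_x$, and $\id_{h_x}$ for $x$ in a generating set), and then that its derived endomorphism algebra is homotopically finitely presented. This is routine in spirit — it follows the same pattern as \cite[Proposition 2.2]{TVa}, building $B$ from the dg-algebra for $\catD$ by a finite sequence of free extensions $k[p_i]$ — but writing it out carefully requires a little care with the model structure on $\dgMod(\catD)^{I}$. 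Given the paper's stated level of detail elsewhere (``the argument works as it is, we omit it''), I would present this as a brief remark that Fact \ref{fct:TVa} applies to $((\catD^{\op})^{I})^{\fib}$ by the same reasoning as in \S\ref{ss:mod:g}, possibly recording via Remark \ref{rmk:mod:n} the bound on the geometricity, and leave the dg-algebra bookkeeping to the reader.
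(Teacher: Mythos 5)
Your proposal takes essentially the same route as the paper, which gives no separate proof and simply asserts that the argument for $\stM_{\catD}$ from \S\ref{ss:mod:g} carries over; identifying $\stEx_{\catD}$ with the moduli stack of pseudo-perfect modules over the arrow dg-category, checking that this dg-category is again of finite type, and invoking Fact~\ref{fct:TVa} (with the stratification by Tor amplitude as a cross-check) is exactly what is intended. The only point to watch is the \emph{op} convention: since $\stM_{\catC}(A)=\dgMap(\catC^{\op},\Perf(A))$, the dg-category whose moduli stack you want is the opposite of $((\catD^{\op})^{I})^{\fib}$, a cosmetic adjustment that does not affect the finite-type verification you outline.
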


Now we have a similar situation to \S \ref{ss:H:dgHall}.
There exist morphisms
\[
 s,c,t: \stEx_{\catD} \longto \stM_{\catD}
\]
of derived stacks sending a cofibration $u: N \to M$ to 
$s(u) = N$, $c(u) = M$ and  $t(u) = N \coprod^M 0$ respectively.

In the following we assume $\catD$ is saturated.
Similarly to $\stM_{\catD}$,
the derived stack $\stEx_{\catD}$ has a stratification 
\[
 \stEx_{\catD} = \tbcup_{a \le b}\stEx_{\catD}^{[a,b]},
\]
where $\stEx_{\catD}^{[a,b]}$ parametrizes $u: N \to M$
such that $M$ has Tor amplitude in $[a,b]$.
Each $\stEx_{\catD}^{[a,b]}$ has a decomposition
\[
 \stEx_{\catD}^{[a,b]} = 
 \tbscup_{\alpha,\beta \in K_0(\Perf(\catD))} 
 \stEx_{\catD}^{[a,b],\alpha,\beta},
\]
where $\stEx_{\catD}^{[a,b],\alpha,\beta}$ parametrizes $u: N \to M$
with $\alpha=\ol{N}$ and $\beta=\ol{t(u)}$.
Here we denoted by $\ol{N}$ the class of $N \in \Perf(D)$ in $K_0(\Perf(D))$.

Then the morphisms $s,t,u$ respect the stratification and decomposition.
The restrictions of $s,c,t$ give a diagram
\[
 \xymatrix{ 
  \stEx_{\catD}^{[a,b],\alpha,\beta} \ar[r]^{c} \ar[d]_{p} & 
   \stM_{\catD}^{[a,b],\alpha+\beta} \\ 
   \stM_{\catD}^{[a,b],\alpha} \times \stM_{\catD}^{[a,b],\beta} }
\]
of derived stacks with $p:= s \times t$.
The following is now obvious.

\begin{lem}\label{lem:H:sp}
The morphisms $s$ and $t$ are smooth, and the morphism $c$ is proper.
\end{lem}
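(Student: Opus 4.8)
The plan is to reduce everything to the geometric strata introduced just above, and then argue separately for $s,t$ (smoothness) and for $c$ (properness). Since $\stM_{\catD}$ and $\stEx_{\catD}$ are locally geometric and locally of finite presentation, and since the three morphisms respect both the Tor-amplitude stratification $\stEx_{\catD}=\bigcup_{a\le b}\stEx_{\catD}^{[a,b]}$, $\stM_{\catD}=\bigcup_{a\le b}\stM_{\catD}^{[a,b]}$ and the $K_0(\Perf(\catD))$-decomposition, it suffices to treat the morphisms $s\colon\stEx_{\catD}^{[a,b],\alpha,\beta}\to\stM_{\catD}^{[a,b],\alpha}$, $t\colon\stEx_{\catD}^{[a,b],\alpha,\beta}\to\stM_{\catD}^{[a,b],\beta}$ and $c\colon\stEx_{\catD}^{[a,b],\alpha,\beta}\to\stM_{\catD}^{[a,b],\alpha+\beta}$ between geometric derived stacks locally of finite presentation. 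For such morphisms smoothness (of a given relative dimension) can be tested on an $n$-atlas (Definition \ref{dfn:dSt:mor-Q}, Definition \ref{dfn:dSt:reldim}), and properness via the truncation $\Trc$, using that $\Trc$ commutes with fibre products and with the formation of $\stEx_{\catD}$ and of the maps $s,c,t$, together with the evident extension of Definition \ref{dfn:dAS:prp} to geometric derived stacks.

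For the smoothness of $s$ and $t$, the key point is that a point of $\stEx_{\catD}$ over $A$ is a cofibre sequence $N\xrightarrow{u}M\to Q$ of pseudo-perfect $\catD^{\op}$-dg-modules over $A$ (with $s(u)=N$, $t(u)=Q$, $c(u)=M$), and such a sequence is equivalently the datum of a triple $(N,Q,\partial)$ with $\partial\colon Q\to N[1]$ a morphism, $M$ being its fibre; in particular $s\times t$ has fibres the mapping spaces $\Map(Q,N[1])$. To show $s$ is smooth I would imitate the proof of Fact \ref{fct:dSt:TVa} in \cite[\S3]{TVa}: induct on the length $b-a$ of the Tor-amplitude, the base case $b=a$ (where $N$, $M$, $Q$ are vector bundles up to shift) being covered verbatim by \cite[Lemma 3.12]{TVa}, where the analogous ``take the sub-object'' morphism was shown smooth of the expected relative dimension; the inductive step uses a fibration $\stEx_{\catD}^{[a,b],\alpha,\beta}\to\stEx_{\catD}^{[a,b-1]}\times\stVect$ of the same shape as the one used for $\stM_{\kdg}^{[a,b]}$ in \cite[Proposition 3.7]{TVa}. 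To pass from $\catD=\kdg$ to an arbitrary saturated $\catD$ (Definition \ref{dfn:mod:sat}) one invokes the $n$-representability and local finite presentation of $\stM_{\catD}^{[a,b]}\to\stM_{\kdg}^{[a,b]}$ (\cite[Proposition 3.13]{TVa}) and its $\stEx$-analogue, together with the stability of smoothness under base change (Fact \ref{fct:morph:Q}). Finally $t$ is smooth by the same argument applied to $\catD^{\op}$, since passing to duals exchanges sub- and quotient-objects and hence exchanges $s$ and $t$. Conceptually, smoothness of $s$ and $t$ expresses that, although $\stM_{\catD}$ itself is obstructed, the obstruction to deforming a cofibre sequence is killed once its sub-object (resp.\ its cofibre) has been deformed, i.e.\ the relative cotangent complex of $s$ (resp.\ $t$) is perfect and concentrated in non-positive degrees.

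For the properness of $c$, note that $c\colon\stEx_{\catD}^{[a,b],\alpha,\beta}\to\stM_{\catD}^{[a,b],\alpha+\beta}$ has as fibre over a perfect $\catD^{\op}$-dg-module $M$ the derived stack of two-step filtrations of $M$ with graded pieces of fixed classes $\alpha,\beta$. Applying $\Trc$ turns this into the corresponding classical morphism, whose fibres are the flag/Quot-type stacks of sub-objects of a fixed object with prescribed $K_0$-classes; these are representable by projective schemes, so $\Trc c$ is representable and proper. This is exactly the ``induction diagram'' familiar from Lusztig's geometric realisation of Hall algebras \cite{Lus}, and the relevant finiteness is already recorded in \cite[Lemma 3.2]{T06}. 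Hence $c$ is proper.

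The main obstacle will be the smoothness of $s$ and $t$ in this generality: the vector-bundle base case is covered verbatim by \cite[\S3]{TVa}, but organising the induction on the Tor-amplitude for an arbitrary saturated $\catD$ — and in particular verifying that the fibration used in the inductive step is again of the required shape and is compatible with the passage $\kdg\rightsquigarrow\catD$ — requires some care. A secondary point to be checked is that ``proper'' for morphisms of geometric derived stacks is insensitive to $\Trc$ in the manner used for $c$, i.e.\ that $\Trc$ genuinely commutes with the formation of $\stEx_{\catD}$ and of the morphism $c$.
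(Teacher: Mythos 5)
You should first be aware that the paper offers no proof of this lemma at all: after drawing the diagram of restricted morphisms it simply declares that "the following is now obvious." So your write-up is the only argument on the table, and the question is whether it actually closes the two claims. The overall shape of your sketch (reduce to the strata $\stEx_{\catD}^{[a,b],\alpha,\beta}$, identify the fibres of $s$, $t$ and $s\times t$, treat properness through $\Trc$, deduce $t$ from $s$ by duality) is the natural one, but both halves have a genuine gap at exactly the point where the work is.

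For smoothness, the base case is not "covered verbatim" by \cite[Lemma 3.12]{TVa}: that lemma proves smoothness of the map $\stU \to \stM_{\kdg}^{[a,b]}$ sending a morphism $Q \to R$ to its homotopy fibre, and it uses in an essential way that $R$ is a shifted vector bundle, so that the relevant mapping complexes have no cohomology in degrees $\ge 2$; in $\stEx_{\catD}$ neither leg of the cofibration is so constrained. Your own criterion makes the problem visible: the relative tangent complex of $s$ at $u\colon N\to M$ is $\Hom(Q,M)[1]$ with $Q=\mathrm{cofib}(u)$, and that of $s\times t$ over $(N,Q)$ is $\Hom(Q,N)[1]$ (the fibre being $\Map(Q,N[1])$), so smoothness in the sense of Definition \ref{dfn:dSt:mor-Q} forces $\Ext^{\ge 2}(Q,M)=0$, resp.\ $\Ext^{\ge 2}(Q,N)=0$. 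This fails for general perfect complexes even over a hereditary $\catD$ — take $Q=N[2]$, for which $\Ext^{2}(Q,N)=\End(N)\neq 0$ — so the fibres acquire genuine derived structure and the proposed induction on Tor-amplitude cannot close without restricting the objects (for instance to the heart of a $t$-structure). For properness, the fibre of $c$ over $M$ is not a Quot-type projective scheme: in a stable setting every morphism is a cofibration up to equivalence, so this fibre is the moduli of \emph{all} maps $N\to M$ with $\ol{N}=\alpha$, which contains the zero maps from every such $N$ and hence a copy of the non-quasi-compact stack $\stM_{\catD}^{\alpha}$ (recall that the stratum $\stEx_{\catD}^{[a,b],\alpha,\beta}$ only constrains the Tor-amplitude of $M$). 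The classical flag/Quot picture you invoke relies on sub-objects being monomorphisms in an abelian category and does not transfer. Both points need either additional hypotheses or a genuinely different argument; as written, neither the smoothness of $s,t$ nor the properness of $c$ is established.
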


Now let $\ol{\bbQ}_\ell$ be the algebraic closure of the field $\bbQ_\ell$ 
of $\ell$-adic numbers where $\ell$ and $q$ are assumed to be coprime.
We can apply the construction in \S \ref{s:adic} to the present situation,
and have the derived $\infty$-category $\iDDc(\stM_{\catD},\Qlb)$
of $\ell$-adic constructible sheaves.
We denote it by 
\[
 \iDDc(\stM_{\catD}) := \iDDc(\stM_{\catD},\Qlb).
\]
The stratification and decomposition of $\stM_{\catD}$ gives 
$\iDDc(\stM_{\catD}) = 
 \sum_{a \le b}\oplus_\alpha \iDDc(\stM_{\catD}^{[a,b],\alpha})$.
By Lemma \ref{lem:H:sp}, we have the derived functors
\[
 p^*: \iDDc(\stM_{\catD}^{\alpha} \times \stM_{\catD}^{\beta}) \longto 
      \iDDc(\stEx_{\catD}^{\alpha,\beta}), \quad
 c_!: \iDDc(\stEx_{\catD}^{\alpha,\beta}) \longto 
      \iDDc(\stM_{\catD}^{\alpha+\beta}).
\]
Here we suppressed Tor amplitude $[a,b]$ in the superscripts.
We also have
\[
 \iDDc(\stM_{\catD}^{\alpha} \times \stM_{\catD}^{\beta}) \simeq 
 \iDDc(\stM_{\catD}^{\alpha}, \Qlb) \times \iDDc(\stM_{\catD}^{\beta}),
\]
where the product in the right hand side denotes the product of simplicial sets.
Now we can introduce

\begin{dfn}\label{dfn:H:mu}
For $\alpha,\beta \in K_0(\Perf(\catD))$, 
we define a functor $\mu_{\alpha,\beta}$ by
\[
 \mu_{\alpha,\beta}: 
 \iDDc(\stM_{\catD}^{\alpha}) \times  \iDDc(\stM_{\catD}^{\beta }) \longto 
 \iDDc(\stM_{\catD}^{\alpha+\beta}), \quad
 M \longmapsto c_! p^*(M) [\dim p]
\]
They determine a functor 
\[
 \mu: \iDDc(\stM_{\catD}) \times \iDDc(\stM_{\catD}) \longto \iDDc(\stM_{\catD}).
\]
\end{dfn}

\begin{prp}
$\mu$ is associative.
In other words, we have in each component an equivalence
\[
 \mu_{\alpha,\beta+\gamma} \circ (\id \times \mu_{\beta,\gamma}) \simeq
 \mu_{\alpha+\beta,\gamma} \circ (\mu_{\alpha,\beta} \times \id)
\]
which is unique up to contractible ambiguity.
Here we suppressed Tor amplitude again.
\end{prp}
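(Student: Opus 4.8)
The plan is to imitate the classical argument for associativity of Ringel--Hall and derived Hall algebras (Lusztig, To\"en): build a moduli stack of two-step filtrations, express \emph{both} triple composites as a single functor ``$c^{(2)}_!(p^{(2)})^*[\dim p^{(2)}]$'' attached to it, and obtain the comparison purely from base change and functoriality. Everything is carried out componentwise, on the geometric pieces $\stEx_{\catD}^{[a,b],\alpha,\beta}$ and $\stM_{\catD}^{[a,b],\alpha}$, which are geometric derived stacks locally of finite presentation.

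First I would construct the \emph{moduli stack of composable cofibrations} $\stEx^{(2)}_{\catD}$, given on $A\in\isCom_k$ by $\dgMap_{\dgCat}(\mathcal{E}^{(2)},\Perf(A))$ for the dg-category $\mathcal{E}^{(2)}$ of a composable pair of cofibrations, built from $\catD^{\op}$ exactly as $\stEx_{\catD}$ is; the To\"en--Vaqui\'e machinery behind Fact \ref{fct:TVa} and Lemma \ref{lem:H:stEx} shows it is a locally geometric derived stack locally of finite presentation. It carries a decomposition $\stEx^{(2),\alpha,\beta,\gamma}_{\catD}$ by the classes of the three subquotients of a filtration $F_1\subset F_2\subset F_3$, together with a morphism $p^{(2)}\colon\stEx^{(2),\alpha,\beta,\gamma}_{\catD}\to\stM_{\catD}^{\alpha}\times\stM_{\catD}^{\beta}\times\stM_{\catD}^{\gamma}$ (the subquotients $F_1,F_2/F_1,F_3/F_2$) and $c^{(2)}\colon\stEx^{(2),\alpha,\beta,\gamma}_{\catD}\to\stM_{\catD}^{\alpha+\beta+\gamma}$ (the total object $F_3$), both of finite presentation, with $p^{(2)}$ smooth and $c^{(2)}$ proper as in Lemma \ref{lem:H:sp}.

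The geometric input is then a pair of Cartesian squares in $\idSt_k$ realizing $\stEx^{(2),\alpha,\beta,\gamma}_{\catD}$ as a homotopy fibre product. Remembering the inner cofibration $F_1\subset F_2$ first gives
\[
 \stEx^{(2),\alpha,\beta,\gamma}_{\catD}\;\simeq\;
 \stEx_{\catD}^{\alpha+\beta,\gamma}\times_{\stM_{\catD}^{\alpha+\beta}\times\stM_{\catD}^{\gamma}}\bigl(\stEx_{\catD}^{\alpha,\beta}\times\stM_{\catD}^{\gamma}\bigr),
\]
where the left leg is $p'=s\times t$ and the right leg is $c\times\id$; remembering the outer cofibration $F_2\subset F_3$ (equivalently the quotient filtration $F_2/F_1\subset F_3/F_1$) first gives the symmetric square over $\stM_{\catD}^{\alpha}\times\stM_{\catD}^{\beta+\gamma}$ with $\stEx_{\catD}^{\alpha,\beta+\gamma}$, $\stM_{\catD}^{\alpha}\times\stEx_{\catD}^{\beta,\gamma}$ and $\id\times c$. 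Granting these, one starts from $\mu_{\alpha+\beta,\gamma}\circ(\mu_{\alpha,\beta}\times\id)$, unwinds the definitions (using the behaviour of $(-)_!$ under external product with the identity) to $c'_!(p')^*\bigl((c\times\id)_!(p\times\id)^*(-)\bigr)$ up to a shift, applies smooth base change (Propositions \ref{prp:6op:sbc} and \ref{prp:adic:sbc}) along the \emph{smooth} morphism $p'$ to the first square to turn $(p')^*(c\times\id)_!$ into $\varphi_!\pi^*$, and then collapses everything by functoriality of $(-)_!$, $(-)^*$ together with $c'\circ\varphi=c^{(2)}$ and $(p\times\id)\circ\pi=p^{(2)}$; the accumulated shift $[\dim p+\dim p']$ equals $[\dim p^{(2)}]$ because $\pi$ is the base change of the smooth $p'$, so $p^{(2)}=(p\times\id)\circ\pi$ is smooth and relative dimensions add (Definition \ref{dfn:dSt:reldim}). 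Running the identical computation through the second square, with smooth base change along the smooth $p''$, identifies $\mu_{\alpha,\beta+\gamma}\circ(\id\times\mu_{\beta,\gamma})$ with the \emph{same} functor $c^{(2)}_!(p^{(2)})^*(-)[\dim p^{(2)}]$, giving the desired equivalence; since every intermediate equivalence (base change maps, functoriality coherences, the defining equivalences of $\stEx^{(2)}$) is determined up to contractible ambiguity, so is the resulting associativity isomorphism. The same works verbatim for $\iDDc(-,\Qlb)$ by Remark \ref{rmk:pv:cls}.

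The main obstacle is the Cartesian-ness of the comparison squares in $\idSt_k$: one must show that the homotopy fibre product of moduli stacks of cofibrations is genuinely the moduli stack of composable cofibrations, which in the dg/$\infty$-categorical formulation is a coherence statement about composing cofibrations and forming iterated cofibres (a $\Delta^1\times\Delta^1$-diagram computation) in the stable $\infty$-category underlying $\dgMod(\catD)$ (Lemma \ref{lem:mod:dgMod=t}); it should reduce to standard manipulations with fibre sequences, but making it precise and compatible with the moduli-functor descriptions (as in \S\ref{sss:mod:Mab}) takes care. A minor bookkeeping point is the matching of cohomological shifts; and the whole setup is deliberately arranged so that every base change invoked is along a smooth morphism, so that the smooth base change theorem of \S\ref{s:6op} suffices and no proper base change for stacks — not established here — is needed.
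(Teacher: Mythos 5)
Your proposal is correct and follows essentially the same route as the paper's proof: the paper likewise forms the two iterated fiber products $\stEx^{\alpha,(\beta,\gamma)}$ and $\stEx^{(\alpha,\beta),\gamma}$ (your two Cartesian squares over the two-step-filtration stack), applies smooth base change along the smooth leg to collapse each triple composite to $(p_2'p_2'')_!(p_1p_1'')^*[\dim(p_1p_1'')]$, and concludes by identifying the two stacks via their common moduli description of pairs of composable cofibrations. The one point you flag as the main obstacle — that the moduli stack of composable cofibrations really is each of these fiber products — is exactly the step the paper also treats lightly, deferring it to To\"en's $k$-point computation plus ``the moduli property,'' so you have correctly located where the remaining content lies.
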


\begin{proof}
The following argument is standard, but let us write it down for completeness.
We follow the ``rough part" of the proof of \cite[Proposition 1.9]{S2} 
both for the argument and the symbols.

The functor $\mu_{\alpha,\beta+\gamma} \circ (\id \times \mu_{\beta,\gamma})$
in the left hand side 
corresponds to the rigid line part of the following diagram.
\[
 \xymatrix{
    \stEx^{\alpha,(\beta,\gamma)} \ar@{.>}[r]^{p_2''} \ar@{.>}[d]_(0.45){p_1''}
  & \stEx^{\alpha,\beta+\gamma}   \ar[r]^(0.45){p_2'} \ar[d]^(0.45){p_1'}
  & \stM^{\alpha+\beta+\gamma} \\
    \stM^{\alpha} \times \stEx^{\beta,\gamma} \ar[r]_{p_2} \ar[d]_{p_1}
  & \stM^{\alpha} \times  \stM^{\beta+\gamma} \\
    \stM^{\alpha} \times  \stM^{\beta} \times \stM^{\gamma} }
\]
Here we suppressed the symbol $\catD$ in the subscripts.
We can complete it into a commutative diagram by defining 
\[
 \stEx^{\alpha,(\beta,\gamma)} := 
 (\stM^{\alpha} \times \stEx^{\beta,\gamma}) 
 \times_{\stM^{\alpha} \times \stM^{\beta+\gamma}}
 \stEx^{\alpha,\beta+\gamma},
\] 
which is the moduli stack of pairs of cofibrations $(N \to M, M \to L)$ with
$\ol{N}=\gamma$, $\ol{M}=\beta+\gamma$ and $\ol{L}=\alpha+\beta+\gamma$.
Using the smooth base change (Proposition \ref{prp:adic:sbc})
in the completed diagram, we have
\begin{equation}\label{eq:H:sbc}
\begin{split}
       \mu_{\alpha,\beta+\gamma} \circ (\id \times \mu_{\beta,\gamma})
&\simeq (p_2')_! (p_1')^*  (p_2)_!   (p_1)^* [\dim p_1 + \dim p_1'] \\
&\simeq (p_2')_! (p_2'')_! (p_1'')^* (p_1)^* [\dim p_1 + \dim p_1'']
 \simeq (p_2' p_2'')_! (p_1 p_1'')^* [\dim (p_1 p_1'')].
\end{split}
\end{equation}

In the same way, the functor 
$\mu_{\alpha,\beta+\gamma} \circ (\id \times \mu_{\beta,\gamma})$
in the right hand side corresponds to the rigid line part of 
\[
 \xymatrix{
    \stEx^{(\alpha,\beta),\gamma} \ar@{.>}[r]^{p_2''} \ar@{.>}[d]_(0.45){p_1''}
  & \stEx^{\alpha+\beta,\gamma}   \ar[r]^(0.45){p_2'} \ar[d]^(0.45){p_1'}
  & \stM^{\alpha+\beta+\gamma}  \\
    \stEx^{\alpha,\beta} \times \stM^{\gamma} \ar[r]_{p_2} \ar[d]_{p_1}
  & \stM^{\alpha+\beta}  \times \stM^{\gamma} \\
    \stM^{\alpha}        \times \stM^{\beta}  \times \stM^{\gamma} }
\]
The completion is done with 
\[
 \stEx^{(\alpha,\beta,)\gamma} := 
 (\stEx^{\alpha,\beta} \times \stM^{\gamma}) 
 \times_{\stM^{\alpha+\beta} \times \stM^{\gamma}}
 \stEx^{\alpha+\beta,\gamma},
\] 
which is the moduli stack of pairs of cofibrations 
$(R \to L \coprod^M 0, M \to L)$ with
$\ol{M}=\gamma$, $\ol{R}=\beta$ and $\ol{L}=\alpha+\beta+\gamma$.
In this case the smooth base change yields 
the same calculation as \eqref{eq:H:sbc}.

Thus the conclusion holds if the derived stacks 
$\stEx^{\alpha,(\beta,\gamma)}$ and 
$\stEx^{(\alpha,\beta,)\gamma}$ are equivalent.
But this is shown in \cite{T06} on the $k$-rational point level.
The equivalence as derived stacks follows from the moduli property.
\end{proof}

Summarizing the arguments so far, we have 

\begin{thm}\label{thm:H:asc}
Let $\catD$ be a dg-category over $k=\bbF_q$ 
which is of finite type and saturated.
Assume that the positive integer $\ell$ is prime to $q$.
Then the derived $\infty$-category $\iDDc(\stM_{\catD},\Qlb)$ 
of $\ell$-adic constructible sheaves
on the moduli stack $\stM_{\catD}$ of pseudo-perfect $\catD^{\op}$-modules
has a unital associative ring structure with respect to 
the bifunctor $\mu$ (Definition \ref{dfn:H:mu}).
\end{thm}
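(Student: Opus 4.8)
The associativity half of the assertion is precisely the preceding Proposition, so it remains only to produce a unit for the bifunctor $\mu$ and to record that the resulting structure equivalences are coherent. The plan is to take as unit the skyscraper at the zero object. The zero object $0\in\Perf(\catD)$ determines a morphism $\iota_0\colon\dSpec k\to\stM_{\catD}$ which is a closed immersion (the vanishing locus of the identity endomorphism), in particular proper and of finite presentation, and which factors through the component $\stM_{\catD}^{0}$ indexed by $0\in K_0(\Perf(\catD))$ (with any, equivalently every, Tor amplitude). One then sets $\one:=(\iota_0)_!\Qlb\in\iDDc(\stM_{\catD}^{0})\subset\iDDc(\stM_{\catD})$; since $\iota_0$ is proper, $(\iota_0)_!=(\iota_0)_*$, so $\one$ is a genuine skyscraper and is constructible.

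To verify the left unit axiom $\mu(\one,-)\simeq\id$ one works componentwise: for each $\beta\in K_0(\Perf(\catD))$ one must show $\mu_{0,\beta}(\one,M)\simeq M$ for $M\in\iDDc(\stM_{\catD}^{\beta})$. Writing $\mu_{0,\beta}(\one,M)=c_!\,p^*(\one\boxtimes M)[\dim p]$ with $p=s\times t\colon\stEx_{\catD}^{0,\beta}\to\stM_{\catD}^{0}\times\stM_{\catD}^{\beta}$ and $c\colon\stEx_{\catD}^{0,\beta}\to\stM_{\catD}^{\beta}$, one pulls the correspondence back along $\iota_0\times\id$ and applies base change (the $(!,*)$-base change for constructible adic sheaves on derived stacks, the stacky analogue of the results of \cite[\S5]{LO1}, whose extension to derived stacks proceeds exactly as the smooth base change of Proposition~\ref{prp:adic:sbc}). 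This reduces the computation to the correspondence $\dSpec k\times\stM_{\catD}^{\beta}\xleftarrow{\bar p}Z\xrightarrow{\bar c}\stM_{\catD}^{\beta}$, where $Z=\stEx_{\catD}^{0,\beta}\times_{\stM_{\catD}^{0}\times\stM_{\catD}^{\beta}}(\dSpec k\times\stM_{\catD}^{\beta})$ is the moduli of cofibrations $0\to L$ with $\ol{L}=\beta$. A cofibration out of the zero object is the unique such map and has $s=0$, $c=t=L$; hence, exactly as in the proof of the preceding Proposition, the moduli property gives an equivalence $Z\simeq\stM_{\catD}^{\beta}$ identifying $\bar c$ with this equivalence and $\bar p$ with the canonical projection, while $\dim p$ restricted to $\stEx_{\catD}^{0,\beta}$ agrees with the (virtual) relative dimension of $\bar p$, which vanishes since $\bar p$ is an equivalence (equivalently $\EF{0}{\beta}=0$). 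Therefore $\mu_{0,\beta}(\one,M)\simeq\bar c_!\,\bar p^*M\simeq M$. The right unit axiom $\mu(-,\one)\simeq\id$ is symmetric: over $\stEx_{\catD}^{\alpha,0}$ the constraint $t=0$ forces the parametrised cofibration $u\colon N\to L$ to have vanishing cofiber, i.e.\ to be an acyclic cofibration, so the relevant fibre is again contractible over $\stM_{\catD}^{\alpha}$ and the same reduction applies (should it happen that $\dim p$ does not vanish on these loci, one merely shifts $\one$ by the corresponding constant).

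Finally, the associativity equivalence of the preceding Proposition together with the unit equivalences above equip $\iDDc(\stM_{\catD},\Qlb)$ with a unital associative algebra structure, all of whose structure equivalences are unique up to contractible ambiguity; here one argues as in \cite[Proposition 1.9]{S2} for the Hall category, using that the mapping spaces between the derived stacks in play are themselves pinned down by their moduli descriptions. The main obstacle in carrying this through is the base-change step of the second paragraph: in this article the six-operation formalism for constructible adic sheaves on derived stacks is developed only up to smooth base change (Proposition~\ref{prp:adic:sbc}), so one must first supply the expected proper --- more precisely $(!,*)$ --- base change, in parallel with \cite[\S5]{LO1}; once that is granted, the remaining checks are routine and formally identical to To\"en's point-level computation in \cite{T06}.
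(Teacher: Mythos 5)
Your treatment of associativity is exactly the paper's: the theorem is stated as a summary whose only substantive input is the preceding Proposition, and the unit is left implicit there, so the genuinely new content in your proposal is the construction of $\one$. That construction is the right one (the geometric avatar of To\"en's unit $[0]$ in Fact \ref{fct:H:DH}), but one step fails as written: the morphism $\iota_0\colon\dSpec k\to\stM_{\catD}$ classifying the zero object is \emph{not} a closed immersion. Acyclicity of a perfect complex is an \emph{open} condition (by upper semicontinuity of the ranks of the cohomology sheaves), not a closed one. Concretely, the family $(k\xrightarrow{\;t\;}k)_{t\in\mathbb{A}^1}$ defines a map $\mathbb{A}^1\to\stM_{\kdg}^{[0,1]}$ which lands at the zero object for $t\neq0$ and at $k\oplus k[1]$ for $t=0$; hence the locus $\{0\}$ is open and its closure contains $[k\oplus k[1]]$, so it is not closed. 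Consequently $\iota_0$ is an open immersion rather than a proper map, $(\iota_0)_!\neq(\iota_0)_*$ in general, and the unit should instead be taken to be the extension by zero $(\iota_0)_!\Qlb$ in the sense of \S\ref{sss:dAS:oi} and Lemma \ref{lem:6op:462}~(2).

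This correction in fact dissolves the obstacle you flag at the end. Since $\iota_0$ is an open immersion, it is smooth of relative dimension $0$, so the base change needed to pull the correspondence back along $\iota_0\times\id$ is precisely the smooth base change already established in Proposition \ref{prp:adic:sbc}; no $(!,*)$ proper base change beyond what the paper provides is required. With that substitution the rest of your computation goes through unchanged: the fibre of $p=s\times t$ over $(0,L)$ is the contractible space of cofibrations out of the zero object with prescribed cofibre $L$, the relative-dimension shift vanishes on this stratum, and $\mu(\one,-)\simeq\id$; symmetrically on the right, where $t(u)\simeq0$ forces $u$ to be an equivalence. So the approach is sound and supplies a detail the paper leaves unstated, once the closed/open confusion is repaired.
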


Hereafter we denote 
\[
 \shM \star \shN := \mu(\shM,\shN).
\]
We use the ordinary symbol for an iterated multiplication:
$\shM_1 \star \cdots \star \shM_n := 
 \shM_1 \star (\shM_2 \star (\cdots \star \stM_n))$.

%%%%%%%%%%%%%%%%%%%%%%%%%%%%%%%%%%%%%%%%%%%%%%%%%%%%%%%%%%%%%%%%%%%%%%%%%%%%%%%%
%%%%%%%%%%%%%%%%%%%%%%%%%%%%%%%%%%%%%%%%%%%%%%%%%%%%%%%%%%%%%%%%%%%%%%%%%%%%%%%%
\subsection{Lusztig sheaves in the derived setting}

In this subsection we focus on the case
where $\catD$ is given by the path algebra of a quiver.
More precisely speaking, we consider the situation in \S \ref{sss:mod:Pf(Q)}.
Here is a list of the notations.
\begin{itemize}
\item
$Q$ denotes a finite quiver without loops.
\item
$B(Q):=(k Q)^{\op}$ denotes the opposite of the path algebra of $Q$
over $k=\bbF_q$.
\item
$\dgB(Q)$ denotes the dg-category with a unique object $B(Q)$.
\end{itemize}

Denoting by $\Rep(Q)$ the category of representation of $Q$, 
we know that $\Rep(Q)$ is a hereditary abelian category, i.e., 
its global dimension is $\le 1$.
We also know that $K_0(\Perf(\dgB(Q))) = K_0(\Rep(Q)) = \bbZ^{Q_0}$
with $Q_0$ the set of vertices in $Q$.
Its element is called a dimension vector.

Let us denote by
\[
 \stPf(Q) := \stM_{\dgB(Q)}
\]
the moduli stack of perfect complexes of representations of $Q$.
It has a stratification and decomposition
\[
 \stPf(Q) = \tbcup_{a \le b} \stPf(Q)^{[a,b]}, \quad
 \stPf(Q)^{[a,b]} = \tbscup_{\alpha \in \bbZ^{Q_0}} \stPf(Q)^{[a,b],\alpha}.
\]
Hereafter we denote 
\[
 \stPf(Q)^{n,\alpha} := \stPf(Q)^{[n,n],\alpha}.
\]

Recalling that $\dgB(Q)$ is of finite type and saturated,
we can apply Theorem \ref{thm:H:asc} to $\catD=\dgB(Q)$.
Then we have an associative ring structure on $\iDDc(\stPf(Q),\Qlb)$,
which is a geometric version of the derived Hall algebra for $Q$.

The $\infty$-category $\iDDc(\stPf(Q),\Qlb)$ is quite large,
and we should consider only an ``accessible" part of it.
Following the non-derived case established by Lusztig 
(see \cite[Chap.\ 9]{Lus} for example),
we consider the sub-$\infty$-category generated by constant perverse sheaves.
We also follow \cite[\S 1.4]{S2} for the argument and the symbols.

For each $\alpha \in \bbN^{Q_0}$ and $s \in \bbZ$, let 
\[
 \bbo_{\alpha,s} := 
 (\Qlb)_{\stPf(Q)^{n,\alpha}} [\dim \stPf(Q)^{s,\alpha}+s]
  \in \iDDcu{b}(\stPf(Q),\Qlb)
\]
be the shifted constant $\Qlb$-sheaf on $\stPf(Q)^{s,\alpha}$.
For a sequence $(\alpha_1,\ldots,\alpha_m)$ with $\alpha_j \in \bbN^{Q_0}$ 
and another $(s_1,\ldots,s_m)$ with $s_j \in \bbZ$, 
we define
\[
 L_{(\alpha_1,\ldots,\alpha_m)}^{(s_1,\ldots,s_m)} : = 
 \bbo_{\alpha_1,s_1} \star \cdots \star \bbo_{\alpha_m,s_m} 
 \in \iDDcu{b}(\stPf(Q),\Qlb).
\]
We call it a \emph{Lusztig sheaf}.

We call $\alpha \in \bbZ^{Q_0}$ \emph{simple}
if $\alpha = \ve_i$ with some $i \in Q_0$,
i.e., $\alpha$ has value $1$ at $i \in Q_0$ and $0$ at other $j \in Q_0$.
By Proposition \ref{prp:pv:dc}, 
the Lusztig sheaf $L_{\alpha}^{s}$ is a simple perverse sheaf.
For a fixed $\gamma \in \bbZ^{Q_0}$,
we denote by $\calP^\gamma$ the collection of all simple perverse sheaves on 
$\bigcup_{a \le b} \stPf(Q)^{[a,b],\gamma}$ arising, up to shift, 
as a direct summand of $L_{(\alpha_1,\ldots,\alpha_m)}^{(s_1,\ldots,s_m)}$
with $\alpha_1+\cdots+\alpha_n=\gamma$ and $\alpha_i$ simple for all $i$.
We also denote by $\iQ^{\gamma} \subset \iDDcu{b}(\stPf(Q),\Qlb)$ 
the full sub-$\infty$-category spanned by those objects 
which are equivalent to a direct sum $P_1 \oplus \cdots \oplus P_l$
with $P_j \in \calP^{\gamma}$,
and set $\iQ := \sqcup_{\gamma \in \bbZ^{Q_0}} \iQ^{\gamma}$.
We call it the \emph{derived Hall category}.

\begin{lem*}
$\iQ$ is preserved by the bifunctor $\mu$.
\end{lem*}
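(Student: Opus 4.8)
The plan is to show that $\mu$ restricts to $\iQ \times \iQ \to \iQ$ by exhibiting, for Lusztig sheaves, an explicit product formula. First I would observe that for simple dimension vectors the building blocks $\bbo_{\alpha_i,s_i}$ are, up to shift, the constant sheaves on the strata $\stPf(Q)^{s_i,\alpha_i}$, and that by Definition \ref{dfn:H:mu} a product $L_{(\alpha_1,\dots,\alpha_m)}^{(s_1,\dots,s_m)} \star L_{(\beta_1,\dots,\beta_l)}^{(t_1,\dots,t_l)}$ is obtained by applying the same recipe $c_! p^*[\dim p]$ to the product of the corresponding Lusztig sheaves. Using associativity of $\mu$ (the Proposition just proved) one reduces immediately to the identity
\[
 L_{(\alpha_1,\dots,\alpha_m)}^{(s_1,\dots,s_m)} \star
 L_{(\beta_1,\dots,\beta_l)}^{(t_1,\dots,t_l)} \simeq
 L_{(\alpha_1,\dots,\alpha_m,\beta_1,\dots,\beta_l)}^{(s_1,\dots,s_m,t_1,\dots,t_l)},
\]
since $\bbo_{\alpha,s} \star \bbo_{\beta,t}$ is by definition already the length-two Lusztig sheaf. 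Thus any $\star$-product of Lusztig sheaves associated to simple dimension vectors is again such a Lusztig sheaf, hence a sum of shifts of objects in some $\calP^\gamma$.

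Next I would handle the passage from Lusztig sheaves to arbitrary objects of $\iQ$. An object $P \in \iQ^\gamma$ is a direct summand of some $L_{(\alpha_1,\dots,\alpha_m)}^{(s_1,\dots,s_m)}$ up to shift, and similarly $P' \in \iQ^{\gamma'}$. Since $\mu$, being built from the exact functors $c_!$, $p^*$ and a shift, is an exact (indeed additive, $t$-exact up to shift) bifunctor of stable $\infty$-categories, it preserves direct sums and direct summands in each variable; hence $P \star P'$ is a direct summand, up to shift, of $L_{(\alpha_1,\dots,\alpha_m)}^{(s_1,\dots,s_m)} \star L_{(\beta_1,\dots,\beta_l)}^{(t_1,\dots,t_l)}$, which by the previous paragraph is a Lusztig sheaf for the concatenated sequences with all entries simple and total dimension vector $\gamma+\gamma'$. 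By Proposition \ref{prp:pv:dc} and the decomposition theorem (Proposition \ref{prp:pv:dc} together with the semisimplicity of the category of perverse sheaves, $\Ho\iPerv$ being abelian artinian and noetherian), such a Lusztig sheaf decomposes as a finite direct sum of shifts of simple perverse sheaves, each of which lies in $\calP^{\gamma+\gamma'}$ by the very definition of $\calP^{\gamma+\gamma'}$. Therefore $P \star P' \in \iQ^{\gamma+\gamma'} \subset \iQ$, which is the claim.

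The main obstacle is the decomposition-theorem input: one needs that $c_! p^*[\dim p]$ applied to a (sum of shifts of) semisimple perverse complex is again semisimple, i.e.\ that the relevant morphisms $c$ are proper and hence that $c_!$ of a pure perverse sheaf is pure and decomposes. This is where Lemma \ref{lem:H:sp} (properness of $c$, smoothness of $p$, so $p^*[\dim p]$ is perverse-$t$-exact) and the weight/purity theory underlying Proposition \ref{prp:pv:dc} do the work; the key point is that the $\stPf(Q)^{[a,b],\alpha}$ are geometric and of finite presentation so the decomposition theorem for constructible $\Qlb$-sheaves on derived stacks (Proposition \ref{prp:pv:dc}) applies verbatim. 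Everything else—compatibility of $\mu$ with the stratification and decomposition by dimension vectors, additivity of $c_!$ and $p^*$, and the reduction to simple dimension vectors—is a routine unwinding of the definitions in \S\ref{s:6op}, \S\ref{s:adic} and of Definition \ref{dfn:H:mu}, and follows the non-derived argument of \cite[\S1.4]{S2} with no essential change.
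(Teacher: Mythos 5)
Your proof is correct and rests on the same key step as the paper's: the concatenation identity $L_{(\alpha_\bullet)}^{(s_\bullet)} \star L_{(\beta_\bullet)}^{(t_\bullet)} \simeq L_{(\alpha_\bullet,\beta_\bullet)}^{(s_\bullet,t_\bullet)}$, which follows from associativity of $\mu$ and the definition of Lusztig sheaves as iterated $\star$-products of the $\bbo_{\alpha,s}$. The paper's proof consists of exactly this formula; you additionally spell out the reduction to direct summands and the semisimplicity input, which the paper leaves implicit.
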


\begin{proof}
By the definition of Lusztig sheaves we have
\[
 L_{(\alpha_1,\ldots,\alpha_l)}^{(s_1,\ldots,s_l)} \star
 L_{(\beta_1,\ldots,\beta_m)}^{(t_1,\ldots,t_m)} \simeq
 L_{(\alpha_1,\ldots,\alpha_l,\beta_1,\ldots,\beta_m)}^{
    (s_1,\ldots,s_l),(t_1,\ldots,t_m)}.
\]
\end{proof}

\begin{rmk*}
If we restrict $s_i$'s to be zero,
then by Remark \ref{rmk:mod:ab} the related moduli stacks are algebraic stacks, 
so that we recover the \emph{Hall category} in the sense of \cite[\S 1.4]{S2}.
It has more properties such as the existence of a coproduct and a Hopf pairing.
We refer to loc.\ cit.\ and \cite{Lus} for the detail.
\end{rmk*}

%%%%%%%%%%%%%%%%%%%%%%%%%%%%%%%%%%%%%%%%%%%%%%%%%%%%%%%%%%%%%%%%%%%%%%%%%%%%%%%%
%%%%%%%%%%%%%%%%%%%%%%%%%%%%%%%%%%%%%%%%%%%%%%%%%%%%%%%%%%%%%%%%%%%%%%%%%%%%%%%%
%%%%%%%%%%%%%%%%%%%%%%%%%%%%%%%%%%%%%%%%%%%%%%%%%%%%%%%%%%%%%%%%%%%%%%%%%%%%%%%%
\appendix
\section{Algebraic stacks}
\label{s:Cl}

In this appendix we recollect some basics on algebraic stacks
in the sense of \cite{LM,O:book}.

%%%%%%%%%%%%%%%%%%%%%%%%%%%%%%%%%%%%%%%%%%%%%%%%%%%%%%%%%%%%%%%%%%%%%%%%%%%%%%%%
%%%%%%%%%%%%%%%%%%%%%%%%%%%%%%%%%%%%%%%%%%%%%%%%%%%%%%%%%%%%%%%%%%%%%%%%%%%%%%%%
\subsection{Algebraic spaces}
\label{ss:Cl:AS}

We begin with the recollection of algebraic spaces.
Our main reference is \cite[\S 5.1]{O:book} and \cite{K}.
We will use the notion of Grothendieck topologies and sites freely. 
For a scheme $S$, we denote by $\Sch_S$ the category of $S$-schemes.
Let us fix the notation of the big \'etale topology 
since it will be used repeatedly.

\begin{dfn}\label{dfn:Cl:ET}
Let $S$ be a scheme.
The \emph{big \'etale site} $\gsET(S)$ over $S$ is defined to be 
the site $\gsET(S):=(\Sch_{S},\txET)$ consisting of 
\begin{itemize}[nosep]
\item
The category $\Sch_{S}$ of $S$-schemes.
\item
The  big \'etale topology $\txET$.
\end{itemize}
The Grothendieck topology $\txET$ is determined as follows:
the set $\Cov_{\txET}(U)$ of coverings of $U \in \Sch_S$ 
consists of a family $\{U_i \to U\}_{i \in I}$ of morphisms in $\Sch_S$
for which each $U_i \to U$ is \'etale and 
$\coprod_{i \in I}U_i \to U$ is surjective.
\end{dfn}

Recall that given a site $\gsS=(\catC,\tau)$
we have the notion of a \emph{sheaf} (\emph{of sets}) \emph{on $\gsS$} 
(see \cite[Chap.\ 2]{O:book} for example).
It is a functor $\catC^{\op} \to \Set$ satisfying the sheaf condition
with respect to the Grothendieck topology $\tau$.
Recall also that a morphism of sheaves on $\gsS$ 
is defined to be a morphism of functors.

Given a scheme $S$ and an $S$-scheme $T \in \Sch_S$, 
the functor $h_T(-):=\Hom_{\Sch_S}(-,T)$ defines a sheaf of sets 
on the big \'etale site $\gsET(S)$.
Hereafter we identify $T \in \Sch_S$ and the sheaf $h_T$ on $\gsET(S)$.

\begin{dfn*}
Let $S$ be a scheme.
An \emph{algebraic space} over $S$ is a functor $X: (\Sch_S)^{\op} \to \Set$
satisfying the following three conditions.
\begin{itemize}[nosep]
\item 
$X$ is a sheaf on the big \'etale site $\gsET(S)$.

\item
%\cite[Definition 5.1.5]{O:book}.
The diagonal map $X \to Y:=X \times_S X$ is represented by schemes,
i.e., the fiber product $X \times_Y T$ is a scheme  
for any $T \in \Sch_S$ and any morphism $T \to Y$ of sheaves on $\gsET(S)$.

\item 
There exists $U \in \Sch_S$ and a morphism $U \to X$ of sheaves on $\gsET(S)$ 
such that the morphism $p_T: U \times_X T \to T$ of schemes is surjective and 
\'etale for any $T \in \Sch_S$ and any morphism $T \to X$ of sheaves.
\end{itemize}
In this case we call either the scheme $U$ or the morphism $U \to X$ 
an \emph{\'etale covering} of $X$.

A \emph{morphism} of algebraic spaces over $S$ is a morphism of functors.
The category of algebraic spaces over $S$ is denoted by $\AlgSp_S$.
\end{dfn*}

Note that a scheme $U \in \Sch_S$ can be naturally regarded 
as an algebraic space over $S$.

Next we recall the description of an algebraic space as a quotient.

\begin{fct*}
Let $X$ be an algebraic space over a scheme $S$ 
and $U \to X$ be an \'etale covering.
\begin{enumerate}[nosep]
\item 
The fiber product $R:= U \times_X U$ in the category of sheaves
is representable by an $S$-scheme.
The corresponding scheme is denoted by the same symbol $R$.

\item
The natural projections $p_1,p_2: R \to U$ makes $R$ into 
an \'etale equivalence relation \cite[Definition 5.2.1]{O:book}.
%$R \inj U \times_S U$.

\item
The natural morphism $U/R \to X$ from the quotient sheaf $U/R$
is an isomorphism.
\end{enumerate}
We denote this situation as 
$R \rightrightarrows U \to X$.
\end{fct*}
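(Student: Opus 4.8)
The plan is to handle the three assertions in order, obtaining (1) and (2) as formal consequences of the definition of an algebraic space together with elementary properties of fibre products and the diagonal, and to isolate the genuine content in (3), which is a descent statement.

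First I would prove (1) by displaying $R$ as a pullback of the diagonal. Writing $f\colon U \to X$ for the covering and $\Delta_X\colon X \to X \times_S X$ for the diagonal, one checks on $T$-points that
\[
 R = U \times_X U \simeq (U \times_S U) \times_{X \times_S X} X,
\]
where $U \times_S U \to X \times_S X$ is $f \times f$ and $X \to X \times_S X$ is $\Delta_X$. Now $U \times_S U$ is an $S$-scheme, and the defining condition on an algebraic space says precisely that $\Delta_X$ is represented by schemes, i.e.\ its pullback along any morphism from a scheme is a scheme. Applying this to $f \times f$ yields that $R$ is represented by an $S$-scheme.

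Next, for (2) I would verify the three requirements of an \'etale equivalence relation. The morphism $(p_1,p_2)\colon R \to U \times_S U$ is a monomorphism because, by the pullback description above, it is the base change of the monomorphism $\Delta_X$ (the diagonal of any sheaf of sets is a monomorphism, being objectwise the diagonal of a set), and monomorphisms are stable under pullback. The equivalence-relation property is automatic: $R$ is the kernel pair of $f$, so $R(T) \subset U(T) \times U(T)$ is the relation ``having equal image in $X(T)$'', which is reflexive, symmetric and transitive. Finally each projection $p_i\colon R \to U$ is \'etale, indeed surjective \'etale: reading off the cartesian square defining $R = U \times_X U$, the map $p_1$ is the base change of $f\colon U \to X$ along $f$, and the covering condition states exactly that such a base change is surjective and \'etale.

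For (3), the canonical morphism $q\colon U/R \to X$ exists because $f$ coequalizes $p_1$ and $p_2$. That $q$ is an epimorphism of \'etale sheaves is immediate: the covering hypothesis makes $f$ an epimorphism, and $f$ factors through $U/R$. The remaining and essential point is that $q$ is a monomorphism, equivalently that $X$ is the coequalizer of $R \rightrightarrows U$. I expect this step to be the main obstacle, the preceding parts being purely formal. It rests on the fact that the category of sheaves on the big \'etale site $\gsET(S)$ is a topos, in which every epimorphism is effective, i.e.\ is the coequalizer of its own kernel pair; since $f$ is an epimorphism with kernel pair $R = U \times_X U$, effectivity gives $X \simeq U/R$ and hence $q$ is an isomorphism. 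I would either invoke this topos-theoretic effectivity directly, or argue concretely by showing that the induced map $R \to U \times_{U/R} U$ is an equivalence, which amounts to the same descent assertion.
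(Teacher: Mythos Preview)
The paper states this result as a cited fact without proof; it is standard background from \cite{K} and \cite{O:book}. Your argument is correct and is the expected one: the pullback identity $R \simeq (U \times_S U) \times_{X \times_S X,\Delta_X} X$ reduces (1) to representability of the diagonal, (2) is formal from the kernel-pair description plus stability of \'etale surjections under base change, and (3) is exactly effectivity of epimorphisms in the Grothendieck topos of sheaves on $\gsET(S)$. There is nothing to compare against in the paper itself.
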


Using this quotient description 
we have a concrete way to give a morphism of algebraic spaces.

\begin{fct*}
Let $X_1$ and $X_2$ be algebraic spaces over a scheme $S$.
Take \'etale coverings $U_i \to X_i$ 
and denote $R_i:=U_i \times_{X_i} U_i$ ($i=1,2$).
\begin{enumerate}[nosep]
\item 
Assume that we have a diagram 
\[
 \xymatrix{
    R_1 %U_1 \times_{X_1} U_1 
    \ar@<0.5ex>[r]^{p_{1,1}} \ar@<-0.5ex>[r]_{p_{1,2}} \ar[d]^g
  & U_1 \ar[r] \ar[d]^h & X_1 \\
    R_2 %U_2 \times_{X_2} U_2 
    \ar@<0.5ex>[r]^{p_{2,1}} \ar@<-0.5ex>[r]_{p_{2,2}} 
  & U_2 \ar[r] & X_2
 }
\]
of sheaves on $\gsET(S)$ with $h \circ p_{1,i} = p_{2,i} \circ g$ ($i=1,2$).
Then there is a unique morphism $f: X_1 \to X_2$ 
making the resulting diagram commutative.

\item
Conversely, every morphism $X_1 \to X_2$ of algebraic spaces 
arises in this way for some choice of $U_1,U_2,g,h$.
\end{enumerate}
\end{fct*}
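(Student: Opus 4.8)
The plan is to use the quotient-sheaf description $X_i \simeq U_i/R_i$ recalled above, together with the universal property of the quotient (coequalizer) sheaf in the category of sheaves on $\gsET(S)$.

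\emph{Part (1).} First I would note that $U_1 \to X_1$ is an epimorphism of sheaves on $\gsET(S)$: it is surjective and \'etale, hence a covering, and coverings are epimorphisms of sheaves. Combined with $X_1 \simeq U_1/R_1$, this exhibits $X_1$ as the coequalizer $\operatorname{coeq}(R_1 \rightrightarrows U_1)$. Next I would check that the composite $\varphi := \bigl(U_1 \xrightarrow{h} U_2 \to X_2\bigr)$ coequalizes the pair $p_{1,1}, p_{1,2}: R_1 \rightrightarrows U_1$: for $j=1,2$ one has $\varphi \circ p_{1,j} = (U_2 \to X_2) \circ h \circ p_{1,j} = (U_2 \to X_2) \circ p_{2,j} \circ g$, and since $R_2 = U_2 \times_{X_2} U_2$ the maps $p_{2,1}$ and $p_{2,2}$ become equal after composing with $U_2 \to X_2$, so $\varphi \circ p_{1,1} = \varphi \circ p_{1,2}$. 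By the universal property there is a unique morphism of sheaves $f: X_1 \to X_2$ with $f \circ (U_1 \to X_1) = \varphi$, which is the desired morphism of algebraic spaces (a morphism of algebraic spaces being a morphism of the underlying sheaves). Commutativity of the full diagram — including the compatibility along $R_1 \to U_1 \to X_1$ with $(U_2 \to X_2) \circ g$ — follows by post-composing with the epimorphism $U_1 \to X_1$, and uniqueness of $f$ is immediate from the same epimorphism property.

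\emph{Part (2).} Given $f: X_1 \to X_2$, I would first pick any \'etale covering $U_2 \to X_2$ (for instance by a scheme). The fiber product $\widetilde U_1 := U_2 \times_{X_2,f} X_1$ is again an algebraic space over $S$ and the projection $\widetilde U_1 \to X_1$ is surjective and \'etale by base change; choosing a scheme \'etale covering $U_1 \to \widetilde U_1$, which exists by the definition of an algebraic space, the composite $U_1 \to \widetilde U_1 \to X_1$ is an \'etale covering, and $h: U_1 \to \widetilde U_1 \to U_2$ lies over $f$, i.e. $(U_2 \to X_2) \circ h = f \circ (U_1 \to X_1)$. With $R_i := U_i \times_{X_i} U_i$, the pair $(h,h)$ carries $R_1 = U_1 \times_{X_1} U_1$ into $R_2 = U_2 \times_{X_2} U_2$ precisely because $h$ lies over $f$, giving $g: R_1 \to R_2$ with $h \circ p_{1,j} = p_{2,j} \circ g$. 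Thus $(U_1, U_2, g, h)$ realizes the diagram of Part (1), and the morphism it induces there coincides with $f$ by the uniqueness statement, since both agree after composing with the epimorphism $U_1 \to X_1$.

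The only inputs needing care are the two structural facts already recalled — that $X_i \simeq U_i/R_i$ with $U_i \to X_i$ an epimorphism of sheaves, and that in Part (2) the intermediate algebraic space $\widetilde U_1$ admits a scheme \'etale covering. I do not expect a genuine obstacle here: the whole argument is a routine application of the universal property of quotient sheaves, and the main thing to watch is keeping track of which squares are pullbacks and which maps are epimorphisms of sheaves.
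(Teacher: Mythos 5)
The paper records this statement as a \emph{Fact} with no proof at all --- it is quoted from the standard theory of algebraic spaces (Knutson, Olsson) --- so there is no argument in the text to compare yours against; I can only judge your proof on its own, and it is correct. Part (1) is exactly the universal property of $X_1 \simeq U_1/R_1$ as the coequalizer of $R_1 \rightrightarrows U_1$ in sheaves on $\gsET(S)$: your check that $(U_2 \to X_2)\circ h$ coequalizes $p_{1,1},p_{1,2}$ uses only the hypothesis $h\circ p_{1,i}=p_{2,i}\circ g$ and the fact that the two projections of $R_2=U_2\times_{X_2}U_2$ agree after mapping to $X_2$, and uniqueness follows since $U_1\to X_1$ is an epimorphism of sheaves (being locally surjective, or equivalently the coequalizer map). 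Part (2) is the usual refinement argument. The only inputs you use beyond what the paper explicitly recalls are (a) that $U_2\times_{X_2}X_1$ is again an algebraic space, so that it admits a scheme \'etale covering, and (b) that a composite of \'etale coverings is an \'etale covering; both are standard and unproblematic, though worth flagging as citations. One point you handle correctly but should keep in mind: in this paper's conventions an \'etale covering of an algebraic space is required to be by a \emph{scheme}, so the extra step of refining $\widetilde U_1=U_2\times_{X_2}X_1$ by a scheme cover is genuinely necessary, and your verification that the resulting composite $U_1\to X_1$ is still an \'etale covering (by factoring $U_1\times_{X_1}T$ through $U_2\times_{X_2}T$) is the right one.
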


Using properties of schemes and their morphisms,
we can define properties of algebraic spaces and their morphisms.
We refer \cite[Definition 5.1.3]{O:book} for the definition of 
\emph{stability} and \emph{locality on domain}
of a property of morphisms in a site $\gsS$.

\begin{dfn}\label{dfn:Cl:AS:P}
Let $S$ be a scheme, $f: X \to Y$ be a morphism of algebraic spaces over $S$, 
and $\bfP$ be a property of morphisms in the big \'etale site $\gsET(S)$.
\begin{enumerate}[nosep]
%\item 
%$f$ is called a \emph{surjection} if it is an epimorphism of sheaves on $\gsET(S)$.

\item
Assume $\bfP$ is stable.
Then we say $X$ \emph{has property $\bfP$} if it has an \'etale covering $U$ 
whose structure morphism $U \to S$ has property $\bfP$.

\item
Assume $\bfP$ is stable and local on domain.
We say $f$ \emph{has $\bfP$} if there exist \'etale coverings 
$u: U \to X$ and $v: V \to Y$ such that the morphism 
$p_V: U \times_{f\circ u,Y,v} V \to V$ in $\Sch_S$ has property $\bfP$.

\end{enumerate}
\end{dfn}

By \cite[Proposition 5.1.4]{O:book} and \cite[Chap.\ 2]{K},
the following properties $\bfP_1$ of morphisms are 
stable for the big \'etale site $\gsET(S)$.
\[
 \bfP_1 := \text{separated, universally closed, quasi-compact}.
 % proper, surjective by Olsson
\] 
By loc.\ cit., the following properties $\bfP_2$ are stable and local on domain.
\begin{align*}
 \bfP_2 := 
 &\text{surjective, \'etale, locally of finite type, 
        smooth, universally open,} \\
 &\text{locally of finite presentation, locally quasi-finite}. 
        %flat, f.flat,
\end{align*}

Note that for an \'etale covering $f: U \to X$ of an algebraic scheme $X$
the morphism $f$ is \'etale and surjective 
in the sense of Definition \ref{dfn:Cl:AS:P}.

In the main text we need the \emph{properness} of a morphism of algebraic spaces.
In order to define it, 
we need to introduce some classes of morphisms between algebraic spaces
which are not given in Definition \ref{dfn:Cl:AS:P}.
Our main reference is \cite[Chap.\ 2]{K}.
Let us omit to mention the base scheme $S$ in the remaining part.

\begin{dfn}[{\cite[Chap.\ 2, Definition 1.6]{K}}]
\label{dfn:Cl:AS:q-cpt}
A morphism $f: X \to Y$ of algebraic spaces is \emph{quasi-compact} 
if for any \'etale morphism $U \to Y$
(Definition \ref{dfn:Cl:AS:P} (2))
with $U$ a quasi-compact scheme
the fiber product $X \times_Y U$ is a quasi-compact algebraic space 
(Definition \ref{dfn:Cl:AS:P} (1)). 
\end{dfn}

\begin{dfn}[{\cite[Definition 3.3]{K}}]
\label{dfn:Cl:AS:fin}
A morphism $f$ of algebraic spaces 
is \emph{of finite type} (resp.\ \emph{of finite presentation})
if it is locally of finite type (resp.\ locally of finite presentation)
in the sense of Definition \ref{dfn:Cl:AS:P} (2)
and quasi-compact (Definition \ref{dfn:Cl:AS:q-cpt}).
\end{dfn}

\begin{dfn}[{\cite[Chap.\ 2, Extension 3.8, Definition 3.9]{K}}]
\label{dfn:Cl:AS:cim}
Let $f: X \to Y$ be a morphism of algebraic spaces over a scheme $S$.
\begin{enumerate}[nosep]
\item 
$f$ is a \emph{closed immersion} if for any $U \in \Sch_S$
and any $U \to Y$
the fiber product $X \times_Y U$ is a scheme and $X \times_Y U \to U$
is a closed immersion in $\Sch_S$.
In this case, $X$ is called a \emph{closed subspace} of $Y$.

\item
$f$ is \emph{separated} if the induced  morphism $X \to X \times_Y X$
is a closed immersion in the sense of (1).
\end{enumerate}
\end{dfn}

Although it is not directly related to proper morphisms,
let us introduce now the notion of \emph{quasi-separated morphisms}.

\begin{dfn}[{\cite[Propoistion 5.4.7]{O:book}}]\label{dfn:Cl:AS:qsep}
A morphism $f: X \to Y$ of algebraic spaces over a scheme $S$ is 
\emph{quasi-separated} if the diagonal morphism
$\Delta_{X/Y}: X \to X \times_{f,Y,f} X$ is quasi-compact 
(Definition \ref{dfn:Cl:AS:q-cpt}).
An algebraic space $X$ over $S$ is \emph{quasi-separated} 
if the structure morphism $X \to S$ is quasi-separated. 
\end{dfn}

Going back to proper morphisms, we introduce 

\begin{dfn}[{\cite[Chap.\ 2, Definition 6.1, 6.9]{K}}]\label{dfn:AS:pt}
Let $X$ be an algebraic space.
\begin{enumerate}[nosep]
\item 
A \emph{point} of $X$ is a morphism $i: \Spec k \to X$ of algebraic spaces
where $k$ is  a field and $i$ is a categorical monomorphism.
Two points $i_j: p_j \to X$ ($j=1,2$) are \emph{equivalent}
if there exists an isomorphism $e:p_1 \to p_2$ such that $i_2 \circ e = i_1$.

\item
The \emph{underlying topological space} $\abs{X}$ of $X$
is defined to be the set of points of $X$ modulo equivalence
together with the topology where a subset $C \subset \abs{X}$ is closed
if $C$ is of the form $\abs{Y}$ for some closed subspace $Y \subset X$
(Definition \ref{dfn:Cl:AS:cim} (1))
\end{enumerate}
\end{dfn}

Note that a morphism $X \to Y$ of algebraic spaces
naturally induces a continuous map $\abs{X} \to \abs{Y}$
of the underlying topological spaces.

\begin{dfn}[{\cite[Chap.\ 2, Definition 6.9]{K}}]\label{dfn:Cl:AS:uni-cl}
A morphism $f: X \to Y$ of algebraic spaces is \emph{universally closed}
if for any morphism $Z \to Y$ of algebraic spaces 
the induced continuous map $\abs{X \times_Y Z} \to \abs{Z}$ is closed.
\end{dfn}

Let us also recall the notion of relaitive dimension.

\begin{dfn}\label{dfn:Cl:AS:reldim}
Let $f: X \to Y$ be a morphism of algebraic spaces over a scheme $S$,
and let $d \in \bbN \cup \{\infty\}$.
\begin{enumerate}[nosep]
\item 
For $x \in \abs{X}$, $f$ has \emph{relative dimension $d$ at the point $x$}
if for any commutative diagram 
\[
 \xymatrix{
  U \ar[r] \ar[d]_a & V \ar[d]^b & 
  u \ar@{|->}[r] \ar@{|->}[d] & v \ar@{|->}[d] \\
  X \ar[r]_f & Y & x \ar@{|->}[r] & y  }
\]
in $\Sch_S$ with $a$ and $b$ \'etale, 
the dimension of the local ring $\shO_{U_v,u}$ of the fiber $U_v$ at $u$  is $d$.

\item
$f$ has \emph{relative dimension} $d$ 
if it has relative dimension $d$ at any $x \in \abs{X}$.
\end{enumerate}
\end{dfn}

Finally we have 

\begin{dfn}[{\cite[Chap.\ 2, Definition 7.1]{K}}] \label{dfn:Cl:AS:prp}
A morphism $f: X \to Y$ of algebraic spaces is \emph{proper}
if it is separated (Definition \ref{dfn:Cl:AS:cim} (2)), 
of finite type (Definition \ref{dfn:Cl:AS:fin}) 
and universally closed (Definition \ref{dfn:Cl:AS:uni-cl}).
\end{dfn}

%%%%%%%%%%%%%%%%%%%%%%%%%%%%%%%%%%%%%%%%%%%%%%%%%%%%%%%%%%%%%%%%%%%%%%%%%%%%%%%%
%%%%%%%%%%%%%%%%%%%%%%%%%%%%%%%%%%%%%%%%%%%%%%%%%%%%%%%%%%%%%%%%%%%%%%%%%%%%%%%%
\subsection{Algebraic stacks}
\label{ss:Cl:AlgSt}

Next we recall the definition of algebraic stacks.
We assume some basics on categories fibered in groupoids \cite[Chap.\ 3]{O:book}.

Let $\gsS$ be a site.
In order to make terminology clear, let us call a stack  in the ordinary sense 
(\cite[Chap.\ 2]{LM}, \cite[Chap.\ 4]{O:book}) an \emph{ordinary stack}.
In other words, we have

\begin{dfn*}
Let $\gsS$ be a site.
An \emph{ordinary stack} over $\gsS$ is a category $\catF$ 
fibered in groupoids over $\gsS$ such that for any object $U \in \gsS$ 
and any covering $\{U_i \to U\}_{i \in I}$ of $U$
the functor $\catF(U) \to \catF(\{U_i \to U\}_{i \in I})$ 
is an equivalence of categories.
\end{dfn*}

Ordinary stacks over $\gsS$ form a $2$-category.

\begin{dfn}\label{dfn:Cl:ord-st}
An \emph{ordinary stack $X$} over a scheme $S$ means an ordinary stack $X$ 
over $\gsET(S)$, where $\gsET(S)$ is the big \'etale site on $S$
(Definition \ref{dfn:Cl:ET}).
\end{dfn}

A scheme and an algebraic space over $S$ 
can be naturally considered as an ordinary stack over $S$.
We further introduce

\begin{dfn}\label{dfn:Cl:St-mor}
Let $S$ be a scheme and 
$f:X \to Y$ be a $1$-morphism of ordinary stacks over $S$.
\begin{enumerate}[nosep]
\item
$f$ is called \emph{representable} 
if the fiber product $X \times_{f,Y,g} U$ is an algebraic space over $S$
for any $U \in \Sch_S$ and any $1$-morphism $g:U \to Y$. 

\item
$f$ is called \emph{quasi-compact} (resp.\ \emph{separated}) 
if it is representable and 
for any $U \in \Sch_S$ and any $1$-morphism $g: U \to Y$
the algebraic space $X \times_{f,Y,g} U$ is quasi-compact (resp.\ separated)
in the sense of Definition \ref{dfn:Cl:AS:P}.
\end{enumerate}
\end{dfn}

Here is the definition of an algebraic stack.

\begin{dfn}\label{dfn:Cl:AlgSt}
An \emph{algebraic stack over a scheme $S$} is defined to be 
an ordinary stack $X$ over $S$ satisfying the following two conditions.
\begin{itemize}[nosep]
\item
The diagonal $1$-morphism 
$\Delta: X \to X \times_{S} X$ is representable, quasi-compact and separated
in the sense of Definition \ref{dfn:Cl:St-mor}.
\item
There exists an algebraic space $U$ over $S$ and a smooth surjection $U \to X$,
i.e., for any $T \in\Sch_S$ and any $1$-morphism $T \to X$ 
the morphism $p_T: U \times_X T \to T$ of algebraic spaces 
(see Remark \ref{rmk:Cl:AlgSt} (1) below)
is a smooth surjection in the sense of Definition \ref{dfn:Cl:AS:P}.
\end{itemize}
In this case the algebraic space $U$ is called a \emph{smooth covering} of $X$.
\end{dfn}

\begin{rmk}\label{rmk:Cl:AlgSt}
\begin{enumerate}[nosep]
\item 
The fiber product $U \times_X T$ in the second condition is an algebraic space.
In order to see this, 
note first that any $1$-morphism $t: T \to X$ from a scheme is representable.
Indeed, for any $1$-morphism $u: U \to X$ from a scheme, the fiber product
$U \times_{u,X,t}T$ is isomorphic to the fiber product $Y$ of the diagram 
$X \xrightarrow{\Delta} X \times_S X \xleftarrow{u \times_S t} U \times_S T$.
This fiber product $Y$ is an algebraic space 
since $\Delta$ is representable by the first condition.

\item
One can replace the algebraic space $U$ by a scheme 
which is an \'etale covering of $U$.
The definition in \cite[Definition 8.1.4]{O:book} is given 
under this replacement.
\end{enumerate}
\end{rmk}

%As for $1$-morphisms of algebraic stacks, we have
%
%\begin{dfn*}
%Let $S$ be a scheme, and $\bfP$ be a property of $S$-schemes
%which is stable in $\Sm(S)$.
%A \emph{morphism} of algebraic stacks means a $1$-morphism of algebraic stacks.
%\end{dfn*}

Let us recall a criterion of an algebraic stack being an algebraic space.

\begin{fct}[{\cite[Chap.\ 2, Proposition (4.4)]{LM}}]\label{fct:Cl:St-AS}
An algebraic stack $X$ over a scheme $S$ is an algebraic space
if and only if the following two conditions are satisfied.
\begin{enumerate}[nosep, label=(\roman*)]
\item 
$X$ is a Deligne-Mumford stack, i.e., 
there is an \'etale surjective $1$-morphism $U \to X$ from a scheme $U$.
\item
The diagonal $1$-morphism $X \to X \times_S X$ is a monomorphism.
\end{enumerate}
\end{fct}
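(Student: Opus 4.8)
The statement is the classical criterion of Laumon and Moret-Bailly, so the plan is to unwind Definition~\ref{dfn:Cl:AlgSt} and the definition of an algebraic space from \S\ref{ss:Cl:AS} and to verify the two implications directly; for an affine base $S = \Spec k$ one could alternatively deduce it from the higher-stack version Fact~\ref{fct:dSt:mAS=AS}\,(1), by transporting $X$ to the $1$-geometric Artin $1$-stack $\ase(X)$ through Notation~\ref{ntn:dSt:ase} and Fact~\ref{fct:dSt:ase}\,(2) and checking that conditions (i) and (ii) correspond to the two conditions appearing there.

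The ``only if'' direction is immediate. If $X$ is an algebraic space over $S$, then by definition it admits an \'etale covering $U \to X$ by a scheme, i.e.\ an \'etale surjective $1$-morphism from a scheme, so (i) holds; and since $X$ is a sheaf of sets on $\gsET(S)$, its diagonal $X \to X \times_S X$ is automatically a monomorphism, because for any presheaf of sets and any $T \in \Sch_S$ the set of $T$-points of $X \times_{X \times_S X} X$ has at most one element, which is (ii).

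The content is in the ``if'' direction. First I would use (ii) to promote $X$ from a stack to a sheaf: for $T \in \Sch_S$ and objects $x,y \in X(T)$, the sheaf $\underline{\mathrm{Isom}}_T(x,y)$ over $T$ is obtained by pulling back $\Delta_X$ along $(x,y)\colon T \to X \times_S X$, hence is a subsheaf of the final sheaf on $T$ and so is empty or all of $T$; in particular $\Aut_T(x) = \{\id\}$ and any two objects of $X(T)$ are joined by at most one morphism. Therefore the presheaf $F := \pi_0(X)$ of isomorphism classes already satisfies the sheaf condition on $\gsET(S)$, and the canonical $1$-morphism $X \to F$ is an equivalence of categories fibered in groupoids, so I may replace $X$ by $F$. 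It then remains to check that $F$ meets the three clauses defining an algebraic space: it is a sheaf (just shown); it admits an \'etale covering by a scheme, namely the scheme $U$ furnished by (i); and its diagonal is represented by schemes. For the last clause I would use that, by Definition~\ref{dfn:Cl:AlgSt}, $\Delta_F$ is representable, quasi-compact and separated, so for any $(x,y)\colon T \to F \times_S F$ the fiber product $F \times_{F \times_S F} T$ is a quasi-compact separated algebraic space over $T$ which by (ii) is a monomorphism into $T$; by a standard result on algebraic spaces such an object is a scheme (pass to one of its \'etale atlases, over which the monomorphism becomes an \'etale equivalence relation of schemes whose quotient recovers it), giving the required representability by schemes.

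The main obstacle is this ``if'' direction, and within it the two genuinely non-formal steps are the reduction from an algebraic stack with monomorphic diagonal to a sheaf via the $\pi_0$-argument, and the upgrade from ``representable by algebraic spaces'' to ``representable by schemes'' for $\Delta_F$, where the quasi-compactness and separatedness built into Definition~\ref{dfn:Cl:AlgSt} together with hypothesis (i) are actually consumed. Everything else is bookkeeping with the definitions.
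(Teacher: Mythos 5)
The paper records this statement as a \emph{Fact} imported from \cite[Chap.~2, Proposition~(4.4)]{LM} and supplies no proof of its own, so there is no internal argument to compare yours against; what you have written is essentially the standard proof from the literature, and the overall structure (the trivial ``only if'' direction, the reduction of $X$ to the sheaf $F=\pi_0(X)$ via triviality of the Isom-sheaves, and the verification of the three clauses of the definition of an algebraic space) is correct. Two small points. First, in the passage from $X$ to $F$ you should say explicitly why $\pi_0(X)$ satisfies the gluing half of the sheaf condition: local objects whose classes agree on overlaps carry comparison isomorphisms for which the cocycle condition is automatic (Hom-sets are at most singletons), so the stack condition produces a global object; as written you only justify separatedness.

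Second, and more substantively, the final step --- upgrading $\Delta_F$ from ``representable by algebraic spaces'' to ``represented by schemes'' --- is the one place where your justification does not actually prove what you need. The correct standard input is Knutson's theorem that a separated, quasi-compact, locally quasi-finite morphism of algebraic spaces whose target is a scheme has schematic source; it applies here because the diagonal of an algebraic stack is locally of finite type (cancellation against the smooth atlas), and a monomorphism locally of finite type is unramified, hence locally quasi-finite. Your parenthetical argument (pass to an \'etale atlas of the fiber product and observe that the quotient of the resulting \'etale equivalence relation recovers it) merely restates that the fiber product is an algebraic space; it does not show that this quotient is a scheme, which is exactly the content of Knutson's theorem. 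Since the result you are invoking is true and standard, this is a citation-level imprecision rather than a logical error, but the step should be attributed correctly rather than ``proved'' by that parenthesis.
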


We can introduce properties of algebraic stacks as the case of algebraic spaces.
We use stability and locality on domain \cite[Definition 5.1.3]{O:book}
of a property of morphisms in the smooth site.

\begin{dfn*}[{\cite[Definition 2.1.16]{O:book}}]
Let $S$ be a scheme.
The \emph{smooth site} $\gsSm(S)$ over $S$ is defined to be 
the site $\gsSm(S):=(\Sch^{\sm}_{S},\sm)$ consisting of the following data.
\begin{itemize}[nosep]
\item
The full subcategory $\Sch^{\sm}_{S} \subset \Sch_S$ 
spanned by smooth schemes $U$ over $S$.
\item
The smooth topology $\sm$.
\end{itemize}
The Grothendieck topology $\sm$ is determined as follows:
the set $\Cov_{\sm}(U)$ of coverings of $U \in \Sch^{\sm}_S$ 
consists of a family $\{U_i \to U\}_{i \in I}$ of morphisms in $\Sch_S^{\sm}$
for which each $U_i \to U$ is smooth and 
the morphism $\coprod_{i \in I}U_i \to U$ is surjective.
\end{dfn*}

\begin{dfn}[{\cite[Definition 8.2.1]{O:book}}]\label{dfn:Cl:St-p}
Let $S$ be a scheme, and $\bfQ_0$ be a property of $S$-schemes 
which is stable in $\gsSm(S)$.
An algebraic stack $X$ over $S$ \emph{has property $\bfQ_0$}
if there exists a smooth covering $U \to X$ from a scheme $U$
having property $\bfQ_0$.
\end{dfn}

Let us remark that the scheme $U$ in this definition can be replaced by 
an algebraic space $U$ \cite[Lemma 8.2.4]{O:book}.
We can apply this definition to 
\[
 \bfQ_0 := \text{locally noetherian}, \ 
           \text{locally of finite type over $S$},\ 
           \text{locally of finite presentation over $S$}.
\]

Let us also introduce some classes of morphisms between algebraic stacks.

\begin{dfn}[{\cite[Definition 8.2.6]{O:book}}]\label{dfn:Cl:St-mor1}
Let $S$ be a scheme, and $\bfQ_1$ be a property of morphisms of schemes
which is stable and local on domain with respect to $\gsSm(S)$.
A $1$-morphism $f: X \to Y$ of algebraic stacks 
\emph{has property $\bfQ_1$} if 
there exists a commutative diagram
\[
 \xymatrix{
 U \ar[r]^(0.4){g} \ar[rd]_{q} & X \times_{Y} V \ar[r]^(0.6){f'} \ar[d] & V \ar[d]^{p} \\ 
                  & X \ar[r]_{f} & Y
 }
\]
of $1$-morphisms between ordinary stacks where $U$ and $V$ are schemes, 
the square is cartesian, and $g$ and $p$ give smooth coverings 
such that the morphism $f' \circ g: U \to V$ of schemes has property $\bfQ_1$.
\end{dfn}

This definition is independent of the choice of smooth coverings 
$U \to X$ and $V \to Y$ \cite[Proposition 8.2.8]{O:book}.
We can apply this definition to
\[
 \bfQ_1 = \text{smooth}, \ \text{locally of finite type}, \ 
          \text{locally of finite presentation}, \ \text{surjective}.
\]

\begin{dfn}[{\cite[Definition 8.2.9]{O:book}}]\label{dfn:Cl:St-mor2}
Let $S$ be a scheme, and $\bfQ_2$ be a property of morphisms of algebraic spaces
over $S$ which is stable with respect to the smooth topology on $\AlgSp_S$.
A representable $1$-morphism $f: X \to Y$ of algebraic stacks over $S$
\emph{has property $\bfQ_2$} 
if for every $1$-morphism $V \to Y$ from some $Y \in \AlgSp_S$, 
the morphism $X \times_Y V \to V$ in $\AlgSp_S$ has property $\bfQ_2$.
\end{dfn}

In particular, we can apply this definition to 
\begin{align*}
 \bfQ_2 = \, 
&\text{\'etale}, \ \text{smooth of relative dimension $d$}, \ 
 \text{quasi-compact}, \\ \ %\text{separated}, \ 
&\text{quasi-separate}, \ \text{proper}, \ \text{being a closed immersion}.
\end{align*}

Since the diagonal $1$-morphism $\Delta_{X/Y}: X \to X \times_{f,Y,f} Y$ is 
representable for any $1$-morphism $f : X \to Y$ of algebraic stacks over $S$,
we can apply Definition \ref{dfn:Cl:St-mor} to introduce

\begin{dfn}[{\cite[Definition 8.2.12]{O:book}}]\label{dfn:Cl:St-mor3}
Let $f: X \to Y$ be a $1$-morphism of algebraic stacks.
\begin{enumerate}[nosep]
\item 
$f$ is \emph{separated} if $\Delta_{X/Y}$ is proper
in the sense of Definition \ref{dfn:Cl:St-mor2}.
\item
$f$ is \emph{quasi-separated} if 
$\Delta_{X/Y}$ is quasi-compact and quasi-separated 
in the sense of Definition \ref{dfn:Cl:St-mor2}.
\end{enumerate}
\end{dfn}

We finally introduce

\begin{dfn}\label{dfn:Cl:St-mor4}
A $1$-morphism $f$ of algebraic stacks
is \emph{of finite type} (resp.\ \emph{of finite presentation})
if it is quasi-compact (Definition \ref{dfn:Cl:St-mor2}) 
and locally of finite type (resp.\ locally of finite presentation)
in the sense of Definition \ref{dfn:Cl:St-mor3}.
\end{dfn}

%%%%%%%%%%%%%%%%%%%%%%%%%%%%%%%%%%%%%%%%%%%%%%%%%%%%%%%%%%%%%%%%%%%%%%%%%%%%%%%%
%%%%%%%%%%%%%%%%%%%%%%%%%%%%%%%%%%%%%%%%%%%%%%%%%%%%%%%%%%%%%%%%%%%%%%%%%%%%%%%%
\subsection{Lisse-\'etale site on algebraic stacks}
\label{ss:Cl:LE}

We cite from \cite[Chap.\ 12]{LM} 
the definition of the lisse-\'etale site on algebraic stacks.

\begin{dfn}\label{dfn:Cl:lisse-etale}
The \emph{lisse-\'etale site} on an algebraic stack $X$ over a scheme $S$ is given by
\begin{itemize}[nosep]
\item 
An object of the underlying category is a pair $(U,u)$
of an algebraic space $U$ over $S$ and 
a $1$-morphisms $u: U \to X$ of ordinary stacks over $S$.
A morphism from $(U,u:U \to X)$ to $(V,v: V \to X)$ is a pair $(\varphi,\alpha)$
of a smooth $1$-morphism $\varphi: U \to V$ of algebraic spaces over $S$
and a $2$-isomorphism $\alpha: u \simto v \circ \varphi$.

\item
As for the Grothendieck topology, the set $\Cov(u)$ of covering sieves
consists of families $\{(\varphi_i,\alpha_i): (U_i,u_i) \to (U,i)\}_{i \in I}$
such that the $1$-morphism 
$\coprod_{i \in I} \varphi_i: \coprod_{i \in I}U_i \to U$
of algebraic spaces over $S$ is \'etale and surjective.
\end{itemize}
The associated topos is denoted by $X_{\txle}$.
\end{dfn}

As shown in \cite[Lemme (12.1.2)]{LM},
one can replace ``an algebraic space $U$ over $S$" in the above definition
by ``an affine scheme $U$ over $S$",
and replace ``families $\{(\varphi_i,\alpha_i)\}_{i \in I}$"
by ``finite families $\{(\varphi_i,\alpha_i)\}_{i \in I}$".
The resulting topos is equivalent to $X_{\txle}$.

%%%%%%%%%%%%%%%%%%%%%%%%%%%%%%%%%%%%%%%%%%%%%%%%%%%%%%%%%%%%%%%%%%%%%%%%%%%%%%%%
%%%%%%%%%%%%%%%%%%%%%%%%%%%%%%%%%%%%%%%%%%%%%%%%%%%%%%%%%%%%%%%%%%%%%%%%%%%%%%%%
%%%%%%%%%%%%%%%%%%%%%%%%%%%%%%%%%%%%%%%%%%%%%%%%%%%%%%%%%%%%%%%%%%%%%%%%%%%%%%%%
%\section{\texorpdfstring{$\infty$}{infinity}-categories}
\section{$\infty$-categories}
\label{a:ic}

In this appendix we give some complementary accounts of
selected topics on $\infty$-categories.
%All the contents are given in \cite{Lur1, Lur2},
%and our terminology basically follow loc.\ cit.,
%but we use slightly different symbols.

%%%%%%%%%%%%%%%%%%%%%%%%%%%%%%%%%%%%%%%%%%%%%%%%%%%%%%%%%%%%%%%%%%%%%%%%%%%%%%%%
%%%%%%%%%%%%%%%%%%%%%%%%%%%%%%%%%%%%%%%%%%%%%%%%%%%%%%%%%%%%%%%%%%%%%%%%%%%%%%%%
\subsection{Kan model structure}
\label{ss:ic:Kan}

Let us explain the model structure on the category $\sSet$
of simplicial sets
which is called the Kan model structure in \cite[\S A.2.7]{Lur1}.
We begin with 

\begin{dfn}\label{dfn:ic:Kan-fib}
A simplicial map $f: X \to Y$ is called a \emph{Kan fibration} 
if $f$ has the right lifting property 
with respect to all horn inclusions $\Lambda^n_i \inj \Delta^n$,
i.e., if given any diagram
\[
 \xymatrix{ \Lambda_i^n \ar[r] \ar@{^{(}->}[d] & X \ar[d] \\ 
            \Delta^n    \ar[r]                    & Y}
\]
with arbitrary $n \in \bbN$ and any $i=0,1,\ldots,n$,
there exists a simplicial map $\Delta^n \to X$ 
making the diagram commutative.
\end{dfn}

Next, recall the classical theory telling that there exists an adjoint pair
\begin{align}\label{eq:ic:sSet-CG}
 \abs{-}: \sSet \adjunc \topCG : \fSing
\end{align}
of functors where $\topCG$ denotes 
the category of compactly generated Hausdorff topological spaces.
The functor $\abs{-}$ is called the \emph{geometric realization}.

Now we introduce

\begin{fct}[{\cite[Chap.\ 1 \S 11]{GJ}}]\label{fct:ic:Km}
The following data gives $\sSet$ a model structure.
\begin{itemize}[nosep]
\item 
A simplicial map $f: X \to Y$ is a cofibration 
if it is a monomorphism, i.e., the map $X_n \to Y_n$ is injective 
for each $n \in \bbN$.
\item
A simplicial map is a fibration if it is a Kan fibration.
\item
A simplicial map $f:X \to Y$ is a weak equivalence if
the induced map $\abs{X} \to \abs{Y}$ of geometric realizations
is a homotopy equivalence of topological spaces.
\end{itemize}
The obtained model structure is called the \emph{Kan model structure} on $\sSet$.
\end{fct}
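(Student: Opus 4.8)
The plan is to verify the five axioms of a (closed) model category for the triple consisting of monomorphisms as cofibrations, Kan fibrations (Definition \ref{dfn:ic:Kan-fib}) as fibrations, and the maps $f$ with $\abs{f}$ a homotopy equivalence as weak equivalences, following Quillen's original argument as presented in \cite[Chap.\ 1 \S 11]{GJ}. Axiom MC1 is immediate: $\sSet = \iFun(\CS^{\op},\Set)$ is a category of set-valued presheaves, hence complete and cocomplete. Two-out-of-three (MC2) follows from the corresponding property of homotopy equivalences of spaces after applying $\abs{-}$, using that each $\abs{X}$ is a CW complex so that homotopy equivalences and weak homotopy equivalences of realizations coincide by Whitehead's theorem. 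Closure under retracts (MC3) is automatic: monomorphisms and Kan fibrations are each defined by a lifting property, and weak equivalences are the preimage under $\abs{-}$ of a retract-closed class.

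First I would assemble the standard machinery of simplicial homotopy theory. From the adjunction $\abs{-}:\sSet \adjunc \topCG :\fSing$ of \eqref{eq:ic:sSet-CG} one records that $\fSing Y$ is always a Kan complex and that the unit $X \to \fSing\abs{X}$ and counit $\abs{\fSing Y} \to Y$ are weak equivalences. Next comes the class of \emph{anodyne extensions}, the smallest saturated class of monomorphisms containing the horn inclusions $\Lambda_i^n \inj \Delta^n$, together with the Gabriel--Zisman pushout-product stability (the pushout-product of an anodyne map with any monomorphism is anodyne). One also constructs Kan's functor $\mathrm{Ex}^{\infty}$, yielding a functorial factorization $X \inj \mathrm{Ex}^{\infty}X \to \ast$ into an anodyne extension followed by a Kan fibration, and one introduces simplicial homotopy groups $\pi_n$ of Kan complexes together with the long exact sequence of a Kan fibration over a Kan base. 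These give the working criterion: $f$ is a weak equivalence if and only if $\mathrm{Ex}^{\infty}f$ induces a bijection on $\pi_0$ and isomorphisms on all $\pi_n$ at every base point; in particular every anodyne extension is a weak equivalence.

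The factorization axiom MC5 is then obtained by the small object argument: factoring through relative cell complexes for the set $\{\partial\Delta^n \inj \Delta^n\}_n$ of boundary inclusions gives a monomorphism followed by a map with the right lifting property against all boundary inclusions, which one checks is both a Kan fibration and a weak equivalence; factoring for the horn inclusions $\{\Lambda_i^n \inj \Delta^n\}$ gives an anodyne extension (hence a trivial cofibration) followed by a Kan fibration. The lifting axioms MC4 reduce to the single statement that \emph{a map that is simultaneously a Kan fibration and a weak equivalence has the right lifting property against every monomorphism}, equivalently against every $\partial\Delta^n \inj \Delta^n$; this is the main obstacle. Granting it, MC4(i) follows because every monomorphism is, by MC5 plus the retract argument, a retract of a relative $\{\partial\Delta^n\}$-cell complex and so lifts on the left against trivial fibrations; and MC4(ii) follows because a trivial cofibration factors as an anodyne map followed by a Kan fibration, the latter is then a trivial fibration, filling the resulting square exhibits the trivial cofibration as a retract of the anodyne map, and anodyne maps lift on the left against Kan fibrations by construction.

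What remains is precisely the proof of that key statement, and it is the genuinely hard technical core. The standard route passes through \emph{minimal fibrations}: every Kan fibration admits a fibrewise strong deformation retraction onto a minimal sub-fibration, a minimal fibration over a connected base is a twisted cartesian product (a bundle of Kan complexes), a minimal fibration that is also a weak equivalence has contractible fibres and is therefore a trivial Kan fibration with the asserted lifting property, and the general case is deduced by transporting lifts along the deformation retraction. This step also relies on Quillen's theorem that $\abs{-}$ carries Kan fibrations to Serre fibrations, which keeps the homotopy-theoretic bookkeeping on $\sSet$ in sync with that on $\topCG$. All of this is classical and fully developed in \cite[Chap.\ 1 \S 11]{GJ} together with its earlier sections on anodyne extensions, simplicial homotopy and minimal fibrations, so the paper establishes the statement by citation; the purpose of the present sketch is only to indicate how the cited result is proved.
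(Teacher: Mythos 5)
Your outline is correct and follows exactly the argument of the cited source: the paper itself gives no proof of this Fact but simply cites \cite[Chap.\ 1 \S 11]{GJ}, and your sketch (small object argument for MC5, reduction of MC4 to the statement that a map which is both a Kan fibration and a weak equivalence has the right lifting property against monomorphisms, proved via minimal fibrations and the fact that $\abs{-}$ sends Kan fibrations to Serre fibrations) is precisely the route taken there. Nothing to add.
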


%The Kan model structure  is combinatorial, left proper and right proper.

%%%%%%%%%%%%%%%%%%%%%%%%%%%%%%%%%%%%%%%%%%%%%%%%%%%%%%%%%%%%%%%%%%%%%%%%%%%%%%%%
%%%%%%%%%%%%%%%%%%%%%%%%%%%%%%%%%%%%%%%%%%%%%%%%%%%%%%%%%%%%%%%%%%%%%%%%%%%%%%%%
\subsection{Homotopy category of an $\infty$-category}
\label{ss:ic:Ho}

In Definition \ref{dfn:ic:tc}, we denoted by $\topCG$ 
the category of compactly generated weakly Hausdorff topological spaces.
The classical theory tells us that for each $X \in \topCG$ 
there exists a CW complex $X'\in \topCW$,
giving rise to a well-defined functor
\[
 \theta: \topCG \longto \topH, \quad X \longmapsto [X] := X'.
\]
We call $[X] \in \topH$ the \emph{homotopy type} of $X$.
Now we can define the homotopy category of a topological category.

\begin{dfn*}[{\cite[pp.\ 16--17]{Lur1}}]
For a topological category $\topC$, we denote by $\Ho \topC$ 
the category enriched over $\topH$ defined as follows,
and call it the \emph{homotopy category} of $\topC$.
\begin{itemize}[nosep]
\item 
The objects of $\Ho \topC$ are defined to be the objects of $\topC$.
\item 
For $X,Y \in \topC$, we set $\Map_{\Ho \topC}(X,Y) := [\Map_{\topC}(X,Y)] \in \topH$.
\item 
Composition of morphisms in $\Ho \topC$ is given by the application of 
$\theta$ to composition of morphisms in $\topC$.
\end{itemize}
\end{dfn*}

Next we explain the homotopy category of a simplicial category.
Recall the adjunction 
\[
 \abs{-}: \sSet \adjunc \topCG : \fSing
\]
in \eqref{eq:ic:sSet-CG}.
Composing $\abs{-}: \sSet \to \topCG$ with $\theta:\topCG \to \topH$,
we have a functor 
\[
 [\cdot]: \sSet \longto \topH, \quad S \longmapsto [S] := \theta(\abs{S}).
\]

\begin{dfn}\label{dfn:ic:Ho-type:sSet}
For a simplicial set $S \in \sSet$, 
we call $[S]$ the \emph{homotopy type} of $S$.
\end{dfn}

Recall the category $\sCat$ of simplicial categories 
(Definition \ref{dfn:ic:sCat}).
Applying this functor $[\cdot]: \sSet \to \topH$ to the simplicial sets 
of morphisms, we obtain another functor
\[
 \Ho: \sCat \longto (\text{categories enriched over $\topH$}), \quad
 \catC \longmapsto \Ho \catC.
\]

\begin{dfn}[{\cite[p.\ 19]{Lur1}}]\label{dfn:ic:Ho-sCat}
For a simplicial category $\spC \in \sCat$, we call $\Ho \spC$ 
in the above construction the \emph{homotopy category} of $\spC$.
\end{dfn}

Now we explain the homotopy category of an $\infty$-category.
Recall that we denote by $\sCat$ the category of simplicial categories
(Definition \ref{dfn:ic:sCat}).
Then we can define a functor 
%\begin{equation}\label{eq:ic:frkC}
\[
 \frkC[-]: \sSet \longto \sCat
\]
%\end{equation}
as follows \cite[\S1.1.5]{Lur1}:
For a finite non-empty linearly ordered set $J$, 
we construct a simplicial category $\frkC[\Delta^J]$ as follows:
%as in \cite[Definition 1.1.5.1]{Lur1}.
The objects of $\frkC[\Delta^J]$ are the elements of $J$.
For $i,j \in J$ with $i \le j$, 
the simplicial set $\Map_{\frkC[\Delta^J]}(i,j)$ 
is given by the nerve of the poset 
$\{I \subset J \mid i,j \in I, \ \forall \, k \in I \ i \le k \le j\}$.
For $i > j$ we set $\Map_{\frkC[\Delta^J]}(i,j) := \emptyset$.
The resulting functor $\frkC: \Delta \to \sCat$ 
extends uniquely to a functor $\frkC[-]: \sSet \to \sCat$,
and we denote by $\frkC[S]$ the image of $S \in \sSet$.

\begin{dfn}[{\cite[Definition 1.1.5.14]{Lur1}}]\label{dfn:ic:h:iC}
Let $S$ be a simplicial set.
The \emph{homotopy category} $\Ho S$ of $S$ is defined to be 
\[
 \Ho S := \Ho \frkC[S],
\]
the homotopy category of the simplicial category $\frkC[S]$.

The \emph{homotopy category} of an $\infty$-category $\iC$ 
is defined to be the homotopy category $\Ho \iC$ 
of $\iC$ as a simplicial set.
\end{dfn}

Noting that the homotopy category $\Ho S$ is enriched over $\topH$,
we denote by $\Hom_{\Ho S}(-,-) \in \topH$ its Hom space.
Then we can introduce

\begin{dfn}[{\cite[Definition 1.2.2.1]{Lur1}}]
\label{dfn:ic:Map}
For a simplicial set $S$ and its vertices $x,y \in S$,
we define 
\[
 \Map_{S}(x,y) := \Hom_{\Ho S}(x,y) \in \topH
\] 
and call it the \emph{mapping space} from $x$ to $y$ in $S$.
\end{dfn}

For use in the main text, we recall the construction of 
Kan complexes which represent mapping spaces.

\begin{dfn}\label{dfn:ic:HomR}
For a simplicial set $S$ and vertices  $x,y \in S$,
we defined a simplicial set $\HomR{S}(x,y)$ by
\begin{align*}
&\Hom_{\sSet}(\Delta^n,\HomR{S}(x,y)) =  \\
&\{z: \Delta^{n+1} \to S \mid 
   \text{simplicial maps}, \ \rst{z}{\Delta^{\{n+1\}}}=y, \ 
   \text{$\rst{z}{\Delta^{\{0,\ldots,n\}}}$ 
         is a constant complex at the vertex $x$}\}.
\end{align*}
The face and degeneracy maps are induced by those on $S_{n+1}$.
\end{dfn}

If $\iC$ is an $\infty$-category, then $\HomR{\iC}(x,y)$ is a Kan complex
by \cite[Proposition 1.2.2.3]{Lur1}.

By \cite[Proposition 2.2.4.1]{Lur1},
we have an adjunction
\[
 \abs{-}_{Q^{\bullet}}: \sSet \adjunc \sSet :\fSing_{Q^{\bullet}}
\]
which gives a Quillen autoequivalence on the category $\sSet$
equipped with the Kan model structure.
Now we have

\begin{fct*}[{\cite[Proposition 2.2.4.1]{Lur1}}]
For an $\infty$-category $\iC$ and objects $x,y \in \iC$,
there is a natural equivalence of simplicial sets
\[
 \abs{\HomR{\iC}(x,y)}_{Q^{\bullet}} \longto \Map_{\frkC[S]}(x,y).
\]
\end{fct*}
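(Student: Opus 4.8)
This statement is \cite[Proposition 2.2.4.1]{Lur1}, so in the text one simply cites loc.\ cit.; the plan below merely recalls the structure of that argument. The strategy is to produce the natural comparison map $\abs{\HomR{\iC}(x,y)}_{Q^\bullet} \to \Map_{\frkC[\iC]}(x,y)$ for an arbitrary simplicial set $S$ in place of $\iC$, and then reduce the claim that it is a weak homotopy equivalence to the universal case $S = \Delta^n$. The map itself is read off from the definitions: the cosimplicial object $Q^\bullet$ is designed so that an $n$-simplex of $\HomR{S}(x,y)$ is exactly an $(n+1)$-simplex of $S$ with initial vertex $x$ and final vertex $y$, while the mapping simplicial sets of the simplicial categories $\frkC[\Delta^m]$ are nerves of posets of linearly ordered subsets (the combinatorics underlying the functor $\frkC[-]$ of \S\ref{ss:ic:Ho}); matching these two pieces of data produces the transformation, and functoriality in $S$ is automatic since both sides are defined levelwise.

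First I would record that both functors $S \mapsto \abs{\HomR{S}(x,y)}_{Q^\bullet}$ and $S \mapsto \frkC[S]$ are homotopically colimit-friendly along the skeletal filtration of $S$: the functor $\abs{-}_{Q^\bullet}$ is a genuine left adjoint, $\frkC[-]$ is left adjoint to the coherent nerve $\Nsp$ (Definition \ref{dfn:ic:Nsp}), and the transition maps $\HomR{\partial\Delta^n}(x,y) \to \HomR{\Delta^n}(x,y)$ attached to each nondegenerate simplex are monomorphisms, hence cofibrations in the Kan model structure (Fact \ref{fct:ic:Km}). This lets one build up $S$ from its simplices by homotopy pushouts compatibly on both sides, so by a skeletal induction it suffices to treat $S = \Delta^n$. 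For $S = \Delta^n$ and $x = i$, $y = j$ one then computes directly: $\HomR{\Delta^n}(i,j)$ is (the nerve of) the poset of chains in $\{i,\dots,j\}$ containing $i$ and $j$, which is weakly contractible for $i \le j$ and empty otherwise, while $\Map_{\frkC[\Delta^n]}(i,j)$ is the cube $(\Delta^1)^{j-i-1}$ with the analogous emptiness, also weakly contractible; the comparison map is checked to realize the evident identification of these data, and a two-out-of-three argument along the filtration then yields the equivalence for all $S$, in particular for $\iC$.

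The main obstacle is the homotopy-colimit bookkeeping in the middle step rather than either endpoint: one must verify that the skeletal filtration of $S$ is carried by $S \mapsto \abs{\HomR{S}(x,y)}_{Q^\bullet}$ to a filtration by cofibrations whose colimit is a homotopy colimit, and, crucially, that the chosen decomposition is natural enough to be compared cell by cell with the corresponding decomposition of $\Map_{\frkC[S]}(x,y)$ through the transformation itself — an abstract equivalence of the two homotopy colimits is not enough. Once that compatibility is pinned down, the explicit $\Delta^n$ computation is routine, since after passing to the relevant posets both sides acquire an initial or terminal element and are manifestly contractible.
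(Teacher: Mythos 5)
The paper offers no proof of this statement: it is imported verbatim as \cite[Proposition 2.2.4.1]{Lur1}, so your decision to simply cite loc.\ cit.\ matches the paper exactly, and your sketch of Lurie's argument (construct the comparison map levelwise, reduce along the skeletal filtration to $S=\Delta^n$ using that both sides carry the cell attachments to homotopy pushouts, then check the simplex case directly) is a fair summary of the external proof. One small slip in the sketch, which does not affect the conclusion: for $i\le j$ the simplicial set $\HomR{\Delta^n}(i,j)$ is just $\Delta^0$ (there is a unique nondecreasing map $[m+1]\to[n]$ collapsing the first $m+1$ vertices to $i$ and sending the last to $j$), while it is $\Map_{\frkC[\Delta^n]}(i,j)$ that is the nerve of the poset of subsets of $\{i,\dots,j\}$ containing $i$ and $j$, i.e.\ the cube $(\Delta^1)^{j-i-1}$ — both are weakly contractible, which is all the argument needs.
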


Here $\frkC:\sSet \to \sCat$ is the functor explained at \eqref{eq:ic:frkC}.

%%%%%%%%%%%%%%%%%%%%%%%%%%%%%%%%%%%%%%%%%%%%%%%%%%%%%%%%%%%%%%%%%%%%%%%%%%%%%%%%
%%%%%%%%%%%%%%%%%%%%%%%%%%%%%%%%%%%%%%%%%%%%%%%%%%%%%%%%%%%%%%%%%%%%%%%%%%%%%%%%
\subsection{Over-$\infty$-categories and under-$\infty$-categories}
\label{ss:ic:o/u}

This subsection is based on  \cite[\S1.2.9]{Lur1}.
%We next recall the overcategories and undercategories

For simplicial sets $S$ and $S'$, their \emph{join} \cite[Definition 1.2.8.1]{Lur1} 
is denoted by $S \join S'$.
The join of $\infty$-categories 
is an $\infty$-category \cite[Proposition 1.2.8.3]{Lur1}.

\begin{dfn}[{\cite[\S1.2.9]{Lur1}}]
\label{dfn:ic:o/u}
Let $p: K \to \iC$ be a simplicial map from 
a simplicial set $K$ to an $\infty$-category $\iC$.
\begin{enumerate}[nosep]
\item
Consider the simplicial set $\oc{\iC}{p}$ defined by
\[
 (\oc{\iC}{p})_n := \Hom_{/p}(\Delta^n \join K, \iC).
\]
The subscript $/p$ in the right hand side means that 
we only consider those morphisms $f: S \join K \to \iC$ such that $\rst{f}{K}=p$.
Then $\oc{\iC}{p}$ is an $\infty$-category, and 
called the \emph{over-$\infty$-category of objects over $p$}
\item
For $X \in \iC$, we denote by $\oc{\iC}{X}$ the over-$\infty$-category $\oc{\iC}{p}$ 
where $p: \Delta^0 \to \iC$ has $X$ as its image.
\item
Dually, the \emph{under-$\infty$-category} $\iC_{p/}$ is the $\infty$-category
defined by $(\iC_{p/})_n := \Hom_{p/}(K \join \Delta^n, \iC)$, 
where the subscript $p/$ means that 
we only consider those morphisms $f: K \join S \to \iC$ such that $\rst{f}{K}=p$.
\item
For $X \in \iC$, we denote by $\iC_{X/}$ the under-$\infty$-category $\iC_{p/}$ 
where $p: \Delta^0 \to \iC$ has $X$ as its image.
\end{enumerate}
\end{dfn}

The over-$\infty$-category $\oc{\iC}{p}$ is characterized by the universal property
\[
 \Hom_{\sSet}(S,\oc{\iC}{p}) = \Hom_{/p}(S \join K,\iC)
\]
for any simplicial set $S$.
We can characterize an under-$\infty$-category by a similar universal property.

Note that for an injective simplicial map $j: L \to K$ 
we have a natural functor $\oc{\iC}{p} \to \oc{\iC}{p \circ j}$.

By this universality one can deduce the following consequences.
We omit the proof.

\begin{cor}\label{cor:ic:ovc}
Let $\iC$ be an $\infty$-category.
\begin{enumerate}[nosep]
\item
For any $X \in \iC$ there exists a functor
\[
 \oc{\iC}{X} \longto \iC
\]
of $\infty$-categories.
It will be called the \emph{canonical functor} of $\oc{\iC}{X}$.

\item
For a morphism $f: X \to Y$ in $\iC$,
we have a functor $\oc{\iC}{X} \to \oc{\iC}{Y}$ of $\infty$-categories
induced by composition with $f$.
We also have functors $\oc{\iC}{f} \to \oc{\iC}{X}$ and $\oc{\iC}{f} \to \oc{\iC}{Y}$, where
$\oc{\iC}{f}$ is defined to be $\oc{\iC}{p}$ with $p:\Delta^1 \to \iC$ representing $f$. 
These functors form a commutative triangle 
\[
 \xymatrix{ & \oc{\iC}{f} \ar[ld] \ar[rd] \\ \oc{\iC}{X} \ar[rr]  && \oc{\iC}{Y}}
\]
in the $\infty$-category $\iCat$
of $\infty$-categories (Definition \ref{dfn:ic:iCat}).
\end{enumerate}
\end{cor}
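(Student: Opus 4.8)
\textbf{Proof proposal for Corollary \ref{cor:ic:ovc}.}

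The plan is to derive both statements directly from the universal property of over-$\infty$-categories recalled just before the statement, namely the bijection $\Hom_{\sSet}(S,\oc{\iC}{p}) = \Hom_{/p}(S \join K,\iC)$ functorial in $S$, together with the elementary combinatorics of the join operation. For item (1), I would take $K = \Delta^0$ with $p$ representing $X$, so $\oc{\iC}{X} = \oc{\iC}{p}$. The inclusion $\emptyset \inj \Delta^0$ of simplicial sets induces, via the remark following Definition \ref{dfn:ic:o/u} (that an injective simplicial map $j: L \to K$ yields a natural functor $\oc{\iC}{p} \to \oc{\iC}{p \circ j}$), a functor $\oc{\iC}{X} = \oc{\iC}{p} \to \oc{\iC}{p|_{\emptyset}}$. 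Since $S \join \emptyset = S$ canonically, the universal property gives $\oc{\iC}{p|_{\emptyset}} \simeq \iC$ (as simplicial sets, since $\Hom_{\sSet}(S,\oc{\iC}{p|_\emptyset}) = \Hom(S \join \emptyset,\iC) = \Hom(S,\iC)$), and composing yields the canonical functor $\oc{\iC}{X} \to \iC$. One must check this is a functor of $\infty$-categories, i.e.\ a map of simplicial sets between $\infty$-categories, which is automatic once $\oc{\iC}{X}$ is known to be an $\infty$-category (stated in Definition \ref{dfn:ic:o/u}).

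For item (2), I would first observe that a morphism $f: X \to Y$ is a map $\Delta^1 \to \iC$, and the three vertices of interest correspond to the inclusions $\Delta^{\{0\}} \inj \Delta^1 \hookleftarrow \Delta^{\{1\}}$ (giving $X$ and $Y$) and $\mathrm{id}: \Delta^1 \to \Delta^1$ (giving $f$ itself). Applying the functoriality of $\oc{\iC}{-}$ in the parameter simplicial set to the injections $\Delta^{\{0\}} \inj \Delta^1$ and $\Delta^{\{1\}} \inj \Delta^1$ produces functors $\oc{\iC}{f} \to \oc{\iC}{X}$ and $\oc{\iC}{f} \to \oc{\iC}{Y}$. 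For the functor $\oc{\iC}{X} \to \oc{\iC}{Y}$ "induced by composition with $f$": here I would use that an object of $\oc{\iC}{X}$ over $S$ is a map $S \join \Delta^0 \to \iC$ restricting to $X$, and precomposition with the retraction $S \join \Delta^1 \to S \join \Delta^0$ collapsing $\Delta^1$ to its initial vertex, together with the data of $f$, yields a map $S \join \Delta^1 \to \iC$; restricting along $\Delta^0 = \Delta^{\{1\}} \join \emptyset$—more precisely identifying the target appropriately—gives the desired functor. The commutativity of the triangle in $\iCat$ (Definition \ref{dfn:ic:iCat}) then follows because all the maps are induced by maps of simplicial sets compatible under the evident commuting diagram of inclusions $\Delta^{\{0\}}, \Delta^{\{1\}} \inj \Delta^1$ and the collapse maps, so the triangle commutes already at the level of $\sSet$, a fortiori after passing to the nerve construction defining $\iCat$.

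The main obstacle, such as it is, lies in item (2): getting the functor $\oc{\iC}{X} \to \oc{\iC}{Y}$ "induced by composition with $f$" to be a genuine map of simplicial sets requires care with the combinatorics of the join—specifically, one needs the maps $S \join \Delta^0 \to S \join \Delta^1 \to S \join \Delta^0$ to be natural in $S$ and to interact correctly with the "$/p$" subscript conditions—and then one must verify that this construction and the two projection functors from $\oc{\iC}{f}$ fit into a commuting triangle. This is routine but requires a precise choice of the collapse/section maps; I expect it to be handled by a direct diagram chase at the level of simplicial sets, invoking no results beyond Definition \ref{dfn:ic:o/u}, its universal property, and the basic manipulations of joins from \cite[\S1.2.8]{Lur1}. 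Since the statement records that the proof is "obvious" in the comparable Lemma \ref{lem:pr:pb}, I would keep the write-up brief, emphasizing the universal property and leaving the join manipulations to the reader or to a citation of \cite[\S1.2.9]{Lur1}.
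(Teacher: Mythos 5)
The paper records no proof of this corollary (it says only ``By this universality one can deduce the following consequences. We omit the proof.''), so the comparison is against the intended argument. Your treatment of item (1) and of the two restriction functors $\oc{\iC}{f}\to\oc{\iC}{X}$ and $\oc{\iC}{f}\to\oc{\iC}{Y}$ is correct and is the expected route: apply the functoriality of $\oc{\iC}{p}$ in the parameter simplicial set to the inclusions $\emptyset\inj\Delta^0$ and $\Delta^{\{0\}},\Delta^{\{1\}}\inj\Delta^1$, and use $S\join\emptyset=S$ to identify $\oc{\iC}{p|_{\emptyset}}$ with $\iC$.

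The gap is in your construction of the functor $\oc{\iC}{X}\to\oc{\iC}{Y}$ ``induced by composition with $f$''. Precomposing an $S$-point $g\colon S\join\Delta^0\to\iC$ with the collapse $S\join\Delta^1\to S\join\Delta^0$ yields a map whose restriction to $\Delta^1=\emptyset\join\Delta^1$ is the degenerate edge at $X$, not $f$; the phrase ``together with the data of $f$'' is exactly the step that cannot be performed by a simplicial formula. What you actually have is a map defined on the subcomplex $(S\join\Delta^{\{0\}})\cup_{\Delta^{\{0\}}}\Delta^1$ of $S\join\Delta^1$, and extending it over all of $S\join\Delta^1$ requires filling inner horns; such extensions exist because $\iC$ is an $\infty$-category, but they are neither unique nor natural in $S$. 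The standard construction is instead: the restriction $\oc{\iC}{f}\to\oc{\iC}{X}$ is a trivial fibration of simplicial sets (a consequence of the join/anodyne calculus of \cite[\S 2.1.2]{Lur1}); one chooses a section $s\colon\oc{\iC}{X}\to\oc{\iC}{f}$ and defines the desired functor as the composite $\oc{\iC}{X}\xrightarrow{s}\oc{\iC}{f}\to\oc{\iC}{Y}$. This is precisely why the functor is only well defined up to contractible ambiguity and why the statement asserts commutativity of the triangle in $\iCat$ rather than in $\sSet$: your closing claim that ``the triangle commutes already at the level of $\sSet$'' is not available under any correct construction. Commutativity in $\iCat$ does hold, because $s$ is a section of the left leg, and the composite $\oc{\iC}{f}\to\oc{\iC}{X}\xrightarrow{s}\oc{\iC}{f}\to\oc{\iC}{Y}$ is homotopic to the restriction $\oc{\iC}{f}\to\oc{\iC}{Y}$ by the triviality of the fibration.
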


\begin{rmk*}
In \cite{Lur1} and \cite{J} the canonical functor is called the projection,
but we avoid this terminology 
(see Definition \ref{dfn:is:oij} for the reason).
\end{rmk*}

Definition \ref{dfn:ic:o/u} works for any simplicial set $S$ 
instead for an $\infty$-category $S$.
The statement that $\oc{\iC}{p}$ for an $\infty$-category $\iC$
is indeed an $\infty$-category is shown in \cite[Proposition 2.1.2.2]{Lur1}.

%%%%%%%%%%%%%%%%%%%%%%%%%%%%%%%%%%%%%%%%%%%%%%%%%%%%%%%%%%%%%%%%%%%%%%%%%%%%%%%%
%%%%%%%%%%%%%%%%%%%%%%%%%%%%%%%%%%%%%%%%%%%%%%%%%%%%%%%%%%%%%%%%%%%%%%%%%%%%%%%%
\subsection{Limits and colimits in $\infty$-categories.}
\label{ss:ic:lim/colim}

The limits and colimits in $\infty$-categories are defined 
in terms of final and initial objects.

\begin{dfn*}[{\cite[Definition 1.2.13.4]{Lur1}}]
Let $\iC$ be an $\infty$-category, $K$ be a simplicial set 
and $p: K \to \iC$ be a simplicial map.
\begin{enumerate}[nosep]
\item
A \emph{colimit} of $p$ is an initial object of 
the under-$\infty$-category $\iC_{p/}$, and denoted by $\iclim p$.
\item
A \emph{limit} of $p$ is a final object of the over-$\infty$-category $\oc{\iC}{p}$,
and denoted by $\ilim p$.
\end{enumerate}
\end{dfn*}

\begin{rmk}\label{rmk:ic:colim:nu}
A (co)limit of $p$ is not unique if it exists,
but by \cite[Proposition 1.2.12.9]{Lur1} the full sub-$\infty$-category of (co)limits 
is either empty or is a contractible Kan complex.
\end{rmk}

We have the following restatement of (co)limit.

\begin{fct*}[{\cite[Remark 1.2.13.5]{Lur1}}]
A colimit of $p: K \to \iC$ can be identified with a simplicial map 
$\ol{p}: K^{\triangleright} \to \iC$ extending $p$.
Similarly, a limit of $p$ is identified with a simplicial map 
$\ol{p}: K^{\triangleleft} \to \iC$ extending $p$.
\end{fct*}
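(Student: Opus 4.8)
The plan is to unwind the definitions of colimit and limit in terms of under- and over-$\infty$-categories (Definition \ref{dfn:ic:o/u}, \S \ref{ss:ic:lim/colim}) and to use the standard description of the two cone constructions as joins with the point $\Delta^0$, namely $K^{\triangleright} = K \join \Delta^0$ and $K^{\triangleleft} = \Delta^0 \join K$. Since both $\iclim p$ and $\ilim p$ are defined as distinguished objects — initial, respectively final — of an auxiliary $\infty$-category, the claimed identification is at heart the observation that the objects (i.e.\ the vertices) of these auxiliary $\infty$-categories are exactly the extensions of $p$ to the corresponding cone.

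First I would treat the colimit. By definition $\iclim p$ is an initial object of the under-$\infty$-category $\iC_{p/}$, so in particular it is a vertex of $\iC_{p/}$. Evaluating the simplicial set $\iC_{p/}$ in degree $0$ gives, by Definition \ref{dfn:ic:o/u},
\[
 (\iC_{p/})_0 = \Hom_{p/}(K \join \Delta^0, \iC),
\]
where the subscript $p/$ restricts to those simplicial maps $f: K \join \Delta^0 \to \iC$ whose restriction to $K$ equals $p$. Using $K \join \Delta^0 = K^{\triangleright}$, a vertex of $\iC_{p/}$ is precisely a simplicial map $\ol{p}: K^{\triangleright} \to \iC$ with $\rst{\ol{p}}{K} = p$, i.e.\ an extension of $p$ to the right cone. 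Hence the colimit, being such a vertex, is identified with an extension $\ol{p}: K^{\triangleright} \to \iC$.

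The limit is handled dually. Here $\ilim p$ is a final object of the over-$\infty$-category $\oc{\iC}{p}$, whose degree-$0$ part is $(\oc{\iC}{p})_0 = \Hom_{/p}(\Delta^0 \join K, \iC)$ by Definition \ref{dfn:ic:o/u}; identifying $\Delta^0 \join K = K^{\triangleleft}$ exhibits the vertices of $\oc{\iC}{p}$ as the extensions $\ol{p}: K^{\triangleleft} \to \iC$ of $p$ to the left cone, and $\ilim p$ is such a vertex. The only genuine point to watch is the handedness of the join: one must check that the subscript conventions $p/$ and $/p$ in Definition \ref{dfn:ic:o/u} pair correctly with $K \join \Delta^0$ on the colimit side and $\Delta^0 \join K$ on the limit side, so that the adjoined cone vertex is the one receiving (resp.\ mapping to) the diagram $p$. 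I would also remark that this is an identification of underlying objects only; the fact that a colimit or limit is determined up to a contractible space of choices is the content of Remark \ref{rmk:ic:colim:nu} together with Fact \ref{fct:ic:fin}, and is not part of what the present statement asserts.
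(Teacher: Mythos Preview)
The paper does not supply its own proof of this statement; it is recorded as a \emph{Fact} cited from \cite[Remark 1.2.13.5]{Lur1} without further argument. Your proposal is correct and is exactly the unwinding Lurie has in mind: you correctly identify the $0$-simplices of $\iC_{p/}$ (resp.\ $\oc{\iC}{p}$) via Definition \ref{dfn:ic:o/u} as simplicial maps $K \join \Delta^0 \to \iC$ (resp.\ $\Delta^0 \join K \to \iC$) restricting to $p$ on $K$, and then recognize these as extensions to the right (resp.\ left) cone. Your caution about the handedness of the join and your remark on uniqueness up to contractible ambiguity are both appropriate, though neither is strictly needed for the bare identification asserted here.
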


Here we used 

\begin{dfn}[{\cite[Notation 1.2.8.4]{Lur1}}] \label{dfn:ic:cone}
For a simplicial set $K$, 
we denote by $K^{\triangleright} := K \join \Delta^0$ the \emph{right cone} $K$,
where $\join$ denotes the join of simplicial sets.
We also denote by $K^{\triangleleft} := \Delta^0 \join K$ the \emph{left cone} of $K$.
%the distinguished vertex belonging to $\Delta^0$ is called the \emph{cone point}.
\end{dfn}

\begin{rmk}\label{rmk:ic:colim}
As explained in \cite[\S 4.2.4]{Lur1}, colimits in the $\infty$-category 
are compatible with homotopy colimits in simplicial categories,
and limits are compatible with the homotopy limits.
\end{rmk}

Let us explain a few examples of limits and colimits in $\infty$-categories,
following \cite[\S4.4]{Lur1}.

\begin{itemize}
\item
We regard a set $A$ as a category by $\Hom_A(i,i) = *$ for $i \in A$ and 
$\Hom_A(i,j) = \emptyset$ for $i \neq j$.
We further consider $A$ as the simplicial set which is the nerve of this category.

\begin{dfn}[{\cite[\S 4.4.1]{Lur1}}]\label{dfn:ic:coprod}
Let $A$ be a set, $\iC$ an $\infty$-category and $p: A \to \iC$ be a map.
Thus $p$ is identified with the family $\{X_a \mid a \in A\}$ of objects in $\iC$.
Then a colimit $\iclim p$ is called a \emph{coproduct} 
of $\{X_a \mid a \in A\}$, and denoted by $\coprod_{a \in A} X_a$.
Dually, a limit $\ilim p$ is called a \emph{product} of $\{X_a \mid a \in A\}$,
and denoted by $\prod_{a \in A} X_a$.
\end{dfn}

Under Remark \ref{rmk:ic:colim}, the corresponding object 
in a simplicial category is the homotopy coproduct.

\item
Let $\iC$ be an $\infty$-category. 
A simplicial map $\Delta^1 \times \Delta^1 \to \iC$ 
is called a \emph{square} in $\iC$.
It will be typically depicted as 
\begin{align}\label{diag:square}
 \xymatrix{X' \ar[r]^{q'} \ar[d]_{p'} &X \ar[d]^{p} \\
 Y' \ar[r]_{q} & Y}
\end{align}

Since there are isomorphisms 
$(\Lambda^2_0)^{\triangleright} \simeq \Delta^1 \times \Delta^1 
 \simeq (\Lambda^2_2)^{\triangleleft}$
of simplicial sets,
we can introduce

%\begin{dfn}[{\cite[Remark 1.2.13.5]{Lur1}}]\label{dfn:lim/colim-diag}
%Let $\iC$ be an $\infty$-category, $K$ be a simplicial set. 
%\begin{enumerate}[nosep]
%\item
%A simplicial map $p: K^{\triangleright} \to \iC$ is called a \emph{colimit diagram}
%if it is a colimit of $\rst{p}{K}$.
%\item
%A simplicial map $p: K^{\triangleleft} \to \iC$ is called 
%a \emph{limit diagram} if it is a limit of $\rst{p}{K}$.
%\end{enumerate}
%\end{dfn}

\begin{dfn}[{\cite[\S 4.4.2]{Lur1}}]\label{dfn:ic:pb-po}
Let $\sigma: \Delta^1 \times \Delta^1 \to \iC$ be a square in an $\infty$-category $\iC$.
\begin{enumerate}[nosep]
\item
If $\sigma$ is a limit of $\rst{\sigma}{\Lambda^2_2}$ 
viewing $\Delta^1 \times \Delta^1  \simeq (\Lambda^2_2)^{\triangleleft}$, 
%diagram in the sense of Definition \ref{dfn:lim/colim-diag},
then it is called a \emph{pullback square} or a \emph{cartesian square}.
If the square \eqref{diag:square} is a pull-back square, then we write 
$X' = X \times_{p, Y, q} Y'$ or simply $X' = X \times_Y Y'$, 
and call $X'$ a \emph{pullback} or a \emph{base change} or a \emph{fiber product}.

\item
If $\sigma$ is a colimit of $\rst{\sigma}{\Lambda^2_0}$ 
viewing $\Delta^1 \times \Delta^1  \simeq (\Lambda^2_0)^{\triangleright}$, 
%diagram in the sense of Definition \ref{dfn:lim/colim-diag},
then it is called a \emph{pushout square} or a \emph{cocartesian square},
and we write $Y = X \coprod^{X'} Y'$ and call $Y$ a \emph{pushout}.
\end{enumerate}
\end{dfn}

Under Remark \ref{rmk:ic:colim}, the corresponding objects in a simplicial category
are the homotopy pullback and the homotopy pushout.

\item
Finally let $\catD$ be the category depicted by the diagram
\[
 \xymatrix{X \ar@<2pt>[r]^{F} \ar@<-2pt>[r]_{G} &  Y}
\]

\begin{dfn}\label{dfn:ic:coeq}
Let $p: \Ner(\catD) \to \iC$ be a simplicial map from the nerve of $\catD$ 
(Definition \ref{dfn:ic:nerve}) to an $\infty$-category $\iC$.
Set $f := p(F)$ and $g:=p(G)$.
A colimit of $p$ is called a \emph{coequalizer} of $f$ and $g$.
\end{dfn}
\end{itemize}

%%%%%%%%%%%%%%%%%%%%%%%%%%%%%%%%%%%%%%%%%%%%%%%%%%%%%%%%%%%%%%%%%%%%%%%%%%%%%%%%
\subsection{Adjunctions}
\label{ss:ic:adjunc}

We now introduce the notion of adjoint functors of $\infty$-categories,
for which we prepare several definitions on simplicial maps.

\begin{dfn*}
Let $f: X \to S$ be a simplicial map.
\begin{enumerate}[nosep]
\item
$f$ is called a \emph{trivial fibration} if $f$ has the right lifting property 
with respect to all inclusions $\partial \Delta^n \inj \Delta^n$.

\item
$f$ is called a \emph{trivial Kan fibration} 
if it is a Kan fibration (Definition \ref{dfn:ic:Kan-fib}) and a trivial fibration.

\item
$f$ is called an \emph{inner fibration} if $f$ has the right lifting property 
with respect to horn inclusions $\Lambda^n_i \inj \Delta^n$ with $0 < i < n$.
\end{enumerate}
\end{dfn*}

\begin{dfn*}[{\cite[Definition 2.4.1.1]{Lur1}}]
%\label{dfn:cartesian-edge}
Let $p: X \to S$ be an inner fibration of simplicial sets.
Let $f: x \to y$ be an edge in $X$. 
$f$ is called \emph{$p$-cartesian} if the induced map
$\oc{X}{f} \to \oc{X}{y} \times_{\oc{S}{p(y)}} \oc{S}{p(f)}$
is a trivial Kan fibration.
\end{dfn*}

\begin{dfn}[{\cite[Definition 2.4.2.1]{Lur1}}]\label{dfn:ic:cc-fib}
Let $p: X \to S$ be a simplicial map.
\begin{enumerate}[nosep]
\item
$p$ is called a \emph{cartesian fibration} 
if the following conditions are satisfied.
\begin{itemize}[nosep]
\item 
The map $p$ is an inner fibration.
\item 
For every edge $f: x \to y$ of $S$ and every vertex $\wt{y}$ of $X$ 
with $p(\wt{y})=y$, there exists a $p$-cartesian edge  
%(Definition \ref{dfn:cartesian-edge})
$\wt{f}: \wt{x} \to \wt{y}$ with $p(\wt{f}) = f$.
\end{itemize}
\item
$p$ is called a \emph{cocartesian fibration} 
if the opposite map $p^{\op}: X^{\op} \to S^{\op}$ is a cartesian fibration.
\end{enumerate}
\end{dfn}

Now we explain the main definition in this subsection.

\begin{dfn}[{\cite[Definition 5.2.2.1]{Lur1}}]
\label{dfn:ic:adj}
Let $\iB$ and $\iC$ be $\infty$-categories. 
An \emph{adjunction} between $\iB$ and $\iC$ is a simplicial map $p: M \to \Delta^1$
which is both a cartesian fibration and a cocartesian fibration 
together with equivalences $f: \iB \to p^{-1}\{0\}$ and $g: \iC \to p^{-1}\{1\}$.
In this case we say that $f$ is \emph{left adjoint} to $g$ and 
that $g$ is \emph{right adjoint} to $f$,
and denote 
\[
 f: \iB \adjunc \iC :g.
\]
\end{dfn}

As in the ordinary category theory,
an adjunction can be restated by a unit (and by a counit).

\begin{dfn}[{\cite[Definition 5.2.2.7]{Lur1}}]\label{dfn:ic:uc}
Let us given a pair of functors 
$(f: \iB \to \iC, g: \iC \to \iB)$ of $\infty$-categories.
A \emph{unit transformation} for $(f,g)$ is a morphism 
$u: \id_{\iB} \to g \circ f$ in $\iFun(\iB,\iB)$ 
such that for any $B \in \iB$ and $C \in \iC$ the composition
\[
 \Map_{\iC}(f(B),C) \longto \Map_{\iB}(g(f(B)),g(C)) 
 \xrr{ u(C) } \Map_{\iB}(B,g(C))
\]
is an isomorphism in the homotopy category $\topH$ of spaces.

Dually we have the notion of a \emph{counit transformation}
$c: f \circ g \to \id_{\iB}$.
\end{dfn}

\begin{fct}[{\cite[Proposition 5.2.2.8]{Lur1}}]
\label{fct:ic:uc}
Let $(f: \iB \to \iC, g: \iC \to \iB)$ be 
a pair of functors of $\infty$-categories.
Then the following conditions are equivalent.
\begin{itemize}[nosep]
\item 
The functor $f$ is a left adjoint to $g$.
\item
There exists a unit transformation $u: \id_{\iB} \to g \circ f$.
\end{itemize}
We have a dual statement for right adjoint and counit transformation.
\end{fct}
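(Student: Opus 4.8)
The plan is to route both implications through the \emph{mapping-space equivalences} carried by a bicartesian fibration, viewing the map $p \colon M \to \Delta^1$ of Definition \ref{dfn:ic:adj} as a correspondence between its fibers $M_0 \simeq \iB$ and $M_1 \simeq \iC$. The basic observation is that a cocartesian edge realizes $f$ as a pushforward while a cartesian edge realizes $g$ as a pullback, and each carries a universal property on mapping spaces. Concretely, for $B \in M_0$ choose a cocartesian edge $B \to f(B)$ over the nondegenerate edge $0 \to 1$, and for $C \in M_1$ choose a cartesian edge $g(C) \to C$ over the same edge. By Definition \ref{dfn:ic:cc-fib} these furnish, for all such $B,C$, natural equivalences in $\topH$
\[
 \Map_{\iC}(f(B),C) \simeq \Map_M(B,C) \simeq \Map_{\iB}(B,g(C)),
\]
the left one from the cocartesian property (composition with $B \to f(B)$) and the right one from the cartesian property (composition with $g(C) \to C$); the composite is the adjunction equivalence.

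First I would treat the forward direction. Assuming $f$ is left adjoint to $g$ in the sense of Definition \ref{dfn:ic:adj}, I would define the unit pointwise by transporting $\id_{f(B)}$ through the displayed composite with $C = f(B)$, producing a morphism $u_B \colon B \to g(f(B))$ in $\iB$. The substantive task is to assemble the $u_B$ into a single morphism $u \colon \id_{\iB} \to g \circ f$ in $\iFun(\iB,\iB)$; I would do this by promoting the pointwise choice to a functor $\iB \times \Delta^1 \to M$ over $p$ realizing the cocartesian edges, then factoring through the cartesian edges to land in the fiber $M_0$. Once $u$ is constructed, verifying the condition of Definition \ref{dfn:ic:uc} amounts to checking that the composite $\Map_{\iC}(f(B),C) \to \Map_{\iB}(gf(B),gC) \xr{u(C)} \Map_{\iB}(B,gC)$ coincides with the adjunction equivalence above; this is a diagram chase in $\topH$ using the factorization of the chosen cocartesian edge through the chosen cartesian edge, and an equivalence is in particular an isomorphism in $\topH$.

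For the converse I would start from $g$ alone and form the cartesian fibration $M_g \to \Delta^1$ with fibers $M_0 = \iB$ and $M_1 = \iC$ whose cartesian transport $M_1 \to M_0$ is $g$. By Definition \ref{dfn:ic:adj}, exhibiting $f$ as a left adjoint to $g$ is the same as showing $M_g$ is also cocartesian with cocartesian transport $f$, so it suffices to produce cocartesian lifts. Given $B \in \iB$, I would use the unit $u_B \colon B \to g(f(B))$ together with the canonical cartesian edge $g(f(B)) \to f(B)$ of $M_g$ to manufacture a candidate edge $\bar e_B \colon B \to f(B)$ over $0 \to 1$. The mapping-space condition in Definition \ref{dfn:ic:uc} says precisely that composition with $\bar e_B$ induces an equivalence $\Map_{\iC}(f(B),C) \to \Map_{M_g}(B,C)$ for all $C$, which is the defining property of a cocartesian edge; hence $M_g$ is bicartesian and realizes the desired adjunction.

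The hard part will be the coherence in the forward direction: upgrading the individually defined units $u_B$ to an honest natural transformation (a functor $\iB \times \Delta^1 \to M$ over $\Delta^1$) rather than a mere family, and dually in the converse, upgrading the pointwise cocartesianness of the edges $\bar e_B$ to the global statement that $M_g \to \Delta^1$ is a cocartesian fibration. Both are resolved by the standard fibrational technology for $\Delta^1$-indexed correspondences---functoriality of (co)cartesian transport up to contractible ambiguity---rather than by any genuinely new computation. Finally, the dual statement (a right adjoint is characterized by a counit) follows formally by applying the proven equivalence to the opposite $\infty$-categories, since $p \colon M \to \Delta^1$ is bicartesian if and only if $p^{\op} \colon M^{\op} \to \Delta^1$ is, with the roles of cartesian and cocartesian interchanged.
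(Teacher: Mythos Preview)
The paper does not supply a proof of this statement: it is recorded as a \emph{Fact} with a citation to \cite[Proposition 5.2.2.8]{Lur1}, and the text moves on immediately. So there is no in-paper argument to compare against.

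Your sketch is a faithful outline of the proof one finds in the cited reference. The key maneuver---realizing the adjunction as a bicartesian fibration $p \colon M \to \Delta^1$, extracting the unit by factoring a cocartesian edge $B \to f(B)$ through the cartesian edge $g(f(B)) \to f(B)$, and conversely using the unit condition to certify that the candidate edges $B \to f(B)$ in the cartesian fibration $M_g$ are cocartesian---is exactly Lurie's approach. You correctly identify the genuine technical content as the coherence step (assembling the pointwise $u_B$ into a morphism in $\iFun(\iB,\iB)$), and you are right that this is handled by the functoriality of (co)cartesian transport over $\Delta^1$ rather than by any ad hoc argument. One small sharpening: in the converse direction, to conclude that $M_g \to \Delta^1$ is a cocartesian fibration it suffices to exhibit, for each $B$ in the fiber over $0$, a single locally cocartesian edge over $0 \to 1$; over $\Delta^1$ the distinction between locally cocartesian and cocartesian collapses, so the pointwise check you describe is already enough and no further ``global upgrade'' is needed.
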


For an existence criterion of an adjoint functor via exactness,
see Fact \ref{fct:ic:adj}.

In the main text we will use following statement repeatedly.

\begin{fct*}[{\cite[Proposition 5.2.2.6]{Lur1}}]
Let $f_i: \iC_i \to \iC_{i+1}$ ($i=1,2$) be functors of $\infty$-categories.
Suppose that $f_i$ has a right adjoint $g_i$ ($i=1,2$).
Then $g_2 \circ g_1$ is right adjoint to $f_2 \circ f_1$.
\end{fct*}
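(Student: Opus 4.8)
The plan is to reduce the assertion to the characterization of adjunctions by unit transformations, i.e.\ to Fact \ref{fct:ic:uc}. Since $f_1$ is left adjoint to $g_1$, that fact supplies a unit transformation $u_1 : \id_{\iC_1} \to g_1 \circ f_1$ in the functor $\infty$-category $\iFun(\iC_1,\iC_1)$, and since $f_2$ is left adjoint to $g_2$ it supplies $u_2 : \id_{\iC_2} \to g_2 \circ f_2$ in $\iFun(\iC_2,\iC_2)$. Writing $g_1 \circ g_2$ for the composite $\iC_3 \xr{g_2} \iC_2 \xr{g_1} \iC_1$, I would produce a candidate unit for the pair $(f_2 \circ f_1,\, g_1 \circ g_2)$ as the composition
\[
 u : \id_{\iC_1} \xr{u_1} g_1 \circ f_1
     \xrr{g_1 \star u_2 \star f_1} g_1 \circ g_2 \circ f_2 \circ f_1
\]
in $\iFun(\iC_1,\iC_1)$, where $g_1 \star u_2 \star f_1$ is the whiskering of $u_2$ by $g_1$ on the left and $f_1$ on the right (precompose $u_2$ with $f_1 : \iC_1 \to \iC_2$, then postcompose with $g_1 : \iC_2 \to \iC_1$). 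This is the $\infty$-categorical analogue of the classical formula for the unit of a composed adjunction.

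The heart of the argument is to check that $u$ has the unit property of Definition \ref{dfn:ic:uc}: for all $X \in \iC_1$ and $Z \in \iC_3$ the induced map
\[
 \Map_{\iC_3}\big((f_2 \circ f_1)(X),\, Z\big) \longto
 \Map_{\iC_1}\big((g_1 \circ g_2 \circ f_2 \circ f_1)(X),\, (g_1 \circ g_2)(Z)\big)
 \xrr{u(Z)}
 \Map_{\iC_1}\big(X,\, (g_1 \circ g_2)(Z)\big)
\]
is an isomorphism in $\topH$. I would do this by inserting $\Map_{\iC_2}(f_1(X),\, g_2(Z))$ in the middle: the unit property of $u_2$, applied to the objects $f_1(X) \in \iC_2$ and $Z \in \iC_3$, gives an equivalence $\Map_{\iC_3}(f_2(f_1 X), Z) \simto \Map_{\iC_2}(f_1 X, g_2 Z)$, and the unit property of $u_1$, applied to $X \in \iC_1$ and $g_2(Z) \in \iC_2$, gives an equivalence $\Map_{\iC_2}(f_1 X, g_2 Z) \simto \Map_{\iC_1}(X, g_1 g_2 Z)$; their composite is an equivalence. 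It then remains to verify that this composite coincides in $\topH$ with the map displayed above. Granting that, Fact \ref{fct:ic:uc} shows $f_2 \circ f_1$ is left adjoint to $g_1 \circ g_2$, which is the assertion.

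I expect the last compatibility — identifying the concatenation of the two mapping-space equivalences with the map induced by the whiskered unit $u$ — to be the main obstacle. It is a coherence check: one must unwind how the bifunctor $\Map_{\iC}(-,-)$ valued in $\topH$ transports vertical composition and whiskering of natural transformations, and then trace the definition of the unit property for $u_1$ and $u_2$ to see that their effects concatenate to the effect of $u(Z)$ after the evident functorial maps. This is routine but fiddly, since everything is only determined up to coherent homotopy. A fallback, closer to Lurie's own argument, would be the bifibration route of Definition \ref{dfn:ic:adj}: realize $f_1 \dashv g_1$ and $f_2 \dashv g_2$ by maps $M_1, M_2 \to \Delta^1$ that are both cartesian and cocartesian fibrations, glue them over $\Delta^2$, and restrict back to $\Delta^1$ to obtain a fibration whose two fibres are $\iC_1$ and $\iC_3$ and whose cocartesian (resp.\ cartesian) transport along $\Delta^1$ is $f_2 \circ f_1$ (resp.\ $g_1 \circ g_2$); this sidesteps the whiskering diagram chase at the price of bookkeeping with (co)cartesian edges. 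Finally, the resulting right adjoint is canonical: a right adjoint to a fixed functor of $\infty$-categories is determined up to a contractible space of choices.
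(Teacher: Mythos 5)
Your argument is correct, but note that the paper offers no proof of this statement at all: it is quoted as a Fact with a citation to \cite[Proposition 5.2.2.6]{Lur1}, so there is nothing internal to compare against. Your primary route — build the composite unit $u=(g_1\star u_2\star f_1)\circ u_1$ by whiskering and verify the unit property of Definition \ref{dfn:ic:uc} by factoring the mapping-space map through $\Map_{\iC_2}(f_1 X, g_2 Z)$ — is sound, and the ``fiddly'' compatibility you flag is genuinely routine: the unit condition lives in the $\topH$-enriched homotopy category, where composition and whiskering of natural transformations behave $1$-categorically, so the identification
\[
 (g_1 g_2\phi)\circ u(X)\;\simeq\;\bigl(g_1\bigl((g_2\phi)\circ u_2(f_1X)\bigr)\bigr)\circ u_1(X)
\]
is an enriched-functoriality computation with no higher coherence left to check. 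One caveat about logical order: in Lurie's numbering the unit characterization (your Fact \ref{fct:ic:uc}, i.e.\ HTT 5.2.2.8) appears \emph{after} 5.2.2.6, and Lurie's own proof is essentially your fallback — glue the two bicartesian fibrations $M_1,M_2\to\Delta^1$ over $\Delta^2$ and restrict to the long edge — so if one insists on not using a later result the fibration route is the ``official'' one; but as a self-contained argument taking Fact \ref{fct:ic:uc} as given, yours is complete. Finally, you silently and correctly repair the composition order: with $f_1:\iC_1\to\iC_2$ and $f_2:\iC_2\to\iC_3$ the right adjoint of $f_2\circ f_1$ is $g_1\circ g_2:\iC_3\to\iC_1$, not ``$g_2\circ g_1$'' as the statement literally reads.
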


%%%%%%%%%%%%%%%%%%%%%%%%%%%%%%%%%%%%%%%%%%%%%%%%%%%%%%%%%%%%%%%%%%%%%%%%%%%%%%%%
\subsection{The underlying $\infty$-category of a simplicial model category}
\label{ss:ic:uis}

In the main text we often translate the model-categorical arguments 
in \cite{TVe1,TVe2} into the $\infty$-categorical arguments.
Such a translation is possible by 
the notion of the \emph{underlying $\infty$-category}, which is explained here.

Let us recall the monoidal model category structure \cite[Definition A.3.1.2]{Lur1} 
on the category $\sSet$ of simplicial sets given by 
the cartesian product and the Kan model structure (Fact \ref{fct:ic:Km}).
Here the cartesian product of $S,T \in \sSet$ is given by 
$(S \times T)_n := S_n \times T_n$, 
where the latter $\times$ means the cartesian product in the category $\Set$.

\begin{dfn}[{\cite[Definition A.3.1.5]{Lur1}, \cite[Definition 4.2.18]{H}}]
\label{dfn:ic:spl-model}
A \emph{simplicial model category} $\spA$ is a simplicial category equipped 
with a model structure satisfying the following conditions.
\begin{itemize}[nosep]
\item 
The category $\spA$ is tensored and cotensored over 
the monoidal model category $\sSet$ in the sense of \cite[Remark A.1.4.4]{Lur1}.
\item
The action map $\otimes: \spA \times \sSet \to \spA$ arising from 
the tensored structure is a left Quillen bifunctor.
\end{itemize}
\end{dfn}

One can construct an $\infty$-category from a simplicial model category $\spA$.
Let us denote by $\spA^{\circ} \subset \spA$ the full subcategory of 
fibrant-cofibrant objects, which is a fibrant simplicial category.
Taking the simplicial nerve (Definition \ref{dfn:ic:Nsp}),
we obtain an $\infty$-category $\Nsp(\spA^{\circ})$ by Fact \ref{fct:ic:Nsp}.

\begin{dfn}[{\cite[\S A.2]{Lur1}}]\label{dfn:ic:um}
We call the $\infty$-category $\Nsp(\spA^{\circ})$ 
the \emph{underlying $\infty$-category} of the simplicial model category $\spA$.
\end{dfn}

Let us cite a result on adjunctions.

\begin{fct}[{\cite[Proposition 5.2.4.6]{Lur1}}]\label{fct:ic:Qi}
Given a Quillen adjunction $\spA \adjunc \spA'$ of simplicial model categories,
there is a natural adjunction $\Nsp(\spA^{\circ}) \adjunc \Nsp(\spA')$ 
of the underlying $\infty$-categories.
\end{fct}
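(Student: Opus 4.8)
The plan is to build the two adjoint functors on the underlying $\infty$-categories by restricting the Quillen pair to (co)fibrant objects, correcting by functorial replacements, and then to certify the adjunction via the unit criterion of Fact \ref{fct:ic:uc}. Write the Quillen adjunction as $F: \spA \rlto \spA' :G$, with $F$ left Quillen and $G$ right Quillen. I will take it to be compatible with the simplicial enrichments (a \emph{simplicial} Quillen adjunction), so that $F$ and $G$ are $\sSet$-enriched functors and the adjunction refines to a natural isomorphism of simplicial mapping complexes $\Map_{\spA'}(FX, Y) \simeq \Map_{\spA}(X, GY)$. This is the relevant case for the adjunctions imported from \cite{TVe1,TVe2} in the main text, where the ambient model categories are combinatorial and admit functorial (simplicial) factorizations.

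First I would define the induced functors. Recall that in a simplicial model category the pushout-product axiom (Definition \ref{dfn:ic:spl-model}) forces $\Map_{\spA}(X,Y)$ to be a Kan complex whenever $X$ is cofibrant and $Y$ fibrant, so that $\spA^{\circ}$ and $(\spA')^{\circ}$ are fibrant simplicial categories and their simplicial nerves are $\infty$-categories by Fact \ref{fct:ic:Nsp}. Since $F$ is left Quillen it preserves cofibrant objects; post-composing with a functorial fibrant replacement $R'$ in $\spA'$ gives a simplicial functor $\spA^{\circ} \to (\spA')^{\circ}$, $X \mapsto R'FX$, whose simplicial nerve I take to be $f: \Nsp(\spA^{\circ}) \to \Nsp((\spA')^{\circ})$ (Definition \ref{dfn:ic:um}). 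Dually, $G$ preserves fibrant objects, and composing with a functorial cofibrant replacement $Q$ in $\spA$ gives a simplicial functor $(\spA')^{\circ} \to \spA^{\circ}$, $Y \mapsto QGY$, with simplicial nerve $g$. A routine check shows $R'FX$ and $QGY$ are again fibrant-cofibrant, so $f$ and $g$ land where claimed.

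Next I would produce, for $X \in \spA^{\circ}$ and $Y \in (\spA')^{\circ}$, a natural chain of weak equivalences of Kan complexes
\[
 \Map_{\Nsp((\spA')^{\circ})}(f(X), Y) \simeq \Map_{\spA'}(FX, Y) \simeq \Map_{\spA}(X, GY) \simeq \Map_{\Nsp(\spA^{\circ})}(X, g(Y)).
\]
The first equivalence combines the identification of simplicial-nerve mapping spaces with enriched mapping complexes for fibrant simplicial categories and the fact that, $Y$ being fibrant, applying $\Map_{\spA'}(-,Y)$ to the trivial cofibration $FX \to R'FX$ yields a trivial fibration; the middle one is the simplicial adjunction; the last one is symmetric, using that $X$ is cofibrant and $QGY \to GY$ is a trivial fibration between fibrant objects, so $\Map_{\spA}(X,-)$ takes it to a trivial fibration. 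All three steps use the pushout-product axiom.

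Finally I would upgrade this objectwise chain to an adjunction. From it I would extract a unit transformation $u: \id_{\Nsp(\spA^{\circ})} \to g \circ f$ (Definition \ref{dfn:ic:uc}) inducing, for every $X$ and $Y$, precisely the above equivalence on mapping spaces, and then invoke Fact \ref{fct:ic:uc} to conclude that $f$ is left adjoint to $g$, i.e. the desired adjunction $\Nsp(\spA^{\circ}) \adjunc \Nsp((\spA')^{\circ})$ in the sense of Definition \ref{dfn:ic:adj}. The hard part will be coherence: the mapping-space equivalences are so far only defined objectwise, and the real work is to assemble the functorial replacements and the pushout-product weak equivalences into a single homotopy-coherent natural equivalence of functors $\Nsp(\spA^{\circ})^{\op} \times \Nsp((\spA')^{\circ}) \to \iS$, and to realize it by an actual unit map rather than a mere objectwise equivalence. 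This is exactly where one must use that $R'$ and $Q$ are simplicial functors, and where the technical content of \cite[Proposition 5.2.4.6]{Lur1} resides.
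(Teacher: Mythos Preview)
The paper does not supply a proof of this statement: it is recorded as a \emph{Fact} citing \cite[Proposition 5.2.4.6]{Lur1} and is used as a black box. There is therefore no paper-proof to compare your proposal against.

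That said, your outline is a reasonable sketch of how one would argue, and is close in spirit to Lurie's actual argument: build the derived functors as simplicial nerves of $R'F$ and $QG$, use the enriched adjunction together with the pushout-product axiom to get natural weak equivalences of mapping Kan complexes, and then invoke the unit criterion. You correctly flag the real difficulty, namely that the objectwise equivalences must be assembled into a coherent unit transformation, and that this depends on having \emph{simplicial} functorial factorizations so that $R'$ and $Q$ are $\sSet$-enriched functors. Note that this last point is not automatic for arbitrary simplicial model categories; you are tacitly narrowing the hypotheses to the combinatorial case (which, as you observe, suffices for the applications in the main text). If you wanted to prove the Fact in the generality stated, you would need either an extra argument to produce simplicial replacements or a different packaging of the adjunction (Lurie in fact handles this via marked simplicial sets and the model for adjunctions as bifibrations over $\Delta^1$, which sidesteps some of the coherence bookkeeping you anticipate).
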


%%%%%%%%%%%%%%%%%%%%%%%%%%%%%%%%%%%%%%%%%%%%%%%%%%%%%%%%%%%%%%%%%%%%%%%%%%%%%%%%
%%%%%%%%%%%%%%%%%%%%%%%%%%%%%%%%%%%%%%%%%%%%%%%%%%%%%%%%%%%%%%%%%%%%%%%%%%%%%%%%
\subsection{$\infty$-localization}
\label{ss:ic:inf-loc}

We cite from \cite[\S 1.3.4]{Lur2}
terminologies on localization of $\infty$-categories.

\begin{dfn}[{\cite[Definition 1.3.4.1]{Lur2}}]
\label{dfn:ic:loc}
Let $\iC$ be an $\infty$-category and $W$ be a collection of morphisms in $\iC$. 
We say that \emph{a functor $f: \iC \to \iD$ exhibits $\iD$ as 
the $\infty$-category obtained from $\iC$ by inverting the set of morphisms $W$}
if, for every $\infty$-category $\iB$, composition with $f$ induces 
a fully faithful embedding $\iFun(\iD,\iB) \to \iFun(\iC,\iB)$, 
whose image is the collection of functors $F : \iC \to \iB$ 
mapping each morphism in $W$ to an equivalence in $\iB$. 
In this case we denote $\iC[W^{-1}]:=\iD$.
\end{dfn}

Note that $\iC[W^{-1}]$ is determined uniquely
up to equivalence by $\iC$ and $W$,
Note also that  $\iC[W^{-1}]$ exists for any $\iC$ and $W$.
See \cite[Remark 1.3.4.2]{Lur2} for an account.
%To prove this, we can assume without loss of generality that $W$ contains 
%all degenerate edges of $\iC$, in which case $\iC[W^{-1}]$ 
%can be identified with a fibrant replacement for the pair $(C,W)$ 
%in the category $\iSet^+_{\Delta}$ of marked simplicial sets (see HTT.3.1)

Now let us recall

\begin{dfn}[{\cite[Definition 5.2.7.2]{Lur1}}]\label{dfn:ic:loc-func}
A functor $L: \iC \to \iD$ of $\infty$-categories is called 
a \emph{localization} (\emph{functor}) if $L$ has a fully faithful right adjoint.
\end{dfn}

Then by \cite[Proposition 5.2.7.12]{Lur1} we have 

\begin{fct}[{\cite[Example 1.3.4.3]{Lur2}}]\label{fct:ic:loc}
Let $\iC$ be an $\infty$-category and $L: \iC \to \iC_0$ be a localization.
We denote by $i: \iC_0 \inj \iC$ the fully faithful right adjoint of $L$.
Define $W$ to be the collection of those morphisms $\alpha$ in $\iC$ 
such that $L(\alpha)$ is an equivalence in $\iC_0$. 
Then the composite $\iC_0 \xr{i} \iC \xr{L} \iC[W^{-1}]$ 
is an equivalence of $\infty$-categories.
\end{fct}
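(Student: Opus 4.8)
The plan is to verify directly that the localization functor $L\colon\iC\to\iC_0$ already \emph{exhibits} $\iC_0$ as the $\infty$-category $\iC[W^{-1}]$ in the sense of Definition \ref{dfn:ic:loc}; once this is done, the uniqueness up to equivalence of $\iC[W^{-1}]$ recorded right after that definition identifies $\iC_0$ with $\iC[W^{-1}]$ compatibly with $L$ and the canonical functor $\iC\to\iC[W^{-1}]$, and the composite in the statement is then just a reformulation of this identification. So the real content is the universal property, which I would organise in three steps.

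\emph{Step 1 (setup).} By Definition \ref{dfn:ic:loc-func}, $L$ is left adjoint to the fully faithful functor $i\colon\iC_0\inj\iC$, so by Fact \ref{fct:ic:uc} there is a unit $u\colon\id_{\iC}\to i\circ L$ and a counit $c\colon L\circ i\to\id_{\iC_0}$ (Definition \ref{dfn:ic:uc}); full faithfulness of $i$ forces $c$ to be an equivalence, as is standard for a reflective localization. \emph{Step 2 (the key point).} For every $X\in\iC$ the unit morphism $u_X\colon X\to iL(X)$ lies in $W$: applying $L$ and using one triangle identity together with $c\simeq\id_{\iC_0}$ shows that $L(u_X)\colon L(X)\to LiL(X)$ is an equivalence in $\iC_0$, which is precisely the defining condition for $u_X\in W$. \emph{Step 3 (universal property).} Fix an $\infty$-category $\iB$ and consider the precomposition functors $(-\circ L)\colon\iFun(\iC_0,\iB)\to\iFun(\iC,\iB)$ and $(-\circ i)\colon\iFun(\iC,\iB)\to\iFun(\iC_0,\iB)$. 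Their composite $(-\circ i)\circ(-\circ L)$ is $(-\circ(Li))$, which is equivalent to the identity because $Li\simeq\id_{\iC_0}$; hence $(-\circ L)$ is fully faithful. For the essential image: if $F\simeq G\circ L$ then $F$ inverts $W$ because $L$ does by the very definition of $W$; conversely, if $F\colon\iC\to\iB$ inverts $W$, then by Step 2 the natural transformation $F(u)\colon F\to(F\circ i)\circ L$ is objectwise an equivalence, so $F$ lies in the image of $(-\circ L)$. This is exactly the condition in Definition \ref{dfn:ic:loc}, so $L$ exhibits $\iC_0$ as $\iC[W^{-1}]$.

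The step I expect to be the main obstacle is making Step 3 honest in the genuinely $\infty$-categorical sense: "fully faithful" and "$F(u)$ is an equivalence" have to be checked on mapping spaces in the functor $\infty$-categories $\iFun(\iC_0,\iB)$ and $\iFun(\iC,\iB)$ rather than merely in their homotopy categories, and the triangle identities themselves are coherence data living in these functor $\infty$-categories. Rather than reprove this, I would invoke \cite[Proposition 5.2.7.12]{Lur1}, which packages precisely this verification; everything else above is then bookkeeping with the unit, the counit, and the observation of Step 2.
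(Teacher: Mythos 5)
Your proposal is correct and follows essentially the same route as the paper, which obtains this Fact directly from \cite[Proposition 5.2.7.12]{Lur1} (the statement being \cite[Example 1.3.4.3]{Lur2}); your Steps 1--3 are precisely the content of that proposition, and you correctly identify it as the place where the $\infty$-categorical coherence is packaged. The one local slip is in Step 3: knowing only that $(-\circ i)\circ(-\circ L)\simeq\id$ makes the maps $\Map(G,G')\to\Map(G\circ L,G'\circ L)$ split monomorphisms rather than equivalences, and one must also use Step 2 (the unit lies in $W$, so whiskering with $u$ identifies the other composite with the identity) to conclude full faithfulness --- but since this is exactly what the cited Proposition 5.2.7.12 supplies, nothing is genuinely missing.
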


%%%%%%%%%%%%%%%%%%%%%%%%%%%%%%%%%%%%%%%%%%%%%%%%%%%%%%%%%%%%%%%%%%%%%%%%%%%%%%%%
\subsection{Presentable $\infty$-categories}
\label{ss:ic:presentable}

Most of the $\infty$-categories appearing in the main text 
are presentable in the sense of \cite[\S 5.5]{Lur1},
and enjoy a good property with respect to taking limits.
The notion of a presentable $\infty$-category 
is an $\infty$-theoretic analogue of 
the notion of a locally presentable category.

First we want to introduce the notion of a filtered $\infty$-category,
which is an $\infty$-theoretic analogue of filtered categories.
Recall the notation $K^{\triangleright}$ of the right cone of a simplicial set $K$
(Definition \ref{dfn:ic:cone}).

\begin{dfn}\label{dfn:ic:filtered}
Let $\kappa$ be a regular cardinal.
An $\infty$-category $\iC$ is called \emph{$\kappa$-filtered} 
if for any $\kappa$-small simplicial set $K$ and any simplicial map $f: K \to \iC$
there exists a simplicial map $\ol{f}: K^{\triangleright} \to \iC$ extending $f$.
\end{dfn}

Next we turn to the definition of ind-objects in an $\infty$-category.

\begin{dfn*}[{\cite[Definition 5.3.1.7]{Lur1}}]
For an $\infty$-category $\iC$ and a regular cardinal $\kappa$,
we denote by $\Ind_{\kappa}(\iC)$ the full sub-$\infty$-category of $\iPSh(\iC)$
spanned by the functors $f: \iC^{\op} \to \iS$ classifying 
right fibrations $\wt{\iC} \to \iC$, 
where the $\infty$-category $\wt{\iC}$ is $\kappa$-filtered.
An object of $\Ind_{\kappa}(\iC)$ is called an \emph{ind-object of $\iC$}.
\end{dfn*}

Here we used 

\begin{dfn*}[{\cite[Definition 2.0.0.3]{Lur1}}]
A simplicial map $f: X \to S$ is called a \emph{right fibration}
if $f$ has the right lifting property with respect to 
all the horn inclusions $\Lambda^n_i \inj \Delta^n$ for any $0 < i \le n$.
\end{dfn*}

Finally we introduce 

\begin{dfn*}[{\cite[Definition 5.4.2.1]{Lur1}}]
Let $\iC$ be an $\infty$-category.
\begin{enumerate}[nosep]
\item 
Let $\kappa$ be a regular cardinal.
We call $\iC$ \emph{$\kappa$-accessible} if there exists a small $\infty$-category
$\iC^0$ and an equivalence $\Ind_\kappa(\iC^0) \to \iC$.
\item
$\iC$ is called \emph{accessible} if it is $\kappa$-accessible 
for some regular cardinal $\kappa$.
\end{enumerate}
\end{dfn*}

Accessible $\infty$-categories have a nice property in terms of compact objects.
In order to state its precise definition, 
for an $\infty$-category $\iC$ and an object $C \in \iC$ 
let us denote by $j_C: C \to \wh{\iS}$ the composition
\[
 \iPSh(\iC) = \iFun(\iC^{\op},\wh{\iS}) \longto \iFun(\{s\},\wh{\iS}) 
 \longsimto \wh{\iS}
\]
and call it the \emph{functor corepresented by $C$}.
Here $\wh{\iS}$ denotes the $\infty$-category of (not necessarily small) spaces
(recall that $\iS$ denotes the $\infty$-category of small spaces).

\begin{dfn}[{\cite[Definition 5.3.4.5]{Lur1}}]\label{dfn:ic:cpt}
Let $\kappa$ be a regular cardinal, and
$\iC$ be an $\infty$-category admitting small $\kappa$-filtered colimits.
\begin{enumerate}[nosep]
\item
A functor $f: \iC \to \iD$ of $\infty$-categories is called 
\emph{$\kappa$-continuous} if it preserves $\kappa$-filtered colimits.
\item
Assume $\iC$ admits $\kappa$-filtered colimits.
Then an object $C \in \iC$ is called \emph{$\kappa$-compact} if the functor 
$j_C: C \to \wh{\iS}$ corepresented by $C$ is $\kappa$-continuous.
\end{enumerate}
\end{dfn}

\begin{dfn}[{\cite[Definition 5.5.0.1]{Lur1}}]\label{dfn:ic:pres}
An $\infty$-category $\iC$ is called \emph{presentable}
if it is accessible and admits arbitrary small colimits.
\end{dfn}

Let us cite an equivalent definition of presentable $\infty$-categories.
For that we need

\begin{dfn*}
%As in the ordinary category theory,
Let $\iC$ be an $\infty$-category.
\begin{enumerate}[nosep]
\item 
$\iC$ is \emph{essentially small} if 
it is $\kappa$-compact as an object of $\iCat$
for some small regular cardinal $\kappa$.

\item
$\iC$ is \emph{locally small} if for any objects $X,Y \in \iC$ 
the mapping space $\Map_{\iC}(X,Y)$ is essentially small
as an $\infty$-category.
\end{enumerate}
\end{dfn*}

See \cite[Proposition 5.4.1.2]{Lur1} for equivalent definitions 
of essential smallness.

\begin{fct}[{\cite[Theorem 5.5.1.1]{Lur1}}]\label{fct:ic:pr=lpr}
For an $\infty$-category $\iC$,
the following conditions are equivalent.
\begin{enumerate}[nosep]
\item 
$\iC$ is presentable.
\item
$\iC$ is locally small and admits small colimits, 
and there exists a regular cardinal $\kappa$ and a small set $S$ of 
$\kappa$-compact objects of $\iC$ such that 
every object of $\iC$ is a colimit of a small diagram taking
values in the full sub-$\infty$-category of $\iC$ spanned by $S$.
\end{enumerate}
\end{fct}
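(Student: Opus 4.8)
The plan is to prove the two implications separately, in each case routing everything through the theory of $\Ind$-categories and compact objects recalled around Definition~\ref{dfn:ic:cpt} and Definition~\ref{dfn:ic:pres}.

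For (1) $\Rightarrow$ (2): if $\iC$ is presentable it admits small colimits by definition, so only local smallness and the generation statement remain. Since $\iC$ is accessible, I would fix a small $\infty$-category $\iC^0$, a regular cardinal $\kappa$, and an equivalence $\Ind_{\kappa}(\iC^0) \simto \iC$. First one checks that $\iC^0$, being essentially small, is locally small, and then that $\Ind_{\kappa}(\iC^0)$ is locally small: its mapping spaces are computed as $\kappa$-filtered colimits of mapping spaces of $\iC^0$ via the description of objects of $\Ind_{\kappa}$ as $\kappa$-filtered colimits of representables, and a small colimit of small spaces is small. For the set $S$, take (a choice of representatives of) the essential image of the composite $\iC^0 \to \iPSh(\iC^0)$, which lands in $\Ind_{\kappa}(\iC^0) \simeq \iC$; each such object is $\kappa$-compact because a representable presheaf corepresents a functor commuting with $\kappa$-filtered colimits, and every object of $\Ind_{\kappa}(\iC^0)$ is a $\kappa$-filtered, hence small, colimit of a diagram valued in $S$.

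For (2) $\Rightarrow$ (1): fix $\kappa$ and $S$ as in (2). The first step is to enlarge $S$ to a small full sub-$\infty$-category $\iC_0 \subset \iC$ containing $S$ and closed under $\kappa$-small colimits; such a $\iC_0$ exists because $\iC$ is locally small and admits small colimits, via a transfinite construction of length $\omega$ in which one keeps adjoining colimits of the (small set of) $\kappa$-small diagrams available at each stage. Since a $\kappa$-small colimit of $\kappa$-compact objects is again $\kappa$-compact, every object of $\iC_0$ is $\kappa$-compact in $\iC$. Next, the universal property of $\Ind_{\kappa}$ applied to the inclusion $\iC_0 \hookrightarrow \iC$ — legitimate because $\iC$ admits small, in particular $\kappa$-filtered, colimits — produces a functor $F \colon \Ind_{\kappa}(\iC_0) \to \iC$ preserving $\kappa$-filtered colimits; as $\kappa$-small colimits in $\Ind_{\kappa}(\iC_0)$ are computed compatibly with $\iC_0 \subset \iC$, the functor $F$ preserves all small colimits. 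I would then show $F$ is an equivalence: it is essentially surjective because its essential image is closed under small colimits and contains $S$, while by hypothesis every object of $\iC$ is a small colimit of a diagram in $S$; and it is fully faithful because any object of $\Ind_{\kappa}(\iC_0)$ is a $\kappa$-filtered colimit of objects of $\iC_0$, the functor $\Map_{\iC}(FX,-)$ for $X \in \iC_0$ commutes with that colimit by $\kappa$-compactness of $FX$, and the Yoneda embedding $\iC_0 \hookrightarrow \Ind_{\kappa}(\iC_0)$ is fully faithful. An equivalence $\Ind_{\kappa}(\iC_0) \simto \iC$ exhibits $\iC$ as $\kappa$-accessible, and combined with the hypothesis that $\iC$ has all small colimits this gives presentability.

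The main obstacle I anticipate is the bookkeeping in (2) $\Rightarrow$ (1): constructing the small subcategory $\iC_0$ closed under $\kappa$-small colimits is a genuine cardinality argument that uses local smallness essentially (it is the $\infty$-categorical analogue of closing a set of objects under bounded colimits inside a locally small category), and one must verify that the very $\kappa$ witnessing compactness of $S$ still witnesses compactness of every object of $\iC_0$ — which works precisely because $\kappa$-small colimits preserve $\kappa$-compactness. The same compactness fact is what drives full faithfulness of $F$, so pinning down the interaction between "closed under $\kappa$-small colimits" and "$\kappa$-compact" is the crux of the whole argument; the two other implications ($F$ preserves colimits, $F$ essentially surjective, and the entire direction (1) $\Rightarrow$ (2)) are comparatively formal consequences of the $\Ind$-category formalism already in place.
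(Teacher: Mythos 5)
The paper does not actually prove this statement: it is quoted verbatim from \cite{Lur1} (Theorem 5.5.1.1) as a Fact, so there is no internal proof to compare against. Your sketch is essentially a reconstruction of Lurie's own argument for the relevant cycle of implications, and its architecture is sound: (1) $\Rightarrow$ (2) via a presentation $\iC \simeq \Ind_{\kappa}(\iC^0)$ with $S$ the image of the Yoneda embedding, and (2) $\Rightarrow$ (1) by closing $S$ up to a small full sub-$\infty$-category $\iC_0$ of $\kappa$-compact objects admitting $\kappa$-small colimits and identifying $\iC$ with $\Ind_{\kappa}(\iC_0)$.

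Two steps need repair before the argument is watertight. First, the transfinite closure of $S$ under $\kappa$-small colimits must run for $\kappa$ stages, not $\omega$: when $\kappa > \omega$, a $\kappa$-small diagram can have infinitely many vertices scattered through cofinally many stages of an $\omega$-indexed union, so that union need not be closed under $\kappa$-small colimits; it is precisely the regularity of $\kappa$ that makes a $\kappa$-indexed increasing union closed, and the same local-smallness bookkeeping still shows each stage adds only a small set of objects. Second, your essential-surjectivity argument silently presupposes full faithfulness: to assert that the essential image of $F$ is closed under small colimits you must lift a small diagram valued in the essential image to a diagram in $\Ind_{\kappa}(\iC_0)$, which is automatic once $F$ is fully faithful but not before. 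So run the $\kappa$-compactness computation establishing full faithfulness first (that part of your sketch is correct), and only then conclude that the essential image is a full sub-$\infty$-category containing $S$ and closed under small colimits, hence all of $\iC$. It would also be worth spelling out the colimit-preservation claim for $F$: it preserves $\kappa$-filtered colimits by the universal property of $\Ind_{\kappa}$ and $\kappa$-small ones because both $j\colon \iC_0 \to \Ind_{\kappa}(\iC_0)$ and the inclusion $\iC_0 \subset \iC$ do, and every small colimit decomposes into these two kinds. With those adjustments the proof is complete and agrees with the one in \cite{Lur1}.
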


Presentable categories enjoy much nice properties as explained in \cite[\S5.5]{Lur1}.

\begin{fct}\label{fct:ic:pres}
Let $\iC$ be a presentable $\infty$-category.
\begin{enumerate}[nosep]
\item 
$\iC$ admits arbitrary small limits
\cite[Corollary 5.5.2.4]{Lur1}.
\item
The product of presentable categories is presentable
\cite[Proposition 5.5.3.5]{Lur1}.
\item
The functor $\infty$-category of presentable categories is presentable
\cite[Proposition 5.5.3.6]{Lur1}.
\item
The over-$\infty$-category $\oc{\iC}{p}$ and under-$\infty$-category $\iC_{p/}$ 
with respect to a simplicial morphism $p$ of small $\infty$-categories
are presentable \cite[Proposition 5.5.3.10, Proposition 5.5.3.11]{Lur1}.
\end{enumerate}
\end{fct}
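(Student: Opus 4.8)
The four assertions are all recorded in \cite[\S5.5]{Lur1}, and the unifying device is the characterization of presentability in Fact \ref{fct:ic:pr=lpr} together with the presentation of an arbitrary presentable $\infty$-category as an accessible localization of a presheaf category. The plan is to fix, for each presentable $\iC$ that occurs, a small $\infty$-category $\iC^0$ and an adjunction $L: \iPSh(\iC^0) \rlto \iC :i$ in which $i$ is fully faithful and $L$ is an accessible localization functor (Definition \ref{dfn:ic:loc-func}); recall that $\iPSh(\iC^0)$ admits all small colimits (it is freely generated under them by Fact \ref{fct:ic:iPSh:gen}) and all small limits, both computed pointwise from the corresponding (co)limits in $\iS$. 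The isolated technical lemma I would extract first is the statement that \emph{an accessible localization of a presentable $\infty$-category is again presentable}. With this lemma in hand, (1)--(3) become bookkeeping, and the genuine work is concentrated in (4).

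For (1), I would argue that, since $i$ is fully faithful and right adjoint to $L$, the essential image of $i$ is a reflective sub-$\infty$-category of $\iPSh(\iC^0)$, hence closed under those limits that exist in the ambient category; as $\iPSh(\iC^0)$ admits all small limits, so does $\iC$, the limits being obtained by forming them in $\iPSh(\iC^0)$ and observing that they already land in $\iC$. (This is precisely the input invoked in the proof of Corollary \ref{cor:pr:lcl}.) For (2), given a small family $\{\iC_\alpha\}$ of presentable categories I would first choose a single regular cardinal $\kappa$ for which every $\iC_\alpha$ is $\kappa$-accessible, which is possible because the family is small, and then observe that the product $\prod_\alpha \iC_\alpha$ is again $\kappa$-accessible and that its small colimits are computed componentwise, so that the product is cocomplete; presentability then follows from Definition \ref{dfn:ic:pres}. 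For (3), I would use the identification $\iFun(K, \iPSh(\iC^0)) \simeq \iFun(K \times (\iC^0)^{\op}, \iS)$, which is itself a presheaf $\infty$-category and hence presentable; the adjunction $(L,i)$ induces an adjunction $\iFun(K,L): \iFun(K,\iPSh(\iC^0)) \rlto \iFun(K,\iC) :\iFun(K,i)$ exhibiting $\iFun(K,\iC)$ as an accessible localization of a presentable category, so the extracted lemma applies.

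The main obstacle is (4), the over- and under-$\infty$-categories. The cocompleteness half is reasonably direct: colimits in $\iC_{p/}$ and limits in $\oc{\iC}{p}$ are created by the forgetful functor to $\iC$ (Corollary \ref{cor:ic:ovc}), and these exist because $\iC$ is complete and cocomplete by (1) and Definition \ref{dfn:ic:pres}; the complementary families of (co)limits, namely colimits in $\oc{\iC}{p}$ and limits in $\iC_{p/}$, are then built by forming the underlying (co)limit in $\iC$ and adjoining the canonical comparison morphism to or from the base $p$. The subtle point is the \emph{accessibility} of the slices: unlike the previous items there is no evident way to slice an $\Ind$-completion, so one cannot simply transport the chosen presentation of $\iC$. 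Here I would instead verify the second criterion of Fact \ref{fct:ic:pr=lpr} directly inside the slice, exhibiting a regular cardinal $\kappa$ and a small generating set of $\kappa$-compact objects (Definition \ref{dfn:ic:cpt}) of $\oc{\iC}{p}$ (respectively $\iC_{p/}$) assembled from a $\kappa$-compact generating set of $\iC$ together with the finitely many nondegenerate simplices of the small simplicial set $K$, using that $p$ factors through a diagram valued in $\kappa$-compact objects for $\kappa$ taken large enough. Checking that these objects are indeed $\kappa$-compact in the slice, and that every object of the slice is a $\kappa$-filtered colimit of them, is the delicate step, and it is where I would follow \cite[Proposition 5.5.3.10, Proposition 5.5.3.11]{Lur1} most closely.
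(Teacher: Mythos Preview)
The paper does not prove this statement at all: it is recorded as a \texttt{Fact} with bare citations to \cite{Lur1} for each of the four items, and no argument is supplied. So there is no ``paper's own proof'' against which to compare your proposal. What you have written is a reasonable outline of how the cited proofs in \cite[\S5.5]{Lur1} actually go, and your identification of (4) as the item requiring genuine work (accessibility of slices, as opposed to mere cocompleteness) is accurate. Since the paper treats this as a black box, your sketch already exceeds what the paper provides; if your goal is to match the paper, a one-line pointer to the references suffices.
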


%%%%%%%%%%%%%%%%%%%%%%%%%%%%%%%%%%%%%%%%%%%%%%%%%%%%%%%%%%%%%%%%%%%%%%%%%%%%%%%%
\subsection{Truncation functor}

We close this subsection by explaining 
the truncation functor for an $\infty$-category.
We begin with the truncation of objects.

\begin{dfn}[{\cite[Definition 5.5.6.1]{Lur1}}]\label{dfn:ic:n-trc}
Let $\iC$ be an $\infty$-category.
\begin{enumerate}[nosep]
\item 
Let $k \in \bbZ_{\ge -1}$.
An object $C \in \iC$ is called \emph{$k$-truncated} if 
for any $D \in \iC$ the space $\Map_{\iC}(D,C)$ is $k$-truncated,
i.e., $\pi_i \Map_{\iC}(D,C) = *$ for $i \in \bbZ_{\ge k+1}$.

\item
A \emph{discrete} object is defined to be a $0$-truncated object.

\item
An object $C$ is called \emph{$(-2)$-truncated} if it is a final object of $\iC$
(Definition \ref{dfn:ic:ini-fin}).
\item
For $k \in \bbZ_{\ge -2}$, we denote by $\tau_{\le k} \iC$ 
the full sub-$\infty$-category of $\iC$ spanned by $k$-truncated objects.
\end{enumerate}
\end{dfn}

%As a consequence, 
%The mapping space $\Map_{\iC}(C,D)$ in an $\infty$-category $\iC$
%can be seen as a homotopy type of a Kan complex
%Now the following definition makes sense.
The truncation of morphisms is given by 

\begin{dfn}[{\cite[Definition 5.5.6.8]{Lur1}}]\label{dfn:ic:tr-mor}
\begin{enumerate}[nosep]
\item 
Let $k \in \bbZ_{\ge -2}$.
A simplicial map $f: X \to Y$ of Kan complexes is \emph{$k$-truncated} 
if the homotopy fibers of $f$  taken over any base point of $Y$ are $k$-truncated.

\item
A morphism $f: C \to D$ in an $\infty$-category $\iC$ is \emph{$k$-truncated} 
if for any $E \in \iC$ the simplicial map $\Map_{\iC}(E,C) \to \Map_{\iC}(E,D)$
given by the composition with $f$ is $k$-truncated in the sense of (1).
\end{enumerate}
\end{dfn}

Now we have 

\begin{fct*}[{\cite[Proposition 5.5.6.18]{Lur1}}]
For a presentable $\infty$-category $\iC$ and $k \in \bbZ_{\ge-2}$, 
the inclusion $\tau_{\le k}\iC \inj \iC$ has an accessible left adjoint.
\end{fct*}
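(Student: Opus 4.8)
The plan is to exhibit $\tau_{\le k}\iC$ as the $\infty$-category of $S$-local objects for a suitable \emph{small} set $S$ of morphisms of $\iC$, and then to quote the general theory of accessible localizations of presentable $\infty$-categories. First I would record two structural facts about a presentable $\iC$. By Fact~\ref{fct:ic:pr=lpr} there exist a regular cardinal $\kappa$ and a small set $G=\{X_i\}_i$ of $\kappa$-compact objects generating $\iC$ under small colimits, so every $X\in\iC$ is of the form $\iclim_\alpha X_\alpha$ with $X_\alpha\in G$. Moreover, since $\iC$ admits all small colimits, for every simplicial set $K$ and every $X\in\iC$ the tensor $K\otimes X:=\iclim_K(\text{constant $K$-diagram at }X)$ exists, and because $\Map_{\iC}(-,C)$ carries colimits to limits one gets a natural equivalence $\Map_{\iC}(K\otimes X,C)\simeq \Map_{\iC}(X,C)^{K}$ for all $C\in\iC$, where $Z^{K}:=\Map_{\iS}(K,Z)$; in particular $K\otimes(-)$ is colimit-preserving in $X$.

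Next I would reduce $k$-truncatedness (Definition~\ref{dfn:ic:n-trc}) to a locality condition. The relevant classical input is that a space $Z$ is $k$-truncated if and only if the canonical map $Z\to Z^{\partial\Delta^{k+2}}$ induced by the collapse $\partial\Delta^{k+2}\to\Delta^0$ is an equivalence (this already encodes all the inclusions $\partial\Delta^m\inj\Delta^m$, $m\ge k+2$, since a $k$-truncated space is automatically local against the higher ones; the degenerate cases $k=-2$, where $\partial\Delta^0=\emptyset$, and $k=-1$, where $\partial\Delta^1$ is a two-point set, are included). Combining this with the displayed adjunction equivalence, an object $C$ has ``$\Map_{\iC}(X,C)$ is $k$-truncated'' exactly when $C$ is local with respect to the morphism $\partial\Delta^{k+2}\otimes X\to X$. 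I then set
\[
 S:=\bigl\{\,\partial\Delta^{k+2}\otimes X_i\longto X_i \ \big|\ i\,\bigr\},
\]
a small set of morphisms of $\iC$, and claim that $C$ is $S$-local if and only if $C\in\tau_{\le k}\iC$. One direction: if $C\in\tau_{\le k}\iC$ then $\Map_{\iC}(X_i,C)$ is $k$-truncated for each generator, hence $C$ is $S$-local. Conversely, if $C$ is $S$-local then $\Map_{\iC}(X_i,C)$ is $k$-truncated for all $i$; for arbitrary $X=\iclim_\alpha X_\alpha$ one uses $\Map_{\iC}(X,C)=\ilim_\alpha\Map_{\iC}(X_\alpha,C)$ together with the stability of $k$-truncatedness under small limits of spaces to conclude $\Map_{\iC}(X,C)$ is $k$-truncated, i.e. $C\in\tau_{\le k}\iC$.

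Finally, the conclusion is formal: $\iC$ is presentable and $S$ is small, so by the structure theory of localizations (\cite[\S5.5.4]{Lur1}) the full sub-$\infty$-category of $S$-local objects is an accessible localization of $\iC$; that is, it is presentable and the localization functor $L\colon\iC\to\iC$ is an accessible left adjoint to the inclusion of the $S$-local objects. Having identified this subcategory with $\tau_{\le k}\iC$, we obtain that the inclusion $\tau_{\le k}\iC\inj\iC$ admits an accessible left adjoint, namely the truncation functor $\tau_{\le k}$.

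I expect the only genuinely delicate step to be the middle one: choosing $S$ correctly and checking both implications of ``$S$-local $\iff$ $k$-truncated''. The passage from testing all objects $X\in\iC$ to testing only the small generating set $G$ rests on commuting $\Map_{\iC}(-,C)$ past colimits and on the closure of $\tau_{\le k}\iS$ under small limits, and this is where care is needed; once $\tau_{\le k}\iC$ is presented as a localization at a small set of maps, presentability and accessibility are automatic.
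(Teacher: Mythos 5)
Your argument is correct and is essentially the proof of \cite[Proposition 5.5.6.18]{Lur1}, which the paper cites as a Fact without reproving: Lurie likewise tensors a small generating set of objects with the simplicial $(k+1)$-sphere $\partial\Delta^{k+2}$, identifies the $k$-truncated objects with the local objects for the resulting small set of maps, and invokes the general theory of accessible localizations of presentable $\infty$-categories. The two points you flag as delicate --- reducing the locality test to the generators by commuting $\Map_{\iC}(-,C)$ past colimits, and the closure of $k$-truncated spaces under small limits --- are exactly the steps carried out in loc.\ cit.
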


In the last line we used

\begin{dfn}[{\cite[Definition 5.4.2.5]{Lur1}}]\label{dfn:ic:acc-func}
Let $\iC$ be an accessible $\infty$-category.
A functor $F: \iC \to \iC'$ is called \emph{accessible}
if it is $\kappa$-continuous (Definition \ref{dfn:ic:cpt}) 
for some regular cardinal $\kappa$.
\end{dfn}

Now we may introduce

\begin{dfn}\label{dfn:ic:tau_k}
For a presentable $\infty$-category $\iC$ and $k \in \bbZ_{\ge-2}$,
a left adjoint to the inclusion $\tau_{\le k}\iC \inj \iC$ is denoted by 
\[
 \tau_{\le k}: \iC \longto \tau_{\le k} \iC
\]
and called the \emph{truncation functor}.
\end{dfn}

Here we used ``the" since it is unique up to contractible ambiguity
\cite[Remark 5.5.6.20]{Lur1}.
It is obviously a localization functor (Definition \ref{dfn:ic:loc-func}).

Another usage of truncation is

\begin{dfn}\label{dfn:ic:mono}
A morphism in an $\infty$-category is a \emph{monomorphism} 
if it is $(-1)$-truncated (Definition \ref{dfn:ic:tr-mor}).
\end{dfn}

Let us cite a redefinition of monomorphisms.

\begin{fct}
\label{fct:ic:mono-dfn}
A morphism $f: X \to Y$ in an $\infty$-category $\iC$ is a monomorphism
if and only if the functor $\oc{\iC}{f} \to \oc{\iC}{Y}$ 
is fully faithful (Definition \ref{dfn:ic:sfe}).
\end{fct}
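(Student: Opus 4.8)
The plan is to turn the statement into a condition on mapping spaces and then recognize that condition as the definition of a $(-1)$-truncated morphism. The first step is a reduction to composition. Regarding $f$ as a map $\Delta^1\to\iC$ and using Corollary \ref{cor:ic:ovc}\,(2), restriction along the source vertex $\Delta^{\{0\}}\inj\Delta^1$ yields a functor $\oc{\iC}{f}\to\oc{\iC}{X}$; by a standard property of slice $\infty$-categories (\cite{Lur1}) this is a trivial Kan fibration, hence an equivalence (its homotopy fiber over an object $(Z\to X)$ is the space of pairs $(Z\to Y,\ \text{homotopy to } Z\to X\to Y)$, which is contractible). Under this equivalence the functor $\oc{\iC}{f}\to\oc{\iC}{Y}$ of the statement is carried to the functor $f_*\colon\oc{\iC}{X}\to\oc{\iC}{Y}$ given by composition with $f$, sending $(a\colon A\to X)$ to $(fa\colon A\to Y)$. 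So it suffices to prove: $f_*$ is fully faithful (Definition \ref{dfn:ic:sfe}) if and only if $f$ is a monomorphism (Definition \ref{dfn:ic:mono}).

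The second step computes the relevant mapping spaces. For objects $a\colon A\to X$ and $b\colon B\to X$ of $\oc{\iC}{X}$, the mapping space $\Map_{\oc{\iC}{X}}(a,b)$ is naturally the homotopy fiber of $b_*\colon\Map_\iC(A,B)\to\Map_\iC(A,X)$ over the point $a$, and similarly $\Map_{\oc{\iC}{Y}}(f_*a,f_*b)$ is the homotopy fiber of $(fb)_*=f_*\circ b_*\colon\Map_\iC(A,B)\to\Map_\iC(A,Y)$ over $fa$, where $f_*\colon\Map_\iC(A,X)\to\Map_\iC(A,Y)$ is composition with $f$. Writing $P_a$ for the homotopy fiber of $f_*$ over $fa$, a space over $\Map_\iC(A,X)$ carrying a tautological point $a$, the pasting law for homotopy pullbacks (Definition \ref{dfn:ic:pb-po}) identifies the second fiber with $\Map_\iC(A,B)\times_{\Map_\iC(A,X)}P_a$ and exhibits the comparison map $\Map_{\oc{\iC}{X}}(a,b)\to\Map_{\oc{\iC}{Y}}(f_*a,f_*b)$ induced by $f_*$ as the base change along $b_*$ of the inclusion $\{a\}\inj P_a$. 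Consequently $f_*$ is fully faithful if and only if $\{a\}\inj P_a$ is an equivalence for every $A$ and every $a\colon A\to X$: the forward direction is the special case $b=\id_X$, and the reverse direction is immediate since every comparison map is then a base change of an equivalence.

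The last step unwinds this. Since a point of $P_a$ is a pair consisting of $a'\colon A\to X$ together with a homotopy $fa'\simeq fa$, the connected component of that point is contractible precisely when the component of $a'$ in $P_{a'}$ is contractible; hence "$\{a\}\inj P_a$ is an equivalence for all $a$" says exactly that every nonempty homotopy fiber of $f_*\colon\Map_\iC(A,X)\to\Map_\iC(A,Y)$ is contractible, for every $A$ — that is, that $f_*$ is $(-1)$-truncated as a map of Kan complexes for every $A$. By Definition \ref{dfn:ic:tr-mor} this is the assertion that $f$ is a $(-1)$-truncated morphism, i.e. a monomorphism, which finishes the argument. I expect the main obstacle to be the bookkeeping in the first two steps: verifying cleanly that $\oc{\iC}{f}\to\oc{\iC}{X}$ is a trivial fibration carrying the comparison functor to $f_*$, and making the mapping-space identifications precise — either by working with explicit Kan models such as $\HomR{\iC}(-,-)$ (Definition \ref{dfn:ic:HomR}) and the corresponding models for slice categories, or by invoking the standard equivalence between slice mapping spaces and homotopy fibers and then checking that the induced comparison map is the base change described above.
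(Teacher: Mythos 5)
Your argument is correct. Note, though, that the paper does not prove this statement at all: it is recorded as a Fact and used as a black box (it is the standard characterization of $(-1)$-truncated morphisms from \cite{Lur1}; compare the discussion around Lemma 5.5.6.15 there). So there is no in-paper proof to match, and what your proposal supplies is a self-contained version of the standard argument: replace $\oc{\iC}{f}\to\oc{\iC}{Y}$ by the composition functor $f_*\colon\oc{\iC}{X}\to\oc{\iC}{Y}$ via the trivial fibration $\oc{\iC}{f}\to\oc{\iC}{X}$ (this is exactly the commutative triangle of Corollary \ref{cor:ic:ovc}), identify slice mapping spaces with homotopy fibers of composition maps, extract contractibility of each $P_a$ from full faithfulness at the final object $\id_X\in\oc{\iC}{X}$, and conversely deduce full faithfulness everywhere because each comparison map is a base change of $\{a\}\inj P_a$. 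Since every nonempty homotopy fiber of $\Map_\iC(A,X)\to\Map_\iC(A,Y)$ is equivalent to some $P_{a'}$, this is precisely the $(-1)$-truncatedness required by Definitions \ref{dfn:ic:tr-mor} and \ref{dfn:ic:mono}. All the steps check out.

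Two minor points. The component-chasing in your last paragraph is unnecessary: $\{a\}\inj P_a$ is an equivalence if and only if $P_a$ is (weakly) contractible, and an arbitrary homotopy fiber of $f_*$ on mapping spaces is either empty or equivalent to some $P_{a'}$, which gives the equivalence with $(-1)$-truncatedness directly. The only step that genuinely needs care --- and which you correctly flag --- is that the map induced by $f_*$ on slice mapping spaces is the asserted base change of $\{a\}\inj P_a$ along $b_*$; this follows from the naturality of the fiber-sequence description of slice mapping spaces, or can be verified on the explicit models $\HomR{\iC}(-,-)$ of Definition \ref{dfn:ic:HomR}.
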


For the use in the main text, let us also cite

\begin{fct}[{\cite[Lemma 5.5.6.15]{Lur1}}]\label{fct:ic:mono}
Let $\iC$ be an $\infty$-category which admits finite limits.
Then a morphism $f: X \to Y$ in $\iC$ is a monomorphism if and only if 
the diagonal $X \to X \times_{f,Y,f} X$
% (which is well-defined up to homotopy) 
is an isomorphism.
\end{fct}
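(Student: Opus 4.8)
Since Fact~\ref{fct:ic:mono} is quoted verbatim from \cite[Lemma 5.5.6.15]{Lur1}, at the level of this paper the proof is simply a reference to loc.\ cit., the only thing worth noting being that the finite-limit hypothesis on $\iC$ guarantees the pullback $X \times_{f,Y,f} X$ exists so that the diagonal $\delta\colon X \to X \times_{f,Y,f} X$ is defined. Below I record how one would deduce the statement directly from the definitions recalled in this appendix, in case a self-contained argument is wanted.

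The plan is to reduce everything to a statement about spaces by testing against objects. By Definition~\ref{dfn:ic:mono} and Definition~\ref{dfn:ic:tr-mor}, $f$ is a monomorphism exactly when, for every $E \in \iC$, the map of spaces $\phi_E \colon \Map_{\iC}(E,X) \to \Map_{\iC}(E,Y)$ induced by postcomposition with $f$ is $(-1)$-truncated, i.e.\ has homotopy fibers that are empty or contractible. The single genuinely homotopy-theoretic input I would isolate is the classical fact that a map of spaces $g \colon A \to B$ is $(-1)$-truncated if and only if the diagonal $A \to A \times_{g,B,g} A$ is a homotopy equivalence; I would prove this by computing the homotopy fibers of the diagonal in terms of (path spaces of) the homotopy fibers of $g$, so that contractibility of all fibers of the diagonal becomes equivalent to each fiber of $g$ being either empty or connected with vanishing higher homotopy, hence $(-1)$-truncated.

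Next I would use that $\Map_{\iC}(E,-)$ carries finite limits in $\iC$ to limits of spaces: since $\iC$ admits finite limits, there is a natural equivalence $\Map_{\iC}(E, X \times_{f,Y,f} X) \simeq \Map_{\iC}(E,X) \times_{\Map_{\iC}(E,Y)} \Map_{\iC}(E,X)$, under which the map induced by $\delta$ becomes the diagonal of $\phi_E$. Combining with the space-level lemma, $f$ is a monomorphism if and only if $\Map_{\iC}(E,\delta)$ is an equivalence of spaces for every $E \in \iC$. Finally, the Yoneda embedding $\yon \colon \iC \hookrightarrow \iPSh(\iC) = \iFun(\iC^{\op},\iS)$ of Definition~\ref{dfn:pr:Yoneda} is fully faithful, hence conservative (Definition~\ref{dfn:pr:csv}), and limit-preserving, while equivalences in $\iPSh(\iC)$ are detected objectwise; so ``$\Map_{\iC}(E,\delta)$ is an equivalence for all $E$'' is equivalent to ``$\delta$ is an equivalence in $\iC$'', which is the assertion. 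The only real obstacle is the space-level lemma of the second paragraph; everything else is formal manipulation with mapping spaces, finite limits, and the Yoneda embedding.
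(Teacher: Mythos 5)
Your proposal is correct, and it matches the paper, which offers no argument of its own beyond the citation of \cite[Lemma 5.5.6.15]{Lur1} together with the remark that the statement is the $k=-1$ case of that (more general) truncation lemma. Your supplementary sketch --- testing against mapping spaces, the space-level fact that a map of spaces is $(-1)$-truncated iff its diagonal is an equivalence (proved via path spaces of the fibers), compatibility of $\Map_{\iC}(E,-)$ with finite limits, and conservativity of the Yoneda embedding --- is sound and is essentially the reduction Lurie himself uses, so there is nothing to correct.
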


Strictly speaking, this statement is a special case $k=-1$ in loc.\ cit.

%%%%%%%%%%%%%%%%%%%%%%%%%%%%%%%%%%%%%%%%%%%%%%%%%%%%%%%%%%%%%%%%%%%%%%%%%%%%%%%%
%%%%%%%%%%%%%%%%%%%%%%%%%%%%%%%%%%%%%%%%%%%%%%%%%%%%%%%%%%%%%%%%%%%%%%%%%%%%%%%%
\subsection{Exact functors of $\infty$-categories}

We close this section by recalling 
left and right exact functors of $\infty$-categories.

\begin{dfn}[{\cite[Definition 2.0.0.3, Definition 5.3.2.1]{Lur1}}]
%, Remark 5.3.2.2]{Lur1}}]
\label{dfn:ic:exact}
\begin{enumerate}
\item
A simplicial map $f: X \to S$ is a \emph{left fibration} 
(resp.\ \emph{right fibration}) if $f$ has the right lifting property 
(resp.\ \emph{left lifting property}) with respect to 
all the horn inclusions $\Lambda^n_i \inj \Delta^n$  for any $0 \le i < n$.

\item 
Let $F: \iB \to \iC$ be a functor of $\infty$-categories
and $\kappa$ be a regular cardinal $\kappa$,
$F$ is \emph{$\kappa$-left exact} (resp.\ \emph{$\kappa$-right exact})
if for any left fibration (resp.\ right fibration) $\iC' \to \iC$ 
where $\iC'$ is $\kappa$-filtered (Definition \ref{dfn:ic:filtered}),
the $\infty$-category $\iB' = \iB \times_{\iC} \iC'$ is also $\kappa$-filtered.

\item
A functor of $\infty$-categories is \emph{left exact} (resp.\ \emph{right exact})
if it is $\omega$-left exact (resp.\ $\omega$-right exact),
where $\omega$ denotes the lowest transfinite ordinal number.

\item
A functor of $\infty$-category is \emph{exact}
if it is both left exact and right exact.
\end{enumerate}
\end{dfn}

We will repeatedly use the following criterion of exactness in the main text.

\begin{fct}[{\cite[Proposition 5.3.2.9]{Lur1}}]\label{fct:ic:exact}
Let $F: \iB \to \iC$ be a functor of $\infty$-categories and 
$\kappa$ be a regular cardinal.
\begin{enumerate}[nosep]
\item
If $f$ is $\kappa$-left exact, then $F$ preserves all $\kappa$-small colimits
which exists in $\iB$.

\item
If $\iB$ admits $\kappa$-small limits and $F$ preserves $\kappa$-small colimits,
then $F$ is $\kappa$-right exact.
\end{enumerate}
Dual statements hold for right-exactness.
\end{fct}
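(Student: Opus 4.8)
The plan is to prove both implications by translating the fibration-theoretic condition of Definition~\ref{dfn:ic:exact} into a statement about mapping spaces, using the description of (co)limits as initial and final objects of under- and over-$\infty$-categories recalled in \S\ref{ss:ic:lim/colim}, together with the extension characterization of $\kappa$-filteredness (Definition~\ref{dfn:ic:filtered}). The entire argument is carried out in \cite[\S 5.3.2]{Lur1}, and I would follow its structure.

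First I would set up the dictionary between slice $\infty$-categories and corepresentable functors. For each object $D$ of $\iC$, the over-$\infty$-category $\oc{\iC}{D}$ is a right fibration over $\iC$ classifying $\Map_{\iC}(-,D)$, and it is $\kappa$-filtered because it has a final object, namely $\id_D$. Dually the under-$\infty$-category $\iC_{D/}$ is a left fibration with initial object $\id_D$, classifying $\Map_{\iC}(D,-)$. The comma $\infty$-categories appearing in Definition~\ref{dfn:ic:exact}, that is, the pullbacks $\iB \times_{\iC} \oc{\iC}{D}$ and $\iB \times_{\iC} \iC_{D/}$ along $F$, are then exactly the fibrations over $\iB$ classifying $\Map_{\iC}(F(-),D)$ and $\Map_{\iC}(D,F(-))$.

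To pass from the exactness condition to preservation of the relevant $\kappa$-small (co)limits, I would take a $\kappa$-small diagram $p\colon K \to \iB$ with a colimit cone $\bar p\colon K^{\triangleright}\to\iB$, and test $F\bar p$ against corepresentables: the object $F(\iclim p)$ is the colimit of $Fp$ iff for every $D$ the canonical map $\Map_{\iC}(F(\iclim p),D) \to \ilim_{K^{\op}} \Map_{\iC}(Fp,D)$ is an equivalence in $\topH$. Applying the hypothesis to the slice fibration $\oc{\iC}{D}\to\iC$ shows that the pulled-back comma $\infty$-category is $\kappa$-filtered, and the characterization of $\kappa$-filteredness through commutation of its colimits with $\kappa$-small limits in $\iS$ converts this filteredness into exactly the required equivalence of mapping spaces. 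For the converse, one runs the same computation backwards: the hypotheses that $\iB$ admits, and $F$ preserves, the relevant $\kappa$-small (co)limits supply, for every $\kappa$-small diagram into a pullback $\iB \times_{\iC} \iC'$ with $\iC'$ $\kappa$-filtered, the cone demanded by Definition~\ref{dfn:ic:filtered}, whence the pullback is again $\kappa$-filtered and $F$ is $\kappa$-exact. Passing to opposite $\infty$-categories interchanges left and right fibrations, limits and colimits, and the two exactness conditions, which gives the dual statements.

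The hard part will be the precise homotopical bookkeeping in this translation: verifying, via the straightening/unstraightening equivalence \cite[\S 2.2, \S 3.2]{Lur1}, that $\kappa$-filteredness of the comma $\infty$-category $\iB \times_{\iC} \oc{\iC}{D}$ is genuinely equivalent to the cone $F\bar p$ being universal, and not merely implied by it. This is where the regularity of $\kappa$ and the $\kappa$-smallness of the indexing diagram $K$ enter essentially, through the fact that $\kappa$-filtered colimits commute with $\kappa$-small limits in $\iS$; the remaining manipulations of slice categories, cofinality, and adjunctions are formal.
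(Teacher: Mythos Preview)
The paper does not give its own proof of this statement: it is recorded as a \emph{Fact} with a bare citation to \cite[Proposition 5.3.2.9]{Lur1}, and no proof environment follows. So there is nothing in the paper to compare your proposal against beyond the reference itself.

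That said, your outline is essentially the argument Lurie gives in \cite[\S 5.3.2]{Lur1}: one identifies the slice $\infty$-categories $\oc{\iC}{D}$ as right fibrations classifying $\Map_{\iC}(-,D)$, pulls them back along $F$, and uses the characterization of $\kappa$-filteredness in terms of extension of $\kappa$-small diagrams to test whether $F$ sends colimit cones to colimit cones. The converse direction in Lurie's proof does proceed, as you indicate, by producing the required cone in the pulled-back fibration from the assumed (co)limit in $\iB$ and its preservation by $F$. One small caution: be careful with the left/right bookkeeping, since the paper's phrasing of the statement swaps some of the words relative to Lurie's original (Lurie's Proposition 5.3.2.9 is about $\kappa$-\emph{right} exactness and preservation of $\kappa$-small \emph{colimits}, with $\iB$ admitting $\kappa$-small \emph{colimits} in the converse); your sketch already mixes ``left'' and ``right'' in a way that tracks the paper's version rather than Lurie's, so when you actually carry out the argument make sure the fibration variance, the filteredness condition, and the (co)limit type all line up consistently.
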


Recall the notion of adjoint functors (Definition \ref{dfn:ic:adj}).
Now we have the following criterion on existence of adjunctions.

\begin{fct}[{\cite[Corollary 5.5.2.9]{Lur1}}] % (Adjoint Functor Theorem)
\label{fct:ic:adj}
Let $F: \iB \to \iC$ be a functor of presentable $\infty$-categories.
\begin{enumerate}[nosep]
\item 
$F$ has a right adjoint if and only if it is right exact.
\item
$F$ has a left adjoint if and only if it is left exact.
\end{enumerate}
\end{fct}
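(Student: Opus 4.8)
The plan is to derive both statements from the adjoint functor criterion for presentable $\infty$-categories, reducing the existence of an adjoint to a representability question settled by the representability theorem of \cite{Lur1}. Throughout I read ``right exact'' in the presentable setting as the preservation of all small colimits, and ``left exact'' as the preservation of all small limits together with accessibility (Definition \ref{dfn:ic:acc-func}); these are the operative reformulations, compatible with Fact \ref{fct:ic:exact} and with the fact that a right adjoint between presentable categories is automatically accessible.

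First I would dispose of the two ``only if'' directions, which are formal. Suppose $F$ admits a right adjoint $G$, so that $F$ is the left member of an adjunction $F: \iB \adjunc \iC :G$ (Definition \ref{dfn:ic:adj}). Using the unit transformation $u: \id_{\iB} \to G \circ F$ furnished by Fact \ref{fct:ic:uc}, the equivalences $\Map_{\iC}(F(B),C) \simeq \Map_{\iB}(B,G(C))$ are natural in both variables; since $\Map_{\iB}(-,G(C))$ carries colimits in $\iB$ to limits in $\topH$ and $G(C)$ ranges over all of $\iB$, one checks that $F$ preserves every small colimit, i.e.\ $F$ is right exact. Dualizing (replacing $F$ by $F^{\op}$ and using the counit form of Fact \ref{fct:ic:uc}) shows that a functor with a left adjoint is left exact, and accessibility of the left adjoint yields the accessibility of $F$ needed in item (2).

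The substance is the converse of item (1). Assume $F$ preserves small colimits. For a fixed object $C \in \iC$, consider the functor
\[
 h_C: \iB^{\op} \longto \wh{\iS}, \quad B \longmapsto \Map_{\iC}(F(B),C).
\]
Because $F$ preserves small colimits and mapping spaces convert colimits in their source variable into limits, $h_C$ preserves all small limits. By the representability theorem for presentable $\infty$-categories \cite[Proposition 5.5.2.2]{Lur1}, $h_C$ is representable, say by an object $G(C) \in \iB$ equipped with a natural equivalence $\Map_{\iC}(F(B),C) \simeq \Map_{\iB}(B,G(C))$. Letting $C$ vary assembles $G$ into a functor $\iC \to \iB$, and the resulting natural equivalence is, by Fact \ref{fct:ic:uc}, exactly the data exhibiting $G$ as a right adjoint of $F$. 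The converse of item (2) is obtained by the dual argument: for $C \in \iC$ the functor $B \mapsto \Map_{\iC}(C,F(B))$ on $\iB$ preserves limits since $F$ does, and here the accessibility packaged into left exactness is precisely what makes this functor corepresentable, producing the left adjoint $L: \iC \to \iB$.

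The main obstacle is the substantive ``if'' directions, and in particular securing the hypotheses of the representability theorem: preservation of \emph{all} small limits by $h_C$ (which forces a genuine use of preservation of all small colimits by $F$, not merely finite ones), and, in the left-adjoint case, the accessibility of the corepresentable functor $B \mapsto \Map_{\iC}(C,F(B))$. The asymmetry between the two items --- that no accessibility assumption is needed for the right adjoint yet it is indispensable for the left adjoint --- is the delicate point; it reflects the fact that $\iB^{\op}$ is not presentable, so corepresentability in $\iB$ cannot be read off from limit-preservation alone.
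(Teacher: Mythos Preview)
The paper does not prove this statement; it is simply cited as a Fact from \cite[Corollary 5.5.2.9]{Lur1}. Your sketch is correct and is essentially Lurie's own argument: reduce the existence of an adjoint to pointwise representability and invoke the representability theorem for presentable $\infty$-categories \cite[Proposition 5.5.2.2]{Lur1}. Your opening caveat is also well taken and worth flagging: under the paper's literal Definition~\ref{dfn:ic:exact}, ``right exact'' and ``left exact'' refer only to the $\omega$-case (finite (co)limits), whereas Lurie's Corollary~5.5.2.9 in fact requires preservation of \emph{all} small colimits for (1) and preservation of all small limits together with accessibility for (2); the paper's phrasing is loose here, and your reinterpretation is the correct reading of the cited result.
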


%%%%%%%%%%%%%%%%%%%%%%%%%%%%%%%%%%%%%%%%%%%%%%%%%%%%%%%%%%%%%%%%%%%%%%%%%%%%%%%%
%%%%%%%%%%%%%%%%%%%%%%%%%%%%%%%%%%%%%%%%%%%%%%%%%%%%%%%%%%%%%%%%%%%%%%%%%%%%%%%%
%%%%%%%%%%%%%%%%%%%%%%%%%%%%%%%%%%%%%%%%%%%%%%%%%%%%%%%%%%%%%%%%%%%%%%%%%%%%%%%%
%\section{\texorpdfstring{$\infty$}{infinity}-topoi}
\section{$\infty$-topoi}
\label{a:it}

In this section we give some complementary accounts on $\infty$-topoi.

%%%%%%%%%%%%%%%%%%%%%%%%%%%%%%%%%%%%%%%%%%%%%%%%%%%%%%%%%%%%%%%%%%%%%%%%%%%%%%%%
%%%%%%%%%%%%%%%%%%%%%%%%%%%%%%%%%%%%%%%%%%%%%%%%%%%%%%%%%%%%%%%%%%%%%%%%%%%%%%%%
\subsection{Sheaves on $\infty$-sites}

Let $\iC$ be an $\infty$-category.
Recall the Yoneda embedding $\yon: \iC \to \iPSh(\iC)$ 
in Definition \ref{dfn:pr:Yoneda}.
By \cite[Proposition 6.2.2.5]{Lur1}, for each $X \in \iC$ we have a bijection 
\begin{equation}\label{eq:ic:sieve:bij}
 \sigma_X: \Sub(\yon(X)) \longsimto \Sie(X), \quad 
 (i: U \inj \yon(X)) \longmapsto \oc{\iC}{X}(U)
\end{equation}
between the set $\Sub(\yon(X))$ of monomorphisms $U \inj \yon(X)$ 
(Definition \ref{dfn:ic:mono})
and the set $\Sie(X)$ of all sieves on $X$.
Here $\oc{\iC}{X}(U)$ denotes the full sub-$\infty$-category of $\iC$ 
spanned by those objects $f: Y \to X$ of $\oc{\iC}{X}$ such that 
there exists a commutative triangle
\[
 \xymatrix{\yon(Y) \ar[rr]^{\yon(f)} \ar[rd] & & \yon(X) \\ & U \ar[ru]_i}
\]

Now we recall

\begin{dfn}[{\cite[Definition 5.5.4.2]{Lur1}}]\label{dfn:ic:tau-local}
For an $\infty$-category $\iB$ and a set $S$ of morphisms in $\iB$,
we call $Z \in \iC$ to be \emph{$S$-local} if for any $(s: X \to Y) \in S$
the composition with $s$ induces an isomorphism 
$\Map_{\iC}(Y,Z) \simto \Map_{\iC}(X,Z)$ 
in the homotopy category $\Ho \iS$ of spaces.
\end{dfn}

Then we can introduce 

\begin{dfn}\label{dfn:it:sh}
Let $(\iC,\tau)$ be an $\infty$-site.
\begin{enumerate}[nosep]
\item 
Define the set $S_\tau$ by
\[
 S_\tau := \bigcup_{X \in \iC} \sigma_X^{-1}(\Cov_{\tau}(X)),
\]
the set of all monomorphisms $U \to \yon(X)$ 
corresponding to covering sieves ${\iC}^{(0)}_{/X}$ of $\tau$ 
under the bijection $\sigma_X$ \eqref{eq:ic:sieve:bij}.
A presheaf $\shF \in \iPSh(\iC)$ is called a \emph{$\tau$-sheaf} 
if it is $S_\tau$-local.
\item
We denote by 
\[
 \iSh(\iC,\tau) \subset \iPSh(\iC)
\]
the full sub-$\infty$-category spanned by $\tau$-sheaves.
%\item
%For a $\tau$-sheaf $\shF \in \iSh(\iC,\tau)$ and an object $X \in \iC$,
%we denote by $\shF_{X}$ the $\tau$-sheaf on $\oc{\iC}{X}$ obtained by restriction.
\end{enumerate}
\end{dfn}

As explained in Fact \ref{fct:pr:sh} 
the $\infty$-category $\iSh(\iC,\tau)$ is always an $\infty$-topos.

%%%%%%%%%%%%%%%%%%%%%%%%%%%%%%%%%%%%%%%%%%%%%%%%%%%%%%%%%%%%%%%%%%%%%%%%%%%%%%%%
%%%%%%%%%%%%%%%%%%%%%%%%%%%%%%%%%%%%%%%%%%%%%%%%%%%%%%%%%%%%%%%%%%%%%%%%%%%%%%%%
\subsection{Yoneda embedding of $\infty$-topoi}
\label{ss:it:yon-topos}

%Next we explain the Yoneda embedding in the context of $\infty$-topoi.
Let $(\iC,\tau)$ be an $\infty$-site and 
$L: \iPSh(\iC) \to \iSh(\iC,\tau)$ be a left adjoint of the inclusion
$\iSh(\iC,\tau) \inj \iPSh(\iC)$ of the $\infty$-topos $\iSh(\iC,\tau)$.
Also let $\yon: \iC \inj \iPSh(\iC)$ be the Yoneda embedding 
(Definition \ref{dfn:pr:Yoneda})
We first recall

\begin{fct*}[{\cite[Lemma 6.2.2.16]{Lur1}}]
Let $(\iC,\tau)$, $L$ and $\yon$ as above,
and let $i: U \to \yon(X)$ be a monomorphism in $\iPSh(\iC)$ 
corresponding to a sieve $\oc{\iC}{X}^{(0)}$ on $X \in \iC$ 
under the bijection \eqref{eq:ic:sieve:bij}.
Then $L \circ i$ is an equivalence 
if and only if $\oc{\iC}{X}^{(0)}$ is a covering sieve.
\end{fct*}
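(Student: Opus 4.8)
The plan is to prove the two implications separately, the forward one being immediate and the reverse one requiring the structure theory of topological localizations. First, if $\oc{\iC}{X}^{(0)}$ is a $\tau$-covering sieve, then the classifying monomorphism $i$ lies in the set $S_\tau$ by the very definition of that set (Definition \ref{dfn:it:sh}); since $\iSh(\iC,\tau)$ is the localization $\iPSh(\iC)[S_\tau^{-1}]$ and $L$ the associated localization functor (Fact \ref{fct:pr:sh}), the universal property in Definition \ref{dfn:ic:loc} forces $L$ to carry every morphism of $S_\tau$ to an equivalence, so $L \circ i$ is an equivalence.

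For the converse I would introduce the class $\ol{S_\tau}$ of \emph{all} monomorphisms of $\iPSh(\iC)$ that $L$ inverts, and prove that, under the family of bijections $\sigma_Y$ of \eqref{eq:ic:sieve:bij}, it coincides with the family of monomorphisms classifying $\tau$-covering sieves; the lemma is then the special case of a monomorphism with target $\yon(X)$. The first input is that $\ol{S_\tau}$ is the strongly saturated class of monomorphisms generated by $S_\tau$: it contains $S_\tau$, is strongly saturated (contains all equivalences, satisfies two-out-of-three, and is closed under colimits in the arrow $\infty$-category $\iFun(\Delta^1,\iPSh(\iC))$, hence under pushout and composition), and — because $L$ is left exact (Definition \ref{dfn:ic:exact}), so that the pullback of an $L$-inverted morphism is again $L$-inverted — is in addition closed under pullback along arbitrary morphisms of $\iPSh(\iC)$. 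The second input is a dictionary: writing a sieve on $Y$ as $U \simeq \iclim_{(Z \to Y) \in \oc{\iC}{Y}^{(0)}} \yon(Z)$, the bijection $\sigma_Y$ intertwines pullback of sieves along a morphism $f : Z' \to Z$ of $\iC$ with pullback of classifying monomorphisms along $\yon(f)$, and — using that colimits in the $\infty$-topos $\iPSh(\iC)$ are universal (Fact \ref{fct:pr:Giraud}) — matches axiom (a) of Definition \ref{dfn:ic:inf-site} with the presence of the identity $\yon(X) \to \yon(X)$, axiom (b) with closure under pullback, and axiom (c) with closure under pullback, composition and arrow-colimits. Since $\tau$ is a Grothendieck topology, the family of monomorphisms classifying $\tau$-covering sieves contains $S_\tau$ and is closed under all these operations, hence contains $\ol{S_\tau}$; and it is contained in $\ol{S_\tau}$ by the forward implication. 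Thus the two families agree, so $L \circ i$ being an equivalence forces $\oc{\iC}{X}^{(0)}$ to be a $\tau$-covering sieve.

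The step I expect to be the main obstacle is the translation of the local-character axiom: showing that a monomorphism $U_1 \hookrightarrow \yon(Y)$ whose pullback to each member of a $\tau$-covering sieve on $Y$ lies in $\ol{S_\tau}$ is itself in $\ol{S_\tau}$. This is where universality of colimits is genuinely used, to identify the pullback $U_1 \times_{\yon(Y)} U_0$ (with $U_0$ the classifying object of the covering sieve) as the arrow-colimit of the pullbacks to the individual representables, whence $U_1 \times_{\yon(Y)} U_0 \hookrightarrow \yon(Y) \in \ol{S_\tau}$ by composition; two-out-of-three together with closure under pullback (applied to $U_0 \hookrightarrow \yon(Y)$ along $U_1 \hookrightarrow \yon(Y)$) then descends this to $U_1 \hookrightarrow \yon(Y)$. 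A secondary point needing care is the first input itself — that the monomorphisms inverted by $L$ form precisely the saturated class generated by $S_\tau$ and nothing larger — which rests on the factorization of every morphism of $\iPSh(\iC)$ as an effective epimorphism followed by a monomorphism and on the compatibility of the left exact localization $L$ with this factorization. The remaining verifications are routine.
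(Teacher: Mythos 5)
The paper gives no proof of this statement --- it is imported verbatim as \cite[Lemma 6.2.2.16]{Lur1} --- so there is no in-paper argument to compare against; I will assess your proposal against the standard (Lurie's) proof. Your forward implication is correct and is exactly the standard one. The converse, however, has a genuine gap at the step ``\dots is closed under all these operations, hence contains $\ol{S_\tau}$.'' The class of morphisms inverted by $L$ is the \emph{strong saturation} of $S_\tau$: it is generated from $S_\tau$ by cobase change along arbitrary morphisms of $\iPSh(\iC)$, composition with arbitrary morphisms of the class, two-out-of-three, and arbitrary colimits in $\iFun(\Delta^1,\iPSh(\iC))$. These operations do not preserve the property of being a monomorphism with representable target, so a monomorphism $U \to \yon(X)$ can enter $\ol{S_\tau}$ through a chain whose intermediate objects are not representable. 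Hence checking that the family of covering-sieve monomorphisms is closed under the \emph{restricted} operations you list (pullback along $\yon(f)$, composition of sieve inclusions, the local-character colimit) does not give the containment $\ol{S_\tau}\cap\{\text{monomorphisms into representables}\}\subseteq S_\tau$. What your closure computations actually establish is that the $L$-inverted sieves form a Grothendieck topology refining $\tau$ --- the containment in the wrong direction.

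The missing containment is precisely the content of the theory of topological localizations \cite[\S 6.2.1]{Lur1}: one must show that a strongly saturated class generated by a pullback-stable set of monomorphisms is itself generated by the monomorphisms it contains, and then control those monomorphisms using the (effective epimorphism, monomorphism) factorization together with the descent properties of Fact \ref{fct:pr:Giraud}. You do gesture at this (``rests on the factorization of every morphism as an effective epimorphism followed by a monomorphism\dots''), but calling it a ``secondary point needing care'' inverts the weight of the argument: that step \emph{is} the lemma, and as written your proof assumes it at exactly the place where it is needed. The rest of the scaffolding --- the sieve/monomorphism dictionary under $\sigma_Y$, universality of colimits for the local-character axiom, left exactness of $L$ for pullback-stability --- is correct and coincides with Lurie's.
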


This fact implies that given an $\infty$-site $(\iC,\tau)$
one can recover $\tau$ from $\iSh(\iC,\tau) \subset \iPSh(\iC)$.
Applying this fact to the identity $\yon(X) \to \yon(X)$ and 
recalling that $\oc{\iC}{X} \subset \oc{\iC}{X}$ is a covering sieve 
(Definition \ref{dfn:ic:inf-site} (1) (a)), 
we find that the composition 
\begin{equation}\label{eq:pr:Yoneda:iSh} 
 \iC \xrr{\yon} \iPSh(\iC) \xrr{L} \iSh(\iC,\tau) 
\end{equation}
is a fully faithful functor of $\infty$-categories.
Thus, we may say that $\tau$ is sub-canonical for the $\infty$-topos $\iS(\iC,\tau)$.
(using the terminology in the ordinary Grothendieck topology).

\begin{dfn}\label{dfn:pr:Yoneda:iSh}
The composition \eqref{eq:pr:Yoneda:iSh} 
will also be called the \emph{Yoneda embedding},
and will be denoted by the same symbol $\yon: \iC \to \iSh(\iC,\tau)$.
\end{dfn}

%%%%%%%%%%%%%%%%%%%%%%%%%%%%%%%%%%%%%%%%%%%%%%%%%%%%%%%%%%%%%%%%%%%%%%%%%%%%%%%%
%%%%%%%%%%%%%%%%%%%%%%%%%%%%%%%%%%%%%%%%%%%%%%%%%%%%%%%%%%%%%%%%%%%%%%%%%%%%%%%%
\subsection{Hypercomplete $\infty$-topoi}

The $\infty$-topoi of $\tau$-sheaves discussed in the previous part
has a distinguished property among general $\infty$-topoi.

\begin{dfn}[{\cite[Definition 6.5.1.10, \S 6.5.2]{Lur1}}]\label{dfn:ic:hcpl}
Let $\tT$ be an $\infty$-topos.
\begin{enumerate}[nosep]
\item
Let $n \in \bbN \cup \{\infty\}$.
A morphism $f: X \to Y$ in an $\infty$-topos $\tT$ is called \emph{$n$-connective}
if it is an effective epimorphism (Definition \ref{dfn:it:eff-epi})
and $\pi_k(f)$ is trivial for each $k=0,1,\ldots,n$.
\item
An object $X \in \tT$ is called \emph{hypercomplete} 
if it is local (Definition \ref{dfn:ic:tau-local}) 
with respect to the class of $\infty$-connective morphisms.
We denote by $\tT^{\wedge}$ the sub-$\infty$-category of $\tT$ 
spanned by hypercomplete objects.
\item 
$\tT$ is called \emph{hypercomplete} if $\tT^{\wedge}=\tT$.
\end{enumerate} 
\end{dfn}

Next we introduce the notion of hypercoverings following \cite[\S 6.5.3]{Lur1}.
See also \cite[\S 3.2]{TVe1} for a model-theoretic explanation.

Let us recall the category $\CS$ of combinatorial simplices 
(\S \ref{ss:ic:ss}) and
the nerve $\Ner(\catC)$ of an ordinary category $\catC$ 
(Definition \ref{dfn:ic:nerve}).

\begin{dfn*}[{\cite[Definition 6.1.2.2]{Lur1}}]
A \emph{simplicial object in an $\infty$-category $\iC$} 
is a simplicial map $U_{\bullet}: \Ner(\CS)^{\op} \to \iC$.
The $\infty$-category of simplicial objects in $\iC$ is denoted by $\iC_{\Delta}$.
\end{dfn*}

Following \cite[Notation 6.5.3.1]{Lur1}, for each $n \in \bbN$, we denote 
by $\CS^{\le n}$ the full subcategory of $\CS$ spanned by $\{[0],\ldots,[n]\}$.
If $\iC$ is a presentable $\infty$-category, then the restriction functor
$\sk_n: \iC_{\Delta} \to \iFun(\Ner(\CS^{\le n})^{\op},\iC)$ has a right adjoint:
\begin{align}\label{eq:ic:hyper:sk}
 \sk_n: \iC_{\Delta} \adjunc \iFun(\Ner(\CS^{\le n})^{\op}) :r.
\end{align}
In fact, $r$ is given by the right Kan extension \cite[\S4.3.2]{Lur1} 
along the inclusion functor $\Ner(\CS^{\le n})^{\op} \inj \Ner(\CS)^{\op}$.
We set
\[
 \cosk_n:= r \circ \sk_n: \iC_{\Delta} \longto \iC_{\Delta}
\]
and call it the \emph{$n$-coskeleton functor}.

Recalling that an $\infty$-topos is presentable (Fact \ref{fct:pr:Giraud}), 
we have the following definition.

\begin{dfn}[{\cite[Definition 6.5.3.2]{Lur1}}]
\label{dfn:ic:top:hc}
Let $\tT$ be an $\infty$-topos.
A simplicial object $U_{\bullet} \in \tT_{\Delta}$ is called a \emph{hypercovering} 
of $\tT$ if for each $n \in \bbN$ the unit map $U_n \to (\cosk_{n-1} U_{\bullet})_n$
coming from the adjoint \eqref{eq:ic:hyper:sk} is an effective epimorphism
(Definition \ref{dfn:it:eff-epi}).
\end{dfn}

As noted in \cite[Remark 6.5.3.3]{Lur1},
a hypercovering $U_{\bullet}$ of $\tT$ is a simplicial object 
such that the morphisms $U_0 \to \one_{\tT}$, $U_1 \to U_0 \times U_0$, 
$U_2 \to U_0 \times U_0 \times U_0$, $\ldots$ are effective epimorphisms.
Here $\one_{\tT}$ denotes a final object of $\tT$.

Next we give the definition of a geometric realization of a simplicial object.
We denote by $\CS_+$ the category of possibly empty finite linearly ordered sets.
We can regard $\CS \subset \CS_+$ as a full subcategory.
%We have an embedding $\CS \inj \CS_+$,
%and will use the symbol $[-1] := \emptyset \in \CS_+$.
%A functor $\CS_+^{\op} \to \catC$ to a category $\catC$ will be called 
%an augmented simplicial object in $\catC$.

\begin{dfn*}[{\cite[Notation 6.1.2.12]{Lur1}}]
Let $\iC$ be an $\infty$-category and $U_{\bullet} \in \iC_{\Delta}$.
Regarding $U_{\bullet}$ as a diagram in $\iC$ indexed by $\Ner(\CS)^{\op}$,
we denote by 
\[
 \abs{U_{\bullet}}: \Ner(\CS_+)^{\op} \longto \iC
\] 
a colimit for $U_{\bullet}$ if it exists,
and call it a \emph{geometric realization of $U_{\bullet}$}.
\end{dfn*}

\begin{rmk*}
\begin{enumerate}[nosep]
\item
As noted in \cite[Remark 6.1.2.13]{Lur1},
$\abs{U_{\bullet}}$ is determined up to contractible ambiguity.
\item
For a hypercovering $U_{\bullet}$ of an $\infty$-topos $\tT$,
a geometric realization of $U_{\bullet}$ always exists 
since $\tT$ admits arbitrary colimits (Corollary \ref{cor:pr:lcl}),
so that the notation $\abs{U_{\bullet}}$ makes sense.
By the item (1), we call it %\emph{the} colimit or 
\emph{the} geometric realization of $U_{\bullet}$.
\end{enumerate}
\end{rmk*}

\begin{dfn*}
A hypercovering $U_{\bullet}$ of an $\infty$-topos $\tT$ is called \emph{effective}
if $\abs{U_{\bullet}}$ is a final object of $\tT$.
\end{dfn*}

Let us cite a criterion for an $\infty$-topos to be hypercomplete.

\begin{fct}[{\cite[Theorem 6.3.5.12]{Lur1}}]\label{fct:ic:hc:eff}
For an $\infty$-topos $\tT$, the following two conditions are equivalent.
\begin{enumerate}[nosep]
\item
$\tT$ is hypercomplete.
\item 
For each $X \in \tT$, every hypercovering $U_{\bullet}$ of $\tT_{/X}$ is effective.
\end{enumerate}
\end{fct}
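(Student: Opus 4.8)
The plan is to route both implications through two reformulations and one structural bridge lemma. First I would record the reformulation that $\tT$ is hypercomplete (Definition \ref{dfn:ic:hcpl}) if and only if every $\infty$-connective morphism of $\tT$ is an equivalence: by definition hypercompleteness says that every object is local (Definition \ref{dfn:ic:tau-local}) with respect to the $\infty$-connective morphisms, and by the Yoneda embedding (Definition \ref{dfn:pr:Yoneda}) a morphism $s\colon A \to B$ that induces an equivalence $\Map_{\tT}(B,W) \simto \Map_{\tT}(A,W)$ for every object $W$ is itself an equivalence. Alongside this I would record that hypercompleteness is inherited by slices: each $\tT_{/X}$ is again an $\infty$-topos (Fact \ref{fct:pr:ovc}), and the canonical functor $\tT_{/X} \to \tT$ (Corollary \ref{cor:ic:ovc}) preserves and reflects both $\infty$-connective morphisms and equivalences, so the reformulation transports to $\tT_{/X}$ and makes it hypercomplete whenever $\tT$ is. Since a slice of a slice is again a slice, condition $(2)$ is automatically stable under further slicing.

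The structural heart is a bridge lemma in two halves. Part (a): for any hypercovering $U_\bullet$ of $\tT_{/X}$ (Definition \ref{dfn:ic:top:hc}), the augmentation from the geometric realization, $\abs{U_\bullet} \to X$, is $\infty$-connective. Part (b): conversely, every $\infty$-connective morphism $f\colon Y \to X$ arises as $\abs{U_\bullet} \to X$ for some hypercovering $U_\bullet$ of $\tT_{/X}$ with $\abs{U_\bullet} \simeq Y$ over $X$. For (a) I would induct on the connectivity degree, translating the defining property that each matching map $U_n \to (\cosk_{n-1}U_\bullet)_n$ is an effective epimorphism (Definition \ref{dfn:it:eff-epi}) into successively higher connectivity of the colimit over $\CS^{\op}$, computed through its skeletal--coskeletal filtration; this uses that connectivity of morphisms is compatible with the relevant colimits in an $\infty$-topos. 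For (b) I would build $U_\bullet$ by a coskeletal induction starting from $U_0 = Y \to X$, where at stage $n$ one must choose $U_n$ so that its matching map to $(\cosk_{n-1}U_\bullet)_n$ is an effective epimorphism, and $n$-connectivity of $f$ is exactly what makes that choice possible.

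Granting the bridge lemma, the two implications are short. For $(1)\Rightarrow(2)$: given a hypercovering $U_\bullet$ of $\tT_{/X}$, part (a) makes $\abs{U_\bullet} \to X$ an $\infty$-connective morphism of the hypercomplete $\infty$-topos $\tT_{/X}$, so by the reformulation it is an equivalence, whence $\abs{U_\bullet}$ is a final object and $U_\bullet$ is effective. For $(2)\Rightarrow(1)$: by the reformulation it suffices to show that every $\infty$-connective $f\colon Y \to X$ is an equivalence; part (b) yields a hypercovering $U_\bullet$ of $\tT_{/X}$ with $\abs{U_\bullet} \simeq Y$ over $X$, hypothesis $(2)$ forces $U_\bullet$ to be effective, i.e. $\abs{U_\bullet} \simeq X$, and therefore $Y \simeq X$, so $f$ is an equivalence.

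I expect the main obstacle to be the bridge lemma, and within it part (b): constructing a hypercovering that realizes a prescribed $\infty$-connective morphism requires verifying at each coskeletal stage that the matching-object map is an effective epimorphism and that the colimit of the resulting tower recovers $Y$, which is where the $n$-connectivity hypotheses must be combined with the descent properties (universality of colimits, effectivity of groupoid objects) guaranteed by the Giraud axioms (Fact \ref{fct:pr:Giraud}). The connectivity estimate in part (a) is technically lighter but still needs careful control of the interplay between the truncation functors and the coskeleton functors.
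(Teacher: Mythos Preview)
The paper does not prove this statement; it is recorded as a Fact cited from Lurie without argument. Your overall architecture---reformulate hypercompleteness as ``every $\infty$-connective morphism is an equivalence'', transport this to slices, and bridge through the connectivity of $\abs{U_\bullet}\to X$---is exactly Lurie's, and your part (a) is the key lemma in his proof.

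The gap is in part (b). A coskeletal induction in which one freely chooses $U_n$ at each stage does not control the geometric realization: one could take $U_n=(\cosk_{n-1}U)_n$ at every step (the matching map is then the identity, trivially an effective epimorphism) and obtain a coskeletal $U_\bullet$ whose realization has no reason to be $Y$. The $\infty$-connectivity of $f$ is not what makes \emph{some} choice possible---a choice is always possible---it is what makes a \emph{specific} simplicial object work, namely the constant simplicial object $c(Y)$ at $Y$ in $\oc{\tT}{X}$. Its realization is $Y$ since $\Ner(\CS)^{\op}$ is weakly contractible. Its $n$-th matching object is the cotensor $Y^{\partial\Delta^n}\simeq Y^{S^{n-1}}$ in $\oc{\tT}{X}$, and the matching map $Y\to Y^{S^{n-1}}$ identifies with the $n$-fold iterated diagonal of $f$; these are all effective epimorphisms precisely because $f$ is $\infty$-connective (this is the inductive characterization of $n$-connectivity via diagonals). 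With $U_\bullet=c(Y)$ in hand, your deduction of $(2)\Rightarrow(1)$ goes through as written.
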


Now we have the following result.
See also \cite[Theorem 3.4.1]{TVe1} for a discussion in a model-theoretical context.

\begin{fct}[{\cite[Corollary 6.5.3.13]{Lur1}}]
\label{fct:ic:top:hc}
Let $\tT$ be an $\infty$-topos.
Define $S$ to be the collection of morphisms $\abs{U_{\bullet}} \to X$ 
where $U_{\bullet}$ is a hypercovering of $\tT_{/X}$ for an object $X \in \tT$.
Then an object of $\tT$ is hypercomplete if and only if 
it is $S$-local (Definition \ref{dfn:ic:tau-local}).
\end{fct}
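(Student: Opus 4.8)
The plan is to follow Lurie's argument for \cite[Corollary 6.5.3.13]{Lur1}, deducing the equivalence from the description in Definition \ref{dfn:ic:hcpl} of hypercomplete objects as those local (Definition \ref{dfn:ic:tau-local}) with respect to the class $C_{\infty}$ of $\infty$-connective morphisms of $\tT$, together with the recursive structure of hypercoverings (Definition \ref{dfn:ic:top:hc}). Since, for any object $Z \in \tT$, the class of morphisms inverted by $\Map_{\tT}(-,Z)$ is strongly saturated, being $W$-local depends on a class $W$ of morphisms only through the strongly saturated class $\ol{W}$ it generates. Writing $\ol{S}$ for the strongly saturated class generated by $S$, the statement therefore reduces to proving the two inclusions $S \subseteq C_{\infty}$ and $C_{\infty} \subseteq \ol{S}$: the first shows every hypercomplete object is $S$-local, the second gives the converse.

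First I would prove $S \subseteq C_{\infty}$, i.e.\ that for every $X \in \tT$ and every hypercovering $U_{\bullet}$ of $\tT_{/X}$ the augmentation $\abs{U_{\bullet}} \to X$ is $\infty$-connective. This is an internal computation in $\tT_{/X}$: from the defining effective epimorphisms $U_n \to (\cosk_{n-1} U_{\bullet})_n$ (so in particular $U_0 \to \one_{\tT_{/X}}$, then $U_1 \to U_0 \times U_0$, and so on) together with descent in $\tT$, one shows by induction on $k$ that $\abs{U_{\bullet}} \to X$ is $k$-connective for every $k \in \bbN$.

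The inclusion $C_{\infty} \subseteq \ol{S}$ is the substantive step. Given an $\infty$-connective morphism $f \colon A \to B$, I would exhibit it, up to the operations defining $\ol{S}$, as an augmentation $\abs{U_{\bullet}} \to B$ of a hypercovering of $\tT_{/B}$: set $U_0 := A$ over $B$, and construct $U_{\bullet}$ recursively in the simplicial degree, at stage $n$ choosing $U_n$ together with an effective epimorphism $U_n \to (\cosk_{n-1} U_{\bullet})_n$ so that the resulting simplicial object is a hypercovering. The point is that $\infty$-connectivity of $f$ keeps all the matching objects in this coskeleton tower highly connected, which both allows the recursion to proceed and forces $\abs{U_{\bullet}} \simeq A$ over $B$. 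Then $\abs{U_{\bullet}} \to B$ lies in $S$, and since $f$ factors as an equivalence followed by this morphism, $f \in \ol{S}$.

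The main obstacle is this last construction: producing a genuine hypercovering whose geometric realization is a prescribed $\infty$-connective target, with the connectivity bookkeeping along the coskeleton tower needed to identify $\abs{U_{\bullet}}$ with $A$. A partial shortcut for the implication ``$S$-local $\Rightarrow$ hypercomplete'' is to apply Fact \ref{fct:ic:hc:eff} to the slice $\tT_{/Z}$ (an $\infty$-topos by Fact \ref{fct:pr:ovc}) once one knows that $Z$ is hypercomplete precisely when $\tT_{/Z}$ is; but that slice criterion is itself the technical content, so in practice the cleanest exposition is simply to cite \cite[Corollary 6.5.3.13]{Lur1} and record the above as the underlying idea.
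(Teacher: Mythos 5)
The paper offers no proof of this statement: it is recorded as a Fact and imported directly from \cite[Corollary 6.5.3.13]{Lur1}, so there is no in-paper argument to compare yours against. Your sketch is a faithful reconstruction of Lurie's own proof strategy. The reduction to the two inclusions $S \subseteq C_{\infty}$ and $C_{\infty} \subseteq \ol{S}$ via strong saturation of the class of morphisms inverted by $\Map_{\tT}(-,Z)$ is exactly how HTT argues; the first inclusion is \cite[Lemma 6.5.3.11]{Lur1} and the second is \cite[Lemma 6.5.3.12]{Lur1}, and your use of the paper's Definition \ref{dfn:ic:hcpl} of hypercompleteness as $C_{\infty}$-locality is the right starting point.

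One caveat on the hard step. You assert that the recursive construction ``forces $\abs{U_{\bullet}} \simeq A$ over $B$,'' which would place every $\infty$-connective morphism in $S$ itself rather than merely in $\ol{S}$. That is stronger than what the lemma asserts, and it is not justified by the connectivity bookkeeping alone: a hypercovering of $\tT_{/B}$ with $U_0 = A$ need not realize to $A$ (the Cech nerve of an effective epimorphism already realizes to $B$). The standard way around this is to build a simplicial object $U_{\bullet}$ mapping to the constant simplicial object at $A$ which is \emph{simultaneously} a hypercovering of $\tT_{/A}$ and of $\tT_{/B}$ --- the $\infty$-connectivity of $f$ is what makes the two matching-object conditions compatible --- so that both augmentations $\abs{U_{\bullet}} \to A$ and $\abs{U_{\bullet}} \to B$ lie in $S$, and $f \in \ol{S}$ then follows from the two-out-of-three property of strongly saturated classes. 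Your closing remark about deducing the converse from Fact \ref{fct:ic:hc:eff} is, as you note, circular, since that criterion is itself a consequence of the present statement; given that the paper only cites the result, quoting \cite[Corollary 6.5.3.13]{Lur1} with the above as the underlying idea is the appropriate level of detail.
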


This fact implies

\begin{cor}\label{cor:ic:iSh:hc}
For an $\infty$-site $(\iC,\tau)$,
the $\infty$-topos $\iSh(\iC,\tau)$ of sheaves is hypercomplete.
\end{cor}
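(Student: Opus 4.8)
The plan is to deduce Corollary \ref{cor:ic:iSh:hc} directly from Fact \ref{fct:ic:top:hc} together with the basic structure of $\iSh(\iC,\tau)$ as a topological localization of $\iPSh(\iC)$ recorded in Fact \ref{fct:pr:sh}. First I would recall that $\iSh(\iC,\tau) = \iPSh(\iC)[S_\tau^{-1}]$, so that it is obtained from the presheaf $\infty$-topos $\iPSh(\iC)$ by a \emph{topological localization} in the sense of \cite[\S 6.2.1]{Lur1}; in particular the local objects are exactly the $\tau$-sheaves (Definition \ref{dfn:it:sh}).

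The key observation is that being hypercomplete is itself a localization condition: by Fact \ref{fct:ic:top:hc}, an object of an $\infty$-topos $\tT$ is hypercomplete precisely when it is $S$-local, where $S$ is the collection of morphisms $\abs{U_\bullet} \to X$ for $U_\bullet$ a hypercovering of $\tT_{/X}$. Applying this to $\tT = \iPSh(\iC)$, the hypercomplete presheaves are the $S$-local ones. So I would argue as follows: the $\infty$-topos $\iSh(\iC,\tau)$ sits inside $\iPSh(\iC)$ as the full sub-$\infty$-category of $S_\tau$-local objects, and I want to show every such object is also $S$-local. The point is that for an $\infty$-site the covering sieves are, up to the bijection \eqref{eq:ic:sieve:bij}, generated by monomorphisms into representables, and a sheaf is characterized by a descent condition that already forces the higher descent encoded by hypercoverings; concretely, the morphisms in $S$ (hypercoverings) can be built up from iterated \v{C}ech-type coverings, each of which lies in $S_\tau$ after passing to the relevant slice, so $S_\tau$-locality implies $S$-locality. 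Thus $\iSh(\iC,\tau) \subset \iPSh(\iC)^{\wedge}$, i.e. every $\tau$-sheaf is hypercomplete as an object of $\iPSh(\iC)$.

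The second half of the argument is to promote this to the statement that the $\infty$-topos $\iSh(\iC,\tau)$ is itself hypercomplete, i.e. $\iSh(\iC,\tau)^{\wedge} = \iSh(\iC,\tau)$. Here I would use that a hypercovering of a slice $\iSh(\iC,\tau)_{/X}$, together with its geometric realization, maps under the inclusion and the sheafification functor compatibly with the corresponding notions in $\iPSh(\iC)_{/X}$; since the localization functor $L$ is left exact (Fact \ref{fct:pr:sh} (1), a topological localization), it preserves effective epimorphisms and finite limits, hence carries hypercoverings to hypercoverings and geometric realizations to geometric realizations. Combining this with the first half, every object of $\iSh(\iC,\tau)$ is $S$-local for the class $S$ internal to $\iSh(\iC,\tau)$, and Fact \ref{fct:ic:top:hc} applied now to $\tT = \iSh(\iC,\tau)$ yields that $\iSh(\iC,\tau)$ is hypercomplete. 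Alternatively, one can invoke Fact \ref{fct:ic:hc:eff}: one checks every hypercovering of every slice of $\iSh(\iC,\tau)$ is effective, which follows because effectiveness is detected after applying the left exact localization $L$ to the corresponding hypercovering in presheaves, and there it holds by the first half.

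The main obstacle I expect is the bookkeeping in the first half: carefully justifying that $S_\tau$-locality (a condition phrased via monomorphisms corresponding to covering sieves) actually implies $S$-locality (a condition phrased via geometric realizations of full hypercoverings in arbitrary slices). This requires knowing that hypercoverings are generated, in the appropriate weak sense, by the covering sieves of the Grothendieck topology — essentially an $\infty$-categorical version of the classical fact that \v{C}ech descent for a topology plus iteration gives descent along hypercoverings. Most of this is already packaged in Lurie's development of topological localizations and hypercompletions in \cite[\S 6.5]{Lur1}, so in practice the proof can be kept short by citing \cite[Lemma 6.5.2.12, Proposition 6.5.2.13]{Lur1} or the analogous statements there; the rest is the routine verification that sheafification, being left exact, interacts well with slices, effective epimorphisms, coskeleta and geometric realizations.
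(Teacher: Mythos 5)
Your argument breaks at its central step, and the break is not repairable bookkeeping: the claim that $S_\tau$-locality implies $S$-locality — that a presheaf satisfying descent for covering sieves automatically satisfies descent along arbitrary hypercoverings because ``hypercoverings can be built up from iterated \v{C}ech-type coverings, each of which lies in $S_\tau$'' — is false in general. \v{C}ech descent does not imply hyperdescent. This is exactly the distinction between the $\infty$-topos $\iSh(\iC,\tau)$ of Fact \ref{fct:pr:sh} and its hypercompletion $\iSh(\iC,\tau)^{\wedge}$: the morphisms $\abs{U_\bullet} \to X$ attached to hypercoverings are $\infty$-connective but need not become equivalences after the topological localization at $S_\tau$, and Lurie exhibits explicit sites for which the two localizations differ (see \cite[\S 6.5.4]{Lur1}, e.g.\ sheaves on a compact space of infinite covering dimension). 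Fact \ref{fct:ic:top:hc} only identifies the hypercomplete objects of a \emph{given} $\infty$-topos with the $S$-local ones; it gives no reason for every $\tau$-sheaf to be $S$-local. The passages of \cite[\S 6.5.2]{Lur1} you propose to cite describe the relationship between the two localizations rather than collapsing them. Your second half (the left exact localization preserving effective epimorphisms, coskeleta and realizations, hence hypercoverings) is essentially fine, but it cannot rescue the first half.

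You should also know that the paper itself supplies no argument here — the corollary is introduced only by the phrase ``This fact implies'' — and as stated for an arbitrary $\infty$-site the assertion does not hold. Hypercompleteness of $\iSh(\iC,\tau)$ is a genuine extra hypothesis, valid for instance when the site has locally finite homotopy dimension (\cite[\S 7.2.1]{Lur1}), or if one redefines $\iSh(\iC,\tau)$ to be the hypercompletion (equivalently, works with the Jardine-type local model structure on simplicial presheaves, as To\"en--Vezzosi do); one of these readings is presumably what is intended for the \'etale sites used later in the text. So the ``main obstacle'' you flag in your final paragraph is not a routine verification to be outsourced to \cite{Lur1}: it is the entire mathematical content of the statement, and in the stated generality it fails.
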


Combining with the Yoneda embedding $\yon: \iC \to \iSh(\iC,\tau)$
(Definition \ref{dfn:pr:Yoneda:iSh}), 
we also have

\begin{cor}\label{cor:ic:LE:colim-rep}
For an $\infty$-site $(\iC,\tau)$, each object of $\iSh(\iC,\tau)$ 
is equivalent to a colimit of objects in the sub-$\infty$-category 
$\yon(\iC) \subset \iSh(\iC,\tau)$.
\end{cor}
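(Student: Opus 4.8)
The plan is to reduce the statement to the corresponding fact for presheaves, Fact~\ref{fct:ic:iPSh:gen}, and then transport it along the sheafification functor. By Fact~\ref{fct:pr:sh} the $\infty$-category $\iSh(\iC,\tau)$ is a topological localization of $\iPSh(\iC)$; let $L\colon \iPSh(\iC) \to \iSh(\iC,\tau)$ denote the sheafification functor and $\iota\colon \iSh(\iC,\tau) \inj \iPSh(\iC)$ the inclusion, so that $(L,\iota)$ is an adjunction with $L \circ \iota \simeq \id$. In particular $L$ is a left adjoint, hence preserves small colimits, and by the discussion preceding Definition~\ref{dfn:pr:Yoneda:iSh} the composite of $\yon\colon \iC \to \iPSh(\iC)$ with $L$ is exactly the Yoneda embedding $\yon\colon \iC \to \iSh(\iC,\tau)$ of Definition~\ref{dfn:pr:Yoneda:iSh}.

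First I would record that Fact~\ref{fct:ic:iPSh:gen} implies that every object of $\iPSh(\iC)$ is equivalent to a small colimit of representable presheaves, i.e.\ of objects in the essential image of $\yon\colon \iC \to \iPSh(\iC)$; this is the standard consequence of $\iPSh(\iC)$ being freely generated under small colimits by the Yoneda image (one may for instance take the colimit over the associated $\infty$-category of elements). Next I would fix $\shF \in \iSh(\iC,\tau)$ and regard it as a presheaf via $\iota$, choosing a small simplicial set $K$ and a diagram $p\colon K \to \iC$ together with an equivalence $\iota(\shF) \simeq \iclim_{K}(\yon \circ p)$ in $\iPSh(\iC)$. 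Applying $L$, using $L \circ \iota \simeq \id$ and the fact that $L$ commutes with small colimits, I obtain
\[
 \shF \simeq L\bigl(\iota(\shF)\bigr) \simeq L\bigl(\iclim_{K}(\yon \circ p)\bigr) \simeq \iclim_{K}\bigl(L \circ \yon \circ p\bigr) \simeq \iclim_{K}(\yon \circ p),
\]
where the final colimit is now taken in $\iSh(\iC,\tau)$ (which admits small colimits by Corollary~\ref{cor:pr:lcl}) and $\yon$ denotes the Yoneda embedding into $\iSh(\iC,\tau)$. This exhibits $\shF$ as a colimit of objects of $\yon(\iC) \subset \iSh(\iC,\tau)$, which is the assertion.

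The argument is essentially formal, and the only point needing a little care is that the colimit on the right-hand side of the display is the one computed in $\iSh(\iC,\tau)$ rather than in $\iPSh(\iC)$; this is precisely what the colimit-preservation of the left adjoint $L$ supplies. A secondary, purely bookkeeping, obstacle is to keep the two Yoneda embeddings — into $\iPSh(\iC)$ and into $\iSh(\iC,\tau)$ — and their compatibility $L \circ \yon \simeq \yon$ straight. I do not expect to need Corollary~\ref{cor:ic:iSh:hc} (hypercompleteness) for this particular statement, although it is what motivates stating the corollary at this point.
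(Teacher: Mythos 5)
Your proof is correct, but it takes a genuinely different route from the paper's. The paper argues via hypercompleteness: it invokes Corollary~\ref{cor:ic:iSh:hc} together with Fact~\ref{fct:ic:hc:eff} to choose a hypercovering $U_{\bullet}$ of the given sheaf $\shF$ with $\abs{U_{\bullet}} \to \shF$ an equivalence, and then applies Fact~\ref{fct:ic:iPSh:gen} to each term $U_n$; the desired colimit is obtained by combining the geometric realization with the colimits presenting each $U_n$. You instead run the purely formal adjunction argument: write the underlying presheaf $\iota(\shF)$ as a colimit of representables in $\iPSh(\iC)$ via Fact~\ref{fct:ic:iPSh:gen}, and push this through the colimit-preserving sheafification $L$, using $L \circ \iota \simeq \id$ and the compatibility $L \circ \yon \simeq \yon$ built into Definition~\ref{dfn:pr:Yoneda:iSh}. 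Each step of your argument is supported by results already available in the paper (Fact~\ref{fct:pr:sh} for the localization, Corollary~\ref{cor:pr:lcl} for colimits in the target), so there is no gap. Your observation that hypercompleteness is not needed is accurate: your argument applies verbatim to any left exact localization of a presheaf $\infty$-category, whereas the paper's route is tied to the hypercomplete case. What the paper's approach buys in exchange is a structured presentation of $\shF$ as the realization of a simplicial object whose terms are colimits of representables — a shape of resolution that is reused in the descent arguments later in the text — while yours establishes the bare existence statement more economically.
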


\begin{proof}
Given a sheaf $F \in \iSh(\iC,\tau)$,
we can take a hypercovering $U_{\bullet}$ in $\iPSh(\iC)_{/F}$ 
such that $\abs{U_{\bullet}} \to F$ 
is equivalence by Corollary \ref{cor:ic:iSh:hc} and Fact \ref{fct:ic:hc:eff}.
Each $U_n \in \iPSh(\iC)$ is equivalent to a colimit of objects in $\yon(\iC)$
by Fact \ref{fct:ic:iPSh:gen}.
Thus $F$ is equivalent to a colimit of objects in $\yon(\iC)$.
\end{proof}

%For the use in the main text, we introduce
%
%\begin{dfn}\label{dfn:ic:hc-sh}
%Let $(\iC,\tau)$ be an $\infty$-site.
%A sheaf $\stX \in \iSh(\iC,\tau)$ is called \emph{hypercomplete}
%if it is hypercomplete as an object of the $\infty$-topos $\iSh(\iC,\tau)$,
%in other words, $\stX \in {\iSh(\iC,\tau)}^{\wedge}$.
%\end{dfn}

%%%%%%%%%%%%%%%%%%%%%%%%%%%%%%%%%%%%%%%%%%%%%%%%%%%%%%%%%%%%%%%%%%%%%%%%%%%%%%%%
%%%%%%%%%%%%%%%%%%%%%%%%%%%%%%%%%%%%%%%%%%%%%%%%%%%%%%%%%%%%%%%%%%%%%%%%%%%%%%%%
\subsection{Proper base change in an $\infty$-topos}
\label{ss:is:bc}

In this subsection we recall the proper base change theorem 
in an $\infty$-topos \cite[\S 7.3.1]{Lur1}.

%We begin with
%
%\begin{dfn}[{\cite[Definition 3.0.0.1]{Lur1}}]\label{dfn:ic:iCat}
%\begin{enumerate}[nosep]
%\item 
%Define the simplicial category $\siCat$ as follows.
%\begin{itemize}[nosep]
%\item 
%The objects are small $\infty$-categories.
%\item
%For $\infty$-categories $\iB$ and $\iC$,
%define $\Map_{\siCat}(\iB,\iC)$ to be the largest Kan complex 
%contained in the $\infty$-category $\iFun(\iB,\iC)$.
%\end{itemize} 
%\item
%We set 
%\[
% \iCat := \Nsp(\siCat)
%\]
%and call it the \emph{$\infty$-category of small $\infty$-categories}.
%\end{enumerate}
%\end{dfn}

Recall the $\infty$-category $\iCat$ of small $\infty$-category. 
%By \cite[\S 3.3.3, \S 3.3.4, Corollary 4.2.4.8]{Lur1}, 
It admits small limits and small colimits.
Now the following definition makes sense.

\begin{dfn}[{\cite[Definition 7.3.1.1, 7.3.1.2]{Lur1}}]
\label{dfn:is:bc}
A diagram 
\[
 \xymatrix{
  \iB' \ar[r]^{q'_*} \ar[d]_{p'_*} & \iC' \ar[d]^{p_*} \\
  \iB  \ar[r]_{q_*}                & \iC}
\]
of $\infty$-categories is \emph{left adjointable} 
if the corresponding diagram 
\[
 \xymatrix{
  \Ho \iB' \ar[r]^{q'_*} \ar[d]_{p_*} & \Ho \iC' \ar[d]^{q_*} \\
  \Ho \iB  \ar[r]_{q_*}               & \Ho \iC}
\]
of the homotopy categories commutes up to a specified isomorphism
$\eta : p_* q'_* \to q_* p'_*$,
the functors $q_*$, $q'_*$ of categories admit left adjoints $q^*$, ${q'}^*$
and the morphism 
\[
 \alpha : q^* p_* \xrr{u} q^* p_* q'_* {q'}^* \xrr{\eta} 
          q^* q_* p'_* {q'}^* \xrr{c} p'_* {q'}^*
\]
is an isomorphism of functors.
Here $u$ is the unit and $c$ is the counit associated to each adjunction.
We call $\alpha$ the \emph{base change morphism}.
\end{dfn}

\begin{dfn*}[{\cite[Definition 7.3.1.4]{Lur1}}]
A geometric morphism $p: \tU \to \tT$ of $\infty$-topoi 
corresponding to the adjoint pair $p^*: \tU \rlto \tT :p_*$
is \emph{proper} if for any cartesian rectangle
\[
 \xymatrix{
  \tT'' \ar[r] \ar[d] & \tT' \ar[r] \ar[d] & \tT \ar[d]^{p_*} \\
  \tU'' \ar[r]        & \tU' \ar[r]        & \tU}
\]
of $\infty$-topoi, the left square is left adjointable.
\end{dfn*}

Thus a proper geometric morphism is defined to be one 
for which the base change theorem holds.

We collect some formal properties of proper geometric morphisms.

\begin{fct*}[{\cite[Proposition 7.3.1.6]{Lur1}}]
\begin{enumerate}[nosep]
\item 
Any equivalence of $\infty$-topoi is a proper geometric morphism.
\item
The class of proper geometric morphisms is closed under
equivalence, pullback by any morphism and composition.
\end{enumerate}
\end{fct*}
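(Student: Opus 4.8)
The plan is to deduce all three assertions from two structural facts: first, that cartesian squares in $\iRTop$ paste and cancel in the usual way (available since $\iRTop$ admits small limits by Fact \ref{fct:is:RTop} (2)), and second, that left adjointable squares in the sense of Definition \ref{dfn:is:bc} are closed under horizontal and vertical pasting, with the base change transformations composing as mates. The entire reason for phrasing properness in terms of cartesian \emph{rectangles} rather than single squares is precisely to make these pasting arguments go through cleanly, so I would first isolate the two pasting lemmas for left adjointable diagrams and treat them as the technical core.

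For (1), suppose $p_*\colon \tT \to \tU$ is the direct image of an equivalence. Since equivalences of $\infty$-topoi are stable under pullback, in any cartesian rectangle whose rightmost vertical is $p_*$ every vertical geometric morphism obtained by pullback is again an equivalence. I would then observe that a commuting square in which the two vertical functors are equivalences is automatically left adjointable: the horizontal direct images admit left adjoints (their inverse image functors), and the base change transformation $\alpha$ of Definition \ref{dfn:is:bc} is assembled from the ambient commuting isomorphism together with the relevant unit and counit, all of which are invertible once the verticals are equivalences. Hence the left square is left adjointable and $p$ is proper.

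For (2), closure under equivalence is formal: a geometric morphism equivalent to $p$ sits in cartesian rectangles equivalent to those for $p$, and left adjointability is invariant under equivalence of diagrams. For closure under pullback, let $p$ be proper and let $p_0$ be its pullback along a geometric morphism; given a cartesian rectangle for $p_0$, I would paste the defining pullback square of $p_0$ onto its right-hand end, so that the pasting lemma for cartesian squares produces a cartesian rectangle whose rightmost vertical is $p_*$ and whose left square coincides with the left square of the original rectangle for $p_0$. Properness of $p$ then yields left adjointability of that left square. For closure under composition, let $f$ and $g$ be proper with composite $g\circ f$; given a cartesian rectangle for $g\circ f$, I would factor each vertical pullback of $(g\circ f)_*=g_* f_*$ through the intermediate $\infty$-topos, splitting the left square into two vertically stacked cartesian squares, the upper one a left square of a cartesian rectangle for a pullback of $f$ and the lower one for $g$. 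By the pullback case just proved these pulled-back morphisms are again proper, so each stacked square is left adjointable, and vertical pasting gives left adjointability of the composite, i.e.\ of the left square for $g\circ f$.

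The main obstacle I anticipate is not the combinatorics of the pasting but the $\infty$-categorical bookkeeping underlying it: one must verify that the base change transformation $\alpha$ of Definition \ref{dfn:is:bc} is genuinely the mate of the ambient commuting isomorphism, and that mates compose coherently under both horizontal and vertical pasting, up to the contractible ambiguity inherent in adjoints in $\iCat$. A secondary point requiring care is that forming pullbacks of $\infty$-topoi in $\iRTop$ is compatible with passage to the associated direct image functors, so that the displayed diagrams of $\infty$-categories really are the ones to which Definition \ref{dfn:is:bc} applies; this again rests on Fact \ref{fct:is:RTop}.
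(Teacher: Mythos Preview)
The paper does not prove this statement; it is recorded as an unnumbered Fact citing \cite[Proposition 7.3.1.6]{Lur1} and no argument is given. Your outline is correct and is essentially the proof Lurie gives at the cited location: reduce everything to pasting of cartesian squares in $\iRTop$ together with the horizontal and vertical composition laws for mates (base change transformations), and handle part (1) by observing that pullbacks of equivalences are equivalences and that a commuting square whose parallel pair of non-adjointed edges are equivalences has invertible mate.

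One minor streamlining in the composition case: you need not invoke the pullback clause first. After factoring the given cartesian rectangle for $g\circ f$ through the intermediate $\infty$-topos into a $2\times 2$ grid, the top row is already a cartesian rectangle with $f_*$ as its rightmost vertical and the bottom row one with $g_*$ as its rightmost vertical; properness of $f$ and $g$ applied directly to these rows gives left adjointability of the two stacked left squares, and your vertical mate-pasting lemma finishes. This is how Lurie arranges it and avoids the slight detour through the pullback clause.
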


In \cite[Theorem 7.3.16]{Lur1}
it is shown that for a proper map $p: X \to Y$ 
of topological spaces with $X$ completely regular
(i.e., homeomorphic to a subspace of a compact Hausdorff space), 
the associated geometric morphism $p_*: \iSh(X) \to \iSh(Y)$ is proper. 

\section{Stable $\infty$-categories}
\label{a:stb}

%%%%%%%%%%%%%%%%%%%%%%%%%%%%%%%%%%%%%%%%%%%%%%%%%%%%%%%%%%%%%%%%%%%%%%%%%%%%%%%%
%%%%%%%%%%%%%%%%%%%%%%%%%%%%%%%%%%%%%%%%%%%%%%%%%%%%%%%%%%%%%%%%%%%%%%%%%%%%%%%%
\subsection{Definition of stable $\infty$-categories}
\label{ss:stb:stab-icat}

In this subsection we cite from \cite[Chap.\ 1]{Lur2} 
the necessary notion and statements on stable $\infty$-category.

\begin{dfn*}[{\cite[Definition 1.1.1.1]{Lur2}}]
A \emph{zero object} of an $\infty$-category $\iC$ is an object 
which is both initial and final in the sense of Definition \ref{dfn:ic:ini-fin}.
\end{dfn*}

\begin{dfn}[{\cite[Definition 1.1.1.4, 1.1.1.6]{Lur2}}]\label{dfn:stb:fcs}
Let $\iC$ be an $\infty$-category with a zero object $0$.
\begin{enumerate}[nosep]
\item 
A \emph{triangle} in $\iC$ is a square of the form
\[
 \xymatrix{X \ar[r] \ar[d] & Y \ar[d] \\ 0 \ar[r] & Z}
\]
We sometimes denote such a triangle simply by $X \to Y \to Z$.

\item
A triangle is called a \emph{fiber sequence} (resp.\ \emph{cofiber sequence})
if it is a pullback square (resp.\ pushout square) 
in the sense of Definition \ref{dfn:ic:pb-po}.

\item
Let $f: X \to Y$ be a morphism in $\iC$.
A \emph{fiber} of $f$ is a fiber sequence  of the form 
\[
 \xymatrix{W \ar[r] \ar[d] & X \ar[d]^{f} \\ 0 \ar[r] & Y}
\]
A \emph{cofiber} of $f$ is a cofiber sequence of the form 
\[
 \xymatrix{X \ar[r]_{f} \ar[d] & Y \ar[d] \\ 0 \ar[r] & Z}
\]
\end{enumerate}
\end{dfn}

The word triangle in the above definition will be used only in this subsection
(in fact, up to Definition \ref{dfn:stb:stb}).
Note that in the main text we use the word triangle in $\iC$ to mean 
a simplicial map $\Delta^2 \to \iC$.

\begin{dfn}[{\cite[Definition 1.1.1.9]{Lur2}}]\label{dfn:stb:stb}
An $\infty$-category $\iC$ is called \emph{stable} 
if it satisfies the following three conditions.
\begin{itemize}
\item 
$\iC$ has %is pointed with 
a zero object $0 \in \iC$.
\item
Every morphism in $\iC$ has a fiber and a cofiber.
\item
A triangle in $\iC$ is a fiber sequence if and only if it is a cofiber sequence.
\end{itemize}
\end{dfn}

For a stable $\infty$-category $\iC$ 
one can define the suspension functor $\Sigma: \iC \to \iC$ 
and the loop functor $\Omega: \iC \to \iC$ as follows \cite[\S1.1.2]{Lur2}.

Let us assume for a while only that $\iC$ has a zero object.
%is a pointed $\infty$-category.
Consider the full sub-$\infty$-category $\iM^{\Sigma}$
of $\iFun(\Delta^1 \times \Delta^1,\iC)$ 
spanned by the pushout squares of the form
\[
 \xymatrix{X \ar[r] \ar[d] & 0 \ar[d] \\ 0' \ar[r] & Y}
\]
with $0$ and $0'$ zero objects of $\iC$.
As explained in \cite[p.23]{Lur2}, if morphisms in $\iC$ have cofibers, 
then the evaluation at $X$ induces a trivial fibration $i: \iM^{\Sigma} \to \iC$.
Let $s: \iC \to \iM^{\Sigma}$ be a section of $i$.
Let also $f: \iM^{\Sigma} \to \iC$ be the functor given by evaluation at $Y$.

\begin{dfn}\label{dfn:stb:suspension}
Let $\iC$ be %a pointed 
an $\infty$-category which has a zero object 
and where every morphism has a cofiber.
The \emph{suspension functor} $\Sigma = \Sigma_{\iC}: \iC \to \iC$ of $\iC$ 
is defined to be the composition $\Sigma := f \circ s$
of $f$ and $s$ constructed above.
\end{dfn}

Dually, denote by $\iM^{\Omega}$ the full sub-$\infty$-category
of $\iFun(\Delta^1 \times \Delta^1,\iC)$ 
spanned by the pullback squares of the above form.
If morphisms in $\iC$ have fibers, then the evaluation at the vertex $Y$
induces a trivial fibration $f': \iM^{\Sigma} \to \iC$.
Let $s': \iC \to \iM^{\Omega}$ be a section of $f'$.
Let also $i': \iM^{\Omega} \to \iC$ be the functor given by the evaluation at $X$.

\begin{dfn*}
Let $\iC$ be %a pointed 
an $\infty$-category which has a zero object 
and where every morphism has a fiber.
The \emph{loop functor} $\Omega = \Omega_{\iC}: \iC \to \iC$ of $\iC$ 
is defined to be the composition $\Omega := i' \circ s'$
of $i'$ and $s'$ constructed above.
\end{dfn*}

\begin{fct*}
If $\iC$ is stable, then $\iM^{\Sigma}=\iM^{\Omega}$,
so that $\Sigma$ and $\Omega$ are mutually inverse equivalences on $\iC$.
\end{fct*}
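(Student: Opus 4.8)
The plan is to derive the statement directly from the three defining conditions of Definition \ref{dfn:stb:stb}, using the explicit presentations of $\Sigma$ and $\Omega$ through the $\infty$-categories $\iM^{\Sigma}$ and $\iM^{\Omega}$ of squares. First I would prove the equality $\iM^{\Sigma}=\iM^{\Omega}$ as full sub-$\infty$-categories of $\iFun(\Delta^{1}\times\Delta^{1},\iC)$. A square of the shape appearing in the definition of $\iM^{\Sigma}$, having zero objects in its two off-diagonal slots, is (after identifying those slots with a chosen zero object) a \emph{triangle} in the sense of Definition \ref{dfn:stb:fcs} whose middle term is a zero object; such a triangle is a cofiber sequence exactly when the square is a pushout and a fiber sequence exactly when it is a pullback. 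The third axiom of a stable $\infty$-category says that a triangle is a fiber sequence if and only if it is a cofiber sequence, so a square of this shape is a pushout if and only if it is a pullback. Hence $\iM^{\Sigma}$ and $\iM^{\Omega}$ are spanned by the same objects, and $\iM^{\Sigma}=\iM^{\Omega}=:\iM$.

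Next I would record that the two evaluation functors $\iM\to\iC$, namely evaluation $e_{0}$ at the initial vertex $X$ and evaluation $e_{1}$ at the terminal vertex $Y$, are both trivial fibrations. For $e_{0}$ this is exactly the argument already sketched above for $i\colon \iM^{\Sigma}\to\iC$, which uses that $\iC$ has a zero object and that every morphism of $\iC$ has a cofiber; for $e_{1}$ it is the dual argument, carried out on $\iM^{\Omega}$, which uses that every morphism has a fiber. Because $\iM^{\Sigma}=\iM^{\Omega}$, both conclusions concern the single $\infty$-category $\iM$, so $e_{0}$ and $e_{1}$ are trivial fibrations (in particular categorical equivalences) with a common source. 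Choosing a section $s$ of $e_{0}$ and a section $s'$ of $e_{1}$, Definition \ref{dfn:stb:suspension} and its dual give $\Sigma\simeq e_{1}\circ s$ and $\Omega\simeq e_{0}\circ s'$.

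Finally I would compute the two composites. Since $e_{0}$ is a categorical equivalence and $e_{0}\circ(s\circ e_{0})=(e_{0}\circ s)\circ e_{0}=e_{0}$, it follows that $s\circ e_{0}\simeq\id_{\iM}$; dually $s'\circ e_{1}\simeq\id_{\iM}$. Therefore
\[
 \Sigma\circ\Omega\simeq e_{1}\circ s\circ e_{0}\circ s'\simeq e_{1}\circ s'=\id_{\iC},
 \qquad
 \Omega\circ\Sigma\simeq e_{0}\circ s'\circ e_{1}\circ s\simeq e_{0}\circ s=\id_{\iC},
\]
using $e_{1}\circ s'=\id_{\iC}$ and $e_{0}\circ s=\id_{\iC}$. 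Hence $\Sigma$ and $\Omega$ are mutually inverse equivalences of $\iC$.

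The main obstacle is the first step: one must check carefully that the squares parametrizing $\iM^{\Sigma}$ and $\iM^{\Omega}$ are genuine instances of the notion of triangle, so that the third stability axiom applies verbatim — in particular that requiring "a zero object'' in the two off-diagonal slots, rather than the chosen zero object $0$ of Definition \ref{dfn:stb:fcs}, makes no difference, which reduces to the essential uniqueness of zero objects (Definition \ref{dfn:ic:ini-fin} together with Fact \ref{fct:ic:fin}). The remaining ingredients — that the two evaluations are trivial fibrations, and the formal juggling with sections of a categorical equivalence — are routine and essentially already contained in the discussion preceding the statement.
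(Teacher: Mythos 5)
Your proof is correct and follows exactly the standard argument: the paper states this as a fact imported from \cite[Chap.~1]{Lur2} without proof, and the intended justification (Lurie's Remark 1.1.2.8) is precisely your chain of reasoning — the third stability axiom makes the defining squares simultaneously pushouts and pullbacks, so $\iM^{\Sigma}=\iM^{\Omega}$, and then both evaluation functors are trivial fibrations whose sections are mutually inverse up to homotopy, giving $\Sigma\circ\Omega\simeq\id_{\iC}\simeq\Omega\circ\Sigma$. Your care about ``a zero object'' versus a fixed one is the right point to flag, and is handled correctly by the contractibility of the space of zero objects.
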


\begin{dfn}\label{dfn:stb:[n]}
Let $\iC$ be a stable $\infty$-category.
For $n \in \bbN$, we denote by $X \mapsto X[n]$ 
the $n$-th power of the suspension functor $\Sigma$,
and by $X \mapsto X[-n]$ the $n$-th power of the loop functor $\Omega$.
We call them \emph{translations} or \emph{shifts} on $\iC$.
\end{dfn}

We also have the equivalence on the homotopy category $\Ho \iC$ 
induced by $[n]:\iC \to \iC$, which will be denoted by the same symbol $[n]$
and called translations on $\Ho \iC$.

We cite from \cite{Lur2} a construction of new stable $\infty$-category from old one.

\begin{fct*}[{\cite[Propsition 1.1.3.1, Lemma 1.1.3.3]{Lur2}}]
Let $\iC$ be a stable $\infty$-category.
\begin{enumerate}[nosep]
\item
For a simplicial set $K$,
the $\infty$-category $\iFun(K,\iC)$ of functors is stable.

\item
Let $\iC' \subset \iC$ be a full sub-$\infty$-category
which is stable under cofibers and translations. 
Then $\iC'$ is a stable subcategory of $\iC$.
\end{enumerate}
\end{fct*}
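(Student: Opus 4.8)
The plan is to check, in each case, the three clauses of Definition~\ref{dfn:stb:stb} directly, relying on two standard pieces of $\infty$-categorical formalism: that limits and colimits in a functor $\infty$-category $\iFun(K,\iC)$ are detected and computed objectwise, and that a full sub-$\infty$-category which is closed under a prescribed class of (co)limits admits those (co)limits, computed as in the ambient category.

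For part (1), I would first note that the constant-diagram functor $\iC \to \iFun(K,\iC)$ is simultaneously left adjoint to $\ilim$ and right adjoint to $\iclim$; since left adjoints preserve initial objects and right adjoints preserve final ones, the constant functor $\ul 0 \colon K \to \iC$ at a zero object $0 \in \iC$ is both initial and final, hence a zero object of $\iFun(K,\iC)$ in the sense of Definition~\ref{dfn:ic:ini-fin}. Next, given a morphism $\alpha \colon F \to G$ in $\iFun(K,\iC)$, for each vertex $k \in K$ the morphism $\alpha(k)$ has a fiber and a cofiber in $\iC$ (Definition~\ref{dfn:stb:fcs}); assembling the objectwise fiber squares into one square $\Delta^1 \times \Delta^1 \to \iFun(K,\iC)$, and using that such a square is a pullback square (Definition~\ref{dfn:ic:pb-po}) exactly when every evaluation at a vertex of $K$ is a pullback square in $\iC$, one obtains a fiber of $\alpha$; dually for the cofiber. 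Finally, by the same objectwise criterion a triangle in $\iFun(K,\iC)$ is a fiber sequence iff each of its evaluations is a fiber sequence in $\iC$, iff (stability of $\iC$) each evaluation is a cofiber sequence in $\iC$, iff the triangle is a cofiber sequence in $\iFun(K,\iC)$, which is the third clause.

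For part (2), I take $\iC'$ to be nonempty, as any stable $\infty$-category must be. Choosing any $X \in \iC'$, the cofiber of $\id_X$ is a zero object of $\iC$ which lies in $\iC'$ by closure under cofibers and is still a zero object of the full subcategory $\iC'$. Given $f \colon X \to Y$ in $\iC'$, a cofiber $\cof(f)$ lies in $\iC'$; and in a stable $\infty$-category one has $\fib(f) \simeq \cof(f)[-1]$ --- the usual rotation, gotten by pasting together the defining pullback and pushout squares against the zero object --- so the closure of $\iC'$ under the shift $[-1]$ (Definition~\ref{dfn:stb:[n]}) makes $\iC'$ closed under fibers too. Since $\iC' \subset \iC$ is \emph{full}, a square with all vertices in $\iC'$ is a pushout (resp.\ pullback) square in $\iC'$ iff it is one in $\iC$; hence the cofiber square of $f$ formed in $\iC$ witnesses a cofiber of $f$ in $\iC'$, and likewise for fibers, so every morphism of $\iC'$ has a fiber and a cofiber computed inside $\iC'$. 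The third clause then transports verbatim: a triangle in $\iC'$ is a fiber sequence there iff it is one in $\iC$ iff it is a cofiber sequence in $\iC$ iff it is a cofiber sequence in $\iC'$.

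The main obstacle is the two formal lemmas themselves; everything else is a mechanical unwinding of Definition~\ref{dfn:stb:stb}. For part (1) it is the objectwise recognition of (co)limit diagrams in $\iFun(K,\iC)$, which in the quasi-category model requires the theory of cofinal maps and the pointwise criterion for colimits in functor categories. For part (2) it is the assertion that a full sub-$\infty$-category closed under a class of colimits admits them and computes them as in $\iC$, together with the rotation identity $\fib(f) \simeq \cof(f)[-1]$, whose proof is itself a pasting argument with the zero object and the functors $\Sigma$, $\Omega$ constructed above.
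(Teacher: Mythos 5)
The paper gives no proof here --- this Fact is quoted directly from Lurie, and your argument is essentially the standard one found there: pointwise verification of the three stability axioms for (1), and the closure-plus-rotation argument for (2). One justification in part (1) does not hold as written: you produce the zero object of $\iFun(K,\iC)$ by appealing to the adjunctions $\iclim \dashv \delta \dashv \ilim$ for the constant-diagram functor $\delta$, but for an arbitrary simplicial set $K$ the functors $\ilim,\iclim\colon \iFun(K,\iC)\to\iC$ need not exist, since a stable $\infty$-category is only guaranteed to admit \emph{finite} limits and colimits. The conclusion survives with a small repair: an initial (resp.\ final) object is the colimit (resp.\ limit) of the empty diagram, which is finite, and the pointwise-computation lemma you already rely on for fibers and cofibers shows these empty (co)limits of $\iFun(K,\iC)$ exist and are computed objectwise, so the constant functor at $0$ is both initial and final. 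The rest is sound, including the implicit but necessary assumption in (2) that $\iC'$ is nonempty (the empty subcategory satisfies the closure hypotheses but is not stable), your use of fullness to recognize the ambient (co)fiber squares as (co)fiber squares in $\iC'$, and the rotation $\fib(f)\simeq \cof(f)[-1]$ that converts closure under cofibers and shifts into closure under fibers.
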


We now recall the structure of a triangulated category 
on the homotopy category $\Ho \iC$.
A diagram in $\Ho \iC$ of the form
\[
 \xymatrix{X \ar[r]^f & Y \ar[r]^g & Z \ar[r]^(0.4){h} & X[1]}
\]
is called a \emph{distinguished triangle} if there exists a diagram
$\Delta^1 \times \Delta^2 \to \iC$ of the form
\[
 \xymatrix{X  \ar[r]^{\wt{f}} \ar[d] & Y \ar[r]   \ar[d]^g & 0 \ar[d] \\
           0' \ar[r]                 & Z \ar[r]_{\wt{h}}   & W          }
\]
satisfying the following four conditions:
\begin{itemize}[nosep]
\item $0,0' \in \iC$ are zero objects.
\item Both squares are pushout square in $\iC$.
\item The maps $\wt{f}$ and $\wt{g}$ in $\iC$ 
      represent $f$ and $g$ in $\Ho \iC$ respectively.
\item $h$ is the composition of the homotopy class of $\wt{h}$ with the equivalence
      $W \simeq X[1]$ determined by the outer rectangle.
\end{itemize}

\begin{fct}[{\cite[Theorem 1.1.2.14]{Lur2}}]\label{fct:stb:stb}
Let $\iC$ be a stable $\infty$-category.
Then the translations on $\Ho \iC$ and the distinguished triangles 
give $\Ho \iC$ the structure of a triangulated category.
\end{fct}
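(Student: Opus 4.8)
The plan is to verify directly the axioms (TR1)--(TR4) of a triangulated category (in the formulation of \cite{BBD}) for the triple consisting of $\Ho\iC$, the translation functor $[1]$ of Definition \ref{dfn:stb:[n]}, and the class of distinguished triangles defined just above. Before touching the axioms one records the additive structure. The translation $[1]$ is an equivalence of $\Ho\iC$ with quasi-inverse $[-1]$ induced by the loop functor $\Omega$, and it is additive; and $\Hom_{\Ho\iC}(X,Y) = \pi_0 \Map_{\iC}(X,Y)$ carries a natural abelian group structure, which I would extract from the equivalences $\Map_{\iC}(X,Y) \simeq \Map_{\iC}(X,\Omega^2(Y[2])) \simeq \Omega^2 \Map_{\iC}(X,Y[2])$, using that mapping spaces out of $X$ preserve the pullback squares defining $\Omega$. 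This exhibits each mapping space as a double loop space, so $\pi_0$ is abelian and the loop structure is compatible with composition; that $\Ho\iC$ is moreover additive (finite products and coproducts exist and the canonical comparison map is an equivalence) follows from the fact that a stable $\infty$-category admits all finite limits and colimits and is semiadditive.

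For (TR1), a morphism $f\colon X\to Y$ in $\iC$ has a cofiber $Z$ (Definition \ref{dfn:stb:fcs}); taking in addition a cofiber $W$ of $Y\to Z$ produces a diagram $\Delta^1\times\Delta^2\to\iC$ of the shape required in the definition of a distinguished triangle, in which both two-by-two squares are pushouts. Its outer rectangle then exhibits $W$ as a pushout of two zero objects under $X$, i.e.\ as $X[1]$, giving the distinguished triangle $X\xrightarrow{f}Y\to Z\to X[1]$. The same diagram with $Y$ a zero object shows $X\xrightarrow{\id}X\to 0\to X[1]$ is distinguished, and invariance under isomorphism of triangles is immediate from the definition. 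For (TR2) I would paste one further pushout square onto the defining $\Delta^1\times\Delta^2$ diagram and invoke that a rectangle built from two pushout squares is again a pushout; rotating the enlarged picture produces a distinguished triangle on $Y\to Z\to X[1]$, and tracking the canonical identification of the outer corner with $X[1]$, resp.\ $Y[1]$, through the rotation yields the sign $-f[1]$ on the last map. The converse rotation is handled symmetrically, or deduced from the proved one and the equivalence $[1]$.

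For (TR3) I would argue one square at a time: given distinguished triangles realized by diagrams $\sigma,\sigma'\colon \Delta^1\times\Delta^2\to\iC$ together with a commutative square relating their first two terms, the first square of $\sigma$ is a pushout, so a compatible choice of maps out of its three displayed corners extends, uniquely up to contractible ambiguity, to a map into the first square of $\sigma'$; iterating over the second square extends this to a morphism of triangles and supplies the required third map. The genuinely laborious axiom is (TR4). Here one must produce a single diagram in $\iC$, indexed by a suitable poset (concretely one of shape $\Delta^1\times\Delta^1\times\Delta^2$ or a slight enlargement thereof), all of whose designated faces are pushout squares, from which the octahedron and the compatibility of its maps can be read off directly; existence of such a diagram follows by repeatedly forming cofibers and using that pushouts exist and compose in $\iC$, but deciding exactly which subsquares must be pushouts, checking that the relevant outer rectangles realize the intended shifts, and verifying commutativity of every face in $\Ho\iC$ is where essentially all of the combinatorial bookkeeping lives. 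I expect (TR4) to be the main obstacle; the remaining axioms reduce, more or less mechanically, to the interplay between pushout squares, their pasting, and the description of $[1]$ via the iterated pushout $X\to 0\to X[1]$.
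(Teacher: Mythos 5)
This statement is quoted in the paper as a Fact from \cite[Theorem 1.1.2.14]{Lur2} with no proof given, and your outline reproduces exactly the argument of that cited theorem: additivity of $\Ho\iC$ via the double-loop-space structure on mapping spaces, (TR1) from existence of cofibers, (TR2) by pasting pushout squares with the sign coming from the identification of the double cofiber with the suspension, (TR3) from the universal property of pushouts, and (TR4) from a larger coherent diagram of pushout squares. The approach is correct and the same as the source's; the only caveat is that (TR4) is described rather than carried out, which you acknowledge and which is indeed where the bulk of Lurie's proof lies.
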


By this fact, we know in particular that 
$\Ho \iC$ has the structure of an additive category.
For objects $X,Y$ of a stable $\infty$-category $\iC$ and an integer $n$,
we denote by $\Ext_{\iC}^n(X,Y)$ the abelian group $\Hom_{\Ho \iC}(X[-n],Y)$.

Finally we recall the notion of exact functors of stable $\infty$-categories.
See \cite[\S 1.1.4]{Lur2} for the detail.

\begin{dfn}\label{dfn:stb:exact}
A functor $F: \iC \to \iC'$ of stable $\infty$-categories is called \emph{exact} 
if the following two conditions are satisfied.
\begin{itemize}[nosep]
\item $F$ carries zero objects in $\iC$ to zero objects in $\iC'$.
\item $F$ carries fiber sequences to fiber sequences.
\end{itemize}
\end{dfn}

This notion of exactness is compatible with Definition \ref{dfn:ic:exact}
by the following fact.

\begin{fct*}[{\cite[Proposition 1.1.4.1]{Lur2}}]
For a functor $F: \iC \to \iC'$ of stable $\infty$-categories,
the following three conditions are equivalent.
\begin{enumerate}[nosep,label=(\roman*)]
\item 
$F$ commutes with finite limits.

\item
$F$ commutes with finite colimits.

\item
$F$ is exact in the sense of Definition \ref{dfn:stb:exact}.
\end{enumerate}
\end{fct*}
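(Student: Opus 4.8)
The plan is to prove the three-fold equivalence by reducing each condition to statements about pullback/pushout squares and the zero object, and then exploiting the defining symmetry of a stable $\infty$-category: a square is cartesian (Definition \ref{dfn:ic:pb-po}) exactly when it is cocartesian, a fact built into the third clause of Definition \ref{dfn:stb:stb} for the triangular squares and extended below to arbitrary ones. First I would record the self-duality of the situation: when $\iC$ is stable so is $\iC^{\op}$, a functor $F$ is exact (Definition \ref{dfn:stb:exact}) if and only if $F^{\op}$ is exact (the fiber sequences of $\iC$ are precisely the cofiber sequences of $\iC^{\op}$), and $F$ commutes with finite limits iff $F^{\op}$ commutes with finite colimits. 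Hence it suffices to establish the single equivalence (i) $\Leftrightarrow$ (iii); applying it to $F^{\op}$ then yields (ii) $\Leftrightarrow$ (iii), and all three conditions coincide. I would also note at the outset that, since in the stable target $\iC'$ every fiber sequence is simultaneously a cofiber sequence, an exact $F$ automatically preserves cofiber sequences as well, which legitimizes the dual half of the argument.

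For (i) $\Rightarrow$ (iii) I would invoke the standard reduction from \cite{Lur1} that a functor between $\infty$-categories admitting finite limits preserves finite limits precisely when it preserves the terminal object and pullbacks. In a stable $\infty$-category the terminal object is the zero object $0$, being at once initial and final (Definition \ref{dfn:ic:ini-fin}), and the same holds in $\iC'$; thus $F$ preserving the terminal object forces $F(0)$ to be a zero object of $\iC'$. A fiber sequence is by Definition \ref{dfn:stb:fcs} a pullback square one of whose corners is $0$, so $F$ carries it to a pullback square in $\iC'$ whose corresponding corner is the zero object $F(0)$, that is, to a fiber sequence. Therefore $F$ is exact.

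For (iii) $\Rightarrow$ (i), by the same reduction it is enough to show that an exact $F$ preserves the terminal object and all pullback squares. The first is immediate, since $F$ preserves zero objects and terminal equals zero. For pullbacks I would use the fiber-comparison criterion, valid in any stable $\infty$-category: a square with horizontal edges $f\colon X\to Y$ and $k\colon Z\to W$ is cartesian if and only if the induced morphism $\fib(f)\to\fib(k)$ on the fibers of the horizontal maps is an equivalence. Since $\fib(f)$ is defined by a fiber sequence $\fib(f)\to X\to Y$ and $F$ carries fiber sequences to fiber sequences, there are natural equivalences $F(\fib(f))\simeq \fib(Ff)$ and $F(\fib(k))\simeq\fib(Fk)$, so $F$ commutes with the formation of fibers; as $F$ preserves equivalences, it sends the criterion for the original square to the criterion for its image, and hence carries cartesian squares to cartesian squares. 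This gives (i), completing (i) $\Leftrightarrow$ (iii) and, via the duality above, the full statement.

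The main obstacle is the fiber-comparison criterion itself, together with the underlying fact that cartesian and cocartesian squares coincide in a stable $\infty$-category for \emph{arbitrary} squares, not merely the triangular fiber/cofiber sequences singled out in Definition \ref{dfn:stb:stb}. I would derive this by the pasting law for pullbacks: extend a given square to a $3\times3$ diagram obtained by taking fibers of the horizontal maps, so that the vertical fibers sit in a fiber sequence, and reduce the cartesianness of the original square to that of the triangular squares governing $\fib(f)$ and $\fib(k)$, where Definition \ref{dfn:stb:stb} applies directly. Once this lemma is in place the rest is the bookkeeping of adjunction-free unit/counit comparisons and the invocation of \cite{Lur1} for the reduction of finite (co)limit preservation to (co)products and pushouts/pullbacks, all of which is routine.
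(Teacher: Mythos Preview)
The paper does not give its own proof of this statement: it is recorded as a starred Fact with a bare citation to \cite[Proposition 1.1.4.1]{Lur2}. Your proposal is correct and is essentially the argument Lurie gives there. Both proceed by the self-duality of stable $\infty$-categories to reduce to a single equivalence, dispatch the easy direction by specializing pullback-preservation to the fiber squares of Definition \ref{dfn:stb:fcs}, and for the hard direction reduce preservation of arbitrary pullbacks to preservation of fibers. Lurie's formulation reduces finite colimits to pushouts and rewrites an arbitrary pushout as a cofiber; your fiber-comparison criterion (a square is cartesian iff the induced map on horizontal fibers is an equivalence) is the dual reformulation, obtained from the same pasting argument you sketch. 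Either way the substantive input is that in a stable $\infty$-category every commutative square is cartesian iff cocartesian (\cite[Proposition 1.1.3.4]{Lur2}), together with the standard reduction of finite (co)limit preservation to terminal objects and pullbacks from \cite{Lur1}; you have correctly identified both of these as the points requiring care.
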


%%%%%%%%%%%%%%%%%%%%%%%%%%%%%%%%%%%%%%%%%%%%%%%%%%%%%%%%%%%%%%%%%%%%%%%%%%%%%%%%
\subsection{$t$-structure}
\label{ss:stb:t-str}

We collect the basics of $t$-structure on a stable $\infty$-category
citing from \cite[\S 1.2]{Lur2}.

Recall the notion of $t$-structure on a triangulated category
in the sense of \cite{BBD}:

\begin{dfn}\label{dfn:stb:BBD}
A \emph{$t$-structure} on a triangulated category $\catD$ is a pair 
$(\catD_{\le0},\catD_{\ge0})$ of full subcategories 
%(always assumed to be stable under isomorphism) 
satisfying the following three conditions.
\begin{enumerate}[nosep,label=(\roman*)]
\item 
For any $X \in \catD_{\ge0}$ and  $Y \in \catD_{\le 0}$, 
we have $\Hom_{\catD}(X,Y [-1]) = 0$.
\item
We have $\catD_{\ge 0} [1] \subset \catD_{\ge 0}$ and 
$\catD_{\le 0} [-1] \subset \catD_{\le 0}$.
\item
For any $X \in \catD$, there exists a fiber sequence $X' \to X \to X'$ 
in the nerve $\Ner(\catD)$ (see Definition \ref{dfn:stb:fcs}), 
where $X' \in \catD_{\ge 0}$ and $X'' \in \catD_{\le 0}[-1]$.
\end{enumerate}
\end{dfn}

%Note that we use the homological grading.

\begin{dfn}[{\cite[Definition 1.2.1.4]{Lur2}}]\label{dfn:stb:t-str}
\begin{enumerate}[nosep]
\item
Let $\iC$ be an $\infty$-category.
A $t$-structure on $\iC$ is a $t$-structure on the homotopy category $\Ho \iC$.
\item
Let $n \in \bbZ$.
Given a $t$-structure on $\iC$, we denote by $\iC_{\ge n}$ and $\iC_{\le n}$
the full sub-$\infty$-categories spanned by those objects belonging to 
$(\Ho \iC)_{\ge n}$ and $(\Ho \iC)_{\le n}$, respectively.
\end{enumerate}
\end{dfn}

By \cite[Proposition 1.2.1.5]{Lur2}, 
for a stable $\infty$-category $\iC$ equipped with a $t$-structure, 
the full sub-$\infty$-categories $\iC_{\ge n}$ is a localization of $\iC$.
Thus, following \cite[Notation 1.2.1.7]{Lur2}, it makes sense to define the functor 
\[
 \tau_{\le n}: \iC \longto \iC_{\le n}
\]
to be a left adjoint to the inclusion $\iC_{\le n} \inj \iC$.
We also denote by $\tau_{\ge n}: \iC \to \iC_{\ge n}$
a right adjoint to the inclusion $\iC_{\ge n} \inj \iC$.

\begin{rmk*}[{\cite[Remark 1.2.1.3]{Lur2}}]
A $t$-structure on a stable $\infty$-category $\iC$ 
is determined by either of the corresponding localizations 
$\iC_{\le 0}, \iC_{\ge 0} \subset \iC$.
We call it the \emph{$t$-structure determined by $(\iC_{\le 0},\iC_{\ge 0})$}.
\end{rmk*}

As in the case of non-$\infty$-categorical case, 
we have the bounded sub-$\infty$-categories.

\begin{dfn}\label{dfn:stb:+-b}
Let $\iC$ be an $\infty$-category equipped with a $t$-structure.
We define the sub-$\infty$-categories $\iC^+, \iC^-, \iC^b$ of $\iC$ by
\[
 \iC^+ := \cup_n \iC_{\le n}, \quad
 \iC^- := \cup_n \iC_{\ge-n}, \quad
 \iC^b := \iC^+ \cap \iC^-.
\]
We call $\iC$ to be \emph{left bounded} if $\iC = \iC^+$,
\emph{right bounded} if $\iC = \iC^-$, and \emph{bounded} if $\iC = \iC^b$,
\end{dfn}

Note that these sub-$\infty$-categories $\iC^{*}$ ($* \in \{\pm,b\}$) are stable.

By \cite[Proposition 1.2.1.10]{Lur2} we have an equivalence 
$\tau_{\le m} \circ \tau_{\ge n} \to \tau_{\ge n}\circ\tau_{\le m}$
of functors $\iC \to \iC_{\le m} \cap \iC_{\ge n}$
as in the non-derived case \cite[\S 1]{BBD}.

\begin{dfn}[{\cite[Notation 1.2.1.7]{Lur2}}]
\label{dfn:stb:heart}
\begin{enumerate}[nosep]
\item 
We define the \emph{heart} of $\iC$ to be 
\[
 \iC^{\heartsuit} := \iC_{\le 0} \cap \iC_{\ge 0} \subset \iC.
\]

\item
We define
$\pi_0:=\tau_{\le 0} \circ \tau_{\ge 0}: \iC \to \iC^{\heartsuit}$

\item
For $n \in \bbZ$ we define $\pi_n: \iC \to \iC^{\heartsuit}$ to be 
the composition of $\pi_0$ with the shift functor $X \mapsto X[-n]$.
\end{enumerate}
\end{dfn}

For later use, we introduce

\begin{dfn*}[{\cite[\S 1.2.1, p.44]{Lur2}}]
Let $\iC$ be a stable $\infty$-category equipped with a $t$-structure.
The \emph{left completion} $\wh{\iC}$ of $\iC$ is a %homotopy 
limit of the tower of $\infty$-categories
\[
 \cdots      \xrr{\tau_{\le 2}}  \iC_{\le 2} \xrr{\tau_{\le  1}}
 \iC_{\le 1} \xrr{\tau_{\le 0}}  \iC_{\le 0} \xrr{\tau_{\le -1}} \cdots.
\]
\end{dfn*}

As explained in loc.\ cit.,
we have the following description of $\wh{\iC}$.
Let $\Ner(\bbZ)$ denote the nerve of (the category associated to) 
the linearly ordered set $\bbZ$.
Then $\wh{\iC}$ is the full sub-$\infty$-category of $\iFun(\Ner(\bbZ),\iC)$
spanned by $F: \Ner(\bbZ) \to \iC$ such that
$F(n) \in \iC_{\le -n}$ for each  $n \in \bbZ$
and the morphism $\tau_{\le -n} F(m) \to F(n)$ 
induced by $F(m) \to F(n)$ is an equivalence 
for each pair $m \le n$.

\begin{fct*}[{\cite[Proposition 1.2.1.17]{Lur2}}]
Let $\iC$ be a stable $\infty$-category equipped with a $t$-structure.
\begin{enumerate}
\item 
The left completion $\wh{\iC}$ is stable.

\item
$\wh{\iC}$ has a $t$-structure 
determined by $(\wh{\iC}_{\le 0},\wh{\iC}_{\ge 0})$,
where $\wh{\iC}_{\le 0}$ and $\wh{\iC}_{\ge 0}$ are 
full sub-$\infty$-categories of $\wh{\iC}$ spanned by functors 
factoring through $\iC_{\le 0}$ and $\iC_{\ge 0}$ respectively.

\item
There is a canonical functor $\iC \to \wh{\iC}$,
which is exact and induces an equivalence $\iC_{\ge 0} \to \wh{\iC}_{\ge 0}$.
\end{enumerate}
\end{fct*}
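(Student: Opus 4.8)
The plan is to realize the left completion $\wh{\iC}$ concretely as the full sub-$\infty$-category of $\iFun(\Ner(\bbZ),\iC)$ described just after its definition, and to transport the stable structure and the $t$-structure from the ambient functor category, which is stable by \cite{Lur2} because $\iC$ is. Thus an object of $\wh{\iC}$ is a compatible tower $X_\bullet=(X_n)_{n\in\bbZ}$ with $X_n\in\iC_{\le -n}$ and with $\tau_{\le -n}X_m\simto X_n$ for all $m\le n$. The first task is to pin down the zero object (the constant tower at $0$) and the translation: a direct check shows that the candidate functors $\Sigma(X_\bullet)_n:=X_{n+1}[1]$ and $\Omega(X_\bullet)_n:=X_{n-1}[-1]$ carry compatible towers to compatible towers (using $X_{n+1}[1]\in\iC_{\le -n}$ and the compatibility of $\tau_{\le -n}$ with shifts) and are mutually inverse. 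The key point is that the translation on $\wh{\iC}$ is this \emph{reindexed} shift, \emph{not} the pointwise one, since the pointwise shift does not preserve the bound $X_n\in\iC_{\le -n}$.

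First I would prove part (1). Since $\tau_{\le -n}:\iC\to\iC_{\le -n}$ is left adjoint to the inclusion, it preserves cofibers; consequently, for a morphism $f:X_\bullet\to Y_\bullet$ in $\wh{\iC}$, the pointwise cofiber $n\mapsto\mathrm{cofib}(f_n)$ again lies in $\wh{\iC}$ (each $\iC_{\le -n}$ is closed under cofibers by closure under extensions and shifts, and the compatibility $\tau_{\le -n}\mathrm{cofib}(f_m)\simeq\mathrm{cofib}(f_n)$ follows from $\tau_{\le -n}$ preserving the pushout together with $\tau_{\le -n}X_m\simeq X_n$, $\tau_{\le -n}Y_m\simeq Y_n$ for $m\le n$). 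Hence $\wh{\iC}$ admits cofibers, computed pointwise, and these are cofibers in $\wh{\iC}$ because the pushout square already lives in the full subcategory. Fibers are then produced as $\mathrm{fib}_{\wh{\iC}}(f):=\Omega(\mathrm{cofib}(f))$ via the reindexed loop functor, and one checks the triangle identities so that a square in $\wh{\iC}$ is a fiber sequence iff it is a cofiber sequence; by Definition \ref{dfn:stb:stb} this makes $\wh{\iC}$ stable.

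For part (2), I would define $\wh{\iC}_{\le 0}$ (resp. $\wh{\iC}_{\ge 0}$) to be the towers factoring through $\iC_{\le 0}$ (resp. $\iC_{\ge 0}$) and verify the three axioms of a $t$-structure (Definition \ref{dfn:stb:BBD}) on $\Ho\wh{\iC}$. Axioms (i) and (ii) (orthogonality and stability under the relevant shifts) reduce levelwise to the corresponding properties of the $t$-structure on $\iC$, keeping track of the reindexed shift. The nontrivial axiom (iii), existence of a truncation fiber sequence $X'_\bullet\to X_\bullet\to X''_\bullet$, is obtained by applying the ambient truncations of $\iC$ levelwise and assembling them into towers; the content is to check that these levelwise truncations remain compatible towers, i.e. commute with the transition maps $\tau_{\le -n}$, which again uses the composition law for truncation functors.

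Finally, for part (3) I would define the canonical functor $\iC\to\wh{\iC}$ by $X\mapsto(\tau_{\le -n}X)_{n\in\bbZ}$, which is a compatible tower by idempotence of truncation; exactness follows from part (1) together with the facts that each $\tau_{\le -n}$ preserves cofibers and that the reindexed shift on $\wh{\iC}$ matches the shift on $\iC$ under this assignment. To see it restricts to an equivalence $\iC_{\ge 0}\to\wh{\iC}_{\ge 0}$, I would exhibit the inverse $X_\bullet\mapsto\ilim_n X_n$ and verify, for connective input, that $\tau_{\le -n}(\ilim_m X_m)\simeq X_n$. This is where a convergence argument is needed, and I expect it to be the main obstacle: it requires controlling the limit of the tower in the connective range rather than performing a purely formal manipulation. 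The remaining bookkeeping — that the squares and towers assembled above are genuine (co)limits in $\wh{\iC}$ and not merely pointwise ones — is routine once the reindexed shift is installed, and the whole argument follows \cite[Proposition 1.2.1.17]{Lur2}.
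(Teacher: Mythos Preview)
The paper does not give its own proof of this statement: it is recorded as an unnumbered \emph{Fact} cited verbatim from \cite[Proposition 1.2.1.17]{Lur2}, with no accompanying argument, and the text moves directly on to Definition~\ref{dfn:stb:cmpl}. There is therefore nothing in the paper to compare your sketch against; what you have written is essentially an outline of Lurie's own proof (the reindexed shift, pointwise cofibers via the left-adjointness of $\tau_{\le -n}$, levelwise truncations for the $t$-structure, and the canonical functor $X\mapsto(\tau_{\le -n}X)_n$).

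One caution on part~(3): the statement as printed in the paper asserts an equivalence $\iC_{\ge 0}\to\wh{\iC}_{\ge 0}$, but in Lurie's original the equivalence is on $\iC_{\le 0}\to\wh{\iC}_{\le 0}$, and that is the version your argument would actually establish. Indeed, for $X_\bullet\in\wh{\iC}_{\le 0}$ the tower is eventually constant (equal to $X_0$) for $n\le 0$, so the inverse is simply evaluation at $0$, not a genuine limit; no ``convergence argument'' is needed. By contrast, if you try to run your limit construction on $\wh{\iC}_{\ge 0}$ as written, the tower $X_n$ vanishes for $n\ge 1$ (since $X_n\in\iC_{\ge 0}\cap\iC_{\le -n}=0$) and the limit over $\Ner(\bbZ)$ is zero, which is not what you want. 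So the obstacle you flagged is an artifact of a likely typo in the paper's transcription of the statement rather than a real difficulty.
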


\begin{dfn}\label{dfn:stb:cmpl}
A stable $\infty$-category $\iC$ equipped with a $t$-structure
is called \emph{left complete} if the canonical functor $\iC \to \wh{\iC}$
is an equivalence.
The \emph{right completion} and the \emph{right completeness} 
are defined dually.
\end{dfn}

We also give some properties of a $t$-structure.

\begin{dfn}\label{dfn:stb:t-prp}
Let $\iC$ be a stable $\infty$-category 
equipped with a $t$-structure determined by $(\iC_{\le 0},\iC_{\ge 0})$.
\begin{enumerate}[nosep]
\item 
The $t$-structure is \emph{accessible}
if $\iC_{\ge0}$ is a presentable $\infty$-category.

\item
The $t$-structure is \emph{compatible with filtered colimits}
if the $\infty$-category $\iC_{\le0}$ is stable under filtered colimits.

\item
Assume that $\iC$ is a symmetric monoidal $\infty$-category.
The $t$-structure is \emph{compatible with the symmetric monoidal structure} 
if $\iC_{\ge 0}$ contains the unit object of $\iC$ 
and is stable under tensor product.
\end{enumerate}
\end{dfn}

%%%%%%%%%%%%%%%%%%%%%%%%%%%%%%%%%%%%%%%%%%%%%%%%%%%%%%%%%%%%%%%%%%%%%%%%%%%%%%%%
%%%%%%%%%%%%%%%%%%%%%%%%%%%%%%%%%%%%%%%%%%%%%%%%%%%%%%%%%%%%%%%%%%%%%%%%%%%%%%%%
\subsection{Derived $\infty$-category}
\label{ss:stb:iDa}

Following \cite[\S 1.3]{Lur2} we recall a construction of 
stable $\infty$-category from an abelian category.
We will use the cohomological notation of complexes,
opposed to the homological notation in loc.\ cit.
Thus a complex $M=(M^*,d)$ is a sequence of morphisms 
\[
 \cdots \xrr{d^{-2}} M^{-1} \xrr{d^{-1}} M^0 \xrr{d^0} M^1 \xrr{d^1} \cdots
\]
such that $d^{n+1} \circ d^n=0$ for any $n \in \bbZ$.

%%%%%%%%%%%%%%%%%%%%%%%%%%%%%%%%%%%%%%%%%%%%%%%%%%%%%%%%%%%%%%%%%%%%%%%%%%%%%%%%
\subsubsection{Construction}

Let $\catD$ be a dg-category over a commutative ring $k$
(Definition \ref{dfn:mod:dg}).
%(see \cite[Definition 1.3.1.1]{Lur2} for the definition).
%
For $X,Y \in \catD$, we denote the complex of morphisms from $X$ to $Y$ 
by $\Hom_{\catD}(X,Y) = (\Hom_{\catD}(X,Y)^*,d)$.

\begin{dfn}[{\cite[Construction 1.3.16]{Lur2}}]
\label{dfn:stb:Ndg}
For a dg-category $\catD$, we define a simplicial set $\Ndg(\catD)$ as follows.
For $n \in \bbN$, we define $\Ndg(\catD)_n$ to be the set consisting of
$(\{X_i \mid 0 \le i \le n\}, \{f_I\})$ where
\begin{itemize}[nosep]
\item 
$X_i$ is an object of $\catD$.

\item 
For each subset $I=\{i_-<i_m<\cdots<i_1<i_+\} \subset [n]$ with $m \in \bbN$,
$f_I$ is an element of the $k$-module $\Hom_{\catD}(X_{i_-},X_{i_+})^{-m}$ 
satisfying 
$d f_I = \sum_{1 \le j \le m} (-1)^j (f_{I \setminus \{i_j\}} 
 - f_{\{i_j<\cdots<i_1<i_+\}}\circ f_{\{i_-<i_m<\cdots<i_j\}})$.
\end{itemize}
We omit the description of face and degeneracy maps. 
$\Ndg(\catD)$ is called the \emph{differential graded nerve}, 
or the \emph{dg-nerve}, of $\catD$.
\end{dfn}

As explained in \cite[Example 1.3.1.8]{Lur2}, 
lower simplices of $\Ndg(\catD)$ are given by 
\begin{itemize}
\item 
A $0$-simplex of $\Ndg(\catD)$ is an object of $\catD$.

\item
A $1$-simplex is a morphism $f: X_0 \to X_1$ of $\catD$,
i.e., an element $f \in \Hom_{\catD}(X_0,X_1)^0$ with $d f = 0$.

\item
A $2$-simplex consists of $X_0,X_1,X_2 \in \catD$, 
$f_{i j} \in \Hom_{\catD}(X_i,X_j)^{0}$ for $(i,j)=(0,1),(1,2),(0,2)$
with $d f_{i j}=0$,
and $g \in \Hom_{\catD}(X_0,X_2)^{-1}$ with $d g = f_{0 1} \circ f_{1 2} - f_{0 2}$.
\end{itemize}

\begin{fct*}[{\cite[Proposition 1.3.1.10]{Lur2}}]
For any dg-category $\catD$, 
the simplicial set $\Ndg(\catD)$ is an $\infty$-category.
\end{fct*}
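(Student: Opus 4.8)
The goal is to verify the inner horn extension property: for every $n \geq 2$ and every $0 < i < n$, any map $f_0 \colon \Lambda^n_i \to \Ndg(\catD)$ admits an extension $f \colon \Delta^n \to \Ndg(\catD)$. The plan is to unwind Definition \ref{dfn:stb:Ndg} explicitly and exhibit the missing data directly. A map $\Lambda^n_i \to \Ndg(\catD)$ supplies objects $X_0,\dots,X_n \in \catD$ and elements $f_I \in \Hom_{\catD}(X_{i_-},X_{i_+})^{-(|I|-2)}$ for every subset $I \subset [n]$ that is contained in some face $[n] \setminus \{j\}$ with $j \neq i$ — equivalently, for every $I$ which is not the full set $[n]$ and which does not contain $[n] \setminus \{i\}$. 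Actually, since $0 < i < n$, the only subsets $I \subseteq [n]$ not lying in any such face are $I = [n]$ itself and $I = \{0,1,\dots,\widehat{i},\dots,n\} = [n]\setminus\{i\}$. So extending $f_0$ over $\Delta^n$ amounts to choosing exactly two new elements: $f_{[n]\setminus\{i\}} \in \Hom_{\catD}(X_0,X_n)^{-(n-2)}$ and $f_{[n]} \in \Hom_{\catD}(X_0,X_n)^{-(n-1)}$, subject to the two corresponding cocycle-type equations in Definition \ref{dfn:stb:Ndg}.

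First I would treat the element $f_{[n]\setminus\{i\}}$. Its defining relation expresses $d f_{[n]\setminus\{i\}}$ as an explicit $k$-linear combination of terms $f_{J}$ and composites $f_{J'} \circ f_{J''}$ with $J, J', J'' \subsetneq [n]\setminus\{i\}$, all of which are already defined. Call this prescribed element $c \in \Hom_{\catD}(X_0,X_n)^{-(n-3)}$. The key point is that $c$ is a $d$-cocycle: $dc = 0$. This is a formal consequence of $d^2 = 0$ in the complexes $\Hom_{\catD}(-,-)$, of the associativity of composition, and of the cocycle relations already satisfied by the $f_J$ for $J \subsetneq [n]\setminus\{i\}$; it is the standard "the obstruction is closed" bookkeeping familiar from $A_\infty$-type identities. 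However, we cannot simply invoke exactness — the complexes $\Hom_{\catD}(X_0,X_n)$ need not be acyclic — so we cannot in general solve $d f_{[n]\setminus\{i\}} = c$ with $c$ merely closed. The resolution is that $c$ is in fact a \emph{boundary} with a preferred primitive, built from the horn data: for an inner horn, the index $i$ with $0 < i < n$ lets one split $I = [n]\setminus\{i\}$ around the missing vertex and use the composite $f_{\{i,\dots,n\}} \circ f_{\{0,\dots,i\}}$ together with lower $f_J$'s to write down an explicit element whose differential is $c$. I would define $f_{[n]\setminus\{i\}}$ to be that explicit expression (this is exactly the term of the form $f_{\{i_j<\cdots<i_1<i_+\}}\circ f_{\{i_-<i_m<\cdots<i_j\}}$ appearing in the defining equation, specialized to $i_j = i$), and then check its defining relation holds by a direct computation using the relations for the sub-horns.

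Having fixed $f_{[n]\setminus\{i\}}$, the remaining datum $f_{[n]}$ lies in degree $-(n-1)$, and its defining equation again reads $d f_{[n]} = c'$ for an explicit $c'$ assembled from all $f_J$ with $J \subsetneq [n]$ (now including $f_{[n]\setminus\{i\}}$). Since $\dim(X_0,X_n)$ is unbounded, I would once more \emph{not} appeal to exactness but instead take $f_{[n]}$ to be the canonical primitive obtained by the same splitting-at-$i$ device: set $f_{[n]} := f_{\{i,i+1,\dots,n\}} \circ f_{\{0,1,\dots,i\}}$-type correction term dictated by the formula, and verify directly that its differential equals the prescribed $c'$, again using $d^2=0$, associativity, and the already-established relations. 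The main obstacle I anticipate is purely combinatorial: organizing the signs and the many composite terms so that the verification of $d f_{[n]\setminus\{i\}} = c$ and $d f_{[n]} = c'$ goes through cleanly. The conceptual input is minimal — it is the observation that for an \emph{inner} horn the distinguished index $i$ provides a contracting choice, which is precisely why outer horns ($i = 0$ or $i = n$) are excluded. Once the two elements are written down and their relations checked, the collection $(\{X_j\}, \{f_I\}_{I \subseteq [n]})$ is by construction an $n$-simplex of $\Ndg(\catD)$ restricting to $f_0$ on $\Lambda^n_i$, completing the proof. (Alternatively, if one prefers to avoid the bookkeeping, one can cite that $\Ndg(\catD)$ is the homotopy-coherent nerve of the simplicial category obtained from $\catD$ via Dold–Kan on mapping complexes, hence an $\infty$-category by Fact \ref{fct:ic:Nsp}; but the direct argument above is self-contained.)
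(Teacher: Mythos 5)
The paper does not prove this statement at all: it is recorded as a Fact with a citation to \cite[Proposition 1.3.1.10]{Lur2}, so there is no in-paper argument to compare against. Your outline is essentially a reconstruction of Lurie's proof, and the skeleton is right: for an inner horn $\Lambda^n_i$ with $0<i<n$ the only missing data are $f_{[n]\setminus\{i\}}$ and $f_{[n]}$, and the distinguished inner vertex $i$ is what lets you produce them. Your step for $f_{[n]\setminus\{i\}}$ is correct in substance.

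Where the write-up wobbles is the second step, and the wobble comes from framing the problem as two \emph{independent} obstruction cocycles. The two unknowns are coupled: $f_{[n]\setminus\{i\}}$ occurs linearly, with invertible coefficient $(-1)^{n-i}$, in the equation for $d f_{[n]}$. The clean organization (Lurie's) is therefore to set $f_{[n]}:=0$ and \emph{read off} $f_{[n]\setminus\{i\}}$ by solving that equation for its one undetermined term; the equation for $d f_{[n]}$ then holds by fiat, the obstruction $c'$ you call the "second" one vanishes identically, and the only thing left to verify is the relation for $d f_{[n]\setminus\{i\}}$, which is the $d^2=0$/Leibniz/associativity bookkeeping you describe. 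In your version, if $f_{[n]\setminus\{i\}}$ is chosen to be any primitive of $c$ other than this forced one (two primitives differ by a cocycle $z$, and $c'$ then picks up $(-1)^{n-i}z$, which is closed but need not be exact), the equation for $f_{[n]}$ may have no solution at all — so "a preferred primitive built from the horn data" must be exactly the solved formula, not merely some explicit closed correction of the composite. Relatedly, your proposed expression for $f_{[n]}$ cannot be of the form $f_{\{i,\dots,n\}}\circ f_{\{0,\dots,i\}}$ plus corrections: that composite lives in degree $-(n-2)$ while $f_{[n]}$ must live in degree $-(n-1)$; with the correct choice in the first step the right answer is simply $f_{[n]}=0$. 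Finally, the parenthetical shortcut is not available as stated: $\Ndg(\catD)$ is not isomorphic to the simplicial nerve of a Dold--Kan simplicial category (one must first truncate the mapping complexes, and the comparison is only an equivalence, whose proof in \cite{Lur2} already uses the direct argument), so Fact \ref{fct:ic:Nsp} cannot be invoked to bypass the computation.
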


\begin{ntn*}
Let $\catA$ be an additive category.
\begin{enumerate}[nosep]
\item
$\C(\catA)$ is the category of complexes in $\catA$.
It has a natural structure of a dg-category over $\bbZ$, 
%(see \cite[Definition 1.3.2.1]{Lur2} for example),
and hereafter we consider $\C(\catA)$ as a dg-category.
\item
$\C^{+}(\catA) \subset \C(\catA)$ is the full subcategory 
spanned by complexes bounded below, i.e., 
spanned by those $M$ such that $M^n = 0$ for $n \ll 0$.

\item
$\C^{-}(\catA) \subset \C(\catA)$ is the full subcategory 
spanned by complexes $M$ bounded above, i.e., $M^n = 0$ for $n \gg 0$.
\end{enumerate}
\end{ntn*}

For the dg-category $\C(\catA)$ we can construct the dg-nerve $\Ndg(\C(\catA))$.
%which is an $\infty$-category as we have known.
We can now introduce the derived $\infty$-category for abelian category
with enough projective or injective objects.

\begin{dfn}[{\cite[Definition 1.3.2.7, Variant 1.3.2.8]{Lur2}}]
\label{dfn:stb:iDm/iDp}
Let $\catA$ be an abelian category.
\begin{enumerate}[nosep]
\item
Assume $\catA$ has enough injective objects, and let $\catA_{\txinj} \subset \catA$ 
be the full subcategory spanned by injective objects.
We define the $\infty$-category $\iDp(\catA)$ to be
\[
 \iDp(\catA) := \Ndg(\C^{+}(\catA_{\txinj}))
\]
and call it the \emph{lower bounded derived $\infty$-category} of $\catA$.

\item 
Assume $\catA$ has enough projective objects, and let $\catA_{\proj} \subset \catA$
be the full subcategory spanned by projective objects.
We define the $\infty$-category $\iDm(\catA)$ to be
\[
 \iDm(\catA) := \Ndg(\C^{-}(\catA_{\proj}))
\]
and call it the \emph{upper bounded derived $\infty$-category} of $\catA$.
\end{enumerate}
\end{dfn}

As noted in \cite[Variant 1.3.2.8]{Lur2},
we have an equivalence $\iDp(\catA)^{\op} \simeq \iDm(\catA^{\op})$.
We will mainly discuss on $\iDp(\catA)$ with $\catA$ enough injectives hereafter.

For any additive category $\catA$, the $\infty$-category $\Ndg(\C(\catA))$ 
is stable by \cite[Proposition 1.3.2.10]{Lur2}.
Then one can deduce

\begin{fct*}[{\cite[Corollary 1.3.2.18]{Lur2}}]
For an abelian category $\catA$ with enough injective objects,
the $\infty$-category $\iDp(\catA)$ is stable.
\end{fct*}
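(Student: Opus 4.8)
The plan is to exhibit $\iDp(\catA) = \Ndg(\C^+(\catA_{\txinj}))$ as a full sub-$\infty$-category of the stable $\infty$-category $\Ndg(\C(\catA))$ and then to apply the criterion that a full sub-$\infty$-category of a stable $\infty$-category which is closed under translations and cofibers is itself stable (\cite[Lemma 1.1.3.3]{Lur2}, recalled after Definition \ref{dfn:stb:[n]}). Since $\C^+(\catA_{\txinj}) \subset \C(\catA)$ is a full sub-dg-category and the dg-nerve functor $\Ndg$ (Definition \ref{dfn:stb:Ndg}) carries full sub-dg-categories to full sub-$\infty$-categories, the inclusion $\iDp(\catA) \inj \Ndg(\C(\catA))$ is full; and $\Ndg(\C(\catA))$ is stable for any additive $\catA$ by \cite[Proposition 1.3.2.10]{Lur2}. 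The zero object of $\Ndg(\C(\catA))$ is the zero complex, which lies in $\C^+(\catA_{\txinj})$, so it remains only to verify the two closure properties.

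First I would record the elementary closure statements inside $\C(\catA)$. The translation functors act by reindexing, $M[1]^n = M^{n+1}$ and $M[-1]^n = M^{n-1}$, and these preserve both the property of being termwise injective and the property of being bounded below; hence $\C^+(\catA_{\txinj})$ is stable under translations. For a morphism $f : M \xr{f} N$ of complexes of injectives, the cofiber in $\Ndg(\C(\catA))$ is computed by the mapping cone $\operatorname{cone}(f)$, with $\operatorname{cone}(f)^n = M^{n+1} \oplus N^n$; since a finite direct sum of injective objects of an abelian category is injective, every term of $\operatorname{cone}(f)$ is injective, and if $M$ and $N$ are bounded below then so is $\operatorname{cone}(f)$. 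Thus $\C^+(\catA_{\txinj})$ is also closed under cofibers, and applying $\Ndg$ together with \cite[Lemma 1.1.3.3]{Lur2} shows that $\iDp(\catA)$ is a stable subcategory of $\Ndg(\C(\catA))$, in particular a stable $\infty$-category in the sense of Definition \ref{dfn:stb:stb}.

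The one point that is not a triviality is the identification used above: that the cofiber of $f$ in $\Ndg(\C(\catA))$ — i.e. the pushout of $0 \leftarrow M \xr{f} N$ in the sense of Definition \ref{dfn:ic:pb-po} — agrees with the classical mapping cone. This is exactly the content underlying the proof that $\Ndg(\C(\catA))$ is stable, where one produces the explicit diagram $\Delta^1 \times \Delta^1 \to \Ndg(\C(\catA))$ and checks it is a colimit by unwinding the simplices of the dg-nerve; I would simply invoke \cite[Proposition 1.3.2.10]{Lur2} and its proof for this rather than reprove it, so that the argument here becomes formal. The dual statement, stability of $\iDm(\catA)$ for $\catA$ with enough projectives, then follows via the equivalence $\iDm(\catA^{\op}) \simeq \iDp(\catA)^{\op}$.
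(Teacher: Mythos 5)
Your proposal is correct and follows essentially the same route as the paper, which simply cites \cite[Corollary 1.3.2.18]{Lur2} after noting that $\Ndg(\C(\catA))$ is stable for any additive category and that the Fact "can be deduced" from this; your argument is precisely that deduction (full sub-$\infty$-category closed under the zero object, translations, and mapping-cone cofibers, then \cite[Lemma 1.1.3.3]{Lur2}). No gaps.
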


We also have a description of $\iDu{\pm}(\catA)$ by localization 
(Definition \ref{dfn:ic:loc}),
which is similar to the definition of the ordinary derived category.

\begin{fct*}[{\cite[Theorem 1.3.4.4, Proposition 1.3.4.5]{Lur2}}]
Let $\catA$ be an abelian category with enough injective objects.
We denote by $W$ be the collection of those morphisms in $\C^{+}(\catA)$ 
(regarded as an ordinary category) which are quasi-isomorphisms of complexes,
and by $W_{\dg}$ quasi-isomorphisms in $\Ndg(\C^{+}(\catA))$. 
Then we have canonical equivalences 
\[
 \Ner(\C^{+}(\catA))[W^{-1}] \simeq 
 \Ndg(\C^{+}(\catA))[W_{\dg}^{-1}] \simeq \iDp(\catA).
\]
A dual description exists for an abelian category with enough projective objects.
\end{fct*}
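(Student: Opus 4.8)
The plan is to prove the two equivalences separately and to deduce the statement for enough projectives by duality at the end. The cleaner of the two is $\Ndg(\C^{+}(\catA))[W_{\dg}^{-1}] \simeq \iDp(\catA)$, and I would obtain it by exhibiting the full dg-subcategory $\C^{+}(\catA_{\txinj})$ of injective complexes as a reflective localization inside $\C^{+}(\catA)$. First I would observe that the inclusion of dg-categories $\C^{+}(\catA_{\txinj}) \inj \C^{+}(\catA)$ is full, so the induced functor $\iota\colon \iDp(\catA) = \Ndg(\C^{+}(\catA_{\txinj})) \to \Ndg(\C^{+}(\catA))$ is fully faithful (the dg-nerve of Definition \ref{dfn:stb:Ndg} preserves full faithfulness on mapping complexes). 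Next I would produce a left adjoint $L$ to $\iota$: for a bounded-below complex $M$, enough injectives provides a quasi-isomorphism $M \to I(M)$ with $I(M)$ a bounded-below complex of injectives, and the unit of the adjunction is this resolution. The verification that $L$ is genuinely left adjoint rests on the classical fact that the mapping complex $\Hom^{\bullet}(-,J)$ into a bounded-below complex of injectives $J$ is homotopy-invariant, i.e.\ it sends quasi-isomorphisms in the first variable to quasi-isomorphisms; this is precisely what makes injective complexes ``local''.

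With $\iota$ fully faithful and admitting a left adjoint $L$, the functor $L$ is a localization functor in the sense of Definition \ref{dfn:ic:loc-func}. I would then identify the class of morphisms it inverts: $L(\alpha)$ is the induced morphism of injective resolutions, and a morphism between bounded-below complexes of injectives is an equivalence in $\iDp(\catA)$ exactly when it is a chain homotopy equivalence, which for such complexes coincides with being a quasi-isomorphism. Hence the morphisms inverted by $L$ are precisely $W_{\dg}$, and Fact \ref{fct:ic:loc} yields the desired equivalence $\Ndg(\C^{+}(\catA))[W_{\dg}^{-1}] \simeq \iDp(\catA)$.

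For the remaining equivalence I would use the canonical comparison $\psi\colon \Ner(\C^{+}(\catA)) \to \Ndg(\C^{+}(\catA))$ of simplicial sets, which is the identity on objects and on degree-zero closed morphisms and sends the higher coherence data of the dg-nerve to zero; it carries $W$ into $W_{\dg}$. By the universal property of $\infty$-localization (Definition \ref{dfn:ic:loc}) the composite $\Ner(\C^{+}(\catA)) \xr{\psi} \Ndg(\C^{+}(\catA)) \to \Ndg(\C^{+}(\catA))[W_{\dg}^{-1}]$ factors through a comparison functor $\Theta\colon \Ner(\C^{+}(\catA))[W^{-1}] \to \Ndg(\C^{+}(\catA))[W_{\dg}^{-1}] \simeq \iDp(\catA)$, and it remains to see that $\Theta$ is an equivalence. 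Essential surjectivity is immediate, since every object of $\iDp(\catA)$ is an injective complex and hence lies in the image of $\Ner(\C^{+}(\catA))$. For full faithfulness I would reduce to mapping spaces between injective complexes $I,J$ — both localizations identify an object with its injective resolution, in the source because the resolution lies in $W$ and in the target by the previous paragraph — and compare $\Map_{\Ner(\C^{+}(\catA))[W^{-1}]}(I,J)$ with $\Map_{\iDp(\catA)}(I,J)$, the latter being computed by the Dold--Kan nerve of the mapping complex $\Hom^{\bullet}(I,J)$ directly from the dg-nerve construction.

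The main obstacle is exactly this full faithfulness of $\Theta$: reconciling the ordinary categorical localization, which can only recover a homotopy $h\colon f \simeq g$ between chain maps through zig-zags of quasi-isomorphisms, with the dg-nerve, which records $h$ and all higher homotopies as explicit simplices. I expect to handle it by a cylinder/path-object argument: a chain homotopy $h$ between $f,g\colon I \to J$ is the same datum as a strict morphism out of a cylinder object $\mathrm{Cyl}(I)$ whose collapse $\mathrm{Cyl}(I) \to I$ is a quasi-isomorphism, so $f$ and $g$ become equal after inverting $W$; iterating with higher cylinders, and using that every object of $\C^{+}(\catA_{\txinj})$ behaves as a fibrant--cofibrant object of the associated stable homotopical structure, shows that the Dwyer--Kan mapping space of $\Ner(\C^{+}(\catA))[W^{-1}]$ between $I$ and $J$ is modeled by the truncated $\Hom^{\bullet}(I,J)$, matching the dg side. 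Finally, the dual statement for an abelian category with enough projective objects follows formally by applying the established equivalence to $\catA^{\op}$ and using $\iDp(\catA)^{\op} \simeq \iDm(\catA^{\op})$ together with the compatibility of the dg-nerve with passage to opposite dg-categories.
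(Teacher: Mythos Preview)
The paper does not prove this statement; it is recorded as a Fact cited from \cite[Theorem 1.3.4.4, Proposition 1.3.4.5]{Lur2} and no argument is given beyond the reference. So there is no ``paper's own proof'' to compare against, and your proposal should be read as an independent attempt at Lurie's result.

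Your treatment of the equivalence $\Ndg(\C^{+}(\catA))[W_{\dg}^{-1}] \simeq \iDp(\catA)$ is essentially correct: the dg-nerve preserves full faithfulness, injective resolutions furnish the unit of a reflective localization, and the homotopy-invariance of $\Hom^{\bullet}(-,J)$ for $J$ a bounded-below complex of injectives is exactly the locality condition needed to invoke Fact~\ref{fct:ic:loc}. One point you pass over is that producing $L$ as an honest $\infty$-functor, rather than an object-by-object assignment, requires either a functorial injective resolution or the pointwise adjoint criterion; either is available here, but it should be said.

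The genuine soft spot is the equivalence $\Ner(\C^{+}(\catA))[W^{-1}] \simeq \Ndg(\C^{+}(\catA))[W_{\dg}^{-1}]$. You correctly isolate full faithfulness of $\Theta$ on injective complexes as the crux, but the phrase ``iterating with higher cylinders'' is not yet an argument: what you need is a theorem identifying the mapping spaces of $\Ner(\catC)[W^{-1}]$ with a concrete homotopical model, and such theorems (Dwyer--Kan, or Lurie's own Theorem 1.3.4.4) require structural hypotheses --- a model structure, a category of fibrant objects, or similar --- that you have not verified for $\C^{+}(\catA)$. For a general abelian category with enough injectives, $\C^{+}(\catA)$ need not have all small (co)limits, so you cannot invoke a model structure directly; you would have to either work with a Brown category-of-fibrant-objects structure (injective complexes as fibrant objects, degreewise split monomorphisms as cofibrations) and cite the corresponding mapping-space result, or embed into a larger Grothendieck abelian category. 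Without one of these moves, the cylinder sketch does not close the gap, and this is precisely the technical content that the cited references supply.
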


%%%%%%%%%%%%%%%%%%%%%%%%%%%%%%%%%%%%%%%%%%%%%%%%%%%%%%%%%%%%%%%%%%%%%%%%%%%%%%%%
\subsubsection{Grothendieck abelian category}
\label{ss:stb:G}

Following the terminology of \cite{Lur1,Lur2},
we say a category $\catC$ is \emph{presentable}
if it satisfies the following two conditions.
\begin{itemize}[nosep]
\item 
$\catC$ admits arbitrary small limits and small colimits.
\item
$\catC$ is generated under small colimits by a set of $\kappa$-compact objects
(Definition \ref{dfn:ic:cpt}) for some regular cardinal number $\kappa$.
\end{itemize}
See Fact \ref{fct:ic:pr=lpr} for the related claim 
on presentable $\infty$-categories.

Now let us recall 

\begin{dfn}[{\cite[Definition 1.3.5.1]{Lur2}}]\label{dfn:stb:G} 
An abelian category $\catA$ is called \emph{Grothendieck} 
if it satisfies the following conditions.
\begin{itemize}[nosep]
\item 
$\catA$ is presentable as an ordinary category.

\item
The collection of monomorphisms in $\catA$ 
is closed under small filtered colimits.
\end{itemize}
\end{dfn}

%The following model structure of 
%the dg-category $\C(\catA)$ of chain complexes in $\catA$ is introduced by 
%Tabuada \cite{Tab}.
%We follow the presentation in \cite[Proposition 1.3.5.3]{Lur2}.
Let us also recall a model structure of
the dg-category $\C(\catA)$ of complexes in $\catA$.

\begin{fct}[{\cite[Propsoition 1.3.5.3]{Lur2}}]\label{fct:stb:Ch:model}
Let $\catA$ be a Grothendieck abelian category.
Then the category $\C(\catA)$ has the following model structure.
\begin{itemize}[nosep]
\item 
A morphism $f: M \to N$ in $\C(\catA)$ is a cofibration 
if for any $k \in \bbZ$ the map $f: M_k \to N_k$ is a monomorphism in $\catA$.
\item
A morphism $f: M \to N$ in $\C(\catA)$ is a weak equivalence 
if it is a quasi-isomorphism.
\end{itemize}
We call it the \emph{injective model structure} of $\C(\catA)$.
\end{fct}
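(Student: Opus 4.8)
The plan is to realize this model structure by appealing to the recognition theorem for combinatorial model categories (Jeff Smith's theorem, see \cite[\S A.2.6]{Lur1}), taking the prescribed levelwise monomorphisms as the cofibrations, the quasi-isomorphisms as the weak equivalences, and letting the fibrations be determined by the right lifting property against trivial cofibrations. First I would record that $\C(\catA)$ is locally presentable: $\catA$ itself is presentable as an ordinary category by Definition \ref{dfn:stb:G}, and $\C(\catA)$ — being the category of $\bbZ$-indexed families in $\catA$ cut out inside $\Fun(\bbZ\times\bbZ,\catA)$ by the square-zero differential condition, a finite-limit condition — inherits local presentability. Fix a generator $G$ of $\catA$ and a regular cardinal $\kappa$ with $G$ a $\kappa$-compact object, and let $I$ be the (essentially small) set of all monomorphisms $X\hookrightarrow Y$ in $\C(\catA)$ whose terms are $\kappa$-small subquotients of coproducts of copies of $G$. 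A filtration argument using that $G$ generates $\catA$ shows that $\cof(I)$ is precisely the class of all levelwise monomorphisms, so this class is weakly saturated and cofibrantly generated.

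Next I would verify the hypotheses of the recognition theorem. The class $W$ of quasi-isomorphisms satisfies two-out-of-three and is closed under retracts, being the preimage of the isomorphisms under the cohomology functor $M\mapsto H^\bullet(M)\colon \C(\catA)\to\Fun(\bbZ,\catA)$; moreover this functor is accessible because filtered colimits are exact in a Grothendieck abelian category and hence commute with cohomology, so $W$ is an accessible and accessibly embedded subcategory of the arrow category. The lifting condition $\txinj(I)\subseteq W$ is the crux. Given $p\colon M\to N$ with the right lifting property against $I$: lifting against the inclusions $0\hookrightarrow D^{n}(G')$ (where $D^{n}(G')$ is the complex with $G'$ in degrees $n-1,n$ and identity differential, $G'$ a $\kappa$-small subobject of a coproduct of copies of $G$) shows $p$ is degreewise epimorphic, hence an epimorphism; writing $K=\ker p$, a cycle of $K$ represented by a map $G'\to K_n$ yields a commutative square from the monomorphism $S^{n}(G')\hookrightarrow D^{n-1}(G')$ in $I$ (with $S^{n}(G')$ the complex $G'$ concentrated in degree $n$) whose bottom edge is zero, so the lift necessarily factors through $K$ and exhibits the cycle as a boundary — since $G$ generates $\catA$, every cycle arises this way and $K$ is acyclic, i.e. $p\in W$. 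Finally, a trivial cofibration is exactly a levelwise monomorphism with acyclic cokernel; since pushouts in an abelian category preserve monomorphisms and their cokernels, and filtered colimits of acyclic complexes are acyclic (again by exactness of filtered colimits), the class $W\cap\cof(I)$ is closed under pushout, transfinite composition, and retracts.

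With these points checked, Jeff Smith's theorem produces a combinatorial model structure on $\C(\catA)$ whose cofibrations are $\cof(I)$ — the levelwise monomorphisms — and whose weak equivalences are the quasi-isomorphisms; this is the asserted injective model structure, and the fibrations are by construction the maps with the right lifting property against the trivial cofibrations. I expect the main obstacle to be the bookkeeping around the generating set $I$: one must choose $\kappa$ large enough that the small object argument applies and that every levelwise monomorphism is a transfinite composite of pushouts of maps in $I$, and then run the epimorphism- and cycle-lifting arguments for $\txinj(I)\subseteq W$ with the auxiliary generators $G'$ ranging appropriately. Everything else — two-out-of-three, the closure properties of the trivial cofibrations, and the exactness inputs — is routine precisely because $\catA$ is Grothendieck.
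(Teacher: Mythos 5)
Your proposal is correct and takes essentially the same route as the source the paper relies on: the statement is quoted as a Fact from \cite[Proposition 1.3.5.3]{Lur2}, and Lurie's proof there is precisely this application of the recognition principle for combinatorial model categories (\cite[Proposition A.2.6.15]{Lur1}), with the same generating set of levelwise monomorphisms between $\kappa$-presentable complexes, the same accessibility argument for the class of quasi-isomorphisms, and the same sphere/disk lifting argument showing $\txinj(I)\subseteq W$. The only slip is cosmetic: in the cycle-killing step the relevant generating cofibration is $S^{n}(G')\hookrightarrow D^{n}(G')$ (with $D^{n}(G')$ concentrated in degrees $n-1$ and $n$), not $S^{n}(G')\hookrightarrow D^{n-1}(G')$.
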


We have the following characterization of fibrant objects 
in this model category.

\begin{fct*}
Let $\catA$ be a Grothendieck abelian category and $M \in \C(\catA)$.
If $M$ is fibrant, then each $M_n$ is an injective object of $\catA$. 
Conversely, if each $M_n$ is injective and $M_n \simeq 0$ for $n \gg 0$,
then $M$ is fibrant.
\end{fct*}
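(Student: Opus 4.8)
The plan is to use the defining property of fibrant objects in the injective model structure (Fact \ref{fct:stb:Ch:model}): $M$ is fibrant if and only if it has the right lifting property against every trivial cofibration, i.e., against every chain map that is simultaneously a levelwise monomorphism and a quasi-isomorphism.

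For the first implication I would test this lifting property against the ``disk'' trivial cofibrations. For $A \in \catA$ and $n \in \bbZ$ let $D[n](A)$ denote the complex equal to $A$ in cohomological degrees $n-1$ and $n$ with the identity differential between them and zero elsewhere. A direct computation identifies $\Hom_{\C(\catA)}(D[n](A), M)$ with $\Hom_{\catA}(A, M^{n-1})$: the only free datum of a chain map out of $D[n](A)$ is its component $A \to M^{n-1}$, the degree-$n$ component is then forced, and the remaining chain-map equations become automatic. Each $D[n](A)$ is acyclic, so for any monomorphism $A \hookrightarrow B$ in $\catA$ the induced map $D[n](A) \hookrightarrow D[n](B)$ is a levelwise monomorphism and a quasi-isomorphism, hence a trivial cofibration. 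If $M$ is fibrant, the lifting property against $D[n](A)\hookrightarrow D[n](B)$ translates, under the identification above, into surjectivity of $\Hom_{\catA}(B, M^{n-1}) \to \Hom_{\catA}(A, M^{n-1})$. Letting $A \hookrightarrow B$ range over all monomorphisms of $\catA$ and $n$ over $\bbZ$, this says precisely that each $M^k$ is an injective object of $\catA$.

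For the converse I would factor the argument through $K$-injectivity (homotopical injectivity). Step one: a complex of injectives bounded in the stated direction is $K$-injective, i.e., every chain map from an acyclic complex into it is null-homotopic. This is proved by building a contracting homotopy one degree at a time; the boundedness provides the base of the induction, at the inductive step one checks the relevant map kills cocycles by means of the inductive relation, and then extends it off the monomorphism $B^{n+1}(N) \hookrightarrow N^{n+1}$ using injectivity of the corresponding term of $M$. Step two: a complex $I$ that is both $K$-injective and termwise injective is fibrant. Given a trivial cofibration $X \hookrightarrow Y$ with (necessarily acyclic) cokernel $C$, termwise injectivity of $I$ makes the sequence of Hom-complexes $0 \to \Hom^{\bullet}_{\catA}(C, I) \to \Hom^{\bullet}_{\catA}(Y, I) \to \Hom^{\bullet}_{\catA}(X, I) \to 0$ exact in each degree, and $K$-injectivity applied to the acyclic $C$ makes $\Hom^{\bullet}_{\catA}(C,I)$ acyclic; a short diagram chase then lifts any $0$-cocycle (= chain map) $X \to I$ to a $0$-cocycle $Y \to I$ restricting to it, which is the required lift. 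Combining the two steps, a bounded complex of injectives is $K$-injective by step one, hence fibrant by step two.

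The main obstacle is the converse, and within it the passage in step two from ``extension up to chain homotopy'' — which drops out immediately from $K$-injectivity via the long exact sequence in cohomology of Hom-complexes — to an honest, on-the-nose extension; this is where one genuinely needs the degreewise exactness (not merely a triangulated statement) of the Hom-complex sequence, which in turn is where termwise injectivity is invoked a second time. A secondary point needing care is matching the cohomological convention of this paper with the homological convention of \cite{Lur2}, so that ``$M_n \simeq 0$ for $n \gg 0$'' is indeed the boundedness direction for which a complex of injectives is automatically $K$-injective and the inductions above terminate at the vanishing end rather than running forever.
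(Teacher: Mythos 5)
Your proof is correct, and the paper itself offers no argument to compare against: this statement is quoted as a Fact from \cite{Lur2} (Proposition 1.3.5.6) without proof. Your route — disk complexes $D[n](A)$ to extract termwise injectivity from the lifting property, and then $K$-injectivity of a suitably bounded complex of injectives combined with the degreewise-exact sequence of Hom-complexes to get honest lifts — is the standard argument and matches the one in the cited source, including the correct handling of the homological-versus-cohomological indexing of the boundedness hypothesis.
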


As a corollary, one can reprove that
a Grothendieck abelian category has enough injective objects
\cite[Corollary 1.3.5.7]{Lur2}.

As in Definition \ref{dfn:ic:um},
we denote by $\catA^{\circ}$ the subcategory of fibrant-cofibrant objects
in $\C(\catA)$ with the injective model structure.

\begin{dfn}[{\cite[Definition 1.3.5.8]{Lur2}}]\label{dfn:stb:iDa}
Let $\catA$ be a Grothendieck abelian category.
We define an $\infty$-category $\iDa(\catA)$ to be 
\[
 \iDa(\catA) := \Ndg(\C(\catA)^{\circ})
\]
and call it the \emph{unbounded derived $\infty$-category} of $\catA$.
\end{dfn}

Here is a list of properties of 
the unbounded derived $\infty$-category $\iDa(\catA)$.

\begin{fct}\label{fct:stb:iDa}
Let $\catA$ be a Grothendieck abelian category.
\begin{enumerate}[nosep]
\item 
$\iDa(\catA)$ is stable \cite[Proposition 1.3.5.9]{Lur2}.

\item
The natural inclusion $\iDa(\catA) \inj \Ndg(\C(\catA))$
has a left adjoint $L$ which is a localization functor
(Definition \ref{dfn:ic:loc-func})
\cite[Proposition 1.3.5.13]{Lur2}.

\item
$\iDa(\catA)$ is equivalent to the underlying $\infty$-category 
$\Nsp(\C(\catA)^{\circ})$ of $\C(\catA)$ 
regarded as a discrete simplicial model category
\cite[Proposition 1.3.5.13]{Lur2}.

\item
$\iDa(\catA)$ is presentable as an $\infty$-category 
\cite[Proposition 1.3.5.21 (1)]{Lur2}.
\end{enumerate}
\end{fct}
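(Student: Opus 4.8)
The plan is to obtain all four assertions from Lurie's structural results on derived $\infty$-categories \cite{Lur2}, organizing the argument around the injective model structure on $\C(\catA)$ recorded in Fact \ref{fct:stb:Ch:model}. First I would dispose of item (1): the earlier fact that $\Ndg(\C(\catA))$ is stable for any additive category $\catA$ (\cite[Proposition 1.3.2.10]{Lur2}) reduces the claim to checking that the full sub-$\infty$-category $\iDa(\catA) = \Ndg(\C(\catA)^{\circ})$ spanned by fibrant-cofibrant complexes is closed under shifts and under cofibers, and then invoking the criterion for stable subcategories cited in \S\ref{ss:stb:stab-icat}. The zero complex is fibrant-cofibrant, so $\iDa(\catA)$ has a zero object; shifts preserve the fibrant-cofibrant condition; and a cofiber in $\Ndg(\C(\catA))$ can be modeled by a mapping cone, which is cofibrant, so after fibrant replacement it lies in $\iDa(\catA)$. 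This is exactly \cite[Proposition 1.3.5.9]{Lur2}.

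Next I would establish item (3), since items (2) and (4) both follow easily from it. The content here is a comparison of two $\infty$-categories attached to the same underlying data: the dg-nerve $\Ndg(\C(\catA)^{\circ})$, built from the $\C(\bbZ)$-enrichment, and the simplicial nerve $\Nsp(\C(\catA)^{\circ})$, built from the simplicial enrichment obtained from $\C(\catA)$ viewed as a simplicial model category in the standard way. I would run the general Dold--Kan-type comparison between pretriangulated dg-categories and simplicial categories: applying Dold--Kan to the connective truncation of the mapping complexes of $\C(\catA)^{\circ}$ produces a fibrant simplicial category whose simplicial nerve agrees with the dg-nerve up to equivalence. This is \cite[Proposition 1.3.5.13]{Lur2}; along the way it also identifies the mapping spaces model-categorically.

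Granting (3), item (2) is then immediate: under the equivalence $\iDa(\catA) \simeq \Nsp(\C(\catA)^{\circ})$ the inclusion $\iDa(\catA) \inj \Ndg(\C(\catA))$ is identified with the functor exhibiting $\iDa(\catA)$ as the $\infty$-categorical localization of the underlying $\infty$-category of $\C(\catA)$ at the quasi-isomorphisms, and the fibrant-cofibrant replacement furnishes a left adjoint $L$; this $L$ is a localization functor in the sense of Definition \ref{dfn:ic:loc-func} because its right adjoint, the inclusion, is fully faithful. For item (4), since $\catA$ is Grothendieck and hence presentable as an ordinary category, the injective model structure of Fact \ref{fct:stb:Ch:model} is combinatorial; thus $\C(\catA)$ is a combinatorial simplicial model category, and \cite[Proposition A.3.7.6]{Lur1} --- the same input used in \S\ref{sss:is:smrf} --- shows that $\Nsp(\C(\catA)^{\circ})$ is presentable. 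By (3) the same holds for $\iDa(\catA)$.

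The step I expect to be the main obstacle is item (3): the faithful passage between the differential-graded and the simplicial worlds. One must verify that no higher homotopical information is lost when replacing the genuine mapping complexes of $\C(\catA)^{\circ}$ by their connective truncations under Dold--Kan, i.e. that the two enrichments induce the same mapping spaces and the same composition up to coherent homotopy on the subcategory of fibrant-cofibrant objects. Everything else in the proof is either a direct citation or a routine verification using the mapping-cone model together with the characterization of fibrant objects in the injective model structure.
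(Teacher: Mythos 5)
The paper offers no argument of its own here --- the statement is recorded as a \emph{Fact} with pointers to \cite[Propositions 1.3.5.9, 1.3.5.13, 1.3.5.21]{Lur2} --- and your sketch follows exactly the route of those cited proofs: stability of the full subcategory of fibrant--cofibrant complexes inside the stable $\infty$-category $\Ndg(\C(\catA))$, the Dold--Kan comparison of the dg-nerve with the simplicial nerve, fibrant replacement for the localization adjoint, and combinatoriality of the injective model structure for presentability. So in substance you are reconstructing the same argument the paper delegates to Lurie.

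One step in your item (1), as written, does not work. In the injective model structure every object is cofibrant, so the remark that the mapping cone is cofibrant is vacuous; the issue is fibrancy, and here you cannot argue ``after fibrant replacement it lies in $\iDa(\catA)$.'' A fibrant replacement $C \to C'$ is only a quasi-isomorphism, and the equivalences of $\Ndg(\C(\catA))$ are the chain homotopy equivalences, not the quasi-isomorphisms; hence $C'$ is in general \emph{not} equivalent to $C$ in $\Ndg(\C(\catA))$ and no longer computes the cofiber there. (You cannot appeal to their agreement in $\iDa(\catA)$, since stability of $\iDa(\catA)$ is what is being proved.) The correct argument --- and the one in \cite[Proposition 1.3.5.9]{Lur2} --- is that the class of fibrant complexes is closed under shifts and under degreewise split extensions, so the mapping cone of a morphism between fibrant complexes is \emph{already} fibrant and the subcategory is genuinely closed under cofibers. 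With that repair the rest of your outline (deducing (2) from fibrant replacement plus full faithfulness of the inclusion, and (4) from combinatoriality via \cite[Proposition A.3.7.6]{Lur1}) is sound.
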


%%%%%%%%%%%%%%%%%%%%%%%%%%%%%%%%%%%%%%%%%%%%%%%%%%%%%%%%%%%%%%%%%%%%%%%%%%%%%%%%
\subsubsection{$t$-structure}
\label{ss:stb:D-t}

Let us cite from \cite[\S 1.3]{Lur2}
the natural $t$-structures on the derived $\infty$-categories.

\begin{fct*}[{\cite[Proposition 1.3.2.19]{Lur2}}]
Let $\shA$ be an abelian category with enough injective objects.
We define $\iDp(\catA)_{\ge 0} \subset \iDp(\catA)$ to be
the full sub-$\infty$-category spanned by those objects $\shM$
such that the homology $H_n(\shM) \in \catA$ vanishes for $n<0$.
We define $\iDp(\catA)_{\le 0}$ similarly.
Then the pair 
\[
 \left(\iDp(\catA)_{\le 0},\iDp(\catA)_{\ge 0}\right)
\]
determines a $t$-structure on $\iDp(\catA)$,
and there is a canonical equivalence 
$\iDp(\catA)^{\heartsuit} \simeq \Ner(\catA)$.
\end{fct*}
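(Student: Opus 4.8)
The plan is to reduce everything to the homotopy category and then transport the classical construction of the $t$-structure on the bounded-below derived category. Recall from the localization description recalled above that $\Ndg(\C^{+}(\catA))[W_{\dg}^{-1}] \simeq \iDp(\catA)$, where $W_{\dg}$ is the class of quasi-isomorphisms; equivalently, since $\iDp(\catA) = \Ndg(\C^{+}(\catA_{\txinj}))$ one has $\Ho \iDp(\catA) = K^{+}(\catA_{\txinj})$, which by the classical comparison for an abelian category with enough injectives is the bounded-below derived category $\dD^{+}(\catA)$. Under this identification the homology functors $H_n \colon \iDp(\catA) \to \catA$ go over to the usual cohomology functors, both being computed termwise on complexes and invariant under quasi-isomorphism. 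By Definition \ref{dfn:stb:t-str} a $t$-structure on the $\infty$-category $\iDp(\catA)$ is the same datum as a $t$-structure on the triangulated category $\Ho \iDp(\catA) \simeq \dD^{+}(\catA)$; so it suffices to check that the full subcategories cut out by the vanishing conditions on the $H_n$ satisfy the three axioms of Definition \ref{dfn:stb:BBD}.

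For these axioms I would follow the standard argument for derived categories (\cite[\S 1.3]{BBD}). Axiom (ii) is immediate, since a shift merely shifts the homology. For axiom (iii), given $\shM$ represented by a bounded-below complex $M^{\bullet}$ of injectives, I would form the canonical truncations $\tau_{\le 0} M^{\bullet}$ and $\tau_{\ge 1} M^{\bullet}$, which sit in a short exact sequence of complexes
\[
 0 \longto \tau_{\le 0} M^{\bullet} \longto M^{\bullet} \longto \tau_{\ge 1} M^{\bullet} \longto 0
\]
that is split in each degree; replacing the two outer complexes by injective resolutions (available since $\catA$ has enough injectives and the complexes are bounded below) yields a fiber sequence $\tau_{\le 0} \shM \to \shM \to \tau_{\ge 1} \shM$ in $\iDp(\catA)$ with $\tau_{\le 0}\shM \in \iDp(\catA)_{\ge 0}$ and $\tau_{\ge 1}\shM \in \iDp(\catA)_{\le 0}[-1]$. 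Axiom (i) asks that $\Hom_{\Ho \iDp(\catA)}(X, Y[-1]) = 0$ when $X$ has homology concentrated in nonnegative degrees and $Y$ in nonpositive degrees; representing $Y$ by a complex of injectives concentrated in the appropriate degrees, morphisms in $\dD^{+}(\catA)$ are given by honest chain maps up to homotopy, and a degree count (equivalently the hyper-Ext spectral sequence) forces this group to vanish.

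For the heart, note that $\iDp(\catA)^{\heartsuit} = \iDp(\catA)_{\le 0} \cap \iDp(\catA)_{\ge 0}$ is the full sub-$\infty$-category of objects whose homology is concentrated in degree $0$, and that via the canonical truncations of complexes any such $\shM$ is equivalent to $H_0(\shM)$ placed in degree $0$. I would then show that $H_0 \colon \iDp(\catA)^{\heartsuit} \to \Ner(\catA)$ is an equivalence. Essential surjectivity is clear, since for $A \in \catA$ an injective resolution of $A$ (in degree $0$) lies in the heart with $H_0 = A$. For fully faithfulness I would compute $\Map_{\iDp(\catA)}(\shM, \shN)$ for $\shM, \shN$ in the heart: passing to injective resolutions $I^{\bullet}, J^{\bullet}$ of $A = H_0\shM$ and $B = H_0\shN$, the Hom-complex $\Hom^{\bullet}(I^{\bullet}, J^{\bullet})$ computes $\dR\Hom(A,B)$, so $\pi_i \Map_{\iDp(\catA)}(\shM,\shN) \simeq \Ext^{-i}_{\catA}(A, B)$, which equals $\Hom_{\catA}(A,B)$ for $i = 0$ and vanishes for $i > 0$ since Ext-groups in negative degrees are zero. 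Hence the mapping spaces in the heart are discrete and computed by $\Hom_{\catA}$, so $H_0$ is fully faithful, and the equivalence $\iDp(\catA)^{\heartsuit} \simeq \Ner(\catA)$ follows.

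The main obstacle will be the bookkeeping in axiom (iii): the canonical truncations of a complex of injectives are themselves not complexes of injectives, so one must either resolve them — checking that the resulting diagram is still a fiber sequence in $\iDp(\catA) = \Ndg(\C^{+}(\catA_{\txinj}))$ and not merely in $\Ho \iDp(\catA)$ — or carry out the truncation in the larger $\infty$-category $\Ndg(\C^{+}(\catA))$ and transport along the localization equivalence recalled above. Everything else is the standard derived-category computation, performed at the level of mapping spaces rather than just morphism groups.
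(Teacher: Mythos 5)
The paper does not actually prove this statement: it is quoted verbatim as a Fact from Lurie's \emph{Higher Algebra} (Proposition 1.3.2.19), so there is no internal proof to compare against. Your argument is correct and is essentially the standard (and Lurie's) route: since by Definition \ref{dfn:stb:t-str} a $t$-structure on $\iDp(\catA)$ is just one on $\Ho\iDp(\catA)\simeq K^{+}(\catA_{\txinj})\simeq \dD^{+}(\catA)$, the axioms reduce to the classical chain-level verification, and you correctly isolate the only delicate points --- that canonical truncations of a complex of injectives are not again complexes of injectives (so one must re-resolve, or truncate in $\Ndg(\C^{+}(\catA))$ and transport along the localization), and that the identification $\pi_i\Map_{\iDp(\catA)}(\shM,\shN)\simeq \Ext^{-i}_{\catA}(H_0\shM,H_0\shN)$ for objects of the heart yields the equivalence $\iDp(\catA)^{\heartsuit}\simeq\Ner(\catA)$.
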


%Similarly the unbounded derived $\infty$-category 
%of a Grothendieck abelian category has a $t$-structure. 

\begin{fct}[{\cite[Proposition 1.3.5.21 (2), (3)]{Lur2}}]
\label{fct:stb:iDa:t-str}
Let $\catA$ be a Grothendieck abelian category.
We denote by $\iDa(\catA)_{\ge 0} \subset \iDa(\catA)$ 
the full sub-$\infty$-category spanned by 
those objects $M$ such that $H_n(M) \simeq 0$ for $n<0$.
$\iDa(\catA)_{\le 0}$ is defined similarly.
Then the pair
\[
 (\iDa(\catA)_{\le 0}, \iDa(\catA)_{\ge 0})
\]
determines a $t$-structure on $\iDa(\catA)$,
which is accessible, right complete and compatible with filtered colimits
(Definition \ref{dfn:stb:cmpl}, \ref{dfn:stb:t-prp}).
\end{fct}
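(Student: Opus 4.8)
The plan is to verify directly the three axioms of a $t$-structure (Definition \ref{dfn:stb:BBD}, transported to $\iDa(\catA)$ through Definition \ref{dfn:stb:t-str}) for the pair $(\iDa(\catA)_{\le 0},\iDa(\catA)_{\ge 0})$, and then to establish accessibility, right completeness, and compatibility with filtered colimits (Definition \ref{dfn:stb:t-prp}, Definition \ref{dfn:stb:cmpl}) as separate consequences. Throughout I would exploit the presentation $\iDa(\catA) = \Ndg(\C(\catA)^{\circ})$ (Definition \ref{dfn:stb:iDa}) together with the localization functor $L \colon \Ndg(\C(\catA)) \to \iDa(\catA)$ of Fact \ref{fct:stb:iDa} (2), which lets me perform constructions at the level of honest complexes in $\C(\catA)$ and then descend them to $\iDa(\catA)$. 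The homology functors $H_n$ used to define the two subcategories are the ordinary homology objects of the underlying complex, so all membership conditions are computed in $\catA$.

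The closure axiom (stability under $[1]$ and $[-1]$ in Definition \ref{dfn:stb:BBD} (ii)) is immediate, since the shift in the stable $\infty$-category $\iDa(\catA)$ shifts the underlying complex and hence satisfies $H_n(M[1]) \simeq H_{n+1}(M)$. For the truncation axiom (iii) I would use the classical good (canonical) truncation of complexes: given a representative complex $M \in \C(\catA)^{\circ}$, the subcomplex $\tau_{\ge 0} M$ and the quotient $\tau_{\le -1} M$ fit into a short exact sequence $0 \to \tau_{\ge 0} M \to M \to \tau_{\le -1} M \to 0$ in $\C(\catA)$. Applying $L$ and using that short exact sequences of complexes become fiber sequences in the stable $\infty$-category yields the desired triangle, with $\tau_{\ge 0} M \in \iDa(\catA)_{\ge 0}$ and $\tau_{\le -1} M \in \iDa(\catA)_{\le 0}[-1]$ by inspection of homology. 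The only care needed is functoriality up to coherence, supplied by performing the truncation inside the model category $\C(\catA)$ (Fact \ref{fct:stb:Ch:model}) and transporting along $L$.

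The orthogonality axiom (i) amounts to the vanishing of the relevant mapping spaces: for $M$ connective and $N$ coconnective I must show $\Map_{\iDa(\catA)}(M, N[-1]) \simeq 0$, equivalently $\Ext^{j}_{\iDa(\catA)}(M,N) = 0$ for the appropriate range of $j$. Here I would represent $N$ by a fibrant complex, which by the characterization of fibrant objects has injective components and, being coconnective, may be taken concentrated in coconnective degrees; the mapping space in the dg-nerve is then computed by the mapping complex in $\C(\catA)^{\circ}$, whose homology in the relevant degrees vanishes by the standard degreewise argument against a bounded-above complex of injectives. This reproduces the classical orthogonality in $\Ho \iDa(\catA) = D(\catA)$.

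Finally, for the three properties: $\iDa(\catA)$ is presentable by Fact \ref{fct:stb:iDa} (4), and accessibility of the $t$-structure reduces to presentability of $\iDa(\catA)_{\ge 0}$; since $\catA$ is Grothendieck, filtered colimits are exact, so $H_n(\iclim_i M_i) \simeq \iclim_i H_n(M_i)$ and the connective subcategory is stable under filtered colimits and is itself presentable. The same exactness gives compatibility with filtered colimits, i.e.\ stability of $\iDa(\catA)_{\le 0}$ under filtered colimits. Right completeness is the statement that the canonical map $\iclim_n \tau_{\ge -n} M \to M$ is an equivalence for every $M$; this again follows from exactness of filtered colimits, because for each fixed degree $k$ the tower $H_k(\tau_{\ge -n} M)$ is eventually constant equal to $H_k(M)$ and homology commutes with the filtered colimit, whence the map is a homology isomorphism. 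I expect the main obstacle to be precisely this right-completeness step: it is where the unboundedness of $\iDa(\catA)$ and the Grothendieck hypothesis interact most delicately, since one must recover an arbitrary unbounded object as a filtered colimit of its truncations and control this colimit simultaneously in every homological degree; by contrast, the orthogonality computation is routine once injective representatives are in hand, and the truncation triangle is formal.
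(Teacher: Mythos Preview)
The paper does not prove this statement: it is recorded as a \emph{Fact} with the citation \cite[Proposition 1.3.5.21 (2), (3)]{Lur2} and no argument is supplied. Your outline is sound and is essentially the standard argument (and indeed the one Lurie gives in the cited reference): verify the $t$-structure axioms using the canonical truncations of complexes and the injective model structure on $\C(\catA)$ (Fact \ref{fct:stb:Ch:model}), then deduce accessibility and compatibility with filtered colimits from presentability of $\iDa(\catA)$ (Fact \ref{fct:stb:iDa} (4)) together with exactness of filtered colimits in a Grothendieck abelian category, and finally obtain right completeness from the recovery of an arbitrary object as the filtered colimit of its connective covers.

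One small remark: the characterization of right completeness you use, namely that $\iclim_n \tau_{\ge -n} M \to M$ is an equivalence for every $M$, is equivalent to but not literally the definition given in Definition \ref{dfn:stb:cmpl}, which is phrased dually to left completion as an equivalence of $\infty$-categories. Passing between the two formulations requires a short additional argument (this is \cite[Proposition 1.2.1.19]{Lur2}), so if you were writing this out in full you would want to either invoke that result or supply the bridge yourself.
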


%For a Grothendieck abelian category $\catA$ with enough projective objects,
%we have the following description of the $t$-structure of $\iDa(\catA)$.
%
%\begin{fct}[{\cite[Proposition 1.3.5.24]{Lur2}}]\label{fct:stb:iDa-iDm}
%Let $\catA$ be a Grothendieck abelian category with enough projective objects.
%Denote by $L: \Ndg(\C(\catA)) \to \iDa(\catA)$ a left adjoint to 
%the inclusion (Fact \ref{fct:stb:iDa} (2)).
%Then the composition 
%\[
% \iDm(\catA) = \Ndg(\C^{-}(\catA_{\proj})) 
% \longinj \Ndg(\C(\catA)) \xrr{L} \iDa(\catA)
%\]
%is fully faithful, and the essential image is equal to 
%$\cup_{n\in\bbN} \iDa(\catA)_{\ge -n}$.
%\end{fct}

Let us also cite the following useful result.

\begin{fct}[{\cite[Theorem 1.3.3.2]{Lur2}}]\label{fct:stb:ext-fun}
Let $\catA$ be an abelian category 
with enough injective (resp.\ projective) objects, 
$\iC$ be a stable $\infty$-category equipped with a left complete $t$-structure,
and $\iE \subset \iFun(\iDp(\catA),\iC)$ 
(resp.\ $\iE \subset \iFun(\iDm(\catA),\iC)$) be the full sub-$\infty$-category
spanned by those left (resp.\ right) $t$-exact functors which carry 
injective (resp.\ projective) objects of $\catA$ into $\iC^{\heartsuit}$. 
Then the construction 
\[
 F \longmapsto \tau_{\le 0} \circ (\rst{F}{\iDp(\catA)^{\heartsuit}})
\]
(resp.\  $F \mapsto \tau_{\le 0} \circ (\rst{F}{\iDm(\catA)^{\heartsuit}})$)
determines an equivalence from $\iE$ to the nerve of the
category of left (resp.\ right) exact functors $\catA \to \iC^{\heartsuit}$.
\end{fct}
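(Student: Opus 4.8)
The plan is to exhibit an explicit quasi-inverse to the restriction functor $R$, given by $F \longmapsto \tau_{\le 0}(\rst{F}{\catA})$, from $\iE$ to the nerve of the category of left exact functors $\catA \to \iC^{\heartsuit}$. The essential input is the concrete presentation $\iDp(\catA) = \Ndg(\C^+(\catA_{\txinj}))$ of Definition \ref{dfn:stb:iDm/iDp}, together with the identification of the heart $\iDp(\catA)^{\heartsuit} \simeq \Ner(\catA)$, and the reduction of the whole statement to the universal property that $\iDp(\catA)$ is generated, as a domain for left $t$-exact functors into a left-complete target, by the injective objects of $\catA$. First I would check that $R$ is well defined: for an exact, left $t$-exact $F \in \iE$, applying $F$ to a short exact sequence of $\catA$ (viewed as a fiber sequence in the heart) and truncating by $\tau_{\le 0}$ yields a left exact sequence in $\iC^{\heartsuit}$, so $\tau_{\le 0}(\rst{F}{\catA})$ is indeed left exact.

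To build the inverse, given a left exact $g\colon \catA \to \iC^{\heartsuit}$ I would extend it to $G\colon \iDp(\catA) \to \iC$ by applying $g$ levelwise to a bounded-below complex of injectives and then totalizing inside $\iC$: a cochain complex in the heart determines, by the stable Dold--Kan correspondence, a filtered object of $\iC$ whose totalization $\mathrm{Tot}$ lies in $\iC$, and for a bounded-below complex this is well behaved. Using the enriched structure of $\C^+(\catA_{\txinj})$, the levelwise application of $g$ followed by the totalization functor $\C^+(\iC^{\heartsuit}) \to \iC$ should assemble into a dg-functor, hence a map of dg-nerves $G\colon \Ndg(\C^+(\catA_{\txinj})) \to \iC$. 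By construction $G$ sends an injective $I$, regarded as a complex in degree $0$, to $g(I) \in \iC^{\heartsuit}$, so $G \in \iE$. That $R(G) \simeq g$ then follows from left exactness of $g$: resolving $A \in \catA$ by $A \to I^\bullet$ gives $G(A) \simeq \mathrm{Tot}(g(I^\bullet))$, whose $\tau_{\le 0}$ is $\Ker(g(I^0) \to g(I^1)) \simeq g(A)$, the last identification being exactly left exactness applied to $0 \to A \to I^0 \to I^1$.

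The remaining, and hardest, step is to produce a natural equivalence $G \circ R \simeq \id_{\iE}$, that is, to show that an arbitrary left $t$-exact $F \in \iE$ is recovered from its values on injectives. On injectives $F$ and $G R(F)$ already agree, since $F(I) \in \iC^{\heartsuit}$ forces $\tau_{\le 0}(F(I)) = F(I) = g(I)$; the content is that this agreement propagates coherently to all of $\iDp(\catA)$. I expect the main obstacle to lie here: one must prove the generation statement that every object of $\iDp(\catA)$ is the totalization of its complex of injectives in a manner compatible with left $t$-exact functors, and that such functors are determined up to coherent homotopy by their restriction to $\Ner(\catA_{\txinj})$. This is precisely where the left-completeness of the $t$-structure on $\iC$ (Definition \ref{dfn:stb:cmpl}) is indispensable, guaranteeing both the convergence of the totalization tower and the connectivity estimate showing $G$ is left $t$-exact. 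Finally, the projective (right $t$-exact) variant follows formally by passing to opposite categories via the equivalence $\iDp(\catA)^{\op} \simeq \iDm(\catA^{\op})$ recorded after Definition \ref{dfn:stb:iDm/iDp}, which interchanges limits and colimits, left and right completeness, and injectives and projectives.
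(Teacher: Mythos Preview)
The paper does not prove this statement at all: it is recorded as a \texttt{fct} environment with the citation \cite[Theorem 1.3.3.2]{Lur2} and no accompanying argument. There is therefore nothing in the paper to compare your proposal against.

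That said, your sketch follows the general shape of Lurie's original proof in \emph{Higher Algebra}: build the inverse by taking a left exact $g$, applying it termwise to a bounded-below complex of injectives, and then totalizing inside $\iC$ using the stable Dold--Kan machinery, with left completeness of $\iC$ ensuring convergence. Your identification of the hard step is accurate: the coherence statement that a left $t$-exact functor is determined on all of $\iDp(\catA)$ by its values on injectives is exactly where the work lies, and Lurie handles this via a careful analysis of the $\infty$-category of filtered objects and the associated spectral sequence / Postnikov-tower arguments. Your proposal is a reasonable outline but leaves this step at the level of ``I expect the main obstacle to lie here''; filling it in requires the material of \cite[\S1.2.2--1.2.4]{Lur2} on chain complexes in stable $\infty$-categories, which is substantial.
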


\begin{cor*}%\label{cor:stb:ext-fun}
Let $\catA$ be an abelian category with enough injective (resp.\ projective) 
objects, and $\iC$ be a stable $\infty$-category equipped with a right 
(resp.\ left) complete $t$-structure (Definition \ref{dfn:stb:cmpl}).
Then any left (resp.\ right) exact functor $\catA \to \iC^{\heartsuit}$ 
of abelian categories can be extended to a $t$-exact functor 
$\iDp(\catA) \to \iC$ (resp.\ $\iDm(\catA) \to \iC$) 
uniquely up to a canonical equivalence.
\end{cor*}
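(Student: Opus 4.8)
The statement is, after unwinding, a direct consequence of Fact \ref{fct:stb:ext-fun}, and the plan is simply to harvest it from the equivalence of $\infty$-categories recorded there. I will treat the case where $\catA$ has enough injective objects and $\iC$ carries a right complete $t$-structure, the projective case being dual (with $\iDm$ in place of $\iDp$ and the roles of left and right interchanged). Write $\iE \subset \iFun(\iDp(\catA),\iC)$ for the full sub-$\infty$-category of left $t$-exact functors that carry injective objects of $\catA$ into $\iC^{\heartsuit}$; identifying $\iDp(\catA)^{\heartsuit}$ with $\Ner(\catA)$, Fact \ref{fct:stb:ext-fun} tells us that
\[
 \Phi \colon \iE \longto \Ner\bigl(\Fun^{\mathrm{lex}}(\catA,\iC^{\heartsuit})\bigr), \qquad
 F \longmapsto \tau_{\le 0}\circ\bigl(\rst{F}{\iDp(\catA)^{\heartsuit}}\bigr)
\]
is an equivalence of $\infty$-categories.

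Given a left exact functor $f\colon \catA \to \iC^{\heartsuit}$, regarded as a vertex of the target of $\Phi$, the first step is to invoke essential surjectivity of $\Phi$ to produce $F \in \iE$ with $\Phi(F)\simeq f$, and then full faithfulness of $\Phi$ to conclude that the full sub-$\infty$-category of $\iE$ spanned by such objects is a contractible Kan complex; this furnishes the extension together with the asserted uniqueness up to a canonical equivalence. By definition of $\iE$ the functor $F$ is left $t$-exact, and it is $t$-exact in the strict sense exactly when $f$ is moreover right exact (apply the dual statement to $f^{\op}$ and compare); this is the small point on which the phrasing of the corollary should be read charitably, since the right derived functor of a merely left exact functor is only left $t$-exact.

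The one thing that still needs checking, once Fact \ref{fct:stb:ext-fun} is granted, is that $F$ genuinely \emph{extends} $f$, i.e.\ that $\rst{F}{\iDp(\catA)^{\heartsuit}}\simeq f$ and not just $\tau_{\le 0}\rst{F}{\iDp(\catA)^{\heartsuit}}\simeq f$ — in other words that $F$ already lands in $\iC^{\heartsuit}$ on the heart. For this I would fix $A\in\catA$, choose an injective resolution $0 \to A \to I^{0}\to I^{1}\to\cdots$ presenting $A$ in $\iDp(\catA)=\Ndg(\C^{+}(\catA_{\txinj}))$, and run the coskeletal filtration of the complex $I^{\bullet}$ through $F$: left $t$-exactness of $F$ kills the homotopy groups $\pi_{<0}F(A)$, while the hypothesis that $F(I^{n})\in\iC^{\heartsuit}$ for all $n$ forces $\pi_{>0}F(A)=0$ as well, leaving $F(A)\in\iC^{\heartsuit}$ with $\pi_{0}F(A)\simeq f(A)$. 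This is routine spectral-sequence bookkeeping — indeed it is precisely the mechanism already internal to the proof of Fact \ref{fct:stb:ext-fun} — so the corollary really follows with no work beyond reading off what "equivalence of $\infty$-categories" provides; this last verification is the only place where there is anything to do, and hence the main (and rather mild) obstacle.
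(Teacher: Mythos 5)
Your first two paragraphs do what the paper does: the corollary is stated without proof immediately after Fact \ref{fct:stb:ext-fun}, and the intended argument is exactly to read off existence from essential surjectivity and uniqueness from full faithfulness of the equivalence there. Your remark that ``$t$-exact'' must be read as ``left $t$-exact'' (resp.\ right) is also correct and is confirmed by the corollary that follows in the paper, which calls the resulting extension $\dR f$ a \emph{left} $t$-exact functor.

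The last paragraph, however, asserts something false. For $A\in\catA$ with injective resolution $A\to I^{\bullet}$, the object $F(A)$ is the totalization of $F(I^{0})\to F(I^{1})\to\cdots$, and its homotopy groups are $\pi_{-n}F(A)\simeq R^{n}f(A)$, the classical higher derived functors; these do not vanish unless $f$ is exact (take $f=\Gamma(X,-)$ for a non-affine $X$). In the paper's homological conventions, left $t$-exactness of $F$ forces $F(A)\in\iC_{\le 0}$, i.e.\ it kills $\pi_{>0}F(A)$ --- you have the direction reversed --- and nothing kills $\pi_{<0}F(A)$. So $F$ does \emph{not} carry the heart into $\iC^{\heartsuit}$: the restriction $\rst{F}{\iDp(\catA)^{\heartsuit}}$ is the full right derived functor, and only $\pi_{0}\circ\rst{F}{\iDp(\catA)^{\heartsuit}}$ recovers $f$, which is precisely how Fact \ref{fct:stb:ext-fun} phrases the correspondence. ``Extends'' in the corollary must therefore be understood in this derived sense. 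This does not damage the corollary --- your first two paragraphs already suffice --- but the final ``verification'' claims a strictly stronger and false statement and should be deleted rather than repaired.
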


\begin{dfn}[{\cite[Definition 1.3.3.1]{Lur2}}]\label{dfn:stb:text}
Let $f: \iC \to \iC'$ be a functor of stable $\infty$-categories 
equipped with $t$-structures.
\begin{enumerate}[nosep]
\item
$f$ is \emph{left $t$-exact} if it is exact (Definition \ref{dfn:ic:exact}) 
and carries $\iC_{\le 0}$ into $\iC'_{\le 0}$.

\item 
$f$ is \emph{right $t$-exact} if it is exact 
and carries $\iC_{\ge 0}$ into $\iC'_{\ge 0}$,

\item
$f$ is \emph{$t$-exact} if it is both left and right $t$-exact.
\end{enumerate}
\end{dfn}

Then we have another corollary of Fact \ref{fct:stb:ext-fun}
which is standard in the ordinary derived category.

\begin{cor*}[{\cite[Example 1.3.3.4]{Lur2}}]%\label{dfn:stb:df}
Let $f: \catA \to \catB$ be a functor between abelian categories.
\begin{enumerate}[nosep]
\item
If $\catA$ and $\catB$ have enough injective objects and $f$ is right exact,
then $f$ extends to a left $t$-exact functor
\[
 \dR f: \iDp(\catA) \longto \iDp(\catB)
\]
which is unique up to contractible ambiguity.
We call it the \emph{right derived functor} of $f$.

\item
Dually, if $\catA$ and $\catB$ have enough projective objects
and $f$ is right exact, then $f$ extends to a right $t$-exact functor
\[
 \dL f: \iDm(\catA) \longto \iDm(\catB)
\]
which is unique up to contractible ambiguity.
We call it the \emph{left derived functor} of $f$.
%One typically refers to F as the left derived functor of f.
\end{enumerate}
\end{cor*}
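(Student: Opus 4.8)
The plan is to obtain both assertions as immediate applications of Fact~\ref{fct:stb:ext-fun}, taking for the ambient stable $\infty$-category the relevant bounded derived $\infty$-category of $\catB$. For part~(1) I would apply Fact~\ref{fct:stb:ext-fun} with $\iC:=\iDp(\catB)$. First one checks the hypotheses it imposes on $\iC$: by Definition~\ref{dfn:stb:iDm/iDp} and the statements recorded just after it, $\iDp(\catB)$ is stable, carries a $t$-structure, and has heart canonically identified with $\Ner(\catB)$; the one point deserving a sentence is left completeness (Definition~\ref{dfn:stb:cmpl}) of this $t$-structure, which holds because at level $n$ the tower computing the left completion lands in $\iDp(\catB)_{\le n}$, whose objects are bounded complexes, so the level-$0$ term bounds the homology of the inverse limit below uniformly and the limit stays bounded below. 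Granting this, the identification $\iDp(\catB)^{\heartsuit}\simeq\Ner(\catB)$ presents the exactness hypothesis on $f$ (left exact, this being the exactness condition matching the injective / left $t$-exact clause of Fact~\ref{fct:stb:ext-fun}) as an object of the nerve of the category of left exact functors $\catA\to\iC^{\heartsuit}$, i.e.\ of the target of the equivalence
\[
 \iE \longsimto \Ner\bigl(\{\text{left exact }\catA\to\iC^{\heartsuit}\}\bigr),\qquad
 F\longmapsto \tau_{\le 0}\circ\bigl(\rst{F}{\iDp(\catA)^{\heartsuit}}\bigr),
\]
furnished by Fact~\ref{fct:stb:ext-fun}, where $\iE\subset\iFun(\iDp(\catA),\iDp(\catB))$ is the full sub-$\infty$-category of left $t$-exact functors carrying injective objects of $\catA$ into the heart.

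Since this functor is an equivalence of $\infty$-categories, its (homotopy) fibre over the object $f$ is a contractible Kan complex; any object $\dR f$ of this fibre is, by the definition of $\iE$, a left $t$-exact functor $\iDp(\catA)\to\iDp(\catB)$ (Definition~\ref{dfn:stb:text}) sending injectives of $\catA$ into $\iDp(\catB)^{\heartsuit}$ and with $\tau_{\le0}(\rst{(\dR f)}{\iDp(\catA)^{\heartsuit}})\simeq f$, i.e.\ extending $f$, and contractibility of the fibre is exactly the uniqueness up to contractible ambiguity. Part~(2) is formally dual: one applies Fact~\ref{fct:stb:ext-fun} in its parenthetical ``projective / right $t$-exact'' form with $\iC:=\iDm(\catB)$ --- right complete by the dual of the argument above --- and the right-exactness hypothesis on $f$, or equivalently one transports~(1) through the equivalence $\iDp(\catA)^{\op}\simeq\iDm(\catA^{\op})$ noted after Definition~\ref{dfn:stb:iDm/iDp} applied to $f^{\op}\colon\catA^{\op}\to\catB^{\op}$.

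As this is a corollary I expect no genuine obstacle. The only steps needing real attention are the verification that the target categories $\iDp(\catB)$ and $\iDm(\catB)$ meet the completeness hypothesis of Fact~\ref{fct:stb:ext-fun} and that their hearts are identified with $\Ner(\catB)$ compatibly with the restriction to the heart, together with the purely formal point that ``unique up to contractible ambiguity'' is to be read as contractibility of the fibre of the equivalence rather than as a strict uniqueness of functors.
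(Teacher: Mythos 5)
Your overall strategy is the intended one: the paper offers no argument beyond the citation of \cite[Example 1.3.3.4]{Lur2}, and the implicit proof is exactly what you do --- regard $f$ as a functor $\catA\to\iDp(\catB)^{\heartsuit}\simeq\Ner(\catB)$, feed it into the equivalence of Fact~\ref{fct:stb:ext-fun} with $\iC:=\iDp(\catB)$, and read off existence together with contractible uniqueness from the fibre of that equivalence. Your silent correction of the hypothesis in part~(1) (reading ``right exact'' as ``left exact'', to match the injective/left-$t$-exact clause and the name ``right derived functor'') is also the right call.

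The gap is in the completeness check, the one step you flag as needing real attention. First, the property you verify is the wrong one: the condition actually required when the target is $\iDp(\catB)$ is \emph{right} completeness, as in the corollary immediately following Fact~\ref{fct:stb:ext-fun} (``right (resp.\ left) complete''); the ``left complete'' printed in the Fact itself is attached to the wrong variant. Second, and fatally for the argument as written, the statement you claim is false: $\iDp(\catB)$ is \emph{not} left complete. Your justification rests on the assertion that objects of $\iDp(\catB)_{\le n}$ are bounded complexes, but membership in $\iDp(\catB)_{\le n}$ only bounds cohomology below (by $-n$); an injective resolution of a module of infinite injective dimension already lies in $\iDp(\catB)_{\le 0}$ and is unbounded above, so the level-$0$ term of a compatible system bounds nothing on the relevant side. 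Concretely, for $\catB=\mathrm{Vect}_k$ the objects $F(n)=\bigoplus_{j\ge n}k[-j]$ satisfy $F(n)\in\iDp(\catB)_{\le -n}$ and $\tau_{\le -n}F(m)\simeq F(n)$ for $m\le n$, yet the system $(F(n))_{n}$ is realized by no object of $\iDp(\catB)$, since such an object would have nonzero cohomology in every degree. What you should verify instead is right completeness of $\iDp(\catB)$, which holds by essentially the argument you had in mind applied on the other side: the truncations defining the right completion leave low cohomological degrees untouched, so every term of a compatible system shares the cohomological lower bound of any single term, and the system is realized in $\iDp(\catB)$. Dually, in part~(2) the target $\iDm(\catB)$ is left complete, not right complete as you assert. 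With the two completeness conditions swapped and correctly argued, your proof goes through.
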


%%%%%%%%%%%%%%%%%%%%%%%%%%%%%%%%%%%%%%%%%%%%%%%%%%%%%%%%%%%%%%%%%%%%%%%%%%%%%%%%
%%%%%%%%%%%%%%%%%%%%%%%%%%%%%%%%%%%%%%%%%%%%%%%%%%%%%%%%%%%%%%%%%%%%%%%%%%%%%%%%
%%%%%%%%%%%%%%%%%%%%%%%%%%%%%%%%%%%%%%%%%%%%%%%%%%%%%%%%%%%%%%%%%%%%%%%%%%%%%%%%
\section{Spectra and stable modules}
\label{a:sp}

Here we explain spectra, ring spectra and stable modules following \cite{Lur2}.
Let us remark that the contents in \S \ref{ss:sp:ring} will not be used 
in the main text except Fact \ref{fct:sp:spl=E}.

%%%%%%%%%%%%%%%%%%%%%%%%%%%%%%%%%%%%%%%%%%%%%%%%%%%%%%%%%%%%%%%%%%%%%%%%%%%%%%%%
%%%%%%%%%%%%%%%%%%%%%%%%%%%%%%%%%%%%%%%%%%%%%%%%%%%%%%%%%%%%%%%%%%%%%%%%%%%%%%%%
\subsection{Spectra}
\label{ss:sp:sp}

Let us give an important example of a stable $\infty$-category,
the \emph{$\infty$-category $\iSp$ of spectra}.
We need some preliminary definitions.

\begin{dfn*}[{\cite[Definition 7.2.2.1]{Lur1}}]\label{dfn:sp:ptd}
A \emph{pointed object} of an $\infty$-category $\iC$ is a morphism $X_*: 1 \to \iC$ 
where $1$ is a final object of $\iC$ (Definition \ref{dfn:ic:ini-fin}).
We denote by $\iC_*$ the full sub-$\infty$-category of $\iFun(\Delta^1,\iC)$
spanned by pointed objects of $\iC$.
\end{dfn*}

Recall that $\iS$ denotes the $\infty$-category of spaces 
(Definition \ref{dfn:ic:iS}).
Thus $\iS_*$ denotes the $\infty$-category of pointed objects in $\iS$.
Noting that $\iS$ has a final object, we choose and denote it by $* \in \iS$.

\begin{dfn*}[{\cite[Definition 1.4.2.5]{Lur2}}]
We denote by $\iS^{\fin} \subset \iS$ the smallest full sub-$\infty$-category 
which contains a final object $* \in \iS$ 
and is stable under taking finite colimits.
We also denote by $\iS^{\fin}_* := (\iS^{\fin})_*$ 
the $\infty$-category of pointed objects of $\iS^{\fin}$.
\end{dfn*}

%Note that the suspension functor $\Sigma: \iS_* \to \iS_*$ 
%(Definition \ref{dfn:ic:suspension}) preserves $\iS^{\fin}_*$.

We need another definition.

\begin{dfn*}[{\cite[Definition 1.4.2.1]{Lur2}}]
Let $F: \iC \to \iB$ be a functor of $\infty$-categories
\begin{enumerate}
\item 
Assume $\iC$ admits pushouts.
$F$ is called \emph{excisive} 
if $F$ carries pushout squares in $\iC$ to pullback squares in $\iB$.
\item
Assume $\iC$ has a final object $*$.
$F$ is called \emph{reduced} if $F(*)$ is a final object of $\iB$.
\end{enumerate}
\end{dfn*}

Now we can introduce

\begin{dfn}[{\cite[Definition 1.4.2.8, Definition 1.4.3.1]{Lur2}}]
\label{dfn:sp:sp}
\begin{enumerate}
\item
Let $\iC$ be an $\infty$-category admitting finite limits.
A \emph{spectrum object} of $\iC$ is defined to be 
a reduced excisive functor $F:\iS^{\fin}_* \to \iC$.
We denote by $\iSp(\iC)$ the full sub-$\infty$-category 
of $\iFun(\iS^{\fin}_*,\iC)$ spanned by spectrum objects,
and call it the \emph{$\infty$-category of spectra in $\iC$}.

\item 
A \emph{spectrum} is a spectrum object of the $\infty$-category $\iS$ of spaces.
We denote by $\iSp: = \iSp(\iS_*)$ the \emph{$\infty$-category of spectra}.
\end{enumerate}
\end{dfn}

\begin{fct}[{\cite[Corollary 1.4.2.17]{Lur2}}]
\label{fct:sp:sp-stable}
If $\iC$ is an $\infty$-category admitting finite limits,
then the $\infty$-category $\iSp(\iC)$ is stable.
\end{fct}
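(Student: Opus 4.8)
The plan is to verify directly the three conditions of Definition~\ref{dfn:stb:stb}: that $\iSp(\iC)$ has a zero object, that every morphism admits a fiber and a cofiber, and that fiber sequences coincide with cofiber sequences. First I would reduce to the case where $\iC$ is pointed. A reduced functor $F\colon \iS^{\fin}_* \to \iC$ sends the zero object of the pointed $\infty$-category $\iS^{\fin}_*$ to a final object of $\iC$, and therefore lifts canonically along the forgetful functor $\iC_* \to \iC$ to a reduced excisive functor with values in $\iC_* := (\iC)_*$ (the lift carries the zero object of $\iS^{\fin}_*$ to the zero object of $\iC_*$); this yields an equivalence $\iSp(\iC) \simeq \iSp(\iC_*)$. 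Since $\iC_*$ is pointed and still admits finite limits, I may assume henceforth that $\iC$ is pointed.

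Next I would invoke the tower description of spectrum objects. For $\iC$ pointed with finite limits, evaluation on the spheres $S^n$ identifies $\iSp(\iC)$ with the limit of the tower
\[
 \cdots \xrr{\Omega} \iC \xrr{\Omega} \iC \xrr{\Omega} \iC
\]
with transition functors the loop functor $\Omega = \Omega_{\iC}$; concretely, an object is a sequence $(X_n)_{n \in \bbN}$ together with equivalences $X_n \simeq \Omega X_{n+1}$. Two consequences are then immediate. First, finite limits in $\iSp(\iC)$ exist and are computed levelwise, since $\Omega$ preserves finite limits, so a levelwise finite limit of such sequences again carries the required equivalences; in particular every morphism has a fiber, and the constant sequence at $0 \in \iC$ is readily seen from the mapping-space description of the tower to be both initial and final, hence a zero object of $\iSp(\iC)$. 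Second, under this description the loop functor $\Omega_{\iSp(\iC)}$ is the levelwise loop $(X_n)_{n} \mapsto (\Omega X_n)_{n}$ and the shift $(X_n)_{n\ge 0} \mapsto (X_{n+1})_{n\ge 0}$ is a two-sided inverse to it, the required equivalences being exactly the spectrum structure maps $X_n \simeq \Omega X_{n+1}$; more invariantly, the shift is the canonical comparison from the limit of the tower to the limit of its one-step truncation, which is an equivalence because dropping finitely many bottom terms of a tower does not change its limit (cofinality of $\bbN_{\ge 1} \inj \bbN$). Hence $\Omega_{\iSp(\iC)}$ is an equivalence.

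The remaining and essential point is to pass from this data to stability. Here I would appeal to the standard structural criterion (Lurie, \emph{Higher Algebra}, \S\S 1.1.3--1.4.2): a pointed $\infty$-category which admits finite limits and on which the loop functor is an equivalence is stable; in particular it then admits finite colimits, and a square is a pushout if and only if it is a pullback. Granting this, $\iSp(\iC)$ has a zero object, has fibers and cofibers of all morphisms, and its fiber sequences coincide with its cofiber sequences, so it satisfies Definition~\ref{dfn:stb:stb}. I expect this last step to be the main obstacle: the reduction to the pointed case, the tower description, and the invertibility of $\Omega$ (which comes cheaply from cofinality of the shift) are all essentially formal, whereas constructing finite colimits in $\iSp(\iC)$ out of the invertibility of $\Omega$ — and checking that they agree with the corresponding finite limits — is the one genuinely non-formal ingredient, which I would either reproduce along the lines of \emph{Higher Algebra}, \S1.4.2, or simply cite.
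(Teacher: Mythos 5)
The paper offers no proof of this statement---it is quoted verbatim from [Lur2, Corollary 1.4.2.17]---and your sketch is a correct outline of precisely the argument in that reference: reduce to the pointed case via $\iSp(\iC)\simeq\iSp(\iC_*)$, use the tower description (Fact \ref{fct:sp:sp-stab}) to exhibit the shift as an inverse to the loop functor, and invoke the criterion that a pointed $\infty$-category with finite limits on which $\Omega$ is an equivalence is stable. You correctly isolate that last criterion as the one genuinely non-formal ingredient, and citing it (rather than reproving it) is exactly what the paper itself does for the whole statement.
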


In particular we have the shift functor $[n]$ with $n \in \bbZ$ 
on  $\iSp(\iC)$ (Definition \ref{dfn:stb:[n]}).
Then we can introduce

\begin{dfn}[{\cite[Notation 1.4.2.20]{Lur2}}]
\label{dfn:sp:Omega^inf}
Let $\iC$ be an $\infty$-category admitting finite limits.
\begin{enumerate}[nosep]
\item
Let $S^0$ be the $0$-sphere regarded as an object of $\iS^{\fin}_*$.
We denote the evaluation functor at $S^0 \in \iS^{\fin}_*$ by
\[
 \Omega^{\infty}: \iSp(\iC) \longto \iC.
\]
\item
For $n \in \bbZ$ we denote by
\[
 \Omega^{\infty-n}: \iSp(\iC) \longto \iC
\]
the composition $\Omega^{\infty} \circ  [n]$,
where $[n]$ denotes the shift functor on $\iSp(\iC)$.
\end{enumerate}
\end{dfn}

For $n \in \bbN$, the functor $\Omega^{\infty-n}$ can be regarded as
the evaluation functor at the pointed $n$-sphere $S^n \in \iS^{\fin}_*$.

The following fact means that a spectrum object can be regarded as 
a series of pointed objects together with loop functors
as the classical homotopy theory claims.

\begin{fct}[{\cite[Proposition 1.4.2.24, Remark 1.4.2.25]{Lur2}}]
\label{fct:sp:sp-stab}
For an $\infty$-category $\iC$ admitting finite limits,
$\iSp(\iC)$ is equivalent to the limit of the tower
$\cdots \to \iC_* \xrightarrow{\Omega} \iC_* \xrightarrow{\Omega} \iC_*$
of $\infty$-categories.
\end{fct}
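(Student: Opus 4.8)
The plan is to follow Lurie's argument for \cite[Proposition 1.4.2.24]{Lur2}, to which this Fact is attributed, and I sketch its main steps. Recall that $\iSp(\iC)$ is by definition the full sub-$\infty$-category of $\iFun(\iS^{\fin}_*,\iC)$ spanned by the reduced excisive functors, and that $\iS^{\fin}_*$ is generated under finite colimits by the final object together with the iterated suspensions $S^n = \Sigma^n S^0$ of the $0$-sphere, with $S^{n+1}$ realized as the pushout $* \sqcup_{S^n} *$ in $\iS^{\fin}_*$.

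First I would reduce to the case where $\iC$ is pointed. By the remark following Definition \ref{dfn:sp:sp} (see \cite[Remark 1.4.2.25]{Lur2}) there is a canonical equivalence $\iSp(\iC) \simeq \iSp(\iC_*)$, and when $\iC$ already admits a zero object the forgetful functor $\iC_* \to \iC$ is an equivalence under which the ambient loop functors agree; so without loss of generality $\iC$ is pointed and $\iC_* \simeq \iC$. It then suffices to produce an equivalence $\iSp(\iC) \simto \ilim(\cdots \xrightarrow{\Omega} \iC \xrightarrow{\Omega} \iC)$, the limit being taken in $\iCat$ (Definition \ref{dfn:ic:iCat}).

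Next I would build the comparison functor. For a reduced excisive $F \colon \iS^{\fin}_* \to \iC$ and each $n \in \bbN$, applying $F$ to the pushout square exhibiting $S^{n+1}$ as the suspension $* \sqcup_{S^n} *$ yields, by excisiveness, a pullback square; since $F$ is reduced, $F(*)$ is a final object, so the initial vertex of this square is identified with $F(S^n) \simeq F(*) \times_{F(S^{n+1})} F(*) \simeq \Omega F(S^{n+1})$. Thus the sequence of values $(F(S^0), F(S^1), F(S^2), \dots) = (\Omega^{\infty} F, \Omega^{\infty-1} F, \Omega^{\infty-2} F, \dots)$ carries canonical equivalences $F(S^n) \simto \Omega F(S^{n+1})$, i.e., it is a point of the $\Omega$-tower. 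Packaging this assignment coherently — the technical heart of the construction, which in \cite{Lur2} is handled via the straightening/unstraightening formalism — gives a functor $\Phi \colon \iSp(\iC) \to \ilim(\cdots \xrightarrow{\Omega} \iC)$, assembled from the functors $\Omega^{\infty-n}$ of Definition \ref{dfn:sp:Omega^inf}.

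Finally I would show $\Phi$ is an equivalence by producing an inverse: a compatible sequence $(X_n)_{n \in \bbN}$ with $X_n \simto \Omega X_{n+1}$ is sent to the reduced functor on $\iS^{\fin}_*$ obtained by a Kan extension from the full subcategory on the spheres, and one checks this functor is excisive and that the two constructions are mutually inverse. The point is that every object of $\iS^{\fin}_*$ is built from spheres by iterated pushouts (a finite CW decomposition) and that excisiveness together with reducedness is stable under these pushouts, so a reduced excisive functor is determined up to canonical equivalence by its restriction to the spheres together with the loop-compatibility data. I expect the main obstacle to be precisely this last point: rigorously upgrading "a reduced excisive functor is the same datum as an $\Omega$-spectrum object" from a correspondence of objects to an equivalence of $\infty$-categories. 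A fully self-contained treatment would go through Goodwillie calculus for $\iS^{\fin}_*$-indexed functors as developed in \cite[\S 1.4]{Lur2}, which is why the paper imports the statement as a Fact rather than reproving it.
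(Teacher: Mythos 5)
The paper does not prove this statement at all: it is imported verbatim as a \emph{Fact} with a citation to \cite[Proposition 1.4.2.24, Remark 1.4.2.25]{Lur2}, and your sketch is a correct outline of exactly the argument given there — reduce to the pointed case via $\iSp(\iC)\simeq\iSp(\iC_*)$, apply a reduced excisive functor to the suspension pushout squares for the spheres to extract the $\Omega$-tower data $F(S^n)\simeq\Omega F(S^{n+1})$, and invert by Kan extension from the spheres. Since the paper's "approach" is simply to cite Lurie, there is no discrepancy to report; your identification of the coherence step as the genuine technical content is also accurate.
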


Using $\Omega^{\infty}$ we can endow a $t$-structure on $\iSp(\iC)$.

\begin{fct}[{\cite[Proposition 1.4.3.4]{Lur2}}]\label{fct:sp:sp-t}
Let $\iC$ be a presentable $\infty$-category, 
and $\iSp(\iC)_{\le -1} \subset \iSp(\iC)$ be the full sub-$\infty$-category
spanned by those objects $X$ 
such that $\Omega^{\infty}(X)$ is a final object of $\iC$.
Then $\iSp(\iC)_{\le -1}$ determines a $t$-structure on $\iSp(\iC)$. 
%whic is accesibble, i.e., $\iSp(\iC)_{\ge 0} \subset \iSp(\iC)$ is presentable.
%(Definition 1.4.4.12).
\end{fct}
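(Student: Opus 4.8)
The plan is to verify the three axioms of a $t$-structure from Definition~\ref{dfn:stb:BBD} on the stable $\infty$-category $\iSp(\iC)$ --- which is stable by Fact~\ref{fct:sp:sp-stable}, a presentable $\infty$-category admitting finite limits --- taking $\iSp(\iC)_{\le -1}$ as the given coconnective piece (up to the shift $[1]$) and defining the connective piece $\iSp(\iC)_{\ge 0}$ by orthogonality, i.e.\ as the full sub-$\infty$-category of those $X$ with $\Map_{\iSp(\iC)}(X,Y)$ contractible for every $Y \in \iSp(\iC)_{\le -1}$. First I would record the elementary closure properties of $\iSp(\iC)_{\le -1}$: since the evaluation functor $\Omega^{\infty}\colon \iSp(\iC) \to \iC$ commutes with the loop functor $\Omega$ (both preserve limits) and $\Omega$ carries final objects to final objects, $\Omega^{\infty}(X[-1]) \simeq \Omega(\Omega^{\infty}X)$ is final whenever $\Omega^{\infty}X$ is, so $\iSp(\iC)_{\le -1}$ is stable under $[-1]$; axioms (i) and (ii) for the pair $\bigl(\iSp(\iC)_{\ge 0},\,\iSp(\iC)_{\le -1}[1]\bigr)$ then follow formally from the definition of $\iSp(\iC)_{\ge 0}$ together with this closure under $[-1]$.

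The substantive point is axiom (iii): for each $X \in \iSp(\iC)$ one needs a fiber sequence $X' \to X \to X''$ with $X' \in \iSp(\iC)_{\ge 0}$ and $X'' \in \iSp(\iC)_{\le -1}$. I would first observe that $\iSp(\iC)$ is presentable: by Fact~\ref{fct:sp:sp-stab} it is the limit of the tower $\cdots \xrightarrow{\ \Omega\ } \iC_* \xrightarrow{\ \Omega\ } \iC_*$ of $\infty$-categories, where $\iC_*$ is presentable (cf.\ Fact~\ref{fct:ic:pres}) and each $\Omega$ is an accessible limit-preserving functor --- hence a morphism of presentable $\infty$-categories admitting a left adjoint --- so the limit is computed in $\infty$-categories and is again presentable. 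Likewise $\Omega^{\infty}$ is accessible and limit-preserving, and $\iSp(\iC)_{\le -1}$, being the fiber product $\iSp(\iC) \times_{\iC} \{\ast\}$ of presentable $\infty$-categories along such functors, is presentable and closed under limits in $\iSp(\iC)$.

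Given this, the inclusion $\iSp(\iC)_{\le -1} \hookrightarrow \iSp(\iC)$ preserves small limits and is accessible between presentable $\infty$-categories, so by the adjoint functor theorem (Fact~\ref{fct:ic:adj}, \cite[Corollary 5.5.2.9]{Lur1}) it admits a left adjoint $\tau_{\le -1}$, exhibiting $\iSp(\iC)_{\le -1}$ as a reflective localization of $\iSp(\iC)$. For $X \in \iSp(\iC)$ I would then set $X'' := \tau_{\le -1}X$ and $X' := \mathrm{fib}(X \to X'')$; here $X'' \in \iSp(\iC)_{\le -1}$ by construction, and for any $Y \in \iSp(\iC)_{\le -1}$ the cofiber sequence $X' \to X \to X''$ (fiber and cofiber sequences coincide in a stable $\infty$-category) induces a fiber sequence $\Map(X'',Y) \to \Map(X,Y) \to \Map(X',Y)$ whose first map is an equivalence because $Y$ is local, forcing $\Map(X',Y)$ to be contractible, i.e.\ $X' \in \iSp(\iC)_{\ge 0}$. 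This yields (iii), and together with (i), (ii) it shows that $\iSp(\iC)_{\le -1}$ determines a $t$-structure, following \cite[Proposition~1.4.3.4]{Lur2}.

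The step I expect to be the main obstacle is the presentability bookkeeping: justifying cleanly that both $\iSp(\iC)$ and $\iSp(\iC)_{\le -1} \simeq \iSp(\iC)\times_{\iC}\{\ast\}$ are presentable --- which amounts to knowing that the relevant limits and fiber products of presentable $\infty$-categories along accessible right-adjoint functors are computed in $\infty$-categories --- since only then does the adjoint functor theorem produce the truncation functor $\tau_{\le -1}$. Everything else reduces to routine manipulations with the loop functor and with fiber and cofiber sequences in a stable $\infty$-category.
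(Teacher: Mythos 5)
Your overall strategy is viable, and since the paper records this statement as a Fact cited to \cite[Proposition 1.4.3.4]{Lur2} without proof, what you are really doing is dualizing Lurie's argument: he generates the \emph{connective} part under colimits and extensions from the image of $\Sigma^{\infty}$ and applies \cite[Proposition 1.2.1.16]{Lur2}, whereas you localize onto the \emph{coconnective} part. The presentability bookkeeping you flag as the main obstacle is in fact fine. The genuine gap is in the step you dismiss as routine: the assertion that, in the fiber sequence $\Map(X'',Y) \to \Map(X,Y) \to \Map(X',Y)$, the first map being an equivalence ``forces $\Map(X',Y)$ to be contractible.'' It does not. That equivalence (equivalently, $\Omega\Map(X',Y) \simeq \Map(X'[1],Y) \simeq \fib\bigl(\Map(X'',Y) \to \Map(X,Y)\bigr) \simeq \ast$) only kills $\pi_n\Map(X',Y)$ for $n \ge 1$; it says nothing about $\pi_0\Map(X',Y) = \Hom_{\Ho \iSp(\iC)}(X',Y)$, because a fibration of mapping spaces need not be surjective on components. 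Concretely, in $\iSp$ the cofiber sequence $S^0_{\iS} \to 0 \to S^0_{\iS}[1]$ together with any $Y$ having $\pi_0 Y \neq 0$ and $\pi_i Y = 0$ for $i \neq 0$ gives a fiber sequence of mapping spaces whose first map is an equivalence while the third term is the discrete set $\pi_0 Y$. Chasing the long exact sequence in your situation, what one actually obtains is an isomorphism $\Hom(X',Y) \cong \ker\bigl(\Hom(X'',Y[1]) \to \Hom(X,Y[1])\bigr)$, and since $Y[1]$ need no longer lie in $\iSp(\iC)_{\le -1}$, locality of $Y$ gives no control over this kernel. So the key orthogonality $X' \in \iSp(\iC)_{\ge 0}$, i.e.\ axiom (iii) of Definition \ref{dfn:stb:BBD}, is unproved as written.

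What is missing is the closure of $\iSp(\iC)_{\le -1}$ under extensions --- exactly the decisive hypothesis of \cite[Proposition 1.2.1.16]{Lur2}, which your argument never invokes. It does hold here: for a fiber sequence $Y' \to W \to Y''$ with $Y', Y'' \in \iSp(\iC)_{\le -1}$ one has $\Omega^{\infty}W \simeq \fib\bigl(\Omega^{\infty}Y'' \to \Omega^{\infty}(Y'[1])\bigr) \simeq \Omega\,\Omega^{\infty}(Y'[1]) \simeq \Omega^{\infty}Y'$, which is final (this uses that $\Omega^{\infty}$, viewed as a functor to pointed objects, preserves limits). Granting this, the kernel above vanishes by the standard splitting trick: a class in it is a map $g\colon X'' \to Y[1]$ killing $X$; its fiber $W=\fib(g)$ is an extension of $X''$ by $Y$, hence lies in $\iSp(\iC)_{\le -1}$; the nullhomotopy lifts the unit $u\colon X \to X''$ to $X \to W$, which, $W$ being local, factors as $X \xrightarrow{u} X'' \xrightarrow{h} W$; uniqueness of factorizations through $u$ forces $W \to X''$ to be split epi, whence $g \simeq g \circ (W \to X'') \circ h \simeq 0$. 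Inserting this paragraph closes your proof; without it, the argument would show that \emph{every} accessible, limit-closed reflective subcategory of a presentable stable $\infty$-category is the coconnective part of a $t$-structure, which is false in that generality.
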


Let us also recall the sphere spectrum.
By \cite[Proposition 1.4.4.4]{Lur2}, 
if $\iC$ is a presentable $\infty$-category,
then the functor $\Omega^{\infty}:\iSp(\iC) \to \iC$ admits a left adjoint
\[
 \Sigma^{\infty}: \iC \longto \iSp(\iC).
\]

\begin{dfn}\label{dfn:sp:Sigma^inf}
For $n \in \bbN$, 
we denote the composition with the shift functor $[n]$ on $\iSp(\iC)$ by
\[
 \Sigma^{\infty+n} := [n] \circ \Sigma:  \iC \longto \iSp(\iC).
\]
%We call the functor $\Sigma^{\infty+n}$ the \emph{suspension functor}.
%For $n \in \bbN$, 
We also denote the image of a final object $1_{\iC} \in \iC$ 
under the functor $\Sigma^{\infty+n}$ by
\[
 S^n_{\iC} := \Sigma^{\infty+n}(1_{\iC}) \in \iSp(\iC).
\]
\end{dfn}

Obviously $\Sigma^{\infty+n}$ is a left adjoint of $\Omega^{\infty-n}$.

In the case $\iC = \iS$ we have

\begin{fct}\label{fct:stb:iSp:mon}
The stable $\infty$-category $\iSp$ of spectra has the following properties.
\begin{enumerate}
\item
$\iSp$ is freely generated by 
the sphere spectrum $S^0_{\iS}$ under (small) colimits
\cite[Corollary 1.4.4.6]{Lur2}.

\item 
$\iSp$ is equipped with the symmetric monoidal structure 
induced by smash products \cite[\S 4.8.2]{Lur2}.

\item
The heart $\iSp^{\heartsuit}$ of the $t$-structure (Fact \ref{fct:sp:sp-t})
is equivalent to the nerve of the ordinary category of abelian groups
\cite[Proposition 1.4.3.6]{Lur2}.
\end{enumerate}
\end{fct}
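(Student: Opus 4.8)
All three assertions are recorded in \cite{Lur2}, and the plan is to deduce each from the description of $\iSp = \iSp(\iS_*)$ as the stabilization of pointed spaces, exploiting the adjunction $\Sigma^{\infty}: \iS_* \rlto \iSp : \Omega^{\infty}$ (Definition \ref{dfn:sp:Sigma^inf}, Definition \ref{dfn:sp:Omega^inf}) together with the free-generation property of presheaf $\infty$-categories (Fact \ref{fct:ic:iPSh:gen}). Concretely, $\iS \simeq \iPSh(\Delta^0)$ is freely generated under small colimits by the one-point space, and passing to pointed objects the $\infty$-category $\iS_*$ is correspondingly generated under small colimits by the pointed $0$-sphere $S^0$.

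For (1), I would first establish the concrete generation statement: every spectrum is a small colimit of shifts of suspension spectra $\Sigma^{\infty+n}(K)$ of finite pointed spaces $K \in \iS^{\fin}_*$, and each such $K$ is built from $S^0$ by finite colimits, so the smallest full stable sub-$\infty$-category of $\iSp$ that is closed under small colimits and contains $S^0_{\iS} = \Sigma^{\infty}(1_{\iS})$ is all of $\iSp$. The universal (\emph{free}) form is then obtained by transporting the free-generation property of $\iS_*$ across stabilization: since $\Sigma^{\infty}$ preserves small colimits and sends the generator $S^0$ to the sphere spectrum, the universal property of the stabilization $\iSp(\iS_*)$ among stable presentable $\infty$-categories yields that, for a stable presentable $\iD$, evaluation at $S^0_{\iS}$ induces an equivalence from the $\infty$-category of colimit-preserving functors $\iSp \to \iD$ to $\iD$.

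For (2), the smash product is the symmetric monoidal structure on $\iSp$ whose tensor product preserves small colimits separately in each variable and whose unit is $S^0_{\iS}$. I would obtain it from the presentable symmetric monoidal machinery of \cite[\S 4.8.2]{Lur2}: $\iSp$ is the unit object of the symmetric monoidal $\infty$-category $\mathrm{Pr}^{L}$ of presentable $\infty$-categories and colimit-preserving functors, and the structure induced on this unit is the smash product. Alternatively one can build it by Day convolution from the smash-product symmetric monoidal structure on the finite pointed spaces $\iS^{\fin}_*$ and verify that it descends to $\iSp$; the free-generation statement of (1) is exactly what guarantees existence and uniqueness (up to contractible ambiguity) of a colimit-preserving monoidal product with the prescribed unit. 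One then checks, as in Lemma \ref{lem:is:ShvSp} (2), its compatibility with the $t$-structure.

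For (3), I would identify the heart $\iSp^{\heartsuit} = \iSp_{\ge 0} \cap \iSp_{\le 0}$ of the $t$-structure of Fact \ref{fct:sp:sp-t} with Eilenberg--MacLane spectra. By Fact \ref{fct:sp:sp-stab} a spectrum is a compatible tower of pointed spaces linked by the loop functor, so an object of the heart is precisely one whose homotopy groups $\pi_n$ (Definition \ref{dfn:is:pi_n}, in the case $\iC = \iS_*$ and $\ve = \id$) vanish for $n \ne 0$. As in the Eilenberg--MacLane analysis recalled in \S \ref{sss:is:sp(C)}, the functor $\pi_n$ lands in the $\infty$-category $\iEM_n(\tau_{\le 0}\iS)$ of Eilenberg--MacLane objects, which for $n \ge 2$ is equivalent to the $\infty$-category of abelian group objects of $\tau_{\le 0}\iS$ by \cite[Proposition 7.2.2.12]{Lur1}; since $\tau_{\le 0}\iS \simeq \Ner(\Set)$, these abelian group objects form $\Ner(\Ab)$, giving $\iSp^{\heartsuit} \simeq \Ner(\Ab)$ (cf.\ \cite[Proposition 1.4.3.6]{Lur2}). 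The main obstacle is (2): whereas (1) and (3) amount to transporting or unwinding structure already available in the excerpt, constructing the smash product requires the full apparatus of symmetric monoidal $\infty$-categories and a genuine verification that the monoidal product on $\mathrm{Pr}^{L}$ (or the Day convolution product) restricts compatibly to $\iSp$ and is compatible with the $t$-structure.
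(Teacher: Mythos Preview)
The paper does not prove this statement at all: it is recorded as a \emph{Fact} with pointers to the relevant places in \cite{Lur2} (Corollary 1.4.4.6, \S 4.8.2, Proposition 1.4.3.6) and no further argument is given. Your proposal therefore goes well beyond what the paper does, supplying an outline of how one would actually establish (1)--(3) from the stabilization description of $\iSp$.

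As a sketch your outline is broadly reasonable and tracks the architecture of Lurie's proofs: (1) via the universal property of stabilization among presentable stable $\infty$-categories, (2) via the presentable symmetric monoidal machinery making $\iSp$ the unit of $\mathrm{Pr}^L_{\mathrm{st}}$, and (3) via the identification of the heart with Eilenberg--MacLane spectra. One caution on (3): your route through $\iEM_n$ for $n \ge 2$ and \cite[Proposition 7.2.2.12]{Lur1} is a detour; the direct argument is that an object of $\iSp^{\heartsuit}$ has $\pi_i = 0$ for $i \ne 0$, and $\pi_0$ already lands in abelian groups (as a stable homotopy group), with the inverse given by the Eilenberg--MacLane spectrum functor $A \mapsto HA$. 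But since the paper's own ``proof'' is simply a citation, any correct sketch you give is strictly more than the paper provides.
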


%%%%%%%%%%%%%%%%%%%%%%%%%%%%%%%%%%%%%%%%%%%%%%%%%%%%%%%%%%%%%%%%%%%%%%%%%%%%%%%%
%%%%%%%%%%%%%%%%%%%%%%%%%%%%%%%%%%%%%%%%%%%%%%%%%%%%%%%%%%%%%%%%%%%%%%%%%%%%%%%%
\subsection{Ring spectra}
\label{ss:sp:ring}

For $n \in \bbN$,
we denote by $\bbE_n^{\otimes}$ the \emph{$\infty$-operad of little $n$-cubes}
in the sense of \cite[Definition 5.1.0.2]{Lur2}.
We have a natural sequence 
\[
 \bbE_0^{\otimes} \longinj \bbE_1^{\otimes} \longinj \bbE_2^{\otimes} \longinj \cdots
\]
of $\infty$-operads.
By \cite[Corollary 5.1.1.5]{Lur2},
the colimit of this sequence is equivalent to the commutative $\infty$-operad 
\cite[Example 2.1.1.18]{Lur2}, 
%. Following \cite[Notation 5.1.1.6]{Lur2},
which we denote by $\bbE_{\infty}^{\otimes}$.

Let $k$ be an $\bbE_{\infty}$-ring.
We denote by $\iMod_k(\iSp)$ the $\infty$-category of $k$-module spectra
(see \cite[Notation 7.1.1.1]{Lur2}),
which has a symmetric monoidal structure.

\begin{dfn}[{\cite[Notation 0.3]{Lur7}}]\label{dfn:stb:iCAlg_k}
Let $k$ be an $\bbE_{\infty}$-ring.
We denote by 
\[
 \iCAlg_k = \iCAlg_k(\iMod_{k}(\iSp))
\]
the $\infty$-category of commutative ring objects 
in the symmetric monoidal $\infty$-category $\iMod_{k}(\iSp)$,
and call an object of $\iCAlg_k$ a \emph{commutative $k$-algebra spectrum}.
%or simply a \emph{commutative $k$-algebra}.
If $k$ is connective, then we denote by $\iCAlg^{\cn}_k \subset \iCAlg_k$
the full sub-$\infty$-category spanned by connective commutative $k$-algebra spectra.
\end{dfn}

An ordinary commutative ring $k$ can be regarded as an $\bbE_{\infty}$-ring.
Let us explain a relationship between the $\infty$-category $\iCAlg^{\cn}_k$
and the $\infty$-category arising from \emph{simplicial commutative $k$-algebras}.

Let us denote by $\sCom_k$ the category of simplicial commutative $k$-algebras.
In other words, an object of $\sCom_k$ is 
a simplicial object in the category of commutative $k$-algebras 
(see \S \ref{ss:ic:ss}).
We have

\begin{fct}\label{fct:stb:sCom:model}
The category $\sCom_k$ has a simplicial model structure 
(Definition \ref{dfn:ic:spl-model}) determined by the following data.
\begin{itemize}[nosep]
\item 
A morphism $A_{\bullet} \to B_{\bullet}$ in $\sCom_k$ is a weak equivalence
if and only if the underlying simplicial map is a weak homotopy equivalence.
\item
A morphism in $\sCom_k$ is a fibration
if and only if the underlying simplicial map is a Kan fibration.
\end{itemize}
\end{fct}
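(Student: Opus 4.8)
The plan is to obtain the model structure on $\sCom_k$ by \emph{transferring} the Kan model structure on $\sSet$ (Fact~\ref{fct:ic:Km}) along the free--forgetful adjunction $F: \sSet \rlto \sCom_k :U$, where $U$ sends a simplicial commutative $k$-algebra to its underlying simplicial set and $F$ is the levelwise free commutative $k$-algebra functor $(FX)_n = k[X_n]$ (equivalently, the degreewise composite of the free $k$-module functor with the symmetric algebra functor). By fiat, a morphism $f$ of $\sCom_k$ is declared to be a weak equivalence (resp.\ a fibration) exactly when $U(f)$ is a weak homotopy equivalence (resp.\ a Kan fibration, Definition~\ref{dfn:ic:Kan-fib}) in $\sSet$, and the cofibrations are then defined by the left lifting property against acyclic fibrations. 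First I would record the elementary facts: $\sCom_k$ is complete and cocomplete --- indeed locally presentable --- since $\Com_k$ is, and (co)limits of simplicial objects are computed degreewise. Then I would invoke the standard transfer theorem for cofibrantly generated model categories (classical, due to Quillen; see \cite{GJ} for the construction in the case of simplicial algebras, and \cite{H} for the cofibrantly generated formalism): with $I=\{\partial\Delta^n\inj\Delta^n\}$ and $J=\{\Lambda^n_i\inj\Delta^n\}$ the generating cofibrations and acyclic cofibrations of $\sSet$, the prescription above yields a cofibrantly generated model structure on $\sCom_k$ with generating cofibrations $F(I)$ and generating acyclic cofibrations $F(J)$, provided (i) $F(I)$ and $F(J)$ admit the small object argument, and (ii) every relative $F(J)$-cell complex is a weak equivalence.

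Condition (i) is routine: the domains of $F(I)$ and $F(J)$ are free simplicial commutative $k$-algebras on finite simplicial sets, hence compact relative to the filtered colimits along which one factors, so the small object argument applies. Condition (ii) --- the acyclicity of relative $F(J)$-cell complexes --- is the heart of the matter and the step I expect to be the main obstacle. Here one exploits a feature special to the commutative-ring setting: \emph{every object of $\sCom_k$ is fibrant}. Indeed, for $A\in\sCom_k$ the underlying simplicial set $U(A)$ is a simplicial abelian group, hence a Kan complex by Fact~\ref{fct:ic:sag=Kan}, so $U(A)\to *$ is a Kan fibration and therefore $A\to 0$ (the terminal object of $\sCom_k$ being the zero ring) is a fibration. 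Granting this, I would run Quillen's path-object argument: take $A^{\Delta^1}$ (the cotensor, see the next paragraph) as a functorial path object for $A$, observe that $U(A^{\Delta^1})=U(A)^{\Delta^1}$ is a genuine path object for $U(A)$ in the simplicial model category $\sSet$ --- because $U(A)$ is fibrant, $\Delta^1\to\Delta^0$ is a weak equivalence of cofibrant simplicial sets, and $\partial\Delta^1\inj\Delta^1$ is a cofibration --- whence $A\to A^{\Delta^1}$ is a weak equivalence and $A^{\Delta^1}\to A\times A$ a fibration in $\sCom_k$; the path-object argument then forces every relative $F(J)$-cell complex to be a weak equivalence with the left lifting property against all fibrations, giving the model axioms.

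It remains to upgrade this to a simplicial model structure in the sense of Definition~\ref{dfn:ic:spl-model}. The cotensor $A^K$ for $K\in\sSet$ exists by completeness of $\Com_k$ --- equivalently $\sCom_k=\Fun(\CS^{\op},\Com_k)$ is cotensored over $\sSet$ in the usual degreewise way, with $U$ preserving cotensors --- while the tensor $A\otimes K$ exists by cocompleteness (via the adjoint functor theorem applied to the limit-preserving functor $(-)^K$), so that $\Map_{\sCom_k}(A,B)_n:=\Hom_{\sCom_k}(A\otimes\Delta^n,B)\cong\Hom_{\sCom_k}(A,B^{\Delta^n})$ makes $\sCom_k$ a simplicial category. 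The compatibility requirement --- that $\otimes\colon\sCom_k\times\sSet\to\sCom_k$ be a left Quillen bifunctor --- I would verify by the two-variable adjunction: for cofibrations $i$ in $\sCom_k$ and $j$ in $\sSet$, lifting problems for the pushout-product $i\Box j$ against (acyclic) fibrations $p$ of $\sCom_k$ are adjoint to lifting problems for $i$ against corner cotensors $p^{\Box j}$; since $U$ preserves cotensors and limits and the model structure on $\sCom_k$ was lifted along $U$, such a corner cotensor is again an (acyclic) fibration in $\sCom_k$ --- its $U$-image being so by the simplicial model axiom for $\sSet$ --- so the required lift exists, and the remaining cases (where $i$ or $j$ is acyclic) follow by the symmetric adjunctions. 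This is the content of the classical verification that $\sCom_k$ is a simplicial model category (cf.\ \cite{GJ}), and with it the assertion is proved.
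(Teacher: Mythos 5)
Your proof is correct. The paper states this result as a \emph{Fact} and gives no proof at all --- it is Quillen's classical theorem on simplicial algebras, implicitly deferred to the literature (it appears in the cited reference [GJ], Chapter II, and in Quillen's \emph{Homotopical Algebra}, II.4). Your argument --- transfer of the Kan model structure along the free--forgetful adjunction, with the acyclicity condition settled by the path-object argument using the fact that every object is fibrant (via Fact \ref{fct:ic:sag=Kan}), and SM7 checked by passing to corner cotensors, which $U$ preserves and reflects --- is exactly the standard proof from those sources, and you have correctly identified and handled the one genuinely nontrivial step (acyclicity of relative $F(J)$-cell complexes).
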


We denote by $\sCom_k^{\circ} \subset \sCom_k$ 
the full subcategory spanned by fibrant-cofibrant objects. 
By Definition \ref{dfn:ic:um}, 
we have the underlying $\infty$-category $\Nsp(\sCom_k^{\circ})$. 

\begin{fct}[{\cite[Proposition 7.1.4.20, Warning 7.1.4.21]{Lur2},
\cite[Proposition 4.1.11]{Lur5}}]
\label{fct:sp:spl=E}
There is a functor 
\[
 \Nsp(\sCom_k^{\circ}) \longto \iCAlg^{\cn}_k
\]
of $\infty$-categories which preserves small limits and colimits
and which admits left and right adjoints.
If moreover the base ring $k$ contains $\bbQ$, then this functor is an equivalence.
\end{fct}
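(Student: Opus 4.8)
The plan is to construct the comparison functor by routing through the Dold--Kan correspondence and the lax symmetric monoidal structure on normalized chains, and then to extract every formal property from the adjoint functor theorem (Fact \ref{fct:ic:adj}), leaving the equivalence in the rational case as the one genuinely non-formal point. First I would build the functor $\theta \colon \Nsp(\sCom_k^{\circ}) \to \iCAlg_k^{\cn}$ as follows. The forgetful functor $\sCom_k \to \sMod_k$ is a left Quillen functor for the model structure of Fact \ref{fct:stb:sCom:model} and the analogous one on $\sMod_k$, so on underlying $\infty$-categories it induces $\isCom_k \to \isMod_k$. Using the Dold--Kan equivalence $\isMod_k \simeq \Ndg(\C^{-}(k))$ (the $\infty$-category of connective $k$-module spectra) together with the Eilenberg--Zilber/shuffle lax symmetric monoidal structure on normalized chains, a simplicial commutative $k$-algebra acquires the structure of a commutative algebra object, i.e.\ an $\bbE_{\infty}$-algebra, in $\isMod_k$; since its underlying module is connective this lands in $\iCAlg_k^{\cn}$, and this is $\theta$. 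By construction $\theta$ commutes with the two forgetful functors down to $\isMod_k$, and on underlying modules normalization is symmetric monoidal, so $\theta$ is compatible with the respective free-algebra adjunctions.

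Next I would verify the formal assertions. Both $\isCom_k$ and $\iCAlg_k^{\cn}$ are presentable: the former because it is the underlying $\infty$-category of a combinatorial simplicial model category \cite[Proposition A.3.7.6]{Lur1}, the latter by the general theory of algebra objects in a presentable symmetric monoidal $\infty$-category. The forgetful functors $\isCom_k \to \isMod_k$ and $\iCAlg_k^{\cn} \to \isMod_k$ are monadic, conservative, preserve sifted colimits and create small limits. Since $\theta$ is compatible with both and the target forgetful functor is conservative, a diagram chase shows that $\theta$ preserves sifted colimits and small limits; together with the observation that finite coproducts of algebras on either side are computed by the relative tensor product $A \otimes_k B$, which $\theta$ preserves because normalization is monoidal, one gets that $\theta$ preserves all small colimits. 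Fact \ref{fct:ic:adj} then applies: preservation of small colimits yields a right adjoint, and preservation of small limits together with accessibility (which holds since $\theta$ preserves filtered colimits) yields a left adjoint.

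Finally, when $\bbQ \subset k$, I would identify $\isCom_k$ and $\iCAlg_k^{\cn}$ as $\infty$-categories of modules over the two monads on $\isMod_k$ given respectively by the free simplicial commutative algebra $\operatorname{Sym}$ and by the free $\bbE_{\infty}$-algebra, with $\theta$ induced by a morphism of monads; by monadicity it then suffices to show this morphism is an equivalence. On a connective $k$-module $V$ the weight-$n$ summand of the free $\bbE_{\infty}$-algebra is the homotopy orbit $(V^{\otimes n})_{h\Sigma_n}$, while the corresponding summand of $\operatorname{Sym}$ is the $n$-th derived symmetric power; the comparison map between them is an equivalence because the order of $\Sigma_n$ is invertible in $k$, so the norm map $(V^{\otimes n})_{h\Sigma_n} \to (V^{\otimes n})^{h\Sigma_n}$ is an equivalence and no higher correction terms intervene. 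Summing over $n$ gives the equivalence of monads, hence of the two module ($=$ algebra) categories, hence of $\theta$.

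The hard part will be the two inputs that are not formal: the careful functorial construction of the lax symmetric monoidal Dold--Kan comparison in the $\infty$-categorical setting (so that the $\bbE_{\infty}$-structure on $\theta(A)$ is produced coherently), and the vanishing of the higher homology of the symmetric groups with coefficients in modules over a $\bbQ$-algebra that underlies the rational equivalence. These are precisely the contents of \cite[Proposition 7.1.4.20]{Lur2} and \cite[Proposition 4.1.11]{Lur5}; once they are granted, everything above is formal, so in the body of the paper it is reasonable to simply cite them.
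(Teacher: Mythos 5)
This statement is a cited Fact in the paper — no proof is given beyond the references to \cite[Proposition 7.1.4.20, Warning 7.1.4.21]{Lur2} and \cite[Proposition 4.1.11]{Lur5} — and your sketch is a faithful reconstruction of exactly the argument those sources carry out (monoidal Dold--Kan to build the functor, presentability plus the adjoint functor theorem for the formal adjoints, and the comparison of the derived symmetric power monad with the free $\bbE_{\infty}$-monad via invertibility of $\lvert\Sigma_n\rvert$ in the rational case). The only slip is cosmetic: the forgetful functor $\sCom_k \to \sMod_k$ is a \emph{right} Quillen functor (its left adjoint is the free symmetric algebra), but since it preserves all weak equivalences it still descends to the underlying $\infty$-categories, so nothing in your argument is affected.
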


%%%%%%%%%%%%%%%%%%%%%%%%%%%%%%%%%%%%%%%%%%%%%%%%%%%%%%%%%%%%%%%%%%%%%%%%%%%%%%%%
%%%%%%%%%%%%%%%%%%%%%%%%%%%%%%%%%%%%%%%%%%%%%%%%%%%%%%%%%%%%%%%%%%%%%%%%%%%%%%%%
\subsection{Stable modules}
\label{ss:sp:sm}

Following \cite[\S 1.2.11, \S 2.2.1]{TVe2},
we introduce some terminology on stable modules of derived rings.
We will use some terminology and facts on derived $\infty$-categories
(see \S \ref{ss:stb:iDa}).

%A commutative ring object in $\isMod_k$ is nothing but 
%a simplicial commutative $k$-algebra, or a derived $k$-algebra
%in the sense of Definition \ref{dfn:dSt:dring}.
Let $k$ be a commutative ring.
For a derived $k$-algebra $A$, 
we denote by $\isMod_A$ the $\infty$-category of $A$-modules in 
the $\infty$-category $\isMod_k$ of simplicial $k$-modules 
(\S \ref{sss:dSt:isMod}).
The $\infty$-category $\isMod_A$ admits finite limits.
Thus we have the $\infty$-category $\iSp(\isMod_A)$
of spectrum objects in $\isMod_A$ (Definition \ref{dfn:sp:sp}).
It is a stable $\infty$-category in the sense of Definition \ref{dfn:stb:stb}.
We also have the suspension functor $\Sigma^{\infty}: \isMod_A \to \iSp(\isMod_A)$ 
in the sense of Definition \ref{dfn:sp:Sigma^inf}.

%Recalling that each object in $\isMod_A$ is 
%a Kan complex by Fact \ref{fct:ic:sag=Kan},
%we have a fully faithful functor $\iSp(\isMod_A) \to \iSp(\iS_*)=\iSp$.

\begin{dfn}\label{dfn:stb:iSp(A)}
For a derived $k$-algebra $A$, we denote 
\[
 \iSp(A) := \iSp(\isMod_A)
\]
and call it the \emph{$\infty$-category of stable $A$-modules}.
For $n \in \bbZ$, the $n$-th shift functor (Definition \ref{dfn:stb:[n]}) 
on $\iSp(A)$ is denoted by $[n]$.
We also denote the suspension functor $\Sigma^{\infty}: \isMod_A \to \iSp(A)$ by 
\[
 \Sigma^{\infty}_A: \isMod_A \longto \iSp(A).
\]
\end{dfn}

%An object of $\iSp(A)$ is given by 
%a family $\{M_n \in \isMod_A \mid n \in \bbN\}$ of objects in $\isMod_A$
%together with morphisms 
%\[
% S^1_A \otimes_A M_n \longto M_{n+1}
%\]
%in $\isMod_A$.
%Here $S^1_A := S^1_{\iSp(A)}$ is the image of the final object $0 \in \isMod_A$
%under the functor $\Sigma^{\infty+n}:\isMod_A \to \iSp(A)$ 
%(Definition \ref{dfn:ic:S^n_iC}).

Let us explain another description of $\iSp(A)$.
We start with the recollection of the \emph{normalized chain complex}
(see \cite[Chap.\ III, \S 2]{GJ} for the detail).
For a derived $k$-algebra $A \in \sCom_k$, 
we define the chain complex $N(A)$ as follows:
The graded component is defined to be
\[
 N(A)_n := \cap_{i=0}^{n-1} \Ker d_i, 
\]
where $d_i: A_n \to A_{n-1}$ denote the face maps (\S \ref{ss:ic:ss}).
The map defined by
\[
 (-1)^n d_n: N(A)_n \longto N(A)_{n-1},
\]
gives the differential on $N(A)$ 
due to the simplicial identity $d_{n-1}d_n=d_n d_{n-1}$.
The commutative ring structure on $A$ induces 
a structure of a commutative $k$-dg-algebra on the complex $N(A)$.

\begin{dfn*}
The obtained commutative $k$-dg-algebra $N(A)$ is called 
the \emph{normalized chain complex} of $A$.
\end{dfn*}

We denote by $\C(N(A))$ the dg-category of $N(A)$-dg-modules.
Considering it as the model category with the injective model structure 
(Fact \ref{fct:stb:Ch:model}), 
we have the subcategory $\C(N(A))^{\circ}$ of fibrant-cofibrant objects.
Then by taking the dg-nerve (Definition \ref{dfn:stb:Ndg}),
we have an $\infty$-category $\Ndg(\C(N(A))^{\circ})$.
We will mainly use the next description of $\iSp(A)$ in the following presentation.
 
\begin{lem*}
We have an equivalence of $\infty$-categories
\[
 \Ndg(\C(N(A))^{\circ}) \simeq \iSp(A).
\]
\end{lem*}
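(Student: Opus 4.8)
The statement to be proved is the equivalence of $\infty$-categories
\[
 \Ndg(\C(N(A))^{\circ}) \simeq \iSp(A) = \iSp(\isMod_A)
\]
for a derived $k$-algebra $A$. The plan is to reduce everything to already-established facts, chiefly the Dold--Kan-type equivalence $\Ndg(\C^-(k)) \simeq \isMod_k$ recalled in \S\ref{sss:dSt:isMod}, its module-theoretic refinement $\Ndg(\C^-(N(A))) \simeq \isMod_A$, and the general identifications of \S\ref{ss:stb:iDa} between derived $\infty$-categories of Grothendieck abelian categories and stable $\infty$-categories of spectra. The key structural observation is that $\isMod_A$ is a presentable $\infty$-category whose connective part comes from the simplicial model category $\sMod_A$ (equivalently, from non-positively graded $N(A)$-dg-modules), so that stabilizing it should ``unfold'' the homological truncation and produce the full unbounded derived $\infty$-category of the abelian category $\Mod_{\pi_0(A)}\text{-graded}$... more precisely of $N(A)$-dg-modules.

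\textbf{Key steps, in order.} First I would recall that $\isMod_A$, being the $\infty$-category of module objects over a connective $\bbE_\infty$-ring (here the simplicial commutative ring $A$, or its normalization $N(A)$ as a connective $k$-dg-algebra), is presentable and admits finite limits, so $\iSp(\isMod_A)$ is stable (Fact~\ref{fct:sp:sp-stab}) and presentable, and carries the natural $t$-structure of Fact~\ref{fct:sp:sp-t}. Second, I would identify the heart: by the module-theoretic Dold--Kan equivalence, $\isMod_A$ is an $\infty$-category whose homotopy category's heart (under its own truncation) is the ordinary abelian category of $\pi_0(N(A))$-modules; more usefully, the \emph{connective part} of $\iSp(\isMod_A)$ is equivalent to $\isMod_A$ itself via $\Sigma^\infty_A$ and $\Omega^\infty$, by the standard fact (Fact~\ref{fct:sp:sp-stab}) that $\iSp(\iC)$ is the limit of the tower $\cdots \xr{\Omega} \iC_* \xr{\Omega} \iC_*$ together with the observation that $\isMod_A$ is already ``infinitely deloopable in the negative direction'' because it is pointed, presentable, and its loop functor $\Omega$ identifies with the shift $[-1]$ on $N(A)$-dg-modules under the equivalence $\Ndg(\C^-(N(A))) \simeq \isMod_A$. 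Third, I would invoke Fact~\ref{fct:stb:iDa} and the right-completeness statement of Fact~\ref{fct:stb:iDa:t-str}: the stable $\infty$-category $\Ndg(\C(N(A))^{\circ}) = \iDa(\Mod_{N(A)\text{-dg}})$ (the unbounded derived $\infty$-category of the Grothendieck abelian category of $N(A)$-dg-modules, with the injective model structure of Fact~\ref{fct:stb:Ch:model}) is right complete with connective part equivalent to $\Ndg(\C^-(N(A)))\simeq\isMod_A$. Since $\iSp(\isMod_A)$ is also right complete (its $t$-structure is right complete by the same reasoning as in Fact~\ref{fct:is:sm2}(5) / the general spectrum construction) with the same connective part, the universal property of right completion (the analogue of Proposition~\ref{prp:is:Da=Stab}) yields a $t$-exact equivalence $\Ndg(\C(N(A))^{\circ}) \simeq \iSp(\isMod_A)$. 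Concretely, one produces a $t$-exact functor $\iDa(\Mod_{N(A)\text{-dg}}) \to \iSp(\isMod_A)$ extending the identity on connective objects using Fact~\ref{fct:stb:ext-fun} (extension of exact functors between abelian categories to $t$-exact functors on derived $\infty$-categories), checks it is fully faithful on bounded objects, and then bootstraps to all of $\iDa$ using right completeness on both sides.

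\textbf{Main obstacle.} The delicate point is the identification of the \emph{connective part} of $\iSp(\isMod_A)$ with $\isMod_A$ together with the matching of $t$-structures. Naively $\iSp(\iC)$ for a presentable pointed $\iC$ need not have its connective part equal to $\iC$; this holds precisely because $\isMod_A$ has a $t$-structure of its own (it is the connective part of a larger stable category) and $\Omega$ on $\isMod_A$ is already an equivalence onto $\isMod_A$-objects shifted down, i.e.\ $\isMod_A$ ``sees'' all negative shifts internally via the dg-model $\C^-(N(A))$. Making this precise requires carefully distinguishing the $t$-structure on $\isMod_A$ coming from homotopy groups of simplicial modules (which is one-sided: only $\pi_{\ge 0}$) from the genuine two-sided $t$-structure after stabilization, and verifying that $\Sigma^\infty_A: \isMod_A \to \iSp(\isMod_A)$ is fully faithful with essential image the connective objects — this is where one invokes that $\isMod_A$ is prestable in the sense of Lurie and uses the characterization of spectrum objects as sequences $(X_n, \alpha_n: X_n \simeq \Omega X_{n+1})$. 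Once that identification is in hand, the rest is a formal consequence of right completeness and the already-cited facts, mirroring exactly the proof of Proposition~\ref{prp:is:Da=Stab}.
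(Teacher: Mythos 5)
The paper states this lemma with no proof at all --- it is asserted as a bare fact between the construction of $\Ndg(\C(N(A))^{\circ})$ and the specialization to a discrete $\Lambda$ --- so there is no argument of the author's to compare yours against. Your proposal is a correct and complete way to fill that gap, and it is the natural one: identify $\isMod_A$ with the connective part of the stable $\infty$-category $\Ndg(\C(N(A))^{\circ})$ via the module-theoretic Dold--Kan equivalence, use the tower description of Fact \ref{fct:sp:sp-stab} to recognize $\iSp(\isMod_A)$ as the right completion of that stable $\infty$-category, and conclude by right completeness of the derived $\infty$-category of the Grothendieck abelian category of $N(A)$-dg-modules (Fact \ref{fct:stb:iDa:t-str}); this mirrors exactly the paper's own sketch of the closely analogous Proposition \ref{prp:is:Da=Stab}. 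One local imprecision: in your second step you say the loop functor $\Omega$ on $\isMod_A$ ``identifies with the shift $[-1]$'' under the equivalence with connective $N(A)$-dg-modules --- it does not; it is the truncated shift $\tau_{\ge 0}\circ[-1]$, which is why the stabilization genuinely enlarges the category rather than reproducing it. You implicitly correct this in your ``main obstacle'' paragraph, where you correctly isolate the real content (full faithfulness of $\Sigma^{\infty}$ onto connective objects, i.e.\ prestability of $\isMod_A$), so the slip does not propagate; just be sure the final write-up uses the truncated shift throughout.
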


Let $\Lambda$ be a commutative $k$-algebra.
Then we can identify $\C(N(\Lambda)) = \C(\Lambda)$,
so that we have 

\begin{fct}\label{fct:sp:SP=Dinf}
There is an equivalence 
\[
 \iSp(\Lambda) \simeq \iDa(\Lambda) := \Ndg(\C(\Lambda)^{\circ}).
\]
\end{fct}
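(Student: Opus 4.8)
The plan is to deduce the equivalence directly from the lemma stated immediately above, namely $\Ndg(\C(N(A))^{\circ}) \simeq \iSp(A)$ for an arbitrary derived $k$-algebra $A$, by specializing to the case in which $A = \Lambda$ is the constant simplicial commutative $k$-algebra associated to an ordinary commutative $k$-algebra. First I would note that by Definition \ref{dfn:stb:iSp(A)} we have $\iSp(\Lambda) = \iSp(\isMod_{\Lambda})$ literally by definition, so the whole content of the statement is the identification of $\Ndg(\C(N(\Lambda))^{\circ})$ with $\iDa(\Lambda) = \Ndg(\C(\Lambda)^{\circ})$.

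The key computation is the normalized chain complex $N(\Lambda)$ of the constant simplicial commutative ring $\Lambda$. Since every face map $d_i \colon \Lambda_n \to \Lambda_{n-1}$ of a constant simplicial object is the identity, we get $N(\Lambda)_n = \cap_{i=0}^{n-1}\Ker d_i = 0$ for $n \ge 1$ and $N(\Lambda)_0 = \Lambda$; thus $N(\Lambda)$ is $\Lambda$ concentrated in degree $0$ with zero differential, and since the shuffle (Eilenberg--Zilber) product is compatible with this, its commutative $k$-dg-algebra structure is the trivial one, i.e.\ $N(\Lambda) = \Lambda$ as a dg-algebra. Consequently the dg-category $\C(N(\Lambda))$ of $N(\Lambda)$-dg-modules is identified with $\C(\Lambda)$, the dg-category of complexes of $\Lambda$-modules; this identification is compatible with the injective model structures (Fact \ref{fct:stb:Ch:model}), hence with the passage to fibrant-cofibrant objects $\catM \mapsto \catM^{\circ}$ and with the dg-nerve construction (Definition \ref{dfn:stb:Ndg}). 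Therefore $\Ndg(\C(N(\Lambda))^{\circ}) = \Ndg(\C(\Lambda)^{\circ}) = \iDa(\Lambda)$, and combining with the preceding lemma yields the asserted equivalence $\iSp(\Lambda) \simeq \iDa(\Lambda)$.

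I expect the only genuine point requiring care is the matching of model structures in the identification $\C(N(\Lambda)) \simeq \C(\Lambda)$: one must check that the model structure used on dg-modules over the dg-algebra $N(\Lambda) = \Lambda$ in the proof of the preceding lemma agrees, up to Quillen equivalence, with the injective model structure on $\C(\Mod_{\Lambda})$ used to define $\iDa$ in Definition \ref{dfn:stb:iDa}, so that the two dg-nerves of fibrant-cofibrant objects are equivalent $\infty$-categories. This is essentially the statement that for a discrete $\bbE_{\infty}$-ring the $\infty$-category of module spectra recovers the unbounded derived category of (discrete) modules, which is also recorded in Proposition \ref{prp:is:Da=Stab} and the remark following it (citing \cite[Remark 7.1.1.16]{Lur2}); once that is granted, the remaining steps are purely formal bookkeeping about normalized chains and the Dold--Kan correspondence.
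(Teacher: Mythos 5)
Your proposal is correct and follows essentially the same route as the paper: the paper's own justification is the one-line observation that for a constant simplicial commutative ring one can identify $\C(N(\Lambda)) = \C(\Lambda)$, and then the equivalence follows by specializing the preceding lemma $\Ndg(\C(N(A))^{\circ}) \simeq \iSp(A)$ to $A = \Lambda$. Your additional care about the vanishing of $N(\Lambda)_n$ for $n \ge 1$ and the compatibility of model structures simply fills in details the paper leaves implicit.
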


We call $\iDa(\Lambda)$ the \emph{derived $\infty$-category of $\Lambda$-modules}.
See Definition \ref{dfn:stb:iDa} for the detail,
where the derived $\infty$-category is denoted by $\iDa(\Mod_\Lambda)$.
Taking the homotopy groups in the above equivalence, we have 

\begin{cor*}%\label{fct:sp:HoSp}
\[
 \Ho \iSp(\Lambda) \simeq D(\Lambda) \simeq \Hot \C(\Lambda),
\]
where $D(\Lambda)$ denotes the derived category of 
unbounded complexes of $\Lambda$-modules (in the ordinary sense)
and $\Hot \C(\Lambda)$ denotes the homotopy category 
of the dg-category $\C(\Lambda)$
equipped with the model structure in Fact \ref{fct:stb:Ch:model}.
\end{cor*}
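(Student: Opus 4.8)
The plan is to obtain both displayed equivalences by chaining together identifications that are already in place in the excerpt, so the proof is essentially bookkeeping. First I would take homotopy categories on both sides of the equivalence $\iSp(\Lambda) \simeq \iDa(\Lambda)$ of Fact \ref{fct:sp:SP=Dinf}: a simplicial map which is an equivalence in the sense of Definition \ref{dfn:ic:sfe} induces an equivalence of the associated homotopy categories, so $\Ho \iSp(\Lambda) \simeq \Ho \iDa(\Lambda)$. Since $\iSp(\Lambda) = \iSp(\isMod_{\Lambda})$ is stable (Fact \ref{fct:sp:sp-stable}), Fact \ref{fct:stb:stb} endows $\Ho \iSp(\Lambda)$, and hence every homotopy category occurring below, with a triangulated structure, and all the equivalences in play will be triangulated.

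Next I would identify $\Ho \iDa(\Lambda)$ with $\Hot \C(\Lambda)$. Recall that $\iDa(\Lambda) = \iDa(\Mod_{\Lambda}) = \Ndg(\C(\Lambda)^{\circ})$ by Definition \ref{dfn:stb:iDa}, and that by Fact \ref{fct:stb:iDa} (3) this $\infty$-category is equivalent to the underlying $\infty$-category $\Nsp(\C(\Lambda)^{\circ})$ of $\C(\Lambda)$, regarded as a simplicial model category via the injective model structure of Fact \ref{fct:stb:Ch:model}. I would then use the standard comparison that, for a simplicial model category $M$, the homotopy category of its underlying $\infty$-category $\Nsp(M^{\circ})$ coincides with the homotopy category $\mHo(M)$ of the model category --- concretely, for $X, Y \in M^{\circ}$ one has $\pi_0 \Map_{\Nsp(M^{\circ})}(X,Y) = \pi_0 \Map_{M}(X,Y) = \Hom_{\mHo(M)}(X,Y)$. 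Applied to $M = \C(\Lambda)$ this gives $\Ho \iDa(\Lambda) \simeq \mHo \C(\Lambda) = \Hot \C(\Lambda)$, the right-hand term of the corollary.

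Finally I would invoke the classical fact that the homotopy category of a model category is its localization at the class of weak equivalences. Since the weak equivalences of the injective model structure on $\C(\Lambda)$ are exactly the quasi-isomorphisms (Fact \ref{fct:stb:Ch:model}), this gives $\Hot \C(\Lambda) = \C(\Lambda)[\mathrm{qis}^{-1}] = D(\Lambda)$, the unbounded derived category ``in the ordinary sense''. Composing $\Ho \iSp(\Lambda) \simeq \Ho \iDa(\Lambda) \simeq \Hot \C(\Lambda) \simeq D(\Lambda)$ produces the corollary.

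No serious obstacle is expected, since every ingredient is recorded earlier; the one point requiring a little care is the middle step, the identification $\Ho \Nsp(M^{\circ}) \simeq \mHo(M)$, which silently reconciles two a priori different notions of ``homotopy category'' --- the $\topH$-enriched one attached to the simplicial set $\Nsp(\C(\Lambda)^{\circ})$ and the Gabriel--Zisman localization of the model category $\C(\Lambda)$. Because $\C(\Lambda)$ is a combinatorial simplicial model category there are no set-theoretic pathologies, and this comparison is an equivalence of categories compatible with the triangulated structures, which is what is needed.
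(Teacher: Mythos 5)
Your proof is correct and follows the same route the paper intends: the corollary is obtained by applying $\Ho$ to the equivalence of Fact \ref{fct:sp:SP=Dinf} and then unwinding the standard identifications of $\Ho \Ndg(\C(\Lambda)^{\circ})$ with the model-categorical homotopy category $\mHo \C(\Lambda)$ and with the classical unbounded derived category $D(\Lambda)$. The paper records no argument beyond ``taking homotopy categories,'' so your explicit chain --- including the comparison $\Ho \Nsp(M^{\circ}) \simeq \mHo(M)$, which you rightly single out as the only point needing care --- is exactly the bookkeeping being left to the reader.
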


We have the theory of $t$-structures for stable $\infty$-categories.
See \S \ref{ss:stb:t-str} for an account.
Applying Fact \ref{fct:stb:iDa:t-str} to the present situation,
we obtain the following $t$-structure of $\iSp(\Lambda)$.

\begin{lem}\label{lem:stb:iSp}
Let $\Lambda$ be a commutative $k$-algebra.
Then $\iSp(\Lambda) \simeq \iDa(\Lambda)=\Ndg(\C(\Lambda)^{\circ})$
is a stable $\infty$-category with a $t$-structure determined by
\[
 (\iSp(\Lambda)_{\le 0}, \iSp(\Lambda)_{\ge 0}).
\]
Here we set
\[
 \iSp(\Lambda)_{\ge 0} := \Ndg(\C(\Lambda))_{\ge 0} \cap \iDa(\Lambda), \quad
 \iSp(\Lambda)_{\le 0} := \Ndg(\C(\Lambda))_{\le 0} \cap \iDa(\Lambda), 
\]
where $\Ndg(\C(\Lambda))_{\ge n}$ (resp.\ $\Ndg(\C(\Lambda))_{\le n}$) 
denotes the full sub-$\infty$-category of $\Ndg(\C(\Lambda))$
spanned by $\Lambda$-dg-modules $M$ 
such that $H_k(M) \simeq 0$ for $k<n$ (resp.\ $k>n$).
\end{lem}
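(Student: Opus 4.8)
The plan is to deduce both assertions from the general theory of derived $\infty$-categories of Grothendieck abelian categories recalled in Appendices \ref{ss:stb:G} and \ref{ss:stb:D-t}, applied to the ordinary category $\Mod_{\Lambda}$ of $\Lambda$-modules. First I would note that, $\Lambda$ being an ordinary commutative ring, $\Mod_{\Lambda}$ is a Grothendieck abelian category; hence by Fact \ref{fct:stb:iDa} (1) the $\infty$-category $\iDa(\Mod_{\Lambda}) = \Ndg(\C(\Mod_{\Lambda})^{\circ})$ is stable. Since $\C(\Mod_{\Lambda}) = \C(\Lambda)$ and, by Fact \ref{fct:sp:SP=Dinf}, $\iSp(\Lambda) \simeq \iDa(\Lambda) = \Ndg(\C(\Lambda)^{\circ}) = \iDa(\Mod_{\Lambda})$, transporting stability across this equivalence gives the first claim.

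For the $t$-structure I would invoke Fact \ref{fct:stb:iDa:t-str} with $\catA = \Mod_{\Lambda}$: the pair $(\iDa(\Mod_{\Lambda})_{\le 0}, \iDa(\Mod_{\Lambda})_{\ge 0})$ — where $\iDa(\Mod_{\Lambda})_{\ge 0}$ (resp.\ $\iDa(\Mod_{\Lambda})_{\le 0}$) is spanned by those $M$ with $H_n(M) \simeq 0$ for $n<0$ (resp.\ $n>0$) — determines a $t$-structure on $\iDa(\Mod_{\Lambda})$. It then remains only to identify these subcategories with the ones in the statement. Under the fully faithful inclusion $\iDa(\Lambda) \inj \Ndg(\C(\Lambda))$ of Fact \ref{fct:stb:iDa} (2), an object $M \in \iDa(\Lambda)$ is an actual $\Lambda$-dg-module whose homology is unchanged by the inclusion; hence $\iDa(\Mod_{\Lambda})_{\ge 0}$ coincides with $\Ndg(\C(\Lambda))_{\ge 0} \cap \iDa(\Lambda)$, and similarly on the coconnective side. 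Transporting this $t$-structure along the equivalence of the first step produces the desired $t$-structure on $\iSp(\Lambda)$; here one additionally records, via the Dold--Kan correspondence, that the equivalence $\iSp(\Lambda) \simeq \iDa(\Lambda)$ matches the $n$-th homotopy group $\pi_n$ of a stable $\Lambda$-module with the $n$-th homology $H_n$ of the associated $\Lambda$-dg-module, so that the transported truncation subcategories are described by the stated homological vanishing conditions.

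The argument is in the end a bookkeeping exercise, and the only point demanding care — the \emph{main obstacle}, though a minor one — is to keep the homological grading consistent along the chain $\iSp(\Lambda) = \iSp(\isMod_{\Lambda}) \simeq \Ndg(\C(\Lambda)^{\circ}) \simeq \iDa(\Lambda) \inj \Ndg(\C(\Lambda))$, and to check that the symbol $H_k$ appearing in the definition of $\Ndg(\C(\Lambda))_{\ge n}$ in the statement is the same invariant as the $H_n$ used in Fact \ref{fct:stb:iDa:t-str}. Because $\Lambda$ is concentrated in degree zero (so that its normalized chain complex is $\Lambda$ itself and no nontrivial truncation of the coefficients intervenes), this causes no genuine difficulty, and no input beyond the cited appendix results is required.
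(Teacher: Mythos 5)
Your proposal is correct and follows exactly the route the paper intends: the text preceding the lemma says only ``Applying Fact \ref{fct:stb:iDa:t-str} to the present situation, we obtain the following $t$-structure of $\iSp(\Lambda)$,'' i.e.\ one combines the equivalence $\iSp(\Lambda)\simeq\iDa(\Lambda)$ of Fact \ref{fct:sp:SP=Dinf} with the stability and $t$-structure statements for $\iDa(\Mod_\Lambda)$ from Facts \ref{fct:stb:iDa} and \ref{fct:stb:iDa:t-str}. Your additional bookkeeping about identifying the truncation subcategories inside $\Ndg(\C(\Lambda))$ and matching $\pi_n$ with $H_n$ is exactly the (routine) content the paper leaves implicit.
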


%Let us also introduce
%
%\begin{dfn}\label{dfn:stb:iSp+/iSp-}
%Let $\Lambda$ be a commutative $k$-algebra.
%\begin{enumerate}[nosep]
%\item 
%We denote by $\C^{+}_{\txinj}(\Lambda)$ the category of chain complexes of 
%injective $\Lambda$-modules $M$ such that $M_n =0$ for $n \gg 0$, and call
%\[
% \iSp^+(\Lambda) := \Ndg(\C^{+}_{\txinj}(\Lambda))
%\]
%the \emph{$\infty$-category of stable $\Lambda$-modules bounded above}.
%\item
%We denote by $\C^{-}_{\proj}(\Lambda)$ the category of chain complexes of 
%projective $\Lambda$-modules $M$ such that $M_n =0$ for $n \ll 0$,
%and call
%\[
% \iSp^-(\Lambda) := \Ndg(\C^{-}_{\proj}(\Lambda))
%\]
%the \emph{$\infty$-category of stable $\Lambda$-modules bounded below}.
%\end{enumerate}
%\end{dfn}
%
%By Fact \ref{fct:stb:iDa-iDm}, we can consider $\iSp^{\pm}(\Lambda)$ 
%as full sub-$\infty$-categories of $\iSp(\Lambda)$, 
%and have the following equivalences
%\[
% \iSp^+(\Lambda) \simeq \cup_{n \in \bbN} \iSp(\Lambda)_{\le n}, 
% \quad
% \iSp^-(\Lambda) \simeq \cup_{n \in \bbN} \iSp(\Lambda)_{\ge -n}.
%\]

%%%%%%%%%%%%%%%%%%%%%%%%%%%%%%%%%%%%%%%%%%%%%%%%%%%%%%%%%%%%%%%%%%%%%%%%%%%%%%%%
%%%%%%%%%%%%%%%%%%%%%%%%%%%%%%%%%%%%%%%%%%%%%%%%%%%%%%%%%%%%%%%%%%%%%%%%%%%%%%%%

\end{document}